\tikzset{ closed/.style = {decoration = {markings, mark = at position 0.5 with { \node[transform shape, xscale = .8, yscale=.4] {/}; } }, postaction = {decorate} },
open/.style = {decoration = {markings, mark = at position .5 with { \node[transform shape, scale =1.2] {$\circ$}; } }, postaction = {decorate} }
}
 \theoremstyle{plain}
\newtheorem{thm}{Theorem}[subsection]
\theoremstyle{plain}
  \newtheorem{prop}[thm]{Proposition}
\theoremstyle{plain}
 \newtheorem{lemma}[thm]{Lemma}
\theoremstyle{plain}
\theoremstyle{plain}
\newtheorem{cor}[thm]{Corollary}
\newtheorem{conj}[thm]{Conjecture}
\theoremstyle{definition}
  \newtheorem{defn}[thm]{Definition}
  \theoremstyle{definition}
  \newtheorem{setup}[thm]{Setup}
\theoremstyle{definition}
 \theoremstyle{definition}
  \newtheorem{exam}[thm]{Example}
\theoremstyle{remark}
\newtheorem{rmk}[thm]{Remark}
\theoremstyle{definition}
\newtheorem{warning}[thm]{Warning}
\numberwithin{equation}{section}
\newcommand{\Z}{\mathbb{Z}}
\newcommand{\Q}{\mathbb{Q}}
\newcommand{\Qp}{\mathbb{Q}_p}
\newcommand{\R}{\mathbb{R}}
\newcommand{\bG}{\mathbb{G}}
\newcommand{\F}{\mathbb{F}}
\newcommand{\N}{\mathbb{N}}
\newcommand{\A}{\mathbb{A}}
\newcommand{\fA}{\mathfrak{a}}
\newcommand{\fF}{\mathfrak{f}}
\newcommand{\fM}{\mathfrak{M}}
\newcommand{\fp}{\mathfrak{p}}
\newcommand{\fS}{\mathfrak{S}}
\newcommand{\fm}{\mathfrak{m}}
\newcommand{\bA}{\mathbb{A}}
\newcommand{\bD}{\mathbb{D}}
\newcommand{\bF}{\mathbb{F}}
\newcommand{\bQ}{\mathbb{Q}}
\newcommand{\bT}{\mathbb{T}}
\newcommand{\bV}{\mathbb{V}}
\newcommand{\bZ}{\mathbb{Z}}
\newcommand{\cA}{\mathcal{A}}
\newcommand{\cC}{\mathcal{C}}
\newcommand{\cE}{\mathcal{E}}
\newcommand{\cF}{\mathcal{F}}
\newcommand{\cG}{\mathcal{G}}
\newcommand{\cI}{\mathcal{I}}
\newcommand{\cJ}{{\mathcal{J}}}
\newcommand{\cK}{\mathcal{K}}
\newcommand{\cM}{\mathcal{M}}
\newcommand{\cO}{\mathcal{O}}
\newcommand{\cP}{\mathcal{P}}
\newcommand{\cR}{\mathcal{R}}
\newcommand{\cS}{{\mathcal{S}}}
\newcommand{\cU}{\mathcal{U}}
\newcommand{\cX}{\mathcal{X}}
\newcommand{\cY}{\mathcal{Y}}
\newcommand{\cZ}{\mathcal{Z}}
\newcommand{\eps}{\varepsilon}
\newcommand{\phz}{\varphi}
\newcommand{\La}{\Lambda}
\newcommand{\Zp}{\mathbb{Z}_p}
\newcommand{\Id}{\mathrm{id}}
\newcommand{\Gal}{\mathrm{Gal}}
\newcommand{\Hom}{\mathrm{Hom}}
\newcommand{\Res}{\mathrm{Res}}
\newcommand{\Ind}{\mathrm{Ind}}
\newcommand{\End}{\mathrm{End}}
\newcommand{\Aut}{\mathrm{Aut}}
\newcommand{\GL}{\mathrm{GL}}
\newcommand{\ad}{\mathrm{ad}}
\newcommand{\red}{\mathrm{red}}
\newcommand{\Spec}{\mathrm{Spec}\ }
\newcommand{\Frob}{\mathrm{Frob}}
\newcommand{\id}{\mathrm{id}}
\newcommand{\Adm}{\mathrm{Adm}}
\newcommand{\Irr}{\mathrm{Irr}}
\newcommand{\gen}{\mathrm{gen}}
\newcommand{\obv}{{\mathrm{obv}}}
\newcommand{\Conv}{\mathrm{Conv}}
\newcommand{\Fp}{\F_p}
\newcommand{\un}[1]{\underline{#1}}
\renewcommand{\bf}[1]{\mathbf{#1}}
\newcommand{\Rep}{\mathrm{Rep}}
\newcommand{\Diag}{\mathrm{Diag}}
\newcommand{\tld}[1]{\widetilde{#1}}
\newcommand{\JH}{\mathrm{JH}}
\newcommand{\rig}{\mathrm{rig}}
\newcommand{\supp}{\mathrm{Supp}}
\newcommand{\out}{{\mathrm{out}}}
\newcommand{\orient}{\mathrm{or}}
\newcommand{\reg}{\mathrm{reg}}
\newcommand{\Dpst}{\mathrm{D}_{\mathrm{pst}}}
\newcommand{\Dpcris}{\mathrm{D}_{\mathrm{pcris}}}
\newcommand{\DdR}{\mathrm{D}_{\mathrm{dR}}}
\newcommand{\BdR}{\mathrm{B}_{\mathrm{dR}}}
\newcommand{\rbar}{\overline{r}}
\newcommand{\rhobar}{{\overline{\rho}}}
\newcommand{\taubar}{\overline{\tau}}
\newcommand{\Spf}{\mathrm{Spf}\,}
\newcommand{\sing}{\mathrm{sing}}
\newcommand{\rG}{\mathrm{G}}
\newcommand{\defeq}{\stackrel{\textrm{\tiny{def}}}{=}}
\newcommand{\ovl}[1]{\overline{#1}}
\newif\iffinalrun
  \newcommand{\mar}[1]{}
  \newcommand{\mar}[1]{\marginpar{\raggedright\tiny #1}}
\DeclareMathOperator{\Mod}{Mod}
\DeclareMathOperator{\Coh}{Mod}
\DeclareMathOperator{\Lie}{Lie}
\DeclareMathOperator{\Ad}{Ad}
\DeclareMathOperator{\Mat}{Mat}
\DeclareMathOperator{\nv}{nv}
\DeclareMathOperator{\nm}{nm}
\DeclareMathOperator{\leql}{\text{$\setlength{\thickmuskip}{0mu}\leq\lambda$}}
\DeclareMathOperator{\ext}{ext}
\DeclareMathOperator{\pr}{pr}
\DeclareMathOperator{\gr}{gr}
\DeclareMathOperator{\Gr}{Gr}
\DeclareMathOperator{\Fl}{Fl}
\DeclareMathOperator{\Iw}{\cI}
\DeclareMathOperator{\tr}{tr}
\newcommand{\ra}{\rightarrow}
\newcommand{\iarrow}{\hookrightarrow}
\newcommand{\into}{\hookrightarrow}
\newcommand{\onto}{\twoheadrightarrow}
\newcommand{\risom}{\buildrel\sim\over\rightarrow} 
\title{Local models for Galois deformation rings and Applications}
\author{Daniel Le}
\address{Department of Mathematics,
University of Toronto,
40 St. George Street,
Toronto, ON M5S 2E4, Canada}
\email{le@math.toronto.edu}
\author{Bao V.~Le Hung}
\address{Department of Mathematics,
Northwestern University, 
2033 Sheridan Road, 
Evanston, Illinois 60208, USA}
\email{lhvietbao@googlemail.com}
\author{Brandon Levin}
\address{Department of Mathematics,
University of Arizona, 
617 N Santa Rita Avenue, 
Tucson, Arizona 85721, USA}
\email{bwlevin@math.arizona.edu}
\author{Stefano Morra}
\address{Universit\'e Paris 8, Laboratoire d'Analyse, G\'eom\'etrie et Applications,  LAGA, Universit\'e Sorbonne Paris Nord, CNRS, UMR 7539,  F-93430, Villetaneuse, France}
\email{morra@math.univ-paris13.fr}
\begin{document}

\begin{abstract} We construct projective varieties in mixed characteristic whose singularities model, in generic cases, those of tamely potentially crystalline Galois deformation rings for unramified extensions of $\Qp$ with small regular Hodge--Tate weights.
We establish several significant facts about their geometry including a unibranch property at special points and a representation theoretic description of the irreducible components of their special fibers.     We derive from these geometric results a number of local and global consequences: the Breuil--M\'ezard conjecture in arbitrary dimension for tamely potentially crystalline deformation rings with small Hodge--Tate weights (with appropriate genericity conditions), the weight part of Serre's conjecture for $U(n)$ as formulated by Herzig (for global Galois representations which satisfy the Taylor--Wiles hypotheses and are sufficiently generic at $p$), and an unconditional formulation of the weight part of Serre's conjecture for wildly ramified representations.
\end{abstract}

\maketitle

\tableofcontents

\clearpage{}%
\section{Introduction}

In this paper, we construct and study local models for stacks of \'etale $(\varphi,\Gamma)$-modules which correspond to tamely potentially crystalline Galois representations (of the absolute Galois group of an unramified extension of $\Q_p$) with small regular Hodge--Tate weights under suitable genericity conditions (see \S \ref{sec:suffgen}).
As a consequence, we deduce a refinement of a conjecture of Breuil--M\'ezard due to Emerton--Gee in this context and a conjecture of Herzig about the weight part of Serre's conjecture for definite unitary groups under genericity hypotheses.

\subsection{Motivation}

Over the last few decades, starting with the work of Wiles and Taylor--Wiles \cite{wiles-FLT,TW}, %
there has been tremendous progress on the modularity of global Galois representations, leading to spectacular consequences such as Fermat's Last Theorem and the Sato--Tate conjecture. %
Early modularity results such as those in \cite{TW} %
require stringent $p$-adic Hodge theoretic hypotheses to guarantee formal smoothness of patched global deformation rings. 
In the early 2000s, Kisin made the crucial observation that all the singularities of the patched deformation ring come from bad places, shifting the focus to local deformation rings, especially those at places dividing the residue characteristic of the coefficient field.  %
He then analyzed the singularities of (two-dimensional) potentially Barsotti--Tate local deformation rings through comparison to local models appearing in the theory of integral models of Shimura varieties, leading to very strong modularity lifting theorems in this setting, cf.~\cite{MFFGS}. Furthermore, Kisin constructed potentially semistable deformation rings in great generality and established their basic properties. However, the finer structure of these rings remain mysterious, and they appear to be intrinsically difficult objects in general. Indeed, the Breuil--M\'ezard conjecture predicts a lower bound for the complexity of the singularities in terms of modular representation theory of finite groups of Lie type.

In a recent advance, Emerton and Gee \cite{EGstack} have constructed $p$-adic formal stacks which interpolate these semistable deformation rings (these deformation rings are versal rings for the stacks), thereby ``globalizing" the above deformation theory and opening up more geometric ways to study it.
In this paper, we construct and analyze local models for a subset of these stacks---those parametrizing generic tamely potentially crystalline representations with small Hodge--Tate weights. A common feature of our work and Kisin's work is that these local models are closed subvarieties of certain Pappas--Zhu local models. 
However, unlike Kisin's situation, this inclusion is proper when the Hodge--Tate cocharacter is non-minuscule.

\subsection{Main results}

All our main results hold under suitable \emph{genericity} hypotheses, whose discussion we postpone to \S \ref{sec:suffgen} to avoid unnecessary distractions.
Fix a positive integer $n$, a rational prime $p$, a finite unramified extension $K/\Q_p$ with residue field $k$, and a (sufficiently large) finite extension $\F$ of $\F_p$. For a Hodge--Tate cocharacter $\lambda$ and an inertial type $\tau$, let $\cX^{\lambda,\tau}$ denote the $p$-adic formal stack over $W(\F)$ corresponding to $n$-dimensional potentially crystalline representations of the absolute Galois group $G_K$ of $K$ with Hodge--Tate weights $\lambda$ and Galois type $\tau$.
We let $M(\lambda)_{/W(\F)}$ be the Pappas--Zhu local model corresponding to the cocharacter $\lambda$ and the standard Iwahori subgroup (see \S \ref{sec:intro:MLM} below for the definition and further details).
Our first main theorem establishes a connection between $\cX^{\lambda,\tau}$ and $M(\lambda)$:
\begin{thm}[Theorem \ref{thm:stack_local_model}]
\label{thm:2intro:LMD} 
Let $\lambda$ be a regular Hodge--Tate cocharacter, and let $\tau$ be a sufficiently generic \emph{(}depending on $\lambda$\emph{)} tame inertial type. 
The $p$-adic completion of an explicit irreducible subvariety of $M(\lambda)$ \emph{(}depending on $\tau$\emph{)} is a smooth modification of $\cX^{\lambda,\tau}$. 
\end{thm}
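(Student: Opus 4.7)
The plan is to factor the local model map through an auxiliary moduli stack of \emph{framed} Breuil--Kisin modules and to identify its image combinatorially inside $M(\lambda)$. By Kisin's classification of tamely potentially crystalline representations, the stack $\cX^{\lambda,\tau}$ is identified with the moduli stack $\cY^{\lambda,\tau}$ of Breuil--Kisin modules over $(W(k)\otimes_{\Zp}\cO_E)[\![u]\!]$ of height bounded by $\lambda$ equipped with descent data of type $\tau$. Under the genericity hypothesis on $\tau$, the tame inertia action admits a clean eigenspace decomposition (via Clifford theory), which allows one to add a compatible trivialization on each eigenspace and define a smooth cover $\cY^{\lambda,\tau,\square}\to \cY^{\lambda,\tau}$ that is a torsor under an explicit smooth group determined by the type.

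Next, I would define a morphism $\cY^{\lambda,\tau,\square}\to M(\lambda)$ by taking the matrix of Frobenius in the chosen basis, viewed as a point in the affine flag variety for $\mathrm{Res}_{K/\Qp}\GL_n$. The height bound places this matrix in $M(\lambda)$, while the descent data constrains it to lie in a union of Iwahori double cosets determined by the combinatorial shape of $\tau$, seen as an element of the extended affine Weyl group. I would then identify the scheme-theoretic image with an explicit closed irreducible subvariety $M(\lambda,\tau)\subseteq M(\lambda)$: irreducibility is inherited from the irreducibility of the relevant Schubert cell (using genericity to rule out spurious components), and the precise description of $M(\lambda,\tau)$ emerges from unraveling the equations imposed by the descent action on a general matrix in $M(\lambda)$.

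Finally, I would verify that the induced map from $\cY^{\lambda,\tau,\square}$ to the $p$-adic completion of $M(\lambda,\tau)$ is smooth; descending along the framing torsor then yields the claimed smooth modification of $\cX^{\lambda,\tau}$. This reduces to a tangent space comparison at each $\F$-point, matching first-order deformations of a framed Breuil--Kisin module with descent data to tangent vectors on $M(\lambda,\tau)$. The main obstacle is the \emph{precise} calibration of this comparison: unlike the minuscule Barsotti--Tate case treated by Kisin, the image is strictly smaller than $M(\lambda)$ when $\lambda$ is not minuscule, so genericity of $\tau$ must be used to isolate the right irreducible subvariety—neither too large nor too small—and to ensure that the deformation-theoretic matching is an isomorphism on both sides. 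I expect this to reduce to a matrix-level calculation involving the $u$-adic expansions of the entries of Frobenius, where the genericity bound on $\tau$ relative to $\lambda$ guarantees that the relevant denominators obstructing the Gauss-elimination-type normalization are invertible.
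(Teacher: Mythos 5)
Your outline correctly identifies Breuil--Kisin modules with tame descent data and the framing/eigenbasis trick as the bridge to the Pappas--Zhu model $M(\lambda)$, but it misses the central new ingredient and consequently mislocates the source of the constraint. Taking matrices of partial Frobenius with respect to a (gauge) basis gives a local model diagram between the full stack $Y^{\leq\lambda,\tau}$ of Breuil--Kisin modules with descent of type $\tau$ and the $p$-adic completion of $M(\leq\lambda)$ itself (Theorem \ref{thm:Breuil-Kisin_local_model}); the descent data and height bound, contrary to what you suggest, do not by themselves cut out a proper irreducible subvariety. The potentially crystalline stack $\cX^{\lambda,\tau}$ is a \emph{strictly smaller} closed substack of $Y^{\leq\lambda,\tau}$ when $\lambda$ is non-minuscule, and the difference is precisely the \emph{monodromy condition} $N_{\fM^{\rig}}(\fM^{\rig})\subset\fM^{\rig}$, the $p$-adic analogue of Griffiths transversality (Theorem \ref{thmKisin}, Proposition \ref{prop:stack_diagram}). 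This condition does not appear anywhere in your proposal, and without it you would at best recover Kisin's description of the full Breuil--Kisin stack, not of $\cX^{\lambda,\tau}$.

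There is a second, downstream gap: the monodromy condition on matrices of partial Frobenius is transcendental (an infinite series in $v$ with unbounded denominators in $p$, cf.~equation \eqref{eq:Ninf}), whereas the subvariety $M(\lambda,\nabla_{\bf{a}_\tau})\subset M(\lambda)$ in the theorem is cut out by an \emph{algebraic} approximation of it, condition \eqref{eq:monodromy}. Matching the two requires Proposition \ref{prop:monodromy_approximation} to control the $p$-adic error term and then Elkik's approximation theorem (Proposition \ref{prop:Elkik}, invoked in Theorem \ref{thm:stack_local_model}) to produce a non-canonical closed immersion of $\cX^{\lambda,\tau}$ into the completion of $M(\lambda,\nabla_{\bf{a}_\tau})$ — this is where the genericity bound relative to $\lambda$ actually enters, rather than in isolating Iwahori double cosets as you suggest. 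Finally, proving that this immersion is an isomorphism is not a tangent-space calculation: one compares the number of formal branches at $T^\vee$-fixed points, getting an upper bound from the unibranch property of $M(\lambda,\nabla_{\bf{a}_\tau})$ (Theorem \ref{thm:model_unibranch}, itself a delicate spreading-out argument from the equal characteristic case) and a matching lower bound from global inputs via Taylor--Wiles patching (Lemma \ref{lem:local_model_lower_bound}). Your deformation-theoretic comparison would establish smoothness of the local model diagram for $Y^{\leq\lambda,\tau}$, but cannot on its own force the approximated equality to be exact.
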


Theorem \ref{thm:2intro:LMD} gives explicit presentations of potentially crystalline deformation rings, which we expect to have applications to local-global compatibility in the mod $p$ and $p$-adic Langlands programs. 
See \cite{EGS,LLLM2,Dan,DoLe,BHHMS} for applications when $n=2$ and $3$. 

\begin{rmk}\label{rmk:psmall}
The genericity condition implies $n\langle \lambda,\alpha\rangle < p$ for any root $\alpha$ so that $\lambda$ is necessarily ``small" with respect to $p$ and in particular is well within the \emph{Fontaine--Laffaille} range. Thus, for any generic representation to exist, we will need $p$ to be at least $O(n^2)$. 
See \S \ref{sec:suffgen} for more details.
\end{rmk}

One can think of this result as the modular/affine analogue of the work of Breuil--Hellmann--Schraen \cite{BHS}: whereas \cite{BHS} finds local models for moduli of trianguline representations in terms of Steinberg varieties (and thus related to the geometry of flag varieties), our models are found inside the (mixed characteristic) affine flag variety.

\vspace{2mm}
  
With $\lambda$ and $\tau$ as above, our methods also determine the irreducible components of the underlying reduced stack $\cX^{\lambda,\tau}_{\red}$ and construct local models for them. 
Now $\cX^{\lambda,\tau}_{\red}$ is a maximal dimensional substack of the underlying reduced stack of the stack $\cX_n$ of $(\varphi,\Gamma)$-modules of rank $n$, whose irreducible components $\cC_\sigma$ are parametrized by \emph{Serre weights} $\sigma$ (i.e. irreducible $\GL_n(k)$-representations over $\F$). If the highest weight of $\sigma$ is sufficiently deep in its $p$-alcove, we thus obtain a description of $\cC_\sigma$ in terms of certain deformed affine Springer fibers. %

The list of irreducible components of $\cX^{\lambda,\tau}_{\red}$ has a representation theoretic interpretation which is a weak (topological) version of the Breuil--M\'ezard conjecture. 
The usual Breuil--M\'{e}zard conjecture predicts that the special fiber $\cX^{\lambda,\tau}_{\F}$ has a complicated non-reduced structure, which we analyze by combining Theorem \ref{thm:2intro:LMD} with global methods. By taking versal rings, we deduce the following theorem (see Theorem \ref{thm:introBMrhobar} below):

\begin{thm}[Corollary \ref{cor:BM}]
Fix a set $\Lambda$ of regular Hodge--Tate cocharacters. 
The Breuil--M\'ezard conjecture holds for tamely potentially semistable deformation rings of Hodge--Tate weights $\lambda\in \Lambda$ of sufficiently generic \emph{(}depending on $\Lambda$\emph{)} representations $\rhobar: G_K \ra \GL_n(\F)$.
\end{thm}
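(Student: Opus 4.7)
The plan is to deduce Corollary \ref{cor:BM} from a stack-theoretic version of the Breuil--M\'ezard equality for the Emerton--Gee stack $\cX_n$ (the statement referred to as Theorem \ref{thm:introBMrhobar}), and then specialize by passing to versal rings at a generic $\rhobar\colon G_K\to \GL_n(\F)$. Concretely, I would formulate the target as an identity of cycles
\[
Z(\cX^{\lambda,\tau}_{\F}) \;=\; \sum_\sigma m_\sigma(\lambda,\tau)\,[\cC_\sigma]
\]
in the top-dimensional cycle group of $(\cX_n)_{\F,\red}$, where the $\cC_\sigma$ are the Herzig/Caraiani--Emerton--Gee--Geraghty--Shotton Serre-weight components and $m_\sigma(\lambda,\tau)$ is the multiplicity of $\sigma$ in the mod~$p$ reduction of the inertial local Langlands representation $\sigma(\tau)\otimes \sigma(\lambda)$ associated to $(\lambda,\tau)$. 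Taking the completed local ring at a point $\rhobar$ of $\cX_n$ then translates this into the classical Breuil--M\'ezard identity for the versal deformation ring $R^{\lambda,\tau}(\rhobar)$, and the assumption $\lambda\in\Lambda$ lets one choose the genericity depending only on $\Lambda$.

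The first step is the \emph{support} (topological) direction: identify the set of $\sigma$ for which $\cC_\sigma$ appears in $\cX^{\lambda,\tau}_\F$. This is exactly what the local-model analysis yields, since Theorem~\ref{thm:2intro:LMD} realizes $\cX^{\lambda,\tau}$ as a smooth modification of a component of $M(\lambda)$ cut out by the type $\tau$, and the analysis of irreducible components of $\cX^{\lambda,\tau}_\red$ in the paper matches these with precisely the Serre weights $\sigma$ for which $m_\sigma(\lambda,\tau)>0$. This already gives the weak Breuil--M\'ezard equality up to positive coefficients.

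To upgrade to an equality of cycles one uses the standard patching/automorphy machinery. Choose a globalization of (a twist of) $\rhobar$ to an automorphic $\bar r\colon G_F\to \GL_n(\overline{\F}_p)$ satisfying the Taylor--Wiles hypotheses and with the right local behavior away from $p$; this is possible under our genericity assumptions. Patch completed cohomology (or spaces of algebraic automorphic forms on a definite unitary group) to obtain a patching functor $M_\infty$ from smooth $\GL_n(\Oh_K)$-representations to coherent sheaves on the framed deformation ring $R_\infty$, such that $M_\infty(\sigma(\tau)\otimes\sigma(\lambda))$ is supported on the potentially crystalline deformation ring of type $(\lambda,\tau)$. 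The usual argument (as in Kisin and Emerton--Gee) then gives an upper bound
\[
Z(R^{\lambda,\tau}(\rhobar)/p) \;\le\; \sum_\sigma m_\sigma(\lambda,\tau)\,Z(\cC_\sigma(\rhobar))
\]
by d\'evissage on $\sigma(\tau)\otimes\sigma(\lambda)$ modulo $p$, using that $M_\infty(\sigma)$ is supported on $\cC_\sigma(\rhobar)$. The matching lower bound comes from the fact that $M_\infty$ is a faithful Cohen--Macaulay module of the expected depth (a consequence of Taylor--Wiles patching plus the Breuil--M\'ezard genericity hypotheses), so the two cycles must in fact agree for every $\lambda\in\Lambda$ simultaneously; here one uses the explicit geometry of the local models to verify the cyclicity/multiplicity-one input at the generic points.

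The main obstacle is the lower bound: one must show that every predicted component $\cC_\sigma$ actually appears in $\cX^{\lambda,\tau}_\F$ with the prescribed multiplicity. This requires (a) producing enough automorphic lifts, which forces the genericity hypothesis and the Taylor--Wiles setup, and (b) using the fine geometry provided by Theorem~\ref{thm:2intro:LMD}---in particular the unibranch property at special points advertised in the abstract---to rule out ``extra" multiplicities and to ensure that specialization of cycles from the stack to $R^{\lambda,\tau}(\rhobar)$ is well-behaved. Once this is in place, the passage from stacks to deformation rings by completing at $\rhobar$ yields Corollary~\ref{cor:BM} uniformly for $\lambda\in\Lambda$.
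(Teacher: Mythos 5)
Your outline has the right skeleton — prove a cycle identity on the Emerton--Gee stack, then specialize to versal rings at $\rhobar$ — and you correctly identify the two main ingredients (local-model geometry for the components, Taylor--Wiles patching for the multiplicities). However, there are genuine gaps in how you propose to close the argument.

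The most serious issue is the ``matching lower bound.'' You assert that $M_\infty$ is a \emph{faithful} Cohen--Macaulay module over the relevant deformation ring and treat this as a consequence of patching plus genericity. This is circular: faithfulness of $M_\infty(\sigma^\circ(\lambda,\tau))$ over $R_\rhobar^{\lambda+\eta,\tau}$ (i.e.\ full support) is exactly what one must establish, and patching alone gives only that the support is a nonempty union of irreducible components. The paper's key mechanism is precisely Theorem~\ref{thm:intro:unibranch}: for \emph{tame} $\rhobar$ and generic $\tau$, $R_\rhobar^{\lambda+\eta,\tau}$ is a \emph{domain} (this is the payoff of the unibranch result on the local model). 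Only because the ring has a single minimal prime does ``nonempty union of components'' become ``everything.'' You gesture at the unibranch property as ``ruling out extra multiplicities,'' but this understates its role — it is the linchpin of the whole argument, not a cleanup step.

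Relatedly, you do not distinguish tame from wild $\rhobar$. Your direct patching argument (``choose a globalization of $\rhobar$, patch, get upper and lower bounds'') cannot work as stated for wild $\rhobar$: the paper explicitly observes that $R_\rhobar^{\lambda+\eta,\tau}$ fails to be a domain for some wild $\rhobar$, so the full-support step breaks. The correct logical order is: (i) prove the versal Breuil--M\'ezard identity for tame semisimple $\rhobar$ using patching plus the domain property; (ii) use the fact that tame $\rhobar$ are dense enough on the relevant components $\cC_\sigma$ of $\cX_{n,\red}$ (Lemma~\ref{lemma:localizeinj}, Proposition~\ref{prop:BMglobalize}) to spread the versal identities into a cycle identity on the stack (Theorem~\ref{thm:genBM}, Corollary~\ref{cor:PgenBM}); (iii) \emph{then} pull back along $i_\rhobar^*$ to deduce the versal identity for arbitrary, possibly wild, $\rhobar$ (Proposition~\ref{prop:BMlocalize}). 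Your proposal inverts this: you start from the stack identity, but then sketch proving it by directly patching at an unrestricted $\rhobar$, which doesn't go through.

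Finally, a smaller but real omission: the stack-level identity only controls $\cZ_{\lambda,\tau}$ for sufficiently generic tame $\tau$, while Corollary~\ref{cor:BM} is stated for \emph{all} tame inertial Weil--Deligne types $\tau$. The paper closes this gap by invoking weight elimination results from \cite{LLL} to show that, under the genericity hypothesis on $\rhobar$, any $\tau$ with $R_\rhobar^{\lambda+\eta,\preceq\tau}\neq 0$ is forced to be sufficiently generic, so the remaining Breuil--M\'ezard equations reduce to $0=0$. Your proposal does not address this.
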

\begin{rmk} Here and in the rest of the paper, we included all semistable deformation rings to get an overdetermined system of Breuil--M\'ezard equations. However, our genericity hypotheses automatically imply that any nonzero potentially semistable deformation ring that occurs is actually a potentially crystalline deformation ring. In particular, we do not prove any results about genuinely potentially semistable deformation rings.
\end{rmk}

Just as the trianguline local models \cite{BHS} shed light on the constituents of the locally analytic socle of completed cohomology of (unitary type) locally symmetric spaces, the models in Theorem \ref{thm:2intro:LMD} shed light on the constituents of the socle of mod $p$ completed cohomology (the \emph{modular} Serre weights). In more traditional language, this is known as the weight part of Serre's conjecture, which seeks to classify congruences between mod $p$ automorphic forms. Our main result in this direction is the following theorem, which confirms the unitary version of a conjecture of Herzig (\cite[Conjecture 6.9]{herzig-duke}, see also \cite[Conjecture 7.2.7, Theorem 10.2.11]{GHS}).
We refer the reader to \S \ref{sec:intro:WPSC} for undefined notation.

\begin{thm}[Theorem \ref{thm:SWC}]\label{thm:i2i:SWC} Let $F/F^+$ be a CM extension which is split at all places above $p$ and such that $F^+$ is unramified at $p$. 
Assume that $F^+ \neq \Q$.
Let $G_{/F^+}$ be a definite unitary group which splits over $F$. 
For each place $v\mid p$ in $F^+$, fix a place $\tld{v}$ of $F$ lying above $v$.
Let $\rbar: G_F \ra \GL_n(\F)$ be a \emph{(}$G$-\emph{)}modular Galois representation such that $\rbar(G_{F(\zeta_p)})$ is adequate and the local components $\rbar_v \defeq \rbar|_{G_{F_{\tld{v}}}}$ are tame and sufficiently generic for all $v\mid p$. 
Then the set of modular Serre weights $W(\rbar)$ is
\[\Big\{\bigotimes_{v| p}  \sigma_v \mid  \sigma_v \in W^?(\rbar_v)\Big\},\]
where $W^?(\rbar_v)$ is the explicit set defined by \cite{herzig-duke}.
\end{thm}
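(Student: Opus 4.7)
The plan is to prove the two inclusions $W(\rbar)\subseteq \bigotimes_{v|p} W^?(\rbar_v)$ and $\bigotimes_{v|p} W^?(\rbar_v) \subseteq W(\rbar)$ separately, combining the local geometric input from Theorem \ref{thm:2intro:LMD} and the Breuil--M\'ezard formula of Corollary \ref{cor:BM} with a Taylor--Wiles--Kisin patching setup in the definite unitary situation.

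For \emph{weight elimination}, suppose $\sigma=\bigotimes_v \sigma_v$ is modular for $\rbar$. Classical local-global compatibility produces, for every tame inertial type $\tau_v$ with $\sigma_v \in \JH(\ovl{\sigma(\tau_v)})$, a potentially crystalline lift of $\rbar_v$ of type $\tau_v$ with small regular Hodge--Tate weights, forcing the corresponding local deformation ring $R_{\rbar_v}^{\lambda,\tau_v}$ to be nonzero. By Theorem \ref{thm:2intro:LMD}, together with its identification of the irreducible components of $\cX^{\lambda,\tau}_{\red}$, this geometrically constrains $\rbar_v$ to lie on an Emerton--Gee component $\cC_{\sigma'}$ for some Serre weight $\sigma'$ compatible with $(\lambda,\tau_v)$. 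A combinatorial analysis of the component-to-weight dictionary (already carried out for instance in \cite{herzig-duke, GHS}) then translates this constraint into the inclusion $\sigma_v \in W^?(\rbar_v)$.

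\emph{Weight existence} is the hard direction. One applies Taylor--Wiles--Kisin patching to build a finite projective $R_\infty$-module $M_\infty$, with $R_\infty = R_\infty^{\mathrm{loc}}[\![x_1,\dots,x_h]\!]$ and $R_\infty^{\mathrm{loc}} = \widehat{\bigotimes}_{v|p} R_{\rbar_v}^{\square}$, such that a Serre weight $\sigma$ is modular for $\rbar$ if and only if $M_\infty(\sigma)\neq 0$. The patched specializations $M_\infty(\sigma^\circ(\tau))$ are maximal Cohen--Macaulay over $R_\infty^{\lambda,\tau}$. The Breuil--M\'ezard formula of Corollary \ref{cor:BM} decomposes the Hilbert--Samuel cycle of $R_\infty^{\lambda,\tau}/\fm$ as a sum indexed by Serre weights $\sigma'\in\JH(\ovl{\sigma(\tau)})\cap W^?(\rbar)$; the parallel equation for $M_\infty$ expresses its multiplicity as a weighted sum of the modular multiplicities $m(\sigma')$.

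The main obstacle is upgrading these sum identities into componentwise statements pinpointing which $m(\sigma')$ are nonzero. This is precisely where the unibranch property and the explicit component description of Theorem \ref{thm:2intro:LMD} are decisive: they imply that $\mathrm{Spec}\, R_\infty^{\lambda,\tau}$ has a well-controlled irreducible component structure indexed by exactly the weights in $\JH(\ovl{\sigma(\tau)})\cap W^?(\rbar)$, and that each support $\mathrm{Supp}\, M_\infty(\sigma^\circ(\tau))$ is a union of such components. Starting from a known modular weight, produced for instance by a Fontaine--Laffaille crystalline lift of $\rbar$ whose existence follows from the Taylor--Wiles hypothesis and the smallness of the Hodge--Tate weights, one propagates modularity by varying the tame type $\tau$ and exploiting that different types share common components via the local model. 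A combinatorial induction on the parametrization of $W^?(\rbar_v)$ by $p$-restricted weights, carried out under the genericity hypotheses to rule out degenerate intersections of components, then forces $m(\sigma)>0$ for every $\sigma=\bigotimes_v\sigma_v$ with each $\sigma_v \in W^?(\rbar_v)$, completing the equality.
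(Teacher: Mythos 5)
Your proposal captures the general architecture (Taylor--Wiles patching, the Breuil--M\'ezard formula, the local-model description of components), but it glosses over the step that the paper itself flags as the genuine difficulty, and the fix you sketch does not close it.

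The issue is in your weight-existence argument. You assert that Theorem \ref{thm:2intro:LMD} implies $\Spec R_{\rbar_v}^{\lambda,\tau}$ has irreducible components "indexed by exactly the weights in $\JH(\ovl{\sigma(\tau)})\cap W^?(\rbar)$." What the local model actually gives (via Theorem \ref{thm:EGmodp}) is that the components of the special fiber of $\cX^{\lambda,\tau}$ are the $\cC_\sigma$ with $\sigma\in\JH(\ovl{\sigma(\tau)}\otimes W(\lambda-\eta))$; the components of $\Spec R_{\rbar_v}^{\lambda,\tau}/\varpi$ are then the $\cC_\sigma$ among these that \emph{contain} $\rbar_v$. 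This is $W^{\mathrm{g}}(\rbar_v)\cap\JH(\cdots)$, not $W^?(\rbar_v)\cap\JH(\cdots)$. One has a priori only $W^{\mathrm{g}}(\rbar_v)\subset W^?(\rbar_v)$, from weight elimination. The equality $W^{\mathrm{g}}(\rbar_v)=W^?(\rbar_v)$ is precisely the hard geometric input, and no amount of patching or "combinatorial induction on $p$-restricted weights" recovers it: the Breuil--M\'ezard cycles $\cZ_\sigma$ need not equal $\cC_\sigma$ (they are generally supported on several components, cf.~Remark \ref{rmk:intro:cyclemult}), so the identity $W^{\mathrm{BM}}=W^?$ that your induction implicitly needs is, as the paper says, out of reach from Breuil--M\'ezard alone. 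The paper instead proves $W^{\mathrm{g}}(\rbar_v)=W^?(\rbar_v)$ directly: by Proposition \ref{prop:rhobaroncomponents} and Theorem \ref{thm:Tfixedpts}, membership $\rbar_v\in\cC_\sigma$ for a tame $\rbar_v$ translates into whether a specific $T^\vee$-fixed point lies on an irreducible component $C_\sigma$ of a deformed affine Springer fiber, and the nontrivial lower bound on those fixed points comes from Boixeda Alvarez \cite{Pablo} together with a spreading-out argument (Proposition \ref{prop:Tfixedpts}). Your proposal contains no substitute for this step. Once $W^{\mathrm{g}}=W^?$ is in hand, the patching/defect-lowering argument of Lemma \ref{lemma:SWexist} does finish the job much as you describe; but without that identity the induction does not get off the ground, because you cannot certify that the components one needs to propagate modularity along are actually present in the deformation ring.
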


\subsection{The genericity condition}\label{sec:suffgen}

We expand on the terminology \emph{sufficiently generic}, which we only use in the introduction. 
Let $K/\Q_p$ be a finite extension and write $I_K$ for the inertia subgroup of $G_K$.
Suppose that $\rhobar: G_K \ra \GL_n(\F)$ is tame.
Then its restriction $\rhobar|_{I_K}$ to inertia is classified by the combinatorial data of a pair $(s,\mu) \in S_n^{\Hom_{\Qp}(K, \overline{\Q}_p)} \times (\Z^n)^{\Hom_{\Qp}(K, \overline{\Q}_p)}$ up to an equivalence relation (see Example \ref{ex:data:type} for details).
Indeed, $\rhobar|_{I_K}$ is a sum of characters which are necessarily powers of Serre's fundamental characters.
Then, informally speaking, $s$ determines the niveau of these characters and $\mu$ determines the powers.
For example, if $\rhobar$ is completely reducible, then we can take $s$ to be trivial and $\mu$ defined by
\[
\rhobar|_{I_K} = \bigoplus_{i=1}^n \prod_{j\in \Hom_{\Qp}(K, \overline{\Q}_p)} j\circ \ovl{\omega}_1^{\mu_{j,i}},
\] 
where $\ovl{\omega}_1: I_K \ra k^\times$ is the reduction of Serre's fundamental character of niveau $1$.
We say that $\rhobar$ is \emph{sufficiently generic} if, for an implicit nonzero polynomial $P \in \Z[X_1,\ldots,X_n]$ \emph{independent of} $p$, $P(\mu_j) \neq 0 \pmod p$ for each $j \in \Hom_{\Q_p}(K,\ovl{\Q}_p)$ (for some choice of $(s,\mu)$). 
If $\rhobar$ is not tame, then we say that $\rhobar$ is \emph{sufficiently generic} if its semi-simplification $\rhobar^{\mathrm{ss}}$ (which is tame) is.
The independence from $p$ guarantees that many sufficiently generic $\rhobar$ exist for large enough primes $p$, and in fact the proportion of tame $\rhobar|_{I_K}$ which are sufficiently generic tends to $1$ as $p$ tends to $\infty$. 
For other objects that have similar combinatorial descriptions like tame inertial types $\tau$ (cf.~\S \ref{sec:InertialTypes}) or Serre weights $\sigma$, one has an analogous notion of sufficiently generic, which we will freely use for the remainder of the introduction. 

{There are two sources of genericity in our methods.}
\begin{enumerate}
\item
\label{source:gen:1}
A \emph{combinatorial} genericity which requires that $\mu_j$ is \emph{sufficiently deep} in the base alcove of the standard apartment of $\GL_n$. The role of this condition is to guarantee
\begin{itemize}
\item that various representation theoretic objects (e.g.~decompositions of mod $p$ reductions of Deligne--Lusztig representations) behave according to a ``generic'' pattern; and
\item that the relevant Kisin varieties are trivial.
\end{itemize}
 Some form of this condition is unavoidable for our theorems to be true, as the Galois deformation rings are known to exhibit less uniform behavior in its absence, see \cite[Th\'eor\`eme 2]{CDM}.

This sort of condition also appears in \cite{herzig-duke,GHS} and in our previous work \cite{LLLM}, \cite{LLLM2}, \cite{LLL} and corresponds to a polynomial of the form $P = \prod_{i=1}^{n} \prod_{m=0}^M (X_i - X_{i+1} - m)$ for some positive $M$ where $X_{n+1}$ is understood to be $X_1$. 
In these cases, we make the relevant $M$ explicit. 
In particular, we will always have $M \geq \langle \lambda,\alpha \rangle$ for all roots $\alpha$, which gives the inequality in Remark \ref{rmk:psmall}. 
\item
\label{source:gen:2}
A \emph{geometric} genericity, whose role is to guarantee:
\begin{itemize}
\item that we can apply Elkik's approximation theorem to the local models; and
\item that our local models have the desired geometric properties.
\end{itemize}
The first item leads to a condition similar to the combinatorial genericity condition above, i.e.~ it is guaranteed by a choice of polynomial of the form
$P = \prod_{i=1}^{n} \prod_{m=0}^M (X_i - X_{i+1} - m)$ for some positive $M$ independent of $p$ (which arises from the singularity of local models, and hence is less explicit).

On the other hand, to guarantee the second item, our approach is to deduce geometric properties of the local models by specialization from some universal cases.
Since the properties we are interested in (e.g.~ being unibranch) are not preserved under arbitrary base change but only preserved under ``generic'' base change, we need to ensure that $\mu_j$ avoids a closed locus in $\bA^n_\Z$ which is \emph{independent of} $p$ (see \S \ref{sec:UMLM}). This produces a computable, but hard to make explicit, polynomial $P$. 

The geometric genericity condition is mainly an artifact of our proof of Theorem \ref{thm:intro:unibranch}. While the second source for the geometric genericity condition appears to impose more severe restrictions, we conjecture that that it is in fact unnecessary: in other words, we expect that our main result (Theorem \ref{thm:intro:unibranch}) hold with just a combinatorial genericity condition, but with the caveat that the bounds depend on the singularity of (universal) local models. 

We verified this conjecture in several cases, where we write $\eta$ for the Hodge--Tate cocharacter corresponding to $(n-1,n-2,\ldots,1,0)$ in all embedding $K\into\ovl{\Q}_p$:
\begin{itemize}
\item When $n=2$, $\Lambda=\{\eta\}$, where we can take $M=2$ (this follows from Theorems \ref{thm:Breuil-Kisin_local_model} and \ref{prop:stack_diagram}, noting that the ``monodromy condition'' is vacuous in this case).
\item When $n=3$, $\Lambda=\{\eta\}$, where we can take $M=4$, cf.~\cite{GL3Wild}.
\item When $n=3$, $\Lambda=\{\lambda\}$ where $\lambda$ corresponds to $(3,1,0)$ in all embeddding $K\into \ovl{\Q}_p$, but restricting to a specific open locus in the appropriate potentially crystalline stack, where we can take $M=10$, cf.~ Appendix \ref{ex:failure:UB}.
\item When $n$ is arbitrary, $\Lambda=\{\eta\}$, restricting to specific open loci in the appropriate potentially crystalline stack, where we can take $M$ to be a linear function in $n$, cf.~ \cite{OBW, Colength1}.
\end{itemize}
Unfortunately, beyond these cases, directly verifying the conjecture without extra geometric observations seems prohibitively computationally expensive with current computer algebra systems.
\end{enumerate}
In the introduction, while we omit the implicit polynomial $P$, we will describe exactly what it depends on (aside from $n$). Note that the particular $P$ may be different in different statements, and its precise nature will be spelled out in the body of the paper.

\subsection{Local models for potentially crystalline stacks} 
\label{sec:intro:MLM}

The possibility of studying singularities of potentially semistable deformation rings by means of group theoretic local models was first suggested by Kisin in \cite{MFFGS}. Using his theory of Breuil--Kisin modules, he resolved potentially Barsotti--Tate deformation rings (which correspond to minuscule Hodge--Tate cocharacters) by formal schemes which are certain completions of Pappas--Rapoport local models. 
To generalize this picture to non-minuscule cocharacters, one encounters the essential difficulty that not all Breuil--Kisin modules give rise to crystalline representations; indeed, only those obeying the $p$-adic analogue of Griffiths transversality do. Thus, while local models for the moduli of Breuil--Kisin modules exist quite generally in the form of Pappas--Zhu models, one needs to cut them down suitably to obtain models related to Galois deformation rings. In this section we will explain the construction of the subvariety in Theorem \ref{thm:2intro:LMD} above, which achieves this in certain situations.

Let $E$ be a finite extension of $\Qp$ with ring of integers $\cO$, uniformizer $\varpi$, and residue field $\F$. %
 Let $L\cG$ be the ind-group scheme given by $L\cG(R)=\GL_n(R(\!(v+p)\!))$ for any $\cO$-algebra $R$, the loop group. Consider the positive loop group scheme $L^+\cG$ over $\cO$ sending an
$\cO$-algebra %
$R$ to the subgroup of $\GL_n(R[\![v+p]\!])$ consisting of matrices that
are upper triangular mod~$v$. The %
quotient $L^+\cG\backslash L\cG$ is represented by an ind-proper
$\cO$-ind-scheme $\Gr_{\cG}$. This is a mixed characteristic
version of the degeneration of affine Grassmannians introduced by Gaitsgory. Indeed its generic fiber $\Gr_{\cG, E}$ is isomorphic to an affine Grassmannian, %
while the special fiber $\Gr_{\cG, \F}$ is isomorphic to the affine flag variety $\mathrm{Fl}$ (for the standard Iwahori $\cI$).

For $\lambda \in \Z^n$, let $S^{\circ}_E(\lambda)$ denote the $L^+ \cG_E$-orbit of $(v+p)^{\lambda}$ in $\Gr_{\cG, E}$. The Pappas--Zhu local model $M(\leql)$ is the Zariski closure of $S^{\circ}_E(\lambda)$ in $\Gr_{\cG}$, cf.~\cite{PZ}.   %

Let $\bf{a} \in \cO^n$.  We now consider the condition %
\[
 \tag{$\star$}  v \frac{dA}{dv} A^{-1}  + A \Diag(\bf{a}) A^{-1} \in
 \left(\frac{1}{v+p}\right)\mathrm{Lie} \, L^+\cG \label{eq:monodromy}
\]
for $ A\in L\cG (R)$. This is an approximation to the monodromy condition coming from $p$-adic Hodge theory.
 This condition clearly descends to a closed condition on $\Gr_{\cG}$. 

\begin{defn} 
\label{defn:LMQp}
The local model $M(\lambda, \nabla_{\bf{a}})$ is
  the Zariski closure in $M(\leql)$ of the locus of
    \eqref{eq:monodromy} in $S^{\circ}_E(\lambda)$. %
\end{defn}
Note that condition \eqref{eq:monodromy} is preserved under the right action by the constant diagonal torus $T$.  Thus, $M(\lambda, \nabla_{\bf{a}})$ inherits an action of $T$ compatible with the $T$-action on $M(\leql)$.  

The local models $M(\lambda, \nabla_{\bf{a}})$ turn out to behave very differently from the Pappas--Zhu models $M(\leql)$:
\begin{itemize}
\item
The generic fiber of $M(\lambda, \nabla_{\bf{a}})$ is smooth; it is isomorphic to a partial flag variety (see Proposition \ref{prop:genfiber}). In contrast, the generic fiber of $M(\leql)$ is not smooth unless $\lambda$ is minuscule (cf.~\cite{Haines-Richarz}).

\item A deep theorem of Zhu implies that the special fiber of $M(\leql)$ is reduced, and thus $M(\leql)$ is normal. In contrast, it will follow from the connection between $M(\lambda, \nabla_{\bf{a}})$ and Galois deformation theory that its special fiber fails to be reduced, and $M(\lambda, \nabla_{\bf{a}})$ fails to be normal in general. In fact, this failure is quite severe: one can get lower bounds for the non-reducedness in terms of affine Kazhdan--Lusztig multiplicities.
\end{itemize}
In other words, while our models have nice generic fibers,  they are nevertheless complicated degenerations of partial flag varieties.

Using the standard stratifications on $\Gr_{\cG}$, it is not difficult to analyze the underlying reduced subscheme of $M(\lambda, \nabla_{\bf{a}})$, in particular one sees that it is irreducible, and there is a combinatorial parametrization of the irreducible components of the special fiber. However, in order to establish the connection of our models to Galois deformation theory, we have to understand the behavior of $M(\lambda, \nabla_{\bf{a}})$ under completion. The essential difficulty is that an irreducible variety may break up into formal branches in some complicated way after completions: its singularities may not be \emph{unibranch}. One important sufficient condition to guarantee this unibranch property is normality, and to the best of our knowledge, we are not aware of any other useful general criteria. Worse still, it turns out that $M(\lambda, \nabla_{\bf{a}})$ fails to be unibranch in general! {(See Appendix \ref{ex:failure:UB} for an explicit example.)}
Miraculously, we manage to show that (for generic values of $\bf{a}$) $M(\lambda, \nabla_{\bf{a}})$ is unibranch at special points:
\begin{thm}[Theorem \ref{thm:model_unibranch}] \label{thm:intro:unibranchMLM} There exists a nonzero polynomial $P \in \Z[X_1, \ldots, X_n]$ such that if $P(\bf{a}) \neq 0 \mod \varpi$, then for any $T$-fixed point  $x \in M(\lambda, \nabla_{\bf{a}})(\ovl{\F}_p)$,  the completed local ring $\cO_{ M(\lambda, \nabla_{\bf{a}}), x}^{\wedge}$ is a domain $($i.e., $M(\lambda, \nabla_{\bf{a}})$ is unibranch at its $T$-fixed points).     %
\end{thm}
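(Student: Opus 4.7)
My plan is to prove the theorem by a universal family argument. Introduce the universal local model $\cM^{\univ}$ as the closed subscheme of $\Gr_{\cG} \times_{\Spec \Z} \Spec \Z[X_1, \ldots, X_n]$ cut out by the monodromy condition \eqref{eq:monodromy} with $\mathbf{a}$ replaced by the indeterminates $(X_1, \ldots, X_n)$; its fiber over any $\mathbf{a} \in \cO^n$ recovers $M(\lambda, \nabla_{\mathbf{a}})$. The crucial input is that each characteristic-zero fiber $M(\lambda, \nabla_{\mathbf{a}})_E$ is isomorphic to a partial flag variety (Proposition \ref{prop:genfiber}) and hence smooth; this upgrades to smoothness of the generic fiber of $\cM^{\univ}$ over $\Spec \Z[X_1, \ldots, X_n]$, so $\cM^{\univ}$ is geometrically unibranch at every point lying over the characteristic-zero generic fiber of the base. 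A short direct computation also shows that each $T$-fixed point of $M(\leq \lambda)_{\overline{\F}_p}$ automatically satisfies the monodromy condition for every value of $\mathbf{a}$ and therefore defines a section of $\cM^{\univ} \to \Spec \Z[X_1, \ldots, X_n]$.

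With these ingredients in hand, I would transfer unibranchness to the special fiber by upper semicontinuity of the number of analytic branches. Let $\nu \colon \widetilde{\cM}^{\univ} \to \cM^{\univ}$ denote the normalization, a finite morphism by excellence of $\cM^{\univ}$; the function $x \mapsto \# \nu^{-1}(x)$ is upper semicontinuous. Restricting along the section corresponding to a fixed $T$-fixed point $x_0$, the locus in $\Spec \Z[X_1, \ldots, X_n]$ where $x_0$ has more than one analytic branch in the corresponding fiber of $\cM^{\univ}$ is closed, and by the generic unibranchness above it avoids the characteristic-zero generic point. Hence it is cut out by a nonzero polynomial $P_{x_0}$; taking the product over the finite set of relevant $T$-fixed points produces the desired universal polynomial $P$.

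The main obstacle, and the step I expect to be the most delicate, is the compatibility between $\cM^{\univ}$ at $x_0$ and its specialization to the fiber $M(\lambda, \nabla_{\mathbf{a}})$ at $x_0$. Since base change does not preserve normalization in general, for $\mathbf{a}$ with $P(\mathbf{a}) \neq 0 \pmod{\varpi}$ one must check that the substitution $X_i \mapsto a_i$ is sufficiently transversal at $x_0$ so that $\cO^{\wedge}_{M(\lambda, \nabla_{\mathbf{a}}), x_0}$ equals $\cO^{\wedge}_{\cM^{\univ}, x_0}$ modulo a regular sequence. I would verify this by picking an explicit affine chart around $x_0$ inside $\Gr_{\cG}$ via a Bruhat-type decomposition, rewriting \eqref{eq:monodromy} as polynomial equations linear in the $X_i$, and exploiting the residual $T$-grading on the chart to reduce the check to a finite-dimensional linear-algebraic computation. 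The polynomial $P$ emerges, roughly, as the determinant of a Jacobian matrix associated to these equations that must remain invertible for the desired transversality to hold.
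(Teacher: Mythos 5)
Your proposal shares the overall philosophy of the paper's argument — a universal family over the parameter space of $\bf{a}$, specialization from characteristic zero, sections at $T$-fixed points — and you correctly identify the compatibility of normalization with base change as the delicate step.  However, as written there are several gaps that the paper works hard to circumvent, and I do not think the proposal can be repaired along the lines you sketch.

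\textbf{The parameter space is missing an essential direction.}  Your $\cM^{\univ}$ lives over $\Spec\Z[X_1,\ldots,X_n]$ with $\Gr_{\cG}$ the Pappas--Zhu degeneration over $\cO$, so the total space is an $\cO$-scheme and the ``bad'' closed locus avoiding the generic point is a priori cut out by some $f\in\cO[X_1,\ldots,X_n]$, not by a polynomial in $\Z[X_1,\ldots,X_n]$ independent of $p$.  The bad locus could well contain $V(\varpi)$, in which case no polynomial in the $X_i$ exists.  The paper's universal model instead lives over $X\times\bA^n=\Spec\Z[v,X_1,\ldots,X_n]$, treating the coordinate $v$ of $X\cong\bA^1_\Z$ as a variable and only specializing $v\mapsto -p$ at the very end.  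This extra $\bA^1$-direction is not cosmetic: it is what makes the bad locus a proper closed subscheme of $\bA^n_\Z$ (and hence cut out by a nonzero polynomial with $\Z$-coefficients) and is what forces the argument to deal with equal-characteristic fibers where $v$ is a variable.

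\textbf{Upper semicontinuity of the number of analytic branches does not transfer to fibers.}  Your semicontinuity argument bounds the number of preimages of $x_0$ under the normalization of the total space $\cM^{\univ}$.  But the theorem asks for unibranchness of the \emph{fiber} $M(\lambda,\nabla_{\bf a})$ at $x_0$, and the normalization of a fiber is not the fiber of the normalization.  You flag this yourself, but the proposed remedy — that after a transversal specialization $\cO^{\wedge}_{M(\lambda,\nabla_{\bf a}),x_0}$ equals $\cO^{\wedge}_{\cM^{\univ},x_0}$ modulo a regular sequence — is exactly the step that fails: being a domain (equivalently unibranch, by excellence) is not stable under quotient by a regular element.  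For instance $\Z_p[[x,y]]/(xy-p)$ is a domain with $p$ a regular nonzerodivisor, yet $\F_p[[x,y]]/(xy)$ is not a domain.  No Jacobian or transversality condition repairs this on its own, which is why the paper does not argue this way.

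\textbf{The paper's substitute for semicontinuity is the equal-characteristic cone-point argument.}  In equal characteristic (the fiber of $\cM_X(\lambda,\nabla)$ over $v\mapsto 0$ over a field $k$), there is an additional $\bG_m$-action by loop rotation (scaling $v$), and combining it with the torus action produces a contracting one-parameter subgroup whose unique fixed point is the $T^\vee$-fixed point $\tld z$ (Lemmas~\ref{lem:torus_action_open_chart} and \ref{lem:contracting_torus_action}).  A cone point of an irreducible variety is unibranch (Lemma~\ref{lem:attractor}), giving Proposition~\ref{prop:unibranch_equal_char} as the unconditional input.  The passage from this equal-characteristic source to mixed characteristic is what actually requires genericity: one shows (Proposition~\ref{prop:spreading}) that over a nonempty Zariski open in the $\bA^n$-base, the base change of the normalization along any DVR $\Spec R\to X\times\bA^n$ sending $v$ to a uniformizer remains normal, hence coincides with the normalization of the fiber.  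The level of ramification ($e$) is absorbed by running this for the covers $v\mapsto v^{k}$, which is why the universal polynomial depends on $e$.  None of these ingredients (the contracting $\bG_m$-action in equal characteristic, or the generic-normality-after-DVR-base-change criterion) appears in your proposal, and I do not see how to avoid them with a Jacobian or Elkik-type argument: Elkik's theorem appears in the paper to produce a \emph{map} between formal completions (Theorem~\ref{thm:stack_local_model}), not to deduce that a ring is a domain.

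In short: correct guiding intuition, but the absence of the $\bA^1$-direction makes the polynomial's independence from $p$ inaccessible, and the specialization step you identify as ``delicate'' is in fact fatal to the regular-sequence approach and requires the paper's equal-characteristic contraction together with the spreading-out of normality.
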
 
This is the deepest geometric fact that we prove about $M(\lambda, \nabla_{\bf{a}})$, and its proof lies at the technical heart of the paper.  A key observation (Proposition \ref{prop:unibranch_equal_char}) is that the theorem holds (under a mild assumption on the characteristic) for the equal characteristic analogues of   $M(\lambda, \nabla_{\bf{a}})$ where $p$ is replaced by a variable $t$. 
In this context, there is more symmetry: there is an extra $\bG_m$-action given by ``loop rotation'' which scales $t$. Thanks to this, the $T$-fixed points are all cone points, in the sense that they are the fixed point of an attracting torus action, and one observes that cone points are unibranch. Unfortunately, we can not mimic this argument in the original mixed characteristic setting, as it doesn't make sense to ``scale'' the prime $p$. Instead, we resort to a soft spreading out argument, by contemplating the universal case where $p$ and $\bf{a}$ are formal variables. The fact that being unibranch can be phrased in terms of the normalization map, and normalization commutes with generic base change, allows us to transfer the unibranch property from equal characteristic to mixed characteristic. It is here that the universal polynomial $P$ appears: its vanishing locus is the obstruction to certain properties being preserved under base change. The actual argument is a bit more involved than this outline, since we do not base change to spectra of fields, but rather spectra of DVRs.

Having proven the important geometric properties of $M(\lambda, \nabla_{\bf{a}})$, we now turn to its connection to Galois theory. 
Let $K/\Qp$ be a finite unramified extension, and let $\cJ$ be the set of embeddings $\Hom_{\Qp}(K, \overline{\Q}_p)$. In \cite{EGstack}, Emerton--Gee constructed the moduli stack $\cX_n$ over $\Spf \cO$ of rank $n$ $(\varphi,\Gamma)$-modules. By its construction, $\cX_n$ interpolates framed deformation rings in the sense that the set $\cX_n(\ovl{\F}_p)$ is in bijection with the set of continuous representations $\rhobar:G_K \ra \GL_n(\overline{\F}_p)$, and framed deformation rings of such $\rhobar$ are versal rings (in the sense of \cite[Definition 2.2.9]{EGschemetheoretic}) for $\cX_n$. Furthermore, for a collection $\lambda \in (\Z^n)^{\cJ}$ and a rank $n$ inertial type $\tau$ defined over $\cO$ (cf.~\S \ref{sec:InertialTypes} for their definition), %
 they construct a $\cO$-flat $p$-adic formal algebraic substack $\cX^{\lambda,\tau}$ which is characterized by the property that its points over any finite flat $\cO$-algebra correspond to potentially crystalline representations $\rho$ of type $(\lambda,\tau)$ (i.e.~the Hodge--Tate weights of $\rho$ are given by $\lambda$ and $\mathrm{WD}(\rho)$ induces the inertial type $\tau$). 

 Now, to any \emph{tame} inertial type $\tau$ for $I_K$, one can associate a collection $\bf{a}_{\tau} = (\bf{a}_{\tau, j})_{j \in \cJ}$, where $\bf{a}_{\tau, j} \in \cO^n$ records the inertial weights of $\tau$ (see \S \ref{sec:local_model_EG}). Set $\lambda = (\lambda_j)_{j \in \cJ} \in (\Z^n)^{\cJ}$.  Define 
\[
M_{\cJ}(\lambda, \nabla_{\bf{a}_{\tau}}) = \prod_{j \in \cJ} M(\lambda_j, \nabla_{ \bf{a}_{\tau, j}})
\] 
where, for each $j\in\cJ$, the local models $M(\lambda_j, \nabla_{ \bf{a}_{\tau, j}})$ are those appearing in  Definition \ref{defn:LMQp}.
Our main result is the following:
\begin{thm}[Theorem \ref{thm:stack_local_model}]
\label{thm:intro:LMD}   If $\tau$ is sufficiently generic \emph{(}depending on $\lambda$\emph{)}, then
 there exist Zariski open covers $%
 \underset{\text{\tiny{$\tld{z} $}}}{\text{\Large{$\bigcup$}}}
 \cX_{\reg}^{\leq \lambda,\tau}(\tld{z}) $ and $\underset{\text{\tiny{$\tld{z} $}}}{\text{\Large{$\bigcup$}}} U_{\reg}(\tld{z},\leql,\nabla_{\bf{a}_\tau})^{\wedge_p}$ of $\underset{\tiny{\substack{\lambda' \leq \lambda\\ \lambda' \text{\emph{reg. dom.}}}}}{\text{\Large{$\bigcup$}}} \cX^{\lambda',\tau} $ and  $ \underset{\tiny{\substack{\lambda' \leq \lambda\\ \lambda' \text{\emph{reg. dom.}}}}}{\text{\Large{$\bigcup$}}}M(\lambda', \nabla_{\bf{a}_\tau})^{\wedge_p}$ respectively such that for each $\tld{z} $, there exists a local model diagram 
\begin{equation}
\xymatrix{
& \tld{\cX}_{\reg}^{\leq \lambda,\tau}(\tld{z}) \ar[dl] \ar[dr] & \\
\cX_{\reg}^{\leq \lambda,\tau}(\tld{z}) & &U_{\reg}(\tld{z},\leql,\nabla_{\bf{a}_\tau})^{\wedge_p} \\
}
\end{equation}
where both arrows are torsors for the torus $T^\cJ$ with respect to different $T^{\cJ}$-actions and the superscript $\wedge_p$ stands for taking $p$-adic completion. 
\end{thm}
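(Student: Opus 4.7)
\medskip
\noindent\emph{Proof proposal.} The plan is to use Breuil--Kisin modules with descent data as the intermediate object through which the comparison is effected. By Kisin's theory (with tame descent datum encoded by $\tau$), a point of $\cX^{\lambda',\tau}$ over a $p$-adically complete $\cO$-algebra $R$ corresponds to a rank $n$ Breuil--Kisin module $\mathfrak{M}$ over $R[\![v+p]\!]$ (with descent datum) equipped with a Frobenius whose elementary divisors are bounded by $\lambda'$, together with a monodromy/Griffiths-transversality condition enforcing that the associated $(\varphi,\Gamma)$-module comes from a potentially crystalline representation. After choosing an eigenbasis of $\mathfrak{M}$ for the descent datum, the Frobenius is encoded by a matrix in $L\cG(R)$, and the $L^+\cG$-ambiguity in the choice of basis identifies the resulting map with the loop group quotient, i.e.~with (the formal completion of) a union of $L^+\cG$-orbits inside $\Gr_{\cG}$. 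The shape stratification of $\Gr_{\cG}$ by Iwahori double cosets, indexed by elements $\tld z$ of the extended affine Weyl group, will produce the open covers $\cX_{\reg}^{\le \lambda,\tau}(\tld z)$ and $U_{\reg}(\tld z,\leql,\nabla_{\bf{a}_\tau})^{\wedge_p}$ appearing in the statement.

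First I would define the intermediate stack $\tld\cX_{\reg}^{\le\lambda,\tau}(\tld z)$ as the moduli of such Breuil--Kisin modules with descent datum, bounded shape, satisfying the monodromy condition, and equipped with a trivializing \emph{gauge basis} (of the type used in our prior work, compatible with the descent datum and the stratification by $\tld z$). The forgetful map $\tld\cX_{\reg}^{\le\lambda,\tau}(\tld z)\to \cX_{\reg}^{\le\lambda,\tau}(\tld z)$ is then tautologically a $T^{\cJ}$-torsor: two gauge bases differ by a diagonal element (one factor of $T$ per embedding $j\in\cJ$), by genericity of $\tau$ which kills the off-diagonal ambiguity via the descent character decomposition. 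Sending a Breuil--Kisin module with gauge basis to the matrix of Frobenius defines the second map $\tld\cX_{\reg}^{\le\lambda,\tau}(\tld z)\to U_{\reg}(\tld z,\leql,\nabla_{\bf{a}_\tau})^{\wedge_p}$, and the $T^{\cJ}$-torsor structure here comes from the residual right action of the constant diagonal torus on the loop group matrix (essentially the Hecke-type action already present on $M(\lambda,\nabla_{\bf{a}})$).

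The main obstacle is verifying that this second map lands in $M(\lambda_j,\nabla_{\bf{a}_{\tau,j}})^{\wedge_p}$ at each embedding, i.e.~that the true $p$-adic Hodge theoretic monodromy condition for potentially crystalline representations agrees, after $p$-adic completion at the relevant formal neighborhoods, with the algebraic condition \eqref{eq:monodromy}. The latter is only an \emph{approximation} to the true monodromy operator (which a priori lives in a much larger period ring). To close the gap, I would use Elkik's approximation theorem applied to the smooth locus of $M(\lambda,\nabla_{\bf{a}_\tau})$: sufficient genericity of $\tau$ (depending on $\lambda$) guarantees that the higher-order Taylor coefficients of the monodromy operator are determined by, and can be successively solved in terms of, the truncation that yields \eqref{eq:monodromy}, so that both conditions cut out the same $p$-adic formal substack inside the moduli of Breuil--Kisin modules with chosen basis. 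This is where the unibranch result (Theorem \ref{thm:intro:unibranchMLM}) enters: it ensures the approximation cut out by \eqref{eq:monodromy} is the correct one around $T$-fixed points, and combined with flatness of $\cX^{\lambda',\tau}$ over $\cO$ it upgrades the approximate equality to an equality of formal stacks.

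Finally, to obtain the Zariski open cover, I would verify that varying the gauge shape $\tld z$ over the relevant subset of the extended affine Weyl group gives an open cover of the union $\bigcup_{\lambda'\le\lambda,\ \lambda'\text{ reg.\ dom.}} \cX^{\lambda',\tau}$: every mod-$\varpi$ point has a well-defined shape by the Iwahori decomposition of $\Gr_{\cG,\F}\cong\mathrm{Fl}$, and the condition ``the Frobenius matrix has shape bounded by $\tld z$'' is Zariski open, so the $\cX_{\reg}^{\le\lambda,\tau}(\tld z)$ cover the regular locus (the ``$\reg$'' subscript cutting out exactly those shapes compatible with a regular dominant $\lambda'\le\lambda$). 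A parallel verification on the local model side produces the cover $\bigcup_{\tld z} U_{\reg}(\tld z,\leql,\nabla_{\bf{a}_\tau})^{\wedge_p}$, and the diagram then assembles from the pieces constructed in the previous paragraph.
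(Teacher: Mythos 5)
There is a genuine gap, and in fact the proposal asserts something the paper explicitly warns is not expected to be true. You claim that sufficient genericity plus Elkik's approximation yields that the true monodromy condition (cutting out $\cX^{\leql,\tau}$ inside $Y^{\leql,\tau}$) and the algebraic truncation $\nabla_{\bf{a}_\tau}$ ``cut out the same $p$-adic formal substack,'' and that the unibranch theorem plus $\cO$-flatness ``upgrades the approximate equality to an equality of formal stacks.'' This is not how the argument can go: the paper is explicit that $\cX^{\leql,\tau}(\tld{z})$ and the $\nabla_{\bf{a}_\tau}$-cut substack are genuinely different closed substacks of $Y^{\leql,\tau}(\tld{z})$, and in the Warning after Theorem~\ref{thm:stack_local_model} the authors state that they do not expect the two substacks to coincide inside $\tld{U}(\tld{z},\leql)^{\wedge_p}$ even at the level of rigid generic fibers. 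Elkik's theorem only produces a \emph{non-canonical closed embedding} $\tld{\cX}_\reg^{\leq\lambda,\tau}(\tld{z})\into\tld{U}^{\nv}(\tld{z},\leql,\nabla_{\bf{a}_\tau})^{\wedge_p}$ (which then factors through the $\cO$-flat regular part), not an equality. Unibranchness alone cannot promote a closed embedding to an isomorphism; it only bounds the number of formal branches on the local-model side.

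What is missing is the other half of the component count. After the Elkik embedding, the paper shows the surjection $B=\cO\bigl(\tld{U}_\reg(\tld{z},\leql,\nabla_{\bf{a}_\tau})^{\wedge_p}\bigr)\onto A = \cO\bigl(\tld{\cX}^{\leq\lambda,\tau}_\reg(\tld{z})\bigr)$ is an isomorphism by observing that both rings are reduced, $\cO$-flat, equidimensional of the same dimension, and have the \emph{same} number of minimal primes: the \emph{upper} bound on minimal primes of $B$ comes from Theorem~\ref{thm:model_unibranch} (unibranchness at $T$-fixed points of $M(\lambda',\nabla_{\bf{a}_\tau})$ for each regular dominant $\lambda'\leq\lambda$), while the matching \emph{lower} bound on minimal primes of $A$ comes from Lemma~\ref{lem:local_model_lower_bound}, whose proof uses Proposition~\ref{prop:patchnonzero} and hence the existence of suitable potentially crystalline lifts established via Taylor--Wiles patching and automorphic forms (\cite{LLL}). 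This global input is essential and is entirely absent from your argument; without it, one cannot rule out that the closed embedding is strict. Your description of the Breuil--Kisin setup, gauge bases, the two $T^{\cJ}$-torsor structures, the shape stratification by $\tld z$, and the use of Elkik's theorem all track the paper's strategy, but the closing step as you've written it would not establish the theorem.
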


\begin{rmk} \begin{enumerate}

\item {In the above statement, when we talk about the scheme-theoretic union of two closed formal algebraic substacks $\cY,\cZ$ of a formal algebraic stack $\cX$, we mean to take the scheme-theoretic image of the map $\cY\sqcup\cZ\ra\cX$ (\cite[Definition A.16]{EGstack}).}
\item The right arrow in the local model diagram is highly non-canonical, as it is produced by a Hensel-type lifting argument (in the form of Elkik's approximation theorem \cite{Elkik}). 
However, the entire diagram is canonical in characteristic $p$. 
\item 
When $\lambda = \eta\defeq (n-1,n-2,\ldots,1,0)_{j\in \cJ}\in (\Z^n)^{\cJ}$, one has
$
\underset{\tiny{\substack{\lambda' \leq \lambda\\ \lambda' \text{reg. dom.}}}}{\text{{$\bigcup$}}} \cX^{\lambda',\tau}=\cX^{\eta,\tau}.$
Since potentially crystalline deformation rings of type $(\eta,\tau)$ are versal rings to $\cX^{\eta, \tau}$, we see that they appear (up to smooth modifications) as the completion of local rings of $M(\eta, \nabla_{\bf{a}_\tau})$ at closed points.
\end{enumerate}
\end{rmk}

We now give a slightly simplified outline of the proof of Theorem \ref{thm:intro:LMD}. The starting point is the theory of Breuil--Kisin modules: The potentially crystalline stacks we consider are closed substacks of the moduli stack of Breuil--Kisin modules  $Y^{\leq \lambda,\tau}$ with tame descent data of type $(\lambda,\tau)$, which is known to have the Pappas--Zhu model $M(\leql)$ as a local model.
More specifically, the natural open affine cover of $\Gr_\cG=\bigcup_{\tld{z}} \cU(\tld{z})$ by translates of the ``big open cell'' induces an open cover of $M(\leql)$. We develop a theory of canonical bases of Breuil--Kisin modules to show that this open cover induces an open cover of $Y^{\leql,\tau}$. Thus we get the analogue of the above local model diagram for $Y^{\leql,\tau}$ and induced open affine covers on every object in sight. These are the open covers featured in Theorem \ref{thm:intro:LMD}.

At this point, we get two closed substack of $Y^{\leql,\tau}(\tld{z})$: the substack $\cX^{\leql,\tau}(\tld{z})$ and the substack $\cX^{\leql,\tau,\star}(\tld{z})$ induced by the $p$-adic completion of $ \bigcup_{\lambda' \leq \lambda} M(\lambda', \nabla_{\bf{a}_\tau})$ along the local model diagram for $Y^{\leql,\tau}$. They are genuinely different substacks, because condition (\ref{eq:monodromy}) is only an approximation to the condition cutting out $\cX^{\leql,\tau}$ inside $Y^{\leql,\tau}$. 
However, the two substacks are $p$-adically close, and using the smoothness of the generic fiber of $M(\lambda, \nabla_{\bf{a}})$, one can produce a non-canonical embedding $\cX^{\leql,\tau}(\tld{z})\into \cX^{\leql,\tau,\star}(\tld{z})$. Since both stacks turn out to have the same dimension, the maximal dimensional part $\cX_\reg^{\leql,\tau}(\tld{z})$ of $\cX^{\leql,\tau}(\tld{z})$ embeds into the maximal dimension part of $\cX^{\leql,\tau,\star}(\tld{z})$. Now, using the results of \cite{LLL} (which ultimately uses Taylor--Wiles patching, and hence automorphic forms), one obtains a lower bound on the number of irreducible components (of the spectrum of the structure sheaf) of the former, while Theorem \ref{thm:intro:unibranchMLM} gives the same upper bound for the number of irreducible components (of the spectrum of the structure sheaf) of the latter. Thus the two maximal dimension parts are (non-canonically) isomorphic to each other, which concludes the proof. 

Theorem \ref{thm:intro:LMD} allows us to study local properties of the potentially crystalline stacks $\cX^{\leql,\tau}$ via the local models, which gives crucial geometric information about potentially crystalline deformation rings needed for our applications below, cf.~ Theorem \ref{thm:intro:unibranch}. In characteristic $p$, one can do even better: the local model diagrams produced by Theorem \ref{thm:intro:LMD} glue together, and thus one can even study \emph{global properties} of the underlying reduced stacks $\cX^{\lambda,\tau}_{\red}$ (which live in characteristic $p$) via the reduced special fiber $\overline{M}(\lambda, \nabla_{\bf{a}_\tau})$ of $M(\lambda, \nabla_{\bf{a}_\tau})$. To state our result, we recall \cite{EGstack} that $\cX_{n,\red}$ is equidimensional, and its irreducible components are in bijection with the irreducible $\F$-representations of $\GL_n(k)$ (which we refer to as \emph{Serre weights}). We write $\cC_{\sigma}$ for the irreducible component of $\cX_{n,\red}$ corresponding to a Serre weight $\sigma$. 
Given $\lambda \in (\Z^n)^{\cJ}$ regular and dominant, let $V(\lambda-\eta)$ be the irreducible $\Res_{K/\Q_p} \GL_n$-representation with highest weight $\lambda - \eta$ (recall that $\eta$ is such that $\eta_j = (n-1,n-2,\ldots,1,0)$ for all $j \in \cJ$).
We also denote the restriction of $V(\lambda-\eta)$ to $\GL_n(\cO_K)$ by $V(\lambda-\eta)$.
As in \S \ref{sec:suffgen}, a tame type $\tau$ corresponds to an equivalence class of pairs $(s,\mu)$.
Then we let $\sigma(\tau)$ be the Deligne--Lusztig representation $R_s(\mu)$ where $s$ defines a rational torus and $\mu$ defines a character (see \S \ref{sec:DLandSW}). 
For a representation $V$ over $E$ of a compact group, let $\ovl{V}$ be the semisimplification of the reduction of any invariant $\cO$-lattice in $V$.
Then we prove:
\begin{thm}[Theorem \ref{thm:EGmodp}] Let $\lambda$ be regular dominant and let $\tau$ be a sufficiently generic tame inertial type.  Then: %

\label{thm:intro:irreducible_components_mod_p}
\begin{enumerate}
\item $\cX^{\lambda,\tau}_\red=\cup_\sigma \cC_{\sigma}$, where the union runs over all Serre weights $\sigma\in \JH(\ovl{\sigma(\tau) \otimes_E V(\lambda-\eta)})$. %
\item There is a natural bijection between the irreducible components of $\overline{M}(\lambda, \nabla_{\bf{a}_\tau})$ and the Jordan--H\"older factors of $\ovl{\sigma(\tau) \otimes_E V(\lambda-\eta)}$.
\item For each $\sigma\in \JH(\ovl{\sigma(\tau) \otimes_E V(\lambda-\eta)})$, we have a mod $p$ local model diagram: 
\begin{equation}\label{eq:mod_p_local_model}
\xymatrix{
& \tld{\cC}_{\sigma} \ar[dl] \ar[dr] & \\
\cC_{\sigma} &   &  \overline{M}(\lambda, \nabla_{\bf{a}_\tau})_{\sigma} \\
}
\end{equation} 
where $\overline{M}(\lambda, \nabla_{\bf{a}_\tau})_{\sigma}$ is the irreducible component of $\overline{M}(\lambda, \nabla_{\bf{a}_\tau})$ labelled by $\sigma$ \emph{(}denoted by $C_\sigma$ in Theorem \ref{thm:EGmodp}\emph{)} and both arrows are torsors for the torus $T^{\cJ}$ with respect to different $T^{\cJ}$-actions.   
\end{enumerate}
\end{thm}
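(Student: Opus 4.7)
The plan is to work in the affine flag variety $\Fl = \Gr_{\cG,\F}$ and first give a combinatorial classification of the irreducible components of $\overline{M}(\lambda,\nabla_{\bf{a}_\tau})$, then transfer that picture to $\cX^{\lambda,\tau}_\red$ via the mod $p$ version of the local model diagram of Theorem \ref{thm:intro:LMD}, and finally match the combinatorial labels to Jordan--H\"older factors via a representation-theoretic dictionary.

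\medskip

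First I would analyze the special fiber in each factor: the Iwahori stratification of $\Fl$ refines the stratification of the reduced special fiber of $M(\leql)$, whose top-dimensional strata are labelled by the $(W,\lambda)$-admissible set in the extended affine Weyl group. Imposing condition \eqref{eq:monodromy} is a closed condition, and a direct computation inside the big open cells $\cU(\tld{z})$ shows which admissible elements give strata that lie in $M(\lambda,\nabla_{\bf{a}_\tau})$, yielding an explicit parametrization of the irreducible components of $\overline{M}(\lambda,\nabla_{\bf{a}_\tau})$ by some subset $\Sigma(\lambda,\tau)$ of the admissible set. Taking products over $j\in\cJ$ gives the irreducible components of $\overline{M}(\lambda,\nabla_{\bf{a}_\tau})=\prod_j \overline{M}(\lambda_j,\nabla_{\bf{a}_{\tau,j}})$.

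\medskip

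Next, for the local model diagram \eqref{eq:mod_p_local_model}, I would reduce the local model diagram of Theorem \ref{thm:intro:LMD} modulo $p$. The right arrow of that diagram is only canonical in characteristic $p$, so the mod $p$ diagrams attached to the various $\tld{z}$ glue along the open cover to give a global $T^\cJ$-torsor diagram between $\cX^{\lambda,\tau}_\red$ (which is the reduced underlying stack of $\bigcup_{\lambda'\leq \lambda,\,\lambda'\text{ reg. dom.}}\cX^{\lambda',\tau}$, after noting that lower $\lambda'$ contribute no new top-dimensional components by dimension count) and $\overline{M}(\lambda,\nabla_{\bf{a}_\tau})$. Since being a $T^\cJ$-torsor is smooth and surjective on both sides, irreducible components match, so the irreducible components of $\cX^{\lambda,\tau}_\red$ are labelled by the same set $\Sigma(\lambda,\tau)$. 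Because $\cX^{\lambda,\tau}_\red$ is a top-dimensional closed substack of $\cX_{n,\red}$, each component is some $\cC_\sigma$; which $\sigma$ it is can be read off by specializing to a generic point of the corresponding stratum and computing the associated $\rhobar$ (so matching to the Herzig/Emerton--Gee labelling of $\cC_\sigma$ by Serre weights).

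\medskip

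The main obstacle, and the crux of the theorem, is matching $\Sigma(\lambda,\tau)$ with $\JH(\ovl{\sigma(\tau)\otimes_E V(\lambda-\eta)})$. I would do this by leveraging the Jantzen-type mod $p$ decomposition of Deligne--Lusztig representations: the Jordan--H\"older factors of $\ovl{R_s(\mu)\otimes V(\lambda-\eta)}$ are parametrized by certain elements of the affine Weyl group of $\GL_n^\cJ$ through the ``lowest alcove presentation'' framework developed in earlier work of the authors. Under the combinatorial genericity hypothesis imposed on $\tau$ (which pushes $\mu$ deep into the base alcove), this parametrizing set is computed by a standard Deligne--Lusztig/Jantzen filtration analysis, and matches the admissible elements describing $\Sigma(\lambda,\tau)$ through the same dictionary that relates affine Weyl orbits of Serre weights to Iwahori strata of $\Fl$. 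Part (1) of the theorem then follows from part (2) together with the irreducible component identification from the mod $p$ local model diagram, and part (3) is the resulting $\sigma$-by-$\sigma$ restriction of the global mod $p$ diagram.
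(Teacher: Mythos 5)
Your proposal has the right high-level structure — analyze the local model special fiber combinatorially, transfer to $\cX^{\lambda,\tau}_{\red}$ via a mod $p$ torsor diagram, match labels by computing $\rhobar$ at generic points — but two of the three steps are handled differently than you propose, and one contains a genuine gap.

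The gap is in the first step. You write that ``a direct computation inside the big open cells shows which admissible elements give strata that lie in $M(\lambda,\nabla_{\bf{a}_\tau})$.'' This is not a direct computation: $M(\lambda,\nabla_{\bf{a}_\tau})$ is the Zariski closure of the generic-fiber monodromy locus, so its special fiber is \emph{a priori} only a closed subscheme of the naive model $M^{\nv}(\leql,\nabla_{\bf{a}_\tau})=M(\leql)\cap\Gr_{\cG,\cO}^{\nabla_{\bf{a}_\tau}}$, and determining which naive-fiber strata survive in the flat closure is exactly the hard point. What \emph{is} directly computable is $\overline{M}^{\nv}$: its top-dimensional components are classified by Theorem~\ref{thm:compandSW} using Theorem~\ref{thm:monodromySchubert} and the parametrization of $\JH(\ovl{\sigma(\tau)\otimes V(\lambda-\eta)})$ by admissible pairs (Proposition~\ref{prop:JHbij}). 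The paper works with the naive model throughout; it never needs the flat closure's components for this theorem.

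The second step also departs from the paper, and the paper flags this explicitly (Remark~\ref{rmk:intro:MLM:sp:fib}(1)): the proof of Theorem~\ref{thm:EGmodp} does \emph{not} go through Theorem~\ref{thm:intro:LMD}. Reducing Theorem~\ref{thm:intro:LMD} modulo $p$ as you propose would require the stronger ``geometric'' genericity polynomial $P$ (needed for Elkik's approximation and the unibranch Theorem~\ref{thm:model_unibranch}), while the paper's argument for the mod $p$ statement needs only the weaker combinatorial genericity. What the paper uses instead is the much softer Proposition~\ref{prop:mod_p_factor}: the closed immersion $\tld{\cX}^{\leql,\tau}_{\F}\hookrightarrow\tld{M}^{\nv}_{\cJ}(\leql,\nabla_{\bf{a}_\tau})_{\F}$, which follows from Proposition~\ref{prop:monodromy_approximation} modulo $p$ (no Elkik lifting). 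This gives only an \emph{upper} bound for the components of $\cX^{\lambda,\tau}_{\F}$. The complementary \emph{lower} bound — that every $\cC_\sigma$ with $\sigma\in\JH(\ovl{\sigma(\tau)\otimes V(\lambda-\eta)})$ actually lies in $\cX^{\lambda,\tau}$ — is Lemma~\ref{lem:lower_bound_cpt}, and it uses genuinely global input: the Taylor--Wiles argument of Proposition~\ref{prop:obvious_wild_lift} produces potentially crystalline lifts of the maximally non-split niveau~$1$ representations that are dense in $\cC_\sigma$. Your proposal never invokes any such global argument; the lower bound is missing. (It is implicitly present in Theorem~\ref{thm:intro:LMD}, but then you are importing a stronger hypothesis and a heavier argument than the paper needs.) Your third step — identifying which $\sigma$ labels each component by computing the associated $\rhobar$ at a generic point of the stratum — is sound and is exactly Lemma~\ref{lem:good_open_cpt} together with the Emerton--Gee characterization of $\cC_\sigma$ by maximally non-split ordinary points.
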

\begin{rmk} 
\label{rmk:intro:MLM:sp:fib}
\begin{enumerate}
\item Our proof of Theorem \ref{thm:intro:irreducible_components_mod_p} does not go through Theorem \ref{thm:intro:LMD}. Because of that it holds under much milder genericity conditions compared to our other theorems: we only need an explicit \emph{combinatorial} genericity condition (see \S \ref{sec:suffgen}).
\item It follows essentially from the definitions that $\overline{M}(\lambda, \nabla_{\bf{a}_\tau})_{\sigma}$ is an irreducible component of a \emph{deformed} affine Springer fiber in the sense of \cite{Frenkel_Zhu}.  In particular, $\cC_\sigma$ is equisingular to an irreducible component of a deformed affine Springer fiber. 
We expect that this connection will be a powerful tool to investigate the internal structure of irreducible components of the Emerton--Gee stack. As a sample application, we deduce Herzig's formulation of the weight part of Serre's conjecture (Theorem \ref{thm:intro:SWC}) from the count of torus-fixed points in the irreducible components of affine Springer fibers obtained by Boixeda Alvarez \cite{Pablo} (see \S \ref{sec:intro:WPSC} for more details).
\end{enumerate}
\end{rmk}

Theorem \ref{thm:intro:irreducible_components_mod_p} follows from analyzing the effect of condition (\ref{eq:monodromy}) on the reduced special fiber of $M(\leql)$, which was determined by Pappas--Zhu \cite{PZ}. Namely, \cite{PZ} shows that it is the reduced union of the affine Schubert cells $S_{\F}^{\circ}(\tld{w})$ for $\tld{w}$ running over the $\lambda$-admissible set $\Adm(\lambda)$, which is defined in terms of combinatorics of the affine Weyl group. A simple computation shows that (\ref{eq:monodromy}) cuts out an affine subspace of the affine space $S_{\F}^{\circ}(\tld{w})$, whose dimension is easily computed. This provides a combinatorial parametrization of the irreducible components of $\overline{M}(\lambda, \nabla_{\bf{a}_\tau})$ in terms of a subset of $\Adm(\lambda)$, which beautifully matches with the parametrization of $\JH(\ovl{\sigma(\tau) \otimes_E V(\lambda-\eta)})$ given by Jantzen's generic decomposition pattern. Finally, one has to show that $\overline{M}(\lambda, \nabla_{\bf{a}_\tau})_{\sigma} \subset \overline{M}(\lambda, \nabla_{\bf{a}_\tau})$ corresponds to $\cC_\sigma$ in the local model diagram, and we achieve this by identifying the Breuil--Kisin modules attached to a generic point of $\overline{M}(\lambda, \nabla_{\bf{a}_\tau})_{\sigma}$.

\subsection{The Breuil--M\'ezard conjecture} 
\label{intro:sec:BMC}

Let $K/\Q_p$ be a finite extension with ring of integers $\cO_K$ and residue field $k$. 
Write $G_K$ for the absolute Galois group of $K$.
(Note that this is a more general setup than in the previous section for now.)
The  Breuil--M\'ezard conjecture quantifies the complexity of the special fibers of potentially semistable Galois deformation rings in terms of $\GL_n(\cO_K)$-representations with mod $p$ coefficients.
These special fibers are especially mysterious because outside of very special cases they do not have known moduli interpretations.
We now describe the ``geometric'' version of the conjecture as formulated by \cite{EG}.    

Let $\tau$ be an inertial Weil--Deligne type for $K$ (see Definition \ref{defn:WDtype}) and let $\lambda \in (\Z^n)^{\Hom_{\Qp}(K, \overline{\Q}_p)}$ be a collection of  regular Hodge--Tate weights.  
For a continuous Galois representation $\rhobar: G_K \ra \GL_n(\F)$, there is a unique reduced quotient $R_\rhobar^{\lambda, \tau}$ of the framed $\cO$-deformation ring $R_\rhobar^\square$ whose $\ovl{\Q}_p$-points correspond to lifts $\rho: G_K \ra \GL_n(\ovl{\Q}_p)$ which are potentially semistable of type $(\lambda,\tau)$ (i.e.~the Hodge--Tate weights of $\rho$ are given by $\lambda$ and $\mathrm{WD}(\rho)$ induces the inertial Weil--Deligne type $\tau$). The dimensions of these rings are independent of $(\lambda,\tau)$, and one can associate to each pair $(\lambda,\tau)$ the cycle $Z( R_\rhobar^{\lambda, \tau}/\varpi)$ in $\Spec R_\rhobar^\square/ \varpi$, which counts the irreducible components of $\Spec R_\rhobar^{\lambda, \tau}/\varpi$ with appropriate multiplicities.

The Breuil--M\'ezard conjecture describes the cycle $Z( R_\rhobar^{\lambda, \tau}/\varpi)$ in representation theoretic terms as $\lambda$ and $\tau$ vary.  
For $V$ a virtual $\GL_n(\cO_K)$-representation over $E$ expressed as the difference $V_1 - V_2$ of two genuine representations, we let $\ovl{V}$ be the virtual $\GL_n(\cO_K)$-representation $\ovl{V}_1-\ovl{V}_2$ over $\F$, where for $i = 1,2$, $\ovl{V}_i$ denotes the semisimplification of the reduction modulo $\varpi$ of any $\GL_n(\cO_K)$-stable $\cO$-lattice in $V_i$. This is independent of the choice of $V_1$ and $V_2$ and $\cO$-lattices therein. %

\begin{conj}  \label{conj:introBM} There exist cycles $\cZ_\sigma(\rhobar)$ in $\Spec R_\rhobar^\square/\varpi$ for each irreducible $\GL_n(\cO_K)$-representation $\sigma$ over $\F$ such that for all $\tau$ and all regular $\lambda$, 
\[
Z(R_\rhobar^{\lambda, \tau}/\varpi) = \sum_\sigma [\ovl{r(\tau) \otimes_E V(\lambda-\eta)}:\sigma] \cZ_\sigma(\rhobar),
\]
where $r(\tau)$ is a virtual representation of $\GL_n(\cO_K)$ over $E$ defined in \cite[\S 4.2]{Shotton} using an inertial local Langlands correspondence, $V(\lambda-\eta)$ is the restriction to $\GL_n(\cO_K)$ of the irreducible algebraic representation of $\Res_{K/\Qp} \GL_n$ with highest weight $\lambda-\eta$, and $[\ovl{r(\tau) \otimes_E V(\lambda-\eta)}:\sigma]$ denotes the \emph{(}possibly negative\emph{)} multiplicity of $\sigma$ in $\ovl{r(\tau) \otimes_E V(\lambda-\eta)}$.
\end{conj}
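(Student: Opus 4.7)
My plan is to reduce the cycle-theoretic identity to a statement about the local models $M(\lambda,\nabla_{\mathbf{a}_\tau})$ constructed in the paper, then import the combinatorial matching from Theorem \ref{thm:intro:irreducible_components_mod_p} to identify the support of both sides, and finally appeal to a global Taylor--Wiles argument to equate multiplicities. Concretely, for the candidate cycles I would set $\mathcal Z_\sigma(\rhobar)$ to be the pullback to $\Spec R_\rhobar^\square/\varpi$ of the Emerton--Gee component $\mathcal C_\sigma \subset \mathcal X_{n,\red}$ via the versal map $\Spf R_\rhobar^\square \to \mathcal X_n$; since $\Spf R_\rhobar^\square$ is formally smooth over $\mathcal X_n$ after adding auxiliary framing variables, each $\mathcal C_\sigma$ pulls back to an equidimensional closed subscheme of the correct dimension, giving a well-defined cycle class.

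The next step is to establish the identity on supports. Theorem \ref{thm:intro:LMD} provides a local model diagram relating $\mathcal X^{\lambda,\tau}$ and $M(\lambda,\nabla_{\mathbf{a}_\tau})$ by torus torsors, so completing at a point $x$ and framing gives a chain of formally smooth morphisms $R_\rhobar^{\lambda,\tau}\leftrightsquigarrow \widehat{\mathcal O}_{M(\lambda,\nabla_{\mathbf{a}_\tau}),\bar x}$ (up to trivial formal variables). Theorem \ref{thm:intro:unibranchMLM} ensures the local ring on the right is a domain at $T$-fixed points, so the irreducible components of $\Spec R_\rhobar^{\lambda,\tau}/\varpi$ are in natural bijection with the irreducible components of $\overline M(\lambda,\nabla_{\mathbf{a}_\tau})$ passing through $\bar x$. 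By Theorem \ref{thm:intro:irreducible_components_mod_p}(2), these are parametrized by $\JH(\overline{\sigma(\tau)\otimes V(\lambda-\eta)})$, and by part (3) of the same theorem the component labeled by $\sigma$ matches $\mathcal C_\sigma$ under the local model diagram. Tame inertial local Langlands identifies $\sigma(\tau)$ with $r(\tau)$ (up to twist) for tame $\tau$, giving the equality of support: the reduced cycle underlying $Z(R_\rhobar^{\lambda,\tau}/\varpi)$ is the sum, with multiplicity one each, over $\sigma \in \JH(\overline{r(\tau)\otimes V(\lambda-\eta)})$ of the reduced cycles $\mathcal Z_\sigma(\rhobar)_{\red}$.

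The hard part, as always with Breuil--M\'ezard, is pinning down the integer multiplicities, since the local models need not be reduced along the special fiber. My strategy would be to upgrade the cycle identity through a patching argument in the style of Kisin--Emerton--Gee--Geraghty, rather than trying to compute the non-reduced structure of $M(\lambda,\nabla_{\mathbf{a}_\tau})$ directly. Namely, one globalizes $\rhobar$ to a modular Galois representation satisfying the Taylor--Wiles hypotheses and forms the patched module $M_\infty$; cycle identities on its support translate, via the inertial local Langlands and type-theoretic arguments, into equalities in the Grothendieck group of $R_\rhobar^\square/\varpi$-modules that imply the multiplicity version of the Breuil--M\'ezard formula. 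The upper bound provided by Theorem \ref{thm:intro:unibranchMLM} (unibranchness constrains multiplicities at worst points) combined with the lower bound from the existence of many automorphic lifts (as in \cite{LLL}) is exactly the kind of squeeze that forces equality.

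The main obstacle I anticipate is reconciling the geometric input—which is clean only at the generic, $T$-fixed points of $\overline M(\lambda,\nabla_{\mathbf{a}_\tau})$—with the global patching setup, which requires enough flexibility to hit every relevant $(\lambda,\tau)$ while keeping $\rhobar$ generic at all places dividing $p$. In particular, one must ensure that the overdetermined system of Breuil--M\'ezard equations (varying $\tau$ over sufficiently generic tame types and $\lambda$ over the regular Fontaine--Laffaille range) has a unique solution in the $\mathcal Z_\sigma(\rhobar)$, which is where the combinatorial description of $\JH(\overline{r(\tau)\otimes V(\lambda-\eta)})$ via Jantzen's generic decomposition pattern and its invertibility properties enter in a crucial way.
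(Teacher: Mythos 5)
The central candidate you propose is the wrong object, and this is not a cosmetic issue. You set $\mathcal Z_\sigma(\rhobar) := i_\rhobar^*(\mathcal C_\sigma)$, the pullback of the Emerton--Gee component labeled by $\sigma$. But the paper itself points out (Remark \ref{rmk:intro:cyclemult} and the surrounding discussion) that the Breuil--M\'ezard cycle $\mathcal Z_\sigma$ is in general a nontrivial effective linear combination $\sum_{\sigma'} b_{\sigma',\sigma}\mathcal C_{\sigma'}$, with $b_{\sigma,\sigma}=1$ but other coefficients possibly nonzero, and that computer experiments for $n=4$ already show $\mathcal Z_\sigma \neq \mathcal C_\sigma$. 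With your definition the conjectured identity would simply be false. The correct definition, and the one the paper uses, is $\mathcal Z_\sigma(\rhobar) := Z(M_\infty(\sigma))$, the support cycle of the patched module evaluated at the Serre weight $\sigma$. Your proposal later drifts toward this (when invoking patching to ``pin down the multiplicities''), but it never reconciles the two definitions, and the first one must be abandoned.

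Your invocation of unibranchness is also mischaracterized. You describe it as giving an ``upper bound'' on multiplicities in a ``squeeze'' argument. The actual mechanism in the paper is different and sharper: because $M_\infty(\sigma^\circ(\lambda,\tau))$ is maximal Cohen--Macaulay over $R_\infty(\lambda,\tau)$, its support is a union of irreducible components of $\Spec R_\rhobar^{\lambda+\eta,\tau}$; Theorem \ref{thm:intro:unibranch} says this spectrum is a domain (hence irreducible) for tame $\rhobar$ and sufficiently generic tame $\tau$; and the nonvanishing criterion from \cite{LLL}/Proposition \ref{prop:patchnonzero} forces the support to be nonempty exactly when $R_\rhobar^{\lambda+\eta,\tau}\neq 0$. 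So the support is either empty or \emph{all} of $\Spec R_\rhobar^{\lambda+\eta,\tau}$. That is what makes $Z(M_\infty(\ovl{\sigma^\circ(\lambda,\tau)})) = Z(R_\rhobar^{\lambda+\eta,\tau}/\varpi)$, and then exactness of $M_\infty$ in $\sigma$ gives the cycle identity. It is not a two-sided bound comparison; it is a one-step ``irreducible spectrum plus nonempty union of components'' argument.

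Finally, be aware that Conjecture \ref{conj:introBM} is stated as a conjecture and the paper does not prove it in full: Corollary \ref{cor:BM} establishes it only for tame inertial Weil--Deligne types $\tau$, $\lambda$ ranging over a fixed finite set $\Lambda$, and $\rhobar$ sufficiently generic depending on $\Lambda$ (with the additional step of spreading from tame $\rhobar$ to all such $\rhobar$ via the Emerton--Gee stack and Theorem \ref{thm:intro:BMstack}). Your proposal reads as if it aims at the unrestricted statement; it cannot, since the unibranch result you cite is itself subject to the same genericity and tameness constraints, and the genuinely potentially semistable (nonzero monodromy) cases are never touched by these methods.
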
 

\begin{rmk}
\begin{enumerate}
\item The symbol $\tau$ is used in \cite{Shotton} to denote what is called in \emph{loc.~cit.}~an inertial type, which is distinct from, but equivalent to, the notion of a Weil--Deligne inertial type (see \cite{tate}).
We ignore this distinction above.
\item If the monodromy operator of $\tau$ is $0$, then we say that $\tau$ is an inertial type.
Then $r(\tau)$ is a genuine $\GL_n(\cO_K)$-representation associated to $\tau$ via the inertial local Langlands correspondence and denoted $\sigma(\tau)$.
As mentioned in \S \ref{sec:intro:MLM}, when $\tau$ is tame and generic, $\sigma(\tau)$ is a Deligne--Lusztig representation with a simple description (Proposition \ref{prop:tameILL}).
\item The equations in the Conjecture \ref{conj:introBM} massively overdetermine the cycles $\cZ_{\sigma}(\rhobar)$. 
In fact, the cycles are uniquely determined by any collection of $(\lambda, \tau)$ such that $\ovl{r(\tau) \otimes_E V(\lambda-\eta)}$ span the Grothendieck group of finite dimensional $\GL_n(\cO_K)$-representations over $\F$.  
\end{enumerate}
\end{rmk}

Combining the Taylor--Wiles patching method and the $p$-adic local Langlands correspondence for $\GL_2(\Qp)$ of \cite{colmez}, Kisin established the conjecture in a wide range of cases when $n = 2$ and $K= \Qp$ in \cite{kisin-fontaine-mazur}.
(When $n = 2$ and $K= \Qp$, the conjecture is now known in all cases by \cite{paskunas-BM, hu-tan, sander, tung1, tung}.)
While the Taylor--Wiles patching method is available in some generality, the $p$-adic Langlands correspondence is not {known} for $n>2$ or $n=2$ and $K \neq \Qp$. 
Absent a general $p$-adic Langlands correspondence, one can still try to establish this conjecture for classes of pairs $(\lambda,\tau)$. 
For example, \cite{Gee-Kisin} prove Conjecture \ref{conj:introBM} when $n=2$, $\lambda = \eta$, and the monodromy operator of $\tau$ is $0$. 
In \S \ref{sec:BMminweight}, we prove this conjecture when $K/\Qp$ is unramified for sufficiently generic $\rhobar$ and pairs $(\lambda, \tau)$ where $\lambda$ ranges over a finite set and $\tau$ ranges over tame inertial Weil--Deligne types. 

\begin{thm}[Corollary \ref{cor:BM}] \label{thm:introBMrhobar}  Assume $K/\Qp$ is unramified and let $\Lambda$ be a finite set of collections of regular Hodge--Tate weights.
If $\rhobar: G_K \ra \GL_n(\F)$ is sufficiently generic \emph{(}depending on $\Lambda$\emph{)}, then there exist cycles $\cZ_\sigma(\rhobar)$ in $\Spec R_\rhobar^\square/\varpi$ for each irreducible $\GL_n(\cO_K)$-representation $\sigma$ over $\F$ such that 
\[
Z(R_\rhobar^{\lambda,\tau}/\varpi) = \sum_\sigma [\ovl{r(\tau) \otimes_E V(\lambda-\eta)}:\sigma] \cZ_\sigma(\rhobar)
\]
for all $\lambda\in \Lambda$ and tame inertial Weil--Deligne types $\tau$.
\end{thm}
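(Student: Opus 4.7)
The plan is to combine the geometric input from Theorems \ref{thm:intro:LMD} and \ref{thm:intro:irreducible_components_mod_p} with a global Taylor--Wiles patching argument in the spirit of Kisin--Gee--Kisin.

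First, define the cycles $\cZ_\sigma(\rhobar)$ geometrically: for each Serre weight $\sigma$, the Emerton--Gee stack provides a canonical irreducible component $\cC_\sigma \subset \cX_{n,\red}$, and $\cZ_\sigma(\rhobar)$ is the pullback of $\cC_\sigma$ to $\Spec R_\rhobar^\square/\varpi$ via the versal morphism (vanishing when $\rhobar \notin \cC_\sigma$). Next, reduce the claim to the case that $\tau$ has zero monodromy (so that $r(\tau) = \sigma(\tau)$ is a genuine Deligne--Lusztig representation): when the monodromy of $\tau$ is nonzero, genericity of $\rhobar$ forces $R_\rhobar^{\lambda,\tau} = 0$ (by the remark following Theorem \ref{thm:2intro:LMD}), while the right-hand side vanishes because the relevant Jordan--H\"older multiplicities $[\ovl{r(\tau)\otimes V(\lambda-\eta)}:\sigma]$ are zero for all $\sigma$ with $\cZ_\sigma(\rhobar) \neq 0$ --- a direct representation-theoretic computation using Shotton's formula for $r(\tau)$ combined with the explicit description of $W^?(\rhobar)$ in the generic regime.

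For tame inertial types $\tau$, globalize $\rhobar$ to an automorphic Galois representation $\rbar$ satisfying the Taylor--Wiles hypotheses (possible by sufficient genericity) and apply a patching functor $M_\infty$ as constructed in \cite{LLL}, obtaining a module over the patched framed deformation ring $R_\infty$ equipped with a compatible $\GL_n(\cO_K)$-action. Additivity of cycles in short exact sequences yields the identity
\[
Z\bigl(M_\infty(\sigma(\tau)^\circ \otimes V(\lambda-\eta)^\circ)/\varpi\bigr) = \sum_\sigma [\ovl{\sigma(\tau)\otimes V(\lambda-\eta)}:\sigma]\cdot Z(M_\infty(\sigma)).
\]
The theorem then reduces to establishing, for a fixed ``generic rank'' $e$ independent of $(\lambda,\tau)$ in the allowed range, both $Z(M_\infty(\sigma(\tau)^\circ\otimes V(\lambda-\eta)^\circ)/\varpi) = e\cdot Z(R_\rhobar^{\lambda,\tau}/\varpi)$ and $Z(M_\infty(\sigma)) = e\cdot \cZ_\sigma(\rhobar)$.

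The last step, where the local model geometry enters decisively, verifies these rank identities. By Theorem \ref{thm:intro:LMD}, every completed local ring of $R_\rhobar^{\lambda,\tau}$ is, up to formally smooth factors, the completion of an irreducible subvariety of $M(\lambda,\nabla_{\bf{a}_\tau})$; Theorem \ref{thm:intro:unibranchMLM} then ensures this completion is a domain, so the special-fiber cycle is concentrated on a single component with a well-defined generic multiplicity which, by Theorem \ref{thm:intro:irreducible_components_mod_p}, matches $\cZ_\sigma(\rhobar)$ for a specific $\sigma \in \JH(\ovl{\sigma(\tau)\otimes V(\lambda-\eta)})$. The main obstacle is controlling the generic rank $e$ of $M_\infty$ on $R_\infty^{\lambda,\tau}$ uniformly across $(\lambda,\tau)$: this is forced by Cohen--Macaulayness of $M_\infty(\sigma(\tau)^\circ\otimes V(\lambda-\eta)^\circ)$ together with the unibranch/domain property just established, and the overdetermination built into the Breuil--M\'ezard system --- the variation of $\tau$ for each fixed $\lambda$ provides enough equations to span the relevant piece of the Grothendieck group of $\GL_n(\cO_K)$-representations --- pins down all the $\cZ_\sigma(\rhobar)$ consistently.
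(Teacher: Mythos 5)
Your proposal contains two substantive gaps that would prevent it from establishing the theorem.

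\textbf{The cycle $\cZ_\sigma(\rhobar)$ is not the pullback of $\cC_\sigma$.} You define $\cZ_\sigma(\rhobar)$ as $i_\rhobar^*(\cC_\sigma)$, the pullback of the irreducible component of $\cX_{n,\red}$ labelled by $\sigma$. But the Breuil--M\'ezard cycle $\cZ_\sigma$ is, in general, a genuine linear combination $\sum_{\sigma'} b_{\sigma',\sigma}\cC_{\sigma'}$ with $b_{\sigma,\sigma}=1$ but possibly $b_{\sigma',\sigma}\neq 0$ for $\sigma'\neq\sigma$; Remark \ref{rmk:intro:cyclemult} records that computer experiments already show $\cZ_\sigma\neq\cC_\sigma$ starting at $n=4$. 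With your definition the equation
\[
Z(R_\rhobar^{\lambda,\tau}/\varpi) = \sum_\sigma [\ovl{r(\tau) \otimes_E V(\lambda-\eta)}:\sigma]\, \cZ_\sigma(\rhobar)
\]
is simply false when $n\geq 4$. This also makes your final step internally inconsistent: the support of $M_\infty(\sigma)$ spreads over several components of $\cX_{n,\red}$, so $Z(M_\infty(\sigma))$ cannot equal $e\cdot i_\rhobar^*(\cC_\sigma)$ for any scalar $e$. The paper defines $\cZ_\sigma(\rhobar)$ as $i_\rhobar^*(\cZ_\sigma)$ for the global cycle $\cZ_\sigma$ constructed in Theorem \ref{thm:genBM}(\ref{it:genBM:2}) by formally inverting the Breuil--M\'ezard equations on the Emerton--Gee stack, which is equivalent to taking the support cycle of a minimal patching module; this is not a geometrically a priori object but is characterized by this compatibility.

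\textbf{Patching alone does not reach wild $\rhobar$.} Your last step rests on the claim that, by Theorems \ref{thm:intro:LMD} and \ref{thm:intro:unibranchMLM}, the completion of $R_\rhobar^{\lambda,\tau}$ is a domain, which forces $M_\infty(\sigma(\tau)^\circ\otimes V(\lambda-\eta)^\circ)$ to have full support on $R_\infty^{\lambda,\tau}$. But Theorem \ref{thm:intro:unibranchMLM} only establishes the unibranch (hence domain) property at \emph{$T^\vee$-fixed points} of the local model, and those correspond to \emph{tame} $\rhobar$. The paper explicitly warns that the domain property fails for certain wild $\rhobar$ (see the remark after Theorem \ref{thm:intro:unibranch} and Corollary \ref{cor:numerical_unibranch} in the appendix). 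The statement you are proving concerns arbitrary sufficiently generic $\rhobar$, so the direct patching argument breaks down precisely where it is needed most. The paper's actual route is a two-step argument: first prove the versal Breuil--M\'ezard statement for \emph{tame} $\rhobar$ via patching and the domain property (Theorem \ref{thm:genBM}(\ref{it:genBM:1})), then \emph{interpolate} to the geometric Breuil--M\'ezard conjecture on the Emerton--Gee stack by checking equality of cycles at enough tame points (Theorem \ref{thm:patchBM}, yielding Theorem \ref{thm:genBM}(\ref{it:genBM:2}) and Corollary \ref{cor:PgenBM}), and finally pull back to arbitrary $\rhobar$ via Proposition \ref{prop:BMlocalize}. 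This interpolation step is entirely absent from your proposal, yet it is indispensable for the generality of the statement.

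A minor further point: the reduction to $N_\tau=0$ works, but not because genericity of $\rhobar$ forces $R_\rhobar^{\lambda,\tau}=0$ whenever $N_\tau\neq 0$. The paper argues more carefully: for types $(\lambda+\eta,\tau)$ outside the generic tame range $\cS_{\cP,\Lambda,\mathrm{t}}$, it shows that $R_\rhobar^{\lambda+\eta,\preceq\tau}$ vanishes (using Lemma \ref{lemma:univpolyrhobar}, Corollary \ref{cor:admshape}, and a result of Enns for the case of insufficiently generic $\rho_\tau$) \emph{and separately} that $\cZ_\sigma(\rhobar)=0$ for every $\sigma\in\JH(\ovl{\sigma}(\lambda,\tau))$; only together do these make both sides of the equation vanish.
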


\begin{rmk}\label{rmk:intro:BMcycle}
\begin{enumerate}
\item If $\Lambda$ contains $\eta$, then the cycles $\cZ_\sigma(\rhobar)$ are unique since the set of $\ovl{r}(\tau)$ for tame $\tau$ span the Grothendieck group of finite-dimensional $\GL_n(\cO_K)$-representation over $\F$. If $\Lambda$ contains $\eta$ and at least one other Hodge-Tate weight, then the set of classes $[\ovl{r(\tau) \otimes_E V(\lambda-\eta)}]$ is spanning and linearly dependent in the Grothendieck group of finite-dimensional $\GL_n(\cO_K)$-representation over $\F$, so Theorem \ref{thm:introBMrhobar} produces many non-trivial linear relations among $Z(R_\rhobar^{\lambda,\tau}/\varpi)$.
\item In contrast to \cite{kisin-fontaine-mazur,Gee-Kisin}, we restrict to cases where $\tau$ is tame. 
However, this result is new even for $n=2$ if $K \neq \Q_p$.
Indeed, in contrast to \cite{Gee-Kisin}, $\Lambda$ may contain non-minuscule weights (which are necessarily ``small" relative to $p$; see Remark \ref{rmk:psmall}).

\item  For \label{it:intro:BMcycle:3} a suitable globalization of $\rhobar$ (as defined in \cite[\S 5.1.1]{EG}) 
and a choice of global setup, the cycles $\cZ_{\sigma}(\rhobar)$ are expected to be the support cycle of {any} patched module $M_{\infty}(\sigma)$ associated to the Serre weight $\sigma$, 
thereby connecting the theorem directly to modularity and the weight part of Serre's conjecture. When $\rhobar$ is tame, our proof establishes this expectation---see the discussion around Theorem \ref{thm:intro:unibranch}.
 In particular, for tame $\rhobar$, this compatibility with patching functors gives a \emph{global} characterization of $\cZ_{\sigma}(\rhobar)$. %
\end{enumerate}
\end{rmk}

Our starting point to attack Theorem \ref{thm:introBMrhobar} is the Taylor--Wiles method, following the approach of \cite{kisin-fontaine-mazur,Gee-Kisin}. The Taylor--Wiles method provides a large supply of exact functors $M_\infty$ from $\GL_n(\cO_K)$-representations over $\cO$ to maximal Cohen--Macaulay modules of generic rank at most one over (power series over) framed local deformation rings with $p$-adic Hodge-theoretic conditions.
Given $\GL_n(\cO_K)$-stable $\cO$-lattices $\sigma(\tau)^\circ$,  $V(\lambda-\eta)^\circ$ in $\sigma(\tau)$,  $V(\lambda-\eta)$ respectively, it is a folklore expectation (affirmed under mild assumptions by \cite{FKP})
that the Fontaine--Mazur conjecture implies that $M_\infty(\sigma(\tau)^\circ \otimes_{\cO} V(\lambda-\eta)^\circ)$ has full support on $\Spec R_\rhobar^{\lambda, \tau}$. If this were true, the exactness of $M_\infty$ would imply Conjecture \ref{conj:introBM} holds with $\cZ_\sigma(\rhobar)$ taken to be the support cycle of $M_\infty(\sigma)$. Unfortunately, little seems to be known about $\supp \, M_\infty(\sigma(\tau)^\circ \otimes_{\cO} V(\lambda-\eta)^\circ)$ beyond the $\GL_2(\Q_p)$ case, and we are unable to make this work for all $\rhobar$ (even for generic tame $\tau$).

To prove Theorem \ref{thm:introBMrhobar}, our first step is to establish it when $\rhobar$ is tame, where we can show that indeed $\supp \, M_\infty(\sigma(\tau)^\circ \otimes_{\cO} V(\lambda-\eta)^\circ) = \Spec R_\rhobar^{\lambda, \tau}$.
The key input is the following theorem, which follows from the corresponding result for the local models (Theorems \ref{thm:intro:unibranchMLM} and  \ref{thm:intro:LMD}):
\begin{thm}[Theorem \ref{thm:stack_local_model}] \label{thm:intro:unibranch}
Assume $K/\Qp$ is unramified.  Let $\lambda \in (\Z^n)^{\Hom_{\Qp}(K, \overline{\Q}_p)}$ be a collection of regular Hodge--Tate weights, $\tau$ be a tame inertial type for $K$, and $\rhobar:G_K \ra \GL_n(\F)$ be an $n$-dimensional representation.   If $\tau$ is sufficiently generic  and $\rhobar$ is tame then $R_{\rhobar}^{\lambda, \tau}$ is a domain $($or zero$)$.  
\end{thm}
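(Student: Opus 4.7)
Granting the deep geometric inputs Theorem \ref{thm:intro:unibranchMLM} (unibranch property of $M(\lambda,\nabla_{\bf{a}})$ at torus-fixed points) and Theorem \ref{thm:intro:LMD} (local model diagram for $\cX^{\lambda,\tau}$), the plan is to identify tame $\rhobar$ with a $T^{\cJ}$-fixed point of the local model and then propagate the domain property through the diagram.

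Reduction to the local model: The ring $R_\rhobar^{\lambda,\tau}$ is a versal ring for $\cX^{\lambda,\tau}$ at the $\F$-point corresponding to $\rhobar$. Applying Theorem \ref{thm:intro:LMD} (arranging $\lambda$ to be regular dominant) yields some $\tld{z}$ in the open cover whose chart $\cX^{\leq\lambda,\tau}_{\reg}(\tld{z})$ contains $\rhobar$; the local model diagram then consists of two $T^{\cJ}$-torsors, and after completing at $\rhobar$ and at a corresponding point $x \in M(\lambda,\nabla_{\bf{a}_\tau})(\ovl{\F})$ one obtains an isomorphism
\[
R_\rhobar^{\lambda,\tau}[\![y_1,\ldots,y_d]\!] \;\cong\; \widehat{\cO}_{M(\lambda,\nabla_{\bf{a}_\tau}),x}[\![z_1,\ldots,z_d]\!]
\]
with $d=\dim T^{\cJ}$. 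If no such $x$ exists then $R_\rhobar^{\lambda,\tau}=0$ and the claim is vacuous.

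Identification as a torus-fixed point: Because $\rhobar$ is tame, its associated Breuil--Kisin module with descent data of type $\tau$ splits as a direct sum of rank-one submodules with respect to a canonical eigenbasis for the tame descent action. The Frobenius matrix in such a basis is therefore monomial, which is exactly the condition that the corresponding point of the affine flag variety be fixed by the diagonal torus $T^{\cJ}$. Since $M(\lambda,\nabla_{\bf{a}_\tau})$ is a closed subscheme of $\Gr_{\cG}$ stable under the $T$-action, the point $x$ is then a $T^{\cJ}$-fixed point. The sufficient genericity of $\tau$ is used here to ensure that the canonical eigenbasis formalism applies and that the chart $\tld{z}$ can be chosen to contain $\rhobar$.

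Conclusion: By Theorem \ref{thm:intro:unibranchMLM}, $\widehat{\cO}_{M(\lambda,\nabla_{\bf{a}_\tau}),x}$ is a domain. Since adjoining formal variables preserves the domain property, $\widehat{\cO}_{M(\lambda,\nabla_{\bf{a}_\tau}),x}[\![z_1,\ldots,z_d]\!]$ is a domain, and so by the displayed isomorphism $R_\rhobar^{\lambda,\tau}[\![y_1,\ldots,y_d]\!]$ is a domain; therefore $R_\rhobar^{\lambda,\tau}$ itself is a domain. The principal obstacle in this outline is the identification of tame $\rhobar$ with a $T^{\cJ}$-fixed point of $M(\lambda,\nabla_{\bf{a}_\tau})$: this rests on the detailed dictionary between tame Galois representations, Breuil--Kisin modules with tame descent data, and monomial points of the affine flag variety, and on pairing the chart $\tld{z}$ correctly with $\rhobar$ so that the local model diagram of Theorem \ref{thm:intro:LMD} applies.
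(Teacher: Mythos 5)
Your plan is the right one in broad outline, and the reduction of the tameness hypothesis to ``$\rhobar$ corresponds to a $T^{\vee,\cJ}$-fixed point'' (via semisimple Breuil--Kisin modules, Proposition \ref{prop:semisimple_admissible}) matches the paper. However, your displayed isomorphism
\[
R_\rhobar^{\lambda,\tau}[\![y_1,\ldots,y_d]\!] \;\cong\; \widehat{\cO}_{M(\lambda,\nabla_{\bf{a}_\tau}),x}[\![z_1,\ldots,z_d]\!]
\]
is not what Theorem \ref{thm:stack_local_model} actually provides. The local model diagram there compares $\cX^{\leq\lambda,\tau}_{\reg}(\tld{z})$, the chart of the scheme-theoretic \emph{union} $\bigcup_{\lambda'\leq\lambda,\,\lambda'\text{ reg.~dom.}}\cX^{\lambda',\tau}$, with the $p$-adic completion of the corresponding union $\bigcup_{\lambda'\leq\lambda}M(\lambda',\nabla_{\bf{a}_\tau})$, not the individual piece $\cX^{\lambda,\tau}$ with the individual $M(\lambda,\nabla_{\bf{a}_\tau})$. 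After completing, you therefore only get an isomorphism between the versal ring of the union and the completed local ring of the union, and the (non-canonical, Elkik-produced) isomorphism is not \emph{a priori} compatible with the decomposition into $\lambda'$-labelled pieces. Your argument would go through verbatim only in the special case $\lambda=\eta$, where the union collapses.

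To bridge this gap, the paper runs a branch-counting induction. Using Theorem \ref{thm:model_unibranch}, each $U(\tld{z},\lambda',\nabla_{\bf{a}_\tau})$ is unibranch at $\tld{z}$, so the completed local ring of the union of local models at $\tld{z}$ has exactly $\#\{\lambda''\leq\lambda' \text{ reg.~dom.}\mid\tld{z}\in\Adm^\vee(\lambda'')\}$ minimal primes. Transporting this across the local model diagram (which is an isomorphism on the union level), one gets the same count for $\sum_{\lambda''\leq\lambda'}n_{\lambda''}$, where $n_{\lambda''}$ is the number of minimal primes of the versal ring of $\cX^{\lambda'',\tau}$ at $\rhobar$. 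On the other hand, Lemma \ref{lem:local_model_lower_bound} (which goes through the patching/global input of Proposition \ref{prop:patchnonzero}) shows $n_{\lambda''}\geq 1$ exactly when $\tld{z}\in\Adm^\vee(\lambda'')$. Matching the upper and lower bounds then forces $n_{\lambda''}\in\{0,1\}$ for each $\lambda''$, which is the statement you want. This counting step is essential and cannot be skipped: without it there is no way to conclude that the domain property descends from the union to each individual $R_\rhobar^{\lambda,\tau}$.
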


\begin{rmk}  \begin{enumerate}
\item
Theorem \ref{thm:intro:unibranch} does not hold in general without the tameness assumption, for example when $\lambda$ corresponds to $(3,1,0)$ in all embeddding $K\into \ovl{\Q}_p$,
for every generic $\tau$  there is some wild $\rhobar$ where it fails, see Corollary \ref{cor:numerical_main_theorem} and Corollary \ref{cor:numerical_unibranch}. This is the reason for the tameness assumption here and in the global applications in \S \ref{sec:intro:WPSC}. However, the theorem does hold for possibly wild $\rhobar$ in several situations such as: $n =2$, $n=3$ and $\lambda= \eta$ (see \cite[Theorem 7.2.1]{EGS}, \cite[Corollary 3.3.3]{GL3Wild}),
or when $n$ is arbitrary and $\rhobar$ has specific \emph{shapes} relative to $\tau$.
\item If $R^{\lambda,\tau}_{\rhobar} \neq 0$, then sufficient genericity of $\tau$ implies that of $\rhobar$ and vice versa (generally with different choices of universal polynomial). Because of this, the conclusion of Theorem \ref{thm:intro:unibranch} also holds if we let $\rhobar$ be tame and sufficiently generic but impose no hypothesis on $\tau$ \cite[Theorem 4.0.1]{OBW}.
\end{enumerate}
\end{rmk}
Theorem \ref{thm:intro:unibranch} immediately implies Theorem \ref{thm:introBMrhobar} for tame $\rhobar$: The results of \cite{LLL} imply that if $\tau$ is sufficiently generic, $M_\infty(\sigma(\tau)^\circ\otimes_{\cO} V(\lambda-\eta)^\circ)$ is nonzero if and only if $R_\rhobar^{\lambda,\tau}$ is nonzero. Since the support of $M_\infty(\sigma(\tau)^\circ\otimes_{\cO} V(\lambda-\eta)^\circ)$ must be a union of irreducible components and $R_\rhobar^{\lambda,\tau}$ is a domain, the support is everything.

Having proven Theorem \ref{thm:introBMrhobar} for tame $\rhobar$, the second step is to spread the result to all $\rhobar$. This is achieved through the use of the Emerton--Gee stack $\cX_n$. For regular $\lambda$, the special fiber of the substack $\cX^{\lambda,\tau}$ is supported on a union of irreducible components of $\cX_{n,\red}$. Thus we can associate to it a top-dimensional cycle $\cZ_{\lambda, \tau}$ on $\cX_{n, \mathrm{red}}$ by recording the irreducible components of $\cX^{\lambda,\tau}_{\red}$ weighted by their multiplicities.   In \cite{EGstack}, Emerton and Gee formulate a Breuil--M\'ezard conjecture on the stack $\cX_n$:   

\begin{conj}[Conjecture 8.2.2 \cite{EGstack}]\label{conj:intro:BMstack}  %
For each Serre weight $\sigma$, there exists an effective top-dimensional cycle $\cZ_\sigma$ on $\cX_{n,\mathrm{red}}$ such that for all regular $\lambda$ and inertial types $\tau$, we have
\[
\cZ_{\lambda,\tau} = \sum_{\sigma} [\ovl{\sigma(\tau) \otimes_E V(\lambda-\eta)}:\sigma] \cZ_\sigma.
\]
\end{conj}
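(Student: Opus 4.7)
My plan is to set $\cZ_\sigma := \cC_\sigma$ and prove the conjectured identity
\[
\cZ_{\lambda,\tau} = \sum_\sigma [\ovl{\sigma(\tau)\otimes_E V(\lambda-\eta)}:\sigma]\,\cC_\sigma
\]
under the same sufficient genericity hypotheses as in Theorem \ref{thm:introBMrhobar}. The essential strategy is to reduce the equality of these top-dimensional cycles on the equidimensional stack $\cX_{n,\red}$ to the already-proven equality of local cycles at suitably chosen versal points.

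The supports of the two sides match by Theorem \ref{thm:intro:irreducible_components_mod_p}(1), which identifies $\cX^{\lambda,\tau}_{\red}$ with $\bigcup_\sigma \cC_\sigma$ for $\sigma\in\JH(\ovl{\sigma(\tau)\otimes_E V(\lambda-\eta)})$, so it remains only to compare multiplicities. Since $\cC_\sigma$ is a top-dimensional irreducible component, its multiplicity in $\cZ_{\lambda,\tau}$ can be computed at any single smooth point; I would take such a point to correspond to a tame, sufficiently generic $\rhobar \in \cC_\sigma$ (the existence of such a point on $\cC_\sigma$ being a separate input). At the corresponding versal ring, the cycle $\cZ_{\lambda,\tau}$ pulls back to $Z(R_\rhobar^{\lambda,\tau}/\varpi)$ up to the smooth factor from the framing. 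Applying Theorem \ref{thm:introBMrhobar} to this tame $\rhobar$ yields
\[
Z(R_\rhobar^{\lambda,\tau}/\varpi)=\sum_\sigma [\ovl{\sigma(\tau)\otimes_E V(\lambda-\eta)}:\sigma]\,\cZ_\sigma(\rhobar),
\]
so the global identity reduces to recognizing each local cycle $\cZ_\sigma(\rhobar)$ as the pullback of the global component $\cC_\sigma$ along the versal map.

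I expect the main obstacle to lie in this last identification: the cycle $\cZ_\sigma(\rhobar)$ is characterized only implicitly via the overdetermined system of Breuil--M\'ezard equations at $\rhobar$, while $\cC_\sigma$ is defined globally on $\cX_{n,\red}$. To match them, I would exploit the mod $p$ local model diagram of Theorem \ref{thm:intro:irreducible_components_mod_p}(3), which canonically identifies $\cC_\sigma$ in a neighborhood of $\rhobar$ with the explicit component $\overline{M}(\lambda,\nabla_{\bf{a}_\tau})_\sigma$ of the local model. Combined with Theorem \ref{thm:intro:LMD}, this should realize $\cZ_\sigma(\rhobar)$ as the pullback of $\cC_\sigma$ and force equality of multiplicities at every tame, sufficiently generic $\rhobar$, hence globally. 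This line of argument only settles the conjecture under the genericity hypotheses inherited from the local Breuil--M\'ezard theorem; removing them for the general conjecture would seem to require substantially different techniques.
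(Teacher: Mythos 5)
Your proposal has a fundamental gap: the cycles $\cZ_\sigma$ in Conjecture \ref{conj:intro:BMstack} are not in general equal to the irreducible components $\cC_\sigma$. The paper is explicit about this in Remark \ref{rmk:intro:cyclemult}: one writes $\cZ_\sigma = \sum_{\sigma'} b_{\sigma',\sigma}\,\cC_{\sigma'}$, and while it is shown (Proposition \ref{prop:BMcoeff}) that $b_{\sigma,\sigma}=1$ and that $b_{\sigma',\sigma}\neq 0$ forces $\sigma$ to cover $\sigma'$, the other coefficients $b_{\sigma',\sigma}$ need not vanish. The remark records that $\cZ_\sigma=\cC_\sigma$ is verified for $n=2,3$ (under mild genericity), but that computer experiments for $n=4$ indicate $\cZ_\sigma$ is not always irreducible. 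So the ansatz $\cZ_\sigma:=\cC_\sigma$ does not satisfy the Breuil--M\'ezard equations in general, and your reduction to local cycles cannot recover the identity you want: the multiplicity of $\cC_\sigma$ in $\cZ_{\lambda,\tau}$ is not $[\ovl{\sigma(\tau)\otimes V(\lambda-\eta)}:\sigma]$; it is the full sum $\sum_\kappa [\ovl{\sigma(\tau)\otimes V(\lambda-\eta)}:\kappa]\, b_{\sigma,\kappa}$.

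The precise point where your argument breaks is the step you yourself flag: ``recognizing each local cycle $\cZ_\sigma(\rhobar)$ as the pullback of the global component $\cC_\sigma$.'' This is false in general. In the paper, $\cZ_\sigma(\rhobar)$ is constructed as the support cycle $Z(M_\infty(\sigma))$ of a patched module via a minimal detectable patching functor (Theorem \ref{thm:genBM}), and its globalization $\cZ_\sigma$ is defined by inverting the Breuil--M\'ezard equations and checking the resulting cycles agree with the patched ones at all tame points in $\cP_{\mathrm{ss}}$. The mod $p$ local model diagram of Theorem \ref{thm:intro:irreducible_components_mod_p}(3) does identify $\cC_\sigma$ with the component $\overline{M}(\lambda,\nabla_{\bf{a}_\tau})_\sigma$ of the local model, but it carries no information about the multiplicity structure of the non-reduced scheme $\cX^{\lambda+\eta,\tau}_{\F}$ along that component; that extra scheme-theoretic thickening is precisely what the $b_{\sigma',\sigma}$ encode. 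Finally, note that the statement you are addressing is a \emph{conjecture} imported from \cite{EGstack}; the paper only establishes the restricted version Theorem \ref{thm:intro:BMstack} (Corollary \ref{cor:PgenBM}), for tame inertial types under strong genericity, and does so by the patching-functor construction of $\cZ_\sigma$ sketched above rather than by taking $\cZ_\sigma = \cC_\sigma$.
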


We first remark that the potentially crystalline case of  Conjecture \ref{conj:introBM} is a consequence of Conjecture \ref{conj:intro:BMstack} by completing at $\rhobar$ and pulling back the cycles.   While it has been understood by experts that the Breuil--M\'ezard conjecture should behave well as $\rhobar$ varies, the Emerton--Gee stack makes it possible to study the conjecture by interpolation. %
 We refer to the conjectural cycles $\cZ_{\sigma}$ as \emph{Breuil--M\'ezard cycles}.   As in Conjecture \ref{conj:introBM}, the system of equations in Conjecture \ref{conj:intro:BMstack} for varying $\lambda$ and $\tau$ over-determines the Breuil--M\'ezard cycles.  
 
 \begin{rmk} 
Caraiani--Emerton--Gee--Savitt \cite{CEGS} recently proved Conjecture \ref{conj:intro:BMstack} in the potentially Barsotti--Tate case (i.e.~in parallel weight $(1, 0)$) when $n =2$ for any extension $K/\Qp$.  The proof uses both the weight part of Serre's conjecture for $\GL_2$ proved by Gee--Liu--Savitt \cite{GLS, GLS15} and the Breuil--M\'ezard conjecture for potentially Barsotti--Tate representations established by Gee--Kisin \cite{Gee-Kisin}. 
 \end{rmk}

By interpolating from Theorem \ref{thm:introBMrhobar} for tame $\rhobar$, we establish a portion of Conjecture \ref{conj:intro:BMstack}.   

 \begin{thm}[Corollary \ref{cor:PgenBM}]
 \label{thm:intro:BMstack}   Assume $K/\Qp$ is unramified. Fix a finite subset $\Lambda \subset (\Z^n)^{\Hom_{\Qp}(K, \overline{\Q}_p)}$ of regular dominant weights. 
There exists a top-dimensional {effective} cycle $\cZ_{\sigma}$ on $\cX_{n, \red}$ for each 
Serre weight $\sigma$ such that for all $\lambda \in \Lambda$ and all sufficiently generic \emph{(}depending on $\Lambda$\emph{)} tame inertial types $\tau$,  
 \[
 \cZ_{\lambda,\tau} = \sum_{\sigma} [\ovl{\sigma(\tau) \otimes_E V(\lambda-\eta)}:\sigma] \cZ_\sigma.
 \] 
\end{thm}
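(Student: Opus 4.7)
The plan is to interpolate the local Breuil--M\'ezard identity of Theorem \ref{thm:introBMrhobar} (Corollary \ref{cor:BM}) to a global identity on $\cX_{n,\red}$, using the fact that $R_{\rhobar}^\square$ is a versal ring for $\cX_n$ at $\rhobar$. The key geometric point is that the multiplicity of an irreducible component $\cC_{\sigma'}$ of $\cX_{n,\red}$ in a top-dimensional cycle on $\cX_{n,\red}$ equals the multiplicity of the corresponding component in the pulled-back cycle on $\Spec R_{\rhobar}^\square/\varpi$ at any $\rhobar \in \cC_{\sigma'}(\ovl{\F}_p)$. Thus the job is to define the $\cZ_\sigma$ so that their pullbacks to tame generic $\rhobar$ recover the local cycles supplied by the Corollary.

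Concretely, I first decompose $\cZ_{\lambda,\tau} = \sum_{\sigma'} n^{(\sigma')}_{\lambda,\tau}\, \cC_{\sigma'}$ with $n^{(\sigma')}_{\lambda,\tau}\in \Z_{\geq 0}$. For each tame sufficiently generic $\rhobar \in \cC_{\sigma'}(\ovl{\F}_p)$, Corollary \ref{cor:BM} furnishes effective cycles $\cZ_\sigma(\rhobar)$ on $\Spec R_{\rhobar}^\square/\varpi$. Writing $m^{(\rhobar)}_{\sigma,\sigma'}\in \Z_{\geq 0}$ for the multiplicity in $\cZ_\sigma(\rhobar)$ of the component corresponding to $\cC_{\sigma'}$, pulling back $\cZ_{\lambda,\tau}$ to $\Spec R_{\rhobar}^\square/\varpi$ and extracting the $\cC_{\sigma'}$-multiplicity from both sides of the identity in Corollary \ref{cor:BM} yields
\[
n^{(\sigma')}_{\lambda,\tau} \;=\; \sum_\sigma [\ovl{\sigma(\tau) \otimes_E V(\lambda-\eta)}:\sigma]\, m^{(\rhobar)}_{\sigma,\sigma'}
\]
for all $\lambda\in\Lambda$ and tame sufficiently generic $\tau$. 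Enlarging the family of allowed $(\lambda,\tau)$ so that the coefficient matrix $([\ovl{\sigma(\tau)\otimes_E V(\lambda-\eta)}:\sigma])$ has full column rank on the finite set of Serre weights that actually appear, the independence of the left side from $\rhobar$ forces $m^{(\rhobar)}_{\sigma,\sigma'}$ to be independent of the chosen tame generic $\rhobar \in \cC_{\sigma'}$. Call this common value $m_{\sigma,\sigma'}$ and set $\cZ_\sigma := \sum_{\sigma'} m_{\sigma,\sigma'}\, \cC_{\sigma'}$; this is an effective top-dimensional cycle on $\cX_{n,\red}$.

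That these $\cZ_\sigma$ satisfy the target identity is then immediate: reading off the multiplicity of each $\cC_{\sigma'}$ on both sides of
\[
\cZ_{\lambda,\tau} \;=\; \sum_{\sigma} [\ovl{\sigma(\tau) \otimes_E V(\lambda-\eta)}:\sigma]\, \cZ_\sigma
\]
reproduces the defining equation for $n^{(\sigma')}_{\lambda,\tau}$. The main obstacles lie in (i) ensuring that every relevant $\cC_{\sigma'}$ contains a tame sufficiently generic $\ovl{\F}_p$-point, which is closely tied to (and supplied by) our proof of the weight part of Serre's conjecture for tame $\rhobar$ underlying Theorem \ref{thm:i2i:SWC}; and (ii) producing a rich enough family of $(\lambda,\tau)$ to guarantee full column rank of the coefficient matrix, a purely representation-theoretic verification carried out by varying the tame type $\tau$ freely and invoking Jantzen-type descriptions of $\ovl{\sigma(\tau)\otimes_E V(\lambda-\eta)}$.
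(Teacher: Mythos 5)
Your overall strategy — pulling back to versal rings at tame generic $\rhobar$, reading off multiplicities component by component, and then reassembling a global cycle — is precisely the paper's mechanism (Proposition \ref{prop:BMglobalize} and the surrounding discussion). The issue is in how you justify the $\rhobar$-independence of the local multiplicities $m^{(\rhobar)}_{\sigma,\sigma'}$.

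You claim that "enlarging the family of allowed $(\lambda,\tau)$" makes the coefficient matrix $\bigl([\ovl{\sigma(\tau)\otimes_E V(\lambda-\eta)}:\sigma]\bigr)$ have full column rank on the Serre weights that occur, so that the linear system pins down the $m^{(\rhobar)}_{\sigma,\sigma'}$ uniquely. This is exactly what fails: the set of virtual representations $\ovl{\sigma(\tau)\otimes_E V(\lambda-\eta)}$ available under the genericity hypotheses of the theorem does \emph{not} span the Grothendieck group of $\GL_n(\cO_K)$-representations (Remark \ref{rmk:intro:characterize}), because the genericity constraint on $\tau$ excludes precisely those types whose reductions would be needed to complete a spanning set. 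So the system is genuinely underdetermined, and the column-rank argument does not close.

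The paper circumvents this as follows. Rather than extracting the $m_{\sigma,\sigma'}$ from the linear system, it defines each candidate cycle $\cZ_\sigma$ directly by a formal inversion in the Grothendieck group against a larger family $\widehat{\cS}$ (which \emph{is} Breuil--M\'ezard modulo "irrelevant" weights that a patching functor must annihilate), followed by a truncation idempotent $\tr_{\sigma,\cS}$ that kills the components coming from types outside $\cS$. It then verifies $i_x^*(\cZ_\sigma)=\cZ_\sigma(x)$ for all tame generic $x$ by a recursive computation along the defect filtration on $W^?(\rhobar)$ (Proposition \ref{prop:defectlower} and the alternative proof of Theorem \ref{thm:genBM}(\ref{it:genBM:3})), not by rank considerations. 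Uniqueness and effectivity then come from Lemma \ref{lemma:localizeinj}. To repair your argument, you would need either to prove a genuine spanning statement for the restricted family (which the paper explicitly does not have), or to replace the rank argument with a mechanism for reconstructing $m_{\sigma,\sigma'}$ intrinsically — for example, the defect recursion in \S\ref{sec:BMbasis}, which computes $\cZ_\sigma(\rhobar)$ purely from the combinatorics of $W^?(\rhobar)$ and thus exhibits $\rhobar$-independence directly.

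One smaller point: your construction only produces $\cZ_\sigma$ for the generic Serre weights $\sigma$ (those appearing in some $\JH(\ovl{\sigma}(\lambda,\tau))$ for allowed $(\lambda,\tau)$). For the remaining $\sigma$ the statement still requires a cycle; the paper simply takes these to be zero, and you should say so.
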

 
\begin{rmk}\label{rmk:intro:characterize}
\begin{enumerate}
\item In contrast to Theorem \ref{thm:introBMrhobar}, the set of $\ovl{\sigma(\tau) \otimes_E V(\lambda-\eta)}$ appearing in Theorem \ref{thm:intro:BMstack} does not span the Grothendieck group of $\GL_n(\cO_K)$-representations, and so it is not immediately apparent that the cycles $\cZ_\sigma$ are uniquely determined. However, though neither stated in nor implied by the theorem, the $\cZ_\sigma$ we construct satisfy a compatibility with patching functors after localizing at tame $\rhobar$ as in Remark \ref{rmk:intro:BMcycle}\eqref{it:intro:BMcycle:3}. It is this compatibility that characterizes the $\cZ_\sigma$ (see also Remark \ref{rmk:algorithm_Zsigma} for an algorithm to compute it without choosing patching functors).

Furthermore, if we assume an extension of Theorem \ref{thm:intro:BMstack} to a sufficiently large spanning set, then the cycles from this extension must agree with the $\cZ_\sigma$ we construct for sufficiently generic $\sigma$, cf.~ Theorem \ref{thm:genBM}. %
With this understanding, we can freely invoke the cycles $\cZ_\sigma$ for sufficiently generic $\sigma$ in our discussion.
\item Even though the equations in the theorem on their own do not suffice to determine all the $\cZ_\sigma$ that occurs in them, they do determine a subset of $\cZ_{\sigma}$ for which $\sigma$ is sufficiently generic, cf.~ Proposition \ref{prop:BMcycles_underdetermined} and Remark \ref{rmk:BMcycles_underdetermined}.
\end{enumerate}
\end{rmk}

 We now explain the idea of the proof of Theorem \ref{thm:intro:BMstack}. We first note that one can invert the equations in Conjecture \ref{conj:intro:BMstack}, and get a candidate for the cycle $\cZ_{\sigma}$: any expression of $\sigma$ in the Grothendieck group of $\GL_n(\cO_K)$-representations as a linear combination of reductions of $\GL_n(\cO_K)$-representations gives a candidate as a linear combination of the $\cZ_{\lambda,\tau}$.  But there is no a priori reason for these candidates to satisfy all the required cycle equations. However, for tame $\rhobar$, the compatibility of $\cZ_{\sigma}(\rhobar)$ with patching functors, cf.~Remark \ref{rmk:intro:BMcycle}(\ref{it:intro:BMcycle:3}), shows that the candidate cycle $\cZ_\sigma$ recovers the already constructed $\cZ_\sigma(\rhobar)$. This implies the equations hold after completion at tame $\rhobar$, and we conclude because there are enough tame $\rhobar$ to detect equality of cycles in $\cX_n$.  

At this point, the proof of Theorem \ref{thm:introBMrhobar} is almost complete. The subtlety is that Theorem \ref{thm:intro:BMstack} only controls the cycles $\cZ_{\lambda, \tau}$ for $\tau$ sufficiently generic.
To deal with this, we invoke a result of \cite{LLL}, which shows that a sufficiently generic (depending on $\lambda$) $\rhobar$ lies in $\cZ_{\lambda, \tau}$ only if $\tau$ is sufficiently generic. This allows us to check the equations in Theorem \ref{thm:introBMrhobar} not covered by Theorem \ref{thm:intro:BMstack}, by showing that they reduce to $0=0$.   

\begin{rmk} \label{rmk:intro:cyclemult} We can certainly write $\cZ_{\sigma} = \sum_{\sigma'}  b_{\sigma', \sigma} \cC_{\sigma'}$, and it is natural to ask what the coefficients $b_{\sigma', \sigma}$ are.
We prove that $b_{\sigma, \sigma} = 1$, and that $b_{\sigma', \sigma} \neq 0$ implies a restrictive relation between $\sigma$ and $\sigma'$, namely that $\sigma$ \emph{covers} $\sigma'$ (Definition \ref{defn:cover}).   When $n =2, 3$, we have $\cZ_{\sigma} = \cC_{\sigma}$ (with mild genericity assumptions).    In \S \ref{sec:BMbasis}, we describe an inductive algorithm for computing $\cZ_{\sigma}$ if one knows the cycles $\cZ_{\eta, \tau}$ for enough $\tau$. In turn, the cycles $\cZ_{\eta, \tau}$ can be computed using the local model $M_{\cJ}(\eta, \nabla_{\bf{a}_\tau})$, introduced in \S \ref{sec:intro:MLM} above, which is an ``explicit" algebraic variety. This algorithm can in theory be implemented on a computer. We have performed computer experiments when $n = 4$, which indicate that $\cZ_{\sigma}$ is not always irreducible.  
We remark that in the analogous situation of \cite{BHS},  the locally analytic Breuil--M\'ezard cycles are also not irreducible in general, beginning with $n = 8$. 
\end{rmk}

\subsection{The weight part of Serre's conjecture} \label{sec:intro:WPSC}
Serre's conjecture \cite{serre-duke} predicts that any odd irreducible two-dimensional mod $p$ Galois representation arises from a modular form, and moreover predicts the minimal level and weight of such a form.
There has been substantial progress in formulating and proving generalizations of this conjecture in higher rank.
While generalizations of the notion of minimal level are rather straightforward, generalizations of the weight part of Serre's conjecture are far from it.
Herzig \cite{herzig-duke} introduced a representation theoretic generalization in the generic tame case, which we now discuss in the context of definite unitary groups.

Let $F$ be an imaginary CM number field unramified at $p$.  Let $F^+$ be the maximal totally real subfield.  Assume $F^+ \neq \Q$ and that all primes of $F^+$ above $p$ split in $F$.  
Let $G$ be a unitary group over $F^+$ which splits over $F$ and is isomorphic to $U(n)$ at each infinite place. %
Let $K^p\subset G(\A_{F^+}^{\infty,p})$ be a compact open subgroup, and let $S(K^p,\F)$ be the space of $\F$-valued locally constant functions on $G(F^+) \backslash G(\A_{F^+}^\infty)/K^p$.
Then $S(K^p,\F)$ has an action of a spherical Hecke algebra $\bT$ (away from $p$ and finitely many other places).
If $\fm \subset \bT$ is a maximal ideal such that $S(K^p,\F)_{\fm}$ is nonzero, then there is a unique semisimple Galois representation $\rbar: G_F \ra \GL_n(\F)$ up to conjugation which matches $\fm$ via the Satake isomorphism.
We say that $\rbar$ is \emph{automorphic}.

Fix places $\tld{v}$ of $F$ lying over $v$ for each place $v \mid p$ of $F^+$ which together give an isomorphism $G(\cO_{F^+}\otimes_{\Z} \Z_p) \cong \GL_n(\cO_{F^+}\otimes_{\Z} \Z_p)$.
A \emph{global Serre weight} is an irreducible smooth $\F$-representation $V$ of $\GL_n(\cO_{F^+}\otimes_{\Z} \Z_p)$. Any such representation has the form $\otimes_{v \mid p} V_v$ with $V_v$ an irreducible representation of $\GL_n(k_v)$ where $k_v$ is the residue field of $F^+$ at $v$. 
We say $\overline{r}$ is \emph{modular} of (global Serre) weight $V$ if $\Hom_{\GL_n(\cO_{F^+}\otimes_{\Z} \Z_p)}(V,S(K^p,\F)_{\fm})$ is nonzero. %

In 2009, Herzig (\cite{herzig-duke}) conjectured (in the context of locally symmetric spaces for $\GL_n$) what the set $W(\rbar)$ of modular weights should be when $\rbar$ is tame at places above $p$, generalizing conjectures of Serre and Buzzard--Diamond--Jarvis (\cite{serre-duke, BDJ}).  These conjectures are collectively referred to as the \emph{weight part of Serre's conjecture}. 
For the reader's convenience, we restate Theorem \ref{thm:i2i:SWC}, our main result towards (the analog for definite unitary groups of) Herzig's conjecture.

\begin{thm}[Theorem \ref{thm:SWC}]\label{thm:intro:SWC} Suppose that 
\begin{itemize}
\item $\rbar: G_{F} \ra \GL_n(\F)$ is automorphic;
\item $\rbar(G_{F(\zeta_p)})$ is adequate; and that 
\item for each $v \mid p$, $\rbar_v \defeq \overline{r}|_{\Gal(\overline{F}_{\tld{v}}/F_{\tld{v}})}$ is tame and sufficiently generic \emph{(}and in particular that $p\nmid 2n$\emph{)}.
\end{itemize}
Then 
\[
\overline{r} \text{ is modular of weight } \otimes_{v \mid p} V_v   \iff V_v \in W^{?}(\rbar_v) \text{ for all } v \mid p
\]
where $W^?(\rbar_v)$ is the explicit collection of Serre weights defined by \cite{herzig-duke}.  
\end{thm}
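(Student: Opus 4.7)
The proof reduces the global weight conjecture to a local question via Taylor--Wiles patching, and then resolves the local question using the Breuil--M\'ezard geometry from earlier in the paper together with a count of torus-fixed points on deformed affine Springer fibers due to Boixeda Alvarez.

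First, using the adequacy hypothesis on $\rbar(G_{F(\zeta_p)})$, one applies the Taylor--Wiles--Kisin patching method to the spaces $S(K^p,\cO)_\fm$ to construct an exact covariant patching functor $M_\infty$ from smooth $\GL_n(\cO_{F^+}\otimes\Z_p)$-representations on $\cO$-modules to maximal Cohen--Macaulay modules of generic rank at most one over a patched deformation ring $R_\infty=(\widehat{\otimes}_{v\mid p} R^\square_{\rhobar_v})[[x_1,\ldots,x_g]]$, with the property that $\rbar$ is modular of weight $V=\otimes_{v\mid p} V_v$ iff $M_\infty(V^\circ)\neq 0$. Thanks to the factorization of $V$ over the places above $p$ and the corresponding factorization of $R_\infty$, this reduces the theorem to showing, at each $v\mid p$, the local equality $W(\rhobar_v):=\{\sigma_v: M_\infty(\sigma_v^\circ)\neq 0\}=W^?(\rhobar_v)$.

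Second, since $\rhobar_v$ is tame and sufficiently generic, Theorem \ref{thm:intro:unibranch} shows that $R_{\rhobar_v}^{\eta,\tau}$ is a domain for every sufficiently generic tame $\tau$; combined with \cite{LLL}, this implies $M_\infty(\sigma(\tau)^\circ)$ has full support on $\Spec R_{\rhobar_v}^{\eta,\tau}$ whenever the latter is nonzero. The exactness of $M_\infty$ and the cycle identity of Theorem \ref{thm:introBMrhobar} then identify $\supp M_\infty(\sigma_v^\circ)$ with the pullback of the Breuil--M\'ezard cycle $\cZ_{\sigma_v}$ to $\Spec R^\square_{\rhobar_v}$. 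Thus $\sigma_v\in W(\rhobar_v)$ iff $\rhobar_v\in\supp\cZ_{\sigma_v}$, and using the decomposition $\cZ_{\sigma_v}=\cC_{\sigma_v}+\sum_{\sigma'<\sigma_v} b_{\sigma',\sigma_v}\cC_{\sigma'}$ with $b_{\sigma_v,\sigma_v}=1$ (Remark \ref{rmk:intro:cyclemult}) together with the combinatorial constraints on the covering partners, the problem reduces further to determining the set $\{\sigma_v:\rhobar_v\in \cC_{\sigma_v}\}$ for tame generic $\rhobar_v$.

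Third, Theorem \ref{thm:intro:irreducible_components_mod_p} exhibits each component $\cC_{\sigma_v}$, after passage along the mod $p$ local model diagram, as an irreducible component $\overline{M}(\eta,\nabla_{\bf{a}_\tau})_{\sigma_v}$ of a deformed affine Springer fiber in the mixed characteristic affine flag variety. The tame representations $\rhobar_v$ correspond to $T^\cJ$-fixed points of these components, and the count of $T^\cJ$-fixed points on irreducible components of deformed affine Springer fibers obtained by Boixeda Alvarez \cite{Pablo} gives the desired matching: a tame generic $\rhobar_v$ with associated Deligne--Lusztig data $(s,\mu)$ lies in $\cC_{\sigma_v}$ iff $\sigma_v\in W^?(\rhobar_v)$ in Herzig's explicit combinatorial sense. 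Combined with the previous step this establishes the theorem.

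The main obstacle is the dictionary underlying this final matching: tracking the translation between the affine Weyl group combinatorics governing $\Adm(\eta)$, the Breuil--Kisin module shape of a tame generic $\rhobar_v$, the $T^\cJ$-fixed point at which it maps into each component $\overline{M}(\eta,\nabla_{\bf{a}_\tau})_{\sigma_v}$, and Herzig's explicit Jantzen-type recipe defining $W^?(\rhobar_v)$; in parallel one must verify that the covering partners in the decomposition of $\cZ_{\sigma_v}$ do not introduce spurious tame $\rhobar_v$ beyond those allowed by $W^?$. It is here that the full strength of the genericity hypotheses is needed.
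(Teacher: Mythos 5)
Your overall architecture — patching to reduce to a local problem, geometry of the Emerton--Gee stack's components, and Boixeda Alvarez's torus fixed-point count — matches the paper's, but your middle step takes a detour that the paper deliberately avoids. The paper's introduction explicitly notes that the Breuil--M\'ezard identification $W(\rbar) = W^{\mathrm{BM}}(\rbar_p)$ is not sufficient because ``the very mysterious nature of the Breuil--M\'ezard cycles makes it difficult to show that $W^{\mathrm{BM}}(\rbar_v) = W^?(\rbar_v)$.'' Instead of routing through $\cZ_{\sigma_v}$, the paper uses the sandwich $W^{\mathrm{g}}(\rbar_v) \subset W(\rbar) \subset W^?(\rbar_v)$ and proves $W^{\mathrm{g}} = W^?$ directly: concretely, Lemma \ref{lemma:SWexist} shows $\{\sigma : M_\infty(\sigma)\neq 0\} = W^?(\rhobar)$ by a defect induction on $\delta_\rhobar(\sigma)$ using the domain property of $R^\tau_\rhobar$ (Theorem \ref{thm:stack_local_model}), the component identification (Theorem \ref{thm:irreducible_components_mod_p}), and the fixed-point equality $W^{\mathrm{g}}_{\mathrm{gen}}(\rhobar) = W^?(\rhobar)$ (Lemma \ref{lemma:geom?}, which is where Boixeda Alvarez enters). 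Your step 2 amounts to re-deriving the same content in the language of $\cZ_\sigma$.

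Two concrete issues with the detour as written. First, you never address weight elimination. If $M_\infty(\sigma_v) \neq 0$ for some $\sigma_v$ that is \emph{not} generic in the sense of Definition \ref{defn:genSW}, your argument is silent: the cycle $\cZ_{\sigma_v}$ is only defined and controlled for generic $\sigma_v$ (Theorem \ref{thm:genBM} sets $\cZ_\sigma = 0$ for non-generic $\sigma$ by fiat, so the cycle equation carries no information there). Ruling out such $\sigma_v$ requires Proposition \ref{prop:WE}, which is an independent input (based on \cite{LLL}) and must be cited explicitly. Second, your reduction ``$\rhobar_v \in \supp\cZ_{\sigma_v}$ iff $\rhobar_v \in \cC_{\sigma_v}$'' is not a free combinatorial consequence of $b_{\sigma_v,\sigma_v}=1$: if $\rhobar_v\in\cC_{\sigma'}$ for some $\sigma'$ covered by $\sigma_v$, you need that the $T^{\vee,\cJ}$-fixed points of $\cC_{\sigma'}$ are contained in $\cC_{\sigma_v}$ (Remark \ref{rmk:compTfixedpts}(\ref{it:compTfixedpts:2})), and that statement already invokes the full fixed-point count of Proposition \ref{prop:Tfixedpts}, i.e.\ Boixeda Alvarez. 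So your step 2 silently uses the same geometric input that you present as step 3. Once one unwinds those dependencies, the more economical organization is the paper's: use weight elimination for one inclusion, then run the defect induction for the other, with the $T^{\vee,\cJ}$-fixed-point count as the single geometric pivot.
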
 

When $n =3$, the theorem with an explicit combinatorial genericity condition was proven in \cite{LLLM2}. For general $n$, the forward direction, known as \emph{weight elimination}, was proven in \cite{LLL}, again with an explicit combinatorial genericity condition. 
 The reverse direction is a statement about mod $p$ modularity, and is much harder. Its content is essentially the construction of all possible congruences between mod $p$ automorphic forms. 
One difficulty is that, in contrast to when $n\leq 2$, Serre weights typically do not admit characteristic zero lifts and so the set $W(\rbar)$ cannot be interpreted in terms of the existence of automorphic lifts of prescribed types.
As a result, $W(\rbar)$ does not have an apparent Galois theoretic meaning, while at the same time its complexity grows rapidly with $n$. 

The tameness hypothesis in Theorem \ref{thm:SWC} is natural because the restrictions to inertia of tame Galois representations can be parametrized combinatorially, and this parametrization plays a central role in Herzig's recipe.
On the contrary, a combinatorial parametrization of all Galois representations
is not possible, as is reflected by the geometry of the Emerton--Gee stack. Thus one cannot expect explicit formulas for $W(\rbar)$, rather, it should depend on the position of the local Galois representations in their moduli. At the same time, the non-liftability of Serre weights to characteristic zero makes it difficult to pin down such a geometric recipe in terms of $p$-adic Hodge theory. For these reasons, there has been no unconditional formulation of the weight part of Serre's conjecture in the wildly ramified case when $n>2$.
However, as observed in \cite{GHS}, the Breuil--Mez\'{a}rd conjecture can be used to resolve the above difficulties. We make the following definition.%

\begin{itemize}
\item Assume Conjecture \ref{conj:introBM} holds. Define $W^{\mathrm{BM}}(\rhobar)$ to be the set of $\sigma$ such that $\cZ_\sigma(\rhobar)\neq 0$.
\end{itemize}
The set $W^{\mathrm{BM}}(\rhobar)$ has some relation to characteristic zero: as one can always lift Serre weights \emph{virtually}, we can as before invert the equations in Conjectures \ref{conj:introBM} and \ref{conj:intro:BMstack}, and understand $\cZ_\sigma(\rhobar)$ in terms of characteristic zero $p$-adic Hodge theoretic conditions.

On the other hand, as little is known about the geometry of the cycle $\cZ_\sigma(\rhobar)$, it is helpful to also define the following set of weights.
\begin{itemize}
\item
We say that $\sigma$ is a \emph{geometric} Serre weight of $\rhobar$ if $\rhobar$ lies on $\cC_\sigma$.
We let $W^{\mathrm{g}}(\rhobar)$ be the set of geometric Serre weights of $\rhobar$. 
\end{itemize}
Observe that both $W^{\mathrm{g}}(\rhobar)$ and $W^{\mathrm{BM}}(\rhobar)$ are geometric in nature, and are also defined for wildly ramified representations.
Unlike $W^{\mathrm{g}}(\rhobar)$ and $W^?(\rhobar)$, for $W^{\mathrm{BM}}(\rhobar)$ to be meaningful, one requires the knowledge of Conjecture \ref{conj:introBM} at least for sufficiently many $(\lambda,\tau)$ to pin down $\cZ_\sigma(\rhobar)$ uniquely. In particular, our result on the Breuil--M\'{e}zard conjecture (Theorem \ref{thm:introBMrhobar}) allows us to formulate the following unconditional version of a conjecture of Gee--Herzig--Savitt \cite[Conjecture 3.2.7]{GHS}: %
\begin{conj}\label{conj:intro:GHS}
Suppose that $\rbar: G_{F} \ra \GL_n(\F)$ is automorphic.  Let $V = \otimes_{v \mid p} V_v$ be a global Serre weight.   Assume that for each $v \mid p$, $\ovl{r}_v$ is sufficiently generic, then
\[
\overline{r} \text{ is modular of weight } V   \iff V_v \in W^{\mathrm{BM}}(\rbar_v) \text{ for all } v \mid p.
\]
\end{conj}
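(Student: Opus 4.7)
The plan is to combine Taylor--Wiles patching with the Breuil--M\'ezard identity of Theorem \ref{thm:introBMrhobar} to translate modularity of weight $V=\otimes_{v|p}V_v$ into a statement about the Breuil--M\'ezard cycles $\cZ_{V_v}(\rbar_v)$. Under the hypotheses (definite unitary group, adequacy, and $p$-splitness of $F/F^+$), Taylor--Wiles--Kisin patching produces an exact functor $M_\infty$ from $\cO$-lattices in smooth $\GL_n(\cO_{F^+}\otimes\Z_p)$-representations to maximal Cohen--Macaulay modules over a patched ring $R_\infty$ that, up to formally smooth factors, is a completed tensor product of the local framed deformation rings $R^\square_{\rbar_v}$ for $v\mid p$. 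The standard formalism yields
\[
\rbar\text{ is modular of weight }V\iff M_\infty(V)\neq 0,
\]
so the conjecture reduces to the cycle-theoretic identity $[\supp M_\infty(V)]=\boxtimes_{v\mid p}\cZ_{V_v}(\rbar_v)$ in $\Spec R_\infty/\varpi$: granting this, non-vanishing of the right side (i.e., each factor being nonzero) is equivalent to non-vanishing of $M_\infty(V)$.

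To establish the identity, exactness of $M_\infty$ on the $\cO$-flat subcategory of type $(\lambda_v,\tau_v)$ combined with the Breuil--M\'ezard formula gives, for each fixed $v$ and varying $(\lambda_v,\tau_v)$,
\[
[\supp M_\infty(\overline{\sigma(\tau_v)^\circ\otimes V(\lambda_v-\eta)^\circ})]=\sum_\sigma[\ovl{r(\tau_v)\otimes V(\lambda_v-\eta)}:\sigma]\,[\supp M_\infty(\sigma)].
\]
Varying $(\lambda_v,\tau_v)$ so that $\ovl{r(\tau_v)\otimes V(\lambda_v-\eta)}$ ranges over a spanning set of the Grothendieck group of $\GL_n(\cO_{F^+_v})$-representations, one pins down each $[\supp M_\infty(\sigma)]=\cZ_\sigma(\rbar_v)$, provided one knows the \emph{full support property}: whenever $R^{\lambda_v,\tau_v}_{\rbar_v}$ is nonzero, $M_\infty$ of the corresponding lattice in type $(\lambda_v,\tau_v)$ has full support on $\Spec R^{\lambda_v,\tau_v}_{\rbar_v}/\varpi$. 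An external product argument at the patching level then assembles these local identities into the global one.

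The principal obstacle is establishing the full support property in the wildly ramified case. For tame $\rbar_v$---the case covered by Theorem \ref{thm:intro:SWC}---it is automatic: by Theorem \ref{thm:intro:unibranch} the ring $R^{\lambda_v,\tau_v}_{\rbar_v}$ is a domain, so any nonzero finitely generated module supported on it has full support, and one deduces the cycle identity as in the proof of Theorem \ref{thm:introBMrhobar}. For wildly ramified $\rbar_v$ the deformation ring may have multiple irreducible components, and the patched module could a priori miss some of them. To circumvent this, one would argue globally on the Emerton--Gee stack: the Breuil--M\'ezard cycles $\cZ_\sigma$ are already defined on $\cX_{n,\red}$ by Theorem \ref{thm:intro:BMstack}, and the density of tame points in each irreducible component of $\cC_\sigma$, together with the patched-module interpretation of $\cZ_\sigma$ at tame points, should propagate the full support identity to all sufficiently generic $\rbar_v$ and thereby establish the conjecture.
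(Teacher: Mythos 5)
You have been asked to prove something that the paper itself leaves open. Conjecture~\ref{conj:intro:GHS} is stated as a \emph{conjecture}, and the paper explicitly establishes it only when each $\rbar_v$ is tame: in the paragraph preceding the proof sketch of Theorem~\ref{thm:intro:SWC} the authors write that ``Theorem~\ref{thm:introBMrhobar} (or rather, the compatibility with patching functors, see Remark~\ref{rmk:intro:BMcycle}) immediately implies Conjecture~\ref{conj:intro:GHS} for our tame $\rbar$.'' Your first two paragraphs correctly reproduce this argument for the tame case: patching reduces modularity of weight $V$ to non-vanishing of $M_\infty(V)$, the Breuil--M\'ezard equations relate the support cycle of $M_\infty(\ovl{\sigma}(\lambda,\tau))$ to the cycles $\cZ_\sigma(\rbar_p)$, and the unibranch property (Theorem~\ref{thm:intro:unibranch}) gives the full support property, which pins down $Z(M_\infty(\sigma))=\cZ_\sigma(\rbar_p)$.

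The gap is in your final paragraph, and it is exactly the gap the authors describe as the obstruction. The ``full support property'' --- that $M_\infty(\sigma(\tau)^\circ\otimes V(\lambda-\eta)^\circ)$ has full support on $\Spec R_\rhobar^{\lambda,\tau}$ whenever the latter is nonzero --- is known only when $R_\rhobar^{\lambda,\tau}$ is a domain, which is what the unibranch theorem supplies for tame $\rhobar$. For wildly ramified $\rhobar$ the ring can have multiple irreducible components (cf.~Corollary~\ref{cor:numerical_unibranch}), and the paper states plainly that ``little seems to be known about $\supp\,M_\infty(\sigma(\tau)^\circ\otimes V(\lambda-\eta)^\circ)$ beyond the $\GL_2(\Q_p)$ case, and we are unable to make this work for all $\rhobar$.'' Your proposed repair --- using density of tame points in $\cC_\sigma$ on the Emerton--Gee stack and the patched-module interpretation of $\cZ_\sigma$ at tame points --- does not close this gap, because $M_\infty$ is not a sheaf on $\cX_n$: it is constructed by Taylor--Wiles patching starting from a single global $\rbar$ and lives only over the versal ring $R_\infty$ at that specific $\rhobar$. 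There is no mechanism by which knowledge of patched-module supports at \emph{other} (tame) points of the stack constrains the support of the patched module at the given wild $\rhobar$. The cycle $\cZ_\sigma(\rhobar)=i^*_{\rhobar}(\cZ_\sigma)$ is indeed determined by Theorem~\ref{thm:intro:BMstack} via density of tame points, but the question of whether $Z(M_\infty(\sigma))$ \emph{equals} this pullback is a statement internal to the one versal ring, and the density argument has no traction there.

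It is worth noting that the paper's actual proof of Theorem~\ref{thm:intro:SWC} deliberately sidesteps $W^{\mathrm{BM}}$: rather than proving $W^{\mathrm{BM}}(\rbar_v)=W^?(\rbar_v)$, the authors prove the a priori weaker inclusion chain $\otimes_{v\mid p}W^{\mathrm{g}}(\rbar_v)\subset W(\rbar)\subset\otimes_{v\mid p}W^?(\rbar_v)$ and then show $W^{\mathrm{g}}(\rbar_v)=W^?(\rbar_v)$ using the geometry of the components $\cC_\sigma$ (deformed affine Springer fibers, the torus-fixed-point computation of Boixeda Alvarez). This is a fundamentally different route from the one you propose, chosen precisely because the Breuil--M\'ezard cycles $\cZ_\sigma$ remain too opaque to compare directly with Herzig's set $W^?$.
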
      
\begin{rmk} \begin{enumerate}
\item 
For $\rhobar$ sufficiently generic, one sees that $W^{\mathrm{BM}}(\rhobar)$ consists of exactly the (neccessarily sufficiently generic) $\sigma$ such that $\rhobar$ lies in the support of the cycle $\cZ_\sigma$. In other words, the discrepancy between $W^{\mathrm{g}}(\rhobar)$ and $W^{\mathrm{BM}}(\rhobar)$ is exactly the discrepancy between the irreducible component $\cC_\sigma$ of $\cX_n$ and the Breuil--Mez\'{a}rd cycle $\cZ_\sigma$.
\item For sufficiently generic $\rhobar$, one has $W^{\mathrm{g}}(\rhobar) \subset W^{\mathrm{BM}}(\rhobar) \subset W^{?}(\rhobar^{\mathrm{ss}})$. The first inclusion is because $\cC_\sigma$ belongs to the support of $\cZ_\sigma$, and we expect this inclusion to be strict in general (cf.~Remark \ref{rmk:intro:cyclemult}). The second inclusion follows from \cite{LLL}, and is always strict when $\rhobar$ is wildly ramified (this is explored in \cite{OBW}). %
\end{enumerate}
\end{rmk}

We now discuss the proof of Theorem \ref{thm:intro:SWC}. We apply Taylor--Wiles patching in our given global context. The modularity of a global Serre weight $V$ is equivalent to the non-vanishing of the associated patched module $M_{\infty}(V)$.
Recall that $W(\rbar)$ denotes the set of modular global Serre weights and we assumed that each $\rbar_v$ is tame and sufficiently generic. At this point, Theorem \ref{thm:introBMrhobar} (or rather, the compatibility with patching functors, see Remark \ref{rmk:intro:BMcycle}) immediately implies Conjecture \ref{conj:intro:GHS} for our tame $\rbar$. However, this is not sufficient for Theorem \ref{thm:intro:SWC}, because of the very mysterious nature of the Breuil--M\'ezard cycles which makes it difficult to show that  $W^{\mathrm{BM}}(\rbar_v) = W^?(\rbar_v)$. Instead, we observe that by the chain of inclusions 
\[
\otimes_{v \mid p} W^{\mathrm{g}}(\rbar_v) \subset W(\rbar) \subset \otimes_{v \mid p} W^?(\rbar_v),
\]    
it suffices to show $W^{\mathrm{g}}(\rbar_v)= W^?(\rbar_v)$. This is more accessible, since $W^{\mathrm{g}}(\rbar_v)$ is expressed in terms of $\cC_\sigma$, which has a transparent geometric meaning, while $W^?(\rbar_v)$ is combinatorially explicit. Using Theorem \ref{thm:intro:irreducible_components_mod_p}, we relate the relevant components $\cC_{\sigma}$ to irreducible components $C_{\sigma}$ of a deformed affine Springer fiber and tame local Galois representations to torus fixed points in the affine flag variety.  
Showing that $ W^{\mathrm{g}}(\rbar_v) = W^?(\rbar_v)$ turns out to be equivalent to showing that the set of torus fixed points of $C_{\sigma}$ achieves the obvious upper bound. Fortunately, Boixeda Alvarez \cite{Pablo} proved the analogous fact for irreducible components of affine Springer fibers, and a simple spreading out argument allows us to transfer his result back to our deformed affine Springer fibers. 

Finally, we remark that, in contrast to \cite[Theorem 7.8]{LLLM} and \cite[Theorem 5.3.1]{LLLM2} for example, we make no assumptions on the ramification of $\rbar$ outside of $p$ in Theorem \ref{thm:intro:SWC}. 
This is possible because our results on the geometric formulations, rather than the original numerical formulation, of the Breuil--M\'ezard conjecture allow for more robust arguments in the Taylor--Wiles method. 
(In fact, for this reason our arguments are slightly more involved than what we describe above.)
In the literature, the Taylor--Wiles patching construction typically takes place after applying solvable base change theorems. 
While this is convenient for the purposes of modularity lifting theorems, in the interest of reducing the hypotheses on our results on the weight part of Serre's conjecture, we describe in the appendix \S \ref{app:TW:patch} the modifications necessary to apply the Taylor--Wiles method at arbitrary sufficiently small level.

\subsection{Outline of the paper}
\label{subsec:overview}
We give a brief overview of the various sections of this paper.

A reader primarily interested in the geometry of the local model and its relationship to the Emerton--Gee stack can read \S \ref{sec:UMLM} (or perhaps just Theorem  \ref{thm:model_unibranch}), \ref{sec:MLM}, \ref{sec:BK:LM}, and \ref{sec:monodromy}, referring to \S \ref{sec:GM} as desired.
A reader primarily interested in our main applications can read \S \ref{sec:BMC} and \ref{sec:GA}, referring to the main results of \S \ref{sec:monodromy}.
\S  \ref{sec:Prel} is preliminary and can be referred to as needed.

\S  \ref{sec:Prel} establishes various connections between extended affine Weyl groups and representation theory used throughout the paper.

\S \ref{sec:UMLM} is the technical heart of the paper.
We introduce a universal version of the local model (\S \ref{subsec:UMLM}) and describe some of its basic properties. %
The  \emph{unibranch} property at torus fixed points is established in \S \ref{subsec:equal_char_unibranch} and the subsequent sections \S \ref{sec:Spr:Out}, \ref{subsec:Sec}, \ref{subsec:prod:UMLM} deal with the problem of spreading out such properties.  The most important result is Theorem \ref{thm:model_unibranch} on the unibranch property used in the main theorem on Galois deformations (Theorem \ref{thm:stack_local_model}).  

\S \ref{sec:MLM} specializes the universal model to the mixed characteristic situation of interest and then studies the special fiber.  The main result (Theorem \ref{thm:compandSW}) uses reductions of Deligne--Lusztig representations to parametrize the irreducible components of the special fiber compatibly (\S \ref{sec:componentmatching}) over varying parameters. 
Finally, Theorem \ref{thm:Tfixedpts} is the main result on torus fixed points of irreducible components used in the proof of the weight part of Serre's conjecture.

\S \ref{sec:BK:LM} compares stacks of Breuil--Kisin modules and Pappas--Zhu local models in preparation for \S \ref{sec:monodromy}.  The main result (Theorem \ref{thm:Breuil-Kisin_local_model}) is the local model diagram for (Zariski covers) of the moduli stack of Breuil--Kisin modules with tame descent data and a Pappas--Zhu local model via a theory of gauge bases for Breuil--Kisin modules (see \S \ref{sub:GBases}, particularly Proposition \ref{prop:gauge_basis_existence}).  We also establish directly a connection in characteristic $p$ to the moduli stack of \'etale $\phz$-modules in Proposition \ref{prop:phi_local_model}.

\S \ref{sec:GM} is an interlude on patching functors. %
Here, global methods are used to show the existence of local lifts of various types, which provides a key input into component counts in \S \ref{sec:monodromy}. %

\S \ref{sec:monodromy} contains the main result (Theorem \ref{thm:stack_local_model}) on the relation between the local models and Galois deformations used in the proof of both the Breuil--M\'ezard conjecture and the weight part of Serre's conjecture.
The \emph{monodromy condition}, in particular its algebrization (Proposition \ref{prop:monodromy_approximation}), is studied in \S \ref{sec:mon:cond}. %
Theorem \ref{prop:stack_diagram} compares Emerton--Gee stacks of potentially crystalline representations with the moduli stacks of Breuil--Kisin modules with tame descent data studied in \S \ref{sec:BK:LM}.  
Finally, \S \ref{sec:local_model_EG} is the culmination of the earlier sections establishing the comparison between the tame potentially crystalline Emerton--Gee stack and the local models of \S \ref{sec:MLM}.
Theorem \ref{thm:irreducible_components_mod_p} describes a sufficiently generic portion of the reduced special fiber of the Emerton--Gee stack.

\S \ref{sub:statements:BM} introduces versions of the Breuil--M\'ezard conjectures, and \S \ref{sub:rel:BM} describes their relationship.
\S \ref{sec:patchBM} provides an axiomatic framework to prove restricted versions of the Breuil--M\'ezard conjectures using patching functors, which is then applied in \S \ref{sec:BMgen} (see Theorem \ref{thm:genBM} and Corollary \ref{cor:BM}).
In \S \ref{sec:gen:BM:basis}, we describe basic properties of Breuil--M\'ezard cycles and an algorithm to compute them. %

Applications to the Serre weight conjecture (Theorem \ref{thm:SWC}) for certain definite unitary groups and modularity lifting are in \S \ref{sec:unitary} and \ref{subsec:MLT}, respectively.
\S \ref{app:TW:patch} describes routine modifications to the Taylor--Wiles method needed to patch at arbitrary level.

\subsection{Acknowledgments}
The genesis of this article covers several years beginning in 2016 at the University of Chicago.
Part of the work has been carried out during visits at the Institut Henri Poinar\'e (2016), CIRM (2017), IAS (2017), Centro di Ricerca Matematica Ennio de Giorgi (2018), Mathematisches Forschungsinstitut Oberwolfach (2019), The University of Arizona, Northwestern University.
We would like to heartily thank these institutions for the outstanding research conditions they provided, and for their support.

For various discussions related to this work, we thank Toby Gee, Florian Herzig, Matthew Emerton, Pablo Alvarez Boixeda. B. LH. would like to thank Rong Zhou for a conversation that lead to decisive progress in the project. We would also like to thank Christophe Breuil and Toby Gee for comments on an earlier draft of this paper. %
We thank the referee for extensive comments that improved the readability of the paper.

Finally, D.L. was supported by the National Science Foundation under agreements Nos.~DMS-1128155 and DMS-1703182 and an AMS-Simons travel grant. B.LH. acknowledges support from the National Science Foundation under grant Nos.~DMS-1128155, DMS-1802037 and the Alfred P. Sloan Foundation. B.L. was supported by Simons Foundation/SFARI (No.~585753), and S.M. by the ANR-18-CE40-0026 (CLap CLap).

\subsection{Notation}
\label{sec:notations}

We fix once and for all a separable closure $\ovl{K}$ of every field $K$ and let $G_K \defeq \Gal(\ovl{K}/K)$.
If $K$ is defined as a subfield of an algebraically closed field, then we set $\ovl{K}$ to be this field.
If $K$ is a nonarchimedean local field, we let $I_K \subset G_K$ denote the inertial subgroup and $W_K \subset G_K$ denote the Weil group.
We fix a prime $p\in\Z_{>0}$.
Let $E \subset \ovl{\Q}_p$ be a subfield which is finite-dimensional over $\Q_p$.
We write $\cO$ to denote its ring of integers, fix an uniformizer $\varpi\in \cO$ and let $\F$ denote the residue field of $E$.
We will assume throughout that $E$ is sufficiently large.

\subsubsection{Reductive groups}
\label{sec:not:RG}
Let $G$ denote a split connected reductive group (over some ring) together with a Borel $B$, a maximal split torus $T \subset B$, and $Z \subset T$ the center of $G$.  
Let $d = \dim G - \dim B$.  
When $G$ is a product of copies of $\GL_n$, we will take $B$ to be upper triangular Borel and $T$ the diagonal torus. 
Let $\Phi^{+} \subset \Phi$ (resp. $\Phi^{\vee, +} \subset \Phi^{\vee}$) denote the subset of positive roots (resp.~positive coroots) in the set of roots (resp.~coroots) for $(G, B, T)$. 
Let $\Delta$ (resp.~$\Delta^{\vee}$) be the set of simple roots (resp.~coroots).
Let $X^*(T)$ be the group of characters of $T$ and $\Lambda_R \subset X^*(T)$ denote the root lattice for $G$.

For a free $\Z$-module $M$ of finite rank (e.g. $M=X^*(T)$), the duality pairing between $M$ and its $\Z$-linear dual $M^*$ will be denoted by $\langle \ ,\,\rangle$.
If $A$ is any ring, the pairing $\langle \ ,\,\rangle$ extends by $A$-linearity to a pairing between $M\otimes_{\Z}A$ and $M^*\otimes_{\Z}A$. %

We say that a weight $\lambda\in X^*(T)$ is \emph{dominant} (resp.~\emph{regular dominant}) if $0\leq \langle\lambda,\alpha^\vee\rangle$ (resp.~$0< \langle\lambda,\alpha^\vee\rangle$) for all $\alpha\in \Delta$.
For $\lambda\in X^*(\un{T})$, set $h_\lambda\defeq \underset{\alpha\in \Phi}{\max}\{\langle \lambda,\alpha^\vee\rangle\}$.
Set $X^0(T)$ to be the subgroup consisting of characters $\lambda\in X^*(T)$ such that $\langle\lambda,\alpha^\vee\rangle=0$ for all $\alpha\in \Delta$.

Let  $W(G)$ denote the Weyl group of $(G,T)$.  Let $w_0$ denote the longest element of $W(G)$.
We sometimes write $W$ for $W(G)$ when there is no chance for confusion.
Let $W_a$ (resp.~$\tld{W}$) denote the affine Weyl group and extended affine Weyl group 
\[
W_a = \Lambda_R \rtimes W(G), \quad \tld{W} = X^*(T) \rtimes W(G)
\]
for $G$.
We use $t_{\nu} \in \tld{W}$ to denote the image of $\nu \in X^*(T)$. 

The Weyl groups $W(G)$, $\tld{W}$, and $W_a$ act naturally on $X^*(T)$ and on $X^*(T)\otimes_{\Z} A$ by extension of scalars for any ring $A$.
Given $\lambda\in X^*(T)$, we write $\mathrm{Conv}(\lambda)$ for the convex hull of the subset $\big\{w(\lambda) \mid w\in W(G)\big\}\subset X^*(T)$.

We write $G^\vee = G^\vee_{/\Z}$ for the split connected reductive group over $\Z$  defined by the root datum $(X_*(T),X^*(T), \Phi^\vee,\Phi)$. 
This defines a maximal split torus $T^\vee\subseteq G^\vee$ such that we have canonical identifications $X^*(T^\vee)\cong X_*(T)$ and $X_*(T^\vee)\cong X^*(T)$.

For $(\alpha,k)\in \Phi \times \Z$, we have the root hyperplane $H_{\alpha,k}\defeq \{\lambda:\ \langle\lambda,\alpha^\vee\rangle=k\}$. 
An alcove %
is a connected component of $X^*(T)\otimes_{\Z}\R\ \setminus\ \big(\bigcup_{(\alpha,n)}H_{\alpha,n}\big)$. 
We say that an alcove $A$ is \emph{restricted} if $0<\langle\lambda,\alpha^\vee\rangle<1$ for all $\alpha\in \Delta$ and $\lambda\in A$.
We let $A_0$ denote the (dominant) base alcove, i.e.~the set of $\lambda\in X^*(T)\otimes_{\Z}\R$ such that $0<\langle\lambda,\alpha^\vee\rangle<1$ for all $\alpha\in \Phi^+$. 
Let $\cA$ denote the set of alcoves. 
Recall that $\tld{W}$ acts transitively on the set of alcoves, and $\tld{W}\cong\tld{W}_a\rtimes \Omega$ where $\Omega$ is the stabilizer of $A_0$.
We define
\[\tld{W}^+\defeq\{\tld{w}\in \tld{W}:\tld{w}(A_0) \textrm{ is dominant}\}.\]
and
\[\tld{W}^+_1\defeq\{\tld{w}\in \tld{W}^+:\tld{w}(A_0) \textrm{ is restricted}\}.\]
We fix an element $\eta\in X^*(T)$ such that $\langle \eta,\alpha^\vee\rangle = 1$ for all positive simple roots $\alpha$ and let $\tld{w}_h$ be $w_0 t_{-\eta}\in \tld{W}^+_1$.%

When $G = \GL_n$, we fix an isomorphism $X^*(T) \cong \Z^n$ in the standard way, where the standard $i$-th basis element $\eps_i=(0,\ldots, 1,\ldots, 0)$ (with the $1$ in the $i$-th position) of the right-hand side corresponds to extracting the $i$-th diagonal entry of a diagonal matrix. Dually we get a standard isomorphism $X_*(T)\cong \Z^n$, and let $(\eps_1^\vee,\dots,\eps_n^\vee)$ denote the dual basis.

Suppose that $G$ is a split connected reductive group over $\Z_p$.
Let $\cO_p$ be a finite \'etale $\Z_p$-algebra, which is necessarily isomorphic to a product $\prod\limits_{v\in S_p} \cO_{v}$ where $S_p$ is a finite set and $\cO_{v}$ is the ring of integers of a finite unramified extension $F^+_{v}$ of $\Q_p$.
For example, we will take $\cO_p$ to be the ring of integers in an unramified extension of $\Q_p$ or $\cO_{F^+} \otimes_{\Z} \Z_p$ where $F^+$ is a number field in which $p$ is unramified and $\cO_{F^+}$ is its ring of integers.
Let $G_0 = \Res_{\cO_p/\Z_p} (G_{/\cO_p})$ with Borel subgroup $B_0 =  \Res_{\cO_p/\Z_p} (B_{/\cO_p})$, maximal torus $T_0 = \Res_{\cO_p/\Z_p} (T_{/\cO_p})$, and $Z_0 = \Res_{\cO_p/\Z_p} (Z_{/\cO_p})$. 
Assume that $\cO$ contains the image of any ring homomorphism $\cO_p \ra \ovl{\Z}_p$.
Let $\cJ$ be $\Hom_{\Zp}(\cO_p,\cO)$.
Then $\un{G} \defeq (G_0)_{/\cO}$ is naturally identified with the split reductive group $G_{/\cO}^{\cJ}$. 
We similarly define $\un{B}, \un{T},$ and $\un{Z}$. %
Corresponding to $(\un{G}, \un{B}, \un{T})$, we have the set of positive roots $\un{\Phi}^+ \subset \un{\Phi}$ and the set of positive coroots $\un{\Phi}^{\vee, +}\subset \un{\Phi}^{\vee}$.
The notations $\un{\Lambda}_R$, $\un{W}$, $\un{W}_a$, $\tld{\un{W}}$, $\tld{\un{W}}^+$, $\tld{\un{W}}^+_1$, $\un{\Omega}$ should be clear as should the natural isomorphisms $X^*(\un{T}) = X^*(T)^{\cJ}$ and the like.  When $G = \GL_n$, then we fix $\eta \in X^*(\un{T})$  to be the product of the elements $(n-1, n-2, \ldots, 0) \in \Z^n$.

The absolute Frobenius automorphism on $\cO_p/p$ lifts canonically to an automorphism $\varphi$ of $\cO_p$. We define an automorphism $\pi$ of the identified groups $X^*(\un{T})$ and $X_*(\un{T}^\vee)$ by the formula $\pi(\lambda)_\sigma = \lambda_{\sigma \circ \varphi^{-1}}$ for all $\lambda\in X^*(\un{T})$ and $\sigma: \cO_p \ra \cO$.
We assume that, in this case, the element $\eta\in X^*(\un{T})$ we fixed is $\pi$-invariant.
We similarly define an automorphism $\pi$ of $\un{W}$ and $\tld{\un{W}}$.

\subsubsection{Galois theory}\label{sec:not:Gal}

Let $\cO_p$ and $\un{G}_{/\cO}$ be as in \ref{sec:not:RG}.
Let $F^+_p$ be $\cO_p[1/p]$.
Then $F^+_p$ is isomorphic to the (finite) product $\prod\limits_{v \in S_p} F^+_{v}$ where, as above, $F^+_{v} = \cO_{v}[1/p]$ is a finite unramified extension of $\Q_p$ for each $v \in S_p$.
Let 
\[
\un{G}^\vee_{/\Z} \defeq \prod_{F^+_p \ra E} G^\vee_{/\Z} 
\]
be the dual group of $\un{G}$ so that the Langlands dual group of $G_0$ is $^L \un{G}_{/\Z} \defeq \un{G}^\vee\rtimes \Gal(E/\Q_p)$ where $\Gal(E/\Q_p)$ acts on the set of homomorphisms $F^+_p \ra E$ by post-composition.
For a topological $\cO$-algebra $A$, an $L$-homomorphism over $A$ is a continuous homomorphism $W_{\Q_p} \ra$ $^L\un{G}(A)$ with open kernel whose projection to $\Gal(E/\Q_p)$ is the natural one.
An $L$-parameter over $A$ is a ${\un{G}^\vee}(A)$-conjugacy class of $L$-homomorphisms.
An isomorphism $\ovl{F^+_v} \risom \ovl{\Q}_p$ for each $v \in S_p$ determines an embedding $G_{F^+_v} \into G_{\Q_p}$ and the restriction of this isomorphism to $F^+_v \into E$ gives a projection ${\un{G}^\vee}\ra {G}^\vee$.
Fixing isomorphisms for each $v\in S_p$, we get a bijection from the set of $L$-homomorphisms over $A$ to the set of collections of continuous representations $W_{F^+_{v}} \ra {G}^\vee(A)$ indexed by $S_p$.
This induces a bijection from the set of $L$-parameters to the set of collections of ${G}^\vee(A)$-conjugacy classes of representations $W_{F^+_{v}} \ra {G}^\vee(A)$ with open kernel indexed by $S_p$.
Moreover, this latter bijection does not depend on the choices of isomorphisms.
Finally, if $A$ is finite, this latter set is equivalent to the set of collections of ${G}^\vee(A)$-conjugacy classes of continuous representations $G_{F^+_{v}} \ra {G}^\vee(A)$ indexed by $S_p$.

An inertial $L$-homomorphism over $A$ is a continuous homomorphism $I_{\Q_p} \ra {\un{G}^\vee}(A)$ with open kernel which admits an extension to an $L$-homomorphism over $A$.
An inertial $L$-parameter over $A$ is a ${\un{G}^\vee}(A)$-conjugacy class of inertial $L$-homomorphisms.
If $K$ is a finite extension of $\Qp$, then an inertial $A$-type (for $K$) is a ${G}^\vee(A)$-conjugacy class of homomorphisms $I_K \ra {G}^\vee(A)$ with open kernels which admit extensions to homomorphisms $W_K \ra {G}^\vee(A)$.
We refer to an inertial $E$-type as just an inertial type.
We say that an inertial $L$-parameter over $A$ (resp.~inertial $A$-type) is \emph{tame} if a homomorphism (equivalently all homomorphisms) in the conjugacy class factors through the tame quotient of the inertial subgroup.
There is a similar bijection between (tame) inertial $L$-parameters over $A$ and collections of (tame) inertial $A$-types $I_{F^+_{v}} \ra G(A)$ indexed by $S_p$ (not depending on choices of isomorphisms between algebraic closures).

We now specialize to the case that $F^+_p$ is a field $K$ and $G = \GL_n$.
Let $K/\Qp$ be an unramified extension of degree $f$ with ring of integers $\cO_K$ and residue field $k$. 
Let $W(k)$ be ring of Witt vectors of $k$, which is also the ring of integers of $K$.  
We denote the arithmetic Frobenius automorphism on $W(k)$ by $\phz$; it acts as raising to $p$-th power on the residue field.

Recall that we fixed a separable closure $\ovl{K}$ of $K$.
We choose $\pi \in \ovl{K}$ such that $\pi^{p^f-1} = -p$ and define $\omega_K : G_K \ra \cO_K^\times$ to be the character defined by $g(\pi) = \omega_K(g) \pi$.
Since any choice of $\pi$ differs by a $p^f-1$-st root of unity on which $G_K$ acts trivially, $\omega_K$ is independent of the choice of $\pi$.
Given a embedding $\sigma: K \into E$, let $\omega_{K,\sigma}: G_K \ra \cO^\times$ be the character $\sigma \circ \omega_K$.
If we let $K^{\mathrm{ur}} \subset \ovl{K}$ be the maximal unramified subfield, then for any subfield $K' \subset K^{\mathrm{ur}}$ which is of finite degree over $\Q_p$, $I_{K'}$ is canonically identified with $G_{K^{\mathrm{ur}}}$. 
Thus, $I_{K'}$ is identified with $I_K$, and we also denote by $\omega_K$ and $\omega_{K,\sigma}$ the restriction of these characters to $I_{K'}$.
For any integer $r\geq 1$, we let $\Q_{p^r}$ denote the unramified degree $r$ extension of $\Q_p$ in $\ovl{\Q}_p$, which we assume is in $E$ (enlarging $E$ if necessary). 
We write $\omega_r$ for $\omega_{\Q_{p^r},\iota}$ where $\iota$ denotes the inclusion $\Q_{p^r} \subset E$ as subfields of $\ovl{\Q}_p$.
We use the overline notation $\ovl{\omega}_K$, $\ovl{\omega}_{K,\sigma}$, $\ovl{\omega}_{r}$, etc.~to denote the mod $\varpi$ reduction of ${\omega}_K$, ${\omega}_{K,\sigma}$, $\omega_{r}$, etc.
When considering $n$-dimensional representations of $G_K$, we will assume that $E$ contains the image of any morphism $K' \ra \ovl{\Q}_p$ where $K' \subset K^{\mathrm{un}}$ is the subfield of degree $r$ over $K$ where $r$ is the order of some element of $S_n$. 
Fix an embedding $\sigma_0: K \into E$. 
Then we define $\sigma_j = \sigma_0 \circ \phz^{-j}$. This identifies $\cJ = \Hom(k, \F) = \Hom_{\Qp}(K, E)$ with $\Z/f \Z$. 

For $K$ as above, we fix once and for all a sequence $\underline{p} \defeq (p_m)_{m\in\N}$ where $p_m\in \overline{K}$ satisfy $p_{m+1}^{p}= p_m$ and $p_0=-p$. 
We let $K_{\infty} \defeq \underset{m\in\N}{\bigcup}K(p_m)$ and $G_{K_{\infty}} \defeq \Gal(\overline{K}/K_{\infty})$.

Let $\eps$ denote the $p$-adic cyclotomic character.  If $W$ is a de Rham representation of $G_K$ over $E$, then for each $\kappa \in \Hom_{\Qp}(K, E)$, we write $\mathrm{HT}_{\kappa}(W)$ for the multiset of Hodge--Tate weights labelled by embedding $\kappa$ normalized so that the $p$-adic cyclotomic character $\eps$ has Hodge--Tate weight $\{1\}$ for every $\kappa$.  For $\mu = (\mu_j) \in X^*(\un{T})$,
we say that an $n$-dimensional representation $W$ has Hodge--Tate weights $\mu$ if 
\[
\mathrm{HT}_{\sigma_j}(W) = \{ \mu_{1, j}, \mu_{2, j}, \ldots, \mu_{n, j} \}.
\]
Our convention is the opposite of that of \cite{EGstack,CEGGPS}, but agrees with that of \cite{GHS}.
We will always use the covariant functors attached to $W$, for example $\DdR(W)=(W\otimes_{\bQ_p} \BdR)^{G_K}$, and similarly we have $\Dpst(W)$ and $\Dpcris(W)$. Note that under our convention, the jumps in the Hodge filtration of $\DdR(W)$ occur at the \emph{opposites} of the Hodge-Tate weights.
We say that an $n$-dimensional potentially semistable representation $\rho:G_K \ra \GL_n(E)$ has type $(\mu, \tau)$ if $\rho$ has Hodge--Tate weights $\mu$ and the Weil-Deligne representation $\mathrm{WD}(\rho)$ restricted to $I_K$ is isomorphic to the inertial type $\tau$. Note that this differs from the conventions of \cite{GHS} via a shift by $\eta$. The condition on inertial type is also equivalent to $\Dpcris(\rho)=\Dpst(\rho)$ being isomorphic to $\tau$ as $I_K$-representations.

Let $\mathrm{Art}_K: K^{\times} \ra W_K^{\mathrm{ab}}$ denote the Artin map normalized so that uniformizers map to geometric Frobenius elements. For $\tau$ an inertial type, we use $\sigma(\tau)$ to denote the finite dimensional smooth irreducible $\overline{\Q}_p$-representation of $\GL_n(\cO_K)$ associated to $\tau$ by the ``inertial local Langlands correspondence'' (see \S \ref{sec:InertialTypes}). 
In fact, in all situations that arise, $\sigma(\tau)$ will be defined over $E$.

\subsubsection{Miscellaneous}\label{sec:not:mis}

For any ring $S$, we define $M_n(S)$ to be the set of $n\times n$ matrix with entries in $S$. 
If $M\in M_n(S)$ and $A\in \GL_n(S)$ we write
\begin{equation}
\label{def:adj}
\Ad(A)(M)\defeq A\,M\,A^{-1}.
\end{equation}
If $\alpha=\eps_i-\eps_j$ is a root of $\GL_n$, we also call the $(i,j)$-th entry of a matrix $X\in M_n(S)$ the $\alpha$-th entry. We will make use of both notations $X_{ij}$ and $X_{\alpha}$ for this entry.

Let $\Gamma$ be a group.
If $V$ is a finite length $\Gamma$-representation, we let $\JH(V)$ be the (finite) set of Jordan--H\"older factors of $V$.
If $V^\circ$ is a finite $\cO$-module with a $\Gamma$-action, we write $\ovl{V}^\circ$ for the $\Gamma$-representation $V^\circ\otimes_{\cO}\F$ over $\F$.
If $\Gamma$ is a compact topological group and $V$ is a virtual representation of $\Gamma$ which is the difference $V_1 - V_2$ of two genuine continuous finite-dimensional $\Gamma$-representations over $E$, let $\ovl{V}$ be the virtual representation $\ovl{V}_1 - \ovl{V}_2$ where $\ovl{V}_i$ is the semisimplification of $\ovl{V}_i^\circ$ and $V_i^\circ$ is any $\Gamma$-stable $\cO$-lattice in $V_i$ (and $\ovl{V}$ depends only on $V$ and not on any other choices). 
Of course, $\ovl{V}$ is a genuine representation if $V$ is.

If $X$ is an ind-scheme defined over $\cO$, we write $X_E\defeq X\times_{\Spec\cO} \Spec E$ and $X_{\F}\defeq X\times_{\Spec \cO}\Spec \F$ to denote its generic and special fiber, respectively. Similarly, if $R$ is any $\cO$-algebra, we write $R_{\F}$ to denote $R\otimes_{\cO}\F$

If $P$ is a statement, the symbol $\delta_P\in \{0,1\}$ takes value $1$ if $P$ is true, and $0$ if $P$ is false. 
\clearpage{}%
\clearpage{}%
\section{Preliminaries}
\label{sec:Prel}

\subsection{Extended affine Weyl groups}

\subsubsection{Admissible sets, regular elements, and admissible pairs}
\label{sub:AWG:2}

Recall that $G$ is a split reductive group with split maximal torus $T$ and Borel $B$.
Let $V\defeq X^*(T) \otimes \R \cong X_*(T^{\vee}) \otimes \R$ denote the apartment of $(G, T)$ on which $\tld{W}$ acts.   Let $\cC_0$ denote the dominant Weyl chamber in $V$.  For any $w \in W(G)$, let $\cC_w = w(\cC_0)$.  In particular, $\cC_{w_0}$ is the anti-dominant Weyl chamber.

Recall from \S \ref{sec:notations} the set $\cA$ of alcoves of $V$.  
We let $\uparrow$ denote the upper arrow ordering on alcoves as defined in \cite[\S II.6.5]{RAGS}. 
Since $W_a$ acts simply transitively on the set of alcoves, $\uparrow$ induces an upper arrow ordering on $W_a$ which we again denote by $\uparrow$.
The dominant base alcove $A_0$ defines a Bruhat order on $W_a$ which we denote by $\leq$. 
If $\Omega$ is the stabilizer of the base alcove, then $\tld{W} = W_a \rtimes \Omega$ and so $\tld{W}$ inherits a Bruhat and upper arrow order in the standard way: For $\tld{w}_1, \tld{w}_2\in W_a$ and $\delta\in \Omega$, $\tld{w}_1\delta\leq \tld{w}_2\delta$ (resp.~$\tld{w}_1\delta\uparrow \tld{w}_2\delta$) if and only if $\tld{w}_1\leq \tld{w}_2$ (resp.~$\tld{w}_1\uparrow \tld{w}_2$), and elements in different right $W_a$-cosets are incomparable. 
We write $X \uparrow Y$ (resp.~$X \leq Y$) between sets $X$ and $Y$ if $x\uparrow y$ (resp.~$x \leq y$) for all $x\in X$ and $y\in Y$.
For $\tld{w} \in \tld{{W}}$, let 
\[
\tld{{W}}_{\leq\tld{w}} \defeq \{\tld{s}\in \tld{{W}} \mid \tld{s}\leq \tld{w}\}.
\]
We write $\ell$ for the Coxeter length function on $W_a$, which we extend to $\tld{W}$ by letting $\ell(\tld{w}\delta)\defeq \ell(\tld{w})$ for any $\tld{w}\in W_a$, $\delta\in \Omega$.

\begin{defn}
Let $m\geq 1$, $\tld{w}_1,\dots, \tld{w}_m\in\tld{W}$ and set $\tld{w}\defeq \prod\limits_{i=1}^m\tld{w}_i$.
We say $\prod\limits_{i=1}^m\tld{w}_i$ is a \emph{reduced expression} for $\tld{w}$ if $\ell(\tld{w})=\sum\limits_{i=1}^m\ell(\tld{w}_i)$.
\end{defn}

We now recall the definition of the admissible set as introduced by Kottwitz and Rapoport:

\begin{defn} \label{defn:adm} Let $\lambda \in X^*(T)$.  Define 
\[
\Adm(\lambda) \defeq  \bigcup_{w\in {W}} \tld{{W}}_{\leq t_{w(\lambda)}}.
\]
\end{defn}

Recall from \S \ref{sec:notations} the hyperplanes $H_{\alpha, n} = \{ x \in V\, | \, \langle x,\alpha^\vee\rangle = n \}$ and the notation $\Phi^{+}$ (resp.~$\Phi^{-}$) for the set of positive (resp.~negative) roots.   
We use the notation $\alpha > 0$ (resp.~$\alpha < 0$) for a positive (resp.~negative) root. 
For $\alpha \in \Phi$, define the half-hyperplanes $H^{+}_{\alpha, n} = \{ x \in V \mid \langle x, \alpha^\vee\rangle > n \}$ and $H^{-}_{\alpha, n} =  \{ x \in V \mid \langle x, \alpha^\vee\rangle < n \}.$ 
Define the $m$-th $\alpha$-strip to be 
\[
H^{(m, m+1)}_{\alpha} = \{ x \in V \mid m < \langle x, \alpha^\vee\rangle < m+1 \}.
\]
Define the critical strips to be strips $H^{(0, 1)}_{\alpha}$ where $\alpha \in \Phi^+$.

\begin{defn} \label{defn:regular} An alcove $A \in \cA$ is \emph{regular} if $A$ does not lie in any critical strips.   For any $\tld{w} \in \tld{W}$, we say $\tld{w}$ is \emph{regular} if $\tld{w}(A_0)$ is regular.  Define 
\[
\Adm^{\mathrm{reg}}(\lambda) = \{ \tld{w} \in \Adm(\lambda)\mid \tld{w} \text{ is regular} \}. 
\]
\end{defn}

From \cite[Lemma 4.1.9]{LLL} we have:
\begin{lemma}\label{lemma:gallery}
Suppose that $\tld{w}_1$ and $\tld{w}_2\in \tld{W}^+$.
Then $\tld{w}_2^{-1}w_0\tld{w}_1$ is a reduced expression. 
\end{lemma}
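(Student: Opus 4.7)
The plan is to prove the length formula $\ell(\tld{w}_2^{-1}w_0\tld{w}_1)=\ell(\tld{w}_2^{-1})+\ell(w_0)+\ell(\tld{w}_1)$ by a direct hyperplane-counting argument in the standard apartment $V$. Recall that for any $\tld{w}\in\tld{W}$, $\ell(\tld{w})$ equals the number of affine root hyperplanes $H_{\alpha,k}$ separating $A_0$ from $\tld{w}(A_0)$; equivalently, $\ell(\tld{w}_2^{-1}\tld{w}_1)$ equals the number of hyperplanes separating $\tld{w}_2(A_0)$ from $\tld{w}_1(A_0)$. Applying this with $\tld{w}_1$ replaced by $w_0\tld{w}_1$, it suffices to count the hyperplanes between $\tld{w}_2(A_0)$ and $w_0\tld{w}_1(A_0)$ and see that it factors as claimed.

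The key observation is that, for $\tld{w}\in\tld{W}^+$ and each $\alpha\in\Phi^+$, the dominance of $\tld{w}(A_0)$ guarantees the existence of a unique non-negative integer $n_\alpha(\tld{w})$ with $n_\alpha(\tld{w})<\langle x,\alpha^\vee\rangle<n_\alpha(\tld{w})+1$ for all $x\in\tld{w}(A_0)$; moreover $\ell(\tld{w})=\sum_{\alpha>0}n_\alpha(\tld{w})$. First I would apply this to $\tld{w}_2$ to see that the $\alpha^\vee$-coordinates on $\tld{w}_2(A_0)$ lie in $(n_\alpha(\tld{w}_2),n_\alpha(\tld{w}_2)+1)$. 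Then for $w_0\tld{w}_1(A_0)$, writing $y=w_0(x)$ with $x\in\tld{w}_1(A_0)$, one has $\langle y,\alpha^\vee\rangle=-\langle x,\beta^\vee\rangle$ with $\beta=-w_0(\alpha)\in\Phi^+$, so the $\alpha^\vee$-coordinates on $w_0\tld{w}_1(A_0)$ lie in $(-n_\beta(\tld{w}_1)-1,-n_\beta(\tld{w}_1))$.

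For each fixed positive $\alpha$, the number of hyperplanes $H_{\alpha,k}$ separating the two alcoves is the number of integers $k$ strictly between these two intervals, namely $n_\alpha(\tld{w}_2)+n_{-w_0(\alpha)}(\tld{w}_1)+1$. Summing over $\alpha\in\Phi^+$ and using that $\alpha\mapsto -w_0(\alpha)$ is a bijection on $\Phi^+$ gives
\[
\sum_{\alpha>0}\bigl(n_\alpha(\tld{w}_2)+n_{-w_0(\alpha)}(\tld{w}_1)+1\bigr)=\ell(\tld{w}_2)+\ell(\tld{w}_1)+|\Phi^+|=\ell(\tld{w}_2^{-1})+\ell(w_0)+\ell(\tld{w}_1),
\]
using $\ell(\tld{w}_2^{-1})=\ell(\tld{w}_2)$ and $\ell(w_0)=|\Phi^+|$. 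This is exactly the claimed identity, proving that the expression is reduced.

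The main (mild) obstacle is bookkeeping with $w_0$ acting on the apartment, in particular keeping straight that $w_0\tld{w}_1(A_0)$ is the antidominant reflection of a dominant alcove; once the coordinates are written down carefully, the count of intervening integer hyperplanes is transparent. No genericity or regularity assumption is needed — only the dominance hypothesis on $\tld{w}_1(A_0)$ and $\tld{w}_2(A_0)$, which ensures that the coordinate intervals on the two sides do not overlap and that all the relevant integer counts are non-negative.
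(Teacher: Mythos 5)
Your proof is correct and is essentially the standard gallery/hyperplane-counting argument (the paper itself does not prove the lemma, but simply cites it from \cite[Lemma~4.1.9]{LLL}, whose name \texttt{lemma:gallery} indicates the same idea). One small point worth making explicit: the count of integers $k$ strictly between the intervals $(n_\alpha(\tld{w}_2),\,n_\alpha(\tld{w}_2)+1)$ and $(-n_\beta(\tld{w}_1)-1,\,-n_\beta(\tld{w}_1))$ includes the endpoints $k=n_\alpha(\tld{w}_2)$ and $k=-n_\beta(\tld{w}_1)$ — these hyperplanes do separate the alcoves since one alcove lies strictly on each side — and that is exactly where the ``$+1$'' comes from, corresponding in the end to $\ell(w_0)=|\Phi^+|$. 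The formula $\ell(\tld{w})=\sum_{\alpha>0}n_\alpha(\tld{w})$ is also correct for $\tld{w}\in\tld{W}^+$ in the \emph{extended} affine Weyl group, because the $\Omega$-part has length zero and fixes $A_0$, so the alcove position (and hence the $n_\alpha$) is insensitive to it — good to keep in mind since $\tld{W}$ here is extended.
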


\begin{prop}
\label{prop:can:reg}
If $\tld{w} \in \tld{W}$ is regular, then there exist $\tld{w}_1$ and $\tld{w}_2 \in \tld{W}_1^+$ and a dominant weight $\nu \in X^*(T)$ such that $\tld{w} = \tld{w}_2^{-1} w_0 t_\nu \tld{w}_1$.
Moreover, $\tld{w}_1$, $\tld{w}_2$, and $\nu$ as above are unique up to $X^0(T)$. 

Conversely, if $\tld{w}_1$ and $\tld{w}_2$ are elements of $\tld{W}^+$, then $\tld{w}_2^{-1} w_0 \tld{w}_1$ is regular.
\end{prop}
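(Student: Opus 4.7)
The approach is geometric, analyzing the position of the alcove $A \defeq \tld{w}(A_0)$ in $V$ relative to the hyperplanes $H_{\alpha,0}$ and $H_{\alpha,1}$ for positive roots $\alpha$.

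For existence, regularity of $\tld{w}$ forces $\langle \cdot, \alpha^\vee \rangle$ on $A$ to lie entirely in $(-\infty, 0)$ or $(1, \infty)$ for each positive root $\alpha$; in particular, $A$ lies in a unique Weyl chamber, which I write as $u \cC_{w_0}$ for some unique $u \in W$. A preliminary observation I would establish is that for each $u \in W$, there is a unique-up-to-$X^0(T)$ element $\tld{w}_2 \in \tld{W}_1^+$ of the form $t_\mu u^{-1}$: the restricted condition on $\tld{w}_2(A_0)$ forces $\langle \mu, \alpha_i^\vee \rangle = \delta_{u(\alpha_i) < 0}$ for each simple root $\alpha_i$, which pins down $\mu$ modulo $X^0(T) = \bigcap_i \ker \alpha_i^\vee$. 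Using the regularity of $A$ (which in Case B, when $u(\alpha)<0$ for $\alpha>0$, forces $\langle x,\alpha^\vee\rangle >1$ on $A$ rather than merely $>0$), I check that $\tld{w}_2(A)$ lands in $\cC_{w_0}$, so that $B \defeq w_0 \tld{w}_2(A)$ is a dominant alcove. Any dominant alcove decomposes uniquely up to $X^0(T)$ as $t_\nu \tld{w}_1(A_0)$ with $\tld{w}_1 \in \tld{W}_1^+$ and $\nu$ dominant: extract, for each simple coroot $\alpha_i^\vee$, the integer $m_i \geq 0$ with $m_i < \langle \cdot, \alpha_i^\vee\rangle < m_i + 1$ on $B$, and take $\nu$ with $\langle \nu, \alpha_i^\vee\rangle = m_i$. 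This yields $\tld{w} = \tld{w}_2^{-1} w_0 t_\nu \tld{w}_1$, and the uniqueness statement follows because each piece is determined by intrinsic geometric data of $A$ modulo $X^0(T)$ (a compensating shift $\tld{w}_1 \mapsto t_\lambda \tld{w}_1$, $\nu \mapsto \nu - \lambda$ for $\lambda \in X^0(T)$ leaves the factorization unchanged).

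For the converse, suppose $\tld{w}_1, \tld{w}_2 \in \tld{W}^+$. Lemma~\ref{lemma:gallery} guarantees that $\tld{w}_2^{-1} w_0 \tld{w}_1$ is a reduced expression. Decomposing $\tld{w}_i = t_{\mu_i} v_i$ and computing $\tld{w} = t_\mu v$ with $v = v_2^{-1} w_0 v_1 \in W$ and $\mu = v_2^{-1}(w_0(\mu_1) - \mu_2)$, the dominance conditions $\langle \mu_i, \alpha^\vee \rangle \geq \delta_{v_i^{-1}(\alpha) < 0}$ for $\alpha > 0$ combine with the sign-flipping effect of $w_0$ on positive roots to force $\langle \tld{w}(x), \alpha^\vee \rangle \notin (0,1)$ for all $x \in A_0$ and $\alpha > 0$.

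\textbf{Main obstacle.} The technical heart is verifying the critical-strip inequalities in the converse uniformly over all positive roots, using only the two dominance conditions on $\tld{w}_1, \tld{w}_2$, since these control $\langle \mu_i, \alpha^\vee\rangle$ in a limited way (and $w_0$ mixes these contributions nontrivially). The existence step is more routine once the $X^0(T)$-modular parametrization of $\tld{W}_1^+$ by finite Weyl components, together with the restricted/dominant decomposition of dominant alcoves, are established.
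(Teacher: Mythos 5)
Your existence and uniqueness arguments are essentially the same as the paper's: both exploit the fact that $\tld{w}_2$ is an affine transformation preserving the strip stratification, so regularity of $\tld{w}$ is equivalent to $\tld{w}_2\tld{w}(x)$ and $\tld{w}_2(x)$ never sharing a strip, combined with the fact that $\tld{w}_2 \in \tld{W}_1^+$ pins down $\tld{w}_2(x)$ to the base alcove and (using $1$-restrictedness of the translation part of $\tld{w}_2$ and regularity) forces $\tld{w}_2\tld{w}(x)$ into the anti-dominant chamber. (There is some sloppiness with $u$ vs.~$u^{-1}$ in your Case B, but the mechanism is right.)

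The converse, however, has a genuine gap, and it is the part you yourself flag as the ``main obstacle.'' Invoking Lemma~\ref{lemma:gallery} is a red herring: being a reduced expression has no bearing on regularity (the identity is trivially reduced but lies in every critical strip). Your proposed direct expansion $\tld{w}_2^{-1}w_0\tld{w}_1 = t_{v_2^{-1}(w_0\mu_1-\mu_2)}\,v_2^{-1}w_0 v_1$ and the ensuing case analysis over all positive roots $\alpha$ with arbitrary $v^{-1}(\alpha)$ is not carried out, and doing so uniformly from the two dominance conditions is genuinely delicate. You already have the clean argument in hand from the existence direction and should reuse it: regularity of $\tld{w}_2^{-1}w_0\tld{w}_1$ is equivalent (after the strip-preserving translation by $\tld{w}_2$) to $w_0\tld{w}_1(x)$ and $\tld{w}_2(x)$ never lying in a common $\alpha$-strip; but $w_0\tld{w}_1(x)$ lies in the anti-dominant chamber and $\tld{w}_2(x)$ in the dominant chamber, so for every $\alpha \in \Phi^+$ one has $\langle w_0\tld{w}_1(x),\alpha^\vee\rangle < 0 < \langle \tld{w}_2(x),\alpha^\vee\rangle$, hence they are separated by $H_{\alpha,0}$ and cannot share a strip. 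No case analysis is needed.
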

\begin{proof}
Suppose that $\tld{w} \in \tld{W}$ is regular and $w_2 \tld{w}(A_0)$ lies in the anti-dominant Weyl chamber for $w_2\in W$.
Let $\eta_2 \in X^*(T)$ be such that $\tld{w}_2 = t_{\eta_2} w_2\in \tld{W}_1^+$.
Note that $\eta_2$ is unique up to $X^0(T)$.
Let $x$ be in $A_0$.
From the assumption we deduce that $\tld{w}(x)$ and $x$ do not lie in the same $\alpha$-strip for any root $\alpha$. 
(Note that $A_0$ only lies inside critical strips.)
Equivalently, $\tld{w}_2\tld{w}(x)$ and $\tld{w}_2(x)$ do not lie in the same $\alpha$-strip for any root $\alpha$.
In particular:
\begin{equation}\label{eqn:noncrit}
\lfloor \langle \tld{w}_2\tld{w}(x),\alpha^\vee\rangle\rfloor \neq \lfloor \langle\tld{w}_2(x),\alpha^\vee\rangle\rfloor = 0
\end{equation} 
for all simple roots $\alpha$, using that $\tld{w}_2 \in \tld{W}_1^+$ to obtain the last equality.

Now let $\alpha$ be a simple root.
Then, $\langle w_2\tld{w}(x),\alpha^\vee\rangle  < 0$ by assumption.
Moreover, $\langle\eta_2,\alpha^\vee\rangle \leq 1$ since $\eta_2$ is $1$-restricted (a lift of a multiplicity free sum of fundamental weights). 
We conclude that $\langle \tld{w}_2\tld{w}(x),\alpha^\vee\rangle < 1$.
From (\ref{eqn:noncrit}), we deduce that $\langle \tld{w}_2\tld{w}(x), \alpha^\vee\rangle< 0$.
Since $\alpha$ is an arbitrary simple root, $\tld{w}_2\tld{w}(x)$ lies in the anti-dominant Weyl chamber.
Thus, $w_0\tld{w}_2 \tld{w} \in \tld{W}^+$.
We conclude that $w_0\tld{w}_2 \tld{w} = t_\nu\tld{w}_1$ for some dominant $\nu \in X^*(T)$ and $\tld{w}_1 \in \tld{W}_1^+$.
Again, $\nu$ and $\tld{w}_1$ are determined up to $X^0(T)$.

For the converse, let $\tld{w}_1$ and $\tld{w}_2$ be elements of $\tld{W}^+$. Let $x \in A_0$. 
Showing that $\tld{w}_2^{-1}w_0\tld{w}_1$ is regular is equivalent to showing that $w_0\tld{w}_1(x)$ and $\tld{w}_2(x)$ do not lie in the same $\alpha$-strip for any root $\alpha$.
This is clear from the fact that $w_0\tld{w}_1(x)$ lies in the anti-dominant Weyl chamber while $\tld{w}_2(x)$ lies in the dominant Weyl chamber.
\end{proof}

\begin{prop}
\label{prop:can:adm}
Suppose that $\tld{w}_1$ and $\tld{w}_2$ are elements in $\tld{W}^+$.
Let $\lambda \in X^*(T)$ be a dominant weight.
The following are equivalent.
\begin{enumerate}
\item \label{item:uparrow} $\tld{w}_1 \uparrow t_\lambda \tld{w}_h^{-1} \tld{w}_2$;
\item \label{item:uparrow2} $\tld{w}_2 \uparrow \tld{w}_h t_{-\lambda} \tld{w}_1$;
\item \label{item:extreme} $\tld{w}_2^{-1} w_0 \tld{w}_1 \leq t_{w_1^{-1}(\lambda+\eta)}, \, t_{(w_0 w_2)^{-1}(\lambda+\eta)}$ where $w_1, \,w_2 \in W$ are the images of $\tld{w}_1$ and $\tld{w}_2$ in $W$; and
\item \label{item:adm} $\tld{w}_2^{-1} w_0 \tld{w}_1$ is in $\Adm(\lambda+\eta)$.
\end{enumerate}
\end{prop}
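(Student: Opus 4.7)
The plan is to establish the four equivalences by passing through the reduced element $\tld{w}_2^{-1} w_0 \tld{w}_1$, which by Lemma \ref{lemma:gallery} is a reduced expression, so its Bruhat-theoretic behavior is tightly controlled by the data of $\tld{w}_1$ and $\tld{w}_2$ individually. First, I would dispatch the easy equivalence \eqref{item:extreme}$\Leftrightarrow$\eqref{item:adm}: the forward direction is immediate from Definition \ref{defn:adm}, and for the converse, I would argue that if $\tld{w}_2^{-1} w_0 \tld{w}_1 \leq t_{w(\lambda+\eta)}$ for some $w \in W$, then the ``source'' and ``target'' alcoves of the gallery $\tld{w}_2^{-1} w_0 \tld{w}_1(A_0)$ --- which by the canonical form of Proposition \ref{prop:can:reg} sit in the Weyl chambers indexed by $w_1^{-1}$ and $(w_0 w_2)^{-1}$ --- force $w$ to be chosen so that the Bruhat inequality is sharpest precisely for $w = w_1^{-1}$ and $w = (w_0 w_2)^{-1}$. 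The standard tool here is that for a reduced element whose initial and final alcoves lie in specified Weyl chambers, membership in a translation Bruhat interval $\tld{W}_{\leq t_{w(\mu)}}$ (for $\mu$ dominant) is automatic once it holds for any single $w$.

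Next, I would prove \eqref{item:uparrow}$\Leftrightarrow$\eqref{item:adm}. Rewriting $t_\lambda \tld{w}_h^{-1} \tld{w}_2 = t_{\lambda+\eta} w_0 \tld{w}_2$, the claim becomes: $\tld{w}_1 \uparrow t_{\lambda+\eta} w_0 \tld{w}_2$ if and only if $\tld{w}_2^{-1} w_0 \tld{w}_1 \in \Adm(\lambda+\eta)$. I would use the standard geometric/combinatorial translation between the upper-arrow order and the Bruhat order due to Lusztig: for $\tld{u} \in \tld{W}^+$ and $\tld{v} \in \tld{W}^+$, and $\mu \in X^*(T)$ dominant, one has $\tld{u} \uparrow t_\mu w_0 \tld{v}$ if and only if $\tld{v}^{-1} w_0 \tld{u} \leq t_{w(\mu)}$ for every/some Weyl chamber-adapted $w \in W$. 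The forward implication can be proven by induction on $\ell(t_\mu w_0 \tld{v}) - \ell(\tld{u})$, reducing one affine reflection of an upper-arrow step at a time and verifying that each such step corresponds to a descent step in the Bruhat order on the compound $\tld{v}^{-1} w_0 \tld{u}$ (this uses reducedness from Lemma \ref{lemma:gallery} in an essential way). The converse is by reversing this process.

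Finally, \eqref{item:uparrow}$\Leftrightarrow$\eqref{item:uparrow2} is obtained via a symmetry argument: the anti-involution $\tld{w} \mapsto \tld{w}^{-1}$ composed with left multiplication by $w_0$ swaps the roles of source and target in a gallery, converts $t_{\lambda+\eta} w_0 = t_\lambda \tld{w}_h^{-1}$ into $w_0 t_{-\lambda-\eta} = \tld{w}_h t_{-\lambda}$, and interchanges $\tld{w}_1$ and $\tld{w}_2$ (both being in $\tld{W}^+$). This formally converts \eqref{item:uparrow} applied to $(\tld{w}_1,\tld{w}_2,\lambda)$ into \eqref{item:uparrow2} for the same triple, so either equivalence with \eqref{item:adm} suffices to close the cycle.

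The main obstacle is the geometric translation in \eqref{item:uparrow}$\Leftrightarrow$\eqref{item:adm}, which requires careful handling of galleries crossing the boundary between the dominant and anti-dominant chambers (traversed by $w_0$). While all the needed ingredients are classical, ensuring the argument respects the specific canonical form $\tld{w}_2^{-1} w_0 \tld{w}_1$ and records the correct Weyl-chamber data for the bounds in \eqref{item:extreme} demands a careful case analysis of which reflection hyperplanes are crossed at each upper-arrow step.
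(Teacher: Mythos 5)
Your skeleton---route everything through the reduced compound $\tld{w}_2^{-1}w_0\tld{w}_1$, whose Bruhat behavior is controlled by Lemma \ref{lemma:gallery}, and use chamber-adapted admissibility---is the right one and matches the paper's strategy. Two steps in the proposal, however, do not go through as written.

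\textbf{The symmetry argument for \eqref{item:uparrow}$\Leftrightarrow$\eqref{item:uparrow2} is broken.} The $\uparrow$ order on alcoves is \emph{not} compatible with the anti-involution $\tld{w}\mapsto\tld{w}^{-1}$ (even after post-composing with $w_0$): there is no order-reversing or order-preserving statement of that form. Moreover, the naive swap $(\tld{w}_1,\tld{w}_2)\mapsto(\tld{w}_2,\tld{w}_1)$ that inversion suggests would convert \eqref{item:uparrow} into $\tld{w}_2\uparrow t_\lambda\tld{w}_h^{-1}\tld{w}_1 = t_{\lambda+\eta}w_0\tld{w}_1$, not into \eqref{item:uparrow2} $=\ \tld{w}_2\uparrow \tld{w}_h t_{-\lambda}\tld{w}_1 = t_{-w_0(\lambda+\eta)}w_0\tld{w}_1$; the translation parts $t_{\lambda+\eta}$ and $t_{-w_0(\lambda+\eta)}$ differ unless $\lambda+\eta$ is self-dual. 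The correct symmetry the paper uses is the $\tld{w}_h$-twist, which \emph{is} order-reversing for $\uparrow$ (\cite[Proposition 4.1.2]{LLL}), combined with translation-invariance of $\uparrow$ (\cite[II.6.5(4)]{RAGS}): applying $\tld{w}_h(\cdot)$ to both sides of \eqref{item:uparrow} and then translating by $-w_0(\lambda)$ gives \eqref{item:uparrow2} directly.

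\textbf{The \eqref{item:uparrow}$\Leftrightarrow$\eqref{item:adm} step underestimates what is needed.} First, there is a normalization the paper performs before anything else: replace $\tld{w}_2$ by its representative in $\tld{W}_1^+$ (adjusting $\tld{w}_1$ by the corresponding dominant translation so that the compound $\tld{w}_2^{-1}w_0\tld{w}_1$ and the $\uparrow$ relation are unchanged). Only after this normalization is $\tld{w}_h^{-1}\tld{w}_2$, and hence $t_\lambda\tld{w}_h^{-1}\tld{w}_2$, in $\tld{W}^+$; for general $\tld{w}_2\in\tld{W}^+$ this fails. Your proposal does not mention this step, and without it the ``Lusztig translation'' you invoke does not apply. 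Second, the actual content of that translation is precisely Wang's theorem ($\uparrow\,=\,\leq$ on $\tld{W}^+$, \cite[Theorem 4.1.1]{LLL}) together with the Haines--He refinement \cite[Corollary 4.4]{HH} that localizes Bruhat-admissibility to the chamber of the element. Both are substantive external results; your proposal folds them into a single sketched induction on $\ell(t_\mu w_0\tld{v})-\ell(\tld{u})$, which would amount to reproving both from scratch without a clear plan for how an $\uparrow$-step on one factor becomes a Bruhat descent on the reduced compound. The paper's proof instead cites these results and then uses the reduced factorization $\tld{w}_2^{-1}w_0\cdot(t_\lambda\tld{w}_h^{-1}\tld{w}_2)$ to transfer the Bruhat inequality $\tld{w}_1\leq t_\lambda\tld{w}_h^{-1}\tld{w}_2$ into $\tld{w}_2^{-1}w_0\tld{w}_1\leq t_{w_2^{-1}w_0(\lambda+\eta)}$ in one line.
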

\begin{proof}
It is clear that  (\ref{item:uparrow}) is equivalent to (\ref{item:uparrow2}) by \cite[Proposition 4.1.2]{LLL} and \cite[II.6.5(4)]{RAGS}. 
We first show that (\ref{item:uparrow}) implies (\ref{item:extreme}).
Let $\omega \in X^*(T)$ be a dominant weight (unique up to $X^0(T)$) such that $t_{-\omega}\tld{w}_2 \in \tld{W}_1^+$. 
Then $t_{-w_0(\omega)}\tld{w}_1 \in \tld{W}^+$, $t_{-w_0(\omega)}\tld{w}_1 \uparrow t_\lambda \tld{w}_h^{-1} (t_{-\omega}\tld{w}_2)$, and $\tld{w}_2^{-1} w_0 \tld{w}_1 = (t_{-\omega}\tld{w}_2)^{-1} w_0 (t_{-w_0(\omega)}\tld{w}_1)$. 
Without loss of generality, we can assume that $\tld{w}_2$ is an element of $\tld{W}_1^+$.
Then $\tld{w}_h^{-1} \tld{w}_2 \in \tld{W}^+$.
Then {Wang's theorem} \cite[Theorem 4.3]{wang} (see also \cite[Theorem 4.1.1]{LLL}) implies that
$\tld{w}_1 \leq t_\lambda \tld{w}_h^{-1} \tld{w}_2$.
Using that $\tld{w}_2^{-1} w_0 (t_\lambda \tld{w}_h^{-1} \tld{w}_2)$ is a reduced expression by Lemma \ref{lemma:gallery}, we have that
\[
\tld{w}_2^{-1} w_0 \tld{w}_1 \leq \tld{w}_2^{-1} w_0 t_\lambda \tld{w}_h^{-1} \tld{w}_2 = t_{w_2^{-1}w_0(\lambda+\eta)}.
\] 
The inequality $\tld{w}_2^{-1} w_0 \tld{w}_1 \leq t_{w_1^{-1}(\eta)}$ follows from (\ref{item:uparrow2}) using the same argument.

Item (\ref{item:extreme}) immediately implies (\ref{item:adm}).
We now show that (\ref{item:adm}) implies (\ref{item:uparrow}).
As before, we can and do change $\tld{w}_1$ and $\tld{w}_2$ so that $\tld{w}_2 \in\tld{W}_1^+$ without affecting the product $\tld{w}_2^{-1}w_0\tld{w}_1$ or the veracity of the relation in (\ref{item:uparrow}).
By writing $\tld{w}_2 = t_{\eta_2} w_2$ (where $\eta_2$ is dominant), it is easy to see that $\tld{w}_2^{-1}w_0\tld{w}_1(A_0) = w_2^{-1} t_{-\eta_2} w_0 \tld{w}_1(A_0)$ lies in the Weyl chamber $(w_0w_2)^{-1} (\cC_0)$ since $t_{-\eta_2} w_0 \tld{w}_1(A_0)$ lies in $w_0(\cC_0)$.
We conclude from \cite[Corollary 4.4]{HH} and the $(\lambda+\eta)$-admissibility of $\tld{w}_2^{-1}w_0\tld{w}_1$ that 
\[
\tld{w}_2^{-1}w_0\tld{w}_1 \leq t_{(w_0w_2)^{-1}(\lambda+\eta)} = \tld{w}_2^{-1}w_0(t_\lambda \tld{w}_h^{-1} \tld{w}_2).
\]
Noting that $\tld{w}_h^{-1} \tld{w}_2 \in \tld{W}^+$ since $\tld{w}_2\in \tld{W}_1^+$, the above factorizations are reduced by Lemma \ref{lemma:gallery}.
We conclude that $\tld{w}_1 \leq t_\lambda \tld{w}_h^{-1} \tld{w}_2$, which implies that $\tld{w}_1 \uparrow t_\lambda \tld{w}_h^{-1} \tld{w}_2$ by Wang's theorem {\cite[Theorem 4.3]{wang}}.
\end{proof}

For a dominant weight $\lambda \in X^*(T)$, define the collection of admissible pairs
\begin{align} \label{eq:AP}
\mathrm{AP}(\lambda+\eta)&\defeq \left\{
\begin{array}{l}
(\tld{w}_1,\tld{w}_2)\in (\tld{W}_1^+\times \tld{W}^+)\slash X^{0}(T)\Bigm\vert\tld{w}_1\uparrow t_{\lambda}\tld{w}_h^{-1}\tld{w}_2
\end{array}
\right\},
\end{align}
where $X^0(T)$ is embedded diagonally in the natural way.

\begin{cor}
\label{cor:can:reg}
Let $\lambda\in X^*(T)$ be a dominant weight.
Then the map
\begin{align*}
\mathrm{AP}(\lambda+\eta)
&\stackrel{\sim}{\longrightarrow}
\Adm^{\mathrm{reg}}(\lambda+\eta).\\
(\tld{w}_1,\tld{w}_2)&\longmapsto \tld{w}_2^{-1}w_0\tld{w}_1
\end{align*}
is a bijection.
\end{cor}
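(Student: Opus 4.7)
The strategy is to assemble the bijection directly from Propositions \ref{prop:can:reg} and \ref{prop:can:adm}. The regularity half comes from Proposition \ref{prop:can:reg} and the admissibility half comes from the equivalence (1)$\Leftrightarrow$(4) in Proposition \ref{prop:can:adm}; the main bookkeeping is to convert between the decomposition $\tld{w} = \tld{w}_2^{-1} w_0 t_\nu \tld{w}_1$ (with both $\tld{w}_1, \tld{w}_2$ restricted) provided by Proposition \ref{prop:can:reg} and the decomposition $\tld{w} = \tld{w}_2^{-1} w_0 \tld{w}_1$ (with $\tld{w}_2$ merely dominant) featured in $\mathrm{AP}(\lambda+\eta)$.

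First I would check that the map is well-defined on $X^0(T)$-equivalence classes. Since $W$ fixes $X^0(T)$ pointwise, $t_\chi$ commutes with every element of $W$ for $\chi \in X^0(T)$, so $(t_\chi \tld{w}_2)^{-1} w_0 (t_\chi \tld{w}_1) = \tld{w}_2^{-1} w_0 \tld{w}_1$. The image lands in $\Adm^{\mathrm{reg}}(\lambda+\eta)$: regularity follows from the converse direction of Proposition \ref{prop:can:reg} applied to $\tld{w}_1, \tld{w}_2 \in \tld{W}^+$, and the admissibility is exactly the implication (1)$\Rightarrow$(4) of Proposition \ref{prop:can:adm}.

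For surjectivity, start with $\tld{w} \in \Adm^{\mathrm{reg}}(\lambda+\eta)$. Proposition \ref{prop:can:reg} furnishes $\tld{u}_1, \tld{u}_2 \in \tld{W}_1^+$ and a dominant $\nu$ with $\tld{w} = \tld{u}_2^{-1} w_0 t_\nu \tld{u}_1$. Using the identity $w_0 t_\nu = t_{w_0(\nu)} w_0$ in $\tld{W}$, this rewrites as $\tld{w} = \tld{w}_2^{-1} w_0 \tld{u}_1$ where $\tld{w}_2 \defeq t_{-w_0(\nu)} \tld{u}_2$. Since $-w_0(\nu)$ is dominant and $\tld{u}_2 \in \tld{W}_1^+$, translating the restricted alcove $\tld{u}_2(A_0)$ by the dominant weight $-w_0(\nu)$ keeps it in the dominant chamber, so $\tld{w}_2 \in \tld{W}^+$. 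The admissibility of $\tld{w}$ together with Proposition \ref{prop:can:adm} (4)$\Rightarrow$(1) then gives $(\tld{u}_1, \tld{w}_2) \in \mathrm{AP}(\lambda+\eta)$, and its image is visibly $\tld{w}$.

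For injectivity, the key observation is that every $\tld{s} \in \tld{W}^+$ admits a decomposition $\tld{s} = t_\mu \tld{v}$ with $\tld{v} \in \tld{W}_1^+$ and $\mu$ dominant, uniquely up to the diagonal action of $X^0(T)$; indeed, any dominant alcove translates uniquely into the restricted region by a dominant weight whose values on simple coroots are the floors $\lfloor \langle \tld{s}(A_0), \alpha_i^\vee\rangle \rfloor$. Given two pairs $(\tld{w}_1, \tld{w}_2), (\tld{u}_1, \tld{u}_2) \in \mathrm{AP}(\lambda+\eta)$ with the same image $\tld{w}$, apply this decomposition to $\tld{w}_2$ and $\tld{u}_2$ and move the translation across $w_0$ to express $\tld{w}$ in the shape of Proposition \ref{prop:can:reg}. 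The uniqueness asserted there identifies the two pairs modulo the diagonal $X^0(T)$-action. The main technical point I expect to require care is tracking the various $X^0(T)$-ambiguities---between $\tld{w}_1$, the restricted part of $\tld{w}_2$, and $\nu$---and checking that they all collapse into the single diagonal action appearing in the definition of $\mathrm{AP}(\lambda+\eta)$.
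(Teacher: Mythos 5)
Your proof is correct and follows essentially the same route as the paper's: the image lies in $\Adm^{\mathrm{reg}}(\lambda+\eta)$ by the converse direction of Proposition \ref{prop:can:reg} together with (\ref{item:uparrow})$\Rightarrow$(\ref{item:adm}) of Proposition \ref{prop:can:adm}, surjectivity comes from moving the translation $t_\nu$ from the Proposition \ref{prop:can:reg} decomposition across $w_0$ into the second factor, and injectivity comes from the uniqueness in Proposition \ref{prop:can:reg}. The only cosmetic difference is in the surjectivity step, where you re-apply Proposition \ref{prop:can:adm}(\ref{item:adm})$\Rightarrow$(\ref{item:uparrow}) to the pair $(\tld{u}_1, t_{-w_0(\nu)}\tld{u}_2)$ directly, whereas the paper first applies it to $(t_\nu\tld{w}_1,\tld{w}_2)$ and then invokes \cite[II.6.5(4)]{RAGS} to shift the translation in the $\uparrow$ relation; both are equivalent, and yours is arguably a touch cleaner.
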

\begin{proof}
We first show that the image of the map lies in $\Adm^{\mathrm{reg}}(\lambda+\eta)$.
Assume $(\tld{w}_1,\tld{w}_2)\in \tld{W}^+_1\times \tld{W}^+$ is such that $\tld{w}_1\uparrow t_{\lambda}\tld{w}_h^{-1}\tld{w}_2$.
Note that this condition is stable under the diagonal action of $X^{0}(T)$ by \cite[II.6.5(4)]{RAGS}.
Then $\tld{w}_2^{-1}w_0\tld{w}_1\in \Adm(\lambda+\eta)$ by Proposition \ref{prop:can:adm}(\ref{item:adm}), and is regular by Proposition \ref{prop:can:reg}.

To show surjectivity, write $\tld{w}=\tld{w}_2^{-1}w_0t_\nu\tld{w}_1 \in \Adm^{\mathrm{reg}}(\lambda + \eta)$ as in the statement of Proposition \ref{prop:can:reg}.
By Proposition \ref{prop:can:adm}(\ref{item:uparrow}) we have $t_\nu\tld{w}_1\uparrow t_{\lambda}\tld{w}_h^{-1}\tld{w}_2$, which is equivalent to $\tld{w}_1\uparrow t_{\lambda}\tld{w}_h^{-1}t_{-w_0(\nu)}\tld{w}_2$ by \cite[II.6.5(4)]{RAGS}.
Since $t_{-w_0(\nu)}\tld{w}_2\in \tld{W}^+$, $(\tld{w}_1,t_{-w_0(\nu)}\tld{w}_2) \in \mathrm{AP}(\lambda+\eta)$ and has image $\tld{w}$. 

The uniqueness of the decomposition up to translation by $X^0(T)$ from Proposition \ref{prop:can:reg} shows injectivity.
\end{proof}

\begin{rmk}\label{rmk:factor}
The same proof also shows that there is a bijection 
\begin{align*}
\left\{
(\tld{w}_1,\tld{w}_2)\in (\tld{W}^+\times \tld{W}_1^+)\slash X^{0}(T)\Bigm\vert\tld{w}_1\uparrow t_{\lambda}\tld{w}_h^{-1}\tld{w}_2 \right\}
&\stackrel{\sim}{\longrightarrow}
\Adm^{\mathrm{reg}}(\lambda+\eta)\\
(\tld{w}_1,\tld{w}_2)&\longmapsto \tld{w}_2^{-1}w_0\tld{w}_1,
\end{align*}
though this plays a lesser role in what follows.
\end{rmk}

\subsubsection{Genericity} \label{sub:AWG:1}

Let $(\tld{W}^\vee,\leq)$ be the following partially ordered group: $\tld{W}^\vee$ is identified with $\tld{W}$ as a group, and $\leq$ is induced from the Bruhat order on $\tld{W}$ defined by the \emph{antidominant} base alcove.  %

\begin{defn} 
\label{affineadjoint} We define a bijection $\tld{w}\mapsto \tld{w}^*$ between $\tld{W}$ and $\tld{W}^\vee$ as follows: for $\tld{w} = t_{\nu}w \in \tld{W}$, with $w\in W$ and $\nu\in X^*(T) = X_*(T^{\vee})$, then $\tld{w}^*\defeq  w^{-1}t_{\nu} \in \tld{W}^\vee$. %
\end{defn}

\cite[Lemma 2.1.3]{LLL} shows that $(-)^*: \tld{W} \ra \tld{W}^\vee$ is an isomorphism of partially ordered groups.

We now introduce various notions of genericity which will be used throughout the paper.

\begin{defn}
\label{defn:var:gen}
Let $\lambda\in X^*(T)$ be a weight and let $m$ be an integer. 

\begin{enumerate}
\item 
\label{defn:-deep}
We say that $\lambda$ is \emph{$m$-deep} in a ($\eta$-shifted) $p$-alcove $C$ if 
\[
n_\alpha p+m < \langle \lambda+\eta,\alpha^\vee \rangle < (n_\alpha +1)p - m
\]
where $C$ is the $p$-alcove defined by the above inequalities with $m = 0$.
\end{enumerate}

We now assume that $m \geq 0$.

\begin{enumerate}\setcounter{enumi}{1}
\item 
\label{defn:deep}
If $m\geq 0$, we say $\lambda$ is \emph{$m$-deep} if $\lambda$ is $m$-deep in some $p$-alcove $C$.
Equivalently, $m < |\langle \lambda+\eta, \alpha^{\vee} \rangle + pk|$ for all $\alpha\in \Phi^{+}$ and $k\in \Z$.  %

\item
\label{it:gen:weyl}
For $\tld{w} = w t_{\nu}$ in either $\tld{W}$ or $\tld{W}^\vee$, we say that $\tld{w}$ is $m$-generic if $\nu - \eta$ is $m$-deep. %

\item  
\label{it:def:small} 
For $\tld{w} = w t_{\nu}$ in either $\tld{W}$ or $\tld{W}^\vee$,  we say that $\tld{w}$ is \emph{$m$-small} if $h_\nu \leq m$, i.e., $\langle \nu, \alpha^{\vee} \rangle  \leq m$
for all $\alpha \in \Phi$.

\item
\label{defn:P-generic}
Let $P = P(X_1,\ldots,X_n) \in \Z[X_1,\ldots,X_n]$ be a polynomial and let $R$ be a commutative ring.
We say that a tuple $\bf{a} \in R^n$ is \emph{$P$-generic} if $P(\bf{a}) \pmod{p} \in R/p$ is in $(R/p)^\times$. 
For a finite set $\cJ$, we say that $\bf{a} \in (R^n)^{\cJ}$ is \emph{$P$-generic} if $\bf{a}_j$ is $P$-generic for all $j \in \cJ$.
If $G= \GL_n^{\cJ}$, we say that $\lambda \in X^*(T)$ is \emph{$P$-generic} if it is under the standard identification of $X^*(T)$ with $(\Z^n)^\cJ$. %
\end{enumerate} 
\end{defn}

\begin{rmk}\label{rmk:P-gen}
\begin{enumerate}
\item 
\label{it:P-gen:1}
We note that depth is preserved by the ($p$-)dot action, smallness is preserved by the standard $W$-action, but $P$-genericity is typically not preserved by either of these.
\item 
\label{it:P-gen:2}
Suppose that $G= \GL_n^{\cJ}$. If we let $P_m(X_1,\ldots,X_n)$ be $\prod_{i=1}^n \prod_{j=1}^m (X_i-X_{i+1} - j)$ where $X_{n+1}$ is understood to be $X_1$, then $\lambda-\eta\in C_0$ is $m$-deep if and only if $\lambda$ is $P_m$-generic.
\end{enumerate}
\end{rmk}

We record some elementary properties of smallness and genericity. 
\begin{prop} \label{prop:propertiesofsmall} 
Let $\tld{w}, \tld{z}$ be elements in $\tld{W}$ (resp.~in $\tld{W}^{\vee}$) and let $\nu\in X^*(T)$.
\begin{enumerate}
\item the element $t_\nu$ is $m$-generic (resp.~$m$-small) if and only if $t_{s(\nu)}$ is $m$-generic (resp.~$m$-small) for all $s\in W(G)$;
\item if $\tld{w}$ is $m$-small and $\tld{z}$ is $m'$-small, then $\tld{w} \tld{z}$ is $(m + m')$-small;
\item the element $\tld{w}$ is $m$-small if and only if $\tld{w}^{-1}$ is $m$-small if and only if $\tld{w}^*$ is $m$-small; and
\item if $\tld{z}$ is $m'$-generic and $\tld{w}$ is $m$-small with $m \leq m'$, then $\tld{z} \tld{w}$ is $(m' - m)$-generic.  
\end{enumerate}
\end{prop}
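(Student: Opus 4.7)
The plan is to verify each assertion by a direct computation from the definitions, using the basic identity that in the semidirect product decomposition $\tld{W}=X^*(T)\rtimes W(G)$ (and analogously for $\tld{W}^\vee$), multiplication reads $(w_1t_{\nu_1})(w_2t_{\nu_2})=w_1w_2\,t_{w_2^{-1}(\nu_1)+\nu_2}$. The fundamental observation underlying all four items is that both $h_\nu=\max_{\alpha\in\Phi}\langle\nu,\alpha^\vee\rangle$ and the quantity governing genericity, namely $\min_{\alpha\in\Phi^+,\,k\in\Z}|\langle\nu,\alpha^\vee\rangle+pk|$, are invariant under replacing $\nu$ by $s(\nu)$ for $s\in W(G)$. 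Indeed, $\Phi^\vee$ is stable under $W(G)$ and under negation, so the genericity condition stated only on $\Phi^+$ is equivalent to the same inequality on all of $\Phi$ (by flipping the sign of $\alpha$ and $k$ simultaneously), and this latter condition is manifestly $W(G)$-invariant. This immediately yields (1), and also confirms that the ``translation part'' of $\tld{w}\in\tld{W}$ has a height and genericity that do not depend on whether we write $\tld{w}=t_\mu w$ or $\tld{w}=wt_\nu$.

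For (2), I write $\tld{w}=w_1t_{\nu_1}$, $\tld{z}=w_2t_{\nu_2}$, so that by the product formula the translation part of $\tld{w}\tld{z}$ is $w_2^{-1}(\nu_1)+\nu_2$, and then estimate $h_{w_2^{-1}(\nu_1)+\nu_2}\leq h_{w_2^{-1}(\nu_1)}+h_{\nu_2}=h_{\nu_1}+h_{\nu_2}\leq m+m'$ using (1). For (3), the identity $\tld{w}^{-1}=w^{-1}t_{-w(\nu)}$ exhibits the translation part of $\tld{w}^{-1}$ as $-w(\nu)$, which has the same height as $\nu$ by (1) combined with the fact that $\Phi^\vee=-\Phi^\vee$; for the map $\tld{w}\mapsto\tld{w}^*$, Definition \ref{affineadjoint} shows that if $\tld{w}=t_\mu w$ then $\tld{w}^*=w^{-1}t_\mu$, so the translation part is unchanged and smallness is preserved by (1) again.

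For (4), I extract the translation part $w_2^{-1}(\nu_1)+\nu_2$ of $\tld{z}\tld{w}$ as before and apply the triangle inequality
\[
|\langle w_2^{-1}(\nu_1)+\nu_2,\alpha^\vee\rangle+pk|\;\geq\;|\langle \nu_1,w_2(\alpha)^\vee\rangle+pk|\;-\;|\langle\nu_2,\alpha^\vee\rangle|
\]
for $\alpha\in\Phi^+$, $k\in\Z$. By $m'$-genericity of $\tld{z}$ and $W(G)$-invariance of the genericity condition established in (1), the first term on the right exceeds $m'$; the second term is bounded by $h_{\nu_2}\leq m$ (using that $\Phi^\vee$ is symmetric under negation, so $h_{\nu_2}$ also dominates $|\langle\nu_2,\alpha^\vee\rangle|$ for every coroot $\alpha^\vee$). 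Hence the left-hand side exceeds $m'-m$, which is exactly $(m'-m)$-genericity of $\tld{z}\tld{w}$.

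There is essentially no obstacle here; the only point that requires any care is the translation between the asymmetric-looking condition in Definition \ref{defn:var:gen}(\ref{defn:-deep})--(\ref{defn:deep}), stated using absolute values and $\Phi^+$, and its manifestly $W(G)$-invariant reformulation over all of $\Phi$. Once this reformulation is recorded in (1), assertions (2)--(4) follow by the one-line computations above, and the arguments go through verbatim in $\tld{W}^\vee$.
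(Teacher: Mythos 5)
Your proposal is correct and is essentially the (only) natural argument; the paper states Proposition~\ref{prop:propertiesofsmall} without proof, leaving exactly this elementary verification to the reader. The only blemish is a harmless relabelling slip between items (2) and (4): in (2) you set $\tld{w}=w_1t_{\nu_1}$, $\tld{z}=w_2t_{\nu_2}$ and the translation part of $\tld{w}\tld{z}$ is $w_2^{-1}(\nu_1)+\nu_2$, but in (4) you quote the formula $w_2^{-1}(\nu_1)+\nu_2$ for $\tld{z}\tld{w}$ while now treating $\nu_1$ as the translation part of $\tld{z}$ and $\nu_2$ as that of $\tld{w}$; internally to (4) the computation is consistent and correct, but a reader tracking notation across items could stumble. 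Beyond that, the key points are all in place: rewriting the $\Phi^+$-indexed genericity condition as the manifestly $W$-invariant condition $m<|\langle\nu,\alpha^\vee\rangle+pk|$ for all $\alpha\in\Phi$ and $k\in\Z$ (using the simultaneous sign flip $\alpha\mapsto-\alpha$, $k\mapsto-k$); using $\Phi^\vee=-\Phi^\vee$ to upgrade $h_\nu\leq m$ to $|\langle\nu,\alpha^\vee\rangle|\leq m$; and the semidirect-product multiplication formula plus triangle inequality. This goes through verbatim in $\tld{W}^\vee$ as you note.
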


\subsection{Serre weights}
\label{sec:SW}
We recall some notation in \ref{sec:notations}.
Let $G_{/\Z_p}$ be a split connected reductive group with extended affine Weyl group $\tld{W}$.
Let $\cO_p$ be a finite \'etale $\Z_p$-algebra.
Let $G_0$ be $\Res_{\cO_p/\Z_p} (G_{/\cO_p})$ and $\un{G}$ be the split group $(G_0)_{/\cO}$. 
Note that the Bruhat order on $\tld{\un{W}}\cong \tld{W}^{\cJ}$ is the product partial order induced from the Bruhat order on $\tld{W}$ {(hence, the partial order $\leq$ on $\tld{W}^{\cJ}$ is taken componentwise)}.  

For a dominant character $\lambda\in X^*(\un{T})$, we define $W(\lambda)_{/\cO}$ to be the $\un{G}$-module $\Ind_{\un{B}}^{\un{G}} w_0 \lambda$.
Then $W(\lambda)_{/E}$ is the unique up to isomorphism irreducible $\un{G}_{/E}$-module of highest weight $\lambda$. 
Let $V(\lambda)$ be the (irreducible) restriction of $W(\lambda)_{/E}(E)$ to $G_0(\Z_p)$.
The socle $L(\lambda)$ of the $\un{G}_{/\F}$-module $W(\lambda)_{/\F} \defeq W(\lambda)_{/\cO} \otimes_{\cO} \F$ is the unique up to isomorphism irreducible $\un{G}_{/\F}$-module of highest weight $\lambda$.
For any character $\lambda\in X^*(\un{T})$, we can extend the above definition by letting $W(\lambda)_{/\cO}$ be the \emph{virtual} $\un{G}$-module 
\begin{equation}\label{eqn:euler}
\sum_i (-1)^i R^i \Ind_{\un{B}}^{\un{G}} w_0 \lambda.
\end{equation}
We similarly define the virtual modules $V(\lambda)$ and $W(\lambda)_{/\F}$.

Let $\rG$ be the group $G_0(\Fp)\cong G_{/\cO_p}(\cO_p/p)$. 
A \emph{Serre weight} (of $\rG$) is an irreducible $\F$-representation of $\rG$.
An irreducible $\rG$-representation over $\F$ is necessarily absolutely irreducible and every irreducible $\rG$-representation over $\ovl{\F}_p$ is defined over $\F$.
Each Serre weight is the restriction $F(\lambda) \defeq L(\lambda)|_\rG$ for some $\lambda\in X_1(\un{T})$ where 
\[
X_1(\un{T})\defeq \{
\lambda\in X^*(\un{T}), 0\leq \langle \lambda,\alpha^\vee\rangle \leq p-1\text{ for all $\alpha \in \un{\Delta}$}
\}
\]
is the set of $p$-restricted dominant weights. 
The map $\lambda \mapsto F(\lambda)$ gives a bijection from $X_1(\un{T})/(p-\pi)X^0(\un{T})$ to the set of isomorphism classes of Serre weights of $\rG$ (see \cite[Lemma 9.2.4]{GHS}). 
For $m\geq 0$, we say that a Serre weight $F(\lambda)$ is $m$-deep if $\lambda$ is $m$-deep.
We say that $\lambda\in X_1(\un{T})$ is \emph{regular $p$-restricted} or $F(\lambda)$ is \emph{regular} if $\langle \lambda,\alpha^\vee\rangle < p-1$ for all $\alpha\in \un{\Delta}$.

For $\lambda \in X^*(\un{T})$, let $W(\lambda)$ be the restriction of $W(\lambda)_{/\F}(\F)$ to $\rG$, which is a genuine representation if $\lambda$ is dominant. 
Then $F(\lambda)$ is an $\rG$-submodule of $W(\lambda)$ for $\lambda \in X_1(\un{T})$.

For the combinatorics of Serre weights it is convenient to introduce the notion of $p$-alcoves and the dot action on them.
A $p$-alcove is a connected component of  
the complement $X^*(\un{T})\otimes_{\Z}\R\ \setminus\ \big(\bigcup_{(\alpha,pn)}(H_{\alpha,pn}-\eta)\big)$.
We say that a $p$-alcove $\un{C}$ is
\emph{dominant} (resp.~\emph{$p$-restricted}) if $0 <  \langle\lambda + \eta,\alpha^\vee\rangle$ (resp.~if $0 <  \langle\lambda + \eta,\alpha^\vee\rangle<p$) for all $\alpha\in \un{\Delta}$ and $\lambda\in \un{C}$.
We let $\un{C}_0$ denote the dominant base $p$-alcove, i.e.~the alcove characterized by $\lambda\in \un{C}_0$ if and only if $0 <\langle\lambda + \eta,\alpha^\vee\rangle<p$ for all $\alpha\in \un{\Phi}^+$.
We define the ($p$-)\emph{dot action} of $\tld{\un{W}}$ on $X^*(\un{T})\otimes_{\Z}\R$ by $\tld{w}\cdot \lambda\defeq w(\lambda+\eta+p\nu)-\eta$ for $\tld{w}=wt_\nu\in \tld{\un{W}}$ and $\lambda\in X^*(\un{T})\otimes_{\Z}\R$.
In particular, $\tld{\un{W}}$ acts transitively via the dot action on the set of $p$-alcoves, and $\un{\Omega}$ is the stabilizer of $\un{C}_0$ for the dot action.
We have
\[\tld{\un{W}}^+=\{\tld{w}\in \tld{\un{W}}:\tld{w}\cdot \un{C}_0 \textrm{ is dominant}\}\]
and
\[\tld{\un{W}}^+_1=\{\tld{w}\in \tld{\un{W}}^+:\tld{w}\cdot \un{C}_0 \textrm{ is } p\textrm{-restricted}\}.\]

\begin{lemma}\label{lemma:nongenWeyl}
If 
\begin{itemize}
\item 
$\mu\in X^*(\un{T})$ is a dominant weight which is not $m$-deep,
\item 
$h\in \Z$ such that $\langle \mu,\alpha^\vee \rangle \leq h$ for all $\alpha \in \un{\Phi}$, and 
\item 
$\sigma \in \JH(W(\mu))$, 
\end{itemize}
then $\sigma$ is not $(m+\lfloor \frac{h}{p-1}\rfloor)$-deep.
\end{lemma}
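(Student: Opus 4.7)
The plan is to propagate the failure of $m$-depth from $\mu$ to $\sigma$ in two stages: first from $\mu$ to an algebraic Jordan--H\"older factor $L(\mu_0)$ of $W(\mu)_{/\F}$ via the strong linkage principle, and then from $L(\mu_0)$ to its Serre-weight constituents by tracking the restriction to $\rG=G_0(\F_p)$, where the loss of $\lfloor h/(p-1)\rfloor$ in depth will enter.

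First, by strong linkage any $\un{G}_{/\F}$-Jordan--H\"older factor of $W(\mu)_{/\F}$ has the form $L(\mu_0)$ for $\mu_0=\tld{w}\cdot \mu$ with $\tld{w}=t_\nu w\in\un{W}_a$ acting by the $p$-dot action. From $\mu_0+\eta=w(\mu+\eta)+pw(\nu)$ we get $\langle\mu_0+\eta,\alpha^\vee\rangle\equiv\langle\mu+\eta,w^{-1}\alpha^\vee\rangle\pmod p$ for every $\alpha\in\un{\Phi}^+$. Given a witness $\beta_0\in\un{\Phi}^+$ and $k_0\in\Z$ with $|\langle\mu+\eta,\beta_0^\vee\rangle-pk_0|\le m$, I choose $\alpha\in\un{\Phi}^+$ so that $w^{-1}\alpha\in\{\pm\beta_0\}$; this yields $|\langle\mu_0+\eta,\alpha^\vee\rangle-pk_0'|\le m$ for some $k_0'$. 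Since $\mu_0$ is a $\un{T}$-weight of $W(\mu)$, it lies in $\mathrm{Conv}(\un{W}\cdot\mu)\cap(\mu+\un{\Lambda}_R)$, so $|\langle\mu_0,\gamma^\vee\rangle|\le h$ for all $\gamma\in\un{\Phi}$.

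Next, write $\sigma=F(\lambda)$ with $\lambda\in X_1(\un{T})$. Since $\sigma$ is a $\rG$-Jordan--H\"older factor of $L(\mu_0)|_\rG$, the character $\lambda|_{T_0(\F_p)}$ equals the restriction of some $\un{T}(\F)$-weight $\xi$ of $L(\mu_0)$, so $\lambda=\xi+(p-\pi)\gamma$ for some $\gamma\in X^*(\un{T})$; moreover $\xi\in\mathrm{Conv}(\un{W}\cdot\mu_0)\cap(\mu_0+\un{\Lambda}_R)$, so $|\langle\xi,\alpha^\vee\rangle|\le h$ and $\langle\xi+\eta,\alpha^\vee\rangle\equiv\langle\mu_0+\eta,\alpha^\vee\rangle\pmod p$. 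If $\alpha^\vee$ is supported in the $\cJ$-component $j_0$, pairing gives
\[
\langle\lambda+\eta,\alpha^\vee\rangle=\langle\xi+\eta,\alpha^\vee\rangle+pc-c',
\]
where $c=\langle\gamma_{j_0},(\alpha^{(0)})^\vee\rangle$ and $c'=\langle\gamma_{\pi^{-1}(j_0)},(\alpha^{(0)})^\vee\rangle$. Combining with the first step, setting $k=-(k_0'+c)$ reduces the claim to the bound $|c'|\le \lfloor h/(p-1)\rfloor$.

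The main obstacle is establishing that last inequality. Applying the explicit inverse $(p-\pi)^{-1}|_{\cJ_v}=(p^{f_v}-1)^{-1}\sum_{k=0}^{f_v-1}p^{f_v-1-k}\pi^k$ to $\lambda-\xi$ on the $\cJ$-component containing $\alpha^{(0)}$, and using $\langle\lambda_j,(\alpha^{(0)})^\vee\rangle\in[0,p-1]$ (from $p$-restrictedness of $\lambda$) together with $|\langle\xi_j,(\alpha^{(0)})^\vee\rangle|\le h$, I obtain $|c'|\le (h+p-1)/(p-1)=1+h/(p-1)$. To sharpen this to $\lfloor h/(p-1)\rfloor$ I will exploit the integrality of $c'$ combined with the freedom to adjust the representative $\lambda\in X_1(\un{T})$ modulo $(p-\pi)X^0(\un{T})$ and, if needed, to replace $\xi$ by another weight of $L(\mu_0)$ in the same $(p-\pi)$-coset (such weights form an affine lattice inside $\mathrm{Conv}(\un{W}\cdot\mu_0)$, in which one can find a representative minimizing $|c'|$). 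Substituting the refined bound into the decomposition above completes the proof.
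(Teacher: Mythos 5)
Your overall plan is similar in spirit to the paper's: reduce via strong linkage to an algebraic factor $L(\mu_0)$ and then track what the restriction to $\rG$ does to depth. But two steps do not hold up.

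The assertion $\langle\xi+\eta,\alpha^\vee\rangle\equiv\langle\mu_0+\eta,\alpha^\vee\rangle\pmod p$ is false. From $\xi\in\mu_0+\un\Lambda_R$ you only get that $\langle\xi-\mu_0,\alpha^\vee\rangle$ is an integer, not that it is divisible by $p$. For a concrete counterexample take $\GL_2$ with $f=1$, $\mu_0=(2,0)$ and the interior weight $\xi=(1,1)$ of $L(\mu_0)$: then $\langle\xi-\mu_0,\alpha^\vee\rangle=-2$, which is nonzero mod $p$. This congruence is precisely what you use to transport the failure of $m$-depth from $\mu_0$ to $\xi$: you produce the witness $(\alpha,k_0')$ from $\mu_0$, but the expression $\langle\lambda+\eta,\alpha^\vee\rangle-pk$ that you end up controlling is written in terms of $\xi$, not $\mu_0$. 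Without the congruence, the ``reduction to $|c'|\le\lfloor h/(p-1)\rfloor$'' is not a reduction at all; you have an uncontrolled extra term $\langle\xi-\mu_0,\alpha^\vee\rangle$. And this cannot be patched by character-level reasoning alone: being a constituent of $L(\mu_0)|_{\rG}$ is a much stronger condition than merely sharing a character of $T_0(\F_p)$ with some weight of $L(\mu_0)$, and the paper's proof exploits that extra structure via the Steinberg tensor product theorem applied iteratively.

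The second gap is the final bound $|c'|\le\lfloor h/(p-1)\rfloor$, which you only announce but do not prove. Your crude estimate gives $|c'|\le 1+h/(p-1)$, and the suggested fix — ``choose $\xi$ in its $(p-\pi)$-coset minimizing $|c'|$'' — is not available: you already committed to a particular $\xi$ when choosing the root $\alpha$ witnessing the failure of depth, and changing $\xi$ (or the representative $\lambda$ mod $(p-\pi)X^0(\un T)$) changes $\gamma$, $c$ and $c'$ simultaneously in ways the write-up does not track. By contrast, the paper's argument iterates linkage together with the Steinberg decomposition, arriving at $\lambda=\tld w\cdot\mu+\pi(\nu)$ with $\nu\in\mathrm{Conv}(\mu_{\mathrm{sum}})$, and obtains the sharp bound on $h_{\mu_{\mathrm{sum}}}$ by squeezing $\langle p\mu_{\mathrm{sum}}-\pi(\nu'),\alpha^\vee\rangle$ between $(p-1)h_{\mu_{\mathrm{sum}}}$ (since $\nu'\in\mathrm{Conv}(\mu_{\mathrm{sum}})$) and $\langle\mu-\lambda,\alpha^\vee\rangle\le h$ (using dominance of $\lambda$). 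That telescoping comparison is where the denominator $p-1$ comes from, and it is exactly what your one-shot argument is missing.
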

\begin{proof}
Suppose that $\mu$ is as in the statement of the lemma.
By \cite[II.6.13 Proposition]{RAGS}, if $\sigma\in \JH(W(\mu))$, then either 
\begin{itemize}
\item 
$\sigma \in \JH(W(\tld{w}\cdot \mu))$ for $\tld{w} \in W_a$ with $\tld{w} \cdot \mu \neq \mu$ dominant and $\tld{w}\cdot \mu \uparrow \mu$; or 
\item 
$\sigma \in \JH(L(\mu)|_{\rG})$.
\end{itemize}
We now replace this second condition.
Suppose that $\sigma \in \JH(L(\mu)|_{\rG})$.
If $\mu = \mu_0 + p\mu_1$ for $\mu_0,\mu_1$ dominant and $\mu_0$ $p$-restricted, then $L(\mu) \cong L(\mu_0)\otimes L(p\mu_1)$ by the Steinberg tensor product theorem \cite[II.3.17]{RAGS}.
Since $L(p\mu_1)|_{\rG} \cong L\big(\pi (\mu_1)\big)|_{\rG}$, we have that $\sigma \in \JH\big(W(\mu_0) \otimes W(\pi(\mu_1))\big)$.
\cite[II.5.8 Lemma]{RAGS} implies that $\sigma \in \JH\big(W(\mu-p\mu_1 + \pi(\nu))\big)$ for some $\nu \in \Conv(\mu_1)$ (recall that $W(\mu-p\mu_1 + \pi(\nu))$ is \emph{a priori} a virtual representation). 
By \cite[II.5.5 Corollary (b)]{RAGS}, $\sigma \in \JH\big(W(w\cdot (\mu-p\mu_1 + \pi(\nu)))\big)$ for some $w \in \un{W}$ such that $w\cdot (\mu-p\mu_1 + \pi(\nu))$ is dominant.
By the following lemma (where we take $\lambda$, $\nu$, and $\kappa$ to be $\mu-p\mu_1 + \eta$, $\pi(\nu)$ and $\pi(\mu_1)$, respectively), replacing $\nu$ by \[\pi^{-1}\big(w\cdot (\mu-p\mu_1 + \pi(\nu))-(\mu-p\mu_1)\big),\]
we can assume without loss of generality that $\mu-p\mu_1 + \pi(\nu) \in X^*(\un{T})$ is dominant.
\begin{lemma}
Suppose $\lambda\in X^*(\un{T})$ is dominant, $\nu\in \Conv(\kappa)$ and $w$ such that $w(\lambda+\nu)$ is dominant. Then $w(\lambda+\nu)-\lambda\in \Conv(\kappa)$.
\end{lemma}
\begin{proof} There is a sequence of positive roots $\alpha_1,\cdots, \alpha_k$ such that $w=s_{\alpha_k}\cdots s_{\alpha_1}$, and setting $s_{\alpha_j}\cdots s_{\alpha_1}(\lambda+\nu)=\lambda+\nu_j$ we have $\lambda+\nu_{j}$ is on the positive side of the $\alpha_{j}$-wall while $\lambda+\nu_{j-1}$ is on the negative side of the $\alpha_j$-wall. Thus we get $\lambda+\nu_j=\lambda+\nu_{j-1}+m \alpha_j$ with $m=-\langle \lambda+\nu_{j-1},\alpha_j^{\vee}\rangle\geq 0$. Now
\[\langle \nu_{j-1}, \alpha_j^\vee \rangle \leq \langle \lambda+\nu_{j-1},\alpha_j^\vee \rangle<0\]
hence $0<m\leq -\langle \lambda+\nu_{j-1}, \alpha_j^\vee \rangle \leq -\langle \nu_{j-1}, \alpha_j^\vee \rangle$.
This shows that $\nu_j=\nu_{j-1}+m\alpha_j$ lies in the segment between $\nu_{j-1}$ and $s_{\alpha_j}\nu_{j-1}=\nu_{j-1}-\langle \nu_{j-1}, \alpha_j^\vee \rangle \alpha_j$, hence $\nu_{j}\in \Conv(\kappa)$ by induction.
\end{proof}

Returning to the proof of Lemma \ref{lemma:nongenWeyl}, the upshot is that if $\sigma\in \JH(W(\mu))$, then either 
\begin{itemize}
\item 
$\sigma \in \JH(W(\tld{w}\cdot \mu))$ for $\tld{w} \in W_a$ with $\tld{w} \cdot \mu \neq \mu$ dominant and $\tld{w}\cdot \mu \uparrow \mu$; 
\item 
$\sigma \in \JH\big(W(\mu - p\mu_1 + \pi(\nu))\big)$ where $\mu_1$ is nonzero, $\mu_1,\mu-p\mu_1,\mu-p\mu_1 + \pi(\nu) \in X^*(\un{T})$ are dominant, and $\nu \in \Conv(\mu_1)$; or 
\item
$\mu$ is $p$-restricted and $\sigma = F(\mu)$.
\end{itemize}
In this way, either $\sigma = F(\mu)$ or we can replace $\mu$ with a ``smaller" weight.
For convenience, for $\lambda\in X^*(\un{T})$ and $\nu \in \Conv(\lambda)$, we let $t_{\lambda,\nu}$ be the operator on $X^*(\un{T})$ which translates by $-p\lambda+\pi(\nu)$.
Iterating the above weight reduction process, we see that if $\sigma\in \JH(W(\mu))$, then $\sigma = F(\lambda)$ for $\lambda\in X^*(\un{T})$ of the form
\begin{equation} \label{eqn:weyl_iterate}
t_{\mu_M,\nu_M}\tld{w}_M \cdot (t_{\mu_{M-1},\nu_{M-1}} \tld{w}_{M-1} \cdot (\cdots t_{\mu_1,\nu_1}\tld{w}_1 \cdot \mu)\cdots )
\end{equation}
where $\mu_i \in X^*(\un{T})$ is dominant and nonzero and $\tld{w}_i \in \un{W}_a$ is nontrivial for all $i$ and the weight at each step is dominant.
Indeed, since each iteration strictly reduces the value of $\langle -,(\eta-w_0(\eta))^\vee\rangle$ which must be positive, the iterative process must end (with an upper bound on the number of steps depending on $\mu$).
Then (\ref{eqn:weyl_iterate}) can be rewritten as $\lambda = \tld{w}\cdot \mu + \pi(\nu)$ for some $\tld{w}\in \tld{\un{W}}$ and $\nu \in \Conv(\mu_{\mathrm{sum}})$ where $\mu_{\mathrm{sum}} = \sum_{i=1}^M \mu_i$.
We claim that $\langle \nu,\alpha^\vee\rangle \leq \lfloor \frac{h}{p-1}\rfloor$ for any $\alpha \in \un{\Phi}$.
Since the $p$-dot action preserves depth, $\lambda$ would not be $(m+\lfloor \frac{h}{p-1}\rfloor)$-deep.

To prove our claim, consider $t_{\mu_M,\nu_M} \cdots t_{\mu_1,\nu_1} \mu = \mu - p\mu_{\mathrm{sum}} +\pi(\nu')$ for some $\nu' \in \Conv(\mu_{\mathrm{sum}})$.
Observe that $\lambda \uparrow \mu - p\mu_{\mathrm{sum}} +\pi(\nu')$.
Then
\[
\langle p\mu_{\mathrm{sum}} - \pi(\nu'),\alpha^\vee\rangle \leq \langle \mu - \lambda,\alpha^\vee\rangle \leq h
\]
for any highest root $\alpha$.
Choosing $\alpha\in \un{\Phi}^+$ a highest root so that $h_{\mu_{\mathrm{sum}}} = \langle \mu_{\mathrm{sum}} ,\alpha^\vee\rangle$, 
we have 
\[
(p-1)h_{\mu_{\mathrm{sum}}} \leq \langle p\mu_{\mathrm{sum}} - \pi(\nu'),\alpha^\vee\rangle \leq h.
\]
We conclude that $h_{\mu_{\mathrm{sum}}} \leq \lfloor \frac{h}{p-1}\rfloor$, and the claim follows.
\end{proof}

We will call an element of $X^*(\un{Z})$ an \emph{algebraic central character} and an element of $X^*(\un{Z})/(p-\pi)X^*(\un{Z})$ a \emph{central character}.
Note that the character group $\Hom(Z_0(\F_p),\F^\times)$ is naturally identified with $X^*(\un{Z})/(p-\pi)X^*(\un{Z})$.
An algebraic central character determines a central character by the natural reduction map.
The central character (a character of $Z_0(\F_p)$) of a Serre weight $F(\lambda)$ is $\lambda|_Z \in X^*(\un{Z})/(p-\pi)X^*(\un{Z})$ which does not depend on the choice of element in $\lambda+(p-\pi)X^0(\un{T})$ and gives the action of $Z_0(\F_p)$ on $F(\lambda)$.
Note that there is a natural identification of $X^*(\un{Z})$ with $\tld{\un{W}}/\un{W}_a$, which we will use often.

Let $\omega-\eta\in \un{C}_0\cap X^*(\un{T})$ and $\tld{w}_1\in\tld{\un{W}}^+_1$.
Then $\pi^{-1}(\tld{w}_1)\cdot (\omega-\eta)\in X_1(\un{T})$ and we define
\begin{equation}\label{eqn:LAPwt}
F_{(\tld{w}_1,\omega)}\defeq F(\pi^{-1}(\tld{w}_1)\cdot (\omega-\eta)).
\end{equation}
We consider the equivalence relation $(\tld{w}_1,\omega) \sim (t_\nu\tld{w}_1,\omega-\nu)$ for all $\nu\in X^0(\un{T})$ and note that the map $(\tld{w}_1,\omega) \mapsto F_{(\tld{w}_1,\omega)}$ sends equivalent pairs to the same Serre weight.
We say that the equivalence class of $(\tld{w}_1,\omega)$ is a \emph{lowest alcove presentation} of $F_{(\tld{w}_1,\omega)}$.
(Note that the notion of lowest alcove presentation depends on the choice of $\eta$ in \S \ref{sec:not:RG}.)
We often will choose a pair in the equivalence class of a lowest alcove presentation of a Serre weight, though nothing we do will depend on this choice.
From a lowest alcove presentation $(\tld{w}_1,\omega)$ of a Serre weight, we obtain an algebraic central character 
\begin{align}\label{eqn:wtchar}
\tld{\un{W}}/\un{W}_a &\cong X^*(\un{Z}) \\
t_{\omega-\eta}\tld{w}_1\un{W}_a/\un{W}_a &\mapsto \zeta,
\end{align}
which does not depend on the choice of representative in the equivalence class of $(\tld{w}_1,\omega)$.
Then we say that the lowest alcove presentation $(\tld{w}_1,\omega)$ of $F_{(\tld{w}_1,\omega)}$ is \emph{compatible with $\zeta \in X^*(\un{Z})$}.
{The following lemma shows that $\zeta$ is an algebraic lift of the central character of the Serre weight.}
\begin{lemma}
Let $(\tld{w}_1,\omega)$ be a lowest alcove presentation of a Serre weight $\sigma$.
Let $\zeta$ be the algebraic central character associated to $(\tld{w}_1,\omega)$ by \eqref{eqn:wtchar}.
Then the class of $\zeta$ in $\pmod{(p-\pi)X^*(\un{Z})}=\Hom(Z_0(\F_p),\F)$ is the central character of $\sigma$ as a $\rG$-representation.
\end{lemma}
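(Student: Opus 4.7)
The plan is to unwind the definitions on both sides and reduce to a short computation modulo $(p-\pi)X^*(\un{Z})$. First, I would pick a representative $\tld{w}_1 = t_\nu w_1$ with $\nu \in X^*(\un{T})$ and $w_1 \in \un{W}$, so that $t_{\omega-\eta}\tld{w}_1 = t_{\omega-\eta+\nu}w_1$. Under the identification $\tld{\un{W}}/\un{W}_a = X^*(\un{T})/\un{\Lambda}_R = X^*(\un{Z})$ (where the second equality comes from $\un{Z}=\bigcap_\alpha\ker\alpha$, so that characters of $\un{T}/\un{Z}$ are exactly the root lattice), the element $\zeta$ is the class of $(\omega-\eta+\nu)|_{\un{Z}}$.

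Next, I would compute the highest weight of $\sigma$ restricted to the center. Expanding the $p$-dot action,
\[
\pi^{-1}(\tld{w}_1)\cdot(\omega-\eta) = \pi^{-1}(w_1)\bigl(\omega+p\pi^{-1}(\nu)\bigr)-\eta.
\]
The Weyl group $\un{W}$ acts trivially on $X^*(\un{Z})$ (because $\un{Z}$ is central, hence fixed pointwise by any lift of $\un{W}$ to $\un{N}(\un{T})$). Therefore restricting to $\un{Z}$ kills the $\pi^{-1}(w_1)$ and we obtain
\[
\bigl(\pi^{-1}(\tld{w}_1)\cdot(\omega-\eta)\bigr)\big|_{\un{Z}} = (\omega-\eta)|_{\un{Z}} + p\,\pi^{-1}(\nu)|_{\un{Z}}.
\]

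Finally, I would compare the two expressions. Their difference is
\[
p\,\pi^{-1}(\nu)|_{\un{Z}} - \nu|_{\un{Z}} = (p-\pi)\bigl(\pi^{-1}(\nu)|_{\un{Z}}\bigr),
\]
since $\pi$ commutes with multiplication by $p$ on $X^*(\un{Z})$ and $\pi\pi^{-1} = \id$. Thus the two classes agree in $X^*(\un{Z})/(p-\pi)X^*(\un{Z}) = \Hom(Z_0(\F_p),\F^\times)$. It remains to note that the central character of $\sigma = F(\lambda)$ on $Z_0(\F_p)$ is indeed given by the reduction of $\lambda|_{\un{Z}}$ modulo $(p-\pi)X^*(\un{Z})$, which is standard. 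No step is technically hard; the only point requiring care is recording that $\un{W}$ acts trivially on $X^*(\un{Z})$, and that the representative $(\tld{w}_1,\omega)$ can be chosen in its equivalence class without affecting either side, since translating $\tld{w}_1$ by $t_\nu$ and $\omega$ by $-\nu$ with $\nu\in X^0(\un{T})$ leaves $(\omega-\eta+\nu)|_{\un{Z}}$ unchanged.
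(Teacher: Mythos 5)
Your proof is correct and amounts to writing out the computation that the paper's one-line proof---appealing to the description of $\sigma$ as the restriction to $\rG$ of the irreducible highest weight module with highest weight $\pi^{-1}(\tld{w}_1)\cdot(\omega-\eta)$---leaves implicit, so the approaches are essentially identical. One small slip worth flagging: with your convention $\tld{w}_1 = t_\nu w_1$, the $p$-dot action formula $\tld{w}\cdot\lambda = w(\lambda+\eta+p\mu)-\eta$ for $\tld{w}=wt_\mu$ requires first rewriting $\pi^{-1}(\tld{w}_1) = \pi^{-1}(w_1)\,t_{\pi^{-1}(w_1^{-1}(\nu))}$, which gives
\[
\pi^{-1}(\tld{w}_1)\cdot(\omega-\eta) \;=\; \pi^{-1}(w_1)(\omega) + p\,\pi^{-1}(\nu) - \eta
\]
rather than $\pi^{-1}(w_1)\bigl(\omega+p\,\pi^{-1}(\nu)\bigr)-\eta$; the two differ by $p\,\pi^{-1}\bigl(w_1(\nu)-\nu\bigr)\in p\,\un{\Lambda}_R$, which dies upon restriction to $\un{Z}$. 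Your displayed formula for $\bigl(\pi^{-1}(\tld{w}_1)\cdot(\omega-\eta)\bigr)\big|_{\un{Z}}$ and the remainder of the argument are therefore unaffected.
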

\begin{proof}
This follows from the description of $\sigma$ as a the restriction to $\rG$ of the irreducible algebraic highest weight module with highest weight $\pi^{-1}(\tld{w}_1)\cdot (\omega-\eta)$.
\end{proof}
We say that two lowest alcove presentations of Serre weights are compatible (with each other) if they are compatible with the same element of $X^*(\un{Z})$.
As the $p$-dot action preserves depth, $F_{(\tld{w}_1,\omega)}$ is $m$-deep if and only if $\omega-\eta$ is $m$-deep (Definition \ref{defn:var:gen}(\ref{defn:deep})) in alcove $\un{C}_0$, i.e.~if $m<\langle \omega, \alpha^\vee\rangle<p-m$ for all $\alpha \in \un{\Phi}^+$.

\begin{lemma}\label{lemma:LAPwt}
If a Serre weight $\sigma$ is $0$-deep, then the map $(\tld{w}_1,\omega) \mapsto \tld{w}_1 t_\omega \un{W}_a/\un{W}_a \in\tld{\un{W}}/\un{W}_a \cong X^*(\un{Z})$ gives a bijection between lowest alcove presentations of $\sigma$ and algebraic central characters lifting the central character of $\sigma$.
\end{lemma}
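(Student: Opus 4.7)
The plan is to verify well-definedness on equivalence classes, and then establish bijectivity by constructing an explicit inverse. For well-definedness, I would write $\tld{w}_1 = w t_\mu$ and compute
\[
(t_\nu \tld{w}_1) t_{\omega - \nu} = w t_{w^{-1}(\nu) + \mu + \omega - \nu}
\]
for $\nu \in X^0(\un{T})$. Its image in $X^*(\un{Z}) = X^*(\un{T})/\un{\Lambda}_R$ equals $\mu + \omega \pmod{\un{\Lambda}_R}$, since $w$ acts trivially on $X^*(\un{T})/\un{\Lambda}_R$ and in particular $\nu - w^{-1}(\nu) \in \un{\Lambda}_R$; this matches the image of $(\tld{w}_1, \omega)$.

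Next, to see that the image lifts the central character of $\sigma$, I would compare $\tld{w}_1 t_\omega \un{W}_a/\un{W}_a$ with the class $t_{\omega - \eta}\tld{w}_1 \un{W}_a/\un{W}_a$ of the preceding unnumbered lemma. A direct calculation, again using that $w^{-1}(\eta) \equiv \eta \pmod{\un{\Lambda}_R}$, shows that these two classes in $X^*(\un{Z})$ differ by the image of $\eta$, a fixed element independent of the choice of presentation. Since the latter class reduces modulo $(p-\pi)X^*(\un{Z})$ to the central character of $\sigma$ by the preceding unnumbered lemma, the image of $\Phi$ lies in the corresponding coset of algebraic lifts.

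For bijectivity, I would construct the inverse. Writing $\sigma = F(\lambda)$ for a $0$-deep $\lambda \in X_1(\un{T})$, the point $\lambda + \eta$ lies in the strict interior of a unique $p$-alcove, which determines an element $\tld{w}_1' \in \tld{\un{W}}^+_1$ (up to the stabilizer of $\un{C}_0$) such that $\tld{w}_1'\cdot \un{C}_0$ contains $\lambda + \eta$; setting $\tld{w}_1 \defeq \pi(\tld{w}_1')$ and $\omega \defeq (\tld{w}_1')^{-1}\cdot \lambda + \eta$ yields a lowest alcove presentation of $\sigma$. Varying the representative $\lambda$ within $\lambda + (p-\pi)X^0(\un{T})$ produces further presentations of $\sigma$, and the different choices yield different images in $X^*(\un{Z})$. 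The main obstacle will be matching the fibers of the map with the equivalence relation exactly: this relies on the identity $\pi^{-1}((p-\pi)\nu) = (p-\pi)\pi^{-1}(\nu) \in (p-\pi)X^0(\un{T})$ for $\nu \in X^0(\un{T})$, combined with the triviality of the $\un{W}$-action on $X^*(\un{Z})$, and finally on the $0$-depth hypothesis to prevent boundary ambiguities in the choice of $\tld{w}_1'$ which could otherwise collapse distinct equivalence classes.
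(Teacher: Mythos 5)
Your proposal takes a genuinely different route from the paper: the paper cites (without derivation) that the lowest alcove presentations of $\sigma$ are exactly
$\{(\tld{w}_1\pi(\delta^{-1}),\delta \cdot(\omega-\eta)+\eta): \delta \in \un{\Omega}\}$ (one per equivalence class), then computes that the $\delta$-th member maps to $\zeta+(p-\pi)\zeta_\delta$ and concludes via the bijection $\un{\Omega}\cong X^*(\un{Z})$ and injectivity of $p-\pi$. You instead propose to build an explicit inverse from the weight side. Several of your observations are correct and useful: the map is literally invariant under $(\tld{w}_1,\omega)\mapsto(t_\nu\tld{w}_1,\omega-\nu)$ (since $t_\nu$ is central in $\tld{\un{W}}$), so well-definedness is trivial; and you have correctly spotted that the map $\tld{w}_1 t_\omega$ in the stated lemma differs from the map $t_{\omega-\eta}\tld{w}_1$ of equation (\ref{eqn:wtchar}) by the fixed shift $\eta|_{\un{Z}}$, so the image coset is a translate (this appears to be a harmless inconsistency in the statement).

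However, there is a genuine gap at the crux of the argument. Your inverse construction produces a pair for each choice of $\lambda$ in its $(p-\pi)X^0(\un{T})$-orbit and each choice of $\tld{w}_1'$ in a fixed right $\un{\Omega}$-coset, so you are in fact enumerating the \emph{entire} set of lowest alcove presentations, not a set of equivalence-class representatives. For the lemma you must then show two things you only gesture at: (a) that as $(\lambda,\tld{w}_1')$ ranges over both parameters the images sweep out exactly the coset $\zeta_0+(p-\pi)X^*(\un{Z})$, and (b) that the fibers of the image map coincide with the equivalence relation (so that the induced map on classes is injective). Your claim that "varying the representative $\lambda$ ... yields different images" is true in isolation (with $\tld{w}_1'$ held fixed the image translates by $(p-\pi)\nu|_{\un{Z}}$), but it does not disentangle the $\lambda$-variation from the $\tld{w}_1'$-variation: both move the image inside $(p-\pi)X^*(\un{Z})$, their contributions overlap, and the equivalence relation ties them together via $(\tld{w}_1,\omega)\sim(t_\mu\tld{w}_1,\omega-\mu)$, which simultaneously shifts $\lambda$ by $(p-\pi)\pi^{-1}(\mu)$ and $\tld{w}_1$ by $t_\mu$. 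Without the explicit computation of the image as a function of the $(X^0(\un{T})\times\un{\Omega})$-parametrization — precisely the content of the paper's formula $\zeta+(p-\pi)\zeta_\delta$ — you cannot conclude that the fibers are exactly the $X^0(\un{T})$-orbits. Listing the relevant identities (the $\pi$-commutation, triviality of $\un{W}$ on $X^*(\un{Z})$, $0$-depth) is a correct diagnosis of what the proof needs, but the bijectivity itself is never established. A smaller issue: the paper's convention places the $p$-alcoves around $\lambda$, not $\lambda+\eta$ (the walls are the hyperplanes $H_{\alpha,pn}-\eta$), so "$\tld{w}_1'\cdot\un{C}_0$ contains $\lambda+\eta$" should read "contains $\lambda$".
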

\begin{proof}
If $(\tld{w}_1,\omega)$ is a lowest alcove presentation for $\sigma$, then the set of lowest alcove presentations of $\sigma$ is  \[\{(\tld{w}_1\pi(\delta^{-1}),\delta \cdot(\omega-\eta)+\eta): \delta \in \un{\Omega}\}\]
(where we write one pair in each equivalence class). 
If $(\tld{w}_1,\omega)$ maps to $\zeta \in X^*(\un{Z})$, then the lowest alcove presentation $(\tld{w}_1\pi(\delta^{-1}),\delta \cdot(\omega-\eta)+\eta)$ maps to $\zeta+(p-\pi)\zeta_\delta$ where $\zeta_\delta\in X^*(\un{Z})$ is the image of $\delta$ under the isomorphisms $\un{\Omega} \cong \tld{\un{W}}/\un{W}_a \cong X^*(\un{Z})$.
\end{proof}

\subsection{Deligne--Lusztig representations and their mod $p$ reductions}
\label{sec:DLandSW}

Let $(s,\mu)\in \un{W}\times X^*(\un{T})$ be a \emph{good pair} (\cite[\S 2.2]{LLL}).
Using \cite[Proposition 9.2.1 and 9.2.2]{GHS}, we can attach to $(s,\mu)$ a Deligne--Lusztig representation $R_s(\mu)$ of $\rG$ defined over $E$. 
We say that $(s,\mu-\eta)$ is a \emph{lowest alcove presentation} of $R_s(\mu)$ if $\mu-\eta \in \un{C}_0$.
(Again, this notion depends on the choice of $\eta$.)

\begin{defn}
\label{defn:LAP:DL}
Let $m\geq 0$ and let $R$ be a Deligne--Lusztig representation.
We say that $R$ is $m$-generic if there exists a lowest alcove presentation $(s,\mu-\eta)\in \un{W} \times \un{C}_0$ for $R$ such that $\mu-\eta$ is $m$-deep (Definition \ref{defn:var:gen}(\ref{defn:deep})). We call such a presentation an \emph{$m$-generic lowest alcove presentation}.
If $R$ has a fixed lowest alcove presentation $(s,\mu-\eta)$, define $\tld{w}(R)\defeq t_\mu s\in \tld{\un{W}}$ and $w(R) \defeq s \in \un{W}$.
Note that $\mu- \eta$ being $m$-deep is equivalent to $\tld{w}(R)$ being $m$-generic in the sense of Definition \ref{defn:var:gen}(\ref{it:gen:weyl}).
\end{defn}
\noindent Note that $(s,\mu)\in\un{W}\times X^*(\un{T})$ is good if $\mu-\eta$ is ($0$-deep) in alcove $\un{C}_0$ by \cite[Lemma 2.2.3]{LLL}.
By \cite[Theorem 6.8]{DeligneLusztig}, we see that a $1$-generic Deligne--Lusztig representation is irreducible.

Let $\lambda \in X^*(\un{T})$ be a character.
We say that a lowest alcove presentation $(s,\mu-\eta)$ of a Deligne--Lusztig representation is \emph{$\lambda$-compatible with an algebraic central character $\zeta\in X^*(\un{Z})$} if the image of the element $t_\lambda t_\mu s \un{W}_a/\un{W}_a \in \tld{\un{W}}/\un{W}_a \cong X^*(\un{Z})$ corresponds to $\zeta$.
Instead of saying $\un{0}$-compatible, we just say compatible.
If $(s,\mu-\eta)$ is a lowest alcove presentation of $R$ compatible with $\zeta$, then $\zeta \mod{(p-\pi)X^*(\un{Z})}$ corresponds to the central character of $R$.
We say that lowest alcove presentations of a Deligne--Lusztig representations are compatible if they are compatible with the same algebraic central character.
We say that lowest alcove presentations $(s,\mu-\eta)$ and $(\tld{w}_1,\omega)$ of a Deligne--Lusztig representation $R$ and a Serre weight $\sigma$ are $\lambda$-compatible if $(s,\mu-\eta)$ and $(\tld{w}_1,\omega)$ are $\lambda$-compatible and compatible, respectively, with some $\zeta\in X^*(Z)$.

\begin{lemma} \label{lemma:LAPbij}
If $R$ is a $1$-generic Deligne--Lusztig representation, the map $(s,\mu-\eta) \mapsto t_\mu s \un{W}_a/\un{W}_a \in \tld{\un{W}}/\un{W}_a \cong X^*(\un{Z})$ gives a bijection between lowest alcove presentations of $R$ and algebraic central characters lifting the reduction of the central character of $R$.
\end{lemma}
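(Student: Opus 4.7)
The strategy parallels that of Lemma \ref{lemma:LAPwt} for Serre weights. The plan is to parametrize the set of lowest alcove presentations of $R$ by an action of $\un{\Omega}$, and then compute the effect of that action on the central character map in the statement.

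First, I would fix one lowest alcove presentation $(s,\mu-\eta)$ of $R$, which exists by hypothesis, with image $\zeta_0 \in X^*(\un{Z})$ under the stated map. For each $\delta \in \un{\Omega}$ (the stabilizer of $\un{C}_0$ under the $p$-dot action), I would show that $\delta \cdot (s,\mu-\eta) \defeq (\pi(\delta)\, s\, \delta^{-1},\, \delta \cdot (\mu-\eta)+\eta)$ is again a lowest alcove presentation of the same $R$. That $\delta\cdot(\mu-\eta)+\eta$ lies in $\eta + \un{C}_0$ is immediate from $\delta \in \un{\Omega}$; that the resulting Deligne--Lusztig representation is unchanged follows from the description of $R_s(\mu)$ in \cite[Proposition 9.2.1, 9.2.2]{GHS}, where the $\pi$-twisted $\tld{\un{W}}$-action on good pairs descends to isomorphism classes of DL representations.

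Next, I would show that this $\un{\Omega}$-action exhausts the set of lowest alcove presentations of $R$. Since $R$ is $1$-generic, it is irreducible by \cite[Theorem 6.8]{DeligneLusztig}, so any two lowest alcove presentations are related by an element coming from the equivalence in \cite[Proposition 9.2.1, 9.2.2]{GHS}. The constraint that both $\mu-\eta$ and the transformed $\mu'-\eta$ lie in $\un{C}_0$ forces the relating element of $\tld{\un{W}}$ to preserve $\un{C}_0$ under the $p$-dot action, hence to lie in $\un{\Omega}$. Thus lowest alcove presentations of $R$ form a torsor under $\un{\Omega}$.

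The third step is a direct computation in $\tld{\un{W}}/\un{W}_a \cong X^*(\un{Z})$: writing $\delta = t_{\nu_\delta} w_\delta$ with $w_\delta \in \un{W}$, the identity $\delta\cdot(\mu-\eta)+\eta = w_\delta(\mu) + p\, w_\delta(\nu_\delta)$ together with multiplying out $t_{w_\delta(\mu)+p\, w_\delta(\nu_\delta)}\pi(\delta)\, s\, \delta^{-1}$ shows that the image differs from $\zeta_0$ by $p\,\zeta_\delta - \pi(\zeta_\delta) = (p-\pi)\zeta_\delta$, where $\zeta_\delta$ is the image of $\delta$ under $\un{\Omega} \cong X^*(\un{Z})$. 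Since the set of algebraic lifts of the reduction of the central character of $R$ is precisely the $(p-\pi)X^*(\un{Z})$-torsor through $\zeta_0$, and since $(p-\pi)$ is injective on $X^*(\un{Z})$ (a free abelian group on which $\pi$ acts with finite order), the composition yields the desired bijection. The most delicate step will be the second one --- extracting from \cite[Proposition 9.2.1, 9.2.2]{GHS} that the only remaining ambiguity in good pairs $(s,\mu)$ representing an irreducible $R$ with $\mu-\eta \in \un{C}_0$ comes from $\un{\Omega}$ --- and this is precisely where $1$-genericity is essential to prevent accidental coincidences at the alcove walls.
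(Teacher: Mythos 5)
Your high-level plan coincides with the paper's: describe the lowest alcove presentations of $R$ by an $\un{\Omega}$-parametrization and then track the effect on the central character map. (The paper takes the parametrization directly from \cite[Proposition 2.2.15]{LLL} rather than re-deriving it from \cite[Propositions 9.2.1, 9.2.2]{GHS}.) The gap is in your explicit formula for the $\un{\Omega}$-action. You set the new weight to be $\delta\cdot(\mu-\eta)+\eta$, the pure $p$-dot action of $\delta$, and the new Weyl element to be the $\un{W}$-part of $\pi(\delta)\,s\,\delta^{-1}$. Neither is correct. The equivalence relation preserving $R_s(\mu)$ has a translation generator $(s,\mu)\sim(s,\mu+p\nu-s\pi(\nu))$, so the effect on the weight genuinely depends on $s$; combining this with $\un{W}$-conjugation, the set of lowest alcove presentations of $R$ is $\bigl(ws\pi(w)^{-1},\,w(\mu+p\nu-s\pi(\nu))-\eta\bigr)$ for $wt_\nu\in\un{\Omega}$. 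Your formula omits the $-s\pi(\nu)$ term and also places the $\pi$-twist on the wrong side of the conjugation.

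This is not a repairable typo: the $-s\pi(\nu)$ is exactly what produces the $-\pi(\zeta_\delta)$ contribution you want. Restricting the translation piece to $\un{Z}$ and using that $\un{W}$ acts trivially on $X^*(\un{Z})$ while $\pi$ permutes the factors, one finds $(\mu+p\nu-s\pi(\nu))|_{\un{Z}}=\mu|_{\un{Z}}+(p-\pi)\bigl(\nu|_{\un{Z}}\bigr)$. With your formula, the new weight is $w_\delta(\mu+p\nu_\delta)$ and the central character moves by $p\zeta_\delta$, not $(p-\pi)\zeta_\delta$, so the map does not sweep out the coset $\mu|_{\un{Z}}+(p-\pi)X^*(\un{Z})$ of lifts of the reduced central character. (And following your literal instruction to multiply out $t_{w_\delta(\mu)+pw_\delta(\nu_\delta)}\pi(\delta)\,s\,\delta^{-1}$ in $\tld{\un{W}}/\un{W}_a$ gives $(p-1)\zeta_\delta+\pi(\zeta_\delta)$, which is also not $(p-\pi)\zeta_\delta$.) Your final answer is the right one, but it does not follow from the action you wrote down; you need the correct transformation formula from \cite[Proposition 2.2.15]{LLL} before the computation in your third step goes through.
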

\begin{proof}
If $(s,\mu-\eta)$ is a $1$-generic lowest alcove presentation for $R$, then by \cite[Proposition 2.2.15]{LLL} the set of lowest alcove presentations for $R$ is 
\[
\{ \big(ws\pi(w)^{-1},w(\mu+p\nu-s\pi(\nu)) - \eta\big): wt_{\nu} \in \un{\Omega}\}.
\]
Note that each of $w(\mu+p\nu-s\pi(\nu)) - \eta$ is $0$-deep in $\un{C}_0$.
Since the image of $t_{w(\mu+p\nu-s\pi(\nu))}ws\pi(w)^{-1}$ in $X^*(\un{Z})$ is $\mu+p\nu-s\pi(\nu)|_{\un{Z}} = \mu|_{\un{Z}}+(p-\pi)\nu|_{\un{Z}}$, it suffices to note that the image of $wt_{\nu}$ under the isomorphism $\un{\Omega}\cong\tld{\un{W}}/\un{W}_a \cong X^*(\un{Z})$ is $\nu|_{\un{Z}}$.
\end{proof}

\begin{lemma}\label{lemma:genLAP}
If $R$ is a Deligne--Lusztig representation, then $R \cong R_s(\mu)$ for some $(s,\mu) \in \un{W} \times X^*(\un{T})$ such that $\mu$ is dominant and $\langle \mu,\alpha^\vee \rangle \leq p+2$ for all $\alpha \in \un{\Phi}$.
In particular, $\mu-\eta$ is $(-3)$-deep in $\un{C}_0$.
\end{lemma}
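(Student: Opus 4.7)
The plan is to exploit the equivalences on good pairs $(s, \mu) \in \un{W} \times X^*(\un{T})$ that preserve the isomorphism class of $R_s(\mu)$. Since $R$ is a Deligne--Lusztig representation, it admits such a presentation by definition; the task is to modify it so that $\mu$ is dominant with $\langle \mu, \alpha^\vee\rangle \leq p+2$ for all roots $\alpha \in \un{\Phi}$.

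The key tool is that, for fixed $s$, the character $\mu$ parametrizes a character of the twisted torus $\un{T}_s^F$, so $R_s(\mu)$ depends on $\mu$ only modulo a full-rank sublattice $L_s \subseteq X^*(\un{T})$, explicitly of the form $(p\Id - s\pi) X^*(\un{T})$ (or a variant), together with the twisted Weyl equivalence $R_s(\mu) \cong R_{w s \pi(w)^{-1}}(w(\mu))$ for $w \in \un{W}$ (cf.~the equivalences used in the proof of Lemma \ref{lemma:LAPbij}). Combining these, I would first use the Weyl group to bring $\mu$ into the dominant chamber, and then reduce $\mu$ modulo $L_s$ to a bounded fundamental domain. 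Since $L_s$ differs from $p \cdot X^*(\un{T})$ only by the operator $s\pi$ of operator norm $1$ (it permutes a basis of characters), a fundamental domain for $L_s$ can be chosen with $|\langle \mu, \alpha^\vee \rangle|$ bounded by $p$ plus a small slack.

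The subtlety is that reduction modulo $L_s$ may break dominance. To address this, I would iterate: after shifting, reapply Weyl conjugation to restore dominance, track the effect on the shift required, and continue. A careful analysis --- reducing coordinate by coordinate in the $\GL_n^\cJ$ presentation, where $s\pi$ acts as a signed permutation on the basis $\{\eps_i^{(j)}\}$ --- shows the procedure terminates with a dominant $\mu$ satisfying $\langle \mu, \alpha^\vee\rangle \leq p + 2$, the extra slack of $2$ arising from simultaneously enforcing dominance and the reduction modulo $L_s$.

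The main obstacle will be pinning down the explicit bound $p+2$: the fundamental domain for $L_s$ alone gives bounds near $p$, and the interaction with restoring dominance (via Weyl conjugation) introduces corrections that must be explicitly controlled. Once this bound is established, the final claim that $\mu-\eta$ is $(-3)$-deep in $\un{C}_0$ is immediate from the definitions: dominance gives $\langle \mu, \alpha^\vee\rangle \geq 0 > -3$, and $\langle \mu, \alpha^\vee\rangle \leq p+2 < p+3$ for all $\alpha \in \un{\Phi}^+$ is exactly the $(-3)$-deep condition on $\mu-\eta$ with respect to $\un{C}_0$.
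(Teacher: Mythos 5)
Your proposal is on the right track conceptually—you correctly identify the two equivalences (translation by $(p\Id - s\pi)X^*(\un{T})$ and Weyl conjugation $(s,\mu)\mapsto(ws\pi(w)^{-1},w(\mu))$)—but there are two genuine gaps that prevent it from being a proof.

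First, you miss the crucial simplification that makes the paper's argument clean: the quantity $h_\mu = \max_{\alpha\in\un{\Phi}}\langle\mu,\alpha^\vee\rangle$ is $\un{W}$-invariant, so one should first shrink $h_\mu$ by the lattice translations alone, with no regard for dominance, and then apply a single Weyl conjugation at the very end to restore dominance; this last step does not change $h_\mu$. By instead interleaving the two operations and trying to maintain dominance throughout, you create an iteration whose behavior is much harder to control, and you then assert without proof that ``a careful analysis shows the procedure terminates'' with the bound $p+2$. That assertion is precisely the content of the lemma and cannot be waved away.

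Second, your explanation for where the slack of $2$ comes from is not correct. You attribute it to ``simultaneously enforcing dominance and the reduction modulo $L_s$,'' but in the actual argument there is no such interaction. The real source is the following iterative estimate: choosing $\nu$ so that $h_{\mu+p\nu}\leq p$ (possible because $X^*(\un{T})\,\un{W}(\ovl{\un{A}}_0)$ covers $X^*(\un{T})\otimes\R$), one bounds $h_\nu\leq\lfloor h_\mu/p\rfloor+1$ and hence $h_{\mu+p\nu-s\pi(\nu)}\leq p+h_\nu\leq p+\lfloor h_\mu/p\rfloor+1$. Iterating this drives $h_\mu$ down until $\lfloor h_\mu/p\rfloor\leq 1$, at which point one lands in $h_\mu\leq p+2$. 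The $+2$ is the fixed point of the recursion $h\mapsto p+\lfloor h/p\rfloor+1$, not an artifact of restoring dominance. Without this quantitative recursion, your proposal identifies the right ingredients but does not actually establish the bound.
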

\begin{proof}
Suppose that $R = R_s(\mu)$ for $(s,\mu) \in \un{W} \times X^*(\un{T})$.
Then $R = R_s(\mu+p\nu-s\pi (\nu))$ for any $\nu \in X^*(\un{T})$.
Since $X^*(\un{T}) (\un{W} (\ovl{\un{A}}_0)) = \tld{\un{W}}(\ovl{\un{A}}_0) = X^*(\un{T})\otimes_{\Z} \R$ where $\ovl{\un{A}}_0$ denotes the closure of the base alcove $\un{A}_0$, there exists $\nu\in X^*(\un{T})$ such that $h_{\mu+p\nu} \leq p$.
Then $h_{p\nu} \leq h_\mu+ h_{\mu-p\nu} \leq h_\mu+p$ so that $h_\nu \leq \lfloor \frac{h_\mu}{p} \rfloor+1$ and therefore $h_{\mu+p\nu-s\pi(\nu)} \leq p+h_\nu \leq p+\lfloor \frac{h_\mu}{p} \rfloor+1$.
Repeatedly replacing $\mu$ with $\mu+p\nu-s\pi (\nu)$ as above, we eventually have that $h_\mu \leq p+2$.
Finally, we replace $(s,\mu)$ with $(ws\pi(w)^{-1},w(\mu))$ where $w\in \un{W}$ is such that $w(\mu)$ is dominant.
\end{proof}

\begin{lemma}\label{lemma:nongenericDL}
Let $\lambda\in X^*(\un{T})$ be a dominant weight such that $h_{\lambda+\eta}<p-3$.
If $R$ is a Deligne--Lusztig representation such that $\JH(\ovl{R} \otimes W(\lambda))$ contains an $m$-deep Serre weight, then there exists a pair $(s,\mu) \in \un{W} \times X^*(\un{T})$ such that $R = R_s(\mu)$ and $\mu-\eta$ is $(m-h_{\lambda+2\eta})$-deep in $\un{C}_0$.
\end{lemma}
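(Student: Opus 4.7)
The plan is to start with an initial presentation whose existence is guaranteed by Lemma~\ref{lemma:genLAP}, then improve its depth using the hypothesis on the Jordan--H\"older factors of $\ovl{R}\otimes W(\lambda)$. First I would use Lemma~\ref{lemma:genLAP} to fix a presentation $R \cong R_s(\mu_0)$ with $\mu_0$ dominant and $h_{\mu_0} \leq p+2$; in particular $\mu_0-\eta$ already lies in the closure of $\un{C}_0$ (it is $(-3)$-deep). The remaining freedom in the presentation is the action
\[
(s,\mu_0) \longmapsto (ws\pi(w)^{-1},\, w(\mu_0+p\xi-s\pi(\xi)))
\]
for $w\in \un{W}$ and $\xi\in X^*(\un{T})$, and the goal is to find $(w,\xi)$ making the new $\mu-\eta$ deeply interior to $\un{C}_0$.

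Next I would relate $[\ovl{R}|_{\rG}]$ to classes of Weyl modules via a Brauer character comparison similar to the reductions carried out in the proof of Lemma~\ref{lemma:nongenWeyl}: one expresses $[\ovl{R_s(\mu_0)}|_{\rG}]$ in the Grothendieck group as a virtual $\Z$-linear combination of classes $[W(\mu'-\eta)|_{\rG}]$ with $\mu'$ dominant and $h_{\mu'}\leq h_{\mu_0}$. Tensoring with $W(\lambda)$ and applying \cite[II.5.8 Lemma]{RAGS}, every Jordan--H\"older constituent $\sigma$ of $\ovl{R}\otimes W(\lambda)$ appears in some $W(\mu'-\eta+\nu')|_{\rG}$ with $\nu'\in \Conv(\lambda)$. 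Since $h_{\mu'+\nu'}\leq h_{\mu_0}+h_\lambda < 2(p-1)$ (using the hypothesis $h_{\lambda+\eta}<p-3$), applying the contrapositive of Lemma~\ref{lemma:nongenWeyl} with $h=h_{\mu'+\nu'}$ shows that if $\sigma$ is $m$-deep, then the dominant Weyl conjugate of $\mu'-\eta+\nu'$ is at least $(m-1)$-deep.

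Finally I would convert this depth statement into one about a presentation of $R$ by choosing $\xi\in X^*(\un{T})$ and $w\in \un{W}$ so that $w(\mu_0+p\xi-s\pi(\xi))$ lies in $\un{C}_0$ and is close, modulo $pX^*(\un{T})$, to the dominant Weyl conjugate of $\mu'-\eta+\nu'+\eta$ produced above. The main technical obstacle is the depth bookkeeping in this last step: the ``twist'' $p\xi-s\pi(\xi)$ is only an \emph{approximate} $p$-translate, so the true alcove movement differs from the naive $p$-dot action by an error bounded by $h_\eta$, and separately, absorbing the shift by $\nu'$ costs a further $h_{\nu'}\leq h_\lambda$ in the depth of $\mu-\eta$. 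Combining these two sources of loss produces the bound $(m-h_{\lambda+2\eta})$ claimed in the statement.
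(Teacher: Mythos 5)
Your plan has a strategic flaw that is worth isolating first, because it explains why the intermediate claims cannot be made precise. You propose to start with the presentation $R\cong R_s(\mu_0)$ from Lemma~\ref{lemma:genLAP} and then \emph{improve} it by choosing $(w,\xi)$ so that $w(\mu_0+p\xi-s\pi(\xi))-\eta$ is deeply interior to $\un{C}_0$. But the depth of a lowest alcove presentation of $R$ is not something one can improve: for any fixed $R$, the set of pairs $(s',\mu')$ with $R_{s'}(\mu')\cong R$ \emph{and} $\mu'-\eta$ in (the closure of) $\un{C}_0$ is a single orbit under the $\un{\Omega}$-action of Lemma~\ref{lemma:LAPbij}, and this action preserves depth. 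The general freedom $(w,\xi)\in \un{W}\times X^*(\un{T})$ you describe is mostly wasted: all but an $\un{\Omega}$-orbit's worth of those choices land $\mu-\eta$ outside $\un{C}_0$ and do not produce a lowest alcove presentation at all. So either the Lemma~\ref{lemma:genLAP} presentation already achieves the required depth, or no presentation does. The paper therefore does not search for a better presentation; it fixes the Lemma~\ref{lemma:genLAP} presentation and argues by contrapositive, showing that if $\mu_0-\eta$ is not $(m-h_{\lambda+2\eta})$-deep then every Jordan--H\"older factor of $\ovl{R}\otimes W(\lambda)$ fails to be $m$-deep.

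There is a second, more concrete gap in your step where you write $[\ovl{R_s(\mu_0)}]$ as a virtual combination of $[W(\mu'-\eta)|_{\rG}]$ with $h_{\mu'}\leq h_{\mu_0}\leq p+2$. That bound on $h_{\mu'}$ is not correct. The decomposition the paper uses (from \cite[\S A.3.4]{herzig-duke}) produces constituents of the form $W\bigl(\pi^{-1}(\tld{w}_1)\cdot(\tld{w}(R)\tld{w}_2^{-1}(\eta)-\eta)\bigr)$ with $\tld{w}_1,\tld{w}_2\in \tld{\un{W}}_1^+$, and the relevant dominant weights have $h$-value on the order of $ph_\eta$ — the paper's own bound is $\langle\cdot,\alpha^\vee\rangle\leq ph_\eta+2$, not $p+2$. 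For $n\geq 3$ this is genuinely larger, so your subsequent estimate $h_{\mu'+\nu'}<2(p-1)$, and with it the claim $\lfloor h/(p-1)\rfloor\leq 1$, does not hold. The correct bookkeeping has $\lfloor h/(p-1)\rfloor = h_\eta$ (this is exactly where the hypothesis $h_{\lambda+\eta}<p-3$ is used), which accounts for the $h_\eta$ lost in passing from $\mu-\eta$ to $\nu$. Finally, even granting a corrected bound, your last step conflates two different objects: the weight $\mu'$ appearing in the Weyl-module decomposition of $[\ovl{R}]$ and the parameter $\mu$ in a lowest alcove presentation $(s,\mu)$ of $R$. Knowing the dominant Weyl conjugate of $\mu'-\eta+\nu'$ is deep does not by itself constrain $\mu-\eta$, because there is no mechanism to ``choose $\xi$'' to align them; the relationship between the two must come through the explicit formula $\tld{w}(R)\tld{w}_2^{-1}(\eta)-\eta = \mu + s\tld{w}_2^{-1}(\eta)-\eta$ (with bounded shift $\leq 2h_\eta$), which is what the paper exploits and your sketch does not supply.
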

\begin{proof}
Let $R = R_s(\mu)$ for some $(s,\mu)$ as in Lemma \ref{lemma:genLAP} and that $\sigma$ is a Serre weight in $\JH(\ovl{R} \otimes W(\lambda))$.
We assume that that $\mu-\eta$ is not $(m-h_{\lambda+2\eta})$-deep in $\un{C}_0$ and will show that $\sigma$ is not $m$-deep.
Note that $\mu-\eta$ is $(-3)$-deep in $\un{C}_0$ so that in particular $m \geq h_{\lambda+2\eta}-2$.
By \cite[\S A.3.4]{herzig-duke}, $\sigma\in \JH(W(\pi^{-1}(\tld{w}_1) \cdot (\tld{w}(R)\tld{w}_2^{-1}(\eta) - \eta)) \otimes W(\lambda))$ for some $\tld{w}_1,\, \tld{w}_2 \in \tld{\un{W}}^+_1$ (in fact, necessarily $\tld{w}_2\in \tld{w}_1\un{W}_a$) where $\tld{w}(R) \defeq t_\mu s$.
Then by the proof of Lemma \ref{lemma:nongenWeyl}, $\sigma\in \JH(W(\nu))$ for $\nu \in (w\pi^{-1}(\tld{w}_1)) \cdot (\tld{w}(R)\tld{w}_2^{-1}(\eta) - \eta)+\Conv(\lambda)$ for $w\in \un{W}$ with $(w\pi^{-1}(\tld{w}_1)) \cdot (\tld{w}(R)\tld{w}_2^{-1}(\eta) - \eta)$ dominant and $\nu$ dominant.
In particular, the depth assumption on $\mu$ implies that $\nu$ is not $(m-h_\eta)$-deep.
Furthermore, since $\mu-\eta$ is $(-3)$-deep in $\un{C}_0$, $(w\pi^{-1}(\tld{w}_1)) \cdot (\tld{w}(R)\tld{w}_2^{-1}(\eta) - \eta)$ is $(-3-h_\eta)$-deep in a $p$-restricted alcove.
In particular, for all $\alpha \in \un{\Phi}$ (we can assume that $\alpha$ is a highest root by dominance), 
$\langle (w\pi^{-1}(\tld{w}_1)) \cdot (\tld{w}(R)\tld{w}_2^{-1}(\eta) - \eta),\alpha^\vee \rangle \leq ph_\eta+2$ so that $\langle \nu,\alpha^\vee \rangle \leq ph_\eta + h_\lambda+2$.
The result now follows from Lemma \ref{lemma:nongenWeyl}.
\end{proof}

\begin{rmk}
If $\un{G}$ is a product of copies of $\GL_n$, one can show that $\mu-\eta$ can be taken to be $(-1)$-deep.
One can then assume instead that $h_{\lambda+\eta} < p-1$ in Lemma \ref{lemma:nongenericDL}.
\end{rmk}

For $\lambda\in X^*(\un{T})$ dominant, recall from \S \ref{sec:SW} that $W(\lambda)_{/\F}$ denotes the dual Weyl module of highest weight $\lambda$ for the split algebraic group $\un{G}_{/\F}$ and that $W(\lambda)$ is the restriction of $W(\lambda)_{/\F}\,(\F)$ to $\rG\subseteq \un{G}(\F)$.
If $R$ is $\lambda$-compatible with $\zeta\in X^*(\un{Z})$, then $\zeta \mod{(p-\pi)X^*(\un{Z})}$ gives the central character of $\ovl{R}\otimes W(\lambda)\defeq \ovl{R}\otimes_{\F} W(\lambda)$.
The set $\JH(\ovl{R}\otimes W(\lambda))$ has the following combinatorial description in terms of $\tld{\un{W}}$. 
We also use $\uparrow$ to denote the ordering on $X^*(\un{T})$ defined in \cite[II.6.4]{RAGS}.

From (the proof of) \cite[Proposition 4.1.3]{LLL} we have:
\begin{prop}\label{prop:JH}
Let $R$ be a Deligne--Lusztig representation with a $2h_\eta$-generic lowest alcove presentation $(s, \mu-\eta)$. %
Let $\lambda\in X_1(\un{T})$.
Then $F(\lambda)\in \JH(\ovl{R})$ if and only if there exists $\tld{w}=wt_\nu\in \tld{\un{W}}^+$ such that $\tld{w}\cdot (\mu-s\pi(\nu)-\eta)\uparrow \tld{w}_h\cdot \lambda$ and $\tld{w}\cdot \un{C}_0\uparrow \tld{w}_h\cdot \un{C}_0$.
\end{prop}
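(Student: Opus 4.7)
The plan is to combine Jantzen's generic decomposition pattern for Deligne--Lusztig representations with the combinatorial machinery of admissible pairs and the upper arrow order developed in \S \ref{sub:AWG:2}. First I would invoke the generic decomposition pattern (as in \cite[\S A.3.4]{herzig-duke} and the proof of \cite[Proposition 4.1.3]{LLL}), which under the $2h_\eta$-genericity of $(s,\mu-\eta)$ yields an explicit description of $\JH(\ovl{R})$: the constituents are precisely the Serre weights of the form
\[
F\bigl(\pi^{-1}(\tld{w}_1) \cdot (\tld{w}(R)\tld{w}_2^{-1}(\eta) - \eta)\bigr),
\]
where $\tld{w}(R) = t_\mu s$ and $(\tld{w}_1,\tld{w}_2)$ ranges over an index set of admissible pairs as in \eqref{eq:AP}. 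The $2h_\eta$-genericity ensures, via Lemma \ref{lemma:nongenericDL} and the depth estimates underlying Jantzen's formal pattern, that no collapsing occurs and that all arising weights stay in regions where Jantzen's combinatorial recipe accurately computes Jordan--H\"older multiplicities.

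Next, I would use Corollary \ref{cor:can:reg} together with Proposition \ref{prop:can:adm} to repackage this index set in terms of elements $\tld{w} = wt_\nu \in \tld{\un{W}}^+$, by identifying $\tld{w}$ with (the $\tld{\un{W}}^+$ translate of) $\tld{w}_2$. Under this repackaging the admissibility relation $\tld{w}_1 \uparrow \tld{w}_h^{-1}\tld{w}_2$ of \eqref{eq:AP} splits into the two upper arrow conditions appearing in the statement: the alcove-level condition $\tld{w}\cdot \un{C}_0 \uparrow \tld{w}_h\cdot \un{C}_0$ encodes that $\tld{w}_2$ lies in $\tld{\un{W}}^+$ and controls the geometry of alcoves, while the weight-level condition $\tld{w}\cdot (\mu - s\pi(\nu) - \eta)\uparrow \tld{w}_h\cdot \lambda$ is exactly the statement that $\lambda$ is obtained as the image under the $p$-dot action formula $\pi^{-1}(\tld{w}_1)\cdot (\tld{w}(R)\tld{w}_2^{-1}(\eta) - \eta)$, rewritten via the identity $\tld{w}(R)\tld{w}_2^{-1}(\eta) = \mu - s\pi(\nu)$ that comes from $\tld{w}_2 = wt_\nu$ and $\tld{w}(R) = t_\mu s$.

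Both directions then reduce to the same combinatorial identification. In the forward direction, given $F(\lambda) \in \JH(\ovl{R})$, I would extract a pair $(\tld{w}_1,\tld{w}_2)$ from Jantzen's pattern realizing $F(\lambda)$, set $\tld{w} = \tld{w}_2$, and verify the two conditions using Proposition \ref{prop:can:adm}. Conversely, given $\tld{w} \in \tld{\un{W}}^+$ satisfying both conditions, I would produce the matching $\tld{w}_1 \in \tld{\un{W}}^+_1$ from Corollary \ref{cor:can:reg} and read off $\lambda$ from the formula.

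The main obstacle will be the bookkeeping needed to convert between the various normalizations: the $s\pi(\nu)$-twist encoding how $\tld{\un{W}}^+$ acts on lowest alcove presentations of $R$ (as in Lemma \ref{lemma:LAPbij} and \S \ref{sec:DLandSW}), the shifted $p$-dot action versus the ordinary translation action, and the distinction between the upper arrow order on $X^*(\un{T})$ and on the set of alcoves. The $2h_\eta$-genericity is precisely what is required to keep all intermediate weights deep inside their alcoves so that these conversions are valid and unambiguous, and to ensure that the dominant representatives $\tld{w} \in \tld{\un{W}}^+$ produced from pairs in $\mathrm{AP}(\eta)$ are uniquely determined (up to $X^0(\un{T})$).
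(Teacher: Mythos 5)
Your proposal inverts the paper's logical flow. Proposition~\ref{prop:JH} is the paper's \emph{input}, cited directly from (the proof of) \cite[Proposition~4.1.3]{LLL}, and the admissible-pair parametrization in Proposition~\ref{prop:JHbij} is then \emph{derived} from it via the translation principle. You propose the opposite order: to first establish an admissible-pair parametrization of $\JH(\ovl{R})$ and then translate it into the form of Proposition~\ref{prop:JH}. But the references you name for that first step (\cite[\S A.3.4]{herzig-duke} and the proof of \cite[Proposition~4.1.3]{LLL}) record Jantzen's generic decomposition pattern precisely in the Proposition~\ref{prop:JH} style --- a $\uparrow$-criterion involving a single element $\tld{w}$ --- not in the admissible-pair style, so the first step is not actually available to you and the plan is circular.

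Beyond the logical order, the algebra in your translation is wrong in several places. With $\tld{w}_2 = wt_\nu$ and $\tld{w}(R) = t_\mu s$, one has $\tld{w}_2^{-1}(0) = -\nu$ and hence $\tld{w}(R)\tld{w}_2^{-1}(0) = \mu - s(\nu)$; in particular the paper's parametrization (via equation~\eqref{eqn:JH}) evaluates $\tld{w}_2^{-1}$ at $0$, not at $\eta$ as you write, and produces $s(\nu)$, not your claimed $s\pi(\nu)$. The Frobenius twist $\pi$ in the statement originates in the definition $F_{(\tld{w}_1,\omega)} = F(\pi^{-1}(\tld{w}_1)\cdot(\omega-\eta))$, not from where you insert it. Moreover, setting $\tld{w} = \tld{w}_2$ cannot be correct: in Proposition~\ref{prop:JH} the \emph{same} element $\tld{w}$ both acts via the $p$-dot action and contributes the twist $s\pi(\nu)$, whereas in the admissible-pair picture the acting element is $\pi^{-1}(\tld{w}_1)$, which is distinct from $\tld{w}_2$ (they are only constrained by $\tld{w}_1 \uparrow \tld{w}_h^{-1}\tld{w}_2$, not by equality). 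Finally, your assertion that $\tld{w}\cdot\un{C}_0 \uparrow \tld{w}_h\cdot\un{C}_0$ ``encodes that $\tld{w}_2 \in \tld{\un{W}}^+$'' is false: membership in $\tld{\un{W}}^+$ says only that $\tld{w}_2\cdot\un{C}_0$ is dominant, whereas the $\uparrow$-condition is strictly stronger, confining $\tld{w}\cdot\un{C}_0$ to lie $\uparrow$-below the top $p$-restricted alcove $\tld{w}_h\cdot\un{C}_0$. These are genuine gaps, not merely bookkeeping to be filled in.
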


We have the following parametrization of Jordan--H\"older factors of $\ovl{R}\otimes W(\lambda)$ in terms of admissible pairs from \S \ref{sub:AWG:2}.

\begin{prop}\label{prop:JHbij}
Let $\lambda\in X^*(\un{T})$ be a dominant weight and let $m\geq \max\{2h_\eta,h_{\lambda+\eta}\}$ be an integer.
Let $R$ be a Deligne--Lusztig representation together with an $m$-generic lowest alcove presentation, with corresponding element $\tld{w}(R)\in \tld{\un{W}}$ \emph{(}cf.~Definition \ref{defn:LAP:DL}\emph{)}.

Then the map 
\begin{align} 
\nonumber \mathrm{AP}(\lambda+\eta) &\ra \JH(\ovl{R}\otimes W(\lambda))\\
(\tld{w}_1,\tld{w}_2) &\mapsto F_{(\tld{w}_1,\tld{w}(R)\tld{w}_2^{-1}(0))} \label{eqn:JH}
\end{align}
is a bijection. 
Moreover, these Jordan--H\"older factors are $(m - h_{\lambda+\eta})$-deep and the lowest alcove presentations $(\tld{w}_1,\tld{w}(R)\tld{w}_2^{-1}(0))$ of these Serre weights are $\lambda$-compatible with the lowest alcove presentation %
of $R$.
\end{prop}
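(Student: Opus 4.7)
The plan is to construct and analyze the map \eqref{eqn:JH} by reducing to Proposition \ref{prop:JH} via the combinatorial bijection of Corollary \ref{cor:can:reg} together with a generic decomposition of $\ovl{R}\otimes W(\lambda)$ in the Grothendieck group of $\rG$-representations.

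First, I would check that \eqref{eqn:JH} is well-defined: for $(\tld{w}_1,\tld{w}_2)\in\mathrm{AP}(\lambda+\eta)$, one needs $\tld{w}(R)\tld{w}_2^{-1}(0)-\eta\in\un{C}_0$, so that the pair $(\tld{w}_1,\tld{w}(R)\tld{w}_2^{-1}(0))$ indeed qualifies as a lowest alcove presentation. The defining condition $\tld{w}_1\uparrow t_\lambda\tld{w}_h^{-1}\tld{w}_2$, combined with Proposition \ref{prop:can:adm}, forces $\tld{w}_2$ to be $h_{\lambda+\eta}$-small (Definition \ref{defn:var:gen}(\ref{it:def:small})). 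Together with the $m$-genericity of $\tld{w}(R)$ and Proposition \ref{prop:propertiesofsmall}(4), this yields that $\tld{w}(R)\tld{w}_2^{-1}$ is $(m-h_{\lambda+\eta})$-generic, and since $F_{(\tld{w}_1,\omega)}$ is as deep as $\omega-\eta\in\un{C}_0$, this simultaneously confirms well-definedness and the claimed $(m-h_{\lambda+\eta})$-depth of the target Serre weights.

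Second, I would establish that the image in fact lies in $\JH(\ovl{R}\otimes W(\lambda))$ by a generic decomposition argument. Using that the formal character of $W(\lambda)$ has weights in $\Conv(\lambda)$ and the Deligne--Lusztig character formula, one expresses $[\ovl{R}\otimes W(\lambda)]$ in the Grothendieck group as a sum of classes $[\ovl{R_s(\mu')}]$ indexed by weight shifts of $\mu$. Under the genericity hypothesis $m\geq 2h_\eta$, each such virtual DL representation remains in the generic regime, so Proposition \ref{prop:JH} applies and produces JH factors of precisely the form $F_{(\tld{w}_1,\tld{w}(R)\tld{w}_2^{-1}(0))}$ as $\tld{w}_2$ ranges over elements satisfying $\tld{w}_1\uparrow t_\lambda\tld{w}_h^{-1}\tld{w}_2$. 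For bijectivity I would compose \eqref{eqn:JH} with Corollary \ref{cor:can:reg} and reduce to a bijection between $\Adm^{\mathrm{reg}}(\lambda+\eta)$ and $\JH(\ovl{R}\otimes W(\lambda))$: injectivity follows from the uniqueness of lowest alcove presentations (Lemma \ref{lemma:LAPwt}) applied to the $0$-deep target weights, and surjectivity by matching cardinalities via the counting from Proposition \ref{prop:JH} across the decomposition.

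Finally, for $\lambda$-compatibility of lowest alcove presentations, I would compute the algebraic central character attached to $(\tld{w}_1,\tld{w}(R)\tld{w}_2^{-1}(0))$ via \eqref{eqn:wtchar}: this equals the class of $\tld{w}_1\, t_{\tld{w}(R)\tld{w}_2^{-1}(0)}$ in $\tld{\un{W}}/\un{W}_a$, which reduces (using that $\tld{w}_1\tld{w}_2^{-1}w_0\in t_\lambda \un{W}_a$ up to elements fixing $\un{Z}$, a consequence of Proposition \ref{prop:can:adm}) to $t_\lambda$ times the central character attached to $(s,\mu-\eta)$ under Lemma \ref{lemma:LAPbij}. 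The main obstacle I foresee is Step 2: rigorously establishing the generic decomposition of $[\ovl{R}\otimes W(\lambda)]$ requires a careful Grothendieck-group computation, likely via Jantzen's generic decomposition pattern, and the bound $m\geq\max\{2h_\eta,h_{\lambda+\eta}\}$ must be used to rule out any ``resonance'' between distinct summands that would corrupt the bijection.
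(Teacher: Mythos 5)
Your opening and closing steps (well-definedness, the $(m-h_{\lambda+\eta})$-depth computation via the $h_{\lambda+\eta}$-smallness of $\tld{w}_2$, injectivity from Lemma \ref{lemma:LAPwt}, and the central-character computation for $\lambda$-compatibility) match the paper and are essentially correct, modulo a missing $t_{-\eta}$ in the formula for the central character, which is harmless modulo $\un{W}_a$.

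The gap is in Step 2. You propose to write $[\ovl{R}\otimes W(\lambda)]$ as a sum of reductions $[\ovl{R_s(\mu')}]$ of twisted Deligne--Lusztig representations, apply Proposition \ref{prop:JH} to each summand, and conclude surjectivity by ``matching cardinalities.'' The cardinality argument does not close: the union $\bigcup_{\mu'}\JH(\ovl{R_s(\mu')})$ need not be disjoint \emph{a priori}, so $\#\JH(\ovl{R}\otimes W(\lambda))$ is not simply the sum of the $\#\JH(\ovl{R_s(\mu')})$, and without knowing this count you cannot conclude by comparing with $\#\mathrm{AP}(\lambda+\eta)$. What is actually needed (and what the paper supplies) is an explicit combinatorial translation: Proposition \ref{prop:JH} together with the translation principle shows every element of $\JH(\ovl{R}\otimes W(\lambda))$ has a lowest alcove presentation of the form $\big(\tld{w}_1,\ \tld{w}(R)\bigl((\tld{w}_2')^{-1}(0)+\omega\bigr)\big)$ with $\tld{w}_1\uparrow \tld{w}_h^{-1}\tld{w}_2'$ and $\omega\in\Conv(\lambda)$, and the paper then manipulates this directly into the form $\big(\tld{w}_1,\tld{w}(R)\tld{w}_2^{-1}(0)\big)$ with $\tld{w}_1\uparrow t_\lambda\tld{w}_h^{-1}\tld{w}_2$ by setting $\tld{w}_2$ to be the dominant representative of $\un{W}t_{-\omega-(\tld{w}_2')^{-1}(0)}$ and invoking \cite[Proposition 4.1.2]{LLL}. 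The converse inclusion uses Wang's theorem, the reducedness of $t_{-w_0(\lambda)}(\tld{w}_h\tld{w}_1)$, and Kottwitz--Rapoport permissibility (to factor $\tld{w}_2\le t_{-w_0(\lambda)}\tld{w}_h\tld{w}_1$ as $\tld{x}\tld{y}$ with $\tld{x}(0)\in\Conv(-w_0(\lambda))$). None of this bookkeeping is present in your sketch, and it cannot be replaced by a counting argument because the map into $\JH$ is set-theoretic and the multiplicities of $\ovl{R}\otimes W(\lambda)$ are not controlled. You correctly flag this as the ``main obstacle,'' but as written the step would fail; you should replace the cardinality argument with the explicit two-sided combinatorial containment.
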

\begin{proof}
Since $\tld{w}_1 t_{\tld{w}(R)\tld{w}_2^{-1}(0)-\eta} \un{W}_a/\un{W}_a = \tld{w}_1 \tld{w}(R) \tld{w}_2^{-1}t_{-\eta} \un{W}_a/\un{W}_a = t_\lambda \tld{w}(R) \un{W}_a/\un{W}_a$, the lowest alcove presentations $(\tld{w}_1,\tld{w}(R)\tld{w}_2^{-1}(0))$ of Serre weights are $\lambda$-compatible with the given lowest alcove presentation %
of $R$.
If $(\tld{w}_1,\tld{w}_2)\in \mathrm{AP}(\lambda+\eta)$, then $\tld{w}_2 \uparrow t_{-w_0(\lambda)} \tld{w}_h \tld{w}_1$, so that $\langle \tld{w}_2^{-1}(0),\alpha^\vee\rangle \leq h_{\lambda+\eta}$ for all $\alpha \in \un{\Phi}$.
This implies that $F_{(\tld{w}_1,\tld{w}(R)\tld{w}_2^{-1}(0))}$ is $(m - h_{\lambda+\eta})$-deep.
Lemma \ref{lemma:LAPwt} finally implies that (\ref{eqn:JH}) is injective.

We next show the image of (\ref{eqn:JH}) is $\JH(\ovl{R} \otimes W(\lambda))$.
By the translation principle and Proposition \ref{prop:JH}, every element of $\JH(\ovl{R} \otimes W(\lambda))$ is of the form
\begin{equation}\label{eqn:trans}
F(\pi^{-1}(\tld{w}_1) \cdot (\tld{w}(R)((\tld{w}'_2)^{-1}(0) +\omega) - \eta))
\end{equation}
for some $\tld{w}_1\in \tld{\un{W}}_1^+$ and $\tld{w}_2'\in \tld{\un{W}}^+$ such that $\tld{w}_1 \uparrow \tld{w}_h^{-1} \tld{w}_2'$ and some $\omega\in \mathrm{Conv}(\lambda)$.
Let $\tld{w}_2 = w t_{-\omega-(\tld{w}'_2)^{-1}(0)}$ be the unique element in $\un{W}t_{-\omega-(\tld{w}'_2)^{-1}(0)} \cap \tld{\un{W}}^+$.
Since $\omega\in \mathrm{Conv}(\lambda)$ implies that $t_{w(-\omega)}\uparrow t_{-w_0(\lambda)}$ and $\tld{w}_2'\in \tld{\un{W}}^+$ implies that $wt_{-(\tld{w}'_2)^{-1}(0)} \uparrow w_2' t_{-(\tld{w}'_2)^{-1}(0)} = \tld{w}_2'$ (where $w_2' \in \un{W}$ is the image of $\tld{w}'_2$), we see that $\tld{w}_2 =  t_{w(-\omega)}wt_{-(\tld{w}'_2)^{-1}(0)} \uparrow t_{-w_0(\lambda)} \tld{w}_2'$, which is equivalent to the inequality $\tld{w}_h^{-1} \tld{w}_2' \uparrow t_{\lambda} \tld{w}_h^{-1}\tld{w}_2$ by \cite[Proposition 4.1.2]{LLL}.
This implies the desired inequality $\tld{w}_1 \uparrow t_{\lambda}\tld{w}_h^{-1}\tld{w}_2$.

For the converse, suppose that $\tld{w}_1 \uparrow t_{\lambda}\tld{w}_h^{-1}\tld{w}_2$.
Equivalently, by \cite[Proposition 4.1.2]{LLL} and Wang's theorem (\cite[Theorem 4.1.1]{LLL}), we have $\tld{w}_2 \leq t_{-w_0(\lambda)}(\tld{w}_h\tld{w}_1)$. 
Since the latter factorization is reduced, we have $\tld{w}_2 = \tld{x} \tld{y}$ where $\tld{x} \leq t_{-w_0(\lambda)}$ and $\tld{y} \leq \tld{w}_h\tld{w}_1$.
Then by \cite{KR}, $\tld{x}$ is $-w_0(\lambda)$-permissible, and in particular, $\tld{x}(0) \in \mathrm{Conv}(-w_0(\lambda))$.
Taking $\tld{w}_2' \in \un{W}\tld{y} \cap \un{\tld{W}}^+$, we conclude that $\tld{w}_2 = wt_{-\omega} \tld{w}_2'$ for some $\omega\in \mathrm{Conv}(\lambda)$ and $\tld{w}_2' \in \tld{W}^+$ with $\tld{w}_2' \uparrow \tld{w}_h \tld{w}_1$ (equivalently $\tld{w}_1\uparrow \tld{w}_h^{-1} \tld{w}_2'$).
Then $F(\pi^{-1}(\tld{w}_1) \cdot (\tld{w}(R)\tld{w}_2^{-1}(0) - \eta))$ has the form of (\ref{eqn:trans}).
\end{proof}

We use Proposition \ref{prop:JHbij} to give another description of $\JH(\ovl{R} \otimes W(\lambda))$.

\begin{prop}\label{prop:JHfixed}
Let $\lambda\in X^*(\un{T})$ be dominant and suppose that $(\tld{w}_1,\omega)$ and $(s,\mu-\eta)$ are $\lambda$-compatible lowest alcove presentations of a Serre weight $\sigma$ and a Deligne--Lusztig representation, respectively.
Suppose further that $(s,\mu-\eta)$ is $\max\{2h_\eta,h_{\lambda+\eta}\}$-generic.
Then $\sigma \in \JH(\ovl{R} \otimes W(\lambda))$ if and only if 
\begin{equation}\label{eqn:JHfixed}
t_\omega \tld{\un{W}}_{\leq w_0\tld{w}_1} \subset t_\mu s \Adm(\lambda+\eta).
\end{equation}
\end{prop}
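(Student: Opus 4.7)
I plan to prove this proposition by combining the bijective parametrization of $\JH(\ovl{R}\otimes W(\lambda))$ by admissible pairs (Proposition \ref{prop:JHbij}), the characterization of admissibility via Bruhat inequalities (Proposition \ref{prop:can:adm}), and the control of lowest alcove presentations of $0$-deep Serre weights (Lemma \ref{lemma:LAPwt}).

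First I reduce the condition $\sigma\in\JH(\ovl{R}\otimes W(\lambda))$ to an existence assertion about an admissible pair. By Proposition \ref{prop:JHbij}, applicable because $(s,\mu-\eta)$ is $m$-generic with $m\geq\max\{2h_\eta,h_{\lambda+\eta}\}$, the condition $\sigma\in\JH(\ovl{R}\otimes W(\lambda))$ is equivalent to the existence of $(\tld{w}_1',\tld{w}_2)\in\mathrm{AP}(\lambda+\eta)$ such that $F_{(\tld{w}_1',\tld{w}(R)\tld{w}_2^{-1}(0))}\cong\sigma$, with the target lowest alcove presentation $\lambda$-compatible with $(s,\mu-\eta)$. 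Since such a $\sigma$ is $(m-h_{\lambda+\eta})\geq 0$-deep, Lemma \ref{lemma:LAPwt} implies that the two $\lambda$-compatible lowest alcove presentations $(\tld{w}_1,\omega)$ and $(\tld{w}_1',\tld{w}(R)\tld{w}_2^{-1}(0))$ lie in the same $X^0(\un{T})$-equivalence class. Normalizing via the diagonal $X^0(\un{T})$-action on $\mathrm{AP}(\lambda+\eta)$, we may assume $\tld{w}_1'=\tld{w}_1$ and $\tld{w}(R)\tld{w}_2^{-1}(0)=\omega$. Proposition \ref{prop:can:adm}(iv) then reformulates the AP condition as $\tld{w}_2^{-1}w_0\tld{w}_1\in\Adm(\lambda+\eta)$.

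Next, I match this existence statement with (\ref{eqn:JHfixed}). After left-multiplying by $(t_\mu s)^{-1}=s^{-1}t_{-\mu}$, (\ref{eqn:JHfixed}) becomes $s^{-1}t_{\omega-\mu}\tld{\un{W}}_{\leq w_0\tld{w}_1}\subset\Adm(\lambda+\eta)$. For the direction existence $\Rightarrow$ inclusion: given $\tld{w}_2$ as above, the equation $\tld{w}(R)\tld{w}_2^{-1}(0)=\omega$ forces a factorization $s^{-1}t_{\omega-\mu}=\tld{w}_2^{-1}u$ for some $u\in\un{W}$ (the stabilizer of $0$), so that $s^{-1}t_{\omega-\mu}\tld{x}=\tld{w}_2^{-1}u\tld{x}$ for any $\tld{x}\leq w_0\tld{w}_1$; the admissibility of $\tld{w}_2^{-1}w_0\tld{w}_1$ then propagates to the full left translate using the reducedness of $\tld{w}_2^{-1}w_0\tld{w}_1$ provided by Lemma \ref{lemma:gallery} together with the subword property of Bruhat intervals and the lower-set structure of $\Adm(\lambda+\eta)$. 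For the converse direction inclusion $\Rightarrow$ existence: evaluating (\ref{eqn:JHfixed}) at the two specific elements $\tld{x}=w_0\tld{w}_1$ and $\tld{x}=\tld{w}_1$ (both in $\tld{\un{W}}_{\leq w_0\tld{w}_1}$ since $w_0\tld{w}_1$ is a reduced expression by Lemma \ref{lemma:gallery}) yields two admissibility statements which, after rewriting, match the pair of extreme inequalities in Proposition \ref{prop:can:adm}(iii); these together produce an element $\tld{w}_2\in\tld{\un{W}}^+$ with the required properties.

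The main obstacle is the forward direction of the last step: promoting admissibility of the single element $\tld{w}_2^{-1}w_0\tld{w}_1$ to that of the entire translated Bruhat lower set. The essential technical tool is the reducedness of $\tld{w}_2^{-1}w_0\tld{w}_1$ (Lemma \ref{lemma:gallery}), which enables a subword-type argument when combined with Proposition \ref{prop:can:adm}(iii).
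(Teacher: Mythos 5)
Your reduction via Proposition \ref{prop:JHbij}, Lemma \ref{lemma:LAPwt}, and Proposition \ref{prop:can:adm} to the equivalence of $\tld{w}_2^{-1}w_0\tld{w}_1\in\Adm(\lambda+\eta)$ with \eqref{eqn:JHfixed} is exactly the paper's setup, and your forward direction is correct: writing $t_\omega=\tld{w}(R)\tld{w}_2^{-1}u$ with $u\in\un{W}$, one uses that $\tld{\un{W}}_{\leq w_0\tld{w}_1}$ is stable under left $\un{W}$-multiplication (a point you should make explicit, since it is needed to absorb $u$) and then the subword property together with the reducedness of $\tld{w}_2^{-1}w_0\tld{w}_1$ from Lemma \ref{lemma:gallery}. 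This matches the paper's argument.

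The backward direction has a genuine gap. Evaluating \eqref{eqn:JHfixed} at $\tld{x}=w_0\tld{w}_1$ gives, after left-multiplying by $\tld{w}(R)^{-1}$, that $\tld{w}_2^{-1}u\,w_0\tld{w}_1\in\Adm(\lambda+\eta)$, and evaluating at $\tld{x}=\tld{w}_1$ gives $\tld{w}_2^{-1}u\,\tld{w}_1\in\Adm(\lambda+\eta)$. In both statements the element $u\in\un{W}$ appears in the middle of the product and cannot simply be dropped or conjugated away: $\Adm(\lambda+\eta)$ is a lower order ideal but these two facts do not ``rewrite'' into the pair of Bruhat inequalities $\tld{w}_2^{-1}w_0\tld{w}_1\leq t_{w_1^{-1}(\lambda+\eta)}$ and $\tld{w}_2^{-1}w_0\tld{w}_1\leq t_{(w_0w_2)^{-1}(\lambda+\eta)}$ of Proposition \ref{prop:can:adm}\eqref{item:extreme}, which concern $\tld{w}_2^{-1}w_0\tld{w}_1$ with no $u$ present. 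The fix is to evaluate at a single well-chosen element: $\tld{x}=u^{-1}w_0\tld{w}_1$, which lies in $\tld{\un{W}}_{\leq w_0\tld{w}_1}$ by the same left $\un{W}$-stability used above, so that $t_\omega u^{-1}w_0\tld{w}_1=\tld{w}(R)\tld{w}_2^{-1}w_0\tld{w}_1\in\tld{w}(R)\Adm(\lambda+\eta)$, giving directly $\tld{w}_2^{-1}w_0\tld{w}_1\in\Adm(\lambda+\eta)$ with no detour through part \eqref{item:extreme} of Proposition \ref{prop:can:adm}. This is in effect what the paper does.
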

\begin{proof}
As usual we let $\tld{w}(R) \defeq t_\mu s$.
Let $\tld{w}_2 \in \tld{\un{W}}^+$ be the unique element such that $t_{-\omega} \tld{w}(R) \in \un{W}\tld{w}_2$.
Note that $\omega = \tld{w}(R) \tld{w}_2^{-1}(0)$.
By Proposition \ref{prop:JHbij} and Proposition \ref{prop:can:adm}, it suffices to show that $\tld{w}_2^{-1}w_0\tld{w}_1 \in \Adm(\lambda+\eta)$ is equivalent to (\ref{eqn:JHfixed}).
If $\tld{w}_2^{-1}w_0\tld{w}_1 \in \Adm(\lambda+\eta)$, then
\[t_\omega \tld{\un{W}}_{\leq w_0\tld{w}_1} \subset \tld{w}(R) \tld{w}_2^{-1} \tld{\un{W}}_{\leq w_0\tld{w}_1} \subset \tld{w}(R) \Adm(\lambda+\eta),\] where the first inclusion follows from the fact that $\tld{\un{W}}_{\leq w_0\tld{w}_1}$ is $\un{W}$-stable under left multiplication.

For the backwards direction, assume (\ref{eqn:JHfixed}).
Then in particular,
\[
\tld{w}(R)\tld{w}_2^{-1} w_0 \tld{w}_1 \in t_\omega \un{W} \tld{w}_1 \subset \tld{w}(R) \Adm(\lambda+\eta).
\]
\end{proof}

\subsubsection{The covering order}
Having discussed the reductions of Deligne--Lusztig representations, we now use these results to define a partial ordering on Serre weights that arises naturally in \S \ref{sec:BMC}.

For a $\max\{2h_\eta,h_{\lambda+\eta}\}$-generic Deligne--Lusztig representation $R$, let $\JH_\out(\ovl{R}\otimes W(\lambda))$ be the subset of $\JH(\ovl{R}\otimes W(\lambda))$ corresponding by (\ref{eqn:JH}) to elements of $\mathrm{AP}(\lambda+\eta)$ of the form $(\tld{w}_1,\tld{w}_ht_{-\lambda}\tld{w}_1)$.
We begin with the following lemma.

\begin{lemma}\label{lemma:separate}
Suppose that $F_{(\tld{w}',\omega')} \in \JH_\out(\ovl{R})$. 
Fix the compatible lowest alcove presentation of $R$ with corresponding element $\tld{w}(R)$ as in Definition \ref{defn:LAP:DL}.
If $(\tld{w},\omega)$ is a compatible lowest alcove presentation of a weight $F_{(\tld{w},\omega)} \in \JH(\ovl{R})$, then \emph{(}choosing any representatives of the equivalence class of lowest alcove presentations\emph{)}
\[
\tld{w} \uparrow t_{w'w(R)^{-1}(\omega' - \omega)}\tld{w}'
\]
where $w'$ denotes the image of $\tld{w}'$ in $\un{W}$.
\end{lemma}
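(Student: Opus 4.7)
The plan is to unpack the two Jordan--H\"older hypotheses via Proposition~\ref{prop:JHbij}. Given $F_{(\tld{w},\omega)}\in\JH(\ovl{R})$, there is a unique pair $(\tld{w},\tld{w}_2)\in\mathrm{AP}(\eta)$ with $\omega=\tld{w}(R)\tld{w}_2^{-1}(0)$, which by Proposition~\ref{prop:can:adm} is equivalent to the upper-arrow relation $\tld{w}\uparrow\tld{w}_h^{-1}\tld{w}_2$. The assumption $F_{(\tld{w}',\omega')}\in\JH_\out(\ovl{R})$ forces the pair attached to $F_{(\tld{w}',\omega')}$ to be $(\tld{w}',\tld{w}_h\tld{w}')$, so that $\omega'=\tld{w}(R)(\tld{w}')^{-1}(\eta)$.

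Next, writing $\tld{w}(R)=t_\mu s$, $\tld{w}_2=w_2\,t_{\nu_2}$, and $\tld{w}'=w'\,t_{\nu'}$, a direct calculation with the affine action yields
\[
\xi\defeq w'\,w(R)^{-1}(\omega'-\omega)\,=\,\eta+w'(\nu_2-\nu'),
\]
and the key identity
\[
t_\xi\tld{w}' \,=\, \tld{w}_h^{-1}\cdot(w_0w'\,t_{\nu_2})
\]
in $\tld{W}$. By transitivity of $\uparrow$ and the already-established relation $\tld{w}\uparrow\tld{w}_h^{-1}\tld{w}_2$, it therefore suffices to prove the intermediate inequality $\tld{w}_h^{-1}\tld{w}_2\uparrow\tld{w}_h^{-1}(w_0w'\,t_{\nu_2})$. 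Since translations preserve $\uparrow$, stripping the common $\tld{w}_h^{-1}$-factor reduces this to an alcove comparison involving two alcoves that share the same translation part $\nu_2$ but differ in their Weyl components, namely $w_2$ versus $w_0w'$.

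The hard part is this last alcove comparison, which is the main obstacle of the proof. My strategy is to recast the $\uparrow$-relation as a Bruhat inequality in $\Adm(\eta)$ via Proposition~\ref{prop:can:adm}(\ref{item:adm}), and then to exploit both $\tld{w}_2\in\tld{W}^+$ (coming from admissibility) and the outer condition $\tld{w}_h\tld{w}'\in\tld{W}^+$ (coming from $F_{(\tld{w}',\omega')}\in\JH_\out(\ovl{R})$), which together constrain $w_0w'$ in a manner compatible with $\nu_2$ and force the desired inequality by a direct combinatorial analysis in the extended affine Weyl group. Since both $\xi$ and the upper-arrow order are invariant under the allowed $X^0(\un{T})$-shifts, the freedom in the choice of representatives for the equivalence classes of lowest alcove presentations does not affect the argument.
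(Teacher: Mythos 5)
Your algebraic setup is correct and mirrors the paper's: identifying $(\tld{w},\tld{w}_2)\in\mathrm{AP}(\eta)$ with $\tld{w}\uparrow\tld{w}_h^{-1}\tld{w}_2$, computing $\xi=\eta+w'(\nu_2-\nu')$, and verifying $t_\xi\tld{w}'=\tld{w}_h^{-1}(w_0w't_{\nu_2})$. The reduction to proving $\tld{w}_h^{-1}\tld{w}_2\uparrow\tld{w}_h^{-1}(w_0w't_{\nu_2})$ is also sound.

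However, there is a genuine gap in how you handle that final inequality, and the justification preceding it is wrong. You write that ``translations preserve $\uparrow$'' allows you to strip the $\tld{w}_h^{-1}$-factor — but $\tld{w}_h^{-1}=t_\eta w_0$ is not a translation. The correct tool is the fact that left-multiplication by $\tld{w}_h$ \emph{reverses} the $\uparrow$-ordering (this is \cite[Proposition 4.1.2]{LLL}, used repeatedly throughout \S\ref{sec:Prel}). Applying it, $\tld{w}_h^{-1}\tld{w}_2\uparrow\tld{w}_h^{-1}(w_0w't_{\nu_2})$ is equivalent to $w_0w't_{\nu_2}\uparrow\tld{w}_2$ — note the direction flips, which your write-up does not acknowledge. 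That reversed inequality is then immediate: since $\tld{w}_2=w_2t_{\nu_2}\in\tld{\un{W}}^+$ and $w_0w't_{\nu_2}=(w_0w'w_2^{-1})\tld{w}_2\in\un{W}\tld{w}_2$, one has $\un{W}\tld{w}_2\uparrow\tld{w}_2$ by \cite[II.6.5(5)]{RAGS}. So the step you describe as ``the hard part'' and ``the main obstacle of the proof'' is actually a one-line citation, and your proposed strategy — recasting via $\Adm(\eta)$ through Proposition \ref{prop:can:adm}(\ref{item:adm}) and invoking $\tld{w}_h\tld{w}'\in\tld{\un{W}}^+$ — is both unexecuted and aimed at the wrong tool: the outer condition on $\tld{w}'$ is only needed to identify $\omega'=\tld{w}(R)(\tld{w}_h\tld{w}')^{-1}(0)$ and plays no role in the final comparison, which uses only $\tld{w}_2\in\tld{\un{W}}^+$. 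To fix the proof, replace the ``translations preserve $\uparrow$'' step with the $\tld{w}_h$-reversal from \cite[Proposition 4.1.2]{LLL} and then cite \cite[II.6.5(5)]{RAGS} for the dominance comparison. This is exactly what the paper does (phrased in terms of the element $(t_{-\tld{w}(R)^{-1}(\omega)})^+$ rather than $\tld{w}_2$, but the content is the same).
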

\begin{proof}
We introduce the following notation for this proof.
If $\tld{s} \in \tld{\un{W}}$, then let $\tld{s}^+$ denote the unique element in $\un{W}\tld{s} \cap \tld{\un{W}}^+$.
Since $F_{(\tld{w}',\omega')} \in \JH_\out(\ovl{R})$, we have that $\omega' = \tld{w}(R)(\tld{w}_h\tld{w}')^{-1}(0)$ so that 
\[
\tld{w}_h\tld{w}' = w_0w't_{-\tld{w}(R)^{-1}(\omega')}.
\]
Since $F_{(\tld{w},\omega)} \in \JH(\ovl{R})$, we have that $\omega = \tld{w}(R)\tld{w}_2^{-1}(0)$ for some $\tld{w} \uparrow \tld{w}_h^{-1}\tld{w}_2$ by (\ref{eqn:JH}).
Then $\tld{w}_2$ is $(t_{-\tld{w}(R)^{-1}(\omega)})^+$, so that $(t_{-\tld{w}(R)^{-1}(\omega)})^+ \uparrow \tld{w}_h \tld{w}$ by \cite[Proposition 4.1.2]{LLL}.
Let $\nu$ be $w(R)^{-1}(\omega' - \omega)$.
On the other hand, 
\begin{align*}
t_{w_0w'(\nu)}\tld{w}_h\tld{w}' &=w_0w' t_{\nu-\tld{w}(R)^{-1}(\omega')} \\
&= w_0w' t_{-\tld{w}(R)^{-1}(\omega)} \\
&\uparrow (t_{-\tld{w}(R)^{-1}(\omega)})^+.
\end{align*}
We conclude that $t_{w_0w'(\nu)}\tld{w}_h\tld{w}' \uparrow \tld{w}_h \tld{w}$, or equivalently, $\tld{w} \uparrow t_{w'(\nu)}\tld{w}'$.
\end{proof}

\begin{defn}\label{defn:cover}
Let $\sigma_0$ be a $3h_\eta$-deep Serre weights.
If 
\[
\sigma \in \underset{\substack{R \, 2h_\eta\textrm{-generic}, \\ \sigma_0 \in \JH(\overline{R})}}{\bigcap} \JH(\overline{R}),
\]
where $R$ runs over $2h_\eta$-generic Deligne--Lusztig representations, then we say that $\sigma_0$ \emph{covers} $\sigma$.
{In other words, $\sigma_0$ cover $\sigma$ if every $2h_\eta$-generic Deligne--Lusztig representation containing $\sigma_0$ also contains $\sigma$.}
\end{defn}

\begin{rmk}\label{rmk:partialorder}
Note that covering is a partial ordering on $3h_\eta$-deep Serre weights.
\end{rmk}

The following alternate criteria for covering are sometimes useful.

\begin{prop}\label{prop:covering}
Suppose that $(\tld{w},\omega)$ and $(\tld{w}',\omega')$ are \emph{(}representatives for\emph{)} compatible lowest alcove presentations of Serre weights and $\omega-\eta$ is $3h_\eta$-deep.
The following are equivalent:
\begin{enumerate}
\item \label{it:covering:1}
$F_{(\tld{w},\omega)}$ covers $F_{(\tld{w}',\omega')}$;
\item \label{it:covering:2}
$\tld{w}' \uparrow t_{\un{W}(\omega-\omega')}\tld{w}$ \emph{(}in particular $\ell(\tld{w}')\leq \ell(\tld{w})$, with equality if and only if $(\tld{w},\omega) \sim (\tld{w}',\omega')$\emph{)}; and
\item \label{it:covering:3}
$t_{\omega'} \tld{\un{W}}_{\leq w_0\tld{w}'} \subset t_{\omega} \tld{\un{W}}_{\leq w_0\tld{w}}$; and
\item \label{it:covering:4}
$F_{(\tld{w}',\omega')}$ is a Jordan--H\"older factor of 
\[
\underset{\substack{\tld{w}_1 \in \tld{\un{W}}^+ \\ \tld{w}_1\uparrow \tld{w}}}{\bigoplus} L(\pi^{-1}(\tld{w}_1)\cdot(\omega-\eta))|_{\rG}.
\]
\end{enumerate}
\end{prop}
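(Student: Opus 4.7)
The plan is to establish the chain of equivalences $(3) \Rightarrow (1)$, $(1) \Rightarrow (3)$, $(3) \Leftrightarrow (2)$, and $(3) \Leftrightarrow (4)$, using Proposition \ref{prop:JHfixed} as the primary translation device between representation-theoretic and combinatorial statements. The starting point is the following reformulation: applying Proposition \ref{prop:JHfixed} with $\lambda = 0$, for any $2h_\eta$-generic Deligne--Lusztig representation $R$ with compatible lowest alcove presentation giving element $\tld{w}(R) = t_\mu s \in \tld{\un{W}}$, and any Serre weight $F_{(\tld{w}'',\omega'')}$ with a compatible presentation, we have $F_{(\tld{w}'',\omega'')} \in \JH(\ovl{R})$ iff $t_{\omega''}\tld{\un{W}}_{\leq w_0\tld{w}''} \subset \tld{w}(R)\Adm(\eta)$. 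In these terms, $(1)$ asserts exactly that, for every such $\tld{w}(R)$, the inclusion $t_\omega\tld{\un{W}}_{\leq w_0\tld{w}} \subset \tld{w}(R)\Adm(\eta)$ forces $t_{\omega'}\tld{\un{W}}_{\leq w_0\tld{w}'} \subset \tld{w}(R)\Adm(\eta)$. The implication $(3) \Rightarrow (1)$ is then immediate by transitivity of inclusion.

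For the converse $(1) \Rightarrow (3)$, which is the main obstacle, the plan is to realize $t_\omega\tld{\un{W}}_{\leq w_0\tld{w}}$ as the intersection $\bigcap_R \tld{w}(R)\Adm(\eta)$ over all $\tld{w}(R)$ defining $2h_\eta$-generic Deligne--Lusztig representations compatible with the ambient central character and satisfying $t_\omega\tld{\un{W}}_{\leq w_0\tld{w}} \subset \tld{w}(R)\Adm(\eta)$. Concretely, for each element $\tld{s}_0 \in \tld{\un{W}} \smallsetminus t_\omega\tld{\un{W}}_{\leq w_0\tld{w}}$ one must construct a witness $\tld{w}(R)$, which is done by exploiting the description $\Adm(\eta) = \bigcup_{w \in \un{W}}\tld{\un{W}}_{\leq t_{w(\eta)}}$: the $|\un{W}|$ maximal elements $\tld{w}(R)t_{w(\eta)}$ can be tuned individually by varying $\tld{w}(R) = t_\mu s$ with $\mu - \eta$ running through a family of $0$-deep elements of $\un{C}_0$. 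The $3h_\eta$-depth of $\omega-\eta$ provides exactly the margin needed to make such $\mu$ remain $2h_\eta$-generic after the necessary translations and to ensure that one can separate any $\tld{s}_0$ violating the inclusion.

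For $(3) \Leftrightarrow (2)$, the plan is to unpack the inclusion $t_{\omega'}\tld{\un{W}}_{\leq w_0\tld{w}'} \subset t_\omega \tld{\un{W}}_{\leq w_0\tld{w}}$ by passing to the antidominant side via the involution $(-)^*$ of Definition \ref{affineadjoint}, which is an isomorphism of partially ordered groups between $\tld{W}$ and $\tld{W}^\vee$. On the antidominant side, translations interact more uniformly with the Bruhat order, and one can apply Wang's theorem \cite[Theorem 4.3]{wang} (in the form \cite[Theorem 4.1.1]{LLL}) to convert the resulting Bruhat inequalities into the family of upper-arrow inequalities $\tld{w}' \uparrow t_{w(\omega-\omega')}\tld{w}$ for $w$ ranging over $\un{W}$. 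The parenthetical length inequality $\ell(\tld{w}') \leq \ell(\tld{w})$, with equality iff $(\tld{w},\omega) \sim (\tld{w}',\omega')$, follows from length monotonicity under $\uparrow$ combined with the centering $\omega - \omega' \in \mathrm{Conv}\{w(\omega-\omega')\}$.

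Finally, $(3) \Leftrightarrow (4)$ follows from the Steinberg tensor product theorem: each $\tld{w}_1 \in \tld{\un{W}}^+$ with $\tld{w}_1 \uparrow \tld{w}$ automatically lies in $\tld{\un{W}}^+_1$ (since $\tld{w} \in \tld{\un{W}}^+_1$ and the restricted region is downward closed under $\uparrow$ within $\tld{\un{W}}^+$), so $\pi^{-1}(\tld{w}_1)\cdot(\omega-\eta)$ is $p$-restricted and $L(\pi^{-1}(\tld{w}_1)\cdot(\omega-\eta))|_{\rG} = F_{(\tld{w}_1, \omega)}$. Thus $(4)$ becomes the condition that $F_{(\tld{w}',\omega')} = F_{(\tld{w}_1,\omega)}$ for some $\tld{w}_1 \uparrow \tld{w}$. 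Using Lemma \ref{lemma:LAPwt} (applicable since $\omega-\eta$ is $3h_\eta$-deep, a fortiori making $F_{(\tld{w},\omega)}$ and hence $F_{(\tld{w}',\omega')}$ sufficiently deep) to match lowest alcove presentations up to $\un{\Omega}$-equivalence, this condition translates into an inclusion of $\leq$-intervals that is seen to coincide with (3). The main technical obstacle throughout remains step $(1) \Rightarrow (3)$, where the explicit construction of the separating Deligne--Lusztig representations must use the depth hypothesis on $\omega-\eta$ in a crucial way.
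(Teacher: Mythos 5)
Your architecture differs from the paper's: you propose $(3)\Rightarrow(1)$, $(1)\Rightarrow(3)$ directly, $(3)\Leftrightarrow(2)$, and $(3)\Leftrightarrow(4)$, whereas the paper runs the cycle $(1)\Rightarrow(2)\Rightarrow(3)\Rightarrow(1)$ and separately proves $(2)\Leftrightarrow(4)$. Two of your steps have gaps.

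For $(1)\Rightarrow(3)$, you reduce the claim to the identity $t_\omega\tld{\un{W}}_{\leq w_0\tld{w}} = \bigcap_R \tld{w}(R)\Adm(\eta)$ over all $2h_\eta$-generic Deligne--Lusztig $R$ containing $F_{(\tld{w},\omega)}$, and then propose to separate any $\tld{s}_0$ outside the left-hand side by ``tuning'' a witness $\tld{w}(R)$. But the set $\tld{w}(R)\Adm(\eta)$ is completely determined once $\tld{w}(R)$ is chosen; you cannot tune the $|\un{W}|$ maximal elements $\tld{w}(R)t_{w(\eta)}$ individually, and you never actually construct the separating $\tld{w}(R)$. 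This intersection identity is a \emph{consequence} of the proposition rather than an available tool, and proving it from scratch is essentially the same difficulty you're trying to avoid. The paper sidesteps this entirely by first proving $(1)\Rightarrow(2)$ using Lemma~\ref{lemma:separate} together with the observation that the Deligne--Lusztig representations $R$ with $F_{(\tld{w},\omega)}\in\JH_\out(\ovl{R})$ are naturally indexed by $\un{W}$, which yields the family of $\uparrow$-inequalities in one stroke; only afterward is $(2)\Rightarrow(3)$ derived by a separate Bruhat-order manipulation via \cite[II.6.5(5)]{RAGS} and Wang's theorem.

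For $(3)\Leftrightarrow(4)$, your key claim that ``each $\tld{w}_1\in\tld{\un{W}}^+$ with $\tld{w}_1\uparrow\tld{w}$ automatically lies in $\tld{\un{W}}^+_1$'' is false, and as a result you end up asserting (internally inconsistently, since you name the Steinberg theorem but then do not use it) that $L(\pi^{-1}(\tld{w}_1)\cdot(\omega-\eta))|_{\rG}$ is a single Serre weight. In fact the paper's proof of $(2)\Rightarrow(4)$ explicitly takes $\tld{w}_1 = t_{s(\omega'-\omega)}\tld{w}'$ with $s(\omega'-\omega)$ the dominant representative of $\un{W}(\omega'-\omega)$, which satisfies $\tld{w}_1\uparrow\tld{w}$ and lies in $\tld{\un{W}}^+$ but \emph{not} in $\tld{\un{W}}^+_1$ whenever $\omega'\not\sim\omega$; this is precisely why (4) is phrased with $\tld{w}_1\in\tld{\un{W}}^+$ rather than $\tld{\un{W}}^+_1$, and why the Steinberg tensor product theorem and the translation principle are needed to decompose $L(\pi^{-1}(t_\nu\tld{w}'_1)\cdot(\omega-\eta))|_{\rG}\cong\bigoplus_{\nu'\in L(\nu)} F_{(\tld{w}'_1,\omega+\nu')}^{\oplus m(\nu,\nu')}$. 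Without this decomposition the equivalence with (4) does not go through.
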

\begin{proof}
We first show that (\ref{it:covering:1}) implies (\ref{it:covering:2}).
Let $\nu_0$ be $ (\tld{w}_h\tld{w})^{-1}(0)$.
Consider the set $X$ of $\tld{x}\in \tld{\un{W}}$ such that $\omega = \tld{x}(\nu_0)$.
We claim that $\tld{x}$ is $2h_\eta$-generic.
Indeed, since $\tld{w}_h\tld{w} \in \tld{\un{W}}_1^+$, we have $\langle \nu_0,\alpha^\vee\rangle \leq h_\eta$ for all $\alpha\in \un{\Phi}$.
Then the claim follows from the fact that $\omega-\eta$ is $3h_\eta$-deep and $\tld{x}(0) = \omega - x(\nu_0)$ (where $x\in{\un{W}}$ is the image of $\tld{x}$). 

From the above paragraph, the map taking the set of $2h_\eta$-generic Deligne--Lusztig representations $R$ with a lowest alcove presentation compatible with $(\tld{w},\omega)$ to $\tld{\un{W}}$ sending $R$ to $\tld{w}(R)$ induces a bijection between those $R$ with $F_{(\tld{w},\omega)} \in \JH_\out(\ovl{R})$ and $X$.
Moreover, the map $X \ra \un{W}$, induced by the natural quotient map $\tld{\un{W}} \ra \un{W}$, is a bijection.
If $F_{(\tld{w},\omega)}$ covers $F_{(\tld{w}',\omega')}$, then Lemma \ref{lemma:separate} implies that $\tld{w}'\uparrow t_{\un{W}(\omega - \omega')}\tld{w}$.
We now show the parenthetical.
Fix $s \in \un{W}$ so that $s(\omega'-\omega)$ is dominant.
We have that $t_{s(\omega'-\omega)} \tld{w}' \uparrow \tld{w}$ so that $t_{s(\omega'-\omega)} \tld{w}' \leq \tld{w}$ by Wang's theorem.
Since $t_{s(\omega'-\omega)} \tld{w}'$ is a reduced expression (counting galleries), 
\[
\ell(\tld{w}') \leq \ell(t_{s(\omega'-\omega)}) + \ell(\tld{w}') = \ell(t_{s(\omega'-\omega)} \tld{w}') \leq \ell(\tld{w}).
\]
If this is an equality, then $\omega'-\omega\in X^0(\un{T})$ and $\tld{w}^{-1}\tld{w}' \in \un{\Omega}$.
Compatibility of lowest alcove presentations implies that $(\tld{w},\omega)\sim (\tld{w}'\omega')$.

We next show that (\ref{it:covering:2}) implies (\ref{it:covering:3}).
Assuming (\ref{it:covering:2}), \cite[II.6.5(5)]{RAGS} shows that for any $\tld{s}'\uparrow \tld{w}'$ with $\tld{s}' \in \tld{\un{W}}^+$,
\[
\un{W}\tld{s}'\uparrow \tld{s}' \uparrow t_{\un{W}(\omega-\omega')}\tld{w},
\]
so that
\[
\un{W}t_{\omega'-\omega} \un{W} \tld{s}' \uparrow \tld{w}.
\]
Then Wang's theorem implies that $t_{\omega' -\omega}\un{W} \tld{s}' \leq w_0\tld{w}$. 
As any element of $\tld{\un{W}}_{\leq w_0 \tld{w}'}$ is in $\un{W} \tld{s}'$ for some $\tld{s}'$ as above, the result follows.

That (\ref{it:covering:3}) implies (\ref{it:covering:1}) follows from Proposition \ref{prop:JHfixed}.
Finally, we show the equivalence of (\ref{it:covering:2}) and (\ref{it:covering:4}).
Using the Steinberg tensor product theorem and the translation principle \cite[Lemma 4.2.4(1)]{LLLM2}, if we write $\tld{w}_1\in \tld{\un{W}}^+$ as $t_\nu \tld{w}'_1$ with $\tld{w}'_1\in \tld{\un{W}}^+_1$, then 
\begin{equation}\label{eqn:steinberg}
L(\pi^{-1}(\tld{w}_1)\cdot(\omega-\eta))|_{\rG} \cong \bigoplus_{\nu' \in L(\nu)} F_{(\tld{w}'_1,\omega+\nu')}^{\oplus m(\nu,\nu')}, 
\end{equation}
where $m(\nu,\nu')$ is the multiplicity of the $\nu'$-weight space in $L(\nu)$.
Consideration of the chain of inequalities $t_{\un{W}\nu'}\tld{w}_1'\uparrow \tld{w}_1 \uparrow \tld{w}$ shows that (\ref{it:covering:4}) implies (\ref{it:covering:2}). 
Conversely, if $t_{\un{W}(\omega'-\omega)} \tld{w}' \uparrow \tld{w}$, then $F_{(\tld{w}',\omega')} \in \JH(L(\pi^{-1}(\tld{w}_1)\cdot (\omega-\eta))|_{\rG})$ where $\tld{w}_1 = t_{s(\omega'-\omega)} \tld{w}'$ and $s(\omega'-\omega)$ is the dominant weight in the Weyl orbit of $\omega'-\omega$.
\end{proof}

\begin{rmk}\begin{enumerate}
\item
The equivalence of (\ref{it:covering:1}) and (\ref{it:covering:4}) in Proposition \ref{prop:covering} shows that if $\mu \uparrow \lambda$, then $F(\lambda)$ covers $F(\mu)$.
The converse does not hold: for $\GL_4$, $\tld{w}_1$ could be in $t_{(1,0,0,0)}\Omega$ if $\tld{w} \in \tld{w}_h \Omega$. 
Then $\nu = (1,0,0,0)$ so that $\omega'-\omega$ would be nonzero and the Serre weights on the right hand side of (\ref{eqn:steinberg}) are not in the $\tld{W}_a$ $p$-dot orbit of $\pi(\tld{w})^{-1}\cdot(\omega-\eta)$.
\item
By the same equivalence and the linkage principle \cite[II.6.13]{RAGS}, if $\sigma \in \JH(W(\tld{w}\cdot(\omega-\eta)))$, then $F_{(\tld{w},\omega)}$ covers $\sigma$.
The converse does not hold for $\GL_4$ (see \cite{Jantzen74} or \cite[Proposition 9.3]{florian-thesis}).
\end{enumerate}
\end{rmk}

\subsection{Tame inertial $L$-parameters} \label{sec:InertialTypes}

Recall the fundamental characters $\omega_d: I_{\Qp} \ra \cO^\times$ defined in \S \ref{sec:not:Gal}.
For $(w,\mu) \in \un{W} \times X^*(\un{T})$, we let $\tau(w,\mu)$ be the tame inertial $L$-parameter over $E$ given by
\[
\Big(\sum_{i=0}^{d-1} (F^*\circ w^{-1})^i(\mu) \Big) (\omega_d): I_{\Qp} \ra {\un{T}^\vee}(E)
\]
where we view $\mu$ here as an element of $X_*({\un{T}^\vee})$, $F^*$ is defined to be the endomorphism $p\pi^{-1}$ on $X_*({\un{T}^\vee})$, and $d \geq 1$ is an integer such that $(F^*\circ w^{-1})^d = p^d$.
(The tame inertial $L$-parameter does not depend on the choice of $d$.)
Let $\taubar(w,\mu)$ be the inertial $L$-parameter over $\F$ obtained by reduction modulo $\varpi$.
All tame inertial $L$-parameters over $E$ and $\F$ arise in this way.

\begin{exam}
\label{ex:data:type}
Suppose that $F^+_p$ is the field $K$ and $G$ is $\GL_n$.
As explained in \S \ref{sec:not:Gal}, to the tame inertial $L$-parameter $\tau(s,\mu+\eta)$, there is a corresponding tame inertial type for $K$ which we also denote by $\tau(s,\mu+\eta)$.
Fix an isomorphism $\iota: \ovl{K} \risom \ovl{\Q}_p$.
This gives a homomorphism $I_K \ra \GL_n(E)$, which we will make explicit.
The isomorphism $\iota$ gives an injection $G_K \into G_{\Q_p}$.
Let $d$ be a positive integer.
Then induced composition $I_K \overset{\iota}{\ra} I_{\Q_p} \overset{\omega_d}{\ra} \cO^\times$ is $\omega_{K_d,\sigma_0'}$ where $K_d$ is the subfield of $\ovl{K}$ generated by the $p^d-1$-st roots of unity and $\sigma_0': K_d \ra E$ denotes the restriction of $\iota$ (taking $E$ sufficiently large). 
We denote $\omega_{K_d,\sigma_0'}$ by $\omega_d$ as well.

Let $\sigma_0: K \ra E$ denote the restriction of $\iota$ to $K$.
As in \S \ref{sec:not:Gal}, we let $\sigma_j$ be $\sigma_0 \circ \varphi^{-j}$ for $j \in \Z/f\Z$, identifying $\cJ$ with $\Z/f\Z$.
If $s = (s_0, \ldots, s_{f-1})$, then set $s_{\tau} = s_0 s_1 \cdots s_{f-1} \in W$.
Then $(F^*\circ s^{-1})^f = (F^*)^f \circ (s_\tau^{-1},\ldots) = p^f (s_\tau^{-1},\ldots)$ where the unspecified components are conjugates of $s_\tau$ so that $(F^* \circ s^{-1})^{fr} = p^{fr}$ where $r$ is the order of $s_\tau$.
Let $\bm{\alpha}_j \defeq p^{-j}\Big((F^*\circ s^{-1})^j(\mu+\eta)\Big)_0 \in X^*(T)$ for $0\leq j \leq f-1$, so that $\bm{\alpha}_j = s_{f-1}^{-1} s_{f-2}^{-1} \ldots s_{f-j}^{-1}(\mu_{f-j}+\eta_{f-j})$ (and $\bm{\alpha}_{0} = \mu_0+\eta_0$).   We also define $\bf{a}^{(0)} \defeq \Big(\sum_{j=0}^{f-1} (F^*\circ s^{-1})^j(\mu+\eta) \Big)_0 = \sum_{j=0}^{f-1} p^j \bm{\alpha}_j \in X^*(T)$.  (Note that the conventions here are different from \cite{LLLM, LLL} as explained in detail in Remark \ref{rmk:cmpr:mat}.)

We have
\begin{align*}
\Big(\sum_{i=0}^{fr-1} (F^*\circ s^{-1})^i(\mu+\eta) \Big)_0 &= \Big(\sum_{k=0}^{r-1} \sum_{j=0}^{f-1} (F^*\circ s^{-1})^{fk} (F^*\circ s^{-1})^j(\mu+\eta) \Big)_0\\
&= \sum_{k=0}^{r-1} \sum_{j=0}^{f-1} p^{fk} s_\tau^{-k} p^j \bm{\alpha}_j \\
&= \sum_{k=0}^{r-1} p^{fk} s_\tau^{-k} \bf{a}^{(0)}.
\end{align*}
We conclude that $\tau(s,\mu+\eta)$ is $\sum_{k=0}^{r-1} p^{fk} s_\tau^{-k}\bf{a}^{(0)}(\omega_{fr})$.
More concretely, setting $\chi_i\defeq \omega_{fr}^{\sum_{0 \leq k \leq r-1} \bf{a}^{(0)}_{s_{\tau}^{k}(i)} p^{fk}}$ for $1\leq i \leq n$, we have
\begin{equation} \label{eq:def:type}
\tau(s,\mu+\eta) \cong \bigoplus_{1 \leq i \leq n}\chi_i.
\end{equation}
This inertial type depends only on the inertial $L$-parameter $\tau(s,\mu+\eta)$ and not on the choice of isomorphism $\ovl{K} \ra \ovl{\Q}_p$.
\end{exam}

Base change $-\otimes_{\cO} E$ and $-\otimes_\cO \F$ induce bijections between tame inertial $L$-parameters over $\cO$, $E$, and $\F$ (the inverse to $-\otimes_\cO \F$ is the Teichm\"{u}ller lift). 
If $\tau$ is a tame inertial $L$-parameter over $\cO$ or $E$, we let $\taubar$ denote the corresponding tame inertial $L$-parameter over $\F$.
We say that $(s,\mu)\in \un{W}\times X^*(\un{T})$ is a \emph{lowest alcove presentation} of a tame inertial $L$-parameter $\tau$ (resp.~$\taubar$) over $E$ (resp.~$\F$) if $\mu \in \un{C}_0$ and $\tau \cong \tau(s,\mu+\eta)$ (resp.~$\taubar \cong \taubar(s,\mu+\eta)$).
When $F^+_p = K$ and $G$ is $\GL_n$ we say that $(s,\mu)$ is a lowest alcove presentation of a tame inertial type $\tau$ (resp.~$\taubar$) for $K$ over $E$ (resp.~$\F$) if $(s,\mu)$ is a lowest alcove presentation of the tame inertial $L$-parameter corresponding to it.

Let $\lambda\in X^*(\un{T})$ be a character.
We say that a lowest alcove presentation $(s,\mu)$ of a tame inertial $L$-parameter is \emph{$\lambda$-compatible with $\zeta \in X^*(\un{Z})$} if the image of $t_\lambda t_{\mu+\eta} s \un{W}_a/\un{W}_a \in \tld{\un{W}}/\un{W}_a \cong X^*(\un{Z})$ corresponds to $\zeta$.
(This notion of compatibility depends on the choice of $\eta$.)
When $\lambda=\un{0}$ we just say compatible instead of $\un{0}$-compatible.

We say that a lowest alcove presentation $(s,\mu)$ of a tame inertial $L$-parameter over $\F$ is compatible with $\zeta \in X^*(\un{Z})$ if the image of $t_\mu s \un{W}_a \in \tld{\un{W}}/\un{W}_a \cong X^*(\un{Z})$ is $\zeta$.
We say that lowest alcove presentations of a tame inertial $L$-parameter over $\F$ and a Serre weight are compatible if these lowest alcove presentations are both compatible with a single element of $X^*(\un{Z})$.
We say that lowest alcove presentations of a tame inertial $L$-parameter over $\F$ and a tame inertial $L$-parameter (over $E$) are $\lambda$-compatible if the lowest alcove presentations of the tame inertial $L$-parameters over $\F$ and $E$ are compatible and $\lambda$-compatible, respectively, with a single element of $X^*(\un{Z})$.
We will sometimes say compatible to mean $0$-compatible.

\begin{rmk}
Note that if $(s,\mu)$ is a lowest alcove presentation for a tame inertial $L$-parameter $\tau$ compatible with $\zeta \in X^*(\un{Z})$, then as a lowest alcove presentation of the tame inertial $L$-parameter $\taubar$ over $\F$ obtained by reduction, $(s,\mu)$ is compatible with $\zeta - \eta|_{\un{Z}} \in X^*(\un{Z})$.
This confusing choice is made because $\zeta$ gives the central character of $\sigma(\tau)$ (Proposition \ref{prop:tameILL}) while $\zeta - \eta|_{\un{Z}}$ gives the central character of elements of $W^?(\taubar)$, whose definition (Definition \ref{defn:W?}) involves $\eta$.
\end{rmk}

Let $\det$ be the natural quotient map ${\un{G}^\vee} \ra {\un{G}^\vee}/{\un{G}}^{\vee,\mathrm{der}} \cong {\un{Z}^\vee}$.
If $(s,\mu)$ is a lowest alcove presentation of a tame inertial $L$-parameter $\tau$ (resp.~$\taubar$) compatible with $\zeta$, then, thinking of $\zeta$ as an element of $X_*(\un{Z}^\vee)$, $\zeta\circ \omega_1 = \det\circ\tau$ (resp.~$(\zeta-\eta|_{\un{Z}})\circ \ovl{\omega}_1 = \det\circ\taubar$).

\begin{defn}
\label{defi:gen}
Let $\tau$ be a tame inertial $L$-parameter over $E$. 
If $F^+_p = K$, then we also denote by $\tau$ the corresponding inertial type for $K$.
The following adjectives also apply to inertial ($\F$-)types for $K$.
\begin{enumerate}
\item
\label{def:gen:reg}
We say that $\tau$ (resp.~$\taubar$) is \emph{regular} if $\tau$ (resp.~$\taubar$) is ${\un{G}^\vee}$-conjugate to a homomorphism $I_{\Qp} \ra \un{T}^\vee(E)$ (resp.~$I_{\Qp} \ra \un{T}^\vee(\F)$) such that the composition with $\alpha^\vee: \un{T}^\vee(E)\ra E^\times$ (resp.~$\alpha^\vee: \un{T}^\vee(\F)\ra \F^\times$) is nontrivial for any coroot $\alpha^\vee$.
\item 
\label{def:LApres}
\label{defi:gen:type}
We say that $\tau$ (resp.~$\taubar$) is $m$-generic for an integer $m\geq 0$ if there exists a lowest alcove presentation $(s,\mu)$ for $\tau$ (resp.~$\taubar$) where $\mu$ is $m$-deep in alcove $\un{C}_0$.
We call such a presentation an \emph{$m$-generic lowest alcove presentation}.
If $\tau$ (resp.~$\taubar$) has a fixed lowest alcove presentation $(s,\mu)$, then we let $\tld{w}(\tau)$ (resp.~$\tld{w}(\taubar)$) be $t_{\mu+\eta} s$.
Again, note that $\mu$ is $m$-deep if and only if $\tld{w}(\tau)$ (resp.~$\tld{w}(\taubar)$) is $m$-generic in the sense of Definition \ref{defn:var:gen}(\ref{it:gen:weyl}). 
Note that $1$-generic implies regular (see \cite[Remark 2.2.4]{LLLM2}), and that a lowest alcove presentation for $\tau$ (resp.~$\taubar$) exists exactly when $\tau$ (resp.~$\taubar$) is $0$-generic.
\end{enumerate}
\end{defn}

\cite[Proposition 9.2.1]{GHS} defines an injective map $V_\phi$ from the set of tame inertial $L$-parameters over $\F$ to isomorphism classes of $\rG$-representations over $E$ (taking $E$ sufficiently large) which takes $\taubar(w,\mu)$ to $R_w(\mu)$.
Note that $\tau(s,\mu + \eta)$ is $m$-generic if and only if $R_s(\mu + \eta)$ is $m$-generic.
As $V_\phi$ respects the notion of lowest alcove presentation, the argument of Lemma \ref{lemma:LAPbij} gives the following lemma.

\begin{lemma}
\label{lemma:LAPbij:Ftypes}
If $\tau$ is a $1$-generic tame inertial $L$-parameter, then $(s,\mu) \mapsto t_{\mu+\eta} s \un{W}_a/\un{W}_a\in \tld{\un{W}}/\un{W}_a \cong X^*(\un{Z})$ gives a bijection between lowest alcove presentations of $\tau$ and algebraic central characters $\zeta|_{\un{Z}} \in X^*(\un{Z})$ such that, thinking of $\zeta$ as an element of $X_*(\un{Z}^\vee)$, $\zeta\circ \omega_1 = \det\circ\tau$.

If $\taubar$ is a $1$-generic tame inertial $L$-parameter over $\F$, then $(s,\mu) \mapsto t_\mu s \un{W}_a/\un{W}_a\in \tld{\un{W}}/\un{W}_a \cong X^*(\un{Z})$ gives a bijection between lowest alcove presentations of $\taubar$ and algebraic central characters $\zeta \in X^*(\un{Z})$ such that, thinking of $\zeta$ as an element of $X_*(\un{Z}^\vee)$, $(\zeta-\eta|_{\un{Z}})\circ \ovl{\omega}_1 = \det\circ\taubar$.
\end{lemma}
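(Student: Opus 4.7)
The plan is to mirror the proof of Lemma \ref{lemma:LAPbij} verbatim, using the injection $V_\phi$ to transport the parametrization of lowest alcove presentations from Deligne--Lusztig representations to tame inertial $L$-parameters. Since $V_\phi$ sends $\taubar(s,\mu+\eta)\mapsto R_s(\mu+\eta)$ and, by construction, a lowest alcove presentation $(s,\mu)$ of $\tau$ (with $\mu\in \un{C}_0$ and $\tau\cong\tau(s,\mu+\eta)$) corresponds to the lowest alcove presentation $(s,\mu)$ of the Deligne--Lusztig representation $R_s(\mu+\eta)$, the indexing set of LAPs of $\tau$ (resp.~$\taubar$) is canonically identified with that of $R_s(\mu+\eta)$.

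Starting from any fixed $1$-generic LAP $(s,\mu)$, applying the analogue of \cite[Proposition 2.2.15]{LLL} in the present setting, every lowest alcove presentation of $\tau$ (resp.~$\taubar$) is of the form
\[
\bigl(ws\pi(w)^{-1},\ w(\mu+\eta+p\nu-s\pi(\nu))-\eta\bigr),\qquad wt_\nu\in\un{\Omega},
\]
with each $w(\mu+\eta+p\nu-s\pi(\nu))-\eta$ being $0$-deep in $\un{C}_0$. Computing the image in $X^*(\un{Z})$ of such an LAP under $(s',\mu')\mapsto t_{\mu'+\eta}s'\un{W}_a/\un{W}_a$ (resp.~$(s',\mu')\mapsto t_{\mu'}s'\un{W}_a/\un{W}_a$) gives $(\mu+\eta)|_{\un{Z}}+(p-\pi)\nu|_{\un{Z}}$ (resp.~$\mu|_{\un{Z}}+(p-\pi)\nu|_{\un{Z}}$), exactly as in Lemma \ref{lemma:LAPbij}. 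Combined with the standard identification $\un{\Omega}\cong\tld{\un{W}}/\un{W}_a\cong X^*(\un{Z})$ sending $wt_\nu\mapsto \nu|_{\un{Z}}$, this realises the LAPs of $\tau$ (resp.~$\taubar$) as a coset of $(p-\pi)X^*(\un{Z})$ in $X^*(\un{Z})$.

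It remains to identify this coset with the one described by the central character conditions. Projecting the defining formula $\tau(s,\mu+\eta)=\bigl(\sum_{i=0}^{d-1}(F^*\circ s^{-1})^i(\mu+\eta)\bigr)(\omega_d)$ through $\un{T}^\vee\twoheadrightarrow \un{Z}^\vee$ eliminates the Weyl contribution and yields an explicit cocharacter presentation of $\det\circ\tau$ in terms of $(\mu+\eta)|_{\un{Z}}$ and $\omega_d$; using $F^*=p\pi^{-1}$ together with the relationship between $\omega_d$ and $\omega_1$, the equation $\zeta\circ\omega_1=\det\circ\tau$ pins down $\zeta\in X^*(\un{Z})$ modulo exactly $(p-\pi)X^*(\un{Z})$, producing the coset appearing on the LAP side. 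The $\taubar$-case is identical after reduction modulo $\varpi$, with the additional shift $-\eta|_{\un{Z}}$ built into the map absorbed into the rewriting $(\zeta-\eta|_{\un{Z}})\circ\ovl{\omega}_1=\det\circ\taubar$. The main routine obstacle is the final bookkeeping to match the two cosets, while the structural content is a direct transfer from Lemma \ref{lemma:LAPbij} via $V_\phi$.
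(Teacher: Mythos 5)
Your proposal follows the same route the paper indicates: the paper gives no standalone proof of Lemma~\ref{lemma:LAPbij:Ftypes}, stating only (just before the lemma) that $V_\phi$ respects lowest alcove presentations, that $\tau(s,\mu+\eta)$ is $m$-generic iff $R_s(\mu+\eta)$ is, and that the argument of Lemma~\ref{lemma:LAPbij} therefore transfers. Your reproduction of that argument --- enumerating the LAPs via \cite[Proposition~2.2.15]{LLL}, computing the image $(\mu+\eta)|_{\un{Z}}+(p-\pi)\nu|_{\un{Z}}$, and matching the resulting $(p-\pi)X^*(\un{Z})$-coset against the coset cut out by the determinant condition --- is exactly the intended proof, and your ``final bookkeeping'' step is already asserted in the paragraph preceding the lemma (``If $(s,\mu)$ is a lowest alcove presentation \ldots compatible with $\zeta$, then \ldots $\zeta\circ\omega_1 = \det\circ\tau$''), so the two torsor structures are matched for free.
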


\begin{prop}\label{prop:PSLAP}
Let $\tld{w} = t_\nu w \in \tld{\un{W}}$, $\omega\in X^*(\un{T})$, and let $\kappa = \pi^{-1}(\tld{w})\cdot (\omega-\eta)$. 
Then the tame inertial $L$-parameter $\tau(1,\kappa)$ is isomorphic to $\tau(\pi^{-1}(w)^{-1}w,\omega+\pi^{-1}(w)^{-1}(\nu-\eta))$.
\end{prop}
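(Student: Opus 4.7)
The plan is to verify directly that both $L$-parameters coincide after the cocharacters defining them are composed with $\omega_d$ for appropriate $d$. First, unpack $\kappa$: writing $\tld{w}=t_\nu w$ gives $\pi^{-1}(\tld{w})=t_{\pi^{-1}(\nu)}\pi^{-1}(w)$, and applying the $p$-dot action $(t_{\nu'}w')\cdot\lambda = w'(\lambda+\eta)+p\nu'-\eta$ yields
\[
\kappa \;=\; \pi^{-1}(w)(\omega) \;+\; p\,\pi^{-1}(\nu) \;-\; \eta.
\]
Set $s\defeq \pi^{-1}(w)^{-1}w$ and $\mu\defeq \omega+\pi^{-1}(w)^{-1}(\nu-\eta)$. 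The goal is to show that the cocharacters $\chi_1,\chi_2\in X_*(\un{T}^\vee)=X^*(\un{T})$ defining $\tau(1,\kappa)$ and $\tau(s,\mu)$ agree up to a lift of a Weyl element of $\un{W}$ to $N(\un{T}^\vee)$, after evaluation at $\omega_d$.

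The crux is the identity
\[
(F^*\circ s^{-1})^i \;=\; p^i\,\pi^{-1}(w)^{-1}\,\pi^{-(i+1)}(w)\,\pi^{-i}
\]
as operators on $X^*(\un{T})$, valid for every $i\geq 0$. This is proved by induction, relying only on the basic commutation rule $\pi^{-1}\circ x = \pi^{-1}(x)\circ \pi^{-1}$ for $x\in\un{W}$. Taking $d$ equal to the order of $\pi$ (so $\pi^d = \id$ and in particular $\pi^{-(d+1)}(w)=\pi^{-1}(w)$), the formula at $i=d$ collapses to $p^d$, confirming that the same integer $d$ works in the definitions of both $\chi_1=\sum_{i=0}^{d-1}(F^*)^i(\kappa)$ and $\chi_2=\sum_{i=0}^{d-1}(F^*\circ s^{-1})^i(\mu)$.

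Substituting the formulas for $\kappa$ and $\mu$, using the $\pi$-invariance of $\eta$ and the commutation rule, a direct rearrangement of sums (the term $\sum_{i=0}^{d-1} p^{i+1}\pi^{-(i+1)}(\nu)$ is re-indexed as $\sum_{i=0}^{d-1} p^i\pi^{-i}(\nu) + (p^d-1)\nu$) yields
\[
\chi_2 \;=\; \pi^{-1}(w)^{-1}(\chi_1) \;-\; (p^d-1)\,\pi^{-1}(w)^{-1}(\nu).
\]
Since $\omega_d$ takes values in $\mu_{p^d-1}$, the second term becomes the trivial character upon composition with $\omega_d$, so $\chi_2(\omega_d) = \bigl(\pi^{-1}(w)^{-1}\chi_1\bigr)(\omega_d)$. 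Lifting $\pi^{-1}(w)^{-1}\in\un{W}$ to an element $g\in N(\un{T}^\vee)(E)$ exhibits the right-hand side as a $\un{G}^\vee(E)$-conjugate of $\chi_1(\omega_d)=\tau(1,\kappa)$, which gives $\tau(1,\kappa)\cong\tau(s,\mu)$ as $L$-parameters. The main obstacle is the bookkeeping of the commutation of $\pi^{-1}$ through Weyl elements (and the attendant index shifts) in the inductive formula for $(F^*\circ s^{-1})^i$; once that identity is secured, the remaining manipulation is mechanical.
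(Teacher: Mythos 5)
Your proof is correct, and it takes a genuinely different route from the paper's. The paper's proof is a one-line application of the equivalence relation on pairs $(w,\mu)$ (``changing the lowest alcove presentation'') cited from the paragraph containing \cite[(10.1.11)]{GHS}: it verifies that $(\pi^{-1}(w)^{-1}w,\,\omega+\pi^{-1}(w)^{-1}(\nu-\eta))$ and $(1,\pi^{-1}(\tld{w})\cdot(\omega-\eta))$ are equivalent under the group action $^{(\nu',w')}(-,-)$ defined there, so that they give the same tame inertial $L$-parameter by construction. Your argument instead unwinds the definition $\tau(s,\mu) = \bigl(\sum_{i}(F^*\circ s^{-1})^i(\mu)\bigr)(\omega_d)$ directly: the inductive identity $(F^*\circ s^{-1})^i = p^i\pi^{-1}(w)^{-1}\pi^{-(i+1)}(w)\pi^{-i}$ is correct (with $d$ any multiple of the order of $\pi$ one has $(F^*\circ s^{-1})^d=p^d$), the re-indexing yields the exact relation $\chi_2 = \pi^{-1}(w)^{-1}(\chi_1) - (p^d-1)\pi^{-1}(w)^{-1}(\nu)$, and the discrepancy term dies under composition with $\omega_d$ because $\omega_d^{p^d-1}=1$; the residual Weyl twist by $\pi^{-1}(w)^{-1}$ is absorbed by $\un{G}^\vee(E)$-conjugation since the $L$-parameter is only defined up to conjugacy. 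The trade-off: the paper's proof is shorter and leans on a black-box cited fact, while yours is self-contained and exhibits exactly which $\un{G}^\vee(E)$-conjugation realizes the isomorphism and why the $\nu$-term is harmless, at the cost of some bookkeeping. Both are valid.
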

\begin{proof}
This follows from the paragraph containing \cite[(10.1.11)]{GHS}.
Indeed, in the notation of \emph{loc.~cit.}, 
\[
^{(\pi^{-1}(\nu),\pi^{-1}(w))}(\pi^{-1}(w)^{-1}w,\omega+\pi^{-1}(w)^{-1}(\nu-\eta)) = (1, \pi^{-1}(w)(\omega)+p\pi^{-1}(\nu)-\eta) = (1,\pi^{-1}(\tld{w})\cdot (\omega-\eta)).
\]
\end{proof}

\subsection{Inertial local Langlands for $\GL_n$}
\label{subsec:ILL}

We recall some results towards inertial local Langlands correspondence for $\GL_n$, before making this explicit in the tame case using the previous two subsections. 
In this section, $K$ is an $\ell$-adic field ($\ell$ a rational prime not necessarily equal to $p$).

\begin{defn}\label{defn:WDtype}
A \emph{Weil--Deligne inertial $L$-homomorphism} $\tau$ is a pair $(\rho_\tau,N_\tau)$ where $\rho_\tau: I_{\Qp} \ra \un{G}^\vee(E)$ is a homomorphism with open kernel, $N_\tau$ is a nilpotent element of $\Lie \un{G}^\vee(E)$, and there exists an $\rho: W_{\Qp} \ra$ $^L \un{G}(E)$ such that the projection to $\Gal(E/\Qp)$ is the natural map, $\rho|_{I_{\Qp}} = \rho_\tau$, and $\rho(g)N\rho(g)^{-1} = \|g\|N$, where $\|\cdot\|: W_{\Qp} \onto W_{\Qp}/I_{\Qp} \risom p^{\Z}$ sends an arithmetic Frobenius element to $p$.
A Weil--Deligne inertial $L$-parameter is a $\un{G}^\vee(E)$-conjugacy class of Weil--Deligne inertial $L$-homomorphisms.

We can similarly define a \emph{Weil--Deligne inertial type} $\tau$ (for $K$) to be a conjugacy class of pairs $(\rho_\tau,N_\tau)$ where $\rho_\tau: I_K \ra G^\vee(E)$ is a homomorphism with open kernel, $N_\tau$ is a nilpotent element of $\Lie G^\vee(E)$, and there exists $\rho: W_K \ra G^\vee(E)$ such that $(\rho,N_\tau)$ is a Weil--Deligne representation, i.e.~$\rho(g)N_\tau\rho(g)^{-1} = \|g\|N_\tau$, where $\|\cdot\|: W_{\Qp} \onto W_{\Qp}/I_{\Qp} \risom p^{f\Z}$ sends an arithmetic Frobenius element to $p^f$. 

We say that a Weil--Deligne inertial $L$-parameter or type is \emph{tame} if $\rho_\tau$ above factors through the tame inertial quotient. 
Finally, there is a natural bijection between Weil--Deligne inertial $L$-parameters $\tau$ and collections of Weil--Deligne inertial types $(\tau_v)_{v\in S_p}$ preserving tameness.
\end{defn}

If $(\rho,N)$ is a Weil--Deligne representation for $K$, we denote by $(\rho,N)|_{I_K}$ the Weil--Deligne inertial type $(\rho|_{I_K},N)$.

\begin{rmk}
\label{rmk:not:WDILP}
We abuse notation by denoting both inertial $L$-parameters and Weil--Deligne inertial $L$-parameters by $\tau$ (and similarly for inertial types).
However, there is a natural inclusion from the set of (tame) inertial $L$-parameters (resp.~inertial types) to the set of (tame) Weil--Deligne $L$-parameters (resp.~Weil--Deligne inertial types) sending an inertial $L$-parameter $\tau$ (resp.~inertial type) to the Weil--Deligne inertial $L$-parameter (resp.~Weil--Deligne inertial type) with $\rho_\tau = \tau$ and $N_\tau=0$.
Through this inclusion, we will think of the set of (tame) inertial $L$-parameters (resp.~inertial types) as a subset of the set of (tame) Weil--Deligne inertial $L$-parameters (resp.~inertial types).

There is also a surjective map in the other direction from the set of (tame) Weil--Deligne $L$-parameters (resp.~Weil--Deligne inertial types) to the set of (tame) inertial $L$-parameters (resp.~inertial types), for which the above inclusion is a section, given by forgetting the nilpotent element.
\end{rmk}

We now specialize our discussion to the case $G = \GL_n$.
Recall that the Jordan normal form of a nilpotent element $N$ of $\mathfrak{gl}_n = M_n(E)$ gives a partition $P_N$ of $n$ by recording the sizes of Jordan blocks, which is a complete conjugation invariant of nilpotent elements of $M_n(E)$.
Viewing a partition as a decreasing function $P: \Z_{>0} \ra \Z_{\geq 0}$ with finite support ($P$ is a partition of $\sum_{i\in \Z_{>0}} P(i)$), we write $P_1 \preceq P_2$ if $\sum_{i=1}^k P_1(i) \leq \sum_{i=1}^k P_2(i)$ for all $k \in \Z_{>0}$. Then $\preceq$ defines a partial ordering on the set of partitions.
We write $N_1 \preceq N_2$ for two nilpotent elements of $M_n(E)$ if $P_{N_1} \preceq P_{N_2}$. Then $\preceq$ defines a partial ordering on the set of conjugacy classes of nilpotent elements of $M_n(E)$.
{Note that with this partial ordering, $0$ is the minimal element.}

For an irreducible inertial type $\tau_0$, let $N_\tau(\tau_0)$ be the restriction of $N_\tau$ to the $\tau_0$-isotypic part of $V_\tau$ (which it preserves).

\begin{defn}\label{defn:preceq}
We write $\tau \preceq \tau'$ for two Weil--Deligne inertial types if $\rho_\tau$ and $\rho_\tau'$ are isomorphic and $N(\tau_0) \preceq N'(\tau_0)$ for all irreducible inertial types $\tau_0$.
{(In particular, the trivial representation is $\preceq$ the Steinberg representation.)}
This defines a partial ordering on the set of Weil--Deligne inertial types.
Thinking of a Weil--Deligne inertial $L$-parameter as a collection of Weil--Deligne inertial types, we say that $\tau \preceq \tau'$ for two Weil--Deligne inertial $L$-parameters if $\tau_v \preceq \tau_v'$ for each $v\in S_p$.
\end{defn}

If $\pi$ is an irreducible admissible representation of $\GL_n(K)$ over $E$, then we let $\mathrm{rec}_K(\pi)$ be the Weil--Deligne representation over $E$ in \cite[Theorem A]{harris-taylor}.

\begin{thm}\label{thm:ILL}
Let $G = \GL_n$.
Let $\tau$ be a Weil--Deligne inertial type for $K$.
Then there is a smooth irreducible $\GL_n(\cO_K)$-representation $\sigma(\tau)$ over $E$ such that for an irreducible admissible representation $\pi$ of $\GL_n(K)$, 
\begin{enumerate}
\item if $\pi|_{\GL_n(\cO_K)}$ contains $\sigma(\tau)$ then $\mathrm{rec}_K(\pi)|_{I_K} \preceq \tau$;
\item if $\mathrm{rec}_K(\pi)|_{I_K} = \tau$, then $\pi|_{\GL_n(\cO_K)}$ contains $\sigma(\tau)$ with multiplicity one; and
\item if $\mathrm{rec}_K(\pi)|_{I_K} \preceq \tau$ and $\pi$ is generic, then $\pi|_{\GL_n(\cO_K)}$ contains $\sigma(\tau)$ and the multiplicity is one if furthermore $\tau$ is maximal with respect to $\preceq$. 
\end{enumerate}
\end{thm}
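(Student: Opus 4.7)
The plan is to construct $\sigma(\tau)$ via the Bushnell--Kutzko theory of simple types, augmented by the Schneider--Zink / Zelevinsky combinatorics that encodes the nilpotent part. My roadmap has four steps.

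First, I would translate the problem to a statement about a single Bernstein component. The inertial equivalence class of the semisimple part $\rho_\tau$ of $\tau$ determines, via the local Langlands correspondence and Bernstein's decomposition of $\Rep_E(\GL_n(K))$, a unique component $\mathfrak{s}(\rho_\tau)$. Any irreducible admissible $\pi$ with $\mathrm{rec}_K(\pi)|_{I_K} \preceq \tau$ has the same semisimple part as $\tau$ and therefore lies in $\mathfrak{s}(\rho_\tau)$; this localizes the problem.

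Second, I would extract an $\mathfrak{s}$-type. By Bushnell--Kutzko, attached to $\mathfrak{s}=\mathfrak{s}(\rho_\tau)$ is a pair $(J_\mathfrak{s},\lambda_\mathfrak{s})$ with $J_\mathfrak{s}$ a compact open subgroup of $\GL_n(K)$ contained (after $\GL_n(K)$-conjugation, which I perform) in $\GL_n(\cO_K)$, and $\lambda_\mathfrak{s}$ an irreducible representation of $J_\mathfrak{s}$, such that $\pi \in \mathfrak{s}$ iff $\pi|_{J_\mathfrak{s}}$ contains $\lambda_\mathfrak{s}$. The compactly induced representation $\Ind_{J_\mathfrak{s}}^{\GL_n(\cO_K)} \lambda_\mathfrak{s}$ has finite length, and by the Bushnell--Kutzko / Schneider--Zink theory its irreducible constituents are naturally parametrized by the Zelevinsky (multi)segment data lying over the inertial class of the cuspidal support corresponding to $\rho_\tau$. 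This parametrization matches, by design, the classification of Weil--Deligne inertial types $(\rho_\tau,N)$ with fixed semisimple part $\rho_\tau$. I then define $\sigma(\tau)$ to be the constituent of $\Ind_{J_\mathfrak{s}}^{\GL_n(\cO_K)} \lambda_\mathfrak{s}$ associated to the orbit of $N_\tau$.

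Third, I would verify the three listed properties. For (1), if $\pi|_{\GL_n(\cO_K)}$ contains $\sigma(\tau)$ then $\pi \in \mathfrak{s}(\rho_\tau)$, so the semisimple parts of $\mathrm{rec}_K(\pi)|_{I_K}$ and $\tau$ agree; the inequality $N_{\mathrm{rec}_K(\pi)} \preceq N_\tau$ comes from analyzing the Jacquet modules of $\pi$ against Bushnell--Kutzko covers of $\lambda_\mathfrak{s}$ on standard Levi subgroups and comparing with Zelevinsky's classification (which aligns the dominance order on segments with the order $\preceq$ on nilpotent orbits). For (2), when $\mathrm{rec}_K(\pi)|_{I_K}$ equals $\tau$ on the nose, the Bushnell--Kutzko equivalence $\Rep_E(\GL_n(K))_\mathfrak{s} \simeq \mathrm{Mod}(\mathcal{H}(\GL_n(K),\lambda_\mathfrak{s}))$ together with Frobenius reciprocity gives multiplicity exactly one. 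For (3), Zelevinsky's classification shows that within $\mathfrak{s}$, generic irreducibles are exactly those whose associated nilpotent is maximal (in the Weil--Deligne sense) among their Bernstein component; Rodier's theorem on the uniqueness of Whittaker models translates this into the statement that any generic $\pi$ with $\mathrm{rec}_K(\pi)|_{I_K} \preceq \tau$ contains $\sigma(\tau)$, with multiplicity one when $\tau$ is $\preceq$-maximal.

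The main obstacle lies in step three: one must carefully match the combinatorics of Zelevinsky segments (together with their dominance order governing Jacquet modules) to the partial order $\preceq$ on nilpotents in $\mathfrak{gl}_n$, and simultaneously identify generic constituents via Rodier's theorem. The rest is an organization of known results, but this combinatorial translation is where the precise statement about $\preceq$ in (1) and (3) is forced.
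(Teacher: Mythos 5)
The paper's own proof of this theorem is a two-line citation to Shotton \cite[Theorem 3.7]{Shotton} and Pyvovarov \cite[Theorem 1.2]{Pyv}; those references in turn build exactly on the Bushnell--Kutzko type theory and Schneider--Zink combinatorics you are reconstructing, so your high-level route is the same one. Your steps 1--2 and the parts of step 3 treating (1) and (2) are a faithful sketch of the Schneider--Zink / Shotton argument: the parametrization of constituents of $\Ind_{J_\mathfrak{s}}^{\GL_n(\cO_K)}\lambda_\mathfrak{s}$ by nilpotent orbits (equivalently, by segment data refining the cuspidal support) is precisely \cite{SZ}, and the Jacquet-module / covers comparison is how Shotton gets the order-inequality in (1).

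Where your proposal is thinner than what is actually needed is item (3). You attribute it to Rodier's uniqueness of Whittaker models plus the Zelevinsky classification identifying the generic constituent as the $\preceq$-maximal one. That identification is correct, but it does not by itself give the containment: one must show that the specific $K$-type $\sigma(\tau)$ (not merely the $\mathfrak{s}$-type $\lambda_\mathfrak{s}$) appears in the restriction of every \emph{generic} $\pi$ with $\mathrm{rec}_K(\pi)|_{I_K}\preceq\tau$, and with multiplicity one when $\tau$ is $\preceq$-maximal. This requires relating $\sigma(\tau)$ to the Whittaker functional on standard modules in a quantitative way — essentially a compatibility of the Schneider--Zink filtration of $\Ind_{J_\mathfrak{s}}^{\GL_n(\cO_K)}\lambda_\mathfrak{s}$ with Bernstein--Zelevinsky derivatives — which is exactly the content of Pyvovarov's Theorem 1.2 and is not a formal consequence of Rodier. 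If you keep the reconstruction route, you should either carry out that derivative computation or, as the paper does, cite Pyvovarov for (3).
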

\begin{proof}
This combines \cite[Theorem 3.7]{Shotton} and \cite[Theorem 1.2]{Pyv}.
\end{proof}

Note that we make no claim of uniqueness for $\sigma(\tau)$.
In what follows, $\sigma(\tau)$ will denote either a particular choice that we have made or any choice that satisfies the properties in Theorem \ref{thm:ILL}.

If $\tau$ is a Weil--Deligne inertial $L$-parameter corresponding to the collection of Weil--Deligne inertial types $(\tau_v)_{v\in S_p}$, we let $\sigma(\tau)$ be the $G_0(\Z_p)$-representation $\otimes_{v\in S_p} \sigma(\tau_v)$.

We now make particular choices of $\sigma(\tau)$ when $\tau$ above is tame.

\begin{prop}\label{prop:tameILL}
Suppose that $G = \GL_n$ and $(s,\mu) \in \un{W}\times X^*(\un{T})$.
We can choose $\sigma(\tau)$ in Theorem \ref{thm:ILL} for tame Weil--Deligne inertial $L$-parameters $\tau$ such that $\{\sigma(\tau) \mid \tau = (\tau(s,\mu),N_\tau) \}$ is the set of all irreducible constituents of $R_s(\mu)$ $($where we view $R_s(\mu)$ as a $\GL_n(\cO_p)$-representation by inflation$)$.
\end{prop}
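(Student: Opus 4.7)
The plan is to reduce to a single $p$-adic place and then match the Howlett--Lehrer decomposition of Deligne--Lusztig representations with the Bushnell--Kutzko/Schneider--Zink parametrization of tame types for $\GL_n$. Since both $\sigma(\tau)$ and $R_s(\mu)$ factor as tensor products over $v\in S_p$, I may assume throughout that $F^+_p=K$ is a single unramified extension of $\Q_p$ and $G=\GL_n$.

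Following Example \ref{ex:data:type} I would write $\tau(s,\mu)\cong \bigoplus_{i=1}^n \chi_i$ as a sum of tame characters of $I_K$ and regroup them into distinct $W_K$-orbits $\chi_1^{\ast},\ldots,\chi_r^{\ast}$ of sizes $d_j$ and multiplicities $m_j$, with $\sum_j d_j m_j=n$, the $d_j$ being read off from the cycle decomposition of the permutation $s_\tau := s_0 s_1\cdots s_{f-1}\in S_n$ of Example \ref{ex:data:type}. Each orbit extends to a character of $W_{K_j}$, where $K_j/K$ is the unramified extension of degree $d_j$, and together they determine a standard Levi subgroup $M=\prod_j \Res_{K_j/K}\GL_{m_j}\subseteq\GL_n$. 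The set of Weil--Deligne inertial types $\tau$ with $\rho_\tau\cong\tau(s,\mu)$ is then in natural bijection, modulo conjugation, with tuples $\un{\lambda}=(\lambda^{(j)})_{j=1}^r$ of partitions of $m_j$, $\lambda^{(j)}$ recording the Jordan type of $N_\tau$ on the $\chi_j^{\ast}$-isotypic part.

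For each such $\tau$, I would take $\sigma(\tau)$ to be the irreducible representation of $\GL_n(\cO_K)$ inflated from $\GL_n(k)$ obtained by parabolically inducing, from the standard parabolic with Levi $M(k)=\prod_j \GL_{m_j}(k_j)$, the tensor product $\bigotimes_j U_{\lambda^{(j)}}$ twisted by the cuspidal character of the central torus of $M(k)$ read off from $(s,\mu)$, where $U_{\lambda^{(j)}}$ denotes the Lusztig unipotent representation of $\GL_{m_j}(k_j)$ attached to $\lambda^{(j)}$. This is precisely the tame case of the Bushnell--Kutzko/Schneider--Zink type for the Bernstein component cut out by $\tau$, and by \cite[Theorem 1.2]{Pyv} this choice satisfies the conditions of Theorem \ref{thm:ILL} (both the multiplicity-one statement for generic $\pi$ and the maximality condition under $\preceq$).

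On the other hand, transitivity of Deligne--Lusztig induction combined with Howlett--Lehrer yields
\[
R_s(\mu)\;\cong\;\Ind_{P(k)}^{\GL_n(k)}\bigotimes_j R_{T_j}^{\GL_{m_j}(k_j)}(\chi_j^{\ast}|_{T_j}),
\]
and each tensor factor decomposes, via Green's classification, into the Lusztig unipotent representations $U_{\lambda^{(j)}}$ (twisted by the same central character data) as $\lambda^{(j)}$ runs over partitions of $m_j$. Consequently the irreducible constituents of $R_s(\mu)$ are in bijection with the tuples $\un{\lambda}$, matching the parametrization of $\{\sigma(\tau)\}_\tau$ above. The main obstacle is a careful check that the Bushnell--Kutzko/Schneider--Zink type and Lusztig's unipotent representation indexed by the same partition agree as $\GL_{m_j}(k_j)$-representations; this amounts to matching normalizations, which I would settle by direct inspection of the parabolic induction formulas or by invoking the explicit construction in the proof of \cite[Theorem 1.2]{Pyv}.
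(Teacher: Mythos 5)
Your overall strategy---reduce to one place, parametrize both the constituents of $R_s(\mu)$ and the Weil--Deligne inertial types over $\tau(s,\mu)$ by tuples of partitions, and match via Schneider--Zink and \cite{Pyv}---is the same parametrization that underlies the paper's proof, but you try to make the decomposition explicit via Howlett--Lehrer theory rather than by invoking the abstract $G$-cover machinery of \cite{BK98,BK99} together with transitivity of Deligne--Lusztig induction (\cite[11.5]{DM}), as the paper does. That is a reasonable alternative route in principle.

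However, there is a genuine error in the setup. The group $M=\prod_j \Res_{K_j/K}\GL_{m_j}$ with $M(k)=\prod_j \GL_{m_j}(k_j)$ is \emph{not} a Levi subgroup of $\GL_n$ over $K$ (nor is $\prod_j\GL_{m_j}(k_j)$ a Levi of $\GL_n(k)$ over $k$) unless every $d_j=1$. It is instead the centralizer $C_{G^\vee}(s)$ of the semisimple element in the dual group, i.e.\ the group whose unipotent characters index the constituents of $R_s(\mu)$ via Lusztig's Jordan decomposition. The minimal Levi carrying the supercuspidal support of the Bernstein component is $L=\prod_j \GL_{d_j}^{m_j}$, with $L(\cO_K)=\prod_j\GL_{d_j}(\cO_K)^{m_j}$; this is the Levi the paper uses (denoted $M$ there) and the Levi from which one actually performs Harish--Chandra/parabolic induction. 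Consequently the displayed formula
\[
R_s(\mu)\;\cong\;\Ind_{P(k)}^{\GL_n(k)}\bigotimes_j R_{T_j}^{\GL_{m_j}(k_j)}(\chi_j^{\ast}|_{T_j})
\]
does not make sense as written, since there is no parabolic $P\subset\GL_n$ with Levi $\prod_j\GL_{m_j}(k_j)$; what is true is $R_s(\mu)\cong \Ind_{P'(k)}^{\GL_n(k)}\bigotimes_j \sigma_j^{\otimes m_j}$ for $P'$ the parabolic with Levi $\prod_j\GL_{d_j}(k)^{m_j}$ and $\sigma_j$ the cuspidal Deligne--Lusztig representation of $\GL_{d_j}(k)$ attached to the orbit $\chi_j^\ast$, and then the Howlett--Lehrer endomorphism algebra $\bigotimes_j H_{m_j}(q^{d_j})\cong\bigotimes_j\C[S_{m_j}]$ produces the indexing by tuples of partitions. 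So while your partition count and your intended bijection with Schneider--Zink are correct, the intermediate identifications need to be stated with the correct Levi $L=\prod_j\GL_{d_j}^{m_j}$, reserving $\prod_j\GL_{m_j}(k_j)$ for the role of "dual centralizer" in Jordan decomposition rather than as the source of parabolic induction.
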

\begin{proof}
We immediately reduce to the case where $\cO_p$ is a domain, say $\cO_K$.
Then this follows from the construction of $\sigma(\tau)$ in \cite[\S 6]{SZ} as we now explain.
We first specify the Bushnell--Kutzko type $(J,\lambda)$ for the Bernstein component corresponding to $\tau(s,\mu)$.
Let $\sigma_0$ be an embedding $\F_{p^f} \into \F$, and let $r$ be the order of $s_\tau$ as in Example \ref{ex:data:type} (though $r$ does not depend on the choice of $\sigma_0$).
Fix an embedding $\sigma_0': \F_{p^{fr}} \into \F$ extending $\sigma_0$, and let $\tau$ also denote the corresponding tame inertial type for $K$ (see Example \ref{ex:data:type}, though again $\tau$ depends only on $\sigma_0$, but not $\sigma_0'$).

We first suppose that $\tau(s,\mu)$ is cuspidal, so that in particular, the order of the automorphism $(\pi^{-1}s^{-1})$ of $X^*(\un{T})$ is $fn$, and we take $r$ above to be $n$.
Then recall that we can choose $g_s\in N(\un{T})(\ovl{\F}_p)$ such that $g_s^{-1}F(g_s) = s$, and we let $\un{T}_s \defeq$ $^{g_s}\un{T} = g_s \un{T} g_s^{-1}$.
By \cite[Proposition 13.7(ii)]{DM}, the map
\begin{equation}\label{eqn:anisotropic}
\Big(\sum_{i=0}^{fn-1} (s\circ F)^i(\nu)\Big) \circ \sigma_0': \F_{p^{fn}}^\times \ra \, ^{g_s^{-1}}(\un{T}_s^F)
\end{equation}
is surjective for $\nu \in X_*(\un{T})$ with $\nu_i = 0$ if $i \neq 0$ and $\nu_0 = (1,0,\ldots,0)$ (since the $(s\circ F)$-orbit of $\nu$ generates $X_*(\un{T})$).
As the domain and codomain of (\ref{eqn:anisotropic}) have the same cardinality, this map is an isomorphism.

Then $R_s(\mu) = (-1)^{n-1} R_{\un{T}_s^F}^\theta$ where $\theta$ is the character $\F_{p^{fn}}^\times \cong \un{T}_s^F \overset{^{g_s^{-1}} (\cdot)}{\ra}$ $^{g_s^{-1}}(\un{T}_s^F) \subset \un{T}(\F) \overset{\mu}{\ra} \F^\times$.
Thus, $\theta = (\sum_{i=0}^{fn-1} (F^* \circ s^{-1})^i(\mu)) \circ \nu \circ \sigma_0'$.
Since $\tau(s,\mu) \cong \oplus_{k=0}^{n-1} \theta^{p^{fk}} \circ \mathrm{Art}_{K'}$ by Example \ref{ex:data:type} where $K' \subset K^{\mathrm{un}}$ is the subfield of degree $n$ over $K$, the result in this case follows from \cite[Proposition 2.4.1(i)]{EGH}.
(Note that in this case, $\tau' \preceq \tau$, $\tau \preceq \tau'$ and $\tau' = \tau$ are all equivalent for $\tau'$ a Weil--Deligne inertial type. 
The multiplicity one statement comes from the fact that, in the notation of \emph{loc.~cit.}, c-$\Ind_{F^\times \GL_n(\cO_F)}^{\GL_n(F)}\tau$ is irreducible.)
In this case, $(\GL_n(\cO_K),R_s(\mu))$ is a Bushnell--Kutzko type for the Bernstein component corresponding to the inertial type $\tau(s,\mu)$.

The general case follows from the fact that if $M\subset \GL_n$ is a Levi subgroup and $(J_M,\lambda)$ with $J_M \defeq M(K) \cap \GL_n(\cO_K)$ is a Bushnell--Kutzko type for a Bernstein component for $M$ corresponding to the inertial equivalence class $[L,\sigma]$ of some supercuspidal pair $(L,\sigma)$, then $(J,\lambda)$ is a Bushnell--Kutzko type for the Bernstein component for $G$ corresponding to $[L,\sigma]$, where $J$ is a minimal parahoric subgroup of $\GL_n(\cO_K)$ containing $J_M$, and $J$ acts on $\lambda$ through the natural quotient map $J \onto J_M$.
Indeed, $(J,\lambda)$ is a $G$-cover of $(J_M,\lambda)$ in the sense of \cite[Definition 8.1]{BK98}, and so $(J,\lambda)$ is the desired Bushnell--Kutzko type (see \cite{BK99}).
Then if $\lambda = R_s(\mu)$ (as an $M(\F_{p^f})$-representation), then $\Ind_J^{\GL_n(\cO_K)} \lambda$ is $R_s(\mu)$ (as a $\rG$-representation) by \cite[11.5]{DM}.
By construction, $\{\sigma(\tau) \mid \tau = (\tau(s,\mu),N_\tau) \}$ is the set of irreducible constituents of $\Ind_J^{\GL_n(\cO_K)} \lambda \cong R_s(\mu)$.
\end{proof}

\subsection{Herzig's conjecture on modular Serre weights}\label{sec:herzig}

Recall that $\tld{w}_h \defeq w_0 t_{-\eta} \in \tld{\un{W}}$.
For a regular Serre weight $\sigma = F(\lambda)$, let $\cR(\sigma)$ be the Serre weight $F(\tld{w}_h \cdot \lambda)$, which does not depend on the choice of $\lambda$.
The map $\cR$ defines a bijection from the set of regular Serre weights to itself (since $\cR^2$ is a twist by a character).
Note however that $\cR$ (like $\tld{w}_h$) depends on the choice of $\eta$.

\begin{defn}\label{defn:W?}
For a tame inertial $L$-parameter $\taubar$ over $\F$, we define $W^?(\taubar)$ to be the set $\cR\big(\JH\big(\ovl{\sigma([\taubar])}\big)\big)$.
\end{defn}

\begin{prop}\label{prop:W?}
Let $m \geq 2h_\eta$ be an integer.
Let $\taubar$ be a tame inertial $L$-parameter over $\F$, together with an $m$-generic lowest alcove presentation with corresponding element $\tld{w}(\taubar)\in\tld{\un{W}}$.
The map 
\begin{equation}\label{eqn:W?}
(\tld{w},\tld{w}_2) \mapsto F_{(\tld{w},\tld{w}(\taubar)\tld{w}_2^{-1}(0))}
\end{equation}
defines a bijection between
\begin{itemize}
\item pairs $(\tld{w},\tld{w}_2)$ with $\tld{w} \in \tld{\un{W}}^+_1$ and $\tld{w}_2\in \tld{\un{W}}^+$, up to the diagonal $X^0(\un{T})$-action, such that $\tld{w}_2 \uparrow \tld{w}$; and
\item elements of $W^?(\taubar)$.
\end{itemize}
Moreover, these Jordan--H\"older factors are $(m-h_\eta)$-deep and the lowest alcove presentations $(\tld{w},\tld{w}(\taubar)\tld{w}_2^{-1}(0))$ of these Serre weights are compatible with the fixed lowest alcove presentation %
of $\taubar$  (see \S \ref{sec:InertialTypes}).
\end{prop}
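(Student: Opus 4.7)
The plan is to derive the bijection by combining Proposition \ref{prop:JHbij} (applied with $\lambda = 0$) with the twist $\cR$ entering the definition of $W^?(\taubar)$. First, by Proposition \ref{prop:tameILL} applied to the fixed lowest alcove presentation $(s,\mu)$ of $\taubar$, the Jordan--H\"older factors of $\ovl{\sigma([\taubar])}$ coincide with those of $\ovl{R_s(\mu+\eta)}$; moreover, $(s,\mu)$ is itself a lowest alcove presentation of $R_s(\mu+\eta)$ (in the sense of \S\ref{sec:DLandSW}), with $\tld{w}(R_s(\mu+\eta)) = t_{\mu+\eta}s = \tld{w}(\taubar)$. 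Since $m \geq 2h_\eta \geq \max\{2h_\eta, h_\eta\}$, Proposition \ref{prop:JHbij} yields a bijection
\[
\mathrm{AP}(\eta) \xrightarrow{\sim} \JH(\ovl{R_s(\mu+\eta)}), \qquad (\tld{w}_1,\tld{w}_2) \mapsto F_{(\tld{w}_1, \tld{w}(\taubar)\tld{w}_2^{-1}(0))},
\]
and these factors are $(m-h_\eta)$-deep with lowest alcove presentations compatible with the fixed lowest alcove presentation of $R_s(\mu+\eta)$.

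Next I apply $\cR$ pointwise. Because $\eta$ is $\pi$-invariant, so is $\tld{w}_h$; using that the $p$-dot action is a left action,
\[
\cR F_{(\tld{w}_1,\omega)} \;=\; F\bigl(\tld{w}_h \cdot \pi^{-1}(\tld{w}_1)\cdot (\omega-\eta)\bigr) \;=\; F\bigl(\pi^{-1}(\tld{w}_h\tld{w}_1)\cdot (\omega-\eta)\bigr).
\]
Since the dot action of $\tld{w}_h$ is an involution of the set of dominant $p$-restricted alcoves (because $\tld{w}_h^2 = 1$ and $\tld{w}_h \in \tld{\un{W}}^+_1$), the coset $\tld{w}_h\tld{w}_1 X^0(\un{T})$ meets $\tld{\un{W}}^+_1$ in a single class, and choosing any representative $\tld{w}$ in this class gives $\cR F_{(\tld{w}_1,\omega)} = F_{(\tld{w},\omega)}$. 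Thus $\tld{w}_1 \mapsto \tld{w} = \tld{w}_h \tld{w}_1$ defines an involution of $\tld{\un{W}}^+_1 / X^0(\un{T})$.

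The third step is to rewrite the indexing condition. By the equivalence of items (\ref{item:uparrow}) and (\ref{item:uparrow2}) of Proposition \ref{prop:can:adm}, specialized to $\lambda = 0$,
\[
\tld{w}_1 \uparrow \tld{w}_h^{-1}\tld{w}_2 \iff \tld{w}_2 \uparrow \tld{w}_h\tld{w}_1 = \tld{w},
\]
so the $\mathrm{AP}(\eta)$ condition $\tld{w}_1 \uparrow \tld{w}_h^{-1}\tld{w}_2$ becomes exactly the condition $\tld{w}_2 \uparrow \tld{w}$ claimed in the proposition. Combining the three steps yields the asserted bijection
\[
\{(\tld{w},\tld{w}_2)\in (\tld{\un{W}}^+_1\times \tld{\un{W}}^+)/X^0(\un{T}) \mid \tld{w}_2 \uparrow \tld{w}\} \xrightarrow{\sim} W^?(\taubar), \qquad (\tld{w},\tld{w}_2) \mapsto F_{(\tld{w}, \tld{w}(\taubar)\tld{w}_2^{-1}(0))}.
\]

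Finally, the depth assertion propagates from Proposition \ref{prop:JHbij} because $\cR$, being given by the dot action of $\tld{w}_h$, acts as a bijection on the set of $p$-restricted weights preserving depth. The compatibility claim is a matter of tracking the algebraic central character: under the isomorphism $\tld{\un{W}}/\un{W}_a \cong X^*(\un{Z})$, left multiplication by $\tld{w}_h$ corresponds to the shift by $-\eta|_{\un{Z}}$, and this shift is exactly absorbed into the discrepancy between the compatibility conventions for inertial $L$-parameters over $E$ and over $\F$ (cf.~the remark after Lemma \ref{lemma:LAPbij:Ftypes}). The only genuinely delicate point in the proof is verifying that the replacement $\tld{w}_1 \rightsquigarrow \tld{w}_h\tld{w}_1$ descends to a well-defined involution on $\tld{\un{W}}^+_1/X^0(\un{T})$; once this involution is in hand, everything else is formal.
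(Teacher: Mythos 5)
Your proof is correct and takes essentially the same route as the paper's: the paper's proof simply cites Proposition \ref{prop:JHbij} together with the definition of $W^?$ for the bijection, and then does the depth and central-character bookkeeping directly, whereas you spell out the intermediate steps (the identification $\sigma([\taubar]) = R_s(\mu+\eta)$ via Proposition \ref{prop:tameILL}, the involution $\tld{w}_1 \mapsto \tld{w}_h\tld{w}_1$ realizing $\cR$ on lowest alcove presentations, and the translation of the $\mathrm{AP}(\eta)$ condition via Proposition \ref{prop:can:adm}). The well-definedness of the involution on $\tld{\un{W}}^+_1/X^0(\un{T})$, which you correctly flag as the delicate point, follows because the $p$-dot action of $\tld{w}_h$ preserves dominant $p$-restricted alcoves and $\tld{w}_h^2 \in X^0(\un{T})$; the paper implicitly absorbs this into the assertion that $\cR$ is a well-defined self-bijection of regular Serre weights.
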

\begin{proof}
That the map is a bijection follows from the definition of $W^?(\taubar)$ and Proposition \ref{prop:JHbij}.
If $\tld{w}_2 \uparrow \tld{w}$ and $\tld{w} \in \tld{\un{W}}_1^+$, $\langle \tld{w}_2^{-1}(0),\alpha^\vee\rangle \leq h_\eta$ for all $\alpha\in \un{\Phi}$, which implies that $F(\pi^{-1}(\tld{w}) \cdot (\tld{w}(\taubar)\tld{w}_2^{-1}(0) - \eta)$ is $(m-h_\eta)$-deep. 
The lowest alcove presentation $(\tld{w},\tld{w}(\taubar)\tld{w}_2^{-1}(0))$ is compatible with the image of $\tld{w} t_{\tld{w}(\taubar)\tld{w}_2^{-1}(0)} \un{W}_a/\un{W}_a = \tld{w} \tld{w}(\taubar) \tld{w}_2^{-1} \un{W}_a/\un{W}_a = \tld{w}(\taubar)\un{W}_a/\un{W}_a$ which is compatible with the lowest alcove presentation of $\taubar$ (for the latter equality note that $\tld{w}\equiv \tld{w}_2$ modulo $\un{W}_a$).
\end{proof}

\begin{defn}\label{defn:obv_weight}
We let $W_{\obv}(\taubar)$ be the subset of $W^?(\taubar)$ corresponding via (\ref{eqn:W?}) to pairs of the form $(\tld{w},\tld{w})$.
Note that a Serre weight in $W_{\obv}(\taubar)$ is determined by the image $w$ of $\tld{w}$ in $W$.
We say that this is the obvious weight of $\taubar$ corresponding to $w$.
\end{defn}

\subsubsection{Breuil--M\'ezard intersections}
Let $\rhobar$ and $\tau$ be tame inertial $L$-parameters over $\F$ and $E$, respectively.
Suppose that we can fix $\lambda$-compatible lowest alcove presentations of $\rhobar$ and $\tau$ (with corresponding elements $\tld{w}(\rhobar)$ and $\tld{w}(\tau)$), for some dominant $\lambda\in X^*(\un{T})$.
Then let $\tld{w}(\rhobar,\tau)$ be $\tld{w}(\tau)^{-1}\tld{w}(\rhobar)$.

\begin{prop}\label{prop:intersect}
Let $\lambda\in X^*(\un{T})$ be a dominant weight.
Let $\rhobar$ and $\tau$ be tame inertial $L$-parameters over $\F$ and $E$, respectively.
Suppose that we can fix $\lambda$-compatible $2h_\eta$-generic and $\max\{2h_\eta,h_{\lambda+\eta}\}$-generic lowest alcove presentations of $\rhobar$ and $\tau$, respectively, and let $\tld{w}(\rhobar)$ and $\tld{w}(\tau)$ be the corresponding elements of $\tld{\un{W}}$.
Then $(\tld{w},\omega)$ is a compatible lowest alcove presentation for a Serre weight $\sigma \in W^?(\rhobar) \cap \JH(\ovl{\sigma}(\tau))$ if and only if there are $\tld{w}_1$, $\tld{w}_2 \in \tld{\un{W}}^+$ such that $\tld{w}_1\uparrow \tld{w} \uparrow t_\lambda\tld{w}_h^{-1} \tld{w}_2$ and $\omega = \tld{w}(\rhobar)\tld{w}_1^{-1}(0)=\tld{w}(\tau)\tld{w}_2^{-1}(0)$. 

The equality $\tld{w}(\rhobar)\tld{w}_1^{-1}(0)=\tld{w}(\tau)\tld{w}_2^{-1}(0)$ holds if and only if $\tld{w}(\rhobar,\tau) = \tld{w}_2^{-1} w \tld{w}_1$ for some $w\in \un{W}$. 
\end{prop}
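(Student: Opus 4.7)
\noindent\emph{Proof proposal.} The plan is to combine the two parametrizations of Jordan--H\"older factors that are already available: Proposition \ref{prop:W?} for $W^?(\rhobar)$ and Proposition \ref{prop:JHbij} for $\JH(\ovl{\sigma(\tau)}\otimes W(\lambda))$. Under the $\max\{2h_\eta,h_{\lambda+\eta}\}$-genericity of the chosen lowest alcove presentation $(s,\mu)$ of $\tau$, the Deligne--Lusztig representation $R_s(\mu+\eta)$ is irreducible and by Proposition \ref{prop:tameILL} equals $\sigma(\tau)$, so its associated element in $\tld{\un{W}}$ coincides with $\tld{w}(\tau)$ and Proposition \ref{prop:JHbij} is directly applicable.

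Applying Proposition \ref{prop:W?} to $\rhobar$ produces a lowest alcove presentation $(\tld{w}_R,\tld{w}(\rhobar)\tld{w}_1^{-1}(0))$ of $\sigma$, compatible with the fixed presentation of $\rhobar$, with $\tld{w}_R\in \tld{\un{W}}^+_1$ and $\tld{w}_1\in \tld{\un{W}}^+$ satisfying $\tld{w}_1\uparrow\tld{w}_R$. Applying Proposition \ref{prop:JHbij} to $\sigma(\tau)$ produces a lowest alcove presentation $(\tld{w}_T,\tld{w}(\tau)\tld{w}_2^{-1}(0))$ of $\sigma$, $\lambda$-compatible with the fixed presentation of $\tau$, with $\tld{w}_T\in \tld{\un{W}}_1^+$, $\tld{w}_2\in\tld{\un{W}}^+$ and $\tld{w}_T\uparrow t_\lambda\tld{w}_h^{-1}\tld{w}_2$.

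The technical step, and essentially the only one requiring care, is to arrange for these two presentations to be given by a \emph{common} pair $(\tld{w},\omega)$. By the $\lambda$-compatibility of the chosen presentations of $\rhobar$ and $\tau$, both presentations of $\sigma$ are compatible with the same algebraic central character $\zeta\in X^*(\un{Z})$. Since the Serre weights produced are sufficiently deep under the genericity hypotheses, Lemma \ref{lemma:LAPwt} applies and forces the two presentations to be equivalent, i.e.~there exists $\nu\in X^0(\un{T})$ with $\tld{w}_T=t_\nu\tld{w}_R$ and the two $\omega$'s differing by $-\nu$. Replacing $(\tld{w}_T,\tld{w}_2)$ by the equivalent element $(t_{-\nu}\tld{w}_T,t_{-\nu}\tld{w}_2)$ of $\mathrm{AP}(\lambda+\eta)$ (the diagonal $X^0(\un{T})$-action preserves the admissibility condition) yields $\tld{w}_T=\tld{w}_R$ and matching $\omega$, producing the data required by the statement. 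The converse is immediate from the "if and only if" in Propositions \ref{prop:W?} and \ref{prop:JHbij}.

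For the final assertion, the equation $\tld{w}(\rhobar)\tld{w}_1^{-1}(0)=\tld{w}(\tau)\tld{w}_2^{-1}(0)$ can be rewritten as $\bigl(\tld{w}_2\,\tld{w}(\tau)^{-1}\tld{w}(\rhobar)\,\tld{w}_1^{-1}\bigr)(0)=0$. Under the semidirect product decomposition $\tld{\un{W}}=X^*(\un{T})\rtimes \un{W}$ an element fixes $0$ exactly when its translation part vanishes, i.e.~when it lies in $\un{W}$. Hence the equality is equivalent to $\tld{w}_2\,\tld{w}(\rhobar,\tau)\,\tld{w}_1^{-1}=w$ for some $w\in \un{W}$, which is the required formula. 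The main (mild) obstacle is the LAP-matching step in the previous paragraph; everything else is direct bookkeeping with the two bijections and the semidirect product structure of $\tld{\un{W}}$.
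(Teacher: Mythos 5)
Your proof is correct and follows the same route as the paper's very terse argument: the characterizations of $W^?(\rhobar)$ and $\JH(\ovl{\sigma}(\tau)\otimes W(\lambda))$ from Propositions \ref{prop:W?} and \ref{prop:JHbij} are combined, and the identification of the two resulting lowest alcove presentations of $\sigma$ follows from compatibility with a common $\zeta\in X^*(\un{Z})$ via Lemma \ref{lemma:LAPwt} — a step the paper leaves implicit but which you correctly make explicit, along with the needed depth check that makes Lemma \ref{lemma:LAPwt} applicable. The treatment of the last assertion (passing to the stabilizer of $0$ in $\tld{\un{W}}$) is likewise the same as the paper's.
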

\begin{proof}
The first claim follows from Propositions \ref{prop:JHbij} and \ref{prop:W?}. For the second claim, the equality $\tld{w}(\rhobar)\tld{w}_1^{-1}(0) = \tld{w}(\tau)\tld{w}_2^{-1}(0)$ implies that $\tld{w}(\rhobar,\tau) \tld{w}_1^{-1} \in \tld{w}_2^{-1} W$.
\end{proof}

\begin{cor}\label{cor:extremeintersect}
Let $\lambda\in X^*(\un{T})$ be a dominant weight.
Let $\rhobar$ and $\tau$ be tame inertial $L$-parameters over $\F$ and $E$, respectively.
Suppose that we can fix $\lambda$-compatible $2h_\eta$-generic and $\max\{2h_\eta,h_{\lambda+\eta}\}$-generic lowest alcove presentations of $\rhobar$ and $\tau$, respectively, and that $\tld{w}(\rhobar,\tau) = t_{s^{-1}(\lambda+\eta)}$ for some $s \in \un{W}$.
Then the intersection $W^?(\rhobar) \cap \JH(\ovl{\sigma}(\tau)\otimes W(\lambda))$ contains exactly one weight which is the obvious weight in $W_{\obv}(\rhobar)$ corresponding to $s$.
\end{cor}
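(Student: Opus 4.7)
The plan is to invoke Proposition \ref{prop:intersect}: a Serre weight $\sigma$ lies in $W^?(\rhobar) \cap \JH(\ovl{\sigma}(\tau) \otimes W(\lambda))$ with compatible lowest alcove presentation $(\tld{w}, \omega)$ if and only if there exist $\tld{w}_1, \tld{w}_2 \in \tld{\un{W}}^+$ and $w \in \un{W}$ such that $\tld{w}_1 \uparrow \tld{w} \uparrow t_\lambda\tld{w}_h^{-1}\tld{w}_2$, $\omega = \tld{w}(\rhobar)\tld{w}_1^{-1}(0)$, and $\tld{w}_2^{-1}w\tld{w}_1 = t_{s^{-1}(\lambda+\eta)}$.

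For existence, I propose the candidate $\tld{w}_1 = \tld{w} = \tld{w}_s$ (any lift of $s$ to $\tld{\un{W}}^+_1$), $w = w_0$, and $\tld{w}_2 = \tld{w}_h t_{-\lambda}\tld{w}_s$. The conjugation identity $\tld{w}_s^{-1}t_{\lambda+\eta}\tld{w}_s = t_{s^{-1}(\lambda+\eta)}$ gives $\tld{w}_2^{-1}w_0\tld{w}_s = t_{s^{-1}(\lambda+\eta)}$, and the equality $t_\lambda\tld{w}_h^{-1}\tld{w}_2 = \tld{w}_s$ makes the arrow-chain trivially satisfied. Membership of $\tld{w}_2$ in $\tld{\un{W}}^+$ follows from computing $\tld{w}_2(A_0) = w_0\bigl(\tld{w}_s(A_0) - (\lambda+\eta)\bigr)$: since $\tld{w}_s(A_0)$ lies in the $1$-restricted dominant region while $\langle \lambda+\eta,\alpha^\vee\rangle \geq 1$ for every simple coroot $\alpha^\vee$, the alcove $\tld{w}_s(A_0) - (\lambda+\eta)$ is antidominant, whence $\tld{w}_2(A_0)$ is dominant. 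By Definition \ref{defn:obv_weight}, the resulting weight $F_{(\tld{w}_s, \tld{w}(\rhobar)\tld{w}_s^{-1}(0))}$ is the obvious weight of $\rhobar$ corresponding to $s$.

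For uniqueness, given any tuple $(\tld{w}_1,\tld{w},\tld{w}_2,w)$ satisfying the conditions above, write $\tld{w}_i = t_{\nu_i}w_i$. Projecting the relation $\tld{w}_2^{-1}w\tld{w}_1 = t_{s^{-1}(\lambda+\eta)}$ to $\un{W}$ gives $w = w_2 w_1^{-1}$. Using $\tld{w}_2 = w\tld{w}_1 t_{-s^{-1}(\lambda+\eta)}$, one computes $\tld{w}_2(A_0) = w\bigl(\tld{w}_1(A_0) - w_1 s^{-1}(\lambda+\eta)\bigr)$; the condition $\tld{w}_2(A_0) \subset \cC_0$, combined with the admissibility bound $\tld{w}_2^{-1}w_0\tld{w}_1 \in \Adm(\lambda+\eta)$ from Proposition \ref{prop:can:adm}(iii), forces the Weyl chamber of the displaced alcove to be $w_1 s^{-1} w_0(\cC_0)$, giving $w = w_0 s w_1^{-1}$. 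A symmetric application to $\tld{w}_1 = w^{-1}\tld{w}_2 t_{s^{-1}(\lambda+\eta)}$ via $\tld{w}_1 \in \tld{\un{W}}^+$ gives $w = w_2 s^{-1}$. Combining these with $w = w_2 w_1^{-1}$ yields $w_1 = s$, $w_2 = w_0 s$, and $w = w_0$.

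With these Weyl parts fixed, the translation relation becomes $\nu_2 = w_0(\nu_1 - \lambda - \eta)$, so $\tld{w}_2 = \tld{w}_h t_{-\lambda}\tld{w}_1$ and hence $t_\lambda\tld{w}_h^{-1}\tld{w}_2 = \tld{w}_1$. The chain $\tld{w}_1 \uparrow \tld{w} \uparrow \tld{w}_1$ then collapses to $\tld{w} = \tld{w}_1$, placing $\tld{w}_1 \in \tld{\un{W}}^+_1$ as a $1$-restricted lift of $s$, uniquely determined up to $X^0(\un{T})$. Up to the standard equivalence of compatible lowest alcove presentations, the intersection therefore contains exactly the obvious weight $F_{(\tld{w}_s, \tld{w}(\rhobar)\tld{w}_s^{-1}(0))}$. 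The main obstacle is the Weyl reduction step that forces $w = w_0$, $w_1 = s$, $w_2 = w_0 s$: it requires balancing the dominance conditions on $\tld{w}_1, \tld{w}_2$ against the admissibility of $\tld{w}_2^{-1}w_0\tld{w}_1$, and the $\max\{2h_\eta, h_{\lambda+\eta}\}$-genericity hypothesis on $\tau$ provides the separation needed for these inequalities to be decisive.
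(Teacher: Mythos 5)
Your existence construction is correct (and more explicit than the paper makes it), and you correctly identify Proposition~\ref{prop:intersect} as the entry point, so the overall architecture matches the paper's. But your uniqueness argument takes a genuinely different and gappier route than the paper's, and I do not think the "Weyl reduction step" closes as you describe it.

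The paper's uniqueness argument is purely a Bruhat-order length computation. From $\tld{w}_1\uparrow\tld{w}$ and $\tld{w}\uparrow t_\lambda\tld{w}_h^{-1}\tld{w}_2$, Wang's theorem converts the arrow relations into Bruhat inequalities $\tld{w}_1\le\tld{w}$ and $\tld{w}_2\le t_{-w_0(\lambda)}\tld{w}_h\tld{w}$; since $(t_{-w_0(\lambda)}\tld{w}_h\tld{w})^{-1}w_0\tld{w}$ is a reduced factorization (Lemma~\ref{lemma:gallery}), the subword property gives
\[
t_{s^{-1}(\lambda+\eta)}=\tld{w}_2^{-1}s'\tld{w}_1\;\le\;(t_{-w_0(\lambda)}\tld{w}_h\tld{w})^{-1}w_0\tld{w}=t_{w^{-1}(\lambda+\eta)},
\]
and equality of lengths of translations by $W$-conjugates of the regular element $\lambda+\eta$ forces equality throughout, hence $\tld{w}_1=\tld{w}$, $s'=w_0$, $\tld{w}_2=t_{-w_0(\lambda)}\tld{w}_h\tld{w}$, and $s=w$. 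That's the whole argument.

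Your route instead tries to read off the Weyl chamber of the displaced alcove $\tld{w}_1(A_0)-w_1s^{-1}(\lambda+\eta)$ directly. Two problems. First, the domination estimate (translation of magnitude $\geq 1$ against a $1$-restricted alcove) controls the sign of $\langle\,\cdot\,,\gamma^\vee\rangle$ only for \emph{simple} $\gamma$, and the signs at simple coroots alone do not pin down the Weyl chamber of a point; you need all positive roots, and for non-simple $\gamma$ the restricted alcove has $\langle x,\gamma^\vee\rangle$ as large as $\mathrm{ht}(\gamma)$, which can exceed $|\langle\lambda+\eta,(sw_1^{-1}\gamma)^\vee\rangle|$. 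So the asserted "forces the Weyl chamber to be $w_1s^{-1}w_0(\cC_0)$" does not follow from what is written. Second, and more seriously, the "symmetric application" to determine $w=w_2s^{-1}$ fails outright: you are invoking the analogous domination for $\tld{w}_2(A_0)$, but $\tld{w}_2\in\tld{\un W}^+$ is only constrained by $\tld{w}_2\uparrow t_{-w_0(\lambda)}\tld{w}_h\tld{w}$, so $\tld{w}_2(A_0)$ is not $1$-restricted and can be as far from the origin as the translation vector itself; the sign of $\langle\tld{w}_2(A_0)+w_2s^{-1}(\lambda+\eta),\alpha^\vee\rangle$ is then genuinely indeterminate. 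Finally, your closing remark that the $\max\{2h_\eta,h_{\lambda+\eta}\}$-genericity of $\tau$ "provides the separation needed" is a red herring: that genericity is consumed inside Proposition~\ref{prop:intersect} (to make the $\JH$ and $W^?$ parametrizations valid); once you are working with the affine Weyl group data it plays no further role, and the forcing has to come from the Bruhat-order/reduced-expression structure, exactly as the paper does via Proposition~\ref{prop:can:adm} and Lemma~\ref{lemma:gallery}. I'd replace the alcove-chamber argument with the Bruhat-order length comparison above.
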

\begin{proof}
Suppose that $(\tld{w},\omega)$ is a lowest alcove presentation of $\sigma \in W^?(\rhobar) \cap \JH(\ovl{\sigma}(\tau)\otimes W(\lambda))$ which is compatible with that of $\rhobar$ (equivalently it is $\lambda$-compatible with that of $\tau$).
Proposition \ref{prop:intersect} implies that $t_{s^{-1}(\lambda+\eta)} = \tld{w}(\rhobar,\tau) = \tld{w}_2^{-1} s' \tld{w}_1$ for some $s'\in \un{W}$, and some $\tld{w}_1$, $\tld{w}_2 \in \tld{\un{W}}^+$ with $\tld{w}_1 \uparrow \tld{w}$ and $\tld{w} \uparrow t_\lambda \tld{w}_h^{-1} \tld{w}_2$.
These inequalities imply that 
\[
t_{s^{-1}(\lambda+\eta)} = \tld{w}_2^{-1} s' \tld{w}_1 \leq (t_{-w_0(\lambda)} \tld{w}_h \tld{w})^{-1} w_0 \tld{w} = t_{w^{-1}(\lambda+\eta)}, 
\]
where $w\in \un{W}$ is the image of $\tld{w}$.
This implies that $s=w$ and that $\tld{w} = \tld{w}_2$.
Then $\sigma$ is the obvious weight corresponding to $s$.
\end{proof}

\begin{prop}\label{prop:obvint}
Let $\lambda\in X^*(\un{T})$ be a dominant weight.
Let $\rhobar$ be a $2h_\eta$-generic tame inertial $L$-parameter over $\F$ and let $\tau$ be a $\max\{2h_\eta,h_{\lambda+\eta}\}$-generic tame inertial $L$-parameter.
Assume we can fix $\lambda$-compatible lowest alcove presentations for $\rhobar$ and $\tau$ such that $\tld{w}(\rhobar,\tau) \in \Adm(\lambda+\eta)$.
Then $W_{\obv}(\rhobar) \cap \JH(\ovl{\sigma(\tau)}\otimes W(\lambda))$ is nonempty.
\end{prop}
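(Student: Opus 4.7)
The plan is to exhibit an explicit obvious weight of $\rhobar$ in the intersection and verify its membership via Proposition \ref{prop:intersect}. By definition of $\Adm(\lambda+\eta)$, choose $s\in\un{W}$ with $\tld{w}(\rhobar,\tau)\leq t_{s^{-1}(\lambda+\eta)}$. Let $\tld{w}\in \tld{\un{W}}^+_1$ be the unique (up to $X^0(\un{T})$) lift of $s$ and let $\sigma$ be the obvious Serre weight of $\rhobar$ corresponding to $s$, with lowest alcove presentation $(\tld{w},\omega)$, where $\omega=\tld{w}(\rhobar)\tld{w}^{-1}(0)$, taken compatibly with the fixed lowest alcove presentation of $\rhobar$.

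To show $\sigma\in W^?(\rhobar)\cap\JH(\ovl{\sigma(\tau)}\otimes W(\lambda))$, by the final clause of Proposition~\ref{prop:intersect} it suffices to produce $\tld{w}_2\in\tld{\un{W}}^+$ with $(\tld{w},\tld{w}_2)\in\mathrm{AP}(\lambda+\eta)$ and $w\in\un{W}$ satisfying
\[
\tld{w}(\rhobar,\tau)=\tld{w}_2^{-1}w\tld{w}.
\]
The key device is Corollary~\ref{cor:can:reg} applied to the regular admissible element $t_{s^{-1}(\lambda+\eta)}$, which yields the admissible pair $(\tld{w},\tld{y}_2)\in\mathrm{AP}(\lambda+\eta)$ together with the factorization $t_{s^{-1}(\lambda+\eta)}=\tld{y}_2^{-1}w_0\tld{w}$; this is a reduced expression by Lemma~\ref{lemma:gallery}. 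The subword property of the Bruhat order applied to $\tld{w}(\rhobar,\tau)\leq\tld{y}_2^{-1}w_0\tld{w}$ will then extract the desired $\tld{w}_2$ and $w$, with $\tld{w}_2\leq\tld{y}_2$; the admissibility of the pair $(\tld{w},\tld{w}_2)$ propagates from $(\tld{w},\tld{y}_2)$ via this inequality and Proposition~\ref{prop:can:adm}. Since $\tld{w}\in\tld{\un{W}}^+_1$ appears as the first entry of the pair, Proposition~\ref{prop:intersect} identifies the resulting Serre weight as $\sigma$, which lies in $W_\obv(\rhobar)$ by Definition~\ref{defn:obv_weight}.

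The main obstacle is the subword extraction: one must exhibit $\tld{w}(\rhobar,\tau)$ as a subword of the reduced expression $\tld{y}_2^{-1}w_0\tld{w}$ in which the rightmost block representing $\tld{w}$ is preserved untouched, so that the factorization has the required shape. A naive application of the subword property need not achieve this, and one must exploit the extremality of $\tld{w}\in\tld{\un{W}}^+_1$ (it sits at a corner of its $\un{W}_a$-coset, so any deletion from the $\tld{w}$-block could be rerouted into the $w_0$-block using the exchange property). A cleaner alternative that bypasses the subword manipulation is to verify the criterion of Proposition~\ref{prop:JHfixed} directly: writing $t_\omega=\tld{w}(\rhobar)\tld{w}^{-1}u^{-1}$ for $u$ the classical part of $\tld{w}(\rhobar)\tld{w}^{-1}$, the desired containment becomes
\[
\tld{w}(\rhobar,\tau)\,\tld{w}^{-1}u^{-1}\,\tld{\un{W}}_{\leq w_0\tld{w}}\subseteq \Adm(\lambda+\eta),
\]
an inclusion that is amenable to direct verification using the left $\un{W}$-invariance of $\Adm(\lambda+\eta)$ together with the hypothesis $\tld{w}(\rhobar,\tau)\leq t_{s^{-1}(\lambda+\eta)}$ and the identification of $\tld{\un{W}}_{\leq w_0\tld{w}}\tld{w}^{-1}$ with a shifted Bruhat interval inside the admissible set.
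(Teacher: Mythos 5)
The proposal has a genuine gap, and it is precisely the one you flag yourself: after choosing $s$ with $\tld{w}(\rhobar,\tau)\leq t_{s^{-1}(\lambda+\eta)}$, you fix $\sigma$ at the outset to be the obvious weight corresponding to \emph{that particular} $s$, and then attempt to show that $\tld{w}(\rhobar,\tau)$ factors as $\tld{w}_2^{-1}w\tld{w}$ with the $\tld{w}$-block preserved intact. The subword property gives no such control: applying it to the reduced factorization $t_{s^{-1}(\lambda+\eta)} = (t_{-w_0(\lambda)}\tld{w}_h\tld{w})^{-1} w_0 \tld{w}$ only produces $\tld{w}(\rhobar,\tau)=\tld{w}_2^{-1}w'\tld{w}_1$ with $\tld{w}_1\leq\tld{w}$, and in general $\tld{w}_1$ is a \emph{proper} subword of $\tld{w}$ whenever the inequality $\tld{w}(\rhobar,\tau)\leq t_{s^{-1}(\lambda+\eta)}$ is strict.

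Your first proposed repair---that deletions from the $\tld{w}$-block can always be rerouted into the $w_0$-block by the exchange property because $\tld{w}$ sits at a ``corner of its $\un{W}_a$-coset''---is not established and is not true in the generality needed. The exchange condition relates a reduced word to a one-step-shorter subword; it does not give a structural guarantee that an arbitrary element $u\leq \tld{y}_2^{-1}w_0\tld{w}$ can be rewritten as $u=u'\tld{w}$ with $u'\leq\tld{y}_2^{-1}w_0$. Your second repair (direct verification via Proposition~\ref{prop:JHfixed}) is only sketched, and the hypothesis $\tld{w}(\rhobar,\tau)\leq t_{s^{-1}(\lambda+\eta)}$ says nothing directly about the translated set $\tld{w}(\rhobar,\tau)\,\tld{w}^{-1}u^{-1}\,\tld{\un{W}}_{\leq w_0\tld{w}}$; the claimed ``identification with a shifted Bruhat interval inside the admissible set'' is precisely the nontrivial point, not a routine step.

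The paper's proof does not attempt to keep $\tld{w}$ intact. It accepts the subword output $\tld{w}_1\leq\tld{w}$ and $\tld{w}_2\leq t_{-w_0(\lambda)}\tld{w}_h\tld{w}$, first moves both into $\tld{\un{W}}^+$ by left-multiplying by Weyl elements (absorbed into $w'$, citing a lemma from \cite{LLL}), passes to $\uparrow$ via Wang's theorem, and then performs a crucial \emph{translation} adjustment: it takes $\omega$ dominant with $t_{-\omega}\tld{w}_1\in\tld{\un{W}}^+_1$ and simultaneously replaces $\tld{w}_2$ by the element $\tld{w}_3\in\un{W}t_{-w'(\omega)}\tld{w}_2\cap\tld{\un{W}}^+$, checking that the $\uparrow$-inequality is preserved (this is the analogue of \cite[Proposition~4.4.1]{LLL}). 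The obvious weight the paper produces is $F_{(\tld{w}_1,\tld{w}(\rhobar)\tld{w}_1^{-1}(0))}$ for the \emph{adjusted} $\tld{w}_1$, whose image in $\un{W}$ is generally not $s$; so the conclusion is that \emph{some} obvious weight lies in the intersection, not the one you fixed. The paper's route is not merely a technical workaround of your obstacle---your obstacle reflects the fact that the statement you are implicitly trying to prove (that the obvious weight for the initially chosen $s$ always lies in the intersection) is stronger than the proposition and appears to be false.
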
  
\begin{proof}
Since $\tld{w}(\rhobar,\tau) \in \Adm(\lambda+\eta)$, there exists a $w\in \un{W}$ such that $\tld{w}(\rhobar,\tau) \leq t_{w^{-1}(\lambda+\eta)} = (t_{-w_0(\lambda)}\tld{w}_h\tld{w})^{-1} w_0 \tld{w}$ where $\tld{w} \in \tld{\un{W}}^+_1$ has image $w\in \un{W}$.
Since this is a reduced factorization by Lemma \ref{lemma:gallery}, $\tld{w}(\rhobar,\tau) = \tld{w}_2^{-1}w'\tld{w}_1$ for some $\tld{w}_1 \leq \tld{w}$, $\tld{w}_2\leq t_{-w_0(\lambda)}\tld{w}_h\tld{w}$ and $w'\in \un{W}$.
By changing $w'$ and using \cite[Lemma 4.3.4]{LLL}, we can assume without loss of generality that $\tld{w}_1$ and $\tld{w}_2$ are elements of $\tld{\un{W}}^+$.
By Wang's theorem (\cite[Theorem 4.1.1]{LLL}), $\tld{w}_1 \uparrow \tld{w}$ and $\tld{w}_2\uparrow t_{-w_0(\lambda)}\tld{w}_h\tld{w}$, or equivalently by \cite[Proposition 4.1.2]{LLL}, $\tld{w} \uparrow t_\lambda \tld{w}_h^{-1}\tld{w}_2$.

Let $\omega\in X^*(\un{T})$ be the unique (dominant) weight up to $X^0(\un{T})$ such that $t_{-\omega} \tld{w}_1\in \tld{\un{W}}^+_1$.
Let $\tld{w}_3$ be the unique element in $\un{W}t_{-w'(\omega)}\tld{w}_2\cap\tld{\un{W}}^+$.
Then $t_{w_0(\omega)}\tld{w}_3 \uparrow \tld{w}_2$ just as in the proof of \cite[Proposition 4.4.1]{LLL} so that $t_{-\omega}\tld{w}_1\uparrow t_\lambda \tld{w}_h^{-1} t_{-w_0(\omega)} \tld{w}_2 \uparrow t_\lambda \tld{w}_h^{-1} \tld{w}_3$.
Replacing $\tld{w}_1$ by $t_{-\omega} \tld{w}_1$ and $\tld{w}_2$ by $\tld{w}_3$ and changing $w'$, we have that $\tld{w}(\rhobar,\tau) = \tld{w}_2^{-1}w'\tld{w}_1$ with $\tld{w}_1\uparrow t_\lambda\tld{w}_h^{-1}\tld{w}_2$ and $\tld{w}_1\in \tld{\un{W}}_1^+$.

We claim that $F_{(\tld{w}_1,\tld{w}(\rhobar)\tld{w}_1^{-1}(0))} \in W_\obv(\rhobar)$ is in $\JH(\ovl{\sigma(\tau)}\otimes W(\lambda))$.
Indeed, 
\[
\tld{w}(\rhobar)\tld{w}_1^{-1}(0) = \tld{w}(\tau)\tld{w}(\rhobar,\tau)\tld{w}_1^{-1}(0) = \tld{w}(\tau)\tld{w}_2^{-1}(0).
\]
The claim now follows from Proposition \ref{prop:JHbij}.
\end{proof}

\begin{lemma}\label{lemma:connectSW}
Let $\taubar$ be a tame inertial $L$-parameter over $\F$.
Suppose there exists a $3h_\eta$-generic lowest alcove presentation for it and let $\tld{w}(\taubar)$ be the corresponding element of $\tld{\un{W}}$.
Let $R$ be the Deligne--Lusztig representation with the $\eta$-compatible lowest alcove presentation such that $\tld{w}(R) = \tld{w}(\taubar) t_{-\eta-w_0(\eta)}$.
Then $W^?(\taubar) \subset\JH(\overline{R} \otimes W(\eta))$. 
\end{lemma}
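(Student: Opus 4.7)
The plan is to compare the two combinatorial parametrizations of the sides of the desired containment.

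By Proposition~\ref{prop:W?}, each element of $W^?(\taubar)$ has the form $\sigma = F_{(\tld{w},\tld{w}(\taubar)\tld{w}_2^{-1}(0))}$ for some $(\tld{w},\tld{w}_2) \in \tld{\un{W}}_1^+ \times \tld{\un{W}}^+$ with $\tld{w}_2 \uparrow \tld{w}$, in a lowest alcove presentation compatible with that of $\taubar$. On the other hand, applying Proposition~\ref{prop:JHbij} to the given $R$ with $\lambda=\eta$, the Jordan--H\"older factors of $\ovl{R}\otimes W(\eta)$ are $F_{(\tld{w}_1',\tld{w}(R)(\tld{w}_2')^{-1}(0))}$ for $(\tld{w}_1',\tld{w}_2') \in \mathrm{AP}(2\eta)$, with lowest alcove presentations that are $\eta$-compatible with that of $R$ and therefore compatible with that of $\taubar$.

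Set $\nu_0 \defeq \eta + w_0(\eta) \in X^0(\un{T})$, so that $\tld{w}(R) = \tld{w}(\taubar)t_{-\nu_0}$. Since $\nu_0$ is $\un{W}$-fixed, one has $\tld{w}(R)(x) = \tld{w}(\taubar)(x) - \nu_0$ for all $x \in X^*(\un{T})$. Given $(\tld{w},\tld{w}_2)$ as above, I propose the pair $(\tld{w}_1', \tld{w}_2') \defeq (\tld{w}, t_{-\nu_0}\tld{w}_2)$. Because $-\nu_0 \in X^0(\un{T})$, $\tld{w}_2'$ still lies in $\tld{\un{W}}^+$. Writing $\tld{w}_2 = t_{\nu_2}w_2$ and using $w_2^{-1}(\nu_0)=\nu_0$, a direct computation gives $(\tld{w}_2')^{-1}(0) = \tld{w}_2^{-1}(0) + \nu_0$; evaluating then yields $\tld{w}(R)(\tld{w}_2')^{-1}(0) = \tld{w}(\taubar)\tld{w}_2^{-1}(0)$. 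Hence the proposed lowest alcove presentation exactly matches $(\tld{w},\tld{w}(\taubar)\tld{w}_2^{-1}(0))$, producing the same Serre weight as $\sigma$.

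It remains to verify that $(\tld{w}_1',\tld{w}_2') \in \mathrm{AP}(2\eta)$, i.e.~$\tld{w} \uparrow t_\eta\tld{w}_h^{-1}(t_{-\nu_0}\tld{w}_2)$. By Proposition~\ref{prop:can:adm} (\ref{item:uparrow})$\Leftrightarrow$(\ref{item:uparrow2}), this is equivalent to $t_{-\nu_0}\tld{w}_2 \uparrow \tld{w}_h t_{-\eta}\tld{w} = t_{2\eta-2\nu_0}w_0\tld{w}$, and since left multiplication by $t_{\nu_0}\in \Omega$ preserves the $\uparrow$-order within a right $\un{W}_a$-coset (and $\tld{w}_2$ and $\tld{w}$ lie in the same coset because $\tld{w}_2\uparrow \tld{w}$), this reduces to $\tld{w}_2 \uparrow t_{\eta-w_0(\eta)}w_0\tld{w}$. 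By transitivity of $\uparrow$ combined with the hypothesis $\tld{w}_2 \uparrow \tld{w}$, it suffices to establish
\[
\tld{w} \uparrow t_{\eta-w_0(\eta)}w_0\tld{w}.
\]

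The main obstacle will be verifying this last $\uparrow$-relation. Since $\tld{w} \in \tld{\un{W}}_1^+$, the alcove $A \defeq \tld{w}(A_0)$ is $p$-restricted dominant, so its alcove coordinates $a_{ij}(A)$ satisfy $0\leq a_{ij}(A) < j-i$ for $i<j$. An explicit calculation shows that the coordinates $a_{ij}$ of $t_{\eta-w_0(\eta)}w_0\tld{w}(A_0) = (\eta-w_0(\eta)) + w_0 A$ lie in the interval $\{j-i,\ldots,2(j-i)-1\}$, so they strictly exceed those of $A$. Combined with Wang's theorem identifying the Bruhat and $\uparrow$-orders on $\tld{\un{W}}^+$, this yields $\tld{w}\leq t_{\eta-w_0(\eta)}w_0\tld{w}$ and hence the required $\uparrow$-relation. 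Proposition~\ref{prop:JHbij} then concludes that $\sigma \in \JH(\ovl{R}\otimes W(\eta))$.
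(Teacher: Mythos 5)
Your reduction to a combinatorial claim follows the paper's argument almost step for step: you identify the candidate pair $(\tld{w}_1',\tld{w}_2')=(\tld{w},t_{-\nu_0}\tld{w}_2)$, correctly check that the resulting lowest alcove presentations match (the computation $(\tld{w}_2')^{-1}(0)=\tld{w}_2^{-1}(0)+\nu_0$ and the cancellation of $\nu_0$ are both fine), and via Proposition~\ref{prop:can:adm} and transitivity with $\tld{w}_2\uparrow\tld{w}$ you arrive at exactly the same intermediate claim as the paper, namely $\tld{w}\uparrow t_{\eta-w_0(\eta)}w_0\tld{w}$ (note $t_{-w_0(\eta)}\tld{w}_h^{-1}\tld{w}=t_{\eta-w_0(\eta)}w_0\tld{w}$). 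Up to that point the two arguments are the same up to packaging.

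The gap is in the last step. You observe that all alcove coordinates of $B:=t_{\eta-w_0(\eta)}w_0\tld{w}(A_0)$ strictly exceed those of $A:=\tld{w}(A_0)$ (this computation is correct), and then assert that coordinatewise comparison plus Wang's theorem gives $\tld{w}\leq t_{\eta-w_0(\eta)}w_0\tld{w}$. But Wang's theorem says only that the Bruhat order and the $\uparrow$-order agree on $\tld{\un{W}}^+$; it says nothing about either order being characterized by componentwise comparison of alcove coordinates. The implication ``$a_{ij}(A)\leq a_{ij}(B)$ for all $(i,j)$ (with $A,B$ dominant) $\Rightarrow$ $A\leq B$ in Bruhat order'' is a nontrivial claim that you neither cite nor prove, and it is not among the results used anywhere in the paper. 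As written, this step does not go through.

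The paper closes the same gap with an elementary observation you do not need coordinates for: translating $\tld{w}\uparrow t_{-w_0(\eta)}\tld{w}_h^{-1}\tld{w}$ by $w_0(\eta)$ (translation-invariance of $\uparrow$, [RAGS II.6.5(4)]) turns the claim into $w_0(\tld{w}_h^{-1}\tld{w})\uparrow\tld{w}_h^{-1}\tld{w}$, and since $\tld{w}\in\tld{\un{W}}_1^+$ implies $\tld{w}_h^{-1}\tld{w}\in\tld{\un{W}}^+$ (so its alcove is dominant), this is exactly the standard fact that $w_0 C\uparrow C$ for $C$ dominant, [RAGS II.6.5(5)]. Replacing your coordinate argument by this observation makes the proof complete.
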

\begin{proof}
Suppose that $\sigma \in W^?(\taubar)$ so that $\sigma$ has lowest alcove presentation $(\tld{w},\omega)$ with $\omega = \tld{w}(\taubar)\tld{w}_2^{-1}(0)$ for some $\tld{w}_2 \uparrow \tld{w}$ by Proposition \ref{prop:W?}.
Then $\omega = \tld{w}(R)(t_{-\eta-w_0(\eta)}\tld{w}_2)^{-1}(0)$ (note that $-\eta-w_0(\eta)\in X^0(\un{T})$).
By Proposition \ref{prop:JHbij}, to show that $\sigma \in \JH(\overline{R} \otimes W(\eta))$, it suffices to show that $\tld{w} \uparrow t_\eta\tld{w}_h^{-1} t_{-\eta-w_0(\eta)}\tld{w}_2 = t_{-w_0(\eta)} \tld{w}_h^{-1}\tld{w}_2$.
Since $\tld{w}_h^{-1} \tld{w} \uparrow \tld{w}_h^{-1} \tld{w}_2$, it suffices to show that $\tld{w} \uparrow t_{-w_0(\eta)}\tld{w}_h^{-1} \tld{w}$, or equivalently that $w_0 \tld{w}_h^{-1}\tld{w} = t_{w_0(\eta)} \tld{w} \uparrow \tld{w}_h^{-1} \tld{w} \in \tld{\un{W}}^+$.
This follows from \cite[II 6.5(5)]{RAGS}.
\end{proof}
\clearpage{}%
\clearpage{}%
\section{The universal local model}\label{sec:UMLM}
In this section, we construct and study the universal version of our local models.   This will allow us to show that various properties hold generically for the mixed characteristic local models studied in Section \ref{sec:MLM}.  Unless otherwise specified, all algebraic groups will be over $\Z$.  
Let $X =\bA^1_\Z= \Spec \Z[v]$. For any commutative ring $R$, we identify the $R$-points $X(R)$ with $R$ in the usual way: an algebra map $\Z[v]\to R$ corresponds to the image $t\in R$ of the coordinate variable $v$. 
{(We will eventually consider, in sections \S \ref{sec:MLM}, \S\ref{sec:BK:LM} and \S\ref{sec:monodromy}, Noetherian $p$-adically complete $\cO$-algebras $R$, and take $t$ to be $-p$ in this case.)}
We also let $X^0=\bA^1_\Z\setminus \{0\}=\Spec \Z[v,v^{-1}]$.

\subsection{Loop groups}\label{sec:UMLM:loopgroups}

Let $\cG$ be the Bruhat-Tits group for $\GL_n$ over $\bA^1_{\Z}$ as in \cite[4.b.1]{PZ}, which is a dilatation of the Chevalley group ${\GL_n}_{/\bA^1_{\Z}}$ along a subgroup concentrated in the fiber $t=0$. Concretely, for any $\Z[v]$-algebra $R$ such that $v$ gets sent to $t\in R$, the functor of points of $\cG(R)$ is given by %
\begin{align*}
R&\mapsto\left\{\,(A_0,\dots A_{n-1})\in \big(\GL_n(R)\big)^n\ \bigg| \ 
            \begin{varwidth}{\textwidth}
               \centering
                $\Diag(1,\dots, t, \dots 1)\,A_{i-1}=A_i\,\Diag(1,\dots, t, \dots 1)$ for all $i$,\\
                \vspace{.15cm}
                where $t$ is in the $i$-th entry of the diagonal matrix.
            \end{varwidth}
            \,\right\}
\end{align*}
In the special case that $t$ is regular in $R$, the above data reduces to just the data of a pair $(t,A_0)$ such that $A_0$ mod $t$ is upper triangular.
It is known that $\cG$ is a smooth affine group scheme with connected fibers {(see \cite[Corollary 3.2]{PZ} and \cite[\S 1.2, Theorem]{MRR})}. %

We also get the positive loop group $L^+\cG$ and the loop group $L\cG$ whose functors of points on a $\Z[v]$-algebra $R$ (sending $v$ to $t\in R$) are given by 
\begin{align*}
R&\mapsto \cG(R[\![v-t]\!])
\end{align*}
and 
\begin{align*}
R&\mapsto \cG(R(\!(v-t)\!)),
\end{align*}
respectively {(where $R[\![v-t]\!]$ denotes the $(v-t)$-adic completion of $R[v]$, and $R(\!(v-t)\!)\defeq R[\![v-t]\!][\frac{1}{v-t}])$.} 
Here the values of the functor $\cG$ are computed using the maps $\Z[v]\to R[\![v-t]\!]$ and $\Z[v]\to R(\!(v-t)\!)$ sending $v$ to $v$. 
It is known that $L^+\cG$ is represented by a(n infinite type) scheme and $L\cG$ is an ind-group scheme {(\cite[\S 5.b.1]{PZ})}.
We have a canonical map $T\to L^+\cG$, sending $h\in T(R)$ to the ``constant'' diagonal matrices $(h,\cdots,h)\in \GL_n(R[\![v-t]\!])^n$.
We have a well-defined determinant map of $X$-ind-schemes {$\det:L\cG\to L(\bG_m)_{\slash X}$}.

\begin{rmk}When $R$ is Noetherian, $v$ is regular in  $R[\![v-t]\!]$ and $R(\!(v-t)\!)$, thus we get the simpler description
\begin{align*}
L^+\cG(R)&=\{A\in \GL_n(R[\![v-t]\!]),\,A \text{ is upper triangular modulo $v$}\}\\
L\cG(R)&=\{A\in \GL_n(R(\!(v-t)\!)),\,A \text{ is upper triangular modulo $v$}\}
\end{align*}
In particular, $L\cG(R)$ is a subgroup of $\GL_n(R(\!(v-t)\!))$ for Noetherian $R$.
In what follows we will restrict all our functors to locally Noetherian schemes, and hence we will do our manipulations using these simpler descriptions. We leave it as an exercise to the reader to formulate the right definitions for possibly non-Noetherian input rings.
\end{rmk}

For an integer $d$, let $L\cG^{det=d}$ be the subfunctor of $L\cG$ given by
\begin{align*}
L\cG^{det=d}(R)=\big\{g\in L\cG(R)| \det(g)\in (v-t)^d\,\big(R[\![v-t]\!]\big)^\times\subset \big(R(\!(v-t)\!)\big)^\times\big\},
\end{align*}
which is stable under the left translation action by $L^+\cG$.

We also define $L^+\cM$ to be the functor given by
\begin{align*}
L^+\cM(R)=\{  g \in M_n(R[\![v-t]\!]), \text { $g$ is upper triangular modulo $v$}\},
\end{align*}
so the subfunctor $L^+\cM \cap L\cG$ is stable under the left and right translation action by $L^+\cG$.

By \cite[\S 5.b]{PZ}, the fpqc quotient sheaf {(over the site of affine $\Z[v]$-schemes)} $L^+\cG\backslash L\cG$ is representable by an ind-projective ind-scheme $\Gr_{\cG,X}$, which also has a moduli interpretation in terms of $\cG$-torsors. For any ring $R$, we have an injection $L^+\cG(R)\backslash L\cG(R) \into \Gr_{\cG,X}(R)$. %

By construction, $\Gr_{\cG,X}\times_X X^0$ is the affine Grassmannian for the split group $\GL_n$ over $X^0$, while $\Gr_{\cG,X}\times_X \{0\}$ is the affine flag variety for the standard Iwahori group scheme $\cI$ over $\Z[\![v]\!]$.

For each integer $d$, we let $\Gr_{\cG,X}^{det=d}$ be the fpqc quotient subsheaf $L^+\cG\backslash L\cG^{det=d} \subset L^+\cG\backslash L\cG$.

For each $h\geq 0$, we let $L\cG^{det=d,\leq h}$ be the subfunctor of $L\cG^{det=d}$ given by 
\begin{align*}
L\cG^{det=d,\leq h}\big(R\big) =\left\{A\in L\cG^{det=d}(R)\ \Big| \  A \in \frac{1}{(v-t)^h}L^+\cM(R) \right\}
\end{align*}
Then $L\cG^{det=d,\leq h}$ is $L^+\cG$-stable, and the fpqc quotient subsheaf $\Gr_{\cG,X}^{det=d,\leq h}=L^+\cG\backslash L\cG^{det=d,\leq h}$ of $\Gr_{\cG,X}$ is representable by a projective scheme over $X=\bA^1_\Z$ {(see the argument of \cite[Lemma 1.1.5]{zhu-intro-AG})}.
We clearly have $\Gr_{\cG,X}^{det=d}= \underset{h}{\varinjlim} \Gr_{\cG,X}^{det=d,\leq h}$.

\subsection{Affine charts}
\label{sec:affine:charts}
{Given integers $d,h\geq 0$ we define and describe affine open charts $\cU(\tld{z})^{\det,\leq h}$ for $\Gr^{\det=d,\leq h}_{\cG,X}$, for $\tld{z}\in \tld{W}^\vee$ (see Proposition \ref{prop:explicit_affine_chart} and Corollary \ref{cor:open:immersion}).}

\begin{defn} \label{defn:Lminusminus} We define the negative loop group $L^{--}\cG$ to be the subgroup of $L\cG$ whose values on Noetherian $\bZ[v]$-algebra $R$ (sending $v$ to $t$) is given by
\begin{align*}
L^{--}\cG(R) =\left\{\,A\in \GL_n\Big(R\Big[\frac{1}{v-t}\Big]\Big)\ \Big| \ 
            \begin{varwidth}{\textwidth}
               \centering
                $A$ is unipotent lower triangular mod ${\frac{1}{v-t}R\Big[\frac{1}{v-t}\Big]}$\\
                 and upper triangular mod ${\frac{v}{v-t}R\Big[\frac{1}{v-t}\Big]}$
            \end{varwidth}
            \,\right\}
\end{align*}
{(where $R\Big[\frac{1}{v-t}\Big]$ denotes the ring of polynomials in $\frac{1}{v-t}$ with coefficients in $R$; it is a subring of $R(\!(v-t)\!)$).}
\end{defn}
Note that the groups $L^+\cG, L^{--}\cG$ and $L\cG$ are formally smooth over $X=\bA^1_\Z$.
\begin{lemma} \label{lem:negative_loop_mono} The multiplication map
\[L^+\cG \times_X L^{--}\cG \rightarrow L\cG\]
is a monomorphism.
In particular, the induced map $L^{--}\cG \to \Gr_{\cG,X}$ is a monomorphism.
\end{lemma}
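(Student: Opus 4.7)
The plan is to reduce both statements to the assertion that the subfunctor $L^+\cG \cap L^{--}\cG$ of $L\cG$ is the trivial subgroup. Indeed, if $(g_1, h_1)$ and $(g_2, h_2)$ are $R$-points of $L^+\cG \times_X L^{--}\cG$ with $g_1 h_1 = g_2 h_2$, then $g_2^{-1} g_1 = h_2 h_1^{-1}$ sits in both $L^+\cG(R)$ and $L^{--}\cG(R)$; triviality of the intersection forces $(g_1, h_1) = (g_2, h_2)$. The ``in particular'' clause then follows from fpqc descent using $\Gr_{\cG,X} = L^+\cG \backslash L\cG$ as fpqc sheaves: two $R$-points of $L^{--}\cG$ with the same image in $\Gr_{\cG,X}$ are related, after an fpqc cover, by left multiplication by a section of $L^+\cG$, and applying the monomorphism of the multiplication map to the resulting pairs forces that section to be $1$.

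The core step is therefore the computation of the intersection. Restricting to Noetherian test rings $R$ (as permitted by the remark preceding the lemma), both functors embed into $\GL_n(R(\!(v-t)\!))$, so any matrix $A$ in the intersection has entries in
\[
R[\![v-t]\!]\; \cap\; R\bigl[\tfrac{1}{v-t}\bigr]\; =\; R
\]
inside $R(\!(v-t)\!)$: an element written as a formal power series in $v-t$ with only non-negative powers that also equals a polynomial in $(v-t)^{-1}$ must be a scalar. Hence $A$ is a constant matrix in $\GL_n(R)$.

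Applied to such a constant $A$, the two triangularity conditions become transparent. The ``upper triangular modulo $v$'' condition from $L^+\cG$, evaluated under the specialization $R[\![v-t]\!]/v \cong R$ sending $v-t \mapsto -t$, becomes the statement that $A$ itself is upper triangular. The ``unipotent lower triangular modulo $\tfrac{1}{v-t}$'' condition from $L^{--}\cG$, evaluated under the specialization $R\bigl[\tfrac{1}{v-t}\bigr]/\tfrac{1}{v-t} \cong R$, becomes the statement that $A$ is unipotent lower triangular. Together these force $A = I$, proving triviality.

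The main subtlety I anticipate is passing from the ``single-matrix'' description used above (valid on Noetherian inputs by the remark) to the functorial definition of $\cG$ via $n$-tuples $(A_0, \ldots, A_{n-1})$; here one notes that in the relevant regime the tuple is determined by $A_0$, and the only condition transported back to $A_0 \in \GL_n(R[\![v-t]\!])$ is upper-triangularity modulo $v$. Since $L^+\cG \times_X L^{--}\cG$ and $L\cG$ are ind-schemes, a standard direct-limit argument allows the monomorphism property to be tested on Noetherian inputs, reducing the general case to the computation above.
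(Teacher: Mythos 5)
Your reduction to showing that $L^+\cG \cap L^{--}\cG$ is trivial is essentially equivalent to the paper's computation (the paper takes $g = (g'_1)^{-1}g_1 = g'_2 g_2^{-1}$, which lies in the intersection), and your observation that each entry of such a matrix lies in $R[\![v-t]\!] \cap R[\tfrac{1}{v-t}] = R$, hence that the matrix is constant, is correct. However, there is a genuine gap in your treatment of the below-diagonal entries.

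The problem is the asserted isomorphism $R[\![v-t]\!]/v \cong R$ (``sending $v-t\mapsto -t$''). This is false in general. Writing $w=v-t$, we have $v = w + t$ inside $R[\![w]\!]$; if $t$ is a unit in $R$, then $v$ is a unit in $R[\![w]\!]$ and the quotient is zero. More to the point for your argument, $R \cap vR[\![v-t]\!]$ can be nonzero: taking the Noetherian ring $R = \F_p\times\F_p$ with $t=(0,1)$, the constant $(0,1)\in R$ lies in $vR[\![v-t]\!]$ because $v=w+1$ is a unit in the second factor. Hence a constant below-diagonal entry can be divisible by $v$ without vanishing, and the ``upper triangular mod $v$'' condition from $L^+\cG$ alone does not force the below-diagonal entries of your constant matrix to be zero. (By contrast, your use of $R[\tfrac{1}{v-t}]/\tfrac{1}{v-t}\cong R$ for the diagonal and above-diagonal entries is fine, since $\tfrac{1}{v-t}$ is an honest variable there.)

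To repair the argument you must also invoke the second condition defining $L^{--}\cG$ --- that the element is upper triangular modulo $\tfrac{v}{v-t}R[\tfrac{1}{v-t}]$ --- which your proof never uses. The paper does exactly this: a below-diagonal entry lies in $vR[\![v-t]\!] \cap \tfrac{v}{v-t}R[\tfrac{1}{v-t}]$, and since $v$ is regular in $R(\!(v-t)\!)$ one may cancel it, reducing to the intersection $R[\![v-t]\!] \cap \tfrac{1}{v-t}R[\tfrac{1}{v-t}] = 0$. Your ``reduce to constants'' step, while clean for the other two entry types, does not let you avoid invoking both constraints on the below-diagonal entries. Your deduction of the ``in particular'' clause via fpqc descent is fine and amounts to the same thing as the paper's appeal to the injectivity of $L^+\cG(R)\backslash L\cG(R)\hookrightarrow\Gr_{\cG,X}(R)$.
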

\begin{proof} Suppose we have a Noetherian $\bZ[v]$-algebra $R$, sending $v$ to $t\in R$. Let $g_1, g'_1\in L^+\cG(R)$ and $g_2, g'_2\in L^{--}\cG(R)$ such that $g_1g_2=g'_1g'
_2$. Then $g=(g'_1)^{-1}g_1=g'_2(g_2)^{-1}\in \GL_n(R(\!(v-t)\!))$ satisfies:
\begin{itemize}
\item  The entries of $g$ above the diagonal belong to $R[\![v-t]\!]\cap \frac{1}{v-t}R[\frac{1}{v-t}]=0$.
\item  The entries of $g$ below the diagonal belong to $vR[\![v-t]\!]\cap \frac{v}{v-t}R[\frac{1}{v-t}]=0$, since $v$ is regular in $R(\!(v-t)\!)$.
\item  The diagonal entries of $g$ belong to $R[\![v-t]\!]\cap (1+\frac{1}{v-t}R[\frac{1}{v-t}])$, and hence are equal to $1$.
\end{itemize}  
We conclude that $g=1$, hence $g_1=g'_1$, $g_2=g'_2$.

For the last statement, we observe that the natural map $L^+\cG(R) \backslash L\cG(R)\into \Gr_{\cG,X}(R)$ is an injection for any $\Z[v]$-algebra $R$.
\end{proof}

{We now define various Lie algebras that will appear in \S \ref{sec:sp:fib}, \S\ref{sec:componentmatching}, \S\ref{sub:GBases}}
Let $R\onto S$ be a surjection of $\bZ[v]$-algebra (sending $v$ to $t\in R$), such that $J=\ker(R\onto S)$ is a square-zero ideal. Define the $S$-modules%
\begin{align*}
\Lie L^{--}\cG(J) &=\left\{\, M\in M_n\Big(J\Big[\frac{1}{v-t}\Big]\Big),\quad
            \begin{varwidth}{\textwidth}
               \flushleft
                $M$ is nilpotent lower triangular mod $\frac{1}{v-t}$, \\
                 and is upper triangular mod $\frac{v}{v-t}$
            \end{varwidth}
            \,\right\},
\\
&\\
\Lie L\cG(J) &=\left\{\,
  M\in M_n(J(\!({v-t})\!)), 
\text{\small{ $M$ is upper triangular mod $v$}} 
\right\},
\\
&\\
\Lie L^+\cG(J) &=\left\{\,  M\in M_n(J[\![{v-t}]\!]), 
\text{\small{ $M$ is upper triangular mod $v$}}
\right\}.
\end{align*}
We observe that the map $M\mapsto 1+M$ gives a canonical isomorphism $\Lie L\cG(J)\cong \ker(L\cG(R)\onto L\cG(S))$. This gives an action of $L\cG(R)$ on $\Lie \cG(J)$ by conjugation, which factors through $L\cG(S)$, where we interpret matrix multiplication using the $S=R/J$-module structure on $J=J/J^2$. The same discussion also applies to $L^{--}\cG$ and $L^+\cG$.

\begin{lemma} \label{lem:Lie_algebra_decomp} Assume that we have a square-zero extension $R\onto S$ of $\bZ[v]$-algebras, with kernel $J$. Then inside $\Lie L\cG(J)$, we have a direct sum decomposition
\[\Lie L\cG(J) =\Lie L^{--}\cG(J)\oplus \Lie L^+\cG(J). \] 
\end{lemma}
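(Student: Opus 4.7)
The plan is to verify the direct sum decomposition entry-by-entry, using as the essential input the $J$-module splitting
\[J(\!(v-t)\!) \;=\; J[\![v-t]\!] \,\oplus\, \tfrac{1}{v-t}\,J\bigl[\tfrac{1}{v-t}\bigr]\]
obtained by separating non-negative and strictly negative powers of $v-t$ in a Laurent series. Directness of the sum I would deduce from Lemma \ref{lem:negative_loop_mono} infinitesimally: if $(x,y)\in \Lie L^+\cG(J)\times \Lie L^{--}\cG(J)$ satisfies $x+y=0$, then $(1+x)(1+y)=1+(x+y)+xy=1$ in $L\cG(R)$ (using $xy=0$ since $J^2=0$), and comparing with the trivial factorization $(1,1)\mapsto 1$ the monomorphism property of Lemma \ref{lem:negative_loop_mono} forces $(1+x,1+y)=(1,1)$, hence $x=y=0$.

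For surjectivity, given $M\in \Lie L\cG(J)$, I would split each matrix entry separately. For entries $m_{ij}$ with $i\leq j$ the ``upper triangular mod $v$'' constraint is vacuous, so I simply take $m_{ij}^+$ to be the non-negative part of $m_{ij}$ in $J[\![v-t]\!]$ and $m_{ij}^-$ to be the strictly negative part in $\tfrac{1}{v-t}J[\tfrac{1}{v-t}]$. Both pieces land in the correct subspace: in particular $m_{ij}^-$ has no constant-in-$\tfrac{1}{v-t}$ term for $i\leq j$, which is exactly the ``nilpotent lower triangular mod $\tfrac{1}{v-t}$'' condition for $\Lie L^{--}\cG(J)$ on these entries.

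The strictly lower entries ($i>j$) are the delicate case, since there $m_{ij}\in v\cdot J(\!(v-t)\!)$. My plan is to write $m_{ij}=v\cdot g_{ij}$ with $g_{ij}\in J(\!(v-t)\!)$, split $g_{ij}=g_{ij}^++g_{ij}^-$ via the same decomposition, and set $m_{ij}^\pm=v\cdot g_{ij}^\pm$. Then $m_{ij}^+\in v\,J[\![v-t]\!]$, as required for $\Lie L^+\cG(J)$. For $m_{ij}^-=v\cdot g_{ij}^-$, the relation $(v-t)\cdot\tfrac{1}{(v-t)^k}=\tfrac{1}{(v-t)^{k-1}}$ shows it lies in $J[\tfrac{1}{v-t}]$. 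The key observation is the clean identity
\[v\cdot \tfrac{1}{v-t} \;=\; \tfrac{v}{v-t},\]
which immediately gives $v\cdot\tfrac{1}{(v-t)^k}\equiv 0$ modulo $\tfrac{v}{v-t}$ for all $k\geq 1$, so that $m_{ij}^-\equiv 0$ there; this furnishes the final ``upper triangular mod $\tfrac{v}{v-t}$'' piece of the $\Lie L^{--}\cG(J)$ condition at strictly-lower entries.

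The main technical obstacle is bookkeeping these various triangularity conditions simultaneously on the strictly-lower entries, in particular the computation showing $m_{ij}^-\equiv 0$ modulo $\tfrac{v}{v-t}$. Once this membership is verified the directness obtained from Lemma \ref{lem:negative_loop_mono} forces the resulting $M^\pm$ to be independent of the (possibly non-unique) choice of $g_{ij}$: any two decompositions $M_1^++M_1^-=M_2^++M_2^-$ differ by an element of $\Lie L^+\cG(J)\cap \Lie L^{--}\cG(J)=0$.
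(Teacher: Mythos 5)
Your proof is correct, and the entry-by-entry Laurent-series split that drives surjectivity is the same strategy as the paper's, whose one-line proof simply records the two direct-sum decompositions $J(\!(v-t)\!) = J[\![v-t]\!] \oplus \frac{1}{v-t}J[\frac{1}{v-t}]$ and $vJ(\!(v-t)\!) = vJ[\![v-t]\!] \oplus \frac{v}{v-t}J[\frac{1}{v-t}]$. Where you genuinely differ is in the directness argument: you pass to the group level and use the monomorphism property of Lemma \ref{lem:negative_loop_mono} applied to the identity $(1+x)(1+y)=1$, whereas the paper asserts directness of the second displayed module decomposition outright. That directness in turn amounts to $v$ acting as a non-zero-divisor on $J(\!(v-t)\!)$, a Krull-intersection consequence of the Noetherian hypothesis of exactly the same flavor as the regularity of $v$ in $R(\!(v-t)\!)$ invoked in the proof of Lemma \ref{lem:negative_loop_mono}. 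So your route recycles the group lemma rather than replaying that Krull argument at the Lie-algebra level. This is slightly neater and also makes the dependence on Noetherianness explicit, which is welcome since some finiteness on $J$ really is needed for the second sum to be direct; the trade-off is that the paper's formulation is self-contained at the module level while yours leans on the group-level lemma. Both routes are sound given the standing Noetherian convention.
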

\begin{proof} This follows from the direct sum decompositions
\[J(\!(v-t)\!)\,=\ J[\![v-t]\!]\  \text{\Large{$\oplus$}}\ \frac{1}{v-t} J\Big[\frac{1}{v-t}\Big],\]
\[vJ(\!(v-t)\!)=vJ[\![v-t]\!]\ \text{\Large{$\oplus$}}\ \frac{v}{v-t}J\Big[\frac{1}{v-t}\Big].\]
\end{proof}

\begin{defn} \label{defn:etale}
Let $f: F\to G$ be a morphism of functors on Noetherian rings. 
We say that $f$ is formally \'{e}tale at $x$ if for every commutative diagram
\[
\xymatrix@=3pc{
\Spec k\ar^-{{x}}[r]\ar[d]&F\ar^-{f}[d]\\
\Spec A\ar[r]\ar@{..>}[ur]&G
}
\]
with $A$ an Artinian ring with residue field $k$, there is a unique dotted arrow that makes the diagram commute.
\end{defn}
\begin{rmk}\label{rmk:standard_etale} \begin{enumerate}
\item The above notion of formally \'{e}tale is slightly weaker than the definition in  \cite[\href{https://stacks.math.columbia.edu/tag/049S}{Tag 049S}]{stacks-project}, since we only consider the lifting problems for thickenings of \emph{Artinian} affine schemes as opposed to general affine schemes.
However, for representable functors $F$, $G$ such that $G$ is locally Noetherian and $f$ is locally of finite type  \cite[\href{https://stacks.math.columbia.edu/tag/02HY}{Tag 02HY}]{stacks-project} shows that $f$ being formally \'{e}tale in the sense of Definition \ref{defn:etale} implies $f$ is \'{e}tale (and hence also formally \'{e}tale) in the sense of \cite[\href{https://stacks.math.columbia.edu/tag/049S}{Tag 049S}]{stacks-project}.
\item It is clear that being formally \'{e}tale in the above sense is preserved by composition and arbitrary base change.
\end{enumerate}
\end{rmk}
\begin{lemma} \label{lem:formally_etale_negative_loop}The multiplication map
\[L^+\cG \times_X L^{--}\cG \rightarrow L\cG\]
is formally \'{e}tale. Hence, the same is true for the natural map $L^{--}\cG \to \Gr_{\cG,X}$.
\end{lemma}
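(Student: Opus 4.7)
My plan is to treat the two statements in order, deducing the second from the first.

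\textbf{First statement: the multiplication map $m: L^{+}\cG \times_X L^{--}\cG \to L\cG$ is formally \'etale.} Since $m$ is already a monomorphism by Lemma \ref{lem:negative_loop_mono}, uniqueness of lifts in Definition \ref{defn:etale} is automatic and I only need to verify existence. So fix an Artinian $\Z[v]$-algebra $R$ with a square-zero ideal $J$, set $S = R/J$, and suppose we are given $(\bar g_{1},\bar g_{2}) \in L^{+}\cG(S) \times_X L^{--}\cG(S)$ and $g \in L\cG(R)$ with $g \equiv \bar g_{1}\bar g_{2} \pmod{J}$. First, using formal smoothness of $L^{+}\cG$ and $L^{--}\cG$ over $X$, I pick arbitrary lifts $g_{1}^{0} \in L^{+}\cG(R)$ and $g_{2}^{0} \in L^{--}\cG(R)$ of $\bar g_{1}$ and $\bar g_{2}$. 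Then $(g_{1}^{0}g_{2}^{0})^{-1} g = 1 + M$ for some $M \in \Lie L\cG(J)$.

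\textbf{Correction step.} The idea is to modify the chosen lifts by small elements in the kernel of reduction mod $J$ on each side, setting $\tld g_{1} \defeq g_{1}^{0}(1 + Y_{1})$ with $Y_{1} \in \Lie L^{+}\cG(J)$ and $\tld g_{2} \defeq (1 + Y_{2}) g_{2}^{0}$ with $Y_{2} \in \Lie L^{--}\cG(J)$. Using $J^{2} = 0$ one computes
\[
\tld g_{1} \tld g_{2} \;=\; g_{1}^{0}(1 + Y_{1} + Y_{2}) g_{2}^{0} \;=\; g_{1}^{0} g_{2}^{0} \bigl(1 + \Ad((g_{2}^{0})^{-1})(Y_{1} + Y_{2})\bigr),
\]
so the condition $\tld g_{1}\tld g_{2} = g$ becomes $Y_{1} + Y_{2} = \Ad(g_{2}^{0})(M)$. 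Now conjugation by an element of $L\cG(R)$ preserves the kernel of $L\cG(R) \onto L\cG(S)$, and hence preserves $\Lie L\cG(J)$, so $\Ad(g_{2}^{0})(M) \in \Lie L\cG(J)$. Applying the direct sum decomposition of Lemma \ref{lem:Lie_algebra_decomp} to this element produces unique $N^{+} \in \Lie L^{+}\cG(J)$ and $N^{--} \in \Lie L^{--}\cG(J)$ with $N^{+} + N^{--} = \Ad(g_{2}^{0})(M)$; taking $Y_{1} = N^{+}$, $Y_{2} = N^{--}$ yields the required factorization.

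\textbf{Second statement: $L^{--}\cG \to \Gr_{\cG,X}$ is formally \'etale.} Uniqueness of lifts again follows from the monomorphism property already proved in Lemma \ref{lem:negative_loop_mono}. For existence, I use that $L\cG \to \Gr_{\cG,X}$ is an $L^{+}\cG$-torsor, hence formally smooth. Given a lifting problem consisting of $x \in L^{--}\cG(k) \subset L\cG(k)$ and $y \in \Gr_{\cG,X}(A)$ lifting the image of $x$ (where $A$ is Artinian with residue field $k$), I inductively apply formal smoothness of $L\cG \to \Gr_{\cG,X}$ along a chain of square-zero thickenings from $k$ to $A$ to lift $y$ to some $g \in L\cG(A)$ with $g|_{k} = x$. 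By the formal \'etaleness of $m$ established in the first part, applied at the $k$-point $(1,x) \in L^{+}\cG(k) \times_X L^{--}\cG(k)$, there is a unique factorization $g = g_{1} g_{2}$ with $g_{1} \in L^{+}\cG(A)$, $g_{2} \in L^{--}\cG(A)$, $g_{1}|_{k} = 1$ and $g_{2}|_{k} = x$. Then $g_{2}$ lifts $x$, and since $g_{1} \in L^{+}\cG(A)$, its image in $\Gr_{\cG,X}(A)$ coincides with that of $g$, namely $y$.

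The only genuinely non-formal input is the correction step of the first statement, which reduces the whole argument to the Iwahori-type decomposition of $\Lie L\cG(J)$ given by Lemma \ref{lem:Lie_algebra_decomp}; the rest is a standard deformation-theoretic bookkeeping. I do not expect any serious obstacle beyond verifying that $\Ad(g_{2}^{0})(M)$ indeed lies in $\Lie L\cG(J)$ (which is immediate from the fact that the ``upper triangular mod $v$'' condition is preserved by matrix conjugation).
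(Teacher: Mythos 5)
Your proof is correct and follows essentially the same route as the paper: use formal smoothness of $L^{+}\cG$ and $L^{--}\cG$ to pick arbitrary lifts, identify the discrepancy as an element of $\Lie L\cG(J)$, and correct using the direct-sum decomposition of Lemma \ref{lem:Lie_algebra_decomp}. The one cosmetic difference is that the paper sandwiches the error as $(g'_1)^{-1}g(g'_2)^{-1} = 1 + \eps X$, so that the decomposition $\eps X = \eps X_1 + \eps X_2$ can be peeled off to the two sides without any conjugation, whereas you put the error on the right as $(g_1^0 g_2^0)^{-1}g = 1+M$ and therefore need to transport it past $g_2^0$ via $\Ad(g_2^0)$; both work, the paper's bookkeeping is marginally lighter. (The paper also reduces to length-one square-zero extensions by induction rather than handling an arbitrary square-zero ideal $J$ directly, but that is immaterial.) Your argument for the second statement spells out what the paper leaves implicit and is fine.
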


\begin{proof}We consider the commutative diagram:
\[
\xymatrix@=3pc{
\Spec k\ar^-{x}[r]\ar[d]&L^+\cG \times_X L^{--}\cG\ar[d]\\
\Spec A\ar[r]\ar@{..>}[ur]&L\cG
}
\]
where $(A,\fm_A)$ is an Artinian local ring with residue field $A/\fm_A=k$. Composing with the projection $L\cG \to X$ makes $A$ naturally a $\bZ[v]$-algebra, sending $v$ to $t$ lifting $\ovl{t}\in k$. 
The top horizontal arrow corresponds to a pair $\ovl{g}_1\in L^{+}\cG(k)$, $\ovl{g}_2\in L^{--}\cG(k)$.
The bottom horizontal arrow correspond to $g\in L\cG(A)$ lifting $\ovl{g}=\ovl{g}_1\ovl{g}_2$.

We need to show that the dotted arrow exists and is unique. We assume $\fm_A\neq 0$, otherwise there is nothing to prove.
The uniqueness follows from the fact that the right vertical map is a monomorphism, by Lemma \ref{lem:negative_loop_mono}.

We now show the existence of the dotted arrow, that is we need to show that $g$ admits a decomposition $g=g_1g_2$ with $g_1\in L^{+}\cG(A)$, $g_2\in L^{--}\cG(A)$. By inducting on the length of $A$, we may assume that we have the desired decomposition for $g$ mod $\eps$, where $0\neq \eps \in \fm_A$ is annihilated by $\fm_A$. We have the square-zero extension $A\onto A/\eps$. Since $L^+\cG$ and $L^{--}\cG$ are formally smooth, we can find $g'_1 \in L^+\cG(A)$, $g'_2\in L^{--}\cG(A)$ such that $(g'_1)^{-1}g (g'_2)^{-1}\in \ker(L\cG(A)\to L\cG(A/\eps))=1+\eps X$.
By Lemma \ref{lem:Lie_algebra_decomp} (noting that $\ker(L\cG(A)\to L\cG(A/\eps))$ is canonically isomorphic to $\Lie(L\cG)(k\eps)$), we can decompose $\eps X=\eps X_1+\eps X_2$ such that $(1+\eps X_1)\in \ker(L^+\cG(A)\to L^+\cG(A/\eps))$, $(1+\eps X_2)\in \ker(L^{--}\cG(A)\to L^{--}\cG(A/\eps))$. This yields the desired decomposition 
\[g=\big(g'_1(1+\eps X_1)\big)\big( (1+\eps X_2)g'_2\big).\]
\end{proof}

Let $\tld{z}=wt_{\nu} \in \tld{W}^{\vee}$ as defined in \S \ref{sub:AWG:1}. We define $\cU(\tld{z})$ to be the subfunctor of $L\cG$ whose value on a Noetherian $\bZ[v]$-algebra $R$ (sending $v$ to $t\in R$) is given by %
\begin{align*}
\cU(\tld{z})(R)=\left\{\,A\in \GL_n(R(\!(v-t)\!))\ \bigg| \ 
            \begin{varwidth}{\textwidth}
               \centering
                $A (v-t)^{-\nu} w^{-1} \in \GL_n(R[\frac{1}{v-t}])$ is unipotent lower triangular mod $\frac{1}{v-t}$\\
\vspace{.1cm}
                 and $A(v-t)^{-\nu}\in \GL_n(R[\frac{1}{v-t}])$ is upper triangular mod $\frac{v}{v-t}$
            \end{varwidth}
            \,\right\}.
\end{align*}
\begin{lemma}\label{lem:open_chart_mono} Left multiplication by $L^{--}\cG$ in $L\cG$ preserves $\cU(\tld{z})$, and makes $\cU(\tld{z})$ an $L^{--}\cG$-torsor. The natural map $\cU(\tld{z})\to \Gr_{\cG,X}$ is a formally \'{e}tale monomorphism.
\end{lemma}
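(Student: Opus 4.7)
The plan is to verify the three claims—that $L^{--}\cG$ acts as a torsor, that the map is a monomorphism, and that it is formally étale—in turn. For the torsor property, I would first check that left multiplication by $L^{--}\cG$ preserves $\cU(\tld{z})$ by directly verifying the two defining conditions: for $h \in L^{--}\cG(R)$ and $A \in \cU(\tld{z})(R)$, the product $(hA)(v-t)^{-\nu}w^{-1} = h\cdot (A(v-t)^{-\nu}w^{-1})$ is a product of two matrices in $\GL_n(R[\frac{1}{v-t}])$ both unipotent lower triangular mod $\frac{1}{v-t}$, hence again unipotent lower triangular mod $\frac{1}{v-t}$; analogously $(hA)(v-t)^{-\nu}$ is upper triangular mod $\frac{v}{v-t}$. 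For simple transitivity, given $A_1,A_2\in \cU(\tld{z})(R)$, setting $A'_i \defeq A_i(v-t)^{-\nu}$, the element $A_2A_1^{-1} = A'_2(A'_1)^{-1}$ lies in $\GL_n(R[\frac{1}{v-t}])$, is upper triangular mod $\frac{v}{v-t}$ (product of two such), and reduces modulo $\frac{1}{v-t}$ to $U_2U_1^{-1}$ (writing $A'_i \equiv U_iw \pmod{\frac{1}{v-t}}$ with $U_i$ unipotent lower triangular), hence is unipotent lower triangular mod $\frac{1}{v-t}$ and thus lies in $L^{--}\cG(R)$.

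For the monomorphism property, if $A_1,A_2\in \cU(\tld{z})(R)$ have the same image in $\Gr_{\cG,X}(R)$, then by the injectivity $L^+\cG(R)\backslash L\cG(R)\into \Gr_{\cG,X}(R)$ there is some $g \in L^+\cG(R)$ with $A_2=gA_1$. From the torsor structure $g = A_2A_1^{-1} \in L^{--}\cG(R)$, so $g \in L^+\cG(R)\cap L^{--}\cG(R)$; repeating the computation in the proof of Lemma \ref{lem:negative_loop_mono} verbatim shows this intersection is trivial, so $A_1=A_2$.

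For formal étaleness, I would adapt the argument of Lemma \ref{lem:formally_etale_negative_loop}: it suffices to show that the multiplication map $L^+\cG\times_X \cU(\tld{z})\to L\cG$ is formally étale, from which the formal étaleness of $\cU(\tld{z})\to \Gr_{\cG,X}$ follows as in \emph{loc.~cit.} Uniqueness in the corresponding lifting problem reduces to the triviality of $L^+\cG\cap L^{--}\cG$ just as in the monomorphism step. For existence, I would induct on the length of an Artinian $\bZ[v]$-algebra $A$: at each stage, formal smoothness of $L^+\cG$ and $\cU(\tld{z})$ (the latter being a torsor over the formally smooth $L^{--}\cG$) produces approximate lifts, which one then corrects using the Lie algebra decomposition of Lemma \ref{lem:Lie_algebra_decomp}, exploiting that the tangent space of $\cU(\tld{z})$ at any point is a translate of $\Lie L^{--}\cG$ and thus complements $\Lie L^+\cG$ inside $\Lie L\cG(J)$. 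The main technical point is in this formal étaleness step—specifically handling the non-group nature of $\cU(\tld{z})$ when applying the Lie algebra decomposition—but since the decomposition of Lemma \ref{lem:Lie_algebra_decomp} is intrinsic to $\Lie L\cG(J)$ and independent of base-point choices in $\cU(\tld{z})$, the induction goes through exactly as in Lemma \ref{lem:formally_etale_negative_loop}.
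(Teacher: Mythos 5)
Your arguments for the torsor property and for the monomorphism are correct and essentially identical to the paper's (the latter cites the triviality of $L^+\cG\cap L^{--}\cG$ from Lemma~\ref{lem:negative_loop_mono} directly rather than re-running the computation, but this is a matter of taste). For formal \'etaleness, however, the paper argues differently and more efficiently: it first establishes that $\cU(\tld{z})\to X$ is formally smooth by \emph{directly} inspecting the coordinate description of $\cU(\tld{z})(R)$ --- each matrix entry is of the form $(v-t)^{d}\,(\tfrac{1}{v-t})^{\delta}\,(\tfrac{v}{v-t})^{\delta'}R[\tfrac{1}{v-t}]$ with exponents depending only on the indices and $\tld{z}$, so the restriction map $\cU(\tld{z})(R)\to\cU(\tld{z})(S)$ is manifestly surjective for any square-zero thickening $R\onto S$ (invertibility lifts automatically along nilpotent thickenings) --- and then applies Lemma~\ref{lem:formally_etale_negative_loop} as a black box. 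Your alternative of re-running the induction of Lemma~\ref{lem:formally_etale_negative_loop} with $L^{--}\cG$ replaced by $\cU(\tld{z})$ is workable, but it still needs formal smoothness of $\cU(\tld{z})$ over $X$ as an input at the lifting step.

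This is where there is a genuine gap: your justification of that formal smoothness --- that $\cU(\tld{z})$ is \emph{``a torsor over the formally smooth $L^{--}\cG$''} --- does not actually deliver formal smoothness over $X$. Being a torsor for a formally smooth group scheme only gives formal smoothness over the base of the torsor, and the base here is the fppf quotient $L^{--}\cG\backslash\cU(\tld{z})$ (equivalently, the image of $\cU(\tld{z})$ inside $\Gr_{\cG,X}$), \emph{not} $X$. The assertion you need is precisely that for a square-zero thickening $R\onto S$, the set $\cU(\tld{z})(R)$ is nonempty whenever $\cU(\tld{z})(S)$ is, and the torsor structure alone says nothing about this (invoking formal \'etaleness of the quotient over $X$ would be circular). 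You should instead appeal to the explicit parametrization of the entries of a point of $\cU(\tld{z})$ as Laurent polynomials in $v-t$ with degree constraints, as in the paper; once this is in place, your inductive adaptation of Lemma~\ref{lem:formally_etale_negative_loop} goes through, since --- as you correctly note --- the tangent space at any point of $\cU(\tld{z})$ is a translate of $\Lie L^{--}\cG$ and therefore complements $\Lie L^+\cG$ by Lemma~\ref{lem:Lie_algebra_decomp}.
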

\begin{proof} The first claim follows immediately from the definitions. For the second claim, note that for any Noetherian $\bZ[v]$-algebra $R$, either $\cU(\tld{z})(R)=\emptyset$, or it is a left coset of $L^{--}\cG(R)$ in $L\cG(R)$. The fact that $\cU(\tld{z})\to \Gr_{\cG,X}$ is a monomorphism then follows from Lemma \ref{lem:negative_loop_mono}. 

To show that $\cU(\tld{z})\to \Gr_{\cG,X}$ is formally \'{e}tale, we first note that $\cU(\tld{z})$ is formally smooth over $X$.
Indeed the condition that $A\in \GL_n(R(\!(v-t)\!))$ belongs to $\cU(\tld{z})(R)$ is that each entry $A_{ij}$ of $A$ has the form $(v-t)^d(\frac{1}{v-t})^{\delta}(\frac{v}{v-t})^{\delta'}R[\frac{1}{v-t}]$, where $d\in \Z$, $\delta,\delta'\in \{0,1\}$ are determined by $i,j$ and $\tld{z}$, and hence it is clear that the map $\cU(\tld{z})(R)\to \cU(\tld{z})(S)$ is surjective for any square-zero nilpotent thickening $R\onto S$. This together with Lemma \ref{lem:formally_etale_negative_loop} shows that $\cU(\tld{z})\to \Gr_{\cG,X}$ is formally \'{e}tale.
\end{proof}

For $h\geq 0$, we define $\cU(\tld{z})^{\det,\leq h}$ to be the intersection $\cU(\tld{z})\cap L\cG^{det=d, \leq h}$, where $d=|\!|\nu|\!|:=\sum_i \nu_i$ if $\nu=(\nu_i)_i\in X_*(T^\vee)=\Z^n$.
We have the following explicit description:
\begin{prop}\label{prop:explicit_affine_chart}
 For a Noetherian $\bZ[v]$-algebra $R$, $\cU(\tld{z})^{\det,\leq h}(R)$ is the set of $n\times n$ matrices $A$ with Laurent polynomial entries $A_{ij}\in R[v-t,\frac{1}{v-t}]$ satisfying the following degree bound and determinant condition:
\begin{itemize}
\item For $1\leq i, j\leq n$, 
\[A_{ij}=v^{\delta_{i>j}}\Bigg(\sum_{k=-h}^{ \nu_j-\delta_{i>j}-\delta_{i<w(j)} }c_{ij,k}(v-t)^k\Bigg),\]
 and $c_{w(j) j, \nu_j-\delta_{w(j)>j}} = 1$.
\item $\det A=\det(w) (v-t)^{|\!|\nu|\!|}$.
\end{itemize}
\end{prop}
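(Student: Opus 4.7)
The plan is to unpack the defining conditions of $\cU(\tld{z})^{\det,\leq h}(R)$ one at a time and translate them into entry-wise constraints. Write $\tld{z}=wt_\nu$ and $D=(v-t)^\nu$ for the diagonal matrix. Then $A\in \cU(\tld{z})(R)$ amounts to two conditions on the matrices $AD^{-1}w^{-1}$ and $AD^{-1}$ respectively.

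First I would compute $(AD^{-1}w^{-1})_{ij}=A_{i,w^{-1}(j)}(v-t)^{-\nu_{w^{-1}(j)}}$, so that the requirement $AD^{-1}w^{-1}\in\GL_n(R[\tfrac{1}{v-t}])$ is the bound $\deg_{v-t}A_{ij}\leq \nu_j$. The unipotent lower triangular mod $\frac{1}{v-t}$ condition then reads: for $i<w(j)$ the constant term of $A_{ij}(v-t)^{-\nu_j}$ vanishes (so $\deg_{v-t}A_{ij}\leq \nu_j-1$, accounting for the $\delta_{i<w(j)}$ shift); for $i=w(j)$ this constant term equals $1$, giving the pivot condition $c_{w(j)j,\nu_j}=1$; for $i>w(j)$ there is no further constraint. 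Next, $(AD^{-1})_{ij}=A_{ij}(v-t)^{-\nu_j}$, and since $\tfrac{v}{v-t}=1+t(v-t)^{-1}$, the upper triangular mod $\frac{v}{v-t}$ condition for $i>j$ translates exactly into $A_{ij}\in v\cdot(v-t)^{\nu_j-1}R[\tfrac{1}{v-t}]$, i.e.\ $A_{ij}=v\cdot h_{ij}$ with $\deg_{v-t}h_{ij}\leq\nu_j-1$; this accounts for the $v^{\delta_{i>j}}$ factor and the further $-\delta_{i>j}$ shift in the upper summation bound. Using $v=(v-t)+t$ one checks the coefficient at $(v-t)^{\nu_j}$ of $A_{w(j),j}$ (when $w(j)>j$) is $c_{w(j)j,\nu_j-1}$, matching the pivot condition $c_{w(j)j,\nu_j-\delta_{w(j)>j}}=1$.

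For the ``$\leq h$'' bound, $A\in L\cG^{\det=d,\leq h}$ means $(v-t)^h A\in L^+\cM$. The entry-wise condition is $(v-t)^h A_{ij}\in R[\![v-t]\!]$ (with $v$ divisibility when $i>j$); using that $v$ is a non-zero-divisor in $R[\![v-t]\!]$ (since $v=t+(v-t)$), this is equivalent to $\deg_{v-t}A_{ij}\geq -h$ for $i\leq j$ and $\deg_{v-t}h_{ij}\geq -h$ for $i>j$. This provides the lower summation bound $k\geq -h$ appearing in the statement.

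Finally, I would handle the determinant condition. Expanding $\det A=\sum_{\sigma\in S_n}\mathrm{sgn}(\sigma)\prod_j A_{\sigma(j),j}$ and using the degree bound $\deg_{v-t}A_{ij}\leq \nu_j-\delta_{i<w(j)}$ established above, each summand has degree at most $\|\nu\|-|\{j:\sigma(j)<w(j)\}|$. The maximum $\|\nu\|$ is attained only when $\sigma(j)\geq w(j)$ for every $j$; summing over $j$ this forces $\sigma=w$, and the corresponding leading coefficient is $\mathrm{sgn}(w)\prod_j 1=\det(w)$. Thus $\det A$ is a Laurent polynomial in $(v-t)$ of top degree $\leq \|\nu\|$ with leading coefficient $\det(w)$. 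Now the condition $A\in L\cG^{\det=d}$ with $d=\|\nu\|$ means $(v-t)^{-\|\nu\|}\det A\in R[\![v-t]\!]^\times$; combined with $\det A\in R[v-t,\tfrac{1}{v-t}]$ this forces $(v-t)^{-\|\nu\|}\det A\in R[\![v-t]\!]\cap R[v-t,\tfrac{1}{v-t}]\cap(\text{deg }\leq 0)=R$, and invertibility together with the leading coefficient computation yield $\det A=\det(w)(v-t)^{\|\nu\|}$. Reversing each of these steps shows conversely that any $A$ in the explicit form of the proposition lies in $\cU(\tld{z})^{\det,\leq h}(R)$. The main bookkeeping obstacle is simply tracking the effect of the factor $v=(v-t)+t$ on leading coefficients when $w(j)>j$; all other steps are direct translations.
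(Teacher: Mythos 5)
Your proof is correct. For the first bullet your "unraveling" is exactly what the paper means by that phrase — compute the entries of $A(v-t)^{-\nu}w^{-1}$ and $A(v-t)^{-\nu}$, translate the "unipotent lower triangular mod $\tfrac{1}{v-t}$'' and "upper triangular mod $\tfrac{v}{v-t}$'' constraints into degree bounds and vanishing/pivot conditions, and account for the shift coming from $v=(v-t)+t$ when $w(j)>j$. The paper doesn't spell this out but it is the same computation.

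For the determinant bullet you take a genuinely different route. The paper exploits the torsor structure: since $A\in\cU(\tld z)(R)=L^{--}\cG(R)\tld z$, one has $A=U\tld z$ with $U$ unipotent lower triangular mod $\tfrac{1}{v-t}$, hence $\det U\in 1+\tfrac{1}{v-t}R[\tfrac{1}{v-t}]$, and the $L\cG^{\det=\|\nu\|}$ condition $\det U\in R[\![v-t]\!]^\times$ then forces $\det U=1$ because $R[\![v-t]\!]\cap\tfrac{1}{v-t}R[\tfrac{1}{v-t}]=0$ inside $R(\!(v-t)\!)$. You instead expand $\det A$ by the Leibniz formula and use the per-entry degree bounds $\deg_{v-t}A_{ij}\le\nu_j-\delta_{i<w(j)}$ to locate the degree-$\|\nu\|$ coefficient as $\det(w)$, then argue $(v-t)^{-\|\nu\|}\det A$ is a constant. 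Both arguments are valid; the paper's is more structural and shorter (it reuses the $L^{--}\cG$-torsor description from Lemma 3.2.4), while yours is self-contained and makes the leading-coefficient computation explicit — the key combinatorial step, that $\sigma(j)\ge w(j)$ for all $j$ forces $\sigma=w$ (by comparing $\sum_j\sigma(j)=\sum_j w(j)$), is correct.
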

\begin{proof} The first item follows from unraveling the definition. For the second item, the condition given in the definition is $\det A\in R[\![v-t]\!]^\times (v-t)^{|\!|\nu|\!|}$. However, a priori $\det A \in \det(w)(v-t)^{|\!|\nu|\!|}(1+\frac{1}{v-t}R[\frac{1}{v-t}])$, hence the determinant condition is equivalent to $\det A=\det(w) (v-t)^{|\!|\nu|\!|}$.
\end{proof}
Thus $\cU(\tld{z})^{\det,\leq h}$ is representable by an affine scheme of finite type over $\bZ$, namely the spectrum of the quotient of the polynomial ring generated by the coefficients $c_{ij,k}$ modulo the relations given by the determinant condition. Note that $\cU(\tld{z})^{\det,\leq h}=\emptyset$ unless $h$ is sufficiently large, namely when $h+\nu_j-\delta_{i>j}-\delta_{i<w(j)}\geq 0$ for all $i, j$. 

\begin{defn} \label{defn:section} When $\cU(\tld{z})^{\det,\leq h} \neq \emptyset$, there is a section $\Spec \Z \into \cU(\tld{z})^{\det,\leq h}\times_X \{0\}$ given by the element $\tld{z}=wv^\nu\in \cU(\tld{z})^{\det,\leq h}(R)\subset \GL_n(R(\!(v)\!))$, for any Noetherian $\Z[v]$ algebra $R$ sending $v$ to $0$. We will abusively denote this section and the corresponding $\Z$-point of $\Gr_{\cG, X} \times_X \{0\}$ by $\tld{z}$.
\end{defn}

\begin{cor} 
\label{cor:open:immersion}
The natural map
\begin{align*}
\iota:\cU(\tld{z})^{\det,\leq h} \rightarrow \Gr^{\det=|\!|\nu|\!|,\leq h}_{\cG,X}
\end{align*}
is an open immersion.
\end{cor}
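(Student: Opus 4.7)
The plan is to deduce the statement directly from Lemma \ref{lem:open_chart_mono} by appealing to the standard characterization of open immersions as \'etale monomorphisms (between locally Noetherian schemes). Indeed, both source and target are built by imposing closed conditions ($\det=|\!|\nu|\!|$ and $\leq h$) on the unbounded objects $\cU(\tld{z})$ and $\Gr_{\cG,X}$, so the properties of monomorphism and formal \'etaleness pass to the fiber product $\cU(\tld{z})^{\det,\leq h} = \cU(\tld{z}) \times_{\Gr_{\cG,X}} \Gr^{\det=|\!|\nu|\!|,\leq h}_{\cG,X}$ by base change. The key gain over Lemma \ref{lem:open_chart_mono} is that now the schemes in play are finite type: by Proposition \ref{prop:explicit_affine_chart}, $\cU(\tld{z})^{\det,\leq h}$ is an explicit affine scheme of finite type over $\Z$ cut out in a polynomial ring by the determinant relation, and $\Gr^{\det=|\!|\nu|\!|,\leq h}_{\cG,X}$ was already noted to be projective over $X = \bA^1_\Z$.

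First I would verify that $\iota$ is a monomorphism. Given two $R$-points of $\cU(\tld{z})^{\det,\leq h}$ mapping to the same point of $\Gr^{\det=|\!|\nu|\!|,\leq h}_{\cG,X}$, they map to the same point of $\Gr_{\cG,X}$, and Lemma \ref{lem:open_chart_mono} forces them to coincide in $\cU(\tld{z})$, hence in $\cU(\tld{z})^{\det,\leq h}$. Next I would check that $\iota$ is formally \'etale in the sense of Definition \ref{defn:etale}: given a commutative square with an Artinian thickening $\Spec A \hookleftarrow \Spec k$ and horizontal arrows landing in $\Gr^{\det=|\!|\nu|\!|,\leq h}_{\cG,X}$ and $\cU(\tld{z})^{\det,\leq h}$, post-composition with the closed immersions into $\Gr_{\cG,X}$ and $\cU(\tld{z})$ produces the lifting problem addressed by Lemma \ref{lem:open_chart_mono}, and the resulting unique dotted lift automatically factors through $\cU(\tld{z})^{\det,\leq h}$ because the $\det=|\!|\nu|\!|$ and $\leq h$ conditions are closed (their failure loci pull back to closed subschemes of $\Spec A$ disjoint from $\Spec k$, hence empty).

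At this point $\iota$ is a formally \'etale monomorphism between locally Noetherian schemes which is moreover locally of finite type (as $\iota$ is of finite presentation, being the inclusion of an affine finite-type scheme into a projective one over $X$). By Remark \ref{rmk:standard_etale}, formal \'etaleness in the sense of Definition \ref{defn:etale} together with local finite-type-ness upgrades to \'etaleness in the usual sense. Finally, an \'etale monomorphism between locally Noetherian schemes is automatically an open immersion (this is the classical statement \cite[EGA IV$_4$, Th.~17.9.1]{EGAIV4}, or equivalently is flat plus radicial plus locally of finite presentation). This yields the desired conclusion; no step looks like a genuine obstacle, since the heavy lifting was already done in Lemma \ref{lem:open_chart_mono}, and the only real content here is that passing to the bounded, finite-type version promotes the formally \'etale monomorphism to an open immersion.
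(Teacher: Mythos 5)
Your argument is correct and is essentially the paper's proof: both use the fiber-product description $\cU(\tld{z})^{\det,\leq h}= \cU(\tld{z})\times _ {\Gr_{\cG,X} }\Gr^{\det=|\!|\nu|\!|,\leq h}_{\cG,X}$, invoke Lemma \ref{lem:open_chart_mono} to get a formally \'etale monomorphism, upgrade to \'etale via Remark \ref{rmk:standard_etale}, and conclude by the standard fact that \'etale monomorphisms are open immersions. The only difference is that you re-verify the base-change stability of the monomorphism and formal-\'etaleness properties by hand, whereas the paper simply cites Remark \ref{rmk:standard_etale}(2) for this.
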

\begin{proof} We observe that $\cU(\tld{z})^{\det,\leq h}= \cU(\tld{z})\times _ {\Gr_{\cG,X} }\Gr^{\det=|\!|\nu|\!|,\leq h}_{\cG,X}$  
Hence Lemma \ref{lem:open_chart_mono} shows that $\iota$ is a formally \'{e}tale monomorphism. By Remark \ref{rmk:standard_etale}, $\iota$ must then be an \'{e}tale monomorphism, and hence is an open immersion by \cite[\href{https://stacks.math.columbia.edu/tag/025G}{Tag 025G}]{stacks-project}.
\end{proof}

\subsection{Universal local models}
\label{subsec:UMLM}

Let $L\cG^{\nabla}$ be the subfunctor of $L\cG\times _\Z \bA^n$ whose value on a Noetherian $\bZ[v]$-algebra $R$ (sending $v$ to $t\in R$) is given by 
\begin{equation} \label{eq:universalnabla}
\text{\small{$L\cG^{\nabla}(R) \defeq\bigg\{  (g,\bf{a}) |\  g\in L\cG(R),\, \bf{a}\in R^n \text{ and } v \frac{dg}{dv} g^{-1}  + g \Diag(\bf{a}) g^{-1} \in \frac{1}{v-t} L^+\cM(R)\bigg\}$}}
\end{equation}
{(where the symbol $\frac{dg}{dv}$ means we differentiate entry-wise).}
{
\begin{lemma}
The functor $L\cG^{\nabla}$ is stable by left multiplication by $L^+\cG^{\nabla}$.
\end{lemma}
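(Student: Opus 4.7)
The plan is to verify the stability by direct computation using the Leibniz rule. Given $(g,\bf{a}) \in L\cG^{\nabla}(R)$ and $h \in L^+\cG(R)$ (thought of as acting trivially on the $\bf{a}$-coordinate, so we may take the presumed $L^+\cG^{\nabla}$ to be the obvious subfunctor of $L^+\cG \times \bA^n$ cut out by the same condition), I want to show $(hg,\bf{a}) \in L\cG^{\nabla}(R)$. The key identity, immediate from $\frac{d(hg)}{dv} = \frac{dh}{dv}g + h\frac{dg}{dv}$, is
\[
v\frac{d(hg)}{dv}(hg)^{-1} + (hg)\Diag(\bf{a})(hg)^{-1} = v\frac{dh}{dv}h^{-1} + h\!\left(v\frac{dg}{dv}g^{-1} + g\Diag(\bf{a})g^{-1}\right)\!h^{-1},
\]
so the task reduces to showing that each of the two summands on the right lies in $\frac{1}{v-t}L^+\cM(R)$.

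For the first summand, $\frac{dh}{dv}$ has entries in $R[\![v-t]\!]$ since the entries of $h$ do, and $h^{-1} \in L^+\cG(R) \subset M_n(R[\![v-t]\!])$. Thus $v\frac{dh}{dv}h^{-1}$ has entries in $vR[\![v-t]\!]$; in particular it vanishes modulo $v$, which trivially makes it upper triangular modulo $v$. Hence it lies in $vL^+\cM(R) \subset L^+\cM(R) \subset \frac{1}{v-t}L^+\cM(R)$.

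For the second summand, the hypothesis $(g,\bf{a}) \in L\cG^{\nabla}(R)$ gives that the conjugated quantity is $h\cdot M\cdot h^{-1}$ for some $M \in \frac{1}{v-t}L^+\cM(R)$. It therefore suffices to observe that $L^+\cG(R)$ normalizes $L^+\cM(R)$ under conjugation: if $N \in L^+\cM(R)$, then $hNh^{-1}$ again has entries in $R[\![v-t]\!]$, and modulo $v$ it is a product of three matrices which are each upper triangular modulo $v$ (with $h, h^{-1}$ additionally invertible), hence $hNh^{-1}$ is upper triangular modulo $v$. Writing $M = \frac{1}{v-t}N$ with $N \in L^+\cM(R)$, we get $hMh^{-1} = \frac{1}{v-t}(hNh^{-1}) \in \frac{1}{v-t}L^+\cM(R)$, completing the proof.

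The argument is essentially a formal manipulation; the only substantive input is the stability of $L^+\cM$ under $L^+\cG$-conjugation, which is a direct consequence of the upper-triangular-mod-$v$ condition defining both. I do not anticipate any genuine obstacle.
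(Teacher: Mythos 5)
Your proof is correct and follows the same Leibniz-rule computation as the paper's, decomposing the expression for $hg$ into a piece coming from $h$ and the conjugate by $h$ of the piece coming from $g$. One small improvement worth noting: the paper's proof justifies the first summand by asserting $\frac{dh}{dv}\in L^+\cM(R)$, which is actually false in general --- if $h_{ij}=vf$ with $f\in R[\![v-t]\!]$ arbitrary, then $\frac{dh_{ij}}{dv}=f+vf'$ need not lie in $vR[\![v-t]\!]$ --- whereas you correctly argue that $v\frac{dh}{dv}h^{-1}$ has all entries in $vR[\![v-t]\!]$ (hence lies in $L^+\cM(R)$) because of the extra factor of $v$. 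This is the right justification and quietly repairs an imprecision in the paper's phrasing (which likely intended $v\frac{dh}{dv}\in L^+\cM(R)$; the paper's displayed identity is also missing the outer $h^{-1}$, again clearly a typo). Your handling of the second summand via the stability of $L^+\cM$ under $L^+\cG$-conjugation matches the paper's.
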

\begin{proof}
Let $R$ be a Noetherian $\bZ[v]$-algebra, sending $v$ to $t\in R$, and let $h\in L^+\cG^{\nabla}(R)$,  $L\cG^{\nabla}(R)$.
The Leibnitz rule gives
\[
v\frac{d(hg)}{dv}(hg)^{-1}+hg \Diag(\bf{a}) (hg)^{-1}=v\frac{d(h)}{dv}h^{-1}+h\Bigg(v\frac{dg}{dv}g^{-1}+g \Diag(\bf{a}) g^{-1}\Bigg)
\]
and the right hand side is manifestly an element in $\frac{1}{v-t} L^+\cM(R)$ since $\frac{1}{v-t} L^+\cM(R)$ is stable by conjugation by $h$, and that $\frac{d(h)}{dv}\in L^+\cM(R)$.
\end{proof}
}

{Thus, $L\cG^{\nabla}$} defines a closed sub-ind-scheme $\Gr^{\nabla}_{\cG,X} \defeq L^{+}\cG\backslash L\cG^{\nabla} \subset \Gr_{\cG,X}\times_{\Z} \bA^n$ which is ind-proper over $X\times_{\Z} \bA^n$.

For $\lambda \in X_*(T^\vee)$, we have a section $s_\lambda: X \to \Gr_{\cG,X}$ induced by the element $(v-t)^\lambda \in L\cG(R)$ for a $\bZ[v]$-algebra $R$ sending $v$ to $t\in R$.
\begin{rmk}\label{rmk:setting_last_0} For $H\cong \bA^{n-1}\subset \bA^n$ a hyperplane where one of the coordinates is $0$, we have a natural isomorphism
\[L\cG^\nabla \cong (L\cG^\nabla \cap (L\cG \times_\Z H))\times_\Z \bA^1\]
For example, if $H=\{\bf{a}\in \bA^n  \mid \bf{a}_n=0\}$, we have an isomorphism given by
\[(g,\bf{a})\mapsto (g,\bf{a}-(\bf{a}_n,\cdots, \bf{a}_n),\bf{a}_n)\] 
Because this observation, we could have always worked under the assumption that $\bf{a}_n=0$ throughout the entire paper. This minor simplification is useful when implementing computer algebra computations, see for example Appendix \ref{ex:failure:UB}
\end{rmk}
We define the global Schubert variety $\cS_X(\lambda)$ to be the minimal irreducible closed subscheme of $\Gr_{\cG,X}$ which contains $s_\lambda$ and is stable under the right multiplication action of $L^+\cG$ (cf.~\cite[Definition 3.1]{Zhu_coherence}). We will also write $\cS_{X^0}(\lambda)=\cS_X(\lambda)\times _X X^0$. The maps $\cS_X(\lambda)\to X$, $\cS_{X^0}(\lambda)\to X^0$ are proper. 
Note that as in \cite[Lemma 3.6]{Zhu_coherence}, we have an isomorphism $\Gr_{\cG,X}\times_X X^0 \cong \Gr_{\GL_n} \times_{\bZ} X^0$, under which $\cS_{X^0}$ corresponds to the constant family of the Schubert variety of $\Gr_{\GL_n}$ for the coweight $\lambda$ over $X^0$. This description makes it clear that for any geometric point $x$ of $X^0$, the fiber $\cS_X(\lambda) \times_X\,x\subset  \Gr_{\cG,X}\times_X\,x\cong \Gr_{\GL_n}$ is the usual Schubert variety for the coweight $\lambda$ in $\Gr_{\GL_n}$. In particular, we have $\cS_{X^0}(\lambda)=\cS_{X^0}(w(\lambda))$ for $w\in W$.  We also have the open Schubert variety $\cS_{X}^\circ(\lambda)=\cS_X(\lambda)\setminus \bigcup_{\lambda'\in \Conv(\lambda), \lambda'\notin W\lambda} \cS_X(\lambda')$. Over $X^0$, $\cS_{X^0}^\circ(\lambda)$ correspond to the constant family of the open Schubert variety for the coweight $\lambda$ in $\Gr_{\GL_n}$.

Given $\lambda\in X_*(T^\vee)$, we have the stabilizer group scheme of $s_\lambda$ whose values on a $\bZ[v]$-algebra $R$ is given by
\[L^+\cG_{\lambda}(R)=L^+\cG(R) \cap \Ad\!\big((v-t)^{-\lambda}\big)\big(L^+\cG(R)\big) \]
Let $P_\lambda$
be the parabolic subgroup of $\GL_n$ determined by the condition that the $\alpha$-th entry vanishes for all roots $\alpha$ such that $\langle \lambda, \alpha^\vee \rangle < 0$. Then there is a natural map $(L^+\cG_\lambda \backslash L^+\cG)_{X^0} \to P_\lambda \backslash \GL_n\times_{\Z} X^0$ given by $g \mapsto g \textrm{ mod } (v-t)$, which makes $L^+\cG_\lambda \backslash L^+\cG$ into an iterated affine space bundle over the partial flag variety $P_\lambda \backslash \GL_n \times_{\Z} X^0$ {(see the discussion after  \cite[Corollary 2.1.11]{zhu-intro-AG} or \cite[\S 2]{mirkovic-vilonen})}. 

Then for sufficiently large $h$, we have a monomorphism $L^+\cG_\lambda \backslash L^+\cG \into  \Gr^{\det=|\!|\lambda|\!|,\leq h}_{\cG,X}$ given by the orbit map $g\mapsto s_\lambda g$, and $\cS_X(\lambda)$ is the scheme-theoretic image of this map. The orbit map induces an isomorphism $(L^+\cG_\lambda \backslash L^+\cG)\times_X X^0 \cong \cS^\circ_{X^0}(\lambda)$. This gives us a map $\pi_\lambda: \cS_{X^0}^\circ(\lambda)\to (P_{\lambda}\backslash \GL_n) \times_{\Z} X^0$.

\begin{defn} We define the naive universal local model to be
\[\cM^{\nv}_X(\leql, \nabla)\defeq \Gr^\nabla_{\cG,X}\cap (\cS_X(\lambda)\times_{\Z} \bA^n).\]
\end{defn}
We will also set $\cM^{\nv}_{X^0}(\leql, \nabla)=\cM^{\nv}_X(\leql, \nabla)\times_X X^0$. It is a proper scheme over $X^0\times_{\Z} \bA^n$.

 For any $\tld{z}\in \tld{W}^{\vee}$ and $h$ sufficiently large, we have
\[\big(\cU(\tld{z})^{\det,\leq h}\times_{\Z} \bA^n\big)\cap \cM^{\nv}_X(\leql, \nabla)=\big(\cU(\tld{z})\times_{\Z} \bA^n\big) \cap \cM^{\nv}_X(\leql, \nabla)\]
is an (possibly empty) open subscheme of $\cM^{\nv}_X(\leql, \nabla)$, and denote this by $\cU^{\nv}(\tld{z},\leql, \nabla)$.

The following Lemma describes the part of $\cM^{\nv}_{X^0}(\leql,\nabla)$ in the open global Schubert variety $\cS^\circ_{X^0}(\lambda)$, away from small positive characteristics:
\begin{prop} \label{lem:intersect_open_schubert_variety}   \label{prop:intersect_open_schubert_variety}Let $\lambda$ be dominant and recall $h_\lambda=\max_{\alpha^\vee}\{\langle \lambda,\alpha^\vee\rangle\}$.
The map $\pi_\lambda$ induces an isomorphism $\pi_\lambda: \big(\cM^{\nv}_{X^0}(\leql, \nabla)\cap (\cS^\circ_{X^0}(\lambda)\times_{\Z} \bA^n)\big)[\frac{1}{h_\lambda !}] \risom (P_{\lambda}\backslash \GL_n) \times_{\Z} X^0\times_{\Z} \bA^n[\frac{1}{h_\lambda!}]$.
\end{prop}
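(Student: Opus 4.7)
The plan is to unpack the monodromy condition in the parametrization of $\cS^\circ_{X^0}(\lambda)$ by $L^+\cG_\lambda\backslash L^+\cG$ and solve it by an explicit triangular recursion in Taylor coefficients. Over $X^0$, for any Noetherian $R$ with $t\in R^\times$, the element $v = t+(v-t)$ is a unit in $R[[v-t]]$, so the ``upper triangular mod $v$'' conditions defining $L^+\cG$ and $L^+\cM$ become vacuous; hence $L^+\cG(R) = \GL_n(R[[v-t]])$ and $L^+\cM(R) = M_n(R[[v-t]])$. For $A = (v-t)^\lambda g$, using that $\Diag(\bf{a})$ commutes with $(v-t)^\lambda$, a direct computation yields
\[
N \defeq v\,\tfrac{dA}{dv}\,A^{-1} + A\Diag(\bf{a})A^{-1} = \tfrac{v}{v-t}\lambda + \Ad((v-t)^\lambda)\bigl(\Phi(g)\bigr),
\]
with $\Phi(g) \defeq v\,\tfrac{dg}{dv}\,g^{-1} + g\Diag(\bf{a})g^{-1} \in M_n(R[[v-t]])$. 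Reading off the $(i,j)$-entry of $(v-t)N = v\lambda_i\delta_{ij} + (v-t)^{\lambda_i-\lambda_j+1}\Phi(g)_{ij}$, the condition $N\in\tfrac{1}{v-t}L^+\cM(R)$ is equivalent to $\Phi(g)_{ij}\in (v-t)^{c_{ij}-1}R[[v-t]]$ for all pairs with $c_{ij}\defeq \lambda_j-\lambda_i\geq 2$, which (by dominance of $\lambda$) forces $i>j$; this gives $\sum_{i>j,\,c_{ij}\geq 2}(c_{ij}-1)$ scalar constraints.

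Next, I would fix locally a lift $g_0\in\GL_n(R)$ of a point of $P_\lambda\backslash\GL_n$ and parametrize $g = g_0\cdot u$ with $u = 1 + \sum_{k\geq 1}u_k(v-t)^k$. A Leibniz computation using $u|_{v=t}=1$ and $\Phi(g) = \Ad(g_0)(\Phi(u))$ gives
\[
\Phi(g)^{(l)} = t(l+1)\,g_0 u_{l+1} g_0^{-1} + R_l(g_0, u_1,\dots, u_l, \bf{a}, t),
\]
where $R_l$ is a polynomial independent of $u_{l+1}$. Hence each equation $\Phi(g)^{(l)}_{ij}=0$ triangularly solves for the $(i,j)$-entry of $g_0 u_{l+1}g_0^{-1}$ in terms of the lower-order data, provided the scalar $t(l+1)$ is invertible. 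Since $l+1$ ranges over $\{1,\dots,c_{ij}-1\}\subseteq\{1,\dots,h_\lambda-1\}$ and $t$ is already a unit on $X^0$, inverting $h_\lambda!$ makes $t(l+1)$ a unit throughout the recursion.

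To construct an inverse to $\pi_\lambda$, observe that the left action by the kernel of $L^+\cG_\lambda \to L^+\cG_\lambda|_{v=t} = P_\lambda$, conjugated through $g_0$, shifts $g_0 u_k g_0^{-1}$ by an arbitrary matrix whose $(i,j)$-entry vanishes for $c_{ij}>k$. Thus at each order $k\geq 1$, the entries of $g_0 u_k g_0^{-1}$ split into: (i) the entries $(i,j)$ with $i>j$ and $c_{ij}\geq k+1$, which are \emph{not} freely modifiable by the action and which are determined by the triangular recursion above; and (ii) the remaining entries, which can be normalized to zero by the action. A bookkeeping count shows that (i) comprises $\sum_{i>j,\,c_{ij}\geq 2}(c_{ij}-1)$ scalars, matching both the fiber dimension of the iterated $\bA^1$-bundle $\pi_\lambda$ and the number of monodromy equations. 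Combining the determination with the normalization constructs a morphism $(P_\lambda\backslash\GL_n)\times_{\Z}X^0\times_{\Z}\bA^n[\tfrac{1}{h_\lambda!}]\to \big(\cM^{\nv}_{X^0}(\leql,\nabla)\cap(\cS^\circ_{X^0}(\lambda)\times_{\Z}\bA^n)\big)[\tfrac{1}{h_\lambda!}]$; the uniqueness of the recursion shows it is inverse to $\pi_\lambda$.

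The main technical hurdle is justifying the bookkeeping---verifying, order by order, that the triangular recursion unambiguously determines exactly the ``non-killable'' entries of $g_0 u_k g_0^{-1}$, without cross-interference from the $g_0$-conjugation in the $L^+\cG_\lambda$-action. Once this matching is in place, invertibility of $t(l+1)$ after inverting $h_\lambda!$ closes the argument; equivalently, one may rephrase the conclusion as ``$\pi_\lambda$ restricted to the monodromy locus is \'etale of degree one'', which follows from the same Jacobian input.
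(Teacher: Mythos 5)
Your proposal attacks the same recursion as the paper (the coefficient of $(v-t)^l$ in the monodromy condition has leading term $t(l+1)\cdot u_{l+1}$, which becomes invertible after inverting $h_\lambda!$), but you set it up in the abstract coset model $L^+\cG_\lambda\backslash L^+\cG$ with coordinates $g = g_0 u$ and then normalize away by the kernel of $L^+\cG_\lambda\to P_\lambda$. The paper instead works in the explicit affine atlas $\cS^\circ_{X^0}(\lambda)=\bigcup_{w\in W}\tld{N}_\lambda w$ by \emph{truncated} polynomial matrices: for each root $\alpha$ with $\langle-\lambda,\alpha^\vee\rangle>0$, the entry $N_\alpha$ is a polynomial in $(v-t)$ of degree $\leq \langle-\lambda,\alpha^\vee\rangle-1$ with coefficients $X_{\alpha,j}$, and all other off-diagonal entries are \emph{zero}. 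This is a slice of your coset quotient, chosen so that no normalization argument is needed; the map $\pi_\lambda$ simply extracts the $X_{\alpha,0}$'s, and the monodromy equation directly solves the remaining $X_{\alpha,j}$'s ($j>0$) in terms of the $X_{\alpha',0}$'s by a finite triangular recursion (triangular both in $j$ and in the height order on roots). Your abstract version buys generality, but the paper's explicit slice buys finiteness and sidesteps the coset freedom entirely.

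The place your write-up is imprecise is the $K=\ker(L^+\cG_\lambda\to P_\lambda)$-action bookkeeping. You claim that $K$, conjugated by $g_0$, ``shifts $g_0 u_k g_0^{-1}$ by an arbitrary matrix whose $(i,j)$-entry vanishes for $c_{ij}>k$.'' That is only the \emph{linearized} statement: the group operation $u\mapsto (g_0^{-1}hg_0)u$ produces, at order $k$, a shift $g_0^{-1}h_kg_0$ \emph{plus} cross-terms $\sum_{a+b=k,\,a,b\geq 1}(g_0^{-1}h_ag_0)u_b$, and these cross-terms do \emph{not} in general vanish in the positions with $c_{ij}>k$. So it is not literally true that the non-killable entries are untouched by the action; what is true is that on the monodromy locus those entries are \emph{forced} to agree (by $L^+\cG_\lambda$-invariance of the condition plus your recursion), so the cross-terms conspire. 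This is exactly the ``order by order, without cross-interference'' verification you flag as the remaining hurdle; it is genuinely nontrivial, not just a bookkeeping formality. To close the gap cleanly within your framework, the route you hint at in the last sentence is the right one: prove that $\pi_\lambda$ restricted to the $\nabla$-locus is formally \'etale (the tangent-space computation only sees the linearized action, where your claim about $K$ \emph{is} correct) and then check degree one on a single fiber. Otherwise, adopt the paper's explicit slice, which removes the quotient from the picture altogether.
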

\begin{proof} We first note that we have an open cover of $P_{\lambda}\backslash \GL_n$ by affine spaces given by $\ovl{N}_\lambda w$ where $\ovl{N}_\lambda$ is the unipotent radical of the opposite parabolic to $P_\lambda$, and $w$ runs over $W$.
This pulls back to an open cover $\cS^\circ(\lambda)_{X^0}=\bigcup_{w\in W} \tld{N}_\lambda w$, where $\tld{N}_\lambda$ is the affine scheme over $X^0$ whose points on a $\bZ[v,v^{-1}]$ algebra $R$ consists of the set of matrices $(v-t)^{\lambda}N w$ where $N\in \GL_n(R[\![v-t]\!])$ is a matrix such that
\begin{itemize}
\item The diagonal entries of $N$ are $1$.
\item For a root $\alpha$ such that $\langle -\lambda, \alpha^\vee \rangle\leq 0$, the entry $N_\alpha=0$.
\item For a root $\alpha$ such that $\langle -\lambda, \alpha^\vee \rangle> 0$, the entry $N_\alpha=\sum_{j=0}^{\langle -\lambda, \alpha^\vee \rangle-1} X_{\alpha,j} (v-t)^j$ with $X_{\alpha,j}\in R$.
\end{itemize}
Note that this describes an affine space over $X^0$, whose coordinates are given by the coefficients $X_{\alpha,j}$ of the entries of $N$.
Under these coordinates, the map $\pi_\lambda$ is the map $(v-t)^\lambda N w \mapsto (N \textrm{ mod } (v-t) )w$.

It suffices to show that $\pi_\lambda: \Gr^\nabla_{\cG,X^0}\cap \Big(\tld{N}_\lambda w \times_{\Z} \bA^n[\frac{1}{h_\lambda!}]\Big)\to \ovl{N}_\lambda w \times_{\Z} X^0\times_{\Z} \bA^n[\frac{1}{h_\lambda!}]$ induces an isomorphism for each $w\in W$.
Fix an $R$-point $x$ of $X^0\times_{\Z} \bA^n$ corresponding to $t\in R^\times$ and $\bf{a}\in R^n$. 
The set of $R$ points of $\Gr^\nabla_{\cG,X^0}\cap \Big(\tld{N}_\lambda w \times_{\Z} \bA^n[\frac{1}{h_\lambda!}]\Big)$ above $x$ is the set of matrices $(v-t)^\lambda N w$ with $N\in \GL_n(R[\![v-t]\!])$ as above such that
\begin{equation}
\label{eq:solve}
v \frac{d}{dv} \Big((v-t)^\lambda N w \Big)\big((v-t)^\lambda N w\big)^{-1}  + \Ad \big((v-t)^\lambda N w \big)(\Diag(\bf{a}))  \in \frac{1}{v-t} L^+\cM(R) 
\end{equation}
which is equivalent to
\[\frac{v}{v-t} \lambda + \Ad\big((v-t)^{\lambda}\big)\Big( \big(v\frac{d}{dv} N\big) N^{-1}\Big)+ \Ad\big((v-t)^{\lambda} \big) (\Ad (N) (\Ad(w) ( \Diag(\bf{a})))) \in \frac{1}{v-t} L^+\cM(R). \]
This is in turn equivalent to (noting that $v\in R[\![v-t]\!]^\times$ since $t\in R^\times$)
\[ \Big( v\frac{d}{dv} N +\big[N,\Ad(w)(\Diag(\bf{a}))\big] \Big) N^{-1} \in \frac{1}{v-t}\Ad\big((v-t)^{-\lambda}\big) (M_n(R[\![v-t]\!])). \]
Note that the only entries in the above matrix that can be non-zero are the $\alpha$-th entries where $\langle -\lambda, \alpha^\vee \rangle >0$ (which in particular implies $\alpha <0$), and for such $\alpha$ the above condition is that the $\alpha$-th entry is divisible by $(v-t)^{\langle -\lambda, \alpha^\vee \rangle-1}$.
Now, for $\langle {-}\lambda, \alpha^\vee \rangle >0$, the $\alpha$-th entry of the above matrix has the form
\[\Big(v\frac{d}{dv}- \big\langle \Ad(w)(\Diag(\bf{a})),\alpha^\vee\big\rangle\Big)N_\alpha +\cdots  \]
where the terms in $\cdots$ involves only $N_\beta$ where $\alpha<\beta<0$. On the other hand, since $N_\alpha =\sum_{i=0}^{\langle -\lambda, \alpha^\vee\rangle-1} X_{\alpha,i} (v-t)^i$, we have:
\begin{align*}
\Big(v\frac{d}{dv}- \big\langle \Ad(w)(\Diag(\bf{a})),\alpha^\vee\big\rangle\Big)N_\alpha&=\sum_{i=0}^{\langle -\lambda, \alpha^\vee\rangle-2} t(i+1)X_{\alpha,i+1} (v-t)^i \\
&\qquad\qquad+ \sum_{i=0}^{\langle -\lambda, \alpha^\vee\rangle-1}( i-\big\langle \Ad(w)(\Diag(\bf{a})),\alpha^\vee\big\rangle)X_{\alpha,i} (v-t)^i.
\end{align*}
Since $i+1, t \in R^{\times}$ for all $0 \leq i<h_\lambda - 1$, equation (\ref{eq:solve}) solves each $X_{\alpha,i}$ for $i>0$ uniquely in terms of $X_{\alpha',0}$ for $\alpha \leq \alpha' <0$. As $\pi_\lambda$ is exactly obtained by extracting $X_{\alpha,0}$ for all $\alpha$ such that $\langle -\lambda, \alpha^\vee \rangle >0$, we are done.
\end{proof}
We thus have a description of the underlying reduced scheme of $\cM_{X^0}^{\nv}(\leq \lambda,\nabla)$ away from small positive characteristics:
\begin{cor} \label{cor:monodromy_before_reduced} 
Let $\lambda$ and $h_\lambda$ be as in Lemma \ref{lem:intersect_open_schubert_variety}, then the underlying reduced subscheme of $\cM_{X^0}^{\nv}(\leql, \nabla)[\frac{1}{h_\lambda!}]$
is isomorphic to $\coprod_{\lambda'\leq \lambda, \lambda'\in X_*^+(T^\vee)} (P_{\lambda'}\backslash \GL_n) \times_{\Z} X^0\times_{\Z} \bA^n[\frac{1}{h_\lambda!}] $. %
\end{cor}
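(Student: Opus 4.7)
My plan is to combine Proposition \ref{prop:intersect_open_schubert_variety} with the standard Schubert stratification of $\cS_{X^0}(\lambda)$, applying the proposition stratum-by-stratum.

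First, I would recall that the identification $\Gr_{\cG,X^0} \cong \Gr_{\GL_n} \times_\Z X^0$ makes $\cS_{X^0}(\lambda)$ the constant family over $X^0$ of the classical Schubert variety for the coweight $\lambda$ in $\Gr_{\GL_n}$. Hence $\cS_{X^0}(\lambda)$ admits the standard set-theoretic Bruhat-type stratification
\[
\cS_{X^0}(\lambda) \;=\; \bigsqcup_{\substack{\lambda' \in X^+_*(T^\vee) \\ \lambda' \leq \lambda}} \cS^\circ_{X^0}(\lambda')
\]
by locally closed smooth subschemes. This pulls back to a stratification of $\cM^{\nv}_{X^0}(\leql,\nabla)$ indexed by the dominant coweights $\lambda' \leq \lambda$.

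Next, for each such $\lambda'$, the containments $\cS^\circ_{X^0}(\lambda') \subset \cS_{X^0}(\lambda') \subset \cS_{X^0}(\lambda)$ identify
\[
\cM^{\nv}_{X^0}(\leql,\nabla) \cap \big(\cS^\circ_{X^0}(\lambda') \times_\Z \bA^n\big) \;=\; \cM^{\nv}_{X^0}(\leq \lambda', \nabla) \cap \big(\cS^\circ_{X^0}(\lambda') \times_\Z \bA^n\big).
\]
Since $\lambda' \leq \lambda$ with both dominant forces $h_{\lambda'} \leq h_\lambda$, the integer $h_\lambda!$ is divisible by $h_{\lambda'}!$, so Proposition \ref{prop:intersect_open_schubert_variety} (applied with $\lambda$ replaced by $\lambda'$) gives an isomorphism
\[
\pi_{\lambda'}: \Big(\cM^{\nv}_{X^0}(\leql,\nabla) \cap (\cS^\circ_{X^0}(\lambda') \times_\Z \bA^n)\Big)\!\Big[\tfrac{1}{h_\lambda!}\Big] \xrightarrow{\;\sim\;} (P_{\lambda'}\backslash \GL_n) \times_\Z X^0 \times_\Z \bA^n\!\Big[\tfrac{1}{h_\lambda!}\Big].
\]
In particular each such intersection is smooth, hence reduced, over $X^0[\tfrac{1}{h_\lambda!}] \times \bA^n$.

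Finally, I would collect these identifications: the pieces indexed by dominant $\lambda' \leq \lambda$ are smooth, pairwise set-theoretically disjoint (since they cover disjoint Schubert cells), and each is reduced of its predicted dimension $\dim(P_{\lambda'}\backslash \GL_n) + 1 + n$. The main obstacle is to justify that these locally closed smooth pieces assemble schematically into a literal disjoint union, i.e.\ that each such piece is simultaneously open and closed in the reduced subscheme of $\cM^{\nv}_{X^0}(\leql,\nabla)[\tfrac{1}{h_\lambda!}]$. This is the delicate point, since a priori the closure in $\cS_{X^0}(\lambda)$ of a higher Schubert stratum meets lower ones. The key is that the explicit description of the affine charts $\cU(\tld{z})^{\det,\leq h}$ from Proposition \ref{prop:explicit_affine_chart}, together with the solution of the monodromy equations carried out in the proof of Proposition \ref{prop:intersect_open_schubert_variety} (which uniquely determines the coefficients $X_{\alpha,i}$ for $i > 0$ in terms of those at $i = 0$), forces the reduced closure of each stratum to not meet any lower-dimensional stratum: any limit point in $\cU(\tld{z})^{\det,\leq h}$ of a family in a Schubert stratum $\cS^\circ(\lambda')$ must lie in $\cS^\circ(\lambda')$ itself, because the coordinates extracted by $\pi_{\lambda'}$ remain finite along the limit. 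This forces the decomposition into a disjoint union of schemes as stated.
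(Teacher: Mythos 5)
Your decomposition into Schubert strata and the stratum-by-stratum application of Proposition~\ref{lem:intersect_open_schubert_variety} is a reasonable starting point, and you have correctly identified the genuine difficulty: a priori the closure of the intersection with an open Schubert cell could meet lower strata. But the argument you offer to resolve it does not work. Your claim that ``the coordinates extracted by $\pi_{\lambda'}$ remain finite along the limit'' is not meaningful in the form stated: $P_{\lambda'}\backslash\GL_n$ is a projective variety, not an affine space, so its points are not parametrized by a fixed tuple of coordinates $X_{\alpha,0}$ — those only describe a single open cell $\ovl{N}_{\lambda'}w$. Moreover ``finiteness of coordinates'' is not an obstruction to a limit falling into the boundary of the Schubert variety; it is perfectly possible for each $X_{\alpha,0}$ to converge while the corresponding point of the affine Grassmannian degenerates to a smaller cell. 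No computation with the charts of Proposition~\ref{prop:explicit_affine_chart} is cited to rule this out, and it is not clear how it could.

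The missing idea is properness, used abstractly rather than through coordinates. The paper's proof observes that Proposition~\ref{lem:intersect_open_schubert_variety} makes $\cM^{\nv}_{X^0}(\leql,\nabla)\cap(\cS^\circ_{X^0}(\lambda)\times_\Z\bA^n)[\frac{1}{h_\lambda!}]$ isomorphic to $(P_\lambda\backslash\GL_n)\times_\Z X^0\times_\Z\bA^n[\frac{1}{h_\lambda!}]$, which is \emph{proper} over the base $X^0\times_\Z\bA^n[\frac{1}{h_\lambda!}]$. Since $\cM^{\nv}_{X^0}(\leql,\nabla)[\frac{1}{h_\lambda!}]$ is separated over the same base, the open immersion of this piece is therefore also proper, hence a closed immersion; so the piece is a union of connected components. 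One then replaces $\lambda$ by each smaller dominant $\lambda'$ and descends through the Schubert stratification inductively. This one-line appeal to ``proper $+$ open immersion $\Rightarrow$ open and closed'' is exactly what your phrasing about limits is reaching for, and is what you need to substitute in place of the argument about finite coordinates.
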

\begin{proof}
By {Proposition} \ref{lem:intersect_open_schubert_variety}, $\cM_{X^0}^{\nv}(\leql,\nabla)\cap (\cS^\circ(\lambda)_{X^0}\times_{\Z} \bA^n[\frac{1}{h_{\lambda}!}])$ is isomorphic to $(P_{\lambda}\backslash \GL_n) \times_{\Z} X^0\times_{\Z} \bA^n[\frac{1}{h_{\lambda}!}]$, and hence is proper over $X^0\times_{\Z} \bA^n[\frac{1}{h_{\lambda}!}]$. Thus the inclusion $\cM_{X^0}^{\nv}(\leql,\nabla)\cap (\cS^\circ(\lambda)_{X^0}\times_{\Z} \bA^n[\frac{1}{h_{\lambda}!}])\into \cM_{X^0}^{\nv}(\leql,\nabla)[\frac{1}{h_{\lambda}!}]$ is a proper open immersion, hence is the inclusion of a connected component. The complement of this component has the same support as $\Gr^\nabla_{\cG,X^0}\cap \Big((\cS_{X^0}(\lambda)\setminus \cS^\circ_{X^0}(\lambda))\times_{\Z} \bA^n[\frac{1}{h!}]\Big)$. Since $\cS_{X^0}(\lambda)\setminus \cS_{X^0}^\circ(\lambda)=\bigcup_{\lambda
<\lambda, \lambda' \in X_*^+(T^\vee)} \cS_{X^0}(\lambda')$ set theoretically and $h_{\lambda'} \leq h_{\lambda}$, we can repeat the above argument for $\lambda'<\lambda$ to conclude. \end{proof}
\noindent
{In particular, $\cM_{X^0}^{\nv}(\leql,\nabla)\cap (\cS^\circ_{X^0}(\lambda)\times_{\Z} \bA^n)$
 is a connected component of $\cM_{X^0}^{\nv}(\leql,\nabla)$.}
 
We can now make the following definition:
\begin{defn} Let $\lambda\in X_*(T^\vee)$ be dominant. {The universal local model $\cM_X(\lambda,\nabla)$ is the closure of $\cM_{X^0}^{\nv}(\leql,\nabla)\cap (\cS^\circ_{X^0}(\lambda)\times_{\Z} \bA^n)$ in $\cM_X^{\nv}(\leql,\nabla)$ (equivalently, in $\Gr_{\cG,X}\times_{\Z} \bA^n$).}
\end{defn}
{We note that the difference between $\cM_X(\lambda,\nabla)$ and $\cM_X^{\nv}(\leql,\nabla)$ is that the 
monodromy condition is imposed, respectively, before and after taking Zariski closures of $\cS^\circ_{X^0}(\lambda)\times_{\Z} \bA^n$ in $\Gr_{\cG,X}\times_{\Z} \bA^n$.}

We will now show that the conclusion of Corollary \ref{cor:monodromy_before_reduced} actually holds without taking reduced subscheme, at the expense of removing some more small positive characteristics.

We have an action of the torus $T^\vee\times_{\Z} X$ on the $X$-scheme $\Gr_{\cG,X}$ induced by the right multiplication action of $T^\vee$ on $L\cG$. 
This action evidently preserves $\cS_X(\lambda)$ and $\Gr_{\cG,X}^{\nabla}$. Thus we get an induced $T^\vee$ action on $\cM_{X}^{\nv}(\leql,\nabla)$.

\begin{lemma} The $T^\vee$-fixed point scheme of $\cS_{X^0}(\lambda)$ is supported on the union of the sections $s_{\lambda'}$ as $\lambda'$ runs over the elements of $\mathrm{Conv}(\lambda)$.
\end{lemma}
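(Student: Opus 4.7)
The plan is to reduce the statement to the classical description of the torus-fixed points of a Schubert variety in the usual affine Grassmannian of $\GL_n$. First, I would invoke the isomorphism $\Gr_{\cG,X}\times_X X^0 \cong \Gr_{\GL_n}\times_{\Z} X^0$ recalled in \S\ref{subsec:UMLM}. Under this identification $\cS_{X^0}(\lambda)$ becomes the constant family $\cS_{\GL_n}(\lambda)\times_{\Z} X^0$, where $\cS_{\GL_n}(\lambda)$ is the ordinary Schubert variety in $\Gr_{\GL_n}$; moreover the right $T^\vee$-action on $L\cG$ transports to the constant right $T^\vee$-action on the first factor (since $T^\vee$ acts through constant matrices). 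Under this identification each section $s_{\lambda'}$ of Definition \ref{defn:section} corresponds to the constant section $v^{\lambda'}\times X^0$. Thus the problem reduces to showing that the $T^\vee$-fixed subscheme of $\cS_{\GL_n}(\lambda)$ is set-theoretically the union $\bigcup_{\lambda'} \{v^{\lambda'}\}$ with $\lambda'\in \mathrm{Conv}(\lambda)\cap X_*(T^\vee)$.

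For this, it suffices to argue on geometric points, so I would fix an algebraically closed field $k$ and recall that the $T^\vee_k$-fixed points of $\Gr_{\GL_n,k}$ are the points $v^\mu$ for $\mu\in X_*(T^\vee)$. Next I would use the Cartan decomposition $\Gr_{\GL_n,k}=\bigsqcup_{\mu\in X_*^+(T^\vee)} L^+\GL_n\cdot v^\mu$ together with the standard fact (which holds in every characteristic and is proved exactly as in \cite[\S 2]{mirkovic-vilonen}) that $\cS_{\GL_n}(\lambda)$ is the union of those strata indexed by dominant $\lambda'$ satisfying $\lambda'\leq \lambda$ in the dominance order. Combining with the equally classical fact that the $T^\vee_k$-fixed points inside a single orbit $L^+\GL_n\cdot v^{\lambda'}$ are precisely $\{v^{w\lambda'}\mid w\in W\}$, I conclude that the $T^\vee_k$-fixed locus of $\cS_{\GL_n}(\lambda)$ is supported on
\[
\{v^{w\lambda'}\mid \lambda'\in X_*^+(T^\vee),\ \lambda'\leq\lambda,\ w\in W\}.
\]

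Finally I would identify this set with $\{v^{\mu}\mid \mu\in \mathrm{Conv}(\lambda)\cap X_*(T^\vee)\}$. One inclusion is immediate: any dominant $\lambda'\leq \lambda$ lies in $\mathrm{Conv}(\lambda)$, and $\mathrm{Conv}(\lambda)$ is $W$-stable. For the reverse, given $\mu\in \mathrm{Conv}(\lambda)\cap X_*(T^\vee)$, let $\mu^+$ be its unique dominant $W$-translate, which again lies in $\mathrm{Conv}(\lambda)\cap X_*(T^\vee)$; for $\GL_n$ the coordinate sum is preserved on $\mathrm{Conv}(\lambda)$, so $\mu^+-\lambda$ belongs to the coroot lattice $Q^\vee$, and the standard comparison between the dominance order and containment in the convex hull (modulo $Q^\vee$) gives $\mu^+\leq \lambda$, whence $\mu=w\mu^+$ is in the set above. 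The set-theoretic equality produced in the previous paragraph then yields the claimed support statement. The only mildly delicate point is this last combinatorial identification; everything else is a direct application of statements already in \S\ref{subsec:UMLM} plus classical facts about the affine Grassmannian.
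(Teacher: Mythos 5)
Your argument is correct and is essentially the paper's argument: the paper's proof consists of a single line noting that the claim can be checked on geometric fibers over $X^0$, where it reduces to the classical description of $T^\vee$-fixed points of a Schubert variety in the affine Grassmannian, with a citation to \cite{zhu-intro-AG} in place of the Cartan-decomposition and convex-hull bookkeeping that you spell out. Your write-up merely unpacks that citation, including the mildly delicate but correct step that for $\GL_n$ the coordinate sum is constant on $\mathrm{Conv}(\lambda)$ so that $\mu^+-\lambda$ automatically lands in the coroot lattice.
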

\begin{proof} This statement can be checked at the level of geometric fibers over $X^0$, where the conclusion is well-known{, see for instance the discussion just before \cite[Example  2.1.12]{zhu-intro-AG}}.
\end{proof}
\begin{prop} \label{prop:smooth_generic_fiber}Let $\lambda$ be dominant. Then $\cM_{X^0}^{\nv}(\leql,\nabla) [\frac{1}{(2h_\lambda)!}]$
 is smooth over $X^0\times_{\Z} \bA^n[\frac{1}{(2h_\lambda)!}]$.
\end{prop}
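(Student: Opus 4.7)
The plan is to reduce smoothness of the whole space to a tangent space computation at the $T^\vee$-fixed points, and then exploit the $T^\vee$-action to spread smoothness out to all of $\cM_{X^0}^{\nv}(\leql,\nabla)[\tfrac{1}{(2h_\lambda)!}]$.

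First I would leverage properness of the structure map $\cM_{X^0}^{\nv}(\leql,\nabla)\to X^0\times_{\Z}\bA^n$ together with the $T^\vee$-action (coming from right multiplication on $L\cG$) to show that every point flows under a suitable cocharacter $\bG_m\to T^\vee$ to a $T^\vee$-fixed point. By the previous lemma these fixed points are supported on the sections $s_{\lambda'}$ for $\lambda'\in\Conv(\lambda)\cap X_*(T^\vee)$. Since smoothness is an open condition which is preserved by the equivariant $T^\vee$-action, it then suffices to establish smoothness in an open neighborhood of each $s_{\lambda'}$; equivalently, smoothness on a single open affine chart $\cU(s_{\lambda'})$ of $\Gr_{\cG,X}$. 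This reduction is also compatible with the stratification by open global Schubert varieties $\cS^\circ_{X^0}(\lambda')$ for $\lambda'\in\Conv(\lambda)\cap X^+_*(T^\vee)$ appearing in Corollary \ref{cor:monodromy_before_reduced}: indeed the complement of $\bigsqcup_{\lambda'}\cS^\circ_{X^0}(\lambda')$ in $\cS_{X^0}(\lambda)$ does not meet any $T^\vee$-fixed point of $\cS_{X^0}(\lambda)$ corresponding to a dominant cocharacter.

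Next I would perform the local tangent space computation at a fixed point $s_{\lambda'}$ for $\lambda'\in\mathrm{Conv}(\lambda)\cap X^+_*(T^\vee)$, using the explicit coordinates on $\cU(s_{\lambda'})$ provided by Proposition \ref{prop:explicit_affine_chart}. This extends the calculation carried out in the proof of Proposition \ref{lem:intersect_open_schubert_variety}, where the monodromy equation \eqref{eq:solve} was shown to determine the higher $(v-t)$-expansion coefficients $X_{\alpha,i}$, for $\alpha<0$ with $\langle-\lambda',\alpha^\vee\rangle>0$ and $i>0$, in terms of the lowest coefficients $X_{\alpha',0}$. Now $\lambda'$ may be non-regular or strictly smaller than $\lambda$, so the positivity interval for $(\alpha,i)$ changes and additional cancellations have to be tracked; the crucial points are that the relevant denominators coming from $v\tfrac{d}{dv}$ and $\langle\Ad(w)(\Diag(\bf{a})),\alpha^\vee\rangle$ only involve integers bounded in absolute value by $2h_\lambda$, so that they become units after inverting $(2h_\lambda)!$. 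A careful bookkeeping of the resulting linear system will show that the tangent space at $s_{\lambda'}$ has the expected dimension $\dim(P_{\lambda'}\backslash\GL_n)+\dim(X^0\times_{\Z}\bA^n)$, matching the dimension of the component identified in Corollary \ref{cor:monodromy_before_reduced}.

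The hard part, as expected, is this explicit linear algebra at a non-regular or non-maximal $T^\vee$-fixed point: when $\lambda'<\lambda$, the monodromy equation couples coefficients of different root entries through the commutator with $\Diag(\bf{a})$, and one has to show that the resulting matrix of the linear system (whose coefficients are integers of absolute value at most $2h_\lambda$, or involve the universal parameters $t$ and $\bf{a}$ in a controlled way) is block triangular with unit diagonal after inverting $(2h_\lambda)!$. Once this dimension count is established at every $T^\vee$-fixed point of $\cM_{X^0}^{\nv}(\leql,\nabla)[\tfrac{1}{(2h_\lambda)!}]$, smoothness over $X^0\times_{\Z}\bA^n[\tfrac{1}{(2h_\lambda)!}]$ of the ambient family follows by flatness (via dimension count combined with the smoothness of the generic locus inside $\cS^\circ$ already provided by Proposition \ref{lem:intersect_open_schubert_variety}) and the spreading argument from the first paragraph.
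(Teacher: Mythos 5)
Your overall strategy matches the paper's: reduce smoothness of the family to a tangent-space bound at $T^{\vee}$-fixed points using properness of the structure map and $T^{\vee}$-equivariance, then compare that bound against the known dimension of the component from Corollary~\ref{cor:monodromy_before_reduced}. The paper's reduction step is phrased slightly differently (the non-smooth locus is closed, $T^{\vee}$-stable, and proper over the base, so a nonempty geometric fiber is a proper variety with a torus action and therefore has a fixed point), but your contracting-flow argument achieves the same thing. The observation that the constants produced by the monodromy recursion lie in $\{1,\dots,2h_\lambda\}$ and are killed off by inverting $(2h_\lambda)!$ is exactly the point.

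However, your description of the ``hard part'' contains a real misconception that you should correct. You claim that ``the monodromy equation couples coefficients of different root entries through the commutator with $\Diag(\bf{a})$,'' and that one must exhibit a block-triangular system with unit diagonal. This is wrong at the linearized level: at a torus-fixed point $s_\mu=(v-t)^\mu$ one deforms by $(1+\eps X)(v-t)^\mu$, and the linearization of the $\nabla$-condition reduces to $\eps\big(v\tfrac{d}{dv}X-\tfrac{v}{v-t}[\mu,X]-[\Diag(\bf{a}),X]\big)\in\tfrac{1}{v-t}L^+\cM$. Every one of these terms is \emph{diagonal} in the root decomposition — $[\Diag(\bf{a}),X]_\alpha=\langle\bf{a},\alpha^\vee\rangle X_\alpha$ and similarly for $[\mu,X]$ — so the system uncouples into independent scalar recursions $t(j+\langle\mu,\alpha^\vee\rangle)X_{\alpha,j}=c_{\alpha,j}X_{\alpha,j+1}$, one for each root $\alpha$. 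The coupling you have in mind comes from the quadratic-and-higher terms in the group coordinates on $\tld{N}_\lambda$ appearing in Proposition~\ref{lem:intersect_open_schubert_variety}; those terms are annihilated by $\eps^2=0$ and simply do not appear in the tangent-space computation. The computation you are bracing yourself for is considerably easier than you expect. Separately, note that the $T^{\vee}$-fixed points are indexed by \emph{all} of $\Conv(\lambda)\cap X_*(T^\vee)$, not just the dominant cocharacters, and the tangent-space bound has to be verified (or symmetry invoked) at every one of them; restricting to $X_*^+(T^\vee)$ as written leaves this to be patched.
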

\begin{proof} To ease notation, in this proof we will abbreviate $Y=\cM_{X^0}^{\nv}(\leql,\nabla)[\frac{1}{(2h_\lambda)!}]$, $S=X^0\times_{\Z} \bA^n[\frac{1}{(2h_\lambda)!}]$ and $pr$ for the natural projection map $Y\to S$.
As $pr$ is a finite type map between finite type $\bZ$-schemes, the locus of points where $pr$ is smooth is open in the domain. The non-smooth locus is thus proper over $S$, and is furthermore $T^\vee$-stable. If it is non-empty, it must have a non-zero geometric fiber over $X^0\times_{\Z} \bA^n[\frac{1}{(2h_\lambda)!}]$. %
Such a fiber will be a proper variety over a field with an action of a torus, and hence must contain a torus-fixed closed point, which must occur in the support of $s_\mu$ for some $\mu\in \mathrm{Conv}(\lambda)$.

 Thus, we only need to show that $pr$ is smooth at any closed point $x:\Spec k \to Y$ lying in the support of $s_\mu$. Let $s=pr(x)\in S$. We will show smoothness by bounding the dimension of the tangent space $\dim T_x Y \leq \dim_x Y$ and the dimension of the relative tangent space $\dim T_x Y/S\leq \dim_x Y_s$. Indeed, granting this, we deduce that the completion $\widehat{\cO}_{Y,x}$ is generated over $\widehat{\cO}_{S,s}$ by $\dim_xY_s$ elements. Using that $S$ is regular at $s$ and comparing dimensions, we then conclude that $\widehat{\cO}_{Y,x}$ is a power series ring over $\widehat{\cO}_{S,s}$ in $\dim_xY_s$ variables.

We may without loss of generality enlarge $k$ and assume that $k$ is algebraically closed. The point $s$ corresponds to the data of a tuple $(t,\bf{a})\in k^\times \times k^n$ and the point $x$ corresponds to the data of $s$ and the element $(v-t)^\mu \in \GL_n(k(\!(v-t)\!))$.
By Corollary \ref{cor:monodromy_before_reduced}, the point $x$ is on the connected component of $Y$ occurring inside $\cS_{X^0}^\circ(\mu)\times_{\Z} \bA^n$, and hence $\dim_x Y = \dim P_\mu \backslash \GL_n+ n+1$.

Let $\cU_{X_0}(t_{\mu}) \defeq \cU(t_{\mu}) \times_X X_0$.  Set $U\defeq\left(\big(\cU_{X^0}(t_{\mu})\cap \cS_{X^0}(\lambda)\big)\times_{\Z} \bA^n\right) \cap \Gr^{\nabla}_{\cG,X^0}$, which is an open neighborhood of $x$ in $Y$.
 We observe that $\cU_{X^0}(t_{\mu})\cap \cS_{X^0}(\lambda)$ occurs inside the closed subscheme of $Z\defeq \Gr_{\cG,X}^{ \det=0,\leq h_{\lambda}}s_\mu\cap \cU_{X^0}(t_\mu)$ of $\cU_{X^0}(t_{\mu}) \subset \Gr_{\cG,X^0}$, since for an element $g \in \cU_{X^0}(t_{\mu})(R)$ to occur in $\cS_{X^0}(\lambda)(R)$, a necessary condition is that $\det g \in R[\![\!v-t]\!]^\times (v-t)^{|\!|\lambda|\!|}$ and that $|\!|\lambda|\!|=|\!|\mu|\!|$, and that each entry of $g(v-t)^{-\mu}$ belongs to 
$(v-t)^{\lambda_{\min}-\mu_{\max}}R[\![v-t]\!]$, where $\lambda_{\min}=\min_{0\leq i \leq n-1 } \lambda_i$ and $\mu_{\max}=\max_{0\leq i \leq n-1 } \mu_i$, and $\lambda_{\min}-\mu_{\max}\geq \min_{\alpha^\vee}\{\langle \lambda,\alpha^\vee\rangle\}=-h_\lambda$.

Thus we conclude that $T_xY_s$ is a subspace of the tangent space of $\left(\Gr^{\nabla}_{\cG,X^0}\cap  \big(Z \times_{\Z} \bA^{n}\big)\right)_{\!\!s}$ at $x$.
This latter space has the following explicit description (using definition of $\cU_X(t_{\mu})$ before Lemma \ref{lem:open_chart_mono}): It is the space of matrices $(1+\eps X)(v-t)^\mu$ with  $X\in M_n(k(\!(v-t)\!))$ such that 
\begin{itemize}
\item For each $i$ the diagonal entry $X_{ii}=\sum_{j=1}^{h_{\lambda}} X_{ii,j}(v-t)^{-j}$ with $X_{ii,j}\in k$.
\item For each root $\alpha$, the $\alpha$-th entry $X_\alpha= v^{\delta_{\alpha<0}}\sum_{j=1}^{h_{\lambda}} X_{\alpha,j} (v-t)^{-j}$.
\item $X$ is subject to the condition that
 \[
 v \frac{d}{dv}\Big((1+\eps X)(v-t)^\mu \Big) (v-t)^{-\mu}(1+\eps X)^{-1}  + \Ad\big(1+\eps X\big) \big(\Ad\big((v-t)^{\mu}\big) (\Diag(\bf{a})) \big) \in \frac{1}{v-t} L^+\cM(k[\eps]/\eps^2).
 \]
\end{itemize}
The last condition is equivalent to
\[\eps v\frac{d}{dv} X -\eps\frac{v}{v-t}[\mu,X]-\eps [\Diag(\bf{a}),X]\in \frac{1}{v-t} L^+\cM(k[\eps]/\eps^2). \]
Hence, we have
\begin{itemize}
\item For each $1\leq i \leq n$, 
\[\sum_{j=1}^{h_\lambda} -t j X_{ii,j}(v-t)^{-(j+1)} +\sum_{j=1}^{h_\lambda} -j X_{ii,j} (v-t)^{-j} \in \frac{1}{v-t} k[\![v-t]\!]. \]
This is equivalent to $tj X_{ii,j}=-(j+1)X_{ii,j+1}$ for all $j\geq 1$ (with the convention that $X_{jj,h_\lambda+1}=0$). Since $h_\lambda!$ and $t$ are invertible in $k$, we conclude that $X_{ii,j}=0$ for all $j$.
\item For any root $\alpha$,
\begin{align*}
\sum_{j=1}^{h_\lambda} -t j X_{\alpha,j}(v-t)^{-(j+1)} &+\sum_{j=1}^{h_\lambda} -j X_{\alpha,j} (v-t)^{-j}
-\sum_{j=1}^{h_\lambda} t \langle \mu, \alpha^\vee\rangle  X_{\alpha,j}(v-t)^{-(j+1)}
-\\
&\hspace{2cm}-\sum_{j=1}^{h_\lambda} (\langle \mu+\bf{a}, \alpha^\vee\rangle -\delta_{\alpha<0}) X_{\alpha,j}(v-t)^{-j}\in \frac{1}{v-t} k[\![v-t]\!]. 
\end{align*}
This is equivalent to 
\[t(j+\langle \mu, \alpha^\vee \rangle)X_{\alpha,j}=-(j+1-\delta_{\alpha<0}+ \langle \mu+\bf{a},\alpha^\vee \rangle )X_{\alpha,j+1} \]
for $j\geq 1$  (with the convention that $X_{\alpha,h_\lambda+1}=0$).

If $\langle \mu,\alpha^\vee \rangle\geq 0$, since $t$ and $(2h_\lambda)!$ are invertible in $k$ and $\langle \mu,\alpha^\vee \rangle \leq h_{\lambda}$, $t(j+\langle \mu, \alpha^{\vee}\rangle)$ is invertible for all $1\leq j\leq h_\lambda$, and hence the above recursion forces $X_{\alpha,j}=0$ for all $j$.

If $\langle \mu,\alpha^\vee \rangle< 0$, $t(j+\langle \mu, \alpha^\vee\rangle)$ is invertible in $k$ unless $j=-\langle \mu, \alpha^\vee \rangle$.
Thus the above recursion shows that $X_{\alpha,j}=0$ for $j>-\langle \mu, \alpha^\vee \rangle$, that $X_{\alpha,j}$ for $1\leq j<-\langle \mu, \alpha^\vee \rangle$ is a multiple of $X_{\alpha,-\langle \mu, \alpha^\vee \rangle}$ by a particular constant, and there are no restrictions on $X_{\alpha,-\langle \mu, \alpha^\vee \rangle}$.
\end{itemize}
The upshot of the above discussion is that $\dim T_x Y_s \leq \#\{ \alpha | \langle \mu, \alpha^\vee \rangle <0\}=\dim_{\Z} P_\mu\backslash \GL_n$. 
Putting everything together, we have 
\[\dim T_x Y \leq \dim_x Y_s +\dim_s S \leq \dim_\Z P_\mu\backslash \GL_n+1+n+1=\dim_x Y,\]
which is what we want.
\end{proof}
{We record the following proposition which is an adaptation of Elkik's approximation theorem to our situation, which will only be used in the proof of Theorem \ref{thm:stack_local_model}.
Roughly, it allows us to lift mod $t^m$-points of $\cU^{\nv}(\tld{z}, \leql,\nabla)$ once $m$ is sufficiently large.}

\begin{prop}
\label{prop:Elkik}
Fix $\lambda$, $\tld{z}$. Choose a finite presentation of the map $\cU^{\nv}(\tld{z}, \leql,\nabla)[\frac{1}{(2h_\lambda)!}]\to X\times_{\Z} \bA^n$. 
Then there exists integers $N$ and $r$ (depending on the chosen finite presentation) such that the following hold:

Let $A$ be a ring and $t\in A$ such that $A$ is $t$-adically complete and $t$-torsion-free, and let $g: \Spec A \to X\times_{\Z} \bA^n$ be a map sending $v$ to $t$.
Then for any integer $m\geq N$, and any commutative diagram
\begin{equation}
\xymatrix{ \Spec A/t^m \ar[r]^-f\ar[d]&  \cU^{\nv}_X(\tld{z}, \leql,\nabla)\Big[\frac{1}{(2h_\lambda)!}\Big] \ar[d] \\
\Spec A\ar[r]^-g &  X\times_{\Z} \bA^n }
\end{equation}
we can find a map $\tld{f}:\Spec A \to \cU^{\nv}(\tld{z}, \leql,\nabla)\Big[\frac{1}{(2h_\lambda)!}\Big]$ which agrees with $f$ modulo $t^{m-r}$ making the following diagram commute:
\[
\xymatrix{ \Spec A \ar[r]^-{\tld{f}}\ar[d] &  \cU^{\nv}(\tld{z}, \leql,\nabla)\Big[\frac{1}{(2h_\lambda)!}\Big] \ar[d] \\
\Spec A\ar[r]^g &  X\times_{\Z} \bA^n }
\]
\end{prop}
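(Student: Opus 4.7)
\smallskip

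The plan is to deduce this from Elkik's approximation theorem (\cite{Elkik}). The crucial input is Proposition \ref{prop:smooth_generic_fiber}, which says that $\cM_{X^0}^{\nv}(\leql,\nabla)[\frac{1}{(2h_\lambda)!}]$ (and hence its open subscheme $\cU^{\nv}(\tld{z},\leql,\nabla)[\frac{1}{(2h_\lambda)!}]|_{X^0}$) is smooth over $X^0\times_{\Z}\bA^n[\frac{1}{(2h_\lambda)!}]$, i.e.~smoothness holds over the locus $t\neq 0$. Equivalently, the map becomes smooth after inverting $t$, and this is the setting in which Elkik's theorem produces approximate lifts from actual lifts with controlled loss of $t$-adic accuracy.

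More concretely, I would fix the chosen finite presentation of $\cU^{\nv}(\tld{z},\leql,\nabla)[\frac{1}{(2h_\lambda)!}]$ over $X\times_\Z\bA^n$ as a closed subscheme of some $\bA^s_{X\times\bA^n}$ cut out by polynomials $F_1,\dots,F_m$ in the variables $Y_1,\dots,Y_s$ over $\Z[\tfrac{1}{(2h_\lambda)!}][v,\bf{a}]$. Let $H$ be the ideal in the coordinate ring of $\cU^{\nv}(\tld{z},\leql,\nabla)[\frac{1}{(2h_\lambda)!}]$ generated by all $s\times s$ minors of the Jacobian matrix $(\partial F_i/\partial Y_j)$. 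Because the projection to $X\times_{\Z}\bA^n$ is smooth after inverting $t$ (by Proposition \ref{prop:smooth_generic_fiber} and the Jacobian criterion), the ideal $H[1/t]$ is the whole ring; as this is a finitely generated ideal, there is an integer $r_0$ (depending only on the chosen presentation) such that $t^{r_0}$ lies in $H$.

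With these constants in hand, I would apply Elkik's theorem in its $t$-adic formulation (\cite{Elkik}, see e.g.~Th\'eor\`eme 2 and its corollaries, or the reformulation in terms of smoothing of solutions): for any $t$-adically complete, $t$-torsion-free $\bZ[v]$-algebra $A$ sending $v$ to $t$, any $A$-algebra map from the coordinate ring of $\cU^{\nv}(\tld{z},\leql,\nabla)[\frac{1}{(2h_\lambda)!}]$ to $A/t^m$ with $m\geq 2r_0+1$ can be lifted to an $A$-algebra map into $A$, with the lift agreeing with the original modulo $t^{m-r_0}$. Setting $N\defeq 2r_0+1$ and $r\defeq r_0$ then gives the conclusion. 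The fact that $A$ is $t$-adically complete is used so that $(A,tA)$ is a henselian pair, allowing Hensel-type iteration; $t$-torsion-freeness ensures that the Jacobian relation $t^{r_0}\in H$ remains nontrivial after base change to $A$ and governs the obstruction spaces.

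The main work is just to check that the hypotheses of Elkik's theorem apply verbatim in this setting (the statement is designed for exactly such input: finitely presented over a base, smooth after inverting a non-zerodivisor, complete with respect to that element). No essentially new idea is required beyond the generic smoothness Proposition \ref{prop:smooth_generic_fiber}; the only mild subtlety is tracking that the integers $N,r$ depend only on the finite presentation (and not on $A$ or $m$), which is automatic from the construction since $r_0$ is extracted once and for all from $H$ inside the coordinate ring of $\cU^{\nv}(\tld{z},\leql,\nabla)[\frac{1}{(2h_\lambda)!}]$ itself.
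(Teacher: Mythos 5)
Your approach is the same as the paper's: extract an integer $r$ with $v^r$ in the relevant Jacobian-type ideal from generic smoothness (Proposition~\ref{prop:smooth_generic_fiber}) and feed this into Elkik's quantitative approximation result, with $t$-torsion-freeness giving uniformity of the constants. Two small imprecisions worth flagging: the paper invokes Elkik's Lemme~1 (the explicit quantitative statement) rather than Th\'eor\`eme~2, and the ideal $H$ there is not simply the ideal of $s\times s$ Jacobian minors but Elkik's ideal built from minors of subsystems times conductor ideals (needed when the presentation is not a complete intersection); also, the role of $t$-torsion-freeness is to set the torsion constant $k$ in Elkik's lemma to zero, which is what makes $N$ and $r$ independent of $A$, rather than to ``keep $t^r\in H$ nontrivial.''
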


\begin{proof} We will apply \cite[Lemme 1]{Elkik}. Let $\Spec B$ be the base change of $ \cU^{\nv}(\tld{z}, \leql,\nabla)[\frac{1}{(2h_\lambda)!}]$ along $g$.
Let $\Spec S=X\times_{\Z} \bA^n$. Let our chosen presentation be $ \cU^{\nv}(\tld{z}, \leql,\nabla)[\frac{1}{(2h_\lambda)!}]=\Spec S[X_1,\ldots, X_k]/J$.
Let $H$ be the ideal of $S[X_1, \ldots, X_k]$ defined in \cite[p.~555]{Elkik}, so the image of $H$ in $S[X_1,\cdots X_k]/J$ is supported on the singular locus of the map $ \cU^{\nv}(\tld{z}, \leql,\nabla)[\frac{1}{(2h_\lambda)!}]\to X\times \bA^n$.
Hence, by Proposition \ref{prop:smooth_generic_fiber}, there exists an integer $r$ such that $v^r \in H+J$. 
We now base change the situation to $A$, and let $B=A[X_1,\cdots X_k]/J$, let $H_B$ be the base change of $H$, and  apply \cite[Lemme 1]{Elkik} to $B$ and $A$ to produce the integer $N>2r$ (note that the $k$ in loc.~cit.~is $0$ in our situation because we assumed $A$ is $t$-torsion free). We check that this choice of $N$ and $r$ works.

 Let $m\geq N>2r$ be as in the statement of the Proposition. The $f$ induces a tuple $\un{a}=(a_1,\cdots a_k)\in A^k$ such that $J(\un{a})\subset t^mA$. On the other hand we know $t^r\in H_B(\un{a})+J(\un{a})\subset H_B(\un{a})+t^mA$, and hence $t^r\in H_B(\un{a})$ because $A$ is $t$-adically complete and $m>r$. Thus \cite[Lemme 1]{Elkik} implies we can find a tuple $\un{\tld{a}}\in A^k$ lifting $\un{a}$ modulo $t^{m-r}$ such that $J(\un{\tld{a}})=0$. But this is exactly the data of the map
$\Spec A\to \Spec B$ that we want.
\end{proof}

\subsection{Equal characteristic and unibranch points}\label{subsec:equal_char_unibranch}
Throughout this section we fix $\lambda\in X_*(T^\vee)$ dominant, a field $k$ and a point $s\in \bA^n(k)$ corresponding to a tuple $\bf{a}\in k^n$.
We will assume that $(2h_{\lambda})!$ is invertible in $k$.

We have the base change $\cM^{\nv}(\leql,\nabla_{\bf{a}})\defeq \cM_X^{\nv}(\leql,\nabla)\times_{\bA^n} s$, and define $\cM(\lambda,\nabla_{\bf{a}})$ to be the Zariski closure of {$\big(\cM_X(\lambda,\nabla) \times_{\bA^n} s\big) \times _{X_k} X^0_k\cong (P_\lambda \backslash \GL_n)_{X^0_k}$ in $\cM^{\nv}(\leql,\nabla_{\bf{a}})$.} In particular, the natural map $\cM(\lambda,\nabla_{\bf{a}})\to X_k=\bA^1_k$ is flat. 

\begin{rmk} There is a natural map $\cM(\lambda,\nabla_{\bf{a}}) \to \cM_{X}(\lambda,\nabla)\times_{ \bA^n} s$ which is an isomorphism over $X^0_k$, and identifies $\cM(\lambda,\nabla_{\bf{a}})$ as the Zariski closure of $\cM_{X^0}(\lambda,\nabla)\times_{\bA^n} s$ in $\cM_{X}(\lambda,\nabla)\times_{\bA^n} s$. 
It is unclear whether it is always an isomorphism, but we will see in Proposition \ref{prop:spreading} that it is an isomorphism for generic choices of $\bf{a}$.
\end{rmk}
We recall the following definition (cf.~\cite[\href{https://stacks.math.columbia.edu/tag/06DT}{Tag 06DT}]{stacks-project}):
\begin{defn}
A point $y\in Y$ of a scheme is called \emph{unibranch} if the normalization of the local ring $(\cO_{Y,y})_{\mathrm{red}}$ is local.
\end{defn}

If $Y$ is an integral scheme, we will write $Y^{\nm}\ra Y$ for the normalization of $Y$. 

\begin{rmk}\label{rmk:unibranch_vs_analytic}
\begin{enumerate}
\item \label{item:onepoint} (\cite[\href{https://stacks.math.columbia.edu/tag/0C3B}{Tag 0C3B}]{stacks-project}) If $Y$ has a finite number of irreducible components and the normalization map $Y^{\nm}\to Y$ is finite (e.g.~when $Y$ is excellent), then the following are equivalent:
\begin{enumerate}[(i)]
\item $y$ is unibranch;
\item the (set-theoretic) fiber above $y$ of the normalization is a single point; and
\item the fiber above $y$ of the normalization is connected.
\end{enumerate}
\item \label{it:unibranch_vs_analytic:2}(\cite[\href{https://stacks.math.columbia.edu/tag/0C2E}{Tag 0C2E}]{stacks-project}) When $Y$ is Noetherian and excellent, $Y$ is unibranch at $y$ if and only if $Y$ is analytically irreducible at $y$, i.e.~the completed local ring $\cO_{Y,y}^\wedge$ is a domain.
\end{enumerate}
\end{rmk}
We now fix $\tld{z}=wt_\nu \in \tld{W}^\vee$.
 Recall we have a subfunctor $\cU(\tld{z})$ of $L\cG$  defined before Lemma \ref{lem:open_chart_mono} which defines an open subfunctor of $\Gr_{\cG,X}$.
We let $\cU(\tld{z},\lambda,\nabla_{\bf{a}})=\cU(\tld{z}) \times _{\Gr_{\cG,X}} \cM(\lambda,\nabla_{\bf{a}})$, which is a Zariski open (possible empty) subscheme of $\cM(\lambda,\nabla_{\bf{a}})$.   

Recall (Definition \ref{defn:section}) that for each $\tld{z} = w t_{\nu} \in \tld{W}^{\vee}$, we have an associated $\Z$-point of $\cU(\tld{z}) \times_X \{0\} \subset  \Gr_{\cG,X} \times_X \{0\}$ given by $w v^{\nu} \in \GL_n(\Z(\!(v)\!))$ which we denote by $\tld{z}$.  For any field $k$, let $\tld{z}_k$ denote the base change to $k$.

The following is the main result of this section:
\begin{prop} \label{prop:unibranch_equal_char} Assume that $\tld{z}_k \in \cM(\lambda,\nabla_{\bf{a}})(k)$. Then for any integer $e>0$, the base change $\cM(\lambda,\nabla_{\bf{a}})\times_{X_k, v\mapsto v^e} X_k$ is unibranch at $\tld{z}_k$. Furthermore, the preimage of $\cU(\tld{z})$ in $(\cM(\lambda,\nabla_{\bf{a}})\times_{X_k, v\mapsto v^e} X_k)^{\nm}\times_X \{0\}$ is connected.
\end{prop}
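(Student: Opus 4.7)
The strategy is to exploit the extra loop-rotation symmetry available in the equal characteristic setting to realize $\tld{z}_k$ as a contracting fixed point of a torus action, and then invoke the general principle that cone points of integral varieties are analytically irreducible.

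\textbf{Step 1 (Loop rotation action).} I would first define a $\bG_m$-action on $X_k$ by $c\cdot v=cv$ and extend it to $L\cG_{X_k}$, $L^+\cG_{X_k}$, $L^{--}\cG_{X_k}$, and hence to $\Gr_{\cG,X_k}$, by scaling the parameter $v$ (and simultaneously the uniformizer $v-t$). Combined with the right $T^\vee$-action, this yields a $T^\vee\times \bG_m$-action on $(\Gr_{\cG,X_k}, X_k)$. A direct computation shows that the monodromy condition (\ref{eq:universalnabla}) is homogeneous (the factor $1/(v-t)$ on the right-hand side absorbs the scaling), so the action preserves $\cM(\lambda,\nabla_{\bf{a}})$. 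Pulling back along $v\mapsto v^e$ equips $\cY_e \defeq \cM(\lambda,\nabla_{\bf{a}}) \times_{X_k, v\mapsto v^e} X_k$ with a compatible $T^\vee\times \bG_m$-action, where $\bG_m$ now acts on the new coordinate by scaling and on the old coordinate $t=v^e$ with weight $e$.

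\textbf{Step 2 (Contracting cocharacter).} Using the explicit coordinates on $\cU(\tld{z})^{\det,\leq h}$ given by Proposition \ref{prop:explicit_affine_chart}, I would exhibit a cocharacter $\chi:\bG_m\to T^\vee\times \bG_m$ of the form $\chi(c)=(c^{-e\nu}, c^{-e})$ (up to a convention for the identification of $T^\vee$-action on $\cU(\tld{z})$ via the embedding into $\Gr_{\cG,X_k}$) that fixes $\tld{z}_k$ and acts on the coefficients $c_{ij,k}$ with weights proportional to $\nu_j - k - \delta_{i>j}$. On $\cU(\tld{z})^{\det,\leq h}$ these weights are non-negative, with vanishing exactly at the basepoint coordinate of $\tld{z}_k$ and the top-degree coefficients indexed by pairs $(i,j)$ with $i>w(j)$ and $k=\nu_j-\delta_{i>j}$. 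The geometric input I would invoke is that the residual zero-weight directions are eliminated by the Schubert condition $\cS_X(\lambda)$ (which cuts off the ambient affine Schubert cells of $\cU(\tld{z})$ that do not pass through $\tld{z}$) and by the monodromy equation (which further constrains the remaining top-degree coefficients); so on $\cU(\tld{z},\lambda,\nabla_{\bf{a}}) \times_{X_k} \cY_e$ the action of $\chi$ has \emph{strictly} positive weights on all remaining coordinates.

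\textbf{Step 3 (Cone point and connectedness).} The contraction of Step 2 equips the affine coordinate ring $R$ of $\cU(\tld{z},\lambda,\nabla_{\bf{a}}) \times_{X_k} \cY_e$ with a positive $\Z$-grading such that $R_0=k$ and the maximal ideal corresponding to $\tld{z}_k$ is the irrelevant ideal. Since $\cM(\lambda,\nabla_{\bf{a}})$ is integral (being the Zariski closure of the smooth irreducible open $\cS^\circ_{X^0_k}(\lambda) \cap \cM_{X^0_k}^{\nv}(\leql,\nabla_{\bf{a}})$ from Proposition \ref{prop:smooth_generic_fiber} and Corollary \ref{cor:monodromy_before_reduced}), $R$ is a positively graded domain. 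Its normalization $R^{\nm}$ is again positively graded and its degree zero part is a finite extension of $k$; hence $R^{\nm}$ has a unique homogeneous maximal ideal, which is the unique preimage of the irrelevant ideal of $R$. By Remark \ref{rmk:unibranch_vs_analytic}(\ref{it:unibranch_vs_analytic:2}), this proves $\cY_e$ is unibranch at $\tld{z}_k$. For the second assertion, the $\bG_m$-action lifts to $\cY_e^{\nm}$; the preimage of $\cU(\tld{z})$ in $\cY_e^{\nm}\times_X\{0\}$ is a $\bG_m$-stable open subset which retracts under the action to the singleton preimage of $\tld{z}_k$, and is therefore connected.

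\textbf{Main obstacle.} The technical crux is Step 2: showing that the candidate cocharacter $\chi$ acts with strictly positive weights on $\cU(\tld{z},\lambda,\nabla_{\bf{a}})$ (and its base change). The naive weight formula $\nu_j-k-\delta_{i>j}$ vanishes at certain top-degree coefficients of $\cU(\tld{z})^{\det,\leq h}$ corresponding to Schubert cells not adjacent to $\tld{z}$; proving that all such zero-weight directions are genuinely cut off by the combination of the Schubert filtration and the (approximate) monodromy condition, uniformly in $\bf{a}$, requires careful combinatorial bookkeeping, and is the only place where the full structure of $\cM(\lambda,\nabla_{\bf{a}})$ (as opposed to the ambient $\Gr_{\cG,X_k}$) is used.
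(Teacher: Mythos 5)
Your overall strategy (contracting $\bG_m$-action $\Rightarrow$ cone point $\Rightarrow$ unibranch, and lift the action to the normalization for the connectedness statement) is the same one the paper uses. Steps 1 and 3 match what the paper does (its Lemma~\ref{lem:torus_action_open_chart} and Lemma~\ref{lem:attractor}). But your Step 2 has a genuine gap, and it is exactly the place you flagged as the ``main obstacle.''

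Your proposed cocharacter $\chi(c)=(c^{-e\nu},c^{-e})$ is the pure loop-rotation direction (combined with a $\nu$-proportional $T^\vee$-twist that is forced to fix $\tld{z}$), and as you correctly observe it has zero weight on the top-degree coordinates $c_{ij,\nu_j-\delta_{i>j}}$ with $i>w(j)$. You then propose to argue that these zero-weight directions are cut off by the Schubert condition $\cS_X(\lambda)$ and the monodromy equations, ``uniformly in $\bf{a}$''; this step is neither carried out nor straightforward. In fact it is false that the Schubert condition eliminates those coordinates: on $\cU(\tld{z})\cap\cS_X(\lambda)$ the top-degree coefficients with $i>w(j)$ are in general genuine free coordinates (they parametrize the transverse directions within the Schubert cell containing $\tld{z}$), so your fixed-point locus would be positive-dimensional.

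The paper avoids this entirely by choosing a better cocharacter in $T^{\vee,\ext}=T^\vee\times\bG_m$: take $r\mapsto(\Ad(w^{-1})(r^\mu),r^N)$ with $\mu$ a \emph{regular} cocharacter and $N>h_\mu$. With this choice (Lemma~\ref{lem:contracting_torus_action}) the weight on $c_{ij,k}$ is $N(\nu_j-\delta_{i>j}-k)+\mu_i-\mu_{w(j)}$. The loop-rotation part $N(\nu_j-\delta_{i>j}-k)$ dominates (because $N>h_\mu$) and is strictly positive whenever $k<\nu_j-\delta_{i>j}$; at the top degree $k=\nu_j-\delta_{i>j}$ with $i>w(j)$, regularity of $\mu$ makes $\mu_i-\mu_{w(j)}$ strictly positive. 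So the action is contracting with unique fixed point $\tld{z}$ \emph{already on the ambient chart} $\cU(\tld{z})^{\det,\leq h}$, before any intersection with $\cS_X(\lambda)$ or with the monodromy locus. The intersection $\cU(\tld{z},\lambda,\nabla_{\bf{a}})$ inherits the action because the Schubert and monodromy conditions are $T^{\vee,\ext}$-stable, and irreducibility comes from Corollary~\ref{cor:monodromy_before_reduced}; then Lemma~\ref{lem:attractor} applies verbatim, and for general $e$ one simply precomposes with the $e$-th power map $\bG_m\to\bG_m$. If you replace your cocharacter by this one, your Step 2 collapses to a one-line weight computation and the argument closes.
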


The Proposition implies the following crucial Corollary{, which underlies the unibranch property (at special points) of the local models we will be interested in (cf.~Theorem \ref{thm:model_unibranch}):}
\begin{cor}\label{cor:unibranch_universal} Let $e>0$ be an integer and $U\subset\bA^n$. Let $\cM_U\defeq \cM_X(\lambda,\nabla)\times_{\bA^n} U\to X\times U$, $\cM_{U,e} \defeq \cM_U\times_{X,v\mapsto v^e} X$ and let $\tld{\cM}_{U,e}$ be the normalization of $\cM_{U,e}$ in $\cM_{U,e}\times _X X^0$. Assume that $\cM_U\to X\times U$ and $\tld{\cM}_{U,e}\to X\times U$ are flat. Suppose we have a geometric point $x$ of $(\cM_U)\times_X \{0\}(k)$ which lies in a section $\tld{z} \in\tld{W}^{\vee}$, with image $0\times s \in (X\times U)(k)$. The the preimage of $x$ in $\tld{\cM}_{U,e}$ is supported at a single point. Furthermore, the preimage of $\cU(\tld{z})$ in $\tld{\cM}_{U,e}\times_{X\times U} (\{0\}\times s)$ is connected.
\end{cor}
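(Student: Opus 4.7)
The strategy is to deduce the corollary from Proposition \ref{prop:unibranch_equal_char} by spreading out the conclusions from the fiber over $s$ to the whole family, using both flatness hypotheses in essential ways.

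My first step is to identify the fiber $Y_s \defeq \cM_{U,e}\times_U\{s\}$ with $\cM(\lambda,\nabla_{\bf{a}})\times_{X_k,v\mapsto v^e}X_k$. Indeed, flatness of $\cM_U\to X\times U$ implies that $\cM_U\times_U\{s\}$ is flat over $X\times\{s\}=X_k=\bA^1_k$, hence torsion-free as an $\cO_{X_k}$-module; in particular all its associated points lie over the generic point of $X_k$, so every irreducible component meets $X^0_k$, and the fiber equals the Zariski closure of its restriction to $X^0_k$, which is $\cM(\lambda,\nabla_{\bf{a}})$ by definition. Base change along $v\mapsto v^e$ yields the desired description of $Y_s$. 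Proposition \ref{prop:unibranch_equal_char} then gives that $Y_s$ is unibranch at $x=\tld{z}_k$---so by Remark \ref{rmk:unibranch_vs_analytic} and excellence the finite normalization $Y_s^{\nm}\to Y_s$ has a unique geometric point above $x$---and that the preimage of $\cU(\tld{z})$ in $Y_s^{\nm}\times_X\{0\}$ is connected.

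Next I compare $\tld{Y}_s \defeq \tld{\cM}_{U,e}\times_U\{s\}$ with $Y_s^{\nm}$. Base-changing the finite birational map $\tld{\cM}_{U,e}\to\cM_{U,e}$ along $\{s\}\hookrightarrow U$ produces a finite map $\alpha:\tld{Y}_s\to Y_s$ which is an isomorphism over the dense open $Y_s\cap(X^0_k\times\{s\})$. This is where the flatness of $\tld{\cM}_{U,e}\to X\times U$ is used crucially: combined with the normality of $\tld{\cM}_{U,e}$ (as a normalization) and the normality of its generic fibers over $X\times U$ (which are localizations of a normal scheme, hence normal), Serre's criteria applied to the flat family show that $\tld{Y}_s$ satisfies $(R_0)+(S_1)$, hence is reduced, so that the universal property of the normalization produces a factorization $\tld{Y}_s\to Y_s^{\nm}\to Y_s$ of $\alpha$. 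Since $\tld{Y}_s\to Y_s^{\nm}$ is then finite and birational with normal target, Zariski's Main Theorem identifies it with an open immersion.

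From this the two claims of the corollary follow: the set-theoretic preimage of $x$ in $\tld{\cM}_{U,e}$ coincides with that in $\tld{Y}_s$, which injects into the preimage of $x$ in $Y_s^{\nm}$---a single point by the first paragraph---proving the first claim; the connectedness claim follows analogously by pulling back the connected preimage of $\cU(\tld{z})$ in $Y_s^{\nm}\times_X\{0\}$ along the open immersion $\tld{Y}_s\hookrightarrow Y_s^{\nm}$. The main technical obstacle is the comparison step of the second paragraph, specifically verifying that $\tld{Y}_s$ is reduced and subsequently comparing it with $Y_s^{\nm}$; this requires careful application of Serre's criteria to the flat family $\tld{\cM}_{U,e}\to X\times U$, since normalization does not commute with arbitrary base change in general.
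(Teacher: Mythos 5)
Your proposal has a genuine gap in the key comparison step, and also reverses the direction of the arrow coming from the universal property of normalization.

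The gap: you claim that flatness of $\tld{\cM}_{U,e}\to X\times U$, normality of $\tld{\cM}_{U,e}$, and normality of the generic fiber imply that $\tld{Y}_s$ is reduced via Serre's criterion. This does not follow. For a flat morphism $f$ the depth formula gives $\depth\cO_{\tld{\cM}_{U,e},x} = \depth\cO_{\tld{Y}_s,x} + \depth\cO_{X\times U,(0,s)}$, so the $(S_2)$ bound on the total space only forces $\depth\cO_{\tld{Y}_s,x}\geq 2 - \dim\cO_{X\times U,(0,s)}$, which is vacuous when the codimension of $(0,s)$ in $X\times U$ is at least $2$ (which is the typical situation here). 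A standard counterexample to the kind of implication you invoke is $\Spec k[x]\to\Spec k[t]$, $t\mapsto x^2$: flat, smooth total space, normal generic fiber, but the fiber at $t=0$ is $k[x]/(x^2)$. So reducedness of $\tld{Y}_s$ would need to be established by other means, and your argument does not supply one.

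Even granting reducedness, the universal property of normalization is being applied backwards. If $\tld{Y}_s\to Y_s$ is finite, birational, and $\tld{Y}_s$ is reduced, then $\cO_{Y_s}\subset\cO_{\tld{Y}_s}\subset\cO_{Y_s^{\nm}}$ inside the total fraction ring, which produces a factorization $Y_s^{\nm}\to\tld{Y}_s\to Y_s$, i.e.\ a map \emph{from} $Y_s^{\nm}$ \emph{to} $\tld{Y}_s$. There is no natural map $\tld{Y}_s\to Y_s^{\nm}$ since $\tld{Y}_s$ need not be normal. (And if a finite birational map to the normal scheme $Y_s^{\nm}$ did exist, it would automatically be an isomorphism, not merely an open immersion.)

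The paper's proof sidesteps both issues: rather than trying to embed $\tld{Y}_s$ into $Y_s^{\nm}$, it establishes a \emph{surjection} $Y_s^{\nm}\onto\tld{Y}_s$ by applying the universal property to the finite birational map $\tld{Y}_s^{\red}\to Y_s$ (which one always has, with no reducedness hypothesis on $\tld{Y}_s$ itself), and then observes that the two conclusions of the corollary are purely set-theoretic, so the surjection suffices. Your first paragraph (identifying $Y_s$ with $\cM(\lambda,\nabla_{\bf a})\times_{X_k,\,v\mapsto v^e}X_k$ and invoking Proposition \ref{prop:unibranch_equal_char}) is correct; the remedy is to replace the open-immersion claim by the surjection from the normalization as above.
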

\begin{proof} The point $x$ gives rise to a point $s\in U(k)\subset \bA^n(k)$ corresponding to a tuple $\bf{a}\in k^n$. The flatness hypotheses imply that the natural map $\cM(\lambda,\nabla_{\bf{a}}) \to \cM_U\times_{ U} s$ is an isomorphism, that $\cM_{U,e}\times_U s= \cM(\lambda,\nabla_{\bf{a}})\times _{X_k, v\mapsto v^e} X_k$, and that $\tld{\cM}_{U,e}\times_U s\to \cM_{U,e}\times_U s$ is a finite birational map. It follows that $\tld{\cM}_{U,e}\times_U s$ is surjected on by the normalization of $\cM(\lambda,\nabla_{\bf{a}})\times_{X_k,v\mapsto v^e} X_k$, and we conclude by Proposition \ref{prop:unibranch_equal_char} and Remark \ref{rmk:unibranch_vs_analytic}(\ref{item:onepoint}). 
\end{proof}

The rest of this section is devoted to the proof of Proposition \ref{prop:unibranch_equal_char}. 
We first recall some torus actions on $\Gr_{\cG,X}$. 
Let $T^{\vee,\ext}=T^\vee\times \bG_m$. We let $T^{\vee,\ext}$ act on $X=\bA^1$ by letting $T^\vee$ act trivially, and the  $\bG_m$ factor act via scaling the coordinate $v$.

Given $r\in R^\times$ we have a canonical isomorphism $R(\!(v-t)\!)\risom R(\!(v-rt)\!)$ of $R$-algebras given by the change of variable $v\mapsto r^{-1}v$. This induces an action $\bG_m \times_\bZ L\cG \to L\cG$ of $\bG_m$ on $L\cG$ (over $\bZ$) which is equivariant with respect to the scaling action of $\bG_m$ on $\bA^1$.
It commutes with the right-translation action of $T^\vee$ on $L\cG$, and thus we obtain an $T^{\vee,\ext}$ action on $L\cG$ which is equivariant for the map $L\cG\to \bA^1$. It is clear that this action preserves the subgroup $L^+\cG$, and thus we get an action of $T^{\vee,\ext}$ on $\Gr_{\cG,X}$ which is equivariant for the action $T^{\vee,\ext}$ on $\bA^1$. This action preserves $S_X(\lambda)$.

\begin{lemma}\label{lem:torus_action_open_chart} Let $\tld{z}=wt_\nu \in \tld{W}^\vee$. Define an action $T^\vee \times \bG_m \times L\cG \to L\cG$ of $T^{\vee,\ext}$ on $L\cG$ given by the formula
\[(D,r)A(v)\defeq \Ad(w)(D^{-1}) A(r^{-1}v)r^{\nu}D,\]
where $D\in T^\vee(R)$ is a diagonal matrix, $r\in R^\times$, $A(v)\in \GL_n(R(\!(v-t)\!))$ and $A(r^{-1}(v))\in \GL_n(R(\!(v-rt)\!))$ is obtained from $A(v)$ by the change of variable $v\mapsto r^{-1}v$. 
Then this action preserves $\cU(\tld{z})$, $\cU(\tld{z})^{\det,\leq h}$ (for any $h$), and the natural map $\cU(\tld{z})\into \Gr_{\cG,X}$ is $T^{\vee,\ext}$-equivariant.
\end{lemma}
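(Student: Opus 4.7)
The proof of Lemma \ref{lem:torus_action_open_chart} is a direct but slightly delicate verification with three tasks: (i) check that the formula defines a group action of $T^{\vee,\ext}$ (compatible with the projection to $\bA^1$, which gets sent to $rt$ when the base point was $t$), (ii) check that the action preserves $\cU(\tld{z})$ and each $\cU(\tld{z})^{\det,\leq h}$, (iii) check equivariance of $\cU(\tld{z})\hookrightarrow \Gr_{\cG,X}$. The formula combines the change-of-variable action of $\bG_m$ (an isomorphism $R(\!(v-t)\!)\stackrel{\sim}{\to} R(\!(v-rt)\!)$) with left multiplication by the constant diagonal matrix $\Ad(w)(D^{-1})\in T^\vee$ and right multiplication by the constant diagonal matrix $r^\nu D$. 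Associativity and the identity axiom follow because the substitutions compose correctly ($v\mapsto r_1^{-1}v$ then $v\mapsto r_2^{-1}v$ gives $v\mapsto (r_1r_2)^{-1}v$), because the twisting matrices are constant in $v$ (so they pass through substitutions unchanged), and because diagonal matrices commute, so $\Ad(w)(D_1^{-1})\Ad(w)(D_2^{-1})=\Ad(w)((D_1D_2)^{-1})$ and $r_2^\nu D_2\cdot r_1^\nu D_1=(r_1r_2)^\nu D_1D_2$.

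For preservation, I would use the explicit description of Proposition \ref{prop:explicit_affine_chart}. Writing $D=\Diag(d_1,\dots,d_n)$ and using that $\Ad(w)(D^{-1})=\Diag(d_{w^{-1}(i)}^{-1})_i$, a direct computation gives
\[ [(D,r)\cdot A]_{ij}=d_{w^{-1}(i)}^{-1}d_j\, r^{\nu_j-\delta_{i>j}}\cdot v^{\delta_{i>j}}\sum_{k}c_{ij,k}r^{-k}(v-rt)^k. \]
The degree bound and the $v^{\delta_{i>j}}$ prefactor (the defining conditions of $\cU(\tld{z})^{\det,\leq h}$ at the base point $rt$) are therefore preserved. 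The normalization $c_{w(j)j,\,\nu_j-\delta_{w(j)>j}}=1$ is preserved since its scalar prefactor is $d_{w^{-1}(w(j))}^{-1}d_j\cdot r^{\nu_j-\delta_{w(j)>j}}\cdot r^{-(\nu_j-\delta_{w(j)>j})}=1$. Finally, the determinant condition is preserved because $\det(\Ad(w)(D^{-1}))\cdot\det(r^\nu D)=r^{\|\nu\|}$ exactly cancels the factor $r^{-\|\nu\|}$ appearing in $\det(A)(r^{-1}v)=\det(w)r^{-\|\nu\|}(v-rt)^{\|\nu\|}$.

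For the equivariance statement, I would observe that $\Ad(w)(D^{-1})\in T^\vee\subseteq L^+\cG$ is a constant left factor which vanishes in the quotient $L^+\cG\backslash L\cG=\Gr_{\cG,X}$; hence the image of $(D,r)\cdot A$ equals the image of $A(r^{-1}v)\cdot r^\nu D$ in $\Gr_{\cG,X}$, which is precisely the previously defined $T^{\vee,\ext}$-action on $\Gr_{\cG,X}$ applied to $[A]$ via the group automorphism $(D,r)\mapsto(r^\nu D,r)$ of $T^{\vee,\ext}$. Up to this automorphism, the inclusion is thus $T^{\vee,\ext}$-equivariant. There is no real obstacle here; the only care required is bookkeeping with the change of variable and tracking how $\Ad(w)$ permutes diagonal entries. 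The specific twists in the formula are designed so that the section $\tld{z}=wv^\nu$ (Definition \ref{defn:section}) is fixed: indeed $\Ad(w)(D^{-1})\cdot w(r^{-1}v)^\nu\cdot r^\nu D=wD^{-1}w^{-1}\cdot wv^\nu r^{-\nu}\cdot r^\nu D=wv^\nu$ since $v^\nu$ commutes with $D$. This fixed-point property is essential for the torus-attracting argument in the forthcoming Proposition \ref{prop:unibranch_equal_char}.
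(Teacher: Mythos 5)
Your proof is correct and takes essentially the paper's approach — a direct verification — but the paper works abstractly with the two defining conditions of $\cU(\tld{z})$ (stability under $T^\vee$-conjugation on the left, $T^\vee$-multiplication on both sides, and the change-of-variable isomorphism $R[\tfrac{1}{v-t}]\cong R[\tfrac{1}{v-rt}]$ sending $\tfrac{v}{v-t}\mapsto\tfrac{v}{v-rt}$), whereas you unfold through the explicit coordinate description of Proposition \ref{prop:explicit_affine_chart}. The two routes buy the same thing; yours makes the verification of the degree bounds, the normalization $c_{w(j)j,\nu_j-\delta_{w(j)>j}}=1$, and the determinant condition completely mechanical, at the cost of more indices. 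The key algebraic identity underlying the paper's version is $r^\nu D(v-rt)^{-\nu}=(r^{-1}v-t)^{-\nu}D$ (there is a sign typo in the paper's displayed exponent), and your coordinate computation is just this identity written entry-by-entry. Your verification of the action axioms — that compositions of substitutions $v\mapsto r^{-1}v$ behave correctly and that the diagonal twisting matrices, being constant in $v$, commute through — is routine but worth stating since the paper does not.

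One point you handle more carefully than the paper: the equivariance of $\cU(\tld{z})\hookrightarrow\Gr_{\cG,X}$. As you observe, the image of $(D,r)\cdot A$ in $L^+\cG\backslash L\cG$ is $[A(r^{-1}v)\,r^\nu D]$, whereas the $T^{\vee,\ext}$-action on $\Gr_{\cG,X}$ previously introduced (loop rotation plus right $T^\vee$-translation) gives $[A(r^{-1}v)\,D]$. These match only after precomposing with the group automorphism $(D,r)\mapsto(r^\nu D,r)$ of $T^{\vee,\ext}$. Your identification of this twist is correct and is not stated explicitly in the paper; since the later applications (Lemma \ref{lem:contracting_torus_action} and Proposition \ref{prop:unibranch_equal_char}) only invoke one-parameter subgroups of $T^{\vee,\ext}$ and the fixed-point/attraction structure, this harmless reparametrization does not affect anything downstream, but it is good that you made it explicit. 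Your closing remark that the formula is rigged so the section $\tld{z}=wv^\nu$ is fixed — checked by $\Ad(w)(D^{-1})\cdot w(r^{-1}v)^\nu\cdot r^\nu D = wv^\nu$ — correctly anticipates the role this lemma plays in the unibranch argument, though that fixed-point property is the content of Lemma \ref{lem:contracting_torus_action} rather than of this lemma.
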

\begin{proof} We have 
\[  \Ad(w)(D^{-1}) A(r^{-1}v) r^\nu D(v-rt)^{-\nu}w^{-1} = \Ad(w)(D^{-1}) A(r^{-1}v) r^\nu D(v-rt)^{-\nu}w^{-1} \]
\[ =\Ad(w)(D^{-1}) A(r^{-1}v) (r^{-1}v-t)^{\nu}w^{-1} \Ad(w)(D)\]
and 
\[ \Ad(w)(D^{-1}) A(r^{-1}v) r^\nu D(v-rt)^{-\nu}=\Ad(w)(D^{-1}) A(r^{-1}v)(r^{-1}v-t)^\nu D\]
The result now follows from the definitions since the first condition defining $\cU(\tld{z})^{\det, \leq h}$ is stable under $T^{\vee}$-conjugation and the second under both right and left multiplication by $T^{\vee}$, and the change of variable $v\mapsto r^{-1}v$ induces an isomorphism $R[\frac{1}{v-t}]\cong R[\frac{1}{v-rt}]$ which sends $\frac{v}{v-t}$ to $\frac{v}{v-rt}$.
\end{proof}
\begin{lemma} \label{lem:contracting_torus_action}There exists a one-parameter subgroup $\bG_m \to T^{\vee,\ext}$ such that for any $h$, the induced action via Lemma \ref{lem:torus_action_open_chart} on $\cU(\tld{z})^{\det,\leq h}$ satisfies the following properties:
\begin{itemize}
\item It is contracting, i.e.~it extends to an action $\bA^1 \times \, \cU(\tld{z})^{\det,\leq h}\to \cU(\tld{z})^{\det,\leq h}$ of the multiplicative monoid $\bA^1$.
\item If $ \cU(\tld{z})^{\det,\leq h}$ is non-empty, the fixed-point subscheme of the action is given by the section $\tld{z}:\Spec \Z \into \cU(\tld{z})^{\det,\leq h}\times _X \{0\}$
\end{itemize}
\end{lemma}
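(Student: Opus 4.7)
The plan is to work in the explicit affine coordinates on $\cU(\tld{z})^{\det,\leq h}$ provided by Proposition \ref{prop:explicit_affine_chart}, compute the weights of the $T^{\vee,\ext}$-action on those coordinates, and then pick a one-parameter subgroup making all non-normalized coordinates have strictly positive weight.

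First I would recall that, writing $\tld{z} = wt_\nu$, the scheme $\cU(\tld{z})^{\det,\leq h}$ is (essentially) the affine space with coordinates the Laurent coefficients
$c_{ij,k}$ of the entries
\[A_{ij} \;=\; v^{\delta_{i>j}}\!\!\sum_{k=-h}^{\nu_j-\delta_{i>j}-\delta_{i<w(j)}} c_{ij,k}(v-t)^k,\]
subject to the normalization $c_{w(j)j,\,\nu_j-\delta_{w(j)>j}}=1$ and the determinant relation $\det A = \det(w)(v-t)^{|\!|\nu|\!|}$. Next I would translate the $T^{\vee,\ext}$-action from Lemma \ref{lem:torus_action_open_chart} into these coordinates. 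A direct computation using the change of variable $v\mapsto r^{-1}v$ (which moves the $X$-point from $t$ to $rt$, so expansions around $v-t$ become expansions around $v-rt$) shows that $(D,r)\in T^{\vee,\ext}$ with $D=\mathrm{diag}(d_1,\dots,d_n)$ acts on coefficients by
\[c_{ij,k}\;\longmapsto\; d_{w^{-1}(i)}^{-1}\,d_j\,r^{\nu_j-\delta_{i>j}-k}\,c_{ij,k}.\]
In particular, on the normalized coefficient $(i,j,k)=(w(j),j,\nu_j-\delta_{w(j)>j})$ the weight is identically zero, so the relation $c_{w(j)j,\nu_j-\delta_{w(j)>j}}=1$ is preserved.

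I would then look for a one-parameter subgroup of the form $r\mapsto(r^m,r^N)$, $r^m=\mathrm{diag}(r^{m_1},\ldots,r^{m_n})$, under which every non-normalized coordinate has strictly positive weight; the resulting total weight on $c_{ij,k}$ is $-m_{w^{-1}(i)}+m_j+N(\nu_j-\delta_{i>j}-k)$. The natural choice is $m_j=-w(j)$, giving weight $i-w(j)+N(\nu_j-\delta_{i>j}-k)$. I would then verify case-by-case that this is $>0$ on non-normalized coordinates once $N>n$: if $\nu_j-\delta_{i>j}-k\geq 1$ the weight is at least $-n+N$; if $\nu_j-\delta_{i>j}-k=0$ then $k=\nu_j-\delta_{i>j}$ forces $\delta_{i<w(j)}=0$, hence $i\geq w(j)$, and non-normalization gives $i>w(j)$, so the weight equals $i-w(j)>0$.

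Finally, positivity of the weights on all non-normalized coordinates immediately gives two conclusions. First, the action extends to $\bA^1\times \cU(\tld{z})^{\det,\leq h}\to \cU(\tld{z})^{\det,\leq h}$: at $r=0$ every non-normalized coordinate is sent to $0$, while the normalized coordinates (whose weight is $0$) are preserved. Second, the fixed-point subscheme is cut out by the vanishing of all non-normalized $c_{ij,k}$ together with $t=0$, which forces $A_{ij}(v)=\delta_{i,w(j)}v^{\nu_j}$, i.e.\ the section $\tld{z}$ of Definition \ref{defn:section}. No step here looks hard; the only thing to be slightly careful about is the book-keeping in the case analysis showing every non-normalized weight is strictly positive (in particular, that the normalization relation $c_{w(j)j,\nu_j-\delta_{w(j)>j}}=1$ accounts for precisely the coordinates whose natural weight vanishes), which is why the proof of the contracting property amounts to this combinatorial check rather than anything deeper.
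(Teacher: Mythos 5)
Your proof is correct and is essentially the paper's argument: one computes the explicit $T^{\vee,\ext}$-weights on the affine coordinates of $\cU(\tld{z})^{\det,\leq h}$ given by Proposition \ref{prop:explicit_affine_chart}, and then chooses a one-parameter subgroup for which every non-normalized coordinate (and $t$) has strictly positive weight, so that the action is contracting with unique fixed point $\tld{z}$. The paper takes $r\mapsto(\Ad(w^{-1})(r^\mu),r^N)$ for $\mu$ regular dominant and $N>h_\mu$, which for $\mu_j=-j$ is exactly your choice $m_j=-w(j)$, and the case analysis on $\nu_j-\delta_{i>j}-k$ is the same.
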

\begin{proof} We choose $\mu\in X_*(T^\vee)$ regular dominant, and choose an integer $N>h_{\mu} =\max_{\alpha^\vee}\{\langle \mu,\alpha^\vee\rangle\}$. 
We claim that the one parameter subgroup $r\mapsto (\Ad(w^{-1})(r^{\mu}),r^N)$  does the job. 
It suffices to verify the statement for Noetherian $\bZ[v]$-algebras.  
For a Noetherian $\bZ[v]$-algebra $R$ (sending $v$ to $t\in R$), recall the explicit description of $\cU_X(\tld{z})^{\det,\leq h}(R)$ from Proposition \ref{prop:explicit_affine_chart}.  Using that description and Lemma \ref{lem:torus_action_open_chart}, we see that for any $A\in \cU_X(\tld{z})^{\det,\leq h}(R)$,  
the action of an element $r\in R^\times$ sends $A$ to $A'$ where $A'$ is given by
\begin{align*}
A'_{ij}&=r^{\mu_i}(r^{-N}v)^{\delta_{i>j}}\Bigg(\sum_{k=-h}^{ \nu_j-\delta_{i>j}-\delta_{i<w(j)} }c_{ij,k}(r^{-N}v-t)^k\Bigg)r^{N\nu_j}r^{-\mu_{w(j)}}\\
&=v^{\delta_{i>j}}\Bigg(\sum_{k=-h}^{ \nu_j-\delta_{i>j}-\delta_{i<w(j)} } r^{N(\nu_j-\delta_{i>j}-k)+\mu_i-\mu_{w(j)}} c_{ij,k}(v-r^N t)^k\Bigg).
\end{align*}
If $k<\nu_j-\delta_{i>j}$, then $N(\nu_j-\delta_{i>j}-k)+\mu_i-\mu_{w(j)}>0$ since $N>h_\mu$.
If $k=\nu_j-\delta_{i>j}$, then necessarily $\delta_{i<w(j)}=0$. 
We have two subcases: If $i=w(j)$, then $N(\nu_j-\delta_{i>j}-k)+\mu_i-\mu_{w(j)}=0$ and $c_{ij,k}=1$.
Otherwise, $i>w(j)$, and $N(\nu_j-\delta_{i>j}-k)+\mu_i-\mu_{w(j)}=\mu_i-\mu_{w(j)}>0$, since $\mu$ was chosen to be regular dominant.
Thus we see that the coordinates $c_{ij,k}$ (for $i\neq w(j)$ are homogenous for our $\bG_m$-action with positive weight, hence the $\bG_m$-action extends to an action of $\bA^1$, and that the fixed point scheme $\cU_X(\tld{z})^{\det,\leq h}$ is exactly given by the section $\tld{z}$. 
\end{proof}

\begin{lemma}\label{lem:attractor} Let $k$ be an algebraically closed field and let $M$ be an irreducible variety over $k$, Suppose there is an action of the multiplicative monoid $\bA^1_k$ on $M$ over $k$ with a unique fixed point $x\in X(k)$. Then $M$ is unibranch at $x$. In particular, the completed local ring $\cO_{M,x}^{\wedge}$ of $M$ at $x$ is a domain. %
\end{lemma}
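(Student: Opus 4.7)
My plan is to show that the set-theoretic fiber $F=\pi^{-1}(x)$ of the normalization $\pi\colon M^{\nm}\to M_{\red}$ consists of a single point. Since finite-type $k$-schemes are excellent, $\pi$ is finite, and by Remark \ref{rmk:unibranch_vs_analytic}(\ref{item:onepoint}) and (\ref{it:unibranch_vs_analytic:2}) this will give both the unibranch property at $x$ and the integrality of $\cO_{M,x}^{\wedge}$.

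First I would use the monoid axioms to show that $0\cdot m=x$ for every $m\in M(k)$. Indeed $r\cdot(0\cdot m)=(r\cdot 0)\cdot m=0\cdot m$ for all $r$, so $0\cdot m$ is fixed by the whole $\bA^1$-action and must equal $x$ by the uniqueness hypothesis; in particular $x$ itself is $\bA^1$-fixed, so $r\cdot x=x$ for every $r\in\bA^1(k)$.

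Next I would lift the action to the normalization. The scheme $\bA^1\times_k M^{\nm}$ is integral and normal, being a smooth base change of the integral normal $k$-variety $M^{\nm}$ (this uses $k$ algebraically closed). The composition $\sigma\circ(\id\times\pi)\colon \bA^1\times_k M^{\nm}\to M_{\red}$ is dominant, because $\sigma(1,-)=\id_{M_{\red}}$. By the universal property of normalization it factors through a unique morphism $\tld{\sigma}\colon \bA^1\times_k M^{\nm}\to M^{\nm}$, and that same uniqueness of lifts forces $\tld{\sigma}$ to satisfy the monoid action axioms, giving an $\bA^1$-action on $M^{\nm}$ compatible with $\sigma$ via $\pi$.

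Finally I would conclude as follows. The map $\tld{\sigma}_0\colon M^{\nm}\to M^{\nm}$, $y\mapsto 0\cdot y$, lands in the finite set $F$ by the first step; since $M^{\nm}$ is irreducible, its image is an irreducible subset of $F$, hence a single point $\tld{x}\in F$. On the other hand, for every $y\in F$ the orbit $\bA^1\cdot y$ lies in $F$ because $\pi(r\cdot y)=r\cdot\pi(y)=r\cdot x=x$; as $F$ is finite (hence discrete) and $\bA^1$ is connected, the orbit morphism $\bA^1\to F$ is constant, so $y=0\cdot y=\tld{x}$. Therefore $F=\{\tld{x}\}$, and Remark \ref{rmk:unibranch_vs_analytic} yields the Lemma. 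The only nontrivial step in this plan is the lifting of the action to $M^{\nm}$, but it is entirely standard once one notes that $\bA^1\times_k M^{\nm}$ is normal over an algebraically closed base.
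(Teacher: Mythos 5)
Your proof is correct and follows essentially the same route as the paper's: lift the $\bA^1_k$-action to $M^{\nm}$ via the universal property of normalization, and use the fact that the contraction $y\mapsto 0\cdot y$ on the irreducible $M^{\nm}$ has single-point image together with the observation that this point must be the whole (finite) fiber over $x$. The only cosmetic difference is that the paper phrases the last step in terms of the $\bG_m$-fixed-point subscheme of $M^{\nm}$, whereas you argue directly that each point of $\pi^{-1}(x)$ satisfies $y=0\cdot y$ via connectedness of $\bA^1$; these are the same observation.
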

\begin{proof} Let $\pi:M^{\nm}\to M$ be the normalization map, so $\pi$ is finite.
Since $\bA^1_k \times M^{\nm}$ is normal, the action of $\bA^1_k$ on $M$ extends to an action of $\bA^1_k$ on $M^{\nm}$, by the universal property of the normalization. In particular, we get an induced action of $\bG_m$ on $M^{\nm}$.

We claim that the fixed-point scheme $M^{\nm,\bG_m}$ has underlying reduced scheme $(M^{\nm,\bG_m})^\red=(\pi^{-1}(x))^\red$. Since $\pi((M^{\nm,\bG_m})^{\red})\subset (M^{\bG_m})^{\red}=x$, we have $(M^{\nm,\bG_m})^\red\subset(\pi^{-1}(x))^\red$. On the other hand, $\pi^{-1}(x)$ is a finite scheme with a $\bG_m$-action, hence $(\pi^{-1}(x))^{\red}$ consists of $\bG_m$-fixed points, so $(\pi^{-1}(x))^{\red}\subset (M^{\nm,\bG_m})^\red$.

Now the action map $\bA^1_k\times M^{\nm} \to M^{\nm}$ induces a surjective map $M^{\nm} \to M^{\nm,\bG_m}$, given by $m\mapsto 0\cdot m$. Since $M^{\nm}$ is irreducible, we conclude that $(\pi^{-1}(x))^{\red}=\tld{x}$ is a single point. Hence, $M$ is unibranch at $x$ by  Remark \ref{rmk:unibranch_vs_analytic}(\ref{item:onepoint}).
The last assertion follows from Remark \ref{rmk:unibranch_vs_analytic}(\ref{it:unibranch_vs_analytic:2}).

\end{proof}

\begin{proof}[Proof Proposition \ref{prop:unibranch_equal_char}] When $e=1$, we can directly apply Lemma \ref{lem:attractor} to $\cU(\tld{z},\lambda,\nabla_{\bf{a}})$, with the $\bG_m$-action obtained from Lemma \ref{lem:torus_action_open_chart} via base change. Note that $\cU(\tld{z},\lambda,\nabla_{\bf{a}})$ is irreducible since $\cM(\lambda, \nabla_{\bf{a}})$ is by Corollary \ref{cor:monodromy_before_reduced}. Furthermore, this contracting $\bG_m$ lifts to a contracting $\bG_m$-action on the normalization $\cU(\tld{z},\lambda,\nabla_{\bf{a}})^{nm}$. Now the fiber of this normalization above $0\in X_k$ has a contracting $\bG_m$-action with a unique fixed point (namely, the pre-image of 
$\tld{z}_k$), and hence is connected. 

For general $e>0$, by composing the previous action with the $e$-th power map $\bG_m\to \bG_m$, we can construct a contracting $\bG_m$-action on $\cU(\tld{z},\lambda,\nabla_{\bf{a}})$ which is equivariant for the $e$-th power of the scaling action on $X_k$. This allows us to define a $\bG_m$-action on $\cU(\tld{z},\lambda,\nabla_{\bf{a}})\times_{X_k,v\mapsto v^e} X_k$ which is contracting to $\tld{z}_k$. We can now repeat the same argument as above.
\end{proof}

\subsection{Spreading out normality}
\label{sec:Spr:Out}

We now return to the universal setting. Recall that $X=\bA^1=\Spec \bZ[v]$, with a chosen coordinate $v$. We thus get a zero section $0:\Spec \bZ \to X$ given by $v\mapsto 0$, and $X^0=\Spec \bZ[v,\frac{1}{v}]$. We will abusively think of $v$ as a global function on any $X$-scheme.
We study the following setup:
\begin{setup}\label{setup:spreading} We have an integral finite type $\bZ$-scheme $S$, and a finitely presented map $M\to X \times S$. We assume that the generic point of $S$ has characteristic $0$.  We also assume the following properties:
\begin{itemize}
\item The base changed family $M^0=M\times _X X^{0}\to X^0\times S$ over $X^0$ is smooth.%
\item $M$ is the Zariski closure of $M^0$. In particular $v\in \cO(M)$ is a regular element. 
\item $M$ is normal.
\end{itemize}
Given this setup, we will denote $M_0=M\times_{X\times S,0} S$, the fiber of $M$ above the zero section $0:\Spec  \bZ\to X$. 
\end{setup}

We want to understand the base change of this situation to a complete discrete valuation ring $R$, via a map $f: \Spec R \to X\times S$ which induces a map $\bZ[v] \ra R$ sending $v$ to a uniformizer of $R$. In general, the base change $M_R\defeq M\times _{X\times S, f} \Spec R$ may neither be flat over $\Spec R$, nor be normal. However, the following Proposition will guarantee that both properties will hold for ``generic'' choices of $f$:
\begin{prop} \label{prop:spreading}In Setup \ref{setup:spreading}, there exists a non-empty open subscheme $U\subset S$ such that if $R$ is a complete discrete valuation ring and $f:\Spec R \to X\times S$ factor through $X\times U$,and such that $v$ is sent to a uniformizer of $R$, then the base change $M_R\to \Spec R$ is flat, and $M_R$ is normal.
\begin{rmk} The hypothesis that $v$ is sent to a uniformizer of $R$ is necessary. For example, let $M=\Spec \bZ[x,v]/(x^2-v)\to \Spec \bZ[v]$, $S=\Spec \bZ$, and let $f:\cO\to \Spec\bZ[v]$ be the map sending $v$ to $\varpi^2$ where $\varpi$ uniformizes $R$. Then the base change $M_R=\Spec R[x]/(x^2-\varpi^2)$ is not normal.
\end{rmk}

\begin{lemma}\label{lem:flat_poly} Let $B$ be an $A[v]$-algebra. Assume that $v$ is a regular element in $B$, $B[\frac{1}{v}]$ is flat over $A[v,\frac{1}{v}]$, and $B/v$ is flat over $A[v]/v=A$. Then $B$ is flat over $A[v]$.
\end{lemma}
\begin{proof} Let $x \in \Spec B$, and let $y$ be the image of $x$ in $\Spec A[v]$. We need to show $B_x$ is flat over $A[v]_y$.
If $x\in \Spec B[\frac{1}{v}]$, this is part of our hypothesis.
If $x\in \Spec B/v$, our hypotheses imply $B_x/v$ is flat over $A[v]_y/v$, and that $\mathrm{Tor}_1^{A[v]_y}(A[v]_y/v,B_x)=0$. We conclude by the local criteria of flatness (\cite[Lemma 10.98.10, \href{https://stacks.math.columbia.edu/tag/00MD}{Tag 00MD}]{stacks-project}).
\end{proof}

\begin{lemma} \label{lem:generic_flatness} Assume Setup \ref{setup:spreading}. Then there is a non-empty open subscheme $U\subset S$ such that the base change $M\times_{X\times S} (X\times U)\to X\times U$ is flat. 
 \end{lemma}
\begin{proof}
We already observed that the coordinate $v\in \cO(X)$ is regular in $M$. Let $M_0$ be the $S$-scheme $M\times_{X} \{ 0 \}$.
Since $S$ is integral, by generic flatness (\cite[\href{https://stacks.math.columbia.edu/tag/0529}{Tag 0529}]{stacks-project}), there is a non-empty open $U\subset S$ such that $M_0\times_ S U$ is flat over $U$. On the other hand $M^0\to X^0\times S$ is smooth, hence flat. We conclude by Lemma \ref{lem:flat_poly}.
\end{proof}

\end{prop}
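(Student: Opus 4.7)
The flatness assertion will follow directly from Lemma \ref{lem:generic_flatness}, which provides an open $U_1 \subseteq S$ over which $M \to X\times S$ is flat, together with the stability of flatness under arbitrary base change: for any $f: \Spec R \to X \times U_1$ as in the statement, the pullback $M_R \to \Spec R$ is flat.

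For normality, the plan is to shrink $S$ further to an open $U \subseteq U_1$ so that base change factors through normality-preserving operations. First I would arrange (a) that $U$ itself is normal (the normal locus of the excellent scheme $S$ is open and dense), (b) that the composition $M \to X\times S \to S$ has geometrically normal fibers over $U$, and (c) that the same map has Cohen--Macaulay fibers over $U$. Both (b) and (c) follow from the openness of the corresponding fiber loci for flat finite-type morphisms (EGA~IV~12.1.1 and its Cohen--Macaulay analogue) applied to $M \to S$, whose generic fiber $M_K = M \times_S \Spec K$ (with $K = \mathrm{Frac}(S)$) is normal as a localization of $M$ and hence, since $K$ has characteristic zero by assumption, is geometrically normal and Cohen--Macaulay at points of small codimension. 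With these arrangements in place, for any $f: \Spec R \to X \times U$ with $v \mapsto \varpi$ a uniformizer of $R$, the intermediate pullback $N \defeq M \times_U \Spec R$ is a normal morphism over $\Spec R$; by EGA~IV~5.12.7 combined with the normality of $\Spec R$, the total space $N$ is itself normal, and it similarly inherits the Cohen--Macaulay property.

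It then remains to show $M_R = N/(v-\varpi)N$ is normal. Since $v-\varpi$ is a non-zero-divisor on $N$ by flatness of $N$ over $R[v]$, the Cohen--Macaulay property of $N$ descends to $M_R$, giving Serre's condition $S_2$. For condition $R_1$, the generic fiber of $M_R \to \Spec R$ equals the smooth $M^0 \times_{X^0\times U,f}\Spec \mathrm{Frac}(R)$, so its codimension-$1$ points are regular; the remaining codimension-$1$ points of $M_R$ are the generic points $\xi$ of components of the special fiber, each lying above a codimension-$1$ point $\eta\in M$ at which $v$ vanishes. Normality of $M$ forces $\cO_{M,\eta}$ to be a DVR in which $v = u\pi^{e}$ for a uniformizer $\pi$, a unit $u$, and some integer $e \geq 1$, and an analysis of the completed local ring $\cO_{M,\eta}^{\wedge}$ base-changed along $v\mapsto \varpi$ then identifies $\cO_{M_R,\xi}$ as a DVR, the crucial input being that $\varpi$ is a \emph{uniformizer} of $R$ so that $u\pi^{e} - \varpi$ is Eisenstein-like in $\pi$ over $R$. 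The hard part will be precisely this $R_1$ step: the matching of the vanishing order $e$ along $\{v=0\}\subset M$ with the requirement that $v\mapsto \varpi$ maps to a uniformizer is essential, since without this matching the quotient typically fails to be normal (as illustrated by the case $v \mapsto \varpi^{2}$ in the remark immediately following the proposition statement); ensuring that the unit $u$ remains a unit after base change may require an additional shrinking of $U$.
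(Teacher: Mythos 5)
Your flatness step is correct and matches the paper. The overall strategy for normality — shrink $S$ and argue that $S_2$ and $R_1$ persist — is also the right shape, but two of the key steps have genuine gaps.

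\textbf{The Cohen--Macaulay claim is an overreach.} Normality of $M$ only gives $S_2 + R_1$, not Cohen--Macaulay; a normal variety of dimension $\geq 3$ need not be CM (e.g.~cones over abelian surfaces), so the generic fiber $M_K$ of $M \to S$, and hence $N$, need not be CM. You hedge with ``Cohen--Macaulay at points of small codimension,'' but you then invoke CM of $N$ to push $S_2$ through the hypersurface cut $v - \varpi$, and $S_2$ does not descend through a hypersurface section the way CM does. The paper avoids this entirely: Lemma \ref{lem:S_2} cuts the geometrically normal (hence $S_2$) fibers of $M \to S$ by the regular element $v$ to get geometrically $S_1$ fibers for $M_0 \to S$. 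Then $M_R$ flat over the DVR $R$, with $S_1$ special fiber and regular (smooth) generic fiber, is $S_2$ by the usual depth additivity over a regular one-dimensional base — no CM hypothesis required.

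\textbf{The $R_1$ argument locates the wrong points in $M$.} You assert the generic points $\xi$ of the special fiber of $M_R$ lie over codimension-1 points $\eta$ of $M$, and then invoke normality of $M$ to make $\cO_{M,\eta}$ a DVR. But $\xi$ lies over a closed point $s$ of $S$ (the image of the closed point of $\Spec R$), and hence over a point of $M$ of codimension roughly $1 + \dim S$, not $1$, whenever $\dim S \geq 1$. Normality of $M$ gives nothing useful at such a point. The paper's Lemma \ref{lem:R_1} gets around this by working at the \emph{generic} point of $S$ — where the DVR statement is true — extracting an explicit relation $f_{\mathfrak{p}}\, y_{\mathfrak{p},i} \equiv a_{\mathfrak{p},i}\, x_{\mathfrak{p}} \pmod{\mathfrak{p}^2}$ that witnesses principality of the height-1 prime $\mathfrak{p}$ after inverting $f_{\mathfrak{p}}$, and then spreading this relation out. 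This also controls that the minimal primes of $B'/\varpi$ are exactly the $\mathfrak{p}B'$. Crucially that spreading-out requires first making the components of $(M_0)_{\kappa}$ geometrically integral, hence passing to a finite separable extension and an \emph{\'etale} neighborhood $U_3 \to S$. The proposition's $U$ is then the image of $U_3$, and \emph{completeness} of $R$ is used to lift $\Spec R \to U$ to $U_3$. Your proposal never addresses why an \'etale cover (rather than an open subscheme) must enter, nor where completeness of $R$ is used, both of which are essential.
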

\begin{rmk} \label{rmk:generic_flatness}  The above proof actually shows that Lemma \ref{lem:generic_flatness} holds under much less restrictive conditions than Setup \ref{setup:spreading}: In fact one only needs a finitely presented map $M\to X\times S$ such that $S$ is integral, $v$ is regular in $M$ { and $M^0\to X^0\times S$ is flat.}
\end{rmk}

\begin{lemma} \label{lem:S_2} Assume Setup \ref{setup:spreading}, and furthermore assume that $M\to X\times S$ is flat. Then there is a non-empty open subscheme $U\subset S$ such that the map $M_0\times_S U\to S$ has geometrically $S_1$ fibers.
\end{lemma}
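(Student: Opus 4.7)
The plan is to first reduce to checking Serre's condition $S_1$ on the generic fiber using an openness principle for flat families, and then to verify the generic geometric fiber is $S_1$ by exploiting the normality of $M$ together with the characteristic zero hypothesis on the generic point of $S$.

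First I would apply generic flatness (as in Remark \ref{rmk:generic_flatness}) to the finite type morphism $M_0 \to S$. Since $S$ is integral, there exists a non-empty open $U_1 \subseteq S$ over which $M_0 \times_S U_1 \to U_1$ is flat of finite presentation. For such a family, the set of $s \in U_1$ whose geometric fiber satisfies $S_1$ is open in $U_1$ by EGA IV$_3$, 12.2.4 (alternatively \cite[\href{https://stacks.math.columbia.edu/tag/0C0E}{Tag 0C0E}]{stacks-project} together with constructibility and specialization). Consequently, once we verify that the generic fiber of $M_0 \to S$ is geometrically $S_1$, shrinking $U_1$ further produces the desired open $U$.

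Let $\eta$ denote the generic point of $S$ and $K \defeq \kappa(\eta)$, so that $\mathrm{char}(K) = 0$ by hypothesis. The fiber $M_\eta = M \times_S \eta$ is obtained from $M$ by localization at elements of $\cO_S$, hence is normal (and in particular $S_2$). Because $M_\eta$ is of finite type over the perfect field $K$, normality is automatically geometric, so $M_\eta \otimes_K \overline{K}$ is again normal, and therefore $S_2$. The coordinate $v \in \cO(X)$ is regular on $M$ by Setup \ref{setup:spreading}, and remains regular after the flat base change to $\overline{K}$. Since cutting a Noetherian $S_2$ scheme by a regular element produces an $S_1$ scheme (depth and dimension each drop by one at every local ring containing that element), we conclude that
\[
(M_0)_\eta \otimes_K \overline{K} \;=\; (M_\eta \otimes_K \overline{K})/v
\]
is $S_1$, as required.

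The only mildly nontrivial ingredient is the openness of the geometric $S_1$-locus for flat finitely presented families, which is a standard consequence of constructibility combined with the behavior of $S_n$ under generization in flat morphisms. Everything else is elementary commutative algebra, rendered straightforward by the characteristic zero hypothesis, which eliminates any concern about inseparability or the failure of normality to ascend along $K \subseteq \overline{K}$.
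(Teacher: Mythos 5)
Your proof is correct, and it reaches the same conclusion by a slightly rearranged route. The paper works with the ambient flat family $M \to S$: since $M$ is normal and the generic point of $S$ has characteristic zero, the generic fiber of $M \to S$ is geometrically normal, so by constructibility of fiber conditions (EGA IV$_3$, 9.9.4) there is a nonempty open $U \subset S$ over which every geometric fiber of $M$ is normal, hence $S_2$; cutting by the regular element $v$ then gives $S_1$ geometric fibers of $M_0$ over $U$. You instead cut by $v$ first at the generic point, check that the geometric generic fiber of $M_0 \to S$ is $S_1$, and then spread out. Both arguments use the same essential inputs. Two remarks on yours: (i) the preliminary generic-flatness step for $M_0 \to S$ is an extra step the paper avoids, since the relevant constructibility statements do not actually require flatness of the family $M_0 \to S$, only finite type and $S$ integral; and (ii) the citations are slightly off --- EGA IV$_3$, Th.~12.2.4 is the openness theorem for \emph{proper} flat morphisms and does not apply to $M_0 \to S$, and Stacks tag 0C0E concerns geometric reducedness rather than $(S_1)$. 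The correct machinery for passing from ``holds at the generic point'' to ``holds over a nonempty open'' without any properness or flatness is the constructibility of geometric fiber conditions in EGA IV$_3$, \S 9.9, which is exactly what the paper invokes. With that substitution your argument is a clean alternative, marginally less economical than the paper's since it re-derives flatness that the proposition's hypothesis already supplies for the ambient family.
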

\begin{proof} Our hypotheses imply that the fiber over the generic point of $S$ of the composition $M\to X\times S\to S$ is geometrically normal. By \cite[Proposition 9.9.4]{EGA_IV_3}, there exists a non-empty open subscheme $U\subset S$ such that $M\times_S U\to S$ has geometrically normal fibers. Hence for each geometric point $u$ of $U$, $M\times_S u$ is normal, and in particular $S_2$. Since the fiber $M_0\times_S u$ is the zero subscheme of a regular element $v\in \cO(M\times_S u)$, it is $S_1$.
\end{proof}
\begin{lemma} \label{lem:R_1} Assume Setup \ref{setup:spreading}, and furthermore that $M\to X\times S$ is flat. Then there is a non-empty \'{e}tale $S$-scheme $U\to S$ such that for any discrete valuation ring $R$ and a map $f:\Spec R \to U$ which sends $v$ to a uniformizer of $R$, the base change $M_R=M\times_{X\times S,f} (X\times U)$ is  $R_1$.
\end{lemma}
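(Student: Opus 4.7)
\emph{Plan.} The strategy is to reduce the $R_1$ property of $M_R$ to an $R_0$ property of its special fiber, and then to produce a spreading out for this weaker property. Since $M\to X\times S$ is flat and $X\times U\to X\times S$ is flat (open), the base change $M_R\to \Spec R$ is flat. As $\Spec R$ is regular of dimension $1$, for any $x\in M_R$ the regularity of $M_R$ at $x$ is equivalent to the regularity of its $\Spec R$-fiber at $x$. The generic fiber of $M_R\to \Spec R$ is the pullback of $M^0 \to X^0\times S$ (since $v$ maps to a uniformizer, hence is nonzero at the generic point of $\Spec R$), and is therefore smooth. By flatness and $\dim R=1$, a point of codimension $\leq 1$ in $M_R$ that lies in the special fiber is a generic point of the special fiber. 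Thus $M_R$ is $R_1$ if and only if its special fiber is $R_0$. Moreover, unwinding the construction, this special fiber is $(M_0)_s \times_{\kappa(s)} \kappa(R)$ where $s\in U$ is the image of the closed point of $\Spec R$.

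The heart of the argument is then to show that the generic fiber of $M_0\to S$ is geometrically reduced, so that $R_0$ of the special fiber persists under the base change $\kappa(s)\to\kappa(R)$ after suitably restricting $s$. Let $K=K(S)$, which has characteristic zero by Setup \ref{setup:spreading}. The scheme $M_K=M\times_S \Spec K$ is an open subscheme of the normal scheme $M$, hence itself normal, hence $R_1$. The generic points of $M_{0,K}:=M_0\times_S K$ are codimension $1$ in $M_K$ because $v$ is a regular element; by $R_1$ their local rings in $M_K$ are discrete valuation rings, whose reductions mod $v$ are fields. Hence $M_{0,K}$ is $R_0$. Combined with Lemma \ref{lem:S_2} (which after shrinking $S$ yields that the fibers of $M_0\to S$ are geometrically $S_1$), the generic fiber $M_{0,K}$ is in fact reduced, and therefore geometrically reduced since $K$ is perfect.

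Standard spreading-out results (e.g.\ along the lines of \cite[Th\'eor\`eme 12.2.4]{EGA_IV_3} or \cite[Corollaire 9.7.7]{EGA_IV_3}, combined with the flatness of $M_0\to S$) ensure that the locus of $s\in S$ for which $(M_0)_s$ is geometrically reduced is open; it is nonempty by the previous paragraph. Intersecting with the open from Lemma \ref{lem:S_2} and the open from Lemma \ref{lem:generic_flatness}, we obtain a nonempty open $U\subset S$ (hence a nonempty \'etale $S$-scheme) such that $(M_0)_s$ is geometrically reduced for every $s\in U$. For any $f:\Spec R \to X\times U$ as in the statement, the special fiber of $M_R\to\Spec R$ is $(M_0)_s\times_{\kappa(s)}\kappa(R)$ for some $s\in U$; being the base change of a geometrically reduced scheme, it is reduced, hence in particular $R_0$. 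By the first paragraph, $M_R$ is $R_1$.

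The main obstacle is the second paragraph: one must extract reducedness of the generic fiber of $M_0\to S$ from the normality of $M$ alone. This is where the hypothesis that $v$ is a regular element (so that $M_0$ is a Cartier divisor in $M$) interacts crucially with the $R_1$ property of $M_K$; without it, the generic fiber of $M_0\to S$ could be generically nonreduced even though $M$ is normal, and no amount of shrinking $S$ would salvage $R_1$ after base change. The remaining steps are routine spreading-out arguments and do not require any significant new input beyond Setup \ref{setup:spreading} and Lemma \ref{lem:S_2}.
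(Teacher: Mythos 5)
Your reduction of $R_1$ for $M_R$ to $R_0$ of its special fiber goes only one way (special fiber $R_0$ $\Rightarrow$ $M_R$ is $R_1$), and the condition you then try to verify is strictly stronger than the lemma and is in fact false: a DVR $A$ with uniformizer $\pi$ can have $v = u\pi^n$ for $n\geq 2$, in which case $A/v$ is not a field even though $A$ is regular. The remark just after Proposition \ref{prop:spreading} records exactly this failure mode: with $M = \Spec \bZ[1/2][x]$, $S = \Spec\bZ[1/2]$ and $v\mapsto x^2$, every hypothesis of Setup \ref{setup:spreading} and of Lemma \ref{lem:R_1} holds (e.g. $M^0\to X^0\times S$ is \'etale since $2x$ is invertible), yet $(M_0)_K = \Spec \Q[x]/(x^2)$ is non-reduced and no shrinking of $S$ fixes this; meanwhile $M_R = \Spec R[x]/(x^2-\varpi)$ is a DVR, so the lemma's conclusion still holds. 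The precise error is in the clause ``by $R_1$ their local rings in $M_K$ are discrete valuation rings, whose reductions mod $v$ are fields'' --- this would require $v$ to be a uniformizer at each height-$1$ prime of $M_K$ containing it, which is exactly the reducedness you are trying to establish and which the example refutes.

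The paper's proof avoids asserting any reducedness of the special fiber of $M_R$. Instead, for each height-$1$ prime $\fp$ of $B = \cO(M\times_{X\times S}(X\times V))$ containing $v$, it picks a uniformizer $x_\fp$ of the DVR $B_\fp$ (in general different from $v$), spreads the identity $\fp B_\fp = (x_\fp)$ out over an open locus via the relation \eqref{eq:generic_uniformizer}, and, after passing to an \'etale cover and shrinking $V$, arranges that the irreducible components of $(M_0)_V$ have geometrically integral fibers. The hypothesis that $v$ maps to a uniformizer of $R$ is used only at this point, to identify the minimal primes of $B'/\varpi$ with pullbacks of the minimal primes $\fp$ of $B/vB$; the $R_1$ conclusion is then read off at each such prime from the persisting relation, showing the local ring is a DVR with uniformizer $x_\fp$, not $\varpi$. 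So the paper's approach is a spreading-out of local uniformizers rather than a reducedness argument, and this is essential rather than an alternative route: your approach has no correct replacement because the statement it reduces to is false.
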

\begin{proof} It suffices to treat the case $M$ is affine. As in the proof of Lemma \ref{lem:S_2}, by shrinking $S$, we may assume $M\to X\times S$ has geometrically normal fibers.

Let $\eta=\Spec \kappa$ be the generic point of $S$. Then there exists a finite (and necessarily separable) extension $\kappa'$ of $\kappa$ such that all irreducible components (with the reduced scheme structure) of $(M_0)_{\kappa'}\defeq M_0\times_S \Spec \kappa'$ are geometrically integral.
The map $(M_0)_{\kappa'}\to \Spec \kappa'$ extends to a map $(M_0)_V\defeq (M_0)\times_S V \to V$ where $V\to S$ is an irreducible affine \'{e}tale $S$-scheme. By \cite[\href{https://stacks.math.columbia.edu/tag/0553}{Tag 0553}]{stacks-project}, we may replace $V$ by an open subset so that for any irreducible component $Z$ of $(M_0)_V$, the map $Z\to V$ has geometrically integral fibers. This implies implies that for any discrete valuation ring $R$ and a map $f:\Spec R \to X\times V$ sending $v$ to a uniformizer $\varpi$ of $R$, all the irreducible components of the special fiber of the  base change $M_R\to \Spec R$  are obtained by base change from the irreducible components of $(M_0)_V$. (This is because the base change of $Z\to X\times V$ to $\Spec R$ will have geometrically integral fibers over $\Spec R/\varpi$. It is here that we use the assumption that $v$ is sent to a uniformizer of $R$.)

We now denote $\Spec B= M\times_{X\times S} (X\times V)$. Then $B$ is normal, and $(M_0)_V=\Spec B/vB$. Let $\cP$ be the finite set of minimal primes of $B/vB$, which we also view as the height $1$ primes of $B$ containing $v$.

Let $\fp\in \cP$, and we fix a finite set $\{y_{\fp,i}\}_I$ of generators for $\fp$. Since $B$ is regular in codimension $1$, the localizations $B_{\fp}$ is a discrete valuation ring. Hence there is an element $x_\fp\in \fp \subset B$ which generates $\fp/\fp^2 \otimes_B B_\fp$ as a $B_\fp$-module. This implies that the module $\fp/(x_\fp+\fp^2)$ as a module over $B/\fp$ is supported on a proper closed subset of $\Spec B/\fp$. Thus there is $f_\fp \in B$ with $f_\fp \notin \fp$, and $a_{\fp, i}\in B$ for each $i\in I$ such that
\begin{equation}\label{eq:generic_uniformizer}
f_{\fp}\ y_{\fp, i}\,\equiv\, a_{\fp,i} \ x_{\fp} \text{ mod } \fp^2
\end{equation}
We remark that these relations persists on any base change of $B$. Now consider the subscheme $V(f_\fp)=\Spec B/(f_\fp+\fp)\into \Spec B/\fp\to V$. The locus of points in $V$ where the fiber of $V(f_\fp)$ has the same dimension as the fiber of $\Spec B/\fp$ is constructible, and does not contain the generic point of $V$. Hence there is an affine Zariski open $V_\fp\subset V$ over which the fiber of $V(f_{\fp})$ has dimension strictly less than the dimension of the fiber of $\Spec(B/\fp)$.

We finally claim that $U=\bigcap_\cP V_\fp$ satisfies the conclusion of the lemma. Indeed, let $R$ be a discrete valuation ring and $f:\Spec R \to X\times V$ sending $v$ to a uniformizer $\varpi$ of $R$. Let $M_R=\Spec B'$. We already observed that the minimal primes of $B'/v=B'/\varpi$ are $\fp B'/v$ for $\fp\in \cP$. But now equation (\ref{eq:generic_uniformizer}) holds in $B'$, and furthermore our arrangement guarantees that $f_\fp\notin \fp B'$. This implies $\fp B'_{\fp}$ is generated by $x_\fp$, hence $B'_\fp$ is a discrete valuation ring.
\end{proof}

\begin{proof}[Proof of Proposition \ref{prop:spreading}] We first pick a Zariski open $U_1\subset S$ for which conclusion of Lemma \ref{lem:generic_flatness} holds. We then pick a Zariski open $U_2\subset U_1$ for which the conclusion of Lemma \ref{lem:S_2} holds. We let $U_3\to U_2$ be the \'{e}tale map for which the conclusion of Lemma \ref{lem:R_1} holds. Thus the base change 
$M\times_{X\times S} U_3\to U_3$ satisfies the conclusions of Lemmas \ref{lem:generic_flatness}, \ref{lem:S_2}, \ref{lem:R_1}.
We let $U$ be the image of $U_3$ in $S$, then $U$ is an open subscheme of $S$. We claim that this choice of $U$ satisfies the conclusion of the Proposition.

Indeed, let $R$ be a complete discrete valuation ring and $f:\Spec R \to U$ be a map that such that $v$ is sent to a uniformizer of $R$.
Since $R$ is complete, we can lift $f$ to a map $\Spec R \to U_3$, which we will abusively call $f$ again. Then the base change $M_R=M$ is a base change of $M\times_{X\times S} U_3\to X\times U_3$ along $f:\Spec R \to U_3$. Hence $M_R\to \Spec R$ is flat and is $R_1$ at the generic points of its special fiber. Since the generic fiber of $M_R$ is smooth (being a base change of $M^0\to X^0\times S$), $M_R$ is $R_1$. Furthermore, since the special fiber of $M_R$ is $S_1$ and $M_R$ is flat over $\Spec R$, $M_R$ is $S_2$. Thus $M_R$ is normal.
\end{proof}
\subsection{Sections}
\label{subsec:Sec}
\begin{prop} \label{prop:rational_points} Let $M\to X\times S$ be a flat finite type map of finite type $\bZ$-schemes, and $S$ is irreducible with characteristic $0$ generic point. Suppose we have a section $s:S\to M_0$. Then there exists a non-empty Zariski open subscheme $U\subset S$ and a closed subscheme $Z\into M\times_{X\times S} (X\times U)$ such that 
\begin{itemize}
\item $Z\to X\times U$ is flat and quasi-finite.
\item $Z$ contains the section $s|_U:U\to (M_0)\times_S U$
\end{itemize}
{(recall from \S \ref{sec:Spr:Out} that $M_0$ is the fiber of $M$ along the zero section $0:\Spec\Z\ra X$).}
\end{prop}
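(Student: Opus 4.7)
The plan is to construct the desired $Z$ first on the generic fiber of $S$ by geometric/Bertini-type methods combined with miracle flatness, and then spread out this construction to an open $U\subset S$.

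First I would reduce to the case where $M=\Spec A$ is affine, and work over the generic point $\eta=\Spec\kappa$ of $S$ (so $\kappa$ has characteristic zero by hypothesis). By base change, $M_\kappa\to\bA^1_\kappa$ is flat, and the section $s$ specializes to a $\kappa$-point $x_0\in M_\kappa$ lying over $v=0$. The main geometric step is to produce a one-dimensional integral closed subscheme $C\subset M_\kappa$ passing through $x_0$ that is flat and quasi-finite over $\bA^1_\kappa$. To this end, I would pick an irreducible component $W$ of $M_\kappa$ through $x_0$ with its reduced structure. By flatness, $W$ dominates $\bA^1_\kappa$ and has dimension $1+e$, where $e$ is the relative dimension of $M_\kappa\to\bA^1_\kappa$ at $x_0$. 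Embedding $W\hookrightarrow\bA^N_\kappa$, I would cut iteratively by $e$ generic hyperplanes passing through $x_0$, and at each step take the irreducible component of the intersection containing $x_0$. Krull's principal ideal theorem keeps the dimensions on track, Bertini in characteristic zero provides generic smoothness (hence reducedness of the chosen component), and a parallel dimension count shows that $C\cap V(v)$ is zero-dimensional. Therefore $C\to\bA^1_\kappa$ is a dominant quasi-finite morphism of integral one-dimensional schemes; since $C$ is reduced of pure dimension one (so Cohen--Macaulay) and $\bA^1_\kappa$ is regular of dimension one, miracle flatness yields flatness.

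For the spreading-out step, let $g_1,\dots,g_m\in\cO(M_\kappa)$ generate the ideal of $C\subset M_\kappa$. Clearing denominators in $\cO(S)$ produces lifts $g_i\in\cO(M|_{U_0})$ for some non-empty open $U_0\subset S$; set $Z_0\defeq V(g_1,\dots,g_m)\cap M|_{U_0}$, whose base change to $\eta$ is $C$. Since $S$ is integral and each $g_i\circ s\in\cO(U_0)$ vanishes at $\eta$, these functions vanish identically on $U_0$, yielding $s(U_0)\subset Z_0$. To further arrange flatness and quasi-finiteness of $Z_0\to X\times U_0$, I would invoke openness of the flat locus for finitely presented morphisms together with upper semicontinuity of fiber dimensions: both ``bad loci'' are closed in $Z_0$ and disjoint from the generic fiber $C$, so projecting them to $U_0$ via Chevalley's constructibility theorem produces a non-empty open $U\subset U_0$ on which $Z\defeq Z_0\times_{U_0}U$ satisfies all the required properties.

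I expect the main obstacle to lie in the construction of $C$ on the generic fiber. The constraint that the cutting hyperplanes pass through the fixed point $x_0$ prevents a direct application of Bertini's irreducibility theorem (as illustrated by a generic hyperplane section through the vertex of the quadric cone $V(x^2-yz)\subset\bA^3$, which is reducible), so one must argue component by component while carefully controlling the dimension of intersections with $V(v)$ to ensure that the chosen component $C$ is not contained in the special fiber.
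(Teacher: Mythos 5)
Your proof is correct, and the spreading-out step is essentially the paper's, but your construction of the quasi-finite flat curve on the generic fiber takes a genuinely different route. The paper argues directly with specializations: since $M_\kappa\to\bA^1_\kappa$ is flat hence generizing, there is a point $x\in M_\kappa$ over the generic point of $\bA^1_\kappa$ whose closure contains $s_\kappa$; taking a suitable (height-minimal) such $x$, the integral scheme $\overline{\{x\}}$ is a curve dominating $\bA^1_\kappa$, and its flatness is automatic because an integral domain dominating a Dedekind domain is torsion-free, hence flat --- no Bertini, no embedding in $\bA^N$, no miracle flatness. By contrast you embed a component $W\ni x_0$ in $\bA^N_\kappa$ and cut by $e$ generic hyperplanes through $x_0$, controlling dimensions via Krull and then invoking miracle flatness; this works, and in fact the ``obstacle'' you flag is less serious than you suggest: you never need Bertini's irreducibility theorem (you choose a component through $x_0$ by hand), and the dimension bookkeeping at each step (any component of $W\cap H$ through $x_0$ has dimension $e$, while $W\cap H\cap V(v)$ has dimension $\leq e-1$ near $x_0$, so not every such component can lie in $V(v)$) already forces a component not contained in the special fiber. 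What the paper's approach buys is brevity and independence from any embedding or genericity argument; what yours buys is a more concrete geometric picture, at the price of the extra bookkeeping and the auxiliary use of miracle flatness where the paper gets flatness for free from torsion-freeness over a PID.
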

\begin{proof} Let $\Spec \kappa\to S$ be the generic point of $S$, and consider the base change $M_\kappa\to X_\kappa=\bA^1_{\kappa}$. The section $s$ induces a $\kappa$-point $s_\kappa$ of $(M_0)_{\kappa}$. Since $M_\kappa\to X_\kappa$ is flat, it is generizing, and we can find a point $x$ of $M_\kappa$ lying over the generic point of $X_\kappa$ whose closure contain $s_\kappa$. The closure of $\overline{x}$ in $M_\kappa$ is an irreducible curve in $M_\kappa$ which dominates $X_\kappa$, and is hence is flat and quasi-finite over $X_\kappa$. Now, $\overline{x}\subset M_\kappa$ extends to a closed subscheme $Z\subset M\times_{X\times S} (X\times V)$ for some non-empty Zariski open $V\subset S$. Note that $Z$ contains the generic point of the section $s$, hence also contains $s|_V:V\to (M_0)\times_S V$.
 
Now there is a non-empty Zariski open $W\subset X\times V$ containing $X_\kappa$, over which the map $Z\to X\times V$ is flat and quasi-finite. Then the image of $(X\times V)\setminus W$ in $V$ is constructible and does not contain the generic point of $V$, hence its complement contains a non-empty Zariski open $U\subset V \subset S$. 
Replacing $Z$ by $Z \cap (X \times U)$, $U$ and $Z$ satisfy the desired properties.
\end{proof}
\begin{cor} \label{cor:rational_points} Let $M\to X\times S$, $s:S\to M_0$ as in Proposition \ref{prop:rational_points}. Then there is an integer $e$ and a non-empty Zariski open $U\subset S$ depending on $M, S, s$ with the following property: for any complete discrete valuation ring $R$ and a map $f:\Spec R \to X\times U$ sending $v$ to an element with positive valuation in $R$, there exist a finite $DVR$ extension $R'$ of $R$ of degree $\leq e$, and an $R'$-point of $M_R$ lifting the point induced by $s$ in the special fiber of $M_R$.
\end{cor}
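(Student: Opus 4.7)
The plan is to combine Proposition \ref{prop:rational_points} with Zariski's Main Theorem and the structure of finite algebras over complete DVRs. Applying Proposition \ref{prop:rational_points}, I obtain a non-empty Zariski open $U \subset S$ and a closed subscheme $Z \hookrightarrow M \times_{X \times S}(X \times U)$ that is flat and quasi-finite over $X \times U$, dominates $X\times U$, contains the section $s|_U$, and is irreducible (being constructed as the closure of a single point in the generic fiber). Applying Zariski's Main Theorem, I factor $Z \to X \times U$ as an open immersion $Z \hookrightarrow \overline{Z}$ followed by a finite morphism $\overline{Z} \to X \times U$; then $\overline{Z}$ is still irreducible, and shrinking $U$ further via generic flatness, I may assume $\overline{Z} \to X \times U$ is finite flat of constant degree $e$. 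This $e$, depending only on $M, S, s$, will be the integer in the statement.

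Given any $f: \Spec R \to X \times U$ as in the statement, pull back to obtain $\overline{Z}_R = \Spec A$ finite free of rank $e$ over $R$. Since $R$ is a complete DVR, $A$ decomposes canonically as a product $A = \prod_i A_i$ of complete local finite free $R$-algebras. The pullback of $s|_U$ by the closed point of $\Spec R$ defines a $\kappa(R)$-point of the special fiber of $\overline{Z}_R$ which lies in $Z_R$, whose image is a closed point $z$; let $A_0$ be the factor of $A$ corresponding to $z$. Because $z$ comes from a section, the residue field of $A_0$ is $\kappa(R)$, and $A_0$ is finite free of some rank $e_0 \leq e$ over $R$.

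To produce the $R'$-point, I normalize $(A_0)_{\mathrm{red}}$ in its total ring of fractions, which decomposes as a product of finite field extensions of $\mathrm{Frac}(R)$ of total degree $\leq e_0$. Since complete DVRs are excellent, the normalization is a finite product $\prod_{i,j} R_{i,j}$ of complete DVRs each of degree $\leq e_0 \leq e$ over $R$. By lying-over applied to the integral extension $(A_0)_{\mathrm{red}} \hookrightarrow \prod R_{i,j}$, some factor $R'$ has its maximal ideal lying over $\fm_{A_0}$; the composition $A \twoheadrightarrow A_0 \to (A_0)_{\mathrm{red}} \to R'$ then defines a morphism $\Spec R' \to \overline{Z}_R$ over $\Spec R$ sending the closed point to $z \in Z_R$. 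Since $Z_R$ is open and dense in the irreducible $\overline{Z}_R$, it contains all generic points, so the generic point of $\Spec R'$ also maps into $Z_R$, yielding the required morphism $\Spec R' \to Z_R \subset M_R$. The main technical step is obtaining a uniform bound on $e$ independent of $R$ and $f$, for which the ZMT-plus-generic-flatness argument above is essential; once this is in place, the normalization-and-lying-over step is formal.
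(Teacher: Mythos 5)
Your route (ZMT to compactify $Z$ into a finite $\overline{Z}\to X\times U$, then decompose the resulting finite algebra over the complete local ring $R$) is a genuine alternative to the paper's, which skips the compactification and applies the structure theorem for quasi-finite separated schemes over a henselian local ring (Stacks Tag 04GE) directly to $Z_R$: that theorem splits $Z_R$ into a finite flat part $C$ (containing the point induced by $s$) and a part with empty closed fiber, and one then normalizes $C$. Your approach reaches the same endpoint, but there is a gap in the middle.

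The unjustified step is ``shrinking $U$ further via generic flatness, I may assume $\overline{Z}\to X\times U$ is finite flat of constant degree $e$.'' Generic flatness only furnishes a dense open of the \emph{total space} $X\times U$, not one of the form $X\times U'$; the non-flat locus of $\overline{Z}\to X\times U$ is a closed subset that may well dominate $U$ and meet the zero section $\{0\}\times U$ (exactly where $s$ lands). There is no reason $\overline{Z}$ — as opposed to $Z$ itself — should become flat after shrinking $U$: flatness over the regular $X\times U$ would force $\overline{Z}$ to be Cohen--Macaulay, and the scheme-theoretic closure of $Z$ has no reason to be so. This matters because you use the flatness later: without it, $A_0$ may have $\varpi$-torsion, so its total ring of fractions need not be a product of finite extensions of $\mathrm{Frac}(R)$ (some factors could be finite extensions of the residue field $\kappa(R)$), the normalization need not consist only of DVRs, and the lying-over step might hand you a field factor sitting entirely over the closed point of $\Spec R$, not a DVR extension. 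Separately, $\overline{Z}_R$ need not be irreducible after base change, so the density argument at the end needs restating (what you actually need is that generizations of a point of the open $Z_R$ stay in $Z_R$, which is true).

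Both issues can be fixed without the flatness claim: the point $z$ induced by $s$ lies in $Z_R$, which is $R$-flat (by Proposition \ref{prop:rational_points}), hence $Z_R$ has a generization of $z$ lying over the generic point of $\Spec R$. This forces $A_0$ to have a minimal prime not containing $\varpi$, and the normalization of the corresponding quotient is the required finite DVR extension $R'$; its closed point maps to $z$ because $A_0$ is local, and its generic point lands in $Z_R$ because $Z_R$ is open and stable under generization. Alternatively, drop $\overline{Z}$ entirely and argue directly on $Z_R$ via Tag 04GE as the paper does — this yields a finite \emph{flat} $C$ over $R$ for free, which is the cleanest way to guarantee torsion-freeness.
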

\begin{proof} We take $U$ and $Z\subset M\times_{X\times S} (X\times U)$ as in the conclusion of Proposition \ref{prop:rational_points}, and let $e$ be the maximal degree of a fiber of $Z\to X\times U$. Then the base change $Z_R\to \Spec R$ is flat, quasi-finite and contains the point induced by $s$. Since $R$ is complete, \cite[\href{https://stacks.math.columbia.edu/tag/04GE}{Tag 04GE}]{stacks-project} shows that $Z_R$ must contain a connected component $C$ which is finite flat over $R$ contains the closed point induced by $s$. The normalization of $C$ then breaks into a disjoint union of the spectrum of finite complete DVR extensions of $R$, whose degrees are $\leq e$. One of these components will have its closed point mapping into $s$.
\end{proof}
\subsection{Products}
\label{subsec:prod:UMLM}

Let $\cJ$ be a finite set.
Let $\lambda \in X_*(T^{\vee})^{\cJ} = X^*(T)^{\cJ}$ be dominant.   For $j \in \cJ$, $\lambda_j \in X^*(T^{\vee})$ will denote the $j$-th component.     

We define $\cM_{X,\cJ}(\lambda,\nabla) = \prod_{j\in \cJ} \cM_X(\lambda_j,\nabla)$, where the product means fiber product over $X$. We have $M_{X,\cJ}(\lambda,\nabla) \subset (\Gr_{\cG, X} \times \bA^n)^{\cJ}$.

Let $\tld{z} = (\tld{z}_j)_{j \in \cJ} \in \tld{W}^{\vee,\cJ}$.   As in Definition \ref{defn:section}, we have constant sections $\tld{z}_j: \bA^n\to (\Gr_{\cG,X}\times_X \{0\})\times \bA^n$, and these compile into a section $\tld{z}:  (\bA^n)^{\cJ} \to ((\Gr_{\cG, X}\times_X \{0\}) \times \bA^n)^{\cJ}$. We thus get an induced $\Z$-point on each fiber of $((\Gr_{\cG, X}\times_X \{0\}) \times \bA^n)^{\cJ} \to (\bA^n)^{\cJ}$, which we abusively still call $\tld{z}$. 

The following Theorem is the main result of this section:
\begin{thm}\label{thm:model_unibranch} %
Fix an integer $e>0$. There exists a Zariski open $U =U(\{\lambda_j\},e,n)\subset \bA^n$ which depends only on $e$, $n$ and the subset $\{\lambda_j\}\subset X_*(T^\vee)$, such that:
For any complete discrete valuation ring $R$ and any map $f:\Spec R \to X\times_{\bZ} \prod_{j\in J} U$ such that $v$ is sent to an element of valuation at most $e$, the base change $M_{X,\cJ}(\lambda,\nabla)_R\to \Spec R$ is flat and unibranch at
any point of the special fiber of $M_{X,\cJ}(\lambda,\nabla)_R$ which lies in a section $\tld{z}$. Furthermore, letting $U(\tld{z},\lambda,\nabla)_R=\cU(\tld{z})\cap  M_{X,\cJ}(\lambda,\nabla)_R$, the $\fm_R$-adic completion of $\cO(U(\tld{z},\lambda,\nabla)_R)$ is a domain.
\end{thm}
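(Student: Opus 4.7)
The plan is to upgrade the equal-characteristic unibranch statement (Proposition \ref{prop:unibranch_equal_char}, in the form of Corollary \ref{cor:unibranch_universal}) to mixed characteristic via the spreading-out mechanism of \S\ref{sec:Spr:Out}. First, using the $\bG_m$-action on $\cM_{X,\cJ}(\lambda,\nabla)$ inherited from scaling the loop variable $v$ (the $\bG_m$-factor of $T^{\vee,\ext}$ from Lemma \ref{lem:torus_action_open_chart})---which a direct chain-rule computation shows preserves the monodromy condition defining $\cM_{X,\cJ}(\lambda,\nabla)$ inside $\Gr_{\cG,X}\times_\Z\bA^n$---one reduces to the case where $v$ is sent to $\varpi^m$ for some $1\leq m\leq e$ (the valuation-zero case being vacuous, as there are no section points in the special fiber). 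Indeed, if $v\mapsto\varpi^m u$ with $u\in R^\times$, the automorphism of the local model with parameter $u^{-1}$ intertwines the fibers at $t=\varpi^mu$ and $t=\varpi^m$. The map $v\mapsto\varpi^m$ in turn factors as $v\mapsto\varpi$ into $R$ precomposed with $v\mapsto v^m$ on $X$, so $\cM_{X,\cJ}(\lambda,\nabla)_R$ is identified with the base change of $\cM^{(m)}\defeq\cM_{X,\cJ}(\lambda,\nabla)\times_{X,v\mapsto v^m} X$ along a map sending $v$ to a uniformizer.

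For each $\lambda'\in\{\lambda_j\}$ and each $1\leq m\leq e$, let $\tld{\cM}^{(m)}_X(\lambda')$ denote the normalization of $\cM_X(\lambda',\nabla)\times_{X,v\mapsto v^m} X$ in its smooth generic fiber over $X^0\times\bA^n$. Over the open subscheme $\bA^n[\tfrac{1}{(2h_{\lambda'})!}]$ this satisfies Setup \ref{setup:spreading}: normality is by construction, the generic fiber is smooth by Proposition \ref{prop:smooth_generic_fiber}, and finite-type-ness of the normalization follows from excellence of finite-type $\Z$-schemes. Applying Proposition \ref{prop:spreading} yields nonempty Zariski opens $U_{\lambda',m}\subset\bA^n$, and we set $U\defeq\bigcap_{\lambda'\in\{\lambda_j\}}\bigcap_{m=1}^{e}U_{\lambda',m}$, which depends only on $e$, $n$, and $\{\lambda_j\}$. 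For $R$ and $f:\Spec R\to X\times U^\cJ$ as reduced above, the base change $\tld{\cM}^{(m)}_X(\lambda_j)_R$ of each factor is then flat over $R$ and normal. Since $\cM_{X,\cJ}(\lambda,\nabla)_R$ is the $\cJ$-fold fiber product over $\Spec R$ of the $\cM_X(\lambda_j,\nabla)^{(m)}_R$, flatness of $\cM_{X,\cJ}(\lambda,\nabla)_R\to\Spec R$ follows immediately.

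For the unibranch property at a section point $\tld{z}=(\tld{z}_j)_j$, Corollary \ref{cor:unibranch_universal} gives that for each $j$ the preimage of $\tld{z}_j$ in $\tld{\cM}^{(m)}_X(\lambda_j)_R$ is supported at a single point, whose completed local ring is a domain by normality and excellence (Remark \ref{rmk:unibranch_vs_analytic}\eqref{it:unibranch_vs_analytic:2}); equivalently, each factor $\cM_X(\lambda_j,\nabla)^{(m)}_R$ is analytically irreducible at $\tld{z}_j$. The hard step, and the main obstacle, is to promote this factor-wise unibranchness to the $\cJ$-fold fiber product over $R$, since completed tensor products of domains over $R$ need not be domains. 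This is resolved using the \emph{second} assertion of Corollary \ref{cor:unibranch_universal}: the preimage of $\cU(\tld{z}_j)$ in the reduced special fiber of each normalization is connected, which combined with Corollary \ref{cor:monodromy_before_reduced} identifies the special fiber on $\cU(\tld{z}_j)$ with an (open in an) affine space bundle over a partial flag variety, hence geometrically integral. With each factor being $R$-flat with geometrically integral special fiber on the relevant chart, the completed fiber product of the factors over $R$ is a domain, yielding unibranchness of $\cM_{X,\cJ}(\lambda,\nabla)_R$ at $\tld{z}$. Finally, the statement about the $\fm_R$-adic completion of $\cO(U(\tld{z},\lambda,\nabla)_R)$ follows because on the affine chart $\cU(\tld{z})=\prod_j\cU(\tld{z}_j)$ the section point is the unique closed point of the special fiber above the closed point of $\Spec R$, so the $\fm_R$-adic completion of the structure ring coincides with the completion at $\tld{z}$.
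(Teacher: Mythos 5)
Your opening reduction via the loop-rotation $\bG_m$-action (replacing $v\mapsto\varpi^m u$ with $v\mapsto\varpi^m$) is a legitimate and somewhat cleaner maneuver than the one in the paper, which instead spreads out all the twists $M_{j,l\tld{e}!}$ for $1\leq l\leq e$ and then passes to a degree-$\tld{e}!$ extension $R'$ where a suitable uniformizer exists. However, your proof has serious gaps in the final paragraph, precisely where the genuine difficulty lies.

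First, the claim that Corollary \ref{cor:monodromy_before_reduced} identifies the special fiber of $\cU(\tld{z}_j)\cap\tld{\cM}^{(m)}_X(\lambda_j)_R$ with an (open in an) affine space bundle over a partial flag variety, hence geometrically integral, is wrong. That corollary describes the structure of $\cM^{\nv}_{X^0}(\leql,\nabla)$ over $X^0$, i.e.\ where $v$ is invertible, whereas the special fiber of the base change to $R$ lives over $v=0$, inside the affine flag variety. The special fiber of $\cM_X(\lambda_j,\nabla)_R$ is a union of intersections of affine Schubert cells with a monodromy locus (as analyzed in \S \ref{sec:sp:fib}) and is certainly not geometrically integral in general. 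All that Corollary \ref{cor:unibranch_universal} gives you is connectedness of the preimage in the normalization; that is far from integrality of the fiber itself. Second, the claim that $\tld{z}$ is ``the unique closed point of the special fiber above the closed point of $\Spec R$'' is false: that fiber is positive-dimensional ($\tld{z}$ is only the unique attracting $T^{\vee,\ext}$-fixed point), so the $\fm_R$-adic completion of $\cO(U(\tld{z},\lambda,\nabla)_R)$ is much larger than the completed local ring at $\tld{z}$, and they cannot be conflated.

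As a consequence, your treatment of the product collapses. Factor-wise unibranchness, or even factor-wise the statement that each $\cO(U_j(\tld{z}_j)_R)^{\wedge_\varpi}$ is a domain, does \emph{not} by itself imply the completed tensor product over $R$ is a domain; one needs geometric connectedness (or geometric irreducibility) of the generic fibers, which is not automatic and is exactly what requires a rational point. The paper supplies this via Corollary \ref{cor:rational_points}, which (over a suitably shrunk $U$) produces a bounded-degree extension $R'/R$ together with $R'$-points of each factor lifting $\tld{z}_j$, and then invokes \cite[Proposition 2.2]{KW_Serre_2} and \cite[Lemma A.1.1]{BLGGT} to pass from the factors to the completed tensor product. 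This rational-point step is essential and is entirely absent from your argument; your $\bG_m$-reduction, while it tidies the bookkeeping of the uniformizer, does not provide a substitute. (A more minor issue: you deduce flatness of $M_{X,\cJ}(\lambda,\nabla)_R$ over $R$ from flatness of the base changes of the \emph{normalizations}, but these are different schemes; the paper separately secures flatness of the un-normalized $M_j$ via Remark \ref{rmk:generic_flatness} before taking normalizations.)
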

\begin{proof} To simplify notation, in this proof we will set $M_j=\cM_X(\lambda_j,\nabla)$, and $M_{\cJ}=\prod_{j\in\cJ} M_j =\cM_{X,\cJ}(\lambda,\nabla)$. For $\tld{s}\in W^{\vee}$, we also abbreviate $U_j(\tld{s})=M_j\cap \cU(\tld{s})$.

We first observe that $(M_j)_0 = M_j \times_X \{0\}$ can only meet a section $\tld{s}$ which occurs in the subset $\{wt_\nu\in \tld{W}^{\vee}|\, \nu\in \mathrm{Conv}(\lambda_j)\}$. 
Note that the latter is a finite set depending only on $\lambda_j$. %

Let $\eta$ be the generic point of $\bA^n$. For each $j\in \cJ$, let $Fix_j$ to be the set of $\tld{s} \in \tld{W}^{\vee}$ such that the section $\tld{s}$ meets $(M_j)_0 \times_{\bA^n} \eta$.

We can now find a non-empty Zariski open $S_j\subset \bA^n$ depending only on $\lambda_j$ such that 
\begin{itemize} 
\item $M_j\times_{X\times S} (X\times S_j)\to X\times S_j$ is flat 
\item $(M_j\times_{X\times S} (X\times S_j))_0$ contains the restriction of the sections $(\tld{s})|_{S_j}$ for $\tld{s} \in Fix_j$, and is disjoint from any section $\tld{s}\notin Fix_j$.
\item $(2h_{\lambda_j})!$ is invertible in $S_j$.
\end{itemize}
Indeed the first item can be arranged by Remark \ref{rmk:generic_flatness}, and the second item can be arranged by the standard constructibility argument: { indeed for any $\tld{s}\in \{wt_\nu\in \tld{W}^{\vee}|\, \nu\in \mathrm{Conv}(\lambda_j)\}\setminus Fix_j$, the image of $\tld{s}\cap (M_j)_0$ under the projection to $S$ is a constructible set not containing $\eta$, hence its complement must contain a non-empty open subset of $S$.}
We let $S=\cap_{j\in \cJ} S_j$, so $S$ depends only on the set $\{\lambda_j\}$. For the rest of the proof we replace $M_j$ by its restriction $M_j|_{S}$.

By construction, for each $j\in \cJ$, $M_j\to X\times S$ is flat.
Applying Corollary \ref{cor:rational_points} to this family and the sections $\tld{s}$ with $\tld{s}\in Fix_j$, there is a Zariski open $V\subset S$ and an integer $\tld{e}$ such that: For any complete discrete valuation ring $R$ and a map $f:\Spec R \to X\times S$ sending $v$ to an element with positive valuation of $R$, there is a finite extension $R'/R$ of degree $\leq \tld{e}$ such that $(M_j)_R$ has an $R'$-point for all $j \in \cJ$. 
Since the data we used to apply Corollary \ref{cor:rational_points} depended only on $\{\lambda_j\mid j\in \cJ\}$, so does $V$ and $\tld{e}$.

Now for any integer $1\leq l\leq e$ and $j\in \cJ$, we let $\tld{M}_{j,l\tld{e}!}$ be the normalization of ${M_{j,l\tld{e}!} \defeq} M_j\times_{X\times S, v\mapsto v^{l\tld{e}!}} (X\times S)$. 
By Proposition \ref{prop:smooth_generic_fiber}, each $\tld{M}_{j,k}\to X\times S$ satisfies Setup \ref{setup:spreading}. We now let $U\subset V \subset S$ be the Zariski open which satisfies the conclusion of Proposition \ref{prop:spreading} for all the $\tld{M}_{j,k}$, and furthermore that $e!\tld{e}!$ is invertible in $U$. Clearly $U$ depends only on $\{\lambda_j\mid j\in \cJ\}$ and $e$.

We claim that the $U$ thus constructed satisfies the conclusion of the theorem. Let $R$ be a complete DVR and $f:\Spec R \to X\times _\bZ \prod_{j\in \cJ} U$ such that $v$ is sent to an element $a$ with valuation $l\in [1,e]$. Since all our schemes are excellent and Noetherian, being unibranch at a point is equivalent to being analytically irreducible. Therefore, it suffices to establish the unibranch property after making an unramified extension of $R$. Thus, we may and do assume that $R$ has separably closed residue field. Then for any integer $m$ invertible in the residue field of $R$, there is a unique extension of $R$ of degree $m$, namely the extension obtained by adjoining the $m$-th root of any uniformizer of $R$. Let $R'$ be the unique extension of $R$ of degree $\tld{e}!$, so $R'$ contains all extensions of degree $\leq \tld{e}$ of $R$. We may choose a uniformizer $\varpi$ of $R'$ so that $\varpi^{l\tld{e}!}=a$.
By construction of $V$, for each $j\in \cJ$ and $\tld{s}\in Fix_j$, $(M_j)_R$ admits an $R'$-point lifting the point $\tld{s}$ in special fiber of $(M_j)_R$. 

Our choice of uniformizer $\varpi$ gives rise to a commutative diagram
\begin{equation}\xymatrix{   \tld{M}_{j,l\tld{e}!}\ar[d]& \\
  M_{j,l\tld{e}!} \ar[r]\ar[d]&M_j\ar[d] \\
X\times U \ar[r]^{v\mapsto v^{l\tld{e}!}} &X\times U \\
\Spec R' \ar[r]\ar[u]^{f'} &\Spec R \ar[u]^f\\ }
\end{equation}
Here the map $f'$ is induced from $f$ and the map $\Spec R' \to X$ sending $v$ to the uniformizer $\varpi$ of $R'$.
We note that the base change of $(M_j)_R$ to $R'$ is also the change of $M_{j,l\tld{e}!}\to X\times U$ along $f'$. The construction of $U$ implies that the base change $(\tld{M}_{j,l\tld{e}!})_{R'}$ is normal, and hence is the normalization of $(M_{j,l\tld{e}!})_{R'}=(M_j)_{R'}$. This implies that the preimage of any $U_j(\tld{s})_{R'}$ in $(\tld{M}_{j,l\tld{e}!})_{R'}$ is its normalization. By Corollary \ref{cor:unibranch_universal} (which applies by the construction of $U$), for each $\tld{s}$ occurring in the special fiber of $(M_j)_{R'}$, its pre-image in $(\tld{M}_{j,l\tld{e}!})_{R'}$ is supported at a point. This implies that $(M_j)_{R'}$ is unibranch at $\tld{s}$, and hence the completed local ring $\cO_{(M_j)_{R'},\tld{z}}^\wedge$ is a domain. Furthermore, Corollary \ref{cor:unibranch_universal} also shows that the preimage of the special fiber of $U_j(\tld{s})_{R'}$ in $(\tld{M}_{j,l\tld{e}!})_{R'}$ is connected. By Lemma \ref{lem:normal_domain_criteria} below, the $\varpi$-adic completion $\cO(U_j(\tld{s}))^{\wedge_{\varpi}}$ is a domain.

We now finish the proof. Let $\tld{z}\in \tld{W}^{\vee,\cJ}$ such that $\tld{z}$ occurs in the special fiber of $(M_{X,\cJ})_R$. Then for each $j\in \cJ$, the component $\tld{z}_j\in Fix_j$. The completed local ring of ($M_{X,\cJ})_{R'}$ at $\tld{z}$ is the completed tensor product over  $\underset{j, R'}{\widehat{\bigotimes}} \cO_{(M_j)_{R'},\tld{z}_j}^\wedge$, where the index $j$ runs through the set $\cJ$.
Now each factor $c$ is a complete local Noetherian domain, has an $R'$-point, and $\cO_{(M_j)_{R'},\tld{z}_j}^\wedge[\frac{1}{\varpi}]$ is regular (since the generic fiber of $(M_j)_R$ is smooth). By \cite[Proposition 2.2]{KW_Serre_2} (which was stated for finite extensions of $\bZ_p$, but the proof works for general complete DVRs), the completed tensor product is also a domain. Since the completed local ring of $(M_{X,\cJ})_R$ at $\tld{z}$ embeds into the completed local ring of $(M_{X,\cJ})_{R'}$ at $\tld{z}$, the former must also be a domain. Hence $(M_{X,\cJ})_R$ is analytically irreducible at $\tld{z}$, and so is unibranch at $\tld{z}$ by Remark \ref{rmk:unibranch_vs_analytic}(\ref{it:unibranch_vs_analytic:2}).

Finally, we show the $\varpi$-adic completion of $\cO(U(\tld{z},\lambda,\nabla)_{R'})=\underset{j\quad{}}{\bigotimes_{R'}} \cO(U_j(\tld{z_j})_{R'})$ is a domain. To do this, instead of invoking \cite[Proposition 2.2]{KW_Serre_2}, we use \cite[Lemma A.1.1]{BLGGT}: since each $\cO(U_j(\tld{z_j})_{R'})^{\wedge_\varpi}[\frac{1}{\varpi}]$ is a regular affinoid domain which admits a rational point over $R'[\frac{1}{\varpi}]$, they are geometrically connected. Thus the completed tensor product $\widehat{\bigotimes}_j \cO(U_j(\tld{z_j})_{R'})[\frac{1}{\varpi}]$ is geometrically connected. Since it is also regular, it is a domain. We conclude as before.
\end{proof}

\begin{lemma}\label{lem:normal_domain_criteria} Let $R$ be a complete DVR with uniformizer $\varpi$. Let $A$ be a finite type flat $R$-algebra, and assume $A[\frac{1}{\varpi}]$ is a regular domain. Furthermore, assume that the special fiber $\Spec A^{\nm}/\varpi$ of the normalization of $\Spec A$ is connected. Then the $\varpi$-adic completion $A^{\wedge_\varpi}$ of $A$ is a domain.
\end{lemma}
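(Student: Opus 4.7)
The plan is to reduce to showing $B^{\wedge_\varpi}$ is a domain where $B \defeq A^{\nm}$, and then combine normality of $B^{\wedge_\varpi}$ with connectedness of its spectrum.

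First I would carry out the reduction. Since $A[\frac{1}{\varpi}]$ is a regular (in particular normal) domain and $A$ is $\varpi$-torsion free by flatness, $A$ is a Noetherian domain. Being finite type over the complete DVR $R$, $A$ is excellent, hence Nagata, so $B = A^{\nm}$ is module-finite over $A$ and is a normal excellent Noetherian domain. Moreover $B[\frac{1}{\varpi}] = A[\frac{1}{\varpi}]$, so the cokernel $Q = B/A$ is a finitely generated $A$-module with $Q[\frac{1}{\varpi}] = 0$; hence $Q$ is killed by a power of $\varpi$ and coincides with its own $\varpi$-adic completion. Applying $\varpi$-adic completion (which is exact on finitely generated modules over the Noetherian ring $A$) to the short exact sequence $0 \to A \to B \to Q \to 0$ yields an injection $A^{\wedge_\varpi} \hookrightarrow B^{\wedge_\varpi}$. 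It therefore suffices to show that $B^{\wedge_\varpi}$ is a domain.

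Next I would establish two properties of $B^{\wedge_\varpi}$: normality and connectedness of spectrum. For normality, the key point is that since $B$ is excellent, the canonical map $B \to B^{\wedge_\varpi}$ is a regular ring map (has geometrically regular fibers). Combined with the normality of $B$ and the fact that normality (in the form of Serre's criterion $(R_1) + (S_2)$) is preserved under flat regular base change, this gives the normality of $B^{\wedge_\varpi}$. For connectedness of $\Spec B^{\wedge_\varpi}$, one observes that $\varpi$ lies in the Jacobson radical of the $\varpi$-adically complete ring $B^{\wedge_\varpi}$ (any element of $1 + \varpi B^{\wedge_\varpi}$ is a unit via geometric series), so a standard Hensel-lifting argument puts idempotents of $B^{\wedge_\varpi}$ in bijection with those of $B^{\wedge_\varpi}/\varpi = B/\varpi = A^{\nm}/\varpi$; by hypothesis the latter has connected spectrum and hence only trivial idempotents.

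Finally I would conclude via the elementary observation that a Noetherian normal ring with connected spectrum is automatically a domain: any two distinct minimal primes $P_1, P_2$ of a normal Noetherian ring satisfy $P_1 + P_2 = R$ (otherwise localizing at a prime containing $P_1 + P_2$ would yield a normal local ring with two distinct minimal primes, contradicting that normal local rings are domains), so connectedness of the spectrum forces uniqueness of the minimal prime. The main obstacle is the normality step: one needs the regularity of $B \to B^{\wedge_\varpi}$ for completion along the non-maximal ideal $(\varpi)$, which goes beyond the defining property of excellent rings (that addresses completion at maximal ideals of localizations) and invokes a more refined theorem -- due to Rotthaus and ultimately related to Popescu's desingularization -- that excellent rings have geometrically regular formal fibers along arbitrary ideals.
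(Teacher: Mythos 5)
Your proof is correct and follows essentially the same path as the paper's: both reduce to the $\varpi$-adic completion $B$ of $A^{\nm}$, establish its normality using excellence of $A^{\nm}$ to control the formal fibers of $\varpi$-adic completion (the paper via Kurano--Shimomoto and two Stacks lemmas, you via Rotthaus' theorem plus flat regular base change --- the same underlying machinery), and deduce the domain property from the connectedness of $\Spec B/\varpi$ via idempotent considerations. The paper derives a contradiction from a nontrivial idempotent in $B[\frac{1}{\varpi}]$ (pulled into $B$ by normality, shown nontrivial mod $\varpi$ by $\varpi$-adic separatedness), whereas you lift idempotents directly from $B/\varpi$ and conclude via the observation that a connected Noetherian normal ring is a domain --- a cosmetically different but logically equivalent organization of the same argument.
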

\begin{proof} Since $A$ is excellent, $A^{\nm}$ is excellent and finite over $A$ {(by \cite[\href{https://stacks.math.columbia.edu/tag/07QV}{Lemma 07QV}]{stacks-project}, \cite[\href{https://stacks.math.columbia.edu/tag/035S}{Lemma 035S}]{stacks-project})}, and we have an inclusion $A^{\wedge_\varpi}\subset A^{\nm,\wedge_\varpi}$ of $\varpi$-adic completions. It thus suffices to show that $B\defeq A^{\nm,\wedge_{\varpi}}$ is a domain. 

Now our hypotheses implies that $B$ is $R$-flat, excellent {(\cite[Main Theorem 2]{kurano-shimomoto}), normal (\cite[\href{https://stacks.math.columbia.edu/tag/0C22}{Lemma 0C22}]{stacks-project}), and $B[\frac{1}{\varpi}]$ is regular (\cite[\href{https://stacks.math.columbia.edu/tag/033A}{Lemma 033A}]{stacks-project})}. Thus if $B$ is not a domain, $\Spec B[\frac{1}{\varpi}]$ must be disconnected, and hence there is a non-trivial idempotent $e\in B[\frac{1}{\varpi}]$. But $B$ is normal, hence $e\in B$. Furthermore, $e\notin \varpi B$, since if $e\in \varpi B$, then $e=e^2$ implies $e$ is infinitely divisible by $\varpi$ in $B$, and hence is $0$ since $B$ is $\varpi$-adically complete {and separated.}
Similarly, $(e-1)\notin \varpi B$. Thus the image of $e$ in $B/\varpi$ is a non-trivial idempotent, contradicting our hypothesis that $\Spec B/\varpi$ is connected. 
\end{proof}

\clearpage{}%
\clearpage{}%
\section{Local models in mixed characteristic} %
\label{sec:MLM}
In this section, we will specialize the universal models from \S \ref{sec:UMLM} to a mixed characteristic DVR.  We introduce naive models which may not be flat but are defined by an explicit condition.   The main result is Theorem \ref{thm:compandSW} which labels the top-dimensional irreducible components of the special fiber by Serre weights.  In fact, this label is ``intrinsic'' to the component in the sense that components with same label which appear in different models can be canonically identified inside (a  suitable subvariety of) the affine flag variety (see Theorem \ref{thm:cmp:match:1}).  Finally, we study the $T^{\vee}$-fixed points on these components and match this with Herzig's conjecture (Definition \ref{defn:W?}) in Theorem \ref{thm:Tfixedpts}.  

Recall that $\cO$ is a finite flat local $\Zp$-algebra with fraction field $E$ and residue field $\F$.  When we decorate an object that occurs in \S \ref{sec:UMLM} with a subscript $\cO$, it means the base change of that object to $\cO$ via the map $\bA^1\to \cO$ sending {$t$} to $-p$.
In particular, we have the objects $L\cG_\cO$, $L^+\cG_\cO$, $L^{--}\cG_\cO$, $L^+ \cM_\cO$, and $\Gr_{\cG,\cO}=L^+\cG_\cO\backslash L\cG_\cO$.   
Similarly, objects decorated with $E$ or $\F$ denote the further base change to $E$ or $\F$ respectively. 
As before, the restrictions of these functors to the category of Noetherian $\cO$-algebras have simple descriptions setting $t=-p$.

\subsection{Mixed characteristic local model} \label{subsec:MLM}

For convenience of the reader, we recall some of the discussion from \S \ref{sec:UMLM} specialized over $\cO$.  As explained at the end of \S \ref{sec:UMLM:loopgroups}, since $v$ is invertible in $E[\![v+p]\!]$, $\Gr_{\cG, E}$ is isomorphic to the affine Grassmannian of $\GL_n$ over $E$.  Similarly, $\Iw_{\F} \defeq L^+\cG_{\F}$ is the usual Iwahori group scheme. (In \S \ref{sec:BKwithdescent}, we introduce a version of $\cI$ over $\cO$ but for now, we only need it over $\F$.)  Then
\begin{align*}
(\Gr_{\cG, \cO})_{\F}=\Fl\defeq \Iw_{\F} \backslash L (\GL_n)_{\F}
\end{align*}
is the affine flag variety over $\F$. 

Let $\lambda \in X_*(T^{\vee})$ be a dominant cocharacter of $T^{\vee} \subset \GL_n$. 
Define $S_E^\circ(\lambda)\subseteq \Gr_{\GL_n, E}$ to be the open affine Schubert cell associated to $(v+p)^{\lambda}\in \Gr_{\GL_n, E}$, $S_E(\lambda)\subseteq \Gr_{\GL_n, E}$ its {Zariski} closure, and $M(\leql)$ the Zariski closure of $S_E(\lambda)$ in $\Gr_{\cG, \cO}$.   
Then $M(\leql)$ is the Pappas--Zhu local model defined in \cite{PZ} associated to the group $\GL_n$, the conjugacy class of $\lambda$, and the Iwahori subgroup.

Let $\bf{a} \in \cO^n$.  Let $R$ be a Noetherian $\cO$-algebra. 
Recall that
\[
L^+ \cM_{\cO} (R) =  \{ M \in \Mat_n(R[\![v+p]\!])  \mid M \text{ is upper triangular mod } v \}.
\]
Define the closed subfunctor $L\cG^{\nabla_{\bf{a}}}_{\cO} \subseteq L\cG_{\cO}$ by the condition that 
 \begin{equation} \label{eq:nablaa}
L\cG^{\nabla_{\bf{a}}}_{\cO}(R)\defeq \left\{A\in L\cG_{\cO}(R) \mid \quad  v \frac{dA}{dv} A^{-1}  + A \Diag(\bf{a}) A^{-1} \in \frac{1}{v+p} L^+ \cM_{\cO} (R) \right\}.
\end{equation}
It is easy to see that $L^+\cG_{\cO}(R) \cdot L\cG^{\nabla_{\bf{a}}}_{\cO}(R) \subseteq L\cG^{\nabla_{\bf{a}}}_{\cO}(R)$ and hence we get a closed {subfunctor}  $\Gr_{\cG, \cO}^{\nabla_{\bf{a}}} \defeq  L^+ \cG_{\cO} \backslash L\cG^{\nabla_{\bf{a}}}_{\cO} \subseteq \Gr_{\cG, \cO}$ {(viewed as functors on Noetherian $\cO$-algebras).}  
Comparing with \eqref{eq:universalnabla}, $\Gr_{\cG, \cO}^{\nabla_{\bf{a}}}$ is clearly the fiber of the universal $\Gr_{\cG, X}^{\nabla}$ over the $\cO$-point $(-p, \bf{a})$.  

\begin{prop}
\label{prop:genfiber}
Let $\bf{a} \in \cO^n$. There is a natural isomorphism
\[
S^\circ_E(\lambda) \cap \Gr_{\cG, \cO}^{\nabla_{\bf{a}}}   \stackrel{\sim}{\longrightarrow} (P_\lambda\backslash \GL_n)_E
\]
where $P_{\lambda}$ is the parabolic subgroup of $\GL_n$ determined by the condition that the $\alpha$-th entry vanishes for all roots $\alpha$ such that $\langle \lambda, \alpha^\vee \rangle < 0$.
In particular, $S^\circ_E(\lambda) \cap \Gr_{\cG, \cO}^{\nabla_{\bf{a}}} $ is a closed, 
irreducible, projective and smooth subscheme of ${S_E(\lambda)}$.
\end{prop}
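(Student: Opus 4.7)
The plan is to deduce this proposition as a direct specialization of the universal result already established in Proposition \ref{prop:intersect_open_schubert_variety}. Since we are working entirely over the generic fibre $E$, the prime $p$ is invertible, so $-p \in X^0(E)$; and since $E$ has characteristic $0$, the integer $h_\lambda!$ is automatically invertible. Hence the $E$-point $(-p,\bf{a}) \colon \Spec E \to X^0 \times_\Z \bA^n$ lies in the locus where Proposition \ref{prop:intersect_open_schubert_variety} applies.

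First I would identify the fiber of the universal object $\cM^{\nv}_{X^0}(\leq\lambda,\nabla)\cap (\cS^\circ_{X^0}(\lambda)\times_\Z \bA^n)$ over $(-p,\bf{a})$ with the scheme $S_E^{\circ}(\lambda) \cap \Gr_{\cG,\cO}^{\nabla_{\bf{a}}}$ in the statement. Under the canonical isomorphism $\Gr_{\cG,X}\times_X X^0 \cong \Gr_{\GL_n}\times_\Z X^0$ (recorded just before the definition of $\cS_X(\lambda)$), the Zariski closed cell $\cS_{X^0}^\circ(\lambda)$ restricts to the usual open Schubert cell $S_E^\circ(\lambda)$ of $\Gr_{\GL_n,E}$. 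Similarly, comparing the defining condition \eqref{eq:universalnabla} of $L\cG^{\nabla}$ with \eqref{eq:nablaa} and setting $t=-p$, the fiber of $\Gr^{\nabla}_{\cG,X}$ over $(-p,\bf{a})$ is by construction the scheme $\Gr_{\cG,\cO}^{\nabla_{\bf{a}}}\otimes_\cO E$. This yields the desired matching of fibers.

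Second, I would invoke Proposition \ref{prop:intersect_open_schubert_variety}, which says that the map $\pi_\lambda$ induces an isomorphism
\[
\cM^{\nv}_{X^0}(\leq\lambda,\nabla)\cap (\cS^\circ_{X^0}(\lambda)\times_\Z \bA^n)\bigl[\tfrac{1}{h_\lambda!}\bigr]
\stackrel{\sim}{\longrightarrow} (P_\lambda\backslash \GL_n)\times_\Z X^0\times_\Z \bA^n\bigl[\tfrac{1}{h_\lambda!}\bigr].
\]
Pulling back along $(-p,\bf{a}) \colon \Spec E \to X^0\times_\Z \bA^n$ then gives the claimed natural isomorphism $S_E^\circ(\lambda)\cap \Gr_{\cG,\cO}^{\nabla_{\bf{a}}} \stackrel{\sim}{\longrightarrow} (P_\lambda\backslash \GL_n)_E$.

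For the geometric consequences, $(P_\lambda\backslash \GL_n)_E$ is a partial flag variety, hence smooth, irreducible, and projective. To upgrade the inclusion into $S_E(\lambda)$ from a locally closed to a closed immersion, I would observe that the composition $(P_\lambda\backslash\GL_n)_E \stackrel{\sim}{\to} S_E^\circ(\lambda)\cap\Gr_{\cG,\cO}^{\nabla_{\bf{a}}}\hookrightarrow S_E(\lambda)$ is proper (source projective, target separated), hence a closed immersion. There is no real obstacle here: this proposition is essentially a corollary of the universal generic-fibre computation, and the only care required is in matching the specialization of the universal objects over $X^0\times\bA^n$ with the mixed-characteristic objects defined in \S\ref{subsec:MLM}.
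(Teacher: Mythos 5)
Your proposal is exactly the paper's argument: the paper's proof is the one-line statement that this is Proposition \ref{lem:intersect_open_schubert_variety} base changed to $E$ and restricted to the fiber over $\bf{a}\in\bA^n(E)$. You have simply expanded that into the explicit matching of fibers and the observation that $h_\lambda!$ is invertible over $E$, which is correct and requires no additional ideas.
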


\begin{proof}  
This is Proposition \ref{lem:intersect_open_schubert_variety} base changed to $E$ and taking the fiber over $\bf{a} \in \mathbb{A}^n(E)$. 
\end{proof}

We can now define the local model associated to $\lambda$ and $\bf{a}$.  
\begin{defn}
\label{defn:alg:MLM}
Let $\bf{a}\in \cO^n$.
Define $M(\lambda, \nabla_{\bf{a}})$ to be the Zariski closure of $S^\circ_E(\lambda) \cap \Gr_{\cG, \cO}^{\nabla_{\bf{a}}}$ in $M(\leql)$.
It is a projective, flat, $\cO$-scheme of relative dimension $\dim (P_\lambda \backslash \GL_n)_E$. 
\end{defn}

\subsubsection{Naive local model}
\label{sec:naivemlm}

\begin{defn} 
\label{def:MLM:1emb}
Let $\bf{a} \in \cO^n$.  Define 
\[
M^{\mathrm{nv}}(\leql, \nabla_{\bf{a}}) = M(\leql) \cap \Gr_{\cG, \cO}^{\nabla_{\bf{a}}}.
\]
\end{defn}

\begin{rmk}
\label{rmk:Mnv:sub}  
There is a natural inclusion of  $M^{\mathrm{nv}}(\leql, \nabla_{\bf{a}})$ into the base change $\cM^{\mathrm{nv}}_X(\leql, \nabla)$ via the map $\Spec \cO  \ra X \times \mathbb{A}^n$ given by $(-p, \bf{a})$ which is an isomorphism on the generic fibers over $E$. 
It is in fact the case that this map is an isomorphism, though we will not need to know this: By \cite[Th\'eor\`eme 1.5]{Lourenco}, the global Schubert variety $S_X(\lambda)$ is flat over $X$, hence the base change of $S_X(\lambda)$ along $\Spec \bZ_p \to X$ induced by ${t}\mapsto -p$ is flat,
hence coincides with $M(\leql)$. Imposing equation \ref{eq:nablaa} yields the result.

\end{rmk}

\begin{prop} \label{prop:compare-with-naive} For any $\lambda' \in X_*(T^{\vee})$ dominant with $\lambda' \leq \lambda$  and $\bf{a} \in \cO^n$, 
\[
M(\lambda^\prime, \nabla_{\bf{a}}) \subset M^{\mathrm{nv}}(\leql, \nabla_{\bf{a}}).
\]   
\end{prop}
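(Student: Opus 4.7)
The plan is to chase the definitions and use the elementary fact that the inclusion of Schubert varieties $S_E(\lambda') \subset S_E(\lambda)$ (for $\lambda' \leq \lambda$ dominant in the affine Grassmannian over $E$) propagates to an inclusion $M(\leq\lambda') \subset M(\leq\lambda)$ after taking Zariski closures in $\Gr_{\cG,\cO}$.

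First I would record that both $M(\leq\lambda')$ and $M(\leq\lambda)$ are closed subschemes of $\Gr_{\cG,\cO}$ (being defined as Zariski closures of subsets of the generic fiber), so the containment $S_E(\lambda') \subset S_E(\lambda)$ on generic fibers implies $M(\leq\lambda') \subset M(\leq\lambda)$ as closed subschemes of $\Gr_{\cG,\cO}$. Intersecting with the closed subfunctor $\Gr^{\nabla_{\bf{a}}}_{\cG,\cO}$ preserves this inclusion, giving
\[
M(\leq\lambda') \cap \Gr^{\nabla_{\bf{a}}}_{\cG,\cO} \;\subset\; M(\leq\lambda) \cap \Gr^{\nabla_{\bf{a}}}_{\cG,\cO} \;=\; M^{\mathrm{nv}}(\leq\lambda,\nabla_{\bf{a}}).
\]

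Next, by Definition \ref{defn:alg:MLM}, $M(\lambda',\nabla_{\bf{a}})$ is the Zariski closure inside $M(\leq\lambda')$ of $S^\circ_E(\lambda') \cap \Gr^{\nabla_{\bf{a}}}_{\cG,\cO}$. The latter is manifestly contained in $M(\leq\lambda') \cap \Gr^{\nabla_{\bf{a}}}_{\cG,\cO}$, which is closed in $M(\leq\lambda')$. Therefore the Zariski closure is also contained in $M(\leq\lambda') \cap \Gr^{\nabla_{\bf{a}}}_{\cG,\cO}$, and combining with the inclusion from the previous paragraph yields $M(\lambda',\nabla_{\bf{a}}) \subset M^{\mathrm{nv}}(\leq\lambda,\nabla_{\bf{a}})$, as desired.

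There is essentially no obstacle here; the statement is a formal consequence of the definitions, modulo the standard fact about containment of Schubert varieties in the affine Grassmannian, which can be invoked directly. The only point requiring a moment's thought is that the closure in $M(\leq\lambda')$ is the same as the closure in $\Gr_{\cG,\cO}$ (which is automatic since $M(\leq\lambda')$ is already closed in $\Gr_{\cG,\cO}$), so one does not have to worry about any stray limit points slipping outside $M^{\mathrm{nv}}(\leq\lambda,\nabla_{\bf{a}})$.
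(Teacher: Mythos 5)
Your argument is correct and is essentially the paper's proof: the paper's one-line proof observes $S^\circ_E(\lambda') \subset S_E(\lambda)$, hence $S^\circ_E(\lambda')^{\nabla_{\bf{a}}} \subset M^{\mathrm{nv}}(\leql,\nabla_{\bf{a}})_E$, and then passes to Zariski closures; you spell out the same reasoning a bit more explicitly via the intermediate inclusion $M(\leq\lambda') \subset M(\leq\lambda)$, but the logic is identical.
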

\begin{proof}  
Since $S^\circ_E(\lambda') \subset S_E(\lambda)$, $S^\circ_E(\lambda^\prime)^{\nabla_{\bf{a}}} \subset M^{\mathrm{nv}}(\leql, \nabla_{\bf{a}})_E$.   This gives the desired inclusion. 
\end{proof} 

Notice in Proposition \ref{prop:compare-with-naive} that the generic fiber of $M^{\mathrm{nv}}(\leql, \nabla_{\bf{a}})$ contains the generic fibers of the models $M(\lambda', \nabla_{\bf{a}})$ for all $\lambda' \leq \lambda$.  For later applications, we will need an $\cO$-flat model with the same generic fiber. With that in mind, we make the following definition        
\begin{equation}  \label{eq:lessthanorequalmodel}
M(\leql, \nabla_{\bf{a}}) \defeq \bigcup_{\lambda' \leq \lambda} M(\lambda', \nabla_{\bf{a}})
\end{equation} 
which is $\cO$-flat and projective and clearly satisfies $M(\leql, \nabla_{\bf{a}}) \subset M^{\mathrm{nv}}(\leql, \nabla_{\bf{a}})$.

\begin{prop}  \label{prop:compare-with-naive2}  The above inclusion induces an equality
\[
M(\leql, \nabla_{\bf{a}})_E  = M^{\mathrm{nv}}(\leql, \nabla_{\bf{a}})_E.  
\]
\end{prop}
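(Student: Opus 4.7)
The plan is to first establish set-theoretic equality via the Schubert stratification, and then upgrade to scheme-theoretic equality by showing $M^{\mathrm{nv}}(\leql, \nabla_{\bf{a}})_E$ is reduced (in fact smooth).

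First, I would exploit the Schubert stratification $S_E(\lambda) = \bigsqcup_{\lambda' \leq \lambda \text{ dom.}} S^\circ_E(\lambda')$ together with $M(\leql)_E = S_E(\lambda)$ to write the underlying set of $M^{\mathrm{nv}}(\leql, \nabla_{\bf{a}})_E$ as $\bigcup_{\lambda' \leq \lambda \text{ dom.}} |S^\circ_E(\lambda') \cap \Gr_{\cG,E}^{\nabla_{\bf{a}}}|$. Applying Proposition \ref{prop:genfiber} with $\lambda$ replaced by $\lambda' \leq \lambda$ shows that each $S^\circ_E(\lambda') \cap \Gr_{\cG,\cO}^{\nabla_{\bf{a}}}$ is already closed in $S_E(\lambda') = M(\leq \lambda')_E$, so the Zariski closure in Definition \ref{defn:alg:MLM} is superfluous and $M(\lambda', \nabla_{\bf{a}})_E = S^\circ_E(\lambda') \cap \Gr_{\cG,E}^{\nabla_{\bf{a}}}$ scheme-theoretically. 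Taking the union over $\lambda' \leq \lambda$ then yields $|M(\leql, \nabla_{\bf{a}})_E| = |M^{\mathrm{nv}}(\leql, \nabla_{\bf{a}})_E|$.

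To upgrade this to a scheme-theoretic equality, I would use Remark \ref{rmk:Mnv:sub}, which identifies $M^{\mathrm{nv}}(\leql, \nabla_{\bf{a}})_E$ with the fiber of the universal naive local model $\cM^{\nv}_X(\leql, \nabla)$ at the point $(-p, \bf{a}) \in (X^0 \times \bA^n)(E)$ (note that $-p \in E^\times$). Since $E$ is of characteristic zero, $(2h_\lambda)!$ is invertible, and Proposition \ref{prop:smooth_generic_fiber} applies to conclude that $\cM^{\nv}_{X^0}(\leql, \nabla)[\tfrac{1}{(2h_\lambda)!}]$ is smooth over its base. Base changing along the section above, $M^{\mathrm{nv}}(\leql, \nabla_{\bf{a}})_E$ is smooth over $E$, and in particular reduced.

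Combining these, the closed immersion $M(\leql, \nabla_{\bf{a}})_E \hookrightarrow M^{\mathrm{nv}}(\leql, \nabla_{\bf{a}})_E$ induces a bijection on underlying topological spaces with reduced target, so the ideal sheaf cutting out the closed subscheme must vanish and the inclusion is an equality. The main obstacle is the reducedness (smoothness) of $M^{\mathrm{nv}}(\leql, \nabla_{\bf{a}})_E$, which is imported from the universal smoothness Proposition \ref{prop:smooth_generic_fiber}; once this is in hand, the rest amounts to routine bookkeeping via the Schubert stratification and the explicit identification in Proposition \ref{prop:genfiber}.
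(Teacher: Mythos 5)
Your proof is correct and follows essentially the same route as the paper's: both arguments rest on the universal smoothness result (Proposition~\ref{prop:smooth_generic_fiber}) to get reducedness of the naive model over $E$, and then identify the pieces of the Schubert stratification with the generic fibers of the individual flat models. The only superficial difference is that you invoke Proposition~\ref{prop:genfiber} together with Remark~\ref{rmk:Mnv:sub} to identify the strata, whereas the paper cites Corollary~\ref{cor:monodromy_before_reduced} (which packages the same information in the universal setting); the underlying computation is the same.
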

\begin{proof}  Since $E$ is characteristic 0, by Corollary \ref{cor:monodromy_before_reduced} and Proposition \ref{prop:smooth_generic_fiber} $M^{\mathrm{nv}}(\leql, \nabla_{\bf{a}})_E \cong \coprod_{\lambda' \leq \lambda} (P_{\lambda'} \backslash \GL_n)_E$. In particular, the generic fiber is the reduced disjoint union of the generic fibers of $M(\lambda',  \nabla_{\bf{a}})$ for $\lambda' \leq \lambda$.  
\end{proof}

\subsection{Special fiber}
\label{sec:sp:fib}

In this section, we study the special fiber $M^{\mathrm{nv}}(\leql, \nabla_{\bf{a}})_\F$ of $M^{\mathrm{nv}}(\leql, \nabla_{\bf{a}})$ which is a closed subscheme of the affine flag variety $\Fl = \Gr_{\cG, \F}$.  In particular, we study the condition \eqref{eq:nablaa} over $\F$.  To ease notation, we let $\Fl^{\nabla_{\bf{a}}} = \Gr_{\cG, \F}^{\nabla_{\bf{a}}} \subset \Fl$ be the closed subscheme defined by the condition \eqref{eq:nablaa} restricted to $\F$-algebras.   Note that when $R$ is a Noetherian $\F$-algebra, $L^+ \cM_{\cO} (R)$ appearing in \eqref{eq:nablaa} is the same as $\Lie \Iw_{\F}(R) \defeq  \{ M \in \Mat_n(R[\![v]\!]) \mid M \text{ is upper triangular mod } v \}$.

Recall that by \cite[Theorem 9.3]{PZ} (which is a consequence of the coherence conjecture proven in \cite{Zhu_coherence}) the special fiber $M(\leql)_\F$ can be identified with the reduced union of the affine Schubert cells $S^\circ_{\F}(\tld{w}){\defeq \cI_{\F}\backslash\cI_{\F}\tld{w}\cI_{\F}}$  for $\tld{w} \in \Adm^{\vee}(\lambda)$. The goal of this section is to describe $S^\circ_{\F}(\tld{w}) \cap \Fl^{\nabla_{\bf{a}}}$, thereby giving a topological description of $M^{\mathrm{nv}}(\leql, \nabla_{\bf{a}})_\F$. 

\begin{rmk}
The special fibers of $M^{\mathrm{nv}}(\leql, \nabla_{\bf{a}})$ and $M(\lambda, \nabla_{\bf{a}})$ are not reduced in general (see Remark \ref{rmk:BM:nonreduced}). 
\end{rmk}

\begin{defn} \label{defn:modpgeneric}
Let $R$ be an $\F$-algebra and $\overline{\bf{a}} = (a_1, \ldots, a_n) \in R^n$. For any positive integer $m$, we say $\overline{\bf{a}}$ is $m$-generic if for all $i \neq j$, $a_i - a_j \notin \{-m, -m +1, \ldots, m -1, m\}$, where $-m,-m+1,\dots$ are considered as elements of $\Fp\into \F$. 
\end{defn}

\begin{rmk}  \label{rmk:comparegen} Let $\nu \in X^*(T) \cong \Z^n$.  If $t_\nu\in\tld{W}$ is $m$-generic in the sense of Definition \ref{defn:var:gen}(\ref{it:gen:weyl}), then $\nu$ mod $p \in (\F_p)^n$ is $m$-generic in the sense of Definition \ref{defn:modpgeneric}. 
\end{rmk}

Let $d \defeq \dim (B \backslash \GL_n)_{\F}$.  Recall the $\alpha$ critical strip $H^{(0, 1)}_{\alpha} = \{ x \in V \mid 0 < \langle x, \alpha^\vee\rangle < 1 \}$ from \S \ref{sub:AWG:2}.  We now state the main result of this section.

\begin{thm} \label{thm:monodromySchubert} Let $h$ be a positive integer.   Let $\tld{w} \in \tld{W}$ and $\bf{a} \in \cO^n$.  Assume that $\tld{w}$ is $h$-small $($Definition \ref{defn:var:gen}$($\ref{it:def:small}$))$ and that $\overline{\bf{a}} = \bf{a} \mod \varpi \in \F^n$ is $h$-generic. Then  the intersection $S^\circ_{\F}(\tld{w}^*) \cap \Fl^{\nabla_{\bf{a}}}$ is an affine space of dimension $d - \# \{ \alpha \in \Phi^{+} \mid \tld{w}(A_0) \subset H^{(0, 1)}_{\alpha} \}$.
\end{thm}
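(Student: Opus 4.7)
The plan is to compute the intersection $S^\circ_\F(\tld{w}^*) \cap \Fl^{\nabla_{\bf{a}}}$ in explicit coordinates provided by the affine chart $\cU(\tld{w}^*)$ of Proposition \ref{prop:explicit_affine_chart}, specialized to $t=0$. Writing $\tld{w}^* = s t_\nu$ with $s\in W$ and $\nu\in X^*(T)$, the Schubert cell is covered by this chart and its points are parametrized as $A = U \cdot s v^\nu$, where $U = 1 + X$ ranges over an explicit unipotent-type affine space of dimension $\ell(\tld{w})$ whose coordinates are Laurent coefficients $X_{ij,k}$ of the matrix entries. Since $-p = 0$ in $\F$, the monodromy condition \eqref{eq:nablaa} reads $vA'A^{-1} + A\Diag(\bf{a})A^{-1} \in \tfrac{1}{v}\Lie\cI_\F(R)$, and using the Leibniz rule together with the identity $v \tfrac{d(sv^\nu)}{dv}(sv^\nu)^{-1} = \Ad(s)(\nu)$ this becomes
\[
v\,\tfrac{dU}{dv}\, U^{-1} \;+\; \Ad(U)\bigl(\Ad(s)(\nu + \Diag(\bf{a}))\bigr) \;\in\; \tfrac{1}{v}\Lie\cI_\F(R).
\]

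Expanding in the coordinates $X$ and collecting the coefficient of $v^k$ in the matrix entry indexed by the root $\alpha = \eps_i - \eps_j$, each equation imposed by the monodromy condition takes the form
\[
\bigl(k - \langle \nu + \Diag(\bf{a}),\, s^{-1}\alpha^\vee\rangle\bigr) \cdot X_{ij,k} \;=\; \bigl(\text{polynomial in } X_{\cdot,k'} \text{ for } k' > k\bigr),
\]
imposed for $k\leq -2$ always and also for $k = -1$ when $i > j$. The genericity hypotheses intervene at this point: the $h$-smallness of $\tld{w}$ bounds the integer $|k - \langle \nu, s^{-1}\alpha^\vee\rangle|$ within the chart range, while the $h$-genericity of $\bar{\bf{a}}$ forces $\langle \Diag(\bar{\bf{a}}), s^{-1}\alpha^\vee\rangle \bmod p \notin \{-h,\dots,h\}$. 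Together they guarantee that the scalar pivot reduces modulo $\varpi$ to a nonzero element, hence is a unit in $R$. The equations can therefore be solved uniquely by descending recursion on $k$ for every constrained coordinate $X_{ij,k}$, leaving the remaining coordinates as free affine parameters. This directly yields the affine-space structure.

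Finally, I would match the dimension count to $d - \#\{\alpha \in \Phi^+ : \tld{w}(A_0) \subset H^{(0,1)}_\alpha\}$. The free parameters correspond to pairs $(\alpha,k)$ which are allowed in the chart but on which no monodromy equation is imposed. Using the explicit exponent bounds of Proposition \ref{prop:explicit_affine_chart} together with the formula
\[
\langle \tld{w}(x), \alpha^\vee\rangle \in \bigl(\langle \nu, \alpha^\vee\rangle - \delta_{w^{-1}\alpha < 0},\ \langle \nu, \alpha^\vee\rangle + \delta_{w^{-1}\alpha > 0}\bigr) \qquad (x \in A_0),
\]
a case analysis on the sign of $\alpha$ and of $w^{-1}\alpha$ identifies the free parameters with the set of positive finite roots $\alpha$ satisfying $\tld{w}(A_0) \not\subset H^{(0,1)}_\alpha$, whose cardinality is $d - \#\{\alpha \in \Phi^+ : \tld{w}(A_0) \subset H^{(0,1)}_\alpha\}$. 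The main technical obstacle is precisely this combinatorial bookkeeping: one must carefully track how the exponent bounds of the chart interact with the sign of $s^{-1}\alpha$ and ensure that at the ``boundary'' positions (where the integer part of the pivot $k - \langle \nu, s^{-1}\alpha^\vee\rangle$ is extremal) the monodromy equation either sits outside the chart range entirely or is automatically compatible with the already-determined values. This is the delicate use of both smallness and genericity together.
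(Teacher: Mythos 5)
Your overall strategy is the same as the paper's: parametrize the Schubert cell by a unipotent subgroup, expand the monodromy condition in coordinates, observe that each constrained coefficient appears with a scalar pivot that is a unit by $h$-smallness plus $h$-genericity, and solve by recursion, then count the remaining free parameters via Corollary~\ref{cor:support}. The paper writes the cell as $\tld{w}^* N_{\tld{w}^*}$ with $N_{\tld{w}^*}\subset\cI_\F$ (polynomial entries, Proposition~\ref{prop:opencell}); you write it as $U\cdot\tld{w}^*$ with $U$ in the conjugate group $\tld{w}^* N_{\tld{w}^*}(\tld{w}^*)^{-1}\subset L^{--}\cG_\F$ (Laurent entries). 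These are equivalent up to $\Ad(\tld{w}^*)$, and your pivot $k-\langle\nu+\bar{\bf a},s^{-1}\alpha^\vee\rangle$ matches the paper's $i+\delta_{\alpha>0}+\langle\bar{\bf a},\alpha^\vee\rangle$ after the degree shift.

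However, the displayed shape of your equations is not quite right, and the claimed ``descending recursion on $k$'' does not go through as stated. The quadratic piece of $\Ad(U)\bigl(\Ad(s)(\nu+\Diag(\bf a))\bigr)$ in the $(i,j)$ entry is $\sum_m\bigl(M_{ii}-M_{mm}\bigr)X_{im}X_{mj}$, and since the strictly lower triangular entries of $U\in L^{--}\cG_\F$ generically have a nonzero constant term $X_{im,0}$, the coefficient of $v^k$ picks up terms $\bigl(M_{ii}-M_{mm}\bigr)X_{im,0}\,X_{mj,k}$: these involve unknowns at the \emph{same} level $k$, not at levels $k'>k$. So the right-hand side of your equation is not ``polynomial in $X_{\cdot,k'}$ for $k'>k$'', and a naive recursion purely on $k$ stalls. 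What rescues the argument---and this is precisely the structural observation that drives the paper's proof---is that such cross-terms only couple $X_{ij,k}$ to $X_{mj,k}$ (and $X_{im,k}$) for roots that are \emph{strictly smaller} in the partial order $<_\cC$ attached to the Weyl chamber $\cC=w(\cC_0)$ that contains $-\mathrm{Supp}(N_{\tld{w}^*})$ (Corollary~\ref{cor:NandB}). Thus at each $k$ the linear system in the unknowns is unitriangular with respect to $<_\cC$ with unit pivots, hence solvable; equivalently, one recurses on $\alpha$ in $<_\cC$-increasing order as the paper does, which makes each step a genuinely diagonal system. You should either adopt the root recursion, or else prove the triangularity of the level-$k$ system; as written, your proposal assumes it implicitly. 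The rest---the dimension count via $\#\mathrm{Supp}(N_{\tld{w}^*})$ and Corollary~\ref{cor:support}---is handled correctly and matches the paper's conclusion.
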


\begin{lemma} 
\label{bound} Let $\lambda\in X_*(T^{\vee}) = X^*(T)$.   Let $h_{\lambda} =\max_{\alpha^\vee}\{\langle \lambda,\alpha^\vee\rangle\}$. %
If $\tld{w} \in \Adm(\lambda)$, then $\tld{w}^* \in \Adm^{\vee}(\lambda)$ and $\tld{w}^*$ is $h_{\lambda}$-small.  

\end{lemma}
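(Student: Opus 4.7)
The plan is to deduce both assertions from two facts: the involution $(-)^*$ is an isomorphism of partially ordered groups, and admissibility translates to a concrete bound on the translation part of $\tld{w}$. The first claim ($\tld{w}^* \in \Adm^\vee(\lambda)$) is essentially formal. By \cite[Lemma 2.1.3]{LLL} (recalled just after Definition \ref{affineadjoint}), $(-)^*: (\tld{W},\leq) \to (\tld{W}^\vee, \leq)$ is an isomorphism of partially ordered groups. Since $t_\mu = 1\cdot t_\mu$ is fixed by $(-)^*$, any inequality $\tld{w} \leq t_{w(\lambda)}$ in $\tld{W}$ translates to $\tld{w}^* \leq t_{w(\lambda)}$ in $\tld{W}^\vee$. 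Taking the union over $w\in W$ in Definition \ref{defn:adm} gives $\tld{w}^*\in \Adm^\vee(\lambda)$.

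For smallness, first I would compute the translation part of $\tld{w}^*$ explicitly. Writing $\tld{w} = w t_\nu$ in the standard (Weyl-on-left) convention of \S \ref{sec:notations}, we have $\tld{w} = t_{w(\nu)} w$, so by the recipe of Definition \ref{affineadjoint} one obtains
\[
\tld{w}^* = w^{-1} t_{w(\nu)},
\]
whose translation part (in the Weyl-on-left convention) is $w(\nu)$. Since the quantity $h_\mu = \max_{\alpha^\vee}\langle \mu,\alpha^\vee\rangle$ is $W$-invariant, we have $h_{w(\nu)} = h_\nu$. Thus $\tld{w}^*$ is $h_\lambda$-small if and only if $h_\nu \leq h_\lambda$, and the problem reduces to bounding the translation part of $\tld{w}$ itself.

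It then remains to show $\nu \in \Conv(W\lambda)$. This is the admissibility-to-permissibility implication (in its easy direction, which holds for any split reductive group): evaluating the action of $\tld{w} = w t_\nu$ on any $x$ in the closed base alcove $\overline{A}_0$ gives $\tld{w}(x) - x = (w(x)-x) + w(\nu)$, and the origin $0$ lies in $\overline{A}_0$, so specializing to $x=0$ yields $\tld{w}(0) = w(\nu)$. The admissibility condition $\tld{w} \leq t_{w'(\lambda)}$ for some $w'\in W$ forces $\tld{w}(0) \in \Conv(W\lambda)$ (this is the content of the permissibility inclusion; one convenient reference is the description of admissibility via the affine Weyl group orbit of $\overline{A}_0$ in Kottwitz--Rapoport, or alternatively an application of \cite[Corollary 4.4]{HH} already invoked in the proof of Proposition \ref{prop:can:adm}). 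Hence $w(\nu) \in \Conv(W\lambda)$, which gives $\nu \in w^{-1}\Conv(W\lambda) = \Conv(W\lambda)$ by $W$-stability of $\Conv(W\lambda)$, and therefore $h_\nu \leq h_\lambda$ as required.

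The argument is essentially formal once one has the partial-order-preserving property of $(-)^*$ and the Kottwitz--Rapoport bound; I do not anticipate any technical obstacle, so the only care needed is to match conventions between the left-translation ($t_\nu w$) form used in the definition of $(-)^*$ and the right-translation ($w t_\nu$) form used in the definition of smallness, which is why I compute the translation part of $\tld{w}^*$ by hand in the second step.
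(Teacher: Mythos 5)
Your proof is correct. The paper disposes of this lemma by citing \cite[Lemmas 2.1.4 and 2.1.5]{LLL}, so you have in effect reconstructed the content of those two lemmas in the two steps of your argument: the poset-isomorphism property of $(-)^*$ (recalled after Definition \ref{affineadjoint}) gives $\tld{w}^*\in\Adm^\vee(\lambda)$ formally, and the admissibility-implies-permissibility bound of Kottwitz--Rapoport (already used in the proof of Proposition \ref{prop:JHbij} with the reference \cite{KR}) gives $\tld{w}(0)=w(\nu)\in\Conv(W\lambda)$, whence $h_\nu\leq h_\lambda$. The one place worth being pedantic is the convention check you already flagged: since smallness (Definition \ref{defn:var:gen}) is read off from the form $w''t_{\nu''}$, and $\tld{w}^*=w^{-1}t_{w(\nu)}$, the translation part to bound is $w(\nu)$ rather than $\nu$; you handle this correctly via $W$-invariance of $h_{(-)}$. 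No gap.
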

\begin{proof} This follows directly from \cite[Lemmas 2.1.4 and 2.1.5]{LLL}. 
\end{proof}

\begin{cor} \label{cor:compofnaive} Let $\lambda \in X^*(T^{\vee})$ be a dominant cocharacter.  
Assume that $\overline{\bf{a}} = \bf{a} \mod \varpi \in \F^n$ is $h_{\lambda}$-generic.  
Then there is a natural bijection 
\[
\tld{w} \mapsto \overline{(S^\circ_{\F}(\tld{w}^*) \cap \Fl^{\nabla_{\bf{a}}} )}
\]
between $\Adm^{\mathrm{reg}}(\lambda)$ and the top-dimensional irreducible components of $M^{\mathrm{nv}}(\leql, \nabla_{\bf{a}})_\F$.
\end{cor}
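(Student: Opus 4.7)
The plan is to combine the Pappas--Zhu stratification of $M(\leql)_{\F}$ recalled at the start of \S\ref{sec:sp:fib} with Theorem \ref{thm:monodromySchubert}, which carries out the real work; the corollary is essentially a combinatorial repackaging of that theorem.

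First, from $M^{\mathrm{nv}}(\leql,\nabla_{\bf{a}})_{\F}=M(\leql)_{\F}\cap\mathrm{Fl}^{\nabla_{\bf{a}}}$ (Definition \ref{def:MLM:1emb}) and the Pappas--Zhu description of $M(\leql)_{\F}$ as the reduced union of the affine Schubert cells $S^{\circ}_{\F}(\tld{s})$ for $\tld{s}\in \Adm^{\vee}(\lambda)$, I obtain a disjoint set-theoretic decomposition
\[
M^{\mathrm{nv}}(\leql,\nabla_{\bf{a}})_{\F}=\coprod_{\tld{s}\in\Adm^{\vee}(\lambda)} \bigl(S^{\circ}_{\F}(\tld{s})\cap\mathrm{Fl}^{\nabla_{\bf{a}}}\bigr).
\]
The involution $(-)^{*}$ of Definition \ref{affineadjoint} is an isomorphism of partially ordered groups and fixes every translation $t_{\nu}$; combined with the definition of $\Adm^{\vee}(\lambda)$ it therefore induces a bijection $\Adm(\lambda)\xrightarrow{\sim}\Adm^{\vee}(\lambda)$ sending $\tld{w}\mapsto\tld{w}^{*}$, so I reindex the strata above by $\tld{w}\in\Adm(\lambda)$.

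Next, Lemma \ref{bound} ensures that each such $\tld{w}^{*}$ is $h_{\lambda}$-small, while the corollary's hypothesis says $\overline{\bf{a}}$ is $h_{\lambda}$-generic; these are precisely the two assumptions required by Theorem \ref{thm:monodromySchubert}. Applying the theorem identifies the stratum $S^{\circ}_{\F}(\tld{w}^{*})\cap\mathrm{Fl}^{\nabla_{\bf{a}}}$ with an affine space of dimension
\[
d-\#\bigl\{\alpha\in\Phi^{+}\mid\tld{w}(A_{0})\subset H^{(0,1)}_{\alpha}\bigr\},
\]
a quantity bounded above by $d$ and attaining this bound exactly when $\tld{w}(A_{0})$ meets no critical strip, i.e.\ when $\tld{w}\in\Adm^{\mathrm{reg}}(\lambda)$ (Definition \ref{defn:regular}).

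Finally, each closure $\overline{S^{\circ}_{\F}(\tld{w}^{*})\cap\mathrm{Fl}^{\nabla_{\bf{a}}}}$ is irreducible, as the closure of an affine space, and for distinct $\tld{w}$ these closures are distinct because their dense open subsets lie in distinct Schubert cells of $\mathrm{Fl}$ (the cells being locally closed and pairwise disjoint). Since $M^{\mathrm{nv}}(\leql,\nabla_{\bf{a}})_{\F}$ is the finite union of the above strata, all of dimension at most $d$, its top-dimensional irreducible components are exactly the $d$-dimensional ones among these closures, and the assignment $\tld{w}\mapsto\overline{S^{\circ}_{\F}(\tld{w}^{*})\cap\mathrm{Fl}^{\nabla_{\bf{a}}}}$ provides the desired bijection. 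There is no serious obstacle, as the main content has already been absorbed into Theorem \ref{thm:monodromySchubert} and Lemma \ref{bound}; the only thing to watch is a clean translation between $\Adm(\lambda)$ and $\Adm^{\vee}(\lambda)$ via $(-)^{*}$.
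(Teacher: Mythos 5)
Your proof is correct and follows essentially the same route as the paper's: you invoke the Pappas--Zhu stratification of $M(\leql)_{\F}$ by the cells $S^{\circ}_{\F}(\tld{z})$ for $\tld{z}\in\Adm^{\vee}(\lambda)$, translate to $\Adm(\lambda)$ via $(-)^{*}$, apply Lemma \ref{bound} to verify the smallness hypothesis, and let Theorem \ref{thm:monodromySchubert} do the dimension count, with the regular elements being exactly those whose strata reach dimension $d$. Your injectivity step is slightly compressed compared to the paper's (which spells out that two coinciding closures are a common irreducible space in which the two dense open strata would have to intersect, forcing the Schubert cells to meet), but you have stated the irreducibility of the closures in the preceding sentence, so the needed ingredient is there and the argument is sound.
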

\begin{proof}
By \cite[Theorem 9.3]{PZ},  $M(\leql)_\F$ is the reduced union of $S_\F^\circ(\tld{z})$ for $\tld{z} \in \Adm^{\vee}(\lambda)$.  Thus, $(M^{\mathrm{nv}}(\leql, \nabla_{\bf{a}})_\F)_{\mathrm{red}}$ is the reduced union of $S_\F^\circ(\tld{z}) \cap  \Fl^{\nabla_{\bf{a}}}$. 
By \cite[Lemma 2.1.4]{LLL}, any such $\tld{z}$ is of the form $\tld{w}^*$ for $\tld{w} \in \Adm(\lambda)$. By Lemma \ref{bound}, all $\tld{z} \in \Adm^{\vee}(\lambda)$ satisfy the hypotheses of Theorem \ref{thm:monodromySchubert}. Thus, $S_\F^\circ(\tld{z}) \cap  \Fl^{\nabla_{\bf{a}}}$ has maximal dimension $d$ if and only if $\tld{z} = \tld{w}^*$ where $\tld{w}$ is regular.  Moreover, the assignment $\tld{w} \mapsto \overline{(S_\F^\circ(\tld{w}^*) \cap \Fl^{\nabla_{\bf{a}}} )}$ is injective: indeed if $\overline{(S_\F^\circ(\tld{z}) \cap \Fl^{\nabla_{\bf{a}}})}=\overline{(S_\F^\circ(\tld{z}') \cap \Fl^{\nabla_{\bf{a}}})}$ then $S_\F^\circ(\tld{z}) \cap \Fl^{\nabla_{\bf{a}}}$ and $S_\F^\circ(\tld{z}') \cap \Fl^{\nabla_{\bf{a}}}$ are both open and nonempty in an irreducible scheme and so must intersect. 
In particular, $S_\F^\circ(\tld{z})$ and $S_\F^\circ(\tld{z}')$ intersect so that $\tld{z} = \tld{z}'$.   
\end{proof}

The remainder of the section is devoted to the proof of Theorem \ref{thm:monodromySchubert} by studying the $\nabla_{\bf{a}}$- condition \eqref{eq:nablaa} in terms of explicit coordinates for $S_\F^\circ(\tld{w}^*)$.  Let $\tld{w} \in \tld{W}$.  The open affine Schubert cell $S_\F^\circ(\tld{w}^*) \subset \Fl$ is an affine space of dimension $\ell(\tld{w}^*)$.    We now recall explicit coordinates for the affine space using the open cell.     

Recall that the roots $\Phi$ of $G = \GL_n$ are canonically identified with the coroots of $G^{\vee}$ and so we use the same notation for both.  
Thus, for any integer $m$ and any $\alpha \in \Phi$, we have an affine root group $U_{\alpha, m}$ of $G^{\vee}$.
Concretely, if $\alpha = \alpha_{ik}$ with $i \neq k$, then $U_{\alpha, m}$ is the unipotent group with $ik$-entry $c v^m$ for $c$ a constant and all other non-diagonal entries zero.    

Specializing Definition \ref{defn:Lminusminus} to $\F$ (hence $t=0$), we have
\[
L^{--} \cG_{\F} (R) \defeq  \Bigg\{ g \in  \GL_n\Big(R\Big[\frac{1}{v}\Big]\Big)\  \Big|\  \text{$g$ mod  $\frac{1}{v}$  is lower unipotent} \Bigg\}.
\]
In particular, $U_{\alpha, m} \subset L^{--} \cG_{\F}$ if and only if $m  \leq - \delta_{\alpha > 0}$.   (Recall from \S \ref{sec:not:mis} that $\delta_P$ is 1 if $P$ is true and 0 if $P$ if false.)

We first record two easy lemmas. 

\begin{lemma}\label{lem:conjrootgp}
If $\tld{w} = st_\nu \in \tld{W}$ and we set $\tld{z} = \tld{w}^* \in \tld{W}^\vee$, then $\tld{z}^{-1} U_{-\alpha,m} \tld{z} = U_{-s(\alpha),m+\langle \nu,\alpha^\vee\rangle}$.
\end{lemma}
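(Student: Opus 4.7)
The plan is to reduce the claim to a direct matrix calculation in the loop group, using the explicit form of the affine root subgroups given just before the lemma.

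First I would unwind the definition of $(-)^*$ from Definition \ref{affineadjoint}. Writing $\tld{w} = st_\nu = t_{s(\nu)}s$, the convention in Definition \ref{affineadjoint} yields
\[
\tld{z} \;=\; \tld{w}^* \;=\; s^{-1}t_{s(\nu)} \;=\; t_\nu\, s^{-1},
\]
so that $\tld{z}^{-1} = s\,t_{-\nu}$. Inside the loop group we represent $t_\nu$ by the diagonal matrix $v^\nu=\Diag(v^{\nu_1},\dots,v^{\nu_n})$ and $s$ by its permutation matrix.

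Next, writing $\alpha = \alpha_{ik}$ so that $-\alpha = \alpha_{ki}$, the remark preceding the lemma identifies $U_{-\alpha,m}$ with the one-parameter subgroup of matrices $1 + c\,v^m E_{ki}$, $c$ varying. The main step is then the conjugation computation
\[
\tld{z}^{-1}\bigl(1 + c\,v^m E_{ki}\bigr)\tld{z}
\;=\; s\,v^{-\nu}\bigl(1 + c\,v^m E_{ki}\bigr)v^{\nu}\,s^{-1}.
\]
Torus conjugation gives $v^{-\nu} E_{ki}v^{\nu}=v^{\nu_i-\nu_k}E_{ki}=v^{\langle\nu,\alpha^\vee\rangle}E_{ki}$, while permutation conjugation gives $sE_{ki}s^{-1}=E_{s(k),s(i)}$, whose associated root is $\alpha_{s(k),s(i)} = s(\alpha_{ki}) = -s(\alpha)$. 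Combining these two identities produces
\[
\tld{z}^{-1}\bigl(1+c\,v^m E_{ki}\bigr)\tld{z} \;=\; 1 + c\,v^{m+\langle \nu,\alpha^\vee\rangle}E_{s(k),s(i)},
\]
which is precisely a general element of $U_{-s(\alpha),\,m+\langle\nu,\alpha^\vee\rangle}$, proving the claim.

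The whole argument is essentially a direct bookkeeping exercise; the only subtlety is keeping track of the two conventions at play (the identification of roots of $G$ with coroots of $G^\vee$, and the formula $\tld{w}^* = w^{-1}t_\nu$ for $\tld{w}=t_\nu w$). Once those are fixed, the calculation is forced.
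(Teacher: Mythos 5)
Your proof is correct and is the natural (essentially the only) argument: unwind Definition \ref{affineadjoint} to get $\tld z = t_\nu s^{-1}$, then do the matrix conjugation in $L\GL_n$ using the explicit form of $U_{\alpha,m}$ given just before the lemma. The paper gives no proof for this lemma, so there is no alternative route to compare against; yours is what the authors had in mind.

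One small remark on conventions, not an error: you take the permutation matrix for $s$ to satisfy $s\,e_j = e_{s(j)}$, so that $s E_{ki} s^{-1} = E_{s(k),s(i)}$ and, consistently, $s(\eps_j) = \eps_{s(j)}$. The paper's explicit embedding (stated in \S\ref{sec:componentmatching}) sends $w\in W$ to the matrix with $(i,j)$-entry $\delta_{j=w(i)}$, which instead gives $w\,e_j = e_{w^{-1}(j)}$, hence $s E_{ki} s^{-1} = E_{s^{-1}(k),s^{-1}(i)}$; correspondingly the $W$-action on $X^*(T)$ induced by conjugation is $s(\eps_j) = \eps_{s^{-1}(j)}$. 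Either consistent pair of conventions produces the same conclusion $\tld z^{-1}U_{-\alpha,m}\tld z = U_{-s(\alpha),\,m+\langle\nu,\alpha^\vee\rangle}$, so your argument is valid as written; it is just worth flagging that the intermediate identities are convention-dependent and should match whichever normalization the paper fixes.
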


Now fix $x \in A_0$.

\begin{lemma}\label{lem:conjiw}
If $\tld{w} \in \tld{W}$ and we set $\tld{z}=\tld{w}^* \in \tld{W}^\vee$, then $U_{-\alpha,m} \subset \tld{z}^{-1} \cI_{\F} \tld{z}$ if and only if $\langle \tld{w}(x),\alpha^\vee\rangle < m$.
Similarly, $U_{-\alpha,m} \subset \tld{z}^{-1} L^{--} \cG_{\F} \tld{z}$ if and only if $m< \langle \tld{w}(x),\alpha^\vee\rangle$.
\end{lemma}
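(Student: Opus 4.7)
The plan is to rewrite both containments in the form $\tld{z}U_{-\alpha,m}\tld{z}^{-1}\subset K$ for $K\in\{\cI_\F,\,L^{--}\cG_\F\}$ and to compute the conjugate root group explicitly, reducing the statement to a comparison of integers. Write $\tld{w}=st_\nu$ so that $\tld{z}=\tld{w}^*=t_\nu s^{-1}$. By a direct computation entirely parallel to (and essentially inverse to) Lemma \ref{lem:conjrootgp}, using that $t_\nu$ shifts the depth of an affine root group $U_{\beta,k}$ by $\langle\nu,\beta^\vee\rangle$ and that $s^{-1}$ permutes root groups via $s^{-1}$, I get
\[
\tld{z}\,U_{-\alpha,m}\,\tld{z}^{-1}\,=\,U_{-s^{-1}(\alpha),\,m-\langle\nu,s^{-1}(\alpha)^\vee\rangle}.
\]

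Next I invoke the standard descriptions of which affine root groups lie in the respective subgroups. For a root $\beta$ and an integer $k$, one has $U_{\beta,k}\subset \cI_\F$ if and only if $k\geq \delta_{\beta<0}$, while (using the specialization $t=0$ of Definition \ref{defn:Lminusminus} recalled in the paragraph before the lemma) $U_{\beta,k}\subset L^{--}\cG_\F$ if and only if $k\leq -\delta_{\beta>0}$. Applied to $\beta=-s^{-1}(\alpha)$ and $k=m-\langle\nu,s^{-1}(\alpha)^\vee\rangle$, these become integer inequalities for $m$ in terms of $\langle\nu,s^{-1}(\alpha)^\vee\rangle$ whose form depends on the sign of $s^{-1}(\alpha)$.

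Finally I translate these inequalities into the intrinsic language of $\langle\tld{w}(x),\alpha^\vee\rangle$. Since $\tld{w}(x)=s(x+\nu)$, we have
\[
\langle\tld{w}(x),\alpha^\vee\rangle\,=\,\langle x,s^{-1}(\alpha)^\vee\rangle+\langle\nu,s^{-1}(\alpha)^\vee\rangle,
\]
and because $x\in A_0$ the first term lies in $(0,1)$ if $s^{-1}(\alpha)>0$ and in $(-1,0)$ if $s^{-1}(\alpha)<0$. Thus $\lfloor\langle\tld{w}(x),\alpha^\vee\rangle\rfloor$ equals $\langle\nu,s^{-1}(\alpha)^\vee\rangle$ in the first case and $\langle\nu,s^{-1}(\alpha)^\vee\rangle-1$ in the second case. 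A case-by-case substitution then shows that the Iwahori condition $m\geq\langle\nu,s^{-1}(\alpha)^\vee\rangle+\delta_{s^{-1}(\alpha)>0}$ is equivalent to $\langle\tld{w}(x),\alpha^\vee\rangle<m$, and the negative loop condition $m\leq\langle\nu,s^{-1}(\alpha)^\vee\rangle-\delta_{s^{-1}(\alpha)<0}$ is equivalent to $m<\langle\tld{w}(x),\alpha^\vee\rangle$, yielding both claims.

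The statement is essentially a bookkeeping exercise, and the only real obstacle is keeping the conventions straight: the precise relationship between $\tld{w}=st_\nu$ and $\tld{z}=\tld{w}^*=t_\nu s^{-1}$, the signs in the conjugation formulas for the affine root groups, and the thresholds defining $\cI_\F$ and $L^{--}\cG_\F$ as affine root subgroups. Once these are pinned down, the two equivalences fall out of the single identity above together with the observation that $x\in A_0$ determines the integer part of $\langle\tld{w}(x),\alpha^\vee\rangle$ in terms of $s^{-1}(\alpha)$ and $\nu$.
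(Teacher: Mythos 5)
Your proof is correct, and it takes essentially the same route as the paper's. The paper parametrizes $\tld{w}^{-1}=st_\nu$ so it can quote Lemma \ref{lem:conjrootgp} verbatim to get $\tld z\,U_{-\alpha,m}\,\tld z^{-1}=U_{-s(\alpha),\,m+\langle\nu,\alpha^\vee\rangle}$, whereas you write $\tld w=st_\nu$ and re-derive the equivalent (inverse) conjugation identity yourself; both then compare the resulting depth with the threshold for membership in $\cI_\F$ or $L^{--}\cG_\F$ and translate back to $\langle\tld w(x),\alpha^\vee\rangle$ using $x\in A_0$, the only cosmetic difference being that the paper reads the integer condition off $\lceil\langle x,s(\alpha)^\vee\rangle\rceil$ while you read it off $\lfloor\langle\tld w(x),\alpha^\vee\rangle\rfloor$.
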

\begin{proof}
Let $\tld{w}^{-1}$ be $st_\nu$.
By Lemma \ref{lem:conjrootgp}, $U_{-\alpha,m} \subset \tld{z}^{-1} \cI_{\F} \tld{z}$ is equivalent to $U_{-s(\alpha),m+\langle \nu,\alpha^\vee\rangle} = \tld{z} U_{-\alpha,m} \tld{z}^{-1} \subset \cI_{\F}$. 
This is equivalent to the fact that $m+\langle \nu,\alpha^\vee\rangle > \langle x,s(\alpha)^\vee\rangle$ (note that $\lceil\langle x,s(\alpha)^\vee\rangle\rceil = \delta_{s(\alpha)>0}$), or that $m > \langle s^{-1}(x)-\nu, \alpha^\vee\rangle = \langle \tld{w}(x),\alpha^\vee\rangle$.
The proof of the second part is similar.
\end{proof}

\begin{defn}  
\label{defn:Uw}
For any $\tld{z} \in \tld{W}^{\vee}$,  define $N_{\tld{z}} \defeq   \tld{z}^{-1}   L^{--} \cG_{\F}  \tld{z} \cap \cI_\F.$ 
\end{defn}

We can use Lemma \ref{lem:conjiw} to characterize the affine roots which appear in  $N_{\tld{w}}$.

\begin{prop} \label{prop:Nwroot}
If $\tld{w} \in \tld{W}$ and set $\tld{z} = \tld{w}^* \in \tld{W}^\vee$, then $U_{-\alpha,m} \subset N_{\tld{z}}$ if and only if 
\begin{equation} \label{ikey1}
\langle x,\alpha^\vee\rangle < m < \langle \tld{w}(x),\alpha^\vee \rangle.
\end{equation}
\end{prop}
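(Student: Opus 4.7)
The plan is to deduce the proposition directly from Lemma \ref{lem:conjiw}. By Definition \ref{defn:Uw}, $N_{\tld{z}} = \tld{z}^{-1} L^{--}\cG_{\F} \tld{z} \cap \cI_\F$ is an intersection, so containment $U_{-\alpha,m}\subset N_{\tld{z}}$ is equivalent to the two separate containments $U_{-\alpha,m}\subset \tld{z}^{-1} L^{--}\cG_{\F} \tld{z}$ and $U_{-\alpha,m}\subset \cI_\F$ (this is legitimate because each affine root subgroup is, for each individual subgroup of $L\GL_n(\F)$ we care about, either wholly contained in it or meets it trivially).

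First I would apply the second half of Lemma \ref{lem:conjiw} verbatim to obtain the upper bound: $U_{-\alpha,m}\subset \tld{z}^{-1} L^{--}\cG_{\F} \tld{z}$ if and only if $m<\langle \tld{w}(x),\alpha^\vee\rangle$. Second, I would apply the first half of Lemma \ref{lem:conjiw} in the special case $\tld{w}=1$ (so $\tld{z}=1$), which yields $U_{-\alpha,m}\subset \cI_\F$ if and only if $\langle x,\alpha^\vee\rangle<m$. Conjunction of these two equivalences gives exactly the inequality \eqref{ikey1}.

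The only even mildly subtle point is checking that specializing Lemma \ref{lem:conjiw} to the trivial element makes sense, i.e.\ that it correctly recovers the standard criterion for an affine root subgroup to lie in the Iwahori (using that $x\in A_0$, so $\lceil \langle x,\alpha^\vee\rangle\rceil=\delta_{\alpha>0}$ for any root $\alpha$); this is immediate from the statement of the lemma and requires no additional argument. Since both ingredients are already in place, there is no substantive obstacle: the proposition is essentially a direct corollary of Lemma \ref{lem:conjiw} and the definition of $N_{\tld{z}}$.
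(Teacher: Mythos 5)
Your proof is correct and takes essentially the same route as the paper's, which likewise deduces the claim immediately from the definition of $N_{\tld{z}}$ as the intersection $\tld{z}^{-1}L^{--}\cG_\F\tld{z}\cap \cI_\F$ together with Lemma \ref{lem:conjiw}. Whether one specializes the first half of that lemma to $\tld{w}=1$ or simply notes directly that $U_{-\alpha,m}\subset\cI_\F$ iff $m\geq\delta_{\alpha>0}=\lceil\langle x,\alpha^\vee\rangle\rceil$ is a matter of taste; both give the lower bound $\langle x,\alpha^\vee\rangle<m$, and the observation that each affine root group is either wholly contained in or trivially meets the subgroups in question is exactly what makes this clean splitting legitimate.
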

\begin{proof}
This follows from the definition of $N_{\tld{z}}$ and Lemma \ref{lem:conjiw}.
\end{proof}

Let $d_{\alpha,\tld{w}}$ be $\lfloor\langle\tld{w}(x),\alpha^\vee\rangle\rfloor - \lceil\langle x,\alpha^\vee\rangle\rceil$.
Note that $\lceil\langle x,\alpha^\vee\rangle\rceil = \delta_{\alpha>0}$.

\begin{rmk}\label{rmk:msmall}
If $\tld{w}$ is $m$-small, then $d_{\alpha,\tld{w}}\leq m$ for any $\alpha\in \Phi$.
\end{rmk}

The following elementary corollary describes the entries of $N_{\tld{w}}$ in terms of ``polynomials with degree bounds''.

\begin{cor}
\label{cor:deg:bound}
Let $R$ be a Noetherian $\F$-algebra and $\alpha\in \Phi$.
Then $(N_{\tld{w}^*}(R))_{-\alpha}= \{ v^{\delta_{\alpha>0}}f_{\alpha,R} \}$ where $f_{\alpha,R}\in R[v]$ has degree $d_{\alpha,\tld{w}}$ $($with the convention that $f_{\alpha,R}=0$ if $d_{\alpha,\tld{w}}<0$.$)$
\end{cor}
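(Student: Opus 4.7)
The plan is to derive this corollary as a direct unpacking of Proposition \ref{prop:Nwroot} combined with explicit matrix computations. First, I would note that the statement decomposes into two inclusions: every element of $N_{\tld{w}^*}(R)$ has $(-\alpha)$-entry of the claimed form, and conversely every such polynomial is realized as an entry.

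For the containment ``$\subseteq$'', I would argue entry-wise from the two conditions cutting out $N_{\tld{w}^*} = \cI_\F \cap \tld{z}^{-1} L^{--}\cG_\F \tld{z}$ (writing $\tld{z} = \tld{w}^*$). The Iwahori condition immediately forces the $(-\alpha)$-entry of $n \in N_{\tld{w}^*}(R)$ to lie in $v^{\delta_{\alpha>0}} R[\![v]\!]$, which supplies the leading $v$-factor and rules out polar terms. For the degree bound, I would write $\tld{z}$ concretely as a monomial matrix $w\cdot v^{\nu'}$ (permutation times diagonal) and observe that conjugation by $\tld{z}$ multiplies the $(-\alpha)$-entry of $n$ (after permuting indices by $w$) by an explicit power of $v$. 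The condition that $\tld{z} n \tld{z}^{-1}$ lies in $L^{--}\cG_\F(R) \subseteq \GL_n(R[v^{-1}])$ then forces polynomial (rather than power-series) entries, and computing the exponent that must appear in the leading power of $v$ recovers exactly the upper bound $\lfloor \langle \tld{w}(x),\alpha^\vee\rangle\rfloor$ appearing in Proposition \ref{prop:Nwroot}. Together with the lower bound $\delta_{\alpha>0}$, this forces $n_{-\alpha} = v^{\delta_{\alpha>0}} f_{\alpha,R}$ with $\deg f_{\alpha,R} \leq d_{\alpha,\tld{w}}$, and if $d_{\alpha,\tld{w}} < 0$ the two bounds are incompatible so $n_{-\alpha} = 0$.

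For the reverse containment ``$\supseteq$'', I would use Proposition \ref{prop:Nwroot} directly: for every $m$ with $\delta_{\alpha>0} \leq m \leq \lfloor \langle \tld{w}(x),\alpha^\vee\rangle\rfloor$, the root subgroup $U_{-\alpha,m}$ lies in $N_{\tld{w}^*}$ and its image in the $(-\alpha)$-entry is $\F\cdot v^m$. For fixed $\alpha$ and varying $m$, these root subgroups all sit inside the abelian subgroup of matrices that differ from the identity only in the $(-\alpha)$-slot, so they commute and their product realizes every polynomial of the form $v^{\delta_{\alpha>0}} f_{\alpha,R}$ with $\deg f_{\alpha,R} \leq d_{\alpha,\tld{w}}$ as a genuine $(-\alpha)$-entry of an element of $N_{\tld{w}^*}(R)$.

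The bulk of the work is purely bookkeeping in step ``$\subseteq$'': carefully tracking how conjugation by the monomial matrix $\tld{z}$ permutes matrix entries and shifts $v$-valuations, and checking that the resulting exponent condition matches \eqref{ikey1} of Proposition \ref{prop:Nwroot}. The main obstacle I anticipate is just notational --- keeping the translation between the root label $-\alpha$, the corresponding matrix position, and the affine root exponent $m$ consistent throughout --- rather than any conceptual difficulty, since both the entry-wise bound and its attainability are forced by the same underlying computation used to prove Proposition \ref{prop:Nwroot}.
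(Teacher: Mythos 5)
The paper gives no proof of this corollary, calling it ``elementary'' after Proposition \ref{prop:Nwroot}; your argument supplies exactly the intended verification, so there is nothing to compare against. Both halves are sound: the inclusion ``$\subseteq$'' follows entry-wise from the two defining conditions $n\in \cI_\F(R)$ (giving the $v^{\delta_{\alpha>0}}$ lower bound and power-series entries) and $\tld{z}n\tld{z}^{-1}\in L^{--}\cG_\F(R)$ (giving Laurent-polynomial entries bounded above by $v^{\lfloor\langle\tld{w}(x),\alpha^\vee\rangle\rfloor}$, since conjugation by the monomial matrix $\tld{z}$ merely permutes slots and shifts $v$-degrees), and intersecting the two constraints forces a polynomial of the stated form; the inclusion ``$\supseteq$'' uses that for fixed $\alpha$ the elements $\mathrm{Id}+c\,v^m E_{ki}$ with $\delta_{\alpha>0}\le m\le \delta_{\alpha>0}+d_{\alpha,\tld w}$ lie in $N_{\tld w^*}(R)$ by Proposition \ref{prop:Nwroot}, commute because $E_{ki}^2=0$, and their products exhaust all polynomials $v^{\delta_{\alpha>0}}f$ with $\deg f\le d_{\alpha,\tld w}$.
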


The significance of $N_{\tld{z}}$ lies in the following standard description of the affine Schubert cell over $\F$.
\begin{prop} \label{prop:opencell} Let $\tld{z} \in \tld{W}^{\vee}$. The subgroup scheme $N_{\tld{z}}$ is a  finite-dimensional affine unipotent group scheme over $\F$.  The natural map
\[
\tld{z} N_{\tld{z}} \ra S_\F^\circ(\tld{z})
\]
is an isomorphism of affine spaces of dimension $\ell(\tld{z})$.  
\end{prop}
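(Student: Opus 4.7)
The plan is to first analyze the internal structure of $N_{\tld{z}}$ as an iterated product of affine root groups, and then to identify the natural map $\tld{z} N_{\tld{z}} \to S_\F^\circ(\tld{z})$ via the open chart $\cU(\tld{z})_\F$ of Lemma \ref{lem:open_chart_mono}. Throughout, set $\tld{z} = \tld{w}^*$ with $\tld{w} \in \tld{W}$.

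\textbf{Structure of $N_{\tld{z}}$.} By Proposition \ref{prop:Nwroot}, the affine root subgroups $U_{-\alpha,m}$ contained in $N_{\tld{z}}$ are precisely those with $\langle x,\alpha^\vee\rangle < m < \langle \tld{w}(x),\alpha^\vee\rangle$, and Corollary \ref{cor:deg:bound} shows that this set, call it $\Psi_{\tld{z}}$, is finite. The standard filtration of $\cI_\F$ by the valuation of matrix entries in $v$ restricts to a filtration of $N_{\tld{z}}$ whose successive quotients are $\cong \mathbb{G}_{a,\F}$, indexed by $\Psi_{\tld{z}}$; choosing any total order refining this filtration, the iterated product map $\prod_{(\alpha,m)\in \Psi_{\tld{z}}} U_{-\alpha,m} \stackrel{\sim}{\longrightarrow} N_{\tld{z}}$ is a scheme-theoretic isomorphism. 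Hence $N_{\tld{z}} \cong \mathbb{A}_{\F}^{|\Psi_{\tld{z}}|}$ is a finite-dimensional affine unipotent group scheme. The equality $|\Psi_{\tld{z}}| = \ell(\tld{z})$ is the classical combinatorial description of the Coxeter length: since $\ell(\tld{z}) = \ell(\tld{w})$ (as $(-)^*$ preserves length), and $\tld{w}(x)$ lies in the open alcove $\tld{w}(A_0)$ (so no $\langle \tld{w}(x),\alpha^\vee\rangle$ is an integer), the number of affine walls $H_{\alpha,m}$ separating $A_0$ from $\tld{w}(A_0)$ is exactly $|\Psi_{\tld{z}}|$.

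\textbf{Embedding into $\Fl$.} By the very definition of $N_{\tld{z}}$, we have $\tld{z} N_{\tld{z}} \tld{z}^{-1} \subseteq L^{--}\cG_\F$, hence $\tld{z}N_{\tld{z}} \subseteq L^{--}\cG_\F \cdot \tld{z}$. Since $\cU(\tld{z})_\F$ is an $L^{--}\cG_\F$-torsor (Lemma \ref{lem:open_chart_mono}) containing the section $\tld{z}$, it follows that $L^{--}\cG_\F \cdot \tld{z} \subseteq \cU(\tld{z})_\F$, and by Corollary \ref{cor:open:immersion} (applied over $\F$) the natural map $\cU(\tld{z})_\F \to \Fl$ is an open immersion. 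The inclusion $\tld{z}N_{\tld{z}}\tld{z}^{-1} \hookrightarrow L^{--}\cG_\F$ is a closed immersion (a product of affine root groups inside the loop group), so composition gives a locally closed immersion $\tld{z} N_{\tld{z}} \hookrightarrow \Fl$.

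\textbf{Identifying the image with the Schubert cell.} The image of this immersion is $\{\cI_\F\tld{z}n : n \in N_{\tld{z}}\}$, which is contained in $S_\F^\circ(\tld{z}) = \cI_\F\backslash \cI_\F\tld{z}\cI_\F$. To verify the reverse inclusion (and injectivity on points), let $H \defeq \tld{z}^{-1}\cI_\F\tld{z} \cap \cI_\F$ be the right stabilizer in $\cI_\F$ of the coset $\cI_\F\tld{z}$: by Lemma \ref{lem:conjiw}, $H$ is generated by $T_\F$ and the affine root subgroups $U_{-\alpha,m}$ with $m > \langle\tld{w}(x),\alpha^\vee\rangle$, while $\cI_\F$ is generated by $T_\F$ and the $U_{-\alpha,m}$ with $m > \langle x,\alpha^\vee\rangle$. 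The same filtration/product argument as in the first paragraph then yields a scheme-theoretic decomposition $H \times N_{\tld{z}} \stackrel{\sim}{\to} \cI_\F$, with $H \cap N_{\tld{z}} = \{1\}$. Since $\tld{z}H\tld{z}^{-1} \subseteq \cI_\F$, we get $\cI_\F\tld{z}\cI_\F = \cI_\F\tld{z}HN_{\tld{z}} = \cI_\F\tld{z}N_{\tld{z}}$, and two cosets $\cI_\F\tld{z}n$, $\cI_\F\tld{z}n'$ agree precisely when $n(n')^{-1} \in H \cap N_{\tld{z}} = \{1\}$. The image of the locally closed immersion is thus exactly $S_\F^\circ(\tld{z})$, proving that $\tld{z}N_{\tld{z}} \stackrel{\sim}{\to} S_\F^\circ(\tld{z})$. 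The main technical point is the scheme-theoretic Iwahori factorization $\cI_\F = H \cdot N_{\tld{z}}$, which is standard but requires careful bookkeeping of the affine root group filtration.
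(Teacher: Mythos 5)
The paper gives no proof of Proposition~\ref{prop:opencell}; it is explicitly labeled a ``standard description of the affine Schubert cell over~$\F$.'' Your argument is the expected one and is essentially correct, so there is nothing in the paper to compare against. Two small remarks. First, Corollary~\ref{cor:deg:bound} already identifies $N_{\tld{z}}$ with an affine space by writing down the admissible entries of each off-diagonal slot; the iterated-product-of-root-groups argument recovers the same thing (and also verifies the group structure), so you have in effect reproved a piece of the paper's preparatory material, which is fine. Second, there is a slip in the description of $H=\tld{z}^{-1}\cI_\F\tld{z}\cap\cI_\F$: since $H\subset\cI_\F$, the affine root groups $U_{-\alpha,m}\subset H$ are those with $m>\max\bigl(\langle x,\alpha^\vee\rangle,\langle\tld{w}(x),\alpha^\vee\rangle\bigr)$, not merely $m>\langle\tld{w}(x),\alpha^\vee\rangle$ as written (Lemma~\ref{lem:conjiw} applied with $\tld{z}$ gives the second inequality, and applied with the identity element gives the first). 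Your factorization $\cI_\F=H\cdot N_{\tld{z}}$ still goes through precisely because the condition $m>\langle x,\alpha^\vee\rangle$ (defining $\cI_\F$) splits, using that $\langle\tld{w}(x),\alpha^\vee\rangle\notin\Z$, into the disjoint union of $\langle x,\alpha^\vee\rangle<m<\langle\tld{w}(x),\alpha^\vee\rangle$ (giving $N_{\tld{z}}$) and $m>\max$ of the two (giving $H$); this is exactly the bookkeeping you deferred to, and it is worth making explicit to see why $H\cap N_{\tld{z}}$ is scheme-theoretically trivial.
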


Before giving the proof of Theorem \ref{thm:monodromySchubert}, we collect a series of preliminary results.

\begin{defn} \label{defn:support} Define the \emph{support} of $N_{\tld{w}}$ (denoted $\mathrm{Supp}(N_{\tld{w}})$) to be the set of $\alpha \in \Phi$ such that $U_{\alpha, m} \subset N_{\tld{w}}$ for some $m$.   %
\end{defn}

\begin{cor} \label{cor:support} Let $\alpha \in \Phi$ and $\tld{w} \in \tld{W}$.  
Then the following are equivalent:
\begin{enumerate}
\item \label{item:rootsupp} $-\alpha \in \mathrm{Supp}(N_{\tld{w}^*})$; 
\item \label{item:higherstrip} $\lfloor\langle\tld{w}(x),\alpha^\vee\rangle \rfloor \geq \lceil\langle x,\alpha^\vee\rangle \rceil$; and 
\item \label{item:critstrip} $\langle\tld{w}(x),\alpha^\vee\rangle > 0$ and $\tld{w}(x)$ and $x$ lie in different $\alpha$-strips.
\end{enumerate}
In particular, $-\mathrm{Supp}(N_{\tld{w}^*})\subset w(\Phi^+)$ where $w\in W$ is the unique element so that $w^{-1}\tld{w} \in \tld{W}^+$ and $\#\mathrm{Supp}(N_{\tld{w}^*})=\# \Phi^{+} - \# \{ \alpha \in \Phi^{+} \mid \tld{w}(A_0) \subset H^{(0,1)}_{\alpha} \}$. 
\end{cor}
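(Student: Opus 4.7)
The plan is to read off each equivalence directly from Proposition \ref{prop:Nwroot}, which characterizes membership in $N_{\tld{w}^*}$ by the single inequality $\langle x,\alpha^\vee\rangle < m < \langle \tld{w}(x),\alpha^\vee\rangle$ for some integer $m$. By definition of support, $-\alpha \in \mathrm{Supp}(N_{\tld{w}^*})$ is exactly the statement that this open interval contains an integer. Since $x$ lies in the open alcove $A_0$ and $\tld{w}(x)$ lies in the interior of another alcove, both endpoints are non-integral. Hence the interval contains an integer if and only if $\lceil\langle x,\alpha^\vee\rangle\rceil \leq \lfloor\langle \tld{w}(x),\alpha^\vee\rangle\rfloor$, which is exactly (\ref{item:higherstrip}). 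This gives (\ref{item:rootsupp})$\iff$(\ref{item:higherstrip}).

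For (\ref{item:higherstrip})$\iff$(\ref{item:critstrip}), I would split on the sign of $\alpha$, using that $\lceil\langle x,\alpha^\vee\rangle\rceil = \delta_{\alpha>0}$ and $\lfloor\langle x,\alpha^\vee\rangle\rfloor = -\delta_{\alpha<0}$ because $0 < \langle x,\alpha^\vee\rangle < 1$ for $\alpha > 0$ and symmetrically for $\alpha < 0$. Under either sign, (\ref{item:higherstrip}) forces $\lfloor\langle\tld{w}(x),\alpha^\vee\rangle\rfloor \geq 0$ (so $\langle\tld{w}(x),\alpha^\vee\rangle > 0$) and $\lfloor\langle\tld{w}(x),\alpha^\vee\rangle\rfloor \neq \lfloor\langle x,\alpha^\vee\rangle\rfloor$ (different $\alpha$-strips). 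Conversely, the two conditions in (\ref{item:critstrip}) combined with the explicit values of $\lceil\langle x,\alpha^\vee\rangle\rceil$ recover (\ref{item:higherstrip}) by a straightforward case check.

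For the inclusion $-\mathrm{Supp}(N_{\tld{w}^*}) \subset w(\Phi^+)$, if $-\alpha$ lies in the support then $\langle\tld{w}(x),\alpha^\vee\rangle > 0$ by (\ref{item:critstrip}). Since $w^{-1}\tld{w}\in \tld{W}^+$, the point $w^{-1}\tld{w}(x)$ is dominant, so writing $\langle\tld{w}(x),\alpha^\vee\rangle = \langle w^{-1}\tld{w}(x), w^{-1}(\alpha)^\vee\rangle > 0$ forces $w^{-1}(\alpha)\in \Phi^+$, i.e.\ $\alpha \in w(\Phi^+)$.

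The cardinality formula then follows by computing the complement of $-\mathrm{Supp}(N_{\tld{w}^*})$ inside $w(\Phi^+)$ via (\ref{item:higherstrip}) or (\ref{item:critstrip}). For $\alpha < 0$ in $w(\Phi^+)$, positivity of $\langle\tld{w}(x),\alpha^\vee\rangle$ together with $\lceil\langle x,\alpha^\vee\rangle\rceil = 0$ automatically gives (\ref{item:higherstrip}), so the complement is concentrated on $\alpha \in w(\Phi^+)\cap \Phi^+$. For such $\alpha$, failure of (\ref{item:higherstrip}) amounts to $0 < \langle\tld{w}(x),\alpha^\vee\rangle < 1$, i.e.\ $\tld{w}(A_0) \subset H^{(0,1)}_\alpha$. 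Because $\tld{w}(A_0) \subset H^{(0,1)}_\alpha$ with $\alpha > 0$ already forces $\alpha \in w(\Phi^+)$ (positivity of $\langle\tld{w}(x),\alpha^\vee\rangle$), the set $\{\alpha \in w(\Phi^+)\cap\Phi^+ : \tld{w}(A_0)\subset H^{(0,1)}_\alpha\}$ coincides with $\{\alpha\in \Phi^+ : \tld{w}(A_0)\subset H^{(0,1)}_\alpha\}$, giving the stated count $\#\mathrm{Supp}(N_{\tld{w}^*}) = \#\Phi^+ - \#\{\alpha\in\Phi^+ : \tld{w}(A_0)\subset H^{(0,1)}_\alpha\}$. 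No real obstacle is anticipated; this is an accounting argument on top of Proposition \ref{prop:Nwroot}, with the only mild care being the sign-dependent formulas for $\lceil\langle x,\alpha^\vee\rangle\rceil$.
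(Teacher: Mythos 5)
Your argument is correct and follows essentially the same route as the paper: both derive (\ref{item:rootsupp})$\iff$(\ref{item:higherstrip}) by asking when the open interval in Proposition \ref{prop:Nwroot} contains an integer, observe that (\ref{item:higherstrip})$\iff$(\ref{item:critstrip}) is immediate from the explicit values of $\lceil\langle x,\alpha^\vee\rangle\rceil$ and $\lfloor\langle x,\alpha^\vee\rangle\rfloor$, and get the inclusion into $w(\Phi^+)$ from dominance of $w^{-1}\tld{w}(x)$. The one cosmetic difference is the bookkeeping for the cardinality: the paper pairs $\{\alpha,-\alpha\}$ over $\alpha\in\Phi^+$ and notes exactly one lands in the support unless $\tld{w}(A_0)\subset H^{(0,1)}_\alpha$, whereas you count the complement of $-\mathrm{Supp}(N_{\tld{w}^*})$ inside $w(\Phi^+)$; the two counts are trivially equivalent.
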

\begin{proof}
The equivalence of (\ref{item:rootsupp}) and (\ref{item:higherstrip}) follows from Proposition \ref{prop:Nwroot}.
The equivalence between (\ref{item:higherstrip}) and (\ref{item:critstrip}) is clear.

If $-\alpha \in \mathrm{Supp}(N_{\tld{w}^*})$, then by (\ref{item:critstrip}), $\langle \tld{w}(x),\alpha^\vee\rangle = \langle w^{-1}\tld{w}(x),w^{-1}(\alpha)^\vee\rangle>0$.
Since $w^{-1}\tld{w} \in \tld{W}^+$, we have that $w^{-1}(\alpha) \in \Phi^+$ and that $\alpha \in w(\Phi^+)$.

For each $\alpha \in \Phi^+$, (\ref{item:critstrip}) implies that exactly one of $\{\alpha,-\alpha\}$ is in $\mathrm{Supp}(N_{\tld{w}^*})$ unless $\tld{w}(A_0) \subset H^{(0,1)}_{\alpha}$. 
This gives the desired formula for $\#\mathrm{Supp}(N_{\tld{w}^*})$.
\end{proof}

\begin{cor}
\label{cor:NandB}
Let $w$ be as in Corollary \ref{cor:support}.   Then $N_{\tld{w}^*} \subset  w(L^+ \ovl{N}) w^{-1}$ where $\ovl{N}$ represents the subfunctor of $\GL_n$ of unipotent lower triangular matrices and $L^+ \ovl{N}$ represents the functor on $\F$-algebras $R \mapsto \ovl{N}(R[\![v]\!])$.  
\end{cor}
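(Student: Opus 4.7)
The plan is to deduce this corollary directly from the two pieces of structural information about $N_{\tld{w}^*}$ that have already been established: its support (Corollary \ref{cor:support}) and its entries (Corollary \ref{cor:deg:bound}). Since $w \bigl(L^+\ovl{N}\bigr) w^{-1}$ is characterized as the subfunctor of $L^+\GL_n$ consisting of unipotent matrices whose off-diagonal entries are supported on the roots $w(\Phi^-)$ and lie in $R[\![v]\!]$, the task is to verify these three properties (unipotence, support, regularity at $v=0$) for arbitrary elements of $N_{\tld{w}^*}(R)$.

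First I would observe that by Proposition \ref{prop:opencell} the group scheme $N_{\tld{w}^*}$ is affine unipotent over $\F$, and in fact decomposes as a product (in some fixed order) of the affine root subgroups $U_{-\alpha,m}\subset N_{\tld{w}^*}$ as $-\alpha$ runs over $\mathrm{Supp}(N_{\tld{w}^*})$ and $m$ ranges over the integers satisfying condition~\eqref{ikey1} of Proposition \ref{prop:Nwroot}. In particular any element $n\in N_{\tld{w}^*}(R)$ has $1$'s on the diagonal and only picks up nonzero entries at positions corresponding to roots in $\mathrm{Supp}(N_{\tld{w}^*})$. Combined with Corollary \ref{cor:deg:bound}, these entries lie in $R[v]\subset R[\![v]\!]$.

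Next I would invoke Corollary \ref{cor:support}, which gives the crucial inclusion $\mathrm{Supp}(N_{\tld{w}^*})\subset w(\Phi^-)$, since $-\mathrm{Supp}(N_{\tld{w}^*})\subset w(\Phi^+)$. Conjugating $n$ by $w^{-1}$ (which, being a permutation matrix, simply reindexes roots and preserves $1$'s on the diagonal), the resulting matrix $w^{-1}nw$ has $1$'s on the diagonal, entries in $R[\![v]\!]$, and off-diagonal entries concentrated at positions in $\Phi^-$. This is precisely the condition defining an element of $\ovl{N}(R[\![v]\!]) = L^+\ovl{N}(R)$, so $n\in w\bigl(L^+\ovl{N}\bigr)w^{-1}(R)$, as desired.

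There is no real obstacle here: both ingredients (the support bound and the fact that entries are polynomial in $v$) are already available from the previous two corollaries, and the rest of the argument is a clean matter of matching root-group decompositions. The only minor point worth emphasizing in the write-up is why the factorization of elements of $N_{\tld{w}^*}$ into affine root subgroups forces the matrix description used above; this follows from Proposition \ref{prop:opencell} together with the standard fact that the Iwahori group scheme $\cI_\F$ has a triangular (Levi-like) decomposition via its affine root subgroups, inherited by the closed subfunctor $N_{\tld{w}^*}$.
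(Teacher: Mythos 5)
Your argument is correct and is essentially the proof the paper implicitly intends (the paper gives no explicit proof for this corollary, leaving it as a consequence of Corollary \ref{cor:support}). One remark on economy: your detour through the product decomposition of $N_{\tld{w}^*}$ into affine root subgroups is unnecessary and would require more justification than you supply. Corollary \ref{cor:deg:bound} already gives the explicit entry-by-entry description of an arbitrary $n\in N_{\tld{w}^*}(R)$: the $-\alpha$-entry is $v^{\delta_{\alpha>0}}f_{\alpha,R}$ with $f_{\alpha,R}\in R[v]$, and this entry vanishes identically whenever $d_{\alpha,\tld{w}}<0$. Combined with the equivalence in Corollary \ref{cor:support} between $d_{\alpha,\tld{w}}\geq 0$ and $-\alpha\in\mathrm{Supp}(N_{\tld{w}^*})$, and the containment $-\mathrm{Supp}(N_{\tld{w}^*})\subset w(\Phi^+)$, you immediately get that $n$ is unipotent with off-diagonal entries in $R[v]\subset R[\![v]\!]$ supported on $w(\Phi^-)$, so $w^{-1}nw\in\ovl{N}(R[\![v]\!])$. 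Citing the root-group decomposition of $N_{\tld{w}^*}$ and asserting that it ``forces'' the matrix description adds a step you would then have to prove (products of root-group elements can pick up entries at sum roots), when the conclusion is already contained in the two corollaries you quote.
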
 

\begin{proof}[Proof of Theorem \ref{thm:monodromySchubert}]
Let $w$ be as in Corollary \ref{cor:support} so that $-\mathrm{Supp}(N_{\tld{w}^*}) \subset w(\Phi^+)$.  
Let $\cC = w(\cC_0)$ denote the Weyl chamber corresponding to $w(\Phi^{+})$.  
We use $\leq_{\cC}$ to denote the partial order on $w(\Phi^{+})$ defined by the set of simple roots $w(\Delta)$ (i.e.~$\alpha' \leq_{\cC} \alpha$ if and only if $\alpha - \alpha'$ is a non-negative sum of elements in $w(\Delta)$).    

 For any $\alpha \in \mathrm{Supp}(N_{\tld{w}^*})$, by standard results about unipotent groups, $(N^{-1}_{\tld{w}^*})_{-\alpha} = - v^{\delta_{\alpha  > 0}} f_{\alpha} + G_{\alpha}(<_{\cC} \alpha)$ where $G_{\alpha}(<_{\cC} \alpha)$ is a linear combination of terms of the form $v^{\delta_{\alpha_1 > 0} + \ldots + \delta_{\alpha_k > 0}} f_{\alpha_1} f_{\alpha_2} \ldots f_{\alpha_k}$ where $\alpha = \alpha_1 + \ldots + \alpha_k$ and $-\alpha_i \in \mathrm{Supp}(N_{\tld{w}^*})$.  
Note that if $\alpha > 0$, then at least one of the $\alpha_i$ is also positive and so $v^{\delta_{\alpha > 0}}$ divides $G_{\alpha}(<_{\cC} \alpha)$.

Consider the expression 
\[
L^{\nabla}_{\bf{a}}(N_{\tld{w}^*}) \defeq v \frac{d N_{\tld{w}^*}}{dv} N_{\tld{w}^*}^{-1} + N_{\tld{w}^*} \Diag(\ovl{\bf{a}}) N_{\tld{w}^*}^{-1}. 
\]
Let $-\alpha \in \mathrm{Supp}(N_{\tld{w}^*})$. 

Then,
\begin{equation} \label{e1}
(N_{\tld{w}^*} \Diag(\ovl{\bf{a}}) N_{\tld{w}^*}^{-1})_{-\alpha} = \langle \ovl{\bf{a}},\alpha^\vee\rangle v^{\delta_{\alpha > 0}} f_{\alpha} + G_{\alpha}(<_{\cC} \alpha) + F_{2, \alpha}(<_{\cC} \alpha)
\end{equation}
where $F_{2, \alpha}(<_{\cC} \alpha)$ is a linear combination of terms of the form $v ^{\delta_{\alpha_1 > 0}} f_{\alpha_1} G_{\alpha_2}(<_{\cC} \alpha_2)$ where $\alpha_1 + \alpha_2 = \alpha$ and $-\alpha_1, -\alpha_2 \in  \mathrm{Supp}(N_{\tld{w}^*})$. 
(Recall from \ref{sec:not:RG} that $\langle \ovl{\bf{a}},\alpha^\vee\rangle$ denotes the difference  $a_i - a_k$ if $\alpha=\eps_i-\eps_k$ and $\ovl{\bf{a}} = (a_1, \ldots, a_n)\in\F^n$.)
Note that if $\alpha > 0$, then at least one of the $\alpha_1, \alpha_2$ is also positive and so $v^{\delta_{\alpha > 0}}$ divides $F_{2, \alpha}(<_{\cC} \alpha)$. 

Let $f_{\alpha} = \sum_{i=0}^{d_{\alpha, \tld{w}}} c_{\alpha, i} v^i$.   
Set
\[
f^*_{\alpha} \defeq  (v \frac{d N_{\tld{w}^*}}{dv})_{-\alpha} = v^{\delta_{\alpha > 0}} \sum_{i=0}^{d_{\alpha, \tld{w}}} (i + \delta_{\alpha > 0}) c_{\alpha, i} v^i.
\]
Since the diagonal terms of $v \frac{d N_{\tld{w}^*}}{dv}$ are zero, a direct computation shows that 
\begin{equation} \label{e2}
(v \frac{d N_{\tld{w}^*}}{dv} N_{\tld{w}^*}^{-1})_{-\alpha} = v^{\delta_{\alpha > 0}} \sum_{i=0}^{d_{\alpha, \tld{w}}} (i + \delta_{\alpha > 0 }) c_{\alpha, i} v^i + F_{1, \alpha}(<_{\cC} \alpha)
\end{equation}
where $F_{1, \alpha}(<_{\cC} \alpha)$ is a linear combination of terms of the form $f^*_{\alpha_1} G_{\alpha_2}(<_{\cC} \alpha_2)$ where $\alpha_1 + \alpha_2 = \alpha$ and $-\alpha_1, -\alpha_2 \in  \mathrm{Supp}(N_{\tld{w}^*})$.  
By the same logic as above, $F_{1, \alpha}(<_{\cC} \alpha)$ is always divisible by $v^{\delta_{\alpha > 0}}$.

Combining \eqref{e1} and \eqref{e2}, 
\begin{equation}\label{e3}
L^{\nabla}_{\bf{a}}(N_{\tld{w}^*})_{-\alpha} = v^{\delta_{\alpha > 0}} \sum_{i=0}^{d_{\alpha, \tld{w}}} (i + \delta_{\alpha > 0} + \langle \ovl{\bf{a}},\alpha^\vee\rangle) c_{\alpha, i} v^i + F_{\alpha}(<_{\cC} \alpha)
\end{equation}
where $F_{\alpha}(<_{\cC} \alpha) = a_i G_{\alpha}(<_{\cC} \alpha) + F_{1, \alpha}(<_{\cC} \alpha) + F_{2, \alpha}(<_{\cC} \alpha)$.

Finally, we consider the naive monodromy condition \eqref{eq:nablaa} on the family $\tld{w}^* N_{\tld{w}^*}$. By Leibniz rule, this is the condition that  
\[
 \tld{w}^* L^{\nabla}_{\bf{a}}(N_{\tld{w}^*}) (\tld{w}^*)^{-1} + L^{\nabla}_{\bf{a}}(\tld{w}^*) \in \frac{1}{v} \Lie \Iw_{\F}.   
\]
It is straightforward to check that $L^{\nabla}_{\bf{a}}(\tld{w}^*) \in \frac{1}{v} \Lie \Iw_{\F}$ and so the condition is equivalent to $v L^{\nabla}_{\bf{a}}(N_{\tld{w}^*}) \in (\tld{w}^*)^{-1}\Lie \Iw_{\F}\tld{w}^*$. 
By Lemma \ref{lem:conjiw}, this is equivalent to $v^{d_{\alpha, \tld{w}} + \delta_{\alpha > 0}} = v^{\lfloor \langle \tld{w}(x),\alpha^\vee \rangle \rfloor}$ dividing $L^{\nabla}_{\bf{a}}(N_{\tld{w}^*})_{-\alpha}$ for all $\alpha \in \Phi$.
In other words, all terms but the top degree one in (\ref{e3}) must vanish for all $-\alpha \in \mathrm{Supp}(N_{\tld{w}^*})$.

By Remark \ref{rmk:msmall}, $d_{\alpha, \tld{w}} \leq m$ for all $-\alpha \in \mathrm{Supp}(N_{\tld{w}^*})$.
Since $\overline{\bf{a}}$ is $m$-generic, $i + \delta_{\alpha > 0} + \langle \ovl{\bf{a}},\alpha^\vee\rangle \neq 0$ for all $\alpha$ and all $i < d_{\alpha, \tld{w}}$.
The above condition on (\ref{e3}) solves for $c_{\alpha, i}$ for all $i < d_{\alpha, \tld{w}}$ in terms of the coefficients of $v^{- \delta_{\alpha > 0}} F_{\alpha}(<_{\cC} \alpha)$.  
The coefficients of $v^{- \delta_{\alpha > 0}} F_{\alpha}(<_{\cC} \alpha)$ are expressions in terms of coefficients of $f_{\alpha'}$ for $\alpha' <_{\cC} \alpha$.   
There is no condition on $c_{\alpha, d_{\alpha, \tld{w}}}$ for $-\alpha \in \mathrm{Supp}(N_{\tld{w}})$.   

Thus, if we take $N^{\nabla_{\bf{a}}}_{\tld{w}^*} \subset N_{\tld{w}^*}$ to be the subspace defined by these conditions, then clearly $N^{\nabla_{\bf{a}}}_{\tld{w}^*}$ is an affine space of dimension $\#\mathrm{Supp}(N_{\tld{w}^*})$ with coordinates given by the $c_{\alpha, d_{\alpha, \tld{w}}}$ for all $-\alpha \in \mathrm{Supp}(N_{\tld{w}^*})$.  
Since $\tld{w}^* N^{\nabla_{\bf{a}}}_{\tld{w}^*}$ is isomorphic to $S_\F^\circ(\tld{w}^*) \cap \Fl^{\nabla_{\bf{a}}}$, this proves the theorem by the formula in Corollary \ref{cor:support}.   
\end{proof}

\subsection{Irreducible components in the special fiber} 
\label{sec:componentmatching} 
\label{subsub:cmp:match}
\label{sbsec:MAFL}

We next want to compare the irreducible components of the special fibers $M^{\mathrm{nv}}(\leql,\nabla_{\bf{a}})_\F$ for different pairs $(\lambda, \bf{a})$.   
To do this, we introduce a common space in which they all embed.  

Define $\Fl^{\nabla_0}$ to be the fpqc-sheafification of the sub-presheaf on $\F$-algebras $R$
\begin{equation} \label{eq:nabla0}
R \mapsto \left\{\Iw_{\F} A \in \cI_\F(R) \backslash L \GL_n(R) \, | \, (v\frac{d}{dv} A) A^{-1}\in \frac{1}{v} \Lie \Iw_{\F}(R)\right\}.
\end{equation}
This is a special case of $\Gr_{\cG, \F}^{\nabla_{\bf{a}}}$ where $\bf{a} = (0, 0, \ldots, 0)$, hence the notation.  

{We have and action of
 $\tld{W}^{\vee}$ on $\tld{\Fl}$ by right translation, induced by the standard embedding $\tld{W}^{\vee} \subset  L\GL_n(\Z)$ (which sends an element $w\in W$ to the $n$ by $n$ matrix whose $(i,j)$-th entries are $\delta_{j=w(i)}$, and sends $t_{\mu}$ to $v^{\mu}$).} %
\begin{prop} \label{prop:embinmafv}  Let $\tld{z} = s^{-1} t_{\mu} \in \tld{W}^{\vee}$ acting by right translation on $\Fl$.   
Let $\bf{a} \in \Z^n$ and assume that $\bf{a} \equiv s^{-1}(\mu)$ mod $p$.
Then 
\[
M(\leql)_\F\, \tld{z} \cap \Fl^{\nabla_0} = M^{\mathrm{nv}}(\leql, \nabla_{\bf{a}})_\F\, \tld{z}
\]
Similarly, for any $\tld{w}^*\in \tld{W}^\vee$, we have
\[
\big(S_\F^\circ(\tld{w}^*)\tld{z}\big)\cap \Fl^{\nabla_0} = \big(S_\F^\circ(\tld{w}^*)\cap \Fl^{\nabla_\bf{a}}\big) \tld{z}.
\]

In particular, right translation by $\tld{z}$ induces a closed immersion
\[
r_{\tld{z}}:M^{\mathrm{nv}}(\leql, \nabla_{\bf{a}})_\F \iarrow  \Fl^{\nabla_0}.
\]
\end{prop}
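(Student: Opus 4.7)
The proof is essentially a direct computation via the Leibniz rule; the assertion $\bf{a} \equiv s^{-1}(\mu) \pmod p$ is chosen precisely to make the $\nabla_0$-condition on $A\tld{z}$ match the $\nabla_{\bf{a}}$-condition on $A$ after reducing modulo $p$. The first step is to compute the logarithmic derivative of $\tld{z}$. Writing $\tld{z} = s^{-1}t_{\mu}$ as a matrix $M_{s^{-1}} \cdot v^{\mu} \in L\GL_n(\Z)$, one has
\[
v\frac{d\tld{z}}{dv}\,\tld{z}^{-1} \;=\; M_{s^{-1}}\Bigl(v\frac{dv^{\mu}}{dv}\Bigr)v^{-\mu} M_{s^{-1}}^{-1} \;=\; M_{s^{-1}}\,\Diag(\mu)\,M_{s^{-1}}^{-1} \;=\; \Diag(s^{-1}(\mu)),
\]
which lies in $\Lie T(\Z) \subset \Lie\Iw(\Z)$. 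Here I use the convention that $s$ acts on diagonal matrices by permuting the diagonal entries; a direct index chase using the formula $(M_w)_{ij} = \delta_{j=w(i)}$ from the paper gives the last equality.

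Next, for any $\F$-algebra $R$ and any $A \in L\GL_n(R)$, the Leibniz rule yields
\[
v\frac{d(A\tld{z})}{dv}(A\tld{z})^{-1} \;=\; v\frac{dA}{dv}A^{-1} \;+\; A\bigl(v\tfrac{d\tld{z}}{dv}\tld{z}^{-1}\bigr)A^{-1} \;=\; v\frac{dA}{dv}A^{-1} + A\,\Diag(s^{-1}(\mu))\,A^{-1}.
\]
Over $\F$, since $\bf{a} \equiv s^{-1}(\mu) \pmod p$, $\Diag(s^{-1}(\mu)) = \Diag(\bf{a})$ in $M_n(\F)$, so the right-hand side equals $v\frac{dA}{dv}A^{-1} + A\,\Diag(\bf{a})\,A^{-1}$. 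Thus $A\tld{z}$ satisfies the $\nabla_0$-condition \eqref{eq:nabla0} if and only if $A$ satisfies the $\nabla_{\bf{a}}$-condition \eqref{eq:nablaa}. Recalling that this condition is invariant under left multiplication by $\Iw_\F$ (one checks directly that $v\tfrac{dg}{dv}g^{-1} \in \Lie \Iw_\F \subset \tfrac{1}{v}\Lie\Iw_\F$ for $g \in \Iw_\F$, and that $\tfrac{1}{v}\Lie\Iw_\F$ is $\Ad(\Iw_\F)$-stable), this passes to the level of cosets, so one gets the identity of subfunctors $\Fl^{\nabla_0}\,\tld{z}^{-1} = \Fl^{\nabla_{\bf{a}}}$ inside $\Fl$.

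From this identity, since right translation by $\tld{z}$ is an automorphism of $\Fl$ as an ind-scheme (hence preserves intersections and closed subschemes), both equalities of the proposition follow formally:
\[
M(\leql)_\F\,\tld{z} \cap \Fl^{\nabla_0} \;=\; \bigl(M(\leql)_\F \cap \Fl^{\nabla_{\bf{a}}}\bigr)\,\tld{z} \;=\; M^{\mathrm{nv}}(\leql,\nabla_{\bf{a}})_\F\,\tld{z},
\]
and likewise with $M(\leql)_\F$ replaced by the Schubert cell $S^\circ_\F(\tld{w}^*)$. Finally, for the closed immersion: right translation by $\tld{z}$ is an isomorphism of $\Fl$, so it identifies $M^{\mathrm{nv}}(\leql,\nabla_{\bf{a}})_\F$ with its image $M^{\mathrm{nv}}(\leql,\nabla_{\bf{a}})_\F\,\tld{z} = M(\leql)_\F\,\tld{z} \cap \Fl^{\nabla_0}$, which is a closed subscheme of $\Fl^{\nabla_0}$ (as the intersection of a closed subscheme of $\Fl$ with $\Fl^{\nabla_0}$).

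\textbf{Main obstacle.} There is no real obstacle, only care with conventions. The only potentially delicate point is the matrix computation $v\frac{d\tld{z}}{dv}\tld{z}^{-1} = \Diag(s^{-1}(\mu))$, which must be done consistently with the paper's conventions for the standard embedding $\tld{W}^\vee \subset L\GL_n(\Z)$ and for the Weyl action on $X^*(T)$; once these are fixed, the matching with $\Diag(\bf{a})$ is exactly the hypothesis $\bf{a} \equiv s^{-1}(\mu) \pmod p$.
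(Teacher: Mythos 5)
Your proof is correct and matches the paper's argument essentially step for step: both compute $v\frac{d}{dv}(A\tld{z})(A\tld{z})^{-1}$ by the Leibniz rule, reduce to the identity $v\frac{d\tld{z}}{dv}\tld{z}^{-1}=\Diag(s^{-1}(\mu))$, and then observe that modulo $p$ this is $\Diag(\bf{a})$ by hypothesis. You spell out a couple of intermediate points the paper leaves implicit (the left $\Iw_\F$-invariance needed to pass to cosets, and the formal step that right translation by $\tld{z}$ preserves intersections), but this is the same route.
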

\begin{proof}
We show that the $\nabla_0$-condition on $M(\leql)_\F\, \tld{z}$ induces the $\nabla_{\bf{a}}$-condition \eqref{eq:nablaa} which defines  $M^{\mathrm{nv}}(\leql, \nabla_{ \bf{a}})_\F$. 

Let $R$ be any Noetherian $\F$-algebra and let $A \in L \GL_n (R)$.   We compute the $\nabla_0$-condition on the translate $A \tld{z}$. Namely, $\Iw_{\F} \cdot (A \tld{z}) \in \Fl^{\nabla_0}$ if and only if 
\[
\big(v\frac{d}{dv} (A \tld{z})\big) \tld{z}^{-1} A^{-1} = v \frac{d}{dv}(A) A^{-1} +  A \Diag(s^{-1}(\mu)) A^{-1} \in \frac{1}{v} \Lie \Iw_{\F}(R)
\]
using that $v \frac{d}{dv}(v^{\mu}) = \Diag(\mu) v^{\mu}$.  
This is identical to the condition defining $\Fl^{\nabla_{\bf{a}}} = \Gr_{\cG, \F}^{\nabla_{\bf{a}}}$.   
\end{proof}

Since $M^{\mathrm{nv}}(\leql, \nabla_{\bf{a}})_\F$ is topologically the union of $S_\F^\circ(\tld{w}^*) \cap \Fl^{\nabla_{\bf{a}}}$ for $\tld{w} \in \Adm(\lambda)$,  we consider certain translates of Schubert cells inside $\Fl$ arising from the inclusion in Proposition \ref{prop:embinmafv}. 
{
\begin{defn}
\label{defn:Schubert:var}
Let $\tld{s} \in \tld{W}$ and let $\tld{w}_1, \tld{w}_2 \in \tld{W}^+$.
We define:
\begin{enumerate}
\item 
\label{it:Schubert:var:1}
$S_\F^\circ(\tld{w}_1,\tld{w}_2, \tld{s}) \subset \Fl$ to be the locally closed subvariety $S_\F^\circ((\tld{w}_2^{-1} w_0 \tld{w}_1)^*) \tld{s}^* \subset \Fl$;  
\item
\label{it:Schubert:var:2}
$S_\F^\circ(\tld{w}_1,\tld{w}_2, \tld{s})^{\nabla_0} \defeq  S_\F^\circ(\tld{w}_1,\tld{w}_2, \tld{s}) \cap \Fl^{\nabla_0}$; and
\item
\label{it:Schubert:var:3}
$S_\F^{\nabla_0}(\tld{w}_1,\tld{w}_2, \tld{s})$ to be the closure of $S_\F^\circ(\tld{w}_1,\tld{w}_2, \tld{s})^{\nabla_0}$ in $\Fl^{\nabla_0}$.
\end{enumerate}
\end{defn}
}
{
\begin{rmk} 
\label{FullRmk:strictly_smaller}
\begin{enumerate}
\item
The motivation for considering elements of the form $\tld{w}^{-1}_2 w_0 \tld{w}_1$ is that regular elements are of this form by Proposition \ref{prop:can:reg}.   
\item
\label{rmk:strictly_smaller}
The closure of $S_\F^\circ(\tld{w}_1,\tld{w}_2, \tld{s})^{\nabla_0}$ is usually strictly smaller than the closure of $S_\F^\circ(\tld{w}_1,\tld{w}_2, \tld{s})$ (which is a translate of an affine Schubert variety) intersected with $\Fl^{\nabla_0}$ (see \S \ref{sub:Tfixed}). 
\end{enumerate}
\end{rmk}
}

\begin{prop} \label{prop:mafldim} Let $\tld{s} = t_{\mu} s \in \tld{W}$ and let $\tld{w}_1, \tld{w}_2 \in \tld{W}^+$. 
Let $m$ be a positive integer.  
Assume $\tld{w}_2^{-1} w_0 \tld{w}_1$ is $m$-small and $\tld{s}$ is $m$-generic. 
Then $S_\F^\circ(\tld{w}_1,\tld{w}_2, \tld{s})^{\nabla_0}$ is isomorphic to an affine space of dimension $d$. 
\end{prop}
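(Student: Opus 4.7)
The plan is to reduce the proposition directly to Theorem \ref{thm:monodromySchubert} via the embedding of Proposition \ref{prop:embinmafv}, and then invoke the regularity characterization from Proposition \ref{prop:can:reg}.

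First, set $\tld{z} \defeq \tld{s}^* = s^{-1}t_{\mu} \in \tld{W}^\vee$ and pick any $\bf{a} \in \cO^n$ lifting $s^{-1}(\mu) \bmod p \in \F^n$ (the Teichm\"uller lift, say). Writing $\tld{w} \defeq \tld{w}_2^{-1}w_0\tld{w}_1$, Definition \ref{defn:Schubert:var}(\ref{it:Schubert:var:1})--(\ref{it:Schubert:var:2}) gives
\[
S_\F^\circ(\tld{w}_1,\tld{w}_2,\tld{s})^{\nabla_0} \;=\; \big(S_\F^\circ(\tld{w}^*)\,\tld{z}\big)\cap \Fl^{\nabla_0},
\]
and Proposition \ref{prop:embinmafv} identifies this with $\big(S_\F^\circ(\tld{w}^*)\cap \Fl^{\nabla_\bf{a}}\big)\,\tld{z}$. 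Since right translation by $\tld{z}$ is an isomorphism, it suffices to show that $S_\F^\circ(\tld{w}^*)\cap \Fl^{\nabla_\bf{a}}$ is an affine space of dimension $d$.

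Second, I verify the hypotheses of Theorem \ref{thm:monodromySchubert} with parameter $h = m$. The $m$-smallness of $\tld{w}$ is exactly the standing assumption on $\tld{w}_2^{-1}w_0\tld{w}_1$. For the $m$-genericity of $\ovl{\bf{a}} \in \F^n$ (Definition \ref{defn:modpgeneric}), the $m$-genericity of $\tld{s} = t_\mu s$ (Definition \ref{defn:var:gen}(\ref{it:gen:weyl})) means $\mu - \eta$ is $m$-deep, which translates to $\langle \mu,\alpha^\vee\rangle \notin \{-m,-m+1,\ldots,m\} \bmod p$ for every $\alpha\in \Phi^+$ (equivalently for every root). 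Writing $\bf{a} = s^{-1}(\mu)$, one has
\[
a_i - a_j \;=\; \mu_{s(i)} - \mu_{s(j)} \;=\; \langle \mu,\,s(\eps_i-\eps_j)^\vee\rangle,
\]
which lies outside $\{-m,\ldots,m\}\bmod p$ for all $i\neq j$. Hence $\ovl{\bf{a}}$ is $m$-generic.

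Third, applying Theorem \ref{thm:monodromySchubert} yields that $S_\F^\circ(\tld{w}^*)\cap \Fl^{\nabla_\bf{a}}$ is an affine space of dimension
\[
d \;-\; \#\{\alpha \in \Phi^+ \mid \tld{w}(A_0) \subset H_\alpha^{(0,1)}\}.
\]
Finally, by the last assertion of Proposition \ref{prop:can:reg}, because $\tld{w}_1,\tld{w}_2 \in \tld{W}^+$ the element $\tld{w} = \tld{w}_2^{-1}w_0\tld{w}_1$ is regular; by Definition \ref{defn:regular} this means $\tld{w}(A_0)$ avoids all critical strips $H_\alpha^{(0,1)}$ for $\alpha\in \Phi^+$. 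The correction term vanishes and the dimension equals $d$, proving the proposition. There is no real obstacle here; the content of the argument is the compatibility of the two genericity conventions and the fact that elements of the form $\tld{w}_2^{-1}w_0\tld{w}_1$ with $\tld{w}_i \in \tld{W}^+$ are automatically regular.
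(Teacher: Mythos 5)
Your proof is correct and follows essentially the same path as the paper's: reduce via Proposition \ref{prop:embinmafv} to $S_\F^\circ(\tld{w}^*)\cap \Fl^{\nabla_{\bf{a}}}$, verify the mod-$p$ genericity of $\ovl{\bf{a}}$, and apply Theorem \ref{thm:monodromySchubert}. You additionally make explicit, via Proposition \ref{prop:can:reg}, why the dimension correction term $\#\{\alpha\in\Phi^+\mid\tld{w}(A_0)\subset H^{(0,1)}_\alpha\}$ vanishes, which the paper's proof leaves implicit; one tiny cleanup is that Proposition \ref{prop:embinmafv} as stated asks for $\bf{a}\in\Z^n$, so take $\bf{a}=s^{-1}(\mu)\in\Z^n$ itself rather than a Teichm\"uller lift.
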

\begin{proof} By Proposition \ref{prop:embinmafv}, $S_\F^\circ(\tld{w}_1,\tld{w}_2, \tld{s})^{\nabla_0}$ is isomorphic to $S_\F^\circ(\tld{z}) \cap  \Fl^{\nabla_{\bf{a}}}$ where $\tld{z} =(\tld{w}_2^{-1} w_0 \tld{w}_1)^*$  for any $\bf{a} \in \Z^n$ such that $\bf{a} \equiv s^{-1}(\mu)$ mod $p$.  
As $t_\mu$ is $m$-generic we deduce by Remark \ref{rmk:comparegen} that $s^{-1}(\mu)$ mod $p$ is $m$-generic; the result follows now from Theorem \ref{thm:monodromySchubert}. 
\end{proof}

Proposition \ref{prop:mafldim} defines a collection of irreducible closed subvarieties $S_\F^{\nabla_0}(\tld{w}_1,\tld{w}_2, \tld{s})$ of $\Fl^{\nabla_0}$ of dimension $d$, associated to certain triples $(\tld{w}_1, \tld{w}_2, \tld{s})$.   
As we will see, in many cases, we get the same subvariety for different triples $(\tld{w}_1, \tld{w}_2, \tld{s})$ and this is crucial in understanding how the special fibers of different $M(\leql, \nabla_{\bf{a}})$ interact.  

\begin{prop} \label{prop:nosematch} 
 Let $\tld{s} \in \tld{W}$ and let $\tld{w}_1, \tld{w}_2 \in \tld{W}^+$.
Assume that for each $i\in\{1,2\}$ there exists a positive integer $m_i$ such that $\tld{w}_i$ is $m_i$-small. 
There is a closed immersion
\[S_\F^\circ(\tld{w}_1,e, \tld{s} \tld{w}_2^{-1})^{\nabla_0} \subset S_\F^\circ(\tld{w}_1,\tld{w}_2, \tld{s})^{\nabla_0}. \] 
If $\tld{s}$ is $(m_1 + m_2)$-generic, then the two sides are equal, {hence
\[S_\F^{\nabla_0}(\tld{w}_1,e, \tld{s} \tld{w}_2^{-1})=S_\F^{\nabla_0}(\tld{w}_1,\tld{w}_2, \tld{s}).\]} 

\end{prop}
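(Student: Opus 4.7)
The plan is to establish the closed immersion directly via standard affine Schubert calculus, and then upgrade it to the claimed equality under the genericity hypothesis by a dimension comparison using Proposition \ref{prop:mafldim}.

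For the closed immersion, since $\tld{w}_1,\tld{w}_2\in\tld{W}^+$, Lemma \ref{lemma:gallery} shows that $\tld{w}_2^{-1}w_0\tld{w}_1$ is reduced; applying the length-preserving anti-involution of Definition \ref{affineadjoint}, $(w_0\tld{w}_1)^*\cdot\tld{w}_2^{-*}$ is a reduced product in $\tld{W}^\vee$ for the antidominant Bruhat order. A standard consequence of the Bott--Samelson--Demazure--Hansen construction---or alternatively a direct calculation with the affine root coordinates on $N_{(\tld{w}_2^{-1}w_0\tld{w}_1)^*}$ coming from Corollary \ref{cor:deg:bound}---is that right multiplication by $\tld{w}_2^{-*}$ defines a closed immersion
\[
S_\F^\circ\!\big((w_0\tld{w}_1)^*\big)\hookrightarrow S_\F^\circ\!\big((w_0\tld{w}_1)^*\,\tld{w}_2^{-*}\big)=S_\F^\circ\!\big((\tld{w}_2^{-1}w_0\tld{w}_1)^*\big).
\]
Post-composing with the automorphism of $\Fl$ given by right multiplication by $\tld{s}^*$ and intersecting with the closed subscheme $\Fl^{\nabla_0}\subset\Fl$ preserves closed immersions, giving the claimed closed immersion $S_\F^\circ(\tld{w}_1,e,\tld{s}\tld{w}_2^{-1})^{\nabla_0}\hookrightarrow S_\F^\circ(\tld{w}_1,\tld{w}_2,\tld{s})^{\nabla_0}$.

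For the equality under the hypothesis that $\tld{s}$ is $(m_1+m_2)$-generic, I would verify that both sides are affine spaces of dimension $d$ using Proposition \ref{prop:mafldim} together with the calculus of Proposition \ref{prop:propertiesofsmall}. On the right-hand side, $\tld{w}_2^{-1}w_0\tld{w}_1$ is $(m_1+m_2)$-small (since $\tld{w}_2^{-1}$ is $m_2$-small by (3) and $w_0\tld{w}_1$ is $m_1$-small, so their product is $(m_1+m_2)$-small by (2)), so Proposition \ref{prop:mafldim} with $m=m_1+m_2$ yields an isomorphism with $\bA^d$. On the left-hand side, $w_0\tld{w}_1$ is $m_1$-small and $\tld{s}\tld{w}_2^{-1}$ is $m_1$-generic by Proposition \ref{prop:propertiesofsmall}(4) (applied to the $(m_1+m_2)$-generic $\tld{s}$ and the $m_2$-small $\tld{w}_2^{-1}$), so Proposition \ref{prop:mafldim} applied to the triple $(\tld{w}_1,e,\tld{s}\tld{w}_2^{-1})$ with $m=m_1$ again yields an isomorphism with $\bA^d$. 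Since a closed immersion between nonempty reduced irreducible schemes of the same finite dimension is an isomorphism, the two open subvarieties coincide; passing to closures in $\Fl^{\nabla_0}$ then gives the asserted equality $S_\F^{\nabla_0}(\tld{w}_1,e,\tld{s}\tld{w}_2^{-1})=S_\F^{\nabla_0}(\tld{w}_1,\tld{w}_2,\tld{s})$.

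The main technical point is the closedness of the inclusion $S_\F^\circ(\tld{a})\cdot\tld{b}\hookrightarrow S_\F^\circ(\tld{a}\tld{b})$ for a reduced product $\tld{a}\tld{b}$. Concretely, using the root-coordinate description of $N_{\tld{a}\tld{b}}$ from Corollary \ref{cor:deg:bound}, one checks that the image is cut out by the vanishing of the coordinates $c_{\alpha,i}$ corresponding to the ``new'' affine roots appearing in $N_{\tld{a}\tld{b}}$ but not in $\tld{b}^{-1}N_{\tld{a}}\tld{b}$ (which, by reducedness, is a linear subspace), exhibiting the image as a closed linear subvariety of the ambient affine chart. Once this is secured, everything else reduces to the bookkeeping described above.
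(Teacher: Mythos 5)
Your proposal is correct and follows essentially the same route as the paper's proof: establish the containment of cells via the reduced decomposition $(\tld{w}_2^{-1}w_0\tld{w}_1)^* = (w_0\tld{w}_1)^*\tld{w}_2^{-*}$ (the paper invokes Iwahori--Matsumoto double-coset multiplication, where you supply the equivalent coordinate calculation on $N_{\tld{z}}$ that also makes the closedness explicit), then conclude equality under the genericity hypothesis by the dimension count from Proposition \ref{prop:mafldim}. The bookkeeping in your dimension comparison (using $m=m_1+m_2$ on the right and $m=m_1$ on the left, via Proposition \ref{prop:propertiesofsmall}) matches the paper's argument exactly.
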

\begin{proof}

Let $\tld{z}_1 = \tld{w}_1^*$ and $\tld{z}_2 = \tld{w}_2^*$. Then, we see that $\tld{z}_1 w_0 \tld{z}_2^{-1}$ is a reduced expression in $\tld{W}^{\vee}$, by Lemma \ref{lemma:gallery} and the proof of \cite[Lemma 2.1.3]{LLL} (which says that the star operation preserves reduced expressions).  
By \cite[Proposition 2.8]{iwahori-matsumoto}, we have
\[
\cI_{\F}\,\tld{z}_1 w_0\,\cI_{\F}\,\tld{z}_2^{-1}\,\cI_{\F}=\cI_{\F}\,\tld{z}_1 w_0 \tld{z}_2^{-1}\,\cI_{\F} 
\]
which in particular implies that $S_\F^\circ(\tld{w}_1,e, \tld{s} \tld{w}_2^{-1}) \defeq  S_\F^\circ(\tld{z}_1 w_0) \tld{z}_2^{-1} \tld{s}^* \subset S_\F^\circ(\tld{z}_1 w_0 \tld{z}_2^{-1}) \tld{s}^* \defeq S_\F^\circ(\tld{w}_1,\tld{w}_2, \tld{s})$.   
This gives the desired inclusion.  

By Proposition \ref{prop:propertiesofsmall}, $\tld{w}_2^{-1} w_0 \tld{w}_1$ is $(m_1 + m_2)$-small and if $\tld{s}$ is $(m_1 + m_2)$-generic, then $ \tld{s}\tld{w}_2^{-1}$ is  $m_1$-generic. 
Thus,  if $\tld{s}$ is $(m_1 + m_2)$-generic, then both sides are affine spaces of the same dimension by Proposition \ref{prop:mafldim} and so inclusion implies equality.  
\end{proof}

\begin{prop} \label{prop:match2}
Let $\tld{s} \in \tld{W}$ and $\tld{w}_1 \in \tld{W}^+_1$.  Assume that $\tld{s}$ is $(n-1)$-generic. Then, for any $w \in W$, \[S_\F^{\nabla_0}(\tld{w}_1,e, \tld{s}) = S_\F^{\nabla_0}(\tld{w}_1,e, \tld{s}w^{-1}).\]
\end{prop}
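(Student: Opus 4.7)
Both $S_\F^{\nabla_0}(\tld{w}_1,e,\tld{s})$ and $S_\F^{\nabla_0}(\tld{w}_1,e,\tld{s}w^{-1})$ are closed irreducible subvarieties of $\Fl^{\nabla_0}$ of the same dimension $d$ by Proposition \ref{prop:mafldim}; the hypotheses hold because $\tld{w}_1\in\tld{W}_1^+$ is $(n-1)$-small, $e,w\in W\subset\tld{W}^+$ are $0$-small, and $\tld{s}$ is $(n-1)$-generic (so is $\tld{s}w^{-1}$, since right $W$-translation preserves depth). Consequently it suffices to exhibit a dense open subset common to both.

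My plan is to reduce to a comparison of two Schubert cells with the same base translation $\tld{s}^*$, and then invoke the right $\GL_n(\F)$-equivariance of $\Fl^{\nabla_0}$. First, apply Proposition \ref{prop:nosematch} to the triple $(\tld{w}_1,w,\tld{s})$, whose hypotheses are precisely those listed above with $m_1=n-1$ and $m_2=0$. This yields
\[
S_\F^{\nabla_0}(\tld{w}_1,e,\tld{s}w^{-1}) \;=\; S_\F^{\nabla_0}(\tld{w}_1,w,\tld{s}),
\]
so the problem reduces to proving $S_\F^{\nabla_0}(\tld{w}_1,e,\tld{s})=S_\F^{\nabla_0}(\tld{w}_1,w,\tld{s})$. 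Equivalently, the two $d$-dimensional irreducible open cells
\[
S_\F^\circ\bigl((w_0\tld{w}_1)^*\bigr)\tld{s}^*\cap\Fl^{\nabla_0} \qquad\text{and}\qquad S_\F^\circ\bigl((w_0\tld{w}_1)^*w\bigr)\tld{s}^*\cap\Fl^{\nabla_0}
\]
should have the same closure in $\Fl^{\nabla_0}$.

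For the main step, observe that right multiplication by any constant $h\in\GL_n(\F)$ preserves $\Fl^{\nabla_0}$: the Leibniz identity $v\frac{d(Ah)}{dv}(Ah)^{-1}=v\frac{dA}{dv}\,A^{-1}$ shows the $\nabla_0$-condition is unchanged. In particular right multiplication by $w\in W$ is an automorphism of $\Fl^{\nabla_0}$. Now, the union $\bigsqcup_{w'\in W}S_\F^\circ((w_0\tld{w}_1)^*w')$ inside $\Fl$ coincides with the single right $L^+G^\vee$-orbit $\cI_\F(w_0\tld{w}_1)^*L^+G^\vee/\cI_\F$ of the point $[(w_0\tld{w}_1)^*]$, via the Iwahori factorization $L^+G^\vee=\bigsqcup_{w'\in W}\cI_\F w'\cI_\F$; hence this union is irreducible. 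Translating by $\tld{s}^*$ and intersecting with $\Fl^{\nabla_0}$—an operation compatible with the right $W$-action—one obtains a subvariety whose individual strata $S_\F^\circ((w_0\tld{w}_1)^*w')\tld{s}^*\cap\Fl^{\nabla_0}$ are each irreducible of dimension $d$ by Proposition \ref{prop:mafldim}. Because these strata are permuted transitively among themselves by the (irreducibility-preserving) right $W$-action, their closures in $\Fl^{\nabla_0}$ must all coincide; in particular the closures for $w'=e$ and $w'=w$ agree, completing the proof.

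The main obstacle is verifying that the strata $S_\F^\circ((w_0\tld{w}_1)^*w')\tld{s}^*\cap\Fl^{\nabla_0}$ are all dense in a common irreducible component of the translated parabolic orbit intersected with $\Fl^{\nabla_0}$. This requires combining the explicit parametrization of $N_{(w_0\tld{w}_1)^*}^{\nabla_{\bf{a}}}$ from the proof of Theorem \ref{thm:monodromySchubert} (via Corollary \ref{cor:deg:bound}) with the Iwahori-Matsumoto relations controlling right multiplication by $W$. The role of the $(n-1)$-genericity hypothesis on $\tld{s}$ is precisely to ensure that the leading-coefficient denominators appearing in the change-of-coordinates between adjacent cells are non-vanishing, mirroring the genericity input in Proposition \ref{prop:nosematch}.
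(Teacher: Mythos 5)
Your first step is not valid. Proposition \ref{prop:nosematch} requires $\tld{w}_2 \in \tld{W}^+$, but a nontrivial $w\in W$ is never in $\tld{W}^+$ (since $w(A_0)\subset\cC_0$ forces $w=e$). This is not a cosmetic issue: the reduced expression $\tld{z}_1 w_0 \tld{z}_2^{-1}$ on which that proof rests becomes $\tld{z}_1(w_0 w)$, whose length is $\ell(\tld{z}_1)+\ell(w_0)-\ell(w)$, so the Iwahori--Matsumoto containment fails. Concretely, the proposed intermediate object ``$S_\F^{\nabla_0}(\tld{w}_1,w,\tld{s})$'' — meaning the closure of $S_\F^\circ\bigl((w_0\tld{w}_1)^*w\bigr)\tld{s}^*\cap\Fl^{\nabla_0}$ — has dimension strictly less than $d$ once $w\neq e$, because $w^{-1}w_0\tld{w}_1$ is then generally not regular: for $\GL_2$ with $\tld{w}_1=e$ and $w=s=w_0$ one gets $w^{-1}w_0\tld{w}_1=e$, whose alcove is $A_0$ itself and lies in the critical strip, so Theorem \ref{thm:monodromySchubert} yields dimension $d-1$. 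A set of dimension $d-1$ cannot equal the $d$-dimensional $S_\F^{\nabla_0}(\tld{w}_1,e,\tld{s}w^{-1})$, so the reduction is impossible, not merely unproven.

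The second step is also a genuine gap. Asserting that the strata ``are permuted transitively by the right $W$-action'' and hence ``their closures in $\Fl^{\nabla_0}$ must all coincide'' is a non-sequitur: a group action permutes the set of closures, and concluding they are all equal is precisely the statement being proven, not a corollary of the permutation. Worse, no well-defined action is actually produced — right translation by $w''$ moves $S_\F^\circ\bigl((w_0\tld{w}_1)^*w'\bigr)\tld{s}^*$ to $S_\F^\circ\bigl((w_0\tld{w}_1)^*w'\bigr)\tld{s}^*w''$, which changes the base translation $\tld{s}^*$ rather than the index $w'$. Irreducibility of the $\tld{s}^*$-translate of the parabolic orbit intersected with $\Fl^{\nabla_0}$ would indeed finish the argument, but this is exactly what fails in general: Remark \ref{FullRmk:strictly_smaller}\eqref{rmk:strictly_smaller} is an explicit warning that the naive intersection with $\Fl^{\nabla_0}$ is typically larger than the closure of the open cell intersected with $\Fl^{\nabla_0}$. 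The paper's actual resolution is to reduce by induction to $w$ a simple reflection $s_\alpha$, and then exhibit a concrete $\bP^1$-family $X_\alpha=\cI_\F\backslash\cI_\F\tld{z}'L_\alpha$ inside $\Fl^{\nabla_0}$ (Lemma \ref{lem:obvious-specialization}) containing the $T^\vee$-fixed point $\tld{w}_1^*w_0 s_\alpha\tld{s}^*$, which lies in $S_\F^{\nabla_0}(\tld{w}_1,e,\tld{s})$ and in $S_\F^\circ(\tld{w}_1,e,\tld{s}s_\alpha)$; this single common point is what forces the two closures to coincide. You acknowledge ``the main obstacle'' remains, but the proposed remedy (leading-coefficient denominators in a change of coordinates plus Iwahori--Matsumoto) does not identify this specialization argument and does not carry out anything that could replace it.
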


We prove the Proposition after a couple of Lemmas. 

\begin{lemma} \label{lem:match2}   Let $\tld{s}, w, \tld{w}_1$ be as in Proposition \ref{prop:match2}. Then,
\[
\overline{S_\F^\circ(\tld{w}_1,e, \tld{s})} = \overline{S_\F^\circ(\tld{w}_1,e, \tld{s}w^{-1})} 
\]
where closure is taken in $\Fl$.  
\end{lemma}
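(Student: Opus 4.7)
The plan is to reduce the claim to showing that the affine Schubert variety $S_\F((w_0\tld{w}_1)^*) \subset \Fl$ is invariant under right multiplication by the finite Weyl group $W\subset L\GL_n$. Using the explicit formula of Definition \ref{affineadjoint}, a direct computation shows $(\tld{s}w^{-1})^* = w\cdot \tld{s}^*$ inside $L\GL_n$: writing $\tld{s}=t_\mu s$, one has $(\tld{s}w^{-1})^* = (sw^{-1})^{-1}t_\mu = w s^{-1}t_\mu = w\tld{s}^*$. By Definition \ref{defn:Schubert:var}, the Schubert cells $S_\F^\circ(\tld{w}_1,e,\tld{s})$ and $S_\F^\circ(\tld{w}_1,e,\tld{s}w^{-1})$ are therefore the right translates of $S_\F^\circ((w_0\tld{w}_1)^*)$ by $\tld{s}^*$ and by $w\tld{s}^*$ respectively. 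After taking closures in $\Fl$ and using that right translation by $\tld{s}^*$ is an automorphism of $\Fl$, the lemma reduces to the equality $S_\F((w_0\tld{w}_1)^*)\cdot w = S_\F((w_0\tld{w}_1)^*)$ for every $w\in W$.

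Next I show that every simple reflection $s\in W$ is a right descent of $\tld{z}\defeq (w_0\tld{w}_1)^*$. A further direct computation from Definition \ref{affineadjoint} yields the identity $(w_0\tld{w}_1)^* = \tld{w}_1^*\cdot w_0$ inside $\tld{W}$. By Lemma \ref{lemma:gallery} applied with $\tld{w}_2=e$, the product $w_0\tld{w}_1$ is a reduced expression, so $\ell(\tld{z})=\ell(w_0)+\ell(\tld{w}_1)$; since $(-)^*$ preserves length (being an isomorphism of partially ordered sets by \cite[Lemma 2.1.3]{LLL}), the factorization $\tld{z}=\tld{w}_1^*\cdot w_0$ is itself reduced. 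For any prescribed simple reflection $s\in W$ we may choose a reduced expression of $w_0$ ending in $s$ (possible because $w_0$ is the longest element of $W$); concatenating with a reduced expression of $\tld{w}_1^*$ produces a reduced expression of $\tld{z}$ ending in $s$, whence $\tld{z}s<\tld{z}$.

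Finally I invoke the following standard consequence of the Iwahori--Hecke product relations: if every simple reflection $s\in W$ is a right descent of $\tld{z}$, then $S_\F(\tld{z})$ is right $W$-invariant in $\Fl$. Iterating the relation $\cI\tld{z}'\cI\cdot \cI s\cI\subseteq \cI\tld{z}'s\cI\cup \cI\tld{z}'\cI$ over a reduced expression of $w\in W$ shows $\cI\tld{z}'\cI\cdot w\subseteq \bigcup_{\tld{z}''\leq \tld{z}'\ast w}\cI\tld{z}''\cI$, where $\tld{z}'\ast w$ denotes the Demazure product. The right-descent hypothesis on $\tld{z}$ gives $\tld{z}\ast w=\tld{z}$ for every $w\in W$, and by monotonicity of $\ast$ we deduce $\tld{z}'\ast w\leq \tld{z}\ast w=\tld{z}$ for all $\tld{z}'\leq\tld{z}$; thus $S_\F(\tld{z})\cdot w\subseteq S_\F(\tld{z})$, and applying the same argument to $w^{-1}$ gives equality. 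The main technical point is the identity $(w_0\tld{w}_1)^*=\tld{w}_1^*w_0$ together with the verification that this factorization is reduced, as this is what forces every simple reflection in $W$ to be a right descent; the concluding Bruhat--Hecke argument is then essentially formal. In particular the $(n-1)$-genericity hypothesis on $\tld{s}$ plays no role here and is inherited from Proposition \ref{prop:match2} purely for uniformity of statement.
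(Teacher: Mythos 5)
Your proof is correct and follows essentially the same route as the paper's: reduce to $\tld{s}=e$ by right translation, identify $(w_0\tld{w}_1)^*=\tld{w}_1^*w_0$ as a reduced factorization (via Lemma \ref{lemma:gallery} and length-preservation of $(-)^*$), and conclude that the Schubert variety $\overline{S_\F^\circ(\tld{w}_1^*w_0)}$ is right $W$-invariant. The only cosmetic difference is in the final step: the paper cites the Iwahori--Matsumoto product formula to identify $\overline{S_\F^\circ(\tld{w}_1^*w_0)}$ with the closure of $\cI_\F\backslash\cI_\F\,\tld{w}_1^*\,L^+\GL_n$ (whose $W$-stability is manifest), whereas you formalize the same invariance via right descents and monotonicity of the Demazure product; both are direct consequences of the same Hecke-algebra relations, so this is a stylistic rather than a substantive difference.
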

\begin{proof}
Translating by $(\tld{s}^*)^{-1}$ inside $\Fl_{\F}$,  it suffices to consider the case where $\tld{s}$ is the identity. 
Recall that $S_\F^\circ(\tld{w}_1, e, e) \defeq  S_\F^\circ(\tld{w}^*_1 w_0) \defeq \cI_{\F} \backslash \cI_{\F}\,\tld{w}^*_1 w_0\, \cI_{\F}$.
It suffices to show that $\overline{S_\F^\circ(\tld{w}^*_1 w_0)}$ is $W^\vee$-stable under right multiplication.
As $\tld{w}^*_1 w'$ is a reduced expression for all $w'\in W^\vee$ (as follows from a gallery argument and the fact that the $*$-involution is length preserving), by \cite[Proposition 2.8]{iwahori-matsumoto}, $\overline{S_\F^\circ(\tld{w}^*_1 w_0)}$ contains $\cI_{\F} \backslash \cI_{\F}\,\tld{w}^*_1\,\cI_{\F}\,w'\,\cI_{\F}$ for any $w'\in W^\vee$.
By the Bruhat decomposition $L^+ \GL_n = \cup_{w'\in W^\vee} \cI_{\F}\,w'\,\cI_{\F}$, so that $\overline{S_\F^\circ(\tld{w}^*_1 w_0)}$ is the closure of $\cI_{\F} \backslash \cI_{\F}\,\tld{w}^*_1\,L^+ \GL_n$, which is evidently $W^\vee$-stable under right multiplication.
The result follows. 
\end{proof}

\begin{lemma} \label{lem:obvious-specialization} Let $\tld{s}, \tld{w}_1$ be as in Proposition \ref{prop:match2}.   If $s_{\alpha}$ is a simple reflection for $\alpha \in \Delta$, then
\[
\tld{w}^*_1 w_0 s_{\alpha} \tld{s}^* \in S_\F^{\nabla_0}(\tld{w}_1,e, \tld{s}).
\]
\end{lemma}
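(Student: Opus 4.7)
The strategy is to construct an explicit Demazure-type $\mathbb{P}^1$-curve inside $\Fl^{\nabla_0}$ that joins the two torus-fixed points $\tld{z}\,\tld{s}^*$ and $\tld{z}\,s_\alpha\,\tld{s}^*$, where $\tld{z} := \tld{w}_1^* w_0 = (w_0 \tld{w}_1)^*$ and we write $\tld{s} = t_\mu s$. Concretely, consider the one-parameter family
\[
A(c) := \tld{z}\, s_\alpha\, u_\alpha(c)\, \tld{s}^*, \qquad c \in \F,
\]
where $u_\alpha(c) = 1 + c E_\alpha$. At $c = 0$ this is exactly the target point of the lemma. Using the $\SL_2$-identity $s_\alpha u_\alpha(c) = u_{-\alpha}(-c) s_\alpha$ together with the reducedness of $\tld{w}_1^* w_0$ from Lemma \ref{lemma:gallery} (so that $\ell(\tld{z} s_\alpha) = \ell(\tld{z}) - 1$), a standard affine-Schubert-variety argument identifies $\{A(c) : c \in \F^\times\}$ with the open $\mathbb{A}^1$ of the $\bP^1$-orbit of $\tld{z}\tld{s}^*$ under the right action of the finite $\SL_2^\alpha$; this $\bP^1$ sits inside $\overline{S_\F^\circ(\tld{w}_1, e, \tld{s})}$ with generic point $\tld{z}\tld{s}^*$ and closed point $\tld{z} s_\alpha \tld{s}^*$.

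The substantive step is to verify that $A(c) \in \Fl^{\nabla_0}$ for every $c$. By Proposition \ref{prop:embinmafv}, after right-translating by $(\tld{s}^*)^{-1}$, this reduces to checking that $\tld{z}\,s_\alpha\,u_\alpha(c) \in \Fl^{\nabla_{\bf{a}}}$ for $\bf{a} \equiv s^{-1}(\mu) \pmod p$. A direct Leibniz-rule computation, exploiting that $u_\alpha(c)$ is constant in $v$ while $\tld{z} s_\alpha = w_1^{-1} w_0 s_\alpha \cdot v^{s_\alpha w_0 \nu_1}$ is monomial in $v$, reduces the left-hand side of \eqref{eq:nablaa} to a constant diagonal piece (harmlessly in $\Lie \cI_\F$) plus a single off-diagonal term
\[
\pm\,c\,\langle \bf{a}, \alpha^\vee\rangle\, v^{e}\, E_{\beta'}, \qquad e := \langle \nu_1, \alpha'^\vee\rangle, \quad \beta' := w_1^{-1}(\alpha'), \quad \alpha' := -w_0(\alpha).
\]
The hypothesis $\tld{w}_1 \in \tld{W}^+_1$ forces $\langle \nu_1, \alpha'^\vee\rangle \in \{0, 1\}$ for each simple positive root $\alpha'$ (by considering the pairing $\langle \tld{w}_1(x), \alpha'^\vee\rangle \in (0,1)$ as $x$ ranges over $A_0$); moreover $e = 0$ precisely when $w_1^{-1}(\alpha') > 0$ (i.e., $\beta' > 0$) and $e = 1$ precisely when $w_1^{-1}(\alpha') < 0$ (i.e., $\beta' < 0$). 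In either matched case, the required divisibility $v^{e} E_{\beta'} \in \tfrac{1}{v}\Lie\cI_\F$ holds on the nose, so the $\nabla_0$-condition is satisfied for all $c$.

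Finally, since $\Fl^{\nabla_0}$ is closed in $\Fl$, the closure of $\{A(c) : c \in \F^\times\} \subset S_\F^\circ(\tld{w}_1, e, \tld{s})^{\nabla_0}$ inside $\Fl^{\nabla_0}$ contains its closure inside $\Fl$, and therefore contains the $c = 0$ limit $A(0) = \tld{w}_1^* w_0 s_\alpha \tld{s}^*$. By Definition \ref{defn:Schubert:var}(\ref{it:Schubert:var:3}), this closure is precisely $S_\F^{\nabla_0}(\tld{w}_1, e, \tld{s})$, completing the proof. The main obstacle is the $v$-adic bookkeeping in the second paragraph: one must carefully track how conjugation by the monomial part $v^{s_\alpha w_0 \nu_1}$ of $\tld{z}\,s_\alpha$ shifts the $v$-valuation of the off-diagonal contribution, and verify that the one-restricted condition on $\tld{w}_1$ conspires precisely to align that shift with the sign of the resulting root $\beta'$.
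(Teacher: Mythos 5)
Your one-parameter family does not trace out the $\bP^1$-orbit; it is constant.  Writing $\tld{z}' \defeq \tld{z}\,s_\alpha$, we have $A(c) = \tld{z}\,s_\alpha\,u_\alpha(c)\,\tld{s}^* = \tld{z}'\,u_\alpha(c)\,\tld{s}^*$.  Since $\alpha\in\Delta$ is a positive simple root, $u_\alpha(c) = 1 + cE_\alpha$ is a constant upper-triangular unipotent matrix lying in $\cI_\F$; and since $\ell(\tld{z}\,s_\alpha) = \ell(\tld{z})-1$, i.e.\ $\ell(\tld{z}'\,s_\alpha) = \ell(\tld{z}')+1$, the Iwahori--Tits axioms give $\tld{z}'\,U_{\alpha,0}\,(\tld{z}')^{-1} \subset \cI_\F$.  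Therefore $\cI_\F\,\tld{z}'\,u_\alpha(c) = \cI_\F\,\tld{z}'$ for every $c\in\F$, so $\{A(c) : c\in\F^\times\}$ is the single torus-fixed point $\tld{z}'\,\tld{s}^* = \tld{w}_1^*\,w_0\,s_\alpha\,\tld{s}^*$ --- the very point you are trying to place in $S_\F^{\nabla_0}(\tld{w}_1,e,\tld{s})$ --- rather than the open $\bA^1$ of the $\bP^1$-orbit.  In particular $\{A(c):c\ne 0\}$ is \emph{not} contained in the open cell $S_\F^\circ(\tld{w}_1,e,\tld{s})^{\nabla_0}$ (whose unique torus-fixed point is $\tld{z}\,\tld{s}^*$, not $\tld{z}'\,\tld{s}^*$), so the final closure step yields only the tautology $A(0)\in\overline{\{A(0)\}}$ and does not prove the lemma.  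The nonvanishing $c$-dependent off-diagonal term in your second paragraph should also have raised a flag: for a genuinely constant coset the $\nabla_0$-condition must be $c$-independent, so that term must already lie in $\tfrac{1}{v}\Lie\cI_\F$ for trivial reasons, not because of any delicate conspiracy.

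To fix the argument one must parametrize via the opposite root group, e.g.\ $c\mapsto\tld{z}'\,u_{-\alpha}(c)\,\tld{s}^*$ (so that $c\ne 0$ genuinely lands in the open cell), and the Leibniz computation then has to be carried out for that family.  The paper's proof sidesteps the choice of parametrization entirely: it works with the full orbit $X_\alpha = \cI_\F\backslash\cI_\F\,\tld{z}'\,L_\alpha$, invokes the standard fact that $X_\alpha\cong\bP^1_\F$ lies in $\overline{S_\F^\circ(\tld{z})}$ once $\ell(\tld{z}') = \ell(\tld{z})-1$, and verifies $X_\alpha\,\tld{s}^*\subset\Fl^{\nabla_0}$ all at once via the single Lie-algebra inclusion $v\,\Lie L_\alpha\subset(\tld{z}')^{-1}\Lie\cI_\F\,\tld{z}'$, which follows from $\tld{w}_1\in\tld{W}^+_1$ and Lemma~\ref{lem:conjiw}.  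That inclusion is the correct form of the $v$-adic bookkeeping you were attempting, applied uniformly to the whole $L_\alpha$ rather than to a degenerate one-parameter slice of it.
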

\begin{proof}
Set $\tld{z} = \tld{w}^*_1 w_0$ and $\tld{z}' = \tld{z} s_{\alpha}$.
Since $w_0$ is longest element in $W^\vee$ and $s_{\alpha}$ is a simple reflection, we deduce from Lemma \ref{lemma:gallery} that $\tld{z}' \leq \tld{z}$ and $\ell(\tld{z}') = \ell(\tld{z}) - 1$.  
Let $L_{\alpha} \subset \GL_n$ denote the minimal standard Levi subgroup containing $U_{\alpha, 0}$ and $U_{-\alpha, 0}$.   
Consider the family
\[
X_{\alpha} \defeq  \cI_{\F} \backslash \cI_{\F}\,\tld{z}'  L_{\alpha} \subset \Fl.
\]
We clearly have $\tld{z} = \tld{z}' s_{\alpha} \in X_{\alpha}$, and it is standard result that $X_{\alpha} \cong \mathbb{P}^1_{\F}$ and  $X_{\alpha} \subset  \overline{S^{\circ}_\F(\tld{z})}$  (see, for example, \cite[Proposition 8.8]{PRtwisted}).  

We show that $X_{\alpha} \tld{s}^* \subseteq \Fl^{\nabla_0}$.
This will imply that $X_{\alpha} \tld{s}^* \subset S_\F^{\nabla_0}(\tld{w}_1,e, \tld{s})$, hence the statement.   
For any $A \in L_{\alpha}$, the $\nabla_0$-condition \eqref{eq:nabla0} on $\tld{z}' A \tld{s}^*$ is given by 
\begin{equation} \label{f1}
v \frac{d}{dv} (\tld{z}') (\tld{z}')^{-1} + \tld{z}' A  v \frac{d}{dv} (\tld{s}^*) (\tld{s}^*)^{-1} A^{-1} (\tld{z}')^{-1} \in \frac{1}{v} \Lie \cI_{\F}
\end{equation}
since $\frac{dA}{dv} = 0$. 
If $\tld{s}^* = s^{-1} t_{\mu}$ then $v \frac{d}{dv} (\tld{s}^*) (\tld{s}^*)^{-1} = \mathrm{Diag}(s^{-1}(\mu))$.   
Thus, $A  v \frac{d}{dv} (\tld{s}^*) (\tld{s}^*)^{-1} A^{-1} \in \Lie L_{\alpha}$.   
Then \eqref{f1} is satisfied if $v \Lie L_{\alpha} \subset (\tld{z}')^{-1} \Lie \cI_{\F}\, \tld{z}'$.   

Since $\alpha \in \Delta$ and $\tld{w}_1 \in \tld{W}^+_1$, $0< \langle \tld{w}_1(x),-w_0(\alpha)^\vee\rangle < 1$. 
This implies that $-1 < \langle \tld{z}^*(x),\alpha^\vee\rangle < 0$, so that $\langle (\tld{z}')^*(x),\pm\alpha^\vee\rangle < 1$.
The inclusion $v \Lie L_{\alpha} \subset (\tld{z}')^{-1} \Lie \cI_{\F}\tld{z}'$ now follows from Lemma \ref{lem:conjiw}.
\end{proof} 

\begin{proof}[Proof of Proposition \ref{prop:match2}]
By induction on length of $w$ in $W$, we can assume $w = s_{\alpha}$, a simple reflection for $\alpha \in \Delta$. 

Consider $S_\F^{\nabla_0}(\tld{w}_1,e, \tld{s}) \subset \overline{S_\F^\circ(\tld{w}_1,e, \tld{s})}$, a closed subvariety.   
By Lemma \ref{lem:match2},  the intersection $S_\F^{\nabla_0}(\tld{w}_1,e, \tld{s}) \cap S_\F^\circ(\tld{w}_1,e,\tld{s}  s_{\alpha} )$ is open in $S_\F^{\nabla_0}(\tld{w}_1,e, \tld{s})$.  
If the intersection is non-empty, then since $S_\F^{\nabla_0}(\tld{w}_1,e, \tld{s})$ is irreducible by Proposition \ref{prop:mafldim} (as $\tld{w}_1\in \tld{W}^+_1$ implies that $\tld{w}_1$ is $(n-1)$-small), the intersection  is open and dense and this proves the inclusion $S_\F^{\nabla_0}(\tld{w}_1,e, \tld{s})\subseteq S_\F^{\nabla_0}(\tld{w}_1,e, \tld{s}s_{\alpha})$.   
(Note that $S_\F^{\nabla_0}(\tld{w}_1,e, \tld{s}) \cap S_\F^\circ(\tld{w}_1,e, \tld{s} s_{\alpha})\subseteq S_\F^{\nabla_0}(\tld{w}_1,e, \tld{s} s_{\alpha})$.)
By symmetry, this is enough to prove the proposition.  

Lemma \ref{lem:obvious-specialization} shows that the intersection is non-empty since the point $\tld{w}^*_1 w_0 s_{\alpha} \tld{s}^* \in  S_\F^\circ(\tld{w}_1,e,  \tld{s} s_{\alpha})$ lies in  $S_\F^{\nabla_0}(\tld{w}_1,e, \tld{s})$. This completes the proof.  
\end{proof}

Using Proposition \ref{prop:nosematch} and \ref{prop:match2}, we are able to identify the closed subvarieties of the monodromy affine flag variety $\Fl^{\nabla_0}$ which arise in this way.  Let $\mathrm{Irr}_d(\Fl^{\nabla_0})$ denote the set of irreducible subvarieties of dimension $d$.   We `label' the subvarieties in following way:  

Consider $(\tld{w}_1, \omega) \in \tld{W}_1^+ \times X^*(T)$.   Assume that $t_\omega$ is $(n-1)$-generic (Definition \ref{defn:var:gen}(\ref{it:gen:weyl})).   
Define 
\begin{equation} \label{eq:nameofcomponent}
C_{(\tld{w}_1, \omega)}  \defeq  S_\F^{\nabla_0}(\tld{w}_1, e, \tld{s}) \in \mathrm{Irr}_d(\Fl^{\nabla_0})
\end{equation}
for any choice of $\tld{s} \in \tld{W}$ such that $\tld{s}(0) = \omega$.
By Proposition \ref{prop:match2}, this does not depend on the choice of $\tld{s}$. Note that it also only depends on $(\tld{w}_1, \omega)$ up to equivalence relation $(\tld{w}_1,\omega) \sim (t_\nu\tld{w}_1,\omega-\nu)$ for $\nu\in X^0(T)$ from \S \ref{sec:SW}.  Since $\tld{w}_1$ is $(n-1)$-small, $C_{(\tld{w}_1, \omega)}$ is an irreducible closed subvariety of dimension $d$.  

\begin{thm} 
\label{thm:cpt:match}
Let $\tld{w}_1, \tld{w}_2,  \tld{s} \in \tld{W}$ such that $\tld{w}_1 \in \tld{W}_1^+$ and  $\tld{w}_2 \in \tld{W}^+$. Let $m \geq 1$. Assume that $\tld{w}_2$ is $m$-small and that $\tld{s}$ is $(m + n -1)$-generic.    Then 
\[
S_\F^{\nabla_0}(\tld{w}_1,\tld{w}_2, \tld{s}) = C_{(\tld{w}_1, \tld{s} \tld{w}_2^{-1} (0))}.
\] 
\end{thm}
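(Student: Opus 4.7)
The plan is to reduce directly to Proposition \ref{prop:nosematch} and the well-definedness statement that underlies the definition of $C_{(\tld{w}_1,\omega)}$ in \eqref{eq:nameofcomponent}, the only real work being to verify that the genericity/smallness bookkeeping matches up.

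First I would set $\tld{s}' \defeq \tld{s}\tld{w}_2^{-1}$ so that $\tld{s}'(0) = \tld{s}\tld{w}_2^{-1}(0) = \omega$, the weight labelling the target component. Proposition \ref{prop:nosematch} applied to the triple $(\tld{w}_1,\tld{w}_2,\tld{s})$ with $m_1 = n-1$ (noting that any element of $\tld{W}_1^+$ is automatically $(n-1)$-small) and $m_2 = m$ yields the inclusion $S_\F^\circ(\tld{w}_1,e,\tld{s}')^{\nabla_0} \subset S_\F^\circ(\tld{w}_1,\tld{w}_2,\tld{s})^{\nabla_0}$ together with equality whenever $\tld{s}$ is $(m_1+m_2) = (m+n-1)$-generic; this last condition is precisely our hypothesis on $\tld{s}$. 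Taking Zariski closures in $\Fl^{\nabla_0}$ then gives
\[
S_\F^{\nabla_0}(\tld{w}_1,e,\tld{s}') \;=\; S_\F^{\nabla_0}(\tld{w}_1,\tld{w}_2,\tld{s}).
\]

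Next I would verify that $C_{(\tld{w}_1,\omega)}$ is well defined and equals $S_\F^{\nabla_0}(\tld{w}_1,e,\tld{s}')$. For this I need $t_\omega$ to be $(n-1)$-generic (i.e.\ $\omega-\eta$ is $(n-1)$-deep), and I need the chosen representative $\tld{s}'$ to be $(n-1)$-generic so that Proposition \ref{prop:match2} guarantees the independence of $C_{(\tld{w}_1,\omega)}$ from the choice of $\tld{s}'$ lifting $\omega$. Both follow from Proposition \ref{prop:propertiesofsmall}: since $\tld{w}_2$ is $m$-small, so is $\tld{w}_2^{-1}$, and since $\tld{s}$ is $(m+n-1)$-generic, multiplication by an $m$-small element preserves $((m+n-1)-m) = (n-1)$-genericity, so $\tld{s}\tld{w}_2^{-1}$ is $(n-1)$-generic. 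In particular its translation part $\omega$ satisfies the required depth condition.

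Combining the two steps with the definition $C_{(\tld{w}_1,\omega)} = S_\F^{\nabla_0}(\tld{w}_1,e,\tld{s}')$ gives the theorem. There is no substantive obstacle here beyond Proposition \ref{prop:nosematch}; the only subtlety is the bookkeeping of genericity bounds, in particular the appearance of $n-1$ coming from the general fact that elements of $\tld{W}_1^+$ are $(n-1)$-small, which combines additively with the smallness $m$ of $\tld{w}_2$ to produce the $(m+n-1)$-genericity hypothesis on $\tld{s}$.
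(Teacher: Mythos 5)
Your proof is correct and takes essentially the same route as the paper's: apply Proposition \ref{prop:nosematch} (with $m_1 = n-1$ since $\tld{w}_1 \in \tld{W}_1^+$ and $m_2 = m$) to pass to the untwisted Schubert variety $S_\F^{\nabla_0}(\tld{w}_1,e,\tld{s}\tld{w}_2^{-1})$, and then use Proposition \ref{prop:propertiesofsmall} to check that $\tld{s}\tld{w}_2^{-1}$ is $(n-1)$-generic so that $C_{(\tld{w}_1,\tld{s}\tld{w}_2^{-1}(0))}$ is well defined and given by this very Schubert variety via \eqref{eq:nameofcomponent}. You have spelled out the bookkeeping a bit more fully than the paper, but there is no substantive difference.
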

\begin{proof}
The assumption implies that $t_{\tld{s}\tld{w}_2^{-1} (0)}$ is $(n-1)$-generic by Proposition \ref{prop:propertiesofsmall}, and so $C_{(\tld{w}_1, \tld{s} \tld{w}_2^{-1} (0))}$ is well-defined and equal to  $S_\F^{\nabla_0}(\tld{w}_1, e, \tld{w}^{-1}_2 \tld{s})$ by \eqref{eq:nameofcomponent}.
By Proposition \ref{prop:nosematch},  
\[
S_\F^{\nabla_0}(\tld{w}_1,\tld{w}_2, \tld{s})  = S_\F^{\nabla_0}(\tld{w}_1, e, \tld{s} \tld{w}_2^{-1}). 
\]
\end{proof}

Let $\lambda \in X_*(T^{\vee})$ be a dominant cocharacter.  We now assume that $\lambda$ is regular.   Note then that  $\lambda - \eta$ is dominant where $\eta$ is our choice of lift of half-sum of positive roots.  In Corollary \ref{cor:compofnaive}, we identified the top-dimensional irreducible components of $M^{\mathrm{nv}}(\leql, \nabla_{\bf{a}})_{\F}$ (with a genericity condition on $\bf{a}$).   We now combine this with Theorem \ref{thm:cpt:match} to identify those same components in $\mathrm{Irr}_d(\Fl^{\nabla_0})$.   This will allow us to compare special fibers for various $(\lambda, \bf{a})$.

\begin{thm}
\label{thm:cmp:match:1} Let $\lambda$ be dominant and regular.
Let $h_{\lambda} = \max_{\alpha^\vee}\{\langle \lambda,\alpha^\vee\rangle\}$ 
and let $\bf{a} \in \cO^n$.    
Let $\tld{s} = t_{\mu}s$ be $(h_{\lambda} + n -1)$-generic.
Assume that $\bf{a} \equiv s^{-1}(\mu)$ modulo $\varpi$.   
There is a natural bijection between $\mathrm{AP}(\lambda)$ \emph{(}defined in \eqref{eq:AP}\emph{)} and the $d$-dimensional irreducible components of $M^{\mathrm{nv}}(\leql, \nabla_{\bf{a}})_{\F}) \tld{s}^* \subset \Fl^{\nabla_0}$, given by 
\[
(\tld{w}_1, \tld{w}_2) \mapsto C_{(\tld{w}_1, \tld{s} \tld{w}_2^{-1}(0))}.
\] 
\end{thm}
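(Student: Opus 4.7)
The plan is to assemble the statement by composing four already-established bijections/identifications: the parametrization of top-dimensional components of $M^{\mathrm{nv}}(\leql,\nabla_{\bf{a}})_\F$ by $\Adm^{\mathrm{reg}}(\lambda)$, the canonical factorization by admissible pairs, the closed embedding into $\Fl^{\nabla_0}$ via right translation by $\tld{s}^*$, and the intrinsic identification of translated Schubert-type varieties with the labelled $C_{(\tld{w}_1,\omega)}$.

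First I would verify the hypothesis of Corollary \ref{cor:compofnaive}. Since $\tld{s}=t_\mu s$ is $(h_\lambda+n-1)$-generic, Proposition \ref{prop:propertiesofsmall} gives that $t_{s^{-1}(\mu)}$ is $(h_\lambda+n-1)$-generic as well, and by Remark \ref{rmk:comparegen} its reduction modulo $\varpi$ (which equals $\overline{\bf{a}}$ by hypothesis) is $h_\lambda$-generic in the sense of Definition \ref{defn:modpgeneric}. Corollary \ref{cor:compofnaive} therefore produces a bijection $\tld{w}\mapsto \overline{S_\F^{\circ}(\tld{w}^*)\cap\Fl^{\nabla_{\bf{a}}}}$ between $\Adm^{\mathrm{reg}}(\lambda)$ and the top-dimensional irreducible components of $M^{\mathrm{nv}}(\leql,\nabla_{\bf{a}})_\F$. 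Composing with the bijection $\mathrm{AP}(\lambda)\stackrel{\sim}{\to}\Adm^{\mathrm{reg}}(\lambda)$, $(\tld{w}_1,\tld{w}_2)\mapsto \tld{w}_2^{-1}w_0\tld{w}_1$, of Corollary \ref{cor:can:reg} (applied to the dominant weight $\lambda-\eta$), I index the components by admissible pairs $(\tld{w}_1,\tld{w}_2)\in\mathrm{AP}(\lambda)$.

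Next I would transport the picture into $\Fl^{\nabla_0}$. The hypothesis $\bf{a}\equiv s^{-1}(\mu)\pmod\varpi$ is exactly what is needed to apply Proposition \ref{prop:embinmafv}, producing a closed immersion $r_{\tld{s}^*}:M^{\mathrm{nv}}(\leql,\nabla_{\bf{a}})_\F\hookrightarrow\Fl^{\nabla_0}$ that matches $S_\F^{\circ}(\tld{w}^*)\cap\Fl^{\nabla_{\bf{a}}}$ with $\big(S_\F^{\circ}(\tld{w}^*)\tld{s}^*\big)\cap\Fl^{\nabla_0}$. Setting $\tld{w}=\tld{w}_2^{-1}w_0\tld{w}_1$, this latter set is precisely $S_\F^{\circ}(\tld{w}_1,\tld{w}_2,\tld{s})^{\nabla_0}$ by Definition \ref{defn:Schubert:var}, whose closure in $\Fl^{\nabla_0}$ is $S_\F^{\nabla_0}(\tld{w}_1,\tld{w}_2,\tld{s})$. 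Hence the image of the component labelled by $(\tld{w}_1,\tld{w}_2)$ is exactly $S_\F^{\nabla_0}(\tld{w}_1,\tld{w}_2,\tld{s})$.

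The last step is to apply Theorem \ref{thm:cpt:match} to rewrite $S_\F^{\nabla_0}(\tld{w}_1,\tld{w}_2,\tld{s})=C_{(\tld{w}_1,\tld{s}\tld{w}_2^{-1}(0))}$. The one nontrivial check, and the main obstacle in the argument, is the smallness bound on $\tld{w}_2$ needed to match the genericity hypothesis on $\tld{s}$: one must show that $\tld{w}_2$ is $h_\lambda$-small, so that the required $(m+n-1)$-genericity of $\tld{s}$ in Theorem \ref{thm:cpt:match} with $m=h_\lambda$ coincides with the $(h_\lambda+n-1)$-genericity assumed here. This follows from the defining condition $\tld{w}_1\uparrow t_{\lambda-\eta}\tld{w}_h^{-1}\tld{w}_2$ of $\mathrm{AP}(\lambda)$ via Proposition \ref{prop:can:adm}, which gives $\tld{w}_2\uparrow \tld{w}_h t_{-(\lambda-\eta)}\tld{w}_1$; Wang's theorem then converts this to a Bruhat inequality in $\tld{W}^+$, and combining with the fact that $\tld{w}_1\in\tld{W}_1^+$ is $(n-1)$-small and applying Proposition \ref{prop:propertiesofsmall} bounds the translation part of $\tld{w}_2$ by $h_\lambda$. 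Once this verification is in place, Theorem \ref{thm:cpt:match} delivers the desired identification and the bijection of the statement follows.
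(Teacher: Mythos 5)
Your proposal follows essentially the same route as the paper's proof: compose the bijection $\mathrm{AP}(\lambda)\risom\Adm^{\mathrm{reg}}(\lambda)$ of Corollary \ref{cor:can:reg}, the component parametrization of Corollary \ref{cor:compofnaive}, the translation embedding of Proposition \ref{prop:embinmafv}, and the identification $S_\F^{\nabla_0}(\tld{w}_1,\tld{w}_2,\tld{s})=C_{(\tld{w}_1,\tld{s}\tld{w}_2^{-1}(0))}$ of Theorem \ref{thm:cpt:match}. That is exactly the paper's chain of reductions.

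The one point worth tightening is your justification that $\tld{w}_2$ is $h_\lambda$-small. As phrased --- "combining with the fact that $\tld{w}_1\in\tld{W}_1^+$ is $(n-1)$-small and applying Proposition \ref{prop:propertiesofsmall}" --- the natural reading is to compose the bounds $\tld{w}_h$ is $(n-1)$-small, $t_{-(\lambda-\eta)}$ is $(h_\lambda-(n-1))$-small, and $\tld{w}_1$ is $(n-1)$-small, which by Proposition \ref{prop:propertiesofsmall}(2) only yields $(h_\lambda+n-1)$-smallness for the product. That would force a stronger genericity requirement on $\tld{s}$ than the theorem assumes, and the argument would not close. The sharper bound $h_\lambda$ requires observing that $\tld{w}_h t_{-(\lambda-\eta)}\tld{w}_1 = t_{-w_0(\lambda-\eta)}(\tld{w}_h\tld{w}_1)$ with $\tld{w}_h\tld{w}_1\in\tld{W}_1^+$, so one is translating a single restricted element by the dominant weight $-w_0(\lambda-\eta)$ of height $h_\lambda-(n-1)$; the $\eta$-shifts cancel, and the bound on $\langle\tld{w}_2^{-1}(0),\alpha^\vee\rangle$ follows as in the proof of Proposition \ref{prop:JHbij}. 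The paper glosses over this same step with "(note that $\tld{w}_2$ is $h_\lambda$-small)", so this is not a flaw in your overall approach, only in the intermediate justification you sketched.
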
 
\begin{proof}
By Corollary \ref{cor:can:reg}, there is a bijection between $\mathrm{AP}(\lambda)$ and $\Adm^{\mathrm{reg}}(\lambda)$ given by $(\tld{w}_1, \tld{w}_2) \mapsto \tld{w}_2^{-1} w_0 \tld{w}_1$.   
By Corollary \ref{cor:compofnaive},  there is a bijection between regular elements $\tld{w} \defeq \tld{w}_2^{-1} w_0 \tld{w}_1 \in \Adm^{\mathrm{reg}}(\lambda)$ and $\mathrm{Irr}_d(M^{\mathrm{nv}}(\leql, \nabla_{\bf{a}})_{\F})$ sending $\tld{w}$ to 
\[
\overline{S_\F^\circ(\tld{w}_1^* w_0 (\tld{w}^*_2)^{-1}) \cap \Fl^{\nabla_{\bf{a}}}}.
\]
By Proposition \ref{prop:embinmafv} and Theorem \ref{thm:cpt:match},  
\[
(\overline{S_\F^\circ(\tld{w}_1^* w_0 (\tld{w}^*_2)^{-1}) \cap \Fl^{\nabla_{\bf{a}}}}) \tld{s}^* = S_\F^{\nabla_0}(\tld{w}_1, \tld{w}_2, \tld{s}) = C_{(\tld{w}_1, \tld{s} \tld{w}_2^{-1}(0))}
\]
(note that $\tld{w}_2$ is $h_\lambda$-small).
\end{proof}

\subsection{$T^{\vee}$-torsors}  \label{sec:Ttorsors}

Let $\widetilde{\Fl}$ be the ind-scheme representing the fpqc-sheafification of the functor on $\F$-algebras given by $R \mapsto \Iw_{1, \F}(R)\backslash L\GL_n(R)$, where 
\begin{align*}
\Iw_{1,\F}:\, R &\mapsto \{A\in \GL_n(R[\![v]\!]),\, \text{$A$ is upper triangular unipotent modulo $v$}\}.
\end{align*}
The natural quotient map $\Psi: \widetilde{\Fl}\ra \Fl$ is a $T^{\vee}_{\F}$-torsor. We define $\tld{M}(\leql)_{\F}$ via the Cartesian diagram
\[
\xymatrix{
\tld{M}(\leql)_{\F}\ar_{\Psi}[d]\ar@{^{(}->}[r]&\widetilde{\Fl}\ar^{\Psi}[d]
\\
M(\leql)_{\F}\ar@{^{(}->}[r]&\Fl.
}
\]
In particular, $\tld{M}(\leql)_{\F}\ra M(\leql)_{\F}$ is a $T^{\vee}_{\F}$-torsor.  Similarly, for any $\bf{a} \in \cO^n$, we have $T^{\vee}_{\F}$-torsors $\tld{M}(\lambda, \nabla_{\bf{a}})_{\F} \ra M(\lambda, \nabla_{\bf{a}})_{\F}$, $\tld{M}^{\mathrm{nv}}(\leql, \nabla_{\bf{a}})_{\F} \ra M^{\mathrm{nv}}(\leql, \nabla_{\bf{a}})_{\F}$, and $\tld{M}(\leql, \nabla_{\bf{a}})_{\F} \ra M(\leql, \nabla_{\bf{a}})_{\F}$ defined by analogous diagrams.  We abusively use $\Psi$ to denote any of these induced maps. 

\begin{rmk}  
{Despite the notation $\tld{M}(\leql)_{\F}$, we will not define (and will not need) an object $\tld{M}(\leql)$ over $\cO$ whose special fiber is $\tld{M}(\leql)_{\F}$.}
{However, we will construct $\tld{U}(\tld{z}, \leql)$ (cf.~\eqref{eq:chartsforlambda}) which are torus torsors over Zariski opens that cover $M(\leql)$.}
The same is true of the other objects defined above. 
\end{rmk}

Given our choice of embedding $\tld{W}^{\vee} \subset  L\GL_n(\Z)$, $\tld{W}^{\vee}$ acts by right translation on $\tld{\Fl}$ (see the beginning of \S \ref{sec:componentmatching}).    
Hence, we can lift the map $r_{\tld{z}}$ from Proposition \ref{prop:embinmafv} to a Cartesian diagram:
\begin{equation} \label{eq:liftoftranslation} 
\xymatrix{
\tld{M}^{\mathrm{nv}}(\leql, \nabla_{\bf{a}})_{\F}\ar_{\Psi}[d]\ar@{^{(}->}^-{\tld{r}_{\tld{z}}}[r]
&\tld{\Fl}^{\nabla_0}\ar^{\Psi}[d]
\\
M^{\mathrm{nv}}(\leql, \nabla_{\bf{a}})_{\F}
\ar@{^{(}->}_-{r_{\tld{z}}}[r]
&
\Fl^{\nabla_0}
}
\end{equation}
where $\tld{\Fl}^{\nabla_0}$ is the preimage of $\Fl^{\nabla_0}$ in $\tld{\Fl}$.  

Finally, for any $(\tld{w}, \omega) \in \tld{W}^+_1 \times X^*(T)$ where $t_\omega$ is $(n-1)$-generic, let $\tld{C}_{(\tld{w}, \omega)} \subset \tld{\Fl}^{\nabla_0}$ denote the preimage of $C_{(\tld{w}, \omega)}$.  It is a closed irreducible subscheme of dimension $n + d$, i.e.,  $\tld{C}_{(\tld{w}, \omega)} \in \mathrm{Irr}_{d+ n}(\tld{\Fl}^{\nabla_0})$.      

\subsection{Products} \label{subsec:products}

Let $\cJ$ be a finite set as in \S \ref{sec:notations}.   
We take products of all the constructions and results of the previous sections; this will be essential for the connection to Galois representations in \S \ref{sec:local_model_EG}.   
We briefly summarize the necessary notation.

For $\lambda = (\lambda_j)_{j \in \cJ} \in X_*(T^{\vee})^{\cJ} = X^*(T)^{\cJ}$. 
Then
\[
\Adm(\lambda) = \prod_{j \in \cJ} \Adm(\lambda_j) \subset \tld{W}^{\cJ}, \qquad \Adm^{\mathrm{reg}}(\lambda) = \prod_{j \in \cJ} \Adm^{\mathrm{reg}}(\lambda_j).
\]

We can then define a local model $M_{\cJ}(\leql) = \prod_{j \in \cJ} M(\leql_{j})\!\subset\!\Gr_{\cG, \cO}^{\cJ}$  a projective scheme over $\cO$.   Similarly, we define $\tld{M}_{\cJ}(\leql)_{\F} \ra  M_{\cJ}(\leql)_{\F}$ a $T^{\vee, \cJ}_{\F}$-torsor.   

For any $\bf{a} \in (\cO^n)^{\cJ}$,  we define local models $M_{\cJ}(\lambda, \nabla_{\bf{a}}), M_{\cJ}(\leql, \nabla_{\bf{a}}),$ and $M^{\mathrm{nv}}_{\cJ}(\leql, \nabla_{\bf{a}})$ in the natural way.  
We have a closed immersion of the latter inside $\Fl_{\cJ}^{\nabla_0} \defeq (\Fl^{\nabla_0})^{\cJ}$ as in Proposition \ref{prop:embinmafv}. 

We extend the construction in Section \ref{sec:Ttorsors} to get the corresponding  $T^{\vee, \cJ}_{\F}$-torsors $\tld{M}_{\cJ}(\lambda, \nabla_{\bf{a}})_{\F}, \tld{M}_{\cJ}(\leql, \nabla_{\bf{a}})_{\F}$ and $\tld{M}^{\mathrm{nv}}_{\cJ}(\leql, \nabla_{\bf{a}})_{\F}$ over the special fibers.   
For consistency in notation, we define $\tld{\Fl}_{\cJ}^{\nabla_0} \defeq (\tld{\Fl}^{\nabla_0})^{\cJ}$.

We then have analog of Proposition \ref{prop:embinmafv} and \eqref{eq:liftoftranslation}:
\begin{prop} \label{prop:embinmafv2}  Let $\tld{z} = s^{-1} t_{\mu} \in \tld{W}^{\vee, \cJ}$ acting by right translation on $\Fl^{\cJ}$ and $\tld{\Fl}^{\cJ}$ component-wise.   
Let $\bf{a} \in (\cO^n)^{\cJ}$.  
If, for each $j \in \cJ$, $\bf{a}_{j} \equiv s^{-1}_{j}(\mu_{j})$ mod $\varpi$, then right translation by $\tld{z}$ induces a Cartesian diagram
\begin{equation*}
\xymatrix{
\tld{M}^{\mathrm{nv}}_{\cJ}(\leql, \nabla_{\bf{a}})_{\F}\ar@{^{(}->}^-{\tld{r}_{\tld{z}}}[r]\ar_{\Psi}[d]&
\tld{\Fl}_{\cJ}^{\nabla_0}\ar^{\Psi}[d]
\\
M^{\mathrm{nv}}_{\cJ}(\leql, \nabla_{\bf{a}})_{\F}\ar@{^{(}->}_-{r_{\tld{z}}}[r]&
\Fl_{\cJ}^{\nabla_0}
}
\end{equation*}
where the horizontal arrows are closed immersions and the vertical arrows are smooth $T_{\F}^{\vee, \cJ}$-torsors.
\end{prop}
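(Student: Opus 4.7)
The plan is to deduce this product version directly from the single-component case in Proposition \ref{prop:embinmafv} and the $T^{\vee}_{\F}$-torsor construction recalled in \S \ref{sec:Ttorsors}, by taking products over $j \in \cJ$. The proposition is essentially formal once one checks that all the relevant constructions commute with finite products in the expected way, so no new geometric input is needed.

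First, I would observe that since all the schemes and stacks in sight are defined componentwise as fiber products over $\cJ$, the diagram in question is (up to rearrangement) the product over $j\in\cJ$ of the diagrams in \eqref{eq:liftoftranslation}. More precisely, for each $j\in\cJ$, the hypothesis $\bf{a}_j \equiv s_j^{-1}(\mu_j) \pmod{\varpi}$ is exactly the hypothesis of Proposition \ref{prop:embinmafv} applied to $(\lambda_j,\bf{a}_j,\tld{z}_j)$, producing a Cartesian square
\[
\xymatrix{
\tld{M}^{\mathrm{nv}}(\leql_j, \nabla_{\bf{a}_j})_{\F}\ar@{^{(}->}^-{\tld{r}_{\tld{z}_j}}[r]\ar_{\Psi}[d]&
\tld{\Fl}^{\nabla_0}\ar^{\Psi}[d]\\
M^{\mathrm{nv}}(\leql_j, \nabla_{\bf{a}_j})_{\F}\ar@{^{(}->}_-{r_{\tld{z}_j}}[r]&
\Fl^{\nabla_0}
}
\]
in which the horizontal arrows are closed immersions (Proposition \ref{prop:embinmafv}) and the vertical arrows are smooth $T^{\vee}_\F$-torsors (by the construction recalled in \S \ref{sec:Ttorsors}, as fiber products of such torsors with the horizontal closed immersions). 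Taking the product of these diagrams over $j \in \cJ$ yields the desired diagram, after identifying $\tld{M}^{\mathrm{nv}}_\cJ(\leql,\nabla_{\bf{a}})_\F = \prod_{j\in\cJ}\tld{M}^{\mathrm{nv}}(\leql_j,\nabla_{\bf{a}_j})_\F$ (which follows from the definition of the $T^{\vee,\cJ}_\F$-torsor structure in \S \ref{subsec:products}) and similarly for the other three corners.

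The second step is to verify that each property is preserved under such a finite product of Cartesian squares: a finite fiber product of Cartesian squares is Cartesian; a product of closed immersions is a closed immersion; and a product of smooth $T^{\vee}_\F$-torsors is a smooth $T^{\vee,\cJ}_\F$-torsor, with the $T^{\vee,\cJ}_\F$-action being the componentwise one. The map $\tld{r}_{\tld{z}}$ on the top is by definition the product of the $\tld{r}_{\tld{z}_j}$, and right translation by $\tld{z} = (\tld{z}_j)_{j\in\cJ}$ on $\tld{\Fl}_\cJ = \tld{\Fl}^\cJ$ is by construction componentwise right translation. There is no substantial obstacle here; the only minor point to check is that the definitions of $\tld{M}^{\mathrm{nv}}_{\cJ}(\leql, \nabla_{\bf{a}})_{\F}$ and of $\tld{\Fl}_\cJ^{\nabla_0} = (\tld{\Fl}^{\nabla_0})^\cJ$ from \S \ref{subsec:products} are compatible with the product of Cartesian diagrams, but this is immediate from their construction as componentwise $T^{\vee}_\F$-torsor pullbacks.
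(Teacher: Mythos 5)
Your proof is correct and matches the paper's intent: the paper presents this proposition as the product-over-$\cJ$ analog of Proposition \ref{prop:embinmafv} and \eqref{eq:liftoftranslation}, leaving the formal reduction implicit, which is exactly the reduction you carry out.
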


Let $d_{\cJ} \defeq (\# \cJ) d = (\# \cJ) \dim_{\F} (B \backslash \GL_n)_{\F}$.   
Let $\omega = (\omega_{j})_{j \in \cJ}  \in X^*(T)^{\cJ}$ where each $t_{\omega_{j}}$ is $(n-1)$-generic.   
Let $\tld{w} \in (\tld{W}^{+}_1)^{\cJ}$.    
Define  
\begin{equation} \label{Zomegas}
C_{(\tld{w}, \omega)} \defeq  \prod_{j \in \cJ} C_{(\tld{w}_{j}, \omega_{j})} \subset \Fl_{\cJ}^{\nabla_0} \text{ and } \tld{C}_{(\tld{w}, \omega)} \defeq  \prod_{ j \in \cJ} \tld{C}_{(\tld{w}_{j}, \omega_{j})} \subset \tld{\Fl}_{\cJ}^{\nabla_0}
\end{equation}
irreducible closed subschemes of dimension $d_{\cJ}$ and $n (\# \cJ) + d_{\cJ}  $  respectively by Proposition \ref{prop:mafldim} and the results of \S \ref{sec:Ttorsors}.

\subsection{Local models, Deligne--Lusztig representations and Serre weights}
\label{sub:LM:DL:SW}
We now connect up the local models to the representation theory results of \S \ref{sec:DLandSW}.   
Let $\cJ = \Hom(k, \F)$ and let 
$\zeta \in X^*(Z)^{\cJ}$ be an algebraic central character.  Let $\sigma$ denote an $(n-1)$-deep Serre weight for $\rG$ which admits a lowest alcove presentation compatible with $\zeta$ (see \S \ref{sec:SW}) and fix a representative  $(\tld{w}_1, \omega) \in (\tld{W}^+_1)^{\cJ} \times X^*(T)^{\cJ}$ for this lowest alcove presentation.

\begin{defn}  \label{defn:complbbysigma} For $\sigma, (\tld{w}_1, \omega)$ and $\zeta$ as above, define
\[
C^{\zeta}_\sigma \defeq  C_{(\tld{w}_1,\omega)}
\]
as defined in \eqref{eq:nameofcomponent}. 
Note that $C_{(\tld{w}_1,\omega)}$ does not depend on the choice of the representative $(\tld{w}_1,\omega)$ for the $\zeta$-compatible lowest alcove presentation of $\sigma$ (see discussion after \eqref{eq:nameofcomponent}).
\end{defn}

We can now give a representation theoretic parametrization of the irreducible components of the special fiber of the local models using Theorem \ref{thm:cmp:match:1}.

\begin{thm} \label{thm:compandSW} 
Let $\lambda\in X^*(T)^{\cJ}$ be a regular dominant weight and set $h_{\lambda} = \max\{ \langle \lambda, \alpha^{\vee} \rangle \mid \alpha \in \Phi \}$.   Let $R$ be a Deligne--Lusztig representation with  $\max\{2n,h_{\lambda}\}$-generic lowest alcove presentation $(s,\mu)$  which is $(\lambda - \eta)$-compatible with $\zeta \in X^*(\un{Z})$.  Let $\bf{a} \in (\cO^n)^{\cJ}$ such that  $\bf{a} \equiv s^{-1}(\mu + \eta)$ modulo $\varpi$.
Then, 
\[
\Irr_{d_{\cJ}}\Big(\big(M^{\mathrm{nv}}_\cJ(\leql, \nabla_{\bf{a}})\big)_{\F} (s^{-1} t_{\mu + \eta}) \Big) = \big\{ C^{\zeta}_{\sigma} \mid \sigma \in \JH\big(\ovl{R}\otimes W(\lambda - \eta)\big) \big\}.  
\]

\end{thm}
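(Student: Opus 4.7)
The plan is to combine two parametrizations of the indexing set $\mathrm{AP}(\lambda)$: the product form of Theorem \ref{thm:cmp:match:1}, which parameterizes the top-dimensional irreducible components of the translated special fiber, and Proposition \ref{prop:JHbij}, which parameterizes $\JH(\ovl{R}\otimes W(\lambda-\eta))$. Because both parametrizations will turn out to assign the same second coordinate $\tld{w}(R)\tld{w}_2^{-1}(0)$ to a given pair $(\tld{w}_1,\tld{w}_2) \in \mathrm{AP}(\lambda)$, the claimed set equality will follow tautologically through Definition \ref{defn:complbbysigma}.

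First I will apply the product form of Theorem \ref{thm:cmp:match:1} with $\tld{s} \defeq t_{\mu+\eta}s = \tld{w}(R) \in \tld{\un{W}}^{\cJ}$, so that $\tld{s}^{*} = s^{-1}t_{\mu+\eta}$ is exactly the right-translation element appearing in the statement of the theorem. Since $R$ has a $\max\{2n,h_\lambda\}$-generic lowest alcove presentation, the translation part of $\tld{s}$ is suitably deep in the base alcove, which suffices for the genericity hypothesis on $\tld{s}$ required by Theorem \ref{thm:cmp:match:1}. This will give a bijection
\[
\mathrm{AP}(\lambda) \xrightarrow{\sim}\;\Irr_{d_{\cJ}}\big((M^{\mathrm{nv}}_\cJ(\leql,\nabla_{\bf{a}})_{\F})\cdot s^{-1}t_{\mu+\eta}\big),\qquad (\tld{w}_1,\tld{w}_2)\mapsto C_{(\tld{w}_1,\tld{w}(R)\tld{w}_2^{-1}(0))}.
\]

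Next I will apply Proposition \ref{prop:JHbij} with the dominant weight $\lambda-\eta$ in place of $\lambda$ (possible since $\lambda$ is regular dominant). The required genericity $\max\{2h_\eta,h_\lambda\}$ is covered by the hypothesis, so the proposition produces a bijection $(\tld{w}_1,\tld{w}_2)\mapsto F_{(\tld{w}_1,\tld{w}(R)\tld{w}_2^{-1}(0))}$ from $\mathrm{AP}(\lambda)$ to $\JH(\ovl{R}\otimes W(\lambda-\eta))$. Crucially, the same proposition asserts that the resulting lowest alcove presentation of each Serre weight is $(\lambda-\eta)$-compatible with the fixed lowest alcove presentation of $R$; since the latter is, by assumption, $(\lambda-\eta)$-compatible with $\zeta$, unwinding the definitions of compatibility (comparing images in $\tld{\un{W}}/\un{W}_a \cong X^*(\un{Z})$) shows that $(\tld{w}_1,\tld{w}(R)\tld{w}_2^{-1}(0))$ is in fact a $\zeta$-compatible lowest alcove presentation of $\sigma = F_{(\tld{w}_1,\tld{w}(R)\tld{w}_2^{-1}(0))}$. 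Consequently $C^{\zeta}_{\sigma} = C_{(\tld{w}_1,\tld{w}(R)\tld{w}_2^{-1}(0))}$ by Definition \ref{defn:complbbysigma}.

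Composing the two bijections through $\mathrm{AP}(\lambda)$ then produces the desired set equality. The only substantive point is the bookkeeping of the various $\eta$-shifts arising from the differing conventions for lowest alcove presentations of Deligne--Lusztig representations (where $\tld{w}(R) = t_{\mu+\eta}s$) versus Serre weights (where the associated element of $\tld{\un{W}}/\un{W}_a$ is $t_{\omega-\eta}\tld{w}_1 \un{W}_a/\un{W}_a$), and checking that both bijections produce the same second coordinate. Once this matching is made explicit, no further geometric input is needed beyond the two cited results.
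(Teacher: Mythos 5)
Your proof is correct and essentially identical to the paper's: both arguments route through the common index set $\mathrm{AP}(\lambda)$, apply Theorem \ref{thm:cmp:match:1} (factor-by-factor over $\cJ$, equivalently in "product form") on the geometric side and Proposition \ref{prop:JHbij} with $\lambda-\eta$ on the representation-theoretic side, and then match the two via the observation that both bijections produce the same second coordinate $\omega = \tld{w}(R)\tld{w}_2^{-1}(0)$ compatibly with $\zeta$, so that Definition \ref{defn:complbbysigma} identifies $C_{(\tld{w}_1,\omega)}$ with $C^\zeta_\sigma$. The only cosmetic difference is the order in which the two bijections are introduced.
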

\begin{rmk}\label{rmk:local_model_pure_dim} 
\begin{enumerate}
\item One can show that $M^{\nv}_\cJ(\leql,\nabla_{\bf{a}})_{\F}$ is equidimensional of dimension $d_\cJ$ when $\lambda$ is regular, using arguments similar to that of the proof of Theorem \ref{thm:cpt:match}. As we will not need this information, we will not pursue this here.  %

\item One can ask whether Theorem \ref{thm:compandSW} holds for the flat closure $M_{\cJ}(\leql,\nabla_{\bf{a}}) \subset M^{\mathrm{nv}}_{\cJ}(\leql,\nabla_{\bf{a}})$.   This is true under stronger genericity hypotheses and can be deduced from Theorem \ref{thm:irreducible_components_mod_p}. Note that the proof Theorem \ref{thm:irreducible_components_mod_p} uses global input in order to construct the desired lifts of generic points on the components of the special fiber.    
\end{enumerate}
\end{rmk}
\begin{proof}
We begin with the bijection
\[
\mathrm{AP}(\lambda) \xrightarrow{\sim} \JH\big(\ovl{R}\otimes W(\lambda- \eta)\big)
\]
from Proposition \ref{prop:JHbij}.  In particular, each $\sigma \in \JH\big(\ovl{R}\otimes W(\lambda-\eta)\big)$ is $(n-1)$-deep and there exists a unique element $(\tld{w}_1, \tld{w}_2)+X^0(T)^{\cJ} \in \mathrm{AP}(\lambda)$ such that $(\tld{w}_1, \omega) \defeq  (\tld{w}_1, (t_{\mu + \eta} s) \tld{w}_2^{-1}(0))$ is representative for a lowest alcove presentation for $\sigma$ compatible with $\zeta$. 

If we write $\tld{w}_1 = (\tld{w}_{1, j})_{j \in \cJ}$ and $\omega = (\omega_{j})_{j \in \cJ}$, then by Definitions \ref{defn:complbbysigma} and \eqref{Zomegas}, 
\begin{equation} \label{esigma}
C^{\zeta}_{\sigma} = \prod_{j \in \cJ} C_{(\tld{w}_{1, j}, \,\omega_{j})}.  
\end{equation}

We now examine the top-dimensional irreducible components of $\big(M^{\mathrm{nv}}_\cJ(\leql, \nabla_{\bf{a}})\big)_{\F} (s^{-1} t_{\mu + \eta})$.    We have a product structure 
\[
\Irr_{d_{\cJ}}\Big(\big(M^{\mathrm{nv}}_\cJ(\leql, \nabla_{\bf{a}})\big)_{\F} (s^{-1} t_{\mu + \eta}) \Big) = \prod_{j \in \cJ} \Irr_{d}\Big(\big(M^{\mathrm{nv}}(\leql_j, \nabla_{\bf{a}_{j}})\big)_{\F} (s^{-1}_{j} t_{\mu_{j} + \eta_j}) \Big). 
\]
Theorem \ref{thm:cmp:match:1} says that
\[
 \mathrm{AP}(\lambda_{j}) \xrightarrow{\sim} \Irr_{d}\Big(\big(M^{\mathrm{nv}}(\lambda_j, \nabla_{\bf{a}_j})\big)_{\F} (s^{-1}_{j} t_{\mu_j + \eta_j}) \Big)
\]
such that the $d$-dimensional irreducible components are exactly the $C_{(\tld{w}_{1, j}, \,\omega_{j})}$ appearing in \eqref{esigma}. 
\end{proof}

\subsection{$T^{\vee}$-fixed points and Serre weights}
\label{sub:Tfixed}

In this section, we discuss results about the $T^{\vee, \cJ}$-fixed points on the components $\cC_{\sigma}^{\zeta}$ from Definition \ref{defn:complbbysigma} which will used in the proof of the weight part of Serre's conjecture in Section \ref{sec:unitary}. 

Assume $\sigma$ is an $(n-1)$-deep Serre weight with lowest alcove presentation compatible with $\zeta$.
Fix a representative  $(\tld{w}_1, \omega) \in (\tld{W}^+_1)^{\cJ} \times X^*(T)^{\cJ}$ for this lowest alcove presentation so that $C_{\sigma}^{\zeta} = C_{(\tld{w}_1, \omega)}$.     

Recall that the $T^{\vee}$-fixed point of $\Fl$ under the right translation action are in bijection with $\tld{W}^{\vee}$ under the natural inclusion $\tld{W}^{\vee} \subset \Fl$. It is easy to check directly from condition \eqref{eq:nabla0} that $\tld{W}^{\vee} \subset \Fl^{\nabla_0}$.   If we let $T^{\vee, \cJ}$ act on $\Fl^{\nabla_0}_{\cJ}$ component-wise, then clearly $\tld{W}^{\vee, \cJ} \subset \Fl^{\nabla_0}_{\cJ}$ are exactly the $T^{\vee, \cJ}$-fixed points.   We will abuse notation and use $\tld{z} \in \tld{W}^{\vee, \cJ}$ to also denote the corresponding point of $\Fl^{\nabla_0}_{\cJ}$.  We also recall (cf.~Section \ref{subsec:equal_char_unibranch}) that there is an action of $T^{\vee,\ext}=T^\vee\times \bG_m$ on $\Fl$ where $T^\vee$ acts on $\Fl$ by left translation and the $\bG_m$ factor acts by loop rotation $v\mapsto r^{-1}v$. 

We start with a criteria to detect the torus fixed point of a subvariety of $\Fl$:
\begin{lemma} \label{lem:Tfixedpts_criterion} Let $Y\subset \Fl$ be a finite type irreducible closed subscheme which is stable under the action of $T^{\vee,\ext}$, and let $\tld{z}\in \Fl^{T^{\vee}}$ be a $T^{\vee}$-fixed point. Let $Y^\circ\subset Y$ be an open dense subscheme of $Y$. Then the following are equivalent:
\begin{enumerate}
\item $\tld{z}\subset Y$.
\item $Y\cap L^{--}\cG_\F \tld{z} \neq \emptyset$.
\item $Y^{\circ}\cap L^{--}\cG_\F \tld{z} \neq \emptyset$.
\end{enumerate} %
\end{lemma}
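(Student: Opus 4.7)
The plan is to argue by elementary topology and Białynicki-Birula contraction. The key geometric input is the fact, essentially already proved in Corollary \ref{cor:open:immersion} and Lemma \ref{lem:contracting_torus_action} (specialized to $\F$ with $t=0$), that the orbit map $L^{--}\cG_\F \tld{z} \to \Fl$ is an open immersion whose image is a Zariski open neighborhood of $\tld{z}$, and that there exists a cocharacter $\chi : \bG_m \to T^{\vee,\ext}$ whose action on this open chart extends to an $\bA^1$-action with $\tld{z}$ as its unique fixed point (equivalently, the chart is the attracting cell of $\tld{z}$ for $\chi$).

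First I will dispatch the easy implications. The implication (1)$\Rightarrow$(2) is immediate since $\tld{z}\in L^{--}\cG_\F\tld{z}$ (via the identity element). The implication (3)$\Rightarrow$(2) is trivial from $Y^\circ\subset Y$. For (2)$\Rightarrow$(3), since $L^{--}\cG_\F\tld{z}\subset \Fl$ is open, the intersection $L^{--}\cG_\F\tld{z}\cap Y$ is open in $Y$; if it is nonempty then, because $Y$ is irreducible and $Y^\circ$ is dense in $Y$, the two open subsets $L^{--}\cG_\F\tld{z}\cap Y$ and $Y^\circ$ of $Y$ must meet.

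The heart of the proof is (2)$\Rightarrow$(1). Let $y\in Y\cap L^{--}\cG_\F\tld{z}$ and let $\chi$ be the contracting cocharacter described above. Then the orbit map $\bG_m\to \Fl$, $r\mapsto \chi(r)\cdot y$, lands in $L^{--}\cG_\F\tld{z}$ and extends (by the contraction property) to a morphism $\bA^1\to L^{--}\cG_\F\tld{z}\subset\Fl$ sending $0$ to $\tld{z}$. On the other hand, since $Y$ is closed in $\Fl$ and stable under $T^{\vee,\ext}$, hence under $\chi(\bG_m)$, the image of $\bG_m$ lies in $Y$, and properness of $\bA^1\to \Fl$ along the valuative criterion applied to $Y\subset \Fl$ forces the whole of $\bA^1$ to map into $Y$. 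Evaluating at $0$ gives $\tld{z}\in Y$ as desired.

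The only step that is not immediately formal is setting up the contracting cocharacter $\chi$ in the left-translation/loop-rotation conventions of the present lemma (rather than the right-translation conventions of Lemma \ref{lem:contracting_torus_action}). I expect this to be the main, though still essentially routine, obstacle: one has to translate the explicit formula for the $T^{\vee,\ext}$-action given in Lemma \ref{lem:torus_action_open_chart} and the regular-dominant cocharacter produced in Lemma \ref{lem:contracting_torus_action} into the ambient $\Fl$, using that the open cell $\cU(\tld{z})_\F \cong L^{--}\cG_\F\tld{z}$ carries compatible actions of both tori and checking that the same weight calculation yields strictly positive weights on all chart coordinates with $\tld{z}$ as the unique fixed point. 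Once this bookkeeping is done, the rest of the argument is the standard Białynicki-Birula observation recorded above.
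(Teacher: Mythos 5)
Your proposal is correct and follows essentially the same route as the paper: use the contracting cocharacter of Lemma \ref{lem:contracting_torus_action} (specialized at $v\mapsto 0$) on the open cell $L^{--}\cG_\F\tld{z}$, observe that the limit of any $\bG_m$-orbit of a point in that cell is $\tld{z}$, and conclude from closedness and $T^{\vee,\ext}$-stability of $Y$ that $\tld{z}\in Y$; the equivalence $(2)\Leftrightarrow(3)$ is the same openness-plus-irreducibility argument. Two minor remarks: the convention-matching you flag is not really an obstacle, since the formula in Lemma \ref{lem:torus_action_open_chart} is by construction a lift of the right-translation-plus-loop-rotation action that $T^{\vee,\ext}$ already carries on $\Gr_{\cG,X}$ (so Lemma \ref{lem:contracting_torus_action} produces a contracting cocharacter for exactly the ambient action, no further bookkeeping required); and the appeal to ``properness of $\bA^1\to\Fl$ along the valuative criterion'' is slightly misphrased — the relevant fact is simply that the preimage of the closed subscheme $Y$ under the morphism $\bA^1\to\Fl$ is closed in $\bA^1$ and contains the dense open $\bG_m$, hence is all of $\bA^1$.
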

\begin{proof} Specializing the $\bG_m$-action constructed in Lemma \ref{lem:contracting_torus_action} and noting that $L^{--}\cG_\F\tld{z}$ is the specialization of $\cU(\tld{z})$ in \emph{loc.~cit.}~along the map $\bZ[v]\to \bF$ sending $v$ to $0$, we find an one parameter subgroup $\bG_m\subset T^{\vee,\ext}$ which induces a contracting action on $L^{--}\cG_\F\tld{z}$ with unique fixed point $\tld{z}$.
It is clear that the first item implies the second item. Conversely, if the second item holds, then the first item holds, since $\tld{z}$ is the limit of a $\bG_m$-orbit of any point in $Y \cap L^{--}\cG_\F \tld{z}$ and $Y$ is closed and $T^{\vee,\ext}$-stable.

Finally, since $Y\cap L^{--}\cG_\F \tld{z} \neq \emptyset$ is an open subscheme of $Y$, it is either empty or open and dense in $Y$. Thus the second and the third item are equivalent.
\end{proof}

\begin{prop}\label{prop:Tfixedobv}  The set of $T^{\vee, \cJ}$-fixed points of $C_{(\tld{w}_1, \omega)}$  contains $\{ (t_{\omega} w\tld{w}_1)^* \mid w\in W^{\cJ} \}$
\end{prop}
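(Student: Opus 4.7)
The plan is to reduce to a single factor via the product structure, and then exhibit each $(t_\omega w \tld{w}_1)^*$ as a translate of a distinguished torus-fixed point inside an appropriate Schubert cell, using Proposition \ref{prop:match2} to tune the translation cleverly.

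First I would observe that $C_{(\tld{w}_1, \omega)} = \prod_{j \in \cJ} C_{(\tld{w}_{1,j}, \omega_j)}$ by \eqref{Zomegas}, that $T^{\vee,\cJ}$ acts componentwise on $\Fl_\cJ^{\nabla_0}$, and that $(t_\omega w \tld{w}_1)^* = \prod_{j \in \cJ} (t_{\omega_j} w_j \tld{w}_{1,j})^*$. It therefore suffices to treat the case where $\cJ$ is a singleton, and we do so, dropping the subscript $j$. Recall that $\tld{w}_1 \in \tld{W}^+_1$ and that $t_\omega$ is $(n-1)$-generic.

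Next, fix $w \in W$ and set $\tld{s}' \defeq t_\omega w w_0$. Since $\tld{s}'$ is $(n-1)$-generic (its translation part is $\omega$), Proposition \ref{prop:match2} applied to $\tld{s} = t_\omega$ and Weyl element $w w_0$ gives $C_{(\tld{w}_1, \omega)} = S_\F^{\nabla_0}(\tld{w}_1, e, \tld{s}')$. By Definition \ref{defn:Schubert:var}, this is the closure in $\Fl^{\nabla_0}$ of $S_\F^\circ(\tld{w}_1^* w_0)\cdot (\tld{s}')^*$. The distinguished element $\tld{w}_1^* w_0$ lies in the Schubert cell it labels, so the point
\[
x \defeq \tld{w}_1^* w_0 \cdot (\tld{s}')^* \in \tld{W}^{\vee}
\]
lies in $S_\F^\circ(\tld{w}_1, e, \tld{s}')$. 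Since $x \in \tld{W}^\vee$ is $T^\vee$-fixed, it automatically satisfies the $\nabla_0$-condition (as noted at the start of \S \ref{sub:Tfixed}; directly, for $\tld{z} = w'v^\mu \in \tld{W}^\vee$ one computes $v(d\tld{z}/dv)\tld{z}^{-1} = \Diag(w'(\mu)) \in \Lie \cI_\F$). Hence $x \in S_\F^\circ(\tld{w}_1, e, \tld{s}')^{\nabla_0} \subset C_{(\tld{w}_1, \omega)}$.

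Finally, I would verify by a direct computation using $(t_\nu g)^* = g^{-1} t_\nu$ for $g \in W$ (Definition \ref{affineadjoint}), the anti-multiplicativity of $*$ proved in \cite[Lemma 2.1.3]{LLL}, and the identity $t_\nu g = g t_{g^{-1}(\nu)}$, that $x = (t_\omega w \tld{w}_1)^*$. Writing $\tld{w}_1 = t_{\nu_1} w_1$, the right-hand side equals $(w w_1)^{-1} t_{\omega + w(\nu_1)}$; on the other hand, $x = w_1^{-1} t_{\nu_1} w_0\cdot w_0 w^{-1} t_\omega = w_1^{-1} t_{\nu_1} w^{-1} t_\omega = w_1^{-1} w^{-1} t_{\omega + w(\nu_1)}$, matching. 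Letting $w$ vary over $W$ produces every element of the claimed set. There is no substantial obstacle: the whole argument combines the ``Weyl-translation flexibility'' given by Proposition \ref{prop:match2} with the trivial observation that $\tld{W}^\vee \subset \Fl^{\nabla_0}$.
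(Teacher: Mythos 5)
Your proof is correct and follows essentially the same route as the paper: reduce componentwise, use the Weyl-translation flexibility (via Proposition \ref{prop:match2} / the well-definedness in \eqref{eq:nameofcomponent}) to write $C_{(\tld{w}_1,\omega)} = S_\F^{\nabla_0}(\tld{w}_1, e, t_\omega w w_0)$, and observe that the distinguished $T^\vee$-fixed point $\tld{w}_1^*w_0\,(t_\omega w w_0)^* = (t_\omega w\tld{w}_1)^*$ lies in the open cell and automatically satisfies the $\nabla_0$-condition. The paper cites Theorem \ref{thm:cpt:match} where you invoke Proposition \ref{prop:match2} together with the definition, but in the case $\tld{w}_2 = e$ these amount to the same thing (your version actually sidesteps a superfluous genericity hypothesis in the theorem).
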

\begin{proof}  Since  $C_{(\tld{w}_1, \omega)} =  \prod_{j \in \cJ} C_{(\tld{w}_{1, j}, \omega_{j})}$, this reduces immediately to a statement about  $C_{(\tld{w}_{1, j}, \omega_{j})}$.  
By Theorem \ref{thm:cpt:match},  $C_{(\tld{w}_{1, j}, \omega_{j})}$ is equal to $S_{\F}^{\nabla_0}(\tld{w}_{1, j}, e,  t_{\omega_j} w w_0)$ {(see Definition \ref{defn:Schubert:var}\eqref{it:Schubert:var:3})} which is easily seen to contain the point $(t_{\omega_j} w\tld{w}_{1, j})^*$.
\end{proof}

As discussed in Remark \ref{FullRmk:strictly_smaller}\eqref{rmk:strictly_smaller}, $ S_\F^{\nabla_0}(\tld{w}_{1, j},e, t_{\omega_j})$ can be much smaller than $S_{\F}( (w_0 \tld{w}_{1, j})^*) t_{\omega_j} \cap \Fl^{\nabla_0}$.   Nevertheless under suitable genericity hypotheses, they have the same $T^{\vee}$-fixed points. 

\begin{prop} \label{prop:Tfixedpts} There exists a polynomial $P_{\tld{w}_{1,j}}\in \Z[X_1,\ldots, X_n]$ depending only on $\tld{w}_{1,j} \in \tld{W}^+_1$ such that if $P_{\tld{w}_{1,j}}(\omega_j) \neq 0 \mod p$ for all $j \in \cJ$, then the set of $T^{\vee, \cJ}$-fixed points of $C_{(\tld{w}_1, \omega)}$ is exactly  $\{ \tld{w}^* t_{\omega} \mid \tld{w} \leq w_0 \tld{w}_1\}$.
\end{prop}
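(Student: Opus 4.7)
The inclusion ``(fixed points of $C_{(\tld{w}_1,\omega)}$) $\subseteq \{\tld{w}^*t_\omega \mid \tld{w}\leq w_0\tld{w}_1\}$'' is immediate and does not require any genericity assumption. Indeed, $C_{(\tld{w}_1,\omega)}$ is by construction contained in $\overline{S_\F^\circ((w_0\tld{w}_1)^*)}\,t_\omega$ (taking $\tld{s}=t_\omega$ in \eqref{eq:nameofcomponent}), whose $T^{\vee,\cJ}$-fixed points are exactly the claimed set by standard Bruhat theory, noting that the constant torus $T^\vee$ commutes with $t_\omega$ viewed as the diagonal loop $v^\omega$.

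For the reverse inclusion, which is the content of the proposition, I would first reduce to the case $|\cJ|=1$ using the product decomposition $C_{(\tld{w}_1,\omega)}=\prod_jC_{(\tld{w}_{1,j},\omega_j)}$ and the fact that torus fixed points of a product are the product of fixed points. Fix $j$, drop the subscript, and set $\tld{z}\defeq(w_0\tld{w}_1)^*$. Applying Proposition \ref{prop:embinmafv} with $\tld{s}=t_\omega$ and right-translating by $t_{-\omega}$, showing $\tld{w}^*t_\omega \in C_{(\tld{w}_1,\omega)}$ becomes equivalent to showing $\tld{w}^*\in Y$, where $Y \defeq \overline{S_\F^\circ(\tld{z})\cap \Fl^{\nabla_\omega}}$, for each $\tld{w}\leq w_0\tld{w}_1$. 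Under the standing genericity assumption on $\omega$, Theorem \ref{thm:monodromySchubert} guarantees that $Y$ is irreducible, and $Y$ is clearly $T^{\vee,\ext}$-stable: both right $T^\vee$-translation and loop rotation $v\mapsto r^{-1}v$ preserve the defining equation of $\Fl^{\nabla_\omega}$ (by a direct verification with \eqref{eq:nablaa}) as well as the Schubert cell $S_\F^\circ(\tld{z})$.

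By Lemma \ref{lem:Tfixedpts_criterion} applied to $Y$ and the $T^\vee$-fixed point $\tld{w}^*$, the containment $\tld{w}^*\in Y$ is equivalent to the non-emptiness of
\[
\bigl(S_\F^\circ(\tld{z})\cap \Fl^{\nabla_\omega}\bigr)\cap L^{--}\cG_\F\,\tld{w}^*,
\]
which is an open subscheme of the affine space $S_\F^\circ(\tld{z})\cap \Fl^{\nabla_\omega}$ cut out by the condition of lying in the big open cell around $\tld{w}^*$. Using the explicit coordinates on $S_\F^\circ(\tld{z})$ via the parametrization $\tld{z}\,N_{\tld{z}}$ (Proposition \ref{prop:opencell}) together with the linear equations describing the $\nabla_\omega$-affine subspace from the proof of Theorem \ref{thm:monodromySchubert} (expressed in terms of the coefficients $c_{\alpha,i}$), the non-emptiness of this intersection becomes the non-vanishing of an explicit polynomial $P_{\tld{w}_1,\tld{w}}\in \Z[X_1,\ldots,X_n]$ evaluated at $\omega$, depending only on $\tld{w}_1$ and $\tld{w}$. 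The desired $P_{\tld{w}_1}$ will then be the product $\prod_{\tld{w}\leq w_0\tld{w}_1}P_{\tld{w}_1,\tld{w}}$.

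The main obstacle is to show that each polynomial $P_{\tld{w}_1,\tld{w}}$ is nonzero, equivalently, that there exists at least one value of $\omega$ for which $\tld{w}^*\in Y$. I would attack this by iterating the $\bP^1$-family construction of Lemma \ref{lem:obvious-specialization}: that lemma handles the case where $\tld{w}^*$ differs from $\tld{z}$ by a single simple reflection, by exhibiting a Levi $\bP^1$-family lying inside $\Fl^{\nabla_0}$. A Bott--Samelson-style iteration using chains of such $\bP^1$-families, corresponding to reduced subword decompositions witnessing $\tld{w}^*\leq \tld{z}$, should extend the argument to all $\tld{w}^*\leq \tld{z}$. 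The delicate point at each iteration is to verify that the corresponding Levi orbit remains inside $\Fl^{\nabla_0}$; this imposes combinatorial constraints on the chain of reflections that should be realizable after imposing an explicit genericity condition on $\omega$, which is encoded by the polynomial $P_{\tld{w}_1}$.
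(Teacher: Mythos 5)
Your first two paragraphs are essentially correct and match the paper's setup: the easy Bruhat inclusion, the reduction to a single embedding, the right-translation by $t_{-\omega}$ to rewrite the question as $\tld{w}^*\in Y\defeq\overline{S_\F^\circ(\tld{z})\cap \Fl^{\nabla_\omega}}$, the invocation of Lemma~\ref{lem:Tfixedpts_criterion}, and the observation that non-emptiness of $Y^\circ\cap L^{--}\cG_\F\tld{w}^*$ is a polynomial condition on $\omega$.

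The gap is in your final paragraph, and it is not minor: showing that the polynomial $P_{\tld{w}_1,\tld{w}}$ is nonzero is the entire content of the proposition, and a Bott--Samelson iteration of Lemma~\ref{lem:obvious-specialization} does not obviously achieve this. That lemma succeeds for a single codimension step only because the Levi $\bP^1$ it constructs actually meets the open cell (it contains $\tld{z}'=\tld{w}_1^*w_0$), which forces the $\bP^1$ into the closure $Y=\overline{Y^\circ}$. Once you try to iterate from a boundary fixed point $\tld{z}_1$ of positive codimension, a Levi $\bP^1$ through $\tld{z}_1$ need not meet $Y^\circ$, so even if you can verify (which is itself nontrivial) that it lies inside $\Fl^{\nabla_0}$, there is no reason it lies in $Y$. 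The paper flags precisely this danger in Remark~\ref{FullRmk:strictly_smaller}(\ref{rmk:strictly_smaller}): $S_\F^{\nabla_0}(\tld{w}_1,e,\tld{s})$ is \emph{strictly} smaller than $\overline{S^\circ_\F((w_0\tld{w}_1)^*)\tld{s}^*}\cap\Fl^{\nabla_0}$ in general, so ``inside $\Fl^{\nabla_0}$'' is not sufficient.

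The paper proves non-vanishing by a completely different route: degeneration to a genuine affine Springer fiber plus an external input. One homogenizes the monodromy condition by an auxiliary parameter $b$, so that the family $\cY^\circ$ over $\bA^{n+1}=\Spec\Z[b,a_1,\ldots,a_n]$ is cut out by $b\,v\tfrac{dg}{dv}g^{-1}+g\Diag(\bf{a})g^{-1}\in\tfrac{1}{v}L^+\cM$ and is stable under the scaling $\bG_m$-action. At $b=1$ this recovers $Y^\circ$; at $b=0$ the reduced fiber is an open dense piece of an irreducible component of the affine Springer fiber attached to $v\Diag(\bf{a})$, and \cite[Theorem~3.1]{Pablo} (Boixeda Alvarez) asserts that the required torus fixed points lie in the closure of that affine Springer cell. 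Hence $\cZ\defeq\cY^\circ\cap\cU_\Z(\tld{z})\times_\Z\bA^{n+1}$ is nonempty, so $\pi(\cZ)$ is a nonempty $\bG_m$-stable open of $\bA^{n+1}$, its complement is contained in the zero locus of a nonzero homogeneous polynomial $\tld{P}$, and one sets $P_{\tld{w}_1,\tld{z}}(X)\defeq\tld{P}(1,X)\neq 0$. Your proposed attack would in effect require reproving Boixeda Alvarez's theorem from scratch, which is a substantial geometric result; the paper avoids this by importing it through the $b\to 0$ degeneration.
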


\begin{proof} %

Fix $j\in \cJ$. By Proposition \ref{prop:embinmafv}, $Y\defeq C_{(\tld{w}_{1,j},\omega_j)}t_{-\omega_j}$ is the closure of $Y^{\circ}\defeq S^{\circ}_\F(\tld{w}^*_{1,j} w_0)\cap \Fl^{\nabla_{\omega_j}}$.  
One inclusion then follows from the standard description of the $T^{\vee}$-fixed points of the closure of $S^{\circ}_\F(\tld{w}^*_{1,j} w_0)$ in terms of the Bruhat order.   

Fix $\tld{w}\leq w_0\tld{w}_{1, j}$. We need to show that $\tld{z}\defeq \tld{w}^*$ belongs to $Y$ if $\omega_j \mod p$ avoids the zero locus of a universal polynomial depending only on $\tld{w}_{1,j}$.  We will deduce the result from the main result of \cite{Pablo}, which describes the torus fixed points of certain affine Springer fibers.

We consider the base change of the objects in Section \ref{sec:UMLM:loopgroups} along the map $\Z[v]\to \Z$ sending $v$ to $0$.
In particular we get $\cF l_\Z=\Gr_{\cG,X} \times_X \bZ$ and the ind-group schemes $L\cG_\Z$, $L^{--}\cG_\Z$, etc. Thus $\cF l_\Z$ is the affine flag variety for $\GL_n$ over $\Z$, and we have the open affine Schubert variety $\cS_\Z^{\circ}(\tld{w}^*_{1,j} w_0)\subset \cF l_\Z$ which is isomorphic $\bA^{\ell(w_0 \tld{w}_{1,j})}$. We also have the subfunctor $\cU_\Z(\tld{z})\subset \cF l_\Z$ by base changing $\cU(\tld{z})$, which coincides with $L^{--}\cG_\Z \tld{z}$. 
The closed subfunctor of $L\cG_\Z \times_\Z \bA^1\times_\Z \bA^n$ which classifies triples $(g,b,\bf{a})$ such that
\[ b\frac{vdg}{dv} g^{-1}  + g \Diag(\bf{a}) g^{-1} \in \frac{1}{v} L^+\cM\]
induces a closed subscheme $\cY^{\circ}$ of $\cS_\Z^{\circ}(\tld{w}^*_{1,j} w_0)\times_\Z \bA^1\times_\Z \bA^n$. Let $\pi:\cY^\circ\to \bA^{n+1}$ be the projection map.
 We observe
\begin{itemize}
\item $Y^{\circ}$ is the base change of $\cY^{\circ}$ along the map $\bA^1\times_\Z \bA^n \to \F$ corresponding to the tuple $(1,\omega_j \mod p)\in \F^{n+1}$.
\item Let $V\subset \bA^1\times_\Z\bA^n$ be the open locus of tuples $(b,\bf{a})$ such that $b(i + \delta_{\alpha > 0}) + \langle \bf{a}, \alpha^\vee \rangle)$ is invertible for all roots $\alpha$ and $0\leq i< d_{\alpha,w\tld{w}_{1,j}}$.%
Then the proof of Theorem \ref{thm:monodromySchubert} shows that the restriction $\pi: \cY^{\circ}|_{V}\to V$ is isomorphic to the projection $\bA^d \times _\Z V\to V$ (recall from \S \ref{sec:sp:fib} that $d = \dim (B \backslash \GL_n)_{\F}$). 
\item The $\bG_m$-action on $\cF l_\Z\times_\Z \bA^{n+1}$ induced by the scaling action on $\bA^{n+1}$ and the trivial action on $\cF l_\Z$ preserves $\cY^\circ$.
\item If $k$ is a field and $(0,\bf{a})$ is a $k$-point of $V$, then the reduced fiber of $\cY^\circ$ above $(0,\bf{a})$ is an open dense subset of an irreducible component of the affine Springer fiber in $\Fl_k$ associated to the element $v\bf{a}\in \mathfrak{gl}_n(\!(v)\!)$. 
This is exactly the affine Springer fiber studied in \cite{Pablo}.
\end{itemize}
We now consider the intersection $\cZ\defeq \cY^\circ\cap \cU_\Z(\tld{z})\times_\Z \bA^{n+1}$. Then by the fourth item above and \cite[Theorem 3.1]{Pablo}, this intersection is non-empty. Thus $\cZ$ is a non-empty open subscheme of $\cY^\circ$, hence its image $\pi(\cZ)$ is open in $\bA^{n+1}$. Since $\cZ$ is also stable under the scaling $\bG_m$ action, so is $\pi(\cZ)$. Thus there exists a non-zero homogenous polynomial $\tld{P}\in \bZ[b,a_1,\cdots a_n]$ which vanishes on the complement of $\pi(\cZ)\cap V$. Note that $\cY^{\circ}$, and hence $\tld{P}$ depends only on $\tld{w}_{1, j}$ and $\tld{z}$.
Setting $P_{\tld{w}_{1,j},\tld{z}}(a_1,\cdots, a_n)=\tld{P}(1,a_1,\cdots a_n)\neq 0$, we see that as long as $P_{\tld{w}_{1,j},\tld{z}}(\omega_j)\mod p\neq 0$, the fiber of $\cZ$ at the tuple $(1,\omega_j)$ is non-empty. But this fiber is exactly $Y^{\circ}\cap L^{--}\cG_\F\tld{z}$, so Lemma \ref{lem:Tfixedpts_criterion} shows that $\tld{z}\in Y$ in this situation. The polynomial $P_{\tld{w}_{1,j}}=\prod_{\tld{z}^*\leq w_0\tld{w}_1} P_{\tld{w}_{1,j},\tld{z}}$ thus satisfies the conclusion of the Proposition.
\end{proof}
\begin{rmk}\label{rmk:compTfixedpts}
\begin{enumerate}
\item \label{it:compTfixedpts:1}
In fact, whether $P_{\tld{w}_{1,j}}(\omega_j) \neq 0 \mod p$ for a $j\in \cJ$ with $P_{\tld{w}_{1,j}}$ as in the proof of Proposition \ref{prop:Tfixedpts} does not depend on the choice of representative $(\tld{w}_1,\omega)$ for the lowest alcove presentation of $F_{(\tld{w}_1,\omega)}$.
\item \label{it:compTfixedpts:2}
If $\sigma$, $\sigma'$ are two Serre weights for which Proposition \ref{prop:Tfixedpts} holds, then Proposition \ref{prop:covering} shows that $\sigma$ covers $\sigma'$ if and only if all the $T^{\vee,\cJ}$-fixed points of $C^{\zeta}_{\sigma'}$ lie in $C^{\zeta}_{\sigma}$. 
\end{enumerate}
\end{rmk}

We also record the following, which will be convenient for applications:

\begin{prop}\label{prop:Tfixedpots_unibranch} $C_{(\tld{w}_1,\omega)}$ is unibranch at each of its $T^{\vee,\cJ}$-fixed points. 
\end{prop}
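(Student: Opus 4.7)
The plan is to mimic the argument of Proposition \ref{prop:unibranch_equal_char}: exhibit a contracting $\bA^1$-action on an irreducible open neighborhood of the $T^{\vee,\cJ}$-fixed point, then invoke Lemma \ref{lem:attractor}. I would first base change to $\overline{\F}$ (which preserves the unibranch property since $\F\to\overline{\F}$ is faithfully flat, and use the product decomposition $C_{(\tld{w}_1,\omega)}=\prod_{j\in\cJ} C_{(\tld{w}_{1,j},\omega_j)}$: the $T^{\vee,\cJ}$-fixed points are products $\tld{y}=(\tld{y}_j)_{j\in\cJ}$ with each $\tld{y}_j$ a $T^\vee$-fixed point of $C_{(\tld{w}_{1,j},\omega_j)}$, and over $\overline{\F}$ the product $C_{(\tld{w}_1,\omega)}$ is irreducible (as a product of irreducibles by Proposition \ref{prop:mafldim}).

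The key observation is that loop rotation $v\mapsto r^{-1}v$ and right $T^\vee$-translation on $\Fl$ both preserve $\Fl^{\nabla_0}$ and each factor $C_{(\tld{w}_{1,j},\omega_j)}$. Preservation of $\Fl^{\nabla_0}$ would follow by direct computation on the monodromy condition \eqref{eq:nabla0} (the derivation $v\tfrac{d}{dv}$ is invariant under $v\mapsto r^{-1}v$ up to a unit, and conjugation by a constant diagonal matrix preserves $\tfrac{1}{v}\Lie \Iw_\F$). For $C_{(\tld{w}_{1,j},\omega_j)}=\overline{S^{\circ}_\F(\tld{w}_{1,j}^{*}w_0)\tld{s}^*\cap \Fl^{\nabla_0}}$ (any $\tld{s}$ with $\tld{s}(0)=\omega_j$), preservation follows because loop rotation and right $T^{\vee}$-translation preserve $\Iw_\F$-double cosets and send $\tld{s}^*\in\tld{W}^\vee$ to an element of $T^\vee\cdot \tld{s}^*$.

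Next, Lemma \ref{lem:contracting_torus_action} provides a one-parameter subgroup $\bG_m\subset T^{\vee,\ext}$ whose action on $\cU(\tld{y}_j)^{\det,\leq h}$ (for $h$ large enough so that $\tld{y}_j\in \cU(\tld{y}_j)^{\det,\leq h}$) is contracting to $\tld{y}_j$ as the unique fixed point, and which extends to an $\bA^1$-action. Inspecting the formula in Lemma \ref{lem:torus_action_open_chart}, this action on $\Fl$ is a composition of loop rotation and right $T^\vee$-translation (the apparent left $T^\vee$-factor $\Ad(w)(D^{-1})$ being absorbed into $\Iw_\F$). By the preservation claim of the previous paragraph, it restricts to a contracting $\bA^1$-action on the irreducible open neighborhood $C_{(\tld{w}_{1,j},\omega_j)}\cap \cU(\tld{y}_j)^{\det,\leq h}$ of $\tld{y}_j$ in $C_{(\tld{w}_{1,j},\omega_j)}$, with $\tld{y}_j$ as the unique fixed point.

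Finally, the diagonal of these $\bA^1$-actions over $j\in\cJ$ yields an $\bA^1$-action on the product $\prod_{j\in\cJ}\big(C_{(\tld{w}_{1,j},\omega_j)}\cap \cU(\tld{y}_j)^{\det,\leq h}\big)$, which is an irreducible open neighborhood of $\tld{y}$ in $C_{(\tld{w}_1,\omega)}$ and contracts to $\tld{y}$ as the unique fixed point. Lemma \ref{lem:attractor} then gives that this neighborhood is unibranch at $\tld{y}$, whence so is $C_{(\tld{w}_1,\omega)}$. The main obstacle is the verification that the two elementary group actions preserve the subvariety $C_{(\tld{w}_1,\omega)}$; once this is done, the rest of the argument is a direct invocation of the tools already developed in \S\ref{subsec:equal_char_unibranch}.
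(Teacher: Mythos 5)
Your proof is correct and follows the same strategy the paper uses: specialize the contracting $\bG_m$-action of Lemma \ref{lem:contracting_torus_action} to the open chart around the fixed point and apply Lemma \ref{lem:attractor}. The paper's proof is a one-liner (apply Lemma \ref{lem:attractor} to $C_{(\tld{w}_1,\omega)}\cap L^{--}\cG_\F\tld{z}$) that leaves implicit exactly the verification you carry out, namely that the $\bA^1$-action restricts to $C_{(\tld{w}_1,\omega)}$; your observations that loop rotation and right $T^\vee$-translation preserve $\Fl^{\nabla_0}$ and the translated Schubert cell, and that the left $T^\vee$-factor in Lemma \ref{lem:torus_action_open_chart} dies in $\Fl$, are the right ones. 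One small remark: in your parenthetical on preserving $\Fl^{\nabla_0}$, right $T^\vee$-translation leaves $(v\tfrac{dA}{dv})A^{-1}$ literally unchanged (no conjugation occurs there); it is the left $T^\vee$-factor whose effect is by conjugation and which is handled by the $\Iw_\F$-invariance of the condition, so the two mechanisms are distinct even though the outcome is the same.
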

\begin{proof} Let $\tld{z}\in C_{(\tld{w}_1,\omega)}$ is a fixed point. The result follows applying Lemma \ref{lem:attractor} to $C_{(\tld{w}_1,\omega)}\cap L^{--}\cG_\F\tld{z}$, using the (specialization of the) contracting $\bG_m$-action constructed in \ref{lem:contracting_torus_action}.
\end{proof}

We now connect back to the Herzig's conjecture on modular Serre weights \S \ref{sec:herzig}. 

 \begin{thm} \label{thm:Tfixedpts} %
Suppose that $(\tld{w}_1,\omega)$ is a lowest alcove presentation of an $(n-1)$-deep Serre weight $\sigma$ and $(s,\mu)$ is a $2(n-1)$-generic lowest alcove presentation of a tame inertial $L$-parameter $\rhobar$ over $\F$.
Suppose that both lowest alcove presentations are compatible with $\zeta \in X^*(\un{Z})$.
Let $\tld{w}^*(\rhobar) = (t_{\mu+\eta} s)^* = s^{-1} t_{\mu+\eta}$.
 \begin{enumerate}
 \item \label{it:Tfixedpts:1} If $\sigma \in W_{\obv}(\rhobar)$, then $\tld{w}^*(\rhobar) \in C_{\sigma}^{\zeta}$. 
 \item \label{it:Tfixedpts:2} If $\tld{w}^*(\rhobar) \in C_{\sigma}^{\zeta}$, then $\sigma \in W^?(\rhobar)$.
 \item \label{it:Tfixedpts:3} For each $j\in\cJ$, let $P_{\tld{w}_{1 ,j}} \in \Z[X_1,\ldots, X_n]$ be as in Proposition \ref{prop:Tfixedpts}. If $P_{\tld{w}_{1, j}}(\omega_j) \neq 0 \mod p$ for all $j \in \cJ$ and $\sigma \in W^?(\rhobar)$, then $\tld{w}^*(\rhobar) \in C_{\sigma}^{\zeta}$.  
 \end{enumerate}  
 \end{thm}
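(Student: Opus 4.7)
My unifying strategy is to convert the membership $\tld{w}^*(\rhobar)\in C_{(\tld{w}_1,\omega)}$ into a Bruhat-order statement on $\tld{W}^\cJ$, which can then be matched with the combinatorial characterizations of $W_\obv(\rhobar)$ and $W^?(\rhobar)$. Since $*:\tld{W}\to\tld{W}^\vee$ is an order-preserving group isomorphism (\cite[Lemma 2.1.3]{LLL}) and satisfies $(t_\omega)^* = t_\omega$, the condition $\tld{w}^*(\rhobar) = \tld{w}^*\, t_\omega$ in $\tld{W}^{\vee,\cJ}$ is equivalent to $\tld{w}\cdot t_\omega = \tld{w}(\rhobar)$ in $\tld{W}^\cJ$, forcing $\tld{w} = \tld{w}(\rhobar)\,t_{-\omega} = (\mu+\eta-s\omega,\,s)\in X^*(T)^\cJ\rtimes W^\cJ$. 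The strategy is then to relate this $\tld{w}$ to a candidate $\tld{w}_2\in \tld{W}^+$ satisfying the constraints of Proposition \ref{prop:W?}.

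For part (\ref{it:Tfixedpts:1}), a $\zeta$-compatible lowest alcove presentation of $\sigma\in W_\obv(\rhobar)$ can be taken (after the diagonal $X^0(\un{T})$-action) of the form $(\tld{w}_1,\tld{w}(\rhobar)\tld{w}_1^{-1}(0))$ with $\tld{w}_1\in \tld{W}_1^+$. Writing $\tld{w}_1 = t_{\nu_1}w_1$ and choosing $u = sw_1^{-1}\in W^\cJ$, a direct multiplication in $X^*(T)^\cJ\rtimes W^\cJ$ (the translation parts cancel using $\omega = \mu+\eta - sw_1^{-1}\nu_1$) verifies $t_\omega\cdot u\cdot \tld{w}_1 = \tld{w}(\rhobar)$, so Proposition \ref{prop:Tfixedobv} yields $(t_\omega u\tld{w}_1)^* = \tld{w}^*(\rhobar)\in C_\sigma^\zeta$.

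For part (\ref{it:Tfixedpts:3}), the polynomial hypothesis invokes Proposition \ref{prop:Tfixedpts}, so it suffices to exhibit $\tld{w}\leq w_0\tld{w}_1$ with $\tld{w}\,t_\omega = \tld{w}(\rhobar)$. Proposition \ref{prop:W?} and Wang's theorem supply $\tld{w}_2\in \tld{W}^+$ with $\tld{w}_2\leq\tld{w}_1$ and $\omega = \tld{w}(\rhobar)\tld{w}_2^{-1}(0)$. Writing $\tld{w}(\rhobar)\tld{w}_2^{-1} = t_\omega\,u$ for the unique $u\in W^\cJ$, we have $\tld{w} = \tld{w}(\rhobar)t_{-\omega} = t_\omega\,u\,\tld{w}_2\,t_{-\omega}$. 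The factorization $w_0\tld{w}_1$ is reduced (Lemma \ref{lemma:gallery}), and the standard compatibility of the Bruhat order with reduced products, together with a bookkeeping of the translation parts under conjugation by $t_\omega$, yields the desired inequality $\tld{w}\leq w_0\tld{w}_1$; Proposition \ref{prop:Tfixedpts} then concludes.

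For part (\ref{it:Tfixedpts:2}), without the polynomial hypothesis we still have the containment $C_{(\tld{w}_1,\omega)}\subset\overline{S_\F^\circ((w_0\tld{w}_1)^*)}\cdot t_\omega$ from Definition \ref{defn:Schubert:var} and \eqref{eq:nameofcomponent}, together with the standard description of torus-fixed points in an affine Schubert variety. Transporting through the order-preserving $*$, this forces $\tld{w} = \tld{w}(\rhobar)t_{-\omega}\leq w_0\tld{w}_1$ in $\tld{W}^\cJ$. Inverting the construction of part (\ref{it:Tfixedpts:3}), we define $\tld{w}_2\in\tld{W}^+$ as the unique dominant representative with $\tld{w}_2^{-1}(0) = s^{-1}(\omega-\mu-\eta)$; the inequality $\tld{w}\leq w_0\tld{w}_1$ is then converted back into $\tld{w}_2\leq\tld{w}_1$, and hence $\tld{w}_2\uparrow\tld{w}_1$ by Wang's theorem, so Proposition \ref{prop:W?} yields $\sigma\in W^?(\rhobar)$. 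The principal technical obstacle, relevant to both parts (\ref{it:Tfixedpts:2}) and (\ref{it:Tfixedpts:3}), is the combinatorial bookkeeping needed to convert between the affine Bruhat inequality on $\tld{w} = t_\omega u\tld{w}_2 t_{-\omega}$ (which mixes Weyl and translation parts) and the pure $\tld{W}^+$-inequality $\tld{w}_2\leq\tld{w}_1$; this requires a careful gallery argument exploiting the length-additivity of $w_0\tld{w}_1$ and the uniqueness of $\tld{W}^+$-representatives in $W$-cosets.
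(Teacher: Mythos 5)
Your general strategy (convert membership in $C_\sigma^\zeta$ to a Bruhat inequality and match against Propositions \ref{prop:W?} and \ref{prop:Tfixedpts}) is the right one, and part (\ref{it:Tfixedpts:1}) goes through. However, the framing you set up at the start is broken by a sign: the $*$-operation is an \emph{anti}-homomorphism, not a homomorphism. Concretely, for $\tld{w}=t_\nu w$ one has $\tld{w}^* t_\omega = w^{-1}t_{\nu+\omega} = (t_\omega\tld{w})^*$, whereas $(\tld{w}t_\omega)^* = w^{-1}t_{\nu+w(\omega)}$, which is different unless $w$ fixes $\omega$. So the condition $\tld{w}^*(\rhobar)=\tld{w}^*t_\omega$ forces $\tld{w}=t_{-\omega}\tld{w}(\rhobar)=t_{\mu+\eta-\omega}\,s$, \emph{not} $\tld{w}=\tld{w}(\rhobar)t_{-\omega}=t_{\mu+\eta-s\omega}\,s$ as you wrote. (Your verification in part (\ref{it:Tfixedpts:1}) dodges this because you check $t_\omega u\tld{w}_1=\tld{w}(\rhobar)$ directly, which is the correct equation.)

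This is not cosmetic. With the correct $\tld{w}=t_{-\omega}\tld{w}(\rhobar)$ and the factorization $\tld{w}(\rhobar)=t_\omega w\tld{w}_2$ coming from $\omega = \tld{w}(\rhobar)\tld{w}_2^{-1}(0)$, one gets $\tld{w}=w\tld{w}_2$ cleanly, with $w\in W^\cJ$ and $\tld{w}_2\in\tld{W}^{+,\cJ}$, and the equivalence $w\tld{w}_2\le w_0\tld{w}_1\Leftrightarrow\tld{w}_2\le\tld{w}_1$ is immediate from the subword property of the reduced expression $w_0\tld{w}_1$. With your $\tld{w}=\tld{w}(\rhobar)t_{-\omega}=t_\omega w\tld{w}_2 t_{-\omega}$, by contrast, the element is a $t_\omega$-conjugate, and conjugation by a translation does not respect the Bruhat order, so the ``careful gallery argument exploiting length-additivity'' you appeal to cannot close the gap---you would be proving a statement that is neither necessary nor sufficient for $\tld{w}^*(\rhobar)\in C_\sigma^\zeta$. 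The fix is exactly the rewriting the paper leads with: $\{\tld{w}^*t_\omega\mid\tld{w}\le w_0\tld{w}_1\}=\{(t_\omega w\tld{w}_2)^*\mid w\in W^\cJ,\ \tld{w}_2\in\tld{W}^{+,\cJ},\ \tld{w}_2\le\tld{w}_1\}$, after which parts (\ref{it:Tfixedpts:2}) and (\ref{it:Tfixedpts:3}) follow by matching $\tld{w}(\rhobar)=t_\omega w\tld{w}_2$ against this set, using Wang's theorem to pass between $\uparrow$ and $\le$.
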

 \begin{proof}
 The set $\{ \tld{w}^* t_{\omega} \mid \tld{w} \leq w_0 \tld{w}_1\}$ from Proposition \ref{prop:Tfixedpts} can also written $\{ (t_{\omega} w\tld{w}_2)^* \mid w\in W^{\cJ},\, \tld{w}_2 \in \tld{W}^{+, \cJ},\, \tld{w}_2 \leq \tld{w}_1\}$ and taking $\tld{w}_2 = \tld{w}_1$ gives the set from Proposition \ref{prop:Tfixedobv}.
 Let $(\tld{w}_1, \tld{w}_2)$ be the pair as in \eqref{eqn:W?} which gives the presentation for $\sigma$ so that $C^{\zeta}_{\sigma} = C_{(\tld{w}_1, \tld{w}(\rhobar) \tld{w}_2^{-1}(0))}$.  Writing $ \tld{w}(\rhobar) \tld{w}_2^{-1}$ as $t_{\omega} w$, the first item follows from Proposition \ref{prop:Tfixedobv} and the third from Proposition \ref{prop:Tfixedpts}.  
 
For the second item, by the upper bound on $T^{\vee}$-fixed points of $C^{\zeta}_{\sigma}$, if $\tld{w}^*(\rhobar) \in C_{\sigma}^{\zeta}$ then $\tld{w}(\rhobar) = t_{\omega} w' \tld{w}_2$ where $\tld{w}_2 \leq \tld{w}_1$ as above.   
By Proposition \ref{prop:W?}, $\sigma \in W^?(\rhobar)$  since $\tld{w}(\rhobar) \tld{w}_2^{-1} (0) = \omega$.  %
 \end{proof} 
\clearpage{}%
\clearpage{}%
\section{Breuil--Kisin modules and Pappas--Zhu local models}
\label{sec:BK:LM}

\subsection{Breuil--Kisin modules with tame descent} \label{sec:BKwithdescent}

Throughout this section we take $G=\GL_n$ and consider the setting of \S \ref{sec:InertialTypes}.
Let $\tau: I_{\Qp}\ra \widehat{\un{T}}(E)$ be a tame inertial $L$-parameter over $E$, with an associated tame inertial type $\tau:I_K\ra\GL_n(E)$ for $K$ as described in Example \ref{ex:data:type}.
We fix  throughout this section a $1$-generic lowest alcove presentation $(s,\mu)$ for $\tau$.
Let $r$ be the order of $s_\tau$, and let $K'$ be the subfield of $\ovl{K}$ which is unramified of degree $r$ over $K$.
Set $\cJ' = \Hom_{\Qp}(K', E)$ and $\cJ=\Hom_{\Qp}(K,E)$.  
Let $f'\defeq fr$, $e' \defeq p^{f'}-1$.
We fix an isomorphism $\sigma'_0:K' \iarrow E$ which extends $\sigma_0:K \iarrow E$.
The identifications $\cJ'\cong \Z/f' \Z$ and $\cJ\cong \Z/f\Z$ (given by $\sigma_{j'} \defeq \sigma'_0\circ \phz^{-j'} \mapsto j'$ and $\sigma_{j} \defeq \sigma_0 \circ \phz^{-j} \mapsto j$ respectively) are such that  restriction of embeddings from $K'$ to $K$ induces the surjection $\cJ' \onto \cJ$ given by reducing modulo $f$ in the above identifications. 
Write $\tau'$ for the tame inertial type for $K'$ obtained from $\tau$ via the identification $I_{K'}=I_K$ induced by the inclusion $K'\subseteq \ovl{K}$.

We fix an $e'$-th root $\pi'\in \ovl{K}$ of $-p$ and 
set $L' \defeq K'(\pi')$.
Let $\Delta' \defeq \Gal(L'/K') \subset \Delta \defeq \Gal(L'/K)$. 
 We set $\omega_{K'}(g) =  \frac{g(\pi')}{\pi'}$ for $g \in \Delta'$ and note that $\omega_{K'}$ does not depend on the choice of $\pi'$.  We can also think of $\omega_{K'}$ as a character of $I_{K'} = I_{K}$ valued in $\cO_{K'}^{\times}$ (the units in the ring of integers of $K'$). %
 Composing with $\sigma'_0$, we get a character $\omega_{K', \sigma'_0}:\Delta' \ra \cO^{\times}$.  In notation of Example \ref{ex:data:type}, we have $\omega_{K', \sigma'_0} = \omega_{f'}$ and hence $\omega_{K', \sigma_{j'}} = \omega_{f'}^{p^{f'-j'}}$.

Let $R$ be an $\cO$-algebra. Let $\fS_{L'} \defeq W(k')[\![u']\!]$ and $\fS_{L', R} \defeq (W(k') \otimes_{\Zp} R)[\![u']\!]$. 
As usual, $\varphi:\fS_{L', R} \ra \fS_{L', R}$ acts as Frobenius on $W(k')$, trivially on $R$, and sends $u'$ to $(u')^{p}$.  

We endow $\fS_{L', R}$ with an action of $\Delta$ as follows: for any $g$ in $\Delta'$, $g(u') = \frac{g(\pi')}{\pi'} u' = \omega_{K'}(g) u'$   
and $g$ acts trivially on the coefficients; if $\sigma^f \in\Gal(L'/K)$ is the lift of $p^f$-Frobenius on $W(k')$ which fixes $\pi'$, then $\sigma^f$ is a generator for $\Gal(K'/K)$, acting in natural way on $W(k')$ and trivially on both $u'$ and $R$. 
Set $v = (u')^{e'}$, and define
\[
\fS_R \defeq (\fS_{L', R})^{\Delta = 1} = (W(k) \otimes_{\Zp} R)[\![v]\!].
\]
Set $E(v) \defeq v + p = (u')^{e'} + p$.  

We will make use of the group scheme $\cI$ defined over $\cO$, which is the base change of $L^+\cG$ along the map $
\bA^1\to \Spec \cO$ sending $t$ to $0$. In other words, for $R$ a Noetherian $\cO$-algebra,
\[\cI(R)=\{A\in \GL_n(R[\![v]\!]) \mid A \textrm{ is upper triangular mod } v\}.\]
We also have the normal subgroup $\cI_1$ of $\cI$ defined by 
\[\cI_1(R)=\{A\in \GL_n(R[\![v]\!]) \mid A \textrm{ is unipotent upper triangular mod } v\}.\]
Note that $\cI=T^\vee_{\cO} \ltimes \cI_1$, where $T^\vee_{\cO}$ is viewed as the subgroup of $\cI$ consisting of constant diagonal matrices.

As in Section \ref{sec:MLM}, when we decorate an object that occurs in Section \ref{sec:UMLM} with a subscript $\cO$, it means we take the base change of that object to $\cO$ via the map $\bA^1\to \cO$ sending $t$ to $-p$. In particular, we have the objects $L\cG_\cO$, $L^+\cG_\cO$, $L^{--}\cG_\cO$, $\Gr_{\cG,\cO}=L^+\cG_\cO\backslash L\cG_\cO$. %

In general the map $v\mapsto v^p$ does not extend to a homomorphism $R[\![v+p]\!]\to R[\![v+p]\!]$. However, when $R$ is $p$-adically complete, we have $R[\![v+p]\!]=R[\![v]\!]$, and so $\phz$ extends to $R[\![v+p]\!]$. Similarly,
if $p$ is nilpotent in $R$ then $R(\!(v+p)\!)=R(\!(v)\!)$, and so $\phz$ extends to $R(\!(v+p)\!)$. Furthermore, the group functors $\cI$ and $L^+\cG_\cO$ coincide on the category of $p$-adically complete Noetherian $\cO$-algebras.  %
Unless stated otherwise, $R$ will be a $p$-adically complete $\cO$-algebra for the remainder of the section.

For any positive integer $h$, let $Y^{[0, h]}(R)$ be the groupoid of Breuil--Kisin modules of rank $n$ over $\fS_{L', R}$ and height in $[0,h]$:
\begin{defn}
An object of $Y^{[0, h]}(R)$ is a pair $(\fM,\phi_\fM)$ where $\fM$ is a finitely generated projective $\fS_{L', R}$-module, which is locally free of rank $n$, and $\phi_\fM:\phz^*(\fM)\ra\fM$ is an injective $\fS_{L', R}$-linear map whose cokernel is annihilated by $E(v)^h = ((u')^{p^{f'}-1} + p)^h$.
\end{defn}
For any $(\fM,\phi_\fM)\in Y^{[0, h]}(R)$, we have a standard $R[\![u']\!]$-linear decomposition $\fM\cong\bigoplus_{j' \in \cJ'}\fM^{(j')}$, induced by the maps $W(k')\otimes_{\Zp}R\rightarrow R$ defined by $x\otimes r\mapsto \sigma_{j'}(x)r$ for $j' \in \cJ'$.  Note that for the corresponding $R[\![u']\!]$-decomposition $\fS_{L', R} \cong \oplus_{j' \in \cJ'} R[\![u']\!]$, the action of $\Delta'$ on $u'$ in embedding $j'$ is given by $\sigma_{j'} \circ \omega_{K'} = \omega_{f'}^{p^{f' - j'}}$.    The Frobenius $\phi_{\fM}$ induces morphisms $\phi_\fM^{(j')}:\phz^*(\fM^{(j'-1)})\ra \fM^{(j')}$.    

\begin{rmk} There is choice of convention  on whether the domain or target of $\phi_\fM^{(j')}$ should correspond to the $\sigma_{j'}$-embedding.  
We are changing the convention here from our previous works, namely \cite{LLLM, LLL, LLLM2}. 
The convention here makes the connection to Hodge--Tate weights labelled by embedding and representation theory more natural. 
The comparison with \cite{LLLM, LLL, LLLM2} is explained in detail in Remark \ref{rmk:cmpr:mat}.  
\end{rmk}

We let $Y^{[0, h], \tau}(R)$ denote the groupoid of Breuil--Kisin modules of rank $n$, height in $[0,h]$ and descent data of type $\tau$ (cf.~\cite[\S 3]{CL}, \cite[Definition 3.1.3]{LLLM2}):
\begin{defn}\label{defn:Breuil-Kisin} 
An object of $Y^{[0, h], \tau}(R)$ is the datum of $(\fM, \phi_{\fM})\in Y^{[0, h]}(R)$ together with a semilinear action of  $\Delta$ on $\fM$ which commutes with $\phi_{\fM}$, and such that, for each $j' \in \cJ'$, 
\[
\fM^{(j')} \mod u' \cong \tau^{\vee} \otimes_{\cO} R 
\]  
as $\Delta'$-representations. 
In particular, the semilinear action of $\Delta$ induces an isomorphism $\iota_{\fM}:(\sigma^f)^*(\fM) \cong \fM$ (see \cite[\S 6.1]{LLLM}) as elements of $Y^{[0,h], \tau'}(R)$.
\end{defn}

We will often omit the additional data and just write $\fM \in Y^{[0, h], \tau}(R)$. 

\begin{rmk}
\begin{enumerate}
\item
As explained in \cite[\S 6.1]{LLLM}, the data of an extension of the action of $\Delta'$ to an action of $\Delta$ is equivalent to the choice of an isomorphism $\iota_{\fM}:(\sigma^f)^*(\fM) \cong \fM$ satisfying an appropriate cocycle condition. We will use both points of view interchangeably.
\item {It is known (\cite[Corollary 3.1.7]{CEGS}, see also \cite[Theorem 4.7]{CL}) that $Y^{[0,h],\tau}$ is a $p$-adic formal algebraic stack in the sense of \cite[Definition A.2]{CEGS} and therefore it is determined by its values on $\cO/\varpi^a$-algebras of finite type, for $a\geq 1$ (and hence, on $p$-adically complete Noetherian $\cO$-algebras).}
\item  The appearance of $\tau^{\vee}$ in the definition is due to the fact that we are using the contravariant functors to Galois representations to be consistent with \cite{LLL, LLLM}.  In \cite{LLLM}, we didn't use the notation $\tau^{\vee}$. Instead, we included it in our description of descent data by having a minus sign in the equation before Definition 2.1 of \emph{loc. cit.}   The notion of Kisin module with tame descent data of type $\tau$ here is consistent with what appears in both \cite{LLL, LLLM}.  
\end{enumerate}
\end{rmk}
\begin{rmk}
Recall that we have fixed a lowest alcove presentation of the tame inertial $L$-parameter $\tau$.
Definitions \ref{defn:eigenbasis} and \ref{defn:shape} below, as well as the definition of matrix of partial Frobenius $A^{(j)}_{\fM, \beta}$ depend on the choice of presentation. 
\end{rmk}

\begin{defn} \label{defn:eigenbasis} (\cite[Definition 3.1.6]{LLLM2})
Let $\fM \in Y^{[0,h], \tau}(R)$.
An \emph{eigenbasis} of $\fM$ is a collection of (ordered) bases ${\beta}^{(j')}=(f_1^{(j')},f_2^{(j')},\dots,f_n^{(j')})$ for each $\fM^{(j')}$ for $j' \in \cJ'$ such that $\Delta'$ acts on $f_i^{(j')}$ via the character $\chi_i^{-1}$ from (\ref{eq:def:type})  and  which is compatible with the isomorphism $\iota_{\fM}$ in the sense that $\iota_{\fM} ((\sigma^f)^*(\beta^{(j')})) = \beta^{(j' + f)}$. 
\end{defn}

We now define the notion of matrix of partial Frobenius with respect to $\beta$ for an object $\fM \in Y^{[0, h], \tau}(R)$.
Let $\fM \in Y^{[0, h], \tau}(R)$ and let $\beta$ be an eigenbasis for $\fM$.
Define $C^{(j')}_{\fM, \beta}$ to be the matrix of $\phi_\fM^{(j')}:\phz^*(\fM^{(j' - 1)})\ra \fM^{(j')}$ with respect to the bases $\phz^*(\beta^{(j'-1)})$ and $\beta^{(j')}$.  
The height condition on $\fM$ is equivalent to $C^{(j')}_{\fM, \beta} \in \Mat_n(R[\![u']\!])$ and $E(v)^h (C^{(j')}_{\fM, \beta})^{-1} \in \Mat_n(R[\![u']\!])$.  
The fact that $\beta$ is compatible with  $\iota_{\fM}:(\sigma^f)^*(\fM) \cong \fM$ implies that $C^{(j')}_{\fM, \beta}$ only depends on $j'$ mod $f$.   

Because $\phi_{\fM}^{(j')}$ commutes with descent datum, this implies a certain $u'$-divisibility of the entries of $C^{(j')}_{\fM, \beta}$. 
To  ``remove the descent datum," we first recall some data related to the tame inertial type $\tau$. 
Let $(s, \mu)$ be a lowest alcove presentation of $\tau$.  
Recall from Example \ref{ex:data:type} that $\bm{\alpha}_j = s_{f-1}^{-1} s_{f-2}^{-1} \ldots s_{f-j}^{-1}(\mu_{f-j} + \eta_{f-j})$ for $1\leq j\leq f-1$ and $\bm{\alpha}_0 = \mu_0 + \eta_0$, and that $s_{\tau} = s_0 s_1 \ldots s_{f-1} \in W$.   
We have the corresponding data for $\tau'$ which is the tame inertial type for $K'$ obtained as the restriction to $I_{K'}$ of $\tau$.
Namely, for any $j' \in \cJ'$, define 
\begin{equation}
\label{eq:alphaprime}
\bm{\alpha}'_{j + kf} \defeq  s_{\tau}^{-k} (\bm{\alpha}_{ j}) \text{ for } 0 \leq j \leq f-1, \, 0 \leq k \leq r-1.
\end{equation}
Next, for any $j' \in \cJ'$, define %
\begin{equation}
\label{eq:aprime}
\bf{a}^{\prime\,(j')} \defeq \sum_{i =0}^{f'-1} \bm{\alpha}'_{-j' + i} p^i.
\end{equation}
Note that if $\chi_i$ are the characters appearing in $\tau$ as in (\ref{eq:def:type}), then $\chi_i = \omega_{f'}^{\bf{a}_i^{\prime\,(0)}}$. 

We define the \emph{orientation} $s'_{\mathrm{or}} \in W^{\cJ'}$ of $(\bm{\alpha}'_{j'})_{j'\in \cJ'}$ by 
\begin{equation} \label{primeorient}
s'_{\mathrm{or}, j + kf} \defeq s_{\tau}^{k+1} (s^{-1}_{f-1} s^{-1}_{f-2} \ldots s^{-1}_{j+1}) \text{ for } 0 \leq j \leq f-1, \, 0 \leq k \leq r-1
\end{equation} 
where the empty product is interpreted as the identity.  
It is an element of $W^{\cJ'}$ such that $(s'_{\mathrm{or}, j'})^{-1}(\bf{a}^{\prime\,(j')})$ is dominant, and there is a unique such element if $\mu$ is $0$-generic.
This follows from the definitions of $s'_{\mathrm{or}, j'}$, $\bm{\alpha}'_{f'-1-j'}$, noting that $\bf{a}^{\prime\,(j')}$ is dominated by $p^{f'-1} \bm{\alpha}'_{f' -1 -j'}$.

Then
\begin{equation}
\label{eq:CtoA}
A^{(j')}_{\fM,\beta}\defeq 
\Ad\left(
(s'_{\orient,j'})^{-1}  (u^{\prime})^{-\bf{a}^{\prime\,(j')}}
\right)(C^{(j')}_{\fM,\beta})
\end{equation}
{is the \emph{matrix of the $j'$-th partial Frobenius of $\fM$ with respect to $\beta$}. (Note the different meaning of the superscript $(j')$ when comparing with the notion of matrix of partial Frobenii appearing in \cite[\S 3.6.1]{LLLM2}, \cite[\S 3.2]{LLL}, see Remark \ref{rmk:cmpr:mat} below.)}
\begin{rmk}
\label{rmk:cmpr:mat}
In the discussion between Definition 3.2.8 and Proposition 3.2.9 in \cite{LLL}, we find the definition of matrices $A^{(j')}$, attached to an eigenbasis $\beta$ for $\fM\in Y^{[0,h],\tau}(R)$ where $\tau$ is a tame inertial type with a given lowest alcove presentation $(s,\mu)$ in the sense of \cite[Definition 2.2.5(4)]{LLL}.
These matrices differ from those defined in equation (\ref{eq:CtoA}) by a shift due to a change in convention.  %
We now explain in detail the differences between the conventions in this paper, and those in \cite{LLL}, \cite{LLLM2}.

Let $\tau$ be the tame inertial type with lowest alcove presentation $(s,\mu)$, which we fixed at the beginning of this section.
Then the lowest alcove presentation of the tame inertial type \emph{in the sense of \cite[Definition 2.2.5(4)]{LLL}, \cite[Definition 2.2.2(4)]{LLLM2}} is the element $(s_{-},\mu_{-})\in W(G)^{\cJ}\times X^*(T)^{\cJ}$ defined by $s_{-,j}\defeq s_{f-j}$, $\mu_{-,j}\defeq \mu_{f-j}$.
Recall that in \cite[\S 3.2]{LLL} we associate elements $s_\tau$, $s'_{\orient,j'}$, $\bm{\alpha}'_{(s_{-},\mu_{-}),j + kf}$, and $\bf{a}^{\prime\,(j')}_{(s_{-},\mu_{-})}$  to $(s_{-},\mu_{-})$.
The comparison between the two conventions gives the following.
\begin{enumerate} 
\item 
\label{it:rmk:stau}
The element $s_\tau$ defined in Example \ref{ex:data:type} coincides with the element $s_\tau$ defined in \cite[\S 3.2]{LLL};
\item
the element $s'_{\orient,j'}$ defined in (\ref{primeorient}) coincides with the element $s'_{\orient,j'}$ defined in \cite[(3.2)]{LLL};
\item
the elements $\bm{\alpha}'_{j + kf}$, $\bf{a}^{\prime\,(j')}$ defined in (\ref{eq:alphaprime}), (\ref{eq:aprime}) respectively, coincide with the elements $\bm{\alpha}'_{(s_{-},\mu_{-}),j + kf}$, $\bf{a}^{\prime\,(j')}_{(s_{-},\mu_{-})}$, defined in \cite[\S 3.2]{LLL} and \cite[(3.4)]{LLL} respectively;
\item
the characters $\chi_i$ defined in (\ref{eq:def:type}) coincide with the characters $\chi_i$ defined in \cite[(3.1)]{LLL}; and
\item
\label{it:rmk:star}
for any $0\leq j\leq f-1$, we have $(s_{-})_{j}^*t_{(\mu_{-}+\eta)^*_{j}}=s^{-1}_{j+1}t_{\mu_{j+1}+\eta_{j+1}}$ where the $(\cdot)^*$ in the left hand side of the equality denotes the ``star'' operation defined in \cite[Definition 2.1.2]{LLL}, \cite[Definition 3.1.1]{LLLM2} (in particular, $(s_{-})_{j}^*=s_{-,f-1-j}^{-1}$, $(\mu_{-}+\eta)^*_{j}=\mu_{-,f-1-j}+\eta_{f-1-j}$).
\end{enumerate}
Since the partial Frobenius $\phi_\fM^{(j')}$ defined above is denoted as $\phi_\fM^{(j'-1)}$ in \cite[\S 3.2]{LLL}, \cite{LLLM2} we easily deduce from items (\ref{it:rmk:stau})--(\ref{it:rmk:star}) that the matrix $A^{(j')}_{\fM,\beta}$ defined in (\ref{eq:CtoA}) coincides with the matrix  $A^{(j'-1)}$ defined in \cite[\S 3.2]{LLL} (see the discussion after Definition 3.2.8 in \emph{loc.cit.}) with respect to the eigenbasis $\beta$ and $(s_{-},\mu_{-})$ as the fixed lowest alcove presentation (in the sense of \cite[Definition 2.2.5(4)]{LLL}).
\end{rmk}

Because $\tau$ is $1$-generic, the condition that $C^{(j')}_{\fM, \beta}\in \Mat_n(R[\![u']\!])$ is equivalent to $A^{(j')}_{\fM,\beta}  \in \Mat_n(R[\![v]\!])$ and is upper triangular modulo $v$ (equivalently $A^{(j')}_{\fM,\beta}  \in L^+ \cM_\cO(R)$ if $R$ is $p$-adically complete)
(This follows as in \cite[Proposition 2.13]{LLLM}, noticing by Remark \ref{rmk:cmpr:mat} that $C^{(j')}_{\fM, \beta}$ would be denoted as $C^{(j' + 1)}$ in \emph{loc.~cit.})
Similarly, the height condition translates into the condition that $E(v)^h (A^{(j')}_{\fM, \beta})^{-1} \in \Mat_n(R[\![v]\!])$ and is upper triangular modulo $v$. Just as with $C^{(j')}_{\fM, \beta}$, $A^{(j')}_{\fM, \beta}$ only depends on $j' \mod f$. Abusing notation, we occasionally write $A^{(j)}_{\fM, \beta}$ for $j \in \cJ$ with the obvious meaning.

The following Proposition is a reformulation of \cite[Proposition 3.2.9]{LLL} and describes how $A^{(j')}_{\fM, \beta}$ behaves under change of eigenbasis. 
\begin{prop} 
\label{prop:changeofbasis} $($\cite[Proposition 3.2.9]{LLL}$)$
Let $\fM \in Y^{[0,h], \tau}(R)$ together with two eigenbases $\beta_1$ and $\beta_2$ related by
\begin{equation*}
\beta_2^{(j')} D^{(j')} = \beta_1^{(j')}
\end{equation*}
with $D^{(j')} \in \GL_n(R[\![u']\!])$ for $j' \in \cJ'$. Set $I^{(j')} \defeq \Ad \big((s'_{\mathrm{or}, j'})^{-1} (u')^{-\mathbf{a}^{\prime \, (j')}} \big) (D^{(j')})$.

Then $I^{(j')}\in \Iw(R)$ depends only on $j' \mod f$, and for all $j' \in \cJ'$, 
\[
A_{\fM, \beta_2}^{(j')}= I^{(j')}A_{\fM, \beta_1}^{(j')} \big(\Ad(s_{j}^{-1} v^{\mu_{j}+\eta_{j}}) \big(\phz(I^{(j' - 1)})^{-1}\big)\big)
\]
where $j = j' \mod f$.

Furthermore, if $(I^{(j')}) \in \Iw(R)^{\cJ'}$ with $I^{(j')} = I^{(j'+f)}$, then $\Ad \big((u')^{\mathbf{a}^{\prime \, (j')}} s'_{\mathrm{or}, j'} \big) (I^{(j')}) = D^{(j')} \in \GL_n(R[\![u']\!])$ and for any eigenbasis $\beta$, $(\beta^{(j')} D^{(j')} )_{j' \in \cJ'}$ is again an eigenbasis. 
\end{prop}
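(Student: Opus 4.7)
The plan is to reduce everything to an explicit analysis of the matrix $D^{(j')}$ under the constraints imposed by the descent datum and by compatibility with $\iota_\fM$. Recall that for an eigenbasis $\beta^{(j')}=(f_1^{(j')},\dots,f_n^{(j')})$, the basis vector $f_i^{(j')}$ transforms under $g\in\Delta'$ by the character $\chi_i^{-1}$ described in \eqref{eq:def:type}. Hence if $\beta_2^{(j')} D^{(j')} = \beta_1^{(j')}$, the entries $D_{ik}^{(j')}$ must satisfy $g(D_{ik}^{(j')}) = \chi_i(g)\,\chi_k(g)^{-1}\, D_{ik}^{(j')}$ for all $g\in\Delta'$. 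Writing $\chi_i/\chi_k$ in terms of $\bm{\alpha}'$ and the orientation $s'_{\mathrm{or},j'}$, this forces each entry $D_{ik}^{(j')}$ to be of the form $(u')^{N_{ik}^{(j')}}\cdot (\text{element of }R[\![v]\!])$ for an explicit integer $N_{ik}^{(j')}$ dictated by $(s'_{\mathrm{or},j'})^{-1}\bf{a}^{\prime\,(j')}$.

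First, I would verify that the change of variables $I^{(j')} = \Ad\!\big((s'_{\mathrm{or},j'})^{-1}(u')^{-\bf{a}^{\prime(j')}}\big)(D^{(j')})$ is exactly designed so that the exponents $N_{ik}^{(j')}$ cancel, leaving $I^{(j')}\in\mathrm{GL}_n(R[\![v]\!])$ and, in view of the dominance property $(s'_{\mathrm{or},j'})^{-1}(\bf{a}^{\prime\,(j')})\ \text{dominant}$, that $I^{(j')}$ is upper triangular modulo $v$, i.e.\ $I^{(j')}\in\cI(R)$. The periodicity $I^{(j'+f)}=I^{(j')}$ should then follow from the eigenbasis compatibility $\iota_\fM((\sigma^f)^*\beta^{(j')}) = \beta^{(j'+f)}$ together with the fact that $s'_{\mathrm{or},j'+f}$ and $\bf{a}^{\prime(j'+f)}$ transform by $s_\tau$ under the shift $j'\mapsto j'+f$, matching the $\sigma^f$-twist in $(\sigma^f)^*D^{(j')}$.

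Next, I would derive the transformation formula. By the standard change-of-basis formula for the matrix of the Frobenius (noting that $\beta_2^{(j')}=\beta_1^{(j')}(D^{(j')})^{-1}$ is undone when computing the matrix on both source and target), one gets
\[
C^{(j')}_{\fM,\beta_2} = D^{(j')}\,C^{(j')}_{\fM,\beta_1}\,\phz(D^{(j'-1)})^{-1}.
\]
Applying $\Ad\!\big((s'_{\mathrm{or},j'})^{-1}(u')^{-\bf{a}^{\prime(j')}}\big)$ to both sides and using the definition \eqref{eq:CtoA}, the left hand side becomes $A^{(j')}_{\fM,\beta_2}$ and the first factor turns into $I^{(j')}$. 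For the last factor, I would unwind the identity
\[
\Ad\!\big((s'_{\mathrm{or},j'})^{-1}(u')^{-\bf{a}^{\prime(j')}}\big)\circ\phz \;=\; \Ad(s_j^{-1}v^{\mu_j+\eta_j})\circ \Ad\!\big((s'_{\mathrm{or},j'-1})^{-1}(u')^{-\bf{a}^{\prime(j'-1)}}\big)\circ\phz,
\]
which is a direct computation using the defining recursion \eqref{primeorient} of $s'_{\mathrm{or},j'}$ together with $\phz((u')^{\bf{a}^{\prime(j'-1)}})=(u')^{p\bf{a}^{\prime(j'-1)}}$ and the identity $p\,\bf{a}^{\prime(j'-1)}= \bf{a}^{\prime(j')} + e'\,s_j^{-1}(\mu_j+\eta_j)$ modulo lower-order corrections. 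Once this identity is established, the middle factor of $C$ turns into $A^{(j')}_{\fM,\beta_1}$ and we obtain the stated formula for $A^{(j')}_{\fM,\beta_2}$.

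Finally, the converse direction is essentially a repackaging: given any family $(I^{(j')})\in\cI(R)^{\cJ'}$ with $I^{(j'+f)}=I^{(j')}$, one \emph{defines} $D^{(j')} := \Ad\!\big((u')^{\bf{a}^{\prime(j')}}s'_{\mathrm{or},j'}\big)(I^{(j')})$. The fact that $I^{(j')}\in\cI(R)$ (entries in $R[\![v]\!]$, upper triangular mod $v$) and the dominance of $(s'_{\mathrm{or},j'})^{-1}\bf{a}^{\prime(j')}$ guarantee that no negative powers of $u'$ appear, so $D^{(j')}\in\mathrm{GL}_n(R[\![u']\!])$. Reversing the descent-datum analysis from the first step, one checks that $\beta^{(j')}D^{(j')}$ transforms under $\Delta'$ by the correct characters, and the periodicity $I^{(j'+f)}=I^{(j')}$ matches the $\iota_\fM$-compatibility, so $(\beta^{(j')}D^{(j')})$ is indeed an eigenbasis.

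The main obstacle is the careful bookkeeping in step two: tracking the powers of $u'$ and the conjugations by $s'_{\mathrm{or},j'}$ through the Frobenius twist. The key identity that $p\,\bf{a}^{\prime(j'-1)}-\bf{a}^{\prime(j')}$ equals $e'$ times a Weyl conjugate of $\mu_j+\eta_j$ is exactly what makes $\phz$ intertwine the adjoints indexed by $j'-1$ and $j'$, and everything else is formal manipulation once this is in hand.
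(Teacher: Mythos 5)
Your proof is correct, but it takes a genuinely different route from the paper's. The paper's argument for Proposition \ref{prop:changeofbasis} is purely a translation of conventions: it uses Remark \ref{rmk:cmpr:mat} to identify the matrices $I^{(j')}$ and the twisting data here with those of \cite[Proposition 3.2.9]{LLL}, and then reads off the statement from that cited result (with a shift $j'\mapsto j'-1$). Your approach, by contrast, is a direct reproof from first principles. You correctly derive the change-of-basis relation $C^{(j')}_{\fM,\beta_2}=D^{(j')}C^{(j')}_{\fM,\beta_1}\phz(D^{(j'-1)})^{-1}$, correctly describe how $\Delta'$-equivariance forces the $u'$-powers in $D^{(j')}$ to cancel against $\Ad\big((s'_{\mathrm{or},j'})^{-1}(u')^{-\bf{a}^{\prime(j')}}\big)$, and correctly identify the crucial bookkeeping identity that finishes the middle step. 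The direct route is self-contained and would make this paper less reliant on citing a previous computation; the cost is the more involved verification of the orientation recursion.

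One small imprecision worth flagging: the displayed identity in your step two, $p\,\bf{a}^{\prime(j'-1)}=\bf{a}^{\prime(j')}+e'\,s_j^{-1}(\mu_j+\eta_j)$, is not quite the clean statement. The correct computation is $p\,\bf{a}^{\prime(j'-1)}-\bf{a}^{\prime(j')}=(p^{f'}-1)\bm{\alpha}'_{f'-j'}=e'\bm{\alpha}'_{f'-j'}$, and the Weyl conjugate that intervenes is the orientation, namely $\bm{\alpha}'_{f'-j'}=s'_{\mathrm{or},j'-1}(\mu_j+\eta_j)$, not $s_j^{-1}(\mu_j+\eta_j)$. Combined with the recursion $(s'_{\mathrm{or},j'})^{-1}s'_{\mathrm{or},j'-1}=s_j^{-1}$, this gives exactly the factor $s_j^{-1}v^{\mu_j+\eta_j}$ in the stated transformation rule. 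You evidently sensed this (hence the ``modulo lower-order corrections'' hedge and the later reference to ``a Weyl conjugate''), so this is a bookkeeping cleanup rather than a gap, but if you carry out the direct proof you should replace that displayed identity with the exact one. With that fix, the argument goes through as you describe.
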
  
\begin{proof} 
By Remark \ref{rmk:cmpr:mat}, we see that the matrix $I^{(j')}$ defined above coincides with the matrix $I^{(j')}$ defined in the statement of \cite[Proposition 3.2.9]{LLL} (for which we use the lowest alcove presentation $(s_{-},\mu_{-})$ in the sense of \cite[Definition 2.2.1(iv)]{LLL} for $\tau$).
From the conclusion of Remark \ref{rmk:cmpr:mat}, and its item (\ref{it:rmk:star}), we see that the statement of the Proposition is just the statement of \cite[Proposition 3.2.9]{LLL} with $j'$ taken to be $j'-1$.
\end{proof}
\begin{defn}
\label{defn:shape} The \emph{shape} of a mod $p$ Breuil--Kisin module $\fM \in Y^{[0,h], \tau}(\F')$ with respect to $\tau$ is the element $\tld{z} = (\widetilde{z}_j) \in \widetilde{W}^{\vee, \cJ}$ such that for any eigenbasis $\beta$ and any $j \in \cJ$, the matrix $A^{(j)}_{\fM, \beta}$ lies in $\Iw(\F') \widetilde{z}_j \Iw(\F')$.
(This doesn't depend on the choice of eigenbasis by Proposition \ref{prop:changeofbasis}.)
\end{defn}

We record a useful and elementary lemma for later computations:
\begin{lemma} \label{lem:changeofbasis}  \label{lem:contracting1} 
Assume that $\tau$ admits an $m$-generic lowest alcove presentation $(s, \mu)$.
Let $R$ be an $\cO$-algebra.
\begin{enumerate}[(a)]
 \item 
\label{it:contracting1:a}  
Let $I \in \Iw_1(R)$. Then $\Ad(s^{-1}_j v^{\mu_j+\eta_j}) (\phz(I)) \equiv 1 \mod v^{m+1}.$   
\item If $I \in  \GL_n(R[\![v]\!])$, $m \geq 1$, and $I \equiv 1 \mod v^k$, then $\Ad(s^{-1}_j v^{\mu_j+\eta_j})(\phz(I)) \equiv 1 \mod v^{(k-1)p + m + 1}.$   
\item If $Y \in  \Mat_n(R[\![v]\!])$ and  is upper triangular mod $v$, then $\Ad(s^{-1}_j v^{\mu_j+\eta_j})(Y) \in v^{m+1} \Mat_n(R[\![v]\!])$.
\item If $Y \in v^k \Mat_n(R[\![v]\!])$, then   $\Ad(s^{-1}_j v^{\mu_j+\eta_j})(\phz(Y)) \in v^{(k-1)p + m + 1} \Mat_n(R[\![v]\!])$. %
\end{enumerate}
\end{lemma}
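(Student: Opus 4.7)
The plan is to reduce the entire lemma to an entry-by-entry computation of $v$-adic valuations, controlled by three elementary facts that I would record upfront. First I would verify the three structural observations driving the argument:
(i) the Frobenius $\phz$ multiplies the $v$-adic valuation of each matrix entry by a factor of $p$;
(ii) conjugation by $v^{\mu_j+\eta_j}$ shifts the valuation of the $\alpha$-entry by $\langle \mu_j+\eta_j,\alpha^\vee\rangle$;
(iii) conjugation by the permutation matrix $s_j^{-1}$ permutes entries without altering their valuations.
The hypothesis that $(s,\mu)$ is $m$-generic (equivalently that $\mu$ is $m$-deep in $\un{C}_0$, cf.~Definition \ref{defi:gen:type}(\ref{defi:gen:type})) translates into the inequalities $\langle \mu_j+\eta_j,\alpha^\vee\rangle\geq m+1$ for $\alpha\in\Phi^+$ and $\langle \mu_j+\eta_j,\alpha^\vee\rangle\leq -(m+1)$ for $\alpha\in\Phi^-$, and implies $p\geq m+2$.

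With these in hand, each part of the lemma reduces to a case analysis according to whether the matrix entry under consideration lies on the diagonal, strictly above it, or strictly below it. The guiding principle is that upper triangular entries gain the shift $\geq m+1$ directly from (ii); lower triangular entries suffer a shift as negative as $-(p-m-1)$ from (ii), which must be compensated either by the $\phz$-boost of $\times p$ (in parts (a), (b), (d)) or by the intrinsic $v$-divisibility of $Y$ (in part (c)); diagonal entries are not moved by (ii) and rely entirely on the $\phz$-boost or on hypotheses about $Y$.

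Concretely, for (a), writing $I=1+Y$ with $Y_{ii},\,Y_{ik}$ (for $i>k$) in $vR[\![v]\!]$ and $Y_{ik}$ (for $i<k$) in $R[\![v]\!]$, the matrix $\phz(Y)$ has valuations $\geq p$ on the diagonal, $\geq 0$ on strict upper, and $\geq p$ on strict lower entries; after applying (ii)--(iii) the three cases give the bounds $\geq p$, $\geq m+1$, and $\geq p-(p-m-1)=m+1$, so that the minimum $m+1$ is the claim. Part (b) proceeds identically with the stronger uniform divisibility $I-1\in v^k M_n(R[\![v]\!])$, producing the bounds $kp$, $kp+m+1$, $(k-1)p+m+1$ and hence the overall bound $(k-1)p+m+1$ (using $p\geq m+2$ for the diagonal). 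Part (d) is exactly the same computation but with $Y$ in place of $I-1$, and part (c) follows from (ii) and (iii) alone applied to the divisibility pattern forced by $Y$ being upper triangular mod $v$.

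The only real ``obstacle'' here is bookkeeping: one must keep straight the sign of $\langle \mu_j+\eta_j,\alpha^\vee\rangle$ for each class of roots, the order of operations in $\Ad(s_j^{-1}v^{\mu_j+\eta_j})\circ\phz$, and the permutation induced by $s_j^{-1}$; no conceptual difficulty lies beneath these computations.
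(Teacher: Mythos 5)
Your treatment of parts (a), (b), and (d) is correct and is the same entry-by-entry valuation count the paper carries out for (a) (the paper only proves (a) in full, leaving the remaining items to the reader). For (b) and (d) the extension is exactly as you describe: the uniform $v^k$-divisibility gives valuations $\geq kp$, $\geq kp + (m+1)$, $\geq kp - (p-m-1) = (k-1)p + m + 1$ on diagonal, strict upper, strict lower entries respectively, and $(k-1)p + m + 1$ is the minimum since $p > m + 1$. (For the diagonal you only need $p \geq m+1$; the stronger $p \geq m+2$ that $m$-genericity gives is of course available.)

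Part (c) is a genuine gap, and your own observations expose it. In (c) there is no $\phz$, so by your observation that the adjoint action does not move the diagonal, the diagonal entries of $\Ad(s_j^{-1}v^{\mu_j+\eta_j})(Y)$ are just a permutation of those of $Y$, which are only in $R[\![v]\!]$; they need not be divisible by $v^{m+1}$. Worse, the strictly lower entries of $Y$ are only divisible by $v$, while the shift $\langle\mu_j+\eta_j,\alpha^\vee\rangle$ for $\alpha < 0$ can be as small as $-(p-m-1)$, yielding valuation $\geq m + 2 - p \leq 0$: the result may acquire a pole and fail to lie in $\Mat_n(R[\![v]\!])$ at all. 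A concrete counterexample: take $n=2$, $Y = \left(\begin{smallmatrix}1 & 0 \\ v & 0\end{smallmatrix}\right)$ (upper triangular mod $v$), and $\langle\mu_j+\eta_j,(\eps_1-\eps_2)^\vee\rangle = p-m-1$. So (c) does not follow from your observations (ii), (iii) as you claim; in fact the statement appears to be incorrect as printed. What \emph{is} true, and reduces to your computation for (a), is: if $Y$ is strictly upper triangular mod $v$ (i.e.\ diagonal \emph{and} strictly lower entries divisible by $v$), then $\Ad(s_j^{-1}v^{\mu_j+\eta_j})(\phz(Y)) \in v^{m+1}\Mat_n(R[\![v]\!])$ — the Lie-algebra restatement of (a). You should flag the discrepancy rather than assert that (c) follows ``from (ii) and (iii) alone.''
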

\begin{proof} 
We provide a proof of item (\ref{it:contracting1:a})  and leave the rest to the reader.  
It suffices to prove that $\Ad(v^{\mu_j+\eta_j}) (\phz(I)) \equiv 1 \mod v^{m+1}$. Recall (cf.~ Definition \ref{defn:var:gen}) that since $(s, \mu)$ is an $m$-generic lowest alcove presentation, for any $\alpha \in \Phi^+$, 
\[
m < \langle \mu + \eta, \alpha^{\vee} \rangle < p - m. 
\] 
In particular, for such a $\mu$ to exist  we need $m + 1 < p$.

The diagonal entries of $\Ad(v^{\mu_j+\eta_j}) (\phz(I))$ are the same as the diagonal entries of $\phz(I)$, which are congruent to 1 mod $v^p$ since $I \in \Iw_1(R)$. For the off-diagonal entries, let $\alpha$ be a positive root.  The $\alpha$-entry of $\Ad(v^{\mu_j+\eta_j}) (\phz(I))$ is divisible by $v^{ \langle \mu + \eta, \alpha^{\vee} \rangle}$.  Similarly, the $-\alpha$-entry of $\phz(I)$ is divisible by $v^p$ and so  the $-\alpha$ entry of $\Ad(v^{\mu_j+\eta_j}) (\phz(I))$ is divisible by $v^{ p - \langle \mu + \eta, \alpha^{\vee} \rangle}$.  This gives the desired divisibility.
\end{proof}

\subsection{Gauge bases}
\label{sub:GBases}  
The goal of this subsection is to discuss the notion of gauge basis, which will provide a normal form for various families of Breuil--Kisin modules of type $\tau$, and which will be our main tool to analyze the structure of the $p$-adic completion of the stack $Y^{[0,h],\tau}$ in the next subsection. For $\tau$ sufficiently generic relative to $h$, we will define the notion of a Breuil--Kisin module admitting a $\tld{z}$-gauge, for $\tld{z}\in \tld{W}^{\vee,\cJ}$. This is an open condition in the moduli of Breuil--Kisin modules, and thus is stable under small deformations. We then show that such Breuil--Kisin modules admit a canonical basis adapted to $\tld{z}$, which is unique up to rescaling. An innovation compared to our previous work \cite{LLL}, \cite{LLLM} is that we do not just consider canonical bases associated to the shape of a Breuil--Kisin module (which is an element of $\tld{W}^{\vee,\cJ}$ canonically attached to each closed point of $Y^{[0,h],\tau}$). The stratification by shape decomposes $Y^{[0,h],\tau}$ into a disjoint union of locally closed substack, and hence the property of having constant shape is not preserved under small deformations. For this reason,  our approach here is better suited for the local study of $Y^{[0,h],\tau}$. 

Recall that we have the twisted loop group $L\cG_{\cO}$, the twisted positive loop group $L^+\cG_{\cO}$ and the space $L^+\cM_\cO$. Let $a\leq b$ be integers. We let $L^{[a,b]}\cG_{\cO}$ be the subfunctor of $L\cG_{\cO}$ whose value on a Noetherian $\cO$-algebra is given by given by 
\[L^{[a,b]}\cG_{\cO}(R)=\{g\in L\cG_\cO(R)\, |\, g\in (v+p)^aL^+\cM_\cO(R) \textrm{ and } (v+p)^bg^{-1}\in L^+\cM_\cO(R)\}.\]
Clearly $L^{[0,h]}\cG_\cO$ is preserved by left and right multiplication by $L^+\cG_\cO$, and we define
\[\Gr_{\cG,\cO}^{[a,b]}\defeq L^+\cG_\cO\backslash L^{[a,b]}\cG_\cO.\]
Let now $L^{[a,b]} (\GL_n)_{\F}$ be the subfunctor of $L(\GL_n)_{\F}$ defined on $\F$-algebras $R$ by
\begin{equation} \label{defn:GLnAB}  
L^{[a,b]} (\GL_n)_{\F}(R) \defeq \{A \in L  (\GL_n)_{\F}(R)\mid v^{-a} A,\, v^b A^{-1}  \in \Mat_n(R[\![v]\!]) \}.
\end{equation}
The fpqc-sheafification of $R \mapsto \cI_{\F}(R) \backslash L^{[a,b]}  (\GL_n)_{\F}(R)$ is a finite type closed subscheme $\Fl^{[a, b]} \subset \Fl$ {(consider the natural projection of $\Fl$ onto the affine Grassmannian for $\GL_n$ over $\F$, and for the latter use \cite[Lemma 1.1.5]{zhu-intro-AG})}.   
Base changing to $\bF$, we get $\Gr_{\cG,\bF}^{[a,b]}= \cI_\bF\backslash L^{[a,b]}\cG_\bF\subset \Fl^{[a,b]}$ (where the containment is strict). We also define $\tld{\Gr}^{[a,b]}_{\cG,\bF}=\cI_{1,\bF}\backslash L^{[a,b]}\cG_\bF\subset \tld{\Fl}^{[a,b]}$, which is the pullback of the previous situation to $\tld{\Fl}$ (as in \S \ref{sec:Ttorsors}). We evidently have $(v+p)^m\Gr_{\cG,\cO}^{[a,b]}=\Gr_{\cG,\cO}^{[a+m,b+m]}$.

We first give a presentation of the $p$-adic completion of the stack $Y^{[0,h],\tau}$ as a quotient stack.
Given a pair $(s,\mu)\in W^\cJ\times X^*(T)^\cJ$, we define the $(s,\mu)$-twisted $\phz$-conjugation action of $(L^+\cG_\cO)^{\cJ}$ on $(L\cG_{\cO})^{\cJ}$ by 
\[(I^{(j)})_j\cdot (A^{(j)})_j=I^{(j)}A^{(j)}\big(\Ad(s^{-1}_jv^{\mu_j+\eta_j})\big(\phz(I^{(j-1)})^{-1}\big)\big)\]
Similarly, we define the $(s,\mu)$-twisted conjugation action by the above formula, but with the $\phz$ dropped. 
The following is essentially a reformulation of Proposition \ref{prop:changeofbasis}:
\begin{prop}\label{prop:Breuil-Kisin_moduli_presentation} Let $(s,\mu)$ be a lowest alcove presentation of $\tau$. Then there is a canonical isomorphism of $p$-adic formal $\cO$-stacks $Y^{[0,h], \tau}\cong [(L^{[0,h]}\cG_\cO)^{\cJ}/_{\phz,(s,\mu)}(L^+\cG_\cO)^{\cJ}]^{\wedge_p}$, where the action is the $(s,\mu)$-twisted $\phz$-conjugation action.
\end{prop}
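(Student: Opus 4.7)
The plan is to realize both sides as quotient stacks of a common torsor, namely the $p$-adic formal stack $\tld{Y}^{[0,h],\tau}$ parametrizing pairs $(\fM,\beta)$ where $\fM \in Y^{[0,h],\tau}(R)$ and $\beta$ is an eigenbasis of $\fM$ (with respect to the fixed lowest alcove presentation $(s,\mu)$).

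First, I would construct a morphism of $p$-adic formal stacks
\[
\Psi \colon \tld{Y}^{[0,h],\tau} \longrightarrow (L^{[0,h]}\cG_\cO)^{\cJ,\wedge_p}
\]
by sending $(\fM,\beta)$ to the tuple $(A^{(j)}_{\fM,\beta})_{j\in\cJ}$. The height condition on $\fM$ and the $1$-genericity of $\tau$ (cf.~the discussion following Remark \ref{rmk:cmpr:mat}) guarantee that each $A^{(j)}_{\fM,\beta}$ lies in $L^{[0,h]}\cG_\cO(R)$. To show $\Psi$ is an isomorphism I would construct an inverse: given $(A^{(j)})_{j\in\cJ}$, use equation~\eqref{eq:CtoA} inversely to recover matrices $C^{(j')}$ for every $j'\in\cJ'$ (using the reduction $\cJ'\onto\cJ$ and the fact that $A^{(j')}_{\fM,\beta}$ depends only on $j'\bmod f$, which encodes the $\iota_\fM$-compatibility), then define $\fM$ as the free $\fS_{L',R}$-module with standard decomposition, declare its $j'$-th partial Frobenius to have matrix $C^{(j')}$ in the given basis, and equip it with the descent data through the characters $\chi_i$ acting on the standard basis vectors and the isomorphism $\iota_\fM$ forced by the compatibility $C^{(j'+f)}=C^{(j')}$. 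This gives a quasi-inverse to $\Psi$, and the verifications are routine once the bookkeeping of labels is in place.

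Second, I would show that the forgetful morphism $\tld{Y}^{[0,h],\tau}\to Y^{[0,h],\tau}$ is an $(L^+\cG_\cO)^{\cJ,\wedge_p}$-torsor for the right action on eigenbases. The existence of eigenbases locally follows from the fact that $\fM^{(j')}\bmod u'\cong \tau^\vee\otimes R$ as $\Delta'$-representations: this isotypic decomposition lifts to $\fM^{(j')}$ by Nakayama (since $\fM^{(j')}$ is a finitely generated projective $R[\![u']\!]$-module and $\Delta'$ has order coprime to $p$), giving a basis in which the $\Delta'$-action has the prescribed form; imposing the $\iota_\fM$-compatibility fixes the bases in all embeddings $j'\equiv j\bmod f$ from the one in embedding $j$. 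Proposition~\ref{prop:changeofbasis} exhibits the $(L^+\cG_\cO)^\cJ$-action on eigenbases, where the second half of that proposition precisely says the action is transitive (any tuple $(I^{(j)})$ in $\cI(R)^\cJ$ arises from an actual change of basis), and the first half together with the injection $\cI\hookrightarrow L^+\cG_\cO$ shows it is free.

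Third, I would combine the two steps: under the isomorphism $\Psi$, the action of $(L^+\cG_\cO)^{\cJ}$ on $\tld{Y}^{[0,h],\tau}$ is intertwined with the $(s,\mu)$-twisted $\phz$-conjugation action on $(L^{[0,h]}\cG_\cO)^{\cJ}$ by the explicit transformation formula in Proposition~\ref{prop:changeofbasis}. Passing to $p$-adic formal quotient stacks then yields the desired canonical isomorphism.

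The main obstacle is the second step: verifying carefully that eigenbases form an $(L^+\cG_\cO)^\cJ$-torsor rather than, say, an $(L^+\cG_\cO)^{\cJ'}$-torsor. The point is that an eigenbasis is constrained by the $\iota_\fM$-compatibility across the fibers of $\cJ'\onto\cJ$, so only $\#\cJ$ worth of independent choices remain, matching the indexing of partial Frobenii $A^{(j)}$. Making this bookkeeping precise, together with checking that the existence of eigenbases is an fpqc-local question that can be handled by Nakayama plus lifting of idempotents for the prime-to-$p$ group $\Delta'$, is the essential content of the proof; once it is in place, the stated presentation follows formally from Proposition~\ref{prop:changeofbasis}.
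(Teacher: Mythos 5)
Your proposal is correct and takes essentially the same route as the paper: consider the groupoid of pairs $(\fM,\beta)$, send $(\fM,\beta)$ to the tuple $(A^{(j)}_{\fM,\beta})_{j\in\cJ}$ of matrices of partial Frobenii (which lie in $L^{[0,h]}\cG_\cO$ precisely because of the height condition and genericity), use Proposition \ref{prop:changeofbasis} to see that eigenbases form an $(L^+\cG_\cO)^\cJ$-torsor with the $(s,\mu)$-twisted $\phz$-conjugation as the induced action, and establish fpqc-local existence of eigenbases by lifting eigenvectors of $\Delta'$ modulo $u'$ (using that $|\Delta'|$ is prime to $p$). The only cosmetic difference is that the paper does not bother to exhibit an explicit inverse to $\Psi$; it phrases the conclusion as identifying the quotient stack with the substack of objects admitting an eigenbasis fpqc-locally and then observing this is everything.
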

\begin{proof} Consider the groupoid $Y^{[0,h],\tau,\beta}$ parametrizing pairs $(\fM,\beta)$ where $\fM\in Y^{[0,h],\tau}$ and $\beta$ is an eigenbasis of $\fM$. There is a map $Y^{[0,h],\tau,\beta}\to (L\cG_\cO)^{\cJ}$ given by sending $(\fM,\beta)$ to the collection of matrices of partial Frobenii $(A^{(j)}_{\fM,\beta})_{j\in \cJ}$. %
The condition that $\fM\in Y^{[0,h],\tau}$ is equivalent to the condition $ (A^{(j)}_{\fM,\beta})\in (L^{[0,h]}\cG_\cO)^{\cJ}$. For $R$ a $p$-adically complete Noetherian $\cO$-algebra, Proposition \ref{prop:changeofbasis} shows that the set of eigenbases on a given $\fM\in Y^{[0,h],\tau}(R)$ is a torsor for $\cI(R)^{\cJ}=L^+\cG_\cO(R)^{\cJ}$, and the action of $(L^+\cG_\cO(R))^{\cJ}$ corresponds to the $(s,\mu)$-twisted $\phz$-conjugation action on $L\cG^{\cJ}_\cO(R)$ under the above map. Thus 
$[(L^{[0,h]}\cG_\cO)^{\cJ}/_{(s,\mu),\phz}(L^+\cG_\cO)^{\cJ}]^{\wedge_p}$ is the substack of $Y^{[0,h],\tau}$ consisting of objects which fpqc-locally admits an eigenbasis. However, every object $\fM\in Y^{[0,h],\tau}(R)$ has this property: Zariski locally on $R$, we can find a basis for $\fM/u'\fM$, which furthermore consists of eigenvectors for $\Delta'$ and is compatible with $\iota_{\fM}$ modulo $u'$. Such a basis can be lifted to an eigenbasis of $\fM$, since $\Delta'$ has order prime to $p$.
\end{proof}

The following Lemma shows that over $\bF$, the $(s,\mu)$-twisted $\phz$-conjugation action can sometimes be ``straightened'' to a left translation action, at least on the subgroup $(\cI_1)^{\cJ}$.  %
\begin{lemma} %
 \label{lem:iotawelldef}Let $R$ be an $\F$-algebra and $(A^{(j)}_1)_{j \in \cJ},  (A^{(j)}_2)_{j \in \cJ} \in  L^{[0,h]} \GL_n(R)^{\cJ}$.  Let  $\tld{z} =s^{-1}t_{\mu+\eta}\in \tld{W}^{\vee,\cJ}$ where $\mu$ is $(h+1)$-deep in $\un{C}_0$ and $s \in W^\cJ$.  Then, there is a bijection between the following:
\begin{enumerate}
\item
\label{lem:eq:mat:1}
Tuples $(I^{(j)})_{j \in \cJ} \in \cI_1(R)^{\cJ}$ such $A^{(j)}_2 \tld{z}_j = I^{(j)} A^{(j)}_1 \tld{z}_j \phz(I^{(j-1)})^{-1}$ for all $j \in \cJ$;
\item
\label{lem:eq:mat:2}
Tuples $(X_j)_{j \in \cJ} \in \cI_1(R)^{\cJ}$ such that 
$A^{(j)}_2=X_jA^{(j)}_1$ for all $j \in \cJ$.  
\end{enumerate} 
\end{lemma}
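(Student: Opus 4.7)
The plan is to use the genericity of $\mu$ together with the height bound on the $A_i^{(j)}$ to set up a $v$-adically contracting fixed-point problem. First, I rewrite the relation in \ref{lem:eq:mat:1} by right-multiplying by $\tld{z}_j^{-1}$ as
$$
A_2^{(j)} \;=\; I^{(j)} A_1^{(j)} \Ad(\tld{z}_j)(\phz(I^{(j-1)}))^{-1},
$$
so that for $(I^{(j)})$ satisfying \ref{lem:eq:mat:1}, the tuple $X_j \defeq A_2^{(j)}(A_1^{(j)})^{-1}$ factors as
$$
X_j \;=\; I^{(j)} \cdot \Bigl[\,A_1^{(j)}\Ad(\tld{z}_j)(\phz(I^{(j-1)}))^{-1}(A_1^{(j)})^{-1}\,\Bigr].
$$
The forward map of the bijection sends $(I^{(j)})$ to $(X_j)$. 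To see this lands in $\cI_1(R)^{\cJ}$, I would apply Lemma \ref{lem:contracting1}(a) with $m=h+1$ to conclude $\Ad(\tld{z}_j)(\phz(I^{(j-1)})) \in 1 + v^{h+2}\Mat_n(R[\![v]\!])$; combined with the height bound $(A_1^{(j)})^{-1} \in v^{-h}\Mat_n(R[\![v]\!])$, the bracketed factor lies in $1 + v^{2}\Mat_n(R[\![v]\!])$, so multiplying by $I^{(j)} \in \cI_1(R)$ keeps $X_j$ in $\cI_1(R)$.

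For the inverse, given $(X_j)$ as in \ref{lem:eq:mat:2}, I would produce $(I^{(j)})$ as the unique fixed point in $\cI_1(R)^{\cJ}$ of the self-map
$$
\Psi\colon \cI_1(R)^{\cJ} \longrightarrow \cI_1(R)^{\cJ}, \qquad \Psi((I^{(j)}))_j \defeq X_j A_1^{(j)}\Ad(\tld{z}_j)(\phz(I^{(j-1)}))(A_1^{(j)})^{-1}.
$$
The same estimate as above shows $\Psi$ lands in $\cI_1(R)^{\cJ}$. The key point is that $\Psi$ is $v$-adically contracting: if $I^{(j-1)}(I'^{(j-1)})^{-1} \equiv 1 \pmod{v^k}$ with $k \geq 2$, then Lemma \ref{lem:contracting1}(b) (with $m=h+1$) gives $\Ad(\tld{z}_j)(\phz(I^{(j-1)}(I'^{(j-1)})^{-1})) \equiv 1 \pmod{v^{(k-1)p+h+2}}$, and conjugating by $A_1^{(j)}$ yields $\Psi((I^{(j)}))_j\Psi((I'^{(j)}))_j^{-1} \equiv 1 \pmod{v^{(k-1)p+2}}$. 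Starting from $I^{(j)}_{(0)}\defeq X_j$, the first increment already satisfies $I^{(j)}_{(1)}(I^{(j)}_{(0)})^{-1}\equiv 1 \pmod{v^2}$ by the forward estimate, and the contraction then makes the sequence $v$-adically Cauchy; by $v$-adic completeness of $R[\![v]\!]$ and $v$-adic closedness of $\cI_1(R)$, the limit defines a fixed point in $\cI_1(R)^{\cJ}$, which solves \ref{lem:eq:mat:1}.

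Finally, uniqueness of the fixed point — and hence of the inverse map — will follow from the same contraction by iterating around the finite cycle $\cJ$: if two solutions $(I^{(j)})$, $(I'^{(j)})$ both yield the same $(X_j)$, then $Y^{(j)}\defeq I^{(j)}(I'^{(j)})^{-1}$ satisfies a recursion of the form $Y^{(j)} = \mathrm{Ad}(A_1^{(j)})\bigl(\Ad(\tld{z}_j)(\phz(Y^{(j-1)}))\bigr)$, and unfolding this $|\cJ|$ times to express $Y^{(j)}$ in terms of itself improves its $v$-adic congruence without bound, forcing $Y^{(j)} = 1$. The hard part of the argument is the bookkeeping controlling the interplay between the Frobenius semilinearity, the $\Ad(\tld{z}_j)$-twist, and the height-$h$ loss from conjugation by $A_1^{(j)}$; the $(h+1)$-deep hypothesis on $\mu$ is precisely what ensures these effects balance to a net $v$-adic contraction, making the entire fixed-point scheme rigorous.
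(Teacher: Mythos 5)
Your proposal is correct and takes essentially the same approach as the paper: the forward map $(I^{(j)})\mapsto(X_j=A_2^{(j)}(A_1^{(j)})^{-1})$ is shown to land in $\cI_1(R)^{\cJ}$ via Lemma \ref{lem:contracting1} plus the height bound, and the inverse is built by the same $v$-adically contracting iteration (the paper seeds at $\mathrm{Id}$ where you seed at $X_j$, which is just its first iterate, and the paper packages uniqueness as injectivity of $F$ rather than uniqueness of the fixed point — cosmetic differences only). The one small item worth adding, which the paper silently uses as well, is that the limit of the Cauchy sequence is indeed invertible (its determinant agrees with $\det X_j$ modulo $v^2$, hence is a unit in $R[\![v]\!]$).
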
 
\begin{proof} 
Throughout the proof, we will use that
\begin{equation} \label{eq:f1}
 \Ad(\tld{z}_j)\big(\phz(I^{(j-1)})^{-1}\big) \equiv 1 \mod v^{h+2}
\end{equation}   
for any $I^{(j-1)} \in \Iw_1(R)$, by Lemma  \ref{lem:changeofbasis}.

We give a map $F$ from (\ref{lem:eq:mat:1}) to (\ref{lem:eq:mat:2}). Given the data in (\ref{lem:eq:mat:1}), we define $F((I^{(j)})_{j \in\cJ})=(X_j)_{j\in\cJ}$, where
\[
X_{j}\defeq  A^{(j)}_{2}(A^{(j)}_{1})^{-1}= I^{(j)}A^{(j)}_{1} \big(\Ad(\tld{z}_j)\big(\phz(I^{(j-1)})^{-1}\big)\big)(A^{(j)}_{1})^{-1}
\]
To check that $(X_j)_{j\in\cJ}$ satisfies (\ref{lem:eq:mat:2}), we only need to check $X_j\in \cI_1(R)$ for all $j\in \cJ$.
By \eqref{eq:f1}, we can write $\Ad(\tld{z}_j)\big(\phz(I^{(j-1)})^{-1}\big) = 1 +v^{h+2}Y_j$ with $Y_j\in \Mat_n(R[\![v]\!])$.  
By the height condition on $A^{(j)}_{1}$, we deduce that
\[X_j= I^{(j)}(1 +v^{h+2}A^{(j)}_{1} Y_j(A^{(j)}_{1})^{-1})\in \cI_1(R)\]
as desired.

Next, we construct a map $G$ from (\ref{lem:eq:mat:2}) to (\ref{lem:eq:mat:1}).
Thus we are given $X_j\in \cI_1(R)$ such that $A_2^{(j)}=X_j A_1^{(j)}$ for all $j\in \cJ$, and we need to construct a solution $I^{(j)}\in\cI_1(R)^{\cJ}$ to the system of equations
\[X_{j} A_1^{(j)} \big(\Ad(\tld{z}_j)\big(\phz(I^{(j-1)})\big)\big) (A_1^{(j)})^{-1} = I^{(j)}. \]

We construct such a solution as a limit of a convergent sequence in $\cI_1(R)$ with the $v$-adic topology.
Let $J^{(j)}_0 = \mathrm{Id}$.   
For $i \geq 0$, set 
\[
J^{(j)}_{i+1} = X_j A^{(j)}_1 \Big(A^{(j)}_1 \Ad(\tld{z}_j)\big(\phz(J^{(j-1)}_i)^{-1}\big)\Big)^{-1}
\]

We prove that $\big(J^{(j)}_i\big)_i$ converges in $\Iw_1(R)$ in the $v$-adic topology.
For $i \geq 1$, we have
\[
J^{(j)}_{i+1} - J^{(j)}_i = X_j A^{(j)}_1 \Big(\Ad(\tld{z}_j) \big(\phz(J^{(j-1)}_{i} - J^{(j-1)}_{i-1})\big)\Big) (A^{(j)}_1)^{-1}.
\]
Since $J^{(j)}_1 = X_j$, by Lemma  \ref{lem:changeofbasis} we have $\Ad(\tld{z}_j)\big(\phz(J^{(j-1)}_{1} - J^{(j-1)}_{0})\big) \in v^{h+2} \Mat_n(R[\![v]\!])$ and hence $A^{(j)}_1 \tld{z}_j \phz(J^{(j-1)}_{1} - J^{(j-1)}_{0}) \tld{z}_j^{-1}  (A^{(j)}_1)^{-1} \in v^2 \Mat_n(R[\![v]\!])$ by the height condition.
Therefore $J^{(j)}_{2} - J^{(j)}_1\in v^2 \Mat_n(R[\![v]\!])$.  
We conclude by induction on $i$ that
\[
J^{(j)}_{i} - J^{(j)}_{i-1} \in v^{p(i-2)} \Mat_n(R[\![v]\!]).
\]
for $i \geq 3$ and hence the sequence $\big(J^{(j)}_i\big)_i$ converges in $\Iw_1(R)$ in the $v$-adic topology.

We construct the map $G$ by $G((X_j)_{j\in\cJ})=(\lim_{i\to \infty} J^{(j)}_i)_{j\in\cJ}$. By construction, the composition $F\circ G$ is the identity, thus $F$ is surjective. To finish the proof, we just need to show that $F$ is injective.

 Suppose that $F((I^{(j)})_{j\in\cJ})=F((J^{(j)})_{j\in\cJ})=(X_j)_{j\in\cJ}$, then $F((J^{(j)})^{-1}I^{(j)})_{j\in\cJ})=(1)_{j\in\cJ}$. Thus we may assume without loss of generality that $J^{(j)}=X_j=1$.

We now have
\[ I^{(j)}=A_1^{(j)}\big(\Ad(\tld{z}_j)(\phz(I^{(j-1)})^{{-1}})\big)\big(A_1^{(j)}\big)^{-1}\]
for all $j$, and we need to show that $I^{(j)}=1$ for all $j$.
By \eqref{eq:f1}, we have $A_1^{(j)}  \big(\Ad(\tld{z}_j) (\phz(I^{(j-1)})^{-1})\big)\big(A^{(j)}\big)^{-1} \equiv 1$ mod $v^2$ for all $j$, thus $I^{(j)} \equiv 1$ mod $v^2$ for all $j$. Suppose that for all $j\in\cJ$, $I^{(j)} \equiv 1$ mod $v^\delta$ for some $\delta\geq 2$. Then, by Lemma \ref{lem:changeofbasis},
\[
\Ad(\tld{z}_j) (\phz(I^{(j-1)})^{-1}) \equiv 1 \text{ mod } v^{p (\delta - 1) + h + 2}.  
\]
  Hence $I^{(j)} \equiv 1$ mod $v^{p(\delta -1)+ 2}$.  Since $p(\delta -1)+ 2>\delta$, this shows that $I^{(j)} \equiv 1$ modulo arbitrary high powers of $v$ for all $j$. This shows $I^{(j)}=1$ for all $j$.
\end{proof}

\begin{cor} \label{cor:Breuil-Kisin_moduli_model}Suppose $(s,\mu)$ is a lowest alcove presentation of $\tau$ such that $\mu$ is $(h+1)$-deep in $\un{C}_0$. 
Then forming matrices of partial Frobenii with respect to an eigenbasis induces an isomorphism of $\bF$-stacks 
\[\pi_{(s,\mu)}:Y^{[0,h],\tau}_{\bF}\cong [(\tld{\Gr}^{[0,h]}_{\cG,\bF})^{\cJ}/_{(s,\mu)}T^{\vee,\cJ}_{\F}]\subset [(\tld{\Fl}^{[0, h]})^{\cJ}/_{(s,\mu)}T^{\vee,\cJ}_{\F}],\]
 where the action of the constant torus $T^{\vee,\cJ}_{\F}\subset (L^+\cG_\bF)^{\cJ}$ is the $(s,\mu)$-twisted conjugation action.
\end{cor}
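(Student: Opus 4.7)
The plan is to start from the groupoid-theoretic presentation of $Y^{[0,h],\tau}$ provided by Proposition~\ref{prop:Breuil-Kisin_moduli_presentation}, base-changed to $\bF$, namely
\[
Y^{[0,h],\tau}_{\bF}\cong \big[(L^{[0,h]}\cG_{\bF})^{\cJ}/_{\phz,(s,\mu)}(L^{+}\cG_{\bF})^{\cJ}\big],
\]
and then to carry out this quotient in two stages using the semidirect product decomposition $L^{+}\cG_{\bF}=\cI_{\bF}=T^{\vee}_{\bF}\ltimes \cI_{1,\bF}$. First I would mod out by the unipotent factor $(\cI_{1,\bF})^{\cJ}$, and then by the residual $T^{\vee,\cJ}_{\bF}$-action.

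For the first stage, the key input is Lemma~\ref{lem:iotawelldef}, whose hypotheses are met because the corollary assumes $\mu$ is $(h+1)$-deep in $\un{C}_0$. A computation identical to the one in that lemma's proof (using the height bound on $A^{(j)}$ together with Lemma~\ref{lem:changeofbasis}) shows that for any $I\in \cI_{1,\bF}(R)^{\cJ}$, the element $\Ad(\tld{z}_j)(\phz(I^{(j-1)})^{-1})$ lies in $1+v^{h+2}\Mat_n(R[\![v]\!])$, and after conjugating by $A^{(j)}_{1}$ it lands in $\cI_{1,\bF}(R)$; equivalently, the twisted $\phz$-conjugation action of $\cI_{1,\bF}^{\cJ}$ preserves left $\cI_{1,\bF}^{\cJ}$-cosets. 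The bijection $F$ produced in Lemma~\ref{lem:iotawelldef} is functorial in $R$ and compatible with composition of morphisms in the associated transformation groupoids (since $F$ sends $(I^{(j)})$ to the tuple $\big(A^{(j)}_{2}(A^{(j)}_{1})^{-1}\big)$), so it identifies the two equivalence relations on $(L^{[0,h]}\cG_{\bF})^{\cJ}$ cut out by twisted $\phz$-conjugation and by left translation. The injectivity clause of $F$ moreover forces both actions to be free, so both quotient sheaves are represented by the same scheme, namely $(\tld{\Gr}^{[0,h]}_{\cG,\bF})^{\cJ}$.

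For the second stage, I would observe that for a constant diagonal $D\in T^{\vee}(R)$ the Frobenius $\phz$ acts trivially (as $\phz$ is the identity on $R$ and $D$ is independent of $v$) and $\Ad(v^{\mu_j+\eta_j})(D)=D$ because diagonal matrices commute, so that $\Ad(\tld{z}_j)(\phz(D^{(j-1)})^{-1})=\Ad(s_j^{-1})(D^{(j-1),-1})$. Hence the residual $T^{\vee,\cJ}_{\bF}$-action descends to the $(s,\mu)$-twisted conjugation action of the statement, giving the identification with $[(\tld{\Gr}^{[0,h]}_{\cG,\bF})^{\cJ}/_{(s,\mu)}T^{\vee,\cJ}_{\bF}]$, and the final inclusion into $[(\tld{\Fl}^{[0,h]})^{\cJ}/_{(s,\mu)}T^{\vee,\cJ}_{\bF}]$ is immediate from the closed immersion $\tld{\Gr}^{[0,h]}_{\cG,\bF}\subset \tld{\Fl}^{[0,h]}$ recorded just before the corollary. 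I expect the main obstacle to be upgrading the pointwise bijection of Lemma~\ref{lem:iotawelldef} to a genuine isomorphism of stacks: this is exactly what the functoriality and composition-compatibility of $F$ together with the freeness of both actions guarantee, so the argument reduces to bookkeeping once these features are made explicit.
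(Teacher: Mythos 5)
Your proof is correct and takes essentially the same route as the paper, which simply cites Proposition~\ref{prop:Breuil-Kisin_moduli_presentation}, Lemma~\ref{lem:iotawelldef}, the triviality of $\phz$ on $T^{\vee}$, and the decomposition $\cI=T^\vee\ltimes\cI_1$. The two-stage quotient you carry out explicitly is exactly the bookkeeping the paper leaves implicit.
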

\begin{proof} This follows immediately from Proposition \ref{prop:Breuil-Kisin_moduli_presentation}, Lemma \ref{lem:iotawelldef}, the fact that $\phz$ acts trivially on $T^{\vee}$ and that $\cI=T^\vee \ltimes \cI_1$.
\end{proof}
We construct an open cover for $Y^{[0,h],\tau}$ using the above isomorphism. Recall from before Lemma \ref{lem:open_chart_mono} that for any $\tld{z} = (\tld{z}_j)_{j \in \cJ} \in \tld{W}^{\vee,\cJ}$, we have a subfunctor $\cU(\tld{z})\defeq \prod_{j\in\cJ} \cU(\tld{z}_j) \subset L\cG^{\cJ}$. 
For any integers $a\leq b$, we define 
\[
U^{[a,b]}(\tld{z}) \defeq \cU(\tld{z})_\cO\cap (L^{[a,b]}\cG_\cO)^\cJ.
\]
It follows from Lemma \ref{lem:open_chart_mono} that the projection map $U^{[a,b]}(\tld{z})\to \Gr_{\cG,\cO}^{[a,b], \cJ}$ is an open immersion, since $\Gr_{\cG,\cO}^{[a,b], \cJ}$ is a finite type $\cO$-scheme. 
Hence $\cU(\tld{z})_\bF \to \Gr_{\cG,\bF}^\cJ=\Fl^\cJ$ is an open immersion with $T^{\vee,\cJ}_\bF$-stable image. Since $\Fl^\cJ$ is ind-proper and all the $T_\bF^{\vee,\cJ}$-fixed points (for the right translation action) are given by $\tld{z}\subset \tld{W}^{\vee,\cJ}$, we conclude that (the images of) $\cU(\tld{z})_\bF$ form an open cover of $\Fl^{\cJ}$. 
We also set $\tld{\cU}(\tld{z})=T^{\vee,\cJ}\cU(\tld{z})\subset L\cG^\cJ$, and similarly define $\tld{U}^{[a,b]}(\tld{z}) = T^{\vee,\cJ}_{\cO} U^{[a,b]}(\tld{z})$. Then (the images of) the $\tld{\cU}(\tld{z})_\bF$ form an open cover of $\tld{\Fl}^\cJ$, in fact this is the pullback of the above open cover of $\Fl^\cJ$ to $\tld{\Fl}^\cJ$. We thus get the open cover $U^{[a,b]}(\tld{z})_\bF$ (resp. $\tld{U}^{[a,b]}(\tld{z})_\bF$) of  $\Gr^{[a,b], \cJ}_{\cG,\bF}$ (resp. $\tld{\Gr}^{[a,b], \cJ}_{\cG,\bF}$).

\begin{defn} 
\label{defn:open:substacks:BK}
Let $\tld{z}\in\tld{W}^{\vee,\cJ}$. %
\begin{enumerate}
\item Define $Y_\bF^{[0,h],\tau}(\tld{z})$ to be the open substack of $Y_\bF^{[0,h],\tau}$ which corresponds via $\pi_{(s,\mu)}$ to the open substack $[\tld{U}^{[0,h]}(\tld{z})_\bF/_{(s,\mu)} T_\bF^{\vee,\cJ}]$ of $[\Gr_{\cG,\bF}^{[0,h], \cJ}/_{(s,\mu)} T_\bF^{\vee,\cJ}]$.
\item More generally, we define the $p$-adic formal stack $Y^{[0,h],\tau}(\tld{z})$  as the open substack of $Y^{[0,h],\tau}$ induced by $Y_\bF^{[0,h],\tau}(\tld{z})$. For $R$ a $p$-adically complete Noetherian $\cO$-algebra,  $\fM\subset Y^{[0,h],\tau}(R)$ is said to admit a $\tld{z}$-gauge if $\fM\in Y^{[0,h],\tau}(\tld{z})(R)$, or equivalently, $\fM/\varpi\fM \in Y_\bF^{[0,h],\tau}(\tld{z})(R/\varpi R)$.
\end{enumerate}
\end{defn}
\begin{rmk} Let $\bF'$ be a field extension of $\bF$. If $\fM\in Y^{[0,h],\tau}(\bF')$ has shape $\tld{z}$ (Definition \ref{defn:shape}), then $\fM$ admits a $\tld{z}$-gauge.
\end{rmk}

Let $R$ be a Noetherian $\bF$-algebra and $\fM\in Y_\bF^{[0,h],\tau}(R)$. Assume that $\fM$ admit an eigenbasis. Then the condition that $\fM$ admits a $\tld{z} = (\tld{z}_j)_{j \in \cJ}$-gauge is equivalent to the condition that for any eigenbasis $\beta$, 
the matrix of partial Frobenius $A^{(j)}_{\fM,\beta}$ belongs to $\cI(R) (\cU(\tld{z}_j)(R)) = T^\vee(R) (\cI_1(R)) (\cU(\tld{z}_j)(R))$ for all $j \in \cJ$. Furthermore, in this case Proposition \ref{lem:iotawelldef} shows that we can adjust the eigenbasis $\beta$ so that $A^{(j)}_{\fM,\beta}\in T^\vee(R)\cU(\tld{z}_j)(R)$. 
 The proof of Proposition \ref{lem:iotawelldef} and the fact that the multiplication map $\cI(R)\times \cU(\tld{w})(R)\to L\cG_\cO(R)$ is an injection shows that an eigenbasis $\beta'$ has this property if and only if it is obtained from $\beta$ by a change of basis given by $\{ (t_{j'})_{j'\in\cJ'}\in {T^{\vee}_{\F}(R)} \mid t_{j'}=t_{k'} \textrm{ for }j'\equiv k' \textrm{ mod } f\} \cong T^{\vee,\cJ}_{\bF}(R)$. Thus the set of eigenbases with this property form a torsor for the group $T_\bF^{\vee,\cJ}$. This motivates the following definition:
\begin{defn}\label{defn:gauge_basis} Let $\tld{z} = (\tld{z}_j)_{j \in \cJ} \in  \tld{W}^{\vee, \cJ}$.  Let $R$ be a Noetherian $p$-adically complete $\cO$-algebra, and assume $\fM\in Y^{[0,h],\tau}(R)$ admits a $\tld{z}$-gauge. An eigenbasis $\beta$ of $\fM$ is called a $\tld{z}$-gauge basis if the matrix of partial Frobenii $A^{(j)}_{\fM,\beta}\in T^{\vee}(R) (\cU(\tld{z}_j)(R))$ for all $j\in \cJ$.
\end{defn} 

\begin{prop}\label{prop:gauge_basis_existence} Let $(s,\mu)$ be a lowest alcove presentation of $\tau$ where $\mu$ is $(h+1)$-deep in $\un{C}_0$. Let $R$ be a Noetherian $p$-adically complete $\cO$-algebra and $\fM\in Y^{[0,h],\tau}(R)$. Assume that $\fM$ admits both a $\tld{z}$-gauge and an eigenbasis. Then $\fM$ admits a $\tld{z}$-gauge basis $\beta$, which is uniquely determined up to scaling by the group $\{ (t_{j'})_{j'\in\cJ'}\in {T^{\vee, \cJ'}(R)} \mid t_{j'}=t_{k'} \textrm{ for }j'\equiv k' \textrm{ mod } f\}=T^{\vee,\cJ}(R)$.
\end{prop}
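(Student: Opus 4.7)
My plan is to use Proposition \ref{prop:changeofbasis} (which parametrizes changes of eigenbasis by elements of $\cI(R)^{\cJ}$ acting via $(s,\mu)$-twisted $\phz$-conjugation) together with Lemma \ref{lem:iotawelldef} (which straightens the $\cI_1$-part of this action into plain left multiplication) to reduce the statement to a question about open charts in $\tld{\Fl}^{\cJ}$. Existence will be proved by approximating a gauge basis modulo $\varpi$ and then lifting $\varpi$-adically, and uniqueness will follow from the fact that $\tld{\cU}(\tld{z})$ embeds into $\tld{\Fl}$ (Lemma \ref{lem:open_chart_mono}).

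For existence, I would start with any eigenbasis $\beta_0$ of $\fM$ and let $A_0^{(j)} := A^{(j)}_{\fM,\beta_0}$. By the $\tld{z}$-gauge assumption and Corollary \ref{cor:Breuil-Kisin_moduli_model}, the reduction $(\bar A^{(j)}_0)$ lies fpqc-locally in the $(s,\mu)$-twisted $\cI$-orbit of a point of $\tld{U}^{[0,h]}(\tld{z})(R/\varpi)$. Decomposing $\cI = T^{\vee}\ltimes \cI_1$, using Lemma \ref{lem:iotawelldef} (which applies since $R/\varpi$ is an $\F$-algebra and $\mu$ is $(h+1)$-deep), and using that $\cI_1$ is pro-unipotent so any $\cI_1$-torsor on an affine scheme is trivial, the fpqc-local choice globalizes to honest elements $J_{T,0}\in T^{\vee,\cJ}(R/\varpi)$ and $X_0\in\cI_1(R/\varpi)^{\cJ}$ bringing $(\bar A^{(j)}_0)$ into $\tld{U}^{[0,h]}(\tld{z})(R/\varpi)$. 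Lifting $J_{T,0}$ and $X_0$ to $R$ via smoothness of $T^{\vee,\cJ}$ and (pro-)smoothness of $\cI_1$ yields an eigenbasis $\beta_1$ satisfying the gauge condition modulo $\varpi$. I then refine inductively: given $\beta_n$ with $A^{(j)}_{\fM,\beta_n} \in \tld{U}^{[0,h]}(\tld{z}_j)(R) + \varpi^n \Mat_n(R[\![v]\!])$, the infinitesimal obstruction to lifting mod $\varpi^{n+1}$ is killed by an element of $\cI_1(R)^{\cJ}$ congruent to $1$ mod $\varpi^n$. The convergence of the resulting sequence relies on two facts: modulo $\varpi$, Lemma \ref{lem:iotawelldef} lets us replace $\cI_1$-twisted $\phz$-conjugation with $\cI_1$-left multiplication, and the latter acts freely and transitively on fibers of $L^{[0,h]}\cG_{\cO,\F} \to \tld{\Gr}^{[0,h]}_{\cG,\F}$; thus the tangent map combining the $\cI_1$-action with the natural directions along $\tld{U}^{[0,h]}(\tld{z})$ surjects onto the ambient tangent space. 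By $p$-adic completeness of $R$, the sequence converges to a change of basis giving the desired gauge basis $\beta$.

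For uniqueness, suppose $\beta,\beta'$ are two $\tld{z}$-gauge bases of $\fM$, and let $(I^{(j)})\in \cI(R)^{\cJ}$ be the change-of-basis from Proposition \ref{prop:changeofbasis}. Write $I^{(j)} = J^{(j)} T^{(j)}$ with $J^{(j)}\in\cI_1(R)$ and $T^{(j)}\in T^{\vee}(R)$; the $T^{(j)}$-part clearly preserves membership in $\tld{U}^{[0,h]}(\tld{z}_j)$ (as both left $T^\vee$-multiplication and right $\Ad(\tld{z}_j)$-twisted $T^\vee$-multiplication do), so it suffices to show $J^{(j)} = 1$. Working modulo $\varpi^n$ and passing to the limit, Lemma \ref{lem:iotawelldef} produces the unique $X_j\in\cI_1(R)$ with $A^{(j)}_{\fM,\beta'} = X_j A^{(j)}_{\fM,\beta}$, both of which lie in $\tld{U}^{[0,h]}(\tld{z}_j)(R)$; but since $\tld{\cU}(\tld{z}_j) \to \tld{\Fl}$ is injective by Lemma \ref{lem:open_chart_mono}, left $\cI_1$-multiples of an element of $\tld{U}^{[0,h]}(\tld{z}_j)$ can lie in $\tld{U}^{[0,h]}(\tld{z}_j)$ only when $X_j = 1$. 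Hence $J^{(j)} = 1$ and $I^{(j)} = T^{(j)}\in T^\vee(R)$. The compatibility $T^{(j')} = T^{(j'+f)}$ under the identification with $T^{\vee,\cJ'}$ follows from compatibility of the change of basis with the isomorphism $\iota_{\fM}:(\sigma^f)^*\fM\cong\fM$ built into the notion of eigenbasis.

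The main obstacle will be the iterative lifting step in existence: one must carefully manage the interplay between the $\varpi$-adic and $v$-adic topologies, and justify that the approximation scheme produces a $\tld{z}$-gauge basis over $R$ (rather than just in a completion) using the height bound on elements of $L^{[0,h]}\cG_{\cO}$ together with the genericity condition on $(s,\mu)$. The key technical input throughout is Lemma \ref{lem:iotawelldef}, which allows one to replace the nonlinear $\cI_1$-twisted $\phz$-conjugation with the linear $\cI_1$-left multiplication, reducing the gauge problem to an open-chart statement about $\tld{\Fl}^{\cJ}$.
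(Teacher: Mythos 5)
Your general strategy — $\varpi$-adic induction with the base case handled over $\F$-algebras via Lemma \ref{lem:iotawelldef} — matches the paper's, but two of your key steps are not actually established, and one of them rests on a fact that is explicitly false.

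The central issue is in your uniqueness argument. You invoke Lemma \ref{lem:iotawelldef} ``modulo $\varpi^n$ and passing to the limit'' to produce an $X_j \in \cI_1(R)$ with $A^{(j)}_{\fM,\beta'} = X_j\, A^{(j)}_{\fM,\beta}$. But Lemma \ref{lem:iotawelldef} only applies over $\F$-algebras: over $\cO/\varpi^n$-algebras with $n > 1$, the $(s,\mu)$-twisted $\phz$-conjugation by $\cI_1$ is \emph{not} equivalent to left $\cI_1$-multiplication. The paper goes out of its way to make this point in the Warning following Theorem \ref{thm:fh_local_model}, giving a $2\times 2$ counterexample over $\cO/\varpi^a$. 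So the reduction from ``two gauge bases'' to ``same $\cI_1$-left-coset'' has no justification. Even if you could reduce to showing $A' = A$, concluding $J^{(j)} = 1$ from that still requires a contraction argument (the change-of-basis equation is a fixed-point equation), not just the injectivity of $\tld{\cU}(\tld{z})\to\tld{\Fl}$ from Lemma \ref{lem:open_chart_mono}.

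In the existence argument, the assertion that ``the tangent map combining the $\cI_1$-action with the natural directions along $\tld{U}^{[0,h]}(\tld{z})$ surjects onto the ambient tangent space'' is the whole content of the inductive step and is not proved; gesturing at Lemma \ref{lem:iotawelldef} does not give it, since what one actually needs to show is that the linearized $(s,\mu)$-twisted $\phz$-conjugation operator
\[
\Psi\colon (Y_j)_{j\in\cJ}\;\longmapsto\; \Bigl(Y_{j}  - \Ad\bigl( A_\bF^{(j)} s^{-1}_j v^{\mu_j+\eta_j}\bigr) \bigl(\phz(Y_{j-1})\bigr)\Bigr)_{j\in\cJ}
\]
is an automorphism of $\Lie \cI_1(J)^\cJ$ (where $J = \varpi^{a-1}R$ is the square-zero kernel, using $\Lie L\cG(J) = \Lie L^{--}\cG(J) \oplus \Lie T^\vee(J) \oplus \Lie\cI_1(J)$ from Lemma \ref{lem:Lie_algebra_decomp}). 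This is precisely what the paper proves: the genericity $(h+1)$-depth hypothesis feeds into Lemma \ref{lem:contracting1} (\emph{not} Lemma \ref{lem:iotawelldef}) to show that $\Ad(s_j^{-1}v^{\mu_j+\eta_j})(\phz(Y_{j-1}))$ is divisible by $v^{h+2}$, hence after conjugation by $A^{(j)}_\bF$ (using the height bound) divisible by $v^2$, so $\Psi$ is the identity plus a $v$-adically topologically nilpotent operator and is therefore invertible. The same automorphism statement gives uniqueness for free: the system \eqref{eq:solution_gauge_basis} has a \emph{unique} solution in $\Lie\cI_1(J)^\cJ$, and the ambiguity is exactly translation by $\Lie T^\vee(J)^\cJ$, which exponentiates to the $T^{\vee,\cJ}(R)$-scaling ambiguity.

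In short: you correctly identified that one inducts up the $\varpi$-adic filtration and that Lemma \ref{lem:iotawelldef} handles the special fiber, but the engine of the proof is a Lie-algebra contraction estimate (Lemma \ref{lem:contracting1}) showing $\Psi$ is invertible on $\Lie\cI_1(J)^\cJ$, and that estimate is what you are missing. Without it, neither the existence of the lift nor the uniqueness of the gauge basis is established, and the shortcut through Lemma \ref{lem:iotawelldef} in the uniqueness step is one the paper explicitly forbids.
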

\begin{proof} It suffices to prove the Proposition for $R$ a Noetherian $\cO/\varpi^a$-algebra, for any $a\geq 1$. We already observed that the Proposition holds when $a=1$. Thus we have a $\tld{z}$-gauge basis $\ovl{\beta}$ of $\fM/\varpi^{a-1}\fM\in Y^{[0,h],\tau}(R/\varpi^{a-1})$. Let $\tld{\beta}$ be any eigenbasis of $\fM$ lifting $\ovl{\beta}$. We also set $R_\bF=R/\varpi$, $\fM_\F=\fM/\varpi \fM$ and $\beta_\F=\tld{\beta}$ mod $\varpi$.

We set $\tld{A}^{j}=A^{(j)}_{\fM,\tld{\beta}}$, $\ovl{A}^{(j)}=A^{(j)}_{\fM/\varpi^{a-1}\fM,\ovl{\beta}}$, $A^{(j)}_\F=A^{(j)}_{\fM_\F,\beta_\F}$. We get the square-zero extension $R\onto R/\varpi^{a-1}$ with kernel $J=\varpi^{a-1}R$. 
As in \S \ref{sec:affine:charts}, %
we have $R/\varpi^{a-1}$-modules $\Lie L\cG_{\cO}(J)$, $\Lie\cI(J) \defeq \Lie L^+\cG_{\cO}(J)$, $\Lie L^{--}\cG_{\cO}(J)$, which are in fact $R_\bF$-modules. We also have the obvious variants $\Lie T^\vee(J)$, $\Lie \cI_1(J)$. Note that $\Lie \cI(J)=\Lie T^\vee(J)\oplus \Lie \cI_1(J)$.
By Proposition \ref{prop:changeofbasis}, given the choice of eigenbasis $\tld{\beta}$, the set of eigenbases $\beta$ of $\fM$ lifting $\ovl{\beta}$ is in bijection with the set of tuples $(X_{j'})_{j'\in\cJ'}\in\cI(R)^{\cJ'}$ such that
\begin{itemize}
\item $X_{j'}$ depends only on the image of $j'$ in $\cJ$. 
\item $X_j\in \ker(L^+\cG_{\cO}(R)\to L^+\cG_{\cO}(R/\varpi^{a-1}))$, i.e.~$Y_j\defeq X_j-1\in \Lie \cI(J)$ for all $j\in \cJ$.
\end{itemize} 
We thus need to analyze the set of tuples $(X_j)_{j\in \cJ}$ as above such that 
\[X_{j} \tld{A}^{(j)} \big(\Ad(s^{-1}_jv^{\mu_j+\eta_j})\big( \phz(X_{j-1})^{-1}\big) \big) \in T^{\vee}(R) ( \cU(\tld{z}_{j})(R)). \]
Note that $\cU(\tld{z}_{j}) (R) = L^{--} \cG_{\cO}(R) \tld{z}_j$ and similarly for $R/\varpi^{a-1}$. 
By construction, we have $\ovl{A}^{(j)}=\ovl{D}_j\ovl{U}_j\tld{z}_j$, where $\ovl{D}_j\in T^\vee(R/\varpi^{a-1})$ and $\ovl{U}_j\in L^{--}\cG_{\cO}(R/\varpi^{a-1})$. Since $T^{\vee}$ and $L^{--}\cG_\cO$ are formally smooth, we can find lifts $\tld{D}_j\in T^\vee(R)$ and $\tld{U}_j\in L^{--}\cG_{\cO}(R)$ of $\ovl{D}_j$ and $\ovl{U}_j$ respectively. Thus, 
\[
\tld{A}^{(j)} = (1 +  \mathfrak{a}_j) \tld{D}_j \tld{U}_j \tld{z}_j 
\]
where $\mathfrak{a}_j \in \Lie L \cG_{\cO}(J)$.   

We record the effect of $(s,\mu)$-twisted $\phz$-conjugation by $(X_j)_{j\in\cJ}=(1+Y_j)_{j\in\cJ}$.  Namely, if we write, 
\[
X_{j} \tld{A}^{(j)} \big(\Ad(s^{-1}_jv^{\mu_j+\eta_j})\big( \phz(X_{j-1})^{-1}\big) \big) = (1+ \mathfrak{a}'_j)  \tld{D}_j \tld{U}_j \tld{z}_j 
\]
then we find that 
\[
\mathfrak{a}'_j = Y_{j} + \mathfrak{a}_j - \Ad( A_\bF^{(j)} s^{-1}_j v^{\mu_j+\eta_j}) (\phz(Y_{j-1}))
\]
in $\Lie L\cG_{\cO}(J)$.

Since the set of elements in $T^\vee(R)\cU(\tld{z})(R)$ lifting $\ovl{D}_j\ovl{U}_j\tld{z}_j$ are exactly those of the form $(1+ \mathfrak{a}'_j)  \tld{D}_j \tld{U}_j \tld{z}_j$ where $\mathfrak{a}'_j\in (\Lie L^{--}\cG_{\cO}(J) \oplus \Lie T^\vee(J))$, our job boils down to analyzing the set of solutions $(Y_j)\in \Lie L^+\cG_{\cO}(J)^\cJ$ to the system of containments
\begin{equation}\label{eq:solution_gauge_basis}
 Y_{j} + \mathfrak{a}_j - \Ad( A_\bF^{(j)} s^{-1}_j v^{\mu_j+\eta_j}) (\phz(Y_{j-1}))\in  (\Lie L^{--}\cG(J)\oplus \Lie T^\vee(J)), \quad j\in \cJ.
\end{equation}
Observe that the set of solutions to the system (\ref{eq:solution_gauge_basis}) is invariant under translation by $\Lie T^\vee(J)^\cJ$: if $Y_j\in \Lie T^\vee(J)$ then $\Ad( A_\bF^{(j)} s^{-1}_j v^{\mu_j+\eta_j}) (\phz(Y_{j-1}))=\Ad(\ovl{D}_j\ovl{U}_j)\big(\!\Ad(s^{-1}_j)(Y_{j-1})\big)\subset \Ad(\ovl{D}_j\ovl{U}_j) \big(\Lie T^\vee(J)\big)\subset \Lie L^{--}\cG_{\cO}(J) \oplus \Lie T^\vee(J)$,
where the last inclusion follows from the fact that $T^\vee_{\cO} \cdot L^{--}\cG_{\cO}$ is a subgroup of $L\cG_{\cO}$. Thus, to finish the induction, we only need to show that the system (\ref{eq:solution_gauge_basis}) has a unique solution in $\Lie \cI_1(J)^\cJ$.

Now, it follows from Lemma \ref{lem:Lie_algebra_decomp} that  $\Lie L \cG_{\cO}(J) = \Lie L^{--}\cG_{\cO}(J)\oplus \Lie T^{\vee}(J) \oplus \Lie \cI_1(J)$ where $\Lie \cI_1(J)=\left\{\,  M\in M_n(J[\![{v}]\!]), 
\text{\small{ $M$ is unipotent upper triangular mod $v$}}
\right\}.$
Consider the map $\Psi:  \Lie \cI_1(J)^{\cJ} \ra  \Lie L \cG(J)^{\cJ}$ given by $(Y_j)_{j \in \cJ} \mapsto (Y_{j}  - \Ad( A_\bF^{(j)} s^{-1}_j v^{\mu_j+\eta_j}) (\phz(Y_{j-1}))$. 
 By Lemma \ref{lem:contracting1},  $\Ad(  s^{-1}_j v^{\mu_j +\eta_j})(\phz(Y_{j-1})) \equiv 0 \mod v^{h+2}$ and so, by the height condition on $\fM_\bF$, we have: 
\[
\Ad( A_\bF^{(j)}s^{-1}_j v^{\mu_j+\eta_j}) (\phz(Y_{j-1})) \equiv 0 \mod v^2. 
\]
Thus the image of $\Psi$ is in $\Lie\cI_1(J)^\cJ$.
 Furthermore, if $Y_j \equiv 0 \mod v^k$ for $k \geq 0$, then 
 \[
 \Ad( A^{(j)}_{\ovl{\fM}, \ovl{\beta}} s^{-1}_j v^{\mu_j+\eta_j}) (\phz(Y_{j-1})) \equiv 0 \mod v^{k+1}
 \] by  Lemma \ref{lem:contracting1}. Hence, as an endomorphism of $\Lie \cI_1(J)^\cJ$, 
 $\Psi$ decomposes as a sum of an automorphism and a topologically nilpotent endomorphism. We conclude then that $\Psi$ itself is an automorphism.  
We thus conclude that the system (\ref{eq:solution_gauge_basis}) has a unique solution in $\cI_1(J)^\cJ$, namely $\Psi^{-1}$ of the the projection of $(-\mathfrak{a}_j)_{j\in \cJ}$ onto $\Lie \cI_1(J)^\cJ$.
\end{proof}

\subsection{Local models for moduli of Breuil--Kisin modules}
\label{subsec:LMofBK}
In this section, we describe the local structure of the $p$-adic formal stack $Y^{[0,h],\tau}$ and its closed substack $Y^{\leql,\tau}$.

We have the following mixed characteristic generalization of Corollary \ref{cor:Breuil-Kisin_moduli_model}:
\begin{thm}\label{thm:fh_local_model} %
Let $(s,\mu)$ be a lowest alcove presentation of $\tau$ such that $\mu$ is $(h+1)$-deep in $\un{C}_0$, and let $\tld{z}\in \tld{W}^{\vee,\cJ}$. 
Then there is a local model diagram \emph{(}depending on $(s,\mu)$\emph{)} of $p$-adic formal $\cO$-stacks
\begin{equation}\label{eq:fh_local_model}
\begin{tikzcd}
\phantom{M} &  \tld{U}^{[0,h]}(\tld{z})^{\wedge_p} \ar{dl}[swap]{T^{\vee,\cJ}_{\cO}} \ar{dr}{T^{\vee,\cJ}_{\cO}}& \phantom{M}\\
Y^{[0,h],\tau}(\tld{z})=\Big[\tld{U}^{[0,h]}(\tld{z})/_{(s,\mu)} T^{\vee,\cJ}_{\cO}\Big]^{\wedge_p} \ar[hook,open]{d} & \phantom{M} & U^{[0,h]}(\tld{z})^{\wedge_p}\ar[hook,open]{d} \\
Y^{[0,h],\tau} & \phantom{M} & \Big(\Gr_{\cG,\cO}^{[0,h], \cJ}\Big)^{\wedge_p}
\end{tikzcd}
\end{equation}
where 
\begin{itemize}
\item The left diagonal arrow corresponds to extracting a $\tld{z}$-gauge basis and taking its matrices of partial Frobenii.
\item The diagonal arrows are torsors for the ($p$-adic completion of) $T^{\vee,\cJ}_{\cO}$ for two different $T^{\vee,\cJ}_{\cO}$-actions (and hence are smooth maps): The left diagonal arrow correspond to quotiening by the $(s,\mu)$-twisted conjugation action while the right diagonal arrow correspond to quotiening by the left translation action.
\item The vertical arrows are open immersion.
\end{itemize}
\end{thm}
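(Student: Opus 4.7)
The plan is to construct the middle object as a moduli stack of pairs $(\fM, \beta)$ with $\beta$ a $\tld{z}$-gauge basis, and then to identify the two diagonal torsor structures. The key inputs are Proposition \ref{prop:Breuil-Kisin_moduli_presentation} (the quotient-stack presentation of $Y^{[0,h],\tau}$ via eigenbases) and Proposition \ref{prop:gauge_basis_existence} (existence/uniqueness of $\tld{z}$-gauge bases up to $T^{\vee,\cJ}$-rescaling), which provide all the necessary descent.

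First, I would introduce the $p$-adic formal $\cO$-stack $\tld{Y}^{[0,h],\tau}(\tld{z})$ parametrizing, for a $p$-adically complete Noetherian $\cO$-algebra $R$, pairs $(\fM,\beta)$ where $\fM \in Y^{[0,h],\tau}(\tld{z})(R)$ and $\beta$ is a $\tld{z}$-gauge basis of $\fM$ in the sense of Definition \ref{defn:gauge_basis}. Since the notion of $\tld{z}$-gauge basis is stable under pullback along $R \to R/\varpi^a R$, this is a well-defined $p$-adic formal $\cO$-stack. By sending $(\fM,\beta)$ to the tuple of matrices of partial Frobenii $(A^{(j)}_{\fM,\beta})_{j\in\cJ}$, we obtain a morphism $\tld{Y}^{[0,h],\tau}(\tld{z}) \to \tld{U}^{[0,h]}(\tld{z})^{\wedge_p}$ by the very definition of $\tld{z}$-gauge basis. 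The inverse is given by attaching to a tuple $(A^{(j)})_j \in \tld{U}^{[0,h]}(\tld{z})^{\wedge_p}(R)$ the Breuil--Kisin module whose partial Frobenii are encoded by $(A^{(j)})_j$ (with its tautological eigenbasis), which is a $\tld{z}$-gauge basis by construction. Hence this map is an isomorphism and identifies $\tld{Y}^{[0,h],\tau}(\tld{z})$ with the top space in the diagram.

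Next, I would analyze the forgetful map $\tld{Y}^{[0,h],\tau}(\tld{z}) \to Y^{[0,h],\tau}(\tld{z})$. Using Proposition \ref{prop:Breuil-Kisin_moduli_presentation}, any $\fM \in Y^{[0,h],\tau}(\tld{z})(R)$ admits an eigenbasis Zariski-locally, and then Proposition \ref{prop:gauge_basis_existence} upgrades this to a $\tld{z}$-gauge basis, unique up to scaling by $T^{\vee,\cJ}_{\cO}(R)$ acting through the $(s,\mu)$-twisted conjugation action (which, because $\varphi$ acts trivially on $T^\vee$ and conjugation by constant diagonal matrices preserves diagonal matrices, agrees with the restriction of the $(s,\mu)$-twisted $\varphi$-conjugation action of $\cI^{\cJ}$). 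This exhibits the left diagonal arrow as a $T^{\vee,\cJ}_{\cO}$-torsor for this twisted action. For the right diagonal arrow, the decomposition $\tld{U}^{[0,h]}(\tld{z}) = T^{\vee,\cJ}_{\cO} \cdot U^{[0,h]}(\tld{z})$ as a product — immediate from the explicit description in Proposition \ref{prop:explicit_affine_chart} and the containment $T^{\vee,\cJ}_{\cO} \cap L^{--}\cG^{\cJ}_{\cO} = \{1\}$ — identifies $\tld{U}^{[0,h]}(\tld{z})^{\wedge_p} \to U^{[0,h]}(\tld{z})^{\wedge_p}$ with the $T^{\vee,\cJ}_{\cO}$-torsor coming from the left translation action. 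Openness of $U^{[0,h]}(\tld{z})^{\wedge_p} \hookrightarrow (\Gr_{\cG,\cO}^{[0,h],\cJ})^{\wedge_p}$ follows from Corollary \ref{cor:open:immersion} applied componentwise, while openness of the left vertical inclusion is Definition \ref{defn:open:substacks:BK}.

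Two points deserve care, and the second is the main obstacle. The first is verifying that everything assembles into a diagram of $p$-adic formal stacks rather than merely on $R$-points: this is handled by noting that all involved functors are stacks (so fpqc-sheafified forgetful maps are torsors as soon as local sections exist with the correct transition behavior), and that the criteria in Propositions \ref{prop:Breuil-Kisin_moduli_presentation} and \ref{prop:gauge_basis_existence} are preserved under base change between $\cO$-algebras of finite type over $\cO/\varpi^a$. The second and more delicate point is the genuine mixed-characteristic upgrade of Corollary \ref{cor:Breuil-Kisin_moduli_model}: in equal characteristic one can directly straighten the $(s,\mu)$-twisted $\varphi$-conjugation action of $\cI^{\cJ}$ to an honest action (Lemma \ref{lem:iotawelldef}), but in mixed characteristic this straightening is not available. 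This is precisely what Proposition \ref{prop:gauge_basis_existence} circumvents, by exhibiting a canonical reduction of structure group from $\cI^{\cJ}$ to $T^{\vee,\cJ}_{\cO}$ through a Hensel-style iterative argument; the commutativity of the diagram then reduces to tracking that this canonical reduction is $T^{\vee,\cJ}_{\cO}$-equivariant for the compatible actions on each side, which can be checked directly from the formulas in Proposition \ref{prop:changeofbasis}.
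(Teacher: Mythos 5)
Your proposal is correct and follows essentially the same route as the paper: the left-hand torsor is produced from Proposition \ref{prop:gauge_basis_existence} together with the Zariski-local existence of eigenbases (via Proposition \ref{prop:Breuil-Kisin_moduli_presentation}), and the right-hand torsor from the product decomposition $\tld{U}^{[0,h]}(\tld{z}) \cong T^{\vee,\cJ}_\cO \times U^{[0,h]}(\tld{z})$ together with the open-immersion statement for $\cU(\tld{z})$ (the paper cites Lemma \ref{lem:negative_loop_mono} and Lemma \ref{lem:open_chart_mono}, which are the source of both the triviality of $T^{\vee,\cJ}_\cO \cap L^{--}\cG^{\cJ}_\cO$ and the $L^{--}\cG$-torsor property of $\cU(\tld{z})$ that your argument uses implicitly). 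Your explicit introduction of the auxiliary stack of pairs $(\fM,\beta)$ and your remark singling out Proposition \ref{prop:gauge_basis_existence} as the replacement for the equal-characteristic straightening of Lemma \ref{lem:iotawelldef} are just more verbose renderings of the same proof.
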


\begin{proof} The left side of the diagram follows from Proposition \ref{prop:gauge_basis_existence} and the existence of an eigenbasis Zariski locally. The right side of the diagram is a consequence of Lemma \ref{lem:negative_loop_mono}, Lemma \ref{lem:open_chart_mono} and the fact that $\Gr_{\cG,\cO}^{[0,h]}$ is a finite type $\cO$-scheme. %
\end{proof}

\begin{warning}
 As $\tld{z}$ varies in $\tld{W}^{\vee,\cJ}$, the $Y^{[0,h],\tau}(\tld{z})$ form a Zariski open cover
 of $Y^{[0,h],\tau}$. Lemma \ref{lem:iotawelldef} shows that over $\bF$, these local model diagram glue together to give a local model diagram for $Y^{[0,h],\tau}_{\F}$, cf.~Corollary \ref{cor:Breuil-Kisin_moduli_model}. However, the local model diagrams do not glue together into a local model diagram for $Y^{[0,h],\tau}$ in general. The reason is that Lemma \ref{lem:iotawelldef} fails over test rings $R$ where $p\neq 0$, namely, the $(s,\mu)$-twisted $\phz$-conjugation is not equivalent to the left translation relation by $\cI_1(R)$. For example, let $R=\cO/\varpi^a$, $|\cJ|=1$, $(s,\mu)=(1,(k,0))$, and let
\[ A=\begin{pmatrix} 1 &0 \\ 0 & v+p\end{pmatrix}, X=\begin{pmatrix}1 &1 \\0 &1\end{pmatrix}\]
Then $A$ and $XA\Ad(v^{\mu})\big(\phz(X)^{-1}\big)$ are in the same $(s,\mu)$-twisted $\phz$-equivalence class, but do not differ by a left translation by an element of $\cI_1(R)$. Indeed
\[XA\Ad(v^{\mu})\big(\phz(X)^{-1}\big)A^{-1}=\begin{pmatrix}1 & -\frac{v^k}{v+p} \\ 0 & 1\end{pmatrix}\]
does not belong to $\cI_1(R)$, since 
\[\frac{v^k}{v+p}=v^{k-1}(1+\cdots + (-1)^{a-1} \frac{p^{a-1}}{v^{a-1}})\notin R[\![v]\!].\]
\end{warning}

We now impose bounded $p$-adic Hodge type conditions. Let $\lambda \in X_*(T^{\vee})^{\cJ} = X^*(T)^{\cJ}$. We assume that $\lambda$ is effective and has height $\leq h$, that is each component $\lambda_j \in X_*(T^{\vee})$ satisfies $\lambda_j\in[0,h]^n$.  (Note that if $h_{\lambda} = \max \{ \langle \lambda, \alpha^{\vee} \rangle \mid \alpha \in \Phi \}$ then up to changing $\lambda$ by a central cocharacter we can take $h = h_{\lambda}$.) We now recall from \cite[Theorem 5.3]{CL} the closed $p$-adic formal substack $Y^{\leql,\tau}\subset Y^{[0,h],\tau}$ (denoted $Y^{\lambda, \tau}$ in \emph{loc.~cit.}). It is characterized by the following properties (cf.~\cite[Theorem 5.13]{CL}):
\begin{itemize}
\item $Y^{\leql,\tau}$ is flat over $\cO$, and has reduced versal rings (i.e.~it is analytically unramified in the sense of \cite[Definition 8.22]{Emerton_formal}).
\item For any finite extension $E'$ of $E$ with ring of integers $\cO'$, an element $\fM\in Y^{[0,h],\tau}(\cO')$ belongs to
 $Y^{\leql,\tau}(\cO')$ if and only if $\fM[1/p]$ has $p$-adic Hodge type $\leq \lambda$.  This is a condition on the type of the induced grading on $\fM/E(v) \fM$. Lemma 5.10 in \cite{CL} says that the grading on $\fM/E(v) \fM$ is the base change of a grading on the $\chi$-isotypic piece for $\chi$ appearing in the type $\tau$.  The type of this grading is directly related to the elementary divisors of the matrices of partial Frobenii $A^{(j)}_{\fM, \beta}$ (with respect to any eigenbasis). Because of this, the $p$-adic Hodge type $\leq \lambda$ condition translates to the condition that $A^{(j)}_{\fM, \beta}$ viewed as an element of $\GL_n(E'(\!(v+p)\!))$ has elementary divisors bounded by $(v+p)^{\lambda_j}$. Note that this last condition is exactly the condition imposed by the closed affine Schubert variety $S_E(\lambda)\subset \Gr_{\cG,E}^\cJ$.
\end{itemize}
We wish to identify the object that corresponds to $Y^{\leql,\tau}$ under the local model diagram of Theorem \ref{thm:fh_local_model}. Recall the (finite type over $\cO$) closed subscheme $M_{\cJ}(\leql)\into L^+\cG_\cO\backslash L\cG_\cO=\Gr_{\cG,\cO}^{\cJ}$, which is the Zariski closure of the (reduced) affine Schubert variety $S_E(\lambda)\subset \Gr_{\cG,E}^\cJ$ in $\Gr_{\cG,\cO}^\cJ$. 
For $\tld{z} = (\tld{z}_j)_{j \in \cJ} \in\tld{W}^{\vee,\cJ}$, we set
\begin{equation} \label{eq:chartsforlambda}
U(\tld{z}, \leql) := \prod_{j \in \cJ}  \cU(\tld{z}_j)_{\cO} \cap M(\leql_j), \quad \tld{U}(\tld{z}, \leql) := \prod_{j \in \cJ} T^{\vee}_{\cO} \times( \cU(\tld{z}_j)_{\cO} \cap M(\leql_j))
\end{equation}   
where the intersections are understood to be taken inside $\Gr_{\cG,\cO}$ (which can then be lifted to $L\cG_\cO$ since $\cU(\tld{z}_j)$ is canonically lifted to $L\cG$), {and define $Y^{\leql,\tau}(\tld{z})$ as the intersection $Y^{\leql,\tau}\cap Y^{[0,h],\tau}(\tld{z})$ (taken inside $Y^{[0,h],\tau}$)}.
We have the following: 
\begin{thm}\label{thm:Breuil-Kisin_local_model}

Let $(s,\mu)$ be a lowest alcove presentation of $\tau$ such that $\mu$ is $(h+1)$-deep in $\un{C}_0$, and let $\tld{z}\in \tld{W}^{\vee,\cJ}$. Assume $\lambda=(\lambda_j)_{j\in\cJ}\in X_*(T^\vee)^\cJ$ satisfies $\lambda_j\in [0,h]^n$. Then diagram (\ref{eq:fh_local_model}) induces a local model diagram of $p$-adic formal $\cO$-stacks
\begin{equation}\label{eqn:Hodgetype_local_model}
\begin{tikzcd}
\phantom{M} &  \tld{U}(\tld{z},\leql)^{\wedge_p} \subset L\cG^{\cJ}_\cO  \ar{dl}[swap]{T^{\vee,\cJ}_{\cO}} \ar{dr}{T^{\vee,\cJ}_{\cO}}& \phantom{M}\\
Y^{\leql,\tau}(\tld{z})=\Big[\tld{U}(\tld{z},\leql)/_{(s,\mu)} T^{\vee,\cJ}\Big]^{\wedge_p}  \ar[hook,open]{d}& \phantom{M} &U(\tld{z},\leql)^{\wedge_p}\ar[hook,open]{d} \\
Y^{\leql,\tau} & \phantom{M} & M_\cJ(\leql)^{\wedge_p}
\end{tikzcd}
\end{equation}
where the superscript $\wedge_p$ stands for taking $p$-adic completion.
\end{thm}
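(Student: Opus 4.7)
The plan is to construct the refined diagram (\ref{eqn:Hodgetype_local_model}) as the pullback of (\ref{eq:fh_local_model}) along the closed immersion $M_\cJ(\leql)^{\wedge_p} \hookrightarrow (\Gr_{\cG,\cO}^{[0,h],\cJ})^{\wedge_p}$. Concretely, let $Z \subset Y^{[0,h],\tau}(\tld{z})$ be the closed $p$-adic formal substack obtained by pulling back $U(\tld{z},\leql)^{\wedge_p}$ through the two torsor legs of Theorem \ref{thm:fh_local_model}; since both diagonal arrows in (\ref{eq:fh_local_model}) are $T^{\vee,\cJ}_{\cO}$-torsors (hence smooth and faithfully flat), $Z$ is a well-defined closed formal substack of $Y^{[0,h],\tau}(\tld{z})$, and once we identify $Z$ with $Y^{\leql,\tau}(\tld{z})$ the entire diagram (\ref{eqn:Hodgetype_local_model}) follows by restricting (\ref{eq:fh_local_model}). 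The content of the theorem is therefore the equality $Z = Y^{\leql,\tau}(\tld{z})$, which I will verify against the characterization of $Y^{\leql,\tau}$ recalled before the theorem (following \cite[Theorem 5.13]{CL}).

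First I would check that $Z$ is $\cO$-flat with analytically unramified versal rings. Flatness is inherited from the $\cO$-flatness of $M_\cJ(\leql)$ (which holds by construction as the Zariski closure of the smooth generic fiber $S_E(\lambda)^{\cJ}$) via smoothness of the torsor map on the left leg. For the analytically unramified condition, I would invoke \cite[Theorem 9.3]{PZ} (a consequence of Zhu's coherence conjecture) to conclude that the special fiber $M_\cJ(\leql)_{\F}$ is reduced; combined with $\cO$-flatness and $\varpi$-adic completion, the elementary observation that any nilpotent element of an $\cO$-flat Noetherian $\varpi$-adically separated ring with reduced mod-$\varpi$ reduction would be infinitely divisible by $\varpi$ implies that the completed local rings of $M_\cJ(\leql)$ are reduced. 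This reducedness then transfers to the completed versal rings of $Z$ along the smooth torsor.

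Second I would match $\cO'$-points for $\cO'$ a finite flat local $\cO$-algebra, which together with the properties above forces $Z = Y^{\leql,\tau}(\tld{z})$ by the uniqueness clause of \cite[Theorem 5.13]{CL}. Given $\fM \in Y^{[0,h],\tau}(\tld{z})(\cO')$, Proposition \ref{prop:gauge_basis_existence} produces a $\tld{z}$-gauge basis $\beta$, and the resulting tuple of partial Frobenius matrices $(A^{(j)}_{\fM,\beta})_{j\in\cJ}$ realizes $\fM$ as an $\cO'$-point of $U^{[0,h]}(\tld{z})^{\wedge_p}$. Membership of $\fM$ in $Z(\cO')$ amounts to this tuple landing in $M_\cJ(\leql)(\cO')$, which in the generic fiber means lying in $S_{E'}(\lambda)^{\cJ}$, equivalently that each $A^{(j)}_{\fM,\beta}$, viewed in $\GL_n(E'(\!(v+p)\!))$, has elementary divisors bounded by $(v+p)^{\lambda_j}$. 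By the discussion preceding the theorem together with \cite[Lemma 5.10]{CL}, this is precisely the condition that $\fM[1/p]$ has $p$-adic Hodge type $\leq \lambda$, which is the defining condition for $Y^{\leql,\tau}(\tld{z})(\cO')$. The main obstacle is the reducedness step: it relies on the deep input of Zhu's coherence theorem, and one must take some care to propagate reducedness of $M_\cJ(\leql)$ along the smooth torsor into the analytically unramified property of the versal rings of $Z$ in a way compatible with \cite[Theorem 5.13]{CL}; once this is handled, the identification of $\cO'$-points via gauge bases is essentially formal.
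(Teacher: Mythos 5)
Your proposal is correct and follows essentially the same route as the paper's proof: pull both closed substacks back to $\tld{U}^{[0,h]}(\tld{z})^{\wedge_p}$ along the torsor legs of Theorem~\ref{thm:fh_local_model}, observe that both are reduced and $\cO$-flat (for $M_\cJ(\leql)^{\wedge_p}$ this rests on \cite[Theorem 9.3]{PZ}), and then compare $\cO'$-points via the elementary-divisor characterization from \cite[Theorem 5.13]{CL}. The only places you are slightly more verbose than the paper are worth flagging, not as errors but as points to be sure you state cleanly: (i) when you define $Z$ by descending the pullback of $U(\tld{z},\leql)^{\wedge_p}$ along the left torsor, you should note that the intermediate substack $\tld{U}(\tld{z},\leql)^{\wedge_p}$ is stable under the $(s,\mu)$-twisted conjugation $T^{\vee,\cJ}$-action, which follows because $M(\leql_j)$ is stable under both left and right $L^+\cG$-multiplication and the twisted conjugation of $D$ acts by $A^{(j)} \mapsto D_j A^{(j)} s_j^{-1}D_{j-1}^{-1}s_j$; (ii) the ``uniqueness clause of \cite[Theorem 5.13]{CL}'' characterizes the global substack $Y^{\leql,\tau}\subset Y^{[0,h],\tau}$ rather than the open piece $Y^{\leql,\tau}(\tld{z})$, so it is cleaner to phrase the conclusion as the paper does, namely as a direct comparison of two reduced $\cO$-flat closed substacks of $\tld{U}^{[0,h]}(\tld{z})^{\wedge_p}$ with the same $\cO'$-points, the CL characterization supplying the $\cO'$-point description of one side. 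With those phrasing adjustments the argument is exactly the one in the paper.
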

\begin{proof} We need to check $Y^{\leql,\tau}$ and $M_\cJ(\leql)^{\wedge_p}$ coincide after pulling back to $\tld{U}^{[0,h]}(\tld{z})^{\wedge_p}$ along diagram (\ref{eq:fh_local_model}). Since both pull-backs are reduced and $\cO$-flat, it suffices to check they have the same $\cO'$-points for $\cO'$ the integers in a finite extension $E'$ of $E$. But this is immediate, since the elementary divisor condition on an element of $\GL_n(E'(\!(v+p)\!))$ is preserved under both left and right multiplication by $\GL_n(E'[\![v+p]\!])$.
\end{proof}
\begin{cor} \label{cor:non_emptyness_pattern} Assume the hypotheses of Theorem \ref{thm:Breuil-Kisin_local_model}. Then $Y^{\leql,\tau}(\tld{z})\neq \emptyset$ if and only if $\tld{z}\in \Adm^\vee(\lambda)$.
\end{cor}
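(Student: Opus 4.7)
The plan is to reduce the question to a non-emptiness statement over the special fiber, where the geometry of $\Fl$ can be used directly. Since $Y^{\leql,\tau}(\tld{z})$ is an open substack of the $p$-adic formal algebraic stack $Y^{\leql,\tau}$, it is non-empty if and only if its reduction modulo $\varpi$ is non-empty. Reducing the local model diagram of Theorem \ref{thm:Breuil-Kisin_local_model} modulo $\varpi$, the resulting $Y^{\leql,\tau}(\tld{z})_{\F}$ is a smooth $T^{\vee,\cJ}_{\F}$-torsor quotient of an open cover of $U(\tld{z},\leql)_{\F}$. Hence $Y^{\leql,\tau}(\tld{z}) \neq \emptyset$ if and only if $U(\tld{z},\leql)_{\F} = \prod_{j \in \cJ} (\cU(\tld{z}_j)_{\F} \cap M(\leql_j)_{\F})$ is non-empty, which factors into the non-emptiness of each $\cU(\tld{z}_j)_{\F} \cap M(\leql_j)_{\F}$ for $j\in\cJ$.

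For a fixed $j$, the plan is to combine the contracting torus action on $\cU(\tld{z}_j)$ with the Pappas--Zhu Schubert stratification of $M(\leql_j)_{\F}$. Specializing Lemma \ref{lem:contracting_torus_action} to $\F$ (where $t=0$), $\cU(\tld{z}_j)_{\F}$ is identified with the translate $L^{--}\cG_{\F}\tld{z}_j$ of the big open cell, and carries a $\bG_m$-action contracting it to the $T^{\vee}$-fixed point $\tld{z}_j$. By \cite[Theorem 9.3]{PZ}, already invoked in \S\ref{sec:sp:fib}, the special fiber $M(\leql_j)_{\F}$ is the reduced union of the Schubert varieties $\overline{S^{\circ}_{\F}(\tld{w})}$ as $\tld{w}$ runs over the maximal elements of $\Adm^{\vee}(\lambda_j)$; each such closure is irreducible and stable under the $T^{\vee,\ext}_{\F}$-action on $\Fl$ (since loop rotation normalizes the Iwahori). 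Applying Lemma \ref{lem:Tfixedpts_criterion} to each such irreducible component, with $Y^{\circ} = S^{\circ}_{\F}(\tld{w})$, the intersection $\cU(\tld{z}_j)_{\F}\cap \overline{S^{\circ}_{\F}(\tld{w})}$ is non-empty exactly when the fixed point $\tld{z}_j$ lies in $\overline{S^{\circ}_{\F}(\tld{w})}$, i.e., when $\tld{z}_j \leq \tld{w}$ in the Bruhat order on $\tld{W}^{\vee}$. Since $\Adm^{\vee}(\lambda_j)$ is downward closed under $\leq$, taking the union over maximal $\tld{w}$ yields that $\cU(\tld{z}_j)_{\F} \cap M(\leql_j)_{\F}$ is non-empty if and only if $\tld{z}_j \in \Adm^{\vee}(\lambda_j)$.

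Combining the two paragraphs, $Y^{\leql,\tau}(\tld{z})\neq \emptyset$ if and only if $\tld{z}_j\in \Adm^{\vee}(\lambda_j)$ for each $j\in\cJ$, i.e., $\tld{z}\in \Adm^{\vee}(\lambda)$. There is no serious obstacle here; once Theorem \ref{thm:Breuil-Kisin_local_model} and the Pappas--Zhu description of the special fiber are in hand, the argument is essentially formal. The minor bookkeeping required is to check that $\cU(\tld{z}_j)_{\F}$ is identified with a translate of the classical big open cell through $\tld{z}_j$, and that each Schubert variety in $\Fl$ is $T^{\vee,\ext}_{\F}$-stable, both of which are standard facts about the affine flag variety.
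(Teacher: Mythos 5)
Your proposal is correct and follows the same route as the paper: reduce via the local model diagram of Theorem \ref{thm:Breuil-Kisin_local_model} to the non-emptiness of $U(\tld{z},\leql)_{\F}$, then use the Pappas--Zhu description of $M_{\cJ}(\leql)_{\F}$ together with the contracting-torus criterion (Lemma \ref{lem:Tfixedpts_criterion}) to characterize when $\tld{z}$ meets it. You merely spell out the componentwise factorization over $j\in\cJ$ and the $T^{\vee,\mathrm{ext}}$-stability of the Schubert strata, details the paper leaves implicit.
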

\begin{proof} $Y^{\leql,\tau}(\tld{z})\neq \emptyset$ if and only if $U(\tld{z},\leql)^{\wedge_p}\neq \emptyset$ if and only if $U(\tld{z},\leql )_{\bF}\neq \emptyset$. 
On the other hand, by Theorem \cite[Theorem 9.3]{PZ}, $M_{\cJ}(\leql)_{\bF}$ is the union of affine Schubert varieties $S^\circ_{\bF}(\tld{s})$ where $\tld{s}\in \Adm^{\vee}(\lambda)$. Thus, the set of torus fixed points of $M_{\cJ}(\leql)_{\bF}$ is $\Adm^{\vee}(\lambda)$. 
The result then follows from Lemma \ref{lem:Tfixedpts_criterion}. 

\end{proof}

\begin{cor} \label{cor:non_emptyness_pattern} Assume the hypotheses of Theorem \ref{thm:Breuil-Kisin_local_model}. Let $\F'/\F$ be a finite extension.  Then $\fM \in Y^{\leql,\tau}(\F')$ if and only if the shape of $\fM$ with respect to $\tau$ lies in $\Adm^\vee(\lambda)$.
\end{cor}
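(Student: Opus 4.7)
My plan is to reduce this statement to the preceding corollary (and to \cite[Theorem 9.3]{PZ}) by exploiting the local model diagram \eqref{eqn:Hodgetype_local_model}. Let $\tld{z} \in \tld{W}^{\vee,\cJ}$ be the shape of $\fM$ with respect to $\tau$ as in Definition \ref{defn:shape}. As already observed after Definition \ref{defn:open:substacks:BK}, the fact that $\fM$ has shape $\tld{z}$ means exactly that $\fM$ admits a $\tld{z}$-gauge, so $\fM \in Y^{[0,h],\tau}(\tld{z})(\F')$. Hence deciding whether $\fM \in Y^{\leql,\tau}(\F')$ amounts to deciding whether $\fM \in Y^{\leql,\tau}(\tld{z})(\F')$, since $Y^{\leql,\tau}(\tld{z}) = Y^{\leql,\tau} \cap Y^{[0,h],\tau}(\tld{z})$.

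Next, I would pass to the local models via Theorem \ref{thm:Breuil-Kisin_local_model}. Pick a $\tld{z}$-gauge basis $\beta$ of $\fM$ (which exists by Proposition \ref{prop:gauge_basis_existence}, since the hypothesis on $\mu$ is exactly that of Theorem \ref{thm:Breuil-Kisin_local_model}), and consider the collection of matrices of partial Frobenii $(A^{(j)}_{\fM,\beta})_{j\in\cJ}$. By construction of the $\tld{z}$-gauge this tuple defines an $\F'$-point of $U^{[0,h]}(\tld{z})$, and by the diagram \eqref{eqn:Hodgetype_local_model} the condition $\fM \in Y^{\leql,\tau}(\tld{z})(\F')$ is equivalent to this point lying in the closed subscheme $U(\tld{z},\leql) = \cU(\tld{z})_\cO^{\cJ} \cap M_{\cJ}(\leql)$, i.e.\ equivalent to the image of $A_{\fM,\beta}$ in $M_{\cJ}(\leql)(\F')$ lying in $M_{\cJ}(\leql)$.

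Now I would invoke the Pappas--Zhu description of the special fiber: by \cite[Theorem 9.3]{PZ} (already recalled at the beginning of \S\ref{sec:sp:fib}), $M_{\cJ}(\leql)_{\F}$ is the reduced union of the Schubert cells $S^{\circ}_{\F}(\tld{s})$ for $\tld{s} \in \Adm^{\vee}(\lambda)$. Since the image of $A_{\fM,\beta}$ in $\Fl^{\cJ}$ lies in $\prod_{j\in\cJ} S^{\circ}_{\F'}(\tld{z}_j)$ by the very definition of the shape, and since $S^{\circ}_{\F'}(\tld{z}_j)$ is contained in $M(\leql_j)_{\F'}$ precisely when $\tld{z}_j \in \Adm^{\vee}(\lambda_j)$, I conclude that the tuple lies in $M_{\cJ}(\leql)(\F')$ if and only if $\tld{z} \in \Adm^{\vee}(\lambda)$. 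Combining with the previous paragraph completes the argument.

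There is no serious obstacle here: once the local model diagram in Theorem \ref{thm:Breuil-Kisin_local_model} is in hand and one knows that having shape $\tld{z}$ implies admitting a $\tld{z}$-gauge, the statement becomes a direct translation of Kottwitz--Rapoport's admissibility characterization of the Schubert cells inside $M(\leql)_{\F}$. The only minor point to verify is that the notion of shape (formulated via double cosets $\Iw \tld{z}_j \Iw$) matches the description of $S^{\circ}_{\F}(\tld{z}_j)$ used in \cite{PZ}, which is immediate from the definitions.
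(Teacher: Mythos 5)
Your argument is correct and matches the approach that the paper clearly intends (the corollary is left without an explicit proof; the preceding corollary and Theorem \ref{thm:Breuil-Kisin_local_model} are meant to make it immediate). The main chain of reductions — shape $\Rightarrow$ admits a $\tld{z}$-gauge (the Remark after Definition \ref{defn:open:substacks:BK}), $Y^{\leql,\tau}(\tld z)=Y^{\leql,\tau}\cap Y^{[0,h],\tau}(\tld z)$, the local model diagram relating $Y^{\leql,\tau}(\tld z)$ to $U(\tld z,\leql)$, and finally the Pappas--Zhu description of $M_\cJ(\leql)_\F$ as the stratified union of Schubert cells — is exactly what is needed.

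One small imprecision: a $\tld z$-gauge basis produces matrices $A^{(j)}_{\fM,\beta}\in T^\vee(\F')\,\cU(\tld z_j)(\F')$, so the tuple a priori defines an $\F'$-point of $\tld U^{[0,h]}(\tld z)$, not of $U^{[0,h]}(\tld z)$ as you wrote. This does not affect the argument: since $T^\vee\subset\cI$, the class of $A^{(j)}_{\fM,\beta}$ in $\Fl=\cI_\F\backslash L(\GL_n)_\F$ is unchanged, and that class is what you compare against the Schubert stratification; equivalently, project down the $T^{\vee,\cJ}$-torsor $\tld U(\tld z,\leql)\to U(\tld z,\leql)$ before comparing. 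It would also be worth noting explicitly that the hypothesis $\fM\in Y^{[0,h],\tau}(\F')$ is implicit in the statement (it is required for the shape to be defined), and that the forward direction uses only the definition of shape (via any eigenbasis) rather than a gauge basis, since the Schubert cells partition $\Fl^\cJ$.
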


\subsection{\'Etale $\phz$-modules} 

\subsubsection{Background}
\label{subsub:etale_with_descent}
Let  $\cO_{\cE,K}$ (resp. $\cO_{\cE,L'}$) be the $p$-adic completion of $(W(k)[\![v]\!])[1/v]$ (resp.~of $(W(k')[\![u']\!])[1/u']$). 
It is endowed with a continuous Frobenius morphism $\phz$ extending the Frobenius on $W(k)$ (resp.~{on $W(k')$, and moreover endowed with an action of $\Delta$, cf.~\cite[\S 6.1]{LLLM} for the explicit definition of this action}) and such that $\phz(v)=v^p$ (resp.~$\phz(u')=(u')^p$).
Let $R$ be a $p$-adically complete Noetherian $\cO$-algebra.
We then have the groupoid
$\Phi\text{-}\Mod^{\text{\'et}, n}_K(R)$ (resp.  $\Phi\text{-}\Mod^{\text{\'et}, n}_{dd,L'}(R)$) of \'etale $(\phz,\cO_{\cE,K}\widehat{\otimes}_{\Zp}R)$-modules (resp. \'etale $(\phz,\cO_{\cE,L'}\widehat{\otimes}_{\Zp}R)$-modules with descent data from $L'$ to $K$).
Its objects are rank $n$ projective modules $\cM$ over $\cO_{\cE,K}\widehat{\otimes}_{\Zp}R$ (resp.~$\cO_{\cE,L'}\widehat{\otimes}_{\Zp}R)$), endowed with a Frobenius semilinear endomorphism $\phi_{\cM}:\cM\ra\cM$ ({resp.~a Frobenius semilinear endomorphism $\phi_{\cM}:\cM\ra\cM$,  and a semilinear action of $\Delta$ commuting with $\phi_{\cM}$}) inducing an isomorphism on the pull-back: $\Id\otimes_{\phz}\phi_{\cM}:\phz^*(\cM)\stackrel{\sim}{\longrightarrow}\cM$. It is known that $\Phi\text{-}\Mod^{\text{\'et}, n}_K(R)$ and  $\Phi\text{-}\Mod^{\text{\'et}, n}_{dd,L'}(R)$
form fppf stacks over $\Spf\,\cO$
(see \cite[\S 3.1]{EGstack}, \cite[\S 5.2]{EGschemetheoretic}, \cite[\S 3.1]{CEGS} where they are denoted $\mathcal{R}_{n},  \mathcal{R}^{dd}_{n, L'}$ respectively). 
We use $\Phi\text{-}\Mod^{\text{\'et}}_K(R)$ (resp. $\Phi\text{-}\Mod^{\text{\'et}}_{dd,L'}(R)$) to denote the category of \'etale $\phi$-modules  over $K$ (resp. over $L'$ with descent)  with coefficients in $R$ and of arbitrary finite rank.  

Given $\fM\in Y^{[0,h],\tau}(R)$, the element $\fM \otimes_{\fS_{L',R}} (\cO_{\cE,L'}\widehat{\otimes}_{\Zp}R)$
is naturally an object $\Phi\text{-}\Mod^{\text{\'et},n}_{dd,L'}(R)$ and we define an \'etale $\phz$-module $\cM \in \Phi\text{-}\Mod^{\text{\'et},n}_{K}(R)$ by 
\[
\cM \defeq (\fM \otimes_{\fS_{L',R}} (\cO_{\cE,L'}\widehat{\otimes}_{\Zp}R))^{\Delta=1}
\]
with the induced Frobenius.  
This construction defines a map of stacks $\eps_\tau: Y^{[0,h],\tau}\ra\Phi\text{-}\Mod^{\text{\'et},n}_{K}$. Note that $\eps_\tau$ is independent of any presentation of $\tau$.

\begin{prop} \label{prop:epstau} The map $\eps_{\tau}$ is representable by algebraic spaces, proper, and of finite presentation.
\end{prop}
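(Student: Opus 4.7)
My plan is to factor and then analyze $\eps_\tau$ in two logical pieces. First, unramified descent for \'etale $\phi$-modules from $L'$ down to $K$ provides an equivalence of stacks between $\Phi\text{-}\Mod^{\text{\'et},n}_{K}$ and the substack of $\Phi\text{-}\Mod^{\text{\'et},n}_{dd,L'}$ consisting of objects with $\Delta$-action giving type $\tau$ on $\cM/u'\cM$ (the inverse being $\cM\mapsto \cM\otimes_{\cO_{\cE,K}}\cO_{\cE,L'}$ with natural descent data). Under this equivalence, $\eps_\tau$ becomes identified with the map $\fM\mapsto \fM\otimes_{\fS_{L',R}}(\cO_{\cE,L'}\widehat{\otimes}_{\Zp}R)$ from $Y^{[0,h],\tau}$ to $\Phi\text{-}\Mod^{\text{\'et},n}_{dd,L'}$ (restricted to the type $\tau$ locus). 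Since the descent datum is tame and canonically carried by both sides, the desired properties of $\eps_\tau$ follow from those of the analogous map $Y^{[0,h]}\to \Phi\text{-}\Mod^{\text{\'et},n}_{L'}$ forgetting descent data.

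For this latter map, the key point is to identify its fibers. Fix a $p$-adically complete Noetherian $\cO$-algebra $R$ and an \'etale $(\phi,\cO_{\cE,L'}\widehat{\otimes}_{\Zp}R)$-module $\cM$. The fiber of $Y^{[0,h]}\to \Phi\text{-}\Mod^{\text{\'et},n}_{L'}$ over $\cM$ parametrizes $\phi$-stable $\fS_{L',R}$-lattices $\fM\subset \cM$ whose Frobenius cokernel is annihilated by $E(v)^h$. The space of $\fS_{L',R}$-lattices inside the projective $\cO_{\cE,L'}\widehat{\otimes}_{\Zp}R$-module $\cM$ is represented by a twisted form of (the $p$-adic completion of) the affine Grassmannian of $\GL_n$ over $\fS_{L'}$; the height bound $\leq h$ is a closed condition cutting out a bounded affine Schubert variety which is projective over $R$, and the $\phi$-stability is then an additional closed condition. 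Consequently the fiber is representable by a closed subscheme of a projective $R$-scheme, giving representability by algebraic spaces together with finite presentation.

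To conclude properness, I would verify the valuative criterion. Given a complete discrete valuation ring $A$ with fraction field $F$, a morphism $\Spec A\to \Phi\text{-}\Mod^{\text{\'et},n}_{L'}$ corresponding to an \'etale $\phi$-module $\cM$ over $\cO_{\cE,L'}\widehat{\otimes}_{\Zp}A$, and a lift $\Spec F\to Y^{[0,h]}$ giving a $\phi$-stable $\fS_{L',F}$-lattice $\fM_F\subset \cM\otimes_A F$ of bounded height, one extends $\fM_F$ to a $\phi$-stable $\fS_{L',A}$-lattice $\fM\subset \cM$ by $\fM\defeq \fM_F\cap \cM$ inside $\cM\otimes_A F$. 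Standard arguments (as in \cite[\S 2.1]{kisin-fcrystal} or \cite[\S 5.4]{EGstack}) show $\fM$ is a projective $\fS_{L',A}$-lattice, $\phi$-stable, and still has Frobenius cokernel killed by $E(v)^h$; the uniqueness of this extension is immediate from the formula.

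The main technical obstacle in this plan is Step 2: rigorously identifying the moduli of $\fS_{L',R}$-lattices inside a fixed $\cM$ as a closed subscheme of an appropriate (twisted) affine Grassmannian over $\Spf R$, and justifying the projectivity of its finite-height truncations in this $p$-adic formal (as opposed to equal characteristic) setting. However, the requisite representability results for such moduli of lattices are established in \cite{PR} (in the equal characteristic setting) and adapted to the mixed characteristic $p$-adic formal setting in \cite{EGstack}, so one proceeds by citing their constructions and then verifying that the height and $\phi$-stability loci are closed, which is a routine manipulation with the matrix description afforded by the local model diagram of Theorem \ref{thm:fh_local_model}.
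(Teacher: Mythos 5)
Your proof takes a genuinely different route from the paper. The paper's proof is essentially a two-step citation: it observes that the map $Y^{[0,h],\tau}\to\Phi\text{-}\Mod^{\text{\'et}, n}_{dd,L'}$ is representable by algebraic spaces, proper, and of finite presentation by \cite[Corollary 3.1.7(3), Proposition 3.3.5]{CEGS}, and then notes (as you also do) that the $\Delta$-invariants map $\Phi\text{-}\Mod^{\text{\'et}, n}_{dd,L'}\to\Phi\text{-}\Mod^{\text{\'et},n}_{K}$ is an equivalence of stacks with explicit quasi-inverse. That's the whole argument. You instead attempt to bypass \cite{CEGS} by reducing to the analogous map without descent data and then directly verifying representability via bounded affine Schubert varieties and properness via the valuative criterion. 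This is in effect a re-derivation of the content of \cite{CEGS} (itself building on \cite{PZ} and \cite{EGstack}), which is considerably more work than the one-line citation; the trade-off is a more self-contained argument at the cost of substantially more detail to fill in.

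There are two points where your argument is currently incomplete. First, the reduction step. You write that the properties of $\eps_\tau$ ``follow from those of the analogous map $Y^{[0,h]}\to \Phi\text{-}\Mod^{\text{\'et},n}_{L'}$ forgetting descent data,'' but the square formed by the two forgetful maps and the two $\eps$-maps is \emph{not} cartesian: given $\fM\in Y^{[0,h]}(R)$ together with a $\Delta$-action on $\cM = \fM[1/u']$ lifting the one on $\cO_{\cE,L'}$, there is no reason for $\fM$ to be $\Delta$-stable inside $\cM$, so such a pair need not come from a Breuil--Kisin module with descent data. What is true is that $Y^{[0,h],\tau}$ is a \emph{closed} substack of the fiber product $Y^{[0,h]}\times_{\Phi\text{-}\Mod^{\text{\'et},n}_{L'}} \Phi\text{-}\Mod^{\text{\'et},n}_{dd,L'}$, cut out by $\Delta$-stability of the lattice and the type-$\tau$ condition (the latter being an open and closed condition since $\Delta$ has order prime to $p$). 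One then needs to observe that representable, proper, finitely presented morphisms are stable under base change and composition with closed immersions. This makes the reduction valid, but as written it is a handwave. Second, your valuative-criterion sketch is phrased as though $\Phi\text{-}\Mod^{\text{\'et},n}_{L'}$ were an algebraic stack in the usual scheme-theoretic sense. These are $p$-adic formal algebraic stacks over $\Spf\cO$, so both ``representable by algebraic spaces'' and ``proper'' must be interpreted in the adapted sense of \cite{EGstack}, and the valuative criterion has to be run after reducing mod $\varpi^a$ or using the appropriate formal/adic version. The intersection-of-lattices construction $\fM \defeq \fM_F\cap \cM$ is the right idea (it is the classical Kisin argument), but one must check projectivity of $\fM$ over $\fS_{L',A}$ and the height bound in this context; referring the reader to the original sources is acceptable, but as a self-contained alternative to citing \cite{CEGS} the details would need to be supplied, not merely gestured at.
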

\begin{proof}  First, the morphism $ Y^{[0,h],\tau}\to\Phi\text{-}\Mod^{\text{\'et}, n}_{dd,L'}$ is representable by algebraic spaces, proper, and of finite presentation by Corollary 3.1.7(3) and Proposition 3.3.5 of \cite{CEGS}.  {Finally, the map $\Phi\text{-}\Mod^{\text{\'et}, n}_{dd,L'}$ to $\Phi\text{-}\Mod^{\text{\'et},n}_{K}$ defined by taking $\Delta$-invariants is an equivalence} of stacks with quasi-inverse given by $\cM \mapsto \cM \otimes _{\cO_{\cE, K}} \cO_{\cE, L'}$.   
\end{proof}

For any $(\cM,\phi_\cM)\in \Phi\text{-}\Mod^{\text{\'et}}_{K}(R)$, we decompose $\cM=\oplus_{j \in \cJ} \cM^{(j)}$ over the embeddings $\sigma_j: W(k)\ra\cO$, with induced maps $\phi_\cM^{(j)}:\cM^{(j-1)}\ra\cM^{(j)}$.
The following proposition, a direct generalization of \cite[Proposition 3.2.1]{LLLM2}, records the effect of $\eps_\tau$ in terms of eigenbases.
\begin{prop} \label{prop:expeps}  
Let $\fM \in Y^{[0, h], \tau}(R)$ and set $\cM = \eps_{\tau}(\fM)$.  Let $(s, \mu)$ be the fixed lowest alcove presentation of $\tau$. 
 If $\beta$ is an eigenbasis of $\fM$, then there exists a basis $\fF$ \emph{(}determined by $\beta$\emph{)} for $\cM$ such that the matrix of $\phi_{\cM}^{(j)}$ with respect to $\fF$ is given by 
\[
A^{(j)}_{\fM, \beta} s^{-1}_j v^{\mu_j + \eta_j}.  %
\]
\end{prop}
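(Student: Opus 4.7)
The plan is to follow closely the argument of \cite[Proposition 3.2.1]{LLLM2}, with the main new ingredient being a careful bookkeeping of the $\Delta/\Delta'=\Gal(K'/K)$-action on the $W(k')\otimes_{\Z_p}R$-decomposition of $\fM\otimes_{\fS_{L',R}}(\cO_{\cE,L'}\widehat{\otimes}_{\Z_p}R)$, which corresponds to a shift of indices $j'\mapsto j'+f$ on $\cJ'$. First I will identify $\cJ$ with $\{0,1,\dots,f-1\}\subset\cJ'$ and check that for each $j\in\cJ$, Galois averaging under the cyclic group $\langle\sigma^f\rangle$ provides a canonical isomorphism $\cM^{(j)}\cong \bigl(\fM^{(j)}\otimes_{R[\![u']\!]}R[\![u']\!][1/u']^{\wedge_p}\bigr)^{\Delta'=1}$, under which $\phi_{\cM}^{(j)}$ corresponds to $\phi_\fM^{(j)}$.

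Next, I will produce the basis $\fF$ explicitly from $\beta$. The eigencharacter computation for $\Delta'$ acting on each $f_i^{(j)}$ (via $\chi_i^{-1}$, together with $g(u')=\omega_{K'}(g)u'$) shows that $(u')^{\mathbf{a}^{\prime(j)}_i}f_i^{(j)}$ is $\Delta'$-invariant, using the congruence $\chi_i=\omega_{f'}^{\mathbf{a}^{\prime(j)}_i\cdot p^{f'-j}}$ modulo $e'$. Twisting by the orientation permutation $s'_{\orient,j}$, I will define $\fF^{(j)}\defeq \beta^{(j)}D^{(j)}$ where $D^{(j)}\defeq (u')^{\mathbf{a}^{\prime(j)}}s'_{\orient,j}$ (interpreted so that $\fF^{(j)}_i=(u')^{\mathbf{a}^{\prime(j)}_{s'_{\orient,j}(i)}}f^{(j)}_{s'_{\orient,j}(i)}$), and verify that $\fF^{(j)}$ is an $\cO_{\cE,K}\widehat{\otimes}R$-basis of $\cM^{(j)}$ after Galois averaging.

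The matrix of $\phi_\cM^{(j)}$ in the basis $\fF$ is then computed by the standard Frobenius change-of-basis formula, yielding
\[
(D^{(j)})^{-1}C^{(j)}_{\fM,\beta}\phz(D^{(j-1)}) = A^{(j)}_{\fM,\beta}\,(s'_{\orient,j})^{-1}(u')^{p\mathbf{a}^{\prime(j-1)}-\mathbf{a}^{\prime(j)}}s'_{\orient,j-1},
\]
where the last equality inserts $1=(u')^{\mathbf{a}^{\prime(j)}}s'_{\orient,j}(s'_{\orient,j})^{-1}(u')^{-\mathbf{a}^{\prime(j)}}$ to recognize $A^{(j)}_{\fM,\beta}$ via its definition in \eqref{eq:CtoA}. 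Using the periodicity $\bm{\alpha}'_{j'+f'}=\bm{\alpha}'_{j'}$, a direct computation shows $p\mathbf{a}^{\prime(j-1)}-\mathbf{a}^{\prime(j)}=e'\bm{\alpha}'_{-j}$, so the exponent becomes $v^{\bm{\alpha}'_{-j}}$.

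The crux is the combinatorial simplification $(s'_{\orient,j})^{-1}v^{\bm{\alpha}'_{-j}}s'_{\orient,j-1}=s_j^{-1}v^{\mu_j+\eta_j}$, which I will establish using the explicit formulas $s'_{\orient,j}=s_0s_1\cdots s_j$ (derived from \eqref{primeorient}) and $\bm{\alpha}'_{-j}=s_\tau(\bm{\alpha}_{f-j})=s'_{\orient,j-1}(\mu_j+\eta_j)$ for $1\leq j\leq f-1$ (and trivially for $j=0$). Indeed, substituting yields $(s_0\cdots s_j)^{-1}(s_0\cdots s_{j-1})v^{\mu_j+\eta_j}=s_j^{-1}v^{\mu_j+\eta_j}$, matching the stated formula. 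The main obstacle is keeping track of all the indices: the shift between $\cJ$ and $\cJ'$, the precise definition of $\bm{\alpha}_j$ (whose product of $s_i^{-1}$ runs down to $s_{f-j}^{-1}$, easy to get off by one), and the interaction of the orientation with the Frobenius index shift. Once the key identity $\bm{\alpha}'_{-j}=s'_{\orient,j-1}(\mu_j+\eta_j)$ is established, everything assembles cleanly.
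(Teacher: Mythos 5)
Your proposal is correct and follows essentially the same route as the paper's proof: you build the basis $\fF^{(j)}=\beta^{(j)}(u')^{\mathbf{a}^{\prime(j)}}s'_{\orient,j}$ (which the paper calls $\tld{\beta}^{(j')}$, obtained there in two steps via the intermediate $\tld{\beta}'$), and the core computations — the change-of-basis formula, the identity $p\mathbf{a}^{\prime(j-1)}-\mathbf{a}^{\prime(j)}=(p^{f'}-1)\bm{\alpha}'_{-j}$, and the combinatorial simplification $(s'_{\orient,j})^{-1}v^{\bm{\alpha}'_{-j}}s'_{\orient,j-1}=s_j^{-1}v^{\mu_j+\eta_j}$ — are identical. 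The only cosmetic difference is that you pass directly to the $\Delta$-invariant module $\cM$ over $K$, whereas the paper first constructs a basis of $\cM'=(\fM\otimes\cO_{\cE,L'})^{\Delta'=1}$ over $K'$ for all $j'\in\cJ'$ and then notes the $\sigma^f$-compatibility lets it descend to $\cM$.
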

\begin{proof} %
The statement is \cite[Proposition 3.2.1]{LLLM2} whose proof is generalized in the proof of \cite[Corollary 3.2.17]{LLL}.
For the convenience of the reader, we reproduce the argument here.
In particular, the proof below is obtained by a simple relabeling from the proof of \emph{loc.~cit.}, using Remark \ref{rmk:cmpr:mat}.
We define a basis $\tld{\beta}'$ for $\cM' \defeq(\fM \otimes_{\fS_{L'}} \cO_{\cE,L'})^{\Delta' = 1}$ as follows:  for each $0 \leq j' \leq f'-1$, define 
\[
\tld{\beta}^{\prime\,(j')}  \defeq \beta^{(j')} \Big((u')^{\bf{a}^{\prime \, (j')}}\Big)
\]
which is a basis for $\cM^{\prime \, (j')}$.  This uses that the action on $u'$ of $\Delta'$ in embedding $j'$ is through the character $\omega_{f'}^{p^{f' - j'}}$.  
The matrix for $\phi_{{\cM}'}^{(j')}:{\cM}^{\prime \, (j'-1)} \ra {\cM}^{\prime \, (j')}$ with respect to $\tld{\beta}'$ is given by 
\[
s'_{\mathrm{or}, j'} A^{(j')}_{\fM, \beta} (s'_{\mathrm{or}, j'})^{-1} \big(u'\big)^{p \bf{a}^{\prime\, (j'-1)} - \bf{a}^{\prime \, (j')}} = s'_{\mathrm{or}, j'} A^{(j')}_{\fM, \beta} (s'_{\mathrm{or}, j'})^{-1}  v^{\bm{\alpha}'_{f'-j'}}
\]
since $p \bf{a}^{\prime \, (j'-1)}- \bf{a}^{\prime \, (j')}= (p^{f'} -1) \bm{\alpha}'_{ f'-j'}$. 
Define $\tld{\beta}$ by $\tld{\beta}^{(j')} \defeq \tld{\beta}^{\prime \, (j')} s'_{\mathrm{or}, j'}$ for all $0\leq j'\leq f'-1$. 
Let $j' = j + if$ for $0 \leq j \leq f-1$.   Then the matrix for $\phi_{\cM'}^{(j')}$ with respect to $\tld{\beta}$ is given by 
\[
 A^{(j')}_{\fM, \beta} (s'_{\mathrm{or}, j'})^{-1}  s'_{\mathrm{or},j'-1} v^{(s'_{\mathrm{or},j'-1})^{-1} (\bm{\alpha}'_{f'-j'})} =  A^{(j')}_{\fM, \beta} s^{-1}_j v^{\mu_j + \eta_j}.
\]   
 Since $(\sigma^f)^*(\tld{\beta}^{(j')}) = \tld{\beta}^{(j' -f)}$, this descends to a basis $\fF$ of ${\cM} \defeq \eps_\tau(\fM)= (\cM')^{\sigma^f = 1}$, with respect to which the matrix of $\phi_\cM^{(j)}$ has the form described in the statement of the Proposition.

\end{proof}

The following Proposition, {which is the global version of the triviality of Kisin varieties,} shows that $\eps_{\tau}$ does not lose information in generic situations:
\begin{prop}\label{prop:BK_to_phi_mono}
{Let $h$ be a nonnegative integer and} assume $\tau$ is $(h+1)$-generic. Then the {proper} map $\eps_\tau: Y^{[0,h],\tau}\to \Phi\text{-}\Mod^{\text{\emph{\'et}},n}_{K}$ is a monomorphism of stacks over $\Spf \, \cO$, and hence is a closed immersion.
\end{prop}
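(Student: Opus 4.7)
By Proposition~\ref{prop:epstau}, $\eps_\tau$ is already proper and representable by algebraic spaces, and since a proper representable monomorphism is a closed immersion, it suffices to show that $\eps_\tau$ is a monomorphism. Using the equivalence $\Phi\text{-}\Mod^{\text{\'et}, n}_{dd,L'}\risom\Phi\text{-}\Mod^{\text{\'et}, n}_{K}$ from the proof of Proposition~\ref{prop:epstau}, this reduces to showing that, for every $p$-adically complete Noetherian $\cO$-algebra $R$, the natural functor $Y^{[0,h],\tau}(R)\to\Phi\text{-}\Mod^{\text{\'et},n}_{dd,L'}(R)$ is fully faithful. Faithfulness is automatic: since $\fM\in Y^{[0,h],\tau}(R)$ is projective over $\fS_{L',R}$, the localization map $\fM\into\fM\otimes_{\fS_{L',R}}(\cO_{\cE,L'}\widehat{\otimes}_{\Zp}R)$ is injective, so two morphisms $\fM_1\to\fM_2$ that agree on étale $\phi$-modules must coincide.

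The content is therefore fullness: given $\fM_1,\fM_2\in Y^{[0,h],\tau}(R)$ and an isomorphism $\psi:\eps_\tau(\fM_1)\risom\eps_\tau(\fM_2)$ compatible with Frobenius and descent, I must produce a (necessarily unique) isomorphism $\tld{\psi}:\fM_1\risom\fM_2$ inducing $\psi$. Working Zariski-locally on $\Spec R$ so that eigenbases exist, I choose eigenbases $\beta_i$ of $\fM_i$ and the associated bases $\fF_i$ of $\eps_\tau(\fM_i)$ provided by Proposition~\ref{prop:expeps}. In these bases $\psi$ is encoded by a tuple $X=(X_j)_{j\in\cJ}$ of invertible matrices with entries in $\cO_{\cE,K}\widehat{\otimes}_{\Zp}R$, and Frobenius compatibility translates to
\[
X_j \;=\; A^{(j)}_{\fM_2,\beta_2}\cdot \Ad\bigl(s_j^{-1}v^{\mu_j+\eta_j}\bigr)\bigl(\phz(X_{j-1})\bigr)\cdot\bigl(A^{(j)}_{\fM_1,\beta_1}\bigr)^{-1}.
\]
Since both $A^{(j)}_{\fM_i,\beta_i}$ and $E(v)^h(A^{(j)}_{\fM_i,\beta_i})^{-1}$ have entries in $R[\![v]\!]$, the remaining task is to show that each $X_j$ has entries in $\fS_R$, and the same for $X_j^{-1}$; gluing then produces the required $\tld{\psi}$.

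This last integrality step is the technical heart of the argument, and is the only place where the $(h+1)$-genericity of $\tau$ intervenes. The divisibility estimates of Lemma~\ref{lem:changeofbasis}, which are tight precisely under this genericity, show that the operator $Y\mapsto \Ad(s_j^{-1}v^{\mu_j+\eta_j})(\phz(Y))$ strictly improves $v$-adic order after a bounded initial shift controlled by $h$, so iterating the displayed identity around the cyclic index $j\mapsto j-1$ turns it into a $v$-adically contracting fixed-point recursion for the polar part of $X_j$; passing to the limit, as in the successive-approximation arguments of Lemma~\ref{lem:iotawelldef} and Proposition~\ref{prop:gauge_basis_existence}, forces any hypothetical pole of $X_j$ in $v$ to be pushed to arbitrarily high order and hence to vanish. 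The same argument applied to $X^{-1}$ gives integrality of the inverse, producing $X\in\GL_n(\fS_R)^{\cJ}$ (compatibly with descent), and hence the isomorphism $\tld{\psi}:\fM_1\risom\fM_2$.

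The main obstacle in executing this plan is, unsurprisingly, the integrality/pole-cancellation step: it is a global/uniform version of the triviality of the Kisin variety in the generic range, and although the underlying divisibility calculus is parallel to computations we have made in fixed-shape strata in \cite{LLLM,LLL,LLLM2}, here one must organize the argument uniformly over all of $Y^{[0,h],\tau}$ rather than on a single stratum, so some care is required to ensure the convergence is independent of shape.
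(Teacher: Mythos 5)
Your reduction to fullness, the eigenbasis reformulation in terms of the $\phz$-conjugation equation, and the identification of the integrality of $X_j$ as the crux are all correct and agree with the paper. However, the proposed mechanism for the integrality step has a real gap, and it is exactly the point you flag as requiring "some care."

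You propose to run a pole-cancellation/contraction argument directly over a general $p$-adically complete Noetherian $\cO$-algebra $R$, appealing to the divisibility estimates of Lemma~\ref{lem:changeofbasis}. Two problems. First, Lemma~\ref{lem:changeofbasis} gives estimates for matrices already in $\Iw_1(R)$ or with integral entries; it does not by itself control matrices with $v$-adic poles, which is what you have before integrality is established. The relevant pole-bound statement in the paper is Lemma~\ref{lem:BKvariety}, and it is stated \emph{only for $\F$-algebras}. Second, and more seriously, Lemma~\ref{lem:BKvariety}'s pole-order recursion uses that the height condition gives $v^h (A^{(j)})^{-1}\in\Mat_n(R[\![v]\!])$, which over an $\F$-algebra is the same as $E(v)^h(A^{(j)})^{-1}\in\Mat_n(R[\![v]\!])$ since $E(v)=v+p\equiv v$. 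Over an $\cO/\varpi^a$-algebra with $a>1$, one only knows $(v+p)^h(A^{(j)})^{-1}\in\Mat_n(R[\![v]\!])$, and since $(1+p/v)^{-1}=1-p/v+\cdots$ has a pole of order roughly $a-1$ (as $p^a=0$), the best one gets from this is $v^{ha}(A^{(j)})^{-1}\in\Mat_n(R[\![v]\!])$. Feeding this into the pole-order recursion produces a contraction only under a genericity bound that grows with $a$, hence is not uniform and does not yield the $(h+1)$-genericity hypothesis of the proposition.

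The paper avoids this by inducting on $a$ for $R$ an $\cO/\varpi^a$-algebra. The base case $a=1$ is exactly Lemma~\ref{lem:BKvariety}, where the pole-bound argument closes with the tight bound because $(v+p)^h=v^h$. For the inductive step one linearizes along the square-zero extension $R\onto R/\varpi^{a-1}$: writing $A^{(j)}_2=(1+\mathfrak a_j)A^{(j)}_1$ and $X_j=1+Y_j$ with $\mathfrak a_j,Y_j$ valued in the kernel $J=\varpi^{a-1}R$ (an $\F$-module), the recursion becomes an $\F$-linear statement to which the sharp $v$-adic estimates again apply (this is Lemma~\ref{lem:phi_mod_tangent}, the linearized companion of Lemma~\ref{lem:BKvariety}). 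Your outline would need to be reorganized along these lines to produce a correct proof with the stated genericity.
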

To prepare for the proof, we record the following Lemmas:

\begin{lemma} \label{lem:BKvariety}  Let $R$ be an $\bF$-algebra, and let $(A^{(j)}_1)_{j \in \cJ},  (A^{(j)}_2)_{j \in \cJ} \in  L^{[0,h]}\GL_n(R)^{\cJ}$.  Assume $\tld{z} = s^{-1} t_{\mu + \eta} \in \tld{W}^{\vee, \cJ}$ such that $\mu$ is $(h+1)$-deep in $\un{C}_0$.  Let $(I^{(j)})_{j \in \cJ} \in \GL_n(R(\!(v)\!))^{\cJ}$ such that $A^{(j)}_2 \tld{z}_j = I^{(j)} A^{(j)}_1 \tld{z}_j \phz(I^{(j-1)})^{-1}$ for all $j \in \cJ$, then $I^{(j)} \in \Iw_{\F}(R)$ for all $j \in \cJ$. 
\end{lemma}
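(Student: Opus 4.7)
The plan is to run a contracting iteration in the $v$-adic topology, combining the height bound on the $A^{(j)}_k$ with the $(h+1)$-deep hypothesis on $\mu$. I would first rearrange the equation as
\[
\phz(I^{(j-1)}) = \Ad(\tld{z}_j^{-1})\bigl((A^{(j)}_2)^{-1} I^{(j)} A^{(j)}_1\bigr),
\]
turning it into a one-step recursion expressing the $\phz$-image of $I^{(j-1)}$ in terms of $I^{(j)}$. For $X \in \Mat_n(R(\!(v)\!))$, let $v_-(X)$ denote the minimum $v$-adic valuation among the entries of $X$, and set $N_j \defeq \max(0, -v_-(I^{(j)})) \in \Z_{\geq 0}$; the goal is to show $N_j = 0$ for all $j$.

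The height condition $v^h (A^{(j)}_k)^{-1} \in \Mat_n(R[\![v]\!])$ gives $v_-((A^{(j)}_2)^{-1} I^{(j)} A^{(j)}_1) \geq -h - N_j$, while the $(h+1)$-deep hypothesis $|\langle \mu_j + \eta_j, \alpha^\vee\rangle| \leq p - h - 2$ (for every root $\alpha$) shows that $\Ad(\tld{z}_j^{-1})$ lowers entry-wise valuations by at most $p - h - 2$. Together with $v_-(\phz(X)) = p\, v_-(X)$, these bounds yield the key recursive inequality
\[
p N_{j-1} \leq N_j + (p - 2) \quad\text{for all } j \in \cJ.
\]
Iterating around the cycle $\cJ \cong \Z/f\Z$ telescopes to $(p^f - 1)\, N_j \leq (p-2)(p^f - 1)/(p-1)$, hence $N_j \leq (p-2)/(p-1) < 1$. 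Since $N_j \in \Z_{\geq 0}$, this forces $N_j = 0$, so $I^{(j)} \in \Mat_n(R[\![v]\!])$. Applying the same argument to $(I^{(j)})^{-1}$, which satisfies the analogous equation with $A^{(j)}_1$ and $A^{(j)}_2$ swapped, yields $(I^{(j)})^{-1} \in \Mat_n(R[\![v]\!])$, so $I^{(j)} \in \GL_n(R[\![v]\!])$.

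For the remaining upper-triangularity modulo $v$ (equivalently, $I^{(j)} \in \Iw_{\F}(R)$), I would refine the above using the weighted valuation $\tld{v}_-(X_{ik}) \defeq v_-(X_{ik}) - \delta_{i > k}$, engineered so that $\tld{N}(X) \defeq \max(0, -\min_{i,k} \tld{v}_-(X_{ik}))$ vanishes exactly when $X \in \Iw_{\F}(R)$ (given $X \in \GL_n(R[\![v]\!])$). A direct check gives the subadditivity $\tld{N}(XY) \leq \tld{N}(X) + \tld{N}(Y)$ for matrix products. Analyzing the entries of $\Ad(\tld{z}_j)(\phz(I^{(j-1)}))$ case-by-case according to the sign of $s_j(\alpha_{ik})$, and exploiting the $(h+1)$-depth, yields an analogous recursive inequality $p\tld{N}_{j-1} \leq \tld{N}_j + C$ with a constant $C < p - 1$, from which iteration around the cycle forces $\tld{N}_j = 0$.

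The main technical obstacle is this refinement. The naive contributions to $\tld{N}$ from the side factors $A^{(j)}_2$ and $(A^{(j)}_1)^{-1}$ can be as large as $1$ and $h + 1$ respectively, and these losses must be absorbed by the gain of at least $h + 2$ per off-diagonal entry coming from $\Ad(\tld{z}_j)$ under the $(h+1)$-deep condition. That the $p$-fold contraction from $\phz$ dominates the accumulated losses relies crucially on the inequality $p > 2(h + 1)$, itself a consequence of the $(h+1)$-depth hypothesis.
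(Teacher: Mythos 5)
Your first part (integrality, and by symmetry $I^{(j)} \in \GL_n(R[\![v]\!])$) is correct and is essentially the paper's argument: both iterate a contracting recursion for the pole order coming from the height bound, the $(h+1)$-depth, and the $p$-fold contraction of $\phz$ on ordinary valuations. The paper uses $k_j = -v_-(I^{(j)})$ directly where you use $N_j = \max(0, -v_-(I^{(j)}))$, but the iteration around $\Z/f\Z$ is the same.

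The second part has a genuine gap. The claimed recursion $p\tld{N}_{j-1} \leq \tld{N}_j + C$ with $C < p-1$ cannot be established from the estimates you outline, because $\phz$ does \emph{not} contract the weighted valuation $\tld{v}_-$ by a factor of $p$ on lower-triangular entries. Since $v_-(\phz(X)_{ik}) = p\,v_-(X_{ik})$ but the shift $\delta_{i>k}$ is unaffected by $\phz$, one has $\tld{v}_-(\phz(X)_{ik}) = p\,\tld{v}_-(X_{ik}) + (p-1)\delta_{i>k}$, so the best general bound is $\tld{N}(\phz(X)) \geq p\,\tld{N}(X) - (p-1)$; this $p-1$ of slack from $\phz$ alone already destroys any hope of $C < p-1$ once you also take the losses $h+2$ from the side factors and $p-h-2$ from $\Ad(\tld{z}_j)$, which add up to roughly $2p-1$. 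Concretely: once $I^{(j)} \in \GL_n(R[\![v]\!])$ is known, $\tld{N}_j \in \{0,1\}$, and if $\tld{N}_{j-1} = 1$ (a nonzero constant term in a lower-triangular entry of $I^{(j-1)}$) then $\phz$ preserves that constant term, so $\tld{N}(\phz(I^{(j-1)})) = 1$, not $p$. There is no $p$-fold contraction for the aggregate quantity $\tld{N}$ to exploit, so iteration around the cycle gives no leverage. The paper instead argues \emph{directly, without iteration}: from $\Ad(\tld{z}_j)(\phz(I^{(j-1)})) = (A^{(j)}_2)^{-1} I^{(j)} A^{(j)}_1$, the height bounds force the right-hand side to have pole order at most $h$, whereas a nonzero constant term of $(I^{(j-1)})_\alpha$ for a negative root $\alpha$ would give the left-hand side a pole of order $-\langle \mu_j+\eta_j, \alpha^\vee\rangle > h+1$ at the permuted entry, a contradiction for each $j$ separately. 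Your instinct to analyze entries case-by-case according to the sign of $s_j(\alpha_{ik})$ is the right move, but it yields a one-step contradiction at the offending entry, not a contracting recursion for the coarse invariant $\tld{N}$.
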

\begin{proof} We essentially repeat the argument in the proof of \cite[Theorem 3.2]{LLLM} for a general $\bF$-algebra $R$.   For all $j \in \cJ$, define $k_{j}\in\Z$ so that $I^{(j),\,+}\defeq v^{k_{j}} I^{(j)}\in\Mat_n(R[\![v]\!])$ and $I^{(j),\,+}\not\equiv 0$ modulo $v$.
Rewriting the equation and multiplying through by $v^{h}$, we have
\begin{equation*}
v^{h+k_{j}-pk_{j-1}}\Ad(\tld{z}_j)\big(\phz(I^{(j-1),\,+})\big)=v^{h}\Big(A_{2}^{(j)}\Big)^{-1}I^{+,\,(j)}A_1^{(j)},
\end{equation*}
where the right side is in $\Mat_n(R[\![v]\!])$ by the height condition. 

As $I^{(j),\,+}\not\equiv 0$ modulo $v$, 
we deduce that 
\begin{equation}
\label{ex:KV:key}
k_{j}\geq pk_{j-1}-\max_{\alpha \in R}\{\langle \mu_{j}+\eta_j,\alpha^{\vee} \rangle\}-h > p k_{j-1} - p + m - h \geq p(k_{j-1} -1) + 1
\end{equation}
since $\mu$ is $(h+1)$-deep in $\un{C}_0$.  
We conclude that $k_{j'}\leq 0$ for all $j \in \cJ$ or, equivalently, $I^{(j)}\in \Mat_n(R[\![v]\!])$ for all $j \in \cJ$.
By exchanging the roles of $A_1$ and $A_2$, we conclude that $I^{(j)}\in \GL_n(R[\![v]\!])$ for all $j \in \cJ$.

We now prove that $I^{(j)}\in \Iw(R)$ for all $j \in \cJ$. 
Let $\alpha$ be a negative root of $\GL_n$. Assume $(I^{(j-1)}\big)_\alpha \not\equiv 0$ mod $v$ for some $j \in \cJ$.  Since  
\[
\big(\Ad\big(v^{\mu_{j}+\eta_{j}}\big)(\phz(I^{(j-1)}))\big)_\alpha=(\phz(I^{(j-1)})\big)_\alpha v^{\langle \mu_{j}+\eta_{j},\alpha^{\vee} \rangle},\] 
$\Ad(\tld{z}_j) (\phz(I^{(j-1)}))$ has a pole of order $- \langle \mu_{j}+\eta_{j},\alpha^{\vee} \rangle  > h$.   This is a contradiction since  $v^h \Ad(\tld{z}_j)(\phz(I^{(j-1)})) = v^h\big(\big(A_{2}^{(j)}\big)^{-1}I^{(j)}A_1^{(j)}\big)$ is in $\Mat_n(R[\![v]\!])$.  
\end{proof}
The same argument also proves the following:
\begin{lemma} \label{lem:phi_mod_tangent}Let $R$ be an $\bF$-algebra and let $J$ be an $R$-module. Let $(A^{(j)})_{j \in \cJ}\in  L^{[0,h]}\GL_n(R)^{\cJ}$.  Assume $\tld{z} = s^{-1} t_{\mu + \eta} \in \tld{W}^{\vee, \cJ}$ such that $\mu$ is $(h+1)$-deep in $\un{C}_0$. 
Let $(Y_j)_{j \in \cJ} \in \Mat_n(J(\!(v)\!))$.
Assume that 
\[Y_{j}-\Ad(A^{(j)}\tld{z}_j)\phz(Y_{j-1})\in \frac{1}{v^h} \Mat_n(J[\![v]\!]),\]
for all $j\in\cJ$.
Then $Y_{j}\in \Lie \cI_{\F}(J)$.
\end{lemma}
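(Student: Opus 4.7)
The plan is to mimic the proof of Lemma \ref{lem:BKvariety} in the additive setting, proceeding in two steps: first show that $Y_j \in \Mat_n(J[\![v]\!])$ for all $j \in \cJ$, and then deduce that each $Y_j$ is upper triangular modulo $v$.

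For the first step, I would define, as in Lemma \ref{lem:BKvariety}, the integer $k_j \in \Z$ so that $Y_j^+ \defeq v^{k_j} Y_j \in \Mat_n(J[\![v]\!])$ with $Y_j^+ \not\equiv 0 \bmod v$ (whenever $Y_j$ is nonzero). Rearranging the hypothesis as $\Ad(\tld{z}_j)\phi(Y_{j-1}) = (A^{(j)})^{-1}(Y_j - E_j)A^{(j)}$, substituting $Y_{j-1} = v^{-k_{j-1}}Y_{j-1}^+$ and $Y_j = v^{-k_j}Y_j^+$, and multiplying through by $v^{h+k_j}$, I would obtain the additive analogue of the key displayed equation in Lemma \ref{lem:BKvariety}:
\[
v^{h+k_j-pk_{j-1}}\Ad(\tld{z}_j)\phi(Y_{j-1}^+) = \bigl(v^h(A^{(j)})^{-1}\bigr)\cdot Y_j^+\cdot A^{(j)} - \bigl(v^h(A^{(j)})^{-1}\bigr)\cdot (v^{k_j}E_j)\cdot A^{(j)}.
\]
Using that $v^h(A^{(j)})^{-1}$ and $A^{(j)}$ lie in $\Mat_n(R[\![v]\!])$, picking an index $\beta$ at which $(Y_{j-1}^+)_\beta$ has nonzero constant term $c_\beta \in J$, and evaluating both sides at the entry $s_j^{-1}(\beta)$ where the left-hand side has leading term $c_\beta v^{h+k_j-pk_{j-1}+\langle \mu_j+\eta_j,\beta^\vee\rangle}$, the $(h+1)$-depth of $\mu$ yields the strict inequality
\[
k_j > p(k_{j-1}-1)+1
\]
exactly as in the proof of Lemma \ref{lem:BKvariety}. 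Iterating this inequality cyclically around the finite set $\cJ$ then forces $k_j \leq 0$ for all $j$, just as before.

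For the second step, with $Y_j \in \Mat_n(J[\![v]\!])$ now in hand, suppose for contradiction that $(Y_{j-1})_\alpha \not\equiv 0 \bmod v$ for some negative root $\alpha$. Then, as in the proof of Lemma \ref{lem:BKvariety}, the entry $(\Ad(\tld{z}_j)\phi(Y_{j-1}))_{s_j^{-1}(\alpha)}$ has valuation $\langle \mu_j+\eta_j,\alpha^\vee\rangle<-(h+1)$ by the depth hypothesis, producing a pole of order greater than $h+1$. On the other hand, multiplying the hypothesis by $v^h$ and using $Y_j \in \Mat_n(J[\![v]\!])$ together with $v^hE_j \in \Mat_n(J[\![v]\!])$ forces $v^h\Ad(A^{(j)}\tld{z}_j)\phi(Y_{j-1})\in \Mat_n(J[\![v]\!])$. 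Conjugating this by $(A^{(j)})^{-1}$ (which costs at most a further factor of $v^{-h}$) shows $v^{2h}\Ad(\tld{z}_j)\phi(Y_{j-1})\in \Mat_n(J[\![v]\!])$, contradicting the pole of order exceeding $h+1$.

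The main obstacle will be verifying the strict inequality $k_j > p(k_{j-1}-1)+1$ in the first step in the case $k_j < h$: when this holds, the second term on the right-hand side of the displayed equation only lies in $v^{k_j-h}\Mat_n(J[\![v]\!])$ rather than in $\Mat_n(J[\![v]\!])$, so a more delicate leading-term analysis (examining the matching of leading coefficients at the entry $s_j^{-1}(\beta)$ on both sides of the equation) is required to recover the desired strict inequality. Once this is in place, the rest of the argument parallels that of Lemma \ref{lem:BKvariety} line by line.
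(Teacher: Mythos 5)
Your plan of transcribing the proof of Lemma \ref{lem:BKvariety} is the right one (it is what the paper intends by ``the same argument''), but the obstacle you flag in your last paragraph cannot be surmounted by a finer leading-term analysis: with the hypothesis $E_j\in\frac{1}{v^h}\Mat_n(J[\![v]\!])$ read at face value, the conclusion is actually false once $h\ge 2$. For instance, take $n=2$, $\#\cJ=1$, $s=1$, $\mu+\eta=(a,b)$ with $h+1<a-b\le 2h$ (compatible with $(h+1)$-depth for $p>2h+3$), $A^{(0)}=\Diag(1,v^h)$, and $Y_0$ the matrix whose only nonzero entry is a constant $0\ne c\in J$ in position $(2,1)$. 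Then $Y_0-\Ad(A^{(0)}\tld{z}_0)\phz(Y_0)$ has only the $(2,1)$ entry $c(1-v^{h+b-a})$ nonzero, which has pole order $a-b-h\le h$; the hypothesis therefore holds, but $Y_0$ is certainly not upper triangular mod $v$. The culprit is exactly the term you identify: $(A^{(j)})^{-1}E_jA^{(j)}$ genuinely can acquire a pole of order $2h$, and this extra $v^{-h}$ is enough to absorb a violation of triangularity.

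What makes the application go through is that the error term in the inductive step of Proposition \ref{prop:BK_to_phi_mono} has more structure than a generic element of $\frac{1}{v^h}\Mat_n(J[\![v]\!])$: there $E_j=\fA_j=A_2^{(j)}(A_1^{(j)})^{-1}-1$, so that $(A^{(j)})^{-1}\fA_jA^{(j)}=(A_1^{(j)})^{-1}A_2^{(j)}-1$, which again has pole order at most $h$. The correct input to work with is therefore
\[
(A^{(j)})^{-1}Y_jA^{(j)}-\Ad(\tld{z}_j)\phz(Y_{j-1})\in\frac{1}{v^h}\Mat_n(J[\![v]\!]),
\]
and with this in hand the argument of Lemma \ref{lem:BKvariety} transcribes word for word: after multiplying by $v^{h+k_j}$ the right-hand side lands in $\Mat_n(J[\![v]\!])$ whenever $k_j\ge 0$, yielding $\max(k_j,0)>p(k_{j-1}-1)+1$ and hence, by iterating around $\cJ$, $k_j\le 0$ for all $j$; the mod-$v$ triangularity then follows as in the second half of that proof, since the right-hand side has pole order at most $h$, whereas a non-triangular $Y_{j-1}$ would produce an entry of pole order at least $h+2$ on the left. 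The missing ingredient is thus not a finer estimate but the recognition of the correct conjugated form of the input relation.
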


\begin{proof}[Proof of Proposition \ref{prop:BK_to_phi_mono}] We need to show that for each $p$-adically complete Noetherian $\cO$-algebra, $\eps_\tau$ induces a fully faithful functor $ Y^{[0,h],\tau}(R)\to \Phi\text{-}\Mod^{\text{\'et},n}_{K}(R)$. It suffices to treat the case where $R$ is a Noetherian $\cO/\varpi^a$-algebra. We choose a lowest alcove presentation $(s,\mu)$ of $\tau$ such that $\mu$ is $(h+1)$-deep in $\un{C}_0$.

Suppose $\fM_1, \fM_2 \in Y^{[0,h],\tau}(R)$, and let $\cM_i=\eps_{\tau}(\fM_i)$ for $i=1,2$. We need to show $\eps_\tau$ induces an isomorphism $\Hom_{Y^{[0,h],\tau}(R)}(\fM_1,\fM_2)\cong \Hom_{\Phi\text{-}\Mod^{\text{\'et},n}_{K}(R)}(\cM_1,\cM_2)$. Since this assertion is local in $R$, we may assume that $\fM_i$ admits eigenbases $\beta_i$. Let $A^{(j)}_i = A^{(j)}_{\fM_i, \beta_i}$. Proposition \ref{prop:expeps} using the bases $\fF_1, \fF_2$ constructed from $\beta_1, \beta_2$ shows that an element $\Hom_{\Phi\text{-}\Mod^{\text{\'et},n}_{K}(R)}(\cM_1,\cM_2)$ is in bijection with the set of tuples $(I^{(j)})_{j\in\cJ}\in L\cG_\cO(R)^\cJ$ such that 
\begin{equation}\label{eq:hom_elements}
I^{(j)}A^{(j)}_1=A^{(j)}_2\Ad(s^{-1}_j v^{\mu_j+\eta_j})(\phz(I^{(j-1)})),
\end{equation}
while $\Hom_{Y^{[0,h],\tau}(R)}(\fM_1,\fM_2)$ is in bijection with the set of tuple $(I^{(j)})_{j\in \cJ}\in \cI_{\F}(R)$ satisfying the same relation by Proposition \ref{prop:changeofbasis}. In other words, we need to show that any solution to (\ref{eq:hom_elements}) in $L\cG_\cO(R)^{\cJ}$ must automatically belong to $\cI(R)^{\cJ}$. We will prove this assertion by induction on $a$.

The case $a=1$ is treated by Lemma \ref{lem:BKvariety}. Suppose our assertion is true up to $a-1$. We may assume that $A^{(j)}_1\equiv A^{(j)}_2$ mod $\varpi^{a-1}$, and $\beta_1\equiv \beta_2$ mod $\varpi^{a-1}$. Let $A^{(j)}_\bF=A^{(j)}_1=A^{(j)}$ mod $\varpi$. We perform a Lie algebra computation similar to the proof of Proposition \ref{prop:gauge_basis_existence}. Set $J=\varpi^{a-1}R$. We can write $A^{(j)}_2=(1+\fA_j)A^{(j)}_1$ and $I^{(j)}=1+Y_j$, with $\fA_j, Y_j\in \Mat_n(J(\!(v)\!))$. 
Equation \ref{eq:hom_elements} translates to %
\[Y_{j}=\fA_j+\Ad(A^{(j)}_\bF s_j^{-1}v^{\mu_j+\eta_j})(\phz(Y_{j-1}))\]
Since $A^{(j)}_i\in L^{[0,h]}\GL_n(R)$, $\fA_j\in \frac{1}{v^h}\Mat_n(J[\![v]\!])$. Lemma \ref{lem:phi_mod_tangent} thus shows that $Y_j\in \Lie\cI(J)$, and thus $I^{(j)}\in \cI(R)$.
\end{proof}

\subsubsection{\'Etale $\phi$-modules and local models} %
\label{subsub:etale_phi_LM}

Fix an $(h+1)$-generic tame inertial type $\tau$ with a lowest alcove presentation $(s, \mu)$ 
such that $\mu$ is $(h+1)$-deep in $\un{C}_0$. 
This gives rises to local model diagrams (\ref{eq:fh_local_model}) and (\ref{eqn:Hodgetype_local_model}) for $Y^{[0,h],\tau}(\tld{z})$ and $Y^{\leql,\tau}(\tld{z})$. Over $\bF$, these diagrams glue together into local model diagrams for $Y^{[0,h],\tau}_\bF$ and $Y^{\leql,\tau}_\bF$. On the other hand, we have the canonical map $\eps_\tau$ which does not depend on the presentation $(s,\mu)$. It is therefore natural to express $\eps_\tau$  in terms of the objects occurring in the local model diagram. This will later be used in conjunction with the results of \S \ref{sec:componentmatching} to describe the irreducible components of the Emerton-Gee stack which occur in $Y_\bF^{\leql,\tau}$ in terms of the local model.

Let $\tld{z} = (\tld{z}_j)_{j \in \cJ} \in \tld{W}^{\vee, \cJ}$ and $a \leq b$ integers, define a closed subscheme 
\[
\tld{\Fl}^{[a,b]}_{\cJ, \tld{z}} =  \prod_{j \in \cJ} (\Iw_{1,\bF} \backslash (L^{[a,b]} \GL_n)_\bF\,\tld{z}_j) \subset \tld{\Fl}^{\cJ},
\]
where $L^{[a,b]} \GL_n$ is as in Definition \ref{defn:GLnAB}. Clearly, $\tld{\Fl}^{[a,b]}_{\cJ,\tld{z}}=\tld{\Fl}^{[a,b]}_\cJ\tld{z}$.
There is a natural map $\iota'_{\tld{z}}:\prod_{j \in \cJ} (L^{[a,b]}\GL_n)_\bF\,\tld{z}_j \ra \Phi\text{-}\Mod^{\text{\'et},n}_{K, \F}$, which for an $\bF$-algebra $R$ is given by sending $(A^{(j)}\tld{z}_j)_{j \in \cJ} \in \prod_{j \in \cJ} L^{[a,b]} \GL_n(R)\,\tld{z}_j$ to the free rank $n$ \'etale $\phz$-module $\cM$ over $R$ such that $\phi^{(j)}_{\cM}$ is given by $A^{(j)} \tld{z}_j$ in the standard basis. Clearly $\iota'_{\tld{z}}$ factors through the quotient by the $\phz$-conjugation action $\big[\prod_{j \in \cJ} (L^{[a,b]} \GL_n)_\bF\,\tld{z}_j /_\phz\;\cI_{\F}\big]$.

Now assume that $\tld{z} = \sigma^{-1} t_{\nu+\eta}\in \tld{W}^{\vee,\cJ}$ where $\nu$ is $(b-a+1)$-deep in $\un{C}_0$. Since right translation by $\tld{z}$ intertwines the $(\sigma,\nu)$-twisted $\phz$-conjugation action with the $\phz$-conjugation action, Lemma \ref{lem:iotawelldef} shows that $\iota'_{\tld{z}}$ descends to a map $\iota_{\tld{z}}:\tld{\Fl}^{[a,b]}_{\cJ,\tld{z}} \ra \Phi\text{-}\Mod^{\text{\'et},n}_{K, \F}$. 
This further factors through the quotient $\big[\tld{\Fl}^{[a,b]}_{\cJ, \tld{z}}/T^{\vee, \cJ}_{\F}\text{-conj}\big]$ where the action of $T^{\vee, \cJ}$ is  given by $ (\cI_1 A^{(j)} \tld{z}_j)_{j \in \cJ} \mapsto (D_{j} \cI_1 A^{(j)} \tld{z}_j D^{-1}_{j-1})_{j \in \cJ}$ for $(D_j)_{j \in \cJ} \in T^{\vee, \cJ}_{\F}$. We will abbreviate this as the shifted $T^{\vee,\cJ}_{\F}$-conjugation action. 

\begin{prop} \label{prop:iotamono}  Assume that $\tld{z} = \sigma^{-1} t_{\nu + \eta}$ where $\nu$ is $(b-a+1)$-deep in $\un{C}_0$.  The map $\iota_{\tld{z}}$ induces a monomorphism of stacks
\[
\iota_{\tld{z}}:\Big[\tld{\Fl}^{[a,b]}_{\cJ, \tld{z}}/T^{\vee, \cJ}_{\F}\text{-\emph{conj}}\Big] \iarrow \Phi\text{-}\Mod^{\text{\'et},n}_{K, \F}.
\]
\end{prop}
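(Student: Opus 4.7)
Being a monomorphism of stacks is equivalent to $\iota_{\tld{z}}$ inducing a fully faithful functor on $R$-points for every $\F$-algebra $R$. After unwinding the quotient stack, the task reduces to showing the following: given tuples $(A_1^{(j)}\tld{z}_j),(A_2^{(j)}\tld{z}_j)\in\prod_j(L^{[a,b]}\GL_n)_\F(R)\tld{z}_j$ and an isomorphism between their images as \'etale $\phz$-modules, namely a tuple $(I^{(j)})\in\GL_n(R(\!(v)\!))^\cJ$ with $A_2^{(j)}\tld{z}_j=I^{(j)}A_1^{(j)}\tld{z}_j\phz(I^{(j-1)})^{-1}$, this $(I^{(j)})$ must automatically lie in $\cI_\F(R)^\cJ$, and the semidirect product decomposition $\cI_\F=T^\vee_\F\ltimes\cI_{1,\F}$ produces precisely a morphism in the source stack.

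The central step is to prove $I^{(j)}\in\cI_\F(R)$, which is the $L^{[a,b]}$-analogue of Lemma \ref{lem:BKvariety}. This reduces to the $L^{[0,b-a]}$ case via the observation that $v^a$ is central in $L\GL_n$: setting $B^{(j)}\defeq v^{-a}A^{(j)}\in L^{[0,b-a]}\GL_n(R)$ and $\tld{z}_j'\defeq v^a\tld{z}_j=\sigma_j^{-1}t_{(\nu_j+a\cdot(1,\dots,1))+\eta_j}$, the $\phz$-conjugation relation transforms into the analogous relation for $B^{(j)}$ and $\tld{z}_j'$. Coroots of $\GL_n$ pair trivially with the all-ones vector, so the depth of $\nu_j+a\cdot(1,\dots,1)$ in $\un{C}_0$ equals that of $\nu_j$, namely $(b-a+1)$, matching the hypotheses of Lemma \ref{lem:BKvariety} with $h=b-a$. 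The proof of that lemma, bounding the valuations via the inequality in \eqref{ex:KV:key}, then applies verbatim.

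With $I^{(j)}\in\cI_\F(R)$ established, write uniquely $I^{(j)}=D_jU^{(j)}$ with $D_j\in T^\vee_\F(R)$ and $U^{(j)}\in\cI_{1,\F}(R)$. Since $\phz$ fixes the coefficient ring $R$, $\phz(D_{j-1})=D_{j-1}$; the relation rewrites as $D_j^{-1}A_2^{(j)}\tld{z}_jD_{j-1}=U^{(j)}A_1^{(j)}\tld{z}_j\phz(U^{(j-1)})^{-1}$, which is the pure $\cI_{1,\F}$-twisted $\phz$-conjugation relation. Applying the $L^{[a,b]}$-analogue of Lemma \ref{lem:iotawelldef} (obtained by the same central twist trick) produces $X^{(j)}\in\cI_{1,\F}(R)$ with $D_j^{-1}A_2^{(j)}\tld{z}_jD_{j-1}=X^{(j)}A_1^{(j)}\tld{z}_j$, which exhibits the required morphism $(D_j)$ in the quotient stack. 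The uniqueness of the semidirect product decomposition then matches data on both sides bijectively, giving both fullness and faithfulness.

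The principal technical obstacle is verifying the $L^{[a,b]}$-adaptations of Lemmas \ref{lem:BKvariety} and \ref{lem:iotawelldef}; by the central twist these reductions are uniform over all valid $(a,b)$ and the proofs go through because they depend only on the relative depth of $\nu$ and the height $b-a$ of the window, not on its specific endpoints. The remaining content amounts to algebraic bookkeeping with the semidirect product decomposition of $\cI_\F$.
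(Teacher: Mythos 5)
Your proposal is correct and takes essentially the same route as the paper's proof: the paper's argument also reduces to the $L^{[0,b-a]}$ case by twisting by $v^{-a}$ and then cites Lemma \ref{lem:BKvariety} to conclude that the conjugating tuple $(I^{(j)})$ lands in $\cI_\F(R)^\cJ$. The paper leaves the remaining bookkeeping — the $\cI_\F = T^\vee_\F \ltimes \cI_{1,\F}$ decomposition and the re-use of Lemma \ref{lem:iotawelldef} to pass from $\phz$-conjugation by $\cI_{1,\F}$ to left translation — to the phrase ``unraveling the definitions,'' which you have spelled out correctly.
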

\begin{proof}
Unraveling the definitions and twisting by $v^{-a}$, the Proposition boils down to the statement that if $R$ is an $\bF$-algebra, $ (A^{(j)}_1 \tld{z}_j)_{j \in \cJ},(A^{(j)}_2 \tld{z}_j)_{j \in \cJ}\in \prod_{j\in\cJ} L^{[0,b-a]}\GL_n(R)\tld{z}_j$ are $\phz$-conjugate by an element $(I^{(j)})_{j \in \cJ} \in \GL_n(R(\!(v)\!))^{\cJ}$, then $I^{(j)} \in \Iw_{\F}(R)$ for all $j \in \cJ$. This follows from Lemma \ref{lem:BKvariety}. 
\end{proof} 

The following Proposition, obtained by combining Propositions \ref{prop:iotamono}, \ref{prop:BK_to_phi_mono} and Corollary \ref{cor:Breuil-Kisin_moduli_model}, provides our desired description of $\eps_\tau$:
\begin{prop}\label{prop:phi_local_model} Suppose we are given the following data:
\begin{itemize} 
\item Integers $a\leq b$, and $h\geq 0$.
\item An element $\tld{z}=\sigma^{-1} t_{\nu+\eta}\in\tld{W}^{\vee,\cJ}$ such that $\nu$ is $(b-a+1)$-deep in $\un{C}_0$.
\item A tame inertial type $\tau$ with lowest alcove presentation $(s,\mu)$ such that $\mu$ is $(h+1)$-deep in $\un{C}_0$. 
Setting $\tld{w}^{*}(\tau)=s^{-1}t_{\mu+\eta}$, assume that $\big(\!\Gr_{\cG,\bF}^{[0,h], \cJ}\big) \tld{w}^*(\tau)\subset \Fl^{[a,b]}_{\cJ,\tld{z}}$
\item An element $\lambda\in X_*(T^\vee)^\cJ$ such that $\lambda_j\in [0,h]^n$ for all $j \in \cJ$.
\end{itemize} %
Then we have a commutative diagram
\begin{equation}
\label{it:map:iotatau}
\xymatrix{\tld{M}_{\cJ}(\leql)_{\bF} \ar@{^{(}->}[r]\ar[d]
& \tld{\Gr}^{[0,h], \cJ}_{\cG,\bF}   \ar^-{r_{\tld{w}^*(\tau)}}@{^{(}->}[r]\ar[d]^{\pi_{(s, \mu)}} &\tld{\Fl}^{[a,b]}_{\cJ,\tld{z}} \ar[r] &\Big[\tld{\Fl}^{[a,b]}_{\cJ, \tld{z}}/T^{\vee, \cJ}_{\F}\text{-\emph{conj}}\Big]  \ar@{^{(}->}^{\iota_{\tld{z}}}[d]   \\  Y^{\leql,\tau}_\bF \ar@{^{(}->}[r] &Y^{[0,h],\tau}_\bF \ar@{^{(}->}[rru]\ar@{^{(}->}^{\eps_\tau}[rr] &
&\Phi\text{-}\Mod^{\text{\emph{\'et}},n}_{K, \F}
}
\end{equation}
\end{prop}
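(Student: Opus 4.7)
The proof plan is to assemble the diagram by combining three ingredients already established in the paper: the presentation of $Y^{[0,h],\tau}_{\bF}$ as a quotient stack via matrices of partial Frobenii (Corollary \ref{cor:Breuil-Kisin_moduli_model}), the explicit description of $\eps_\tau$ in terms of eigenbases (Proposition \ref{prop:expeps}), and the monomorphism $\iota_{\tld{z}}$ coming from $\phz$-conjugation (Proposition \ref{prop:iotamono}). The left portion of the diagram (involving $\tld{M}_\cJ(\leql)_\bF$ and $Y^{\leql,\tau}_\bF$) follows directly from Theorem \ref{thm:Breuil-Kisin_local_model}, using the hypothesis $\lambda_j \in [0,h]^n$ so that the intersection $M_\cJ(\leql) \cap \Gr^{[0,h],\cJ}_{\cG,\cO}$ makes sense and yields the required closed immersion on the special fiber. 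The existence of the vertical maps on the right, via $\iota_{\tld{z}}$, uses the depth hypothesis on $\nu$ (so that Proposition \ref{prop:iotamono} applies with the interval $[a,b]$).

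The heart of the commutativity is the middle square, and I would verify it on matrices of partial Frobenii. Given $\fM \in Y^{[0,h],\tau}_\bF(R)$ with eigenbasis $\beta$, the map $\pi_{(s,\mu)}$ sends $(\fM,\beta)$ to the tuple $(A^{(j)}_{\fM,\beta})_{j\in\cJ} \in \Gr^{[0,h],\cJ}_{\cG,\bF}(R)$, and right translation by $\tld{w}^*(\tau) = s^{-1}t_{\mu+\eta}$ sends this to $(A^{(j)}_{\fM,\beta} s_j^{-1} v^{\mu_j + \eta_j})_{j\in\cJ}$. By Proposition \ref{prop:expeps}, this is precisely the matrix of $\phi^{(j)}_\cM$ with respect to the basis $\fF$ constructed from $\beta$, where $\cM = \eps_\tau(\fM)$. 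The hypothesis that $\Gr^{[0,h],\cJ}_{\cG,\bF}\tld{w}^*(\tau) \subset \Fl^{[a,b]}_{\cJ,\tld{z}}$ guarantees that this matrix lies in $\Fl^{[a,b]}_{\cJ,\tld{z}}$, so the map is well-defined.

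The key subtlety, and arguably the main technical point, is verifying that right translation by $\tld{w}^*(\tau)$ intertwines the $(s,\mu)$-twisted $T^{\vee,\cJ}_\bF$-conjugation action (which appears in the quotient presentation of $Y^{[0,h],\tau}_\bF$) with the shifted $T^{\vee,\cJ}_\bF$-conjugation action used in defining $[\tld{\Fl}^{[a,b]}_{\cJ,\tld{z}}/T^{\vee,\cJ}_\bF\text{-conj}]$. Writing the $(s,\mu)$-twisted conjugation of $(D_j)$ on $(A^{(j)})$ as $(D_j A^{(j)} \Ad(s_j^{-1}v^{\mu_j+\eta_j})(D_{j-1})^{-1})$ (using that $\phz$ acts trivially on the constant torus), right multiplication by $\tld{w}^*(\tau)_j = s_j^{-1}v^{\mu_j+\eta_j}$ converts this into $(D_j A^{(j)} \tld{w}^*(\tau)_j D_{j-1}^{-1})$, which is exactly the shifted conjugation action on $\tld{\Fl}^{[a,b]}_{\cJ,\tld{z}}$. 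This intertwining, together with the change-of-basis compatibility for eigenbases from Proposition \ref{prop:changeofbasis}, shows that the map factors through the quotient.

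Finally, the commutativity of the outer square $\eps_\tau = \iota_{\tld{z}} \circ (r_{\tld{w}^*(\tau)}) \circ \pi_{(s,\mu)}$ follows from the above calculation combined with Proposition \ref{prop:expeps}, since $\iota_{\tld{z}}$ reads off the étale $\phz$-module whose partial Frobenius matrices are exactly the shifted matrices produced in the top row. The expected main obstacle is bookkeeping the two different torus actions and confirming the intertwining relation precisely; once this is set up correctly, all remaining claims reduce to direct citation of the earlier results.
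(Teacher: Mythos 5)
Your proposal is correct and takes essentially the same route the paper does: the paper's proof consists of citing Proposition \ref{prop:iotamono}, Proposition \ref{prop:BK_to_phi_mono}, and Corollary \ref{cor:Breuil-Kisin_moduli_model}, and you have simply filled in what ``combining'' means, in particular the intertwining calculation showing that right translation by $\tld{w}^*(\tau)$ converts $(s,\mu)$-twisted $T^{\vee,\cJ}_{\F}$-conjugation into shifted $T^{\vee,\cJ}_{\F}$-conjugation. The one small discrepancy is that you invoke Proposition \ref{prop:expeps} in place of the paper's citation of Proposition \ref{prop:BK_to_phi_mono} (which supplies the hooked arrow on $\eps_\tau$), but your factoring of $\eps_\tau$ through the monomorphism $\iota_{\tld{z}}$ composed with a closed immersion recovers that property over $\F$ in any case.
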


\begin{rmk}  In Proposition \ref{prop:phi_local_model}, there is a natural choice of $a, b, \tld{z}$, namely, $a=0, b=h$ and $\tld{z} = s^{-1} t_{\mu + \eta} = \tld{w}^*(\tau)$.   However, to compare multiple types, it can be convenient to make other choices. 
\end{rmk}

\subsection{Semisimple Breuil--Kisin modules}
\label{sub:ssBK}

Let $G_{K_{\infty}} \subset G_K$ denote the Galois group of $K_{\infty}$.  Recall that $K_{\infty}/K$ is totally wildly ramified. When $R$ is a complete Noetherian local $\cO$-algebra with finite residue field, from the theory of fields of norms, we have an exact anti-equivalence {(\cite{fontaine-fest})}
\begin{align*}
\bV^*_K:\Phi\text{-}\Mod^{\text{\'et},n}_{K}(R)&\ra\Rep^n_R(G_{K_\infty})
\end{align*}
{(where $\Rep^n_R(G_{K_\infty})$ denotes the groupoid of $G_{K_\infty}$-representations on rank $n$ projective $R$-modules)}
and hence a functor $T^*_{dd}: Y^{[0,h],\tau}(R)\rightarrow \Rep^n_R(G_{K_\infty})$ defined as the composite of $\eps_{\tau}$ followed by $\bV^*_K$.

Since the subgroup $G_{K_{\infty}}$ of $G_K$ projects surjectively to the tame quotient of $G_K$, the restriction map 
\[
\Rep_{\F}^{n}(G_K) \rightarrow \Rep^n_{\F}(G_{K_{\infty}})
\]   
is fully faithful on the subcategory of tame representation of rank $n$. 
We will often implicitly identify representations of $G_{K_{\infty}}$ in the essential image {(of the tame representations)} with their canonical extensions to $G_K$. Note that this essential image contains exactly representations of $G_{K_\infty}$ which are trivial on $G_{K_\infty}\cap G_{K^t}$, where $K^t$ is the maximal tamely ramified extension of $K$.  Note that semisimple representations of $G_{K_{\infty}}$ are necessarily tame and hence extend uniquely to $G_K$.

Given an $n$-dimensional $\F$-representation $\rhobar$ of $G_K$ or $G_{K_{\infty}}$, we denote its semisimplification by $\rhobar^{\mathrm{ss}}$.
If $\rhobar$ is tame, i.e.~if $\rhobar=\rhobar^{\mathrm{ss}}$, then $\rhobar|_{I_K}$ is a tame inertial $\F$-type for $K$ (see \S \ref{sec:InertialTypes}).

\begin{defn}
\label{defn:gen:rhobar}
Let $\rhobar$ be an $n$-dimensional $\F$-representation of $G_K$ or $G_{K_{\infty}}$.
\begin{enumerate}
\item Given an integer $m\geq 0$, we say that $\rhobar$ is $m$-generic if the tame inertial $\F$-type $\rhobar^{\mathrm{ss}}|_{I_K}$ is $m$-generic in the sense of Definition \ref{defi:gen}(\ref{defi:gen:type}).
\item  If $\rhobar$ is tame we say that $(s,\mu)\in \un{W}\times X^*(\un{T})$ is a lowest alcove presentations for $\rhobar$ if $(s,\mu)$ is a lowest alcove presentation for the inertial $\F$-type $\rhobar|_{I_K}$ as defined in the paragraph following Example \ref{ex:data:type}.
We then write $\tld{w}(\rhobar)$ for the element $\tld{w}(\rhobar|_{I_K}) = t_{\mu + \eta} s$ defined in Definition \ref{defi:gen}(\ref{defi:gen:type}) and $\tld{w}^*(\rhobar) \defeq s^{-1} t_{\mu + \eta}$.  A lowest alcove presentation is $m$-generic if $\tld{w}(\rhobar)$ is $m$-generic.
\end{enumerate}
\end{defn}

We can directly relate the lowest alcove presentation to a description of the corresponding \'etale $\phi$-module. 
\begin{prop} \label{prop:ssetphi}  Let  $\tld{z}' \in \Fl_{\cJ, \tld{z}}^{[a, b]}$ for $a, b, \tld{z}$ as in Proposition \ref{prop:phi_local_model}.   Let $\rhobar$ be an $n$-dimensional semisimple  $\F$-representation of either $G_K$ or $G_{K_{\infty}}$.   Then $\rhobar$ admits a lowest alcove presentation $(s, \mu)$ such that $\tld{w}^*(\rhobar) = s^{-1} t_{\mu + \eta} =  \tld{z}'$ if and only if there exists $D \in T^{\vee, \cJ}(\F)$ such that 
\[
\bV^*_K(\iota_{\tld{z}}( D \tld{z}'))  \cong \rhobar|_{G_{K_{\infty}}}. \]  
\end{prop}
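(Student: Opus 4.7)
The plan is to reduce the equivalence to an explicit character computation. Since $\rhobar$ is semisimple, $\rhobar|_{G_{K_\infty}}$ extends uniquely to a tame semisimple $G_K$-representation (as noted at the opening of \S\ref{sub:ssBK}), so both sides concern tame semisimple $G_K$-representations, and matching them is equivalent to matching (i) the tame inertial $\F$-type $\rhobar|_{I_K}$ with $\taubar(s,\mu+\eta)$, and (ii) the unramified parts on each side.

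I would first unwind $\iota_{\tld{z}}(D\tld{z}')$: by construction (see the definition of $\iota'_{\tld{z}}$ preceding Proposition \ref{prop:iotamono}, together with Proposition \ref{prop:expeps}), the \'etale $\phi$-module $\cM = \iota_{\tld{z}}(D\tld{z}')$ admits a standard basis in which the partial Frobenius $\phi_\cM^{(j)}$ acts by $D_j s_j^{-1} v^{\mu_j+\eta_j}$, well-defined up to the shifted $T^{\vee,\cJ}(\F)$-conjugation acting on $D$. After extending scalars to $\cO_{\cE,L'}\widehat{\otimes}_{\Zp}\F$ and extracting eigenspaces for the tame $\Delta'$-action, $\cM$ decomposes into a direct sum of rank-one \'etale $\phi$-modules with descent data whose partial Frobenii are constant elements of $T^\vee(\F)$ after the change of variable $u' = v^{1/e'}$.

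Second, I would apply the classical explicit formulas for Galois characters attached to rank-one \'etale $\phi$-modules via the theory of fields of norms \cite{fontaine-fest}: each rank-one summand contributes a character of $G_{L'_\infty}$, which is readily extended and identified as a character of $G_{K_\infty}$ whose restriction to $I_K$ equals one of the tame characters $\chi_i = \omega_{f'}^{\bf{a}_i^{\prime\,(0)}}$ from \eqref{eq:def:type}, while its unramified part is the Teichm\"uller lift of the product of the corresponding entries of $D$ taken around one $s_\tau$-orbit. Summing over $i$, this identifies $\bV_K^*(\cM)|_{I_K}$ with $\taubar(s,\mu+\eta)$ independently of $D$, and exhibits the unramified twist as a free parameter in $D$ modulo the shifted $T^{\vee,\cJ}$-conjugation. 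Both directions of the equivalence then follow at once: the converse reads off $(s,\mu)$ from the tame inertial type of $\bV_K^*(\cM)$, while the forward direction produces the correct $D$ from the unramified data of $\rhobar$.

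The main obstacle is careful bookkeeping---tracking the $\cJ/\cJ'$ decomposition, the orientation $s'_{\mathrm{or}}$, the descent via $u' = v^{1/e'}$, and reconciling with the earlier conventions of \cite{LLL,LLLM2} collected in Remark \ref{rmk:cmpr:mat}---in order to verify that the shifted $T^{\vee,\cJ}$-conjugation orbits on $T^{\vee,\cJ}(\F)$ parametrize exactly the unramified-twist ambiguity in a tame semisimple $G_K$-representation with prescribed restriction to $I_K$.
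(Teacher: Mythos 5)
Your overall strategy is correct, and the first half — identifying the restriction to inertia by direct computation, pointing to the definition of $\iota'_{\tld{z}}$, Proposition \ref{prop:expeps}, \cite[Proposition 3.1.2]{LLL}, and Remark \ref{rmk:cmpr:mat} — is essentially what the paper does. The genuine difference is in the forward direction (producing $D$ from $\rhobar$). The paper's proof avoids computing the unramified twist as a function of $D$: it instead invokes the monomorphism property from Proposition \ref{prop:phi_local_model} to know precisely that $\iota_{\tld{z}}(D\tld{z}')\cong\iota_{\tld{z}}(D'\tld{z}')$ if and only if $D\tld{z}'$ and $D'\tld{z}'$ are shifted $T^{\vee,\cJ}(\F)$-conjugate, and then simply \emph{counts} the orbits against the number of isomorphism classes of semisimple extensions of a fixed tame $I_K$-type to $G_K$. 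Your proposal instead computes the unramified part explicitly — the product of the relevant coordinates of $D$ around each $s_\tau$-orbit — and shows that this parametrization is onto. Both work; your version is more constructive and carries more information, while the paper's counting argument is shorter and sidesteps the finest bookkeeping (which $\chi_i$'s correspond to which diagonal entry, the action of $s'_{\mathrm{or}}$, etc.). Two small points to tighten: since everything takes place over $\F$ there is no Teichm\"uller lift involved — the product of entries of $D$ already lies in $\F^\times$; and $\cM$ does not decompose into rank-one \'etale $\phi$-modules \emph{with descent data}, since the Frobenius permutes the $\Delta'$-eigenspaces by $s_\tau$ — the correct statement is that it decomposes into pieces indexed by $s_\tau$-orbits, each of rank equal to the orbit size, the rank-one situation occurring only for fixed points. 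You acknowledge this implicitly by taking the product around an orbit, but the intermediate sentence should be phrased accordingly.
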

\begin{proof}
Let $\tld{z}' = s^{-1} t_{\mu + \eta}$. Then $\mu$ is 1-deep in $\un{C}_0$ by the hypotheses in Proposition \ref{prop:phi_local_model}. The fact that for any such $D$, $\iota_{\tld{z}}( D \tld{z}') \cong \taubar(s, \mu + \eta)$ (and hence has lowest alcove presentation $(s, \mu)$) follows from a direct computation as in Proposition 3.1.2 of \cite{LLL}.  (Note that one needs to use Remark \ref{rmk:cmpr:mat} to translate the conventions of \emph{loc.~cit.}~into conventions of this paper.)   

To show the forward direction, one has to show that the choice of $D$ accounts for all possible extensions from $I_K$ to $G_K$.  This can be done by counting isomorphism classes over $\F$ since by Proposition \ref{prop:phi_local_model}, $\iota_{\tld{z}}( D \tld{z}') \cong \iota_{\tld{z}}( D' \tld{z}')$ if and only if $D \tld{z}$ and $D' \tld{z}'$ are $T^{\vee, \cJ}(\F)$-conjugate by shifted conjugation.   
\end{proof}

\begin{rmk}  In Proposition \ref{prop:ssetphi}, if $\rhobar$ admits a $1$-generic lowest alcove presentation with corresponding element $\tld{w}(\rhobar)$, then one can take $\tld{z} = \tld{w}^*(\rhobar)$ and $a = b =0$ to satisfy the hypotheses of Proposition \ref{prop:ssetphi}. 
\end{rmk}

Fix $\lambda \in X^*(T)^{\cJ}$ dominant.  Assume that $\lambda_j \in [0,h]^n$ for $h \geq 0$.  Let $\tau$ be a tame inertial type together with a $(h + 1)$-generic lowest alcove presentation. %
We now recall notion of shape of $\rhobar$ with respect to $\tau$:
\begin{defn} \label{defn:shaperhobar} Let $\rhobar$ be $n$-dimensional $\F$-representation of $G_K$ or $G_{K_{\infty}}$.   If there exists $\fM \in Y^{[0, h], \tau}(\F)$ such that $T^*_{dd}(\fM) \cong \rhobar|_{G_{K_{\infty}}}$, then define the \emph{shape} $\tld{w}^*(\rhobar, \tau) \in \tld{W}^{\vee, \cJ}$ of $\rhobar$ with respect to $\tau$ to be the shape of $\fM$ (Definition \ref{defn:shape}).    

This is well-defined since $\eps_{\tau}$ is a monomorphism (Proposition \ref{prop:phi_local_model}). 
\end{defn}

We also have the notion of a semisimple Breuil--Kisin module:

\begin{defn} \label{defn:ssBKmod}  Let $\fM \in Y^{[0,h], \tau}(\F')$ for any finite extension $\F'/\F$.   Then $\fM$ is \emph{semisimple} of shape $\tld{z} \in  \tld{W}^{\vee, \cJ}$ if $\fM \in Y^{[0,h], \tau}(\tld{z})$ and for any choice of $\tld{z}$-gauge basis $\beta$, the image of $(\fM, \beta)$
 under the map $\tld{U}^{[0,h]}(\tld{z})_{\F} \ra U^{[0,h]}(\tld{z})_{\F}$ from \eqref{eq:fh_local_model} 
is the $T^{\vee, \cJ}$-fixed point $\tld{z}$ of $\Gr^{[0,h], \cJ}_{\cG, \F}.$  (In this case, $\fM$ clearly has shape $\tld{z}$.)  
\end{defn}

\begin{rmk} Concretely, the condition of being semisimple of shape $\tld{z}$ in Definition \ref{defn:ssBKmod} is that there exists an eigenbasis for $\fM$ such that $A^{(j)}_{\fM, \beta} \in T^{\vee} (\F) \tld{z}_j$ for all $j \in \cJ$.   By \cite[Proposition 3.2.16]{LLL}, this is equivalent to the definition given in Definition 3.2.14 of \emph{loc.~cit}. 
\end{rmk}

Recall that for a fixed lowest presentation $(s, \mu)$ of $\tau$, we define $\tld{w}^*(\tau) = s^{-1} t_{\mu + \eta}$.

\begin{prop} \label{prop:semisimple_admissible} Let $\tau$ be a tame inertial type with lowest alcove presentation $(s, \mu)$ where $\mu$ is $(h + 1)$-deep in $\un{C}_0$.  Let $\rhobar:G_{K_{\infty}} \ra \GL_n(\F)$ be a semisimple representation.  There exists a semisimple $\fM \in Y^{[0,h], \tau}(\F)$ such that $\rhobar \cong T^*_{dd}(\fM)$ if and only if $\rhobar$ admits a lowest alcove presentation such that $\tld{w}^*(\rhobar) (\tld{w}^*(\tau))^{-1} \in \Gr^{[0,h], \cJ}_{\cG, \F}$.     In this case, $\fM$ has shape  $\tld{w}^*(\rhobar) (\tld{w}^*(\tau))^{-1}$.
Furthermore, $\fM \in Y^{\leql, \tau}(\F)$ if and only if $ \tld{w}(\rhobar, \tau) = (\tld{w}(\tau))^{-1}\tld{w}(\rhobar) \in \Adm(\lambda)$.  
\end{prop}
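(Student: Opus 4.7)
The plan is to combine Proposition \ref{prop:expeps} (which expresses the matrix of partial Frobenius of the \'etale $\phi$-module $\eps_\tau(\fM)$ in terms of that of $\fM$ in an eigenbasis) with Proposition \ref{prop:ssetphi} (which characterizes lowest alcove presentations of semisimple Galois representations in terms of matrices of partial Frobenius of the associated \'etale $\phi$-modules). The key formal identity will be that if $A^{(j)}_{\fM,\beta} = D_j\tld{z}_j$ with $D\in T^{\vee,\cJ}(\F)$, then Proposition \ref{prop:expeps} gives that the matrix of partial Frobenius of $\cM := \eps_\tau(\fM)$ in the basis $\fF$ is $D_j\tld{z}_j \cdot s_j^{-1}v^{\mu_j+\eta_j} = D_j (\tld{z}\,\tld{w}^*(\tau))_j$.

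For the forward direction, I would start with a semisimple $\fM\in Y^{[0,h],\tau}(\F)$ of shape $\tld{z}$. By Definition \ref{defn:ssBKmod}, I can choose a $\tld{z}$-gauge basis $\beta$ with $A^{(j)}_{\fM,\beta} = D_j\tld{z}_j$; the validity of $\fM$ as a Breuil--Kisin module of height $\leq h$ forces $D\tld{z}\in L^{[0,h]}\cG(\F)^\cJ$, hence $\tld{z}\in \Gr^{[0,h],\cJ}_{\cG,\F}$. Setting $\tld{z}' := \tld{z}\,\tld{w}^*(\tau) = \sigma^{-1}t_{\nu'+\eta}$, a direct calculation exploiting that $\mu$ is $(h+1)$-deep in $\un{C}_0$ and that the translation part of $\tld{z}$ is bounded componentwise in $[0,h]$ will show that $\nu'$ is $1$-deep in $\un{C}_0$. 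This verifies the hypotheses of Proposition \ref{prop:ssetphi} (with $a=b=0$ and the Proposition's $\tld{z}$ taken to be $\tld{z}'$), and applying it to $\rhobar$ yields a lowest alcove presentation with $\tld{w}^*(\rhobar) = \tld{z}'$, so that $\tld{z} = \tld{w}^*(\rhobar)(\tld{w}^*(\tau))^{-1}$, and the shape statement is immediate.

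For the reverse direction, given a lowest alcove presentation of $\rhobar$ with $\tld{z} := \tld{w}^*(\rhobar)(\tld{w}^*(\tau))^{-1}\in\Gr^{[0,h],\cJ}_{\cG,\F}$, I would set $\tld{z}' := \tld{w}^*(\rhobar) = \tld{z}\,\tld{w}^*(\tau)$ and apply Proposition \ref{prop:ssetphi} to get $D\in T^{\vee,\cJ}(\F)$ with $\bV^*_K(\iota_{\tld{z}'}(D\tld{z}'))\cong \rhobar|_{G_{K_\infty}}$. I would then construct $\fM \in Y^{[0,h],\tau}(\F)$ by declaring an eigenbasis $\beta$ with $A^{(j)}_{\fM,\beta} := D_j\tld{z}_j$; this is a valid matrix of partial Frobenius because $D\tld{z}\in L^{[0,h]}\cG(\F)^\cJ$ by hypothesis. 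Proposition \ref{prop:expeps} then identifies $\eps_\tau(\fM)$ with the \'etale $\phi$-module of matrix $D\tld{z}'$, so $T^*_{dd}(\fM)\cong \rhobar|_{G_{K_\infty}}$. By construction $\fM$ is semisimple of shape $\tld{z}$.

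For the final statement, I would invoke Corollary \ref{cor:non_emptyness_pattern} (the version about $\fM\in Y^{\leql,\tau}(\F')$), which says $\fM\in Y^{\leql,\tau}(\F)$ iff the shape $\tld{z}$ lies in $\Adm^\vee(\lambda)$. Using that $(-)^*$ is an isomorphism of partially ordered groups (\cite[Lemma 2.1.3]{LLL}) that is an anti-homomorphism, a direct computation yields $\tld{z}^* = (\tld{w}(\tau))^{-1}\tld{w}(\rhobar) = \tld{w}(\rhobar,\tau)$, so $\tld{z}\in\Adm^\vee(\lambda)$ iff $\tld{w}(\rhobar,\tau)\in\Adm(\lambda)$. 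The hard part will be bookkeeping the correct hypotheses for Proposition \ref{prop:ssetphi} in both directions, i.e., verifying that the translation part of the element to which it is applied is deep enough in $\un{C}_0$; this reduces to the deepness bound on $\mu$ combined with the height bound on $\tld{z}$ as sketched above.
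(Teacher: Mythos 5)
Your proof is correct and follows essentially the same route as the paper: both reduce to Proposition \ref{prop:ssetphi} for the translation between torus-fixed points of the affine flag variety and lowest alcove presentations of semisimple $G_{K_\infty}$-representations, and both compute $\eps_\tau$ of a semisimple Breuil--Kisin module as $\iota(D\tld{z}\,\tld{w}^*(\tau))$ (you do this explicitly via Proposition \ref{prop:expeps}; the paper packages the same computation into the commutative diagram of Proposition \ref{prop:phi_local_model}). The one genuine difference is the choice of parameters in Proposition \ref{prop:ssetphi}: the paper takes $a=0$, $b=h$, and base element $\tld{w}^*(\tau)$, which makes the required $(b-a+1)$-deepness condition hold by the stated hypothesis on $\mu$ with no further work; your choice $a=b=0$ and base $\tld{z}' = \tld{w}^*(\rhobar)$ instead forces you to verify $1$-deepness of the translation part of $\tld{z}'$ from the $h$-smallness of the shape and the $(h+1)$-deepness of $\mu$. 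That computation is correct (and is exactly the content of Corollary \ref{lem:ss_bound}), so both routes work; the paper's parameter choice is marginally cleaner precisely because it sidesteps this extra verification.
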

\begin{proof}
The forward direction follows from  Proposition \ref{prop:ssetphi} and the diagram in Proposition  \ref{prop:phi_local_model} with $a= 0, b=h$ and $\tld{z} = \tld{w}^*(\tau)$.  Namely, if $\fM$ is semisimple of shape $\tld{z} \in  \Gr^{[0,h], \cJ}_{\cG, \F}$, then $\eps_{\tau}(\fM) \cong \iota_{\tld{w}^*(\tau)}(D \tld{z} \tld{w}^*(\tau))$ for some $D \in T^{\vee, \cJ}(\F)$.  By Proposition \ref{prop:ssetphi},  $\rhobar = T^*_{dd}(\fM)$ is semisimple and has a lowest alcove presentation $(w, \nu)$ such that $\tld{w}^*(\rhobar) = \tld{z} \tld{w}^*(\tau)$ as desired.  

Similarly, if $\rhobar$ admits a lowest alcove presentation such that $\tld{z} = \tld{w}^*(\rhobar) (\tld{w}^*(\tau))^{-1} \in \Gr^{[0,h], \cJ}_{\cG, \F}$, then $\tld{w}^*(\rhobar) \in \tld{\Fl}_{\cJ, \tld{w}^*(\tau)}^{[0,h]}$.  Thus, by Proposition \ref{prop:ssetphi}, there exists  $D \in T^{\vee, \cJ}(\F)$ such that $ \iota_{\tld{w}^*(\tau)}(D \tld{w}^*(\rho))$ gives rise to the \'{e}tale $\phz$-module corresponding to $\rhobar|_{G_{K_\infty}}$.   
Then $\pi_{(s,\mu)}(D \tld{z})$ is the desired semisimple $\fM$ since the diagram in Proposition \ref{prop:phi_local_model} commutes.  

The final statement follows from Corollary \ref{cor:non_emptyness_pattern}.  
\end{proof}

\begin{cor} \label{cor:admshape} Let $\rhobar:G_K \ra \GL_n(\F)$ be a tame representation and $\tau$ be as in Proposition \ref{prop:semisimple_admissible}.  If $R_{\rhobar}^{\lambda, \tau}$ is non-zero, then $\rhobar$ has a $\lambda$-compatible lowest alcove presentation such that $\tld{w}(\rhobar, \tau) = \tld{w}(\tau)^{-1} \tld{w}(\rhobar) \in \Adm(\lambda)$. 
\end{cor}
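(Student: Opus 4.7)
Suppose $R_{\rhobar}^{\lambda, \tau}$ is non-zero, and let $\cO'$ be the ring of integers of a finite extension $E'/E$ (with residue field $\F'$) over which $R_{\rhobar}^{\lambda, \tau}$ admits a point. This corresponds to a lift $\rho: G_K \to \GL_n(\cO')$ of $\rhobar$ of type $(\lambda,\tau)$. By the theory of Breuil--Kisin modules with tame descent (\S \ref{sec:BKwithdescent}--\S \ref{subsec:LMofBK}), this lift corresponds to an object $\fM_{\cO'} \in Y^{\leql,\tau}(\cO')$ with $T^*_{dd}(\fM_{\cO'}) \cong \rho|_{G_{K_\infty}}$. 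Reducing modulo a uniformizer of $\cO'$ yields $\fM \in Y^{\leql,\tau}(\F')$ with $T^*_{dd}(\fM) \cong \rhobar|_{G_{K_\infty}}$, and Corollary \ref{cor:non_emptyness_pattern} shows that the shape $\tld{z}$ of $\fM$ (with respect to the fixed lowest alcove presentation of $\tau$) lies in $\Adm^\vee(\lambda)$.

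The next step is to identify $\tld{z}$ in terms of $\rhobar$ and $\tau$, without requiring $\fM$ to be semisimple. I apply Proposition \ref{prop:phi_local_model} with $a=0$, $b=h$, and with the role of its $\tld{z}$-parameter played by $\tld{w}^*(\tau)$; the hypotheses are met because $\mu$ is assumed $(h+1)$-deep in $\un{C}_0$ and $\lambda_j\in[0,h]^n$. The resulting commutative diagram shows that $\eps_\tau(\fM)$ is isomorphic to $\iota_{\tld{w}^*(\tau)}(D \tld{z} \tld{w}^*(\tau))$ for some $D \in T^{\vee,\cJ}(\F')$. Since $\rhobar$ is tame, $\rhobar|_{G_{K_\infty}}$ is semisimple, so Proposition \ref{prop:ssetphi}, applied with $\tld{z}' = \tld{z}\tld{w}^*(\tau)$ (which lies in $\tld{\Fl}^{[0,h]}_{\cJ,\tld{w}^*(\tau)}$ because $\tld{z} \in \Gr^{[0,h],\cJ}_{\cG,\F}$), produces a lowest alcove presentation of $\rhobar$ for which $\tld{w}^*(\rhobar) = \tld{z}\tld{w}^*(\tau)$, or equivalently $\tld{z} = \tld{w}^*(\rhobar)(\tld{w}^*(\tau))^{-1}$.

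Applying the antiinvolution $(-)^*$ and using $\Adm^\vee(\lambda) = \Adm(\lambda)^*$ (cf.~\cite[Lemma 2.1.4]{LLL}), the inclusion $\tld{z} \in \Adm^\vee(\lambda)$ translates directly into $\tld{w}(\rhobar,\tau) = \tld{w}(\tau)^{-1}\tld{w}(\rhobar) \in \Adm(\lambda)$. The chosen lowest alcove presentation of $\rhobar$ is automatically $\lambda$-compatible with the fixed one of $\tau$: since $\Adm(\lambda)$ lies in a single coset of $W_a$ in $\tld{W}$, the membership $\tld{w}(\rhobar,\tau) \in \Adm(\lambda)$ encodes exactly the equality of algebraic central characters demanded by $\lambda$-compatibility (invoking Lemma \ref{lemma:LAPbij:Ftypes} to confirm that the relevant presentation of $\rhobar$ exists and is unique).

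The main subtlety is that the Breuil--Kisin module $\fM$ coming from the deformation ring is typically not semisimple in the sense of Definition \ref{defn:ssBKmod}, so Proposition \ref{prop:semisimple_admissible} cannot be applied to $\fM$ directly. We circumvent this obstacle by passing to the \'etale $\phi$-module stack via Proposition \ref{prop:phi_local_model}, where the semisimplicity of $\rhobar|_{G_{K_\infty}}$ alone (a consequence of tameness of $\rhobar$), together with Proposition \ref{prop:ssetphi}, suffices to extract the desired lowest alcove presentation of $\rhobar$ from the shape of $\fM$ without ever constructing a semisimple lift of $\fM$ itself.
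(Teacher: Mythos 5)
Your proof contains a genuine gap at the central step. You claim that "the resulting commutative diagram [from Proposition \ref{prop:phi_local_model}] shows that $\eps_\tau(\fM)$ is isomorphic to $\iota_{\tld{w}^*(\tau)}(D\tld{z}\tld{w}^*(\tau))$ for some $D\in T^{\vee,\cJ}(\F')$". This does not follow. The diagram in Proposition \ref{prop:phi_local_model} says only that $\eps_\tau(\fM) \cong \iota_{\tld{w}^*(\tau)}(\tld{y}\,\tld{w}^*(\tau))$ where $\tld{y}\in\tld{\Gr}^{[0,h],\cJ}_{\cG,\F}$ is the (local) lift of $\fM$ given by an eigenbasis; knowing the shape of $\fM$ is $\tld{z}$ only tells you that $\tld{y}$ lies in the (typically positive-dimensional) $\cI_1$-orbit closure over the Schubert cell $S_\F^\circ(\tld{z})$, not that $\tld{y} = D\tld{z}$ for a constant diagonal matrix $D$. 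The latter is exactly the statement that $\fM$ is a \emph{semisimple} Breuil--Kisin module in the sense of Definition \ref{defn:ssBKmod} — precisely the hypothesis you announce you are circumventing. Without it, Proposition \ref{prop:ssetphi} gives you nothing: that proposition characterizes when the \'etale $\phi$-module has the special torus-fixed-point form $\iota_{\tld{z}}(D\tld{z}')$, and you have not placed $\eps_\tau(\fM)$ in that form.

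The missing ingredient is Proposition \ref{prop:redss}: given $\fM\in Y^{\leql,\tau}(\F')$ there exists a semisimple $\fM'\in Y^{\leql,\tau}(\F')$ with $T^*_{dd}(\fM')\cong T^*_{dd}(\fM)^{\mathrm{ss}} = \rhobar|_{G_{K_\infty}}$. One then applies Proposition \ref{prop:semisimple_admissible} to $\fM'$ directly. (Alternatively, once $\fM'$ exists, the monomorphism property of $\eps_\tau$ from Proposition \ref{prop:BK_to_phi_mono} forces $\fM\cong\fM'$, so $\fM$ itself is semisimple; but this still routes through Proposition \ref{prop:redss}.) This is exactly what the paper's proof does, via the argument of case (2) of \cite[Theorem 3.2.20]{LLL}. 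Your appeal to the semisimplicity of $\rhobar|_{G_{K_\infty}}$ \emph{alone}, pushed through the \'etale $\phi$-module stack, is not enough: tameness of $\rhobar$ controls the Galois representation, but not the position of the eigenbasis matrices inside the Iwahori double coset. Incidentally, a second (minor) issue: once the shape identification $\tld{z}=\tld{w}^*(\rhobar)(\tld{w}^*(\tau))^{-1}$ is secured, you should say simply that the resulting lowest alcove presentation of $\rhobar$ is $\lambda$-compatible with that of $\tau$ because $\tld{w}(\rhobar,\tau)\in\Adm(\lambda)\subset t_\lambda W_a$, rather than invoking Lemma \ref{lemma:LAPbij:Ftypes}; but this is a presentation point, not a gap.
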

\begin{proof}
By the same argument as in case (2) of Theorem \cite[Theorem 3.2.20]{LLL}, there exists {a semisimple} $\fM \in Y^{\leql, \tau}(\F)$ such that $\rhobar|_{G_{K_{\infty}}} \cong T^*_{dd}(\fM)$.   The rest follows from Proposition \ref{prop:semisimple_admissible} noting that $(\tld{w}(\tau))^{-1}\tld{w}(\rhobar) \in \Adm(\lambda)$ implies that the lowest alcove presentation of $\rhobar$ is $\lambda$-compatible (see \S \ref{sec:InertialTypes}).    
\end{proof}

\begin{prop}  \label{prop:redss}  Let $\F'/\F$ be a finite extension. 
 If $\fM \in Y^{\leql, \tau}(\F')$ $($resp. $Y^{[0,h], \tau}(\F'))$, then there exists a semisimple Breuil--Kisin module $\fM' \in Y^{\leql, \tau}(\F')$ $($resp. $Y^{[0,h], \tau}(\F'))$ such that $T^*_{dd}(\fM)^{\mathrm{ss}} \cong T^*_{dd}(\fM')$.
\end{prop}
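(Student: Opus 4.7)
The plan is to produce $\fM'$ directly using the gauge-basis description of $Y^{[0,h],\tau}$ from \S \ref{sub:GBases}. I would fix a lowest alcove presentation $(s,\mu)$ of $\tau$ with $\mu$ being $(h+1)$-deep in $\un{C}_0$ (enlarging $E$ if necessary), and pick $\tld{z}\in\tld{W}^{\vee,\cJ}$ such that $\fM\in Y^{[0,h],\tau}(\tld{z})(\F')$; such $\tld{z}$ exists since the opens $Y^{[0,h],\tau}(\tld{z})$ cover $Y^{[0,h],\tau}$. Proposition \ref{prop:gauge_basis_existence} then guarantees that $\fM$ admits a $\tld{z}$-gauge basis, for which the matrices of partial Frobenii take the form $A^{(j)}=D_jU_j\tld{z}_j$ with $D_j\in T^\vee(\F')$ and $U_j\tld{z}_j\in\cU(\tld{z}_j)(\F')$. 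I then define $\fM'\in Y^{[0,h],\tau}(\tld{z})(\F')$ to be the Breuil--Kisin module whose $\tld{z}$-gauge basis yields matrices $A^{(j)}_{ss}\defeq D_j\tld{z}_j$; this is well-posed by Theorem \ref{thm:fh_local_model}, and is semisimple of shape $\tld{z}$ by construction (Definition \ref{defn:ssBKmod}). In the $\leql$ case, $\fM\in Y^{\leql,\tau}(\tld{z})\neq\emptyset$ forces $\tld{z}\in\Adm^\vee(\lambda)$ by Corollary \ref{cor:non_emptyness_pattern}, so the $T^\vee$-fixed point $\tld{z}_j$ lies in the $T^\vee$-stable Schubert variety $M(\leql_j)$, and the local model diagram of Theorem \ref{thm:Breuil-Kisin_local_model} ensures $\fM'\in Y^{\leql,\tau}(\F')$.

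It remains to verify $T^*_{dd}(\fM')\cong T^*_{dd}(\fM)^{\mathrm{ss}}$, which I would approach via a degeneration argument. Using the contracting $\bG_m$-action on $\cU(\tld{z}_j)^{\det,\leq h}$ from Lemma \ref{lem:contracting_torus_action}, which has $\tld{z}_j$ as its unique fixed point and commutes with left $T^\vee$-translation on $T^\vee\cdot\cU(\tld{z}_j)$, and recalling that over $\F'$ the $(s,\mu)$-twisted conjugation coincides with the shifted left translation (Lemma \ref{lem:iotawelldef}), the action descends to a $\bG_m$-action on $Y^{[0,h],\tau}(\tld{z})_\bF$ that preserves $Y^{\leql,\tau}_\bF$ when $\tld{z}\in\Adm^\vee(\lambda)$ (since global Schubert varieties are $T^{\vee,\mathrm{ext}}$-stable). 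Extending the $\bG_m$-orbit of $\fM$ to $\bA^1_{\F'}$ produces a family $\{\fM_r\}$ of Breuil--Kisin modules with $\fM_1=\fM$ and $\fM_0=\fM'$---the latter because the contraction kills the $U_j$-factor while preserving the $T^\vee$-part $D_j$, so the limit matrix is precisely $D_j\tld{z}_j$.

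Pushing through $\eps_\tau$ and $\bV^*_K$ converts the family into an algebraic family of $G_{K_\infty}$-representations over the connected base $\bA^1_{\F'}$, and the semisimplification should be constant along such a family. Since $\fM_0=\fM'$ is semisimple, its Galois representation $T^*_{dd}(\fM')$ is semisimple, which combined with constancy of the semisimplification at $r=1$ yields $T^*_{dd}(\fM)^{\mathrm{ss}}\cong T^*_{dd}(\fM')$. The main technical obstacle is making rigorous this constancy for a family of $G_{K_\infty}$-representations over $\F'[r]$ in characteristic $p$; concretely, one verifies that the characters of elements of $G_{K_\infty}$ are regular functions of $r$ that are determined by the ``diagonal part'' $D_j\tld{z}_js^{-1}_jv^{\mu_j+\eta_j}$ of the matrix of Frobenius on $\eps_\tau(\fM_r)$ (controlling the Jordan--H\"older factors), which is manifestly fixed by the $\bG_m$-action and hence constant throughout the family.
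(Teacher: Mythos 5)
Your construction of the candidate $\fM'$ via the $\tld{z}$-gauge basis and Proposition \ref{prop:gauge_basis_existence} is fine, and the limit you describe is indeed a semisimple Breuil--Kisin module landing in $Y^{\leql,\tau}$.  However, the final and essential step---that $T^*_{dd}(\fM')\cong T^*_{dd}(\fM)^{\mathrm{ss}}$---does not follow from your degeneration argument, and this is precisely where the paper's actual proof has its real content.

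The paper's argument takes a decreasing filtration on $\rhobar=T^*_{dd}(\fM)$ with semisimple graded pieces, transports it to an increasing filtration $\cM_\bullet$ of the \'etale $\phz$-module $\cM$, and crucially forms $\fM_i := \fM\cap\cM_i$ to obtain a filtration of $\fM$ by Breuil--Kisin submodules (a nontrivial fact that you skip entirely).  Choosing a basis adapted to this filtration, the matrix $C^{(j)}$ is block upper triangular, and the family $C^{(j)}_x=\nu(x)C^{(j)}\nu(x)^{-1}$ has the crucial feature that for $x\neq 0$ it is a \emph{change of eigenbasis}, so $\fM_x\cong\fM$ as Breuil--Kisin modules.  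The limit $\cM_0$ is block diagonal and manifestly a direct sum of the graded pieces of $\cM_\bullet$, which by construction carry the semisimple Galois representation $\rhobar^{\mathrm{ss}}$.  Properness of $Y^{\leql,\tau}\to\Phi\text{-}\Mod$ then places a Breuil--Kisin module inside $\cM_0$.  The whole argument works because the degeneration parameter acts by a \emph{two-sided conjugation by the same constant torus element}, which is precisely the change-of-eigenbasis equivalence.

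Your contracting $\bG_m$-action from Lemma \ref{lem:contracting_torus_action} is a different kind of degeneration: it involves loop rotation $v\mapsto r^{-1}v$ together with a left multiplication by one power of $r^\mu$ and a right multiplication by a different expression in $r$.  None of these preserves the Breuil--Kisin module structure: in particular loop rotation is incompatible with $\phz(v)=v^p$ and with the $\Delta$-descent data (scaling $v$ by $r^{-N}$ conflicts with $\phz(r^{-N}v)=r^{-Np}v^p$ unless $r^{N(p-1)}=1$), and one-sided $T^\vee$-multiplication is not an eigenbasis change either.  So along your family $\fM_r$, the modules for $r\neq 0$ are \emph{not} isomorphic to $\fM$, and you have lost the only leverage the paper's proof had.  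Your assertion that ``the semisimplification should be constant along such a family'' is false in general for families of $G_{K_\infty}$-representations over $\bA^1_{\F'}$---already for rank one, the unramified character attached to an \'etale $\phz$-module can vary in such a family.  The concrete justification you give, that the Brauer characters ``are determined by the diagonal part $D_j\tld z_j s_j^{-1}v^{\mu_j+\eta_j}$,'' is exactly the statement the Proposition is trying to prove: that the semisimplification of $T^*_{dd}(\fM)$ depends only on $(D_j,\tld z_j)$.  As written, your argument is circular at this crucial point.
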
 
\begin{proof}

Let $\rhobar = T^*_{dd}(\fM)$, and let $\cM = \fM[1/u'] \in \Phi\text{-}\Mod^{\text{\'et},n}_{dd, L'}(\F')$.  Let $(\rhobar_i)_{0 \leq i \leq d}$ denote a decreasing filtration on $\rhobar$ such that $\gr_i(\rhobar) := \rhobar_i/\rhobar_{i+1}$ is semisimple for all $i$. Recall the exact anti-equivalence of categories $\bV^*_{dd}$ between $\Mod^{\text{\'et}}_{dd, L'}(\F')$ and $\Rep_{\F'}(G_{K_{\infty}})$ (see pg.~24 in \cite{LLLM} for example). Using this equivalence, there is an increasing filtration $\cM_i \subset \cM$ such that 
\[
\bV^*_{dd}(\cM_i/\cM_{i-1}) \cong \gr_i(\rhobar)
\]
for all $i$.  %
Define $\fM_i = \fM \cap \cM_i$.  By construction, $\fM_i$ is a lattice in $\cM_i$ stable under both $\phi_{\fM}$ and the action of $\Delta$.  Thus, $\fM_i$ is a Breuil--Kisin module over $L'$ with descent datum to $K$ and of rank $\dim(\bV^*_{dd}(\cM_i))$. We can inductively construct a basis $\alpha$ adapted to the filtration $(\fM_i)_i$ and compatible with the descent datum.  That is, we inductively pick bases $\alpha_i =( \alpha_i^{(j)})$ for each $\fM_i$ such that $\Delta'$ acts by characters on individual basis elements and $\iota:(\alpha_i^{(j)}) = \alpha_i^{(j + f)}$. 

Let $\alpha = \alpha_d$.  Define the matrix $C^{(j)}\in \GL_n(\F'(\!(u')\!))$ by the condition
\[
\phi_{\fM}^{(j)}(\phz^*(\alpha^{(j-1)})) = \alpha^{(j)} C^{(j)}.
\]
By construction, $C^{(j)}$ lies in a parabolic subgroup $P(\F'(\!(u')\!)) \subset \GL_n(\F'(\!(u')\!))$ corresponding to the filtration $(\cM_i)$.   
Let $L$ denote the corresponding Levi subgroup which contains the diagonal torus $T$.   
Choose a dominant cocharacter $\nu$ such that $L$ is the centralizer of $\nu$.  

We now construct a family $\cM_x$ of free \'{e}tale $\phz$-module with descent data of rank $n$ over $\bA^1_{\F'}=\Spec \F'[x]$ as follows: we take a basis $\alpha_x$ and let $\Delta$ act on $\alpha_x$ in the same way it acts on $\alpha$, and let Frobenius act by $C^{(j)}_x = \nu(x) C^{(j)} \nu(x)^{-1}$ (with respect to $\alpha_x$). Note that the right-hand side belongs to $\GL_n(\bF'[\![x]\!](\!(u')\!))$, and that $C^{(j)}_0$ lies in the Levi subgroup $L(\F'(\!(u')\!))$. The family $\cM_x$ gives a map $ \bA^1_{\F'} \to \Phi\text{-}\Mod^{\text{\'et},n}_{dd,L'}$.

The family we constructed has the following properties:
\begin{itemize} 
\item For each $x\in \ovl{\F}^\times$, the matrices $C^{(j)}_x$ define a Breuil--Kisin module with descent data $\fM_x\subset \cM_x$. Furthermore, $\fM_x\cong \fM$ as Breuil--Kisin modules with descent data (via scaling the basis by $\nu(x)$), and thus $\fM_x$ gives a point of $Y^{\leql,\tau}(\ovl{\F})$.
\item $\bV^*_{dd}(\cM_0)=\rhobar^{\mathrm{ss}}$.
\end{itemize}
Since the map $Y^{\leql,\tau}\to \Phi\text{-}\Mod^{\text{\'et},n}_{dd,L'}$ is representable and proper (and in fact is a closed immersion in the current situation), the locus of $x$ where $\cM_x$ comes from a Breuil--Kisin module in $Y^{\leql,\tau}$ is closed. Since this locus contains all elements of $\ovl{\F}^\times$, it must contain $x=0$.
We conclude that there is a Breuil--Kisin module $\fM'\in Y^{\leql,\tau}(\F')$ inside $\cM_0$. In particular we have $T^*_{dd}(\fM')\cong \rhobar^{\mathrm{ss}}$, and Proposition \ref{prop:semisimple_admissible} implies furthermore that $\fM'$ is semisimple.

\end{proof} 

\begin{cor} \label{lem:ss_bound} Let $\bF'$ be a finite extension of $\bF$ and let $\fM\in Y^{[0,h],\tau}(\bF')$. Assume that $\tau$ is $m$-generic where $m \geq h+1$. Then $T^*_{dd}(\fM)^{\mathrm{ss}}$ admits a $(m-h)$-generic lowest alcove presentation $(w, \nu)$. %
\end{cor}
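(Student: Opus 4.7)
The plan is to reduce to the semisimple case using Proposition \ref{prop:redss}, then exploit the structure theorem for semisimple Breuil--Kisin modules (Proposition \ref{prop:semisimple_admissible}) together with the smallness/genericity arithmetic from Proposition \ref{prop:propertiesofsmall}.

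First, by Proposition \ref{prop:redss} applied to $\fM \in Y^{[0,h],\tau}(\F')$, there exists a semisimple $\fM' \in Y^{[0,h],\tau}(\F')$ with $T^*_{dd}(\fM')\cong T^*_{dd}(\fM)^{\mathrm{ss}}$. Set $\rhobar \defeq T^*_{dd}(\fM')$ and note that $\rhobar$, being semisimple, uniquely extends from $G_{K_\infty}$ to $G_K$. Since $\tau$ is $m$-generic with $m \geq h+1$, we may fix a lowest alcove presentation $(s,\mu)$ of $\tau$ with $\mu$ being $(h+1)$-deep in $\un{C}_0$, so that the hypotheses of Proposition \ref{prop:semisimple_admissible} are satisfied.

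Next, by Proposition \ref{prop:semisimple_admissible}, $\rhobar$ admits a lowest alcove presentation $(w,\nu)$ such that, setting $\tld{z} \defeq \tld{w}^*(\rhobar)(\tld{w}^*(\tau))^{-1}$, we have $\tld{z} \in \Gr^{[0,h],\cJ}_{\cG,\F}$. The key step is to observe that this containment forces $\tld{z}$ to be $h$-small in the sense of Definition \ref{defn:var:gen}(\ref{it:def:small}): indeed, the $T^{\vee,\cJ}$-fixed point $\tld{z} \in \tld{W}^{\vee,\cJ}$ lies in $\Gr^{[0,h],\cJ}_{\cG,\F}$ precisely when, writing $\tld{z}_j = w_j t_{\nu_j'}$ componentwise, each $\nu_j' \in [0,h]^n$; this ensures $|\langle \nu_j', \alpha^\vee\rangle| \leq h$ for every root $\alpha$.

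Finally, writing $\tld{w}^*(\rhobar) = \tld{z}\cdot \tld{w}^*(\tau)$ and applying Proposition \ref{prop:propertiesofsmall}(iv) (transported to $\tld{W}^\vee$ via the $*$-involution), the $m$-genericity of $\tld{w}^*(\tau)$ combined with the $h$-smallness of $\tld{z}$ yields that $\tld{w}^*(\rhobar)$, and hence $\tld{w}(\rhobar)$, is $(m-h)$-generic. This is precisely the statement that the lowest alcove presentation $(w,\nu)$ of $\rhobar$ is $(m-h)$-generic, as required. There is no serious obstacle here: the only point to verify carefully is the translation between membership in $\Gr^{[0,h],\cJ}_{\cG,\F}$ and the combinatorial $h$-smallness condition on the corresponding element of $\tld{W}^{\vee,\cJ}$, which is routine.
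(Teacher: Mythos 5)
Your proof is correct and follows essentially the same path as the paper's: reduce to the semisimple case via Proposition \ref{prop:redss}, invoke Proposition \ref{prop:semisimple_admissible} to extract the shape $\tld{z}$, note that the height condition forces $\tld{z}$ to be $h$-small, and combine with the $m$-genericity of $\tau$ via the arithmetic of Proposition \ref{prop:propertiesofsmall}. The only cosmetic difference is that you justify $h$-smallness by unwinding membership in $\Gr^{[0,h],\cJ}_{\cG,\F}$ while the paper phrases it directly in terms of the height condition on $\fM$, but these are the same observation.
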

\begin{proof} By Proposition \ref{prop:redss}, we can reduce to the case where $T^*_{dd}(\fM)$ is semisimple.   Choose a lowest alcove presentation $(s, \mu)$ of $\tau$ where $\mu$ is $m$-deep in $\un{C}_0$.  By Proposition \ref{prop:semisimple_admissible},  $\fM$ is semisimple of shape $\tld{z} =((s^{-1}_j t_{\nu_j -\mu_{{j}}} w_j)^*)_{j \in \cJ}$ where $T^*_{dd}(\fM)|_{I_K} \cong \taubar(w, \nu + \eta)$.   By the height condition on $\fM$, it is clear that $\tld{z}$ is $h$-small so $|\langle \nu - \mu, \alpha^{\vee} \rangle| \leq h$ for all $\alpha \in \Phi$.  Since $\mu$ is $m$-deep in $\un{C}_0$, we conclude the $\nu$ at least $(m - h)$-deep in $\un{C}_0$.     
\end{proof}

\clearpage{}%
\clearpage{}%
\section{Global methods}
\label{sec:GM}

\subsection{Deformations of representations}\label{sec:deforms}

Let $\mathcal{C}_\cO$ be the category of Noetherian complete local $\cO$-algebras with residue field $\F$ and local $\cO$-algebra homomorphisms.
Let $\cG_{/\cO}$ be a split (possibly disconnected) reductive group.
Given a topological group $\Gamma$, a continuous representation $\rbar: \Gamma \ra \cG(\F)$, and $(A,\fm_A) \in \mathcal{C}_\cO$, an $A$-valued lifting of $\rbar$ is a continuous representation $r_A: \Gamma \ra \cG(A)$ such that $\rbar \equiv r_A \pmod{\fm_A}$.

\subsubsection{Deformations of local Galois representations}\label{sec:localdeforms}

Let $L$ be a nonarchimedean local field of characteristic zero.
For a continuous Galois representation $\rhobar: G_L \ra \cG(\F)$, define the functor $D^\square_\rhobar: \mathcal{C}_\cO\ra\mathrm{Sets}$ by letting $D_\rhobar(A)$ be the set of $A$-valued liftings of $\rhobar$.
Then $D^\square_\rhobar$ is represented by a ring $R_\rhobar^\square$, the $\cO$-lifting ring of $\rhobar$.

Suppose now that $\cG = \GL_n$ and that $L$ is not a $p$-adic field.
If $\tau$ is an inertial type for $L$, then let $R_\rhobar^\tau$ denote the reduced $\cO$-flat quotient of $R_\rhobar^\square$ whose $E'$-points correspond to representations $\rho: G_L \ra \GL_n(E')$ with $\mathrm{WD}(\rho)|_{I_L} \cong \tau \otimes_E E'$ for any $E' \subset \ovl{\Q}_p$ which is finite-dimensional over $E$.

Now suppose that $L$ is a $p$-adic field.
Let $T$ and $B$ be the diagonal maximal torus and upper triangular Borel subgroup, respectively, in $\cG = \GL_n$.
Let $\cJ\defeq \Hom_{\Qp}(L,E)$, let $\lambda \in X_*(\un{T}^\vee)\cong  X_*(T^\vee)^{\cJ}$ be a dominant cocharacter, 
and let $\tau$ be a Weil--Deligne inertial type for $L$.
Then let $R_\rhobar^{\lambda,\tau}$ be the reduced $\cO$-flat quotient of $R_\rhobar^\square$ such that $\Spec R_\rhobar^{\lambda,\tau}$ is the Zariski closure of $E'$-points which correspond to potentially semistable representations $\rho: G_L \ra \GL_n(E')$ of Hodge type $\lambda$ with $\mathrm{WD}(\rho)|_{I_L} \cong \tau \otimes_E E'$ for any subfield $E' \subset \ovl{\Q}_p$ which is of finite degree over $E$.
Let $\Spec R_\rhobar^{\lambda,\preceq\tau}$ denote the reduced union $\cup_{\tau' \preceq \tau} R_\rhobar^{\lambda,\tau'}$ (see Definition \ref{defn:preceq} for the relation $\preceq$). 
We also write $R_\rhobar^\tau$ (resp.~$R_\rhobar^{\preceq\tau}$) for $R_\rhobar^{\eta,\tau}$ (resp.~$R_\rhobar^{\eta,\preceq\tau}$).

\begin{rmk}
If the nilpotent element $N_\tau$ of $\tau$ is zero (i.e.~$\tau$ is minimal with respect to $\preceq$), then $R_\rhobar^{\lambda,\tau}$ is a framed potentially crystalline deformation ring defined in \cite{KisinPSS} and is a versal ring for $\cX^{\lambda,\tau}$ at (the point corresponding to) $\rhobar$; see \S\ref{subsec:tame:EGstacks}.
If $\tau$ is maximal with respect to $\preceq$ then $R_\rhobar^{\lambda,\preceq\tau}$ is a framed potentially semistable deformation ring defined in \cite{KisinPSS}.
\end{rmk}

\subsection{Patching axioms}
\label{sec:patch:ax}

Recall from \S \ref{sec:not:Gal} that $\cO_p$ is a finite \'etale $\Z_p$-algebra, which we write as $\prod\limits_{v\in S_p} \cO_v$ where $S_p$ is a finite set and for each $v\in S_p$, $\cO_v$ is the ring of integers in a finite unramified extension $F^+_v$ of $\Q_p$.
Let $G_{/\Z}$ be a split reductive group.
We let $G_0$ be $\Res_{\cO_p/\Z_p} (G_{/\cO_p})$ and denote the Langlands dual group (defined over $\Z$) of $G_0$ by $^L \un{G}$.
Recall from \S \ref{sec:not:Gal} that 
$^L \un{G}=\un{G}^\vee_{/\Z} \rtimes \Gal(E/\Q_p)$ and that $\un{G}^\vee_{/\Z}\cong G^{\vee,\cJ}_{\slash\Z}$ where  $\cJ=\Hom_{\Zp}(\cO_p,\cO)$. %

We fix isomorphisms $\ovl{F^+_v} \ra \ovl{\Q}_p$ for each $v\in S_p$.
Then we recall that an $L$-homomorphism $W_{\Q_p} \ra$ $^L \un{G}(A)$ over a finite cardinality $\cO$-algebra $A$ is equivalent to a collection $(G_{F^+_v} \ra {G}^\vee(A))_{v\in S_p}$ of continuous homomorphisms.
Similarly, a Weil--Deligne inertial $L$-parameter $\tau$ is equivalent to a collection $(\tau_v)_{v\in S_p}$ of Weil--Deligne inertial types.
We now take $G$ to be $\GL_n$.
Let $\rhobar$ be an $L$-homomorphism over $\F$ with corresponding collection $(\rhobar_v)_{v \in S_p}$.

Let $R_\infty$ be $R_\rhobar \widehat{\otimes}_{\cO} R^p$ where
\[
R_\rhobar \defeq \widehat{\bigotimes}_{v\in S_p,\cO} R_{\rhobar_v}^\square
\]
and $R^p$ is a (nonzero) complete local Noetherian equidimensional flat $\cO$-algebra with residue field $\F$ (we suppress the dependence on $R^p$ below).
(Though we will not use it, $R_\rhobar \cong R_{\rhobar'}^\square$ for $\cG = $ $^L \un{G}$ where $\rhobar'$ denotes the unique extension or $\rhobar$ to $G_{\Qp}$.)
For a Weil--Deligne inertial $L$-parameter $\tau$ and a cocharacter $\lambda\in X_*(\un{T}^\vee)$, let $R_\infty(\lambda, \tau)$ (resp.~$R_\infty(\lambda, \preceq\tau)$) be $R_\infty \otimes_{R_\rhobar} R_\rhobar^{\lambda+\eta,\tau}$ (resp.~$R_\infty \otimes_{R_\rhobar} R_\rhobar^{\lambda+\eta,\preceq\tau}$) where
\[
R_\rhobar^{\lambda+\eta,\tau} \defeq \widehat{\bigotimes}_{v\in S_p,\cO} R_{\rhobar_{v}}^{\lambda_{v}+\eta_{v},\tau_{v}} \quad (\textrm{resp. }R_\rhobar^{\lambda+\eta,\preceq\tau} \defeq \widehat{\bigotimes}_{v\in S_p,\cO} R_{\rhobar_{v}}^{\lambda_{v}+\eta_{v},\preceq\tau_{v}}).
\]
Let $X_\infty$, $X_\infty(\lambda,\tau)$, and $X_\infty(\lambda,\preceq\tau)$ be $\Spec R_\infty$, $\Spec R_\infty(\lambda,\tau)$, and $\Spec R_\infty(\lambda,\preceq\tau)$, respectively. 
Let $\Mod(X_\infty)$ be the category of coherent sheaves over $X_\infty$, and let $\Rep_{\cO}(\GL_n(\cO_p))$ denote the category of topological $\cO[\GL_n(\cO_p)]$-modules which are finitely generated over $\cO$. 
Let $\sigma(\lambda,\tau)$ be the finitely generated $E[\GL_n(\cO_p)]$-module $V(\lambda) \otimes \sigma(\tau)$.

\begin{defn}\label{minimalpatching}
A \emph{weak patching functor} for $\rhobar$ is defined to be a nonzero covariant exact functor $M_\infty:\Rep_{\cO}(\GL_n(\cO_p))\ra \Coh(X_{\infty})$ satisfying the following: if $\sigma^\circ(\lambda,\tau)$ is an $\cO$-lattice in $\sigma(\lambda,\tau)$ then
\begin{enumerate}
\item 
\label{it:minimalpatching:1}
$M_\infty(\sigma^\circ(\lambda,\tau))$ is a maximal Cohen--Macaulay sheaf on $X_\infty(\lambda,\preceq\tau)$; and
\item 
\label{it:minimalpatching:2}
for all $\sigma \in \JH(\ovl{\sigma}^\circ(\lambda,\tau))$, $M_\infty(\sigma)$ is a maximal Cohen--Macaulay sheaf on $\ovl{X}_\infty(\lambda,\preceq\tau)$ (or is $0$).
\end{enumerate}
Moreover, we distinguish the following kind of weak minimal patching functors.
\begin{enumerate}
\item[(I)] A weak patching functor is \emph{minimal} if $R^p$ is formally smooth over $\cO$ and whenever $\tau$ is an inertial $L$-parameter (so $N=0$ as in Remark \ref{rmk:not:WDILP}), $M_\infty(\sigma^\circ(\lambda,\tau))[p^{-1}]$, which is locally free over (the regular scheme) $\Spec R_\infty(\lambda,\tau)[p^{-1}]$, has rank at most one on each connected component.
\item[(II)] A weak patching functor is \emph{potentially diagonalizable} if $M_\infty(\sigma^\circ(\lambda,\tau))$ is nonzero whenever each $\rhobar_v$ for $v\in S_p$ has a potentially diagonalizable lift of type $(\lambda_v+\eta_v,\tau_v)$ (in the sense of \cite[\S 1.4]{BLGGT}).
\item[(III)]  If $\rhobar$ is semisimple and $2n$-generic, we say that a weak patching functor $M_\infty$ is \emph{detectable} if $\sigma\in W_\obv(\rhobar)$ implies that $M_\infty(\sigma)$ is nonzero. 
\item[(IV)] Let $\cS$ be a set of types $(\lambda+\eta,\tau)$ with $\tau$ an inertial $L$-parameter (so $N_\tau=0$).
A \emph{minimal patching functor for} $\rhobar$ and $\cS$ is a minimal weak patching functor for $\rhobar$ such that $M_\infty(\sigma^\circ(\lambda,\tau))[p^{-1}]$ has rank one on $\Spec R_\infty(\lambda,\tau)[p^{-1}]$ whenever $(\lambda+\eta,\tau) \in \cS$ and $\sigma^\circ(\lambda,\tau)$ is as above.
\end{enumerate}
\end{defn}

\begin{rmk}
We essentially consider two contexts: one global and one local.
Correspondingly, in practice, $S_p$ will either be the set of $p$-adic places of a number field or contain a single element.
In the global context, $\rhobar$ will arise from restriction of a global characteristic $p$ Galois representation.
However, in either context, all constructions of patching functors that we use will come from (modifications of) the (global) Taylor--Wiles patching method.

When $S_p$ is the set of $p$-adic places of a number field, then $R^p$ will be a formally smooth algebra over a completed tensor product of local deformation rings at some places away from $p$.
(The extra variables, sometimes called auxiliary, are a byproduct of the global nature of the construction.)

When $S_p$ contains a single element, we globalize the local Galois representation $\rhobar$ i.e.~find a suitable number field $F^+$ whose completion at a place $v$ is $F^+_v$ and a Galois representation whose restriction to the decomposition group at $v$ is isomorphic to $\rhobar$.
We then apply the Taylor--Wiles method to this globalization to obtain a patching functor.
In this case, $R^p$ will be a formally smooth algebra over a completed tensor product of local deformation rings at some places away from $v$ (including all places that divide $p$ except for $v$).
In this local context then, the notation $R^p$ may be misleading, for which we apologize.
\end{rmk}

\begin{prop}\label{prop:WE}
Let $\rhobar$ be as above, $M_{\infty}$ be any weak patching functor, and $\sigma$ be a Serre weight such that $M_\infty(\sigma) \neq 0$.     If either $\rhobar_v^{\mathrm{ss}}$ is $(6n-2)$-generic for all $v \mid p$ or $\sigma$ is $(2n-1)$-deep and $\rhobar_v^{\mathrm{ss}}$ is $4n$-generic for all $v \mid p$,  then $\sigma \in W^?(\rhobar^{\mathrm{ss}})$, where $\rhobar^{\mathrm{ss}}$ denotes the $L$-homomorphism over $\F$ corresponding to the collection $(\rhobar_v^{\mathrm{ss}})_{v\in S_p}$.
\end{prop}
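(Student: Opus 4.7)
The plan is to carry out the standard three-step weight elimination argument, essentially reducing to the main admissibility results of \cite{LLL}, and then using the combinatorial analysis of Jordan--H\"older factors developed in \S\ref{sec:DLandSW}.

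First, I would pick a tame inertial $L$-parameter $\tau$ over $E$ such that $\sigma$ is a Jordan--H\"older factor of $\ovl{\sigma(\tau)}$. By Lemma \ref{lemma:genLAP} such a $\tau$ always exists, and in fact I can assume that $\tau$ admits a lowest alcove presentation with parameters controlled in terms of $\sigma$: concretely, Lemma \ref{lemma:nongenericDL} (with $\lambda=0$) implies that if $\sigma$ is $m$-deep, then every $\tau$ witnessing $\sigma\in\JH(\ovl{\sigma(\tau)})$ admits a lowest alcove presentation that is $(m-h_{2\eta})$-generic. This is the step that distinguishes the two genericity regimes in the statement: in the regime where $\sigma$ is $(2n-1)$-deep we obtain the needed depth of $\tau$ for free, while in the regime where $\rhobar_v^{\mathrm{ss}}$ is $(6n-2)$-generic we will instead use the genericity of $\rhobar_v^{\mathrm{ss}}$ itself (together with weight-elimination-type bounds) to ensure that any $\sigma$ with $M_\infty(\sigma)\neq 0$ is automatically deep enough for the combinatorics.

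Next, I would produce local lifts of the $\rhobar_v$ of type $(\eta,\tau_v)$. Pick any $\GL_n(\cO_p)$-stable $\cO$-lattice $\sigma(\tau)^\circ$ in $\sigma(\tau)$ and consider the short exact sequence $0\to \sigma(\tau)^\circ \xrightarrow{\varpi} \sigma(\tau)^\circ \to \ovl{\sigma(\tau)^\circ}\to 0$. Filtering $\ovl{\sigma(\tau)^\circ}$ by Jordan--H\"older and using the exactness of $M_\infty$, the hypothesis $M_\infty(\sigma)\neq 0$ forces $M_\infty(\ovl{\sigma(\tau)^\circ})\neq 0$, hence $M_\infty(\sigma(\tau)^\circ)\neq 0$. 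Applying axiom (\ref{it:minimalpatching:1}) of Definition \ref{minimalpatching} with $\lambda=0$ and noting that $\preceq\tau$ reduces to $\tau$ since $N_\tau=0$, the support of $M_\infty(\sigma(\tau)^\circ)$ lies in $X_\infty(0,\tau)$, so in particular $R_{\rhobar_v}^{\eta,\tau_v}\neq 0$ for every $v\in S_p$.

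Now I can invoke the admissibility results: Corollary \ref{cor:admshape} (applied to $\rhobar_v^{\mathrm{ss}}$, which inherits the nonvanishing of the deformation ring by semisimplifying any characteristic zero lift --- the inertial type and Hodge--Tate weights being preserved) shows that, under the genericity hypotheses on $\rhobar_v^{\mathrm{ss}}$ and $\tau_v$, there is an $\eta$-compatible lowest alcove presentation of $\rhobar_v^{\mathrm{ss}}$ and $\tau_v$ for which $\tld{w}(\rhobar_v^{\mathrm{ss}},\tau_v)\in \Adm(\eta)$. With $\sigma_v\in\JH(\ovl{\sigma(\tau_v)})$ already in hand, Proposition \ref{prop:intersect} (applied with $\lambda=0$) then produces the required $\tld{w}_1,\tld{w}_2\in \tld{\un{W}}^+$ realizing $\sigma_v$ in $W^?(\rhobar_v^{\mathrm{ss}})\cap \JH(\ovl{\sigma(\tau_v)})$, whence $\sigma_v\in W^?(\rhobar_v^{\mathrm{ss}})$.

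The main obstacle, and the only delicate point, is bookkeeping the genericity constants so as to match the two numerical thresholds $(6n-2)$ and $(4n,2n-1)$ stated in the proposition. Each use of Lemma \ref{lemma:nongenericDL}, Proposition \ref{prop:JHbij}, and Corollary \ref{cor:admshape} costs bounded depth (controlled by $h_\eta = n-1$, $h_{\lambda+\eta}$ with $\lambda=0$, and the height of admissible elements in $\Adm(\eta)$), and the final check is to verify that with the given hypotheses the input to every lemma is at least $2h_\eta$-generic or $(h+1)$-generic as required. This is a routine but careful chase through the thresholds; no new geometric or representation-theoretic ingredient is required beyond what is already assembled in \S\ref{sec:Prel} and \S\ref{sec:BK:LM}.
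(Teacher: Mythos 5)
Your step (4) does not follow, and this is exactly where the real content of weight elimination lives. From steps (2)--(3) you obtain, for a $\tau$ with $\sigma\in\JH(\ovl{\sigma}(\tau))$, that $R^{\eta,\tau_v}_{\rhobar_v}\neq 0$ and hence (via Corollary \ref{cor:admshape}) that $\tld{w}(\rhobar_v^{\mathrm{ss}},\tau_v)\in\Adm(\eta)$ for some compatible lowest alcove presentations. But Proposition \ref{prop:intersect} is an if-and-only-if criterion characterizing membership in the intersection $W^?(\rhobar^{\mathrm{ss}})\cap\JH(\ovl{\sigma}(\tau))$; it does not "produce" a factorization $\tld{w}(\rhobar^{\mathrm{ss}},\tau)=\tld{w}_2^{-1}w\tld{w}_1$ with the required upper-arrow inequalities relating $\tld{w}_1,\tld{w}_2$ to the lowest alcove presentation of your given $\sigma$. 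Admissibility of $\tld{w}(\rhobar^{\mathrm{ss}},\tau)$ plus $\sigma\in\JH(\ovl{\sigma}(\tau))$ is strictly weaker than $\sigma\in W^?(\rhobar^{\mathrm{ss}})$: by Corollary \ref{cor:extremeintersect}, when $\tld{w}(\rhobar^{\mathrm{ss}},\tau)=t_{s^{-1}(\eta)}$ the intersection $W^?(\rhobar^{\mathrm{ss}})\cap\JH(\ovl{\sigma}(\tau))$ is a singleton, while $\JH(\ovl{\sigma}(\tau))$ generally has $|W|\cdot|W_1^+|$-many elements, so the argument as written would "prove" membership for many weights that are in fact excluded.

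The paper's proof does not try to deduce $\sigma\in W^?$ from the nonvanishing of a single arbitrary deformation ring. In the $(6n-2)$-generic regime it packages the patching data into an arithmetic cohomology functor and invokes \cite[Corollary 4.2.4]{LLL}, which is where the genuine weight elimination happens. In the $(2n-1)$-deep regime it proceeds contrapositively: assuming $\sigma\notin W^?(\rhobar^{\mathrm{ss}})$, it constructs (following \cite[Corollary 4.1.12]{LLL} and \cite[Lemma 5]{enns}) a \emph{specific} Weyl element $w$ and corresponding type $\tau=\tau(w,\tld{w}_h\cdot\lambda+\eta)$ for which $\sigma\in\JH(\ovl{\sigma}(\tau))$ yet $\rhobar$ has \emph{no} potentially crystalline lift of type $(\eta,\tau)$; the patching axioms then force $M_\infty(\sigma)=0$. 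You need one of these two mechanisms (or an equivalent): some argument that, for a well-chosen $\tau$, the obstruction to a lift is keyed exactly to the combinatorial failure of $\sigma\in W^?(\rhobar^{\mathrm{ss}})$. The bookkeeping of genericity constants you worry about is real but secondary; the missing ingredient is this elimination step.
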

\begin{proof}
Let $\sigma$ be $\otimes_{v\in S_p} \sigma_v$.
Suppose first that $\rhobar_v^{\mathrm{ss}}$ is $(6n-2)$-generic for all $v \mid p$.
Then the axioms for $M_\infty$ imply that for each $v_0\in S_p$, $M_\infty(-\otimes \otimes_{v\neq v_0}\sigma_v): \F[\GL_n(k_{v_0})] \ra \mathrm{Vect}_{/\F}$ is an arithmetic cohomology functor in the sense of \cite[Definition 4.2.1]{LLL}.
Then \cite[Corollary 4.2.4]{LLL} implies that $\sigma_v \in W^?(\rhobar_v^{\mathrm{ss}})$ for each $v\in S_p$.

Now suppose that $\sigma$ is $(2n-1)$-deep and $\rhobar_v^{\mathrm{ss}}$ is $4n$-generic for all $v \mid p$.
Let $\lambda\in X^*(\un{T})$ be such that $F(\lambda) \cong \sigma$.
Then $\tau \defeq \tau(w,\tld{w}_h \cdot \lambda+\eta)$ is an $n$-generic tame inertial type for all $w\in \un{W}$.
Moreover, the proof of \cite[Corollary 4.1.12]{LLL} (and \cite[Lemma 5]{enns}) shows that there is a $w\in \un{W}$ such that $\rhobar$ does not have a potentially crystalline lift of type $(\eta,\tau)$ if $\sigma \notin W^?(\rhobar^{\mathrm{ss}})$.
By the axioms of $M_\infty$, it would suffice to show that $\sigma \in \JH(\ovl{\sigma}(\tau))$.
This follows from the observation that if $(s,\mu-\eta)$ is an $n$-generic lowest alcove presentation of $\tau$ %
then $F(\pi^{-1}(\tld{w})\cdot(t_\mu s \tld{w}^{-1}(\eta) - \eta)) \in \JH(\ovl{\sigma}(\tau))$
(The fact that $\mu-\eta$ is $n$-deep ensures that $t_\mu s \tld{w}^{-1}(\eta) - \eta$ is dominant and $p$-restricted for any $\tld{w} \in \tld{W}^+_1$.)
\end{proof}

\begin{prop}\label{prop:patchexist}
\begin{enumerate}
\item 
\label{it:patchexist:1}
If $p\nmid 2n$ and $\rhobar$ is an $L$-homomorphism over $\F$, then a weak potentially diagonalizable patching functor exists.
\item 
\label{it:patchexist:2}
If furthermore for each $v\in S_p$, $\rhobar_v$ has a potentially diagonalizable lift of type $(\xi_v+\eta_v,\tau_v)$ so that the potentially crystalline lifting ring $R_\rhobar^{\xi+\eta,\tau}$ is formally smooth, then a weak minimal potentially diagonalizable patching functor exists. 
\end{enumerate}
\end{prop}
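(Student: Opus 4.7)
\medskip

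The plan is to realize $M_\infty$ as the patched module arising from applying the Taylor--Wiles--Kisin method to a suitable definite unitary group, after first globalizing $\rhobar$. Concretely, I would begin by constructing a globalization: pick a CM extension $F/F^+$ which is split at all places above $p$ and such that $F^+$ is totally real with the places dividing $p$ in $F^+$ corresponding bijectively to the factors of $\cO_p$ (so that $G_{F^+_v}$ is identified with the relevant decomposition group and $\rhobar_v$ is realized as a local component). Using the potentially diagonalizable globalization theorems of \cite{BLGGT} (together with the hypothesis $p\nmid 2n$, which guarantees that $p$ is sufficiently large for the adequacy of $\rhobar(G_{F(\zeta_p)})$ and for the existence of a useful CM form away from $p$), I would produce an automorphic Galois representation $\rbar: G_F\to \GL_n(\F)$ whose restriction to each $G_{F^+_v}$ for $v\mid p$ agrees with $\rhobar_v$, and whose local lifts away from $p$ are potentially diagonalizable of a convenient prescribed type.

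Next I would feed this globalization into the Taylor--Wiles--Kisin patching machine for definite unitary groups, working at the level given by the globalization's tame type away from $p$ and allowing arbitrary coefficients at places dividing $p$. This produces an exact covariant functor $M_\infty$ on $\Rep_\cO(\GL_n(\cO_p))$ with values in finitely generated modules over a power series ring $R_\infty = R_\rhobar\,\widehat{\otimes}_\cO\, R^p$, where $R^p$ is formally smooth over a completed tensor product of potentially diagonalizable local deformation rings at the auxiliary finite places (hence complete local Noetherian, flat, equidimensional). Because the patching argument is orthogonal to what happens at the places dividing $p$, applying $M_\infty$ to $\sigma^\circ(\lambda,\tau)$ yields a module over $R_\infty(\lambda,\preceq\tau)$, and standard dimension counting (the patched module is Cohen--Macaulay over the patching variables, and the support is contained in $\Spec R_\infty(\lambda,\preceq\tau)$, which has the expected dimension by Kisin's theorem on potentially semistable deformation rings) shows it is maximal Cohen--Macaulay there. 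Axiom \eqref{it:minimalpatching:2} for Jordan--H\"older factors is deduced by applying $M_\infty$ to a short exact sequence of lattices that separates a given $\sigma$ as a subquotient and using exactness of $M_\infty$.

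The non-vanishing statement in the potentially diagonalizable case (Definition \ref{minimalpatching}(II)) is the heart of the matter and uses the second iteration of the globalization argument. Suppose each $\rhobar_v$ admits a potentially diagonalizable lift $\rho_v$ of type $(\lambda_v+\eta_v,\tau_v)$. Using potential diagonalizability automorphy lifting (again the \cite{BLGGT} machinery), I would find a CM number field $F'$ and an automorphic Galois representation $\rho'$ of $G_{F'}$ whose mod $p$ reduction recovers (a soluble base change of) $\rbar$ and whose local components at the places above $p$ are exactly the given $\rho_v$; the key point is that potential diagonalizability is preserved under $\ell$-adic deformations and permits matching global lifts to arbitrarily prescribed local potentially crystalline lifts of the same type. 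The existence of such a classical automorphic form translates, via the local--global compatibility built into the patching construction, into a nonzero classical point of $M_\infty(\sigma^\circ(\lambda,\tau))[1/p]$, hence the non-vanishing of $M_\infty(\sigma^\circ(\lambda,\tau))$ itself.

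For part \eqref{it:patchexist:2}, the additional hypothesis that some reference type $(\xi+\eta,\tau)$ with potentially diagonalizable lift has formally smooth local deformation ring allows one to arrange the Taylor--Wiles setup so that $R^p$ is formally smooth over $\cO$ (there are no obstructions from the auxiliary places and one can choose the global setup so that the level away from $p$ gives no extra singularities). Minimality (the rank-at-most-one condition on $M_\infty(\sigma^\circ(\lambda,\tau))[p^{-1}]$ for inertial $\tau$) then follows from the standard multiplicity-one statement for algebraic automorphic forms on the definite unitary group, combined with the fact that patching preserves generic ranks per component. The main obstacle throughout is bookkeeping with the globalization: one has to ensure that all the places away from $p$ can be handled with potentially diagonalizable lifting rings of the expected dimension, and that the Taylor--Wiles method can be run without making solvable base change hypotheses that would enlarge the residual image. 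For this last point I would invoke the modifications described in the appendix \S \ref{app:TW:patch} which allow patching at arbitrary sufficiently small level, circumventing the usual solvable base change reduction.
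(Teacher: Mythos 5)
Your proposal is in essence the same strategy as the paper's: globalize $\rhobar$, run the Taylor--Wiles--Kisin patching machine for a definite unitary group, and extract the potential diagonalizability axiom from an automorphy-lifting argument. Where the paper differs is chiefly in its economy: it first reduces to the case that $\cO_p$ is a domain $\cO_K$ (the general case by completed tensor products), produces a potentially diagonalizable crystalline lift via \cite[Theorem 6.4.4]{EGstack}, and then cites the construction in \cite[\S 2]{CEGGPS} and \cite[Lemma 4.18(1)]{CEGGPS} to deliver the patched module and verify the weak patching axioms in one stroke, rather than re-deriving the globalization and patching steps as you sketch. Your "second iteration of globalization" for non-vanishing is the content of the cited proof of \cite[Theorem 4.3.8]{LLL}, and your multiplicity-one observation is what underlies the paper's one-line deduction that formal smoothness of $R^{\xi+\eta,\tau}_{\rhobar}$ implies minimality; both points are right.

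Two places where your route is either weaker or slightly off. First, you propose to globalize all the factors of $\cO_p$ simultaneously by arranging a CM field whose $p$-adic places match the decomposition of $\cO_p$; this is substantially more delicate than the paper's reduction to $\cO_p$ a field, which is cost-free here precisely because the statement is local at $p$ and the functors multiply under completed tensor product. Second, your appeal to the modifications in \S\ref{app:TW:patch} is misplaced: that appendix is designed to avoid solvable base change when one \emph{starts} from a given global automorphic $\rbar$ and must preserve its level (as in Theorem \ref{thm:SWC}). In the present proposition $\rhobar$ is purely local and we are free to pick the globalization; solvable base change is harmless and the standard CEGGPS construction suffices. Invoking the appendix here would not be wrong, but it is not what is needed, and it obscures the actual logical dependence.
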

\begin{proof}
We can assume that $\cO_p$ is a domain $\cO_K$ as the general case follows by taking completed tensor products.
By \cite[Theorem 6.4.4]{EGstack}, a (potentially) crystalline potentially diagonalizable lift always exists, say of type $(\xi+\eta,\tau)$. 
Setting $\xi$ and $\tau$ in \cite[\S 2]{CEGGPS} to be this $\xi$ and $\tau$, the construction in \emph{loc.~cit.}~produces a finitely generated $R_\infty[\![\GL_n(\cO_K)]\!]$-module $M_\infty$.
For $\sigma$ a finite $\cO[\![\GL_n(\cO_K)]\!]$-module, we define $M_\infty(\sigma)$ to be $\Hom^{\mathrm{cont}}_{\cO[\![\GL_n(\cO_K)]\!]}(M_\infty,\sigma^\vee)^\vee$ where $(-)^\vee$ denotes the Pontrjagin dual.
Then $M_\infty(-)$ is a weak patching functor by \cite[Lemma 4.18(1)]{CEGGPS}.
By construction, $\rbar$ in \emph{loc.~cit.}~is potentially diagonalizably automorphic, which implies that $M_\infty$ is potentially diagonalizable by the proof of \cite[Theorem 4.3.8]{LLL}. 
If $R_\rhobar^{\xi+\eta,\tau}$ is formally smooth, then $M_\infty$ is minimal.
\end{proof}

\begin{rmk}
For our purposes, the hypothesis $p \nmid 2n$ is often implicitly assumed since if $p\mid 2n$, then there are no $n$-generic tame inertial $L$-parameters.
\end{rmk}

Let $K$ be a finite unramified extension of $\Q_p$ with ring of integers $\cO_K$, and we now let $\cO_p$ be $\cO_K$.
We assume for the remainder of this section that $p \nmid 2n$ (otherwise there are no $n$-generic tame inertial $L$-parameters).
Then an $L$-homomorphism over $\F$ is equivalent to a representation $\rhobar: G_K \ra \GL_n(\F)$ which we also denote by $\rhobar$.

\begin{prop}\label{prop:obvpatchexist}
If $\rhobar: G_K \ra \GL_n(\F)$ is a semisimple  continuous Galois representation whose restriction $\rhobar|_{I_K}$ corresponds to a $4n$-generic tame inertial $L$-parameter over $\F$, then any weak potentially diagonalizable patching functor for $\rhobar$ is detectable.
Moreover, a weak minimal detectable potentially diagonalizable patching functor exists.
\end{prop}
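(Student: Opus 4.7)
The plan is to establish detectability first and then deduce the existence statement from Proposition \ref{prop:patchexist}(\ref{it:patchexist:2}). So fix a weak potentially diagonalizable patching functor $M_\infty$ for $\rhobar$ and a Serre weight $\sigma \in W_\obv(\rhobar)$. By Definition \ref{defn:obv_weight} and Proposition \ref{prop:W?}, $\sigma$ admits a lowest alcove presentation $(\tld{w}_1, \omega)$ compatible with that of $\rhobar$, with $\tld{w}_1 \in \tld{\un{W}}^+_1$ corresponding to a pair $(\tld{w}_1, \tld{w}_1)$, and $\omega = \tld{w}(\rhobar)\tld{w}_1^{-1}(0)$. Let $w \in \un{W}$ denote the image of $\tld{w}_1$. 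I would then define a tame inertial type $\tau_\sigma$ by specifying the lowest alcove presentation with $\tld{w}(\tau_\sigma) \defeq \tld{w}(\rhobar)\tld{w}_1^{-1}$ (adjusted to be $\eta$-compatible with that of $\rhobar$). Since $\tld{w}(\rhobar)$ is $4n$-generic and $\tld{w}_1$ is $(n-1)$-small (being in $\tld{\un{W}}^+_1$), Proposition \ref{prop:propertiesofsmall} ensures that $\tld{w}(\tau_\sigma)$ is at least $(3n+1)$-generic. By construction $\tld{w}(\rhobar, \tau_\sigma) = t_{w^{-1}(\eta)}$, so Corollary \ref{cor:extremeintersect} applied to $\tau_\sigma$, $\rhobar$, and $\lambda = 0$ shows that $W^?(\rhobar) \cap \JH(\ovl{\sigma(\tau_\sigma)})$ contains exactly one Serre weight, namely the obvious weight of $\rhobar$ corresponding to $w$, which is $\sigma$ itself.

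Next, I would construct a potentially diagonalizable lift of $\rhobar$ of type $(\eta, \tau_\sigma)$. Since $\rhobar$ is semisimple, it decomposes as a direct sum of characters $\bigoplus_i \chibar_i$, and the matching between these characters and the constituents of $\tau_\sigma$ is dictated by the equation $\tld{w}(\tau_\sigma) = \tld{w}(\rhobar)\tld{w}_1^{-1}$, using the explicit formula in Example \ref{ex:data:type}. For each $\chibar_i$, a direct Teichm\"uller-type construction yields a crystalline lift $\chi_i:G_K \to \cO^\times$ whose inertial type matches the $i$-th constituent of $\tau_\sigma$ and whose Hodge--Tate weights are the prescribed shift of $w^{-1}(\eta)$; the resulting lift $\rho_\sigma = \bigoplus_i \chi_i$ is manifestly diagonalizable, hence potentially diagonalizable. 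By axiom (II) of Definition \ref{minimalpatching}, $M_\infty(\sigma(\tau_\sigma)^\circ) \neq 0$ for any $\cO$-lattice $\sigma(\tau_\sigma)^\circ \subset \sigma(\tau_\sigma)$. By the exactness of $M_\infty$, dévissage along a Jordan--H\"older filtration of $\ovl{\sigma(\tau_\sigma)^\circ}$ together with Nakayama produces some $\sigma' \in \JH(\ovl{\sigma(\tau_\sigma)})$ with $M_\infty(\sigma') \neq 0$. The depth computation for $\omega$ combined with the $4n$-genericity of $\rhobar$ places us in the second case of Proposition \ref{prop:WE}, so $\sigma' \in W^?(\rhobar)$. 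The uniqueness in the previous paragraph then forces $\sigma' = \sigma$, yielding $M_\infty(\sigma) \neq 0$.

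For the existence of a weak minimal detectable potentially diagonalizable patching functor, I would apply Proposition \ref{prop:patchexist}(\ref{it:patchexist:2}) with $(\xi, \tau) = (0, \tau_\sigma)$ for some fixed choice of $\sigma \in W_\obv(\rhobar)$; the diagonalizable lift $\rho_\sigma$ constructed above provides the potentially diagonalizable lift of the required type, and one must verify that $R_\rhobar^{\eta, \tau_\sigma}$ is formally smooth. The resulting patching functor is minimal potentially diagonalizable, and the first part of the argument immediately upgrades it to detectable. The main obstacle in the proof is the formal smoothness of $R_\rhobar^{\eta, \tau_\sigma}$: while for a semisimple $\rhobar$ and a principal series type with distinct characters this follows from an explicit description of the tame deformation ring (for instance in the spirit of \cite{CEGGPS}), ensuring the $4n$-genericity hypothesis actually guarantees the required distinctness and smoothness requires tracing through the combinatorics of Example \ref{ex:data:type} carefully. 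A secondary delicate point is the precise matching between the decomposition of $\rhobar$ and the characters of $\tau_\sigma$ in the construction of $\rho_\sigma$; this is essentially the content of Proposition \ref{prop:PSLAP} translated into the tame potentially crystalline setting, but keeping track of Hodge--Tate labels and inertial shifts is where most of the bookkeeping happens.
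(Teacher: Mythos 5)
Your overall strategy is sound and in fact tracks what lies behind the paper's citations: the paper proves detectability by pointing to the proof of \cite[Theorem~4.3.8]{LLL} (with Proposition~\ref{prop:WE} substituted for Corollary~4.2.7 of \emph{loc.\ cit.}), and that argument is precisely the one you sketch --- for each $\sigma\in W_\obv(\rhobar)$ produce a tame type $\tau_\sigma$ with $W^?(\rhobar)\cap\JH(\ovl{\sigma(\tau_\sigma)})=\{\sigma\}$, build an ordinary (hence potentially diagonalizable) lift of type $(\eta,\tau_\sigma)$, invoke axiom~(II) plus exactness and Nakayama, and then eliminate all other constituents via Proposition~\ref{prop:WE}. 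For the existence of a minimal functor the paper's written-out argument actually goes a slightly different way: it uses a crystalline Fontaine--Laffaille lift of $\rhobar_v$, for which formal smoothness of the lifting ring is immediate, rather than the tame potentially crystalline ring $R_\rhobar^{\eta,\tau_\sigma}$. Your route, via the formal smoothness of $R_\rhobar^{\eta,\tau_\sigma}$ when $\tld{w}(\rhobar,\tau_\sigma)=t_{w^{-1}(\eta)}$, is the alternative the paper explicitly flags ("Alternatively, one can use \cite[Theorem~3.4.1]{LLL}"), so this is a legitimate, if marginally less self-contained, choice.

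There is, however, a bookkeeping error you should correct. You define $\tld{w}(\tau_\sigma)\defeq \tld{w}(\rhobar)\tld{w}_1^{-1}$, but then this gives $\tld{w}(\rhobar,\tau_\sigma)=\tld{w}(\tau_\sigma)^{-1}\tld{w}(\rhobar)=\tld{w}_1$, which is an element of $\tld{\un{W}}^+_1$ but is \emph{not} of the form $t_{w^{-1}(\eta)}$ unless $w_1=1$; so Corollary~\ref{cor:extremeintersect} does not apply as stated. The correct definition is $\tld{w}(\tau_\sigma)\defeq\tld{w}(\rhobar)\,t_{-w_1^{-1}(\eta)}$, where $w_1\in\un{W}$ is the image of $\tld{w}_1$ (this is exactly the normalization used in the alternative proof of Theorem~\ref{thm:genBM}(\ref{it:genBM:3}) and in the proof of Proposition~\ref{prop:BMcoeff}). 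With this fix your subsequent assertions --- that $\tld{w}(\rhobar,\tau_\sigma)=t_{w_1^{-1}(\eta)}$, that $\tau_\sigma$ is at least $(3n+1)$-generic since $t_{-w_1^{-1}(\eta)}$ is $(n-1)$-small, and that the intersection is the singleton $\{\sigma\}$ --- are all correct. The remaining genuinely delicate points you name (the matching of characters in the diagonal lift, which is Lemma~\ref{lem:obvious_tame_type_lift} plus Proposition~\ref{prop:PSLAP}, and the formal smoothness of $R_\rhobar^{\eta,\tau_\sigma}$) are real bookkeeping, but they are all established elsewhere in the paper and in \cite{LLL}, so the sketch is essentially complete.
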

\begin{proof}
The first part follows from the proof of \cite[Theorem 4.3.8]{LLL} using Proposition \ref{prop:WE} in place of Corollary 4.2.7 in \emph{loc. cit.}.  
For each $v\in S_p$, $\rhobar_v$ is Fontaine--Laffaille and so $\rhobar_v$ has a crystalline potentially diagonalizable lift for some Fontaine--Laffaille Hodge--Tate weights and the corresponding crystalline lifting ring is formally smooth (\cite[Lemma 1.4.3(2)]{BLGGT},\cite[Lemma 2.4.1]{CHT}). 
(Alternatively, one can use \cite[Theorem 3.4.1]{LLL}.)
Proposition \ref{prop:patchexist}(\ref{it:patchexist:2}) implies that a weak minimal potentially diagonalizable patching functor exists, which is then necessarily detectable.
\end{proof}

\begin{prop}\label{prop:patchnonzero}  
Let $\rhobar$ be a semisimple Galois continuous representation whose restriction $\rhobar|_{I_K}$ corresponds to a $4n$-generic tame inertial $L$-parameter over $\F$.
Let $\lambda \in X_*(\un{T}^\vee)$ be dominant with $\lambda_j \in [0, h]^n$.   
Let $\tau$ be a tame inertial type with a fixed $\max \{2n, h+n-1\}$-generic lowest alcove presentation \emph{(}cf.~Definition \ref{defi:gen}(\ref{def:LApres}))\emph{)}.
Let $\sigma^\circ(\lambda,\tau)$ be an $\cO$-lattice in $\sigma(\lambda,\tau)$.
Let $M_\infty$ be a weak detectable minimal patching functor for $\rhobar$ (which exists by Proposition \ref{prop:obvpatchexist} if $\cO_p$ is a domain).
Then the following are equivalent.
\begin{enumerate}
\item 
\label{it:patchnonzero:1}
$M_\infty(\sigma^\circ(\lambda,\tau))$ is nonzero;
\item
\label{it:patchnonzero:2}
$R_\infty(\lambda,\tau)$ is nonzero; 
\item 
\label{it:patchnonzero:3}
$R^{\lambda+\eta,\tau}_\rhobar$ is nonzero; and
\item 
\label{it:patchnonzero:4}
there is a $\lambda$-compatible lowest alcove presentation of $\rhobar$ and $\tld{w}(\rhobar,\tau) \in \Adm(\lambda+\eta)$. %
\end{enumerate}
\end{prop}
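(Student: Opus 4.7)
The plan is to prove the equivalence by running the cycle $(1) \Rightarrow (2) \Rightarrow (3) \Rightarrow (4) \Rightarrow (1)$; the formal implications $(1) \Rightarrow (2) \Rightarrow (3)$ are immediate from the patching axioms. For $(1) \Rightarrow (2)$: since $\tau$ is a tame inertial type, the monodromy operator $N_\tau$ vanishes, so $\tau$ is minimal with respect to $\preceq$. Hence $R^{\lambda+\eta,\preceq\tau}_\rhobar = R^{\lambda+\eta,\tau}_\rhobar$, so $X_\infty(\lambda,\preceq\tau) = X_\infty(\lambda,\tau)$. By Definition \ref{minimalpatching}(\ref{it:minimalpatching:1}), $M_\infty(\sigma^\circ(\lambda,\tau))$ is a maximal Cohen--Macaulay sheaf on $X_\infty(\lambda,\preceq\tau)$, so its non-vanishing forces $R_\infty(\lambda,\tau) \neq 0$. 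For $(2) \Rightarrow (3)$: $R_\infty(\lambda,\tau) = R^{\lambda+\eta,\tau}_\rhobar \widehat{\otimes}_\cO R^p$ with $R^p \neq 0$ by hypothesis, and the completed tensor product of two nonzero complete Noetherian local $\cO$-algebras is nonzero.

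Next, $(3) \Rightarrow (4)$ follows from Corollary \ref{cor:admshape} applied to the Hodge--Tate cocharacter $\lambda + \eta$: the semisimplicity of $\rhobar$ together with the tameness of $\rhobar|_{I_K}$ force $\rhobar$ itself to be tame, and the genericity hypothesis on $\tau$ provides a lowest alcove presentation with $\mu$ sufficiently deep as required by Proposition \ref{prop:semisimple_admissible}. The conclusion is a $(\lambda+\eta)$-compatible (hence $\lambda$-compatible) lowest alcove presentation of $\rhobar$ for which $\tld{w}(\rhobar,\tau) \in \Adm(\lambda+\eta)$.

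The content of the proposition lies in the implication $(4) \Rightarrow (1)$. Under our hypotheses, $\rhobar$ is $4n$-generic hence $2h_\eta$-generic (since $h_\eta = n-1$), and $\tau$ is $\max\{2n, h+n-1\}$-generic, which majorises $\max\{2h_\eta, h_{\lambda+\eta}\}$ since $h_{\lambda+\eta} \leq h+n-1$. Given $\lambda$-compatible presentations with $\tld{w}(\rhobar,\tau) \in \Adm(\lambda+\eta)$, Proposition \ref{prop:obvint} then produces a Serre weight $\sigma \in W_{\obv}(\rhobar) \cap \JH(\overline{\sigma(\tau)} \otimes W(\lambda))$. Detectability of $M_\infty$ (Definition \ref{minimalpatching}(III)) yields $M_\infty(\sigma) \neq 0$. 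Exactness of $M_\infty$ applied to a Jordan--H\"older filtration of $\overline{\sigma^\circ(\lambda,\tau)}$ --- whose irreducible constituents are exactly $\JH(\overline{\sigma(\tau)} \otimes W(\lambda))$, since $V(\lambda)|_{\GL_n(\cO_K)}$ has mod $\varpi$ reduction $W(\lambda)$ --- produces $M_\infty(\sigma)$ as a subquotient of $M_\infty(\overline{\sigma^\circ(\lambda,\tau)})$, so the latter is nonzero. Applying exactness of $M_\infty$ to the multiplication-by-$\varpi$ short exact sequence
\[
0 \to \sigma^\circ(\lambda,\tau) \xrightarrow{\varpi} \sigma^\circ(\lambda,\tau) \to \overline{\sigma^\circ(\lambda,\tau)} \to 0
\]
identifies $M_\infty(\overline{\sigma^\circ(\lambda,\tau)})$ with $M_\infty(\sigma^\circ(\lambda,\tau))/\varpi$, and Nakayama's lemma yields $(1)$.

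The main obstacle is the coupling, in $(4) \Rightarrow (1)$, of the local-combinatorial existence input (Proposition \ref{prop:obvint}) with the detectability of a patching functor; both rely on carefully calibrated genericity, which is why the explicit bounds $4n$ on $\rhobar$ and $\max\{2n,h+n-1\}$ on $\tau$ appear in the statement (with Corollary \ref{cor:admshape} imposing additional, compatible, requirements on $\tau$ through Proposition \ref{prop:semisimple_admissible}).
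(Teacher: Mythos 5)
Your proposal is correct and follows the same cyclic chain $(1)\Rightarrow(2)\Rightarrow(3)\Rightarrow(4)\Rightarrow(1)$ as the paper's proof, invoking Corollary \ref{cor:admshape} for $(3)\Rightarrow(4)$ and Proposition \ref{prop:obvint} together with detectability and exactness for $(4)\Rightarrow(1)$. The only difference is that you spell out a few standard steps (e.g.\ the $N_\tau=0$ reduction for $(1)\Rightarrow(2)$, the Nakayama argument at the end) that the paper leaves implicit.
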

\begin{proof}
If $M_\infty(\sigma^\circ(\lambda,\tau))$, which is supported on $X_\infty(\lambda,\tau)$, is nonzero, then $R_\infty(\lambda,\tau)$ must be nonzero.
By definition, $R_\infty(\lambda,\tau)$ is nonzero if and only if $R^{\lambda+\eta,\tau}_\rhobar$ is nonzero.

If $R^{\lambda+\eta,\tau}_\rhobar$ is nonzero, then there is a $\lambda$-compatible lowest alcove presentation of $\rhobar$ such that $\tld{w}(\rhobar,\tau) \in \Adm(\lambda+\eta)$ by Corollary \ref{cor:admshape}.
If $\tld{w}(\rhobar,\tau) \in \Adm(\lambda+\eta)$, then $W_\obv(\rhobar) \cap \JH(\ovl{\sigma}^\circ(\lambda,\tau))$ is nonempty by Proposition \ref{prop:obvint}.
If $\sigma$ is in this intersection then $M_\infty(\sigma)$ is nonzero, which implies that $M_\infty(\sigma^\circ(\lambda,\tau))$ is nonzero by exactness of $M_\infty$.
\end{proof}

We will also need a version of the above result for certain non-semisimple $\rhobar$:

\begin{lemma}\label{lem:obvious_tame_type_lift}
Let $\kappa \in X_1(\un{T})$ be $(n-1)$-deep.
Suppose that $\rhobar:G_K\to \GL_n(\F)$ is of the form
\[ \begin{pmatrix}  \ovl{\chi}_1 &* &\cdots & * \\
0&\ovl{\chi}_2& \cdots& *\\
\vdots &&\ddots&\vdots\\
0&\cdots &0 & \ovl{\chi}_n
\end{pmatrix}\]
where $\ovl{\chi}_i|_{I_K}= \prod_{j\in \cJ} \ovl{\omega}_{K,\sigma_j}^{\kappa_{j,i}+\eta_{j,i}}$.
Then $\rhobar$ can be lifted to a representation $\rho$ of the form
 \[\begin{pmatrix}  \chi_1 &* &\cdots & * \\
0&\chi_2& \cdots& *\\
\vdots &&\ddots&\vdots\\
0&\cdots &0 & \chi_n
\end{pmatrix}\]
where $\chi_i|_{I_K}= \eps^{n-i} \prod_{j\in \cJ} \omega_{K,\sigma_j}^{\kappa_{j,i}}$.
Any such lift is potentially crystalline of type $(\eta,\tau(1,\kappa))$.
\end{lemma}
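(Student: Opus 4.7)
The plan is to build $\rho$ as a successive extension and then to verify the potentially crystalline property. First I would construct characters $\chi_i:G_K\to\cO^\times$ lifting $\ovl\chi_i$ with the prescribed restriction to inertia. This is possible because the reduction modulo $\varpi$ of $\eps^{n-i}\prod_j\omega_{K,\sigma_j}^{\kappa_{j,i}}$ equals $\prod_j\ovl\omega_{K,\sigma_j}^{\kappa_{j,i}+\eta_{j,i}}=\ovl\chi_i|_{I_K}$ (using $\ovl\eps|_{I_K}=\prod_j\ovl\omega_{K,\sigma_j}$), and one can then freely lift $\ovl\chi_i(\Frob_K)$ to an element of $\cO^\times$.

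Next I would build $\rho$ inductively by lifting the filtration of $\rhobar$ one step at a time. Starting from a lift of the bottom $k\times k$ block to an upper triangular $\rho_k$ with diagonal $\chi_{n-k+1},\dots,\chi_n$, one extends to the $(k+1)\times(k+1)$ lift by solving a successive extension problem with fixed target character $\chi_{n-k}$. The obstructions at each infinitesimal stage sit in $H^2(G_K,\F(\ovl\chi_i\ovl\chi_j^{-1}))$ for pairs $i<j$, dual under local Tate duality to $H^0(G_K,\F(\ovl\chi_j\ovl\chi_i^{-1}\ovl\eps))$. Vanishing amounts to nontriviality on $I_K$ of $\prod_l\ovl\omega_{K,\sigma_l}^{(\kappa_{l,j}-\kappa_{l,i})+(\eta_{l,j}-\eta_{l,i})+1}$, which the $(n-1)$-depth hypothesis on $\kappa$ guarantees: the perturbations $\eta_{l,j}-\eta_{l,i}+1$ lie in $[2-n,n]$ and cannot compensate the contribution of $\kappa_{l,j}-\kappa_{l,i}$ modulo $p^f-1$. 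All obstructions thus vanish, producing the desired $\rho$.

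Finally, for any upper triangular lift $\rho$ with the prescribed diagonal, the Hodge--Tate weights of $\chi_i$ at $\sigma_j$ are $n-i=\eta_{j,i}$, so the multiset of weights at $\sigma_j$ is $\{n-1,\ldots,0\}$, matching Hodge type $\eta$; moreover $\rho|_{I_K}$ is filtered with graded pieces $\chi_i|_{I_K}=\prod_j\omega_{K,\sigma_j}^{\kappa_{j,i}+\eta_{j,i}}$, whose direct sum is $\tau(1,\kappa)|_{I_K}$ by Example~\ref{ex:data:type} (with $s=1$, so $\bm{\alpha}_j=\kappa_{f-j}$ and $\bf{a}^{(0)}=\sum_j p^j\kappa_{f-j}$). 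The main obstacle will be to upgrade potential semistability to potential \emph{crystallinity}: letting $K'/K$ be the tame extension that trivializes $\tau(1,\kappa)$, each $\chi_i|_{G_{K'}}$ is crystalline, and I would argue that the monodromy operator $N$ on $\Dpst(\rho|_{G_{K'}})$ vanishes. Indeed $N$ preserves the filtration and satisfies $N\varphi=p\varphi N$, so nonzero components of $N$ would force coincidences among the Frobenius eigenvalues of the $\chi_i|_{G_{K'}}$ (differing by prescribed powers of $p$); a direct computation of these eigenvalues from the explicit form of $\chi_i$ combined with the $(n-1)$-depth hypothesis will rule out such coincidences, forcing $N=0$.
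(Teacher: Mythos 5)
Your construction of the lift is essentially the paper's, and your computations of the Hodge--Tate weights and inertial type are correct (the paper streamlines the lifting step by noting directly that the depth hypothesis gives $\ovl{\chi}_i\neq\ovl{\chi}_{i'}\ovl{\eps}$, hence $H^2(G_K,\ovl{\chi}_i\ovl{\chi}_{i'}^{-1})=0$). One small omission: you should justify why $\rho$ is potentially semistable at all; the paper quotes \cite[Lemme 6.5]{Berger}, using that the $j$-labelled Hodge--Tate weights strictly increase down the diagonal, so that $\rho$ is de Rham.

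The real problem is your argument that $N=0$. You want to deduce it from the relation $N\varphi=p\varphi N$ by showing that the Frobenius eigenvalues of the $\chi_i|_{G_{K'}}$ never differ by the required power of $p$. But the unramified parts of the $\chi_i$ are not constrained by the hypotheses: when you lift $\ovl{\chi}_i(\Frob_K)$ you chose an \emph{arbitrary} element of $\cO^\times$, and nothing stops you from making $\chi_i(\Frob_{K'})=p^m\chi_j(\Frob_{K'})$ for some $i\neq j$. No depth condition on $\kappa$ can see this choice. Since the lemma asserts that $N=0$ for \emph{any} lift of the given shape, an argument that depends on genericity of Frobenius eigenvalues cannot be completed. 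The correct mechanism, and the one the paper uses, is $I_K$-equivariance of $N$: $\Dpst(\rho)$ is filtered with graded pieces $\Dpst(\chi_i)$, which as $I_K$-representations are $n$ \emph{pairwise distinct} characters (this is where the $(n-1)$-depth on $\kappa$ actually bites), so $\Dpst(\rho)|_{I_K}$ is multiplicity-free. Any $N$ commuting with the $I_K$-action must therefore preserve each line, i.e.\ be diagonal; being nilpotent, it vanishes. Replace your Frobenius-eigenvalue step with this argument.
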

\begin{proof}
The depth hypothesis implies that $\ovl{\chi}_i\neq \ovl{\chi}_{i'}\ovl{\eps}$ for all $1\leq i<i'\leq n$, so that $H^2(G_K,\ovl{\chi}_i\ovl{\chi}_{i'}^{-1}) = 0$ and there are no obstructions to finding an upper triangular lift $\rho$ of $\rhobar$ with characters $\chi_i$ on the diagonal. 

We now check that such a lift $\rho$ is potentially crystalline of type $(\eta,\tau(1,\kappa))$.
Since for each embedding $j\in \cJ$, the $j$-labelled Hodge-Tate weights of $\rho$ increase along the diagonal, $\rho$ is de Rham, by \cite[Lemme 6.5]{Berger}.
Hence $\rho$ is potentially semistable. Clearly the Hodge-Tate weight of $\rho$ is $\eta$. Now $\Dpst(\rho)$ is a successive extension of $\Dpst(\chi_i)$ as $I_K$-representations, and since $\Dpst(\chi_i)\cong \prod_{j\in \cJ} \omega_{j}^{\kappa_{j,i}}$ as $I_K$-representations, $\rho$ has inertial type $\tau(1,\kappa)$. 
Finally, the depth hypothesis on $\kappa$ implies that $\Dpst(\rho)|_{I_K}$ is a direct sum of $n$ distinct characters, which forces the monodromy operator $N$ on $\Dpst$ to be $0$. Thus $\rho$ is in fact potentially crystalline of type $(\eta,\tau(1,\kappa))$.
\end{proof}

\begin{prop}\label{prop:obvious_wild_lift}
Suppose that $\kappa$ and $\rhobar$ are as in Lemma \ref{lem:obvious_tame_type_lift}.
If $\rhobar$ is $4n$-generic and $M_\infty$ is a weak potentially diagonalizable patching functor (which exists by Proposition \ref{prop:patchexist}), then $M_\infty(F(\kappa)) \neq 0$.
Moreover, if $F(\kappa) \in \JH(\ovl{\sigma}(\lambda,\tau))$, then $R_\rhobar^{\lambda+ \eta,\tau}$ is nonzero.
\end{prop}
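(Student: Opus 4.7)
The plan is to use Lemma \ref{lem:obvious_tame_type_lift} to produce a potentially diagonalizable lift of $\rhobar$ of type $(\eta,\tau(1,\kappa))$, and then combine exactness of $M_\infty$ with the rigidity statement of Corollary \ref{cor:extremeintersect} to isolate $F(\kappa)$ as the unique Jordan--H\"older factor of $\overline{\sigma(\tau(1,\kappa))}$ lying in $W^?(\rhobar^{\mathrm{ss}})$.

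The first step is to apply Lemma \ref{lem:obvious_tame_type_lift} to obtain a lift $\rho$ which is upper triangular with characters $\chi_i$ on the diagonal. The key observation is that such a $\rho$ is potentially diagonalizable: after restricting to a sufficiently large finite extension $K'/K$ on which $\tau(1,\kappa)$ becomes trivial, $\rho|_{G_{K'}}$ is crystalline and upper triangular with strictly decreasing Hodge--Tate weights in each embedding, hence ordinary; a standard argument (cf.~\cite[Lemma 1.4.3]{BLGGT}) then shows that $\rho|_{G_{K'}}$ lies on the same component of the crystalline lifting ring of $\rhobar|_{G_{K'}}$ as the direct sum $\bigoplus_i \chi_i|_{G_{K'}}$. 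Since $M_\infty$ is a potentially diagonalizable patching functor, this yields $M_\infty(\sigma^\circ(\tau(1,\kappa)))\neq 0$, where $\sigma(\tau(1,\kappa))\cong R_1(\kappa)$ by Proposition \ref{prop:tameILL} (irreducible by the genericity of the type).

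Next, I would pin down the intersection $W^?(\rhobar^{\mathrm{ss}})\cap \JH(\overline{R_1(\kappa)})$. By construction of $\rhobar$, the inertial type $\rhobar^{\mathrm{ss}}|_{I_K}$ admits the lowest alcove presentation $(1,\kappa)$ while $\tau(1,\kappa)$ admits the lowest alcove presentation $(1,\kappa-\eta)$, and both are compatible with the same algebraic central character $\kappa|_{\un{Z}}$. Thus $\tld{w}(\rhobar^{\mathrm{ss}})=t_{\kappa+\eta}$, $\tld{w}(\tau(1,\kappa))=t_\kappa$, so $\tld{w}(\rhobar^{\mathrm{ss}},\tau(1,\kappa))=t_\eta=t_{1^{-1}(0+\eta)}$. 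Applying Corollary \ref{cor:extremeintersect} with $\lambda=0$ and $s=1$ (the genericity hypotheses are ensured by the $4n$-genericity of $\rhobar$) shows that this intersection is a singleton, namely the obvious weight in $W_{\obv}(\rhobar^{\mathrm{ss}})$ corresponding to $s=1$, which by Definition \ref{defn:obv_weight} and formula (\ref{eqn:LAPwt}) equals $F_{(1,\kappa+\eta)}=F(\kappa)$.

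With these two ingredients in hand, the conclusion follows. By exactness of $M_\infty$ applied to a Jordan--H\"older filtration of $\overline{\sigma^\circ(\tau(1,\kappa))}$, some $\sigma_0\in \JH(\overline{R_1(\kappa)})$ must have $M_\infty(\sigma_0)\neq 0$; these factors are $(2n-1)$-deep (by Proposition \ref{prop:JHbij} combined with the strong genericity of $\tau(1,\kappa)$ inherited from the $4n$-genericity of $\rhobar$), so weight elimination (Proposition \ref{prop:WE}) forces $\sigma_0\in W^?(\rhobar^{\mathrm{ss}})$, whence $\sigma_0=F(\kappa)$. This proves $M_\infty(F(\kappa))\neq 0$. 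For the ``moreover'' statement, if $F(\kappa)\in\JH(\overline{\sigma(\lambda,\tau)})$ then exactness of $M_\infty$ together with Nakayama's lemma propagate $M_\infty(F(\kappa))\neq 0$ to $M_\infty(\sigma^\circ)\neq 0$ for any lattice $\sigma^\circ\subset \sigma(\lambda,\tau)$; since this module is supported on $\Spec R_\infty(\lambda,\tau)$, we conclude that $R_\rhobar^{\lambda+\eta,\tau}$ is nonzero. The main technical obstacle is the verification of potential diagonalizability of the upper triangular lift $\rho$; once that is secured, the remaining steps are a clean combination of the combinatorial rigidity provided by Corollary \ref{cor:extremeintersect}, weight elimination, and exactness of the patching functor.
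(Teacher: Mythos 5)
Your overall strategy coincides with the paper's, but there is a genuine gap in the combinatorial step that pins down the intersection $W^?(\rhobar^{\mathrm{ss}})\cap \JH(\ovl{\sigma}(\tau(1,\kappa)))$. You assert that $\rhobar^{\mathrm{ss}}|_{I_K}$ admits the lowest alcove presentation $(1,\kappa)$ and that $\tau(1,\kappa)$ admits the lowest alcove presentation $(1,\kappa-\eta)$, from which you compute $\tld{w}(\rhobar^{\mathrm{ss}},\tau(1,\kappa))=t_\eta$ and apply Corollary \ref{cor:extremeintersect} with $s=1$. But a lowest alcove presentation $(s,\mu)$ of a tame inertial $\F$-type requires $\mu\in \un{C}_0$, i.e.\ that $\mu$ lies in the lowest restricted $p$-alcove. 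Lemma \ref{lem:obvious_tame_type_lift} only imposes that $\kappa\in X_1(\un{T})$ be $p$-restricted and $(n-1)$-deep, so $\kappa$ may lie in a \emph{higher} restricted alcove, in which case $(1,\kappa)$ and $(1,\kappa-\eta)$ are not lowest alcove presentations, and the quantities $\tld{w}(\rhobar^{\mathrm{ss}})=t_{\kappa+\eta}$, $\tld{w}(\tau(1,\kappa))=t_\kappa$, and the evaluation of the formula \eqref{eqn:LAPwt} (which presupposes $\omega-\eta\in\un{C}_0$) are not defined. The same issue silently affects your identification $F_{(1,\kappa+\eta)}=F(\kappa)$ and the claimed $1$-genericity of $R_1(\kappa)$.

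The paper avoids this by fixing the honest lowest alcove presentation $(\tld{w},\omega)$ of $F(\kappa)$, writing $\tld{w}=t_{\eta_w}w\in\tld{\un{W}}^+_1$ with $\kappa=\pi^{-1}(\tld{w})\cdot(\omega-\eta)$, and then invoking Proposition \ref{prop:PSLAP} to produce compatible lowest alcove presentations
$\tau(1,\kappa)\cong \tau(\pi^{-1}(w)^{-1}w,\ \omega+\pi^{-1}(w)^{-1}(\eta_w-\eta))$ and $\rhobar^{\mathrm{ss}}|_{I_K}\cong\taubar(\pi^{-1}(w)^{-1}w,\ \omega+\pi^{-1}(w)^{-1}(\eta_w))$, yielding $\tld{w}(\rhobar^{\mathrm{ss}},\tau(1,\kappa))=t_{w^{-1}(\eta)}$. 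The application of Corollary \ref{cor:extremeintersect} then has $s=w$ (not $s=1$), and the resulting obvious weight corresponding to $w$ is $F(\kappa)$ precisely because of the relation $\kappa=\pi^{-1}(\tld{w})\cdot(\omega-\eta)$. When $\kappa$ happens to lie in $\un{C}_0$ (so $w=1$, $\eta_w=0$) the paper's computation collapses to yours, so your argument is the special case rather than the general one. To repair the gap, replace your direct identification of presentations by the passage through the lowest alcove presentation of the Serre weight and Proposition \ref{prop:PSLAP}.

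The remaining pieces of your argument — potential diagonalizability of the ordinary lift via \cite[Lemma 1.4.3]{BLGGT}, nonvanishing of $M_\infty(\sigma^\circ(\tau(1,\kappa)))$, weight elimination on the non-$F(\kappa)$ constituents via Proposition \ref{prop:WE}, and the ``moreover'' step via exactness and support on $R_\infty(\lambda,\tau)$ — are all essentially the paper's argument and are fine.
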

\begin{proof}
Since $\rho$ is ordinary in Lemma \ref{lem:obvious_tame_type_lift}, $\rho$ is potentially diagonalizable by \cite[Lemma 1.4.3]{BLGGT}.
Then $M_\infty(\sigma^\circ(\tau(1,\kappa)))$ is nonzero for any $\cO$-lattice $\sigma^\circ(\tau(1,\kappa))$ in $\sigma(\tau(1,\kappa))$.

Let $(\tld{w},\omega)$ be a lowest alcove presentation for $F(\kappa)$ so that $\kappa = \pi^{-1}(\tld{w}) \cdot (\omega-\eta)$.
Since $\rhobar$ is $4n$-generic, $(\tld{w}, \omega)$ is a $3n$-generic lowest alcove presentation. 
We let $\tld{w} \in \tld{\un{W}}^+_1$ be $t_{\eta_w} w$.
Then writing $\tau(1,\kappa) \cong \tau(\pi^{-1}(w)^{-1} w,\omega + \pi^{-1}(w)^{-1}(\eta_w-\eta))$ and $\rhobar{^{\mathrm{ss}}}|_{I_K} \cong \taubar(\pi^{-1}(w)^{-1} w,\omega + \pi^{-1}(w)^{-1}(\eta_w))$ using Proposition \ref{prop:PSLAP} gives compatible lowest alcove presentations of $\tau(1,\kappa)$ and $\rhobar|_{I_K}$.
Since $\tld{w}(\rhobar,\tau(1,\kappa)) = t_{w^{-1}(\eta)}$, $W^?(\rhobar^{\mathrm{ss}}) \cap \JH(\ovl{\sigma}(\tau(1,\kappa))) = \{F(\kappa)\}$ by Corollary \ref{cor:extremeintersect}.
For any $\sigma \in \JH(\ovl{\sigma}(\tau(1,\kappa)))$ with $\sigma \not\cong F(\kappa)$, $ \sigma \notin W^?(\rhobar^{\mathrm{ss}})$ and $\sigma$ is $2n$-deep by Proposition \ref{prop:JHbij}, 
and so $M_\infty(\sigma) = 0$ by Proposition \ref{prop:WE}.
This implies that $M_\infty(F(\kappa))$ is nonzero.
The final part then follows from the axioms satisfied by $M_\infty$.
\end{proof}
\clearpage{}%
\clearpage{}%
\section{Monodromy, potentially crystalline stacks, and local models} \label{sec:monodromy}
  
As we saw earlier, Theorem \ref{thm:fh_local_model} and Theorem \ref{thm:Breuil-Kisin_local_model} gives a Zariski local description of the moduli of Breuil--Kisin module $Y^{[0,h],\tau}$ in terms of certain affine opens of global Schubert varieties.
In this section, we give a similar description for the potentially semistable stacks of type $(\lambda,\tau)$.
This will the main local ingredient for the global applications.

\subsection{The monodromy condition}
\label{sec:mon:cond}
We are in the setup of \S \ref{sec:BKwithdescent}.
We have fixed a tame inertial $L$-parameter $\tau:I_{\Qp}\ra\widehat{\un{T}}(E)$ together with a $1$-generic lowest alcove presentation $(s,\mu)$. %
To the tame inertial $L$-parameter above, we associate a tame inertial type for $K$, denoted by $\tau: I_K\rightarrow \GL_n(E)$, as described in the Example \ref{ex:data:type}.
Let $r$ be the order of $s_\tau$. 
As in \S \ref{sec:BKwithdescent} we write $K'$ for the unramified extension of $K$ of degree $r$, let $k'$ be its residue field and set $f' = fr$, $e' =p^{f'}-1$. 
Finally, recall that we have fixed an identification of $\cJ' = \Hom_{\Qp}(K', E)$ with $\Z/f' \Z$ by the choice of the isomorphism $\iota: \ovl{K}\stackrel{\sim}{\ra}\ovl{\Q}_p$.

\vspace{2mm}

We begin by recalling some notations from \cite{KisinFcrys}.  Let $\cO_{K'}^{\rig}=\big(\varprojlim_{n} W(k')[\![u',\frac{u'^n}{p}]\!]\big)[\frac{1}{p}]$ denote the ring of rigid analytic functions on the open unit disc over $K'$. %
There is a natural injective map $\cO_{K'}^{\rig}\into (W(k')\otimes \bQ_p)[\![u']\!]=K'[\![u']\!]$, which identifies $\cO_{K'}^{\rig}$ as the subring consisting of power series $\sum_{n = 0}^{\infty} a_n (u')^n$ such that $|a_n|_p R^n \ra 0$ for all $R < 1$. Clearly $\fS_{L'}\subset \cO_{K'}^{\rig}$.
Set 
$$
\lambda = \prod_{n=0}^{\infty} \phz^n \left(\frac{E(u')}{p} \right) \in \cO_{K'}^{\rig}.
$$ 
We define a derivation on $\cO_{K'}^{\rig}$ by $N_{\nabla} \defeq - u' \lambda \frac{d}{d(u')}$; the Frobenius $\phz$ on $\fS_{L'}$ extends to a Frobenius $\phz$ on $\cO_{K'}^{\rig}$. If $\La$ is a finite flat $\cO$-algebra, we define  $\cO_{K',\La}^{\rig}\defeq \cO_{K'}^{\rig} \otimes_{\Zp}\La$. For any Kisin module $\fM \in Y^{[0,h],\tau}(\Lambda)$, we define its base change to $\cO^{\rig}_{K',\La}$ as $\fM^{\rig} \defeq \fM \otimes_{\fS_{L'}} \cO_{K'}^{\rig}$.

One has the following important result of Kisin:%
\begin{thm} \label{thmKisin} 
The module $\fM^{\rig}[1/\lambda]$ is equipped with a unique derivation $N_{\fM^{\rig}}$ over $N_\nabla$ such that 
\begin{equation} \label{commrel}
N_{\fM^{\rig}} \phi_{\fM^{\rig}} = E(u') \phi_{\fM^{\rig}} N_{\fM^{\rig}}
\end{equation}
and $N_{\fM^{\rig}}\mod u' = 0$.   The module $\fM^{\rig}$ is stable under $N_{\fM^{\rig}}$ if and only if $T^*_{dd}(\fM)[1/p]$ is the restriction to $G_{K_{\infty}}$ of a potentially crystalline representation of $G_K$ over $\Lambda[\frac{1}{p}]$, of inertial type $\tau$ for $K$ and Hodge-Tate weights in $[0,h]$.  
\end{thm}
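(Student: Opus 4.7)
The plan is to argue the two parts of the theorem separately, essentially following Kisin's strategy from \cite{KisinFcrys}.

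\textbf{Existence and uniqueness of $N_{\fM^{\rig}}$.} For uniqueness, if $N_1$ and $N_2$ are two such derivations, their difference $D \defeq N_1 - N_2$ is an $\cO^{\rig}_{K',\Lambda}$-linear endomorphism of $\fM^{\rig}[1/\lambda]$ and \eqref{commrel} forces
\begin{equation*}
D = E(u') \cdot \phi_{\fM^{\rig}} \circ D \circ \phi_{\fM^{\rig}}^{-1}.
\end{equation*}
Since $\phi$ sends $u'$ to $(u')^p$ and $D \equiv 0 \pmod{u'}$ by hypothesis, iterating this identity gives $D \equiv 0$ modulo arbitrarily high powers of $u'$, hence $D = 0$. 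For existence, one fixes an eigenbasis of $\fM$ and writes $N_{\fM^{\rig}} = N_\nabla + X$ for a matrix $X$ with entries in $u'\cdot \cO^{\rig}_{K',\Lambda}[1/\lambda]$; the relation \eqref{commrel} then translates into a fixed-point equation of the form
\begin{equation*}
X = E(u') \, C \, \phi(X) \, C^{-1} - N_\nabla(C) \cdot C^{-1},
\end{equation*}
where $C$ is the full matrix of Frobenius. The height condition on $\fM$ bounds the pole order of $C^{-1}$ in $E(u')^{-1}$, and combined with $\phi(u') = (u')^p$ this makes the operator on the right $u'$-adically contracting on matrices vanishing mod $u'$; the required $X$ is produced as the limit of this iteration.

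\textbf{Equivalence with the potentially crystalline condition.} Stability of $\fM^{\rig}$ under $N_{\fM^{\rig}}$ endows $\fM^{\rig}[1/p]$ with the structure of a $(\phi,N_\nabla)$-module over $\cO^{\rig}_{K',\Lambda}[1/p]$ in the sense of \cite{KisinFcrys}. Kisin's equivalence between such $(\phi,N_\nabla)$-modules and filtered $(\phi,N)$-modules, combined with its compatibility with $T^*_{dd}$ and with the tame descent datum of $\Delta$ (whose action commutes with $N_{\fM^{\rig}}$ by uniqueness), then yields the extension of $T^*_{dd}(\fM)[1/p]$ to a potentially semistable representation of $G_K$. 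The vanishing $N_{\fM^{\rig}} \equiv 0 \pmod{u'}$, together with the standard formula for the monodromy operator induced on the reduction $\fM^{\rig}/u'\fM^{\rig}$, forces $N=0$ on the associated filtered module, which is precisely the potentially crystalline condition; the Hodge--Tate and inertial types are read off from the height in $[0,h]$ and from the descent datum $\tau$ respectively. Conversely, given a potentially crystalline lift of the prescribed type, Kisin's equivalence reconstructs a $(\phi,N_\nabla)$-module whose underlying $\phi$-module is canonically identified with $\fM^{\rig}[1/p]$, and by the uniqueness part the resulting connection must coincide with $N_{\fM^{\rig}}$; in particular it preserves $\fM^{\rig}[1/p]$, and a Nakayama-type argument using $p$-torsion freeness upgrades this to preservation of $\fM^{\rig}$ itself.

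The main obstacle is to execute the family (i.e., $\Lambda$-coefficient) version of Kisin's equivalence compatibly with the integral structures, because one must track the rigid-analytic topology on $\cO^{\rig}_{K',\Lambda}$ together with the pole of $\lambda$ and with $p$-torsion phenomena in $\Lambda$. Once this is set up, the tame descent data pose no real difficulty since $\Delta$ has order prime to $p$, so the descent from $K'$ to $K$ is essentially formal.
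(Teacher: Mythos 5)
Your proposal follows essentially the same strategy as the paper, which simply cites \cite[Corollary 1.3.15]{KisinFcrys}; the existence/uniqueness argument you sketch is carried out in the paper a few lines later, in the $\Lambda$-coefficient family setting, as Proposition \ref{prop:mconverge}, so it is correct.

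Two remarks on where your account differs in emphasis. First, you flag the "family (i.e., $\Lambda$-coefficient) version of Kisin's equivalence" as the main obstacle, but Kisin's Corollary 1.3.15 is already stated for coefficient algebras, so there is nothing to upgrade: the paper invokes it directly. Second, and more substantively, your claim that "the descent from $K'$ to $K$ is essentially formal since $\Delta$ has order prime to $p$" glosses over the actual input. The delicate point is not transporting $N_{\fM^{\rig}}$ across the $\Delta$-action (your uniqueness observation does handle that), but rather producing a $G_K$-action on $V = T^*_{dd}(\fM)[1/p]$ that restricts compatibly both to the $G_{K'}$-action Kisin produces and to the given $G_{K_\infty}$-action. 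The paper does this via the full faithfulness of restriction from crystalline $G_{L'}$-representations to $G_{L'_\infty}$-representations (Kisin's Corollary 2.1.14), which pins down the $G_{L'}$-module uniquely and lets one glue. This is a genuine ingredient, not a formality of prime-to-$p$ order. Your route through filtered $(\phi,N)$-modules with $\Gal(L'/K)$-action is a legitimate alternative packaging of the same content, but as written it does not explain why the resulting $G_K$-representation restricts to the prescribed $G_{K_\infty}$-module — which is exactly the assertion of the theorem.
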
 
\begin{proof} This is essentially \cite[Corollary 1.3.15]{KisinFcrys}. The result in \emph{loc.~cit.} is stated there without tame descent data, however, using the full faithfulness of the restriction from crystalline $G_{L'}$-representations to $G_{L'_{\infty}}$-representations (Corollary 2.1.14 in \emph{loc.~ cit.}), we see the stability of $\fM^\rig$ under $N_{\fM^\rig}$ is equivalent to $V=T^*_{dd}(\fM)[1/p]$ extending to a potentially crystalline representation of $G_K$, which becomes crystalline over $L'$. The fact that it has inertial type $\tau$ follows from the fact that $D_{pst}(V)$ is isomorphic to $((\fM/u'\fM)[\frac{1}{p}])^\vee$ as an $I_K$-representation.
\end{proof}

\begin{defn} \label{defn:mcond} Let $\Lambda$ be a finite flat $\cO$-algebra.  We say that $\fM \in Y^{[0, h], \tau}(\Lambda)$ satisfies the \emph{monodromy condition} if $N_{\fM^{\rig}}(\fM^{\rig}) \subset \fM^{\rig}$.   
\end{defn}

The significance of the monodromy condition is that by Theorem \ref{thmKisin}, it captures the condition that the $G_{K_{\infty}}$ representation attached to a Breuil--Kisin module comes from potentially crystalline $G_K$-representations, at least on finite $E$-algebras.  We would like to study this condition when one varies the Breuil--Kisin module in a family, and understand it explicitly in terms of the coordinate charts of $Y^{[0,h],\tau}$ produced by Theorem \ref{thm:fh_local_model}. %

 Let $R$ be a $p$-adically complete, topologically of finite type flat $\cO$-algebra.  %
We define $\cO^{\rig}_R=\varprojlim_n R[\![u',\frac{u'^n}{p}]\!][\frac{1}{p}]$, which can be interpreted as the ring of rigid analytic function on the open unit ball $(\Spf R)^{\rig}\times \bD^{\circ}$ over the rigid analytic generic fiber of $\Spf R$.
There is a natural injection $\cO^{\rig}_R\into R[\frac{1}{p}][\![u']\!]$ whose image is stable under $\frac{d}{du'}$, and we will always think of the former as a subring of the latter via this injection.
Note that for each $m\geq 0$, we have map $\cO^{\rig}_R \onto (R[\frac{1}{p}])[u']/\phz^m(E(u'))$, which we can roughly think of as ``evaluation at $(-p)^{\frac{1}{e'p^m}}$'' (in contrast, there is no such map for $R[\frac{1}{p}][\![u']\!]$). If $F\in \cO^{\rig}_R$, we write $F|_{\phz^m(E(u'))=0}$ to mean the image of $F$ under this evaluation map. Note that the condition $F|_{\phz^m(E(u'))=0}=0$ is a Zariski closed condition on $\Spec R[\frac{1}{p}]$.
{Finally, we note that the formation of $\cO^{\rig}_R$ is a Zariski sheaf on $\Spf R$ (and thus we are free to make Zariski localizations on $\Spf R$ in our arguments below): Indeed, a Zariski open cover of $\Spf R$ induces an open cover of the adic space $(\Spf R[\![u']\!])^{\mathrm{ad}}$ whose generic fiber over $(\Spf \Zp)^{\mathrm{ad}}$ is $(\Spf R)^{\rig}\times \bD^{\circ}$, and these adic spaces are sheafy by \cite[Theorem 2.2 ]{huber94}}.

We also define the variant $\cO^{\rig}_{K',R}=\varprojlim_n (W(k')\otimes_{\bZ_p}R)[\![u',\frac{u'^n}{p}]\!][\frac{1}{p}]$, which is a subring of $(K'\otimes_{\bZ_p} R)[\![u']\!]$. Since $R$ is an $\cO$-algebra, we have a decomposition $\cO^{\rig}_{K',R}=\prod_{j\in \cJ'} \cO^{\rig}_R$. The operators $\phz$, $N_\nabla$ continue to make sense on $\cO^{\rig}_{K',R}$.

Given $\fM\in Y^{[0,h],\tau}(R)$, we define $\fM^{\rig}=\fM\otimes_{\fS_{L',R}} \cO^{\rig}_{K',R}$, which decomposes as 
 $\fM^{\rig} =\oplus_{j'\in \cJ'} \fM^{\rig, (j')}$.

\begin{prop} \label{prop:mconverge} Let $R$ be a $p$-adically complete, topologically finite type flat $\cO$-algebra, and $\fM\in Y^{[0,h],\tau}(R)$.
 \begin{enumerate}
\item There exists a unique derivation $N_{\fM^\rig}:\fM^{\rig}[\frac{1}{\lambda}]\to \fM^{\rig}[\frac{1}{\lambda}]$ over $N_\nabla$ such that 
\begin{equation} \label{commrel_general}
N_{\fM^{\rig}} \phi_{\fM^{\rig}} = E(u') \phi_{\fM^{\rig}} N_{\fM^{\rig}}
\end{equation}
and $N_{\fM^{\rig}}\mod u' = 0$.
\item Suppose $\fM$ admits an eigenbasis $\beta=(\beta^{(j')})_{j'\in\cJ'}$, and recall that $C^{(j')}_{\fM, \beta} \in \Mat_n(R[\![u']\!])$ is the matrix of $\phi_\fM:\fM^{(j'-1)}\to \fM^{(j')}$.
Define inductively the sequence $N^{(j')}_{i}\in \Mat_n(R[\frac{1}{p}][\![u']\!])$ for $j'\in \cJ'$ and $i\geq 0$ as follows:
\begin{itemize}
\item $N^{(j')}_0=0$ for all $j'\in\cJ'$.
\item For each $i \geq 1$, define
\[N_{ i}^{(j')} \defeq E(u')C^{(j')}_{\fM, \beta} \phz(N_{i-1}^{(j'-1)}) C^{(j')}_{\fM, \beta})^{-1}  - N_{\nabla}(C^{(j')}_{\fM, \beta}) (C^{(j')}_{\fM, \beta})^{-1}.
\] 
\end{itemize} 
Then for each $j'\in \cJ'$, the sequence $N^{(j')}_i$ converges in $\Mat(R[1/p][\![u']\!])$ to an element $N^{(j')}_\infty$. Furthermore $N^{(j')}_\infty\in \frac{1}{\lambda^{h-1}} \Mat_n(\cO^{\rig}_{R})$, and is the matrix of $N_{\fM^\rig}: \fM^{\rig,(j')}\to \fM^{\rig,(j')}$ with respect to $\beta^{(j')}$.
\end{enumerate}
\end{prop}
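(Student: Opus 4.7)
The plan is to first establish uniqueness in (1), which allows us to construct $N_{\fM^{\rig}}$ Zariski-locally on $\Spf R$ (where eigenbases exist), and then directly build the operator via the matrix recurrence in (2). For uniqueness, given two candidates $N_1,N_2$, their difference $\Delta=N_1-N_2$ is $\cO^{\rig}_{K',R}$-linear, satisfies $\Delta\circ\phi_{\fM^{\rig}}=E(u')\phi_{\fM^{\rig}}\circ\Delta$, and vanishes modulo $u'$. Working in an eigenbasis, the commutation identity gives $D^{(j')}=E(u')C^{(j')}\phi(D^{(j'-1)})(C^{(j')})^{-1}$ for the matrices of $\Delta$; since $E(u')$ and $\lambda$ are units in $R[1/p][\![u']\!]$ (both having nonzero constant term) and $(C^{(j')})^{-1}\in E(u')^{-h}\Mat_n(R[\![u']\!])$ by the height condition, the Frobenius contraction $\phi(u')=u'^p$ forces, by induction on $u'$-adic valuation in $R[1/p][\![u']\!]$, that $D^{(j')}\equiv 0\pmod{u'^{p^k}}$ for all $k$, hence $\Delta=0$. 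By uniqueness, local constructions will glue to the desired global $N_{\fM^{\rig}}$ on $\fM^{\rig}[1/\lambda]$.

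For existence, work after a Zariski cover trivializing an eigenbasis $\beta$. Applying $N_{\fM^{\rig}}$ (derivation over $N_\nabla$ satisfying the twisted $\phi$-commutation and vanishing modulo $u'$) to the identity $\phi_{\fM^{\rig}}(\beta^{(j'-1)})=\beta^{(j')}C^{(j')}$ yields the matrix equation $N^{(j')}C^{(j')}+N_\nabla(C^{(j')})=E(u')C^{(j')}\phi(N^{(j'-1)})$, whose iterative solution starting from $N^{(j')}_0=0$ is precisely the recursion in the statement. Setting $\delta^{(j')}_i:=N^{(j')}_{i+1}-N^{(j')}_i$, the homogeneous recurrence $\delta^{(j')}_i=E(u')C^{(j')}\phi(\delta^{(j'-1)}_{i-1})(C^{(j')})^{-1}$ combined with the fact that $N^{(j')}_1\equiv 0\pmod{u'}$ (since $N_\nabla(C^{(j')})=-u'\lambda\, dC^{(j')}/du'$ does) gives, by the same bootstrap as above, $\delta^{(j')}_i\equiv 0\pmod{u'^{p^i}}$ in $R[1/p][\![u']\!]$. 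This establishes convergence of $N^{(j')}_i$ to some $N^{(j')}_\infty\in \Mat_n(R[1/p][\![u']\!])$ whose associated operator satisfies the required derivation and commutation properties as a fixed point of the recursion, and vanishes modulo $u'$ by induction.

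The main obstacle is upgrading convergence from $R[1/p][\![u']\!]$ to the strictly smaller ring $\cO^{\rig}_R$ of functions convergent on the open unit disc, with the precise pole bound $N^{(j')}_\infty\in\lambda^{-(h-1)}\Mat_n(\cO^{\rig}_R)$. The key ingredient is the identity $\phi(\lambda)=p\lambda/E(u')$, which after renormalizing $M^{(j')}_i:=\lambda^{h-1}N^{(j')}_i$ converts the recurrence to
\[ M^{(j')}_i = p^{-(h-1)}E(u')^h C^{(j')}\phi(M^{(j'-1)}_{i-1})(C^{(j')})^{-1}\;-\;\lambda^{h-1}N_\nabla(C^{(j')})(C^{(j')})^{-1}. \]
The height condition gives $E(u')^h(C^{(j')})^{-1}\in\Mat_n(R[\![u']\!])$, and distributing $\lambda^{h-1}=(E(u')/p)^{h-1}\phi(\lambda)^{h-1}$ cancels the $E(u')^{h-1}$ part of the denominator of $(C^{(j')})^{-1}$ against $E(u')^{h-1}$, so that both terms of the rewritten recurrence lie in $p^{-(h-1)}\Mat_n(\cO^{\rig}_R)$. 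Combining this bounded $p$-adic loss per iteration with the $u'$-adic estimates $\delta^{(j')}_i\equiv 0\pmod{u'^{p^i}}$ and the sheaf property of $\cO^{\rig}_R$ on disks of radius $r<1$ (on which only finitely many zeros of $\lambda$ interfere), the sequence $M^{(j')}_i$ converges in $\Mat_n(\cO^{\rig}_R)$, yielding the claimed pole bound for $N^{(j')}_\infty$.
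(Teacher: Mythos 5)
Your proposal takes essentially the same route as the paper's proof: uniqueness via the Frobenius contraction $\phz(u')=(u')^p$ forcing any solution of the homogeneous matrix equation with $D\equiv 0\pmod{u'}$ to be infinitely $u'$-divisible; existence via the explicit iterative scheme, with convergence to $\lambda^{-(h-1)}\Mat_n(\cO^{\rig}_R)$ coming from the renormalization $M_i=\lambda^{h-1}N_i$, the identity $\lambda=\phz(\lambda)E(u')/p$, and the height bound $E(u')^h(C^{(j')})^{-1}\in\Mat_n(R[\![u']\!])$, so that the per-step $p$-adic loss (of order $p^{-(h-1)}$) is outrun by the per-step $u'$-adic gain (of order $(u')^{p^i}$). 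This is exactly the paper's estimate $\lambda^{h-1}(N^{(j')}_{i+1}-N^{(j')}_i)\in\frac{(u')^{p^i}}{p^{1+(h-1)(i+1)}}\phz^{i+1}(\lambda^h)\Mat_n(R[\![u']\!])$.

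Two small quibbles, neither of which affects the plan. First, your claim that both terms of the renormalized recurrence lie in $p^{-(h-1)}\Mat_n(\cO^{\rig}_R)$ is off by one for the inhomogeneous term: $\lambda^{h-1}N_\nabla(C)(C)^{-1}=-u'\lambda^h\frac{dC}{du'}(C)^{-1}=-\phz(\lambda^h)\frac{u'}{p^h}\frac{dC}{du'}\bigl(E(u')^h(C)^{-1}\bigr)$ lives in $p^{-h}\Mat_n(\cO^{\rig}_R)$, not $p^{-(h-1)}$ (this matches the paper's base case $1+(h-1)\cdot 1=h$). Second, your last step, "combining linear $p$-loss with exponential $u'$-gain and the sheaf property gives convergence in $\cO^{\rig}_R$", is correct as a plan but is the only part that needs to be carried out in detail: the paper does this by fixing $m$ and observing that for $i$ large relative to $m$, the $i$-th term lies in the image of $R[\![x,y]\!][1/p]\to R[1/p][\![u']\!]$ with $x\mapsto u'$, $y\mapsto (u')^m/p$, using $\phz^{i+1}(\lambda)\in\Z_p[\![(u')^m/p]\!]$ and $p^{p^i}$ beating $p^{m(1+(h-1)(i+1))}$. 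That concrete estimate is what turns "this should converge on disks of radius $<1$" into a proof.
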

\begin{proof} To prove both parts, we can work Zariski locally on $R$ and hence assume that $\fM$ admits a eigenbasis $\beta$. First, assume $N_{\fM^{\rig}}$ exist, then it is $(W(k')\otimes R)$-linear so it preserves $\fM^{\rig,(j')}$. Let $N^{(j')}$ be the matrix of $N_{\fM^{\rig}}$ with respect to $\beta^{(j')}$.  Let $C^{(j')} := C^{(j')}_{\fM, \beta}$.  We can thus write the commutation relation (\ref{commrel_general}) as
\begin{equation}\label{m1}
N^{(j')}C^{(j')} =E(u') C^{(j')} \phz\big(N^{(j'-1)}\big)-N_{\nabla}\big(C^{(j')}\big).
\end{equation}
Then $N_{\fM^{\rig}}$ is unique since this system has at most one solution even in $\Mat_n(R[\frac{1}{p}][\![u']\!])^{\cJ'}$. Indeed, the difference $X_{j'}$ of any two solutions will satisfy (noting that $C^{(j')} \in \GL_n(R[\frac{1}{p}][\![u']\!])$)
\[X_{j'}=E(u')\Ad\big(C^{(j')}\big)(\phz(X_{j'}))\]
and $X_{j'} \mod u'=0$. From this we deduce by induction that $X_{j'}$ is infinitely divisible by $u'$ in $\Mat_n(R[\frac{1}{p}][\![u']\!])$, hence must be $0$.

Thus, we are left with showing the second part of the Proposition, since the limiting $N^{(j')}_\infty$ constructed there will be a solution to the commutation relation (\ref{m1}). 
We show by induction that 
\begin{equation} \label{m3}
\lambda^{h-1}\big(N^{(j')}_{i+1} - N^{(j')}_{i}\big) \in \frac{(u')^{p^{i}}}{p^{1+(h-1)(i+1)}}  \phz^{i+1}(\lambda^h) \Mat(R[\![u']\!]).
\end{equation}
For the base case, we have
\begin{align*}\lambda^{h-1} N_{1}^{(j')} &= \lambda^{h} u'\frac{d}{du'}\big(C^{(j')}\big) \big(C^{(j')}\big)^{-1}\\
&=\phz(\lambda^h)\frac{u'}{p^h}\frac{d}{du'}\big(C^{(j')}\big)E(u')^h\big(C^{(j')}\big)^{-1}\ \in \phz(\lambda^h)\frac{u'}{p^h}\Mat_n(R[\![u']\!]), 
\end{align*}
since $C^{(j')}, E(u')^h(C^{(j')})^{-1}\in \Mat_n(R[\![u']\!])$ by the height condition.
Now suppose we already know (\ref{m3}) up to $i-1\geq 0$. We have
\begin{equation*}
\lambda^{h-1} \left(N_{i+1}^{(j')} - N^{(j')}_i \right) = \frac{E(u')^h}{p^{h-1}} C^{(j')} \phz \left(\lambda^{h-1} \left(N_{i}^{(j'-1)} - N^{(j'-1)}_{i-1} \right) \right) \big(C^{(j')}\big)^{-1}
\end{equation*} 
belongs to $\frac{1}{p^{h-1}}\phz\Big(\frac{(u')^{p^{i-1}}\phz^i(\lambda^h)}{p^{1+(h-1)i}}\Big)\Mat_n(R[\![u']\!])=\frac{(u')^{p^i}}{p^{1+(h-1)(i+1)}}\phz^{i+1}(\lambda^h)\Mat_n(R[\![u']\!])$, since we have $C^{(j')}, E(u')^h\big(C^{(j')}\big)^{-1}\in \Mat_n(R[\![u']\!])$ by the height condition. This finishes the inductive step.

Property (\ref{m3}) shows the convergence of $N^{(j')}_i$ in $\Mat_n(R[\frac{1}{p}][\![u']\!])$, and the limit necessarily is the unique solution of the system (\ref{m1}). It remains to show $N^{(j')}_\infty\in \frac{1}{\lambda^{h-1}}\Mat_n(\cO_R^{\rig})$. From (\ref{m3}), we just need to show an element in $R[\frac{1}{p}][\![u']\!]$ of the form
\[\psi=\sum_{i=0}^\infty \frac{(u')^{p^i}}{p^{1+(h-1)(i+1)}}\phz^{i+1}(\lambda)f_i(u')\]
with $f_i(u')\in R[\![u']\!]$ must belong to $\cO_R^{\rig}$. Equivalently, we need to show that for each fixed $m$, $\psi$ lies in the image of the homomorphism $R[\![x,y]\!][\frac{1}{p}]\to R[\frac{1}{p}][\![u']\!]$ sending $x$ to $u'$ and $y$ to $\frac{(u')^m}{p}$. However this is clear, since $\phz^{i+1}(\lambda)\in \bZ_p[\![\frac{(u')^m}{p}]\!]$ and $\frac{(u')^{p^i}}{p^{1+(h-1)(i+1)}}f_i(u')\in (u')^iR[\![u',\frac{(u')^m}{p}]\!]$ for $i$ sufficiently large relative to $m$.
\end {proof}
\begin{prop} 
\label{prop:Mcond}
Let $\Lambda$ be a finite flat $\cO$-algebra and let $\fM \in Y^{[0,h],\tau}(\Lambda)$ with a eigenbasis $\beta$. Let $N_{\fM, \infty}^{(j)}$ be as in Proposition \ref{prop:mconverge}. 
Then $\fM^{\rig}$ satisfies the monodromy condition if and only if for all $0 \leq t \leq h-2$ and $j' \in \cJ'$,  
$(\frac{d}{du'})^t|_{E(u')=0}(\lambda^{h-1} N_{\infty}^{(j')}) = 0$.
\end{prop}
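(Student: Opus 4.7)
The plan is to turn the monodromy condition into a pole-order analysis of the matrices $N_\infty^{(j')}$ and then translate it into the derivative condition in the statement.

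First I would observe that, by Proposition \ref{prop:mconverge}, the matrices $N_\infty^{(j')}$ represent $N_{\fM^{\rig}}$ in the eigenbasis $\beta$ and lie in $\frac{1}{\lambda^{h-1}}\Mat_n(\cO^{\rig}_\Lambda)$. So the monodromy condition $N_{\fM^{\rig}}(\fM^{\rig})\subset\fM^{\rig}$ is equivalent to the assertion that, for every $j'\in\cJ'$, $N_\infty^{(j')}$ has no poles along the vanishing locus of $\lambda$, which is the union of the Zariski closed subsets $\{\phz^n(E(u'))=0\}$ for $n\geq 0$.

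Next I would exploit the commutation relation
\[ N_\infty^{(j')}C^{(j')}\ =\ E(u')\,C^{(j')}\phz(N_\infty^{(j'-1)})\ -\ N_\nabla(C^{(j')})\]
to show inductively on $n$ that a pole of $N_\infty^{(j')}$ at $\phz^n(E)=0$ for $n\geq 1$ forces a pole of $N_\infty^{(j'-1)}$ at $\phz^{n-1}(E)=0$. The point is that at zeros of $\phz^n(E)$ with $n\geq 1$ the factors $E(u')$, $C^{(j')}$, $(C^{(j')})^{-1}$ and $N_\nabla(C^{(j')})$ are all regular, since the height condition concentrates the poles of $(C^{(j')})^{-1}$ at $E(u')=0$. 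Reading off the commutation relation then gives that the pole order of $N_\infty^{(j')}$ at $\phz^n(E)=0$ equals that of $\phz(N_\infty^{(j'-1)})$ at $\phz^n(E)=0$, which equals the pole order of $N_\infty^{(j'-1)}$ at $\phz^{n-1}(E)=0$. Using that $N_\infty^{(j')}$ depends only on $j'\pmod f$, this reduces the monodromy condition to the single claim that $N_\infty^{(j')}$ has no pole at $E(u')=0$ for every $j'\in\cJ'$.

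Finally I would translate this regularity at $E=0$ into the derivative condition. The identity $\lambda=(E(u')/p)\,\phz(\lambda)$ gives $\lambda^{h-1}=(E(u')/p)^{h-1}\phz(\lambda)^{h-1}$, and $\phz(\lambda)=\prod_{m\geq 1}\phz^m(E)/p$ evaluated at any root of $E(u')$ is a $p$-adic unit (each factor $\phz^m(E)/p=u'^{p^me'}/p+1$ reduces to $(-p)^{p^m-1}\cdot(-1)+1\equiv 1\pmod p$ at a root of $E$, and the product converges on the open disc because $|u'^{p^m e'}|_p\to 0$). Therefore $\lambda^{h-1}$ and $E(u')^{h-1}$ differ by a unit in the local ring along $E(u')=0$, so $N_\infty^{(j')}$ is regular at $E=0$ if and only if $\lambda^{h-1}N_\infty^{(j')}\in \cO^{\rig}_\Lambda$ vanishes to order $\geq h-1$ at each root of $E(u')$. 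Since $E(u')$ is a separable polynomial over $\Lambda[1/p]$ and we are in characteristic zero, Taylor expansion at each root converts this into the pointwise vanishing $(\frac{d}{du'})^t|_{E(u')=0}(\lambda^{h-1}N_\infty^{(j')})=0$ for all $0\leq t\leq h-2$, which is the claim.

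The subtlest step will be the pole-propagation argument: one must verify that no cancellation on the right-hand side of the commutation relation can artificially lower the expected pole order at $\phz^n(E)=0$, and keep track of how the recursion in $j'\in\cJ'$ interacts with the $f$-periodicity of the $C^{(j')}$. Both are essentially bookkeeping in the Bezout domain $\cO^{\rig}_\Lambda$ once the height condition is properly invoked; the translation into the derivative condition is then a purely formal consequence in characteristic zero.
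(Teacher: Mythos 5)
Your proposal is correct and takes essentially the same approach as the paper: both use the commutation relation (\ref{m1}) to propagate regularity of $N_\infty^{(j')}$ from the locus $E(u')=0$ to all zeros $\phz^m(E(u'))=0$ of $\lambda$, and then translate regularity at $E(u')=0$ into the stated derivative condition. Your pole-propagation reformulation spells out the step the paper leaves implicit in ``we deduce from the commutation relation,'' namely that at $\phz^n(E)=0$ with $n\geq 1$ the factors $E(u')$, $C^{(j')}$, $(C^{(j')})^{-1}$ and $N_\nabla(C^{(j')})$ are all regular (the height condition concentrating poles of $(C^{(j')})^{-1}$ at $E(u')=0$), and that $\phz(\lambda)$ is a unit at the roots of $E(u')$.
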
 
\begin{proof}  The forward direction is clear.  For the reverse direction, we deduce from the commutation relation (\ref{m1}) that $(\frac{d}{du'})^t|_{\phz^m(E(u'))=0}(\lambda^{h-1} N_{\infty}^{(j')}) = 0$ for all $m\geq 0$. It follows that $\lambda^{h-1}N^{(j')}_\infty \in \lambda^{h-1}\Mat_n(\cO^{\rig}_\Lambda)$.%
\end{proof}

\begin{cor} \label{cor:ideal_mon}Let $R$ be a $p$-adically complete topologically finite type flat $\cO$-algebra, let $\fM\in Y^{[0,h],\tau}(R)$. Assume $\fM$ is free over $\fS_{L',R}$. Let $\beta$ be a eigenbasis for $\fM$. Let $N^{(j)}_{\infty}$ be the matrix of $N_{\fM^\rig}$ with respect to $\beta^{(j)}$. Then 
\begin{itemize}
\item The condition $(\frac{d}{du'})^t|_{E(u')=0}(\lambda^{h-1} N_{\infty}^{(j')}) = 0$ for all $0\leq t\leq h-2$ and $j'\in\cJ'$ defines a Zariski closed subset $\Spec R[\frac{1}{p}]^{\fM,\nabla_\infty}\subset \Spec R[\frac{1}{p}]$, which is independent of the choice of the eigenbasis $\beta$.
\item The formation of $\Spec R[\frac{1}{p}]^{\fM,\nabla_\infty}$ is compatible with arbitrary base change on the pair $(R,\fM)$ satisfying the above hypotheses.
\end{itemize}
\end{cor}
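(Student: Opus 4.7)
The plan is to obtain both items as formal consequences of Propositions \ref{prop:mconverge} and \ref{prop:Mcond}, with the main content being a careful analysis of how $N^{(j')}_\infty$ transforms under change of eigenbasis and base change.

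First I would verify that the vanishing conditions cut out a Zariski closed subset. By Proposition \ref{prop:mconverge}, the matrix $\lambda^{h-1} N^{(j')}_\infty$ lies in $\Mat_n(\cO^{\rig}_R)$. For each $0\leq t\leq h-2$, the operation $F \mapsto \big(\frac{d}{du'}\big)^tF\bigm|_{E(u')=0}$ is an $R[1/p]$-linear map from $\cO^{\rig}_R$ to the finite free $R[1/p]$-module $R[1/p][u']/E(u')$. Thus the conditions in the statement amount to the vanishing of finitely many entries in a finite free $R[1/p]$-module, which defines a closed subscheme of $\Spec R[1/p]$.

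Next I would establish independence of the eigenbasis. Given two eigenbases $\beta,\,\beta'$ of $\fM$, Proposition \ref{prop:changeofbasis} provides $D^{(j')} \in \GL_n(R[\![u']\!])$ with $\beta'^{(j')} = \beta^{(j')} D^{(j')}$. Using the uniqueness clause of Proposition \ref{prop:mconverge} together with the Leibniz rule applied to the defining commutation relation, the matrices of $N_{\fM^\rig}$ with respect to $\beta^{(j')}$ and $\beta'^{(j')}$ are related by
\[
N'^{(j')}_\infty = (D^{(j')})^{-1} N^{(j')}_\infty\, D^{(j')} + (D^{(j')})^{-1} N_\nabla(D^{(j')}).
\]
Since $N_\nabla = -u'\lambda\frac{d}{du'}$, the correction term is divisible by $\lambda$, so after multiplying by $\lambda^{h-1}$ the discrepancy between $\lambda^{h-1} N'^{(j')}_\infty$ and the conjugate $(D^{(j')})^{-1}(\lambda^{h-1} N^{(j')}_\infty) D^{(j')}$ lies in $\lambda^h\Mat_n(\cO^{\rig}_R)$, and in particular vanishes to order $\geq h$ at $E(u')=0$. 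Hence the first $h-1$ coefficients of the Taylor expansion at $E(u')=0$ of $\lambda^{h-1}N'^{(j')}_\infty$ differ from those of $\lambda^{h-1}N^{(j')}_\infty$ by conjugation by the image of $D^{(j')}$ in $\GL_n(R[1/p][u']/E(u')^{h-1})$; this conjugation preserves the vanishing locus, giving the stated independence.

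Finally I would check base change compatibility. Given a continuous map $R \to R'$ of $p$-adically complete topologically finite type flat $\cO$-algebras and a eigenbasis $\beta$ of $\fM$, the induced tuple $\beta\otimes 1$ is a eigenbasis of $\fM'\defeq \fM\otimes_R R'$, and the matrices $C^{(j')}_{\fM',\beta\otimes 1}$ are the images of $C^{(j')}_{\fM,\beta}$ under the natural map $\cO^{\rig}_{K',R}\to \cO^{\rig}_{K',R'}$. The recursively defined $N^{(j')}_i$ in Proposition \ref{prop:mconverge} are therefore mapped to the analogous sequence for $\fM'$, and by continuity their limits match; alternatively this follows from the uniqueness characterization of $N_{\fM^\rig}$, since the base-changed matrix satisfies the commutation relation and the mod $u'$ vanishing condition. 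Because $R\to R'$ commutes with $\frac{d}{du'}$ and with evaluation at $E(u')=0$, the defining ideal of $\Spec R[1/p]^{\fM,\nabla_\infty}$ pulls back to the defining ideal of $\Spec R'[1/p]^{\fM',\nabla_\infty}$, yielding the desired compatibility. The only place that requires any genuine care is the change of basis computation above; once that is in hand, the rest is bookkeeping.
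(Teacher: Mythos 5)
Your proof is correct and takes the same approach as the paper's. The paper compresses the change-of-basis argument to ``an easy computation,'' and what you spell out — that the correction term $(D^{(j')})^{-1}N_\nabla(D^{(j')})$ is divisible by $\lambda$, so $\lambda^{h-1}N'^{(j')}_\infty$ and the conjugate of $\lambda^{h-1}N^{(j')}_\infty$ agree modulo $E(u')^{h-1}$, hence generate the same ideal — is precisely that computation; the Zariski-closedness and base-change parts match the paper's treatment as well.
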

\begin{proof} By Proposition \ref{prop:mconverge}, each entry of $(\frac{d}{du'})^t|_{E(u')=0}(\lambda^{h-1} N_{\infty}^{(j')})$ is an element of $R[\frac{1}{p}][u']/E(u')$. The Zariski closedness is immediate. A change of the choice of eigenbasis $\beta$ changes $N^{(j')}_{\infty}$ to $\Ad(X^{(j')})(N^{(j')}_{\infty})-X^{(j')}\lambda u'\frac{d}{du'}((X^{(j')})^{-1})$ for $X^{(j')}\in \GL_n(R[\![u']\!])$, and an easy computation shows the independence on the choice of eigenbasis. The last assertion is immediate, as the generators for the ideal cutting out our condition are literally the same if we compute using compatible choice of eigenbases.
\end{proof}

\begin{prop} \label{prop:functorial_mon_condition}Let $R$ be a $p$-adically complete topologically finite type flat $\cO$-algebra, let $\fM\in Y^{[0,h],\tau}(R)$. There is a unique ideal $I_{\fM,\nabla_\infty}$ such that 
\begin{itemize} 
\item $R/I_{\fM,\nabla_\infty}$ is $\cO$-flat; and
\item For any flat map $R\to S$ such that $S$ is a $p$-adically complete topologically finite type flat $\cO$-algebra and the base change $\fM_S$ of $\fM$ to $S$ is free, one has $\Spec S[\frac{1}{p}]/I_{\fM,\nabla_\infty}=\Spec S[\frac{1}{p}]^{\fM_S,\nabla_\infty}$.
\end{itemize}
Furthermore formation of $I_{\fM,\nabla_\infty}$ is compatible with flat base change on the pairs $(R,\fM)$ as above.
\end{prop}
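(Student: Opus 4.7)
The plan is to reduce everything to the setting of Corollary~\ref{cor:ideal_mon} by working Zariski-locally and then gluing.

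For uniqueness, the first bullet forces any candidate ideal $I\subseteq R$ to be $p$-saturated, i.e.\ $I = I[\tfrac{1}{p}]\cap R$, since $R$ itself is $\cO$-flat. Thus $I$ is entirely determined by the ideal $I[\tfrac{1}{p}]\subseteq R[\tfrac{1}{p}]$; and the second bullet, applied in the trivial case $S = R$ (whenever $\fM$ is free) or to a Zariski cover of $\Spf R$ in general, determines $I[\tfrac{1}{p}]$. Thus once existence is established, the ideal is unique.

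For existence I would argue locally. As in the proof of Proposition~\ref{prop:Breuil-Kisin_moduli_presentation}, Zariski locally on $\Spf R$ the Kisin module $\fM$ admits an eigenbasis $\beta$, and in particular $\fM$ is free of rank $n$ over $\fS_{L',R}$. Cover $\Spf R$ by finitely many such opens $\Spf R_i$, where each $R_i$ is the $p$-adic completion of a Zariski localization of $R$ (hence flat over $R$), and let $\fM_i$, $\beta_i$ be the restrictions of $\fM$ and a choice of eigenbasis to $R_i$. Let $J_i\subseteq R_i[\tfrac{1}{p}]$ be the ideal generated by the entries of $(\tfrac{d}{du'})^t\big|_{E(u')=0}(\lambda^{h-1}N_{\infty,i}^{(j')})$ for $0\le t\le h-2$ and $j'\in\cJ'$, viewed as elements of $R_i[\tfrac{1}{p}]$ via the free $R_i[\tfrac{1}{p}]$-module structure on $R_i[\tfrac{1}{p}][u']/E(u')$. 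By Corollary~\ref{cor:ideal_mon}, $J_i$ is independent of the choice of eigenbasis $\beta_i$, and its formation commutes with arbitrary base change on the pair $(R_i,\fM_i)$ along a flat map with free target. Define $I_i\defeq J_i\cap R_i$ under the injection $R_i\hookrightarrow R_i[\tfrac{1}{p}]$ (note that $R_i$ is $\cO$-flat, so the map is indeed injective); then $R_i/I_i$ is $p$-torsion free, hence $\cO$-flat, since $R_i$ is.

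The main step will be the gluing. For overlapping indices $i,j$, pick a mutual refinement $\Spf R_{ij}$ (the $p$-adic completion of a suitable Zariski localization of both $R_i$ and $R_j$); the maps $R_i\to R_{ij}$ and $R_j\to R_{ij}$ are flat, and $\fM\otimes_R R_{ij}$ is free. By the base change compatibility in Corollary~\ref{cor:ideal_mon}, $J_i\cdot R_{ij}[\tfrac{1}{p}] = J_j\cdot R_{ij}[\tfrac{1}{p}]$. Since $R_i/I_i$ is $\cO$-flat and $R_i\to R_{ij}$ is flat, $R_{ij}/(I_i\cdot R_{ij}) = R_i/I_i\otimes_{R_i}R_{ij}$ is $\cO$-flat, so $I_i\cdot R_{ij}$ is already $p$-saturated in $R_{ij}$ and therefore equals $(J_i\cdot R_{ij}[\tfrac{1}{p}])\cap R_{ij}$; similarly for $I_j$. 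Hence $I_i\cdot R_{ij} = I_j\cdot R_{ij}$, so the closed subschemes $\Spf(R_i/I_i)\subseteq\Spf R_i$ form a descent datum for the Zariski cover, and therefore descend to a unique closed formal subscheme of $\Spf R$ defined by an ideal $I_{\fM,\nabla_\infty}\subseteq R$. By construction, $R/I_{\fM,\nabla_\infty}$ is $\cO$-flat and $I_{\fM,\nabla_\infty}$ recovers $J_i$ after inverting $p$ on each $R_i$, giving both bullets of the statement.

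Finally, for compatibility with a flat map $R\to R'$ as in the statement, the same argument shows that $R'/I_{\fM,\nabla_\infty}\cdot R'=R/I_{\fM,\nabla_\infty}\otimes_R R'$ is $\cO$-flat, and that $I_{\fM,\nabla_\infty}\cdot R'[\tfrac{1}{p}]$ coincides, Zariski-locally and hence globally, with the ideal produced by the construction applied to $(R',\fM\otimes_R R')$. By the uniqueness established in the first paragraph, $I_{\fM,\nabla_\infty}\cdot R' = I_{\fM\otimes_R R',\nabla_\infty}$. The main obstacle is the bookkeeping in the gluing step, in particular the fact that one must pass through $R[\tfrac{1}{p}]$ to compare the local ideals and then descend back to $R$ using $\cO$-flatness; this is what makes the independence of the eigenbasis in Corollary~\ref{cor:ideal_mon} crucial, since otherwise the local ideals $I_i$ would not be canonically defined.
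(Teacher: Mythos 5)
Your proposal is correct and follows essentially the same route as the paper: reduce to the free case via Corollary~\ref{cor:ideal_mon}, define $I$ as the $p$-saturation of the locally defined ideal (equivalently, the Zariski closure of $\Spec S[\frac{1}{p}]^{\fM_S,\nabla_\infty}$), use $\cO$-flatness to pin down uniqueness, and glue over a Zariski cover of $\Spf R$ by affine formal opens on which $\fM$ becomes free. You have simply spelled out the gluing step in more detail than the paper does.
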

\begin{rmk} The compatibility with base change means that for flat $R\to S$, we have $I_{\fM,\nabla_\infty}S=I_{\fM_S,\nabla_\infty}$. In general, we always have an inclusion $I_{\fM,\nabla_\infty}S\subset I_{\fM_S,\nabla_\infty}$.
\end{rmk}
\begin{proof} The existence when $\fM$ is free follows from Corollary \ref{cor:ideal_mon}, {taking the Zariski closure of $\Spec R[\frac{1}{p}]^{\fM,\nabla_\infty}$ in $\Spec R$.}
The uniqueness then follows, since there is a Zariski cover of $\Spf \, R$ by $p$-adic affine formal schemes topologically of finite type over $\cO$, over which $\fM$ becomes free. Finally the existence in general and the compatibility with flat base change follows from the base change property in Corollary \ref{cor:ideal_mon}.
\end{proof}

We now wish to analyze the ideal $I_{\fM,\nabla_\infty}$ more closely, and in particular find some approximation of it which is more algebraic in nature. To this end, we let $R$ be a $p$-adically complete topologically finite type $\cO$-flat algebra, and fix a pair $(\fM,\beta)$ where $\fM\in Y^{[0,h],\tau}(R)$ and $\beta$ is an eigenbasis for $\fM$. Given this data we get the matrices of partial Frobenii $A^{(j')}=A^{(j')}_{\fM,\beta}$ and $C^{(j')}=C^{(j')}_{\fM,\beta}$ for $j'\in \cJ'$, cf the discussion after Definition \ref{defn:eigenbasis}. On the other hand, Proposition \ref{prop:mconverge} constructs the matrices $N^{(j')}_\infty\in \frac{1}{\lambda^{h-1}}\Mat_n(\cO^{\rig}_R)$ given by the infinite series
\begin{align} \label{eq:Ninf}
N_{\infty}^{(j')}=N_1^{(j')} + \sum^{\infty}_{i = 1} \left(\prod_{k=0}^{i-1}\phz^{k}\big(C^{(j'-k)}\big)\right) \phz^i(N_{1}^{(j'-i)})\left(\prod_{k=i-1}^{0}\phz^{k} \big(E(u') (C^{(j'-k)})^{-1}\big) \right)
\end{align}
where $N_{1}^{(j')} = \lambda u'\frac{d}{du'}(C^{(j')}) (C^{(j')})^{-1}$.

Thus we can write
\[p^{h}\lambda^{h-1}N_{\infty}^{(j')}= \phz(\lambda)^{h} u'\frac{d}{du'}(C^{(j')}) (v+p)^h (C^{(j')})^{-1}+\sum_{i=1}^{\infty} X_i^{(j')}\]
where
\[X_i^{(j')} := \frac{\phz^{i+1}(\lambda)^{h}}{p^{i(h-1)}} \left(\prod_{k=0}^{i-1}\phz^{k}(C^{(j'-k)})\right) \phz^i\left(u' \frac{d}{du'} C^{(j'-i)} \right) \left(\prod_{k=i}^{0}\phz^{k} \left((v+p)^{h} (C^{(j'-k)})^{-1} \right) \right).\]
We can rewrite this in terms of the $A^{(j')}$ by ``removing the descent data'' as in \cite[Page 52]{LLLM}. We obtain (see \eqref{eq:CtoA}):
\begin{equation} 
\label{eq:remove_descent}
p^{h} \Ad \big((s'_{\mathrm{or}, j'})^{-1} (u')^{-\mathbf{a}^{\prime \, (j')}} \big) (\lambda^{h-1} N_{\infty}^{(j')}) =
-\phz(\lambda)^{h}P_N(A^{(j')})+\sum_{i= 1}^{\infty}\phz^{i+1}(\lambda)^{h}Z_i^{(j')} %
\end{equation}
where (cf.~\cite[Lemma 5.4]{LLLM})
\begin{align*}
P_{N}(A^{(j')})&\defeq\text{\small{$\left(- e'v \frac{d}{dv} A^{(j')} - [\mathrm{Diag}((s'_{\mathrm{or}, j'})^{-1}(\mathbf{a}^{\prime \, (j')})), A^{(j')}] \right)  (v+p)^{h}(A^{(j')})^{-1}\in L^+\cM(R)\subset \Mat_n(R[\![v+p]\!])$}},
\\
Z^{(j')}_i&\defeq \Ad \big((s'_{\mathrm{or}, j'})^{-1} (u')^{-\mathbf{a}^{\prime \, (j')}} \big) \left(\frac{1}{\phz^{i+1}(\lambda)^{h}} X_i^{(j')} \right). 
\end{align*}
 We make the following definition:
\begin{defn} Let $R$, $(\fM,\beta)$ be as above, giving rise to the matrices of partial Frobenii $A^{(j')}$.
We define the ideal $I_{\fM,\beta,\nabla_1} \subset R$ to be the ideal generated by the elements
$(\frac{d}{dv})^t (v^{-\delta_{k>l}}P_N(A^{(j')})_{kl})|_{v=-p}$
for $0\leq t\leq h-2$, $j'\in \cJ'$ and $1\leq k,l\leq n$.
\end{defn} 
\begin{rmk} \label{rmk:derivative_vs_vanishing}
When $p>h-2$, the condition that a series $F=\sum_{m=0}^\infty  a_m(v+p)^m$ belongs to $(v+p)^{h-1}R[\![v+p]\!]$ is equivalent to the condition that $(\frac{d}{dv})^{t}F|_{v=-p}=0$ for $0\leq t\leq h-2$. Thus in this case, we see that the ideal $I_{\fM,\beta,\nabla_1}$ cuts out the locus in $\Spec R$ where
\begin{equation}\label{eqn:algmono}
\left(e'v \frac{d}{dv} A^{(j')} - A^{(j')}\mathrm{Diag}((s'_{\mathrm{or}, j'})^{-1}(\mathbf{a}^{\prime \, (j')})) \right) (A^{(j')})^{-1}\in \frac{1}{(v+p)}L^+\cM(R)
\end{equation}
for all $j'\in \cJ'$. 
Note that the condition (\ref{eqn:algmono}) depends only on the image of $j'$ in $\cJ$. {Furthermore, for each fixed embedding $j'\in\cJ$, because $e'$ is invertible in $\cO$, condition \eqref{eqn:algmono} is the same as condition \eqref{eq:nablaa} with $\bf{a}=-\big((s'_{\mathrm{or}, j'})^{-1}(\mathbf{a}^{\prime \, (j')})\big)\big/e'$ (and hence is a specialization of condition \eqref{eq:universalnabla})}.
\end{rmk}

\begin{prop} \label{prop:monodromy_approximation} Let $\tau$ be a tame inertial type with a lowest alcove presentation $(s,\mu)$. Assume that $\mu$ is $m$-deep in $\un{C}_0$. Let $R$ be a $p$-adically complete topologically finite type $\cO$-flat algebra. Let $\fM\in Y^{[0,h],\tau}(R)$ and $\beta$ an eigenbasis of $\fM$. Then
\[ I_{\fM,\beta,\nabla_1}\subset (I_{\fM,\nabla_\infty},p^{m-2h+3})\]
\end{prop}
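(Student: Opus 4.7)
The proof proceeds by analyzing the identity \eqref{eq:remove_descent}, which relates $\phz(\lambda)^h P_N(A^{(j')})$---whose low-order $(v+p)$-Taylor coefficients (after dividing off-diagonal entries by $v^{\delta_{k>l}}$) generate $I_{\fM,\beta,\nabla_1}$---to $p^h\Ad((s'_{\mathrm{or},j'})^{-1}(u')^{-\mathbf{a}^{\prime\,(j')}})(\lambda^{h-1}N_\infty^{(j')})$, whose relevant low-order $(u'-\pi')$-Taylor coefficients are exactly what $I_{\fM,\nabla_\infty}$ is built from, modulo an error equal to $\sum_{i\geq 1}\Ad(\ldots)(X_i^{(j')}) = p^h\Ad(\ldots)\bigl(\lambda^{h-1}(N_\infty^{(j')}-N_1^{(j')})\bigr)$; the latter identity uses that $-\phz(\lambda)^h P_N(A^{(j')}) = p^h\Ad(\ldots)(\lambda^{h-1}N_1^{(j')})$, which follows by direct computation using $u'\tfrac{d}{du'}C^{(j')} = s'(u')^{\mathbf{a}'}(e'v\tfrac{d}{dv}A^{(j')}+[\mathbf{a}',A^{(j')}])(u')^{-\mathbf{a}'}(s')^{-1}$.

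First, after possibly extending scalars to the faithfully flat extension $R\otimes_\cO\cO[\pi']$, I would reinterpret the generators of $I_{\fM,\beta,\nabla_1}$ in terms of $(u'-\pi')$-Taylor coefficients at $u'=\pi'$ of order $\leq h-2$ of matrix entries of $\phz(\lambda)^h P_N(A^{(j')})$: the change of variable $v+p=E(u')$ has unit Jacobian $e'(\pi')^{e'-1}$ at $u'=\pi'$, inducing an invertible linear transformation on low-order expansions, and $\phz(\lambda)^h$ is a unit at $u'=\pi'$. With this reinterpretation, the $(u'-\pi')$-Taylor coefficients of $p^h\Ad(\ldots)(\lambda^{h-1}N_\infty^{(j')})$ of order $\leq h-2$ are linear combinations of generators of $I_{\fM,\nabla_\infty}$, after absorbing the units $\pi'^{\mathbf{a}^{\prime\,(j')}_{\bullet}-\mathbf{a}^{\prime\,(j')}_{\star}}$ coming from the conjugation.

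The technical heart is to show that the $(u'-\pi')$-Taylor coefficients (of order $\leq h-2$) of the error $\sum_{i\geq 1}\Ad(\ldots)(X_i^{(j')})$ are divisible by $p^{m-2h+3}$. Using the estimate $\lambda^{h-1}(N_{i+1}^{(j')}-N_i^{(j')})\in\tfrac{(u')^{p^i}}{p^{1+(h-1)(i+1)}}\phz^{i+1}(\lambda^h)\Mat_n(R[\![u']\!])$ from the proof of Proposition \ref{prop:mconverge}, together with the observation that $\phz^k(E(u'))|_{u'=\pi'} = (-p)^{p^k}+p$ is divisible by $p$ for all $k\geq 1$ (giving additional $p$-divisibility of the factors $\prod_{k=0}^i\phz^k((v+p)^h(C^{(j'-k)})^{-1})$ and $\phz^i(u'\tfrac{dC^{(j'-i)}}{du'})$ constituting $X_i^{(j')}$), I would bound each Taylor coefficient. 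The $m$-depth hypothesis on $\mu$ controls the $\mathbf{a}^{\prime\,(j')}$-differences, and therefore bounds the negative powers of $p$ introduced by $\pi'^{\mathbf{a}^{\prime\,(j')}_{\bullet}-\mathbf{a}^{\prime\,(j')}_{\star}}$ after evaluating the conjugation near $u'=\pi'$; balancing these against the $p$-divisibility from $\phz^k(E(u'))$ and the explicit denominator $p^{i(h-1)}$ in $X_i^{(j')}$ yields the claimed $p^{m-2h+3}$ bound.

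The main obstacle is the careful bookkeeping of $p$-adic valuations in the infinite sum: individual iterate differences $\lambda^{h-1}(N_{i+1}^{(j')}-N_i^{(j')})$ can have fractional or negative $p$-adic valuations at $u'=\pi'$ (reflecting the $u'$-adic rather than $p$-adic nature of the convergence of the series defining $N_\infty^{(j')}$), so na\"\i ve term-by-term estimates are delicate and must be combined with the cancellations inherent to the full series. The depth hypothesis is precisely what converts this $u'$-adic convergence into a usable $p$-adic bound, with an explicit loss of precision $p^{m-2h+3}$ measuring the gap between the algebraic and analytic monodromy conditions.
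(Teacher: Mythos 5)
Your proposal takes a genuinely different tack from the paper, and the chosen tack has a gap that I do not think can be closed.

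The paper's proof does \emph{not} work by evaluating the error series $\sum_i X_i^{(j')}$ directly at $u'=\pi'$ with the raw estimate from Proposition \ref{prop:mconverge}. Instead, it first applies the ``remove descent data'' conjugation to pass from the $C^{(j')}$-picture to the $A^{(j')}$-picture, obtaining the terms $Z_i^{(j')}$, and then establishes the refined estimate
\[
Z^{(j')}_i\in \frac{1}{p^{i(h-1)}}\,v^{1+m\frac{p^{i}-1}{p-1}}\,\Mat_n(R[\![v+p]\!])\qquad (i\ge 1),
\]
which is the only place the depth hypothesis enters. The mechanism is that the $Z_i$-recursion has the form $Z_i\mapsto \frac{1}{p^{h-1}}A\,\Ad(s^{-1}v^{\mu+\eta})(\phz(Z_{i-1}))\,(v+p)^h A^{-1}$, and the operator $\Ad(s^{-1}v^{\mu+\eta})\circ\phz$ forces $v$-divisibility of order at least $m+1$ per iterate (Lemma \ref{lem:contracting1}); this is what makes the exponent of $v$ grow like $m\frac{p^i-1}{p-1}$. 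Evaluating at $v=-p$ then converts these $v$-powers into $p$-powers that grow linearly in $m$ already at $i=1$, giving the valuation $m-h+2$ for the first error term.

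Your proposal bypasses this and uses the raw bound
$\lambda^{h-1}(N_{i+1}^{(j')}-N_i^{(j')})\in \frac{(u')^{p^i}}{p^{1+(h-1)(i+1)}}\phz^{i+1}(\lambda^h)\Mat_n(R[\![u']\!])$
from the proof of Proposition \ref{prop:mconverge}. This bound does not involve $m$ at all, and it cannot: for $i=1$, evaluating at $u'=\pi'$ gives a $p$-adic valuation of roughly $\frac{p}{e'}-(2h-1)$. Here $\frac{p}{e'}$ is tiny (often $<1$) and, crucially, independent of $m$, so the bound is negative and far below the required $m-3h+3$ for large $m$. Your appeal to ``$\phz^k(E(u'))|_{u'=\pi'}$ divisible by $p$ gives additional $p$-divisibility of $\phz^k\big((v+p)^h (C^{(j'-k)})^{-1}\big)$'' is not correct: that factor is merely integral, since the $p^h$-divisibility of $\phz^k(E(u')^h)|_{u'=\pi'}$ is exactly offset by the pole of order $\le h$ in $\phz^k\big((C^{(j'-k)})^{-1}\big)$ at the zeros of $\phz^k(\det C^{(j'-k)})$. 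And the claim that ``the $m$-depth hypothesis bounds the negative powers of $p$ introduced by $\pi'^{\mathbf{a}'_\bullet-\mathbf{a}'_\star}$'' misidentifies where the depth hypothesis acts: those powers of $\pi'$ have nonnegative $p$-adic valuation and contribute no loss, whereas the genuine input of $m$-depth is the $v$-divisibility enforced by conjugation by $v^{\mu+\eta}$ in the $A$-picture. There is also no cancellation in the series to invoke; the paper's term-by-term estimate is already sharp enough once the conjugation is in place. To close the gap you should abandon the direct $u'=\pi'$ evaluation of the $C$-series and instead redo the induction for $Z_i^{(j')}$ using Lemma \ref{lem:contracting1}, then evaluate $(v\tfrac{d}{dv})^t|_{v=-p}$ as in the paper.
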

{\begin{rmk} This Proposition controls the discrepancy between the ``true'' monodromy condition in Proposition \ref{prop:Mcond} and its truncation \eqref{eqn:algmono}. It is a generalization of \cite[Theorem 5.6]{LLLM}, which asserts that the tail/error term of the true monodromy condition is highly divisible by $p$. 
\end{rmk}}
\begin{proof} We continue to use the notations introduced above.
It follows from that definitions that we have the recursion %
\[Z^{(j')}_i=\frac{1}{p^{h-1}}A^{(j')} \Ad(s^{-1}_{j'}v^{\mu_{j'}+\eta_{j'}})\Big(\phz(Z^{(j'-1)}_{i-1})\Big)(v+p)^h \big(A^{(j')}\big)^{-1}\]
for $i\geq 1$, and
\begin{align*}
Z^{(j')}_0 &= \Ad \Big((s'_{\mathrm{or}, j'})^{-1} (u')^{-\mathbf{a}^{\prime \, (j')}} \Big) \left(u'\frac{d}{du'}C^{(j')}\right)(v+p)^{h}(A^{(j')})^{-1} \\
&=%
\left(\left[\mathrm{Diag}\Big((s'_{\mathrm{or}, j'})^{-1}(\mathbf{a}^{\prime \, (j')})\Big), A^{(j')}\right]+e'v\frac{d}{dv}\big(A^{(j')}\big)\right)(v+p)^{h}(A^{(j')})^{-1}\in \frac{1}{(v+p)}L\cM^+(R).
\end{align*}

An easy induction using the fact that $m+1\leq \langle \mu+\eta, \alpha^\vee \rangle \leq p-m-1$ shows that for $i\geq 1$
\begin{equation} \label{eq:divisibility_1}
Z^{(j')}_i\in \frac{1}{p^{i(h-1)}}v^{1+m\frac{p^{i}-1}{p-1}}\Mat_n(R[\![v+p]\!])
\end{equation}
Now over $\Spec R/I_{\fM,\nabla_\infty}[\frac{1}{p}]$, for all $0\leq t\leq h-2$, we have by definition
\[\left(\frac{d}{du'}\right)^{\!\!\!t}\Big|_{E(u')=0} (\lambda^{h-1} N_{\infty}^{(j')})= 0\]
hence also 
\[\left(u'\frac{d}{du'}\right)^{\!\!\!t}\Big|_{E(u')=0} (\lambda^{h-1} N_{\infty}^{(j')})= 0\] and since $u'$ is invertible in $R[u']/(I_{\fM,\nabla_\infty},E(u'))[\frac{1}{p}]$,
\[\left(u'\frac{d}{du'}\right)^{\!\!\!t}\Big|_{E(u')=0} \left(p^{h} \Ad \Big((s'_{\mathrm{or}, j'})^{-1} (u')^{-\mathbf{a}^{\prime \, (j')}} \Big) (\lambda^{h-1} N_{\infty}^{(j')})\right)=0. \]
Equation (\ref{eq:remove_descent}) thus shows 
\[\left(u'\frac{d}{du'}\right)^{\!\!\!t}\Big|_{E(u')=0} \left(-\phz(\lambda)^{h}P_N(A^{(j')})+\sum_{i= 1}^{\infty}\phz^{i+1}(\lambda)^{h}Z_i^{(j')}\right)=0\]
Since the expression inside the derivative belongs to $R[\frac{1}{p}][\![v]\!]$ and $u'\frac{d}{du'}=e'v\frac{d}{dv}$ we get
\begin{equation}\label{eq:divisible_2}
\left(v\frac{d}{dv}\right)^{\!\!\!t}\Big|_{v=-p} \left(-\phz(\lambda)^{h}P_N(A^{(j')})+\sum_{i= 1}^{\infty}\phz^{i+1}(\lambda)^{h}Z_i^{(j')}\right)=0
\end{equation}
Now observe that 
\begin{itemize} 
\item $\big(v\frac{d}{dv}\big)^{\!t}|_{v=-p} \phz^k(\lambda)\in p^{p-1}\bZ_p$ for any $t,k \geq 1$.
\item If $F\in v^M\Mat_n(R[\![v+p]\!])$ then $\big(v\frac{d}{dv}\big)^{\!t}|_{v=-p}\in p^MR$ for any $t\geq 0$.
\end{itemize}
Hence (\ref{eq:divisibility_1}) and (\ref{eq:divisible_2}) imply the equation
\begin{equation} \label{eq:divisibility_3}
\left(v\frac{d}{dv}\right)^{\!\!\!t}\Big|_{v=-p}P_N(A^{(j')})+ O(p^{m+1-(h-1)})=0
\end{equation}
in $R/I_{\fM,\nabla_\infty}[\frac{1}{p}]$, where the symbol $O(p^{M})$ stands for an element in $p^{M}R$.
Since the differential operator $\big(v\frac{d}{dv}\big)^{\!t}-v^t\big(\frac{d}{dv}\big)^{\!t}$ is a $\bZ$-linear combination of differential operators $v^a\big(\frac{d}{dv}\big)^{\!b}$ with $a, b <t$, (\ref{eq:divisibility_3}) implies by induction
\[\left(\frac{d}{dv}\right)^{\!\!\!t}\Big|_{v=-p}P_N(A^{(j')})+ O(p^{m+1-(h-1)-t})=0\]
in $R/I_{\fM,\nabla_\infty}[\frac{1}{p}]$ for all $0\leq t \leq h-2$. Now using the equation $\big(\frac{d}{dv}\big)^{\!t}v=t\big(\frac{d}{dv}\big)^{\!t-1}+\big(\frac{d}{dv}\big)^{\!t}$, we conclude that
\[\left(\frac{d}{dv}\right)^{\!\!\!t}|_{v=-p} v^{-\delta_{k>l}} P_N(A^{(j')})_{kl}+ O(p^{m-(h-1)-t})=0\]
$R/I_{\fM,\nabla_\infty}[\frac{1}{p}]$ for $0\leq t\leq h-2$, $1\leq k,l \leq n$.  Since the left-hand side of the above equation belong to $R$ and $R/I_{\fM,\nabla_\infty}$ is a subring of $R/I_{\fM,\nabla_\infty}[\frac{1}{p}]$, the above equation implies
\[\left(\frac{d}{dv}\right)^{\!\!\!t}|_{v=-p}v^{-\delta_{k>l}} P_N(A^{(j')})_{kl}\in (I_{\fM,\nabla_\infty},p^{m-2h+3})\]

\end{proof}

\subsection{Tame potentially crystalline stacks}
\label{subsec:tame:EGstacks}

In \cite{EGstack}, Emerton and Gee considered the formal stack $\cX_n$ over $\Spf\cO$ 
parametrizing (projective) \'{e}tale $(\phz,\Gamma)$-modules (see \cite[Definition 3.2.1]{EGstack} for the definition) and showed that $\cX_n$ is a Noetherian formal algebraic stack. For any complete local Noetherian $\cO$-algebra $R$ with finite residue field, the groupoid $\cX_n(R)$ is equivalent to the groupoid of $R$-families of $G_K$-representations, i.e. rank $n$ projective $R$-modules equipped with a continuous $G_K$-action.
We will write $\cX^K_n$ for $\cX_n$ if we want to emphasize the dependence on the field $K$.
Similarly, if $\cO_p$ is a finite \'etale $\Z_p$-algebra and $F^+_p \defeq \cO_p \otimes_{\Z_p} \Q_p$ which can be written in the form $\prod_{v\in S_p} F^+_v$, then we write $\cX_n^{F^+_p}$ for the product
\[
\prod_{v\in S_p,\Spf \cO} \cX_n^{F^+_v}.
\]
In this section, we will consider the case $\cO_p = \cO_K$, but the evident generalizations follow by taking products.

Now let $\tau$ be a tame inertial type (for $K$) and $\lambda\in X_*(T^\vee)^\cJ$ dominant. Then \cite[Theorem 4.8.12]{EGstack} shows there is a unique closed formal substack $\cX^{\lambda,\tau}$ of $\cX_n$, which is characterized by the following properties:
\begin{itemize}
\item $\cX^{\lambda,\tau}$ is $\cO$-flat.
\item For any finite flat $\cO$-algebra $\Lambda$, the groupoid $\cX^{\lambda,\tau}(\Lambda)$ is the subgroupoid of $\cX_n(\Lambda)$ consisting of $G_K$-representations on  rank $n$ projective $\Lambda$-modules which (after inverting $p$) are potentially crystalline with Hodge-Tate weight $\lambda$ and inertial type $\tau$.
\end{itemize}
Furthermore, $\cX^{\lambda,\tau}$ is a $p$-adic formal algebraic stack topologically of finite type over $\Spf \cO$.
For any $h\geq 0$, we also have the closed substack $\cX^{[0,h],\tau}\into \cX_n$, characterized by the same properties except in the second item, where we demand the Hodge-Tate weights to belong to $[0,h]$. Then $\cX^{[0,h],\tau}$ is the scheme theoretic union of $\cX^{\lambda,\tau}$ for $\lambda=(\lambda_j)_{j\in\cJ}$ satisfying $\lambda_j\in [0,h]^n$.
Finally, we set $\cX^{\leq \lambda,\tau}\subset \cX^{[0,h],\tau}$ to be the scheme theoretic union of $\cX^{\lambda',\tau}$ for $\lambda'$ dominant and $\lambda'\leq \lambda$.%

Recall from \cite[Definition 8.22]{Emerton_formal} that a $p$-adic formal algebraic stack $\cZ$ topologically of finite type over $\Spf \cO$ (which implies residual Jacobson) is \emph{analytically unramified} if for any smooth chart $\Spf A \to  \cZ$, $A$ is reduced. This is is also equivalent to $\cZ$ having reduced versal rings at all finite type points. Given $\cZ$, \cite[Example 9.10]{Emerton_formal} shows that it admits an associated reduced formal algebraic substack $\cZ'\into \cZ$. It is characterized as the maximal analytically unramified closed substack of $\cZ$. For any smooth chart $\Spf A \to \cZ$, the pullback of $\cZ'$ is $\Spf A^{\red}$, where $A^{\red}$ is the maximal reduced quotient of $A$.
\begin{warning}\begin{enumerate}
\item In \cite{EGstack}, the convention for Hodge-Tate weights is such that the cyclotomic character has weight $-1$. This is \emph{opposite} of our convention, where the cyclotomic character has weight $1$. As a result, a point in $\cX^{\lambda,\tau}(\overline{\bQ}_p)$ gives rise to a $p$-adic Galois representation $\rho$ such that the \emph{covariant} admissible module $\Dpst(\rho)$ is isomorphic to $\tau[\frac{1}{p}]$ as an $I_K$-representation (and $N=0$), and the Hodge filtration has jumps described by $-w_0\lambda$. In other words, our $\cX^{\lambda,\tau}$ would be $\cX^{-w_0(\lambda), \tau}$ in the notation of \cite{EGstack}. 
\item We warn the reader that the notion of associated reduced formal algebraic substack is different from the notion of underlying reduced algebraic stack: For $\cZ=\Spf A$, the former notion gives the formal scheme $\Spf A^{\red}$, while the latter gives the scheme $\Spec (A/I)^{\red}$, for $I$ an ideal of definition for the topology on $A$. In particular, the former notion is usually larger than the latter.
\end{enumerate}
\end{warning}

We now record some basic properties of these stacks established in \cite{EGstack}.
\begin{thm} \label{thm:EG_basic_properties} Let $?\in \{[0,h], \leql, \lambda\}$.
 \begin{enumerate}
\item 
\label{it:EG_basic_properties:1}
The stack $\cX^{?,\tau}$ is a $p$-adic formal algebraic stack, flat and topologically of finite type over $\Spf \cO$. Furthermore, $\cX^{?,\tau}$ is analytically unramified.
\item 
\label{it:EG_basic_properties:2}
For any smooth map $\Spf R \to \cX^{?,\tau}$ from a topologically finite type affine $p$-adic formal algebraic space, the ring $R[\frac{1}{p}]$ is regular.
\item 
\label{it:EG_basic_properties:3}
Let $\rhobar \in \cX^{?,\tau}(\bF)$ corresponding to a mod $p$ representation of $G_K$. Then the potentially crystalline deformation ring $R_\rhobar^{?,\tau}$ is a versal ring to $\cX^{?,\tau}$ at $\rhobar$.
\item 
\label{it:EG_basic_properties:4}
The stack $\cX^{\lambda,\tau}$ is  equidimensional of dimension 
\[1+\sum_{j\in \cJ} \dim_\bZ P_{\lambda_j} \backslash \GL_n. \]
\end{enumerate}
\end{thm}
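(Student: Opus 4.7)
The plan is to deduce each item essentially from results established in \cite{EGstack}, together with minor bookkeeping for the case of scheme-theoretic unions. First I would address the case $? = \lambda$: items (\ref{it:EG_basic_properties:1}) and (\ref{it:EG_basic_properties:3}) are consequences of the construction of $\cX^{\lambda,\tau}$ in \cite[Theorem 4.8.12]{EGstack}, which produces an $\cO$-flat, $p$-adic formal algebraic substack of $\cX_n$, topologically of finite type, whose versal ring at a finite-type point $\rhobar$ is the potentially crystalline lifting ring $R_\rhobar^{\lambda,\tau}$ (see \S \ref{sec:localdeforms}). Analytic unramifiedness in this case reduces to the reducedness of the rings $R_\rhobar^{\lambda,\tau}$, which is part of their definition (cf.~the discussion following \cite[Proposition 4.8.10]{EGstack}). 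Item (\ref{it:EG_basic_properties:2}) in the case $? = \lambda$ follows because the generic fiber $\cX^{\lambda,\tau}_{E}$ is smooth over $E$: this smoothness is a standard consequence of the fact that characteristic-zero potentially crystalline deformation rings of fixed Hodge--Tate weights and inertial type are formally smooth after inverting $p$ (cf.~\cite{KisinPSS} and \cite[\S 4]{EGstack}), so any smooth chart $\Spf R \to \cX^{\lambda,\tau}$ yields a regular $R[1/p]$.

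Next I would pass to $? \in \{\leql, [0,h]\}$. Since $\lambda' \le \lambda$ with $\lambda'$ dominant has only finitely many solutions, and similarly for $\lambda' \in [0,h]^{n,\cJ}$ dominant, both $\cX^{\leql,\tau}$ and $\cX^{[0,h],\tau}$ are scheme-theoretic unions of finitely many closed substacks $\cX^{\lambda',\tau}$ of $\cX_n$. Each such component is $\cO$-flat, topologically of finite type, and analytically unramified by the previous step; taking scheme-theoretic unions preserves these properties (being $\cO$-flat and reduced is preserved under scheme-theoretic union of closed substacks since the analogous statements are true for ideals in reduced $\cO$-flat rings). This gives item (\ref{it:EG_basic_properties:1}) in general, and item (\ref{it:EG_basic_properties:3}) then follows because the versal ring of the union pulls back from $\cX_n$ to the quotient of $R_\rhobar^\square$ cutting out the required $p$-adic Hodge-theoretic condition, which is $R_\rhobar^{?,\tau}$ by definition. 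Item (\ref{it:EG_basic_properties:2}) propagates to the union: over $E$, the characteristic-zero points factor through a single component $\cX^{\lambda',\tau}_E$, so a smooth chart for the union becomes smooth over a disjoint union of smooth stacks after inverting $p$, yielding regularity of $R[1/p]$.

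For item (\ref{it:EG_basic_properties:4}), the generic fiber $\cX^{\lambda,\tau}_E$ is smooth of relative dimension $\sum_{j\in\cJ}\dim_E(P_{\lambda_j}\backslash\GL_n)_E$ by the local-model description recorded in Proposition \ref{prop:genfiber} (which identifies the smooth generic fiber of the local model with $(P_\lambda\backslash \GL_n)_E$). Combined with the fact that $\cX^{\lambda,\tau}$ is $\cO$-flat and topologically of finite type over $\Spf\cO$, this gives the dimension formula; the extra $+1$ accounts for the one-dimensional base $\Spf\cO$. This is the content of \cite[Theorem 4.8.14]{EGstack}, so in practice I would simply cite it.

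The only mildly delicate point—and the place where I would be most careful—is the extension of analytic unramifiedness and generic regularity from the individual $\cX^{\lambda,\tau}$ to the unions in $\cX^{\leql,\tau}$ and $\cX^{[0,h],\tau}$. The key observation is that distinct components $\cX^{\lambda',\tau}$ have disjoint generic fibers (they are distinguished by the Hodge--Tate weights of characteristic-zero points), so on the regular locus the union is actually a disjoint union of regular pieces; and reducedness of versal rings passes through scheme-theoretic unions since the union of reduced closed subschemes of a Noetherian scheme is reduced. Everything else is bookkeeping from \cite{EGstack}.
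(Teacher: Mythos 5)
Your proposal is correct and follows the paper's approach, which is a terse sequence of citations: part (1) first half from \cite[Theorem 4.8.12]{EGstack}; part (3) from \cite[Proposition 4.8.10]{EGstack}; part (4) from \cite[Theorem 4.8.14]{EGstack}; and part (2) together with the second half of part (1) from part (3) combined with \cite[Theorem 3.3.8]{KisinPSS} (which gives reducedness, $\cO$-flatness, and regularity of $R_{\rhobar}^{\lambda,\tau}[1/p]$). What you do differently is to reduce first to the case $? = \lambda$ and then propagate to the scheme-theoretic unions $\cX^{\leql,\tau}$ and $\cX^{[0,h],\tau}$ by hand; the paper leaves this bookkeeping implicit, presumably because the Emerton--Gee results it cites already cover the relevant stacks. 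Your argument for the propagation is sound: radical ideals intersect to radical ideals (preserving reducedness of versal rings, hence analytic unramifiedness), and the $E$-fibers of distinct $\cX^{\lambda',\tau}$ are disjoint because characteristic-zero points have well-defined Hodge--Tate weights, so $R[1/p]$ for a smooth chart of the union decomposes as a product of regular rings.

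One caveat worth flagging in your sketch for item (4): you suggest deducing the dimension formula from the local-model description in Proposition \ref{prop:genfiber}, but at this point of the paper there is no established link between $\cX^{\lambda,\tau}$ and the local model $M(\lambda,\nabla_{\bf{a}})$. That identification is Theorem \ref{thm:stack_local_model}, which appears later and in fact relies on the present theorem (in particular item (\ref{it:EG_basic_properties:4})) as an input. So the route you sketch would be circular here. You already note that in practice you would simply cite \cite[Theorem 4.8.14]{EGstack}, which is indeed the right move and is what the paper does; the alternative local-model route should be dropped.
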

\begin{proof}
{The first half of part (\ref{it:EG_basic_properties:1}) follows from \cite[Theorem 4.8.12]{EG}. Part (\ref{it:EG_basic_properties:3}) follows from \cite[Proposition 4.8.10]{EG}. Part (\ref{it:EG_basic_properties:4}) follows from \cite[Theorem 4.8.14]{EG}. Finally, part (\ref{it:EG_basic_properties:2}) and the second half of part (\ref{it:EG_basic_properties:1}) follows from part (\ref{it:EG_basic_properties:3}) and \cite[Theorem 3.3.8]{KisinPSS}.}
\end{proof}

By \cite[Proposition 3.7.2]{EGstack}, there is a canonical map $\cX_{n}\to \Phi\text{-}\Mod_K^{\text{\'et},n}$, which when evaluated on complete local Noetherian $\cO$-algebras $A$ corresponds to restricting $G_K$-representations to $G_{K_\infty}$-representations.

Let
\[\cK^{[0,h],\tau}=\cX^{[0,h],\tau}\times_{\Phi\text{-}\Mod_K^{\text{\'et},n}} Y^{[0,h],\tau}\]
be the pullback of $\eps_\tau:Y^{[0,h],\tau}\to \Phi\text{-}\Mod_K^{\text{\'et},n}$ along $\cX^{\lambda,\tau}\to \Phi\text{-}\Mod_K^{\text{\'et},n}.$
Similarly, for $\lambda\in X_*(T^\vee)^\cJ$ such that $\lambda_j\in[0,h]^n$, we have the pullbacks $\cK^{\lambda,\tau}$, $\cK^{\leq\lambda,\tau}$. 

Finally, we let $Y^{[0,h],\tau,\nabla_\infty}\into Y^{[0,h],\tau}$ be the unique $\cO$-flat closed substack characterized by the following property: For any $p$-adically complete topologically finite type flat $\cO$-algebra $R$ and a map $f:\Spf R \to Y^{[0,h],\tau}$ corresponding to $\fM\in Y^{[0,h],\tau}(R)$, $f$ factors through $Y^{[0,h],\tau,\nabla_\infty}$ if and only if the ideal $I_{\fM,\nabla_\infty}\subset R$ constructed in Proposition \ref{prop:functorial_mon_condition} is $0$. The existence of such a substack follows from the general construction of \cite[\S 9]{Emerton_formal}, using the compatibility {of} $I_{\fM,\nabla_\infty}$ with smooth base change established in Proposition \ref{prop:functorial_mon_condition}.
Similarly, we define the $\cO$-flat closed substack $Y^{\leql,\tau,\nabla_\infty}\into Y^{\leql,\tau}$ by imposing the same kind of condition.

The following result is the main result of this section, which summarizes the relationship between the tame potentially crystalline stacks and the moduli stack of Breuil--Kisin modules in generic situations:
\begin{prop} \label{prop:stack_diagram} Let $h\geq 1$, and $\tau$ be an $(h+2)$-generic tame inertial type. Let $\lambda=(\lambda_j)_{j\in\cJ}\in X_*(T^\vee)^\cJ$ be dominant such that $\lambda_j\in [0,h]^n$, and let $\lambda'$ be dominant such that $\lambda'\leq \lambda$.

We then have the following diagram
\begin{equation} \label{eq:stack_diagram}
\xymatrix{\cK^{\lambda',\tau} \ar@{^{(}->}[r] \ar^{\cong}[dd] & \cK^{\leq \lambda,\tau} \ar[rr]^{\cong} \ar^{\cong}[dd] \ar@{^{(}->}[dr] && Y^{\leql,\tau,\nabla_\infty}  \ar@{^{(}->}[r] \ar@{^{(}->}[d] &Y^{\leql,\tau} \ar@{^{(}->}[d] \\
&&\cK^{[0,h],\tau} \ar[r]^{\cong}  \ar^{\cong}[d] &Y^{[0,h],\tau,\nabla_\infty}  \ar@{^{(}->}[r]& Y^{[0,h],\tau} \ar@{^{(}->}^{\eps_\tau}[d] \\
\cX^{\lambda',\tau} \ar@{^{(}->}[r] &\cX^{\leq \lambda,\tau} \ar@{^{(}->}[r] &\cX^{[0,h],\tau} \ar@{^{(}->}[rr]  \ar@{^{(}->}[dr]& & \Phi\text{-}\Mod_K^{\text{\emph{\'et}},n}\\
&&&\cX_n\ar[ur] & }
\end{equation}
such that:
\begin{itemize}
\item All rectangles and trapezoids {except possibly for the top right rectangle} are Cartesian.
\item The arrows decorated with the symbol $\cong$ are isomorphisms.
\item All the hooked arrows are monomorphisms, and except for the {rightmost} bottom horizontal arrow, are even closed immersions.
\end{itemize}
{In particular, $\cX^{[0,h],\tau}\cong Y^{[0,h],\tau,\nabla_\infty}$, and if $\lambda=(\lambda_j)_{j\in\cJ}\in X_*(T^\vee)^\cJ$ is dominant such that $\lambda_j\in [0,h]^n$ for all $j\in\cJ$ then $\cX^{\leql,\tau}\cong Y^{\leql,\tau,\nabla_\infty}$.}
\end{prop}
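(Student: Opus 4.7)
The plan is to assemble the diagram by first realising each $\cK^{?,\tau}$ as a closed substack of $Y^{?,\tau}$, then constructing a comparison map from $\cX^{?,\tau}$ via Breuil--Kisin theory, and finally identifying the resulting substack with $Y^{?,\tau,\nabla_\infty}$ using Kisin's monodromy criterion. Since $\tau$ is $(h+2)$-generic, it is in particular $(h+1)$-generic, so by Proposition \ref{prop:BK_to_phi_mono} the functor $\eps_\tau\colon Y^{[0,h],\tau}\to \Phi\text{-}\Mod^{\text{\'et},n}_K$ is a closed immersion. The pullbacks $\cK^{?,\tau}$ thus embed as closed substacks of $Y^{?,\tau}$, and the identifications $\cK^{\lambda',\tau}=\cK^{[0,h],\tau}\cap Y^{\lambda',\tau}$ and $\cK^{\leql,\tau}=\cK^{[0,h],\tau}\cap Y^{\leql,\tau}$ (taken inside $Y^{[0,h],\tau}$) follow directly from the definitions, accounting for the Cartesianness of the left and top-left rectangles.

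Next, I would construct a morphism $\cX^{?,\tau}\to Y^{?,\tau}$ functorially attaching Breuil--Kisin modules to potentially crystalline representations. For a finite flat $\cO$-algebra $\Lambda$, the classical Kisin correspondence sends a potentially crystalline representation of Hodge type $\le \lambda$ and inertial type $\tau$ over $\Lambda[1/p]$ to the unique Breuil--Kisin lattice inside the attached étale $\phi$-module; the $\cO'$-point characterisation of $Y^{\leql,\tau}$ in \cite{CL} ensures the resulting object lies in $Y^{\leql,\tau}(\Lambda)$ (and in $Y^{\lambda',\tau}$ if the Hodge type is exactly $\leq \lambda'$). Since $\cX^{?,\tau}$ is analytically unramified and topologically of finite type over $\Spf\cO$ (Theorem \ref{thm:EG_basic_properties}), this construction extends to a morphism of $p$-adic formal algebraic stacks that is compatible with $\eps_\tau$ and the projection $\cX_n\to \Phi\text{-}\Mod^{\text{\'et},n}_K$. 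The universal property of the fibre product then produces maps $\cX^{?,\tau}\to \cK^{?,\tau}$, which are closed immersions (the source is $\cO$-flat and the target is a closed substack of $Y^{[0,h],\tau}$), hence isomorphisms once equality of $\Lambda$-points is established for finite flat $\cO$-algebras $\Lambda$.

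The heart of the argument is then the identification $\cK^{[0,h],\tau}\cong Y^{[0,h],\tau,\nabla_\infty}$ (and its $\leql$ variant). For a finite flat $\cO$-algebra $\Lambda$ and $\fM\in Y^{[0,h],\tau}(\Lambda)$, Theorem \ref{thmKisin} together with Proposition \ref{prop:Mcond} says that $T^*_{dd}(\fM)[1/p]$ extends to a potentially crystalline representation of $G_K$ of type $(\lambda,\tau)$ exactly when the universal monodromy ideal $I_{\fM,\nabla_\infty}$ vanishes, i.e.\ when $\fM\in Y^{[0,h],\tau,\nabla_\infty}(\Lambda)$. This shows that $\cK^{[0,h],\tau}$ and $Y^{[0,h],\tau,\nabla_\infty}$ agree on $\Lambda$-valued points for every finite flat $\cO$-algebra $\Lambda$, and both are $\cO$-flat closed substacks of $Y^{[0,h],\tau}$; the same argument applied over $Y^{\leql,\tau}$ handles the $\leql$ case.

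The main obstacle is the final upgrade from equality on finite flat $\cO$-points to equality of formal algebraic substacks. The plan is to invoke the analytic unramifiedness of $\cX^{[0,h],\tau}$ and the fact that $Y^{[0,h],\tau,\nabla_\infty}$ is by construction $\cO$-flat to reduce to an equality of generic fibres as rigid analytic substacks of $(Y^{[0,h],\tau})^{\rig}$. Density of the $\ovl{\Q}_p$-points (already controlled by the finite flat $\cO$-point comparison) and the fact that $Y^{[0,h],\tau}$ has reduced generic fibre (a consequence of Proposition \ref{prop:BK_to_phi_mono} combined with the reducedness of the generic fibre of $\cX_n$) then force the two closed $\cO$-flat substacks to coincide. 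Equidimensionality of $\cX^{\lambda',\tau}$ and $\cX^{[0,h],\tau}$ from Theorem \ref{thm:EG_basic_properties} rules out any excess components. The remaining Cartesianness assertions follow formally from the resulting identifications.
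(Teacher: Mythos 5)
Your overall architecture matches the paper's, but there are two genuine gaps, one of them serious.

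The serious gap is in your claim that ``$T^*_{dd}(\fM)[1/p]$ extends to a potentially crystalline representation of $G_K$ exactly when $\fM\in Y^{[0,h],\tau,\nabla_\infty}(\Lambda)$'' implies ``$\cK^{[0,h],\tau}$ and $Y^{[0,h],\tau,\nabla_\infty}$ agree on $\Lambda$-valued points.'' Theorem \ref{thmKisin} only tells you that $V[1/p]$, with $V=T^*_{dd}(\fM)$, admits a $G_K$-action extending the $G_{K_\infty}$-action. A $\Lambda$-point of $\cK^{[0,h],\tau}$ requires the $\Lambda$-lattice $V$ itself to be $G_K$-stable, and that is not automatic: a $G_{K_\infty}$-stable lattice inside a $G_K$-representation need not be $G_K$-stable. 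The paper addresses precisely this with Lemma \ref{lem:G_K_lattices}, whose proof depends on the ``cyclotomic free'' mechanism (Lemma \ref{lem:cyclo_free}) and is the place where the $(h+2)$-genericity of $\tau$ is really used (via the $2$-genericity of the residual semisimplification, through Corollary \ref{lem:ss_bound}). Without this step your argument produces a point of $\Phi\text{-}\Mod_K^{\text{\'et},n}$ but not one of $\cX^{[0,h],\tau}$, and you never close the circle.

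A secondary gap: you implicitly treat $\cK^{[0,h],\tau}$ as a closed substack of $Y^{[0,h],\tau}$, but for the fibre product $\cK^{[0,h],\tau}\to Y^{[0,h],\tau}$ to be a monomorphism you need $\cX^{[0,h],\tau}\to \Phi\text{-}\Mod_K^{\text{\'et},n}$ to be one; this is Proposition \ref{prop:restriction_mono}, which again relies on the cyclotomic-free argument plus an Artinian-to-general reduction (Lemma \ref{lem:Artinian_point_iso}), and you do not prove it. Related to this, your proposed finishing move invokes ``the reduced generic fibre of $Y^{[0,h],\tau}$, a consequence of Proposition \ref{prop:BK_to_phi_mono} combined with the reducedness of the generic fibre of $\cX_n$,'' but Proposition \ref{prop:BK_to_phi_mono} embeds $Y^{[0,h],\tau}$ into $\Phi\text{-}\Mod_K^{\text{\'et},n}$, not into $\cX_n$, so this justification does not go through. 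The paper instead factors the comparison through the associated reduced formal algebraic substack of $Y^{[0,h],\tau,\nabla_\infty}$ and then applies Lemma \ref{lem:iso_criteria} (cases 1 and 3 respectively) on the two legs — which avoids ever having to know anything about the reducedness of $Y^{[0,h],\tau}$ itself. If you adopt that two-step device and supply Lemma \ref{lem:G_K_lattices} and Proposition \ref{prop:restriction_mono}, your outline will assemble into a complete proof essentially identical to the paper's.
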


\begin{rmk} {It's not clear to us if the top right rectangle in diagram \eqref{eq:stack_diagram} is Cartesian: it is Cartesian after inverting $p$, but taking Zariski closure does not commute with base change in general.}
\end{rmk}
The proof of Proposition \ref{prop:stack_diagram} will occupy the rest of this section.
To prepare for the proof, we record some Lemmas which give criteria for maps of schemes or stacks to be isomorphisms using information on special kinds of points.

\begin{lemma} \label{lem:Artinian_point_iso} Let $a\geq 1$ and let $f: Y\to Z$ be a map between finite type $\cO/\varpi^a$-schemes. Assume that for any local Artinian ring $A$ with finite residue field, $f$ induces a bijection $Y(A)\cong Z(A)$. Then $f$ is an isomorphism. 
\end{lemma}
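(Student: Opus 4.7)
The plan is to reduce the claim to checking $f$ on closed points and on completed local rings at closed points, and then to use the Jacobson property of finite-type schemes over $\cO/\varpi^a$ together with the standard interplay between étaleness, open immersions, and isomorphisms.

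First, I would verify that $f$ induces a bijection between the sets of closed points of $Y$ and $Z$, together with residue field identifications. Since $Y$ and $Z$ are of finite type over the Artinian local ring $\cO/\varpi^a$, they are Jacobson and their closed points have finite residue fields (finite extensions of $\F$). For any closed point $z\in Z$ with residue field $k\defeq k(z)$, the inclusion defines a $k$-point $\Spec k \to Z$; the hypothesis applied to $A=k$ produces a unique $k$-point of $Y$ lifting it, hence a unique closed point $y\in Y$ with $f(y)=z$ and $k(y)=k(z)$. Reversing the roles shows the resulting map between closed points is bijective.

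Next I would show that for each such pair $(y,z)$, the induced local homomorphism $\widehat{f}\colon\widehat{\cO}_{Z,z}\to\widehat{\cO}_{Y,y}$ is an isomorphism. Let $A$ be any local Artinian ring with finite residue field, together with a fixed embedding of $k(y)=k(z)$ into $A/\fm_A$. Since any $A$-point of $Y$ (resp.~$Z$) with this residue data factors uniquely through a closed point, the bijection $Y(A)\cong Z(A)$ restricts to a bijection between local ring maps $\widehat{\cO}_{Y,y}\to A$ and local ring maps $\widehat{\cO}_{Z,z}\to A$ (with the fixed residue field embedding) and this bijection is induced by composition with $\widehat{f}$. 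As $A$ ranges over all Artinian quotients of $\widehat{\cO}_{Y,y}$ and $\widehat{\cO}_{Z,z}$ (which form cofinal systems), $\widehat{f}$ induces an isomorphism between the functors pro-represented by these two complete local rings on this category; Yoneda for pro-representable functors then forces $\widehat{f}$ itself to be an isomorphism.

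Finally, I would conclude that $f$ is an isomorphism. The isomorphism on completed local rings, combined with the fact that completion is faithfully flat, shows that $f$ is flat and unramified at each closed point $y$, hence étale there; as étaleness is an open condition and $Y$ is Jacobson (so an open subset containing all closed points is everything), $f$ is étale. Moreover, the isomorphism on completions also gives a scheme-theoretic fiber $f^{-1}(z)=\Spec k(y)$ at each closed point, so by the standard criterion (\'etale plus trivial fiber at a point implies open immersion in a neighborhood), $f$ is an open immersion near each closed point of $Y$; again by the Jacobson property, $f$ is an open immersion globally. Since $f$ is surjective on closed points and $Z$ is Jacobson, $f$ is surjective, and a surjective open immersion is an isomorphism. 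The main delicacy in this plan lies in the second step, where one must handle the partition of $A$-points by residue field embedding carefully in order to extract the right bijection on local homomorphisms; the remaining steps are routine applications of standard scheme-theoretic facts.
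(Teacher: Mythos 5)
Your proposal is correct, and it reaches the result by a genuinely different technical route than the paper, although the broad skeleton (Jacobson property, étaleness, open immersion, surjectivity on closed points) is shared. The paper invokes the infinitesimal lifting criterion (Stacks Project Tag 02HY) to deduce smoothness directly from the surjectivity half of the bijection on Artinian points, then combines with quasi-finiteness (asserted without elaboration) to get étale; it then uses the injectivity half of the bijection to show the diagonal $\Delta_f$ is surjective on closed points and hence an isomorphism, concluding $f$ is an étale monomorphism and therefore an open immersion (Tag 025F). You instead go through the completed local rings at closed points: you extract from the bijection of $A$-points with fixed residue embedding an isomorphism of pro-representable functors, hence an isomorphism $\widehat{\cO}_{Z,z}\to\widehat{\cO}_{Y,y}$, and then deduce étaleness via flatness and unramifiedness, and the open-immersion property from the trivial fiber. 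Your approach is more hands-on and closer to the deformation-theoretic (Schlessinger-style) point of view; it sidesteps the quasi-finiteness and diagonal arguments and makes the infinitesimal content explicit, at the cost of having to set up the Yoneda argument for pro-representable functors and to track residue-field embeddings carefully. The paper's approach is shorter in print but leans more heavily on black-box citations. Both are valid.
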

\begin{proof} We note that for any finite type $\cO/\varpi^a$-scheme, the set of closed points is dense, and the residue field at the closed points are finite fields.

By \cite[\href{https://stacks.math.columbia.edu/tag/02HY}{Tag 02HY}]{stacks-project}, $f$ is a smooth map. Since $f$ is also quasi-finite, $f$ is \'{e}tale. Thus the diagonal $\Delta_f: Y\to Y\times _Z Y$ is an open immersion. Since $\Delta_f$ is surjective on closed points, it is an isomorphism, hence $f$ is a monomorphism. Thus $f$ is an \'{e}tale monomorphism, hence is an open immersion by \cite[\href{https://stacks.math.columbia.edu/tag/025F}{Tag 025F}]{stacks-project}. Finally $f$ is also surjective on closed points, hence $f$ is an isomorphism.
\end{proof}

\begin{lemma}\label{lem:iso_criteria} Let $f: \cY \to \cZ$ be a monomorphism of $p$-adic formal algebraic stacks topologically of finite type over $\Spf \cO$. Assume that $\cZ$ is flat over $\Spf \cO$.
 Assume that either of the following holds:
\begin{enumerate}
\item $\cZ$ is analytically unramified, and for any finite flat $\cO$-algebra $\Lambda$, $f:\cY(\Lambda)\to \cZ(\Lambda)$ is essentially surjective.
\item $\cZ$ is analytically unramified, $f$ is a closed immersion, and for any finite extension $E'$ of $E$ with ring of integers $\cO'$, $f:\cY(\cO')\to \cZ(\cO')$ is an essentially surjective.
\item $f$ is a closed immersion, and for any finite flat $\cO$-algebra $\Lambda$, $f:\cY(\Lambda)\to \cZ(\Lambda)$ is essentially surjective.
\end{enumerate}
Then $f$ is an isomorphism.
\end{lemma}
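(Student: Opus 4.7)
My plan is to reduce each case to a concrete ideal-theoretic statement by passing to a smooth chart of $\cZ$, then invoke structural facts about topologically finite type $\cO$-flat algebras. Using $\cO$-flatness and topological finite type of $\cZ$, choose a smooth chart $u \colon U = \Spf B \to \cZ$ with $B$ topologically finite type and $\cO$-flat; since $f$ is a monomorphism, the pullback $V := U \times_\cZ \cY \to U$ is a monomorphism of $p$-adic formal algebraic spaces, and is a closed immersion in cases (2) and (3). In the latter cases $V = \Spf(B/I)$ for an ideal $I \subset B$, and it suffices to show $I = 0$. By smoothness of $u$, any map $\Spf\Lambda \to \cZ$ lifts (smooth-locally on $\Lambda$, after passing to a smooth and still finite-flat extension $\Lambda \to \Lambda'$) to a map $\Spf\Lambda' \to U$; the essential-surjectivity hypothesis combined with faithfully flat descent of the closed condition "lies in $V$" then translates to: every $\cO$-algebra map $B \to \Lambda$ from the relevant class factors through $B/I$.

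For case (2), reducedness of $\cZ$ forces $B$ to be reduced, so $B[1/p]$ is a reduced $E$-affinoid algebra. Jacobson-ness of affinoid algebras, together with power-boundedness of elements of $B$ arising from topological finite generation, yields an embedding $B \hookrightarrow \prod_{\phi \colon B \to \cO'} \cO'$ where $\phi$ ranges over $\cO$-algebra maps to integer rings $\cO'$ of finite extensions $E'/E$. The hypothesis places $I$ in the kernel of each such $\phi$, whence $I = 0$. Case (1) reduces to case (2) by first upgrading the monomorphism $V \to U$ to a closed immersion: since $B$ is reduced, $V \to U$ is a finite-type monomorphism into a reduced Noetherian target, and essential surjectivity on finite-flat-$\cO$-algebra points (in particular on points with finite residue field) combined with constructibility-type arguments (EGA IV, \textsection 8) shows that the image is closed in each reduction $U/\varpi^a$, so $V \to U$ is a closed immersion.

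For case (3) we lack reducedness, so we argue one truncation at a time: by Artin--Rees it is enough to show that the image $I_a$ of $I$ in $B_a := B/\varpi^a$ is zero for every $a \geq 1$. Equivalently, the closed immersion $\Spec B_a/I_a \hookrightarrow \Spec B_a$ of finite-type $\cO/\varpi^a$-schemes must be an isomorphism, which by Lemma \ref{lem:Artinian_point_iso} reduces to bijectivity on $A$-points for every local Artinian $\cO/\varpi^a$-algebra $A$ with finite residue field $\F'$. Using the Cohen structure theorem, such an $A$ is a quotient $\cO'/\varpi^a[X_1,\ldots,X_n]/J$ with $\cO' = W(\F')$, and is a quotient of the reduction mod $\varpi^a$ of the finite-flat $\cO$-algebra $\Lambda = \cO'[X_1,\ldots,X_n]/(X_1^N,\ldots,X_n^N)$ for $N$ large. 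Thus every $A$-point of $\Spec B_a$ lifts through some $\Lambda/\varpi^a$-point (where $\Lambda$ is finite flat over $\cO$) up to postcomposition by a further surjection, so the essential surjectivity of $f$ on $\Lambda$-points transports to essential surjectivity on $A$-points, and we conclude. The principal obstacle is this last lifting step, since (as the example $\cO/\varpi^2[X]/(X^2,\varpi X)$ shows) not every Artinian local $\cO/\varpi^a$-algebra $A$ is itself of the form $\Lambda/\varpi^a$ for finite-flat $\Lambda$; the argument above sidesteps this by only requiring that each such $A$ receive a surjection from some $\Lambda/\varpi^a$, which is always possible.
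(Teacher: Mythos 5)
Your case (2) is essentially the paper's argument (reducedness plus $\cO$-flatness gives $B\hookrightarrow B[1/p]$; $B[1/p]$ is reduced Jacobson affinoid, so $J[1/p]$ lies in every maximal ideal, hence vanishes, hence $J=0$). Cases (1) and (3), however, both contain gaps.

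In case (3), the crucial step is exactly the one you flag as the ``principal obstacle,'' and the proposed sidestep does not resolve it. Having a surjection $\Lambda/\varpi^a \twoheadrightarrow A$ does not imply that a given map $B_a \to A$ factors through $\Lambda/\varpi^a$ (let alone lifts to a $\Lambda$-point of $B$); you are conflating ``$A$ is a quotient of $\Lambda/\varpi^a$'' with ``the given $A$-point of $\Spec B_a$ arises from a $\Lambda/\varpi^a$-point.'' There is no formal-smoothness reason this lifting should hold, and without reducedness you cannot invoke any Nagata-type normalization tool. The paper's case (3) sidesteps this entirely by working with the generic fiber: for each maximal ideal $\fm$ of $B[1/p]$, the finite-$E$-dimensional quotients $B[1/p]/\fm^a$ (which are in characteristic zero) factor through finite flat $\cO$-algebras $\Lambda$ with $\Lambda[1/p]=B[1/p]/\fm^a$, so the hypothesis shows $J[1/p]\subset\bigcap_a\fm^a$ for all $\fm$, giving $J[1/p]=0$ by Krull intersection, and then $J=0$ by $\cO$-flatness of $B$.

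In case (1), the proposed ``upgrade the monomorphism to a closed immersion via constructibility'' is not justified. A finite-type monomorphism with closed image need not be a closed immersion, and in any event your essential-surjectivity hypothesis is on finite flat $\cO$-algebra points, which (without a lifting lemma) do not directly control $\F'$-points or more general Artinian points of $B/\varpi^a$. The paper's proof of case (1) instead shows that $\cY(A)\to\cZ(A)$ is a bijection for every local Artinian $\cO$-algebra $A$ with finite residue field, and then applies Lemma \ref{lem:Artinian_point_iso} truncation by truncation. The key tool enabling this, absent from your sketch, is the fact (Bartlett's lemma, requiring $B$ reduced, Nagata, and $\Z_p$-flat) that the map $B\to A$ factors through a continuous map $B\to\Lambda$ with $\Lambda$ finite flat over $\cO$; this is precisely what reducedness of $\cZ$ buys in cases (1) and (2), and what is unavailable in case (3). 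If you want to keep your route in case (1), you need to explicitly invoke this lifting lemma to reach the $\F'$-points, and you will find that once you have it, it is simpler to argue directly on Artinian points as the paper does rather than to try to first promote $V\to U$ to a closed immersion.
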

\begin{proof} As the problem is local (in the smooth topology) in $\cZ$, we reduce to the case $\cZ=\Spf B$ where $B$ is a $p$-adically complete topologically of finite type $\cO$-flat algebra. Then $\cY$ is a formal algebraic space (in fact, a formal scheme by \cite[\href{https://stacks.math.columbia.edu/tag/0B89}{Tag 0B89}]{stacks-project}).

Suppose that we are in the first case. We claim that for any local Artinian $\cO$-algebra $A$ with finite residue field, $f:\cY(A)\to \cZ(A)$ is an equivalence. Since we already have fully faithfulness (from $f$ being a monomorphism), we only need to show essential surjectivity. Suppose we have an element $x\in \cZ(A)$, which corresponds to a map $B\to A$, which factors through $B/\fm^k\to A$ for some maximal ideal $\fm$ of $A$ and $k\geq 1$. Now our hypotheses on $\cZ$ imply that $B$ is reduced and $\bZ_p$-flat. Furthermore, since $B$ is $p$-adically complete and $B/p$ is Nagata, $B$ is also Nagata \cite{Marot}
Hence \cite[Lemma 4.1.2]{BartlettIrrComp}
implies that $B\to B/\fm^k$ factors through some continuous map $B\to \Lambda$ where $\Lambda$ is a finite flat $\cO$-algebra. Thus $x$ can be lifted to a point $\tld{x}\in \cZ(\Lambda)\cong \cY(\Lambda)$, hence $x$ is in the essential image of $\cY(A)$. But now for each $a\geq 1$, Lemma \ref{lem:Artinian_point_iso} implies that the base change $(\cY)_{\cO/\varpi^a}\to (\cZ)_{\cO/\varpi^a}$ is an isomorphism, hence $f$ itself is an isomorphism.

Suppose now that we are in the second case. Then $\cY=\Spf B/J$. Since the residue fields at maximal ideals of $B[\frac{1}{p}]$ are finite extensions of $E$, and any map $B[\frac{1}{p}]\to E'$ where $E'$ is a finite extension of $E$ comes from a map $B\to \cO'$, our hypothesis implies that $J[\frac{1}{p}]$ is in the intersection of all the maximal ideals of $B[\frac{1}{p}]$. Since $B[\frac{1}{p}]$ is Jacobson, $J[\frac{1}{p}]=0$, and hence $J=0$ since $B$ is $\cO$-flat.

Finally, suppose that we are in the third case. Then $\cY=\Spf B/J$. For any maximal ideal $\fm$ of $B[\frac{1}{p}]$ and any $a\geq 1$, $B[\frac{1}{p}]/\fm^a$ is finite dimensional over $E$, and the map $B\to B[\frac{1}{p}]/\fm^a$ factors through some finite flat $\cO$-algebra $\Lambda$ such that $\Lambda[\frac{1}{p}]=B[\frac{1}{p}]/\fm^a$. Our hypothesis implies that the map $B\to \Lambda$ factors through $B/J$. It follows that $J[\frac{1}{p}]\subset \cap_{a=1}^\infty \fm^a$, hence $J[\frac{1}{p}]_{\fm}=0$ Since this is true for any maximal ideal $\fm$, we have $J[\frac{1}{p}]=0$, and hence $J=0$ since $B$ is $\cO$-flat.
\end{proof}

We can now deal with the vertical isomorphisms occurring in diagram (\ref{eq:stack_diagram}):
\begin{prop} \label{prop:unique_BK}Assume that $\tau$ is $(h+1)$-generic. Then the natural map $\cK^{[0,h],\tau}\to \cX^{[0,h],\tau}$ is an isomorphism. %
\end{prop}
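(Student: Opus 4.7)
The plan is to combine the fact that the map is a closed immersion with an essential-surjectivity check on finite-flat points, via Lemma \ref{lem:iso_criteria}.

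First, by Proposition \ref{prop:BK_to_phi_mono}, the $(h+1)$-genericity of $\tau$ implies that $\eps_\tau: Y^{[0,h],\tau}\to \Phi\text{-}\Mod_K^{\text{\'et},n}$ is a closed immersion. Since $\cK^{[0,h],\tau}$ is defined as the fiber product of $\eps_\tau$ with $\cX^{[0,h],\tau}\to \Phi\text{-}\Mod_K^{\text{\'et},n}$, and closed immersions are stable under arbitrary base change, the natural map $\cK^{[0,h],\tau}\to \cX^{[0,h],\tau}$ is also a closed immersion. Both stacks are $p$-adic formal algebraic stacks topologically of finite type over $\Spf\cO$, and by Theorem \ref{thm:EG_basic_properties}(\ref{it:EG_basic_properties:1}), $\cX^{[0,h],\tau}$ is $\cO$-flat and analytically unramified. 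Hence the hypotheses of case (2) of Lemma \ref{lem:iso_criteria} will be met once essential surjectivity on $\cO'$-points is established.

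Accordingly, I would fix a finite extension $E'/E$ with ring of integers $\cO'$ and consider a point of $\cX^{[0,h],\tau}(\cO')$, i.e.~a $G_K$-representation $\rho: G_K \to \GL_n(\cO')$ which is potentially crystalline with Hodge--Tate weights in $[0,h]$ and inertial type $\tau$. The restriction $\rho|_{G_{L'}}$ is then crystalline with Hodge--Tate weights in $[0,h]$, so by Kisin's theorem (\cite{KisinFcrys}, cf.~Theorem \ref{thmKisin}) there exists a projective Breuil--Kisin module $\fM$ over $\fS_{L',\cO'}$ of height in $[0,h]$ with $T^*(\fM) \cong \rho|_{G_{L'_\infty}}$. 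By functoriality of Kisin's construction, the action of $\Delta = \Gal(L'/K)$ on $\rho$ induces a compatible semilinear action of $\Delta$ on $\fM$ commuting with $\phi_\fM$, and the fact that the inertial type of $\rho$ is $\tau$ translates (via the comparison with $\Dpst$, as in \cite{CL}) to the isomorphism $\fM^{(j')}/u'\fM^{(j')} \cong \tau^\vee\otimes_{\cO} \cO'$ of $\Delta'$-representations required in Definition \ref{defn:Breuil-Kisin}. Thus $\fM$ defines an object of $Y^{[0,h],\tau}(\cO')$, and the pair $(\rho,\fM)$ is a lift of $\rho$ to $\cK^{[0,h],\tau}(\cO')$.

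The key obstacle is not the deformation-theoretic input (which is standard Kisin theory together with the tame descent formalism) but rather the matching of the point-set-theoretic setup with the stack-theoretic formalism: one has to confirm that the descent datum on $\fM$ produced from $\rho$ really records the inertial type in the sense of Definition \ref{defn:Breuil-Kisin}. With this in place, the genericity hypothesis on $\tau$ does the work of making $\eps_\tau$ a closed immersion, and Lemma \ref{lem:iso_criteria}(2) upgrades essential surjectivity on $\cO'$-points to the desired isomorphism.
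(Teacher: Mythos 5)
Your proof follows essentially the same route as the paper: from Proposition \ref{prop:BK_to_phi_mono}, $\eps_\tau$ is a closed immersion, so its base change $\cK^{[0,h],\tau}\to\cX^{[0,h],\tau}$ is one too; since $\cX^{[0,h],\tau}$ is $\cO$-flat and analytically unramified (Theorem \ref{thm:EG_basic_properties}), Lemma \ref{lem:iso_criteria}(2) reduces the claim to essential surjectivity on $\cO'$-points; and this is supplied by Kisin's construction of the Breuil--Kisin module from a potentially crystalline $\cO'$-lattice together with its tame descent datum. This matches the paper's argument in structure and content.

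One small technical point that you elide but that is worth making explicit: Kisin's theory produces a $\fS_{L',\cO'}$-\emph{module} inside $\cM$ which is finite and stable under $\phi$ and whose cokernel is killed by $E(u')^h$, but the fact that this module is \emph{projective} over $\fS_{L',\cO'}$ (equivalently, locally free of the right rank, which is what is required for an object of $Y^{[0,h],\tau}(\cO')$) is not quite immediate from Kisin's results and requires a separate verification; the paper cites \cite[Remark 2.2.16(2)]{BartlettIrrComp} for this. Similarly, the statement that the $\Delta$-stability of $\fM$ follows ``by functoriality'' is correct in spirit, but what actually does the work is the \emph{uniqueness} of $\fM$ inside the descended étale $\phi$-module $\cM$ with its $\Delta$-action: the conjugate $g\cdot\fM$ for $g\in\Delta$ is again a Breuil--Kisin sublattice of the same type, and uniqueness forces $g\cdot\fM=\fM$. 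It is cleaner to invoke uniqueness directly rather than appealing to an unspecified functoriality.
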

\begin{proof}It follows from Proposition \ref{prop:BK_to_phi_mono} that our map is a closed immersion. By Lemma \ref{lem:iso_criteria}, we only need to check that for any finite extension $E'$ of $E$ with ring of integers $\cO'$, the natural functor $\cK^{[0,h],\tau}(\cO')\to \cX^{[0,h],\tau}(\cO')$ is essentially surjective. Let $V\in \cX^{[0,h],\tau}(\cO')$ be an $\cO'$-lattice in a potentially crystalline representation over $E'$ with Hodge-Tate weights in $[0,h]$, and let $\cM\in \Phi\text{-}\Mod_{dd,L'}^{\text{\'et},n}(\cO')$ be the associated \'{e}tale $\phz$-module with descent data from $L'$ to $K$. By \cite[Corollary (1.3.15), Proposition (2.1.5) and Lemma (2.1.15)]{KisinFcrys},
there is a unique projective $\fS_{L',\cO'}$-submodule $\fM\subset \cM$ which is $\phi_\cM$-stable (projectivity follows from \cite[Remark 2.2.16(2)]{BartlettIrrComp}), such that $\cM=\fM\otimes_{\fS_{L'}} \cO_{\cE,L'}$ and the cokernel of $\phi_\cM$ on $\fM$ is killed by $E(u')^h$. The uniqueness implies that $\fM'$ is stable under the semi-linear action of $\Delta'$. As $\fM/u'\fM[\frac{1}{p}]\cong D_{pst}(V^\vee)\cong \tau^\vee \otimes_{\cO} (K'\otimes_{\bZ_p} \cO')$ 
 as projective $K'\otimes_{\bZ_p} \cO'$-modules with $\Delta=I(L'/K)$-action, we deduce that $\fM\in Y^{[0,h],\tau}(\cO')$. Thus $V\in \cX^{[0,h],\tau}(\cO')$ is isomorphic to the image of $(V,\fM)\in \cK^{[0,h],\tau}(\cO')$.
\end{proof}
\begin{rmk}  For a general finite flat $\cO$-algebra $\Lambda$ which is not the ring of integers of a finite extension of $E$ and $V\in \cX^{[0,h]}(\Lambda)$, the unique Breuil--Kisin module $\fM$ associated to $V$ viewed as an $\cO$-lattice in potentially crystalline representation over $E$ is a priori only an $\fS_{L',\Lambda}$-module.  However, it follows from the above Proposition that in this case, it is actually $\fS_{L',\Lambda}$-projective. 
\end{rmk}

We now analyze the bottom horizontal map of diagram (\ref{eq:stack_diagram}). We recall the following definition \cite[Definition 3.8]{LLLM}.  Recall that $\eps$ denotes the $p$-adic cyclotomic character. 
\begin{defn} Let $\rhobar:G_K\to \GL_m(\overline{\bF}_p)$. We say $\rhobar$ is cyclotomic free if there is an unramified extension $M/K$ of degree prime to $p$ such that $\rhobar|_{G_M}^{\mathrm{ss}}$ is a direct sum of characters, and 
\[H^0(G_M,\rhobar|_{G_M}^{\mathrm{ss}}\otimes \varepsilon^{-1})=0.\]

\end{defn}
The main feature about this notion that is relevant to us is the following
\begin{lemma}\label{lem:cyclo_free}\begin{enumerate}
\item Suppose $\rhobar$ is cyclotomic free. Then the natural inclusion induces an isomorphism
\[H^0(G_K,\rhobar)\cong H^0(G_{K_\infty},\rhobar)\]
\item If $\rhobar^{\mathrm{ss}}|_{I_K}$ is $2$-generic, 
then $\ad(\rhobar)$ is cyclotomic free.
\item \label{cyclo_free_3} Suppose $V$, $W$ are two $\cO[G_K]$-modules of finite length. Assume there exists a semisimple $G_K$-representation $\rhobar$ such that $\ad(\rhobar)$ is cyclotomic free, and such that $V^{\mathrm{ss}}$, $W^{\mathrm{ss}}$ are direct summands of a direct sum of finitely many copies of $\rhobar$. Then the natural restriction map induces an isomorphism
\[\Hom_{G_K}(V,W)\cong \Hom_{G_{K_{\infty}}}(V,W)\]
\item \label{cyclo_free_4} Let $W$ as in (\ref{cyclo_free_3}). Then any $G_{K_{\infty}}$-submodule $V\subset W$ is $G_K$-stable.
\end{enumerate}
\end{lemma}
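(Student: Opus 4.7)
The plan is to prove the four parts sequentially, with (1) as the technical heart and (3), (4) following from (1) by formal manipulation, while (2) is a direct calculation. Throughout, since the cyclotomic-free hypothesis involves a finite unramified extension $M/K$ of degree prime to $p$, and cohomology of mod-$p$ $G_K$-representations can be recovered from that of $G_M$ by taking $\Gal(M/K)$-invariants (the order being coprime to $p$, hence invertible in $\F$), one can reduce all statements to the case where $\rhobar^{\mathrm{ss}}|_{G_M}$ splits as a direct sum of characters; setting $M_\infty \defeq M K_\infty$, the identification $\Gal(M_\infty/M) = \Gal(K_\infty/K)$ makes this reduction compatible with the passage to $K_\infty$.

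For part (1), after this reduction one picks a filtration of $\rhobar|_{G_M}$ with graded pieces being characters and proceeds by induction on the length. The induction step reduces to showing, for a short exact sequence $0 \to A \to \rhobar' \to B \to 0$ of $G_M$-modules with $A^{G_M} = A^{G_{M_\infty}}$ and $B^{G_M} = B^{G_{M_\infty}}$, that $\rhobar'^{G_M} = \rhobar'^{G_{M_\infty}}$. Comparing the long exact sequences for cohomology of $G_M$ and $G_{M_\infty}$, this comes down to showing that the restriction $H^1(G_M, A) \to H^1(G_{M_\infty}, A)$ is injective on the image of the connecting map from $B^{G_M}$; this can be translated via inflation-restriction relative to a Galois extension of $M$ containing $M_\infty$ (e.g.~$M_\infty(\mu_{p^\infty})$) into a statement about $H^0$ of twists of the graded pieces of $A$ by the cyclotomic character, and the cyclotomic-free hypothesis supplies the required vanishing.

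For part (2), the $2$-generic lowest alcove presentation $(s,\mu)$ of $\rhobar^{\mathrm{ss}}|_{I_K}$ gives an explicit splitting $\rhobar^{\mathrm{ss}}|_{G_M} = \bigoplus_i \chi_i$ over an unramified extension $M/K$ of degree prime to $p$ (obtained as the fixed field of a cyclic subgroup built from $s_\tau$), with $\chi_i$ products of fundamental characters with exponents determined by $s$ and $\mu$. Then $\ad(\rhobar)^{\mathrm{ss}}|_{G_M} = \bigoplus_{i,j} \chi_i \chi_j^{-1}$, and one checks directly that $\chi_i \chi_j^{-1} \neq \eps|_{G_M}$ for all pairs, which follows from $\mu$ being $2$-deep in the base alcove. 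Part (3) is then immediate from part (1): $\Hom(V, W) \cong V^\vee \otimes W$ as $G_K$-modules, and its semisimplification is a direct summand of some $\ad(\rhobar)^{\oplus k}$, which is cyclotomic-free (the hypothesis being inherited by direct sums and direct summands, as it is the vanishing of an $H^0$). Applying part (1) to $V^\vee \otimes W$ yields $\Hom_{G_K}(V, W) = \Hom_{G_{K_\infty}}(V, W)$. For part (4), I induct on the length of $W$: picking a simple $G_K$-submodule $W_1 \subset W$, the case $W_1 \subset V$ reduces the lemma to $V/W_1 \subset W/W_1$; in the case $V \cap W_1 = 0$, the $G_{K_\infty}$-injection $V \hookrightarrow W/W_1$ has $G_K$-stable image by induction, so its preimage $\tld{V} \subset W$ is $G_K$-stable and contains $V$ as a $G_{K_\infty}$-complement to $W_1$, i.e., as the kernel of a $G_{K_\infty}$-projection $\tld{V} \to W_1$; part (3), applied to $\tld{V}$ and $W_1$ (both subquotients of $W$, so the cyclotomic-free hypothesis is inherited), upgrades this projection to $G_K$-equivariance, forcing $V$ to be $G_K$-stable.

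The principal obstacle is the cohomological computation in part (1), specifically identifying the kernel of the restriction $H^1(G_M, A) \to H^1(G_{M_\infty}, A)$ in terms of $H^0$ of cyclotomic-character twists, where one must carefully navigate the fact that $G_{M_\infty}$ is not normal in $G_M$ and work instead through an auxiliary Galois extension. Once this is carried through, the remaining parts are formal consequences, modulo the routine verification that cyclotomic-freeness is inherited by the requisite subquotients and tensor operations on the ambient representation $\rhobar$.
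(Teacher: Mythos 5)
Your approach agrees with the paper for part (3) and is a valid (though different) argument for part (4), but your plan for parts (1) and (2) diverges from the paper in a significant way: the paper does not prove these at all, citing \cite[Lemma 3.11]{LLLM} and \cite[Proposition 3.9]{LLLM} respectively, while you sketch proofs from scratch.

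For (1), your reduction to $G_M$ via $\Gal(M/K)$-invariants (degree prime to $p$), and your reformulation of the inductive step as showing that the restriction $H^1(G_M,A)\to H^1(G_{M_\infty},A)$ kills no nonzero image of the connecting map, are the right shape of argument. But the crucial step --- relating this kernel to $H^0$ of cyclotomic twists by passing through the auxiliary Galois extension $M_\infty(\mu_{p^\infty})/M$ --- is precisely where the real work is, and you gesture at it rather than carry it out. The obstruction you correctly flag (that $G_{M_\infty}$ is not normal in $G_M$) is genuine: the standard workaround involves analyzing the Galois group $\Gal(M_\infty(\mu_{p^\infty})/M)\cong \Z_p(1)\rtimes \Gamma_M$ and one must be careful because $\Gal(M_\infty(\mu_{p^\infty})/M_\infty)$ is also not normal in that group. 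Unless you spell out the Hochschild--Serre manipulations, the sketch is not a proof; and since the paper deliberately outsources this to \cite{LLLM}, you are effectively reproducing the content of a reference result without verifying it.

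For (4), the paper inducts on $V$: pick a simple $G_{K_\infty}$-submodule $V_0\subset V$, extend it uniquely to a simple tame $G_K$-representation, use part (3) to see the inclusion $V_0\hookrightarrow W$ is $G_K$-equivariant, and pass to $V/V_0\subset W/V_0$. You instead induct on $\ell(W)$, peel off a simple $G_K$-submodule $W_1\subset W$, and split into cases according to whether $W_1\subseteq V$ or $V\cap W_1=0$ (using the tameness of $W_1$ to see its restriction to $G_{K_\infty}$ is still simple, which rules out intermediate intersections --- a point you should state explicitly). In the second case you invoke part (3) to upgrade the $G_{K_\infty}$-projection $\tld V\to W_1$ to a $G_K$-map. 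Both arguments are correct; the paper's is slightly more economical since part (3) only enters once, at the base case.
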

\begin{proof}
\begin{enumerate} 
\item This follows from (the proof of) \cite[Lemma 3.11]{LLLM}.
\item This is \cite[Proposition 3.9]{LLLM}. Note the proof in loc.~cit.~was written for $n=3$, but works in general.  Also, $2$-generic in the sense of this paper is stronger than $2$-generic in loc.~cit.~(see \cite[Remark 2.2.8]{LLL}). 
\item %
Since $\ad(\rhobar)=\rhobar\otimes \rhobar^{\vee}$ is cyclotomic free, the same is true for any finite direct sum of $\ad(\rhobar)$.
Now $V^{\mathrm{ss}}$, $W^{\mathrm{ss}}$ are direct summands of a finite direct sum of $\rhobar$, hence $W^{\mathrm{ss}}\otimes (V^{\mathrm{ss}})^\vee$ is a direct summand of a finite direct sum of $\ad(\rhobar)$, and thus is cyclotomic free. Since $(W\otimes V^\vee)^{\mathrm{ss}}=W^{\mathrm{ss}}\otimes (V^{\mathrm{ss}})^\vee$, $W\otimes V^\vee$ is also cyclotomic free. The result now follows from the first part. 
\item We first assume that $V$ is irreducible. Then $V$ extends uniquely to a $G_K$-module. By the previous part, the $G_{K_\infty}$-equivariant inclusion map $V\into W$ is $G_K$-equivariant, thus finishing the proof in this case.

For general $V$, we let $V_0$ be non-zero irreducible $G_{K_\infty}$-submodule of $W$. Then the argument above shows that $V_0$ is a $G_K$-submodule of $W$. We repeat the argument for $V/V_0\into W/V_0$ to conclude.
  
\end{enumerate}

\end{proof}
\begin{prop} \label{prop:restriction_mono}Suppose $\tau$ is $(h+2)$-generic. Then the composition $\cX^{[0,h],\tau}\to \cX_n\to \Phi\text{-}\Mod_K^{\text{\'et},n}$ is a monomorphism.
\end{prop}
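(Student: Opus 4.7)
Plan: To show the composition is a monomorphism, it suffices to verify that for every complete local Noetherian $\cO$-algebra $R$ with finite residue field $\F'$, the induced functor on groupoids $\cX^{[0,h],\tau}(R) \to \Phi\text{-}\Mod_K^{\text{\'et},n}(R)$ is fully faithful; since $\cX^{[0,h],\tau}$ is a $p$-adic formal algebraic stack topologically of finite type over $\Spf\cO$ by Theorem \ref{thm:EG_basic_properties}(\ref{it:EG_basic_properties:1}), this is a standard reduction. For $V_1, V_2\in\cX^{[0,h],\tau}(R)$, viewed as finite-length $\cO[G_K]$-modules, this amounts to showing that the restriction
\[
\Hom_{R[G_K]}(V_1, V_2) \longrightarrow \Hom_{R[G_{K_\infty}]}(V_1, V_2)
\]
is a bijection. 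As restriction preserves $R$-linearity, this further reduces to the analogous statement for $\cO$-linear equivariant maps, which is precisely the setting of Lemma \ref{lem:cyclo_free}(3): we must exhibit a single semisimple $G_K$-representation $\rhobar$ with $\ad(\rhobar)$ cyclotomic free such that $V_1^{\mathrm{ss}}, V_2^{\mathrm{ss}}$ are direct summands of copies of $\rhobar$.

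We construct $\rhobar \defeq \ovl{V}_1^{\mathrm{ss}} \oplus \ovl{V}_2^{\mathrm{ss}}$, where $\ovl{V}_i \defeq V_i \otimes_R \F'$. Every composition factor of the finite-length $\cO[G_K]$-module $V_i$ appears among those of $\ovl{V}_i$, so $V_i^{\mathrm{ss}}$ is a direct summand of $\rhobar^{\oplus N}$ for some sufficiently large $N$ (depending on the $\cO$-length of $R$). Since $\cX^{[0,h],\tau}$ is a closed substack of $\cX_n$, each $\ovl V_i$ lies in $\cX^{[0,h],\tau}(\F')$; Proposition \ref{prop:unique_BK} (applicable since $\tau$ is in particular $(h+1)$-generic) identifies $\ovl{V}_i|_{G_{K_\infty}}$ with $T^*_{dd}(\fM_i)$ for some $\fM_i\in Y^{[0,h],\tau}(\F')$. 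Applying Corollary \ref{lem:ss_bound} with $m = h+2$ then shows that $\ovl{V}_i^{\mathrm{ss}}|_{I_K}$ admits a $2$-generic lowest alcove presentation. Being tame and semisimple with distinct characters, $\ovl{V}_i^{\mathrm{ss}}|_{I_K} \cong \taubar$, so $\rhobar|_{I_K}\cong\taubar^{\oplus 2}$.

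It remains to verify that $\ad(\rhobar)$ is cyclotomic free. Choose an unramified extension $M/K$ of degree prime to $p$ over which the characters $\chi_1, \ldots, \chi_n$ of $\taubar|_{G_M}$ all split; then $\rhobar|_{G_M}$ is a direct sum in which each $\chi_i$ appears with multiplicity two, and the characters of $\ad(\rhobar)|_{G_M}$ are the products $\chi_i\chi_j^{-1}$ for $1 \leq i,j \leq n$. For $i = j$ this product is trivial while $\eps|_{G_M}$ is non-trivial (since $\eps|_{I_K} = \eps|_{I_M}$ is the nontrivial mod-$p$ cyclotomic character on inertia), and for $i\neq j$ the $2$-genericity of $\tau$ inherited from its $(h+2)$-genericity, combined with the explicit formula of Example \ref{ex:data:type} for the characters of $\taubar$, yields $\chi_i\chi_j^{-1} \neq \eps|_{G_M}$. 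This is precisely the argument underlying Lemma \ref{lem:cyclo_free}(2), which carries over even though $\rhobar|_{I_K}$ is $\taubar$ taken with multiplicity two rather than a generic tame inertial type in its own right, since the offending coincidences in $\ad$ always arise from pairs of characters of $\taubar$. Lemma \ref{lem:cyclo_free}(3) then supplies the required bijection. The main obstacle is this last verification: checking cyclotomic freeness for the doubled-character representation $\rhobar$ forces us to look inside the proof of Lemma \ref{lem:cyclo_free}(2) rather than quoting the statement directly, in order to confirm that the genericity hypotheses on $\tau$ alone suffice.
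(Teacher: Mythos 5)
There is a genuine gap in the cyclotomic-freeness verification. You set $\rhobar \defeq \ovl{V}_1^{\mathrm{ss}} \oplus \ovl{V}_2^{\mathrm{ss}}$ and then write ``$\ovl{V}_i^{\mathrm{ss}}|_{I_K} \cong \taubar$, so $\rhobar|_{I_K}\cong\taubar^{\oplus 2}$'', using a single tame inertial $\F$-type $\taubar$ for both $i$. Nothing you have proved forces $\ovl{V}_1^{\mathrm{ss}}|_{I_K} \cong \ovl{V}_2^{\mathrm{ss}}|_{I_K}$: Corollary \ref{lem:ss_bound} only gives that each admits a $2$-generic lowest alcove presentation, not that they are the same type. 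Consequently, in checking $H^0(G_M,\ad(\rhobar)^{\mathrm{ss}}\otimes\ovl\eps^{-1})=0$ you must rule out $\chi_i^{(1)}(\chi_j^{(2)})^{-1}=\ovl\eps|_{G_M}$ where $\chi^{(1)}$ runs over characters of $\ovl{V}_1^{\mathrm{ss}}$ and $\chi^{(2)}$ over characters of $\ovl{V}_2^{\mathrm{ss}}$; the $2$-genericity of each factor alone does not control these cross-products, and Lemma \ref{lem:cyclo_free}(2) is stated for an $n$-dimensional $\rhobar$, not the $2n$-dimensional direct sum, so it cannot be quoted. You acknowledge the difficulty in your final sentence but then assert without proof that the genericity of $\tau$ suffices. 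The paper avoids this entirely: since stack full-faithfulness is about \emph{Isom}-sets, one first observes that if $\mathrm{Isom}_{G_{K_\infty}}(V_A,W_A)$ is nonempty then $V_{\F'}^{\mathrm{ss}}\cong W_{\F'}^{\mathrm{ss}}$ (restriction is an equivalence on tame semisimple representations), and then takes $\rhobar$ to be this common $n$-dimensional semisimplification, for which Lemma \ref{lem:cyclo_free}(2) applies verbatim.

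There is a second, smaller gap: you claim it is a ``standard reduction'' that a monomorphism of $p$-adic formal algebraic stacks topologically of finite type over $\Spf\cO$ can be checked by testing full-faithfulness on complete local Noetherian $\cO$-algebras with finite residue field. The stacks here are determined by their values on finite-type $\cO/\varpi^a$-algebras, and passing from the Artinian local case to the general finite-type case is not formal: the paper proves it by observing that the Isom-presheaves are representable by finite-type $A$-schemes (via \cite[Proposition 5.4.8]{EGschemetheoretic}) and then invoking Lemma \ref{lem:Artinian_point_iso} to detect isomorphism of such schemes on Artinian points. This intermediate argument needs to be supplied; the reduction is not merely a consequence of the stacks being $p$-adic formal and topologically of finite type.
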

\begin{proof} It suffices to show that for any $a\geq 1$ and $A$ a finite type $\cO/\varpi^a$-algebra, the functor $\cX^{[0,h],\tau}(A)\to \Phi\text{-}\Mod_K^{\text{\'et},n}(A)$ is fully faithful.

Suppose first that $A$ is local Artinian $\cO$-algebra with finite residue field $\bF'$. Then $\cX_n(A)$ is equivalent to the groupoid of $G_K$-representation on projective $A$-modules of rank $n$, $\Phi\text{-}\Mod_K^{\text{\'et},n}(A)$ is equivalent to the groupoid of $G_{K_\infty}$-representation on projective $A$-modules of rank $n$. Suppose we have two such $G_K$-representations $V_A$, $W_A$. We need to show the restriction map induces a bijection between the set of isomorphisms $\mathrm{Isom}_{G_K}(V_A,W_A)\cong\mathrm{Isom}_{G_{K_\infty}}(V_A,W_A)$. 

We first observe that if either set is non-empty, then $V_{\bF'}^{\mathrm{ss}}\cong W_{\bF'}^{\mathrm{ss}}$: indeed, the restriction map identifies the semisimple representations of $G_K$ and $G_{K_\infty}$ over $\bF'$.
We can thus assume that $V_{\bF'}^{\mathrm{ss}}\cong W_{\bF'}^{\mathrm{ss}}$, and denote this common representation by $\rhobar$. But now $V_{\bF'}^{\mathrm{ss}}|_{G_{K_\infty}}$ comes from an object of $Y^{[0,h],\tau}(\bF')$, so Lemma \ref{lem:ss_bound}
shows that $\rhobar$ is $2$-generic. Finally, since $V_A^{\mathrm{ss}}$ and $W_A^{\mathrm{ss}}$ are direct summands of a finite direct sum of $\rhobar$, we conclude by Lemma \ref{lem:cyclo_free}.

Suppose now that $A$ is a general finite type $\cO/\varpi^a$-algebra. Let $x_1, x_2 \in \cX^{[0,h],\tau}(A)$ and let $y_1$, $y_2$ be their images in $\Phi\text{-}\Mod_K^{\text{\'et},n}(A)$. Let $Y=\mathrm{Isom}(x_1,x_2)$ and $Z=\mathrm{Isom}(y_1,y_2)$ be the functor over $\Spec A$ which represents isomorphisms between $x_1$, $x_2$ and $y_1$, $y_2$. By \cite[Proposition 5.4.8]{EGschemetheoretic}, $Y$, $Z$ are representable by finite type $A$-schemes, and hence are finite type $\cO/\varpi^a$-schemes. 
The composition in the statement of the proposition 
induces a natural map $Y\to Z$. By the Artinian case above, for any local Artinian ring $B$ with finite residue field, the natural map $Y(B)\to Z(B)$ is a bijection. Lemma \ref{lem:Artinian_point_iso} then shows that $Y\to Z$ is an isomorphism of $A$-schemes, hence in particular $Y(A)=Z(A)$, and hence the subsets of $Y(A)$ and $Z(A)$ which maps to the identity via the structure maps $Y\to \Spec A$, $Z\to \Spec A$ also coincide. But these sets are exactly the Hom space between $x_1$, $x_2$ in $\cX^{[0,h],\tau}(A)$ and the Hom space between $y_1, y_2$ in $\Phi\text{-}\Mod_K^{\text{\'et},n}(A)$.
\end{proof}
Finally, we deal with the middle and top horizontal maps of diagram (\ref{eq:stack_diagram}).
\begin{prop} \label{prop:stack_vs_monodromy}
The natural map $\cK^{[0,h],\tau}\to Y^{[0,h],\tau}$ factors through the substack $Y^{[0,h],\tau,\nabla_\infty}\subset Y^{[0,h],\tau}$, and the natural map $K^{\leq \lambda,\tau}\into \cK^{[0,h],\tau}\to Y^{[0,h],\tau}$ factors through $Y^{\leql,\tau,\nabla_\infty}\subset Y^{[0,h],\tau}$. The induced maps $\cK^{[0,h],\tau}\ra Y^{[0,h],\tau,\nabla_\infty}$ and $\cK^{\leq \lambda,\tau}\ra Y^{\leql,\tau,\nabla_\infty}$ are isomorphisms. 
\end{prop}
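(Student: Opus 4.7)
The plan is to prove the proposition in two stages. First I would show the factoring of $\cK^{[0,h],\tau}\to Y^{[0,h],\tau}$ through the closed substack $Y^{[0,h],\tau,\nabla_\infty}$, which is essentially Kisin's Theorem \ref{thmKisin}. For a test algebra $R$ that is $p$-adically complete, topologically of finite type, and $\cO$-flat, the $\cO$-flatness of $R$ forces the monodromy ideal $I_{\fM,\nabla_\infty}\subset R$ to inject into $R[1/p]$, so vanishing can be checked after inverting $p$ and thus at all $\Lambda$-points with $\Lambda$ finite flat over $\cO$. At such a point, the hypothesis $(V,\fM)\in\cK^{[0,h],\tau}(R)$ means that $V\otimes_R\Lambda[1/p]$ is potentially crystalline of the right type, so Theorem \ref{thmKisin} delivers the monodromy condition on $\fM\otimes_R\Lambda$. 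The identical argument handles $\cK^{\leql,\tau}\to Y^{\leql,\tau,\nabla_\infty}$.

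For the isomorphism statement, the induced maps $\cK^{[0,h],\tau}\to Y^{[0,h],\tau,\nabla_\infty}$ and $\cK^{\leql,\tau}\to Y^{\leql,\tau,\nabla_\infty}$ are monomorphisms, obtained by pulling back the monomorphism $\cX^{[0,h],\tau}\to\Phi\text{-}\Mod_K^{\text{\emph{\'et}},n}$ from Proposition \ref{prop:restriction_mono} along the closed immersion $\eps_\tau$ of Proposition \ref{prop:BK_to_phi_mono} (note that the $(h+2)$-genericity of $\tau$ gives both inputs). I then plan to invoke Lemma \ref{lem:iso_criteria}(1), which reduces the isomorphism to checking (i) essential surjectivity on $\Lambda$-points for $\Lambda$ finite flat over $\cO$, and (ii) that the target is analytically unramified.

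The heart of the argument is step (i): given $\fM\in Y^{[0,h],\tau,\nabla_\infty}(\Lambda)$, I must upgrade Kisin's potentially crystalline $G_K$-structure on $V[1/p]:=T^*_{dd}(\fM)[1/p]$ to an action preserving the integral lattice $V=T^*_{dd}(\fM)$. Decomposing $\Lambda$ into local factors, I would first treat $\Lambda=\cO'$ a DVR: pick any $G_K$-stable $\cO'$-lattice $V_0\subset V[1/p]$ (which exists by continuity), and apply Proposition \ref{prop:unique_BK} to obtain a Breuil--Kisin module $\fM_0\in Y^{[0,h],\tau}(\cO')$ with $T^*_{dd}(\fM_0)=V_0$; Kisin's uniqueness of bounded-height Breuil--Kisin lattices inside a fixed isocrystal then forces the $G_K$-stable lattice corresponding to $\fM$ to coincide with $V$ itself, proving $G_K$-stability of $V$. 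For a general finite flat local $\Lambda$, which is a nilpotent thickening of a DVR, I would argue inductively along the nilradical filtration via deformation theory, invoking the relative form of Kisin's equivalence between BK modules satisfying monodromy and $G_K$-stable lattices in potentially crystalline representations with $\Lambda$-coefficients. Once (i) is established, the monomorphism becomes bijective on $\Lambda$-points, hence is an isomorphism on generic fibers, so $Y^{[0,h],\tau,\nabla_\infty}[1/p]\cong\cX^{[0,h],\tau}[1/p]$, which is regular by Theorem \ref{thm:EG_basic_properties}(\ref{it:EG_basic_properties:2}); combined with the built-in $\cO$-flatness of $Y^{[0,h],\tau,\nabla_\infty}$, a reduced generic fiber forces reducedness, giving (ii). The $\leql$-case is identical, restricting every argument to the closed substack cut out by the Hodge--Tate weight bound $\leq\lambda$. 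The main obstacle will be running the inductive argument for $\Lambda$ which are not DVRs, which requires the relative version of Kisin's integral $p$-adic Hodge theory over finite flat $\cO$-algebras; if that seems too slippery to cite cleanly, I would instead verify the closed immersion property directly (using the valuative criterion along the $\cO'$-essential surjectivity and Kisin's bijection between BK lattices and $G_K$-stable lattices) and apply Lemma \ref{lem:iso_criteria}(2) with only $\cO'$-points.
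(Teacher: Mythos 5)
Your handling of the factorization step is essentially the paper's argument and is fine. But the isomorphism step has two genuine gaps.

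First, the lattice-stability argument does not work as stated. You need to show that the $G_{K_\infty}$-lattice $V=T^*_{dd}(\fM)$ is $G_K$-stable in $V[1/p]$, and your plan is to pick some $G_K$-stable lattice $V_0\subset V[1/p]$, take its Breuil--Kisin module $\fM_0$, and invoke ``Kisin's uniqueness of bounded-height BK lattices inside a fixed isocrystal'' to identify $V$ with $V_0$. This cannot work: $V$ and $V_0$ are different $\cO'$-lattices, so they give rise to \emph{different} $\cO_{\cE,K}$-lattices $\cM$, $\cM_0$ inside $\cM[1/p]$, and Kisin's uniqueness only pins down the BK module \emph{inside a fixed} $\cO_{\cE,K}$-lattice; it says nothing comparing BK modules across distinct lattices. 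Nothing in this argument forces $V=V_0$. The missing ingredient is a genericity input: the paper's proof runs through Lemma \ref{lem:G_K_lattices}, which chooses a $G_K$-stable lattice $W\supset V$, observes $p^N W\subset V\subset W$, and applies the cyclotomic-freeness criterion Lemma \ref{lem:cyclo_free}(\ref{cyclo_free_4}) (available because $\rhobar$ is $2$-generic, by Corollary \ref{lem:ss_bound} and the $(h+2)$-genericity of $\tau$) to conclude that the $G_{K_\infty}$-submodule $V/p^NW\subset W/p^NW$ is automatically $G_K$-stable. This handles arbitrary finite flat local $\Lambda$ in one stroke; no deformation-theoretic induction over the nilradical is needed, and indeed the DVR case is already broken in your version, so the induction wouldn't have a base.

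Second, your application of Lemma \ref{lem:iso_criteria}(1) directly to $\cK^{[0,h],\tau}\to Y^{[0,h],\tau,\nabla_\infty}$ is circular: criterion (1) requires the \emph{target} to be analytically unramified as a hypothesis, but your plan deduces analytic unramifiedness of $Y^{[0,h],\tau,\nabla_\infty}$ only after establishing the isomorphism. The paper sidesteps this by first noting (already in the factorization step) that the map factors through the associated reduced formal substack $\cZ\into Y^{[0,h],\tau,\nabla_\infty}$, then applying criterion (1) to $\cK^{[0,h],\tau}\into\cZ$ (legitimate: $\cZ$ is reduced by construction) and criterion (3) to the closed immersion $\cZ\into Y^{[0,h],\tau,\nabla_\infty}$ (legitimate: criterion (3) does not require analytic unramifiedness). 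You should adopt this two-step structure rather than trying to verify hypothesis (ii) after the fact.
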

\begin{proof} We first show that $\cK^{[0,h],\tau}\to Y^{[0,h],\tau}$ factors through $Y^{[0,h],\tau,\nabla_\infty}$. To do this, it suffices to show that for some smooth cover $\Spf R \to \cK^{[0,h],\tau}$, the induced map $\Spf R \to Y^{[0,h],\tau}$ factors through $Y^{[0,h],\tau,\nabla_\infty}$. Since $\cK^{[0,h],\tau}\cong \cX^{[0,h],\tau}$ is $\cO$-flat, analytically unramified and topologically of finite type over $\Spf \cO$, $R$ is also $\cO$-flat, reduced and topologically of finite type over $\cO$. 
The induced map $\Spf R \to Y^{[0,h],\tau}$ corresponds to an object $\fM\in Y^{[0,h],\tau}(R)$, and the existence of the desired factorization is equivalent to $I_{\fM,\nabla_\infty}=0$. Now for any finite extension $E'/E$ with ring of integers $\cO'$, and any map $x:R \to \cO'$, the base change $\fM_x$ of $\fM$ along $x$ is the Breuil--Kisin module associated to an $\cO'$-lattice in a potentially crystalline representation with inertial type $\tau$, and thus $\fM_x$ satisfies the monodromy condition (cf.~Definition \ref{defn:mcond}). Thus $I_{\fM_x,\nabla_\infty}=0$ in $\cO'$ by Proposition \ref{prop:Mcond}, so $I_{\fM,\nabla_\infty}\subset \ker x$.
This shows that $I_{\fM,\nabla_\infty}[\frac{1}{p}]$ lies in the intersection of all the maximal ideals of $R[\frac{1}{p}]$. Since $R[\frac{1}{p}]$ is reduced and Jacobson, this intersection is $0$, and hence $I_{\fM,\nabla_\infty}=0$ since $R$ is $\cO$-flat. We note that this argument actually shows that $\Spf R \to Y^{[0,h],\tau}$ factors through the associated reduced formal algebraic substack $\cZ$ of $Y^{[0,h],\tau,\nabla_\infty}$.

We have a sequence of monomorphisms $\cK^{[0,h],\tau} \into \cZ \into Y^{[0,h],\tau,\nabla_\infty}$, since $\cK^{[0,h],\tau}\to Y^{[0,h],\tau}$ is a monomorphism, being the base change of the monomorphism $\cX^{[0,h],\tau}\into \Phi\text{-}\Mod_K^{\text{\'et},n}$ (see Proposition \ref{prop:restriction_mono}). Note that the second monomorphism is a closed immersion. %
We now show that for any finite flat $\cO$-algebra $\Lambda$, the composition $\cK^{[0,h],\tau}(\Lambda)\to \cZ(\Lambda)\to Y^{[0,h],\tau}(\Lambda)$ is essentially surjective. %
Let $x\in Y^{[0,h],\tau,\nabla_\infty}(\Lambda)$. Then $x$ corresponds to an object $\fM_x\in Y^{[0,h],\tau}(\Lambda)$. Since $x\in Y^{[0,h],\tau,\nabla_\infty}(\Lambda)$ and $\fS_{L',\Lambda}$ is semilocal, $\fM_x$ satisfies the hypothesis of Proposition \ref{prop:Mcond}. Thus $V=T^*_{dd}(\fM_x)$ is a $G_{K_\infty}$-representation on a free $\Lambda$-module of rank $n$, and $V[\frac{1}{p}]$ extends to a potentially crystalline representation of $G_K$ over $\Lambda[\frac{1}{p}]$ with Hodge-Tate weights in $[0,h]$ and inertial type $\tau$. By Lemma \ref{lem:G_K_lattices} below, $V\subset V[\frac{1}{p}]$ is actually $G_K$-stable, and hence $x$ indeed comes an object $V\in \cX^{[0,h],\tau}(\Lambda)=\cK^{[0,h],\tau}(\Lambda)$.
The upshot of this argument is that on the one hand, $\cK^{[0,h],\tau}\cong \cZ$ by the first criterion of Lemma \ref{lem:iso_criteria}, and on the other hand $\cZ\cong Y^{[0,h],\tau,\nabla_\infty}$ by the third criterion of Lemma \ref{lem:iso_criteria}.

We have thus proved the result for $\cK^{[0,h],\tau}$. To show the result for $\cK^{\leq \lambda,\tau}$, we use the following observations
\begin{itemize}
\item If $V\in \cX^{[0,h],\tau}(\cO')$ with associated Breuil--Kisin module $\fM\in Y^{[0,h],\tau}(\cO')$, then $\DdR(V^\vee)$ is identified with $(\phz^*\fM/E(u')\phz^*\fM) [\frac{1}{p}]$, cf.~\cite[\S 4.7]{EGstack} (the appearance of the dual is because we use the contravariant functor $T^*_{dd}$ on Breuil--Kisin modules, in contrast to \cite{EGstack}). Thus the jumps in the Hodge filtration of $\DdR(V^\vee)$ occur at the components of relative position of $\phz^*\fM$ with respect to $\fM$ (which is given by the elementary divisors of the matrices of partial Frobenii).
This implies that $V$ has Hodge-Tate weights $\leq \lambda$ if and only if $\fM\in Y^{\leql,\tau}(\cO')$, cf.~the discussion above Theorem \ref{thm:Breuil-Kisin_local_model}. 
\item Let $\Lambda$ is a finite flat $\cO$-algebra and and $V \in \cX^{[0,h],\tau}(\Lambda)$ with associated Breuil--Kisin module $\fM\in Y^{[0,h],\tau}(\Lambda)$. Then $\fM\in Y^{\leq \lambda,\tau}(\Lambda)$ implies $V\in \cX^{\leq \lambda,\tau}(\Lambda)$. This is due to the fact that the Hodge filtration on $\DdR(V^{\vee})$ are given by projective $\Lambda[\frac{1}{p}]$ modules, hence one can check the Hodge-Tate weight $\leq \lambda$ condition by passing to $\Lambda^{\red}$, which is dealt with by the item above.
\end{itemize}
The first item shows the existence of the factorization $\cK^{\leq \lambda,\tau}\into Y^{\leq \lambda,\tau,\nabla_\infty}\into Y^{\leq \lambda,\tau}\into Y^{[0,h],\tau}$, and the second item allows us to carry out the above  argument to conclude $\cK^{\leq \lambda,\tau}\cong Y^{\leq \lambda,\tau,\nabla_\infty}$.
\end{proof}

\begin{lemma}\label{lem:G_K_lattices} Let $\tau$ be $(h+2)$-generic. Let $\Lambda$ be a finite flat $\cO$-algebra. Suppose $\fM\in Y^{[0,h],\tau}(\Lambda)$ and $V=T^*_{dd}(\fM)$, a $G_{K_\infty}$-representation on a free $\Lambda$-module of rank $n$. Suppose $V[\frac{1}{p}]$ extends to a $G_K$-representation. Then $V\subset V[\frac{1}{p}]$ is $G_K$-stable.
\end{lemma}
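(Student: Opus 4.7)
The plan is to reduce to the case where $\Lambda$ is local, and then apply Lemma \ref{lem:cyclo_free}(\ref{cyclo_free_4}) to a suitable finite-length $\cO[G_K]$-module built from a $G_K$-stable $\cO$-lattice in $W\defeq V[\tfrac{1}{p}]$.

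Since $\Lambda$ is a finite flat $\cO$-algebra, $\Spec\Lambda$ decomposes as a finite disjoint union of connected components, each being the spectrum of a local finite flat $\cO$-algebra $\Lambda_\alpha$; correspondingly $V=\bigoplus_\alpha V_\alpha$, and the decomposition is preserved by the $G_K$-action on $W$ by $\Lambda[\tfrac{1}{p}]$-linearity. Working one summand at a time, we may thus assume that $\Lambda$ is local with residue field $\bF'$. Next, the continuity of the $G_K$-action together with the compactness of $G_K$ produce a $G_K$-stable $\cO$-lattice $V_0\subset W$; pick $N\geq 1$ large enough so that $\varpi^N V_0\subset V\subset \varpi^{-N}V_0$ (possible since $V$ and $V_0$ are $\cO$-lattices of the same rank in $W$). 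Set $M\defeq \varpi^{-N}V_0/\varpi^N V_0$, a finite-length $\cO[G_K]$-module. Since $\varpi^N V_0 \subset V\subset \varpi^{-N}V_0$, the image $V/\varpi^N V_0$ makes sense as an $\cO$-submodule of $M$, and is $G_{K_\infty}$-stable. If this image turns out to be $G_K$-stable in $M$, then for every $g\in G_K$ and $v\in V$ we get $g(v)\in V+\varpi^N V_0 = V$ (using $\varpi^N V_0\subset V$), which is the desired conclusion.

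The plan is now to apply Lemma \ref{lem:cyclo_free}(\ref{cyclo_free_4}), for which we must exhibit a semisimple $G_K$-representation $\rhobar$ such that $M^{\mathrm{ss}}$ is a direct summand of a direct sum of copies of $\rhobar$ and $\ad(\rhobar)$ is cyclotomic free. We take $\rhobar\defeq (V_0/\varpi V_0)^{\mathrm{ss}}$ as a $G_K$-representation. Since $M$ admits a filtration with $2N$ graded pieces each isomorphic to $V_0/\varpi V_0$ (via multiplication by powers of $\varpi$), the Jordan--H{\"o}lder factors of $M$ as $G_K$-representations are among those of $\rhobar$, and the semisimplicity of $\rhobar$ gives the first property. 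For the cyclotomic-free condition, I will show that $\rhobar$ is a $2$-generic tame $G_K$-representation and then invoke Lemma \ref{lem:cyclo_free}(2). Tameness is general: every continuous irreducible $\bF[G_K]$-representation factors through the tame quotient, because the wild inertia $P_K$ is pro-$p$ (hence has nonzero invariants on any $\bF$-representation), and these invariants are $G_K$-stable by normality of $P_K$ in $G_K$. For the identification of $\rhobar$, Brauer--Nesbitt applied to the two $G_{K_\infty}$-stable $\cO$-lattices $V$ and $V_0$ in $W|_{G_{K_\infty}}$ gives an isomorphism $\rhobar|_{G_{K_\infty}}\cong (V/\varpi V)^{\mathrm{ss}}$ of $G_{K_\infty}$-representations. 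Since $\Lambda$ is local, the Jordan--H{\"o}lder factors of $V/\varpi V$ as a $G_{K_\infty}$-module coincide (with multiplicities) with those of the residue-field reduction $V\otimes_\Lambda \bF'$, which comes from $\fM\otimes_\Lambda\bF'\in Y^{[0,h],\tau}(\bF')$ via $T^*_{dd}$. Since $\tau$ is $(h+2)$-generic, Corollary \ref{lem:ss_bound} applied with $m=h+2$ yields that $T^*_{dd}(\fM\otimes_\Lambda \bF')^{\mathrm{ss}}$ admits a $2$-generic lowest alcove presentation; since the tame $G_K$-extension of a semisimple $G_{K_\infty}$-representation is unique (fully faithfulness of restriction on tame representations), $\rhobar$ itself is $2$-generic, and we conclude.

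The hard part will be the cyclotomic-freeness of $\ad(\rhobar)$, which crucially relies on the reduction to the case where $\Lambda$ is local: without this reduction, $\rhobar$ would be a direct sum of several $2$-generic tame $G_K$-representations coming from distinct residue fields of $\Lambda/\varpi\Lambda$, and controlling the cross-terms in $\ad(\rhobar)$ would require genericity of $\tau$ stronger than $(h+2)$-generic. The remaining steps (choice of $V_0$, identification of $\rhobar|_{G_{K_\infty}}$ via Brauer--Nesbitt, tameness of irreducible $\bF[G_K]$-representations) are essentially formal once the local reduction is in place.
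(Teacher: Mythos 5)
Your overall strategy is the same as the paper's: reduce to $\Lambda$ local, sandwich $V$ between $\varpi^{\pm N}$-scalings of a $G_K$-stable $\cO$-lattice $V_0$, pass to the finite-length $\cO[G_K]$-module $M=\varpi^{-N}V_0/\varpi^N V_0$, and apply Lemma \ref{lem:cyclo_free}(\ref{cyclo_free_4}). The bookkeeping on the lattice side differs slightly (the paper chooses a $\Lambda[G_K]$-stable lattice containing $V$, you sandwich from both sides); that is harmless. The gap lies in the choice of $\rhobar$ and the ensuing genericity claim.

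You set $\rhobar\defeq(V_0/\varpi V_0)^{\mathrm{ss}}$ and conclude by asserting that ``$\rhobar$ itself is $2$-generic,'' identifying $\rhobar$ with the unique tame $G_K$-extension of $T^*_{dd}(\fM\otimes_\Lambda\bF')^{\mathrm{ss}}$. That identification fails whenever $\Lambda\neq\cO$. Since $V[\tfrac{1}{p}]$ has $E$-dimension $n\cdot\mathrm{rank}_\cO\Lambda$, the lattice $V_0$ has $\cO$-rank $n\cdot\mathrm{rank}_\cO\Lambda$, so $\rhobar$ has $\F$-dimension $n\cdot\mathrm{rank}_\cO\Lambda$; running Brauer--Nesbitt together with the $\fm$-adic filtration of $V/\varpi V$ (where $\fm\subset\Lambda$ is the maximal ideal) shows $\rhobar|_{G_{K_\infty}}\cong\bigl(T^*_{dd}(\fM\otimes_\Lambda\bF')^{\mathrm{ss}}\bigr)^{\oplus k}$ with $k=\dim_{\bF'}(\Lambda/\varpi)$, not the rank-$n$ object itself. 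For $k>1$ every inertial character of $\rhobar|_{I_K}$ is repeated $k$ times, so $\rhobar|_{I_K}$ is not regular, hence not even $1$-generic (Definition \ref{defi:gen}), and Lemma \ref{lem:cyclo_free}(2) cannot be invoked for your $\rhobar$. (Your closing remark anticipates trouble for non-local $\Lambda$, but locality of $\Lambda$ does not force $k=1$: it fails already for $\Lambda=\cO[\eps]/(\eps^2)$.) The correct choice is the paper's: let $\rhobar$ be the rank-$n$ tame $G_K$-extension of $T^*_{dd}(\fM\otimes_\Lambda\bF')^{\mathrm{ss}}$, which is $2$-generic by Corollary \ref{lem:ss_bound}, so that $\ad(\rhobar)$ is cyclotomic free by Lemma \ref{lem:cyclo_free}(2); then full faithfulness of restriction on tame representations upgrades the isomorphism above to $(V_0/\varpi V_0)^{\mathrm{ss}}\cong\rhobar^{\oplus k}$ of $G_K$-representations, whence $M^{\mathrm{ss}}\cong\rhobar^{\oplus 2Nk}$ satisfies the hypothesis of Lemma \ref{lem:cyclo_free}(\ref{cyclo_free_4}) and the rest of your argument goes through unchanged.
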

\begin{proof} It suffices to treat the case $\Lambda$ local. Let $\bF'$ be the residue field of $\Lambda$. Note that the semisimplified reduction $V_{\bF'}^{\mathrm{ss}}$ extends uniquely to a semisimple $G_K$-representation $\rhobar$. By Lemma \ref{lem:ss_bound}, $\rhobar$ is $2$-generic.
We choose a $\Lambda[G_K]$-stable $\cO$-lattice $W$ in $V[\frac{1}{p}]$ such that $V\subset W$. Then $(W/\varpi)^{\mathrm{ss}}$ is isomorphic to a finite direct sum of $\rhobar$ as $G_K$-representations. Choose $N$ large enough so that $p^NW\subset V \subset W$. Applying Lemma \ref{lem:cyclo_free}(\ref{cyclo_free_4}) to $V/p^NW\subset W/p^NW$, we conclude that $V/p^NW$, and hence $V$ is $G_K$-stable.
\end{proof}

\begin{proof}[Proof of Proposition \ref{prop:stack_diagram}] The Cartesian-ness of the rectangles and trapezoids follows from the definitions. Proposition \ref{prop:unique_BK} shows that the vertical maps labelled with $\cong$ are isomorphisms. Proposition \ref{prop:restriction_mono} and the definitions shows that all the hooked arrows are monomorphisms. Finally, Proposition \ref{prop:stack_vs_monodromy} show that the maps $K^{[0,h],\tau}\into Y^{[0,h],\tau}$ and $K^{\leq \lambda,\tau}\into Y^{\leq \lambda,\tau}$ are closed immersions with images as claimed.
\end{proof}
\subsection{Local models for potentially crystalline stacks} \label{sec:local_model_EG}
Throughout this section, we fix $\lambda\in X_*(T^\vee)^\cJ$ regular dominant such that $\lambda_j \in [0,h]^n$, and a tame inertial type $\tau$ with a lowest alcove presentation $(s,\mu)$. 
By Proposition \ref{prop:stack_diagram}, if $\mu$ is $(h+2)$-deep in $\un{C}_0$,  we have $\cX^{\leql,\tau}\cong Y^{\leql,\tau,\nabla_\infty}$, which is obtained from the stack of Breuil--Kisin modules $Y^{\leql,\tau}$ by imposing an explicit list of equations. On the other hand, Theorem \ref{thm:Breuil-Kisin_local_model} relates the local structure of $Y^{\leql, \tau}$ to the $p$-adic completion of the local model $M_\cJ(\leql)$. Thus we wish to analyze the effect of imposing the $\nabla_\infty$ equations on the local model diagram of Theorem \ref{thm:Breuil-Kisin_local_model}.

To our lowest alcove presentation $(s, \mu)$ of $\tau$, we get the data $\bf{a}^{\prime\, (j')} \in \Z^n$ for any $j' \in \cJ'$, cf.~equation (\ref{eq:aprime}). 
For each integer $j' \in\cJ'$, define %
\[
\bf{a}_{\tau, j'} \defeq (s'_{\mathrm{or},  j'})^{-1} (\bf{a}^{\prime\, (j')})/ (1 - p^{f'}) 
\]   
so $\bf{a}_{\tau} \in (\cO^n)^{\cJ'}$.
We caution that $\bf{a}_\tau$ depends on the choice of presentation $(s,\mu)$, and not just on $\tau$.
A direct computation gives:
\begin{lemma} 
Let $\bf{a}_\tau\in (\cO^n)^{\cJ'}$ be as above.
Then 
\[
\bf{a}_{\tau,j} \equiv s_{j}^{-1}(\mu_j + \eta_j) \mod \varpi.
\]
for any $j\in\{0,\dots,f-1\}$. 
\end{lemma}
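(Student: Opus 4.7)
The proof will be a direct unwinding of the definitions, modulo $p$. The plan is to isolate the leading ($i=0$) term in the sum defining $\bf{a}^{\prime\,(j)}$, identify the resulting $\bm{\alpha}'_{-j}$ with something built from $\bm{\alpha}$, and check that the prefactor $(s'_{\mathrm{or},j})^{-1}$ conspires with $s_\tau$ to telescope into $s_j^{-1}$. No nontrivial step is involved; this is a bookkeeping exercise.

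First, I would reduce everything modulo $\varpi$. Since $(1-p^{f'})^{-1}\equiv 1\pmod{p}$ and only the $i=0$ term of $\bf{a}^{\prime\,(j)}=\sum_{i=0}^{f'-1}\bm{\alpha}'_{-j+i}p^{i}$ contributes mod $p$, we have
\[
\bf{a}_{\tau,j}\equiv (s'_{\mathrm{or},j})^{-1}\bigl(\bm{\alpha}'_{-j}\bigr)\pmod{\varpi}.
\]
Next I would evaluate $\bm{\alpha}'_{-j}$ via \eqref{eq:alphaprime}: writing $-j\equiv (f-j)+(r-1)f\pmod{f'}$ (for $j\in\{1,\dots,f-1\}$; the case $j=0$ is immediate) and using that $s_\tau$ has order $r$, we get $\bm{\alpha}'_{-j}=s_\tau^{-(r-1)}(\bm{\alpha}_{f-j})=s_\tau(\bm{\alpha}_{f-j})$, and by definition of $\bm{\alpha}_{f-j}$,
\[
s_\tau(\bm{\alpha}_{f-j})=s_\tau\,s_{f-1}^{-1}s_{f-2}^{-1}\cdots s_{j}^{-1}(\mu_j+\eta_j).
\]

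Finally I would use the formula \eqref{primeorient} specialized to $k=0$, namely $s'_{\mathrm{or},j}=s_\tau\cdot s_{f-1}^{-1}s_{f-2}^{-1}\cdots s_{j+1}^{-1}=s_0s_1\cdots s_j$, and compute the resulting product. The prefactor $(s'_{\mathrm{or},j})^{-1}\cdot s_\tau=s_j^{-1}\cdots s_0^{-1}\cdot s_0s_1\cdots s_{f-1}$ telescopes (canceling $s_0^{-1}s_0$, then $s_1^{-1}s_1$, etc.) to $s_{j+1}s_{j+2}\cdots s_{f-1}$; multiplying this by the remaining $s_{f-1}^{-1}s_{f-2}^{-1}\cdots s_{j}^{-1}$ telescopes again (this time from the middle outward) to $s_j^{-1}$. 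Combining the three steps yields $\bf{a}_{\tau,j}\equiv s_j^{-1}(\mu_j+\eta_j)\pmod{\varpi}$, as required. The case $j=0$ reduces to $(s'_{\mathrm{or},0})^{-1}=s_0^{-1}$ applied to $\bm{\alpha}_0=\mu_0+\eta_0$ and so is a degenerate instance of the same computation.
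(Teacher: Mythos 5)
Your proof is correct and follows essentially the same route as the paper: reduce modulo $\varpi$ to isolate the $i=0$ term of $\mathbf{a}^{\prime\,(j)}$, identify $\bm{\alpha}'_{-j}$ via $\bm{\alpha}'_{-j}=\bm{\alpha}'_{f'-j}=s_\tau^{-(r-1)}(\bm{\alpha}_{f-j})=s_\tau(\bm{\alpha}_{f-j})$ using that $s_\tau$ has order $r$, and then telescope. The only cosmetic difference is that you use the rewritten form $s'_{\mathrm{or},j}=s_0s_1\cdots s_j$ to carry out the telescoping uniformly for all $j$, whereas the paper leaves $s'_{\mathrm{or},j}$ in the form of \eqref{primeorient} and splits into the three cases $j=0$, $0<j<f-1$, $j=f-1$; your version is marginally cleaner but mathematically identical.
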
 
\begin{proof}
The proof is obtained by unraveling the definitions.
First of all, we notice that $\bf{a}_{\tau,j} \equiv (s'_{\mathrm{or},  j})^{-1} (\bf{a}^{\prime\, (j)})$ modulo $\varpi$, hence:
\begin{align}
\nonumber
\bf{a}_{\tau,j} &\equiv  (s'_{\mathrm{or},  j})^{-1}(\bm{\alpha}'_{- j}) \text{ modulo $\varpi$}\\
\label{eq:atau:modp}
&= (s'_{\mathrm{or},  j})^{-1}(s_\tau)^{\delta_{j>0}}(\bm{\alpha}_{f-j})
\end{align}
for $0\leq j\leq f-1$, using (\ref{eq:aprime}) and (\ref{eq:alphaprime}) above (where we set  $\bm{\alpha}_{f}\defeq \bm{\alpha}_{0}$).
Recall from (\ref{primeorient}) that $s'_{\mathrm{or},  0} =s_0$, that $s'_{\mathrm{or},  j} = s_{\tau}(s_{f-1}^{-1} \ldots s_{j+1}^{-1})$ for $0< j < f-1$ and that $s'_{\mathrm{or},  f-1} = s_{\tau}$.   
Thus the expression (\ref{eq:atau:modp}) equals $s_0^{-1}(\bm{\alpha}_{0})$ for $j=0$, $(s_{j+1} \ldots s_{f-1})(\bm{\alpha}_{f-j})$ for $0 < j < f-1$ and $\bm{\alpha}_{1}$ if $j = f-1$. 
As $\bm{\alpha}_{f-j} = s^{-1}_{f-1} \ldots s^{-1}_j (\mu_j + \eta_j)$ for $0<j\leq f-1$ and $\bm{\alpha}_0=\mu_0+\eta_0$ (see Example \ref{ex:data:type}), the conclusion follows.
\end{proof}
Recall from \S \ref{sec:naivemlm}, \S \ref{subsec:products} the projective $\cO$-scheme $M_{\cJ}^{\nv}(\leql, \nabla_{\bf{a}_{\tau}})=\prod_{j\in \cJ} M^{\nv}(\leql_j, \nabla_{\bf{a}_{\tau,j}})$. Here, for each $j\in \cJ$, we defined $M^{\nv}(\leql_j, \nabla_{\bf{a}_{\tau,j}})$ as the intersection $M(\lambda)\cap \Gr_{\cG,\cO}^{\nabla_{\bf{a}_{\tau,j}}}$ inside $\Gr_{\cG,\cO}$. In other words if $R$ is a Noetherian $\cO$-algebra and $x\in M(\lambda_j)(R)$ is represented by $A\in L\cG(R)$, then $x\in M^{\nv}(\leql_j,\nabla_{\bf{a}_{\tau,j}})$ if and only if  
\[  v \frac{d}{dv}(A) A^{-1}  + A \Diag(\bf{a}_{\tau,j}) A^{-1} \in \frac{1}{v+p}L^+\cM(R).\]
We have $\dim M_{\cJ}^{\nv}(\leql, \nabla_{\bf{a}_{\tau}})\leq 1+ \#\cJ\dim_E (B\backslash \GL_n)_E =1+\#\cJ\frac{n(n-1)}{2}$
 
Recall from Definition \ref{defn:alg:MLM} and \S \ref{subsec:products} the $\cO$-flat subscheme $M_{\cJ}(\lambda,\nabla_{\bf{a}_\tau})\subset M_{\cJ}^{\nv}(\leql, \nabla_{\bf{a}_{\tau}})$. By Proposition \ref{prop:compare-with-naive2}, the scheme theoretic union of $M_{\cJ}(\lambda',\nabla_{\bf{a}_\tau})$ over $\lambda'$ dominant and $\lambda'\leq \lambda$ is the $\cO$-flat part of $M_{\cJ}^{\nv}(\leql, \nabla_{\bf{a}_{\tau}})$. We define  $M_{\cJ,\reg}(\leql, \nabla_{\bf{a}_{\tau}}) $ to be the scheme theoretic union of $M_{\cJ}(\lambda',\nabla_{\bf{a}_\tau})$ over $\lambda'$ \emph{regular} dominant and $\lambda'\leq \lambda$. {Because $\dim M_{\cJ}(\lambda',\nabla_{\bf{a}_\tau})\leq 1+\#\cJ\dim_E (B\backslash \GL_n)_E$ with equality if and only if $\lambda'$ is regular,} $M_{\cJ,\reg}(\leql, \nabla_{\bf{a}_{\tau}}) $ is characterized as the maximal $\cO$-flat closed subscheme of $M_{\cJ}(\leql, \nabla_{\bf{a}_{\tau}}) $ which is equidimensional of dimension
\[ 1+\#\cJ\dim_E (B\backslash \GL_n)_E.\]

Let $\tld{z}=(\tld{z}_j)_{j\in \cJ}$. Recall from (\ref{eq:chartsforlambda}) the open affine subscheme $U(\tld{z},\leql)\into M_\cJ(\leql)$ and the trivial $T^{\vee,\cJ}_\cO$-torsor $\tld{U}(\tld{z},\leql)=T^{\vee,\cJ}_\cO\times U(\tld{z},\leql)$. Intersecting with these affine opens, we get the objects $U(\tld{z},\lambda,\nabla_{\bf{a}_\tau})$, $\tld{U}(\tld{z},\lambda,\nabla_{\bf{a}_\tau})$, $U_\reg(\tld{z},\leql,\nabla_{\bf{a}_\tau})$ and $\tld{U}_\reg(\tld{z},\leql,\nabla_{\bf{a}_\tau})$.

Our main result is the following
\begin{thm} \label{thm:stack_local_model}
{Let $(s,\mu)$ be a $(h+2)$-lowest alcove presentation for $\tau$ and} consider the commutative diagram of $p$-adic formal algebraic stacks
\begin{equation} %
\begin{tikzcd}[column sep=small]
\tld{U}_\reg(\tld{z},\leql, \nabla_{\bf{a}_\tau})^{\wedge_p}\ar[hook]{rr} & \phantom{M} & \tld{U}^{\nv}(\tld{z}, \leql,\nabla_{\bf{a}_\tau})^{\wedge_p} \ar{r}{T^{\vee,\cJ}_\cO}\ar[hook]{d}& U^{\nv}(\tld{z},\leql,\nabla_{\bf{a}_\tau})^{\wedge_p} \ar[hook,open]{r}\ar[hook]{d}& M_{\cJ}^{\nv}(\leql, \nabla_{\bf{a}_\tau})^{\wedge_p}\ar[hook]{d} \\
\tld{\cX}_{\reg}^{\leq\lambda,\tau}(\tld{z}) \ar[hook,dotted]{u}{\cong}\ar[hook]{r}\ar{d}{T^{\vee,\cJ}_\cO}&\tld{U}(\tld{z},\leql,\nabla_{\tau,\infty})\ar[hook]{r}\ar{d}{T^{\vee,\cJ}_\cO}\ar[hook,dotted]{ur}&\tld{U}(\tld{z},\leql)^{\wedge_p}\ar{r}{T^{\vee,\cJ}_\cO}\ar{d}{T^{\vee,\cJ}_\cO} & U(\tld{z},\leql)^{\wedge_p}\ar[hook,open]{r}& M_{\cJ}(\leql)^{\wedge_p}\\
\cX_{\reg}^{\leq \lambda,\tau}(\tld{z})\ar[hook]{r}\ar[hook,open]{d}&Y^{\leql,\tau,\nabla_\infty}(\tld{z}) \ar[hook]{r}\ar[hook,open]{d}&Y^{\leql,\tau}(\tld{z})\ar[hook,open]{d} & \phantom{M} & \phantom{M} \\
  \cX^{\leq \lambda,\tau}_{\reg}\ar[hook]{r}&Y^{\leql,\tau,\nabla_\infty} \ar[hook]{r}&Y^{\leql,\tau}&\phantom{M} & \phantom{M}
\end{tikzcd}
\end{equation}
where:
\begin{itemize}
\item {The objects $M_{\cJ}(\leql), M_{\cJ}^{\nv}(\leql, \nabla_{\bf{a}_\tau})$ are defined in \S \ref{subsec:products};
$Y^{\leql,\tau}$, $Y^{\leql,\tau}(\tld{z})$, $U(\tld{z},\leql)$, and $\tld{U}(\tld{z},\leql)$ are defined in \S \ref{subsec:LMofBK},$Y^{\leql,\tau,\nabla_\infty}$ is defined in \S \ref{subsec:tame:EGstacks} and finally $\tld{U}_\reg(\tld{z},\leql, \nabla_{\bf{a}_\tau})$ is defined in the paragraph above;
}
\item $\cX^{\leq \lambda,\tau}_\reg$ is the scheme theoretic union $\bigcup_{\lambda'} \cX^{ \lambda',\tau}$, where $\lambda'$ runs over all regular dominant coweights $\leq \lambda$.
\item All solid rectangles are Cartesian. This defines any previously undefined object in the diagram, {namely $\tld{U}^{\nv}(\tld{z}, \leql,\nabla_{\bf{a}_\tau})^{\wedge_p}$, $\ U^{\nv}(\tld{z},\leql,\nabla_{\bf{a}_\tau})^{\wedge_p}$, $\tld{\cX}_{\reg}^{\leq\lambda,\tau}(\tld{z})$, $\tld{U}(\tld{z},\leql,\nabla_{\tau,\infty})$, $\cX_{\reg}^{\leq \lambda,\tau}(\tld{z})$ and $Y^{\leql,\tau,\nabla_\infty}(\tld{z})$}. 
{(Note that the first two objects are $p$-adic completions of $\tld{U}^{\nv}(\tld{z}, \leql,\nabla_{\bf{a}_\tau})$, $\ U^{\nv}(\tld{z},\leql,\nabla_{\bf{a}_\tau})$, which are defined by the same pullbacks but without the $p$-adic completion.)}
\item All undecorated hooked arrows are closed immersions.
\item All circled hooked arrows are open immersions. %
\item All arrows decorated with $T^{\vee,\cJ}_\cO$ are $T^{\vee,\cJ}_\cO$-torsors.
\end{itemize}
Then:
\begin{enumerate}
\item 
\label{it:stack_local_model:1}
There exists an integer $N_{\sing}=N(\{\lambda_j\}_{j\in\cJ})$ which depends only on the subset $\{\lambda_j\}\subset \bZ^n$ $($and not on $p)$ such that if $\mu$ is $N_{\sing}$-deep in $\un{C}_0$, then the {diagonal} dotted arrow exists for all $\tld{z}\in\tld{W}^{\vee,\cJ}$.
\item 
\label{it:stack_local_model:2}
There exists a polynomial $P=P_{\{\lambda_j\}_{j\in\cJ},e}(X_1,\cdots, X_n)\in \bZ[X_1,\cdots, X_n]$ depending only on the subset $\{\lambda_j\}\subset \bZ^n$ and the ramification index $e$ of $\cO$ $($and not on $p)$, such that if  $\mu$ is $P-$generic, then%
\begin{itemize}
\item The dotted arrows exist and the vertical dotted arrow is an isomorphism.
\item %
For any $\lambda'\leq \lambda$ regular dominant and any semisimple $\rhobar\in \cX_n(\overline{\bF}_p)$, the versal rings to $\cX^{\lambda',\tau}$ at $\rhobar$ are domains $($or $0)$. In other words, $\cX^{\lambda',\tau}$ is analytically irreducible at the $\overline{\bF}_p$-points corresponding to semisimple $G_K$-representations.
\end{itemize}
\end{enumerate}
\end{thm}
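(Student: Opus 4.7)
The plan is to construct the diagonal arrows via Elkik's approximation theorem and the algebraic monodromy approximation (Proposition \ref{prop:monodromy_approximation}), and then upgrade to an isomorphism and derive unibranch properties via the universal unibranch result of Theorem \ref{thm:model_unibranch}.

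First I would construct the map $\cX^{\leq\lambda,\tau}(\tld{z}) \hookrightarrow U^{\nv}(\tld{z},\leql,\nabla_{\bf{a}_\tau})^{\wedge_p}$ (and its $T^{\vee,\cJ}$-cover) for $\mu$ sufficiently deep. By Proposition \ref{prop:stack_diagram}, $\cX^{\leql,\tau}(\tld{z}) \cong Y^{\leql,\tau,\nabla_\infty}(\tld{z})$, so after a smooth cover $\Spf R \to \cX^{\leql,\tau}(\tld{z})$ we obtain a Breuil--Kisin module $\fM \in Y^{\leql,\tau}(R)$ satisfying $I_{\fM,\nabla_\infty}=0$. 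Zariski locally we pick a $\tld{z}$-gauge basis $\beta$ (Proposition \ref{prop:gauge_basis_existence}), and Theorem \ref{thm:Breuil-Kisin_local_model} produces a map $\Spf R \to \tld{U}(\tld{z},\leql)^{\wedge_p}$. The approximation Proposition \ref{prop:monodromy_approximation} yields $I_{\fM,\beta,\nabla_1} \subset (I_{\fM,\nabla_\infty}, p^{m-2h+3}) = (p^{m-2h+3})$, which, via Remark \ref{rmk:derivative_vs_vanishing}, means the map lands in $U^{\nv}(\tld{z},\leql,\nabla_{\bf{a}_\tau})^{\wedge_p}$ modulo $p^{m-2h+3}$. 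Elkik's theorem (Proposition \ref{prop:Elkik}), applicable because the generic fiber of $U^{\nv}$ is smooth by Proposition \ref{prop:smooth_generic_fiber}, then lifts this to an honest map, provided $m-2h+3$ exceeds the integer $N$ produced there. Maximizing over the (finite set of) $\tld{z}$ intersecting $\cX^{\leql,\tau}$ non-trivially, i.e.~those in $\Adm^{\vee}(\lambda)$ by Corollary \ref{cor:non_emptyness_pattern}, yields the required depth $N_{\sing}$.

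Next I would show that when restricted to $\cX^{\leq\lambda,\tau}_{\reg}$, the image lies inside $\tld{U}_{\reg}(\tld{z},\leql,\nabla_{\bf{a}_\tau})^{\wedge_p}$. Since the constructed map is a closed immersion by $\cO$-flatness and agrees with the canonical map in characteristic zero (by uniqueness of the integral structure on the smooth generic fiber, Proposition \ref{prop:smooth_generic_fiber}), and since both sides are $\cO$-flat and equidimensional of dimension $1+\#\cJ\dim_E(B\backslash\GL_n)_E$ by Theorem \ref{thm:EG_basic_properties}(\ref{it:EG_basic_properties:4}), the image lands in the maximal-dimensional $\cO$-flat part, which by definition is $\tld{U}_\reg(\tld{z},\leql,\nabla_{\bf{a}_\tau})^{\wedge_p}$.

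For part (\ref{it:stack_local_model:2}), to promote this closed immersion to an isomorphism I would match component counts locally at torus-fixed points, after enlarging the depth of $\mu$ to enforce the hypotheses of Theorem \ref{thm:model_unibranch} for the tuple $\bf{a}_\tau$. Let $\tld{z}'$ be a $T^{\vee,\cJ}$-fixed point of the local model, corresponding to a semisimple $\rhobar$ with shape $\tld{z}$ via Proposition \ref{prop:semisimple_admissible}. The upper bound is the statement that, after pulling back along the $T^{\vee,\cJ}_\cO$-torsor, the completion of $\tld{U}_\reg(\tld{z},\leql,\nabla_{\bf{a}_\tau})^{\wedge_p}$ at $\tld{z}'$ is a disjoint union of at most one domain per $\lambda'\leq \lambda$ regular dominant with $\tld{z}'\in M_{\cJ}(\lambda',\nabla_{\bf{a}_\tau})$; this is precisely Theorem \ref{thm:model_unibranch} applied with the generic-fiber polynomial $P$ (combined with the product structure from \S\ref{subsec:prod:UMLM}). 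The matching lower bound on the number of irreducible components of the versal ring $R_\rhobar^{\leql,\tau}$ comes from global methods: for each such $\lambda'$, one produces a potentially crystalline lift of $\rhobar$ of type $(\lambda'+\eta,\tau)$ via a detectable minimal patching functor (Proposition \ref{prop:obvpatchexist} and Proposition \ref{prop:patchnonzero}), forcing $R_\rhobar^{\lambda',\tau}$ to be non-zero, and its generic fiber to be equidimensional of maximal dimension by Theorem \ref{thm:EG_basic_properties}(\ref{it:EG_basic_properties:2}) and (\ref{it:EG_basic_properties:4}). Since both stacks are $\cO$-flat, equidimensional of the same dimension, and the closed immersion is surjective on components at every torus-fixed point (which are dense in the special fiber after completion), the map is an isomorphism. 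The domain statement for versal rings to $\cX^{\lambda',\tau}$ at semisimple $\rhobar$ then follows directly from the unibranch property of $M_\cJ(\lambda',\nabla_{\bf{a}_\tau})$ at the corresponding torus-fixed point in Theorem \ref{thm:model_unibranch}, via the local model diagram. The main obstacle is book-keeping the genericity polynomial $P$ to absorb all four sources: the depth needed for Proposition \ref{prop:stack_diagram}, the approximation bound $m-2h+3 \geq N$ from Elkik, the geometric genericity from Theorem \ref{thm:model_unibranch}, and the genericity of $\rhobar$ needed to apply the patching machinery of \cite{LLL}; all of these produce universal polynomials independent of $p$, so the product gives the desired $P_{\{\lambda_j\},e}$.
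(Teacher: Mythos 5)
Your proposal follows essentially the same route as the paper: constructing the diagonal arrow via the monodromy approximation (Proposition \ref{prop:monodromy_approximation}) and Elkik (Proposition \ref{prop:Elkik}), deducing the vertical arrow from $\cO$-flatness and equidimensionality, and then upgrading the surjection to an isomorphism by matching the count of minimal primes --- upper bound from the unibranch/domain property of the local models (Theorem \ref{thm:model_unibranch}), lower bound from patching (Lemma \ref{lem:local_model_lower_bound}). The one place you compress a genuine step is the final domain claim for versal rings to $\cX^{\lambda',\tau}$ at semisimple $\rhobar$: since the Elkik arrow is non-canonical, the isomorphism $\tld{\cX}_{\reg}^{\leq\lambda,\tau}(\tld{z})\cong\tld{U}_\reg(\tld{z},\leql,\nabla_{\bf{a}_\tau})^{\wedge_p}$ is \emph{a priori} only an identification of the unions over regular dominant $\lambda'\leq\lambda$, not a $\lambda'$-by-$\lambda'$ matching, so one cannot read off the unibranch property of a single $\cX^{\lambda',\tau}$ directly from that of a single $M_\cJ(\lambda',\nabla_{\bf{a}_\tau})$; the paper instead establishes the system of equalities $\sum_{\lambda''\leq\lambda'}n_{\lambda''}=\#\{\lambda''\leq\lambda'\text{ reg.~dom.}\mid\tld{z}\in\Adm^\vee(\lambda'')\}$ for all $\lambda'\leq\lambda$ and then solves it by induction to conclude $n_{\lambda'}\in\{0,1\}$.
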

\begin{warning} \begin{enumerate} 
\item
Unlike $\tld{U}^{\nv}(\tld{z},\leql,\nabla_{\bf{a}_\tau})$ which is a scheme, $\tld{U}(\tld{z},\leql, \nabla_{\tau,\infty})$ is only a $p$-adic formal scheme. This is because the equations imposed by the $\nabla_{\tau,\infty}$ condition involve infinite series which only make sense over $p$-adically complete test rings. This is why we need to put the $p$-adic completion on some objects in the diagram.
\item 
{The way we will produce the dotted map is by invoking Proposition \ref{prop:Elkik}, which appeals to Elkik's approximation theorem, and hence produces non-canonical liftings. We do not expect that there is a choice which makes the triangle commute. Looking at explicit formulas, we suspect, but have not tried to show, that the rigid generic fibers of $\tld{U}(\tld{z},\leql, \nabla_{\tau,\infty})$ and $\tld{U}^{\nv}(\tld{z},\leql,\nabla_{\bf{a}_\tau})^{\wedge_p}$ do not coincide as subspaces of the rigid generic fiber of $\tld{U}(\tld{z},\leql)^{\wedge_p}$ in general.}

However, we will see that once the diagonal dotted arrow has been constructed, it will induce the vertical dotted arrow to make the top left trapezoid commute.
\end{enumerate}
\end{warning}

\begin{rmk}\label{rmk:product_stack_local_model} Theorem \ref{thm:stack_local_model} is stated for the stacks $\cX_{\reg}^{\leq \lambda,\tau}$ parametrizing representations of $G_K$ where $K$ is a given unramified extension of $\Q_p$ which we fixed at the beginning of this section.
More generally, if we have a finite collection $(F^+_v)_{v\in S_p}$ of such, we have analogous objects $\cX_{\reg}^{\leq \lambda,\tau}$, $Y^{\leql,\tau}$ etc.~by taking products over $S_p$. Then the proof given below carries over verbatim and shows that Theorem \ref{thm:stack_local_model} continues to hold in this more general setting. 
\end{rmk}
\begin{lemma} \label{lem:local_model_lower_bound} Let $\lambda' \in X_*(T^\vee)^\cJ$ regular dominant such that $\lambda'_j \in [0,h']^n$ for all $j \in\cJ$ and $\tau$ a tame inertial type together with a fixed  $(4n + h')$-generic lowest alcove presentation.
Then $\cX^{\lambda',\tau}(\tld{z})=\cX^{\lambda',\tau} \cap Y^{\leq\lambda',\tau}(\tld{z})$ is non-empty if and only if $\tld{z}\in \Adm^\vee(\lambda')$. %
\end{lemma}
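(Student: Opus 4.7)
The plan is to establish both directions of the claimed equivalence.

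For the "only if" direction, assume $\cX^{\lambda',\tau}(\tld{z})$ is nonempty. Since $(4n+h')$-genericity subsumes the $(h'+2)$-genericity hypothesis of Proposition \ref{prop:stack_diagram}, I can use the closed immersion $\cX^{\leql',\tau}\cong Y^{\leql',\tau,\nabla_\infty}\hookrightarrow Y^{\leql',\tau}$ to conclude that $Y^{\leql',\tau}(\tld{z})$ is also nonempty. Corollary \ref{cor:non_emptyness_pattern} then yields $\tld{z}\in\Adm^\vee(\lambda')$.

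For the "if" direction, suppose $\tld{z}\in\Adm^\vee(\lambda')$. The plan is to exhibit an $\F$-point of $\cX^{\lambda',\tau}(\tld{z})$. Enlarging $\F$ if necessary and choosing any diagonal element, Proposition \ref{prop:semisimple_admissible} produces a semisimple Breuil--Kisin module $\fM\in Y^{\leql',\tau}(\F)$ of shape $\tld{z}$ together with a semisimple Galois representation $\rhobar:G_K\to\GL_n(\F)$ such that $T^*_{dd}(\fM)\cong\rhobar|_{G_{K_\infty}}$ and $\tld{w}(\rhobar,\tau)\in\Adm(\lambda')$. The key genericity computation is that every element of $\Adm^\vee(\lambda')$ is $h'$-small by Lemma \ref{bound}, so Proposition \ref{prop:propertiesofsmall}(4) applied to $\tld{w}^*(\rhobar)=\tld{z}\,\tld{w}^*(\tau)$ shows $\rhobar|_{I_K}$ is $4n$-generic. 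Proposition \ref{prop:obvpatchexist} then provides a weak minimal detectable potentially diagonalizable patching functor $M_\infty$ for $\rhobar$, and Proposition \ref{prop:patchnonzero} applied with $\lambda:=\lambda'-\eta$ (whose associated genericity requirement $\max\{2n,h_{\lambda'}+n-1\}$ on $\tau$ is dominated by $4n+h'$) converts the admissibility of $\tld{w}(\rhobar,\tau)$ into the nonvanishing of $R_\rhobar^{\lambda',\tau}$. By Theorem \ref{thm:EG_basic_properties}(\ref{it:EG_basic_properties:3}), $R_\rhobar^{\lambda',\tau}$ is a versal ring of $\cX^{\lambda',\tau}$ at $\rhobar$, so $\rhobar$ indeed defines an $\F$-point of $\cX^{\lambda',\tau}$.

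It remains to verify that this $\F$-point lies in the open substack $\cX^{\lambda',\tau}(\tld{z})$, i.e.~that the Breuil--Kisin module attached to $\rhobar$ via the isomorphism $\cX^{\leql',\tau}\cong Y^{\leql',\tau,\nabla_\infty}$ admits a $\tld{z}$-gauge. The monomorphism property of $\eps_\tau$ established in Proposition \ref{prop:BK_to_phi_mono} in the generic regime forces this Breuil--Kisin module, which recovers $\rhobar|_{G_{K_\infty}}$ under $T^*_{dd}$, to coincide up to isomorphism with $\fM$; and $\fM$ has shape $\tld{z}$ and therefore admits a $\tld{z}$-gauge by construction. The main work in the argument is the careful tracking of genericity bounds so that all cited propositions apply simultaneously, and the uniqueness identification of Breuil--Kisin modules by their underlying $G_{K_\infty}$-representations that is needed to match the two sides of the isomorphism $\cX^{\leql',\tau}\cong Y^{\leql',\tau,\nabla_\infty}$ on $\F$-points.
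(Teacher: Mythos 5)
Your proof is correct and follows essentially the same strategy as the paper's. The ``only if'' direction coincides with the paper's argument (nonemptiness of $\cX^{\lambda',\tau}(\tld{z})$ passes to $Y^{\leql',\tau}(\tld{z})$ via Proposition \ref{prop:stack_diagram}, then Corollary \ref{cor:non_emptyness_pattern} applies). The ``if'' direction also mirrors the paper: one produces a semisimple Breuil--Kisin module of shape $\tld{z}$ with associated semisimple $\rhobar$, establishes $4n$-genericity, invokes Proposition \ref{prop:patchnonzero} to get a potentially crystalline lift, and then uses versality plus the monomorphism property of $\eps_\tau$ to place the resulting $\F$-point in the open substack $\cX^{\lambda',\tau}(\tld{z})$.

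The only cosmetic difference is in how $4n$-genericity of $\rhobar$ is established: you run the smallness computation directly via Lemma \ref{bound} and Proposition \ref{prop:propertiesofsmall}, whereas the paper cites Corollary \ref{lem:ss_bound}, whose proof is precisely this computation. One small omission worth noting: Proposition \ref{prop:propertiesofsmall}(4) is stated for a product of the form (generic)$\cdot$(small), while you apply it to $\tld{w}^*(\rhobar)=\tld{z}\,\tld{w}^*(\tau)$, which is (small)$\cdot$(generic). This requires either the obvious symmetric variant of that item (which holds by the same triangle-inequality argument) or passing to inverses together with the observation that $m$-genericity of $\tld{w}$ is equivalent to $m$-genericity of $\tld{w}^{-1}$. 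This is not a gap, only an unstated line.
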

\begin{proof} For one direction, we consider the Breuil--Kisin module $\fM_{\tld{z}}\in Y^{\leql,\tau}(\tld{z})(\bF)$ which has matrices of partial Frobenii with respect to an eigenbasis given by $A^{(j)}=\tld{z}_j$ where $\tld{z} \in \Adm^{\vee}(\lambda')$. Then $\rhobar=T^*_{dd}(\fM_{\tld{z}})$ is a semisimple representation of $G_{K_\infty}$, hence is also a semisimple representation of $G_K$.  By Corollary \ref{lem:ss_bound}, $\rhobar$ is $4n$-generic.  Furthermore, by Proposition \ref{prop:semisimple_admissible} based on \cite[Corollary 3.2.17]{LLL}, $\rhobar|_{I_K}$ admits a lowest alcove presentation $(w, \nu)$ which is $(\lambda' - \eta)$-compatible such that $\tld{w}(\rhobar, \tau) \in \Adm(\lambda')$.   Then $\rhobar$ satisfies the fourth item of Proposition \ref{prop:patchnonzero} with $\lambda = \lambda' - \eta$ and hence $\rhobar$ admits a potentially crystalline lift with Hodge-Tate weight $\lambda'$ and inertial type $\tau$, and this produces a point of $\cX^{\lambda',\tau}(\tld{z})$. Conversely, if $\cX^{\lambda',\tau}(\tld{z})\neq \emptyset$, then $Y^{\leq \lambda',\tau}(\tld{z})\neq \emptyset$, hence $\tld{z}\in \Adm^{\vee}(\lambda')$ by Corollary \ref{cor:non_emptyness_pattern}.
\end{proof}

\begin{proof}[Proof of Theorem \ref{thm:stack_local_model}] 
\begin{enumerate}
\item
Suppose $\mu$ is $M$-deep in $\un{C}_0$ where $M \geq 2 h - 3$.
From Theorem \ref{thm:Breuil-Kisin_local_model}, $\tld{U}(\tld{z},\leql)^{\wedge_p}=\Spf R$ classifies objects in $Y^{\leql,\tau}$ together with a $\tld{z}$-gauge basis. Thus we get the universal such pair $(\fM,\beta)$ over $R$, and the matrix of partial Frobenii $A^{(j)}=A^{(j)}_{\fM,\beta}$. 
We have $\tld{U}(\tld{z},\leql,\nabla_{\tau,\infty})=\Spf R/I_{\fM,\nabla_\infty}$, while{, by Remark \ref{rmk:derivative_vs_vanishing} and the definition of $\bf{a}_{\tau,j}$,} $\tld{U}^{\nv}(\tld{z},\leql,\nabla_{\bf{a}_\tau})^{\wedge_p}=\Spf R/I_{\fM,\beta,\nabla_1}$ is cut out by the condition  
\[  v \frac{d}{dv}A^{(j)}(A^{(j)})^{-1}  + A^{(j)} \Diag(\bf{a}_{\tau,j}) (A^{(j)})^{-1} \in \frac{1}{v+p}L^+\cM(R).\]
By Proposition \ref{prop:monodromy_approximation}, $I_{\fM,\beta,\nabla_1} \subset (I_{\fM,\nabla_\infty},p^N)$, where $N=M-2h +3$. 
We also note that 
 this implies that for each $j\in \cJ$ there is a diagram 
\[\xymatrix{\Spec R/(I_{\fM,\nabla_\infty},p^N)\ar@{^{(}->}[r]\ar[d]& \Spec R/(I_{\fM,\beta,\nabla_1},p^N)\ar[r]^-{f_j}& \cU_X^{\nv}(\tld{z}_j,\leql_j,\nabla)\ar[d]\\
\Spec  R/I_{\fM,\nabla_\infty}\ar[rr]^-{(-p,\bf{a}_{\tau,j})} &&\bA^1\times \bA^n}
\] \
Note that $R/I_{\fM,\nabla_\infty}$ is $p$-adically complete and $p$-torsion free. By Proposition \ref{prop:Elkik}, there exists an integer $N'_\sing$ depending only on the set $\{\lambda_j\}$ such that if $M \geq 2\max_{j\in\cJ} h_{\lambda_j}+N'_\sing$ , we can lift $f_j \mod p$ to a map $\tld{f}_j: \Spec R/I_{\fM,\nabla_\infty}\to \cU_X^{\nv}(\tld{z}_j,\leql_j,\nabla)$. Since $T^{\vee,\cJ}$ is smooth over $\bZ$, we can also lift the composite $ \Spec R/(I_{\fM,\nabla_\infty},p) \to \tld{U}(\tld{z},\leql)=T^{\vee,\cJ}_{\cO} \times U(\tld{z},\leql)\to T^{\vee,\cJ}_{\cO}$ to $ \Spec R/I_{\fM,\nabla_\infty} \to T^{\vee,\cJ}_{\cO}$.
{Taking the product of the lifts above produces a map}
\[\tld{f}:\Spec R/I_{\fM,\nabla_\infty}\to (\prod_{j\in \cJ} T^{\vee}\times\cU_X^{\nv}(\tld{z}_j,\leql_j,\nabla))_\cO\]
where the base change is along the map $\Spec \cO \to X\times \bA^{n,\cJ}$ given by the tuple $(-p,\bf{a}_\tau)$. By Remark \ref{rmk:Mnv:sub}, {the fact that $U^{\nv}(\tld{z},\leql, \nabla_{\bf{a}_\tau})=M^{\nv}_{\cJ}(\leql,\nabla_{\bf{a}_\tau})\cap U(\tld{z},\leql)$,} and the fact that $R/I_{\fM,\nabla_\infty}$ is $\cO$-flat and $p$-adically complete, $\tld{f}$ factors through $U^{\nv}(\tld{z},\leql, \nabla_{\bf{a}_\tau})$, and further factors through the $p$-adic completion, which produces the desired dotted arrow. 
{This map is a closed immersion of $p$-adic formal algebraic stacks, since it is a closed immersion modulo $p$.}
This finishes the proof of the first part.
\item

We first claim that if the diagonal dotted arrow exists, then it induces the vertical arrow so that the trapezoid commutes. Indeed by construction $\tld{U}_\reg(\tld{z},\leql, \nabla_{\bf{a}_\tau})^{\wedge_p}$ is the maximal closed $p$-adic formal subscheme of $\tld{U}^{\nv}(\tld{z},\leql, \nabla_{\bf{a}_\tau})^{\wedge_p}$ which is $\cO$-flat and equidimensional of dimension $1+\dim_E (B\backslash \GL_n)_E \#\cJ  +n \#\cJ$. But Theorem \ref{thm:EG_basic_properties} says that $\tld{\cX}_\reg^{\leq \lambda,\tau}(\tld{z})$ has both these properties, proving our claim.

We abbreviate $\Spf B= \tld{U}_\reg(\tld{z},\leql,\nabla_{\bf{a}_\tau})^{\wedge_p}$ and $\Spf A=\tld{\cX}^{\leq \lambda,\tau}_\reg(\tld{z})$. Note that $A$ and $B$ are both $\cO$-flat, $p$-adically complete, reduced, equidimensional of dimension $1+\#\cJ \frac{n(n+1)}{2}$, and there is a surjection $B\onto A$ provided by the existence of the dotted vertical arrow.

We now apply Theorem \ref{thm:model_unibranch} for each $\lambda'\leq \lambda$ regular dominant. It implies there exists a polynomial $P\in \bZ[X_1,\cdots X_n]$ depending only on the ramification index $e$ of $\cO$ and the set $\{\lambda_j\}_{j\in \cJ}$ such that if $P(\bf{a}_{\tau,j})\mod p \neq 0$ for all $j\in \cJ$, then for each $\lambda'\leq \lambda$ regular dominant:
\begin{itemize}
\item {$M(\lambda',\nabla_{\bf{a}_\tau})$ is the base change of $\cM_{X,\cJ}(\lambda',\nabla)$ via the map $\Spec \cO  \ra X \times \mathbb{A}^{n,\cJ}$ given by $(-p, \bf{a}_{\tau})$.}
\item \label{item:unibranch_1}
 $\cO(U(\tld{z},\lambda',\nabla_{\bf{a}_\tau})^{\wedge_p})$ is a domain. Hence the same is also true for $\cO(\tld{U}(\tld{z},\lambda',\nabla_{\bf{a}_\tau})^{\wedge_p})$.
\item \label{item:unibranch_2}$U(\tld{z},\lambda',\nabla_{\bf{a}_\tau})$ is unibranch (equivalently, analytically irreducible) at $\tld{z}$. In other words, any of its versal rings at $\tld{z}$ is a domain. %
\end{itemize}
{(To arrange the first item, we use Remark \ref{rmk:generic_flatness} to the map $\cM_{X,\cJ}(\lambda',\nabla)\to X\times \mathbb{A}^{n,\cJ}$ to guarantee that the base change of $\cM_{X,\cJ}(\lambda',\nabla)$ via the map $\Spec \cO  \ra X \times \mathbb{A}^{n,\cJ}$ given by $(-p, \bf{a}_{\tau})$ is $\cO$-flat for each $\lambda'$.)}

Now if $U(\tld{z},\lambda',\nabla_{\bf{a}_\tau})\neq \emptyset$ then $Y^{\leq\lambda',\tau}(\tld{z})_\bF\neq \emptyset$. Hence by Corollary \ref{cor:non_emptyness_pattern}, $\tld{z}\in \Adm^\vee(\lambda')$.
It follows from this and the first item above that the number of minimal primes of $B$ is at most
\[\# \{\lambda'\leq \lambda | \lambda' \textrm{ regular dominant},\, \tld{z}\in \Adm^{\vee}(\lambda') \}.\]
On the other hand, taking $M \geq \max \{4n + h, 2h +3 \}$, Lemma \ref{lem:local_model_lower_bound} shows that the number of minimal primes of $A$ is at least  
\[\# \{\lambda'\leq \lambda | \lambda' \textrm{ regular dominant},\, \tld{z}\in \Adm^{\vee}(\lambda') \}.\]
This forces the kernel $\ker(B\onto A)$ to lie in the intersection of all the minimal primes of $B$, and hence is $0$. This shows that the surjection induces an isomorphism $B\risom A$.

We now wish to show that given a semisimple $G_K$ representation $\rhobar \in \cX_n(\overline{\bF}_p)$ {and $\lambda'\leq \lambda$ regular dominant, any versal ring to $\cX^{\lambda',\tau}$ at $\rhobar$ is an integral domain or zero.}

We may assume without loss of generality that $\rhobar\in \cX_n(\bF)$. For any $\lambda'\leq \lambda$, let $n_{\lambda'}\geq 0$ be the number of minimal primes of any versal ring to $\cX^{\lambda',\tau}$ at $\rhobar$. 

First suppose $\rhobar \in \cX^{\leq \lambda',\tau}(\bF)$. By Proposition \ref{prop:semisimple_admissible}, the image $\fM_\rhobar$ of $\rhobar$ in $Y^{\leq\lambda',\tau}(\bF)$ is a semisimple Breuil--Kisin module of some shape $\tld{z}$ by the uniqueness of $\fM_{\rhobar}$ (Proposition \ref{prop:BK_to_phi_mono}). Hence,  $\fM_\rhobar\in Y^{\leq\lambda',\tau}(\tld{z})(\bF)$, and can be lifted to an element in $T^{\vee,\cJ}(\bF)\tld{z} \subset \tld{U}(\tld{z},\leql')(\bF)$. Thus we can find a versal ring $R$ to $\cX_\reg^{\leq \lambda',\tau}$ at $\rhobar$ which is also a versal ring to $U_\reg(\tld{z},\leql',\nabla_{\bf{a}_\tau})$ at $\tld{z}$. Now it follows from the second item above that the number of minimal primes of $R$ is exactly $\#\{\lambda'' \leq \lambda' \textrm{ regular dominant}| \tld{z}\in \Adm^{\vee}(\lambda'')\}$.
Since $\cX_\reg^{\leq \lambda',\tau}$ is the scheme theoretic union of $\cX^{\lambda'',\tau}$ over {$\lambda''\leq \lambda'$} regular dominant {(each of which is equidimensional of the same dimension and no two share an irreducible component)}, 
we have thus shown that 
\begin{equation}\label{eq:branch_number}\sum_{{\lambda'' \leq \lambda'} \textrm{ regular dominant }} n_{\lambda''}=\#\{{\lambda'' \leq \lambda'} \textrm{ regular dominant}| \tld{z}\in \Adm^{\vee}(\lambda'')\}
\end{equation}
On the other hand, if $\rhobar \notin \cX^{\leq \lambda',\tau}(\bF)$, then $\sum_{{\lambda''\leq \lambda'}} n_{\lambda''}=0$, and $\#\{\lambda'' \leq \lambda'\textrm{ regular dominant}| \tld{z}\in \Adm^{\vee}(\lambda'')\}=0$ by Lemma \ref{lem:local_model_lower_bound}.

Thus equation (\ref{eq:branch_number}) holds for all $\lambda'\leq \lambda$ regular dominant. This implies by induction on $\#\{\lambda''\leq \lambda'| \lambda'' \textrm{ regular dominant}\}$ that $n_{\lambda'}\in \{0,1\}$ and that $n_{\lambda'}=1$ if and only if $\tld{z}\in \Adm^\vee({\lambda'})$. Thus any versal ring to $\cX^{\lambda',\tau}$ to $\rhobar$ is either the zero ring or a domain.
\end{enumerate}
\end{proof}

\subsection{Structure of potentially crystalline stacks modulo $p$}
\label{subsec:PCS:modp}
In \cite[Theorem 6.5.1]{EGstack}, Emerton and Gee describe a parametrization of the irreducible components of the underlying reduced stack $\cX_{n,\red}$ of the moduli of $(\phz,\Gamma)$-modules $\cX_n$ by Serre weights of $\GL_n(\cO_K)$. 
Taking products, this gives a parametrization of the irreducible components of the underlying reduced stack 
\[
\cX_{n,\red}^{F^+_p} = \prod_{v\in S_p,\F}\cX_{n,\red}^{F^+_v} \subset \cX_n^{F^+_p} \defeq \prod_{v\in S_p,\Spf \cO} \cX_n^{F^+_v}
\]
by Serre weights of $\rG$.
Let $\sigma=F(\kappa)$ be a Serre weight of $\GL_n(\cO_K)$ with $\kappa\in X_1(\un{T})=X_1(T)^{\cJ}$. Then the component $\cX_{EG,n,\red}^{\sigma}$ labelled by $\sigma$ is characterized as the reduced substack of $\cX_n$ which is is the closure of the locus of $\rhobar\in \cX_n(\overline{\bF}_p)$ such that $\rhobar$ has the form %
\[ \rhobar \cong \begin{pmatrix}  \chi_1 &* &\cdots & * \\
0&\chi_2& \cdots& *\\
\vdots &&\ddots&\vdots\\
0&\cdots &0 & \chi_n
\end{pmatrix}
\]
where%
\begin{itemize}
\item $\rhobar$ is maximally non-split niveau $1$, i.e.~it has a unique $G_K$-stable complete flag;
\item $\chi_i|_{I_K} = \prod_{j\in\cJ} \ovl{\omega}_{K,\sigma_j}^{1-i-\kappa_{j,n+1-i}}$;
\item If $\chi_{i+1}\chi_i^{-1}|_{I_K}= \overline{\eps}^{-1}$, then $\langle \kappa_j, \eps^{\vee}_{n-i}-\eps^{\vee}_{n+1-i}\rangle=p-1$ for all $j\in \cJ$ if and only if $\chi_{i+1}\chi^{-1}_i =\ovl{\eps}^{-1}$, and the element $\mathrm{Ext}^1(\chi_i,\chi_{i+1})=H^1(G_K,\ovl{\eps})$ determined by $\rhobar$ is tr\`{e}s ramifi\'{e}e (and otherwise $\langle \kappa_j, \eps^{\vee}_{n-i}-\eps^{\vee}_{n+1-i}\rangle=0$ for all $j\in \cJ$). 
\end{itemize}
We define $\cC_{\sigma}\defeq \cX^{\sigma^\vee\otimes \det^{n-1}}_{EG,n,\red}$. 
 Thus if $\sigma=F(\kappa)$ is $1$-deep and $\rhobar$ is as above, then %
\begin{equation}\label{eq:generic_points_cpt} \rhobar \cong \begin{pmatrix}  \chi_1 &* &\cdots & * \\
0&\chi_2& \cdots& *\\
\vdots &&\ddots&\vdots\\
0&\cdots &0 & \chi_n
\end{pmatrix}
\end{equation}
where $\chi_{i}|_{I_K}=\prod_{j\in \cJ} \ovl{\omega}_{K,\sigma_j}^{(\kappa_j+\eta_j)_i}$, and admits a unique $G_K$-stable flag. %

We now analyze the $\cC_\sigma$ in terms of local models, for sufficiently generic $\sigma$. To do so, we recall the setup of Section \ref{sub:LM:DL:SW}. Thus, we fix $\zeta\in X^*(\un{Z})$ an algebraic central character, a regular dominant weight $\lambda\in X^*(\un{T})$ such that $\lambda_j \in [0, h]^n$ for all $j \in \cJ$, and a tame inertial type $\tau=\tau(s,\mu+\eta)$ with lowest alcove presentation $(s,\mu)$ which is $\lambda$-compatible with $\zeta$. Set $\tld{w}^*(\tau)=s^{-1}t^{\mu+\eta}$. %
We assume that $\mu$ is $(h+2)$-deep in $\un{C}_0$.
We also continue to use notations from Section \ref{sec:local_model_EG}.

Recall the diagram from Proposition \ref{prop:phi_local_model} specialized with $a = 0, b=h$ and $\tld{z} = \tld{w}^*(\tau)$.  

\begin{equation}
\label{eq:local_model_mod_p}
\xymatrix{\tld{M}_{\cJ}(\leql)_{\bF} \ar@{^{(}->}^-{r_{\tld{w}^*(\tau)}}[r]\ar[d]^{\pi_{(s,\mu)}} & \tld{\Fl}^{[0,h]}_{\cJ,\tld{w}^*(\tau)} \ar[d] &\\  
Y^{\leql,\tau}_\bF \ar@{^{(}->}[r]& \Big[\tld{\Fl}^{[0,h]}_{\cJ, \tld{w}^*(\tau)}/T^{\vee, \cJ}_{\F}\text{-conj}\Big]  \ar@{^{(}->}^-{\iota_{\tld{w}^*(\tau)}}[r] & \Phi\text{-}\Mod^{\text{\'et},n}_{K, \F}}
\end{equation}

We have the potentially crystalline substack $\cX^{\leql,\tau}_\bF \into Y^{\leql,\tau}_{\bF}$ by Proposition \ref{prop:stack_diagram}, and define $\tld{\cX}^{\leql,\tau}_\bF$ to be its pullback along $\pi_{(s,\mu)}$. This is compatible with our earlier notation, since for any $\tld{z}\in\tld{W}^{\vee,\cJ}$, $\tld{\cX}^{\leql,\tau}_\bF\cap \tld{U}( \tld{z},\leql)=\tld{\cX}^{\leql,\tau}(\tld{z})_\bF$.

When working over $\F$, we have the following refinement of Theorem \ref{thm:stack_local_model}(\ref{it:stack_local_model:1}):
\begin{prop}\label{prop:mod_p_factor} Assume $\mu$ is $(2h-2)$-deep in $\un{C}_0$.   Then the closed immersion $\tld{\cX}^{\leql,\tau}_{\F} \into \tld{M}_{\cJ}(\leql)_\F$ factors through $\tld{M}_{\cJ}^{\nv}(\leql,\nabla_{\bf{a}_\tau})_\F$. %
\end{prop}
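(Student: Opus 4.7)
The plan is to reduce to the open charts of $\tld{M}_\cJ(\leql)_\F$ afforded by the local model diagram of Theorem \ref{thm:Breuil-Kisin_local_model}, on which one can directly compare the ideal cut out by the ``true'' monodromy condition $\nabla_\infty$ with the ideal cut out by the algebraic monodromy condition appearing in the naive local model, via Proposition \ref{prop:monodromy_approximation}.

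Fix $\tld{z}\in\tld{W}^{\vee,\cJ}$ and abbreviate $\Spf R \defeq \tld{U}(\tld{z},\leql)^{\wedge_p}$. Then $R$ is $\cO$-flat, $p$-adically complete, and topologically of finite type over $\cO$. By Theorem \ref{thm:Breuil-Kisin_local_model} the formal scheme $\Spf R$ carries a universal pair $(\fM,\beta)$ consisting of $\fM\in Y^{\leql,\tau}(R)$ together with a $\tld{z}$-gauge basis $\beta$, and the resulting map to $Y^{\leql,\tau}(\tld{z})$ is a $T^{\vee,\cJ}_\cO$-torsor. Writing $A^{(j)} = A^{(j)}_{\fM,\beta}$ for the matrices of partial Frobenius, we obtain two ideals of $R$: the monodromy ideal $I_{\fM,\nabla_\infty}$ of Proposition \ref{prop:functorial_mon_condition}, and the algebraic monodromy ideal $I_{\fM,\beta,\nabla_1}$ introduced just before Remark \ref{rmk:derivative_vs_vanishing}. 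By Proposition \ref{prop:stack_diagram}, $\cX^{\leql,\tau}\cong Y^{\leql,\tau,\nabla_\infty}$, so the pullback of $\tld{\cX}^{\leql,\tau}$ to $\Spf R$ is $\Spf R/I_{\fM,\nabla_\infty}$. On the other hand, $\mu$ being $(2h-2)$-deep forces $p > 2(2h-2) > h-2$, so Remark \ref{rmk:derivative_vs_vanishing} applies, and using that $e' = p^{f'}-1 \in \cO^\times$ identifies $\Spf R/I_{\fM,\beta,\nabla_1}$ with the pullback of $\tld{M}_{\cJ}^{\nv}(\leql,\nabla_{\bf{a}_\tau})$ to $\Spf R$.

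The crux is now Proposition \ref{prop:monodromy_approximation} applied with $m = 2h-2$, which yields
\[
I_{\fM,\beta,\nabla_1} \;\subset\; \bigl(I_{\fM,\nabla_\infty},\; p^{m-2h+3}\bigr) \;=\; (I_{\fM,\nabla_\infty},\; p)
\]
inside $R$. This immediately gives $(I_{\fM,\beta,\nabla_1},p) \subset (I_{\fM,\nabla_\infty},p)$, and hence a chart-wise factorization of the closed immersion $\tld{\cX}^{\leql,\tau}(\tld{z})_\F \into \tld{U}(\tld{z},\leql)_\F$ through $\tld{U}^{\nv}(\tld{z},\leql,\nabla_{\bf{a}_\tau})_\F^{\wedge_p}$. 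As $\tld{z}$ ranges over $\tld{W}^{\vee,\cJ}$, the opens $\tld{U}(\tld{z},\leql)_\F$ cover $\tld{M}_\cJ(\leql)_\F$ (via the isomorphism of Corollary \ref{cor:Breuil-Kisin_moduli_model}), so these chart-wise factorizations glue to the desired global factorization. The only substantive step is Proposition \ref{prop:monodromy_approximation}; the depth bound $2h-2$ is calibrated precisely so that the error term $p^{m-2h+3}$ equals $p$ and therefore dies upon reduction modulo $p$, and the rest is bookkeeping identifying the moduli-theoretic ideals with the loop group equations from Definition \ref{def:MLM:1emb}.
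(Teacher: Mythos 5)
Your proposal is correct and follows precisely the route of the paper's (one‑sentence) proof: reduce to the affine charts $\tld{U}(\tld{z},\leql)_\F$, observe that on each chart the pullbacks of $\tld{\cX}^{\leql,\tau}$ and $\tld{M}^{\nv}_{\cJ}(\leql,\nabla_{\bf{a}_\tau})$ are cut out by $I_{\fM,\nabla_\infty}$ and $I_{\fM,\beta,\nabla_1}$ respectively, and invoke Proposition \ref{prop:monodromy_approximation} with $m=2h-2$ so that the error term $p^{m-2h+3}=p$ vanishes modulo $p$. You have simply made explicit the bookkeeping (the $\cO$-flatness and finite-type hypotheses needed for the ideals to be defined, the use of Remark \ref{rmk:derivative_vs_vanishing} via the inequality $p>2(2h-2)>h-2$, and the gluing over $\tld{z}$ via Corollary \ref{cor:Breuil-Kisin_moduli_model}) that the paper leaves implicit.
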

\begin{proof} It suffices to check the factorization after intersecting with each affine open $\tld{U}(\tld{z},\leql)_\F$. But this follows from Proposition \ref{prop:monodromy_approximation}, since $\mu$ is at least $2h-2$-deep in $\un{C}_0$ (cf.~the proof of Theorem \ref{thm:stack_local_model}(\ref{it:stack_local_model:1})).
\end{proof}

We now recall the top dimensional irreducible components of $\tld{M}^{\nv}_\cJ(\leql,\nabla_{\bf{a}_\tau})_{\F}$ constructed in Sections \ref{sec:componentmatching} and \ref{subsec:products} and identified in Theorem \ref{thm:compandSW}. Since $\lambda$ is regular,  $\dim \tld{M}^{\nv}_\cJ(\leql,\nabla_{\bf{a}_\tau})_{\F} =d_\cJ=\#\cJ\dim_{\F} (B\backslash \GL_n)_{\F}$. For each Serre weight $\sigma$ with a lowest alcove presentation $(\tld{w},\omega)$ compatible with $\zeta$, we have a closed $d_\cJ$-dimensional subvariety $C^\zeta_\sigma=C_{(\tld{w},\omega)}$ of $(\Fl^{\nabla_0})^{\cJ}$ defined in \eqref{Zomegas}.   Recall that  
\[ C_{(\tld{w},\omega)}= \prod_{j \in \cJ} S_\F^{\nabla_0}(\tld{w}_{1, j},\tld{w}_{2, j}, \tld{s}_j) \]
for any choices of $\tld{w}_1,\tld{w}_2,\tld{s}$ such that
\[(\tld{w},\omega)=(\tld{w}_1, \tld{s}\tld{w}_2^{-1}(0)),\]
cf.~Theorem \ref{thm:cpt:match}.  
{Recall that $ S_\F^{\nabla_0}(\tld{w}_{1, j},\tld{w}_{2, j}, \tld{s}_j)$ (Definition \ref{defn:Schubert:var}\eqref{it:Schubert:var:3}) is the closure of the intersection $S^\circ_\F((\tld{w}_{2, j}^{-1}w_0\tld{w}_{1, j})^*)\tld{s}_j^*\cap \Fl^{\nabla_0}$.}
Pulling back to $\tld{\Fl}$, we get the subvarieties $\tld{C}^{\zeta}_\sigma \in \tld{\Fl}$.

Theorem \ref{thm:compandSW} shows that the top dimensional irreducible components of $\tld{M}^{\nv}_\cJ(\leql,\nabla_{\bf{a}_\tau})$ are exactly the translates $\tld{C}^\zeta_{\sigma}\tld{w}^{*}(\tau)^{-1}\subset \tld{\Fl}$, where $\sigma$ runs over $\JH(W(\lambda-\eta)\otimes \ovl{\sigma}(\tau))$. %

Our main result in this section is the following:
\begin{thm} 
\label{thm:EGmodp}
Let $\lambda \in X_*(T)^{\cJ}$ be regular dominant and let $\tau$ be a tame inertial type with lowest alcove presentation $(s,\mu)$ which is $(\lambda - \eta)$-compatible with $\zeta \in X^*(\un{Z})$. Assume that $\mu$ is $\max \{2(h+1), 4n + h \}$-deep. %

\label{thm:irreducible_components_mod_p}
\begin{enumerate}
\item 
\label{it:irreducible_components_mod_p:1}
$\cX^{\leq \lambda,\tau}_{\reg,\red}=\cX^{\lambda,\tau}_\red=\cup_\sigma \cC_{\sigma}$, where the union runs over all Serre weights $\sigma\in \JH(W(\lambda-\eta)\otimes \ovl{\sigma}(\tau))$.
\item 
\label{it:irreducible_components_mod_p:2}
For each $\sigma\in \JH(W(\lambda-\eta)\otimes \ovl{\sigma}(\tau))$, we have a local model diagram:
\begin{equation}\label{eq:mod_p_local_model}
\xymatrix{ &&&&\tld{C}^{\zeta}_\sigma   \ar@{^{(}->}[d]\\
\tld{\cC}_\sigma \ar@{^{(}->}[r] \ar@/^2pc/[rrrru]^{\cong} \ar[d]&\tld{\cX}^{\leql,\tau}_\F\ar@{^{(}->}[r]\ar[d]&\tld{M}^{\nv}_{\cJ}(\leql,\nabla_{\bf{a}_\tau})_\F \ar@{^{(}->}[r]& \tld{M}_\cJ(\leql)_\F \ar@{^{(}->}^{r_{\tld{w}^*(\tau)}}[r]\ar[d]& \tld{\Fl}^{[0,h]}_{\cJ, \tld{w}^*(\tau)} \ar[d] \\
\cC_\sigma \ar@{^{(}->}[r]\ar@/_2pc/@{^{(}->}[rrrrd]& \cX^{\leql,\tau}_\F \ar@{^{(}->}[rr]&&Y^{\leql,\tau}_\F \ar@{^{(}->}[r]&\Big[\tld{\Fl}^{[0,h]}_{\cJ, \tld{w}^*(\tau)}/T^{\vee, \cJ}_{\F}\text{-\emph{conj}}\Big]\ar@{^{(}->}[d]\\
&&&&\Phi\text{-}\Mod^{\text{\emph{\'et}},n}_{K, \F}}
\end{equation}
where
\begin{itemize}
\item $\tld{\cC}_\sigma$ is defined so that all rectangles are Cartesian, and all vertical arrows are $T^{\vee,\cJ}_\F$-torsors.
\item All hooked arrows are closed immersions.
\item The bottom diagonal map is the canonical composition $\cC_\sigma \into \cX_n\to \Phi\text{-}\Mod^{\text{\emph{\'et}},n}_{K, \F}$.
\end{itemize}
\end{enumerate}
\end{thm}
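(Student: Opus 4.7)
The plan is to reduce the statement to a calculation on the local model side and then match the combinatorial parametrization of components on both sides. First, combining Proposition \ref{prop:mod_p_factor} with the local model diagram of Theorem \ref{thm:Breuil-Kisin_local_model} (taken modulo $p$) gives a closed immersion
\[
\tld{\cX}^{\leql,\tau}_\F \,\hookrightarrow\, \tld{M}_\cJ^{\nv}(\leql,\nabla_{\bf{a}_\tau})_\F,
\]
compatible with the $T^{\vee,\cJ}_\F$-torsor structures, and via right translation by $\tld{w}^*(\tau)$ we may further embed the target into $\tld{\Fl}^{[0,h]}_{\cJ,\tld{w}^*(\tau)}$ (Proposition \ref{prop:phi_local_model}). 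This reduces the description of the irreducible components of $\cX^{\leql,\tau}_\F$ to identifying which top-dimensional components of $\tld{M}_\cJ^{\nv}(\leql,\nabla_{\bf{a}_\tau})_\F\, \tld{w}^*(\tau)$ — that is, which of the subvarieties $\tld{C}^\zeta_\sigma$ enumerated by $\JH(\ovl{\sigma}(\tau)\otimes W(\lambda-\eta))$ via Theorem \ref{thm:compandSW} — are hit by $\tld{\cX}^{\leql,\tau}_\F$.

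For the upper bound, I will invoke Theorem \ref{thm:EG_basic_properties}(\ref{it:EG_basic_properties:4}): $\cX^{\lambda,\tau}_\F$ (and more generally $\cX^{\leq\lambda,\tau}_{\reg,\F}$) is equidimensional of dimension $d_\cJ = \sum_j \dim (B\backslash \GL_n)$, which equals $\dim \tld{M}^{\nv}_\cJ(\leql,\nabla_{\bf{a}_\tau})_\F - n \# \cJ$. Hence each irreducible component of $\cX^{\leql,\tau}_{\reg,\F}$ is an irreducible component of $\cX_{n,\red}$ (so of the form $\cC_{\sigma'}$ by the Emerton--Gee classification), and via the local model diagram must pull back to a top-dimensional component of $\tld{M}^{\nv}_\cJ(\leql,\nabla_{\bf{a}_\tau})_\F\,\tld{w}^*(\tau)$, forcing $\sigma' \in \JH(\ovl{\sigma}(\tau)\otimes W(\lambda-\eta))$.

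For the lower bound — and simultaneously the matching of labels on the two sides — I will exhibit a generic point of each $\tld{C}^\zeta_\sigma$ and compute explicitly the associated $G_K$-representation. Concretely, for $\sigma$ with compatible lowest alcove presentation $(\tld{w}_1,\omega)$, the torus-fixed points of $\tld{C}^\zeta_\sigma$ (Proposition \ref{prop:Tfixedobv}) realize in the local model semisimple Breuil--Kisin modules whose $\eps_\tau$-images, by Proposition \ref{prop:ssetphi}, yield tame Galois representations with a controlled lowest alcove presentation. Perturbing to a nearby non-semisimple generic point (by upper-triangular deformation in the Iwahori cell $\Iw_\F \tld{z}\Iw_\F$) and using Lemma \ref{lem:local_model_lower_bound} (which ensures non-emptiness of $\cX^{\lambda,\tau}(\tld{z})$ for every $\tld{z}\in \Adm^\vee(\lambda)$, via potentially diagonalizable globalizations) guarantees the generic point lifts to $\cX^{\lambda,\tau}_\F$. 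A direct calculation then shows the corresponding $\rhobar$ is maximally non-split of niveau one with the inertial characters on the diagonal dictated by $(\tld{w}_1,\omega)$, i.e.~of the form \eqref{eq:generic_points_cpt} for $\sigma$; this identifies the component carrying the generic point as $\cC_\sigma$. This matching, together with the upper bound and the dimension count, also gives $\cX^{\leq\lambda,\tau}_{\reg,\red}=\cX^{\lambda,\tau}_\red$, since any $\sigma \in \JH(\ovl{\sigma}(\tau)\otimes W(\lambda'-\eta))$ for regular dominant $\lambda'\leq \lambda$ also lies in $\JH(\ovl{\sigma}(\tau)\otimes W(\lambda-\eta))$.

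Given the component identification, part (2) is essentially formal: the local model diagram \eqref{eq:mod_p_local_model} is obtained by restricting the local model diagram of Theorem \ref{thm:Breuil-Kisin_local_model} (in characteristic $p$, where Corollary \ref{cor:Breuil-Kisin_moduli_model} glues the diagrams over varying $\tld{z}$) to $\cC_\sigma$ on the Galois side and to $\tld{C}^\zeta_\sigma$ on the local model side; the torus-torsor structure is inherited. The main obstacle is the label-matching step — identifying a generic point of $\tld{C}^\zeta_\sigma$ explicitly enough to read off the tame characters of the associated $\rhobar$ and confirm it lies on $\cC_\sigma$, rather than some other component — which requires the explicit formulas of \S\ref{subsub:etale_phi_LM} together with a careful use of the $T^{\vee,\cJ}$-action to get genuine extension classes of the correct shape.
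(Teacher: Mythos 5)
Your reduction of the problem to the local model side, and your upper bound via the closed immersion of Proposition \ref{prop:mod_p_factor}, the component count from Theorem \ref{thm:compandSW}, and the dimension statement of Theorem \ref{thm:EG_basic_properties}(\ref{it:EG_basic_properties:4}), are all essentially what the paper does (cf.\ Corollary \ref{cor:bound_cpt}). The gap is in the lower bound.

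Your proposed lower bound tries to produce points of $\cX^{\lambda,\tau}_{\F}$ by starting from a torus-fixed point of $\tld{C}^{\zeta}_{\sigma}$ (a semisimple Breuil--Kisin module), perturbing into the open Iwahori cell, and invoking Lemma \ref{lem:local_model_lower_bound}. This does not work as stated: Lemma \ref{lem:local_model_lower_bound} produces non-emptiness of $\cX^{\lambda,\tau}(\tld{z})$ precisely by lifting the semisimple Breuil--Kisin module \emph{at the torus-fixed point itself}; it says nothing about whether the \emph{generic point} of $\tld{C}^{\zeta}_{\sigma}$ lands in (the closed substack) $\tld{\cX}^{\leql,\tau}_{\F}\subset\tld{M}^{\nv}_{\cJ}(\leql,\nabla_{\bf{a}_\tau})_{\F}$, which is exactly the content of the lower bound. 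The closed immersion of Proposition \ref{prop:mod_p_factor} is, a priori, proper — the $\nabla_{\bf{a}_\tau}$ condition is only an approximation to the true monodromy condition — so you cannot conclude from a torus-fixed point of a component being in $\cX^{\lambda,\tau}_{\F}$ that the whole component is. There is no argument given that the perturbed point satisfies the monodromy condition, and one should not appeal to Theorem \ref{thm:stack_local_model} here since part of the point of this theorem (cf.\ Remark \ref{rmk:intro:MLM:sp:fib}(1)) is that it holds under much milder genericity.

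The paper's lower bound (Lemma \ref{lem:lower_bound_cpt}) instead works entirely on the Galois side: it takes the defining dense set of points of $\cC_{\sigma}$ — the maximally non-split niveau-$1$ wildly ramified $\rhobar$ of the form \eqref{eq:generic_points_cpt} — and shows directly via Lemma \ref{lem:obvious_tame_type_lift} that each has an ordinary (hence potentially diagonalizable) potentially crystalline lift of type $(\eta,\tau(1,\kappa))$, then uses a potentially diagonalizable patching functor (Propositions \ref{prop:patchexist}, \ref{prop:obvious_wild_lift}) to conclude that $R^{\lambda,\tau}_{\rhobar}\neq 0$, i.e.\ $\rhobar\in\cX^{\lambda,\tau}(\overline{\F}_p)$. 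Once both bounds are in place, the label-matching step is logically separate: one knows $\tld{\cC}_{\sigma}=\tld{C}^{\zeta}_{\sigma'}$ for \emph{some} $\sigma'$, and Lemma \ref{lem:good_open_cpt} (which computes the $G_{K_\infty}$-stable filtration on $\bV^*_K(\cM_x)$ for $x$ in a suitable dense open $U^{\zeta}_{\sigma'}\subset C^{\zeta}_{\sigma'}$, not for a perturbation of a torus-fixed point), together with cyclotomic freeness (Lemma \ref{lem:cyclo_free}(\ref{cyclo_free_4})) to upgrade the filtration to a $G_K$-stable one, forces $\sigma'=\sigma$ by comparing associated graded characters. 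Your description of this last step is the right idea but is entangled with an incorrect lower bound; you should decouple them and replace your lower bound with the ordinary-lift argument.
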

\begin{rmk} 
\label{rmk:MLM:sp:fib}
\begin{enumerate}
\item 
\label{it:MLM:sp:fib:1}
The theorem shows that $\cC_\sigma= [\tld{C}^\zeta_\sigma/T^{\vee, \cJ}_{\F}\text{-conj}]$ as subfunctors of $\Phi\text{-}\Mod^{\text{\'et},n}_{K, \F}$. Note that this depends only on $\zeta$ and not on the choice of $\lambda$, $s$, and $\mu$.

On the other hand, making the choices $\lambda,s,\mu$ computes $\cC_\sigma$ as a quotient $[T_\F^{\vee,\cJ}C^{\zeta}_\sigma\tld{w}^{*}(\tau)^{-1}/_{(s,\mu)}T^{\vee,\cJ}]$, where $T_\F^{\vee,\cJ}C^{\zeta}_\sigma\tld{w}^{*}(\tau)^{-1}$ is an irreducible component of a \emph{deformed} affine Springer fiber in the sense of \cite{Frenkel_Zhu}, i.e.~the reduced subvariety of $\tld{\Fl}^\cJ$ cut out by the condition
\[(v\frac{d}{dv}g) g^{-1} + \Ad(g)\big(v^{s^{-1}(\mu+\eta)}\big)\in \Lie \cI\]
In particular, $\cC_\sigma$ is equisingular to an irreducible component of a deformed affine Springer fiber.
\item  \label{it:MLM:compmatch} As the proof shows, the isomorphism between $\tld{\cC}_{\sigma}$ and $\tld{C}_{\sigma}^{\zeta}$ holds as long as there exists $(\lambda, \tau)$ such that $\cC_{\sigma} \subset \cX^{\leql, \tau}$ and $\tau$ is $2(h+1)$-generic (in particular, any irreducible component of the special fiber of $\cX^{\eta, \tau}$ where $\tau$ is $2n$-generic).  As long as $\sigma$ is $(3n-1)$-deep, this can always be arranged (see the proof of Proposition \ref{prop:rhobaroncomponents}).
\item 
\label{it:MLM:sp:fib:3}
Under the weaker hypotheses that $\mu$ is $2(h+1)$-deep, one can still show the upper bound on the components $\cX^{\lambda,\tau}_\red \subset \cX^{\leq \lambda,\tau}_{\reg,\red} \subset \cup_\sigma \cC_{\sigma}$.  
In the proof, the bound $4n + h$ only appears when invoking weight elimination and modularity of obvious weights from \cite{LLL}.  
\item 
\label{it:MLM:sp:fib:2}
Using the fact that $M^{\nv}_\cJ(\leql,\nabla_{\bf{a}_\tau})_\F$ is equidimensional (cf.~Remark \ref{rmk:local_model_pure_dim}), one can strengthen the first part of the Theorem to $\cX_\red^{\leql,\tau}=\cup_\sigma \cC_\sigma$; in particular, $\cX^{\leql,\tau}_\F$ is equidimensional. This is because Lemma \ref{lem:lower_bound_cpt} below shows that $\tld{\cX}^{\leql,\tau}_{\red}$ exhausts all the top dimensional irreducible components of $M^{\nv}_\cJ(\leql,\nabla_{\bf{a}_\tau})_\F$, and has the same underlying reduced scheme.
\item \label{item:prod_mod_p_comp}
By taking products over a finite set $S_p$ indexing finite unramified extensions $F^+_v$ of $\Q_p$, one obtains the evident generalization of this theorem for $\cX^{F^+_p,\lambda,\tau}_{\F}$.

\end{enumerate}
\end{rmk}
The rest of this section is devoted to the proof of Theorem \ref{thm:irreducible_components_mod_p}.

\begin{lemma}\label{lem:lower_bound_cpt} Assume that $\mu$ is $(4n + h)$-deep in $\un{C}_0$.  Let $\sigma\in \JH(W(\lambda-\eta)\otimes \ovl{\sigma}(\tau))$. Then $\cC_\sigma \subset \cX^{\lambda,\tau}\subset \cX^{\leql,\tau}$.
\end{lemma}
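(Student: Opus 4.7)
The plan is to show $\cC_\sigma\subset \cX^{\lambda,\tau}$; the second inclusion $\cX^{\lambda,\tau}\subset \cX^{\leql,\tau}$ is immediate, since $\cX^{\leql,\tau}$ is by definition the scheme-theoretic union of the $\cX^{\lambda',\tau}$ for $\lambda'\leq \lambda$ dominant. Because $\cX^{\lambda,\tau}$ is a closed substack of $\cX_n$ and $\cC_\sigma$ is an irreducible reduced closed substack, it will suffice to exhibit a dense open $U\subset \cC_\sigma$ whose $\ovl{\F}_p$-points all lift (as $G_K$-representations) to potentially crystalline representations of type $(\lambda,\tau)$, i.e.\ lie in $\cX^{\lambda,\tau}(\ovl{\F}_p)$.

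For the existence of $U$, I would write $\sigma=F(\kappa)$ with $\kappa\in X_1(\un{T})$ and use the description of $\cC_\sigma$ recalled at the start of \S\ref{subsec:PCS:modp}: there is a dense open whose $\ovl{\F}_p$-points are the maximally non-split upper-triangular $\rhobar$ of the form \eqref{eq:generic_points_cpt} with $\chi_i|_{I_K}=\prod_{j\in\cJ}\ovl{\omega}_{K,\sigma_j}^{(\kappa_j+\eta_j)_i}$. Such $\rhobar$ are precisely the representations to which Lemma \ref{lem:obvious_tame_type_lift} applies with the given $\kappa$, so the problem reduces to showing that each such $\rhobar$ admits a potentially crystalline lift of type $(\lambda,\tau)$.

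Next I would extract the genericity needed to feed the global input in \S\ref{sec:GM}. Applying Proposition \ref{prop:JHbij} with the proposition's ``$\lambda$'' taken to be our $\lambda-\eta$ and its ``$R$'' taken to be $\sigma(\tau)$, and noting that $(4n+h)\geq \max\{2h_\eta,h_\lambda\}$ since $h_\lambda\leq h$, every $\sigma\in\JH(W(\lambda-\eta)\otimes\ovl{\sigma}(\tau))$ is $(4n+h-h_\lambda)$-deep and hence at least $4n$-deep. From the explicit formula for $\chi_i|_{I_K}$, $4n$-depth of $\kappa$ translates directly (via Definition \ref{defn:modpgeneric}) into $4n$-genericity of the semisimple tame inertial $\F$-type $\rhobar^{\mathrm{ss}}|_{I_K}$, so $\rhobar$ is $4n$-generic in the sense of Definition \ref{defn:gen:rhobar}.

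Finally I would invoke Proposition \ref{prop:obvious_wild_lift} with its ``$\lambda$'' equal to our $\lambda-\eta$: its hypotheses are $4n$-genericity of $\rhobar$ (verified above) and $F(\kappa)=\sigma\in\JH(\ovl{\sigma(\lambda-\eta,\tau)})=\JH(\ovl{\sigma(\tau)\otimes W(\lambda-\eta)})$ (which is our assumption on $\sigma$). The conclusion is $R_\rhobar^{(\lambda-\eta)+\eta,\tau}=R_\rhobar^{\lambda,\tau}\neq 0$, i.e.\ $\rhobar\in\cX^{\lambda,\tau}(\ovl{\F}_p)$; since this holds for every $\rhobar\in U(\ovl{\F}_p)$ and $U$ is dense in the irreducible $\cC_\sigma$, closedness of $\cX^{\lambda,\tau}$ in $\cX_n$ yields $\cC_\sigma\subset\cX^{\lambda,\tau}$. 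The only substantive ingredient is Proposition \ref{prop:obvious_wild_lift}, which itself rests on the ordinary lift construction of Lemma \ref{lem:obvious_tame_type_lift} together with the Taylor--Wiles input of Proposition \ref{prop:patchexist}; the numerical bound $(4n+h)$ on the depth of $\mu$ is calibrated exactly to combine Proposition \ref{prop:JHbij} with the $4n$-genericity required by that global input, and is where I expect the main bookkeeping to lie.
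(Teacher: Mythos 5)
Your proposal is correct and follows the same route as the paper's proof: use Proposition~\ref{prop:JHbij} to get $4n$-depth of $\sigma$, observe that the dense locus of $\cC_\sigma$ consists of the maximally non-split upper-triangular $\rhobar$ in \eqref{eq:generic_points_cpt}, check these are $4n$-generic, and apply Proposition~\ref{prop:obvious_wild_lift} together with closedness of $\cX^{\lambda,\tau}$. You have merely made explicit the bookkeeping (the $\eta$-shift in Proposition~\ref{prop:JHbij}, the density/closedness step) that the paper compresses into a few lines.
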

\begin{proof} By Proposition \ref{prop:JHbij} (noting the $\eta$-shift),  $\sigma$ is $4n$-deep.  %
Then for any $\rhobar$ of the form (\ref{eq:generic_points_cpt}) for $\sigma$, $\rhobar$ is $4n$-generic.
By Proposition \ref{prop:obvious_wild_lift}, any such $\rhobar$ of lies in $\cX^{\lambda,\tau}(\ovl{\F})$. Since such points are dense in $\cC_\sigma$, we are done.
\end{proof}
\begin{cor} \label{cor:bound_cpt}  Assume that $\mu$ is $\max \{2(h+1), 4n + h \}$-deep in $\un{C}_0$.   Then the $d_\cJ$-dimensional irreducible components of $\cX^{\lambda,\tau}_\red$ and $\cX^{\leql,\tau}_\red$ are exactly the $\cC_\sigma$ with $\sigma\in \JH(W(\lambda-\eta)\otimes \ovl{\sigma}(\tau))$.
\end{cor}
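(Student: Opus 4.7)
The plan is a two-way inclusion via dimension counts. For the lower bound, Lemma \ref{lem:lower_bound_cpt} places each $\cC_\sigma$ with $\sigma \in \JH(W(\lambda-\eta)\otimes \ovl{\sigma}(\tau))$ inside $\cX^{\lambda,\tau}_\red \subset \cX^{\leql,\tau}_\red$. Since $\cC_\sigma$ is an irreducible component of $\cX_{n,\red}$ of dimension $d_\cJ$, this already produces $|\JH(W(\lambda-\eta)\otimes \ovl{\sigma}(\tau))|$ distinct $d_\cJ$-dimensional irreducible components of both $\cX^{\lambda,\tau}_\red$ and $\cX^{\leql,\tau}_\red$.

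For the matching upper bound, I would pass through the local model. The hypothesis that $\mu$ is $(2h-2)$-deep in $\un{C}_0$, implied by our assumption, lets us invoke Proposition \ref{prop:mod_p_factor} to obtain a closed immersion $\tld{\cX}^{\leql,\tau}_\F \hookrightarrow \tld{M}^{\nv}_\cJ(\leql,\nabla_{\bf{a}_\tau})_\F$. Via the $T^{\vee,\cJ}_\F$-torsor $\tld{\cX}^{\leql,\tau}_\F \to \cX^{\leql,\tau}_\F$, the $d_\cJ$-dimensional irreducible components of $\cX^{\leql,\tau}_\red$ correspond bijectively to $(d_\cJ+n\#\cJ)$-dimensional irreducible components of $\tld{\cX}^{\leql,\tau}_\F$. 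Each such component, being irreducible of dimension equal to the top dimension of the target (Corollary \ref{cor:compofnaive} shows the top dimension of $M^{\nv}_\cJ(\leql,\nabla_{\bf{a}_\tau})_\F$ is $d_\cJ$, which lifts to $d_\cJ+n\#\cJ$ under the $T^{\vee,\cJ}_\F$-torsor $\tld{M}_\cJ(\leql)_\F \to M_\cJ(\leql)_\F$), must coincide with a top-dimensional irreducible component of $\tld{M}^{\nv}_\cJ(\leql,\nabla_{\bf{a}_\tau})_\F$. By Theorem \ref{thm:compandSW}, the latter set of components is precisely $\{\tld{C}^\zeta_\sigma\,\tld{w}^*(\tau)^{-1} \mid \sigma \in \JH(W(\lambda-\eta)\otimes \ovl{\sigma}(\tau))\}$, of cardinality $|\JH(W(\lambda-\eta)\otimes \ovl{\sigma}(\tau))|$.

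Combining the two bounds yields the desired classification for $\cX^{\leql,\tau}_\red$. For $\cX^{\lambda,\tau}_\red$, Theorem \ref{thm:EG_basic_properties}(\ref{it:EG_basic_properties:4}) tells us that $\cX^{\lambda,\tau}$ is equidimensional of dimension $1+d_\cJ$, so every irreducible component of $\cX^{\lambda,\tau}_\red$ has dimension $d_\cJ$; combined with the inclusion $\cX^{\lambda,\tau}_\red \subset \cX^{\leql,\tau}_\red$ and the lower bound step, this forces the claimed equality.

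No genuine obstacle is expected: the argument is structurally clean once the preceding framework is in place. The substantive inputs are Proposition \ref{prop:mod_p_factor} (the $\nabla_\infty$-to-$\nabla_1$ monodromy approximation of \S \ref{sec:mon:cond}), Theorem \ref{thm:compandSW} (the combinatorial identification of top-dimensional local-model components with Jordan--H\"older factors), and Lemma \ref{lem:lower_bound_cpt} (which uses the patching-functor machinery of \S \ref{sec:GM} to construct ordinary lifts at the generic points of $\cC_\sigma$).
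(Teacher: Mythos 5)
Your proposal is correct and follows essentially the same two-sided counting argument as the paper: Lemma \ref{lem:lower_bound_cpt} for the lower bound, Proposition \ref{prop:mod_p_factor} together with Theorem \ref{thm:compandSW} for the upper bound on $d_\cJ$-dimensional components of $\cX^{\leql,\tau}_\red$, and a squeeze to conclude. The only (harmless) deviation is that you additionally invoke the equidimensionality of $\cX^{\lambda,\tau}$ from Theorem \ref{thm:EG_basic_properties}(\ref{it:EG_basic_properties:4}) to finish the $\cX^{\lambda,\tau}_\red$ case, where the paper simply observes that the inequalities force equality at each stage.
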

\begin{proof} By Lemma \ref{lem:lower_bound_cpt}, $\cX^{\lambda,\tau}$ has at least $\#\JH(W(\lambda-\eta)\otimes \ovl{\sigma}(\tau))$ $d_\cJ$-dimensional irreducible components. On the other hand, Proposition \ref{prop:mod_p_factor} and Theorem \ref{thm:compandSW} (using that $\mu$ is $2(h+1)$-deep which implies that $\tau$ is $2n$-generic) imply that $\cX^{\leql,\tau}$ has at most $\#\JH(W(\lambda-\eta)\otimes \ovl{\sigma}(\tau))$ $d_\cJ$-dimensional irreducible components. We conclude that equality must be achieved at each stage.
\end{proof}

\begin{lemma}\label{lem:good_open_cpt} Let $\sigma$ be an $(n-1)$-deep Serre weight with a lowest alcove presentation $(\tld{w},\omega)$ compatible with $\zeta$. Let $\kappa=\pi^{-1}(\tld{w})\cdot (\omega-\eta)$ so that $\sigma=F(\kappa)$. Then there is an open dense subset $U^\zeta_\sigma\subset C^\zeta_\sigma$ with the following property: For any point $x\in \tld{C}^\zeta_\sigma$ lying over $U^{\zeta}_\sigma$ with associated \'{e}tale $\phz$-module $\cM_x$, $\bV^*_{K}(\cM_x)$ has the form
\[\begin{pmatrix}  \chi_1 &* &\cdots & * \\
0&\chi_2& \cdots& *\\
\vdots &&\ddots&\vdots\\
0&\cdots &0 & \chi_d
\end{pmatrix}\]
where the (canonical extension to $G_K$ of) character $\chi_i$ satisfies $\chi_{i}|_{I_K}=\prod_{j\in \cJ} \omega_{K,\sigma_j}^{(\kappa_j+\eta_j)_i}$.

\end{lemma}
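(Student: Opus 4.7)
I would take $U^\zeta_\sigma \subset C^\zeta_\sigma$ to be the image of the open stratum $S^\circ_\F(\tld{w}, e, \tld{s})^{\nabla_0}$, for any $\tld{s} \in \tld{W}^\cJ$ with $\tld{s}(0) = \omega$; by Proposition \ref{prop:match2} the choice is immaterial for the closure $C^\zeta_\sigma = S^{\nabla_0}_\F(\tld{w},e,\tld{s})$, and density of $U^\zeta_\sigma$ follows from Proposition \ref{prop:mafldim} given the $(n-1)$-deep hypothesis on $\omega$ (so that $\tld{s}$ is sufficiently generic).

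For a point $x$ of $\tld{C}^\zeta_\sigma$ lying above $U^\zeta_\sigma$ (extending the residue field if necessary), Proposition \ref{prop:opencell} provides the explicit factorization $x = D \cdot \tld{z} \cdot N \cdot \tld{s}^*$ in $\tld{\Fl}^\cJ$, where $\tld{z} = (w_0 \tld{w})^*$, $D \in T^{\vee,\cJ}_\F(\ovl\F)$, and $N \in N_{\tld{z}}(\ovl\F)$. Via the embedding $\iota_{\tld{w}^*(\tau)}$ of Proposition \ref{prop:iotamono}, $\cM_x$ is the \'etale $\phi$-module whose $j$-th partial Frobenius equals, in the standard basis, the $j$-th component of $x$. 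Writing $\tld{w} = t_{\eta'} w'$ with $w' \in W^\cJ$, I would change basis via the permutation matrix associated to $w_0 w'$: under this change of basis, $\tld{z}$ becomes a diagonal matrix of $v$-monomials, and an explicit calculation using Corollary \ref{cor:support} shows that the conjugate of $N$ has support only in strictly sub-diagonal positions (modulo suitable powers of $v$). The resulting $\phi$-stable complete flag on $\cM_x$ produces, upon applying $\bV^*_K$, a $G_{K_\infty}$-stable flag on $\bV^*_K(\cM_x)$ whose rank-one graded quotients correspond to tame characters of $G_{K_\infty}$ that extend canonically to characters $\chi_i$ of $G_K$; passing to the dual filtration then gives the upper-triangular form asserted in the statement.

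The remaining step, which is the main technical obstacle, is to check the explicit formula $\chi_i|_{I_K} = \prod_{j\in\cJ} \omega_{K,\sigma_j}^{(\kappa_j + \eta_j)_i}$. This amounts to reading off, embedding-by-embedding, the $v$-exponents on the diagonal of the permuted Frobenius matrix while tracking the contributions of $\tld{z}$, $D$, $\tld{s}^*$, and the implicit right-translation by $\tld{w}^*(\tau)^{-1}$ coming from the placement of $x$ inside the translated position $\tld{\Fl}^{[0,h]}_{\cJ, \tld{w}^*(\tau)}$. Using the dot-action formula $\kappa = \pi^{-1}(\tld{w}) \cdot (\omega - \eta)$ together with the compatibility of the lowest alcove presentations $(\tld{w}, \omega)$ and $(s, \mu)$ with the common central character $\zeta$, the calculation reduces to the standard dictionary---as in Example \ref{ex:data:type} and Proposition \ref{prop:PSLAP}---identifying rank-one \'etale $\phi$-modules with tame characters of $G_{K_\infty}$, which gives the claimed formula.
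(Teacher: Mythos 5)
The high-level strategy — describe a point of the open stratum explicitly via the big cell, exhibit a $\varphi$-stable complete flag, then read off the graded characters — is the same as the paper's. But the proposal has a genuine gap precisely where the lemma's content lies, and it also contains a couple of inaccuracies worth flagging.

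First, you allow an arbitrary $\tld{s}\in\tld{W}^\cJ$ with $\tld{s}(0)=\omega$ and assert the choice is immaterial. That is true for the \emph{closure} $C^\zeta_\sigma = S^{\nabla_0}_\F(\tld{w},e,\tld{s})$ (via Proposition~\ref{prop:match2}), but the open stratum $S^\circ_\F(\tld{w},e,\tld{s})^{\nabla_0}$ does depend on $\tld{s}$, and more importantly the partial Frobenius matrices of a point in that stratum depend on $\tld{s}$. The paper's proof exploits a very deliberate choice: it takes $\tld{w}_1=\tld{w}$, $\tld{w}_2=\tld{w}_h\tld{w}$ (not $\tld{w}_2=e$), and then \emph{pins down} $\tld{s} = t_\nu s$ by imposing $w_j s_j^{-1}w_{j-1}^{-1}=1$ for all $j$. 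This choice is what makes $\tld{w}_2^{-1}w_0\tld{w}_1 = t_{w^{-1}(\eta)}$ a pure translation, so that after the $\phz$-conjugation by $(w_j)_j$ the partial Frobenius factors as (constant-diagonal lower triangular) $\times\, v^{\eta_j+w_{j-1}\nu_j}$, giving the filtration and making the exponent computable. Your alternative choice $\tld{w}_2 = e$ forces $\tld{z}=(w_0\tld{w})^*$ to carry a nontrivial Weyl component; your claim that conjugating by $w_0w'$ turns $\tld{z}$ into a diagonal of $v$-monomials is not correct as stated (a computation gives $t_{w_0(\eta_w)}(w_0w)^{-1}$, still mixing a permutation in), and the subsequent simplification would require an additional right-multiplication that you have not accounted for.

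Second — and this is the more serious issue — you defer the explicit verification of $\chi_i|_{I_K}=\prod_{j\in\cJ}\omega_{K,\sigma_j}^{(\kappa_j+\eta_j)_i}$ to "the standard dictionary," but this verification is the actual content of the lemma. The paper carries it out: it unwinds $\kappa_j = w_{j-1}(\omega_j)+p\eta_{w_{j-1}}-\eta_j$ from the dot-action formula, solves the constraint $\nu_j = \omega_j - w_{j-1}^{-1}(\eta_j-\eta_{w_j})$ coming from the special choice of $\tld{s}$, and then identifies $\eta_j + w_{j-1}(\nu_j) = w_{j-1}(\omega_j)+\eta_{w_j}$ with $(\kappa_j+\eta_j)$ modulo $(p-\pi)X^*(T)$ using $\omega_{K,\sigma_j}^p = \omega_{K,\sigma_{j-1}}$. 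None of this is routine bookkeeping that can be waved through; it depends on the specific $\tld{s}$. Finally, your mention of "the implicit right-translation by $\tld{w}^*(\tau)^{-1}$" introduces a type $\tau$ that does not appear in the lemma's statement at all — the $\phi$-module is attached to $x\in\tld{\Fl}^\cJ$ directly via $\iota_{\tld{z}}$ from Proposition~\ref{prop:iotamono}, and no translation back into $Y^{\leql,\tau}$ is involved.
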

\begin{proof} We write $\tld{w}=t_{\eta_w}w$, thus $\kappa_j=w_{j-1}(\omega_j)+p\eta_{w_{j-1}}-\eta_j$. %
The set of triples $(\tld{w}_1,\tld{w}_2,\tld{s})$ such that
\begin{equation} \label{eq:l1}
\tld{w}_1=\tld{w},\quad \tld{w}_2=\tld{w}_h\tld{w}_1,\quad \tld{s}\tld{w}_2^{-1}(0)=\omega
\end{equation}
is in bijection with $W^{\cJ}$, since the first two condition determines $\tld{w}_1, \tld{w}_2$, and the third condition uniquely specifies $\tld{s}$ once the image of $\tld{s}$ in $W^\cJ$ is fixed. Thus we can choose the triple $(\tld{w}_1,\tld{w}_2,\tld{s})$
such that the above conditions hold, and furthermore writing $\tld{s}=t_{\nu}s$, we have
\begin{equation} \label{eq:l2}
w_j s_j^{-1}w_{j-1}^{-1}=1
\end{equation} \label{eq:l2}
for all $j\in \cJ$. Note that our choices give $\tld{w}_2^{-1}w_0\tld{w}_1=t_{w^{-1}(\eta)}$.

We now choose $U^{\zeta}_\sigma$ to be the open affine \[
\prod_{j \in \cJ} \left(S^\circ_\F((\tld{w}_{2, j}^{-1}w_0\tld{w}_{1, j})^*)\tld{s}_j^*\cap \Fl^{\nabla_0}\right) \subset C^\zeta_\sigma. 
\]
Let $x\in \tld{C}^\zeta_\sigma(\ovl{\F})$ such that the image of $x$ in $\Fl^\cJ$ is in $U^{\zeta}_\sigma$. By Proposition \ref{prop:opencell}, Corollary \ref{cor:NandB} and the fact that $wt^{w^{-1}\eta}\in \un{\tld{W}}^{+}$, we see that $x$ can be represented by a tuple of matrices $(C^{(j)})_j=(D_jv^{w_j^{-1}(\eta_j)}w_j^{-1}\ovl{N}_jw_j\tld{s}_j^*)_j\in \GL_n(\ovl{\F}(\!(v)\!))^\cJ$ where $\ovl{N}_j$ is unipotent lower triangular and $D_j\in T^{\vee}(\ovl{\F})$. 

Writing $\tld{s}^*_j = s_j^{-1} \nu_j$, $\phz$-conjugation by $(w_j)$ yields
\[C'^{(j)}=w_j C^{(j)}w_{j-1}^{-1}=\Ad(w_j)(D_j)\Ad(v^{\eta_j})(\ovl{N}_j)v^{\eta_j}w_js_j^{-1}v^{\nu_j}w_{j-1}^{-1}=\ovl{B}'_jv^{\eta_j+w_{j-1}\nu_j} \]
where $\ovl{B}'_j$ is lower triangular with constant diagonal entries.
Thus the \'{e}tale $\phz$-module $\cM_x$ associated to $x$ has a filtration by rank $1$ \'{e}tale $\phz$-module, and $V_x=\bV^*_K(\cM_x)$ has a $G_{K_\infty}$-stable complete flag $0=\mathrm{Fil^0}\subset \mathrm{Fil^1}\subset \cdots \subset \mathrm{Fil^n}=V_x$ with associated graded 
\[\mathrm{gr}_iV_x|_{I_K} \cong \prod_{j\in \cJ} \omega_{K,\sigma_j}^{(\eta_j+w_{j-1}\nu_j)_i}.\]
This follows from Proposition 3.1.2 \cite{LLL} noting that in the conventions of this paper as explained in Remark \ref{rmk:cmpr:mat}(\ref{it:rmk:star}), $\taubar(s^*, \mu^*)$ would be replaced by $\taubar(s^{-1}, \mu)$ in the formula.  
Now the relations (\ref{eq:l1}) gives
\[\nu_j+s_jw^{-1}_j(\eta_j-\eta_{w_j})=\omega\]
 and (\ref{eq:l2}) then implies
\[\nu_j=\omega_j-w^{-1}_{j-1}(\eta_j-\eta_{w_j})\]
and thus 
\[\eta_j+w_{j-1}(\nu_j)=w_{j-1}(\omega_j) +\eta_{w_j}.\]
We conclude by observing that
\[\prod_{j\in \cJ} \omega_{K,\sigma_j}^{(\kappa_j+\eta_j)_i}=\prod_{j\in \cJ} \omega_{K,\sigma_j}^{(w_{j-1}(\omega_j)+p\eta_{w_{j-1}})_i}=\prod_{j\in \cJ} \omega_{K,\sigma_j}^{(w_{j-1}(\omega_j) +\eta_{w_j})_i},\]
since $\omega_{K,\sigma_j}^p=\omega_{K,\sigma_{j-1}}$.

\end{proof}
\begin{proof}[Proof of Theorem \ref{thm:irreducible_components_mod_p}] The first part follows from Corollary \ref{cor:bound_cpt} and the fact that $\cX^{\leql,\tau}_{\reg,\red}$ and $\cX^{\lambda,\tau}_{\red}$ are equidimensional of dimension $d_\cJ$.

We now prove the second part. Let $\sigma\in \JH(W(\lambda-\eta)\otimes \ovl{\sigma}(\tau))$ with lowest alcove presentation $(\tld{w},\omega)$ and set $\kappa=\pi^{-1}(\tld{w})\cdot(\omega-\eta)$. Then $\cC_\sigma$ occurs as an irreducible component of $\cX_{\red}^{\leql,\tau}$, and we have the pullback $\tld{\cC}_\sigma$ as in diagram (\ref{eq:mod_p_local_model}). Now $\tld{\cC}_\sigma$ is a top dimensional irreducible component of $\tld{M}_\cJ^{\nv}(\leql,\nabla_{\bf{a}_\tau})$, thus it must be of the form $\tld{C}^{\zeta}_{\sigma'}\tld{w}^{*}(\tau)^{-1}$ for some $\sigma'\in \JH(W(\lambda-\eta)\otimes \ovl{\sigma}(\tau))$. Let $(\tld{w}',\omega')$ be the lowest alcove presentation of $\sigma'$ compatible with $\zeta$ and $\kappa'=\pi^{-1}(\tld{w}')\cdot(\omega'-\eta)$. %

We need to show that $\sigma'=\sigma$. To this end, let $U^\zeta_{\sigma'}$ be the open subscheme of $C^{\zeta}_{\sigma'}$ constructed in Lemma \ref{lem:good_open_cpt}. By the definition of $\cC_\sigma$, we can find a dense set of points $x\in \tld{\cC}_\sigma(\ovl{\F})$ such that the associated Galois representation $\rho_x$ has the form described in (\ref{eq:generic_points_cpt}). We can thus find such a point $x$ which furthermore induces a point in $U^\zeta_{\sigma'}$. Since $\ad(\rhobar)$ is cyclotomic free as $\rhobar$ is at least $4n$-generic, Lemma \ref{lem:cyclo_free}(\ref{cyclo_free_4}) implies that that any $G_{K_\infty}$-stable filtration on $\rho_x|_{G_{K_\infty}}$ is automatically $G_K$-stable. We conclude that the filtration of $\rho_x$ coming from Lemma \ref{lem:good_open_cpt} and the filtration described in (\ref{eq:generic_points_cpt}) coincide. Comparing the associated graded thus shows that 
\[\prod_{j\in \cJ} \omega_{K,\sigma_j}^{(\kappa_{j}+\eta_j)_i}=\prod_{j\in \cJ} \omega_{K,\sigma_j}^{(\kappa'_{j}+\eta_j)_i}\]
for $1\leq i\leq n$. 
The equation above shows that $\kappa - \kappa' \in (p-\pi) X^*(T)$. Since $\kappa$ and $\kappa'$ are both $p$-restricted, $\kappa - \kappa' \in (p-\pi) X^0(T)$ which means that $\sigma \cong \sigma'$. 
\end{proof}

\begin{prop} \label{prop:rhobaroncomponents}
Let $\sigma$ be $(3n-1)$-deep Serre weight with lowest alcove presentation $(\tld{w}_1, \omega)$. Let $\rhobar$ be a tame $n$-dimensional representation of $G_K$ which is $2n$-generic. 
 \begin{enumerate}
 \item \label{item:comp:obv} If $\sigma \in W_{\obv}(\rhobar)$, then $\rhobar \in \cC_{\sigma}$.    
 \item \label{item:comp:fixed} For each $j\in\cJ$, let $P_{\tld{w}_{1 ,j}} \in \Z[X_1,\ldots, X_n]$ be as in Proposition \ref{prop:Tfixedpts}. If $\sigma \in W^?(\rhobar)$ and $P_{\tld{w}_{1, j}}(\omega_j) \neq 0 \mod p$ for all $j \in \cJ$, then $\rhobar \in \cC_{\sigma}$.   
  \item \label{item:comp:WE} If  $\rhobar \in \cC_{\sigma}$, then $\sigma \in W^?(\rhobar)$. 
 \end{enumerate}  
\end{prop}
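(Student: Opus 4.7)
The plan is to translate each of the three items into an equivalent statement about $T^{\vee,\cJ}$-fixed points on the irreducible components $C^{\zeta}_\sigma$ of $\Fl^{\nabla_0}_{\cJ}$, and then invoke Theorem \ref{thm:Tfixedpts} directly. The bridge between Galois-theoretic data and these local-model components is provided by the isomorphism $\tld{\cC}_\sigma \cong \tld{C}^{\zeta}_\sigma$ of Theorem \ref{thm:EGmodp}, together with Proposition \ref{prop:ssetphi} which identifies tame semisimple representations with $T^{\vee,\cJ}$-fixed points of $\tld{\Fl}^{\cJ}$.

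First I would select an auxiliary pair $(\lambda,\tau)$ so that the local model picture of Theorem \ref{thm:EGmodp} is available, along with compatible lowest alcove presentations. The natural choice is $\lambda=\eta$ (so $h=n-1$), together with a tame inertial type $\tau$ satisfying: $\sigma\in \JH(\ovl{\sigma}(\tau))$, the fixed lowest alcove presentation of $\tau$ is both compatible with that of $\sigma$ (same $\zeta$) and $(\lambda-\eta)=\un{0}$-compatible with that of $\rhobar$, and $\tau$ is sufficiently generic in the sense required by Theorem \ref{thm:EGmodp} (or by the weaker Remark \ref{rmk:MLM:sp:fib}(\ref{it:MLM:compmatch})). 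The $(3n-1)$-depth of $\sigma$, combined with Lemma \ref{lemma:connectSW}, produces a Deligne--Lusztig representation $R=\sigma(\tau)$ with $\sigma\in \JH(\ovl{R})$, and the Teichm\"uller lift $\tau$ can be arranged to meet all the compatibility and genericity requirements; by Lemma \ref{lem:lower_bound_cpt} we then have $\cC_\sigma \subset \cX^{\eta,\tau}$.

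With this setup, Theorem \ref{thm:EGmodp} furnishes the local model diagram (\ref{eq:mod_p_local_model}) and the isomorphism $\tld{\cC}_\sigma\cong \tld{C}^{\zeta}_\sigma$ inside $\tld{\Fl}^{[0,h]}_{\cJ,\tld{w}^*(\tau)}$. Under this identification, the $T^{\vee,\cJ}$-fixed point $\tld{w}^*(\rhobar)\in \tld{W}^{\vee,\cJ}\subset \tld{\Fl}^{\cJ}$ attached to the tame representation $\rhobar$ sits over the point of $\cX_n(\F)$ represented by $\rhobar$ itself (by Proposition \ref{prop:ssetphi}), and since $C^{\zeta}_\sigma$ is stable under the shifted $T^{\vee,\cJ}$-conjugation, the entire $T^{\vee,\cJ}$-orbit lies in $\tld{C}^{\zeta}_\sigma$ or is disjoint from it. Hence $\rhobar \in \cC_\sigma$ if and only if $\tld{w}^*(\rhobar)\in C^{\zeta}_\sigma$. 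The three items of the proposition then follow immediately: item (\ref{item:comp:obv}) from Theorem \ref{thm:Tfixedpts}(\ref{it:Tfixedpts:1}), item (\ref{item:comp:fixed}) from Theorem \ref{thm:Tfixedpts}(\ref{it:Tfixedpts:3}), and item (\ref{item:comp:WE}) from Theorem \ref{thm:Tfixedpts}(\ref{it:Tfixedpts:2}).

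The main obstacle is bookkeeping rather than any deep geometric input: one must simultaneously match lowest alcove presentations and algebraic central characters of $\sigma$, $\rhobar$, and the auxiliary $\tau$, and verify that the chosen $\tau$ is generic enough for both the component-matching in Theorem \ref{thm:EGmodp} and the hypotheses of Theorem \ref{thm:Tfixedpts}. For item (\ref{item:comp:WE}), one also has to check that the assumption $\rhobar\in \cC_\sigma$ already forces the existence of compatible lowest alcove presentations for $\rhobar$ and $\sigma$; this follows from the generic-point description (\ref{eq:generic_points_cpt}) of $\cC_\sigma$, which pins down $\rhobar|_{I_K}$ (up to semisimplification) in terms of the highest weight of $\sigma$, combined with the $2n$-genericity of $\rhobar$ and the $(3n-1)$-depth of $\sigma$.
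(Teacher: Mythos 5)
Your overall strategy---relating $\rhobar \in \cC_\sigma$ to membership of the torus-fixed point $\tld{w}^*(\rhobar)$ in $C^\zeta_\sigma$, and then invoking Theorem \ref{thm:Tfixedpts}---is exactly the route taken by the paper. But the step where you establish $\cC_\sigma \subset \cX^{\eta,\tau}$ has a genuine gap.

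You invoke Lemma \ref{lem:lower_bound_cpt} to conclude $\cC_\sigma \subset \cX^{\eta,\tau}$. That lemma requires the lowest alcove presentation $(s,\mu)$ of $\tau$ to satisfy that $\mu$ is $(4n+h)$-deep in $\un{C}_0$, which for $\lambda = \eta$ (so $h=n-1$) means $(5n-1)$-deep. However, starting from a $(3n-1)$-deep Serre weight $\sigma$, any $\tau$ with $\sigma \in \JH(\ovl{\sigma}(\tau))$ is forced (by the bound $\langle \tld{w}_2^{-1}(0),\alpha^\vee\rangle \leq h_\eta = n-1$ in Proposition \ref{prop:JHbij}) to have a lowest alcove presentation of depth at most roughly $(4n-2)$, which falls short of the $(5n-1)$ requirement. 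Your appeal to Remark \ref{rmk:MLM:sp:fib}(\ref{it:MLM:compmatch}) does reduce the genericity needed for the \emph{component matching} to $2(h+1)$-generic, but that remark still requires a $\tau$ with $\cC_\sigma \subset \cX^{\leql,\tau}$ as an input, and Lemma \ref{lem:lower_bound_cpt} cannot produce such a $\tau$ at this genericity level. (Your use of Lemma \ref{lemma:connectSW} also seems misdirected: it concerns $W^?(\taubar)$ for a tame inertial $L$-parameter $\taubar$, not the containment of Serre weights in Jordan--H\"older factors of reductions relative to a fixed $\sigma$.)

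The paper's fix is to construct a \emph{specific} $\tau$ rather than appeal to a general inclusion lemma. Writing $\sigma = F(\kappa)$, the paper sets $\tau = \tau(1,\kappa)$, which by Proposition \ref{prop:PSLAP} is $2n$-generic when $\kappa$ is $(3n-1)$-deep. The containment $\cC_\sigma \subset \cX^{\eta,\tau(1,\kappa)}$ is proved directly from the moduli description of the dense locus of $\cC_\sigma$ (equation \eqref{eq:generic_points_cpt}): every such $\rhobar$ is niveau-one upper-triangular with prescribed inertial characters, and Lemma \ref{lem:obvious_tame_type_lift} exhibits an explicit ordinary lift that is potentially crystalline of type $(\eta,\tau(1,\kappa))$. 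This is the only genuinely new ingredient needed beyond your outline; once $\cC_\sigma \subset \cX^{\eta,\tau(1,\kappa)}$ and $2n$-genericity are in hand, Remark \ref{rmk:MLM:sp:fib}(\ref{it:MLM:compmatch}) plus Propositions \ref{prop:ssetphi}, \ref{prop:semisimple_admissible} give the identification $\tld{\cC}_\sigma \cong \tld{C}^\zeta_\sigma$ and the equivalence between $\rhobar\in\cC_\sigma$ and $\tld{w}^*(\rhobar)\in C^\zeta_\sigma$ (for some lowest alcove presentation of $\rhobar$), exactly as you describe, and the three items drop out of Theorem \ref{thm:Tfixedpts}.
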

\begin{proof}
First, we claim there exists a $2n$-generic $\tau$ such that $\cC_{\sigma} \subset \cX^{\eta, \tau}$. Let $\sigma = F(\kappa)$.  The component $\cC_{\sigma}$ is characterized by the fact it contains all $\rhobar$ of the form \eqref{eq:generic_points_cpt} such that $\chi_i|_{I_K} = \prod_{j \in \cJ} \ovl{\omega}_{K, \sigma_j}^{(\kappa_j+ \eta_k)_i}$.   To show $\cC_{\sigma} \subset \cX^{\eta, \tau}$, it suffices to show that all such $\rhobar$ have potentially crystalline lifts of type $(\eta, \tau)$.  By Lemma \ref{lem:obvious_tame_type_lift}, this property holds for $\tau = \tau(1, \kappa)$.  Furthermore, if $\kappa$ is $(3n-1)$-deep in its alcove, then by Proposition \ref{prop:PSLAP}, $\tau(1, \kappa)$ is $2n$-generic.    

Thus, by Remark \ref{rmk:MLM:sp:fib}(\ref{it:MLM:sp:fib:3}), there is a diagram as in  \eqref{eq:mod_p_local_model} such that $\tld{\cC}_{\sigma}$ is isomorphic to $\tld{C}^{\zeta}_{\sigma}$.   By Propositions \ref{prop:ssetphi} and \ref{prop:semisimple_admissible}, $\rhobar \in \cC_{\sigma}$ if and only if $\rhobar$ admits a lowest alcove presentation such that $\tld{w}^*(\rhobar) \in \tld{C}^{\zeta}_{\sigma}$.

Since $\tld{w}^*(\rhobar) \in \tld{C}^{\zeta}_{\sigma}$ exactly when $\tld{w}^*(\rhobar) \in C^{\zeta}_{\sigma}$.   Each item follows directly from corresponding item in Theorem \ref{thm:Tfixedpts}.   
\end{proof}

\begin{rmk}  Proposition \ref{prop:rhobaroncomponents} likely holds for $2n$-deep weights.  However, it requires more work to realize $\cC_{\sigma}$ inside some $\cX^{\eta, \tau}$ in that case. 
\end{rmk}
\clearpage{}%
\clearpage{}%
\section{The Breuil--M\'ezard conjecture}
\label{sec:BMC}

In this section, we let $K/\Q_p$ be a finite extension and $n>0$ an integer.
Let $\cJ$ be $\Hom_{\Qp}(K,E)$.
We let $G_0$ be $\Res_{\cO_K/\Z_p} ({\GL_n}_{/\cO_K})$ so that $\un{G} \cong \prod_{\cJ} ({\GL_n}_{/\cO})$ and $\un{G}^\vee \cong \prod_{\cJ} \GL_n$. %
\subsection{The statement of the conjectures}
\label{sub:statements:BM}

In this section, we recall two conjectures which we call the \emph{geometric} and \emph{versal} Breuil--M\'ezard conjectures.

Let $\Z[\cX_{n,\mathrm{red}}]$ denote the free abelian group on the irreducible components $\cC_\sigma$ of $\cX_{n,\mathrm{red}}$ parametrized by Serre weights $\sigma$.
We call elements of $\Z[\cX_{n,\mathrm{red}}]$ \emph{cycles} and call $\cC_\sigma\in \Z[\cX_{n,\mathrm{red}}]$ for a Serre weight $\sigma$ an \emph{irreducible cycle}.
(One might normally call these \emph{top-dimensional} cycles among cycles of varying dimension, but since we only consider top-dimensional cycles, we omit this adjective.)
A cycle is \emph{effective} if its coefficients are nonnegative.
We say that $Z_1 \in \Z[\cX_{n,\mathrm{red}}]$ is greater than or equal to $Z_2 \in \Z[\cX_{n,\mathrm{red}}]$ (and write $Z_1 \geq Z_2$) if $Z_1 - Z_2$ is effective.
Let $K(\Rep_{\F}(\rG))$ be the Grothendieck group of finitely generated $\F[\rG]$-modules, or equivalently the free abelian group generated by Serre weights for $\rG$.
If $W$ is a finitely generated $\F[\rG]$-module, we write $[W] = \sum_\sigma [W:\sigma][\sigma]$ for its image in $K(\Rep_{\F}(\rG))$ where $[W:\sigma]$ denotes the multiplicity of a Serre weight $\sigma$ as a Jordan--H\"older factor of $W$.
If $V$ is a finitely generated $E[\rG]$-module, then $[\ovl{V}^\circ]$ is independent of the $\rG$-stable $\cO$-lattice $V^\circ \subset V$, and so denote this by $[\ovl{V}]$.
We then also denote $[\ovl{V}^\circ:\sigma]$ by $[\ovl{V}:\sigma]$.

A \emph{type} is a pair $(\lambda+\eta,\tau)$ where $\lambda\in X_*(\un{T}^{\vee})$ is a dominant weight and $\tau$ is a Weil--Deligne inertial type for $K$.
We say that a type is \emph{extremal} if $\tau$ is maximal or minimal with respect to $\preceq$. 
Recall that given an extremal type $(\lambda+\eta,\tau)$, $\cX^{\lambda+\eta,\tau}$ denotes the potentially semistable or the potentially crystalline stack of type $(\lambda+\eta,\tau)$.
Let $\cZ_{\lambda,\tau}$ denote the cycle 
\[
\sum_\sigma \mu_\sigma(\cX^{\lambda+\eta,\tau}_{\F}) \cC_\sigma
\]
in $\Z[\cX_{n,\mathrm{red}}]$ where $\mu_\sigma(\cX^{\lambda+\eta,\tau}_{\F})$ denotes the multiplicity of $\cC_\sigma$ as an irreducible component of $\cX^{\lambda+\eta,\tau}_{\F}$ in the sense of \cite[\href{https://stacks.math.columbia.edu/tag/0DR4}{Tag 0DR4}]{stacks-project}.
We also denote by $\lambda$ the corresponding element in $X^*(\un{T})$. %
For a set $\cS$ of extremal types $(\lambda+\eta,\tau)$, we write 
\[
\JH(\ovl{\sigma}(\cS)) \defeq \cup_{(\lambda+\eta,\tau)\in \cS} \JH(\ovl{\sigma}(\lambda,\tau)).\]
The following conjecture is based on a geometric version of (a generalization of) a conjecture of Breuil--M\'ezard (\cite{BM}).

\begin{conj}[Geometric Breuil--M\'ezard conjecture]\label{conj:S-BM}
Let $\cS$ be a set of extremal types.
Then for each $\sigma \in \JH(\ovl{\sigma}(\cS))$, there exists an effective cycle $\cZ_\sigma \in \Z[\cX_{n,\mathrm{red}}]$ such that for all $(\lambda+\eta,\tau) \in \cS$, we have
\[
\cZ_{\lambda,\tau} = \sum_{\sigma} [\ovl{\sigma}(\lambda,\tau):\sigma] \cZ_\sigma.
\]
\end{conj}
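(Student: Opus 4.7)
The plan is to reduce Conjecture \ref{conj:S-BM} (in the restricted form stated as Theorem \ref{thm:intro:BMstack} in the introduction) to the case of tame $\rhobar$, where the Taylor--Wiles patching machinery combined with the unibranch property of potentially crystalline deformation rings gives direct access to the cycles. Concretely, I would first work locally at tame Galois representations. Given a tame $\rhobar$ which is sufficiently generic (depending on the set $\cS$), fix a weak minimal potentially diagonalizable patching functor $M_\infty$ for $\rhobar$ as provided by Proposition \ref{prop:obvpatchexist}. Using Theorem \ref{thm:stack_local_model}, each versal ring $R^{\lambda+\eta,\tau}_\rhobar$ of $\cX^{\lambda+\eta,\tau}$ at $\rhobar$ is either zero or a domain. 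Combined with Proposition \ref{prop:patchnonzero} (the non-vanishing criterion from \cite{LLL}), this forces $M_\infty(\sigma(\lambda,\tau)^\circ)$ to have full support on $\Spec R^{\lambda+\eta,\tau}_\rhobar$. Exactness of $M_\infty$ then gives, at the level of cycles in $\Spec R^\square_\rhobar/\varpi$, the identity
\[
Z(R^{\lambda+\eta,\tau}_\rhobar/\varpi) \;=\; \sum_\sigma [\ovl{\sigma}(\lambda,\tau):\sigma]\,\cZ_\sigma(\rhobar),
\]
where $\cZ_\sigma(\rhobar)$ denotes the support cycle of $M_\infty(\sigma)$. This establishes the conjecture locally at every sufficiently generic tame $\rhobar$.

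The second step is to produce candidate global cycles $\cZ_\sigma$ on $\cX_{n,\mathrm{red}}$. I would choose, for each $\sigma \in \JH(\ovl{\sigma}(\cS))$, an expression of $[\sigma]$ in the Grothendieck group $K(\Rep_{\F}(\rG))$ as a $\Z$-linear combination $[\sigma] = \sum_i a_i [\ovl{\sigma}(\lambda_i,\tau_i)]$ with $(\lambda_i+\eta,\tau_i) \in \cS$, and then define the candidate
\[
\cZ_\sigma \;\defeq\; \sum_i a_i\,\cZ_{\lambda_i,\tau_i} \;\in\; \Z[\cX_{n,\mathrm{red}}].
\]
A priori $\cZ_\sigma$ depends on the chosen expression, and there is no reason the desired identity should hold for other types in $\cS$; verifying both well-definedness and the conjectured equations is the crux of the argument.

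To check the identity for $\cZ_\sigma$, I would pass to completions at tame $\rhobar$. For each tame $\rhobar$ sufficiently generic relative to $\cS$, pulling back the candidate cycles to $\Spec R^\square_\rhobar/\varpi$ yields expressions purely in terms of the $Z(R^{\lambda+\eta,\tau}_\rhobar/\varpi)$, and by the local construction these recover the $\cZ_\sigma(\rhobar)$ already produced from the patching functor. In particular, the pulled-back candidates are independent of choices at each such $\rhobar$ and satisfy the conjectured equations. Since every irreducible component $\cC_\sigma$ of $\cX_{n,\mathrm{red}}$ contains a dense set of such tame $\rhobar$ by Proposition \ref{prop:rhobaroncomponents} (applied to $\sigma \in W_\obv(\rhobar)$), the multiplicities of any cycle in $\Z[\cX_{n,\mathrm{red}}]$ are detected by completion at tame points. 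This simultaneously forces the candidates $\cZ_\sigma$ to be well-defined, effective, and to satisfy $\cZ_{\lambda,\tau} = \sum_\sigma [\ovl{\sigma}(\lambda,\tau):\sigma]\,\cZ_\sigma$ for every $(\lambda+\eta,\tau) \in \cS$.

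The main obstacle is Theorem \ref{thm:intro:unibranch}, the unibranch/domain property of $R^{\lambda+\eta,\tau}_\rhobar$, which underpins the entire local step. Its proof requires the local model analysis of \S \ref{sec:UMLM}--\S \ref{sec:monodromy}, and crucially it holds only for \emph{tame} $\rhobar$; wild examples (see the appendix) show it can fail otherwise. This is precisely why the global construction proceeds by interpolating from the tame locus rather than attempting a direct argument at every $\rhobar$. A secondary technical difficulty is the genericity bookkeeping: one must check that for $\rhobar$ generic enough relative to $\cS$, only sufficiently generic $\tau$ contribute to $\cZ_{\lambda,\tau}$ at $\rhobar$ (an input from \cite{LLL}), so that the local-at-tame-$\rhobar$ analysis covers all the equations appearing in the global system. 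Overcoming both points simultaneously is what restricts the result to the finite-$\Lambda$, sufficiently-generic-$\tau$ setting rather than the full conjecture.
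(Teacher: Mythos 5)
Your strategy is essentially the paper's: prove a versal (local) Breuil--M\'ezard statement at tame $\rhobar$ via patching functors and the unibranch property of $R^{\lambda+\eta,\tau}_\rhobar$ (Theorem \ref{thm:stack_local_model}, Proposition \ref{prop:patchnonzero}), then construct global cycles by formally inverting the Breuil--M\'ezard equations and verify them by pulling back to versal rings at tame points (cf.~Proposition \ref{prop:BMglobalize}, Lemma \ref{lemma:localizeinj}, and Theorem \ref{thm:patchBM}). Note also that the statement you are targeting is a conjecture, not a theorem; the paper only establishes it for restricted sets $\cS$, which you do acknowledge.

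However, there is a genuine gap in your second step. You posit an expression $[\sigma] = \sum_i a_i [\ovl{\sigma}(\lambda_i,\tau_i)]$ with $(\lambda_i+\eta,\tau_i)\in\cS$. For the sets $\cS$ the paper actually treats --- sufficiently generic tame types with bounded Hodge--Tate weights (such as $\cS_{\Lambda,\mathrm{t}}$) --- this spanning property fails: $\cS$ is not a Breuil--M\'ezard system in the sense of \cite[\S 3.3]{GHS}, precisely because the genericity condition removes too many types. The paper resolves this by introducing auxiliary sets $\widehat{\cS}\supset \cS$ and $\widehat{\cS}_{\mathrm{elim}}$ (Definition \ref{defn:sets:types}) so that $[\sigma]$ can be expressed up to an error supported on $(\cP,\widehat{\cS}_{\mathrm{elim}})$-irrelevant weights (where patching functors vanish by Remark \ref{rmk:irrelevant}), and then applies the idempotent $\tr_{\sigma,\cS}$ (in the proof of Theorem \ref{thm:patchBM}, together with Lemmas \ref{lemma:disjoint} and \ref{lemma:coveridem}) to project the candidate cycle onto the components actually covered by $\sigma$. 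Without this truncation, the cycle $\sum_i a_i \cZ_{\lambda_i,\tau_i}$ could carry contributions from non-generic components that cannot be controlled by completion at tame $(6n-2)$-generic $\rhobar$ (your appeal to Proposition \ref{prop:rhobaroncomponents} requires $\sigma$ to be at least $(3n-1)$-deep, so shallow components escape the detection argument). It is exactly this mechanism --- extending to $\widehat{\cS}$, discarding irrelevant weights, and truncating with $\tr_{\sigma,\cS}$ --- that turns ``formal inversion plus density of tame points'' into a correct proof, and you would need to supply it.
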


\begin{rmk}\label{rmk:ZgeqC}
Though it is not necessary for our purposes, we further expect that $\cZ_\sigma$ in Conjecture \ref{conj:S-BM} is greater than or equal to $\cC_\sigma$.
\end{rmk}

Recall from \cite[\S 3.3]{GHS} that $\cS$ is called a \emph{Breuil--M\'ezard system} if the map 
\begin{align*}
\Z[\cS] &\ra K(\Rep_{\F}(\rG))\\
(\lambda+\eta,\tau) &\mapsto [\ovl{\sigma}(\lambda,\tau)]
\end{align*}
has finite cokernel.

\begin{rmk}
\begin{enumerate}
\item If we take $\cS$ to contain all extremal types $(\lambda+\eta,\tau)$, then Conjecture \ref{conj:S-BM} combines the potentially crystalline and semistable parts of \cite[Conjecture 8.2.2]{EGstack} with the additional assertion that the cycles $\cZ_\sigma$ are effective.

\item It is not hard to see that if a system of cycles $\cZ_\sigma$ in Conjecture \ref{conj:S-BM} exists for a Breuil--M\'ezard system $\cS$, then it must be unique. 
Of course, for general $\cS$, there may be more than one system of cycles $\cZ_\sigma$ for which Conjecture \ref{conj:S-BM} holds. 
We will show that the cycles $\cZ_\sigma$ can sometimes also be characterized using minimal patching functors even when $\cS$ is not a Breuil--M\'ezard system (see Theorem \ref{thm:genBM}). 
\end{enumerate}
\end{rmk}

\begin{rmk} \label{rmk:BM:nonreduced}  If $[\ovl{\sigma}(\lambda,\tau):\sigma] > 1$ and $\cZ_{\sigma}$ is nonzero for some Serre weight $\sigma$, then Conjecture \ref{conj:S-BM} (with $(\lambda+\eta,\tau) \in \cS$) would imply that $\cX^{\lambda + \eta, \tau}_{\F}$ is necessarily non-reduced.   
It is known that when $n \geq 4$ and $\tau$ is $2n$-generic, $\JH(\ovl{\sigma}(\tau))$ has Jordan--H\"{o}lder factors with higher multiplicity and so the same will be true for $\JH(\ovl{\sigma}(\lambda, \tau))$ for any $\lambda$. %
Under suitable genericity hypotheses, Proposition \ref{prop:mod_p_factor} and Conjecture \ref{conj:S-BM} then imply that the local model $M_{\cJ}(\leql, \nabla_{\bf{a}_\tau})$ will also have non-reduced special fiber when $\ovl{\sigma}(\lambda,\tau)$ has multiplicities.  
\end{rmk}

Taking versal rings for $\cX_n$ (and taking Hilbert--Samuel multiplicities) recovers the original Breuil--M\'ezard conjecture.
Let $\rhobar:G_K\ra \GL_n(\F)$ be a continuous Galois representation.
We also let $\rhobar$ denote the corresponding $\F$-point of $\cX_n$.
Fix a versal ring $R_\rhobar^{\mathrm{ver}}$ for $\cX_n$ at $\rhobar$.
For example, we could take the framed deformation ring $R^\square_\rhobar$.
For a type $(\lambda+\eta,\tau)$, the fiber product $\Spf R_\rhobar^{\mathrm{ver}} \times_{\cX_n} \cX^{\lambda+\eta,\tau}$ is a closed formal subscheme of $\Spf R_\rhobar^{\mathrm{ver}}$, which we denote by $\Spf R_\rhobar^{\mathrm{ver},\lambda+\eta,\tau}$.
Then $R_\rhobar^{\mathrm{ver},\lambda+\eta,\tau}$ is a versal ring for $\cX^{\lambda+\eta,\tau}$ at $\rhobar$.
Similarly, the fiber product $\Spf R_\rhobar^{\mathrm{ver}} \times_{\cX_n} \cX_{n,\mathrm{red}}$ is a closed formal subscheme of $\Spf R_\rhobar^{\mathrm{ver}}$, which we denote by $\Spf R_\rhobar^{\mathrm{alg}}$.
{Since $\cX_{n,\textrm{red}}$ is an algebraic stack, the versal map $\Spf R_\rhobar^{\mathrm{alg}}\ra \cX_{n,\mathrm{red}}$ is effective (\cite[Definition 2.2.9]{EGschemetheoretic}), i.e.~arises from a map}
\[
i_{\rhobar}: \Spec R_\rhobar^{\mathrm{alg}}\ra \cX_{n,\mathrm{red}}.
\]
The map $i_{\rhobar}$ induces a map from the set of irreducible components of $\Spec R_\rhobar^{\mathrm{alg}}$ to the set of irreducible components $\cX_{n,\mathrm{red}}$ whose image is exactly the set of irreducible components of $\cX_{n,\mathrm{red}}$ containing $\rhobar$ (see \cite[\href{https://stacks.math.columbia.edu/tag/0DRB}{Tag 0DRB}]{stacks-project}).
We denote by $\Z[\Spec R_\rhobar^{\mathrm{alg}}]$ the free abelian group generated by irreducible components of $\Spec R_\rhobar^{\mathrm{alg}}$.
We use the terms \emph{cycle}, \emph{irreducible}, and \emph{effective} in this context as well.
Thinking of $\Z[\cX_{n,\mathrm{red}}]$ and $\Z[\Spec R_\rhobar^{\mathrm{alg}}]$ as spaces of functions on sets of irreducible components, pullback gives a map $i_{\rhobar}^*: \Z[\cX_{n,\mathrm{red}}] \ra \Z[\Spec R_\rhobar^{\mathrm{alg}}]$.
Let $\cZ_{\lambda,\tau}(\rhobar)$ denote the cycle $i_{\rhobar}^*(\cZ_{\lambda,\tau})\in \Z[\Spec R_\rhobar^{\mathrm{alg}}]$ which is the cycle corresponding to $\Spec R_{\rhobar,\F}^{\mathrm{ver},\lambda+\eta,\tau}$ using that taking formal fibers preserves multiplicities (see \cite[\href{https://stacks.math.columbia.edu/tag/0DRD}{Tag 0DRD}]{stacks-project}). 
(We suppress in the notation $\cZ_{\lambda,\tau}(\rhobar)$ the dependence on the choice of versal ring.) 

\begin{conj}[Versal Breuil--M\'ezard conjecture]\label{conj:v-S-BM}
Let $\cS$ be a set of extremal types.
For each $\sigma \in \JH(\ovl{\sigma}(\cS))$, there exist effective cycles $\cZ_\sigma(\rhobar)$ in $\Spec R_\rhobar^{\mathrm{alg}}$ such that for all $(\lambda+\eta,\tau) \in \cS$, we have
\[
\cZ_{\lambda,\tau}(\rhobar) = \sum_{\sigma} [\ovl{\sigma}(\lambda,\tau):\sigma] \cZ_\sigma(\rhobar).
\]
\end{conj}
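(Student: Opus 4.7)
The plan is to deduce Conjecture \ref{conj:v-S-BM} from the geometric version Conjecture \ref{conj:S-BM} by pulling back along the map $i_{\rhobar}^{*}\colon \Z[\cX_{n,\mathrm{red}}]\to\Z[\Spec R_{\rhobar}^{\mathrm{alg}}]$. Since $i_{\rhobar}^{*}$ is additive, preserves effectivity, and by definition sends $\cZ_{\lambda,\tau}$ to $\cZ_{\lambda,\tau}(\rhobar)$, the assignment $\cZ_{\sigma}(\rhobar)\defeq i_{\rhobar}^{*}(\cZ_{\sigma})$ immediately transfers the geometric identity to the required versal identity. Thus the substantive work is to produce effective cycles $\cZ_{\sigma}\in\Z[\cX_{n,\mathrm{red}}]$ satisfying $\cZ_{\lambda,\tau}=\sum_{\sigma}[\ovl{\sigma}(\lambda,\tau)\colon\sigma]\cZ_{\sigma}$ for every $(\lambda+\eta,\tau)\in\cS$.

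My approach to constructing the $\cZ_{\sigma}$ is to first establish the versal conjecture at tame, sufficiently generic points $\rhobar_{0}$, and then interpolate. For such $\rhobar_{0}$, choose a weak minimal detectable potentially diagonalizable patching functor $M_{\infty}$ for $\rhobar_{0}$ (which exists by Proposition \ref{prop:obvpatchexist}), and define $\cZ_{\sigma}(\rhobar_{0})$ to be the support cycle of $M_{\infty}(\sigma)$ in $\Spec R_{\rhobar_{0}}^{\mathrm{alg}}$. Exactness of $M_{\infty}$ applied to a Jordan--H\"older filtration of a lattice $\sigma(\lambda,\tau)^{\circ}$ produces the identity $\cZ_{\lambda,\tau}(\rhobar_{0})=\sum_{\sigma}[\ovl{\sigma}(\lambda,\tau)\colon\sigma]\,\cZ_{\sigma}(\rhobar_{0})$ \emph{provided} $\supp M_{\infty}(\sigma(\lambda,\tau)^{\circ})$ equals the whole of $\Spec R_{\infty}(\lambda,\tau)/\varpi$. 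This last equality is where Theorem \ref{thm:intro:unibranch} enters: it asserts that $R_{\rhobar_{0}}^{\lambda+\eta,\tau}$ is a domain (or zero), so that $\Spec R_{\infty}(\lambda,\tau)$ is irreducible, and a non-zero support (guaranteed by detectability of $M_{\infty}$ via Proposition \ref{prop:patchnonzero}) must then be everything.

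To globalize the $\cZ_{\sigma}(\rhobar_{0})$ into a single cycle $\cZ_{\sigma}\in\Z[\cX_{n,\mathrm{red}}]$, I would exploit the fact that the classes $[\ovl{\sigma}(\lambda,\tau)]$ for $(\lambda+\eta,\tau)$ ranging over (a dense enough subset of) $\cS$ span a large portion of the Grothendieck group $K(\Rep_{\F}(\rG))$. Given any formal expression $[\sigma]=\sum n_{\lambda,\tau}[\ovl{\sigma}(\lambda,\tau)]$, define $\cZ_{\sigma}\defeq\sum n_{\lambda,\tau}\cZ_{\lambda,\tau}\in \Z[\cX_{n,\mathrm{red}}]$. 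Two issues arise: independence of the chosen expression, and effectivity. Both are checked after pullback by $i_{\rhobar_{0}}^{*}$ at varying tame $\rhobar_{0}$: the difference of two candidates pulls back to zero by construction of $\cZ_{\sigma}(\rhobar_{0})$ from the patching functor, and tame points are Zariski-dense in $\cX_{n,\mathrm{red}}$ (each irreducible component $\cC_{\sigma'}$ contains tame $\rhobar_{0}$ by Proposition \ref{prop:rhobaroncomponents}), so a cycle vanishing after pullback at all such $\rhobar_{0}$ must itself be zero. Effectivity follows similarly, since $\cZ_{\sigma}(\rhobar_{0})$ is visibly effective as a support cycle of a module.

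The main obstacle is Theorem \ref{thm:intro:unibranch}, the analytic irreducibility of $R_{\rhobar}^{\lambda+\eta,\tau}$ for tame $\rhobar$, which in turn rests on the unibranch property Theorem \ref{thm:intro:unibranchMLM} of the local model $M(\lambda,\nabla_{\bf a})$ at its torus-fixed points, together with the local model comparison Theorem \ref{thm:stack_local_model}. Both of these require non-trivial genericity assumptions (combinatorial genericity of $\mu$, and a geometric genericity cut out by a universal polynomial $P$) and the proof of the unibranch property itself involves a delicate spreading-out from the equal characteristic situation, where a contracting $\bG_{m}$-action forces analytic irreducibility. A secondary subtlety is that when $\cS$ is not a Breuil--M\'ezard system, the cycles $\cZ_{\sigma}$ are not pinned down uniquely by the stated equations; the construction above implicitly canonicalizes them by demanding compatibility with patching functors at tame $\rhobar_{0}$, and one must verify that this choice is independent of the global input used to build the patching functor (which can be done by comparing support cycles across different choices on overlapping supports).
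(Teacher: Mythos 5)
The statement you have been asked to prove is labeled \emph{Conjecture} \ref{conj:v-S-BM}, and the paper in fact does not prove it in the generality stated. What the paper proves is a restricted version under genericity hypotheses: Theorem \ref{thm:genBM} and Corollary \ref{cor:BM} establish Conjecture \ref{conj:v-S-BM} only for $\cS = \cS_{\Lambda,\mathrm{t}}$ (tame inertial types satisfying explicit polynomial genericity), for sufficiently generic $\rhobar$, and with the cycles $\cZ_\sigma$ only defined for \emph{generic} Serre weights $\sigma$. Your proposal does correctly reproduce the mechanism used in the paper for this restricted case --- define $\cZ_\sigma(\rhobar_0)$ via a minimal detectable patching functor at tame generic $\rhobar_0$, use Theorem \ref{thm:intro:unibranch} to force full support of $M_\infty(\sigma(\lambda,\tau)^\circ)$, and interpolate via Proposition \ref{prop:rhobaroncomponents}. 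But each of these ingredients carries a genuine, unremovable (by current technology) genericity restriction, so they cannot deliver the unconditional statement: Proposition \ref{prop:obvpatchexist} requires $4n$-genericity of $\rhobar_0$; Theorem \ref{thm:intro:unibranch} requires $\tau$ sufficiently generic depending on $\lambda$ (and in particular $p \gg 0$, cf.\ Remark \ref{rmk:psmall}); the interpolation via Proposition \ref{prop:rhobaroncomponents} only works for components $\cC_\sigma$ with $\sigma$ sufficiently deep. None of these covers non-generic types, ramified $K$, or small $p$, all of which the conjecture encompasses.

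Concretely, the step that fails is your claim that $\supp M_\infty(\sigma(\lambda,\tau)^\circ) = \Spec R_\infty(\lambda,\tau)/\varpi$ for arbitrary $(\lambda+\eta,\tau) \in \cS$: the domain property of $R_{\rhobar_0}^{\lambda+\eta,\tau}$ (Theorem \ref{thm:intro:unibranch}) is established only when $\tau$ is $P$-generic for an implicit universal polynomial $P$ depending on $\lambda$, and is known to \emph{fail} for certain wild $\rhobar$ (cf.\ Corollary \ref{cor:numerical_unibranch}, Appendix \ref{ex:failure:UB}). Likewise, your interpolation from tame generic $\rhobar_0$ to a global cycle $\cZ_\sigma$ silently assumes that every irreducible component $\cC_{\sigma'}$ in the support of $\cZ_\sigma$ contains such a $\rhobar_0$ --- true for generic $\sigma'$ by Proposition \ref{prop:rhobaroncomponents}, but not established for arbitrary $\sigma'$. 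These are precisely the reasons the paper presents the full statement as a conjecture and proves only the genericity-restricted versions in Theorem \ref{thm:genBM} and Corollary \ref{cor:BM}. Your ``proof'' is really a correct outline of those theorems, not of Conjecture \ref{conj:v-S-BM}.
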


\begin{rmk}
\begin{enumerate}
\item As stated, Conjecture \ref{conj:v-S-BM} depends on the choice of a versal ring.
However, by choosing a common formally smooth covering of any two versal rings and using that a formally smooth covering of an equidimensional scheme induces a bijection between sets of irreducible components and preserves multiplicities of components, we see that Conjecture \ref{conj:v-S-BM} for one choice of versal ring implies the same result for any other choice.

\item Taking $\cS$ to contain all minimal types ($\tau$ is minimal) and $R_\rhobar^{\mathrm{ver}}$ to be the framed deformation ring $R_\rhobar^\square$ recovers \cite[Conjecture 4.2.1]{EG}, with the added assertion that $\cZ_\sigma(\rhobar)$ is effective.
\end{enumerate}
\end{rmk}

\subsubsection{Cycles from modules}
If $M$ is a finitely generated $R_{\rhobar,\F}^{\mathrm{ver},\lambda+\eta,\tau}$-module for some type $(\lambda+\eta,\tau)$, then we will let $Z(M)$ be the cycle
\[
\sum_{\cC} \mu_{\cC}(M) \cC \in \Z[\Spec R_\rhobar^{\mathrm{alg}}]
\]
where $\cC$ ranges over irreducible components of $\Spec R_{\rhobar,\F}^{\mathrm{ver},\lambda+\eta,\tau}$, $\mu_{\cC}(M)$ denotes $\mathrm{length}_{R_{\rhobar,\F,\fp_{\cC}}^{\mathrm{ver},\lambda+\eta,\tau}}(M_{\fp_{\cC}})$, and $\fp_{\cC}\subset R_{\rhobar,\F}^{\mathrm{ver},\lambda+\eta,\tau}$ denotes the prime ideal corresponding to $\cC$.

\subsection{Relations between the two conjectures}
\label{sub:rel:BM}

\begin{prop}\label{prop:BMlocalize}
Let $\cS$ be a set of extremal types.
Then Conjecture \ref{conj:S-BM} (for $\cS$) implies Conjecture \ref{conj:v-S-BM} (for $\cS$) for all $\rhobar \in \cX_n(\F)$.
\end{prop}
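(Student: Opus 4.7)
The plan is to define $\cZ_\sigma(\rhobar) \defeq i_{\rhobar}^*(\cZ_\sigma)$ for each Serre weight $\sigma\in\JH(\ovl{\sigma}(\cS))$ and then simply apply the $\Z$-linear pullback map $i_{\rhobar}^*$ to the cycle equation provided by Conjecture \ref{conj:S-BM} to produce the cycle equation demanded by Conjecture \ref{conj:v-S-BM}. Concretely, first I would check that $\cZ_\sigma(\rhobar)$ is effective. The map $i_{\rhobar}^*$ sends each irreducible cycle $\cC_\sigma$ to the formal sum, with nonnegative integer multiplicities, of those irreducible components of $\Spec R_\rhobar^{\mathrm{alg}}$ which lie above $\cC_\sigma$ under $i_{\rhobar}$ (the multiplicities coming from the formally smooth/versal structure of $i_\rhobar$). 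In particular $i_{\rhobar}^*$ preserves effectivity, and so the assumed effectivity of the $\cZ_\sigma$ transfers to the $\cZ_\sigma(\rhobar)$.

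Next I would invoke the definition of $\cZ_{\lambda,\tau}(\rhobar)$ given in the excerpt, namely $\cZ_{\lambda,\tau}(\rhobar) = i_{\rhobar}^*(\cZ_{\lambda,\tau})$. The excerpt already records (via \cite[\href{https://stacks.math.columbia.edu/tag/0DRD}{Tag 0DRD}]{stacks-project}) that this cycle coincides with the one attached to the special fiber $\Spec R_{\rhobar,\F}^{\mathrm{ver},\lambda+\eta,\tau}$ of the versal ring, since formation of formal fibers preserves multiplicities of irreducible components. No further identification is required, and in particular the definition of $\cZ_{\lambda,\tau}(\rhobar)$ is independent (up to the canonical identification of cycles discussed in the remark following Conjecture \ref{conj:v-S-BM}) of the choice of versal ring.

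Finally, applying $i_{\rhobar}^*$ to both sides of the identity
\[
\cZ_{\lambda,\tau} = \sum_\sigma [\ovl{\sigma}(\lambda,\tau):\sigma]\,\cZ_\sigma
\]
furnished by Conjecture \ref{conj:S-BM} for $(\lambda+\eta,\tau)\in\cS$ yields
\[
\cZ_{\lambda,\tau}(\rhobar) = \sum_\sigma [\ovl{\sigma}(\lambda,\tau):\sigma]\,\cZ_\sigma(\rhobar),
\]
which is precisely the identity of Conjecture \ref{conj:v-S-BM}. I do not anticipate any serious technical obstacle: the argument is essentially formal, amounting to unwinding the definition of $i_{\rhobar}^*$ and appealing to the compatibility of multiplicities under formal fibers. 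The only mild point to keep track of is that $i_{\rhobar}^*$ is a homomorphism of abelian groups sending effective cycles to effective cycles, which follows from the description of $i_{\rhobar}$ as an effective versal map inherited from a formally smooth covering of $\cX_{n,\mathrm{red}}$.
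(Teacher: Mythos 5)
Your proposal is correct and is essentially the paper's proof, just spelled out in more detail: the paper's one-line argument says exactly that one pulls back the cycle equation along $i_{\rhobar}^*$, using that multiplicities are preserved under passage to versal rings (Stacks Tag 0DRD), and you have simply made explicit the intermediate steps of defining $\cZ_\sigma(\rhobar) \defeq i_{\rhobar}^*(\cZ_\sigma)$ and checking effectivity is preserved.
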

\begin{proof}
This follows from the fact that multiplicities of cycles do not change upon passing to versal rings (\cite[\href{https://stacks.math.columbia.edu/tag/0DRD}{Tag 0DRD}]{stacks-project}).
\end{proof}

In fact, the converse of this statement is true (see Remark \ref{rmk:Pset}), but we will need a variation of it.
Let $\cP \subset \cX_n(\F)$ be a subset.
Let 
\[
i_{\cP}^* \defeq \prod_{x\in \cP} i_x^*: \Z[\cX_{n,\mathrm{red}}] \ra \prod_{x\in \cP} \Z[\Spec R_x^{\mathrm{alg}}].
\]

For a set $\cS$ of extremal types, let $\cC(\cS)$ denote the set of irreducible components of $\cX_{n,\mathrm{red}}$ which lie in the support of $\cX^{\lambda+\eta,\tau}$ for some $(\lambda+\eta,\tau) \in \cS$.
We say that $\cP$ \emph{meets all components of $\cS$} if any $\cC \in \cC(\cS)$ intersects $\cP$.

\begin{lemma}\label{lemma:localizeinj}
If $\cP$ meets all components of a set $\cS$ of extremal types, then the restriction of $i_{\cP}^*$ to $\Z[\cC(\cS)]$, the $\Z$-span of $\cC(\cS)$, is injective.
Moreover, $Z\in \Z[\cC(\cS)]$ is effective if and only if $i_{\cP}^*(Z)$ is effective $($i.e.~$i_x^*(Z)$ is effective for all $x\in\cP$$)$. %
\end{lemma}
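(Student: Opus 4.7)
The plan is to reduce both statements to a straightforward disjointness assertion about how irreducible components of $\cX_{n,\mathrm{red}}$ pull back through the versal maps $i_x$. Concretely, for $x\in \cX_n(\F)$, the map $i_x:\Spec R_x^{\mathrm{alg}}\to \cX_{n,\mathrm{red}}$ is effective for a versal map to $\cX_{n,\mathrm{red}}$, and since $\cX_{n,\mathrm{red}}$ is equidimensional, any irreducible component $\cC'$ of $\Spec R_x^{\mathrm{alg}}$ maps dominantly to a unique irreducible component $\cC$ of $\cX_{n,\mathrm{red}}$ (which then necessarily contains $x$). This gives a well-defined surjection from the set of irreducible components of $\Spec R_x^{\mathrm{alg}}$ to the set of irreducible components of $\cX_{n,\mathrm{red}}$ containing $x$. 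Moreover, the cycle $i_x^*(\cC_\sigma)$ (which as recalled before the lemma equals the cycle of $\Spec R_x^{\mathrm{alg}}\times_{\cX_{n,\mathrm{red}}}\cC_\sigma$) is the sum of those components of $\Spec R_x^{\mathrm{alg}}$ lying over $\cC_\sigma$, with each multiplicity equal to $1$ because $\cC_\sigma$ is reduced and the map is versal. In particular, for distinct $\sigma$, $\sigma'$ with $x\in \cC_\sigma\cap \cC_{\sigma'}$, the irreducible components of $\Spec R_x^{\mathrm{alg}}$ appearing in $i_x^*(\cC_\sigma)$ and in $i_x^*(\cC_{\sigma'})$ are disjoint, and if $x\notin \cC_\sigma$ then $i_x^*(\cC_\sigma)=0$.

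Granting this, the remainder is formal. Write $Z=\sum_{\sigma}a_\sigma \cC_\sigma\in \Z[\cC(\cS)]$. For each $\cC_\sigma\in \cC(\cS)$ with $a_\sigma\neq 0$, the hypothesis that $\cP$ meets all components of $\cS$ furnishes an $x_\sigma\in \cP\cap \cC_\sigma$. By the disjointness observation, any irreducible component $\cC'$ of $\Spec R_{x_\sigma}^{\mathrm{alg}}$ that appears in $i_{x_\sigma}^*(\cC_\sigma)$ does not appear in any $i_{x_\sigma}^*(\cC_{\sigma'})$ for $\sigma'\neq \sigma$, so the coefficient of $\cC'$ in $i_{x_\sigma}^*(Z)$ is exactly $a_\sigma$. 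This immediately yields injectivity of $i_{\cP}^*|_{\Z[\cC(\cS)]}$: if $i_{\cP}^*(Z)=0$ then every $a_\sigma$ vanishes. The same computation proves the effectivity statement: if $i_{\cP}^*(Z)$ is effective, reading off the coefficient of $\cC'$ in $i_{x_\sigma}^*(Z)$ shows $a_\sigma\geq 0$ for every $\sigma$ with $\cC_\sigma\in \cC(\cS)$; the reverse implication is obvious since $i_x^*$ sends effective cycles to effective cycles.

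The only step requiring any care is the assertion that each irreducible component of $\Spec R_x^{\mathrm{alg}}$ dominates a unique component of $\cX_{n,\mathrm{red}}$, and conversely that each component of $\cX_{n,\mathrm{red}}$ through $x$ is hit, with multiplicity one. This is where one invokes the versal nature of $i_x$ together with the fact that taking formal fibers preserves the bijection between sets of components and their multiplicities; the relevant citations are \cite[\href{https://stacks.math.columbia.edu/tag/0DRB}{Tag 0DRB}]{stacks-project} (surjectivity onto components through $x$), and \cite[\href{https://stacks.math.columbia.edu/tag/0DRD}{Tag 0DRD}]{stacks-project} (multiplicities), both of which have already been used in \S \ref{sub:statements:BM} in the very definition of $\cZ_{\lambda,\tau}(\rhobar)$. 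I expect this bookkeeping to be the main (and only genuine) obstacle; once it is recorded, the rest of the argument is purely linear-algebraic.
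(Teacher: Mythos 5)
Your proposal is correct and is essentially the paper's own (very terse) argument, made explicit: the paper's proof likewise picks, for each component $\cC$ in the support of $Z$, a point $x\in\cP\cap\cC$ and reads off the coefficient via the disjointness of the fibers of the map on irreducible components, citing the same Stacks Project tags. The only cosmetic difference is your justification for the multiplicity-one claim (``because $\cC_\sigma$ is reduced and the map is versal''); in fact that claim is immediate from the definition of $i_x^*$ as pullback of functions on sets of irreducible components, but this does not affect the argument.
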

\begin{proof}
If $\cC$ is in the support of $Z$, a nonzero element of the $\Z$-span of $\cC(\cS)$, then $\cC$ contains some $x \in \cP$.
Then $i_x^*(Z)$ is nonzero by \cite[\href{https://stacks.math.columbia.edu/tag/0DRD}{Tag 0DRD}]{stacks-project}.
Similarly, if the coefficient of $\cC$ is negative, then $i_x^*(Z)$ is not effective.
\end{proof}

\begin{prop}\label{prop:BMglobalize}
Let $\cS$ be a set of extremal types and $\cP \subset\cX_n(\F)$ be a subset.
Assume the following: 
\begin{enumerate} 
\item 
\label{it:BMglobalize:1}
$\cP \subset\cX_n(\F)$ meets all components of $\cS$; and
\item 
\label{it:BMglobalize:2}
Conjecture \ref{conj:v-S-BM} holds for $\cS$ and all $x\in \cP$ with cycles $\cZ_\sigma(x)$ for each $\sigma \in \JH(\ovl{\sigma}(\cS))$ and $x\in \cP$.
\end{enumerate}
Then for each $\sigma \in \JH(\ovl{\sigma}(\cS))$, there is at most one cycle $\cZ_\sigma$ in $\Z[\cX_{n,\mathrm{red}}]$ %
such that the support of $\cZ_\sigma$ is contained in the support of $\cZ_{\lambda,\tau}$ for some $(\lambda+\eta,\tau) \in \cS$ and for each $x\in \cP$, $i_x^*(\cZ_\sigma) = \cZ_\sigma(x)$.
If all such cycles exist, then they satisfy the conclusion of Conjecture \ref{conj:S-BM}.
\end{prop}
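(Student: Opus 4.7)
The plan is to reduce both claims to Lemma \ref{lemma:localizeinj}, which says that for $\cP$ meeting all components of $\cS$ the restriction of $i_{\cP}^*$ to $\Z[\cC(\cS)]$ is injective and detects effectivity. The crucial observation is that the support hypothesis on $\cZ_\sigma$ forces $\cZ_\sigma \in \Z[\cC(\cS)]$: each component in the support of $\cZ_{\lambda,\tau}$ for $(\lambda+\eta,\tau)\in \cS$ is, by definition, an element of $\cC(\cS)$.

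For the uniqueness claim, suppose $\cZ_\sigma$ and $\cZ_\sigma'$ are two cycles in $\Z[\cX_{n,\mathrm{red}}]$ satisfying the stated properties. Both lie in $\Z[\cC(\cS)]$ by the remark above, and by hypothesis they agree after applying $i_x^*$ for every $x\in\cP$, i.e.\ their images under $i_{\cP}^*$ coincide. Injectivity of $i_{\cP}^*|_{\Z[\cC(\cS)]}$ from Lemma \ref{lemma:localizeinj} then forces $\cZ_\sigma = \cZ_\sigma'$.

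For the second claim, assume that for every $\sigma \in \JH(\ovl{\sigma}(\cS))$ a cycle $\cZ_\sigma$ with the listed properties exists. Fix $(\lambda+\eta,\tau) \in \cS$ and consider the two cycles
\[
\cZ_{\lambda,\tau}, \qquad \sum_{\sigma} [\ovl{\sigma}(\lambda,\tau):\sigma]\, \cZ_\sigma
\]
in $\Z[\cX_{n,\mathrm{red}}]$. The first lies in $\Z[\cC(\cS)]$ by the definition of $\cC(\cS)$, and the second does by the support hypothesis on each $\cZ_\sigma$. Applying $i_x^*$ for any $x \in \cP$ and using that $i_x^*$ is a group homomorphism together with the equality $i_x^*(\cZ_\sigma) = \cZ_\sigma(x)$ from the hypothesis, the two images become
\[
\cZ_{\lambda,\tau}(x), \qquad \sum_{\sigma} [\ovl{\sigma}(\lambda,\tau):\sigma]\, \cZ_\sigma(x),
\]
which are equal by Conjecture \ref{conj:v-S-BM} at $x$, holding by assumption (\ref{it:BMglobalize:2}). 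Thus $i_{\cP}^*$ sends our two cycles to the same element, and injectivity of $i_{\cP}^*|_{\Z[\cC(\cS)]}$ yields the desired equality, proving the cycle relation of Conjecture \ref{conj:S-BM}. Finally, effectivity of each $\cZ_\sigma$ follows from the effectivity of every $\cZ_\sigma(x)$ (which is part of Conjecture \ref{conj:v-S-BM}) together with the second assertion of Lemma \ref{lemma:localizeinj}, which says that effectivity of elements of $\Z[\cC(\cS)]$ can be detected fiberwise via $i_{\cP}^*$.

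No serious obstacle is expected here: the statement is essentially a formal consequence of Lemma \ref{lemma:localizeinj}, which in turn rests on the general fact \cite[\href{https://stacks.math.columbia.edu/tag/0DRD}{Tag 0DRD}]{stacks-project} that multiplicities of components are preserved when passing to versal rings. The only point requiring mild care is verifying that both sides of the desired equality genuinely lie in $\Z[\cC(\cS)]$ so that Lemma \ref{lemma:localizeinj} is applicable; this is handled by the support hypothesis built into the statement.
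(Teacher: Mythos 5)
Your proof is correct and takes essentially the same route as the paper: uniqueness and the cycle equality both reduce to the injectivity half of Lemma \ref{lemma:localizeinj} after observing that all cycles involved lie in $\Z[\cC(\cS)]$, and effectivity is read off fiberwise. The paper inlines the effectivity argument (directly checking that a negative coefficient would contradict $i_x^*(\cZ_\sigma)$ being effective at some $x\in\cP$) rather than citing the second assertion of Lemma \ref{lemma:localizeinj}, but that is the same argument.
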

\begin{proof}
The uniqueness of the cycles $\cZ_\sigma$ follows from Lemma \ref{lemma:localizeinj}.

We next show that the cycles $\cZ_\sigma$ are effective. 
Indeed, if an irreducible cycle $\cC$ in $\cZ_\sigma$ has a negative coefficient, then there is a point $x\in \cP$ with $x\in \cC$ since $\cP$ meets all components of $\cS$.
Then $i_x^*(\cZ_\sigma)$ is not effective by \cite[\href{https://stacks.math.columbia.edu/tag/0DRD}{Tag 0DRD}]{stacks-project}, which is a contradiction.

From the existence of the cycles, the equality $\cZ_{\lambda,\tau} = \sum_{\sigma} [\ovl{\sigma}(\lambda,\tau):\sigma] \cZ_\sigma$ follows again from Lemma \ref{lemma:localizeinj}.
\end{proof}

\begin{rmk}\label{rmk:Pset}
We recall a strong form of the converse of Proposition \ref{prop:BMlocalize} from \cite[\S 8.3]{EGstack}.
Let $\cS$ be a set of extremal types.
The set $\cP = \{x_{\cC}\}_{\cC}$, where $\cC$ ranges over irreducible components of $\cup_{(\lambda+\eta,\tau)\in \cS}\cX_{\F}^{\lambda+\eta,\tau}$ and $x_{\cC} \in \cC$ is a smooth point of $\cX_{n,\mathrm{red}}$, meets all components of $\cS$.
Moreover, one can define $\cZ_\sigma$ as in \cite[\S 8.3]{EGstack}, so that this collection satisfies the hypothesis of Proposition \ref{prop:BMglobalize}.
In fact, $i_{\cP}^*$ induces an isomorphism on (top-dimensional) cycles.
We conclude that the following strong form of the converse of Proposition \ref{prop:BMlocalize} holds: if Conjecture \ref{conj:v-S-BM} holds at all points in $\cP$, then Conjecture \ref{conj:S-BM} holds.

It is natural to ask if a strong form of the converse holds for any set $\cP$ which meets all components of $\cS$.
In general, it is not as easy to construct the collection of cycles $\cZ_\sigma$ satisfying the properties in Proposition \ref{prop:BMglobalize}.
In \S \ref{sec:patchBM} we show how to use a minimal patching functor to construct $\cZ_\sigma$ so that Conjecture \ref{conj:S-BM} holds for a subset of $\cS$.
In \S \ref{sec:BMgen}, we will take $\cP$ to be the set of semisimple $\rhobar$.
\end{rmk}

\subsection{Patching functors and Breuil--M\'ezard cycles} \label{sec:patchBM}

In this section, we provide an axiomatic framework to show how patching functors (\S \ref{sec:patch:ax}) can be used to deduce versions of Conjectures \ref{conj:S-BM} and \ref{conj:v-S-BM}.
{The idea is to define the cycles $\cZ_\sigma$ in Conjecture \ref{conj:S-BM} by formally inverting the Breuil--M\'ezard equations and ignoring suitably non-generic components. Then one can prove Conjecture \ref{conj:S-BM} using Lemma \ref{lemma:localizeinj} assuming Conjecture \ref{conj:v-S-BM} for each $\rhobar$ in a large enough set $\cP$ (in particular $\cP$ must meet all components of $\cS$). Conjecture \ref{conj:v-S-BM} holds given the existence of a suitable minimal patching functor. The argument requires some intricate definitions, and the reader is invited to consider the context of \S \ref{sec:BMgen}.}

\begin{defn}\label{defn:irrelevant}
If $\rhobar: G_K \ra \GL_n(\F)$ is a continuous representation and $\cS_{\mathrm{elim}}$ is a set of extremal types, 
we say that a Serre weight $\sigma$ is \emph{$(\rhobar,\cS_{\mathrm{elim}})$-irrelevant} if there exists $(\lambda+\eta,\tau) \in \cS_{\mathrm{elim}}$ such that 
\begin{enumerate}
\item 
\label{it:irrelevant:2}
$\sigma \in \JH(\ovl{\sigma}(\lambda,\tau))$; and 
\item 
\label{it:irrelevant:3}
$R_\rhobar^{\lambda+\eta,\preceq\tau} = 0$.
\end{enumerate}
For a set $\cP \subset \cX_n(\F)$, we say that a Serre weight $\sigma$ is $(\cP,\cS_{\mathrm{elim}})$-irrelevant if $\sigma$ is $(\rhobar,\cS_{\mathrm{elim}})$-irrelevant for every $\rhobar \in \cP$.
We say $\sigma$ is $\rhobar$-irrelevant or $\cP$-irrelevant if $\sigma$ is $(\rhobar,\cS_{\mathrm{elim}})$-irrelevant or $(\cP,\cS_{\mathrm{elim}})$-irrelevant for some $\cS_{\mathrm{elim}}$.
\end{defn}

\begin{rmk} \label{rmk:irrelevant}
The significance of Definition \ref{defn:irrelevant} comes from the fact that if $M_\infty$ is a weak patching functor for $\rhobar: G_K \ra \GL_n(\F)$, then $M_\infty(\sigma) = 0$ for all $\rhobar$-irrelevant Serre weights $\sigma$ since $R_\rhobar^{\lambda+\eta,\preceq\tau} = 0$ implies that $M_\infty(\sigma^\circ(\lambda,\tau))=0$ for any $\cO$-lattice $\sigma^\circ(\lambda,\tau) \subset \sigma(\lambda,\tau)$.
Similarly, if Conjecture \ref{conj:v-S-BM} holds for $\cS_{\mathrm{elim}}$, then $\cZ_\sigma(\rhobar) = 0$ for all $(\rhobar,\cS_{\mathrm{elim}})$-irrelevant Serre weights $\sigma$. %
\end{rmk}

\begin{defn}
\label{defn:sets:types}
Below, $\cS$, $\cS'$, $\widehat{\cS}$, and $\widehat{\cS}_{\mathrm{elim}}$ denote sets of extremal types.
\begin{enumerate}
\item 
\label{it:sets:types:1}
We say that $\sigma \in \JH(\ovl{\sigma}(\cS))$ $\cS$-\emph{covers} $\sigma'$ if $(\lambda+\eta,\tau) \in \cS$ and $\sigma \in \JH(\ovl{\sigma}(\lambda,\tau))$ imply that $\cC_{\sigma'}$ lies in $\cX_{\F}^{\lambda+\eta,\tau}$.
\item 
\label{it:sets:types:2}
We say that $\sigma\in \JH(\ovl{\sigma}(\cS))$ is $\cS$-\emph{disjoint} from $\cS'$ if for all Serre weights $\kappa$ such that $\sigma$ $\cS$-covers $\kappa$, $\cC_{\kappa}$ does not lie in $\cX_{\F}^{\lambda+\eta,\tau}$  for any $(\lambda+\eta,\tau) \in \cS'$. 
\item 
\label{it:sets:types:3}
If $\cS \subset \widehat{\cS}$, we say that $\sigma\in \JH(\ovl{\sigma}(\cS))$ is $(\cS,\widehat{\cS})$-\emph{generic} if $\sigma$ is $\widehat{\cS}$-disjoint from $\widehat{\cS} \setminus \cS$.
\end{enumerate}
As before, we let $\cP \subset \cX_n(\F)$ be a subset.
\begin{enumerate}\setcounter{enumi}{3}
\item
\label{it:sets:types:4}
We say that $\widehat{\cS}$ is a $(\cP,\widehat{\cS}_{\mathrm{elim}})$-Breuil--M\'ezard system if for any Serre weight $\sigma$ 
there is a nonzero integer $d_\sigma$ and integers $n_{\lambda,\tau}^\sigma$ such that \[d_\sigma[\sigma] - \sum_{(\lambda+\eta,\tau)\in \widehat{\cS}} n_{\lambda,\tau}^\sigma [\ovl{\sigma}(\lambda,\tau)]\] is supported only at $(\cP,\widehat{\cS}_{\mathrm{elim}})$-irrelevant Serre weights. 
As before, we say that $\cS$ is a $\cP$-Breuil--M\'ezard system if $\widehat{\cS}$ is a $(\cP,\widehat{\cS}_{\mathrm{elim}})$-Breuil--M\'ezard system for some $\widehat{\cS}_{\mathrm{elim}}$.
\item 
\label{it:sets:types:5}
If $\widehat{\cS}$ is a $\cP$-Breuil--M\'ezard system and $\cS \subset \widehat{\cS}$, then we let $\cS_{\cP}\subset \cS$ denote the subset of types $(\lambda+\eta,\tau)$ such that $\JH(\ovl{\sigma}(\lambda,\tau))$ contains only $(\cS,\widehat{\cS})$-generic Serre weights.
(We suppress here the dependence of $\cS_{\cP}$ on $\widehat{\cS}$.)
\end{enumerate}
\end{defn}

\begin{rmk}
It is not clear \emph{a priori} that given a set of extremal types $\cS$, a Serre weight $\cS$-covers itself, {i.e.~$\cC_{\sigma} \leq \cZ_{\lambda,\tau}$ whenever $\sigma \in \JH(\ovl{\sigma}(\lambda,\tau))$,} though we expect this to be true, as would follow from the strengthening of Conjecture \ref{conj:S-BM} in Remark \ref{rmk:ZgeqC}.
Indeed, it will be true in some contexts that we consider in \S \ref{sec:BMgen} (see Proposition \ref{prop:BMcoeff}).
\end{rmk}

\begin{thm}\label{thm:patchBM}
\begin{enumerate}
\item 
\label{it:patchBM:1}
Let $\cP \subset\cX_n(\F)$ and let $\cS$ be a set of extremal types $(\lambda+\eta,\tau)$.
Suppose that for each $x\in \cP$, there exists a minimal patching functor $M_\infty^x$ for $x$ and $\cS$.
Then Conjecture \ref{conj:v-S-BM} holds for each $x\in \cP$ with $\cZ_\sigma(x) \defeq Z(M_\infty^x(\sigma))$.
\item 
\label{it:patchBM:2}
Suppose further that $\cP$ meets all components of $\cS$, and that $\widehat{\cS}$ is a $\cP$-Breuil--M\'ezard system containing $\cS$.
Then for each $(\cS,\widehat{\cS})$-generic $\sigma$, there exists a unique cycle $\cZ_\sigma$ in $\Z[\cX_{n,\mathrm{red}}]$ such that the support of $\cZ_\sigma$ is contained in the support of $\cZ_{\lambda,\tau}$ for some $(\lambda+\eta,\tau) \in \cS$ and for each $x\in \cP$, $i_x^*(\cZ_\sigma) = \cZ_\sigma(x)$.
Moreover, $\cZ_\sigma$ is effective.
In particular, Conjecture \ref{conj:S-BM} holds for $\cS_{\cP}$.
\item 
\label{it:patchBM:3}
For each $(\cS,\widehat{\cS})$-generic $\sigma$, the cycle $\cZ_\sigma$ does not depend on the choice of the patching functors $M_\infty^x$ for $x \in \cP$.
In particular, the cycle $Z(M_\infty^x(\sigma))$ in $R_x^{\mathrm{alg}}$ depends only on the versal ring $R_\infty^x$ of $\cX_n$ at $x$ \emph{(}and not on other data in $M_\infty^x$\emph{)}.
\item
\label{it:patchBM:4}
If furthermore there is a Breuil--M\'ezard system $\widehat{\cS}_{\mathrm{elim}}$ containing $\widehat{\cS}$ such that $\widehat{\cS}$ is a $(\cP,\widehat{\cS}_{\mathrm{elim}})$-Breuil--M\'ezard system and Conjecture \ref{conj:S-BM} holds for $\widehat{\cS}_{\mathrm{elim}}$, then the above cycles $\cZ_\sigma$ coincide with those in Conjecture \ref{conj:S-BM}. 
\end{enumerate}
\end{thm}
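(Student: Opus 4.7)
The plan is to prove the four parts in sequence, with parts (3) and (4) being essentially formal consequences of the uniqueness in part (2).

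For part (\ref{it:patchBM:1}), I would exploit the exactness and minimality of $M_\infty^x$. For $(\lambda+\eta,\tau) \in \cS$, the module $M_\infty^x(\sigma^\circ(\lambda,\tau))$ is maximal Cohen--Macaulay on $X_\infty^x(\lambda,\preceq\tau)$ with generic rank at most one on each component of $\Spec R_\infty^x(\lambda,\tau)[1/p]$, so its support cycle equals the structure cycle of $\Spec R_\infty^x(\lambda,\tau)/\varpi$ on the components where it is nonzero. Since $R_\infty^x = R_x^{\mathrm{ver}} \widehat{\otimes}_\cO R^p$ with $R^p$ formally smooth, the canonical map $\Spec R_\infty^x \to \Spec R_x^{\mathrm{ver}}$ preserves cycle multiplicities, so $Z(M_\infty^x(\sigma^\circ(\lambda,\tau)))$ corresponds to $\cZ_{\lambda,\tau}(x)$. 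Exactness of $M_\infty^x$ applied to a Jordan--H\"older filtration of $\ovl{\sigma}^\circ(\lambda,\tau)$ then yields Conjecture \ref{conj:v-S-BM} with $\cZ_\sigma(x) \defeq Z(M_\infty^x(\sigma))$.

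For part (\ref{it:patchBM:2}), uniqueness of $\cZ_\sigma$ follows from Lemma \ref{lemma:localizeinj} applied to $\cP$ and $\cS$, since any two candidates differ by an element of $\Z[\cC(\cS)]$ whose image under $i_\cP^*$ vanishes. For existence, I would construct $\cZ_\sigma = \sum_{\cC_\kappa \in \cC(\cS)} m_{\sigma,\kappa}\cC_\kappa$ by choosing for each $\cC_\kappa \in \cC(\cS)$ a point $x_\kappa \in \cP \cap \cC_\kappa$ and setting $m_{\sigma,\kappa}$ to be the multiplicity of the pullback of $\cC_\kappa$ in $\cZ_\sigma(x_\kappa)$. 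Independence of the choice of $x_\kappa$ is the crucial step: one applies $Z \circ M_\infty^x$ to the $(\cP,\widehat{\cS}_{\mathrm{elim}})$-Breuil--M\'ezard relation
\[
d_\sigma[\sigma] - \sum_{(\lambda+\eta,\tau)\in\widehat{\cS}} n_{\lambda,\tau}^\sigma[\ovl{\sigma}(\lambda,\tau)] \equiv 0 \pmod{(\cP,\widehat{\cS}_{\mathrm{elim}})\text{-irrelevant}},
\]
noting by Remark \ref{rmk:irrelevant} that the irrelevant terms contribute zero, to obtain
\[
d_\sigma \cZ_\sigma(x) = \sum_{(\lambda+\eta,\tau)\in\widehat{\cS}} n_{\lambda,\tau}^\sigma Z(M_\infty^x(\sigma^\circ(\lambda,\tau))).
\]
The $(\cS,\widehat{\cS})$-genericity hypothesis is then used to show that at every $\cC_\kappa \in \cC(\cS)$, the contributions from $(\lambda+\eta,\tau)\in\widehat{\cS}\setminus\cS$ vanish and the contributions from $(\lambda+\eta,\tau)\in\cS$ reduce to multiplicities of $\cZ_{\lambda,\tau}$ at $\cC_\kappa$ (by part~(\ref{it:patchBM:1})), which are intrinsic to $\cX_{n,\mathrm{red}}$ and independent of $x$. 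Integrality of $m_{\sigma,\kappa}$ is automatic since each $\cZ_\sigma(x)$ is an integral cycle. Effectiveness and the Breuil--M\'ezard equation over $\cS_\cP$ then follow from Proposition \ref{prop:BMglobalize}.

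Parts (\ref{it:patchBM:3}) and (\ref{it:patchBM:4}) are formal consequences. Independence of the patching functor in part (\ref{it:patchBM:3}) follows because the characterization of $\cZ_\sigma$ in part (\ref{it:patchBM:2}) only references the Emerton--Gee stack and the cycles $\cZ_{\lambda,\tau}$, not the choice of $M_\infty^x$; the statement about $Z(M_\infty^x(\sigma))$ then follows by applying this to two patching functors with the same $R_\infty^x$. For part (\ref{it:patchBM:4}), if cycles $\cZ_\sigma^{\mathrm{glob}}$ satisfy Conjecture \ref{conj:S-BM} for $\widehat{\cS}_{\mathrm{elim}}$, one combines Proposition \ref{prop:BMlocalize} with the fact that $\widehat{\cS}_{\mathrm{elim}}$ is a Breuil--M\'ezard system (which inverts the equations up to irrelevant weights) to deduce $i_x^*(\cZ_\sigma^{\mathrm{glob}}) = \cZ_\sigma(x)$ for all $x\in\cP$ and $(\cS,\widehat{\cS})$-generic $\sigma$; uniqueness from part (\ref{it:patchBM:2}) then forces $\cZ_\sigma^{\mathrm{glob}} = \cZ_\sigma$.

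The main obstacle is the compatibility verification in part (\ref{it:patchBM:2}), specifically showing that the contributions of the ``extra'' types in $\widehat{\cS}\setminus\cS$ drop out when one restricts to the irreducible components in $\cC(\cS)$. This requires threading together the formal Breuil--M\'ezard relation, the support properties of the MCM modules $M_\infty^x(\sigma^\circ(\lambda,\tau))$, and the precise combinatorial meaning of $\widehat{\cS}$-covering and $\widehat{\cS}$-disjointness encoded in the notion of $(\cS,\widehat{\cS})$-genericity.
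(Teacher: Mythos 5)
Your arguments for parts (1)--(3) track the paper's own proof quite closely. In part (2), the paper defines the candidate cycle globally as $\tr_{\sigma,\cS}\bigl(\frac{1}{d_\sigma}\sum_{(\lambda+\eta,\tau)\in\widehat{\cS}} n_{\lambda,\tau}^\sigma \cZ_{\lambda,\tau}\bigr)$ and then verifies the pullback identity $i_x^*(\cZ_\sigma)=\cZ_\sigma(x)$, whereas you define the multiplicities component-by-component from chosen points $x_\kappa$ and then argue well-definedness; these are dual formulations, and the verification is the same chain of equalities driven by the $(\cP,\widehat{\cS}_{\mathrm{elim}})$-Breuil--M\'ezard relation, the vanishing of contributions from $\widehat{\cS}\setminus\cS$ (the paper's Lemma~\ref{lemma:disjoint}), and the fact that $\cZ_\sigma(x)$ is supported on components $\widehat{\cS}$-covered by $\sigma$ (the paper's Lemma~\ref{lemma:coveridem}). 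One small imprecision: you say the $\widehat{\cS}\setminus\cS$ contributions vanish at ``every $\cC_\kappa\in\cC(\cS)$'', but the correct statement is that they vanish at components $\widehat{\cS}$-covered by $\sigma$ (which is where the $(\cS,\widehat{\cS})$-genericity is actually applied); for components in $\cC(\cS)$ not covered by $\sigma$ the multiplicity $m_{\sigma,\kappa}$ is already zero by the covering lemma, so your argument survives, but the justification as written is misstated.

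Your proposed route for part (4) has a genuine gap. You want to deduce $i_x^*(\cZ_\sigma^{\mathrm{glob}})=\cZ_\sigma(x)$ by combining Proposition~\ref{prop:BMlocalize} with the formal inversion available because $\widehat{\cS}_{\mathrm{elim}}$ is a Breuil--M\'ezard system, and then invoke the uniqueness of part (2). But the inversion requires the v-S-BM equations for $\widehat{\cS}_{\mathrm{elim}}$ to hold with the cycles $\cZ_\sigma(x)=Z(M_\infty^x(\sigma))$; by part (1) you only know the relation $Z(M_\infty^x(\ovl{\sigma}(\lambda,\tau)))=\cZ_{\lambda,\tau}(x)$ for $(\lambda+\eta,\tau)\in\cS$, since the minimality (rank exactly one) of the patching functor is only assumed for $\cS$. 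For $(\lambda+\eta,\tau)\in\widehat{\cS}_{\mathrm{elim}}\setminus\cS$ one only has $Z(M_\infty^x(\ovl{\sigma}(\lambda,\tau)))\leq\cZ_{\lambda,\tau}(x)$, which is not enough to run the inversion. The paper avoids this by working entirely on $\cX_{n,\mathrm{red}}$: it shows that the hypothesized $\cZ_\sigma^{\mathrm{BM}}$ is itself fixed by $\tr_{\sigma,\cS}$ (by the analogue of Lemma~\ref{lemma:coveridem}), that the error term $\cZ_{\sigma_{\mathrm{irr}}}^{\mathrm{BM}}$---a linear combination of cycles for irrelevant weights---is annihilated by $\tr_{\sigma,\cS}$ (using that the support of $\cZ_{\sigma_{\mathrm{irr}}}^{\mathrm{BM}}$ avoids $\cP$ while $\cP$ meets every component of $\cS$), and then reads off $\cZ_\sigma^{\mathrm{BM}}=\tr_{\sigma,\cS}\bigl(\frac{1}{d_\sigma}\sum n_{\lambda,\tau}^\sigma\cZ_{\lambda,\tau}\bigr)=\cZ_\sigma$ as a direct cycle identity, with no appeal to versal rings and no need for the extra minimality assumption. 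You should replace the versal-ring detour in part (4) by this direct computation.
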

\begin{proof}
For item (\ref{it:patchBM:1}), we can assume that $\cP$ contains a single element $\rhobar$.
The data of a minimal patching functor for $\rhobar$ provides a choice of versal ring $R_\infty = R_\rhobar^\square \widehat{\otimes}_{\cO} R^p$ for $\cX_n$ at $\rhobar$ as $R^p$ is a formally smooth $\cO$-algebra.
We need to show that 
\[
\cZ_{\lambda,\tau}(\rhobar) = \sum_\sigma [\ovl{\sigma}(\lambda,\tau):\sigma] Z(M_\infty(\sigma)).
\]
For each $(\lambda+\eta,\tau)\in \cS$, $Z(M_\infty(\ovl{\sigma}(\lambda,\tau))) = \cZ_{\lambda,\tau}(\rhobar)$ by \cite[Lemma 2.2.10]{EG}.
On the other hand, 
\[
Z(M_\infty(\ovl{\sigma}(\lambda,\tau))) = \sum_\sigma [\ovl{\sigma}(\lambda,\tau):\sigma] Z(M_\infty(\sigma))
\]
by \cite[Lemma 2.2.7]{EG}.

We now proceed to items (\ref{it:patchBM:2}) and (\ref{it:patchBM:3}).
We first define $\cZ_\sigma$ for every $(\cS,\widehat{\cS})$-generic Serre weight $\sigma$.
For such a $\sigma$, {we can find $d_\sigma$ and $n_{\lambda,\tau}^\sigma$ as in Definition \ref{defn:sets:types}(\ref{it:sets:types:4})}.
Let $\tr_{\sigma,\cS}$ denote the idempotent endomorphism of $\Z[\cX_{n,\mathrm{red}}]$ which maps $\cC_{\sigma'}$ to itself if $\sigma$ $\widehat{\cS}$-covers $\sigma'$ and to $0$ otherwise. %
We let 
\[
\cZ_\sigma \defeq \tr_{\sigma,\cS}\Big(\frac{1}{d_\sigma} \sum_{(\lambda+\eta,\tau)\in \widehat{\cS}} n_{\lambda,\tau}^\sigma \cZ_{\lambda,\tau}\Big),
\]
which is \emph{a priori} a cycle with rational coefficients.
We will show that $i_x^*(\cZ_\sigma) = \cZ_\sigma(x)$ for all $x\in \cP$, which also implies that $\cZ_\sigma$ is a cycle with integer coefficients by \cite[\href{https://stacks.math.columbia.edu/tag/0DRD}{Tag 0DRD}]{stacks-project}.
Uniqueness and effectivity in (\ref{it:patchBM:2}) follows as in the proof of Proposition \ref{prop:BMglobalize}.
Item (\ref{it:patchBM:3}) follows from the fact that the definition of $\cZ_\sigma$ does not depend on the choices of $M_\infty^x$ for $x \in \cP$.

We need the following lemma, which follows from definitions.

\begin{lemma}\label{lemma:disjoint}
If $(\lambda+\eta,\tau) \in \widehat{\cS}\setminus \cS$, then $\tr_{\sigma,\cS} (\cZ_{\lambda,\tau}) = 0$.
\end{lemma}

Fix an element $x\in \cP$.
Let $\tr_{\sigma,\cS}(x)$ be the idempotent endomorphism of $\Z[\Spec R_x^{\mathrm{alg}}]$ such that $i_x^* \circ \tr_{\sigma,\cS} = \tr_{\sigma,\cS}(x) \circ i_x^*$ which exists and is unique by \cite[\href{https://stacks.math.columbia.edu/tag/0DRB}{Tag 0DRB},\href{https://stacks.math.columbia.edu/tag/0DRD}{Tag 0DRD}]{stacks-project}.

\begin{lemma}\label{lemma:coveridem}
We have $\tr_{\sigma,\cS}(x) (\cZ_\sigma(x)) = \cZ_\sigma(x)$.
\end{lemma}
\begin{proof}
Suppose that $\cC_{\sigma'}(x)$ is an irreducible cycle in the support of $\cZ_\sigma(x)$, which is also in the support of $i_x^*(\cC_{\sigma'})$ for some Serre weight $\sigma'$.
Then for any $(\lambda+\eta,\tau)\in \widehat{\cS}$ such that $\sigma \in \JH(\ovl{\sigma}(\lambda,\tau))$ we have that $\cC_{\sigma'}(x) \leq \cZ_\sigma(x) \leq \cZ_{\lambda,\tau}(x)$ (by (\ref{it:patchBM:1})), which implies that $\cC_{\sigma'} \leq \cZ_{\lambda,\tau}$.
This means that $\sigma$ $\widehat{\cS}$-covers $\sigma'$.
\end{proof}

Note that for any $(\lambda+\eta,\tau) \in \widehat{\cS} \setminus \cS$, $\tr_{\sigma,\cS}(x) (Z(M_\infty^x(\ovl{\sigma}(\lambda,\tau))))=0$ since $\tr_{\sigma,\cS}(x)(\cZ_{\lambda,\tau}(x)) = 0$ by Lemma \ref{lemma:disjoint} and $Z(M_\infty^x(\ovl{\sigma}(\lambda,\tau)))) \leq \cZ_{\lambda,\tau}(x)$.
Then
\begin{align*}
\cZ_\sigma(x) &= Z(M_\infty^x(\sigma)) \\
&= \frac{1}{d_\sigma}\sum_{(\lambda+\eta,\tau) \in \widehat{\cS}} n_{\lambda,\tau}^\sigma Z(M_\infty^x(\ovl{\sigma}(\lambda,\tau))) \\
&= \frac{1}{d_\sigma}\sum_{(\lambda+\eta,\tau) \in \widehat{\cS}} n_{\lambda,\tau}^\sigma \tr_{\sigma,\cS}(x) (Z(M_\infty^x(\ovl{\sigma}(\lambda,\tau)))) \\
&= \frac{1}{d_\sigma}\sum_{(\lambda+\eta,\tau) \in \cS} n_{\lambda,\tau}^\sigma \tr_{\sigma,\cS}(x) (Z(M_\infty^x(\ovl{\sigma}(\lambda,\tau)))) \\
&= \frac{1}{d_\sigma}\sum_{(\lambda+\eta,\tau) \in \cS} n_{\lambda,\tau}^\sigma \tr_{\sigma,\cS}(x) (\cZ_{\lambda,\tau}(x)) \\
&= i_x^*(\cZ_\sigma),
\end{align*}
where the first equality is by definition, the second equality follows from Remark \ref{rmk:irrelevant}, the third equality follows from Lemma \ref{lemma:coveridem}, the fourth equality follows from the previous sentence, the fifth equality is as in the first paragraph of the proof, and the final equality is by definition of $\cZ_\sigma$ and $\tr_{\sigma,\cS}$.

Finally, we turn to (\ref{it:patchBM:4}).
Suppose that $\widehat{\cS}_{\mathrm{elim}}$ is a Breuil--M\'ezard system containing $\widehat{\cS}$ such that $\widehat{\cS}$ is a $(\cP,\widehat{\cS}_{\mathrm{elim}})$-Breuil--M\'ezard system and Conjecture \ref{conj:S-BM} holds for $\widehat{\cS}_{\mathrm{elim}}$ with cycles $\cZ_\sigma^{\mathrm{BM}}$.
We will show that for a $(\cS,\widehat{\cS})$-generic $\sigma$, the cycle $\cZ_\sigma$ coincides with $\cZ_\sigma^{\mathrm{BM}}$.
Suppose that
\[
\sigma_{\mathrm{irr}} \defeq [\sigma] - \frac{1}{d_\sigma}\sum_{(\lambda+\eta,\tau)\in \cS} n_{\lambda,\tau}^\sigma [\ovl{\sigma}(\lambda,\tau)]
\]
is supported only at $(\cP,\widehat{\cS}_{\mathrm{elim}})$-irrelevant weights, 
and define
\[
\cZ_{\sigma_{\mathrm{irr}}}^{\mathrm{BM}} \defeq \cZ_\sigma^{\mathrm{BM}} - \frac{1}{d_\sigma}\sum_{(\lambda+\eta,\tau)\in \cS} n_{\lambda,\tau}^\sigma \cZ_{\lambda,\tau},
\]
which is a rational linear combination of cycles $\cZ_{\kappa}^{\mathrm{BM}}$ for $(\cP,\widehat{\cS}_{\mathrm{elim}})$-irrelevant weights $\kappa$.
We claim that
\begin{itemize}
\item
$\tr_{\sigma,\cS}(\cZ_\sigma^{\mathrm{BM}}) = \cZ_\sigma^{\mathrm{BM}}$ and
\item 
$\tr_{\sigma,\cS}(\cZ_{\sigma_{\mathrm{irr}}}^{\mathrm{BM}}) = 0$.
\end{itemize}
Then 
\begin{align*}
\cZ_\sigma^{\mathrm{BM}} &= \tr_{\sigma,\cS}(\cZ_\sigma^{\mathrm{BM}}) \\
&\defeq \tr_{\sigma,\cS}\Big(\cZ_{\sigma_{\mathrm{irr}}}^{\mathrm{BM}}+\frac{1}{d_\sigma} \sum_{(\lambda+\eta,\tau)\in \widehat{\cS}} n_{\lambda,\tau}^\sigma \cZ_{\lambda,\tau}\Big) \\
&= \tr_{\sigma,\cS}\Big(\frac{1}{d_\sigma} \sum_{(\lambda+\eta,\tau)\in \widehat{\cS}} n_{\lambda,\tau}^\sigma \cZ_{\lambda,\tau}\Big) \\
&\defeq \cZ_\sigma
\end{align*}
where the first and third equalities correspond to the above claims. %
Turning to the claims, the first follows from the proof of Lemma \ref{lemma:coveridem}.
To show the second claim, by linearity we assume without loss of generality that $\sigma_{\mathrm{irr}}$ is a $(\cP,\widehat{\cS}_{\mathrm{elim}})$-irrelevant Serre weight.
Since $\sigma_{\mathrm{irr}}$ is $(\cP,\widehat{\cS}_{\mathrm{elim}})$-irrelevant, for each $x\in \cP$, there exists $(\lambda_x+\eta,\tau_x) \in \widehat{\cS}_{\mathrm{elim}}$ such that $\sigma_{\mathrm{irr}} \in \JH(\ovl{\sigma}(\lambda_x,\tau_x))$ and the support of $\cZ_{\lambda_x,\tau_x}$ does not contain $x$.
Then the support of $\cZ_{\sigma_{\mathrm{irr}}}^{\mathrm{BM}}$, which is less than or equal to $\cZ_{\lambda_x,\tau_x}$ for all $x\in \cP$, contains no elements of $\cP$.
On the other hand, suppose that $\sigma \in \JH(\ovl{\sigma}(\lambda,\tau))$ for $(\lambda+\eta,\tau)\in \cS$.
Since $\cP$ meets all components of $\cS$, the support of $\cZ_{\lambda,\tau}$ (the set of irreducible components) must be disjoint from that of $\cZ_{\sigma_{\mathrm{irr}}}$.
We conclude that $\sigma$ cannot $\widehat{\cS}$-cover any weights corresponding to components in the support of $\cZ_{\sigma_{\mathrm{irr}}}$ so that $\tr_{\sigma,\cS}(\cZ_{\sigma_{\mathrm{irr}}}^{\mathrm{BM}}) = 0$.
\end{proof}

\subsection{Geometric Breuil--M\'ezard for generic tamely potentially crystalline types} \label{sec:BMgen}

We apply the results of the previous section to a context in which we have enough patching functors.
The section begins with a series of lemmas that establish the requisite hypotheses.

Let $\Lambda \subset X_*(\un{T}^{\vee})$ be a finite set of dominant weights containing $0$, and let $\cS_{\Lambda,\mathrm{t}}$ denote the union of the set of extremal types $(\lambda'+\eta,\tau)$ where $\lambda'\leq \lambda$ for some $\lambda \in \Lambda$ {and is dominant} and $\tau$ is a $P_{\lambda+\eta,e}$-generic and $(6n-2+h_{\lambda+\eta})$-generic tame inertial type. %
Let $\cP_{\mathrm{ss}}$ be the set of $x\in \cX_n(\F)$ such that $x|_{I_K}$ is a $(6n-2)$-generic tame inertial $\F$-type for $K$. 
Let $\widehat{\cS}_{\Lambda,\mathrm{t}}$ be the union of $\cS_{\Lambda,\mathrm{t}}$ and the set of types $(\eta,\tau)$ where $\tau$ is a $2n$-generic tame inertial type for $K$.
Let $\widehat{\cS}_{\Lambda,\mathrm{t,elim}}$ be the union of $\cS_{\Lambda,\mathrm{t}}$ and the set of extremal types $(\eta,\tau)$ where $\rho_\tau$ is tame.
\begin{rmk}
In what follows, we could replace $\cP_{\mathrm{ss}}$ by any set $\cP \subset \cX_n(\F)$ so that $\{x|_{I_K}\mid x\in \cP\}$ is the set of $(6n-2)$-generic tame inertial $\F$-types for $K$.
\end{rmk}

\begin{lemma}\label{lemma:W?irrelevant}
Suppose that $\rhobar\in \cX_n(\F)$ is such that $\rhobar|_{I_K}$ is a $(6n-2)$-generic tame inertial $\F$-type for $K$. 
If $\sigma \notin W^?(x|_{I_K})$, then $\sigma$ is $(\rhobar,\widehat{\cS}_{\Lambda,\mathrm{t,elim}})$-irrelevant.
\end{lemma}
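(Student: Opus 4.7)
The plan is to use weight elimination combined with patching functors. First, observe that since $\rhobar|_{I_K}$ is tame, the wild inertia acts trivially, so $\rhobar$ itself is tame as a $G_K$-representation. In particular, $\rhobar^{\mathrm{ss}} = \rhobar$ is $(6n-2)$-generic. By Proposition \ref{prop:obvpatchexist}, a weak minimal detectable potentially diagonalizable patching functor $M_\infty$ for $\rhobar$ exists.

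Tame inertial types have $N_\tau = 0$, which is the minimal nilpotent element; viewed as Weil--Deligne inertial types, they are extremal (minimal). For such $\tau$, $R_\rhobar^{\eta,\preceq\tau} = R_\rhobar^{\eta,\tau}$ since no $\tau' \prec \tau$ exists. So finding a tame inertial type $\tau$ with $\sigma \in \JH(\ovl{\sigma}(\tau))$ and $R_\rhobar^{\eta,\tau} = 0$ will exhibit $(\eta,\tau) \in \widehat{\cS}_{\Lambda,\mathrm{t,elim}}$ witnessing the $(\rhobar, \widehat{\cS}_{\Lambda,\mathrm{t,elim}})$-irrelevance of $\sigma$. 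By Proposition \ref{prop:patchnonzero} (with $\lambda = 0$, requiring $\tau$ to be $2n$-generic), the vanishing $R_\rhobar^{\eta,\tau} = 0$ is equivalent to $M_\infty(\ovl{\sigma}^\circ(\tau)) = 0$, which by exactness of $M_\infty$ is equivalent to $M_\infty(\sigma') = 0$ for every $\sigma' \in \JH(\ovl{\sigma}(\tau))$. By the contrapositive of Proposition \ref{prop:WE}, $M_\infty(\sigma') = 0$ is implied by $\sigma' \notin W^?(\rhobar|_{I_K})$.

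The crux is thus the following combinatorial claim: there exists a $2n$-generic tame inertial type $\tau$ such that $\sigma \in \JH(\ovl{\sigma}(\tau))$ and $\JH(\ovl{\sigma}(\tau)) \cap W^?(\rhobar|_{I_K}) = \emptyset$. Fix a LAP $(\tld{w}_\sigma,\omega_\sigma)$ of $\sigma$ and $(s,\mu)$ of $\rhobar|_{I_K}$ compatible with each other. Using the bijections in Propositions \ref{prop:JHbij} and \ref{prop:W?} together with Proposition \ref{prop:intersect}, the claim reduces to finding an element $\tld{w}(\tau) \in t_{\omega_\sigma}\un{W}\tld{w}_2$ (ensuring $\sigma\in \JH(\ovl{\sigma}(\tau))$ through the admissible pair $(\tld{w}_\sigma,\tld{w}_2)$) such that $\tld{w}(\rhobar,\tau) = \tld{w}(\tau)^{-1}\tld{w}(\rhobar)$ admits no factorization of the form $\tld{w}_2^{\prime-1}w\tld{w}_1'$ with $\tld{w}_1'\in \tld{\un{W}}_1^+$, $\tld{w}_2'\in\tld{\un{W}}^+$, $w\in \un{W}$ satisfying $\tld{w}_1'\uparrow \tld{w}_h^{-1}\tld{w}_2'$. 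By shifting $\tld{w}_2$ by translations $t_\nu$ for sufficiently deep dominant $\nu$, we simultaneously make $\tau$ arbitrarily generic and push $\tld{w}(\rhobar,\tau)$ into regions of $\tld{\un{W}}$ where no such factorization can occur (the constraint $\tld{w}_1'\uparrow \tld{w}_h^{-1}\tld{w}_2'$ bounds each factorization's ``length profile'' in a controlled way). The hypothesis $\sigma\notin W^?(\rhobar|_{I_K})$ is precisely what rules out the trivial factorization that would otherwise force $\sigma \in W^?(\rhobar|_{I_K})\cap\JH(\ovl{\sigma}(\tau))$.

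The main obstacle is making this last combinatorial step rigorous: one must carefully quantify how ``deep'' the translation must be in terms of $\rhobar$ and $\sigma$, and verify that the set of obstruction factorizations remains avoidable once $\sigma \notin W^?(\rhobar|_{I_K})$ is used. In the language of Propositions \ref{prop:can:adm} and \ref{prop:can:reg}, one essentially needs to show that the ``regular admissible region'' of $\tld{\un{W}}$ which contains the possible values of $\tld{w}(\rhobar,\tau)$ forcing a nonempty intersection is a bounded set not containing arbitrary deep translates, and invoke the freedom in the choice of $\tld{w}_2$.
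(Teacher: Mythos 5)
Your overall reduction is sound: since $\rhobar|_{I_K}$ being a sufficiently generic tame inertial $\F$-type forces $\rhobar$ to be semisimple (the distinct inertial characters split $\rhobar$), Proposition \ref{prop:obvpatchexist} gives a detectable weak minimal patching functor, and the chain $R_\rhobar^{\eta,\preceq\tau}=R_\rhobar^{\eta,\tau}=0 \iff M_\infty(\sigma^\circ(\tau))=0 \iff M_\infty(\sigma')=0\ \forall\,\sigma'\in\JH(\ovl{\sigma}(\tau))$ together with Proposition~\ref{prop:WE} reduces you to exhibiting a $2n$-generic tame $\tau$ with $\sigma\in\JH(\ovl{\sigma}(\tau))$ and $\JH(\ovl{\sigma}(\tau))\cap W^?(\rhobar|_{I_K})=\emptyset$. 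The paper's own proof is a one-line appeal to the proof of \cite[Corollary~4.2.4]{LLL}; that argument gets at the same conclusion, but by a direct combinatorial route (producing a $\tau$ with $\tld{w}(\rhobar,\tau)\notin\Adm(\eta)$, which kills the deformation ring via Corollary~\ref{cor:admshape}) rather than a full patching detour.

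However, the final combinatorial step of your proposal does not go through as written. The set of tame $\tau$ with $\sigma\in\JH(\ovl{\sigma}(\tau))$ is \emph{bounded}: by Proposition~\ref{prop:JHbij} with $\lambda=0$, the admissible pairs involve $\tld{w}_2\in\tld{\un{W}}^+$ with $\tld{w}_\sigma\uparrow\tld{w}_h^{-1}\tld{w}_2$, equivalently (via \cite[Proposition~4.1.2]{LLL}) $\tld{w}_2\uparrow\tld{w}_h\tld{w}_\sigma$, so $\tld{w}_2$ is $2(n-1)$-small and $\tld{w}(\tau)=t_{\omega_\sigma}w'\tld{w}_2$ ranges over a finite set determined by $\sigma$. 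Replacing $\tld{w}_2$ by $t_\nu\tld{w}_2$ for $\nu$ deep dominant \emph{tightens} the constraint $\tld{w}_\sigma\uparrow t_{w_0(\nu)}\tld{w}_h^{-1}\tld{w}_2$ rather than leaving it intact, so ``pushing $\tld{w}(\rhobar,\tau)$ far away by deep translates'' is not available. The correct choice is the extremal $\tld{w}_2=\tld{w}_h\tld{w}_\sigma$ (so $\sigma\in W_\obv(\taubar(\tau))$) together with the specific $w'\in\un{W}$ that makes $\tld{w}(\rhobar,\tau)=(\tld{w}_h\tld{w}_\sigma)^{-1}w_0\,\tld{y}^+$, where $\tld{y}^+$ is the unique element of $\tld{\un{W}}^+\cap\un{W}\big(t_{-\omega_\sigma}\tld{w}(\rhobar)\big)$; Proposition~\ref{prop:can:adm} then gives $\tld{w}(\rhobar,\tau)\in\Adm(\eta)\iff\tld{y}^+\uparrow\tld{w}_\sigma\iff\sigma\in W^?(\rhobar|_{I_K})$, which is precisely the dichotomy you need, with no genericity lost since $\tau$ stays within bounded distance of $\omega_\sigma$. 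You also need to dispose separately of the case where $\sigma$ is too shallow for the genericity hypotheses on $\tau$ in Proposition~\ref{prop:patchnonzero} and Corollary~\ref{cor:admshape}; there any tame $\tau$ containing $\sigma$ is non-generic while $\rhobar$ is $(6n-2)$-generic, so $R_\rhobar^{\eta,\tau}=0$ by the Breuil--Kisin height bound (Corollary~\ref{lem:ss_bound}).
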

\begin{proof}
This follows immediately from the proof of \cite[Corollary 4.2.4]{LLL}.
\end{proof}

\begin{rmk}
We use Lemma \ref{lemma:W?irrelevant} to apply Theorem \ref{thm:patchBM} in the setting of this section.
However, examining the proof of Theorem \ref{thm:patchBM}, we just need that the evaluations of patching functors applied to $\cP_{\mathrm{ss}}$-irrelevant weights vanish, for which Proposition \ref{prop:WE} suffices.
\end{rmk}

\begin{lemma}
The set $\widehat{\cS}_{\Lambda,\mathrm{t}}$ is a $(\cP_{\mathrm{ss}},\widehat{\cS}_{\Lambda,\mathrm{t,elim}})$-Breuil--M\'ezard system.
\end{lemma}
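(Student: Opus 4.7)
The plan is to construct, for each Serre weight $\sigma$, the required integers $d_\sigma$ and $n^\sigma_{\lambda,\tau}$ by induction on the covering partial order of Definition \ref{defn:cover} and Remark \ref{rmk:partialorder}, using Lemma \ref{lemma:W?irrelevant} to dispose of weights of insufficient depth as base cases.

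First I would handle any $\sigma$ that is not $(5n-1)$-deep. For every $x \in \cP_{\mathrm{ss}}$, the tame inertial $\F$-type $x|_{I_K}$ is $(6n-2)$-generic, and Proposition \ref{prop:W?} (with $h_\eta = n-1$) ensures every element of $W^?(x|_{I_K})$ is at least $(5n-1)$-deep. Thus $\sigma \notin W^?(x|_{I_K})$ for every $x \in \cP_{\mathrm{ss}}$, and Lemma \ref{lemma:W?irrelevant} shows $\sigma$ is $(\cP_{\mathrm{ss}}, \widehat{\cS}_{\Lambda,\mathrm{t,elim}})$-irrelevant. We may then take $d_\sigma = 1$ with all $n^\sigma_{\lambda,\tau} = 0$, as $d_\sigma[\sigma]$ is already supported at an irrelevant Serre weight.

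For a $(5n-1)$-deep weight $\sigma = F_{(\tld{w}_1, \omega)}$, I would use Lemma \ref{lemma:LAPbij:Ftypes} to select a tame inertial type $\tau_\sigma$ whose lowest alcove presentation is compatible with that of $\sigma$ and satisfies $\tld{w}(\tau_\sigma) = t_\omega \tld{w}_h \tld{w}_1$ modulo right multiplication by $\un{W}$. This corresponds via Proposition \ref{prop:JHbij} applied with $\lambda = \eta$ to the admissible pair $(\tld{w}_1, \tld{w}_h \tld{w}_1) \in \mathrm{AP}(\eta)$, and $\sigma$ appears as the associated Jordan--H\"older factor of $\ovl{\sigma}(\tau_\sigma)$ with multiplicity one. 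A bookkeeping check bounding by $2h_\eta = 2(n-1)$ the depth loss in passing from $\omega$ to $\mu_\sigma$, using the constraint $\tld{w}_1 \in \tld{\un{W}}_1^+$, shows that $\mu_\sigma$ is at least $(3n+1)$-deep, so $\tau_\sigma$ is $2n$-generic and $(\eta, \tau_\sigma) \in \widehat{\cS}_{\Lambda,\mathrm{t}}$. Rewriting
\[
[\sigma] \;=\; [\ovl{\sigma}(\tau_\sigma)] \;-\; \sum_{\sigma' \in \JH(\ovl{\sigma}(\tau_\sigma)) \setminus \{\sigma\}} [\ovl{\sigma}(\tau_\sigma):\sigma']\, [\sigma'],
\]
I would apply the base case or the inductive hypothesis to each $\sigma'$ as appropriate, and clear denominators via $d_\sigma = \prod_{\sigma'} d_{\sigma'}$ to produce the required relation for $\sigma$.

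The main technical obstacle is the verification that each $\sigma' \in \JH(\ovl{\sigma}(\tau_\sigma)) \setminus \{\sigma\}$ is strictly smaller than $\sigma$ in the covering order, which guarantees termination of the induction. The pair $(\tld{w}_1, \tld{w}_h \tld{w}_1)$ coincides with the parametrization of $\JH_\out(\ovl{\sigma}(\tau_\sigma))$ (cf.~the paragraph before Definition \ref{defn:obv_weight}), so Lemma \ref{lemma:separate} supplies, for every other admissible pair $(\tld{w}_1', \tld{w}_2')$ with corresponding factor $\sigma' = F_{(\tld{w}_1', \omega')}$, a relation of the form $\tld{w}_1' \uparrow t_\nu \tld{w}_1$ with $\nu$ in the Weyl orbit $\un{W}(\omega - \omega')$. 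By Proposition \ref{prop:covering}\eqref{it:covering:2}, this is equivalent to $\sigma$ covering $\sigma'$, and the strict inequality $\ell(\tld{w}_1') < \ell(\tld{w}_1)$, equivalent to $(\tld{w}_1', \omega') \not\sim (\tld{w}_1, \omega)$, upgrades this to strict covering. Since every such $\sigma'$ of depth $\geq 5n-1$ is automatically $3h_\eta$-deep, the covering order restricted to these weights is genuinely a partial order and the induction terminates, producing the required $(\cP_{\mathrm{ss}}, \widehat{\cS}_{\Lambda,\mathrm{t,elim}})$-Breuil--M\'ezard system structure on $\widehat{\cS}_{\Lambda,\mathrm{t}}$.
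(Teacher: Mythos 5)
Your argument is correct, but it takes a genuinely different route from the paper. The paper's proof is much shorter: by Brauer's surjectivity theorem \cite[Theorem 33]{serre-book}, one writes $[\sigma] = \sum_R n^{\sigma}_R [\ovl{R}]$ with $R$ ranging over irreducible $E[\rG]$-representations (each a constituent of some Deligne--Lusztig representation by \cite[Corollary 7.7]{DeligneLusztig}), sets $n^\sigma_{\un{0},\tau} \defeq n^\sigma_{\sigma(\tau)}$ when $\tau$ is $2n$-generic, and invokes Lemma \ref{lemma:nongenericDL} to show that the contributions from non-$2n$-generic $R$ are supported at Serre weights that are not $(4n-2)$-deep, hence $(\cP_{\mathrm{ss}},\widehat{\cS}_{\Lambda,\mathrm{t,elim}})$-irrelevant by Proposition \ref{prop:W?} and Lemma \ref{lemma:W?irrelevant}. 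That argument achieves $d_\sigma = 1$ uniformly with no recursion. Your proof instead performs a strictly-decreasing induction on the covering order: you pick $\tau_\sigma$ so that $\sigma$ lies in $\JH_{\out}(\ovl{\sigma}(\tau_\sigma))$, apply Lemma \ref{lemma:separate} together with Proposition \ref{prop:covering}(\ref{it:covering:2}) to see that every other constituent is strictly covered, and descend. This works (and your depth estimate showing $\tau_\sigma$ is at least $(3n+1)$-generic is correct), but it is more intricate, needs more machinery, and as written relies on the unverified claim that $[\ovl{\sigma}(\tau_\sigma):\sigma] = 1$; the latter is easily repaired since for a general multiplicity $m_\sigma \geq 1$ one simply absorbs $m_\sigma$ into $d_\sigma$. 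A minor imprecision: you should have $\tld{w}(\tau_\sigma) \in t_\omega \un{W}\, \tld{w}_h \tld{w}_1$ (Weyl factor inserted in the middle), not ``modulo right multiplication by $\un{W}$''; this does not affect the depth estimate. The tradeoff: the paper's approach is slicker and immediately gives $d_\sigma=1$, while your inductive descent makes the combinatorial structure of the covering order visible and is closer in spirit to the algorithm of \S \ref{sec:BMbasis}, but at the cost of more bookkeeping and potentially larger $d_\sigma$.
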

\begin{proof}
Given a Serre weight $\sigma$, we can write $[\sigma] = \sum_R n^\sigma_R[\ovl{R}]$ in the Grothendieck group by \cite[Theorem 33]{serre-book}, where $R$ runs over irreducible $\rG$-representations over $E$.
If $\tau$ is a $2n$-generic tame type, we let $n_{\un{0},\tau}^\sigma$ be $n^\sigma_{\sigma(\tau)}$.
We otherwise let $n_{\lambda,\tau}^\sigma \defeq 0$ for $(\lambda+\eta,\tau) \in \widehat{\cS}_{\Lambda,\mathrm{t}}$.
Since each such $R$ above is a Jordan--H\"older factor of a Deligne--Lusztig representation $R_s(\mu)$ by \cite[Corollary 7.7]{DeligneLusztig}, if a Serre weight is in the support of
\[
[\sigma] - \sum_{(\lambda+\eta,\tau) \in \widehat{\cS}_{\Lambda,\mathrm{t}}} n^\sigma_{\sigma(\tau)}[\ovl{\sigma}(\tau)],
\]
then it is contained in $\JH(\ovl{R})$ for some Deligne--Lusztig representation $R$ which is not $2n$-generic.
By Lemma \ref{lemma:nongenericDL}, such Serre weights are not $(4n-2)$-deep, and so not in $W^?(x|_{I_K})$ for any $x \in \cP_{\mathrm{ss}}$ by Proposition \ref{prop:W?}.
\end{proof}

\begin{lemma}\label{lemma:coveringimply}
If $\sigma$ is $3n-1$-deep and $\widehat{\cS}_{\Lambda,\mathrm{t}}$-covers $\sigma'$, then $\sigma$ covers $\sigma'$ (in the sense of Definition \ref{defn:cover}).
\end{lemma}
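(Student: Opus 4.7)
The plan is to translate both the hypothesis and the conclusion into combinatorial conditions inside $\tld{\un{W}}$ and verify the implication directly. Fix compatible lowest alcove presentations $(\tld{w},\omega)$ and $(\tld{w}',\omega')$ of $\sigma$ and $\sigma'$; since $\sigma$ is $(3n-1)$-deep, $\omega-\eta$ is $(3n-1)$-deep in $\un{C}_0$, which exceeds the $3h_\eta = 3(n-1)$ threshold. Hence the equivalence (\ref{it:covering:1})$\Leftrightarrow$(\ref{it:covering:3}) of Proposition \ref{prop:covering} reduces the desired covering relation to the combinatorial containment
\[
t_{\omega'}\tld{\un{W}}_{\leq w_0\tld{w}'} \subset t_\omega \tld{\un{W}}_{\leq w_0\tld{w}}.
\]

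I would first recast the $\widehat{\cS}_{\Lambda,\mathrm{t}}$-covering hypothesis combinatorially. Applied to the types $(\eta,\tau) \in \widehat{\cS}_{\Lambda,\mathrm{t}}$ with $\tau$ a $2n$-generic tame inertial type, the hypothesis gives $\cC_{\sigma'} \subset \cX^{\eta,\tau}_{\F}$ whenever $\sigma \in \JH(\ovl{\sigma}(\tau))$. By the upper bound of Remark \ref{rmk:MLM:sp:fib}(\ref{it:MLM:sp:fib:3}), whose hypothesis ``$\mu$ is $2(h+1)$-deep'' is met here with $h=n-1$ by the $2n$-genericity of $\tau$, we have $\cX^{\eta,\tau}_{\F,\red} \subset \bigcup_{\kappa\in\JH(\ovl{\sigma}(\tau))}\cC_\kappa$, forcing $\sigma' \in \JH(\ovl{\sigma}(\tau))$. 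Then Proposition \ref{prop:JHfixed} (applied with $\lambda=0$, whose genericity hypothesis $\max\{2h_\eta,h_\eta\}=2h_\eta$ is likewise satisfied) rephrases this implication as: for every $2n$-generic $\tld{x}\in \tld{\un{W}}$ with $t_\omega\tld{\un{W}}_{\leq w_0\tld{w}} \subset \tld{x}\Adm(\eta)$, one also has $t_{\omega'}\tld{\un{W}}_{\leq w_0\tld{w}'} \subset \tld{x}\Adm(\eta)$.

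The lemma thus reduces to the combinatorial identity
\[
t_\omega\tld{\un{W}}_{\leq w_0\tld{w}} \;=\; \bigcap_{\tld{x}}\, \tld{x}\Adm(\eta),
\]
the intersection ranging over all $2n$-generic elements $\tld{x}\in\tld{\un{W}}$ with $t_\omega\tld{\un{W}}_{\leq w_0\tld{w}}\subset \tld{x}\Adm(\eta)$. The inclusion $\subseteq$ is immediate. The main obstacle, which constitutes the technical heart of the argument, is the reverse inclusion: for each $\tld{y}\notin t_\omega\tld{\un{W}}_{\leq w_0\tld{w}}$, one must exhibit a $2n$-generic $\tld{x}$ satisfying $t_\omega\tld{\un{W}}_{\leq w_0\tld{w}}\subset \tld{x}\Adm(\eta)$ but with $\tld{y}\notin \tld{x}\Adm(\eta)$. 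My approach is to start from a tight base candidate $\tld{x}_0$ (chosen so that $\tld{x}_0^{-1}t_\omega \tld{\un{W}}_{\leq w_0\tld{w}}$ sits as tightly as possible inside $\Adm(\eta)$ while leaving $\tld{x}_0^{-1}\tld{y}$ outside), and then to translate $\tld{x}_0$ by elements of $X^0(\un{T})$ to achieve $2n$-genericity. The $(3n-1)$-depth of $\omega$ supplies exactly the room required to perform this translation without violating the required containment or re-admitting $\tld{y}$ into $\tld{x}\Adm(\eta)$.
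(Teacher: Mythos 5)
Your proposal goes off course by translating everything into affine Weyl group combinatorics, and the argument as written has a genuine gap at exactly the point where the paper needs none.

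The reduction steps are fine: applying Proposition~\ref{prop:covering}(\ref{it:covering:1})$\Leftrightarrow$(\ref{it:covering:3}) to $\sigma,\sigma'$ (depth hypotheses check out), and using Remark~\ref{rmk:MLM:sp:fib}(\ref{it:MLM:sp:fib:3}) together with Proposition~\ref{prop:JHfixed} to rephrase the $\widehat{\cS}_{\Lambda,\mathrm{t}}$-covering hypothesis as a family of $\Adm(\eta)$-containments, are both correct. But you then declare that the lemma ``reduces to the combinatorial identity
\[
t_\omega\tld{\un{W}}_{\leq w_0\tld{w}} \;=\; \bigcap_{\tld{x}}\, \tld{x}\Adm(\eta),
\]
'' and leave the nontrivial inclusion unproved, offering only a sketch (``start from a tight base candidate $\tld{x}_0$ and translate by $X^0(\un{T})$''). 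That sketch is not an argument: it does not say how $\tld{x}_0$ is chosen, why containment of $t_\omega\tld{\un{W}}_{\leq w_0\tld{w}}$ is preserved under your translation, nor why $\tld{y}$ stays outside. The assertion you need is not an off-the-shelf fact in the paper, so the proof is incomplete.

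The deeper problem is that the detour is unnecessary, because the definition of covering (Definition~\ref{defn:cover}) is \emph{already} phrased in terms of Jordan--H\"older membership of reductions of Deligne--Lusztig representations. Once you notice this, the proof becomes three sentences: for any $2h_\eta$-generic $R = \sigma(\tau)$ with $\sigma \in \JH(\ovl{R})$, Proposition~\ref{prop:JHbij} and the $(3n-1)$-depth of $\sigma$ force $\tau$ to be $2n$-generic, hence $(\eta,\tau)\in\widehat{\cS}_{\Lambda,\mathrm{t}}$; the $\widehat{\cS}_{\Lambda,\mathrm{t}}$-covering hypothesis then puts $\cC_{\sigma'}$ inside $\cX^{\eta,\tau}_{\F}$; and Remark~\ref{rmk:MLM:sp:fib}(\ref{it:MLM:sp:fib:3}) gives $\sigma'\in\JH(\ovl{R})$, which is exactly what Definition~\ref{defn:cover} asks for. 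There is no need to ever pass through Proposition~\ref{prop:covering} or Proposition~\ref{prop:JHfixed}, and no combinatorial identity to be verified. Converting both sides into $\tld{\un{W}}$-language and then trying to close the gap at the combinatorial level obscures rather than clarifies the logic. In fact, the cleanest way to verify your own identity would be to \emph{reverse} the translations and reduce to Definition~\ref{defn:cover}, at which point you may as well have argued directly.
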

\begin{proof}
Suppose that $\sigma$ $\widehat{\cS}_{\Lambda,\mathrm{t}}$-covers $\sigma'$. 
Any $(2n-2)$-generic tame inertial type $\tau$ for $K$ with $\sigma \in \JH(\ovl{\sigma}(\tau))$ must be $2n$-generic by Proposition \ref{prop:JHbij}, so that $\cC_{\sigma'}$ is contained in $\cX^{\eta,\tau}_{\F}$ by assumption. 
Remark \ref{rmk:MLM:sp:fib}(\ref{it:MLM:sp:fib:3}) implies that $\sigma'\in \JH(\ovl{\sigma}(\tau))$. 
The conclusion follows. 
\end{proof}

\begin{defn}\label{defn:genSW}
We say that a Serre weight $\sigma$ is \emph{generic} if $\sigma\in \JH(\ovl{\sigma}(\tau))$ for some $(\eta,\tau) \in \cS_{{\{0\}},\mathrm{t}}$ and $\sigma$ does not cover any Serre weights in $\JH(\ovl{\sigma}(\tau'))$ for all $(\eta,\tau') \in \widehat{\cS}_{{\{0\}},\mathrm{t}} \setminus \cS_{{\{0\}},\mathrm{t}}$. 
\end{defn}

\begin{rmk}\label{rmk:genSW}
\begin{enumerate}
\item 
If $\sigma$ is generic, then $\sigma$ is necessarily $(6n-2)$-deep by Proposition \ref{prop:JHbij} and the fact that $\sigma\in \JH(\ovl{\sigma}(\tau))$ for some $(\eta,\tau) \in \cS_{{\{0\}},\mathrm{t}}$.
\item \label{item:gencover}
If $\sigma$ is generic and covers $\sigma'$, then $\sigma'$ is generic.
\end{enumerate}
\end{rmk}

\begin{lemma}\label{lemma:coverimply}
If $\sigma$ is generic, then $\sigma$ is $(\cS_{\Lambda,\mathrm{t}},\widehat{\cS}_{\Lambda,\mathrm{t}})$-generic (for any set $\Lambda$ as above).
\end{lemma}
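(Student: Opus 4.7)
The plan is to unpack each definition in sequence and reduce to the hypothesis built into the notion of a generic Serre weight (Definition \ref{defn:genSW}). I will fix $\sigma$ generic and $\sigma'$ such that $\sigma$ $\widehat{\cS}_{\Lambda,\mathrm{t}}$-covers $\sigma'$, and aim to show that $\cC_{\sigma'}\not\subset \cX^{\lambda+\eta,\tau}_{\F}$ for every $(\lambda+\eta,\tau)\in\widehat{\cS}_{\Lambda,\mathrm{t}}\setminus\cS_{\Lambda,\mathrm{t}}$. By construction of $\widehat{\cS}_{\Lambda,\mathrm{t}}$, every such type is of the form $(\eta,\tau)$ for a $2n$-generic tame inertial type $\tau$ failing the stronger genericity conditions defining $\cS_{\Lambda,\mathrm{t}}$.

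The first step is to upgrade the hypothesis on $\sigma$: by Remark \ref{rmk:genSW}(1), $\sigma$ is $(6n-2)$-deep, hence in particular $(3n-1)$-deep, so Lemma \ref{lemma:coveringimply} promotes the $\widehat{\cS}_{\Lambda,\mathrm{t}}$-covering relation to the covering of Definition \ref{defn:cover}. Separately, since $0\in\Lambda$ by hypothesis, specializing the genericity conditions defining $\cS_{\Lambda,\mathrm{t}}$ to $\lambda=0$ reproduces $\cS_{\{0\},\mathrm{t}}$; any type $(\eta,\tau)$ failing these conditions for every $\lambda\in\Lambda$ in particular fails them for $\lambda=0$, so that
\[
\widehat{\cS}_{\Lambda,\mathrm{t}}\setminus\cS_{\Lambda,\mathrm{t}} \subset \widehat{\cS}_{\{0\},\mathrm{t}}\setminus \cS_{\{0\},\mathrm{t}}.
\]

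I will then argue by contradiction: suppose $\cC_{\sigma'}\subset \cX^{\eta,\tau}_{\F}$ for some $(\eta,\tau)\in\widehat{\cS}_{\Lambda,\mathrm{t}}\setminus\cS_{\Lambda,\mathrm{t}}$. Since $\tau$ is $2n$-generic and $h_\eta=n-1$, the upper-bound part of Theorem \ref{thm:EGmodp}(\ref{it:irreducible_components_mod_p:1}) recalled in Remark \ref{rmk:MLM:sp:fib}(\ref{it:MLM:sp:fib:3}) applies (the required depth $2(h+1)=2n$ is met) and forces every irreducible component of $\cX^{\eta,\tau}_{\red}$ to be of the form $\cC_\kappa$ for some $\kappa\in\JH(\ovl{\sigma}(\tau))$. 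In particular $\sigma'\in\JH(\ovl{\sigma}(\tau))$, and by the inclusion just established $(\eta,\tau)\in\widehat{\cS}_{\{0\},\mathrm{t}}\setminus\cS_{\{0\},\mathrm{t}}$. Combined with the fact established in the previous paragraph that $\sigma$ covers $\sigma'$, this directly contradicts the defining property of a generic Serre weight.

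There is no serious geometric obstacle once the relevant results are in hand; the argument amounts to bookkeeping driven by the definitions of generic Serre weight, $\widehat{\cS}_{\Lambda,\mathrm{t}}$, and the covering relations. The only subtle point is the inclusion $\widehat{\cS}_{\Lambda,\mathrm{t}}\setminus\cS_{\Lambda,\mathrm{t}}\subset \widehat{\cS}_{\{0\},\mathrm{t}}\setminus\cS_{\{0\},\mathrm{t}}$, which justifies defining genericity of Serre weights with respect to $\cS_{\{0\},\mathrm{t}}$ alone and which depends crucially on the standing assumption $0\in\Lambda$.
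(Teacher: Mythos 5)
Your proof is correct and follows essentially the same route as the paper's: promote $\widehat{\cS}_{\Lambda,\mathrm{t}}$-covering to genuine covering via Lemma \ref{lemma:coveringimply} (using the depth guaranteed by Remark \ref{rmk:genSW}(1)), invoke Remark \ref{rmk:MLM:sp:fib}(\ref{it:MLM:sp:fib:3}) to place $\sigma'$ in $\JH(\ovl{\sigma}(\tau))$, and then conclude from the definition of a generic Serre weight. The only cosmetic differences are that you phrase the last step as a contradiction and make explicit the inclusion $\widehat{\cS}_{\Lambda,\mathrm{t}}\setminus\cS_{\Lambda,\mathrm{t}}\subset \widehat{\cS}_{\{0\},\mathrm{t}}\setminus\cS_{\{0\},\mathrm{t}}$ (equivalently $\cS_{\{0\},\mathrm{t}}\subset\cS_{\Lambda,\mathrm{t}}$ since $0\in\Lambda$), which the paper leaves implicit.
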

\begin{proof}
Suppose that $\sigma$ $\widehat{\cS}_{\Lambda,\mathrm{t}}$-covers a Serre weight $\sigma'$ and that $\cC_{\sigma'}$ is contained in $\cX_{\F}^{\eta,\tau}$ for $(\eta,\tau)\in \widehat{\cS}_{\Lambda,\mathrm{t}}$. 
We need to show that $(\eta,\tau)\in \cS_{\Lambda,\mathrm{t}}$.
Lemma \ref{lemma:coveringimply} implies that $\sigma$ covers $\sigma'$. 
Remark \ref{rmk:MLM:sp:fib}(\ref{it:MLM:sp:fib:3}) implies that $\sigma'\in \JH(\ovl{\sigma}(\tau))$. 
Then the genericity of $\sigma$ implies that $(\eta,\tau)\in \cS_{\Lambda,\mathrm{t}}$. 
\end{proof}

\begin{lemma}
The set $\cP_{\mathrm{ss}}$ meets all components of $\cS_{\Lambda,\tau}$. Any tame $\rhobar\in \cX^{\lambda,\tau}(\F)$ where $(\lambda,\tau)\in \cS_{\Lambda,\tau}$ is $(6n-2)$-generic.
\end{lemma}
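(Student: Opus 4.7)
I would handle the second assertion first, since it plays an auxiliary role in proving the first. For the second assertion, let $\rhobar \in \cX^{\lambda'+\eta,\tau}(\F)$ be tame with $(\lambda'+\eta,\tau) \in \cS_{\Lambda,\mathrm{t}}$, so $\lambda'$ is dominant with $\lambda'\leq\lambda$ for some $\lambda\in\Lambda$ and the fixed lowest alcove presentation of $\tau$ is $(6n-2+h_{\lambda+\eta})$-generic. Since $\rhobar$ is tame it gives a point of $Y^{\leq\lambda'+\eta,\tau}(\F)$, so Corollary \ref{cor:admshape} yields a $\lambda'$-compatible lowest alcove presentation of $\rhobar$ with $\tld{w}(\rhobar,\tau) \in \Adm(\lambda'+\eta)$. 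Lemma \ref{bound} gives that $\tld{w}(\rhobar,\tau)$ is $h_{\lambda'+\eta}$-small, and since $h_{\lambda'+\eta}\leq h_{\lambda+\eta}$, Proposition \ref{prop:propertiesofsmall}(4) tells us that $\tld{w}(\rhobar) = \tld{w}(\tau)\tld{w}(\rhobar,\tau)$ is $(6n-2+h_{\lambda+\eta}-h_{\lambda'+\eta}) \geq (6n-2)$-generic. This proves the second assertion.

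For the first assertion, by definition $\cC(\cS_{\Lambda,\mathrm{t}})$ consists of irreducible components of $\cX_{n,\mathrm{red}}$ supported inside some $\cX^{\lambda'+\eta,\tau}_\F$ with $(\lambda'+\eta,\tau) \in \cS_{\Lambda,\mathrm{t}}$; by Theorem \ref{thm:EGmodp}(1) these components are exactly the $\cC_\sigma$ with $\sigma\in\JH(W(\lambda')\otimes\ovl{\sigma}(\tau))$. Fix such $\sigma$ together with a lowest alcove presentation $(\tld{w}_1,\omega)\in\tld{\un{W}}_1^+\times X^*(\un{T})$ compatible with the fixed presentation of $\tau$. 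Proposition \ref{prop:JHbij} ensures that $\omega-\eta$ is $(6n-2)$-deep in $\un{C}_0$, and writing $\tld{w}_1 = t_{\nu_1}w_1$, Proposition \ref{prop:W?} applied with $\tld{w}_2=\tld{w}_1$ gives $h_{\nu_1}\leq h_\eta = n-1$.

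Construct a tame semisimple $\rhobar$ whose restriction to $I_K$ is the tame inertial $\F$-type with $\tld{w}(\rhobar) = t_\omega\tld{w}_1 = t_{\omega+\nu_1}w_1$. Then $\tld{w}(\rhobar)\tld{w}_1^{-1}(0) = \omega$, so $\sigma = F_{(\tld{w}_1,\omega)} = F_{(\tld{w}_1,\tld{w}(\rhobar)\tld{w}_1^{-1}(0))} \in W_\obv(\rhobar)$ (Definition \ref{defn:obv_weight}); compatibility of the two lowest alcove presentations follows since both central characters are given by the class of $\omega+\nu_1-\eta$ in $X^*(\un{T})/\un{\Lambda}_R$. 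The pairing of $\omega+\nu_1$ with any positive coroot lies in the interval $(5n-1,p-5n+1)$, so $\rhobar$ is $(5n-2)$-generic, and in particular $2n$-generic for $n\geq 1$. Proposition \ref{prop:rhobaroncomponents}(\ref{item:comp:obv}) (whose hypotheses are met because $\sigma$ is $(6n-2)$-deep, hence $(3n-1)$-deep) then yields $\rhobar \in \cC_\sigma$, and in particular $\rhobar\in\cX^{\lambda'+\eta,\tau}(\F)$. The already-proved second assertion now upgrades $\rhobar$ to being $(6n-2)$-generic, so $\rhobar\in\cP_\mathrm{ss}\cap\cC_\sigma$, as needed.

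\textbf{Main obstacle.} The key tension is between the depth of the Serre weight $\sigma$ and the genericity of the inertial type of any tame $\rhobar$ on $\cC_\sigma$ built directly from $(\tld{w}_1,\omega)$: the naive construction only yields roughly $(5n-2)$-genericity, short of the $(6n-2)$-genericity needed for $\cP_\mathrm{ss}$. The resolution is a bootstrap: weaker genericity suffices for Proposition \ref{prop:rhobaroncomponents}(\ref{item:comp:obv}) to place $\rhobar$ on $\cC_\sigma$, after which the admissibility bound from Corollary \ref{cor:admshape} and Proposition \ref{prop:propertiesofsmall}(4) (the content of the second assertion) automatically upgrades $\rhobar$ to $(6n-2)$-generic. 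Ensuring compatibility of lowest alcove presentations throughout, and checking that $h_{\nu_1}\leq n-1$ for $\tld{w}_1\in\tld{\un{W}}_1^+$, are the most delicate bookkeeping points.
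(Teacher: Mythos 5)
Your proof is correct, but it takes a genuinely different route from the paper's, so let me compare.

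For the second assertion, you explicitly run the argument (Corollary \ref{cor:admshape} $\Rightarrow$ $\tld{w}(\rhobar,\tau)\in\Adm(\lambda'+\eta)$ $\Rightarrow$ $h_{\lambda'+\eta}$-small via Lemma \ref{bound} and Proposition \ref{prop:propertiesofsmall} $\Rightarrow$ $\tld{w}(\rhobar)$ is $(6n-2)$-generic). The paper's proof of this lemma only treats the first assertion and leaves the second implicit; your treatment is the natural one.

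For the first assertion the two arguments diverge on how $\rhobar$ is built and how its $(6n-2)$-genericity is obtained. The paper takes $\tld{w}(\rhobar|_{I_K})=\tld{w}(\tau)\tld{w}_2^{-1}w\tld{w}$ (with $\tld{w}_2$ the second element of the admissible pair produced by Proposition \ref{prop:JHbij}). Because $\tld{w}_2^{-1}w_0\tld{w}\in\Adm(\lambda+\eta)$ is $h_{\lambda+\eta}$-small, $\tld{w}(\rhobar)$ inherits $(6n-2)$-genericity directly from the $(6n-2+h_{\lambda+\eta})$-generic presentation of $\tau$ — no bootstrap. You instead set $\tld{w}(\rhobar)=t_\omega\tld{w}_1$, which is simpler to state (no $\tld{w}_2$ bookkeeping), but only gives $(5n-2)$-genericity directly; you then recover $(6n-2)$-genericity by feeding $\rhobar\in\cC_\sigma\subset\cX^{\lambda'+\eta,\tau}_\F$ back into the second assertion. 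Both routes are sound. The paper's is slightly tighter; yours has the advantage of making the interaction between the two assertions explicit and of using a cleaner choice of $\rhobar$.

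Minor quibbles: the bound $h_{\nu_1}\leq n-1$ for $\tld{w}_1\in\tld{\un{W}}^+_1$ is more naturally cited as ``$\tld{w}_1\in\tld{\un{W}}^+_1$ is $(n-1)$-small'' (as used in the proof of Proposition \ref{prop:match2}) rather than by routing it through Proposition \ref{prop:W?}; and ``Since $\rhobar$ is tame it gives a point of $Y^{\leq\lambda'+\eta,\tau}(\F)$'' should really be ``Since $\rhobar\in\cX^{\lambda'+\eta,\tau}(\F)$, the versal ring $R_\rhobar^{\lambda'+\eta,\tau}$ is nonzero, so Corollary \ref{cor:admshape} applies.'' Neither affects correctness.
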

\begin{proof}
If $\cC_\sigma$ is a component of $\cX^{\lambda+\eta,\tau}$ for $(\lambda+\eta,\tau) \in \cS_{\Lambda,\tau}$, then $\sigma \in \JH(\ovl{\sigma}(\lambda,\tau))$ by Remark \ref{rmk:MLM:sp:fib}(\ref{it:MLM:sp:fib:3}).
Fix a $6n-2+h_{\lambda+\eta}$-generic lowest alcove presentation for $\tau$.
Then $(\tld{w},\tld{w}(\tau)\tld{w}_2^{-1}(0))$ is a $\lambda$-compatible lowest alcove presentation for $\sigma$ for some $\tld{w}_2 \in \tld{\un{W}}^+$ with $\tld{w}\uparrow t_\lambda \tld{w}_h^{-1} \tld{w}_2$ by Proposition \ref{prop:JHbij}.
Let $\rhobar: G_K\ra \GL_n(\F)$ be a semisimple continuous representation such that $\rhobar|_{I_K}$ has a lowest alcove presentation such that $\tld{w}(\rhobar|_{I_K}) = \tld{w}(\tau) \tld{w}_2^{-1} w \tld{w}$ for some $w\in \un{W}$.
This lowest alcove presentation is $(6n-2)$-generic so that $\rhobar \in \cP_{\mathrm{ss}}$.
Moreover, since $\tld{w}(\tau)\tld{w}_2^{-1}(0) = \tld{w}(\rhobar|_{I_K})\tld{w}^{-1}(0)$, $\sigma \in W_{\mathrm{obv}}(\rhobar|_{I_K})$.
Then $\rhobar \in \cC_\sigma$ by Proposition \ref{prop:rhobaroncomponents}(\ref{item:comp:obv}).
\end{proof}

Let $\cS_{\cP,\Lambda,\mathrm{t}}\subset \cS_{\Lambda,\mathrm{t}}$ be the subset consisting of types $(\lambda+\eta,\tau)$ such that $\JH(\ovl{\sigma}(\lambda,\tau))$ consists only of generic Serre weights $\sigma$. The following result is the main result of the section.

\begin{thm}\label{thm:genBM}
\begin{enumerate}
\item 
\label{it:genBM:1}
For any semisimple $(6n-2)$-generic $\rhobar$, a minimal patching functor $M_\infty$ for $\rhobar$ and $\cS_{\Lambda,\mathrm{t}}$ exists.
In particular, setting $\cZ_\sigma(\rhobar) \defeq Z(M_\infty(\sigma))$, 
\[
\cZ_{\lambda,\tau}(\rhobar) = \sum_{\sigma} [\ovl{\sigma}(\lambda,\tau):\sigma] \cZ_\sigma(\rhobar).
\]
for all $(\lambda+\eta,\tau) \in \cS_{\Lambda,\mathrm{t}}$.
\item 
\label{it:genBM:2}
For each $x\in \cP_{\mathrm{ss}}$, choose a minimal patching functor $M^x_\infty$ for $x$ and $\cS_{\Lambda,\mathrm{t}}$. Then, for each generic Serre weight $\sigma$, there exists a unique cycle $\cZ_\sigma$ in $\Z[\cX_{n,\mathrm{red}}]$ such that the support of $\cZ_\sigma$ is contained in the set $\{\cC_\kappa\mid\sigma \textrm{ covers }\kappa\}$ and for each $x\in \cP_{\mathrm{ss}}$, $i_x^*(\cZ_\sigma) = \cZ_\sigma(x)$, where $\cZ_\sigma(x)\defeq Z(M_\infty^x(\sigma))$.

\item 
\label{it:genBM:3}
For generic $\sigma$, the cycle $\cZ_\sigma$ does not depend on the choice of the patching functors $M_\infty^x$ for $x \in \cP_{\mathrm{ss}}$.
For generic $\sigma$ and semisimple $(6n-2)$-generic $\rhobar$ with minimal patching functor $M_\infty^\rhobar$ for $\rhobar$ and $\cS_{\Lambda,\mathrm{t}}$, $Z(M_\infty^{\rhobar}(\sigma))$ depends only on the versal ring $R_\infty$ (i.e., it is the pullback to $R_\infty$ of a cycle that is independent of $M_\infty^{\rhobar}$).
\item
\label{it:genBM:4}
Assume Conjecture \ref{conj:S-BM} holds for a Breuil-M\'ezard system containing $\widehat{\cS}_{\Lambda,\mathrm{t,elim}}$. 
Then the above cycles $\cZ_\sigma$ (for generic $\sigma$) coincide with those from Conjecture \ref{conj:S-BM}. 
\end{enumerate}
\end{thm}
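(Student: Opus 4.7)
The plan is to deduce Theorem \ref{thm:genBM} by applying the axiomatic framework of Theorem \ref{thm:patchBM}, with the main preparatory work being the verification of the relevant hypotheses (now packaged in the lemmas preceding the theorem). For part (\ref{it:genBM:1}), I would start from Proposition \ref{prop:obvpatchexist}, which (using that a $(6n-2)$-generic semisimple $\rhobar$ is in particular $4n$-generic) produces a weak minimal detectable potentially diagonalizable patching functor $M_\infty$ associated to some versal ring $R_\infty=R_\rhobar^\square\widehat{\otimes}_\cO R^p$. To upgrade this to a minimal patching functor \emph{for} $\cS_{\Lambda,\mathrm{t}}$, I need to verify the rank-one condition on each $\Spec R_\infty(\lambda,\tau)[1/p]$ for $(\lambda+\eta,\tau)\in\cS_{\Lambda,\mathrm{t}}$. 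The $P_{\lambda+\eta,e}$-genericity built into the definition of $\cS_{\Lambda,\mathrm{t}}$ is exactly what is needed to invoke Theorem \ref{thm:stack_local_model}(\ref{it:stack_local_model:2}) at the semisimple point $\rhobar$, giving that $R_\rhobar^{\lambda+\eta,\tau}$ is either zero or a domain; consequently the same holds for $R_\infty(\lambda,\tau)$, whose generic fibre is connected. Combining this with the generic rank bound from Definition \ref{minimalpatching}(I), the module $M_\infty(\sigma^\circ(\lambda,\tau))[1/p]$ has rank $0$ or $1$ on the whole of $\Spec R_\infty(\lambda,\tau)[1/p]$. When $R_\rhobar^{\lambda+\eta,\tau}\neq 0$, Corollary \ref{cor:admshape} gives $\tld{w}(\rhobar,\tau)\in\Adm(\lambda+\eta)$, so Proposition \ref{prop:obvint} produces an obvious weight $\sigma\in W_\obv(\rhobar)\cap\JH(\ovl{\sigma}(\lambda,\tau))$; detectability then forces $M_\infty(\sigma)\neq 0$ and, by exactness, $M_\infty(\sigma^\circ(\lambda,\tau))\neq 0$, so the rank is $1$. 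With $M_\infty$ now minimal for $\cS_{\Lambda,\mathrm{t}}$, the Breuil--M\'{e}zard identity is just Theorem \ref{thm:patchBM}(\ref{it:patchBM:1}) applied to $\{\rhobar\}$.

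For part (\ref{it:genBM:2}), I would apply Theorem \ref{thm:patchBM}(\ref{it:patchBM:2}) with $\cP=\cP_{\mathrm{ss}}$, $\cS=\cS_{\cP,\Lambda,\mathrm{t}}$ and $\widehat{\cS}=\widehat{\cS}_{\Lambda,\mathrm{t}}$. The three hypotheses are already at hand: minimal patching functors at each $x\in\cP_{\mathrm{ss}}$ exist by part (\ref{it:genBM:1}); $\cP_{\mathrm{ss}}$ meets all components of $\cS_{\Lambda,\mathrm{t}}$, hence of $\cS_{\cP,\Lambda,\mathrm{t}}$; and $\widehat{\cS}_{\Lambda,\mathrm{t}}$ was shown to be a $(\cP_{\mathrm{ss}},\widehat{\cS}_{\Lambda,\mathrm{t,elim}})$-Breuil--M\'{e}zard system. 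By Lemma \ref{lemma:coverimply}, every generic Serre weight is $(\cS_{\cP,\Lambda,\mathrm{t}},\widehat{\cS}_{\Lambda,\mathrm{t}})$-generic, so Theorem \ref{thm:patchBM}(\ref{it:patchBM:2}) supplies the desired effective cycle $\cZ_\sigma$ together with the pullback identities $i_x^*(\cZ_\sigma)=\cZ_\sigma(x)$ for all $x\in\cP_{\mathrm{ss}}$. The remaining point is the refined support statement: \emph{loc.\ cit.}\ gives support inside some $\cZ_{\lambda,\tau}$ with $(\lambda+\eta,\tau)\in\cS_{\cP,\Lambda,\mathrm{t}}$, and the irreducible components of such a $\cZ_{\lambda,\tau}$ are computed (in terms of $\JH(\ovl{\sigma}(\lambda,\tau))$) by Theorem \ref{thm:EGmodp}(\ref{it:irreducible_components_mod_p:1}); unwinding the definition of $\cS_{\cP,\Lambda,\mathrm{t}}$-covering and invoking Lemma \ref{lemma:coveringimply} (applicable since generic Serre weights are $(6n-2)$-deep by Remark \ref{rmk:genSW}) then yields the claimed containment in $\{\cC_\kappa\mid\sigma\text{ covers }\kappa\}$.

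Part (\ref{it:genBM:3}) is immediate from Theorem \ref{thm:patchBM}(\ref{it:patchBM:3}): the construction of $\cZ_\sigma$ in the proof of that theorem never refers to the particular choices of $M_\infty^x$, only to the versal rings. This also yields the second sentence of (\ref{it:genBM:3}) by taking a single point $x=\rhobar$. For part (\ref{it:genBM:4}), assuming Conjecture \ref{conj:S-BM} holds for a Breuil--M\'{e}zard system containing $\widehat{\cS}_{\Lambda,\mathrm{t,elim}}$, all the hypotheses of Theorem \ref{thm:patchBM}(\ref{it:patchBM:4}) are met with $\widehat{\cS}_{\mathrm{elim}}=\widehat{\cS}_{\Lambda,\mathrm{t,elim}}$, so the cycles $\cZ_\sigma$ coincide with the Breuil--M\'{e}zard cycles of Conjecture \ref{conj:S-BM}, as required.

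The main obstacle in this argument is the verification, in part (\ref{it:genBM:1}), that the patching functor has rank exactly one on each nonzero potentially crystalline component in $\cS_{\Lambda,\mathrm{t}}$. This rests on the domain property of tame potentially crystalline deformation rings at semisimple $\rhobar$, which is the deepest input in the paper (Theorem \ref{thm:stack_local_model}(\ref{it:stack_local_model:2})) and is the reason the $P_{\lambda+\eta,e}$-genericity is built into the definition of $\cS_{\Lambda,\mathrm{t}}$. Once this is in place the rest of the theorem is a bookkeeping exercise in the framework already set up in \S \ref{sec:patchBM}.
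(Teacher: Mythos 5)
Your proposal follows the paper's proof essentially step for step: part~(\ref{it:genBM:1}) by upgrading a weak minimal detectable patching functor to a minimal one for $\cS_{\Lambda,\mathrm{t}}$ using Theorem~\ref{thm:stack_local_model}(\ref{it:stack_local_model:2}) and the nonvanishing criterion (you unwind Proposition~\ref{prop:patchnonzero} via Corollary~\ref{cor:admshape} and Proposition~\ref{prop:obvint}, which the paper cites directly), and parts~(\ref{it:genBM:2})--(\ref{it:genBM:4}) by invoking Theorem~\ref{thm:patchBM} together with Lemma~\ref{lemma:coverimply} and Lemma~\ref{lemma:coveringimply}. One small imprecision: in part~(\ref{it:genBM:2}) you should take $\cS=\cS_{\Lambda,\mathrm{t}}$ (not $\cS_{\cP,\Lambda,\mathrm{t}}$) when applying Theorem~\ref{thm:patchBM}(\ref{it:patchBM:2}), since Lemma~\ref{lemma:coverimply} gives $(\cS_{\Lambda,\mathrm{t}},\widehat{\cS}_{\Lambda,\mathrm{t}})$-genericity and $\cS_{\cP,\Lambda,\mathrm{t}}$ only appears as the set $\cS_{\cP}$ in the ``in particular'' clause of that theorem; with this correction your argument is the paper's.
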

\begin{proof}
We start with item (\ref{it:genBM:1}).
Let $M_\infty$ be a weak minimal detectable patching functor for $\rhobar$.
We claim that $M_\infty$ is a minimal patching functor for $\rhobar$ and $\cS_{\Lambda,\mathrm{t}}$.
If $(\lambda+\eta,\tau) \in \cS_{\Lambda,\mathrm{t}}$, then $R_\infty(\lambda,\tau)$ is a domain (or zero) by Theorem \ref{thm:stack_local_model}(\ref{it:stack_local_model:2}).
Moreover, $M_\infty(\sigma^\circ(\lambda,\tau))$ is nonzero if and only if $R_\infty(\lambda,\tau)$ is nonzero by Proposition \ref{prop:patchnonzero}.
These facts imply that $M_\infty(\sigma^\circ(\lambda,\tau))[1/p]$, which is locally free of rank at most one over $R_\infty(\lambda,\tau)[1/p]$, is locally free of rank one. 
This proves the first part.
Items (\ref{it:genBM:2}), (\ref{it:genBM:3}), and (\ref{it:genBM:4}) follow from Theorem \ref{thm:patchBM} (and the previous lemmas in this section).
The stronger conclusion that the support of $\cZ_\sigma$ is contained in the set $\{\cC_\kappa\mid\sigma \textrm{ covers }\kappa\}$ follows from the definition of $\cZ_\sigma$ and Lemma \ref{lemma:coveringimply}.
\end{proof}

\subsubsection{Breuil--M\'ezard with polynomial genericity} \label{sec:polygen}

Let $\tld{P}_{\eta,e}$ be the product of $P_{\eta,e}$ and $P_{7n-3}$ (see Theorem \ref{thm:stack_local_model}(\ref{it:stack_local_model:2}) and Remark \ref{rmk:P-gen}(\ref{it:P-gen:2})).
If $f(t_1,\ldots,t_n) \in \Z[t_1,\ldots,t_n]$ and $\omega \in X^*(T) \cong \Z^n$ is dominant, let 
\begin{equation}\label{eqn:polysuper}
f^\omega(t_1,\ldots,t_n) \defeq \prod_{\nu \in \Conv(\omega)} f(t_1-\nu_1,\ldots,t_n-\nu_n) \in \Z[t_1,\ldots,t_n].
\end{equation}
For a finite set $\Lambda \subset X_*(\un{T}^{\vee})$ of dominant weights, we let 
\[
P_{\cP,\Lambda,e} \defeq \prod_{\lambda\in \Lambda} \prod_{j\in \cJ} \tld{P}^{(\lambda + \eta -w_0(\eta))_j}_{\eta,e}.
\]

\begin{lemma}\label{lemma:univpolytau}
Let $\Lambda \subset X_*(\un{T}^{\vee})$ be a finite set of dominant weights containing $0$.
Let $\tau$ be a tame inertial type for $K$ with a lowest alcove presentation $(s,\mu-\eta)$ such that $\mu$ is $P_{\cP,\Lambda,e}$-generic.
Then $(\lambda+\eta,\tau) \in \cS_{\cP,\Lambda,\mathrm{t}}$ for any $\lambda\in \Lambda$. 
\end{lemma}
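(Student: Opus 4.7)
The proof verifies the two conditions for $(\lambda+\eta,\tau) \in \cS_{\cP,\Lambda,\mathrm{t}}$: membership in $\cS_{\Lambda,\mathrm{t}}$, and the requirement that every $\sigma\in \JH(\ovl{\sigma}(\lambda,\tau))$ be generic in the sense of Definition \ref{defn:genSW}. The key unifying observation is that, by construction, $P_{\cP,\Lambda,e}$-genericity of $\mu$ is equivalent to the assertion that for each $\lambda'\in \Lambda$, each $j,j'\in \cJ$, and each $\nu\in \Conv((\lambda'+\eta-w_0(\eta))_j)$, the shifted weight $\mu_{j'}-\nu$ is both $P_{\eta,e}$-generic and $(7n-3)$-deep in $\un{C}_0$, thanks to the factorization $\tld{P}_{\eta,e} = P_{\eta,e}\cdot P_{7n-3}$.

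For Step 1, fix $\lambda'=\lambda\in \Lambda$ and $j=j'$. For a positive root $\alpha$, choose $\nu\in \Conv((\lambda+\eta-w_0(\eta))_{j'})$ maximizing (resp.~minimizing) $\langle \nu,\alpha^\vee\rangle$, which attains $\pm h_{\lambda+\eta-w_0(\eta)} = \pm (h_{\lambda+\eta}+n-1)$. Translating the $(7n-3)$-depth of $\mu_{j'}-\nu$ then yields that $\mu_{j'}$ is $(8n-4+h_{\lambda+\eta})$-deep, which exceeds the required $(6n-2+h_{\lambda+\eta})$. The $P_{\lambda+\eta,e}$-genericity of $\tau$ follows from the construction of $P_{\lambda+\eta,e}$ in Theorem \ref{thm:stack_local_model}(\ref{it:stack_local_model:2}): this polynomial is itself built from $P_{\eta,e}$ by an analogous $\Conv$-shift recipe, and the needed non-vanishings are already controlled by the factors of $P_{\cP,\Lambda,e}$.

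For Step 2, let $\sigma\in \JH(\ovl{\sigma}(\lambda,\tau))$. Since Step 1 ensures $\tau$ is generic, Proposition \ref{prop:tameILL} gives $\sigma(\tau) = R_s(\mu+\eta)$ (irreducible), so $\JH(\ovl{\sigma}(\lambda,\tau))$ is described by Proposition \ref{prop:JHbij}: $\sigma$ admits a $\lambda$-compatible lowest alcove presentation $(\tld{w}_1,\omega_\sigma)$ with $\omega_\sigma = (\mu+\eta)-\delta_\sigma$, where $\delta_\sigma$ is a $\cJ$-tuple whose $j$-th component lies in $\Conv((\lambda+\eta-w_0(\eta))_j)$ (computed by unwinding $\tld{w}(\tau)\tld{w}_2^{-1}(0)$ for $\tld{w}_2\in \tld{\un{W}}^+$ in an admissible pair). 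Setting $\tld{w}(\tau_\sigma) = \tld{w}(\tau)\tld{w}_2^{-1}$ (after suitable adjustment to land in standard lowest-alcove form), Proposition \ref{prop:JHbij} applied at $\lambda=0$ gives $\sigma\in \JH(\ovl{\sigma}(\tau_\sigma))$, and the lowest alcove parameter of $\tau_\sigma$ at embedding $j$ is $\mu_j-\delta_{\sigma,j}$; by the reading of the $P_{\cP,\Lambda,e}$-genericity above, this is both $P_{\eta,e}$-generic and $(7n-3)$-deep, placing $(\eta,\tau_\sigma)$ in $\cS_{\{0\},\mathrm{t}}$. This verifies condition (a) of Definition \ref{defn:genSW}.

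The main obstacle is condition (b): ruling out that $\sigma$ covers some $\sigma'\in \JH(\ovl{\sigma}(\tau'))$ for $(\eta,\tau')\in \widehat{\cS}_{\{0\},\mathrm{t}}\setminus \cS_{\{0\},\mathrm{t}}$. Assuming it did, Proposition \ref{prop:covering}(\ref{it:covering:2}) forces a compatible lowest alcove presentation of $\sigma'$ to have central weight within a controlled region of $\omega_\sigma$, and Proposition \ref{prop:JHbij} then forces the lowest alcove parameter of $\tau'$ at embedding $j$ to have the form $\mu_j-\delta'_j$ with $\delta'_j \in \Conv((\lambda+\eta-w_0(\eta))_j)$. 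The failure $(\eta,\tau')\notin \cS_{\{0\},\mathrm{t}}$ is precisely the failure of $P_{\eta,e}$-genericity or $(7n-3)$-depth of $\mu_j-\delta'_j$, which contradicts $P_{\cP,\Lambda,e}$-genericity of $\mu$. The key technical task, which explains the use of the full convex hull rather than just the Weyl orbit in the superscript operation defining $P_{\cP,\Lambda,e}$, is to catalog all possible shifts $\delta'_j$ arising from the covering relation and confirm that each indeed lies in $\Conv((\lambda+\eta-w_0(\eta))_j)$.
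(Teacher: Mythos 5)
Your proof sketches the same outline as the paper up to a point — both rely on Proposition \ref{prop:JHbij} to parametrize $\JH(\ovl{\sigma}(\lambda,\tau))$, both read off the $P_{\cP,\Lambda,e}$-genericity as a condition on $\mu$ shifted by elements of $\Conv((\lambda+\eta-w_0(\eta))_j)$, and both want to conclude that any $\tau'$ with $\sigma'\in\JH(\ovl{\sigma}(\tau'))$ and $\sigma$ covering $\sigma'$ is $\tld{P}_{\eta,e}$-generic. But what you identify at the end as ``the key technical task'' — establishing that the shift $\mu'-\mu$ actually lies in $\Conv((\lambda+\eta-w_0(\eta))_j)$ — is exactly the step you leave unproved, and it is the whole content of the lemma.

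The paper fills that gap with two ideas you don't use. First, a reduction: since $\sigma$ covers $\sigma'$ and $\sigma\in\JH(\ovl{\sigma}(\lambda,\tau))$, the covering criterion of Proposition \ref{prop:covering}(\ref{it:covering:3}) together with Proposition \ref{prop:JHfixed} shows $\sigma'\in\JH(\ovl{\sigma}(\lambda,\tau))$ as well, so one may assume $\sigma=\sigma'$. Second — and this is the mechanism you are missing — the paper introduces a \emph{pivot}: choose a tame inertial $\F$-type $\rhobar$ with $\sigma\in W^?(\rhobar)$, and fix compatible lowest alcove presentations. Then Proposition \ref{prop:intersect} applied to the pair $(\rhobar,\tau)$ (using $\sigma\in W^?(\rhobar)\cap\JH(\ovl{\sigma}(\lambda,\tau))$) yields $\tld{w}(\rhobar,\tau)\in\Adm(\lambda+\eta)$, and applied to $(\rhobar,\tau')$ (using $\sigma\in W^?(\rhobar)\cap\JH(\ovl{\sigma}(\tau'))$) yields $\tld{w}(\rhobar,\tau')\in\Adm(\eta)$. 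Multiplying, $\tld{w}(\tau)^{-1}\tld{w}(\tau')=\tld{w}(\rhobar,\tau)\,\tld{w}(\rhobar,\tau')^{-1}\in\Adm(\lambda+\eta-w_0(\eta))$, and then the Kottwitz--Rapoport permissibility theorem immediately gives $\mu'-\mu\in\Conv(\lambda+\eta-w_0(\eta))$. Without this double-admissibility argument your Step 2(b) does not close, since trying to chase the covering relation directly through Proposition \ref{prop:covering}(\ref{it:covering:2}) and Proposition \ref{prop:JHbij} requires a cataloguing of shifts that you (correctly) flag as undone.

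Two smaller points: your Step 1 asserts that $P_{\cP,\Lambda,e}$-genericity of $\mu$ implies $P_{\lambda+\eta,e}$-genericity of $\tau$ on the grounds that $P_{\lambda+\eta,e}$ is ``built from $P_{\eta,e}$ by an analogous $\Conv$-shift recipe''; that is not how $P_{\lambda+\eta,e}$ is defined — it comes out of Theorem \ref{thm:stack_local_model} as a black box depending on the set $\{(\lambda+\eta)_j\}$ — so this step is at best implicit in the paper's intended definition of $P_{\cP,\Lambda,e}$ and should not be treated as automatic. Also, the bound $\langle\tld{w}_2^{-1}(0),\alpha^\vee\rangle\leq h_{\lambda+\eta}$ (from Proposition \ref{prop:JHbij}) that you invoke in Step 2(a) gives a radius $h_{\lambda+\eta}$, not $h_{\lambda+\eta}+n-1=h_{\lambda+\eta-w_0(\eta)}$; this does not create a problem for your Step 2(a) but is worth keeping straight, since it is precisely the larger radius $h_{\lambda+\eta-w_0(\eta)}$ that is needed once $\tau'$ replaces $\tau$ in the final estimate, and the paper's double-admissibility argument produces that larger bound automatically.
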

\begin{proof}
We need to show that for any $\lambda\in \Lambda$, any $\sigma \in \JH(\ovl{\sigma}(\lambda,\tau))$ is generic, i.e.~that if $\sigma$ covers $\sigma'$ and $\sigma' \in \JH(\ovl{\sigma}(\tau'))$ for some tame inertial type $\tau'$, then $\tau'$ has a lowest alcove presentation $(s',\mu'-\eta)$ such that $\mu'$ is $\tld{P}_{\eta,e}$-generic.
In fact, we take $(s',\mu'-\eta)$ to be compatible with $(s,\mu-\eta)$.
Since $\sigma$ covers $\sigma'$, $\sigma' \in\JH(\ovl{\sigma}(\tau))$ so that we can assume without loss of generality that $\sigma = \sigma'$.
Choose a tame inertial $\F$-type $\rhobar$ such that $\sigma \in W^?(\rhobar)$.
Then choosing the compatible lowest alcove presentation for $\rhobar$, we have that $\tld{w}(\rhobar,\tau) \in \Adm(\lambda+\eta)$ and $\tld{w}(\rhobar,\tau') \in \Adm(\eta)$.
Thus $\tld{w}(\tau)^{-1} \tld{w}(\tau') \in \Adm(\lambda+\eta-w_0(\eta))$, so that $\mu'-\mu \in \Conv(\lambda + \eta-w_0(\eta))$.
Then the $P_{\cP,\Lambda,e}$-genericity of $\mu$ implies the $\tld{P}_{\eta,e}$-genericity of $\mu'$.
\end{proof}

\begin{cor}\label{cor:PgenBM}
Let $\Lambda \subset X_*(\un{T}^{\vee})$ be a finite set of dominant weights containing $0$.
Then there exist effective cycles $\cZ_\sigma\in \Z[\cX_{n,\mathrm{red}}]$ for each Serre weight $\sigma$ such that 
\[
\cZ_{\lambda,\tau} = \sum_{\sigma} [\ovl{\sigma}(\lambda,\tau):\sigma] \cZ_\sigma
\]
for any $\lambda \in \Lambda$ and tame inertial type $\tau$ with a lowest alcove presentation $(s,\mu-\eta)$ with $\mu$ $P_{\cP,\Lambda,e}$-generic.
\end{cor}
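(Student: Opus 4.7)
The plan is to deduce the corollary by combining Theorem \ref{thm:genBM}(\ref{it:genBM:2}) with Lemma \ref{lemma:univpolytau}, extending the collection of cycles there (indexed by generic Serre weights) to all Serre weights by declaring $\cZ_\sigma = 0$ whenever $\sigma$ is not generic in the sense of Definition \ref{defn:genSW}. The point is that the polynomial $P_{\cP,\Lambda,e}$ was engineered so that whenever $\tau$ is $P_{\cP,\Lambda,e}$-generic, every Jordan--H\"older factor of $\ovl{\sigma}(\lambda,\tau)$ (for $\lambda \in \Lambda$) is generic, so setting $\cZ_\sigma = 0$ on non-generic weights does not affect the Breuil--M\'ezard sum.

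More concretely, I would first invoke Theorem \ref{thm:genBM}(\ref{it:genBM:2}) with the given $\Lambda$ to produce effective cycles $\cZ_\sigma$ for every generic Serre weight $\sigma$, and set $\cZ_\sigma \defeq 0$ for all other Serre weights. Now fix $\lambda \in \Lambda$ and a tame inertial type $\tau$ admitting a lowest alcove presentation $(s,\mu-\eta)$ with $\mu$ being $P_{\cP,\Lambda,e}$-generic. By Lemma \ref{lemma:univpolytau} we have $(\lambda+\eta,\tau) \in \cS_{\cP,\Lambda,\mathrm{t}}$, which by definition of $\cS_{\cP,\Lambda,\mathrm{t}}$ means that every $\sigma \in \JH(\ovl{\sigma}(\lambda,\tau))$ is generic. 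Then the Breuil--M\'ezard equation supplied by Theorem \ref{thm:genBM}(\ref{it:genBM:2}) applied to $(\lambda+\eta,\tau) \in \cS_{\Lambda,\mathrm{t}}$ (which in particular belongs to $\cS_{\cP,\Lambda,\mathrm{t}}$) reads
\[
\cZ_{\lambda,\tau} \;=\; \sum_{\sigma \text{ generic}} [\ovl{\sigma}(\lambda,\tau):\sigma]\,\cZ_\sigma \;=\; \sum_{\sigma} [\ovl{\sigma}(\lambda,\tau):\sigma]\,\cZ_\sigma,
\]
the second equality using that the extra terms vanish either because $\cZ_\sigma = 0$ by definition, or vacuously because the multiplicity $[\ovl{\sigma}(\lambda,\tau):\sigma]$ vanishes for non-generic $\sigma$.

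This is essentially a bookkeeping step rather than a substantive argument: the real work was done in Theorem \ref{thm:genBM} (whose proof depends on the unibranch Theorem \ref{thm:stack_local_model}, the patching machinery of \S \ref{sec:patch:ax}, and the axiomatic framework of Theorem \ref{thm:patchBM}) and in Lemma \ref{lemma:univpolytau} (which ensures that the $P_{\cP,\Lambda,e}$-genericity condition on $\mu$ propagates to all Serre weights arising in $\JH(\ovl{\sigma}(\lambda,\tau))$ via the covering relation). The only thing one needs to verify beyond applying these results is that the equation from Theorem \ref{thm:genBM} indeed holds for every $(\lambda+\eta,\tau)$ of the form appearing in the statement, which is exactly what Lemma \ref{lemma:univpolytau} delivers. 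There is no real obstacle here; the effectivity of $\cZ_\sigma$ is inherited directly from Theorem \ref{thm:genBM}(\ref{it:genBM:2}), and the extension by zero to non-generic weights preserves effectivity trivially.
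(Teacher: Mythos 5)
Your proposal is correct and takes essentially the same approach as the paper's (very terse) proof, which simply cites Lemma \ref{lemma:univpolytau} and Theorem \ref{thm:genBM}. You helpfully make explicit the extension-by-zero to non-generic Serre weights and why it is harmless, a point the paper relegates to the remark following the corollary; the only blemish is the parenthetical ``$(\lambda+\eta,\tau) \in \cS_{\Lambda,\mathrm{t}}$ (which in particular belongs to $\cS_{\cP,\Lambda,\mathrm{t}}$)'', which has the inclusion backwards (it is $\cS_{\cP,\Lambda,\mathrm{t}} \subset \cS_{\Lambda,\mathrm{t}}$, and membership in the smaller set was established directly via Lemma \ref{lemma:univpolytau}).
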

\begin{proof}
This follows from Lemma \ref{lemma:univpolytau} and Theorem \ref{thm:genBM}.
\end{proof}

\begin{rmk}
If $\sigma$ is not generic, then $\sigma \notin \JH(\ovl{\sigma}(\lambda,\tau))$ for any $(\lambda+\eta,\tau) \in \cS_{\cP,\Lambda,\mathrm{t}}$. Hence any $\sigma$ such that $\cZ_\sigma$ occurs in Corollary \ref{cor:PgenBM} must be generic.
\end{rmk}

\begin{rmk}\label{rmk:BMproduct}
In this entire section, we have restricted ourselves to the case where $\cO_p$ is the ring of integers of a $p$-adic field $K$.
However, the evident generalization of Theorem \ref{thm:genBM} to the general case can be proven in the exact same way.
Moreover, since the completed tensor products of patching functors are again patching functors, the uniqueness statements in Theorem \ref{thm:genBM}(\ref{it:genBM:2}) and (\ref{it:genBM:3}) imply that the cycles $\cZ_\sigma$ have a product structure corresponding to that of $\cO_p$.
\end{rmk}

\subsection{Generic Breuil--M\'ezard for tamely potentially semistable deformation rings in small weight}\label{sec:BMminweight}

In this section, we prove the Breuil--M\'ezard conjecture for sufficiently generic Galois representations and the Breuil--M\'ezard system coming from tame inertial Weil--Deligne types and small regular weight.

\begin{lemma}\label{lemma:univpolyrhobar}
Let $\Lambda\subset X_*(\un{T}^\vee)$ be a finite subset of dominant weights, and let $(s,\mu-\eta)$ be a lowest alcove presentation of a tame inertial $\F$-type $\rhobar$ for $K$.
If $\mu_j$ is $P^\eta_{\cP,\Lambda,e}$-generic (see Lemma \ref{lemma:univpolytau}) for all $j\in \cJ$, then for any tame inertial type $\tau$ for $K$ with $\tld{w}(\rhobar,\tau) \in \Adm(\lambda+\eta)$ for $\lambda \in \Lambda$ \emph{(}and for some lowest alcove presentation for $\tau$\emph{)}, $(\lambda+\eta,\tau) \in \cS_{\cP,\Lambda,\mathrm{t}}$.
\end{lemma}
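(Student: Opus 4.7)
The plan is to reduce to Lemma~\ref{lemma:univpolytau} by exhibiting a lowest alcove presentation $(s',\mu'-\eta)$ of $\tau$ such that $\mu'$ is $P_{\cP,\Lambda,e}$-generic; applying Lemma~\ref{lemma:univpolytau} to this presentation then yields $(\lambda''+\eta,\tau)\in\cS_{\cP,\Lambda,\mathrm{t}}$ for every $\lambda''\in\Lambda$, and in particular for the given $\lambda$.

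First, I would fix the lowest alcove presentation of $\tau$ that is $\lambda$-compatible with the given presentation of $\rhobar$; such a presentation exists (and is essentially unique modulo $X^0(\un{T})$) once the algebraic central character of $\rhobar$ is matched with that of $\tau$ shifted by $\lambda$. Writing $\tld w(\rhobar)=t_\mu s$ and $\tld w(\tau)=t_{\mu'}s'$, a direct computation gives $\tld w(\rhobar,\tau)=(s'^{-1}s)\,t_{s^{-1}(\mu-\mu')}$. Applying the Kottwitz--Rapoport characterization of the translation parts of admissible elements, together with the $W$-stability of $\Conv(\lambda+\eta)$ for the dominant weight $\lambda+\eta$, the admissibility hypothesis $\tld w(\rhobar,\tau)\in\Adm(\lambda+\eta)$ yields $\mu_j-\mu'_j\in\Conv(\lambda_j+\eta)$ componentwise for all $j\in\cJ$.

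Next, I would use the standard containment $\Conv(\lambda_j+\eta)\subseteq\Conv(\lambda_j)+\Conv(\eta)$ to decompose $\mu_j-\mu'_j=\nu_{1,j}+\nu_{2,j}$ with $\nu_{1,j}\in\Conv(\eta)$ and $\nu_{2,j}\in\Conv(\lambda_j)$, so that $\mu'_j=(\mu_j-\nu_{1,j})-\nu_{2,j}$. By the definition~\eqref{eqn:polysuper} of the $P^\eta$ operation, the hypothesis that $\mu_j$ is $P^\eta_{\cP,\Lambda,e}$-generic is exactly the assertion that $P_{\cP,\Lambda,e}(\mu_j-\nu)\not\equiv 0\pmod p$ for every $\nu\in\Conv(\eta)$; specializing to $\nu=\nu_{1,j}$ shows $\mu_j-\nu_{1,j}$ is $P_{\cP,\Lambda,e}$-generic.

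It remains to propagate this genericity through the residual shift by $\nu_{2,j}\in\Conv(\lambda_j)$. Writing $P_{\cP,\Lambda,e}$ as the product $\prod_{\lambda'\in\Lambda,\,i\in\cJ}\tld P^{(\lambda'+\eta-w_0(\eta))_i}_{\eta,e}$ and unwinding factor by factor, the needed assertion reduces to the nonvanishing of $\tld P_{\eta,e}$ at points of the form $\mu_j-\nu_{1,j}-\nu_{2,j}-\nu_0$ for $\nu_0\in\Conv((\lambda'+\eta-w_0(\eta))_i)$. Using the Minkowski identity $\Conv(A)+\Conv(B)=\Conv(A+B)$, applied to $\Conv(\lambda_j)+\Conv((\lambda'+\eta-w_0(\eta))_i)$, together with the containment $0\in\Lambda$ and the $\lambda$-dependent structure of $P_{\cP,\Lambda,e}$, one can express $\nu_{2,j}+\nu_0$ as a shift appearing in another factor $\tld P^{(\bar\lambda+\eta-w_0(\eta))_{i'}}_{\eta,e}$ of $P_{\cP,\Lambda,e}$, so that the $P_{\cP,\Lambda,e}$-genericity of $\mu_j-\nu_{1,j}$ forces the $P_{\cP,\Lambda,e}$-genericity of $\mu'_j$.

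The hard part will be the combinatorial bookkeeping in the last step: verifying that the Minkowski sum absorption into a single factor of the form $\tld P^{(\bar\lambda+\eta-w_0(\eta))_{i'}}_{\eta,e}$ for some $\bar\lambda\in\Lambda$ and $i'\in\cJ$ can be arranged uniformly over all choices of $\lambda,\lambda'\in\Lambda$ and $i\in\cJ$. This is precisely the step that dictates the shape of the polynomial $P^\eta_{\cP,\Lambda,e}$ (as a ``$\Conv(\eta)$-enveloping'' of $P_{\cP,\Lambda,e}$), and managing it is where the substantive work of the argument lies.
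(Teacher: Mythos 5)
Your strategy—produce a lowest alcove presentation of $\tau$ with $P_{\cP,\Lambda,e}$-generic parameter and invoke Lemma \ref{lemma:univpolytau}—is a reasonable reading of the paper's one-line proof, and your opening steps are fine: the $\lambda$-compatible pair of presentations yields $\tld w(\rhobar,\tau)=(s'^{-1}s)\,t_{s^{-1}(\mu-\mu')}$, and Kottwitz--Rapoport converts $\tld w(\rhobar,\tau)\in\Adm(\lambda+\eta)$ into $\mu_j-\mu'_j\in\Conv((\lambda+\eta)_j)$ for each $j$. From there, however, the argument does not close, and the gap is essentially the one you already flagged.

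A minor issue first: the decomposition $\mu_j-\mu'_j=\nu_{1,j}+\nu_{2,j}$ with $\nu_{1,j}\in\Conv(\eta)\cap X^*(T)$ and $\nu_{2,j}\in\Conv(\lambda_j)\cap X^*(T)$ requires that an \emph{integer} point of $\Conv(\lambda_j)+\Conv(\eta)=\Conv(\lambda_j+\eta)$ decompose as a sum of \emph{integer} points of the two summands; this is the integer decomposition property for the Minkowski sum and is not automatic from the polytope identity alone.

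The serious issue is the final absorption step, which is not just hard bookkeeping---it appears to be an obstruction. Once $\nu_{1,j}$ is spent as the $\Conv(\eta)$-shell, the element $\nu_{2,j}+\nu_0$ ranges over $\Conv\bigl(\lambda_j+(\lambda'+\eta-w_0(\eta))_i\bigr)$. For this to land inside some factor $\tld P_{\eta,e}^{(\bar\lambda+\eta-w_0(\eta))_{i'}}$ of $P_{\cP,\Lambda,e}$ one would need $\lambda_j+\lambda'_i$ to be dominated by $\bar\lambda_{i'}$ for some $\bar\lambda\in\Lambda$ and $i'\in\cJ$. Nothing in the hypotheses forces $\Lambda$ to have this closure property. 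So your Minkowski decomposition cannot be re-absorbed into $P_{\cP,\Lambda,e}$; the approach needs a different idea, not just more careful bookkeeping. Structurally, the $\eta$-shell in $P^\eta_{\cP,\Lambda,e}$ only budgets for an offset of size $\Conv(\eta)$, while $\Adm(\lambda+\eta)$ delivers an offset of size $\Conv(\lambda+\eta)$, strictly larger when $\lambda\ne 0$; splitting the offset as $\eta$-part plus $\lambda$-part and then "re-absorbing" the $\lambda$-part is exactly the step that fails. A closer imitation of the proof of Lemma \ref{lemma:univpolytau}---picking an auxiliary $\F$-type $\rhobar'$ with $\sigma\in W^?(\rhobar')$ for $\sigma\in\JH(\ovl\sigma(\lambda,\tau))$ and controlling offsets via the chain of admissibility relations involving $\rhobar$, $\rhobar'$, $\tau$, and $\tau'$ directly---avoids ever having to go through a full $\Conv(\lambda+\eta)$-sized shift.
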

\begin{proof}
This follows from Lemma \ref{lemma:univpolytau} and a similar argument.
\end{proof}

For a finite subset $\Lambda \subset X^*(\un{T}) = X_*(\un{T}^\vee)$, let 
\[
h_\Lambda \defeq \max_{\lambda \in \Lambda,\alpha \in \Phi} \langle \lambda,\alpha^\vee \rangle.
\]
The following is a corollary of Theorem \ref{thm:genBM}(\ref{it:genBM:2}) and Proposition \ref{prop:BMlocalize}.

\begin{cor}\label{cor:BM}
Let $\Lambda\subset X_*(\un{T}^\vee)$ be a finite subset of dominant weights containing $0$.
Let $\rhobar: G_K \ra \GL_n(\F)$ be a continuous Galois representation such that $\rhobar^{\mathrm{ss}}|_{I_K}$ has a lowest alcove presentation $(s,\mu-\eta)$ where $\mu_j$ is $P^\eta_{\cP,\Lambda,e}P_m$-generic for all $j\in \cJ$ and $m = \max\{2h_\Lambda + 2h_\eta,6n-2\}$.
Then there exist cycles $\cZ_\sigma(\rhobar) \in \Z[\Spec R_\rhobar^{\mathrm{alg}}]$ for each Serre weight $\sigma$ such that 
\begin{equation}\label{eqn:minimalBM}
Z(R_{\rhobar,\F}^{\lambda+\eta,\tau}) = \sum_\sigma [\ovl{r(\tau)\otimes_E V(\lambda)}:\sigma] \cZ_\sigma(\rhobar)
\end{equation}
for all $\lambda\in \Lambda$ and tame inertial Weil--Deligne types $\tau$, where $r(\tau)$ is a virtual representation of $\GL_n(\cO_K)$ over $E$ defined in \cite[\S 4.2]{Shotton}. 
\end{cor}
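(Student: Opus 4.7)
\emph{Plan.} The approach is to first invoke the geometric Breuil--M\'ezard result already obtained in Corollary \ref{cor:PgenBM}, then pull the cycles back to versal rings at $\rhobar$ via Proposition \ref{prop:BMlocalize}, and finally extend the Breuil--M\'ezard identity to all tame inertial Weil--Deligne types by showing that both sides of the desired equation vanish outside the already covered range.

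First I apply Corollary \ref{cor:PgenBM} to the finite set $\Lambda$: this yields effective cycles $\cZ_\sigma \in \Z[\cX_{n,\mathrm{red}}]$ (nonzero only for generic Serre weights $\sigma$ in the sense of Definition \ref{defn:genSW}) satisfying $\cZ_{\lambda,\tau} = \sum_\sigma [\ovl{\sigma(\lambda,\tau)}:\sigma]\cZ_\sigma$ for every $\lambda \in \Lambda$ and every tame inertial type $\tau$ admitting a $P_{\cP,\Lambda,e}$-generic lowest alcove presentation. Setting $\cZ_\sigma(\rhobar) \defeq i_\rhobar^*(\cZ_\sigma)$ and pulling back via Proposition \ref{prop:BMlocalize}, together with the equality $r(\tau) = \sigma(\tau)$ whenever $\tau$ is a tame inertial type (i.e.~with $N_\tau = 0$), this gives the desired identity for all such pairs $(\lambda+\eta,\tau)$.

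Next I extend to arbitrary tame inertial types $\tau$. By Lemma \ref{lemma:univpolyrhobar}, the $P^\eta_{\cP,\Lambda,e}$-genericity of $\mu$ (the lowest alcove parameter of $\rhobar^{\mathrm{ss}}|_{I_K}$) ensures that $\tld{w}(\rhobar,\tau) \in \Adm(\lambda+\eta)$ already forces $(\lambda+\eta,\tau) \in \cS_{\cP,\Lambda,\mathrm{t}}$. Contrapositively, if $\tau$ is not $P_{\cP,\Lambda,e}$-generic then Corollary \ref{cor:admshape} gives $R_{\rhobar,\F}^{\lambda+\eta,\tau} = 0$, so the left-hand side vanishes. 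For the right-hand side, the weight elimination inherent to the construction (cf.~Proposition \ref{prop:WE}) ensures that $\cZ_\sigma(\rhobar) \neq 0$ only for $\sigma \in W^?(\rhobar^{\mathrm{ss}})$; Proposition \ref{prop:intersect} then shows that $W^?(\rhobar^{\mathrm{ss}}) \cap \JH(\ovl{\sigma(\tau)\otimes V(\lambda)})$ is empty unless $\tld{w}(\rhobar,\tau) \in \Adm(\lambda+\eta)$, which is excluded. Hence both sides vanish in this case.

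The last and hardest step is to extend from tame inertial types to tame inertial Weil--Deligne types $\tau$ with $N_\tau \neq 0$. The $P_m$-factor in the genericity hypothesis (with $m \geq 2h_\Lambda + 2h_\eta$) rules out any resonance relations among the characters in \eqref{eq:def:type} that would be required for a nonzero monodromy operator on $\Dpst$ of a potentially semistable lift of $\rhobar$ of the allowed Hodge--Tate weights; consequently $R_{\rhobar,\F}^{\lambda+\eta,\tau} = 0$ whenever $N_\tau \neq 0$, and the left-hand side again vanishes. For the right-hand side, Shotton's inertial local Langlands construction expresses $[\ovl{r(\tau)}]$ in the Grothendieck group as a $\Z$-linear combination of classes $[\ovl{\sigma(\tau')}]$ for inertial types $\tau' \preceq \tau$ (with $N_{\tau'} = 0$), and applying the already-established identities from the second step via this linear combination yields the identity for $\tau$. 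The main technical point will be the verification of the vanishing of $R_{\rhobar,\F}^{\lambda+\eta,\tau}$ for $N_\tau \neq 0$ under our genericity hypotheses, which amounts to an explicit computation on Breuil--Kisin modules of the allowed shape showing that the monodromy operator of Proposition \ref{prop:mconverge} is forced to be zero.
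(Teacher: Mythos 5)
Your overall strategy---pull back the geometric Breuil--M\'ezard cycles from Corollary \ref{cor:PgenBM} (or equivalently Theorem \ref{thm:genBM}) via Proposition \ref{prop:BMlocalize}, and extend the identity outside the generic range by showing both sides of \eqref{eqn:minimalBM} vanish---is the same as the paper's, and your first step is essentially correct. But there are several genuine gaps in the second and third steps.

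First, your argument for the vanishing of the left-hand side only runs through the case where $\rho_\tau$ is at least $(h_\Lambda+h_\eta+1)$-generic: Lemma \ref{lemma:univpolyrhobar} requires a lowest alcove presentation of $\tau$, and Corollary \ref{cor:admshape} is a statement about $\Adm(\lambda)$-shapes, both of which need $\tau$ to be reasonably generic. You never treat the complementary case where $\rho_\tau$ fails this genericity bound (e.g.~has small or repeated inertial weights). The paper's proof explicitly invokes a strengthened version of \cite[Proposition 7]{enns} for exactly this case, after first reducing from $\rhobar$ to $\rhobar^{\mathrm{ss}}$ via \cite[Lemma 5]{enns}---the latter reduction is also missing from your proposal, and is needed even in the generic case because Corollary \ref{cor:admshape} is only stated for tame $\rhobar$.

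Second, your third step badly misreads the structure of Shotton's construction and the paper's handling of nonzero monodromy. The virtual representation $r(\tau)$ for a tame Weil--Deligne inertial type with $N_\tau\neq 0$ is \emph{not} a $\Z$-linear combination of $\sigma(\tau')$ for inertial types $\tau'\preceq\tau$ with $N_{\tau'}=0$: the only such $\tau'$ is the minimal one $\tau_0=(\rho_\tau,0)$, and $r(\tau)\neq\sigma(\tau_0)$ in general. Rather, $r(\tau)$ is a virtual combination of $\sigma(\tau')$ over all tame Weil--Deligne types $\tau'$ with $\rho_{\tau'}=\rho_\tau$ and varying nilpotent parts, and the paper exploits precisely this: once both sides of \eqref{eqn:minimalBM} have been shown to vanish for the minimal type $\tau_0$ (when $\tau_0\notin\cS_{\cP,\Lambda,\mathrm{t}}$), they vanish for every $r(\tau)$ with that underlying $\rho_\tau$ by linearity. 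Moreover, your claim that establishing $R_{\rhobar,\F}^{\lambda+\eta,\tau}=0$ for $N_\tau\neq 0$ ``amounts to an explicit computation on Breuil--Kisin modules showing that the monodromy operator of Proposition \ref{prop:mconverge} is forced to be zero'' is neither what the paper does nor the efficient argument: for a regular tame $\rho_\tau$ (a direct sum of pairwise distinct characters), the monodromy operator $N$ on $\Dpst$ commutes with the $I_K$-action and therefore preserves each one-dimensional isotypic line, so nilpotence forces $N=0$. This is exactly the elementary observation already used in Lemma \ref{lem:obvious_tame_type_lift}; no $\phz$-module computation is required. Your attribution of this to the $P_m$-factor in the genericity hypothesis is also off---regularity is already implied by $1$-genericity, while the $P_m$-factor serves to make $\rhobar^{\mathrm{ss}}$ generic enough for the patching and weight-elimination inputs of Theorem \ref{thm:genBM}.

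Finally, your argument for the vanishing of the right-hand side (via Proposition \ref{prop:WE} and Proposition \ref{prop:intersect}) takes a genuinely different route from the paper's. The paper shows that the global cycle $\cZ_\sigma$ itself is zero via a dichotomy on whether $i_x^*(\cZ_{\eta,\tau'})$ is nonzero for some $x\in\cP_{\mathrm{ss}}$ (using Lemma \ref{lemma:localizeinj}, Theorem \ref{thm:EGmodp} and Lemma \ref{lemma:univpolyrhobar}), which sidesteps any issues at wild $\rhobar$. Your version requires knowing that $\cZ_\sigma(\rhobar)\neq 0$ forces $\sigma\in W^?(\rhobar^{\mathrm{ss}})$ even for wildly ramified $\rhobar$; this can be extracted from the support control in Theorem \ref{thm:genBM} together with the weight-elimination results of \cite{LLL}, but you have not supplied the argument, and in particular Proposition \ref{prop:WE} (a statement about patching functors) does not directly apply to the pullback cycles $i_\rhobar^*(\cZ_\sigma)$.
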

\begin{proof}
If $\sigma$ is a generic Serre weight, then let $\cZ_\sigma(\rhobar) \defeq i_\rhobar^*(\cZ_\sigma)$ with $\cZ_\sigma$ as in Theorem \ref{thm:genBM}(\ref{it:genBM:2}) with the set $\Lambda$.
Otherwise, let $\cZ_\sigma(\rhobar) \defeq 0$.
Then (\ref{eqn:minimalBM}) for $\tau$ such that $N_\tau = 0$ and $(\lambda+\eta,\tau) \in \cS_{\cP,\Lambda,\mathrm{t}}$ holds by Theorem \ref{thm:genBM}(\ref{it:genBM:2}) and and Proposition \ref{prop:BMlocalize}.
Note that when $N_\tau = 0$, $r(\tau) = \sigma(\tau)$ (the semisimple case in \cite{Shotton}).

Fix $\lambda \in \Lambda$.
It suffices to show that for any other $\tau$ with $(\lambda+\eta,\tau) \notin \cS_{\cP,\Lambda,\mathrm{t}}$, $R_\rhobar^{\lambda+\eta,\preceq\tau}$ is zero and $\cZ_\sigma(\rhobar) = 0$ for any $\sigma \in \JH(\ovl{\sigma}(\lambda,\tau))$.
Then both sides of (\ref{eqn:minimalBM}) would be zero since $r(\tau)$ for any such $\tau$ is a virtual combination of $\sigma(\tau)$ for such $\tau$.
To show that $R_\rhobar^{\lambda+\eta,\preceq\tau}$ is zero, it suffices to show that $R_{\rhobar^{\mathrm{ss}}}^{\preceq\tau}$ is zero by \cite[Lemma 5]{enns}.
We assume without loss of generality that $\rhobar \cong \rhobar^{\mathrm{ss}}$.
If $\tau$ (or really $\rho_\tau$) is $(h_\Lambda+h_\eta+1)$-generic and $(\lambda+\eta,\tau) \notin \cS_{\cP,\Lambda,\mathrm{t}}$, then $\tld{w}(\rhobar,\tau) \notin \Adm(\lambda+\eta)$ by Lemma \ref{lemma:univpolyrhobar} so that $R_\rhobar^{\lambda+\eta,\preceq\tau} = R_\rhobar^{\lambda+\eta,\tau}$ is zero by Corollary \ref{cor:admshape}.

Suppose now that $\rho_\tau$ is not $(h_\Lambda+h_\eta+1)$-generic.
It suffices to show that $R_{\rhobar|_{G_{K'}}}^{\preceq\tau|_{I_{K'}}}$ is zero for any subfield $K' \subset \ovl{K}$ of finite degree over $K$.
Taking $K'$ to be a sufficiently large unramified extension of $K$, we assume without loss of generality that $\tau$ is a principal series type.
Then the claim follows from a mild strengthening of \cite[Proposition 7]{enns} (the same proof works with minor modifications), replacing $[-n+1,0]$ and $a_j^i \in [0,n-1]$ in \emph{loc.~cit.}~with $[-h_\Lambda-h_\eta+1,0]$ with $a_j^i \in [0,h_\Lambda+h_\eta-1]$, respectively.

We now show that if $\sigma \in \JH(\ovl{\sigma}(\lambda,\tau))$ where $(\lambda+\eta,\tau) \notin \cS_{\cP,\Lambda,\mathrm{t}}$, then $\cZ_\sigma(\rhobar) = 0$.
If $\sigma$ is not generic, $\cZ_\sigma = 0$ by definition.
Assume that $\sigma$ is generic.
There exists a tame type $\tau'$ such that $\sigma \in \JH(\ovl{\sigma}(\tau')) \subset \JH(\ovl{\sigma}(\lambda,\tau))$ so that in particular $(\eta,\tau') \in \cS_{{\{0\}},\mathrm{t}}$. 
Then $i^*_x(\cZ_\sigma) \leq i^*_x(\cZ_{\eta,\tau'})$ for all $x \in \cP_{\mathrm{ss}}$ by Theorem \ref{thm:genBM}(\ref{it:genBM:2}). %
If $i^*_x(\cZ_{\eta,\tau'})$ is zero for all $x \in \cP_{\mathrm{ss}}$, we deduce that $\cZ_\sigma$ is zero by Lemma \ref{lemma:localizeinj}.
Suppose that $i^*_x(\cZ_{\eta,\tau'})$ is nonzero for some $x \in \cP_{\mathrm{ss}}$.
Then $i^*_x(\cZ_{\lambda+\tau})$ is nonzero by Theorem \ref{thm:EGmodp}(\ref{it:irreducible_components_mod_p:1}).
Lemma \ref{lemma:univpolyrhobar} implies that $(\lambda+\eta,\tau) \in \cS_{\cP,\Lambda,\mathrm{t}}$, which is a contradiction.
\end{proof}

\subsection{The generic Breuil--M\'ezard basis}
\label{sec:gen:BM:basis}

In this section, we prove some basic results about the Breuil--M\'ezard cycles $\cZ_\sigma$ that appear in Theorem \ref{thm:genBM}.

\begin{prop}\label{prop:BMcoeff}
Let $\sigma$ be generic and $\cZ_\sigma$ be as in Theorem \ref{thm:genBM}.
Then the coefficient of $\cC_\sigma$ in $\cZ_\sigma$ is $1$.
\end{prop}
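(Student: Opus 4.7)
The plan is to compute both sides after pulling back to a carefully chosen semisimple tame Galois representation, and then invoke Theorem~\ref{thm:genBM}(\ref{it:genBM:3}) to evaluate $\cZ_\sigma$ at that point via a detectable minimal patching functor. Since $\sigma$ is generic it is $(6n-2)$-deep (Remark~\ref{rmk:genSW}); fix a lowest alcove presentation $(\tld{w},\omega)$ of $\sigma$ and let $\rhobar: G_K\ra \GL_n(\F)$ be the tame semisimple representation whose restriction to $I_K$ has the compatible lowest alcove presentation with $\tld{w}(\rhobar)=t_\omega w$, where $w$ is the image of $\tld{w}$ in $\un{W}$. Then $\sigma\in W_{\obv}(\rhobar)$ by construction, and $\rhobar$ is $(6n-2)$-generic. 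By Proposition~\ref{prop:rhobaroncomponents}(\ref{item:comp:obv}), $\rhobar\in \cC_\sigma$; by Theorem~\ref{thm:genBM}(\ref{it:genBM:1}), there is a minimal patching functor $M_\infty$ for $\rhobar$ and $\cS_{\{0\},\mathrm{t}}$. Choose a tame inertial type $\tau$ compatibly presented so that $\tld{w}(\rhobar,\tau)=t_{w^{-1}(\eta)}$. Corollary~\ref{cor:extremeintersect} then gives $\JH(\ovl{\sigma(\tau)})\cap W^?(\rhobar)=\{\sigma\}$ with multiplicity one, and genericity of $\sigma$ guarantees $(\eta,\tau)\in \cS_{\{0\},\mathrm{t}}$.

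Next, I compute $\cZ_\sigma(\rhobar)=Z(M_\infty(\sigma))$. Since $M_\infty$ is minimal and $R_\infty(0,\tau)$ is a domain by Theorem~\ref{thm:stack_local_model}(\ref{it:stack_local_model:2}), the module $M_\infty(\sigma(\tau)^\circ)$ is maximal Cohen--Macaulay of generic rank one over $R_\infty(0,\tau)$. Hence $Z(M_\infty(\sigma(\tau)^\circ))$ equals the unique irreducible cycle of $\Spec R_\infty(0,\tau)_\F$, with coefficient one; call this cycle $\cC'$. On the other hand, by the exactness of $M_\infty$ applied to a $\rG$-stable $\cO$-lattice filtration of $\sigma(\tau)^\circ$,
\[
Z(M_\infty(\sigma(\tau)^\circ))=\sum_{\sigma'\in \JH(\ovl{\sigma(\tau)})}[\ovl{\sigma(\tau)}:\sigma']\,Z(M_\infty(\sigma')).
\]
By Proposition~\ref{prop:WE}, $M_\infty(\sigma')=0$ whenever $\sigma'\notin W^?(\rhobar)$, so by our choice of $\tau$ the right-hand side collapses to $Z(M_\infty(\sigma))$. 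Therefore $\cZ_\sigma(\rhobar)=\cC'$ with coefficient one.

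Finally, I translate this into a statement about $\cZ_\sigma$ itself. The image of $\cC'$ under $i_\rhobar$ is some $\cC_{\sigma'}$ with $\sigma'\in \JH(\ovl{\sigma(\tau)})$ and $\rhobar\in \cC_{\sigma'}$ (cf.~Theorem~\ref{thm:EGmodp}(\ref{it:irreducible_components_mod_p:1})). However, $\rhobar\in \cC_{\sigma'}$ forces $\sigma'\in W^g(\rhobar)\subseteq W^?(\rhobar)$ by Proposition~\ref{prop:rhobaroncomponents}(\ref{item:comp:WE}), so $\sigma'=\sigma$ by the previous step. Writing $\cZ_\sigma=\sum_\kappa c_\kappa \cC_\kappa$ with all $c_\kappa\geq 0$ (effectivity from Theorem~\ref{thm:genBM}(\ref{it:genBM:2})) and applying $i_\rhobar^*$, the coefficient of $\cC'$ on the left is $1$, while on the right only $\kappa=\sigma$ contributes (since $\cC'$ maps to $\cC_\sigma$ only), giving $c_\sigma\cdot m$, where $m$ denotes the multiplicity of $\cC'$ in $i_\rhobar^*(\cC_\sigma)$. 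Since $\cX_{n,\red}$ is reduced and the versal map is formally smooth, $m=1$ by preservation of multiplicities under smooth pullback (\cite[\href{https://stacks.math.columbia.edu/tag/0DRD}{Tag 0DRD}]{stacks-project}). Hence $c_\sigma=1$.

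The main obstacle is the final multiplicity-one assertion $m=1$: this requires carefully tracking that pullback from a reduced algebraic stack to a versal ring preserves generic multiplicities of irreducible components, which is routine but must be invoked via the formal smoothness of the map $\Spf R^{\mathrm{alg}}_\rhobar\to \cX_{n,\red}$.
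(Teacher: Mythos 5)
Your proposal follows the same broad strategy as the paper (pull back to a well-chosen tame semisimple $\rhobar$ with $\sigma$ an obvious weight, use a tame type $\tau$ whose shape relative to $\rhobar$ is $t_{w^{-1}(\eta)}$, evaluate via a patching functor, then invoke preservation of multiplicities under the versal map). However, there is a genuine gap in the key irreducibility step.

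You assert: ``Since $M_\infty$ is minimal and $R_\infty(0,\tau)$ is a domain by Theorem~\ref{thm:stack_local_model}(\ref{it:stack_local_model:2}), the module $M_\infty(\sigma(\tau)^\circ)$ is maximal Cohen--Macaulay of generic rank one over $R_\infty(0,\tau)$. Hence $Z(M_\infty(\sigma(\tau)^\circ))$ equals the unique irreducible cycle of $\Spec R_\infty(0,\tau)_\F$, with coefficient one.'' The second sentence does not follow from the first: a complete local Noetherian $\cO$-flat domain need not have irreducible special fiber. For instance, $\cO[[x,y]]/(xy-p)$ is a domain whose reduction modulo $p$ is $\F[[x,y]]/(xy)$, which has two minimal primes. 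The MCM-of-rank-one structure gives $Z(M_\infty(\ovl{\sigma(\tau)^\circ})) = \cZ_{\eta,\tau}(\rhobar)$, the full cycle of $\Spec R_\infty(0,\tau)_\F$, which could a priori have several irreducible components — in which case the argument does not produce coefficient $1$. The paper closes this gap by observing that the particular $\tau$ chosen has $\tld{w}(\rhobar,\tau)=t_{w^{-1}(\eta)}$, and therefore $R_\rhobar^{\eta,\tau}$ is \emph{formally smooth} by \cite[Theorem 3.4.1]{LLL}; formal smoothness (not merely the domain property) forces $R_\rhobar^{\eta,\tau}/\varpi$ to be an irreducible and reduced cycle, which is exactly what your argument needs. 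You have the right choice of $\tau$ but never exploit its consequence for the deformation ring beyond being a domain. There is also a small slip in the choice of $\rhobar$: for $\sigma=F_{(\tld{w},\omega)}$ to be an obvious weight of $\rhobar$ one needs $\tld{w}(\rhobar)\tld{w}^{-1}(0)=\omega$, which requires $\tld{w}(\rhobar)=t_\omega\tld{w}$, not $t_\omega w$ (these differ by $t_{\eta_w}$ when $\tld{w}=t_{\eta_w}w$).
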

\begin{proof}
Choose a lowest alcove presentation $(\tld{w},\omega)$ for $\sigma$ compatible with $\zeta\in X^*(\un{Z})$.
Let $\rhobar$ be a semisimple Galois representation such that there exists a lowest alcove presentation of $\rhobar|_{I_K}$ so that $\tld{w}(\rhobar) = t_\omega\tld{w}$.
Then $\sigma \in W_{\obv}(\rhobar)$ (see Definition \ref{defn:obv_weight}) and so $\rhobar \in \cC_{\sigma}$ by Proposition \ref{prop:rhobaroncomponents}(\ref{item:comp:obv}). %

Let $\tau$ be the tame inertial type with lowest alcove presentation so that $\tld{w}(\tau) = t_\omega w_0\tld{w}_h\tld{w}$. 
Then $\tau$ is $2n$-generic and $\sigma\in \JH(\ovl{\sigma}(\tau))$ corresponds to $(\tld{w},\tld{w}_h\tld{w})$ in (\ref{eqn:JH}).
In fact, since $\sigma$ is generic, $(\eta,\tau) \in \cS_{{\{0\}},\mathrm{t}}$ so that $\cC_\sigma$ is a component of $\cX_{\F}^{\eta,\tau}$ by Theorem \ref{thm:irreducible_components_mod_p}.
We conclude that $0 < i_{\rhobar}^*(\cC_\sigma) \leq i_{\rhobar}^*(\cZ_{\eta,\tau})$ for any versal ring at $\rhobar$.
On the other hand, $R_\rhobar^{\eta, \tau}$ is formally smooth by \cite[Theorem 3.4.1]{LLL} since $\tld{w}(\rhobar,\tau) = t_{w^{-1}(\eta)}$ where $w\in \un{W}$ is the image of $\tld{w}$.
This implies that $i_{\rhobar}^*(\cC_\sigma) = i_{\rhobar}^*(\cZ_{\eta,\tau})$ and that both of these are irreducible cycles.

By the proof of Theorem \ref{thm:genBM}, there exists a minimal patching functor $M_\infty$ for $\rhobar$ and $\cS_{\Lambda,\mathrm{t}}$, which is detectable. 
Then $M_\infty(\sigma)$ is nonzero, and hence so is $\cZ_{\sigma}(\rhobar)$.
Theorem \ref{thm:genBM} implies that $0<\cZ_{\sigma}(\rhobar) \leq i_{\rhobar}^*(\cZ_{\eta,\tau}) = i_{\rhobar}^*(\cC_\sigma)$, so that $i_{\rhobar}^*(\cC_\sigma) = i_{\rhobar}^*(\cZ_\sigma)$.
The result follows from \cite[\href{https://stacks.math.columbia.edu/tag/0DRD}{Tag 0DRD}]{stacks-project}.
\end{proof}

\begin{prop}
The cycles $\cZ_\sigma$, for $\sigma$ generic, form a basis for the span of the cycles $\cC_\sigma$, for $\sigma$ generic.
\end{prop}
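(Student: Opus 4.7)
The plan is to show that the transition matrix from $\{\cZ_\sigma\}$ to $\{\cC_\sigma\}$, for $\sigma$ generic, is unitriangular with respect to an appropriate ordering. First I would verify that the $\cZ_\sigma$ for generic $\sigma$ actually lie in the span of $\cC_\kappa$ for generic $\kappa$. By Theorem \ref{thm:genBM}(\ref{it:genBM:2}), the support of $\cZ_\sigma$ consists of components $\cC_\kappa$ where $\sigma$ covers $\kappa$. By Remark \ref{rmk:genSW}(\ref{item:gencover}), if $\sigma$ is generic and $\sigma$ covers $\kappa$, then $\kappa$ is generic. Thus $\cZ_\sigma$ indeed lies in the $\Z$-span of $\{\cC_\kappa \mid \kappa \text{ generic}\}$, and we can write
\[
\cZ_\sigma = \sum_\kappa a_{\sigma,\kappa} \cC_\kappa
\]
with $a_{\sigma,\kappa} \in \Z$, $a_{\sigma,\kappa} = 0$ unless $\sigma$ covers $\kappa$.

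Next, I would use Proposition \ref{prop:BMcoeff}, which gives $a_{\sigma,\sigma} = 1$. The remaining step is triangularity. Fix a lowest alcove presentation $(\tld{w}_\sigma,\omega_\sigma)$ for each generic $\sigma$, and consider the length $\ell(\tld{w}_\sigma)$. By Proposition \ref{prop:covering}(\ref{it:covering:2}), if $\sigma$ covers $\kappa$ and $\sigma \not\cong \kappa$, then $\ell(\tld{w}_\kappa) < \ell(\tld{w}_\sigma)$. Thus, ordering the generic Serre weights by nondecreasing values of $\ell(\tld{w}_\sigma)$ and breaking ties arbitrarily yields a total ordering in which the matrix $(a_{\sigma,\kappa})$ is lower triangular with $1$'s on the diagonal. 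Such a matrix is invertible over $\Z$, so $\{\cZ_\sigma\}_{\sigma \text{ generic}}$ and $\{\cC_\sigma\}_{\sigma \text{ generic}}$ span the same $\Z$-module. Since the $\cC_\sigma$ are by definition linearly independent in $\Z[\cX_{n,\mathrm{red}}]$, so are the $\cZ_\sigma$, giving a basis.

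Essentially no step is a real obstacle here: all the work has been done in the preceding results, and the proposition reduces to a formal triangularity argument once one invokes Proposition \ref{prop:BMcoeff}, Theorem \ref{thm:genBM}(\ref{it:genBM:2}), Remark \ref{rmk:genSW}(\ref{item:gencover}) and the length bound from Proposition \ref{prop:covering}(\ref{it:covering:2}). The only (very minor) point to be careful about is that $\ell(\tld{w}_\sigma)$ is well-defined on equivalence classes of lowest alcove presentations, which is immediate since the equivalence $\sim$ modifies $\tld{w}_\sigma$ by an element of $X^0(\un{T})$, preserving length.
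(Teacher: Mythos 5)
Your proof is correct and takes essentially the same approach as the paper: the paper cites Theorem \ref{thm:genBM}, Remark \ref{rmk:partialorder}, Remark \ref{rmk:genSW}(\ref{item:gencover}), and Proposition \ref{prop:BMcoeff} to assert the change-of-basis matrix is unipotent triangular, and you have simply made the triangularity explicit by unwinding Remark \ref{rmk:partialorder} to the length inequality in Proposition \ref{prop:covering}(\ref{it:covering:2}).
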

\begin{proof}
By Theorem \ref{thm:genBM}, 
Remark \ref{rmk:partialorder}, 
Remark \ref{rmk:genSW}(\ref{item:gencover}), and Proposition \ref{prop:BMcoeff}, the ``change-of-basis matrix" relating $(\cC_\sigma)_\sigma$ and $(\cZ_\sigma)_\sigma$ is ``unipotent upper triangular".
\end{proof}

\subsubsection{Computation of the Breuil--M\'ezard basis}\label{sec:BMbasis}

We end this section with an alternative proof of Theorem \ref{thm:genBM}(\ref{it:genBM:3}), which introduces notation and arguments that will be used in \S \ref{sec:unitary}.
Let $\rhobar$ be a $2n$-generic tame inertial $\F$-type and choose a $2n$-generic lowest alcove presentation for $\rhobar$ with corresponding element $\tld{w}(\rhobar)$ (cf.~Definition \ref{defn:gen:rhobar}).
If $\sigma \in W^?(\rhobar)$ corresponds to the pair $(\tld{w},\tld{w}_1)$ in (\ref{eqn:W?}), we say that the $\rhobar$-defect $\delta_\rhobar(\sigma)$ of $\sigma$ is $\ell(t_\eta) - \ell((\tld{w}_h\tld{w})^{-1}w_0\tld{w}_1)$.
{Since a change of lowest alcove presentation corresponds to conjugation by an element of $\underline{\Omega}$ and the latter preserves length, $\delta_\rhobar(\sigma)$ is independent of the choice of lowest alcove presentation of $\rhobar$.}
Wang's theorem implies that $\tld{w}_1 \leq \tld{w}$ so that $(\tld{w}_h\tld{w})^{-1}w_0\tld{w}_1 \leq (\tld{w}_h\tld{w})^{-1}w_0\tld{w} = t_{w^{-1}(\eta)}$. 
Hence, $\delta_\rhobar(\sigma) \geq 0$ with equality if and only if $\sigma \in W_{\obv}(\rhobar)$.

\begin{prop}\label{prop:defectlower}
Let $\rhobar$ be a $2n$-generic tame inertial $\F$-type and $\tau$ be a $2n$-generic tame inertial type.
Fix compatible $2n$-generic lowest alcove presentations for them, with corresponding elements $\tld{w}(\rhobar)$ and $\tld{w}(\tau)$. %
If $\tld{w}(\rhobar,\tau)\in \Adm^{\mathrm{reg}}(\eta)$ with factorization $\tld{w}_2^{-1}w_0 \tld{w}_1$ as in Remark \ref{rmk:factor}, then $\kappa \defeq F_{(\tld{w}_h^{-1}\tld{w}_2,\tld{w}(\rhobar)(\tld{w}_1)^{-1}(0))}$ is the unique Serre weight in $W^?(\rhobar) \cap \JH(\ovl{\sigma}(\tau))$ which maximizes the defect function $\delta_\rhobar$.
\end{prop}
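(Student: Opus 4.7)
The plan is to establish the result in three stages: first verify that $\kappa$ lies in the intersection $W^?(\rhobar)\cap \JH(\ovl{\sigma}(\tau))$ and compute its defect explicitly; second prove a uniform upper bound on $\delta_\rhobar$ over the intersection using length estimates in the affine Weyl group; third extract uniqueness from equality in those estimates.

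First I would show that $\kappa\in W^?(\rhobar)\cap \JH(\ovl{\sigma}(\tau))$ by appealing to Proposition \ref{prop:intersect} with the tuple $(\tld{w},\tld{w}_1',\tld{w}_2') = (\tld{w}_h^{-1}\tld{w}_2,\tld{w}_1,\tld{w}_2)$ and $w = w_0$. The main preliminary check is that $\tld{w}_h^{-1}\tld{w}_2\in \tld{W}^+_1$ when $\tld{w}_2\in \tld{W}^+_1$, which is a short alcove computation: writing $\tld{w}_h^{-1}=t_\eta w_0$, for $x\in \tld{w}_2(A_0)\subset C_0^{\mathrm{res}}$ one checks $\langle w_0(x)+\eta,\alpha^\vee\rangle\in (0,1)$ for $\alpha$ simple using that $w_0$ sends $\Delta$ into $-\Delta$. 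The compatibility $\tld{w}(\rhobar)\tld{w}_1^{-1}(0)=\tld{w}(\tau)\tld{w}_2^{-1}(0)$ then follows immediately from $\tld{w}(\rhobar,\tau)=\tld{w}_2^{-1}w_0\tld{w}_1$. Plugging into the definition of the defect, $\tld{w}_h\cdot(\tld{w}_h^{-1}\tld{w}_2)=\tld{w}_2$, so $\delta_\rhobar(\kappa)=\ell(t_\eta)-\ell(\tld{w}_2^{-1}w_0\tld{w}_1)=\ell(t_\eta)-\ell(\tld{w}(\rhobar,\tau))$.

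Next, for any $\sigma\in W^?(\rhobar)\cap \JH(\ovl{\sigma}(\tau))$, Proposition \ref{prop:intersect} gives a lowest alcove presentation $(\tld{w},\omega)$ and elements $\tld{w}_1',\tld{w}_2'\in \tld{\un{W}}^+$ with $\tld{w}_1'\uparrow \tld{w}\uparrow \tld{w}_h^{-1}\tld{w}_2'$, together with $w'\in \un{W}$ satisfying $\tld{w}(\rhobar,\tau)=\tld{w}_2'^{-1}w'\tld{w}_1'$. The second upward relation is equivalent to $\tld{w}_2'\uparrow \tld{w}_h\tld{w}$ by \cite[Proposition 4.1.2]{LLL}, so Wang's theorem gives $\tld{w}_2'\leq \tld{w}_h\tld{w}$. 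A companion alcove computation shows $\tld{w}_h\tld{w}\in \tld{\un{W}}^+$ when $\tld{w}\in \tld{\un{W}}^+_1$, so by Lemma \ref{lemma:gallery} the expression $(\tld{w}_h\tld{w})^{-1}w_0\tld{w}_1'$ is reduced. I would then chain the inequalities
\[\ell(\tld{w}(\rhobar,\tau))\leq \ell(\tld{w}_2')+\ell(w')+\ell(\tld{w}_1')\leq \ell(\tld{w}_h\tld{w})+\ell(w_0)+\ell(\tld{w}_1')=\ell((\tld{w}_h\tld{w})^{-1}w_0\tld{w}_1'),\]
which yields $\delta_\rhobar(\sigma)\leq \delta_\rhobar(\kappa)$.

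Finally, for uniqueness at equality, the chain above collapses only when $\ell(\tld{w}_2')=\ell(\tld{w}_h\tld{w})$, $\ell(w')=\ell(w_0)$, and $\tld{w}_2'^{-1}w'\tld{w}_1'$ is reduced. The first two conditions, combined with $\tld{w}_2'\leq \tld{w}_h\tld{w}$ and $w'\leq w_0$ in $\un{W}$, force $\tld{w}_2'=\tld{w}_h\tld{w}$ and $w'=w_0$. Then $\tld{w}(\rhobar,\tau)=\tld{w}_2'^{-1}w_0\tld{w}_1'$ provides a second factorization into $\tld{\un{W}}^+\times \tld{\un{W}}^+_1$-data, and the uniqueness clause in Remark \ref{rmk:factor} forces $(\tld{w}_1',\tld{w}_2')=(\tld{w}_1,\tld{w}_2)$ modulo $X^0(\un{T})$. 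Hence $\tld{w}=\tld{w}_h^{-1}\tld{w}_2$ and $\omega=\tld{w}(\rhobar)\tld{w}_1^{-1}(0)$, so $\sigma=\kappa$. The main obstacle is carrying out the short alcove verifications placing $\tld{w}_h\tld{w}$ in $\tld{\un{W}}^+$ and $\tld{w}_h^{-1}\tld{w}_2$ in $\tld{\un{W}}^+_1$, both of which are needed to deploy Lemma \ref{lemma:gallery} and Proposition \ref{prop:intersect}; once these are in hand the length bookkeeping is routine.
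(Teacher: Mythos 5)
Your proof is correct and follows essentially the same approach as the paper's: establish $\kappa\in W^?(\rhobar)\cap\JH(\ovl{\sigma}(\tau))$ via the parametrizations, use the factorization from Proposition \ref{prop:intersect} together with Wang's theorem and the reducedness from Lemma \ref{lemma:gallery} to bound the defect, and extract uniqueness from equality in the length chain. The only cosmetic differences are that you route the membership check through Proposition \ref{prop:intersect} (which packages Propositions \ref{prop:W?} and \ref{prop:JHbij}) and cite Remark \ref{rmk:factor} for the uniqueness of the factorization where the paper cites Proposition \ref{prop:can:reg}; these are equivalent.
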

\begin{proof}
First, the fact that $\tld{w}_1 \uparrow \tld{w}_h^{-1}\tld{w}_2$ implies that $\kappa\in W^?(\rhobar)$ by Proposition \ref{prop:W?}. 
Since $\tld{w}(\rhobar)(\tld{w}_1)^{-1}(0) = \tld{w}(\tau)(\tld{w}_2)^{-1}(0)$, $\kappa \in \JH(\ovl{\sigma}(\tau))$ by Proposition \ref{prop:JHbij}. 

Suppose that $(\tld{w},\omega)$ is a compatible lowest alcove presentation of an element $\sigma \in W^?(\rhobar) \cap \JH(\ovl{\sigma}(\tau))$.
By Proposition \ref{prop:intersect}, $\tld{w}_2^{-1}w_0 \tld{w}_1 = \tld{s}_2^{-1} s \tld{s}_1$ with $\tld{s}_1$, $\tld{s}_2\in \tld{\un{W}}^+$ and $\tld{s}_1 \uparrow \tld{w} \uparrow \tld{w}_h^{-1} \tld{s}_2$.
By Wang's theorem $\tld{s}_2 \leq \tld{w}_h\tld{w}$, and by Lemma \ref{lemma:gallery} $(\tld{w}_h \tld{w})^{-1}w_0\tld{s}_1 \geq \tld{s}_2^{-1}w_0\tld{s}_1$. So we have
\begin{align*} 
\delta_\rhobar(\sigma) = \ell(t_\eta) - \ell((\tld{w}_h \tld{w})^{-1}w_0\tld{s}_1) & \leq \ell(t_\eta)-\ell(\tld{s}_2^{-1}w_0\tld{s}_1) \\ 
& \leq \ell(t_\eta)-\ell(\tld{s}_2^{-1}s\tld{s}_1) = \ell(t_\eta)-\ell(\tld{w}_2^{-1}w_0 \tld{w}_1) = \delta_{\rhobar}(\kappa). 
\end{align*}
Equality implies that $\tld{s}_2 = \tld{w}_h \tld{w}$ and $s = w_0$.
By the uniqueness in Proposition \ref{prop:can:reg}, $\delta_\rhobar(\sigma) = \delta_\rhobar(\kappa)$ implies that $\sigma \cong \kappa$.
\end{proof}

\begin{proof}[Alternative proof of Theorem \ref{thm:genBM}(\ref{it:genBM:3})]
We will denote by $\rhobar$ both a continuous representation $G_K \ra \GL_n(\F)$ and the corresponding inertial $\F$-type obtained by restriction. 
We are given a minimal patching functor $M_\infty^{\rhobar}$ for $\rhobar$ and $\cS_{\Lambda,t}$, and thus a versal ring $R_\infty^{\rhobar}$ to $\cX_n$ at $\rhobar$. For a cycle $\cZ$ of $R_\infty^\rhobar$ of dimension $\dim R_\infty^{\rhobar}\times_{\cX_n} \cX_{n,\red}$, define its generic part $\cZ^{\gen}$ to be the cycle obtained from $\cZ$ by removing any components whose support do not belong to $\underset{\sigma' \textrm{generic}}{\bigcup}i^*_{\rhobar}(\cC_{\sigma'})$.
For any Serre weight $\sigma$ and tame type $\tau$, define $\cZ_\sigma(\rhobar)\defeq Z(M_\infty^\rhobar(\sigma))$ and $\cZ_\tau(\rhobar)\defeq Z(M_\infty^\rhobar(\ovl{\sigma}(\tau)))$. We observe:
\begin{itemize}
\item 
If $\sigma$ is generic, then $\cZ_\sigma(\rhobar)^{\gen}=\cZ_\sigma(\rhobar)$. This is because the support of $\cZ_\sigma(\rhobar)$ belongs to $\underset{\sigma \subset \JH(\ovl{\sigma}(\tau))}{\bigcap} i_\rhobar^*(\cX^{\eta,\tau})=\underset{\sigma \textrm{ covers } \kappa}{\bigcup} i^*_\rhobar (\cC_{\kappa})$, which consists of only generic components.
\item If $\tau$ is a tame type such that $\JH(\ovl{\sigma}(\tau))$ does not contain any generic weight, then $\cZ_{\tau}(\rhobar)^{\gen}=0$.
\item If $\tau$ is a tame type such that $\JH(\ovl{\sigma}(\tau))$ contains a generic weight, then $\cZ_{\tau}(\rhobar)=i^*_{\rhobar}(\cX^{\eta,\tau})$.
\end{itemize}
(For the first two items, we use that Theorem \ref{thm:EGmodp} applies to any $\cX^{\eta,\tau}$ containing $\rhobar$, since $\rhobar$ is $(6n-2)$-generic.) In particular, each $\cZ_{\tau}(\rhobar)^{\gen}$ only depends on the versal ring $R^{\rhobar}_\infty$, i.e.~  equals the $i^*_{\rhobar}$ of a cycle independent of the patching functor $M_\infty^\rhobar$.
 
We now show by induction on $\delta_\rhobar(\sigma)$ that for any $\sigma$, $\cZ_\sigma(\rhobar)^{\gen}$ depends only on $R_\infty^\rhobar$. This proves Theorem \ref{thm:genBM}(\ref{it:genBM:3}) in view of the first item above. By Proposition \ref{prop:WE}, it suffices to restrict our attention to weights in $W^?(\rhobar)$.

Choose a lowest alcove presentation $(\tld{w},\omega)$ of $\sigma$ and a compatible lowest alcove presentation of $\rhobar$.

If $\delta_\rhobar(\sigma) = 0$, then $\tld{w}(\tau) \defeq \tld{w}(\rhobar)t_{-w^{-1}(\eta)}$ corresponds to a compatible lowest alcove presentation of a tame type $\tau$, where $w \in\un{W}$ is the image of $\tld{w}$. 
By Corollary \ref{cor:extremeintersect}, $W^?(\rhobar) \cap \JH(\ovl{\sigma}(\tau)) = \{\sigma\}$.
By Proposition \ref{prop:WE}, $[\ovl{\sigma}(\tau):\sigma] \cZ_\sigma(\rhobar)^{\gen} = \cZ_\tau(\rhobar)^{\gen}$. (It is well-known that $[\ovl{\sigma}(\tau):\sigma]=1$, though we will not use this. In fact, this can be seen from the proof of Proposition \ref{prop:BMcoeff}.) This finishes the base case.

Now, suppose $\delta_\rhobar(\sigma) > 0$. If $\sigma$ corresponds to $(\tld{w},\tld{w}_1)$ in (\ref{eqn:W?}), we choose $\tau$ so that $\tld{w}(\tau) \defeq \tld{w}(\rhobar)(\tld{w}_h\tld{w})^{-1}w_0\tld{w}_1$. Then 
\begin{equation} \label{eq:recursive_BM}
[\ovl{\sigma}(\tau):\sigma] \cZ_\sigma(\rhobar)^{\gen} = \cZ_\tau(\rhobar)^{\gen} - \sum_{\kappa \in W^?(\rhobar) \cap \JH(\ovl{\sigma}(\tau))} [\ovl{\sigma}(\tau):\kappa] \cZ_\kappa(\rhobar)^{\gen}.
\end{equation}
(Again, $[\ovl{\sigma}(\tau):\sigma] =1$.)
Since the right hand side depends only on $R_\infty^\rhobar$ by induction and Proposition \ref{prop:defectlower}, so does the left hand side.
\end{proof}

\begin{rmk}
\label{rmk:algorithm_Zsigma}
Combining the above with Proposition \ref{prop:BMglobalize} gives the following recursive procedure to compute $\cZ_\sigma$ for generic $\sigma$: For any $\sigma'$ covered by $\sigma$, choose a tame $\rhobar$ lying on $\cC_{\sigma'}$ (for instance, those given by Proposition \ref{prop:rhobaroncomponents}(\ref{item:comp:obv})). This gives a defect function on $W^?(\rhobar)$, and the above proof allows us to recursively compute the coefficient of $\cC_{\sigma'}$ in $\cZ_\kappa$ for $\kappa\in W^?(\rhobar)$, and in particular the coefficient of $\cC_{\sigma'}$ in $\cZ_{\sigma}$.
Note that $\sigma \in W^?(\rhobar)$ by Proposition \ref{prop:covering}(\ref{it:covering:3}) (see Remark \ref{rmk:compTfixedpts}(\ref{it:compTfixedpts:2})) and Theorem \ref{thm:Tfixedpts}(\ref{it:Tfixedpts:2}).
\end{rmk}
\begin{prop}\label{prop:BMcycles_underdetermined}
Let $\Lambda \subset X_*(\un{T}^{\vee})$ be a finite set of dominant weights containing $0$.
Suppose there are two collection of effective cycles $\cZ_\sigma, \cZ'_\sigma \in \Z[\cX_{n,\mathrm{red}}]$ for each Serre weight $\sigma$ such that 
\begin{align*}
\cZ_{\lambda,\tau} &= \sum_{\sigma} [\ovl{\sigma}(\lambda,\tau):\sigma] \cZ_\sigma \\
\cZ_{\lambda,\tau} &= \sum_{\sigma} [\ovl{\sigma}(\lambda,\tau):\sigma] \cZ'_\sigma.
\end{align*}
for any $\lambda \in \Lambda$ and tame inertial type $\tau$ which is $P_{\cP,\Lambda,e}$-generic. Let $\sigma_0$ be a Serre weight such that:
\begin{itemize}
\item For any pair of tame types $\tau_1,\tau_2$ such that $\JH(\ovl{\sigma}(\tau_1))\cap \JH(\ovl{\sigma}(\tau_2))\neq \emptyset$ and $\kappa\in \JH(\ovl{\sigma}(\tau_1))$ for some $\kappa$ covered by $\sigma_0$, $\tau_2$ is $P_{\cP,\Lambda,e}$-generic.
\end{itemize}
Then for each semisimple $\rhobar\in \cX_n(\F)$, $\cZ_{\sigma_0}(\rhobar)=\cZ'_{\sigma_0}(\rhobar)$. In particular, $\cZ_{\sigma_0}=\cZ'_{\sigma_0}$.
\end{prop}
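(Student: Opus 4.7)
The plan is to adapt the defect-based induction from the alternative proof of Theorem~\ref{thm:genBM}(\ref{it:genBM:3}) to the difference cycles $D_\sigma \defeq \cZ_\sigma - \cZ'_\sigma$, which satisfy $\sum_\sigma [\ovl{\sigma}(\lambda,\tau):\sigma]\,D_\sigma = 0$ for every $\lambda \in \Lambda$ and every $P_{\cP,\Lambda,e}$-generic tame type $\tau$. By \cite[\href{https://stacks.math.columbia.edu/tag/0DRD}{Tag 0DRD}]{stacks-project} and the fact that every component of $\cX_{n,\red}$ contains a dense open of tame semisimple points, it suffices to show $D_{\sigma_0}(\rhobar) = 0$ for each tame semisimple $\rhobar \in \cX_n(\F)$. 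Effectivity, combined with the BM equations for tame types containing $\sigma_0$ as a JH factor (all of which are $P_{\cP,\Lambda,e}$-generic by the hypothesis, applied with $\tau_1=\tau_2$ and $\kappa=\sigma_0$), shows via Theorem~\ref{thm:EGmodp}(\ref{it:irreducible_components_mod_p:1}) that the supports of $\cZ_{\sigma_0}$ and $\cZ'_{\sigma_0}$ are contained in $\{\cC_\kappa : \sigma_0 \text{ covers } \kappa\}$. Hence we may further restrict to $\rhobar$ lying on some such $\cC_\kappa$, for which Theorem~\ref{thm:Tfixedpts} and Remark~\ref{rmk:compTfixedpts}(\ref{it:compTfixedpts:2}) yield $\sigma_0 \in W^?(\rhobar)$.

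Fix such a $\rhobar$ and proceed by induction on $\delta_\rhobar(\sigma)$ for $\sigma \in W^?(\rhobar)$ covered by $\sigma_0$. For $\sigma$ of defect $d$, Proposition~\ref{prop:defectlower} produces a tame type $\tau_\sigma$, determined by the combinatorial data of $\sigma$ and $\rhobar$, for which $\sigma$ is the unique maximum-defect weight in $W^?(\rhobar) \cap \JH(\ovl{\sigma}(\tau_\sigma))$. Assuming $\tau_\sigma$ is $P_{\cP,\Lambda,e}$-generic, the BM equation at $\tau_\sigma$ reads $\sum_{\kappa \in \JH(\ovl{\sigma}(\tau_\sigma))} [\ovl{\sigma}(\tau_\sigma):\kappa]\,D_\kappa(\rhobar) = 0$ and isolates $[\ovl{\sigma}(\tau_\sigma):\sigma]\,D_\sigma(\rhobar)$ in terms of $D_\kappa(\rhobar)$ for $\kappa \neq \sigma$. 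Contributions from $\kappa \in W^?(\rhobar)$ of strictly smaller defect vanish by the inductive hypothesis; contributions from $\kappa \notin W^?(\rhobar)$ vanish by a support argument analogous to the reduction step (Lemma~\ref{lemma:W?irrelevant} combined with the effectivity of $\cZ_\kappa$, $\cZ'_\kappa$ forces both to be zero at $\rhobar$).

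The genericity of $\tau_\sigma$ at each level of the recursion is supplied by the hypothesis on $\sigma_0$: the base case $\sigma=\sigma_0$ is immediate; at subsequent levels, each $\tau_\kappa$ shares the JH factor $\kappa$ with a type used at an earlier level which contains a weight covered by $\sigma_0$ (for the first descent, the type is $\tau_{\sigma_0}$ itself, containing $\sigma_0$). The main obstacle, and the crux of the argument, is to verify that each weight $\kappa$ entering the recursion at every level remains covered by $\sigma_0$, so that the hypothesis applies inductively to the subsequent level: this is the key combinatorial input, and is to be deduced from monotonicity of defect together with the characterization of the covering order in Proposition~\ref{prop:covering}(\ref{it:covering:3}) applied to the admissible-pair data of Proposition~\ref{prop:can:adm} used to define $\tau_\sigma$. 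Once the induction closes and gives $D_{\sigma_0}(\rhobar) = 0$, the ``in particular'' statement $\cZ_{\sigma_0}=\cZ'_{\sigma_0}$ follows from Lemma~\ref{lemma:localizeinj} applied to the $\Z$-span of the components in the support of $\cZ_{\sigma_0}-\cZ'_{\sigma_0}$.
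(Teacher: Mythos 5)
Your overall strategy---localize to tame semisimple $\rhobar$ via Lemma~\ref{lemma:localizeinj}, then run the defect-induction of the alternative proof of Theorem~\ref{thm:genBM}(\ref{it:genBM:3})---is the right skeleton and matches the paper's. The opening reductions (support of $\cZ_{\sigma_0}$ in $\{\cC_\kappa : \sigma_0 \text{ covers }\kappa\}$, passing to $\rhobar$ on such a $\cC_\kappa$, $\sigma_0 \in W^?(\rhobar)$) are correct. However, the step you yourself flag as ``the main obstacle and crux'' is a genuine gap, not a verification you omitted: the claim that every weight $\kappa$ entering the recursion at every level remains covered by $\sigma_0$ is unjustified and should not be expected to hold. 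Covering by $\sigma_0$ requires $\kappa \in \JH(\ovl R)$ for \emph{every} $2h_\eta$-generic Deligne--Lusztig representation $R$ containing $\sigma_0$, whereas what the recursion hands you is merely $\kappa \in W^?(\rhobar)\cap\JH(\ovl\sigma(\tau_\sigma))$ for one particular type $\tau_\sigma$ (chosen to isolate $\sigma$); nothing forces $\tau_\sigma$ even to contain $\sigma_0$, let alone forces $\kappa$ to persist in all types containing $\sigma_0$. Your proposed derivation from defect monotonicity and Proposition~\ref{prop:covering}(\ref{it:covering:3}) applied to the data in Proposition~\ref{prop:can:adm} is not given and there is no apparent route. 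Concretely, your JH-overlap chain $\tau_{\sigma_0} \leadsto \tau_\kappa \leadsto \tau_{\kappa'}$ breaks at the second step: to apply the hypothesis with $\tau_1 = \tau_\kappa$ and $\tau_2 = \tau_{\kappa'}$ you need $\tau_\kappa$ to contain a weight covered by $\sigma_0$, and $\kappa$ itself is generally not such a weight.

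The paper closes the recursion differently, using a device you don't mention: it calls a Serre weight $\sigma$ \emph{very generic} if it satisfies the same condition imposed on $\sigma_0$, and for any cycle $\cZ$ defines the very generic part $\cZ^{\mathrm{v.gen}}$ by deleting components $\cC_\kappa$ for $\kappa$ not very generic. Two observations make the recursion well-defined at every node without needing covering by $\sigma_0$: (a) if $\sigma$ is very generic then $\cZ^{\mathrm{v.gen}}_\sigma = \cZ_\sigma$, so nothing is lost at the top; and (b) if some tame type $\tau$ with $\sigma \in \JH(\ovl{\sigma}(\tau))$ fails to be $P_{\cP,\Lambda,e}$-generic, then $\cZ^{\mathrm{v.gen}}_\sigma = 0$ automatically---because if a very generic $\kappa'$ appeared in the support of $\cZ_\sigma$, then $\kappa'$ would lie in some generic $\tau_1$ containing $\sigma$, and very genericity of $\kappa'$ applied to the pair $(\tau_1,\tau)$ (overlapping at $\sigma$, with $\tau_1 \ni \kappa'$ covered by $\kappa'$) would force $\tau$ generic. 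Thus at every step of the recursion for $\cZ^{\mathrm{v.gen}}_\sigma(\rhobar)$, either the relevant type is generic and the BM equation is available, or the very generic part already vanishes. Both $\cZ^{\mathrm{v.gen}}_\sigma(\rhobar)$ and $\cZ'^{\mathrm{v.gen}}_\sigma(\rhobar)$ therefore satisfy the same recursion in terms of the $\cZ^{\mathrm{v.gen}}_\tau(\rhobar)$, which depend only on $\tau$ and $\rhobar$, so they agree; and since $\sigma_0$ is very generic, $\cZ_{\sigma_0}(\rhobar) = \cZ^{\mathrm{v.gen}}_{\sigma_0}(\rhobar) = \cZ'^{\mathrm{v.gen}}_{\sigma_0}(\rhobar) = \cZ'_{\sigma_0}(\rhobar)$. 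You should replace the covering-propagation claim with this ``very generic part'' bookkeeping, since the former does not appear to be a true statement.
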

\begin{proof} 
In the proof, we will only consider cycles $\cZ_\sigma$ that occur in the given cycle equations.

We call a Serre weight $\sigma$ satisfying condition in the statement very generic. For a cycle $\cZ$ of $\cX_{n,\red}$, we define its very generic part $\cZ^{\mathrm{v.gen}}$ to be the cycle obtained by removing from $\cZ$ any component $\cC_{\sigma}$ such that $\sigma$ not very generic.

We make the following observations:
\begin{itemize}
\item Suppose $\sigma$ has the property that any tame type $\tau$ such that $\sigma\in \JH(\ovl{\sigma}(\tau))$ is $P_{\cP,\Lambda,e}$-generic. Then the support of $\cZ_\sigma$ belongs to $\underset{\sigma\in \JH(\ovl{\sigma}(\tau))}{\bigcap} \cX^{\eta,\tau}=\underset{\sigma \textrm{ covers } \kappa}{\bigcup} \cC_{\kappa}$ (where the equality follows from Theorem \ref{thm:EGmodp}). In particular, as in Remark \ref{rmk:algorithm_Zsigma}, for any tame $\rhobar$ such that $\cZ_{\sigma}(\rhobar)\neq 0$ we have $\sigma\in W^?(\rhobar)$.
\item Suppose $\rhobar$ is tame and $\cZ^{\mathrm{v.gen}}_\sigma(\rhobar)\neq 0$. Since $\cZ_\sigma$ occurs in the given cycle equations, we can find a $P_{\cP,\Lambda,e}$-generic tame type $\tau_1$ such that $\sigma\in \JH(\ovl{\sigma}(\tau_1))$. By Theorem \ref{thm:EGmodp}, $\cZ^{\mathrm{v.gen}}_\sigma(\rhobar)\neq 0$ implies $\kappa\in \JH(\ovl{\sigma}(\tau_1))$ for some very generic $\kappa$. The definition of very generic implies the previous item applies to $\sigma$, thus we learn that $\sigma\in W^?(\rhobar)$.
\item If $\sigma$ is very generic, then $\cZ_\sigma^{\mathrm{v.gen}}=\cZ_\sigma$.
\item If $\tau$ is a tame type such that $\cZ_{\tau}$ does not occur in the given cycle equations, then $\cZ^{\mathrm{v.gen}}_{\sigma}=0$ for any $\sigma\in\JH(\ovl{\sigma}(\tau))$.
\end{itemize}

We now fix a very generic $\sigma_0$ and $\rhobar$ such that $\cC_{\kappa}(\rhobar)\neq 0$ for some $\kappa$ covered by $\sigma_0$.
Given the above observations, we can repeat the argument in the alternative proof Theorem \ref{thm:genBM}(\ref{it:genBM:3}) to give a recursive formula for $\cZ^{\mathrm{v.gen}}_\sigma(\rhobar)$ in terms of $\cZ_\tau^{\mathrm{v.gen}}(\rhobar)$ for various tame types $\tau$.
But then $\cZ^{\prime,\mathrm{v.gen}}_\sigma(\rhobar)$ satisfies the same recursive formula, and 
hence $\cZ_{\sigma_0}(\rhobar)=\cZ'_{\sigma_0}(\rhobar)$.
\end{proof}
\begin{rmk} 
\label{rmk:BMcycles_underdetermined}
The condition on the Serre weight $\sigma_0$ in Proposition \ref{prop:BMcycles_underdetermined} is guaranteed by $Q$-genericity, for an appropriate polynomial $Q$ built out of $P_{\cP,\Lambda,e}$ (cf.~ Proposition \ref{prop:JHbij} and the proof of Lemma \ref{lemma:univpolytau}).
\end{rmk}

\clearpage{}%
\clearpage{}%
\section{Global applications}
\label{sec:GA}

\subsection{Serre weights for some definite unitary groups}\label{sec:unitary}

Let $F^+$ be a totally real field not equal to $\Q$, and let $F \subset \ovl{F}^+$ be a CM extension of $F^+$.
Denote the set of places in $F^+$ dividing $p$ by $S_p$.
We say that a finite place of $F^+$ (resp.~of $F$) is \emph{split} if it splits in $F$ (resp.~if its restriction to $F^+$ splits in $F$).
Suppose from now on that all places in $S_p$ are split.
Let $G_{/F^+}$ be a reductive group which is an outer form for $\GL_n$ which 
\begin{itemize}
\item splits over $F$; and
\item is definite at all archimedean places.
\end{itemize}
Recall from \cite[\S 7.1]{EGH} that $G$ admits a reductive model $\cG$ defined over $\cO_{F^+}[1/N]$, for some $N\in \N$ which is prime to $p$, together with an isomorphism
\begin{equation}
\label{iso integral}
\iota:\,\cG_{/\cO_{F}[1/N]} \stackrel{\iota}{\rightarrow}{\GL_n}_{/\cO_{F}[1/N]}
\end{equation}
which specializes to
$
\iota_w:\,\cG(\cO_{F^+_v})\stackrel{\sim}{\rightarrow}\cG(\cO_{F_w})\stackrel{\iota}{\rightarrow}\GL_n(\cO_{F_w})
$
for all split finite places $w$ in $F$ where $v$ is $w|_{F^+}$ here.
For each split place $v$ of $F^+$, we choose a place $\tld{v}$ of $F$ dividing $v$, and we let $\iota_v$ be the composition of $\iota_{\tld{v}}$ and the canonical isomorphism $\GL_n(\cO_{F_{\tld{v}}}) \cong \GL_n(\cO_{F^+_v})$ (suppressing the dependence on the choice of $\tld{v}$).
If $U = U_p U^{\infty,p} \leq G(\A_{F^+,p}^{\infty}) \times G(\A_{F^+}^{\infty,p})$ is a compact open subgroup and $W$ is a finite $\cO$-module endowed with a continuous action of $U_p$, then we define the space of algebraic automorphic forms on $G$ of level $U$ and coefficients in $W$ to be the (finite) $\cO$-module 
\begin{equation}
S(U,W) \defeq \left\{f:\,G(F^{+})\backslash G(\A^{\infty}_{F^{+}})\rightarrow W\,|\, f(gu)=u_p^{-1}f(g)\,\,\forall\,\,g\in G(\A^{\infty}_{F^{+}}), u\in U\right\}.
\end{equation}
We recall that the level $U$ is said to be \emph{sufficiently small} if for all $t \in G(\bA^{\infty}_{F^+})$, the order of the finite group $t^{-1} G(F^+) t \cap U$ is prime to $p$.
If $U$ is sufficiently small, then $S(U,-)$ defines an exact functor from finite $\cO$-modules with a continuous $U_p$-action to finite $\cO$-modules.
From now on we assume that $U$ is sufficiently small.

For a finite place $v$ of $F^+$ prime to $N$, we say that $U$ is \emph{unramified} at $v$ if one has a decomposition $U=\cG(\cO_{F^+_v})U^{v}$. %
Let $S$ be a finite set of finite places in $F^+$ containing $S_p$, all places dividing $N$, and all places at which $U$ is \emph{not} unramified.

Let $\cP_S$ be the set of split finite places $w$ of $F$ such that $v = w|_{F^+} \notin S$. 
For any subset $\cP\subseteq \cP_S$ of finite complement that is closed under complex conjugation, we write $\bT_{\cP}\defeq\cO[T^{(i)}_w,\,\,w\in\cP,\, 0 \leq i \leq n]$ for the universal Hecke algebra on $\cP$.
The space of algebraic automorphic forms $S(U,W)$ is endowed with an action of $\bT_{\cP}$, where $T_w^{(i)}$ acts by the usual double coset operator
\[
\iota_w^{-1}\left[ \GL_n(\cO_{F_w}) \left(\begin{matrix}
      \varpi_{w}\mathrm{Id}_i & 0 \cr 0 & \mathrm{Id}_{n-i} \end{matrix} \right)
\GL_n(\cO_{F_w}) \right].
\]

Suppose that $S(U,W)_{\fm}\neq 0$ (or equivalently $S(U,W\otimes_{\cO} \F)_{\fm}\neq 0$) where $\fm$ is the kernel of a homomorphism $\alpha: \bT_{\cP}\rightarrow \F$.
Let $\bT_{\cP}(U,W)$ be the image of $\bT_{\cP}$ in $\End_{\cO}(S(U,W))$---it is a semilocal ring.
If $Q$ is the (finite) set $\{w|_{F^+}:w\in \cP_S \setminus \cP\}$, then we also denote $\bT_{\cP}(U,W)$ by $\bT^Q(U,W)$.
Let $\alpha: \bT_{\cP} \onto \bT^Q(U,W)_{\fm}$ be the natural quotient map.
There is a Galois representation $r_{\fm} \defeq r(U,W)_{\fm}: G_{F^+,S} \ra \cG_n(\bT^Q(U,W)_{\fm})$, where $\cG_n$ is the group scheme over $\Z$ defined in \cite[\S 2.1]{CHT} (see also \S \ref{sec:galdef}), determined by the equations
\[
\det\left(1-r(U,W)_{\fm}|_{G_F}(\mathrm{Frob}_w)X\right)=\sum_{j=0}^n (-1)^j(\mathbf{N}_{F/\Q}(w))^{\binom{j}{2}}\alpha(T_w^{(j)})X^j
\]
for all $w\in \cP$.
Let $\rbar: G_{F^+} \ra \cG_n(\F)$ be the reduction $r_{\fm} \pmod{\fm}$.
We say that such a Galois representation $\rbar$ is \emph{automorphic} of level $U$ and coefficients $W$, and $\fm$ is the maximal ideal of $\bT_{\cP}$ corresponding to $\rbar$.
We say that $\rbar$ is \emph{automorphic} if $\rbar$ is automorphic of some level $U$ and some coefficients $W$.

Let $\cO_p$ be $\cO_{F^+} \otimes_{\Z} \Z_p \cong \prod\limits_{v \in S_p} \cO_{F^+_v}$.
Then the composition
\begin{equation}
\iota_p \defeq \prod_{v\in S_p} \iota_v: \cG(\cO_p) \cong \prod_{v\in S_p} \cG(\cO_{F^+_v}) \risom \prod_{v\in S_p} \GL_n(\cO_{F^+_v})
\end{equation}
gives an equivalence between $\cG(\cO_p)$-modules and $\prod\limits_{v\in S_p} \GL_n(\cO_{F^+_v})$-modules.
Let $k_v$ denote the residue field of $F^+_v$ and $\rG \defeq \prod\limits_{v\in S_p} \GL_n(k_v)$.
If $\sigma$ is a Serre weight of $\rG$, then $\sigma$ is naturally a $\prod\limits_{v\in S_p} \GL_n(\cO_{F^+_v})$-module by inflation.
We can now define what it means for a $\rbar$ as above to be automorphic of a particular weight and level.

\begin{defn}
\label{defn:mod:wght}
Let $U = \cG(\cO_p) U^{S_p}$ be a sufficiently small compact open subgroup of $G(\bA_{F^+}^\infty)$ and let $\sigma$ be a Serre weight for $\rG$.

We say that $\rbar$ is \emph{automorphic of weight} $\sigma$ and level $U$ or $\sigma$ is a \emph{modular} (\emph{Serre}) \emph{weight} for $\rbar$ at level $U$ if $\rbar$ is automorphic of level $U$ and coefficients $\sigma^\vee\circ\iota_p$, where $\sigma^\vee$ denotes the $\F$-dual of $\sigma$.
We say that $\rbar$ is automorphic of weight $\sigma$ or $\sigma$ is a \emph{modular} (\emph{Serre}) \emph{weight} for $\rbar$ if $\rbar$ is automorphic of weight $\sigma$ and some level $U$. 

Let $W(\rbar)$ be the set of modular Serre weights of $\rbar$.
Let $W_{\mathrm{gen}}(\rbar)$ be the subset of \emph{generic} Serre weights in $W(\rbar)$ (see Definition \ref{defn:genSW}).
\end{defn}

\noindent It is a standard fact that if $\rbar$ is automorphic, then $W(\rbar)$ is nonempty.
{
Indeed if $\rbar$ is automorphic, then we can assume by exactness of $S(U,-)_\fm$ that $\rbar$ is automorphic of level $U$ and coefficients $W$ where $W$ is an irreducible $\F[\cG(\cO_p)]$-module. 
Since the space of invariants of a pro-$p$ group acting continuously on an $\F$-vector space is nonzero, $W$ is of the form $\sigma^\vee \circ \iota_p$ for some Serre weight $\sigma$ of $\rG$. 
}

Fixing maps $\ovl{F}^+ \into \ovl{F}^+_v$ for each $v\in S_p$, the restriction of a continuous representation $\rbar: G_{F^+} \ra \cG_n(\F)$ gives a collection of continuous representations $(\rbar_v: G_{F^+_v} \ra \GL_n(\F))_{v\in S_p}$, which is equivalent to an $L$-homomorphism over $\F$ which we denote $\rbar_p: W_{\Qp} \ra$ $^L \un{G}(\F)$ where $G = \GL_n$.

\begin{defn} 
\label{defn:predicted:SW}
Given an $L$-parameter $\rhobar: G_{\Qp} \ra $ $^L \un{G}(\F)$, we say that $\sigma$ is a \emph{geometric} Serre weight of $\rhobar$ if the corresponding collection $(\rhobar_v)_{v\in S_p}$ lies on $\cC_\sigma$ (equivalently, $\rhobar_v$ lies on $\cC_{\sigma_v}$ for all $v\in S_p$ where $\sigma \cong \otimes_{v\in S_p} \sigma_v$).
We let $W^{\mathrm{g}}_{\mathrm{gen}}(\rhobar)$ be the set of geometric Serre weights of $\rhobar$ which are $3n-1$-deep. 

We let $W_{\mathrm{gen}}^{\mathrm{BM}}(\rhobar)$ be the set of \emph{generic} Serre weights such that $\rhobar$ lies in the support of $\cZ_\sigma$ (defined in Theorem \ref{thm:genBM}(\ref{it:genBM:2}), see Remark \ref{rmk:BMproduct}).
By Remark \ref{rmk:BMproduct}, 
\[
W_{\mathrm{gen}}^{\mathrm{BM}}(\rhobar) = \{\otimes_{v\in S_p} \sigma_v|\sigma_v \in W_{\mathrm{gen}}^{\mathrm{BM}}(\rhobar_v)\}.
\]
\end{defn}

\begin{rmk}\label{rmk:geomBM}
Proposition \ref{prop:BMcoeff} implies that any generic Serre weight in $W^{\mathrm{g}}_{\mathrm{gen}}(\rhobar)$ is contained in $W_{\mathrm{gen}}^{\mathrm{BM}}(\rhobar)$.
\end{rmk}

The following conjecture is based on \cite[Conjecture 6.9]{herzig-duke} and \cite[\S 2 and 9.2]{GHS}.

\begin{conj}
\label{conj:SWC-1}
Suppose that $\rbar: G_{F^+} \ra \cG_n(\F)$ is automorphic and that the inertial $L$-homomorphism $\rbar_p|_{I_{\Qp}}$ over $\F$ is tame and $2n$-generic.
Then $W(\rbar) = W^?(\rbar_p|_{I_{\Qp}})$.
\end{conj}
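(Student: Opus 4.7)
The plan is to prove the conjecture via the strategy outlined in the introduction: reduce modularity to nonvanishing of a patching functor, sandwich $W(\rbar)$ between $\otimes_v W^{\mathrm{g}}(\rbar_v)$ and $\otimes_v W^?(\rbar_v)$, and then establish the purely local equality $W^{\mathrm{g}}(\rbar_v) = W^?(\rbar_v)$ using the local model description of the Emerton--Gee stack together with the recent work of Boixeda Alvarez on affine Springer fibers. First, I would apply the Taylor--Wiles patching construction (Appendix \ref{app:TW:patch}) at sufficiently small auxiliary level to $\rbar$, producing a minimal detectable weak patching functor $M_\infty$ for $\rbar_p$ as in Proposition \ref{prop:obvpatchexist}. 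By exactness of $S(U,-)$ at small level, $\sigma \in W(\rbar)$ if and only if $M_\infty(\sigma) \neq 0$. Producing such $M_\infty$ requires an adequacy hypothesis on $\rbar(G_{F(\zeta_p)})$ not included in the stated conjecture; this is the first hypothesis that must be added, bringing us into the setting of Theorem \ref{thm:SWC}.

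Given $M_\infty$, the inclusion $W(\rbar) \subset W^?(\rbar_p|_{I_{\Qp}})$ follows immediately from Proposition \ref{prop:WE}, after strengthening $2n$-genericity to $(6n-2)$-genericity (or imposing $(2n-1)$-deepness on the Serre weight together with $4n$-genericity of $\rbar$). The reverse inclusion is deduced from the sandwich
\begin{equation*}
\bigotimes_{v \mid p} W^{\mathrm{g}}(\rbar_v) \;\subset\; W(\rbar) \;\subset\; \bigotimes_{v \mid p} W^?(\rbar_v)
\end{equation*}
together with the purely local equalities $W^{\mathrm{g}}(\rbar_v) = W^?(\rbar_v)$ for each $v \mid p$. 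The left inclusion is produced via patching: if $\sigma = \otimes_v \sigma_v$ with $\sigma_v \in W^{\mathrm{g}}(\rbar_v)$ sufficiently generic, then $\rbar_v \in \cC_{\sigma_v}$, and Lemma \ref{lem:lower_bound_cpt} combined with Theorem \ref{thm:EGmodp} furnishes a tame potentially crystalline lift of $\rbar$ of some type $(\lambda+\eta, \tau)$ with $\sigma \in \JH(\ovl{\sigma}(\lambda, \tau))$; one then applies $M_\infty$, uses nonvanishing on the lifting ring, and isolates $M_\infty(\sigma) \neq 0$ via Jordan--H\"older decomposition and weight elimination on the other constituents.

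The heart of the proof is the local equality $W^{\mathrm{g}}(\rbar_v) = W^?(\rbar_v)$, which via the local model diagram of Theorem \ref{thm:EGmodp}(2) translates into a statement about $T^{\vee, \cJ}$-fixed points on the irreducible component $C_\sigma^\zeta$ of the deformed affine Springer fiber inside $\Fl^{\nabla_0}$: namely, that $\rbar_v \in \cC_\sigma$ is equivalent to $\tld{w}^*(\rbar_v) \in C_\sigma^\zeta$. The inclusion $\supset$ is Theorem \ref{thm:Tfixedpts}(\ref{it:Tfixedpts:2}), equivalently Proposition \ref{prop:rhobaroncomponents}(\ref{item:comp:WE}). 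The inclusion $\subset$, asserting that every predicted $T^{\vee, \cJ}$-fixed point actually occurs on $C_\sigma^\zeta$, is the main obstacle. Here I would follow the spreading-out argument used in the proof of Proposition \ref{prop:Tfixedpts}: the deformed family $\cY^\circ \subset \cS_\Z^\circ(\tld{w}_{1,j}^* w_0) \times \bA^{n+1}$ degenerates at $t = 0$ to an irreducible component of a classical affine Springer fiber, where Boixeda Alvarez's theorem \cite[Theorem 3.1]{Pablo} enumerates all expected torus fixed points; the additional $\bG_m$-equivariant structure from scaling the loop parameter propagates these fixed points to the generic fiber away from the vanishing locus of a nonzero universal polynomial $P_{\tld{w}_{1,j}}$. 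This step forces the passage from the stated $2n$-genericity to the "sufficiently generic" (i.e., $P_{\tld{w}_{1,j}}$-generic) hypothesis in Theorem \ref{thm:SWC}. Removing the polynomial-genericity condition, and thereby proving the conjecture exactly as stated, would require a uniform (independent of $\omega$) enumeration of the torus fixed points on deformed affine Springer fibers, which our degeneration method does not provide. I expect this to be the main technical hurdle.
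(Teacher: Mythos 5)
Your proposal correctly identifies that the statement as written is a conjecture the paper does not prove, and correctly isolates the two extra hypotheses needed to enter the proved setting of Theorem~\ref{thm:SWC}: adequacy of $\rbar|_{G_F}(G_{F(\zeta_p)})$ (to run Taylor--Wiles patching) and a polynomial genericity condition on $\rbar_p|_{I_{\Qp}}$ (forced by the spreading-out from the special fiber of the deformed affine Springer fiber). You also correctly identify the Boixeda Alvarez input \cite[Theorem 3.1]{Pablo}, the $\bG_m$-equivariant degeneration in Proposition~\ref{prop:Tfixedpts}, and the role of Lemma~\ref{lemma:geom?} in producing the local equality $W^{\mathrm{g}}_{\mathrm{gen}}(\rhobar) = W^?(\rhobar)$.

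The genuine gap in your proposal is the step where you claim to ``isolate $M_\infty(\sigma) \neq 0$ via Jordan--H\"older decomposition and weight elimination on the other constituents.'' This does not work as stated. For a tame type $\tau$ with $\sigma \in \JH(\ovl{\sigma}(\tau))$, the intersection $W^?(\rhobar) \cap \JH(\ovl{\sigma}(\tau))$ typically contains several weights, and weight elimination only kills constituents \emph{outside} $W^?(\rhobar)$; it cannot distinguish among those inside. Knowing $M_\infty(\ovl{\sigma}^{\circ}(\tau)) \neq 0$ therefore gives $M_\infty(\sigma') \neq 0$ for \emph{some} $\sigma' \in W^?(\rhobar) \cap \JH(\ovl{\sigma}(\tau))$, not for the specific $\sigma$ you started with. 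The paper's argument (Lemma~\ref{lemma:SWexist}) closes this gap via a downward induction on the defect function $\delta_{\rhobar|_{I_{\Qp}}}$ of \S\ref{sec:BMbasis}. Given $\sigma \in W^?(\rhobar)$, one chooses the tame type $\tau$ to be \emph{strictly defect-lowering}: $\sigma$ is the unique maximizer of $\delta_{\rhobar|_{I_{\Qp}}}$ in $W^?(\rhobar)\cap\JH(\ovl{\sigma}(\tau))$ (Proposition~\ref{prop:defectlower}). The unibranch property of $R_\rhobar^\tau$ from Theorem~\ref{thm:stack_local_model}, together with maximal Cohen--Macaulayness of $M_\infty(\sigma^\circ(\tau))$, forces $M_\infty(\ovl{\sigma}^\circ(\tau))$ to have full support on $\Spec R^\tau_{\rhobar,\F}$, in particular on $\fp_\sigma(\rhobar)$. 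Lemma~\ref{lemma:patchcover} then shows that the constituent $\sigma'$ carrying that support must cover $\sigma$, which pins down $\delta(\sigma)\le\delta(\sigma')$, and defect-maximality of $\sigma$ forces $\sigma'=\sigma$. This bootstrap, resting on the unibranch input and the covering relation, is the technical heart of the Serre weight argument once the local models are in place, and it is not reconstructible from your sketch.
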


We can use Theorem \ref{thm:genBM}(\ref{it:genBM:2}) to make the following unconditional version of \cite[Conjecture 3.2.7]{GHS}.

\begin{conj}
\label{conj:SWC-2}
Suppose that $\rbar: G_{F^+} \ra \cG_n(\F)$ is automorphic.
Then $W_{\mathrm{gen}}(\rbar) = W_{\mathrm{gen}}^{\mathrm{BM}}(\rbar_p)$.
\end{conj}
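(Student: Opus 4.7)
The plan is to identify the modular locus with the locus where a patching functor is nonzero, and then compare with the definition of $W^{\mathrm{BM}}_{\mathrm{gen}}$ via the geometric Breuil--M\'ezard cycles. First I would invoke a suitable Taylor--Wiles patching construction (in the form described in \S \ref{app:TW:patch}, so as to operate at arbitrary sufficiently small level) attached to $\rbar$ to produce a weak minimal detectable patching functor $M_\infty$ for the local $L$-homomorphism $\rbar_p$. The standard Taylor--Wiles machinery then translates the modularity of a generic Serre weight $\sigma$ at sufficiently small level $U$ into the non-vanishing of $M_\infty(\sigma)$, so that $\sigma\in W_{\mathrm{gen}}(\rbar)$ if and only if $M_\infty(\sigma)\neq 0$.

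Next I would reduce to the geometric Breuil--M\'ezard side. By Theorem \ref{thm:genBM}(\ref{it:genBM:1})--(\ref{it:genBM:2}), for each generic $\sigma$ the cycle $Z(M_\infty(\sigma))$ in $\Spec R_{\rbar_p}^{\mathrm{alg}}$ coincides with the pullback $i^*_{\rbar_p}\cZ_\sigma$ of the geometric Breuil--M\'ezard cycle, at least when $\rbar_p$ is semisimple and $(6n-2)$-generic. In that regime $M_\infty(\sigma)\neq 0$ if and only if $\rbar_p$ lies in the support of $\cZ_\sigma$, which by definition means $\sigma\in W^{\mathrm{BM}}_{\mathrm{gen}}(\rbar_p)$. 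Combined with the previous paragraph this yields $W_{\mathrm{gen}}(\rbar)=W^{\mathrm{BM}}_{\mathrm{gen}}(\rbar_p)$ in the tame, sufficiently generic case---which is precisely the situation in which Theorem \ref{thm:intro:SWC} is proved in \S \ref{sec:unitary}.

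The main obstacle will be extending this equivalence to automorphic $\rbar$ whose local components $\rbar_v$ for $v\mid p$ may be wildly ramified or non-generic. The identification $Z(M_\infty(\sigma))=i^*_{\rbar_p}\cZ_\sigma$ rests on the minimality of the patching functor with respect to $\cS_{\Lambda,\mathrm{t}}$, which in turn relies on the unibranch/domain property of potentially crystalline deformation rings established in Theorem \ref{thm:stack_local_model}(\ref{it:stack_local_model:2}); this property is proved in this paper only for tame $\rbar_p$, and it is known to fail for some wildly ramified $\rbar_p$ (cf.~Appendix \ref{ex:failure:UB}). A natural strategy is to deform $\rbar$ to a nearby automorphic $\rbar'$ whose local components at $p$ are tame and sufficiently generic and then transfer the equality across: by the independence-of-patching-functor assertion of Theorem \ref{thm:genBM}(\ref{it:genBM:3}) together with Proposition \ref{prop:BMglobalize}, the cycles $\cZ_\sigma$ are determined by their restrictions to semisimple tame sufficiently generic points, so in principle one can control the support of $\cZ_\sigma$ over a wild $\rbar_p$ by a spreading-out argument. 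Carrying this through rigorously would require geometric input beyond what is developed here, most naturally an extension of the domain property for $R_{\rbar_p}^{\lambda+\eta,\tau}$ past the tame locus (or a resolution of $\cX^{\lambda+\eta,\tau}$ that restores it); absent such input, one can at best recover the equivalence on the portion of $W_{\mathrm{gen}}(\rbar)$ controlled by the tame, sufficiently generic deformation theory.
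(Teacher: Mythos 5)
This statement is labeled a conjecture, not a theorem, and the paper does not prove it in full generality; what the paper actually establishes is Theorem \ref{thm:SWC}, which confirms $W_{\mathrm{gen}}(\rbar) = W_{\mathrm{gen}}^{\mathrm{BM}}(\rbar_p)$ only under the additional hypotheses that $\rbar_p|_{I_{\Qp}}$ is tame, that $\rbar|_{G_F}(G_{F(\zeta_p)})$ is adequate, and that a certain polynomial genericity condition holds. You have, essentially correctly, reconstructed that restricted proof: the paper's Theorem \ref{thm:SWC} does go through Lemma \ref{lemma:patchSWC} (the existence of a weak detectable minimal patching functor, using the modified Taylor--Wiles argument of \S \ref{app:TW:patch}) and Lemma \ref{lemma:SWexist} (the identification $\{\sigma\text{ generic}\mid M_\infty(\sigma)\neq 0\} = W^{\mathrm{BM}}_{\mathrm{gen}}(\rbar_p)$, which rests on Theorem \ref{thm:genBM} and the unibranch/domain properties of potentially crystalline deformation rings from Theorem \ref{thm:stack_local_model}), and indeed only for tame, sufficiently generic $\rbar_p$.

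Your final paragraph is therefore the crucial one, and its diagnosis is accurate. The identification $Z(M_\infty(\sigma)) = i^*_{\rbar_p}\cZ_\sigma$ requires minimality of $M_\infty$ with respect to a suitable set of types, which hinges on the potentially crystalline deformation rings being domains, and that is established in the paper only at tame $\rbar_p$ (and is known to fail for some wild $\rbar_p$, cf.\ Appendix \ref{ex:failure:UB}). The ``spreading-out'' idea you sketch does not close the gap: the Breuil--M\'ezard cycles $\cZ_\sigma$ are determined by their restrictions to tame generic points, but the quantity one needs to identify is the \emph{support} of a patched module attached to a \emph{fixed} wild $\rbar_p$, and there is no argument given to transfer full support from nearby tame points to $\rbar_p$ without the domain property (or a resolution) at $\rbar_p$ itself. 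So you are right that what can be recovered with the paper's tools is precisely Theorem \ref{thm:SWC}, not the full Conjecture \ref{conj:SWC-2}; the conjecture remains open in the wildly ramified case, which is exactly why it is stated as a conjecture.
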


\begin{thm}\label{thm:SWC}
There exists a polynomial $P(X_1,\ldots,X_n) \in \Z[X_1,\ldots,X_n]$, independent of $p$, such that if ($p\nmid 2n$ and) and
\begin{itemize}
\item $\rbar: G_{F^+} \ra \cG_n(\F)$ is automorphic;
\item $\rbar|_{G_F}(G_{F(\zeta_p)})$ is adequate; and that 
\item the inertial $L$-parameter $\rbar_p|_{I_{\Qp}}$ over $\F$ is tame and has a lowest alcove presentation $(s,\mu-\eta)$ such that $\mu$ is $P$-generic, i.e.~$P(\mu_{j,1},\ldots,\mu_{j,n}) \not\equiv 0 \pmod p$ for all $j\in \cJ$, where $\cJ = \Hom(F^+,E)$, %
\end{itemize}
then 
\[
W(\rbar) = W^{\mathrm{g}}_{\mathrm{gen}}(\rbar_p) = W^?(\rbar_p|_{I_{\Qp}}) \quad \textrm{and} \quad W_{\mathrm{gen}}(\rbar) = W_{\mathrm{gen}}^{\mathrm{BM}}(\rbar_p).
\]
\end{thm}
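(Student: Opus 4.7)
The plan is to establish a circular chain of inclusions
\[
W(\rbar) \subseteq W^?(\rbar_p|_{I_{\Qp}}) \subseteq W^{\mathrm{g}}_{\mathrm{gen}}(\rbar_p) \subseteq W^{\mathrm{BM}}_{\mathrm{gen}}(\rbar_p) = W_{\mathrm{gen}}(\rbar) \subseteq W(\rbar),
\]
which forces all these sets to coincide. The polynomial $P$ will be constructed so that it contains the polynomials demanded by each step below; in particular it will guarantee that every weight in $W^?(\rbar_p|_{I_{\Qp}})$ is sufficiently generic (for instance $(3n-1)$-deep) so that no weights get lost between $W(\rbar)$ and its ``generic'' subset $W_{\mathrm{gen}}(\rbar)$. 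Since all the constraints in the ingredients below only depend on $n$ (and $p$ enters through mod $p$ reduction of a universal polynomial), we can take $P$ independent of $p$.

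First I would apply Taylor--Wiles patching to $\rbar$ (in the variant described in Appendix \ref{app:TW:patch}, needed because we are not assuming our level is small outside of $p$) to produce, for any choice of finite set $\Lambda$ of Hodge--Tate weights containing $\eta$, a minimal patching functor $M_\infty$ for $\rbar_p$ and $\cS_{\Lambda,\mathrm{t}}$ such that $\sigma \in W(\rbar)$ if and only if $M_\infty(\sigma) \neq 0$. Combined with Proposition \ref{prop:WE}, this immediately gives the weight elimination inclusion $W(\rbar) \subseteq W^?(\rbar_p|_{I_{\Qp}})$. Next, choosing $P$ so that the polynomials $P_{\tld{w}_{1,j}}$ of Proposition \ref{prop:Tfixedpts} are nonvanishing at $\omega_j \bmod p$ for every lowest alcove presentation $(\tld{w}_1,\omega)$ of every Serre weight in $W^?(\rbar_p|_{I_{\Qp}})$ (this is a finite set of polynomials depending only on $n$, by Remark \ref{rmk:compTfixedpts}(\ref{it:compTfixedpts:1})), Theorem \ref{thm:Tfixedpts}(\ref{it:Tfixedpts:3}) together with Proposition \ref{prop:rhobaroncomponents}(\ref{item:comp:fixed}) yields $W^?(\rbar_p|_{I_{\Qp}}) \subseteq W^{\mathrm{g}}_{\mathrm{gen}}(\rbar_p)$. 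The inclusion $W^{\mathrm{g}}_{\mathrm{gen}}(\rbar_p) \subseteq W^{\mathrm{BM}}_{\mathrm{gen}}(\rbar_p)$ is Remark \ref{rmk:geomBM}, which follows from Proposition \ref{prop:BMcoeff} asserting that $\cC_\sigma$ appears with multiplicity one in $\cZ_\sigma$.

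The heart of the argument, and the step I expect to be the main obstacle, is the final modularity inclusion $W^{\mathrm{BM}}_{\mathrm{gen}}(\rbar_p) \subseteq W_{\mathrm{gen}}(\rbar)$. The idea is as follows: for $\sigma \in W^{\mathrm{BM}}_{\mathrm{gen}}(\rbar_p)$, by definition $\rbar_p$ lies in the support of the global Breuil--M\'ezard cycle $\cZ_\sigma$ constructed in Theorem \ref{thm:genBM}(\ref{it:genBM:2}), so $\cZ_\sigma(\rbar_p) \defeq i^*_{\rbar_p}(\cZ_\sigma)$ is nonzero in $\Z[\Spec R^{\mathrm{alg}}_{\rbar_p}]$. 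The crucial invariance statement Theorem \ref{thm:genBM}(\ref{it:genBM:3}) says that $Z(M_\infty(\sigma))$ depends only on the versal ring $R_\infty$ at $\rbar_p$, and (taking the patching functors used in the proof of Theorem \ref{thm:genBM} as a reference) this common value is precisely $\cZ_\sigma(\rbar_p)$. Hence $Z(M_\infty(\sigma)) \neq 0$, so $M_\infty(\sigma) \neq 0$, and therefore $\sigma \in W(\rbar)$; since $\sigma$ is already generic, $\sigma \in W_{\mathrm{gen}}(\rbar)$. The equality $W_{\mathrm{gen}}(\rbar) = W^{\mathrm{BM}}_{\mathrm{gen}}(\rbar_p)$ is then symmetric. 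The technical difficulty here is that Theorem \ref{thm:genBM}(\ref{it:genBM:3}) requires $M_\infty$ to be a minimal patching functor for $\cS_{\Lambda,\mathrm{t}}$ for a large enough $\Lambda$, while the automorphic setup (absence of a small auxiliary level, adequacy only of $\rbar|_{G_F}(G_{F(\zeta_p)})$ rather than a stronger condition) requires the care handled in Appendix \ref{app:TW:patch}; one also needs to enlarge $P$ once more so that the hypotheses of Theorem \ref{thm:genBM} (namely $(6n-2)$-genericity of $\rbar_p$) are met.
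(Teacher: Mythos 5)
Your circular-chain decomposition is a natural-looking strategy and the first three inclusions are essentially the same ingredients the paper uses (Proposition \ref{prop:WE}, Theorem \ref{thm:Tfixedpts}(\ref{it:Tfixedpts:3}) with Proposition \ref{prop:rhobaroncomponents}(\ref{item:comp:fixed}), and Remark \ref{rmk:geomBM}). But the final and crucial step, $W^{\mathrm{BM}}_{\mathrm{gen}}(\rbar_p) \subseteq W_{\mathrm{gen}}(\rbar)$, has a genuine gap that the paper specifically works around.

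The issue is that the patching functor you obtain from the automorphic side via Lemma \ref{lemma:patchSWC} is only a \emph{weak detectable} patching functor, while Theorem \ref{thm:genBM}(\ref{it:genBM:3}) is stated for a \emph{minimal} patching functor for $\rbar_p$ and $\cS_{\Lambda,\mathrm{t}}$. These classes are genuinely different: minimality (Definition \ref{minimalpatching}(I)) demands $R^p$ be formally smooth over $\cO$ and that $M_\infty(\sigma^\circ(\lambda,\tau))[1/p]$ have generic rank at most one. Neither holds for the patching functor built from an arbitrary automorphic $\rbar$ in Lemma \ref{lemma:patchSWC} --- there $R^p$ is a completed tensor product of local lifting rings $R^{\xi_v}_v$ at auxiliary ramified places away from $p$, which need not be formally smooth, and the rank-one condition is not guaranteed without a minimal-level reduction (the very thing that the hypotheses of the theorem forbid us from arranging, since we are trying to prove a characteristic-$p$ statement for $\rbar$ at its given level). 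Your parenthetical remark that the ``care handled in Appendix \ref{app:TW:patch}'' covers this is not accurate: the appendix shows how to patch at arbitrary level, but the output is still only a weak detectable functor, not a minimal one. So the invocation of Theorem \ref{thm:genBM}(\ref{it:genBM:3}) to identify $Z(M_\infty(\sigma))$ with $\cZ_\sigma(\rbar_p)$ is unjustified.

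The paper avoids this by proving a separate statement, Lemma \ref{lemma:SWexist}, whose hypothesis is precisely a weak detectable patching functor, and which gives directly $\{\sigma \mid M_\infty(\sigma)\neq 0\} = W^?(\rhobar|_{I_{\Qp}})$ and $\{\sigma \textrm{ generic}\mid M_\infty(\sigma)\neq 0\} = W^{\mathrm{BM}}_{\mathrm{gen}}(\rhobar)$. The mechanism there is a defect induction: for each $\sigma\in W^?$, one chooses a strictly defect-lowering tame type $\tau$, and uses the unibranch property (Theorem \ref{thm:stack_local_model}, so $R^\tau_\rhobar$ is a domain) together with the maximal Cohen--Macaulay-ness of $M_\infty(\sigma^\circ(\tau))$ over $R_\infty(\tau)$ to conclude $\mathrm{Ann}_{R_\rhobar} M_\infty(\ovl{\sigma}^\circ(\tau))$ sits inside every $\fp_{\sigma'}(\rhobar)$ for $\sigma'\in W^?\cap\JH(\ovl{\sigma}(\tau))$; exactness and the defect bound then isolate $\sigma$. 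This avoids any reference to the abstract Breuil--M\'ezard cycles $\cZ_\sigma$ and hence any minimality hypothesis. If you want to salvage your approach you would need to prove a version of Theorem \ref{thm:genBM}(\ref{it:genBM:3}) valid for weak patching functors, which is essentially what Lemma \ref{lemma:SWexist} does.
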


\begin{rmk}
\begin{enumerate}
\item The polynomial $P$ in Theorem \ref{thm:SWC} can be taken to be the product of the polynomials $P_{6n-2}$, $P_{2\eta,e}$, $P^{\eta_0}_{\eta,e}$, and $Q$ appearing, respectively, in Remark \ref{rmk:P-gen}, Theorem \ref{thm:stack_local_model}(\ref{it:stack_local_model:2}), equation (\ref{eqn:polysuper}) and the proof of Lemma \ref{lemma:geom?} below.

\item There exists $P$ so that the $P$-genericity hypothesis implies that $W^?(\rbar_p)$ contains only generic Serre weights. 
So $W_{\mathrm{gen}}(\rbar)$ in Theorem \ref{thm:SWC} could be replaced by $W(\rbar)$.
\end{enumerate}
\end{rmk}

\begin{rmk}
We describe a method to construct examples to which Theorem \ref{thm:SWC} applies.
Suppose that $p\nmid 2n$, $K/\Q_p$ is a finite unramified extension, and let $\rhobar: G_K \ra \GL_n(\F)$ be a semisimple continuous Galois representation such that $\rhobar|_{I_K}$ has a lowest alcove presentation $(s,\mu-\eta)$ with $\mu$ $P$-generic with $P$ as in Theorem \ref{thm:SWC}.
Then by \cite[Corollary A.7]{CEGGPS}, there exists a CM extension $F/F^+$ with $F^+ \neq \Q$ and a (potentially diagonalizably) automorphic representation $\rbar: G_{F^+} \ra \cG_n(\F)$ which is isomorphic to $\rhobar$ at all $p$-adic places and whose restriction $\rbar|_{G_F}(G_{F(\zeta_p)})$ is adequate. %
Then Theorem \ref{thm:SWC} applies to $\rbar$.
\end{rmk}

\begin{lemma}\label{lemma:geom?}
There exists a nonzero polynomial $P(X_1,\ldots,X_n) \in \Z[X_1,\ldots,X_n]$ such that if $\rhobar$ is a tame $L$-homomorphism over $\F$ such that $\rhobar|_{I_{\Qp}}$ has a lowest alcove presentation $(s,\mu-\eta)$ where $\mu$ is $P$-generic, then $W^{\mathrm{g}}_{\mathrm{gen}}(\rhobar) = W^?(\rhobar|_{I_{\Qp}})$.
\end{lemma}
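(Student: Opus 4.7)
The plan is to deduce both inclusions directly from Proposition \ref{prop:rhobaroncomponents}, tracking the genericity requirements as explicit polynomial constraints on $\mu$.

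For the inclusion $W^{\mathrm{g}}_{\mathrm{gen}}(\rhobar) \subseteq W^?(\rhobar|_{I_{\Qp}})$, every $\sigma \in W^{\mathrm{g}}_{\mathrm{gen}}(\rhobar)$ is by definition $(3n-1)$-deep with $\rhobar \in \cC_\sigma$, so Proposition \ref{prop:rhobaroncomponents}(\ref{item:comp:WE}) yields $\sigma \in W^?(\rhobar|_{I_{\Qp}})$ provided $\rhobar$ is $2n$-generic. This is arranged by including $P_{2n}$ as a factor of $P$.

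For the reverse inclusion, fix $\sigma = F_{(\tld{w}_1,\omega)} \in W^?(\rhobar|_{I_{\Qp}})$ with compatible lowest alcove presentation. By Proposition \ref{prop:W?}, we may write $\omega = \tld{w}(\rhobar)\tld{w}_2^{-1}(0) = t_{\mu+\eta}\,s\,\tld{w}_2^{-1}(0)$ for some $\tld{w}_2 \uparrow \tld{w}_1 \in \tld{\un{W}}_1^+$, so the componentwise difference $\delta_j \defeq \omega_j - \mu_j$ lies in a finite set depending only on $n$: indeed $\tld{w}_{1,j}$ modulo $X^0(\un{T})$ ranges over a finite set, $\tld{w}_{2,j}^{-1}(0)$ is bounded by $h_\eta$ (as in the proof of Proposition \ref{prop:W?}), and $s_j$ lies in the finite group $\un{W}$. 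Only finitely many pairs $(\tld{w}_{1,j},\delta_j)$ can appear, and the product
\[
Q(X_1,\ldots,X_n) \defeq \prod_{(\tld{w}_1,\delta)} P_{\tld{w}_1}(X_1 + \delta_1, \ldots, X_n + \delta_n),
\]
where $P_{\tld{w}_1}$ is supplied by Proposition \ref{prop:Tfixedpts} and the product runs over the finitely many relevant pairs, is a nonzero polynomial in $\Z[X_1,\ldots,X_n]$ depending only on $n$. If each $\mu_j$ is $Q$-generic, then $P_{\tld{w}_{1,j}}(\omega_j) \not\equiv 0 \pmod p$ for every $\sigma \in W^?(\rhobar|_{I_{\Qp}})$, and Proposition \ref{prop:rhobaroncomponents}(\ref{item:comp:fixed}) gives $\rhobar \in \cC_\sigma$.

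It remains to force every such $\sigma$ to be both $(3n-1)$-deep and generic in the sense of Definition \ref{defn:genSW}. Proposition \ref{prop:W?} ensures the depth once $\mu$ is $(3n-1+h_\eta)$-deep, which is guaranteed by including $P_{3n-1+h_\eta}$ as a factor of $P$. For genericity, mimicking Lemma \ref{lemma:univpolytau}: any tame type $\tau$ with $\sigma \in \JH(\ovl{\sigma}(\tau))$, and (using Propositions \ref{prop:covering} and \ref{prop:JHbij}) any $\tau'$ such that $\sigma$ covers a weight in $\JH(\ovl{\sigma}(\tau'))$, admits a compatible lowest alcove presentation $(s',\mu'-\eta)$ with $\mu'-\mu$ in a finite subset of $X^*(\un{T})$ depending only on $n$. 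Hence the membership condition ``$(\eta,\tau)\in \cS_{\{0\},\mathrm{t}}$'' and the non-covering condition from Definition \ref{defn:genSW} translate into finitely many further nonvanishing conditions on $\mu$, expressible via the universal polynomials $P_{\eta,e}$ and $P_{7n-3}$ shifted by the finite set of differences $\mu'-\mu$ above. Taking $P$ to be the product of $Q$, $P_{2n}$, $P_{3n-1+h_\eta}$, and these additional shifted factors yields a nonzero polynomial for which $P$-genericity of $\mu$ implies both inclusions. The main obstacle is purely bookkeeping: verifying that the ``covering'' clause of Definition \ref{defn:genSW} really involves only finitely many tame types $\tau'$ up to parameter shifts bounded uniformly in $n$, which is where Proposition \ref{prop:covering}(\ref{it:covering:2}) (bounding lengths) together with Proposition \ref{prop:JHbij} is crucial.
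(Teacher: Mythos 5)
Your proposal takes essentially the same route as the paper: the inclusion $W^{\mathrm{g}}_{\mathrm{gen}}(\rhobar) \subseteq W^?(\rhobar|_{I_{\Qp}})$ from Proposition \ref{prop:rhobaroncomponents}(\ref{item:comp:WE}), and the reverse inclusion from Proposition \ref{prop:rhobaroncomponents}(\ref{item:comp:fixed}) after assembling a universal shifted product of the polynomials $P_{\tld{w}}$ of Proposition \ref{prop:Tfixedpts} over the finitely many possible combinatorial data $(\tld{w}_1,\tld{w}_2,s)$ — which is exactly the paper's polynomial $Q$.

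Two small remarks. First, the entire final paragraph of your proposal is unnecessary work: by Definition \ref{defn:predicted:SW}, $W^{\mathrm{g}}_{\mathrm{gen}}(\rhobar)$ is the set of geometric Serre weights of $\rhobar$ that are merely $(3n-1)$-deep, \emph{not} the set of weights that are generic in the sense of Definition \ref{defn:genSW}. So all you need, beyond the $Q$-genericity required for Proposition \ref{prop:rhobaroncomponents}(\ref{item:comp:fixed}), is that $\mu-\eta$ be $(3n-1+h_\eta)$-deep, which is why the paper can take $P = QP_{4n}$ with no shifted factors of $P_{\eta,e}$ or $P_{7n-3}$. The extra bookkeeping you add (tracking tame types $\tau'$ covered by $\sigma$, etc.) is harmless since Definition \ref{defn:genSW}-generic implies $(3n-1)$-deep, but it is wasted effort stemming from a misreading of the target set. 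Second, with lowest alcove presentation $(s,\mu-\eta)$ one has $\tld{w}(\rhobar) = t_{(\mu-\eta)+\eta}s = t_\mu s$, not $t_{\mu+\eta}s$; consequently the shift is $\omega_j - \mu_j = s_j\tld{w}_{2,j}^{-1}(0)$ (no $\eta_j$ term), matching the paper's $P_{\tld{w}}(X + w\tld{w}_2^{-1}(0))$.
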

\begin{proof}
The inclusion $W^{\mathrm{g}}_{\mathrm{gen}}(\rhobar) \subset W^?(\rhobar|_{I_{\Qp}})$ follows from Proposition \ref{prop:rhobaroncomponents}(\ref{item:comp:WE}) if $\rhobar$ is $4n$-generic. 
We now show the opposite inclusion.
Fix a set $R$ of representatives for the (finite) set $\tld{W}^+_1/X^0(T)$, and consider the (finite) product 
\[
Q(X_1,\ldots, X_n) \defeq \prod_{\tld{w} \in R} \prod_{\tld{w}_2 \uparrow \tld{w},\tld{w}_2 \in \tld{W}^+}\prod_{w\in W} P_{\tld{w}}(X+w\tld{w}_2^{-1}(0)),
\]
where $P_{\tld{w}}$ is as in Proposition \ref{prop:Tfixedpts} and $w\tld{w}_2^{-1}(0)$ is an element of $\Z^n$ under the usual identification $X^*(T) \cong \Z^n$.
If $\rhobar|_{I_K}$ has a lowest alcove presentation $(s,\mu-\eta)$ such that $\mu$ is $Q$-generic, then the compatible lowest alcove presentation for $\sigma \in W^{?}(\rhobar|_{I_K})$ from Proposition \ref{prop:W?} satisfies the hypothesis of Proposition \ref{prop:rhobaroncomponents}(\ref{item:comp:fixed}) so that $\rhobar\in \cC_\sigma$. 
We can therefore take $P = QP_{4n}$ (see \ref{rmk:P-gen}(\ref{it:P-gen:2})).
\end{proof}

We introduce notation for prime ideals in deformation rings corresponding to the irreducible components of
\[
\cX_{n,\mathrm{red}}^{F^+_p} = \prod_{v\in S_p, \F} \cX_{n,\mathrm{red}}^{F^+_v}.
\]
Recall that we index these irreducible components $\cC_\sigma$ by Serre weights $\sigma$ of $\rG$.
Let $\rhobar$ be a tame $L$-homomorphism over $\F$, and recall from \S \ref{sec:patch:ax} that 
\[
R_\rhobar \defeq \widehat{\bigotimes}_{v\in S_p,\cO} R_{\rhobar_v}^\square.
\]
Then there is a versal map $i_\rhobar: \Spf R_\rhobar \ra \prod_{v\in S_p, \Spf \cO} \cX_n^{F^+_v}$.
By Proposition \ref{prop:Tfixedpots_unibranch} and Remark \ref{rmk:MLM:sp:fib}(\ref{it:MLM:compmatch}), if $\rhobar$ is $QP_{4n}$-generic as in the proof of Lemma \ref{lemma:geom?} and $\sigma$ is $(3n-1)$-deep, then $i_\rhobar^*(\cC_\sigma)$ is an irreducible cycle (if nonzero).
In this case, we let $\fp_\sigma(\rhobar) \subset R_\rhobar$ denote the corresponding prime ideal.

\begin{lemma}\label{lemma:SWexist}
There exists a polynomial $P(X_1,\ldots,X_n) \in \Z[X_1,\ldots,X_n]$ such that if $\rhobar$ is a tame $L$-homomorphism over $\F$ such that $\rhobar|_{I_{\Qp}}$ has a lowest alcove presentation $(s,\mu-\eta)$ where $\mu$ is $P$-generic
and $M_\infty$ is a weak detectable patching functor for $\rhobar$, then 
\[
\{\sigma \mid M_\infty(\sigma) \neq 0\} = W^?(\rhobar|_{I_{\Qp}}) \quad \textrm{and} \quad \{\sigma \textrm{ generic}\mid M_\infty(\sigma) \neq 0\} = W_{\mathrm{gen}}^{\mathrm{BM}}(\rhobar).
\]
\end{lemma}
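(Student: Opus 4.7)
My plan is as follows. The polynomial $P$ will be taken large enough that, for any $\rhobar$ satisfying the hypothesis, the following hold: Proposition \ref{prop:WE}, Lemma \ref{lemma:geom?}, and Corollary \ref{cor:BM} apply; and for every tame inertial type $\tau$ admitting a lowest alcove presentation compatible with that of $\rhobar$ and satisfying $\tld{w}(\rhobar,\tau)\in \Adm^{\mathrm{reg}}(\eta)$, both Proposition \ref{prop:patchnonzero} and the unibranch conclusion of Theorem \ref{thm:stack_local_model}(\ref{it:stack_local_model:2}) (so that $R_\rhobar^{\eta,\tau}$ is a domain) are available. The overall structure will mirror the alternative proof of Theorem \ref{thm:genBM}(\ref{it:genBM:3}), with bookkeeping for non-vanishing rather than just for cycle equalities.

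For the first equality, the inclusion $\{\sigma\mid M_\infty(\sigma)\neq 0\}\subseteq W^?(\rhobar|_{I_{\Qp}})$ is weight elimination (Proposition \ref{prop:WE}). For the reverse inclusion, I will induct on the defect $\delta_\rhobar(\sigma)$ for $\sigma\in W^?(\rhobar|_{I_{\Qp}})$. The base case $\delta_\rhobar(\sigma)=0$ is exactly the assertion $\sigma\in W_{\obv}(\rhobar)$, which is handled by detectability. For $\sigma$ of defect $d>0$ corresponding to $(\tld{w},\tld{w}_1)$ via \eqref{eqn:W?}, I will choose a tame type $\tau$ with compatible lowest alcove presentation so that $\tld{w}(\tau)=\tld{w}(\rhobar)(\tld{w}_h\tld{w})^{-1}w_0\tld{w}_1$; then $\tld{w}(\rhobar,\tau)=(\tld{w}_h\tld{w})^{-1}w_0\tld{w}_1\in \Adm^{\mathrm{reg}}(\eta)$, and Proposition \ref{prop:defectlower} ensures $\sigma$ is the unique maximum-defect weight in $W^?(\rhobar)\cap \JH(\ovl{\sigma}(\tau))$. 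The implication (\ref{it:patchnonzero:4})$\Rightarrow$(\ref{it:patchnonzero:1}) of Proposition \ref{prop:patchnonzero}, whose proof uses only detectability and exactness (not minimality), yields $M_\infty(\sigma(\tau)^\circ)\neq 0$, hence $Z(M_\infty(\ovl{\sigma}(\tau)))\neq 0$.

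To conclude $Z(M_\infty(\sigma))\neq 0$, I will use the recursive identity
\[
[\ovl{\sigma}(\tau):\sigma]\,Z(M_\infty(\sigma)) \,=\, Z(M_\infty(\ovl{\sigma}(\tau))) \,-\, \sum_{\substack{\kappa\in W^?(\rhobar)\cap\JH(\ovl{\sigma}(\tau))\\ \kappa\neq \sigma}}[\ovl{\sigma}(\tau):\kappa]\,Z(M_\infty(\kappa))
\]
coming from exactness and weight elimination. Since $R_\rhobar^{\eta,\tau}$ is a domain, the pullback of $\cC_\sigma$ along $\Spec R_\infty/\varpi\to \cX_{n,\mathrm{red}}$ appears as an irreducible component of $Z(M_\infty(\ovl{\sigma}(\tau)))$ by Theorem \ref{thm:EGmodp} and Proposition \ref{prop:BMcoeff}. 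Adapting the generic-part argument from the alternative proof of Theorem \ref{thm:genBM}(\ref{it:genBM:3}) to weak detectable functors, the support of each $Z(M_\infty(\kappa))$ with $\kappa\neq\sigma$ of defect $<d$ is contained in the union of $\cC_{\kappa'}$ where $\kappa$ covers $\kappa'$; Proposition \ref{prop:covering} combined with the strict defect inequality precludes such $\kappa$ from covering $\sigma$, so the component $\cC_\sigma$ survives on the right-hand side and forces $Z(M_\infty(\sigma))\neq 0$.

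For the second equality, restricting the first to generic Serre weights and invoking Lemma \ref{lemma:geom?} gives $\{\sigma\text{ generic}\mid M_\infty(\sigma)\neq 0\}=W^{\mathrm{g}}_{\mathrm{gen}}(\rhobar)\subseteq W^{\mathrm{BM}}_{\mathrm{gen}}(\rhobar)$ (Remark \ref{rmk:geomBM}). For the reverse inclusion, I will apply the first equality to a minimal detectable patching functor $M^0_\infty$ (whose existence is guaranteed by Proposition \ref{prop:obvpatchexist}): Theorem \ref{thm:genBM}(\ref{it:genBM:3}) identifies $Z(M^0_\infty(\sigma))$ with $i_\rhobar^*(\cZ_\sigma)$, which is nonzero exactly when $\sigma\in W^{\mathrm{BM}}_{\mathrm{gen}}(\rhobar)$; applying the first equality to $M^0_\infty$ then places such $\sigma$ in $W^?(\rhobar|_{I_{\Qp}})$. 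The principal obstacle will be the component-support analysis in the inductive step, where the covering-defect compatibility from Proposition \ref{prop:covering} and the integrality of $R_\rhobar^{\eta,\tau}$ from Theorem \ref{thm:stack_local_model}(\ref{it:stack_local_model:2}) must be combined precisely to show the new component survives.
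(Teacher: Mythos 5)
Your overall strategy — weight elimination via Proposition \ref{prop:WE}, detectability for the base case, and strictly-defect-lowering tame types combined with the integrality of $R_\rhobar^{\eta,\tau}$ for the step — mirrors the paper's. But there is a genuine gap in your inductive step, at the claim that \emph{Proposition \ref{prop:covering} combined with the strict defect inequality precludes $\kappa$ from covering $\sigma$}. Proposition \ref{prop:covering} tells you that $\kappa$ covers $\sigma$ forces $\ell(\tld{w}_\sigma)\leq \ell(\tld{w}_\kappa)$ on the $\tld{W}_1^+$-components of the lowest alcove presentations; but the defect $\delta_\rhobar(\cdot)$ is $\ell(t_\eta)-\ell((\tld{w}_h\tld{w})^{-1}w_0\tld{w}_1)$, which depends on \emph{both} $\tld{w}$ and the second parameter $\tld{w}_1\in\tld{W}^+$, and there is no direct inequality between ``$\ell(\tld{w})$ larger'' and ``defect larger''. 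So the implication you need — that $\kappa$ covering $\sigma$ (both in $W^?(\rhobar)$) forces $\delta_\rhobar(\kappa)\geq\delta_\rhobar(\sigma)$ — is not delivered by Proposition \ref{prop:covering}.

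That implication is in fact true, but its proof requires precisely the defect-lowering machinery you are already deploying elsewhere: if $\kappa$ covers $\sigma$, take $\tau_\kappa$ strictly defect lowering for $\kappa$; then $\kappa\in\JH(\ovl{\sigma}(\tau_\kappa))$, covering gives $\sigma\in\JH(\ovl{\sigma}(\tau_\kappa))$, and Proposition \ref{prop:defectlower} yields $\delta_\rhobar(\sigma)\leq\delta_\rhobar(\kappa)$. The paper avoids the covering detour altogether and is simpler at this point: it shows directly that if $\mathrm{Ann}_{R_\rhobar}M_\infty(\sigma)\subset\fp_{\sigma'}(\rhobar)$ then $\delta_\rhobar(\sigma')\leq\delta_\rhobar(\sigma)$, by choosing the defect-lowering type $\tau$ for $\sigma$, using the containment $\mathrm{Ann}M_\infty(\ovl{\sigma}(\tau))\subset\mathrm{Ann}M_\infty(\sigma)\subset\fp_{\sigma'}$ together with the description of the components of $\cX^{\eta,\tau}_{\F}$ (Theorem \ref{thm:EGmodp} and Lemma \ref{lemma:geom?}) to conclude $\sigma'\in\JH(\ovl{\sigma}(\tau))$, and then invoking Proposition \ref{prop:defectlower}. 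Applying that claim in both directions ($\sigma\leftrightarrow\sigma'$) then yields equality of defects and $\sigma=\sigma'$, without any induction, cycle recursion, or appeal to Lemma \ref{lemma:patchcover}. You should either adopt this direct argument or supply the covering-to-defect implication as above before citing Proposition \ref{prop:covering}.
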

\begin{proof}
We claim that the result holds with $P$ taken to be the product of $P^{\eta_0}_{\eta,e}$ (see Theorem \ref{thm:stack_local_model}(\ref{it:stack_local_model:2}) and equation (\ref{eqn:polysuper})),
$P_{6n-2}$ (see Remark \ref{rmk:P-gen}), and $Q$ (from the proof of Lemma \ref{lemma:geom?}).
We have that $\{\sigma \mid M_\infty(\sigma) \neq 0\} \subset  W^?(\rhobar|_{I_{\Qp}})$ by Proposition \ref{prop:WE} (using that $P_{6n-2} \mid P$ and Remark \ref{rmk:P-gen}(\ref{it:P-gen:2})), so it suffices to show the opposite inclusion.

We first claim that if $\sigma \in W^?(\rhobar)$ is a Serre weight such that $\mathrm{Ann}_{R_\rhobar}M_\infty(\sigma) \subset \fp_{\sigma'}(\rhobar)$ for some Serre weight $\sigma'\in W^{\mathrm{g}}_{\mathrm{gen}}(\rhobar)$, then $\delta_{\rhobar|_{I_{\Qp}}}(\sigma') \leq \delta_{\rhobar|_{I_{\Qp}}}(\sigma)$ with $\delta_{\rhobar|_{I_{\Qp}}}$ defined in \S \ref{sec:BMbasis}.
Suppose that $\sigma \in W^?(\rhobar|_{I_{\Qp}})$ corresponds to $(\tld{w},\tld{w}_1)$ in (\ref{eqn:W?}) (with the lowest alcove presentation as in the statement of the theorem).
Then we let $\tau$ be the tame inertial $L$-parameter with a $2n$-generic lowest alcove presentation such that $\tld{w}(\tau) = \tld{w}(\rhobar|_{I_{\Qp}})(\tld{w}_h\tld{w})^{-1}w_0\tld{w}_1$. %
As in the alternative proof of Theorem \ref{thm:genBM}(\ref{it:genBM:3}) (\S \ref{sec:BMbasis}, this choice is made so that the set $W^?(\rhobar|_{I_{\Qp}})\cap \JH(\ovl{\sigma}(\tau))$ contains $\sigma$ and weights of strictly smaller $\rhobar|_{I_{\Qp}}$-defect than $\sigma$ (we say that $\tau$ is strictly defect lowering for $\rhobar$ and $\sigma$).
Theorem \ref{thm:irreducible_components_mod_p}(\ref{it:irreducible_components_mod_p:1}) implies (after taking products as in Remark \ref{rmk:MLM:sp:fib}(\ref{item:prod_mod_p_comp})) that the irreducible components of $\Spec R^\tau_\rhobar$ %
are 
\begin{equation} \label{eqn:intersect_geom?}
\{\cC_{\sigma'}(\rhobar)\mid \sigma' \in W^{\mathrm{g}}_{\mathrm{gen}}(\rhobar)\cap \JH(\ovl{\sigma}(\tau))\} = \{\cC_{\sigma'}(\rhobar)\mid \sigma' \in W^?(\rhobar|_{I_{\Qp}})\cap \JH(\ovl{\sigma}(\tau))\},
\end{equation}
where the equality uses Lemma \ref{lemma:geom?} (and that $QP_{4n}\mid P$).
If $\mathrm{Ann}_{R_\rhobar}M_\infty(\sigma) \subset \fp_{\sigma'}(\rhobar)$, then since $\mathrm{Ann}_{R_\rhobar}M_\infty(\ovl{\sigma}(\tau)) \subset \mathrm{Ann}_{R_\rhobar}M_\infty(\sigma)$, we conclude that $\sigma'\in \JH(\ovl{\sigma}(\tau))$.
The claim then follows from (\ref{eqn:intersect_geom?}) and that $\tau$ is strictly defect lowering for $\rhobar$ and $\sigma$.

We now establish the opposite inclusion: for $\sigma \in W^?(\rhobar|_{I_{\Qp}})$, we show that $M_\infty(\sigma) \neq 0$.
Choose $\tau$ in terms of $\sigma$ as in the previous paragraph.
Since $(\tld{w}_h\tld{w})^{-1}w_0\tld{w}_1\in \Adm(\eta)$ by Proposition \ref{prop:can:adm} and $P^{\eta_0}_{\eta,e} \mid P$, $(\eta,\tau) \in \cS_{0,\mathrm{t}}$ by the proof of Lemma \ref{lemma:univpolytau}.
Choosing an $\cO$-lattice $\sigma^\circ(\tau) \subset \sigma(\tau)$, combining that $M_\infty(\sigma^\circ(\tau))$ is maximal Cohen--Macaulay over $R_\infty(\tau)$, $R_\rhobar^\tau$ is a domain by Theorem \ref{thm:stack_local_model}(\ref{it:stack_local_model:2}) (cf.~Remark \ref{rmk:product_stack_local_model}), 
$M_\infty(\sigma^\circ(\tau))$ is nonzero by Proposition \ref{prop:patchnonzero}, and Theorem \ref{thm:irreducible_components_mod_p}(\ref{it:irreducible_components_mod_p:1}), we conclude that $\mathrm{Ann}_{R_\rhobar} M_\infty(\ovl{\sigma}^\circ(\tau))$ is contained in $\fp_\sigma(\rhobar)$ (which is a proper ideal by (\ref{eqn:intersect_geom?})).
Then $\mathrm{Ann}_{R_\rhobar} M_\infty(\sigma')$ is contained in $\fp_\sigma(\rhobar)$ for some $\sigma' \in W^?(\rhobar|_{I_{\Qp}})\cap \JH(\ovl{\sigma}(\tau))$. 
The claim in the previous paragraph (with the roles of $\sigma$ and $\sigma'$ reversed) implies that $\delta_{\rhobar|_{I_{\Qp}}}(\sigma) \leq \delta_{\rhobar|_{I_{\Qp}}}(\sigma')$.
Since $\tau$ is strictly defect lowering for $\rhobar$ and $\sigma$, $\sigma = \sigma'$.
We conclude that $M_\infty(\sigma) \neq 0$.

Finally, we claim that if $\sigma$ is generic, then $\sigma \in W^?(\rhobar)$ if and only if $\sigma \in W_{\mathrm{gen}}^{\mathrm{BM}}(\rhobar)$.
The forward implication follows from Lemma \ref{lemma:geom?} and Remark \ref{rmk:geomBM}.
We now show that $W_{\mathrm{gen}}^{\mathrm{BM}}(\rhobar)\subset W^?(\rhobar)$. Suppose that $\sigma \in W_{\mathrm{gen}}^{\mathrm{BM}}(\rhobar)$. Then for any minimal patching functor $M_\infty'$ for $\rhobar$ and $\cS_{0,\mathrm{t}}$, $M_\infty'(\sigma) \neq 0$ by Theorem \ref{thm:genBM}(\ref{it:genBM:2}) so that $\sigma \in W^?(\rhobar)$ by Proposition \ref{prop:WE}.
Alternatively, by the same result, $\rhobar \in \cC_{\sigma'}$ for some $\sigma'$ which $\sigma$ covers.
Then $\rhobar\in \cC_\sigma$ by Remark \ref{rmk:compTfixedpts}(\ref{it:compTfixedpts:2}) (see the proof of Proposition \ref{prop:rhobaroncomponents}).
\end{proof}

\begin{proof}[Proof of Theorem \ref{thm:SWC}]
The result follows from Lemmas \ref{lemma:SWexist} and \ref{lemma:patchSWC}.
\end{proof}

\subsection{A modularity lifting result}
\label{subsec:MLT}

\begin{thm} \label{thm:modularity_lifting}
Let $F/F^+$ be a CM extension, and let $r: G_F \ra \GL_n(E)$ be a continuous representation such that 
\begin{itemize}
\item $r$ is unramified at all but finitely many places;
\item $r$ is potentially crystalline at places dividing $p$ of type $(\lambda+\eta,\tau)$ where $\lambda \in (\Z^n_+)^{\Hom(F,E)}$ and $\tau$ is a tame inertial type that admits a lowest alcove presentation $(s,\mu-\eta)$ where $\mu$ is $P_{\lambda+\eta,e}$-generic;
\item $r^c \cong r^\vee \varepsilon^{1-n}$;
\item $\rbar$ is semisimple locally at places above $p$; 
\item $\rbar: G_{F(\zeta_p)} \ra \GL_n(\F)$ is an adequate subgroup and $\zeta_p\notin \ovl{F}^{\ker\ad\rbar}$; and
\item $\rbar \cong \rbar_\iota(\pi)$ for some $\pi$ a regular algebraic conjugate self-dual cuspidal (RACSDC) automorphic representation of $\GL_n(\A_F)$ of weight $\lambda$ so that $\sigma(\tau)$ is a $K$-type for $\pi$ at places dividing $p$.
\end{itemize}
Then $r$ is automorphic i.e.~$r \cong r_\iota(\pi')$ for some $\pi'$ a RACSDC automorphic representation of $\GL_n(\A_F)$ \emph{(}of weight $\lambda$ so that $\sigma(\tau)$ is a $K$-type for $\pi$ at places dividing $p$\emph{)}.
\end{thm}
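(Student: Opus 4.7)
The plan is to deduce this from the Taylor--Wiles--Kisin patching method, using Theorem \ref{thm:stack_local_model}(\ref{it:stack_local_model:2}) as the crucial new local input. First, by standard solvable base change arguments (Arthur--Clozel), we may enlarge $F$ so that $F$ is a CM field containing an imaginary quadratic field in which $p$ splits, every place of $F^+$ above $p$ splits in $F$, and $r$ is unramified outside $p$ and the places above which $\pi$ is ramified; the adequacy hypothesis persists after such an extension provided we arrange $\zeta_p \notin \ovl{F}^{\ker\ad\rbar}$. After this reduction we work with a definite unitary group $G/F^+$ as in \S \ref{sec:unitary}, chosen so that the local factor at each finite ramified place either splits and we may impose a Bushnell--Kutzko type, or is such that $R^{\square}_{\rbar_v}$ is formally smooth of the correct dimension.

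The next step is to patch. We fix a global deformation problem $\cS = (\ovl{r}, S, \{R_v^\square\}_{v\in S})$ where at places $v\mid p$ we take $R_v^\square \defeq R_{\rbar_v}^{\lambda_v + \eta_v, \tau_v}$ (potentially crystalline of the prescribed tame type), and at auxiliary ramified places we take appropriate smooth quotients of the framed deformation ring. The Taylor--Wiles--Kisin method (as implemented, for example, in \cite{CHT}, \cite{BLGGT}, and more recently in the axiomatic framework of \S \ref{sec:patch:ax}) yields a minimal patching functor $M_\infty$ over $R_\infty = R_\rbar^{\mathrm{loc}} [\![x_1,\ldots, x_g]\!]$, where $R_\rbar^{\mathrm{loc}} = \widehat{\bigotimes}_{v\in S, \cO} R_v^\square$. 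The module $M_\infty$ is maximal Cohen--Macaulay over $R_\infty$, and the automorphicity of $\rbar$ from the existence of $\pi$ (with the correct type at $p$) ensures that $M_\infty \neq 0$.

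The crucial step, and the one where the results of this paper enter, is to show that $M_\infty$ has full support on $\Spec R_\infty$. Since $\rbar$ is semisimple at places above $p$ and each $\tau_v$ admits a $P_{\lambda_v + \eta_v, e}$-generic lowest alcove presentation, Theorem \ref{thm:stack_local_model}(\ref{it:stack_local_model:2}) (applied to each $v\mid p$) gives that $R^{\lambda_v+\eta_v, \tau_v}_{\rbar_v}$ is a domain (and nonzero by the existence of $r$). At auxiliary places the local deformation rings are domains by construction. Hence $R_\rbar^{\mathrm{loc}}$ is a completed tensor product of complete local Noetherian domains with generically regular characteristic zero fibers, which is itself a domain by \cite[Lemma A.1.1]{BLGGT} or \cite[Proposition 2.2]{KW_Serre_2}; consequently $R_\infty$ is a domain. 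Since $M_\infty$ is a nonzero maximal Cohen--Macaulay module over the domain $R_\infty$, its support is all of $\Spec R_\infty$.

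The representation $r$ gives rise to a homomorphism $R_\infty \to \cO'$ (for some finite extension $\cO'/\cO$) lying in the support of $M_\infty$, and unwinding the patching construction then produces an automorphic form whose associated Galois representation is $r$, proving that $r \cong r_\iota(\pi')$ for some RACSDC $\pi'$ of the required weight and level. The only genuinely new ingredient beyond standard modularity lifting technology is the domain property of $R^{\lambda+\eta,\tau}_{\rbar}$ at tame semisimple points; the hardest part of the argument is already packaged into Theorem \ref{thm:stack_local_model}.
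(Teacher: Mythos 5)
Your proposal is correct and matches the approach the paper has in mind: the paper's proof is just a one-line pointer to "standard base change and Taylor--Wiles patching arguments (cf.~the proof of \cite[Proposition 6.0.2]{LLLM2})" with Theorem \ref{thm:stack_local_model} as the new input, and your sketch spells out exactly that argument — reduce by solvable base change, patch to obtain a nonzero maximal Cohen--Macaulay $M_\infty$ over $R_\infty$, use the domain property of $R^{\lambda+\eta,\tau}_{\rbar_v}$ at tame semisimple $\rbar_v$ from Theorem \ref{thm:stack_local_model}(\ref{it:stack_local_model:2}) to conclude $R_\infty$ is a domain and hence that $M_\infty$ has full support. No gaps.
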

\begin{proof}
This follows from Theorem \ref{thm:stack_local_model} from standard base change and Taylor--Wiles patching arguments {cf.~the proof of \cite[Proposition 6.0.2]{LLLM2}.}
\end{proof}

\begin{rmk}
\begin{enumerate}
\item
After possibly changing the polynomial $P_{\lambda+\eta,e}$ in Theorem \ref{thm:modularity_lifting}, the last condition on $r$ can be relaxed to only require that $\rbar \cong \rbar_\iota(\pi)$ for some RACSDC automorphic representation $\pi$ using Theorem \ref{thm:SWC} to incorporate a ``change of weight" result. 
\item Unfortunately, the inexplicit nature of $P_{\lambda+\eta,e}$ makes Theorem \ref{thm:modularity_lifting} rather impractical to apply.
\end{enumerate}
\end{rmk}

\newpage

\appendix

\section{Taylor--Wiles patching}
\label{app:TW:patch}

The goal of this section is to construct a patching functor from algebraic modular forms on a definite unitary group using the Taylor--Wiles method. 
This differs from most other constructions in that we allow arbitrary level while other constructions typically assume that the level away from $p$ is rather mild. 
For the purposes of automorphy lifting results, one can arrange for this assumption to hold using solvable base change. 
Since Theorem \ref{thm:SWC} is a characteristic $p$ result, we cannot use solvable base change as a reduction step.
Fortunately, the necessary modifications to account for level are not difficult.

\subsection{The result}

\begin{lemma}\label{lemma:patchSWC}
We use notation from \S \ref{sec:unitary}.
Assume that $p\nmid 2n$.
Let $\rbar: G_{F^+} \ra \cG_n(\F)$ be a continuous representation.
\begin{enumerate}
\item
\label{it:patchSWC:1}
If $\rbar$ is automorphic and $\rbar|_{G_F}(G_{F(\zeta_{p})})$ is adequate, then there exists a weak patching functor for $\rbar_p$ such that $M_\infty(\sigma) \neq 0$ if and only if $\sigma\in W(\rbar)$.
\item 
\label{it:patchSWC:2}
If furthermore, the inertial $L$-parameter $\rbar_p|_{I_{\Qp}}$ over $\F$ is tame and has a lowest alcove presentation $(s,\mu-\eta)$ such that $\mu$ is $P_{6n-2}P_{2\eta,e}$-generic, there exists $M_\infty$ as above such that $M_\infty$ is furthermore detectable. 
\end{enumerate}
\end{lemma}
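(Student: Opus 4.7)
The plan is to apply Taylor--Wiles patching to spaces of algebraic modular forms on $G$. Since $\rbar$ is automorphic, I would fix a sufficiently small compact open subgroup $U = \cG(\cO_p) U^{S_p} \leq G(\A_{F^+}^\infty)$ at which $\rbar$ is automorphic of some Serre weight. Using the adequacy of $\rbar|_{G_F}(G_{F(\zeta_p)})$, for each $N \geq 1$ one chooses a set $Q_N$ of Taylor--Wiles primes of $F^+$ (split in $F$ and disjoint from $S$) together with a compatible extension $\fm_{Q_N}$ of $\fm$ to the enlarged Hecke algebra, and forms the modified level subgroup $U_{Q_N}$ by imposing appropriate congruence conditions at the primes in $Q_N$. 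For each finite $\cO[\cG(\cO_p)]$-module $W$ (thought of via $\iota_p$), the spaces $S(U_{Q_N}, W)_{\fm_{Q_N}}$ then form a system of finitely generated $\cO$-modules on which the local framed deformation rings at places in $S$ and the group $\Delta_N$ of diamond operators at $Q_N$ act compatibly, and the functor $W \mapsto S(U_{Q_N}, W)_{\fm_{Q_N}}$ is exact because $U_{Q_N}$ is sufficiently small.

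Patching proceeds in the standard way, adapted to arbitrary level: one first takes an inverse limit along a tower of $p$-power congruence levels at $Q_N$ to build a finitely generated $\cO[\![\Delta_N]\!]$-module, and then assembles the $M_N$ via an ultrafilter over $N$. This yields a module $M_\infty(W)$ over $R_\infty = R_{\rbar_p} \widehat\otimes_\cO R^p$, where $R^p$ is a power series ring over a completed tensor product of local framed deformation rings for $\rbar|_{G_{F^+_v}}$ at places $v \in S \setminus S_p$, including those where $U$ is ramified and the Taylor--Wiles places (which disappear into the power series variables). The axioms in Definition \ref{minimalpatching}, maximal Cohen--Macaulayness and support on the potentially semistable deformation rings $R_\infty(\lambda, \preceq\tau)$, follow from standard dimension counting together with local--global compatibility at places in $S \setminus S_p$. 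For part (\ref{it:patchSWC:1}), the equivalence $M_\infty(\sigma) \neq 0 \iff \sigma \in W(\rbar)$ is obtained by reducing modulo the kernel of the augmentation from the patched variables back to $R_\rbar$: this recovers $S(U_{Q_N}, \sigma^\vee \circ \iota_p)_{\fm_{Q_N}}$ up to a controlled change of level, and shrinking $U^{S_p}$ if necessary gives the conclusion for the full set $W(\rbar)$.

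For part (\ref{it:patchSWC:2}), I would arrange the globalization so that $M_\infty$ is potentially diagonalizable. Under the $P_{2\eta, e}$-genericity of $\rbar_p|_{I_{\Qp}}$, Theorem \ref{thm:stack_local_model}(\ref{it:stack_local_model:2}) implies that each local potentially crystalline deformation ring $R_{\rbar_v}^{\eta, \tau}$ is a domain (when nonzero), so one can produce potentially diagonalizable lifts at $p$ and ensure that $M_\infty(\sigma^\circ(\tau))[1/p]$ has generic rank one on each relevant component. The $P_{6n-2}$-genericity then implies $\rbar_p|_{I_{\Qp}}$ is $4n$-generic, so Proposition \ref{prop:obvpatchexist} (applied to the semisimplified local components of a potentially diagonalizable patching functor) forces $M_\infty$ to be detectable. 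The main technical obstacle is the patching at arbitrary level $U^{S_p}$ without invoking solvable base change: one must carefully account for the auxiliary framed variables at ramified non-$p$-adic places and verify that the dimension bounds used in the Taylor--Wiles method remain valid. These routine but non-trivial modifications to the standard patching construction constitute the content of Appendix \ref{app:TW:patch}.
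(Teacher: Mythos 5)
Your treatment of part (\ref{it:patchSWC:1}) is essentially the paper's: patch algebraic modular forms at Taylor--Wiles levels via an ultrafilter, adapted to arbitrary level $U^{S_p}$ (tracking the auxiliary framed variables at ramified non-$p$-adic places), and then observe $M_\infty(\sigma)/\fa_\infty \cong S(\cG(\cO_p)U^{S_p},\sigma^\vee)_\fm^\vee$ together with Nakayama to get $M_\infty(\sigma)\neq 0 \iff \sigma\in W(\rbar)$.

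Part (\ref{it:patchSWC:2}) has a genuine gap. You propose to show $M_\infty$ is potentially diagonalizable and then invoke Proposition \ref{prop:obvpatchexist}. But $\rbar$ is given in this lemma; there is no freedom to ``arrange the globalization,'' and the $M_\infty$ in (\ref{it:patchSWC:1}) and (\ref{it:patchSWC:2}) must be the same functor (constructed from $\rbar$'s automorphic forms). Verifying that this particular $M_\infty$ is potentially diagonalizable amounts to an automorphy lifting theorem that controls the level away from $p$: from a potentially diagonalizable local lift of type $(\lambda+\eta,\tau)$, produce an automorphic lift of $\rbar$ of that type at level $U^{S_p}$. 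That is precisely the kind of statement that requires solvable base change in the standard literature and is what Appendix \ref{app:TW:patch} is designed to avoid. The intermediate steps also do not hold as stated: the domain property of $R^{\eta,\tau}_{\rbar_v}$ does not ``produce'' potentially diagonalizable lifts, and ``generic rank one on components'' is the minimality axiom, not detectability or potential diagonalizability. (There is also a smaller mismatch: Proposition \ref{prop:obvpatchexist} is formulated for a semisimple $\rhobar$, whereas you only have $\rbar_p|_{I_{\Qp}}$ tame, and a patching functor for $\rbar_p$ is not one for $\rbar_p^{\mathrm{ss}}$.)

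The paper sidesteps potential diagonalizability entirely via Proposition \ref{prop:patchobv}, which requires only that $M_\infty$ be a \emph{weak} patching functor. Since $\rbar_p$ is tame and $P_{2\eta,e}$-generic, Proposition \ref{prop:patchobv} forces $M_\infty(\sigma)\neq 0$ for some $\sigma\in W_\obv(\rbar_p|_{I_{\Qp}})$ in the highest $p$-restricted alcove; by part (\ref{it:patchSWC:1}), $\sigma\in W(\rbar)$. The $P_{6n-2}$-genericity then supplies $4n$-genericity, and \cite[Theorem 4.3.8]{LLL} (modularity of obvious weights) promotes this to $W_\obv(\rbar_p|_{I_{\Qp}})\subset W(\rbar)$, whence detectability follows from part (\ref{it:patchSWC:1}) again. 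The missing idea in your write-up is that one can detect a \emph{single} obvious weight in the support of $M_\infty$ by a purely local argument (weight elimination plus the domain property of the type-$2\eta$ deformation ring in Theorem \ref{thm:stack_local_model}(\ref{it:stack_local_model:2})) and then bootstrap to all obvious weights automorphically, with no need to establish potential diagonalizability of $M_\infty$.
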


\subsection{Patching functors and obvious weights}

Let $\rhobar$ be a tame $L$-homomorphism over $\F$.
Let $i_\rhobar: \Spf R_\rhobar \ra \prod_{v\in S_p,\Spf\cO}\cX_n^{F^+_v}$ be the versal map from \S \ref{sec:unitary}.
Recall that for any Serre weight $\sigma$ of $\rG$, we let $\cC_\sigma(\rhobar)$ be the irreducible cycle $i_\rhobar^*(\cC_\sigma)$ and let $\fp_\sigma(\rhobar) \subset R_\rhobar$ denote the prime ideal defining $\cC_\sigma(\rhobar)$ (and let $\fp_\sigma(\rhobar) = R_\rhobar$ if $\cC_\sigma(\rhobar) = 0$).
For an inertial type $\tau$, let $\fp_{\lambda,\tau}(\rhobar) \subset R_\rhobar$ denote the ideal defining $\Spec R_\rhobar^{\lambda+\eta,\tau}$.
We write $\fp_\tau(\rhobar)$ for $\fp_{0,\tau}(\rhobar)$.

\begin{lemma}\label{lemma:patchcover}
Let $\rhobar$ be an $L$-homomorphism over $\F$ and $M_\infty$ a weak patching functor for $\rhobar$.
If $\sigma_0$ is $(3n-1)$-deep and $\mathrm{Ann}_{R_\rhobar} M_\infty(\sigma_0) \subset \fp_\sigma(\rhobar)$, then $\sigma_0$ covers $\sigma$.
\end{lemma}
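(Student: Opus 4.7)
The plan is to unpack Definition \ref{defn:cover} and translate the support condition on $M_\infty(\sigma_0)$ into a containment of irreducible components of $\cX_{n,\mathrm{red}}^{F^+_p}$ inside a suitable tame potentially crystalline substack, whose top-dimensional components are governed by Theorem \ref{thm:irreducible_components_mod_p}. Let $R$ be a $2h_\eta$-generic Deligne--Lusztig representation with $\sigma_0 \in \JH(\ovl{R})$; I need to show $\sigma \in \JH(\ovl{R})$. By Proposition \ref{prop:tameILL}, I may write $R = \sigma(\tau)$ for some tame inertial type $\tau$ with a $2(n-1)$-generic lowest alcove presentation.

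First I would bootstrap the genericity of $\tau$ from the depth of $\sigma_0$. Applying Lemma \ref{lemma:nongenericDL} with $\lambda=\un{0}$, together with the $\GL_n$-specific strengthening noted in the remark following that lemma, the presence of the $(3n-1)$-deep weight $\sigma_0$ in $\JH(\ovl{\sigma(\tau)})$ would force $\tau$ to admit a lowest alcove presentation whose depth exceeds the $2(h_\eta+1)$-deep threshold identified in Remark \ref{rmk:MLM:sp:fib}(\ref{it:MLM:sp:fib:3}). This is exactly the combinatorial depth needed for the top-dimensional irreducible components of $\cX^{\eta,\tau}_\F$ to be parametrized by $\JH(\ovl{\sigma(\tau)})$.

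Next, I would fix an $\cO$-lattice $\sigma(\tau)^\circ \subset \sigma(\tau)$ and combine the patching axioms with exactness of $M_\infty$. The axioms give that $M_\infty(\sigma(\tau)^\circ)$ is maximal Cohen--Macaulay on $\Spec R_\infty^{\eta,\tau}$, so that $M_\infty(\ovl{\sigma(\tau)^\circ}) \cong M_\infty(\sigma(\tau)^\circ)\otimes_\cO \F$ is supported on $\Spec R_{\infty,\F}^{\eta,\tau}$; pushing to $\Spec R_\rhobar$ through $R_\rhobar \to R_\infty$, its support lies in $\Spec R_{\rhobar,\F}^{\eta,\tau}$. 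Exactness also exhibits $M_\infty(\sigma_0)$ as a subquotient of $M_\infty(\ovl{\sigma(\tau)^\circ})$, so
\[
\mathrm{Ann}_{R_\rhobar} M_\infty(\ovl{\sigma(\tau)^\circ}) \subset \mathrm{Ann}_{R_\rhobar} M_\infty(\sigma_0) \subset \fp_\sigma(\rhobar),
\]
and therefore $\cC_\sigma(\rhobar) = V(\fp_\sigma(\rhobar))$ is a closed subscheme of $\Spec R_{\rhobar,\F}^{\eta,\tau}$.

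Finally, I would translate this into a global statement about components of $\cX_{n,\mathrm{red}}^{F^+_p}$. Since $\cC_\sigma(\rhobar)$ is the nonzero pullback of $\cC_\sigma$ along the versal map $i_\rhobar$, some irreducible component of $\cC_\sigma(\rhobar)$ maps dominantly onto $\cC_\sigma$ by preservation of multiplicities of top-dimensional cycles under versal base change (cf.~\cite[\href{https://stacks.math.columbia.edu/tag/0DRD}{Tag 0DRD}]{stacks-project}); combined with closedness of $\cX^{\eta,\tau}_\F \subset \cX_n$, this yields $\cC_\sigma \subset \cX^{\eta,\tau}_\F$ globally. A dimension count via Theorem \ref{thm:EG_basic_properties}(\ref{it:EG_basic_properties:4}) with $\lambda=\eta$ shows $\cC_\sigma$ is a top-dimensional component of $\cX^{\eta,\tau}_\F$, and Theorem \ref{thm:irreducible_components_mod_p}(\ref{it:irreducible_components_mod_p:1}) identifies such components with elements of $\JH(\ovl{\sigma(\tau)})$, giving $\sigma \in \JH(\ovl{\sigma(\tau)}) = \JH(\ovl{R})$ as required. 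The main obstacle is the depth book-keeping in the first step: the stated form of Lemma \ref{lemma:nongenericDL} loses $h_{\lambda+2\eta}$ units of depth rather than the sharper $h_{\lambda+\eta}$ available for $\GL_n$, and only the latter bound is enough to clear the $2n$-deep threshold required to invoke Theorem \ref{thm:irreducible_components_mod_p} in the last step.
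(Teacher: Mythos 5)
Your overall structure tracks the paper's proof closely: you invoke the patching axioms to pull the support of $M_\infty(\sigma_0)$ into $\Spec R_{\rhobar,\F}^{\eta,\tau}$, globalize to a containment $\cC_\sigma\subset\cX^{\eta,\tau}_\F$, and then read off $\sigma\in\JH(\ovl{\sigma(\tau)})$ from the component description. Those steps are sound. However, the depth-bootstrapping step you flag as the ``main obstacle'' is a real gap, and the repair you propose does not actually fix it. Lemma~\ref{lemma:nongenericDL} with $\lambda=0$ yields a lowest alcove presentation with $\mu-\eta$ only $\bigl((3n-1)-h_{2\eta}\bigr)=(n+1)$-deep, and the $\GL_n$-specific remark after that lemma only sharpens the auxiliary hypothesis from $h_{\lambda+\eta}<p-3$ to $h_{\lambda+\eta}<p-1$; it does not improve the conclusion from a loss of $h_{\lambda+2\eta}$ to a loss of $h_{\lambda+\eta}$. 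The correct bootstrap --- the one the paper uses, explicitly in the parallel Lemma~\ref{lemma:coveringimply} --- is via Proposition~\ref{prop:JHbij}: since $R$ is already $2h_\eta$-generic, the bijection exhibits a compatible lowest alcove presentation $(\tld{w}_1,\tld{w}(R)\tld{w}_2^{-1}(0))$ of $\sigma_0$ with $\langle \tld{w}_2^{-1}(0),\alpha^\vee\rangle\le h_\eta$ for all roots $\alpha$, whence the depth of $\mu-\eta$ is at least the depth of $\sigma_0$ minus $h_\eta=n-1$, i.e.~at least $(3n-1)-(n-1)=2n$, which is exactly the $2(h_\eta+1)$ threshold.

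A second, smaller inaccuracy: at the very end you appeal to Theorem~\ref{thm:irreducible_components_mod_p}(\ref{it:irreducible_components_mod_p:1}), but that theorem requires $\mu$ to be $\max\{2(h+1),\,4n+h\}=(5n-1)$-deep (with $h=h_\eta=n-1$), which the bootstrapped $2n$ cannot reach for $n>1$. You already cite the correct device earlier in the same paragraph --- Remark~\ref{rmk:MLM:sp:fib}(\ref{it:MLM:sp:fib:3}), which at depth $2(h+1)=2n$ still supplies the one-sided inclusion $\cX^{\eta,\tau}_{\red}\subset\bigcup_{\sigma'\in\JH(\ovl{\sigma(\tau)})}\cC_{\sigma'}$ that the argument actually needs. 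With those two substitutions (Proposition~\ref{prop:JHbij} in place of Lemma~\ref{lemma:nongenericDL} and its remark, and the remark-level upper bound in place of the full component theorem), your argument becomes the paper's.
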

\begin{proof}
Suppose that $\mathrm{Ann}_{R_\rhobar} M_\infty(\sigma_0) \subset \fp_\sigma(\rhobar)$ and that $\tau$ is a $2n$-generic tame inertial $L$-parameter $\tau$ with $\sigma_0 \in \JH(\ovl{\sigma}(\tau))$.
Then by Definition \ref{minimalpatching}, 
\[
\fp_\sigma(\rhobar) \supset \mathrm{Ann}_{R_\rhobar} M_\infty(\sigma_0)\supset \mathrm{Ann}_{R_\rhobar} M_\infty(\ovl{\sigma}^\circ(\tau)) \supset \fp_\tau(\rhobar)+(\varpi).
\]
Remark \ref{rmk:MLM:sp:fib}(\ref{it:MLM:compmatch}) then implies that $\sigma \in \JH(\ovl{\sigma}(\tau))$.
We conclude that $\sigma_0$ covers $\sigma$.
\end{proof}

We say that a tame $L$-homomorphism $\rhobar$ over $\F$ is $P_{2\eta,e}$-generic if the inertial $L$-parameter $\tau$ with $\tld{w}(\rhobar,\tau) = t_{\eta+w_0(\eta)}$ is $P_{2\eta,e}$-generic.

\begin{prop}\label{prop:patchobv} 
Suppose that $\rhobar$ is a tame $L$-homomorphism over $\F$ such that $\rhobar|_{I_{\Qp}}$ has a lowest alcove presentation $(s,\mu-\eta)$ such that $\mu$ is $P_{2\eta,e}$-generic.
If $M_\infty$ is a weak patching functor, then $M_\infty(\sigma) \neq 0$ for every $\sigma\in W_{\obv}(\rhobar)$ in the highest $p$-restricted alcove.
\end{prop}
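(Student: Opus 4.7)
My strategy is to produce, for each obvious weight $\sigma$ in the highest $p$-restricted alcove, a particular tame inertial type $\tau$ such that $\sigma$ is the unique element of $W^?(\rhobar) \cap \JH(\ovl{\sigma}(\tau))$, and then to deduce nonvanishing of $M_\infty(\sigma)$ from nonvanishing of $M_\infty(\sigma^\circ(\tau))$ by exactness plus weight elimination.

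For the choice of $\tau$: writing $\sigma = F_{(\tld{w}_h, \omega)}$ with corresponding lowest alcove presentation of $\rhobar$ from the hypothesis, I would take $\tau$ to be the tame inertial type with compatible lowest alcove presentation chosen so that $\tld{w}(\rhobar,\tau)$ is the ``extremal'' element $t_{w^{-1}(\eta)}$ for $w \in \un{W}$ the image of $\tld{w}_h$. By Corollary \ref{cor:extremeintersect}, the intersection $W^?(\rhobar) \cap \JH(\ovl{\sigma}(\tau))$ then consists of the single weight $\sigma$, appearing with multiplicity one. The $P_{2\eta,e}$-genericity of $\rhobar$ guarantees that $\tau$ is sufficiently generic for the theorems of earlier sections to apply; in particular, by Theorem \ref{thm:stack_local_model}\eqref{it:stack_local_model:2} with $\Lambda = \{0\}$ and $\lambda = \eta$, the deformation ring $R_\rhobar^{\eta,\tau}$ is a domain. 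Since $\rhobar$ is tame, I can conjugate it into diagonal form with diagonal characters matching the setup of Lemma \ref{lem:obvious_tame_type_lift} applied to $\kappa$ with $F(\kappa) = \sigma$; this produces a crystalline lift of $\rhobar$ of type $(\eta,\tau)$ which is ordinary and therefore potentially diagonalizable. Hence $R_\rhobar^{\eta,\tau}$ is a nonzero domain, and $R_\infty(\tau) = R_\infty \otimes_{R_\rhobar} R_\rhobar^{\eta,\tau}$ is a nonzero formally smooth $\cO$-algebra (using the formal smoothness of $R^p$ that is part of the weak patching functor data).

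Combined with the $P_{6n-2}$-genericity of $\rhobar$ (implicit in the $P_{2\eta,e}$-hypothesis), Proposition \ref{prop:WE} yields $M_\infty(\sigma') = 0$ for every Jordan--H\"older factor $\sigma' \neq \sigma$ of $\ovl{\sigma}(\tau)$. Exactness of $M_\infty$ applied to a composition series of $\sigma^\circ(\tau)/\varpi$ then gives $M_\infty(\sigma^\circ(\tau))/\varpi \cong M_\infty(\sigma)^{\oplus m}$ with $m = [\ovl{\sigma}(\tau):\sigma] \geq 1$, so it suffices, by Nakayama's lemma applied to the maximal Cohen--Macaulay module $M_\infty(\sigma^\circ(\tau))$ over $R_\infty(\tau)$, to show $M_\infty(\sigma^\circ(\tau)) \neq 0$. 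The main obstacle is this final step: the weak patching axioms alone do not give nonvanishing. My plan is to exploit the fact that for tame $\rhobar$ every crystalline lift is automatically potentially diagonalizable (since it becomes a sum of crystalline characters after an unramified base change), and to show that this forces $R_\infty(\tau)$ to have the same generic fiber properties as in the potentially diagonalizable case of Proposition \ref{prop:obvious_wild_lift}: concretely, I would verify that the argument of \emph{loc.~cit.}~carries over to any weak patching functor once one uses the Cohen--Macaulay property of $M_\infty(\sigma^\circ(\tau))$ on the nonzero regular ring $R_\infty(\tau)$, combined with the dominance of $R_\infty(\tau)$ by any algebraic modular lift constructed by globalizing the local ordinary lift in the Taylor--Wiles setup underlying the definition of $M_\infty$. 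This globalization, which is the subtle point, is what requires the care developed in the remainder of this appendix.
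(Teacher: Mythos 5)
Your choice of a \emph{narrow} type — one with Hodge--Tate weights $\eta$ and $\tld{w}(\rhobar,\tau)=t_{w_0(\eta)}$ extremal in $\Adm(\eta)$, so that $W^?(\rhobar)\cap\JH(\ovl{\sigma}(\tau))=\{\sigma\}$ — runs directly into the nonvanishing obstruction you identify at the end, and the patch you propose for it does not close the gap. The only hypothesis available is that $M_\infty$ is a weak patching functor, which gives that $M_\infty$ is nonzero but tells you nothing about which $\sigma'$ satisfy $M_\infty(\sigma')\neq 0$ beyond the constraint $\sigma'\in W^?(\rhobar)$ from Proposition~\ref{prop:WE}. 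In particular, no axiom ties nonvanishing of $M_\infty(\sigma^\circ(\tau))$ for \emph{your} $\tau$ to the existence of a potentially diagonalizable lift; that implication is the content of the stronger ``potentially diagonalizable'' axiom (Definition \ref{minimalpatching}(II)), which is not assumed here. Your proposed repair — ``globalizing the local ordinary lift in the Taylor--Wiles setup underlying the definition of $M_\infty$'' — is not available: $M_\infty$ is an abstract functor satisfying a list of axioms, not a module arising from a specific Taylor--Wiles construction, so there is no underlying global setup to globalize into, and even if there were, the argument would be circular (you would be assuming a modularity statement in order to prove nonvanishing). Also, formal smoothness of $R^p$ is part of the \emph{minimal} axiom (Definition \ref{minimalpatching}(I)), not the weak one, so the side claim about $R_\infty(\tau)$ being formally smooth is likewise unwarranted.

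The paper's proof avoids all of this with a different type: it takes $\tau$ with $\tld{w}(\tau)=\tld{w}(\rhobar)t_{-\eta-w_0\eta}$, so that $\tld{w}(\rhobar,\tau)\in\un{\Omega}$ has length zero, and then works with the \emph{fattened} representation $\sigma(\eta,\tau)=V(\eta)\otimes\sigma(\tau)$ (Hodge--Tate weights $2\eta$). Lemma~\ref{lemma:connectSW} gives $W^?(\rhobar)\subset\JH(\ovl{\sigma(\tau)}\otimes W(\eta))=\JH(\ovl{\sigma}(\eta,\tau))$, so by Proposition~\ref{prop:WE} the entire support of $M_\infty$ on Serre weights lies in $\JH(\ovl{\sigma}(\eta,\tau))$, forcing $M_\infty(\sigma^\circ(\eta,\tau))\neq 0$ for free — no potential diagonalizability required. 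The price is that $\JH(\ovl{\sigma}(\eta,\tau))$ now contains many elements of $W^?(\rhobar)$, so the paper then has to pinpoint $\sigma$: it uses that $M_\infty(\sigma^\circ(\eta,\tau))$ is maximal Cohen--Macaulay over the domain $R_\infty(\eta,\tau)$ (Theorem~\ref{thm:stack_local_model} applied at Hodge--Tate weight $2\eta$, which is exactly why the $P_{2\eta,e}$-genericity hypothesis appears) to identify the annihilator, then applies the covering criterion Lemma~\ref{lemma:patchcover} together with the length statement in Proposition~\ref{prop:covering}(\ref{it:covering:2}), which singles out $\sigma$ because it sits in the highest $p$-restricted alcove and hence cannot be covered by anything else. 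So the missing idea in your proposal is precisely this ``fatten, then prune by covering'' strategy; the extremal-intersection route via Corollary~\ref{cor:extremeintersect} cannot get off the ground for a bare weak patching functor.
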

\begin{proof}
Let $\tau$ be the tame inertial $L$-parameter with a lowest alcove presentation $\eta$-compatible with that of $\rhobar$ such that $\tld{w}(\tau) = \tld{w}(\rhobar)t_{-\eta-w_0\eta}$.
If $M_\infty$ is a weak patching functor for $\rhobar$, then $M_\infty(\sigma^\circ(\eta,\tau))$ is nonzero for any lattice $\sigma^\circ(\eta,\tau) \subset \sigma(\eta,\tau)$ by Proposition \ref{prop:WE} and Lemma \ref{lemma:connectSW}. %
By assumption, $\tau$ has a lowest alcove presentation $(s,\mu-2\eta-w_0\eta)$ where $\mu-\eta-w_0\eta$ is (up to $X^0(\un{T})$) $P_{2\eta,e}$-generic, %
so that $\mathrm{Ann}_{R_\rhobar}M_\infty(\sigma^\circ(\eta,\tau)) = \fp_{\eta,\tau}(\rhobar)$ since $R^{2\eta,\tau}_\rhobar$ is a domain by Theorem \ref{thm:stack_local_model} and since $M_\infty(\sigma^\circ(\eta,\tau))$ is maximally Cohen--Macaulay over $R_\infty(\eta,\tau)$ by Definition \ref{minimalpatching}(\ref{it:minimalpatching:1}).
Since $M_\infty(\sigma^\circ(\eta,\tau))$ and $R^{2\eta,\tau}_\rhobar$ are $\cO$-flat, this implies that $\mathrm{Ann}_{R_\rhobar}M_\infty(\ovl{\sigma}^\circ(\eta,\tau)) = \fp_{\eta,\tau}(\rhobar)+ (\varpi)$.
In particular, $\mathrm{Ann}_{R_\rhobar}M_\infty(\ovl{\sigma}^\circ(\eta,\tau)) \subset \fp_\sigma(\rhobar)$ for any $\sigma \in \JH(\ovl{\sigma}(\eta,\tau))$ by Theorem \ref{thm:irreducible_components_mod_p}.
Then for any $\sigma \in \JH(\ovl{\sigma}(\eta,\tau))$, there exists a Serre weight $\sigma'$ such that  $\mathrm{Ann}_{R_\rhobar}M_\infty(\sigma')\subset \fp_{\sigma}(\rhobar)$.

We now take $\sigma \in W_{\obv}(\rhobar)$ in the highest $p$-restricted alcove.  %
Then $\rhobar \in \cC_\sigma$ by Proposition \ref{prop:rhobaroncomponents}(\ref{item:comp:obv}).
Lemma \ref{lemma:patchcover} implies that $M_\infty(\sigma') \neq 0$ for some $\sigma'$ which covers $\sigma$.
Then Proposition \ref{prop:covering}(\ref{it:covering:2}) implies that $\sigma' = \sigma$, so that $M_\infty(\sigma)\neq 0$.
\end{proof}

\subsection{Galois deformations}\label{sec:galdef}

We recall some some definitions from \S \ref{sec:deforms}.
Let $\cG_{/\cO}$ be a split (possibly disconnected) reductive group.
Let $\mathcal{C}_\cO$ be the category with objects Noetherian complete local $\cO$-algebras with residue field $\F$ and morphisms local $\cO$-algebra homomorphisms.
Given a topological group $\Gamma$, a continuous representation $\rbar: \Gamma \ra \cG(\F)$, and $(A,\fm_A) \in \mathcal{C}_\cO$, an $A$-valued lifting of $\rbar$ is a continuous representation $r_A: \Gamma \ra \cG(A)$ such that $\rbar \equiv r_A \pmod{\fm_A}$.
We say that two $A$-valued liftings are \emph{equivalent} if they are $\ker(\cG(A) \onto \cG(\F))$-conjugate.
An $A$-valued deformation of $\rbar$ is an equivalence class of $A$-valued liftings.
Given a $A$-valued lifting $r_A: \Gamma \ra \cG(A)$, let $\det r_A: \Gamma \ra \cG^{\mathrm{ab}}(A)$ denote its composition with the natural quotient map.
Note that $\det r_A$ only depends on the equivalence class of $r_A$.

An example of $\cG$ which will play an important role in what follows is the group scheme $\cG_n$ from \cite{CHT}, which is the (disconnected) split reductive group scheme over $\Z$ defined as the semidirect product
\[ 
(\GL_n \times \GL_1) \rtimes \{1,\jmath\} = \cG_n^0 \rtimes \{1,\jmath\},
\]
where $\jmath (g,a) \jmath = (a\, ^tg^{-1},a)$.
Let $\nu : \cG_n \rightarrow \GL_1$ be the homomorphism defined by $\nu(g,a) = a$ and $\nu(\jmath) = -1$.
Let $\cG_n^{\mathrm{ab}}$ be the quotient of $\cG_n$ by its derived subgroup.
Then $\cG_n^{\mathrm{ab}}$ is isomorphic to $\GL_1 \times \{1,\jmath\}$ (see \cite[\S 5.1]{BG}).
In the next sections, $\Gamma$ will be taken to be a Galois group.

\subsubsection{Local deformations} \label{sec:locdef}

Let $L$ be a nonarchimedean local field of characteristic zero.
For a Galois representation $\rhobar: G_L \ra \cG_n(\F)$, define the functor $D^\square_\rhobar: \mathcal{C}_\cO\ra\mathrm{Sets}$ by letting $D_\rhobar(A)$ be the set of $A$-valued liftings of $\rbar$.
Then $D^\square_\rhobar$ is represented by a ring $R_\rhobar^\square$, the $\cO$-lifting ring of $\rhobar$.
\begin{defn}
A local deformation problem for $\rhobar$ is a nontrivial subfunctor $D_\rhobar$ of $D^\square_\rhobar$ such that 
\begin{enumerate}
\item if $R_1 \onto R_0$ and $R_2 \onto R_0$ are two surjections in $\mathcal{C}_\cO$ and $R_3 = \ker(R_1 \times R_2 \ra R_0) \in \mathcal{C}_\cO$ with the natural ring structure, then $D_\rhobar(R_3)$ is identified with the equalizer of the diagram
\[
\begin{tikzcd}
D_\rhobar(R_1) \times D_\rhobar(R_2) \ar[shift left=.75ex]{r} \ar[shift right=.75ex]{r} & D_\rhobar(R_0);
\end{tikzcd}
\]
\item $D_\rhobar(A)$ is $\ker(\cG(A) \onto \cG(\F))$-conjugation invariant for all $A \in \mathcal{C}_\cO$;
\item the natural map $D_\rhobar(\varprojlim A_i) \risom \varprojlim D_\rhobar(A_i)$ is an isomorphism; and
\item if $i: A \into B$ is an injection in $\mathcal{C}_\cO$, then $r_A \in D_\rhobar(A)$ if and only if $i_*(r_A) \in D_\rhobar(B)$. 
\end{enumerate}
\end{defn}
\noindent Any local deformation problem $D_\rhobar$ is represented by a quotient $R_\rhobar$ of $R^\square_\rhobar$.

If $\xi: G_L \ra \cG_n^{\mathrm{ab}}(\cO)$ is a lift of $\det \rhobar: G_L \ra \cG_n^{\mathrm{ab}}(\F)$, define $D_\rhobar^{\xi,\prime}(A)$ to be the set of lifts $\rho_A$ such that $\det \rho_A=\xi$ (i.e.~$\xi$ composed with the map coming from the structure map $\cO \ra A$).
Then $D_\rhobar^{\xi,\prime}$ is a local deformation problem represented by a ring $R_\rhobar^{\xi,\prime}$.
Let $R_\rhobar^\xi$ be the maximal $\cO$-flat quotient of $R_\rhobar^{\xi,\prime}$, and let $D_\rhobar^\xi$ be the corresponding local deformation problem.

If $\rhobar(G_L)$ is contained in $\cG^0_n(\F)$, then denote the projection of a deformation $\rho_A$ to $\GL_n(A)$ by $\rho_A|$.
Then $\rho_A \mapsto \rho_A|$ induces a natural isomorphism $D_\rhobar^\xi \risom D^\square_{\rhobar|}$.

\subsubsection{Global deformations}\label{sec:globaldeforms}

Let $F^+$ be a totally real extension of $\Q$ and let $F\subset \ovl{F}^+$ be a CM extension of $F^+$. 
There is a natural inclusion $G_F \subset G_{F^+}$.
Let $S$ be a finite set of finite places of $F^+$.
Let $F(S)$ be the maximal extension of $F$ unramified outside $S$, and let $G_{F^+,S}$ be $\Gal(F(S)/F^+)$.
Let $\rbar: G_{F^+,S} \ra \cG_n(\F)$ be a representation which induces an isomorphism $G_{F^+}/G_F \risom \cG_n(\F)/\cG_n^0(\F)$.
(All representations $\rbar: G_{F^+,S} \ra \cG_n(A)$ below are assumed to induce the isomorphism $G_{F^+}/G_F \risom \cG_n(A)/\cG_n^0(A)$.)
Fix a lift $\xi: G_{F^+} \ra \cG_n^{\mathrm{ab}}(\cO)$ of $\det \rbar$.
Let $D_{\rbar}^{\square,\xi}$ denote the functor taking $A \in \mathcal{C}_\cO$ to the set of $A$-valued lifts with $\det r_A = \xi$, which is represented by a quotient $R_{\rbar}^{\square,\xi}$ of $R_{\rbar}^\square$.
We let $\rbar|_{G_F}$ denote the restriction of $\rbar$ to $G_F$ composed with the projection to $\GL_n(\F)$.
Suppose now that $\rbar|_{G_F}$ is absolutely irreducible.
Then the functor $D_{\rbar}^\xi$, taking $A \in \mathcal{C}_\cO$ to the set of equivalence classes in $D_{\rbar}^{\square,\xi}(A)$, is represented by a deformation ring $R_{\rbar}^\xi$.

For each place $v$ of $F^+$, we fix a map $\ovl{F}^+\into \ovl{F}^+_v$.
Then restriction gives an inclusion $G_{F^+_v} \into G_{F^+}$.
A global $\cG_n$-deformation datum is a tuple
\[
\cS = (F/F^+,S,\cO,\rbar,\xi,\{D_v\}_{v\in S}),
\]
where $F/F^+$, $S$, $\cO$, $\rbar$, and $\xi$ are as before, and $D_v$ corresponds to a local deformation problem for $\rbar_v \defeq \rbar|_{G_{F^+_v}}$ which is a subfunctor of $D_{\rbar_v}^{\xi_v}$ where $\xi_v \defeq \xi|_{G_{F^+_v}}$. %
For an $\cO$-algebra $A$, we say that a lifting $r_A: G_{F^+} \ra \cG_n(A)$ of $\rbar$ is of type $\cS$ if $\det r_A = \xi$ and $r_{A,v} \defeq r_A|_{G_{F^+_v}} \in D_v(A)$ for all $v\in S$.
We say that a deformation $[r_A]$ of $\rbar$ is of type $\cS$ if some (or equivalently any) lifting in the equivalence class is of type $\cS$.
Let $D^\square_\cS \subset D_{\rbar}^{\square,\xi}$ (resp.~$D_\cS \subset D_{\rbar}^\xi$) be the subfunctor consisting of liftings (resp.~deformations) of type $\cS$.
Then $D_\cS^\square$ (resp.~$D_\cS$) is represented by a quotient $R^\square_\cS$ of $R_{\rbar}^{\square,\xi}$ (resp.~a quotient $R_\cS$ of $R_{\rbar}^\xi$).

For $T \subset S$, an $A$-valued $T$-framed lifting of $\rbar$ of type $\cS$ is a tuple $(r_A,(\alpha_v)_{v\in T})$ where $r_A \in D^\square_\cS(A)$ and $\alpha_v \in \ker(\GL_n(A) \onto \GL_n(\F))$ for each $v\in T$.
If we let $\cO_T$ be $\cO[\![z_{v,i,j}]\!]_{v\in T,1\leq i,j\leq n}$, then the functor sending $A$ to the set of $A$-valued $T$-framed liftings of $\rbar$ of type $\cS$ is represented by the ring $R^{\square,\square_T}_\cS \cong R^\square_\cS \widehat{\otimes}_\cO \cO_T$.
We say that $A$-valued $T$-framed liftings $(r_A,(\alpha_v)_{v\in T})$ and $(r'_A,(\alpha'_v)_{v\in T})$ of $\rbar$ of type $\cS$ are \emph{equivalent} if for some $\beta \in \ker(\GL_n(A) \onto \GL_n(\F))$, $r'_A = \beta r_A \beta^{-1}$ and $\alpha'_v = \beta \alpha_v$ for all $v\in T$.
An $A$-valued $T$-framed deformation of $\rbar$ of type $\cS$ is an equivalence class of $A$-valued $T$-framed liftings of $\rbar$ of type $\cS$.
The functor of $T$-framed deformations of $\rbar$ of type $\cS$ is represented by a ring $R_\cS^{\square_T}$.
Taking equivalence classes gives a tautological map $R_\cS^{\square_T} \ra R_\cS^{\square,\square_T}$.
The maps sending $[(r_A,(\alpha_v)_{v\in T})]$ to $(\mathrm{Ad}(\alpha_v^{-1})r_{A,v})_{v\in T}$ induces a map $R^{\mathrm{loc}}_{\cS,T}\defeq \widehat{\otimes}_{v\in T,\cO} R_v \ra R_\cS^{\square_T}$ where $R_v$ denotes the ring representing $D_v$.

Fix a universal lifting $r_\cS: G_{F^+} \ra \cG_n(R_\cS)$ (or equivalently a section $\Spec R_\cS \ra \Spec R^\square_\cS$ of the natural map $\Spec R^\square_\cS \ra \Spec R_\cS$).
This induces a map $R^\square_\cS \widehat{\otimes}_\cO \cO_T \ra R_\cS \widehat{\otimes}_\cO \cO_T$ and the composition $R^{\square_T}_\cS \ra R^{\square,\square_T}_\cS \cong R^\square_\cS \widehat{\otimes}_\cO \cO_T \ra R_\cS \widehat{\otimes}_\cO \cO_T$ is an isomorphism.
(Indeed, since $\rbar|_{G_F}$ is absolutely irreducible and $p>2$, $\beta \in \ker(\GL_n(A) \onto \GL_n(\F))$ centralizes $\rbar$ if and only if $\beta = \id_n$.)

\subsubsection{Tangent spaces}

Given a representation $r:\Gamma \ra \cG_n(A)$, one naturally obtains an adjoint representation $\Gamma \ra \Aut_A(\Lie \cG_n(A))$. 
Note that $\Lie \cG_n \cong \mathfrak{gl}_n \times \mathfrak{gl}_1$.
Let $\ad\, r: \Gamma \ra \Aut_A(\mathfrak{gl}_n(A))$ be the representation obtained by the projection $\Lie \cG_n \onto \mathfrak{gl}_n$.

For $(A,\fm_A) \in \cO$, the reduced tangent space of $A$ is defined to be $\Hom_\cO(\fm_A/\fm_A^2,\F)$, which is naturally identified with the set of morphisms $A \ra \F[\eps]/\eps^2$ in $\mathcal{C}_\cO$.
In the setup of \S \ref{sec:locdef}, the reduced tangent space of $R^\square_\rhobar$ is naturally identified with both $D^\square_\rhobar(\F[\eps]/\eps^2)$ and $C^1(G_{F^+_v},\ad\, \rhobar)$.

Recall that $\rbar: G_{F^+,S} \ra \cG_n(\F)$ is a representation which induces an isomorphism $G_{F^+}/G_F \risom \cG_n(\F)/\cG_n^0(\F)$ and whose restriction $\rbar|_{G_F}$ is absolutely irreducible.
Fix a global $\cG_n$-deformation datum
\[
\cS = (F/F^+,S,\cO,\rbar,\xi,\{D_v\}_{v\in S}).
\]
For each $v\in S$, let $L_v \subset C^1(G_{F^+_v},\ad\, \rhobar)$ be the subspace corresponding to $D_v(\F[\eps]/\eps^2)$.

As before, let $T$ be a subset of $S$.
We define $H^i_{\cS,T}(G_{F^+,S},\ad\, \rbar)$ to be the cohomology of the complex
\[
C^i_{\cS,T}(G_{F^+,S},\ad\, \rbar) \defeq C^i(G_{F^+,S},\ad\, \rbar) \oplus \bigoplus_{v\in S} C^{i-1}(G_{F^+_v},\ad\, \rbar)/M_v^{i-1},
\]
where $M_v^i = 0$ unless $v\in S \setminus T$ and $i=0$ in which case $M_v^0 = C^0(G_{F^+_v},\ad\, \rbar)$ or $v\in S \setminus T$ and $i=1$ in which case $M_v^1 = L_v$.
The boundary map for the above complex maps $(\phi,(\psi_v)_{v\in S})$ to $(\partial \phi,(\phi|_{G_{F^+_v}}-\partial \psi_v)_{v\in S})$.

\begin{prop}\label{prop:localgenglobal}
There is a natural isomorphism 
\[
\Hom_\cO(\fm_{R^{\square_T}_\cS}/(\fm^2_{R^{\square_T}_\cS}+\fm_{R^{\mathrm{loc}}_{\cS,T}}),\F) \cong H^1_{\cS,T}(G_{F^+,S},\ad\, \rbar).
\]
\end{prop}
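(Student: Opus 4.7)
The plan is to interpret both sides as explicit spaces of Galois cocycles modulo coboundaries and match them. By standard arguments the left-hand side is naturally identified with the kernel of the $\F$-linear map $\Tan(R^{\square_T}_\cS) \to \Tan(R^{\mathrm{loc}}_{\cS,T})$ on tangent spaces induced by the ring homomorphism $R^{\mathrm{loc}}_{\cS,T}\to R^{\square_T}_\cS$, where $\Tan$ denotes the $\F$-dual of $\fm/\fm^2$.

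First I would parametrize $\Tan(R^{\square_T}_\cS)$ via cocycles. Since $R^{\square_T}_\cS$ represents the functor of $T$-framed deformations of $\rbar$ of type $\cS$, its tangent space consists of such deformations to $\F[\eps]/\eps^2$. Writing $r_A = (1+\eps\phi)\rbar$ and $\alpha_v = 1+\eps\psi_v$, this identifies $\Tan(R^{\square_T}_\cS)$ with the set of pairs $(\phi, (\psi_v)_{v\in T})$ where $\phi \in Z^1(G_{F^+,S}, \ad\rbar)$ satisfies $\phi|_{G_{F^+_v}} \in L_v$ for every $v \in S$ (the type condition) and $\psi_v \in \ad\rbar$ for every $v \in T$, modulo the equivalence relation $(\phi, (\psi_v))\sim (\phi - \partial\kappa, (\psi_v + \kappa))$ arising from simultaneous conjugation by $\beta = 1+\eps\kappa \in \ker(\GL_n(\F[\eps]/\eps^2)\onto\GL_n(\F))$. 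Similarly, $\Tan(R^{\mathrm{loc}}_{\cS,T}) = \bigoplus_{v \in T} L_v$, with no further quotient, because each $R_v$ represents the framed functor $D_v$ of lifts.

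Next I would compute the induced map on tangent spaces. The ring homomorphism $R^{\mathrm{loc}}_{\cS,T} \to R^{\square_T}_\cS$ is induced by the assignment $(r_A, (\alpha_v)) \mapsto (\Ad(\alpha_v^{-1})r_{A,v})_{v\in T}$, and a direct computation expanding $\alpha_v^{-1}r_{A,v}\alpha_v$ modulo $\eps^2$ shows that the cocycle associated to the $v$-th component is $\phi|_{G_{F^+_v}}-\partial\psi_v$. Hence the kernel of the induced map on tangent spaces consists of equivalence classes $[(\phi, (\psi_v)_{v\in T})]$ with $\phi \in Z^1(G_{F^+,S},\ad\rbar)$, $\phi|_{G_{F^+_v}}\in L_v$ for $v \in S\setminus T$, and $\phi|_{G_{F^+_v}}=\partial\psi_v$ for $v \in T$.

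Finally I would match this description with $H^1_{\cS,T}(G_{F^+,S}, \ad\rbar)$. Unraveling the complex $C^\bullet_{\cS,T}$, one has $C^0_{\cS,T} = C^0(G_{F^+,S}, \ad\rbar)$ and $C^1_{\cS,T} = C^1(G_{F^+,S},\ad\rbar)\oplus\bigoplus_{v \in T}C^0(G_{F^+_v},\ad\rbar)$, and the prescribed boundary map identifies $Z^1_{\cS,T}$ with exactly the pairs described above (the vanishing in $C^1(G_{F^+_v},\ad\rbar)/L_v$ for $v\in S\setminus T$ imposes $\phi|_{G_{F^+_v}}\in L_v$, while the vanishing in $C^1(G_{F^+_v},\ad\rbar)$ for $v\in T$ imposes $\phi|_{G_{F^+_v}}=\partial\psi_v$). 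Similarly $B^1_{\cS,T}=\{(\partial\kappa,(\kappa|_{G_{F^+_v}})_{v\in T}):\kappa\in\ad\rbar\}$ matches the equivalence relation (up to a harmless sign flip in the $\psi$-coordinate obtained by negating $\kappa$). The main obstacle is simply careful bookkeeping with signs and conventions; the calculation itself is a routine generalization to the $T$-framed setting of the standard Galois-deformation framework of Clozel--Harris--Taylor.
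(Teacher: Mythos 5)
Your proof is essentially the standard argument for this identification (cf.\ \cite[Lemma 2.3.2]{CHT} and \cite[Proposition 4.10]{Thorne}); the paper states the proposition without proof for exactly this reason, so there is nothing in the paper to compare against directly. The strategy — identify the left side with the kernel of $\Tan(R^{\square_T}_\cS)\to\Tan(R^{\mathrm{loc}}_{\cS,T})$, parametrize both tangent spaces by cocycles and framing matrices, then match cocycles and coboundaries against the complex $C^\bullet_{\cS,T}$ — is the right one and the ingredients are all there.

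One bookkeeping slip worth fixing, though. With the parametrization $\alpha_v = 1+\eps\psi_v$ that you chose, expanding $\alpha_v^{-1}r_{A,v}\alpha_v$ to first order actually yields the cocycle $\phi|_{G_{F^+_v}}+\partial\psi_v$, not $\phi|_{G_{F^+_v}}-\partial\psi_v$, so the result of the ``direct computation'' in your second paragraph is inconsistent with the equivalence relation you derived in the first (which \emph{is} correct for $\alpha_v=1+\eps\psi_v$). The consistent fix is to instead take $\alpha_v = 1-\eps\psi_v$: then the local cocycle becomes $\phi|_{G_{F^+_v}}-\partial\psi_v$ as you want, and conjugation by $\beta = 1+\eps\kappa$ now sends $(\phi,(\psi_v))\mapsto(\phi-\partial\kappa,(\psi_v-\kappa))$, so the difference set is $\{(-\partial\kappa,(-\kappa))\colon\kappa\}=\{(\partial\kappa,(\kappa))\colon\kappa\}=B^1_{\cS,T}$, and both the $Z^1$ and $B^1$ matchings go through simultaneously. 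Your parenthetical ``harmless sign flip obtained by negating $\kappa$'' does not by itself reconcile things: replacing $\kappa$ by $-\kappa$ flips the sign in \emph{both} coordinates of the coboundary set, not just the $\psi$-coordinate, and $\{(\partial\kappa,(\kappa))\colon\kappa\}$ and $\{(-\partial\kappa,(\kappa))\colon\kappa\}$ are genuinely different subspaces in general. It is the choice of sign in the parametrization of the framing variables $\alpha_v$, not of $\kappa$, that produces the clean match.
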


\subsubsection{Taylor--Wiles primes}\label{sec:TWprimes}
Let 
\[
\cS = (F/F^+,S,\cO,\rbar,\xi,\{D_v\}_{v\in S}),
\]
be a global $\cG_n$-deformation datum.
Let $Q$ be a set of split places in $F^+$ such that $\mathbf{N} v \equiv 1 \pmod{p}$ for all $v\in Q$, and let $\overline{\psi}_v$ be a generalized eigenspace for the projection of $\rbar(\Frob_v)$ to $\GL_n(\F)$ on which $\rbar(\Frob_v)$ acts semisimply.
Let $\overline{s}_v$ be the complementary $\rbar(\Frob_v)$-stable subspace.
For $v\in Q$, let $D_v(A)$ be the set of $A$-liftings which induce $G_{F^+_v}$-actions of $A^n$ which decompose as $s_v\oplus \psi_v$ lifting the decomposition over $\F$ such that $s_v$ is unramified and the inertial subgroup acts on $\psi_v$ by scalars.
Then $D_v$ is a local deformation problem by \cite[Lemma 4.2]{Thorne}, and we consider the global $\cG_n$-deformation datum
\[
\cS_Q = (F/F^+,S,\cO,\rbar,\xi,\{D_v\}_{v\in S \cup Q}).
\]

\begin{prop}\label{prop:TWprime}
Let $q_0\geq 0$ be an integer and 
\[
\cS = (F/F^+,S,\cO,\rbar,\xi,\{D_v\}_{v\in S}),
\]
be a global $\cG_n$-deformation datum such that $\rbar(G_{F^+(\zeta_p)})$ is adequate and $\xi(c_v) = -1$ for each $v \mid \infty$ where $c_v$ denotes complex conjugation at $v$.
Let $T \subset S$ be a finite set such that every place in $S\setminus T$ is splits in $F$ and that 
\[
\dim_{\F} L_v - \dim_{\F} H^0(G_{F^+_v},\ad\, \rbar) = 
\begin{cases}
[F^+_v:\Q_p]n(n-1)/2 & \textrm{if } v\mid p \\
0 & \textrm{if } v\nmid p.
\end{cases}
\]
Let $q$ be the larger of $\dim_{\F} H^1_{\mathcal{L}^\perp,T}(G_{F^+,S},\ad\, \rbar(1))$ and $q_0$ \emph{(}with $H^1_{\mathcal{L}^\perp,T}(G_{F^+,S},\ad\, \rbar(1))$ defined as in \cite[\S 2.3]{CHT}\emph{)}.
Then for any integer $N \geq 0$, we can find $(Q,(\overline{\psi}_v)_{v\in Q})$ where $Q$ is a set of places in $F^+$ which split in $F$ which is disjoint from $S$ and $\overline{\psi}_v$ is a nontrivial generalized eigenspace for $\rbar(\Frob_v)$ on which $\rbar(\Frob_v)$ acts semisimply for each $v\in Q$ such that
\begin{itemize}
\item $\# Q = q$;
\item $\mathbf{N} v \equiv 1 \pmod{p^N}$ for all $v\in Q$; and 
\item $R^{\square_T}_{\cS_Q}$ can be topologically generated over $R^{\mathrm{loc}}_{\cS,T} = R^{\mathrm{loc}}_{\cS_Q,T}$ by $q - \sum_{v \in T, v\mid p} [F^+_v:\Q_p]n(n-1)/2$ elements.
\end{itemize}
\end{prop}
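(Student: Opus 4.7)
The strategy is the standard Taylor--Wiles prime construction of \cite{CHT,Thorne}, adapted so as to handle the (possibly large) set $S \setminus T$ of unframed places without recourse to solvable base change. First I would use Proposition \ref{prop:localgenglobal} to translate the generation statement into a cohomological one: the minimal number of generators of $R_{\cS_Q}^{\square_T}$ over $R^{\mathrm{loc}}_{\cS_Q,T} = R^{\mathrm{loc}}_{\cS,T}$ equals $\dim_{\F} H^1_{\cS_Q,T}(G_{F^+,S\cup Q},\ad\, \rbar)$. The goal is therefore to bound this number by $q - \sum_{v\in T, v\mid p}[F^+_v:\Q_p]n(n-1)/2$ after a suitable choice of $Q$.

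Next I would carry out the global Euler--Poincar\'e computation combined with Poitou--Tate duality for the Selmer-type complex $C^\bullet_{\cS_Q,T}$. Using the local conditions $L_v$ at places in $T$ (whose tangent space dimension is pinned down by hypothesis) together with the local Taylor--Wiles conditions at $v\in Q$, and allowing places in $S\setminus T$ to contribute to the boundary of the complex in the usual ``unframed'' way, one obtains an equality
\begin{equation*}
\dim_{\F} H^1_{\cS_Q,T}(G_{F^+,S\cup Q},\ad\, \rbar) - \dim_{\F} H^1_{\mathcal{L}^\perp_Q,T}(G_{F^+,S\cup Q},\ad\, \rbar(1)) = \# Q - \sum_{v\in T,\, v\mid p} [F^+_v:\Q_p]\tfrac{n(n-1)}{2},
\end{equation*}
where $\mathcal{L}^\perp_Q$ is the orthogonal local condition, the $p$-adic contribution comes from the chosen $L_v$, the signs at infinite places combine with the assumption $\xi(c_v)=-1$ to produce no archimedean defect, and the $v\in S\setminus T$ pieces cancel because $v$ splits in $F$ so that $\ad\, \rbar|_{G_{F^+_v}}$ is self-dual up to a twist that matches the lack of framing. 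The key is that this computation is insensitive to enlarging $S\setminus T$, exactly because those places are split and unframed.

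With this in hand, the problem reduces to choosing $Q$ to annihilate $H^1_{\mathcal{L}^\perp,T}(G_{F^+,S},\ad\, \rbar(1))$ by restriction to the places of $Q$. For each nonzero class $\phi \in H^1_{\mathcal{L}^\perp,T}(G_{F^+,S},\ad\, \rbar(1))$, the adequacy of $\rbar(G_{F^+(\zeta_p)})$ provides an element $\sigma\in G_{F^+}$ and a nontrivial $\rbar(\sigma)$-eigenspace on which the restriction of $\phi$ is nonzero; the point is that adequacy (as in \cite{Thorne}) supplies enough regular semisimple elements with the appropriate trace-pairing nondegeneracy. I would then use Chebotarev density in the extension $F(\zeta_{p^N},\ad\rbar)/F^+$, which contains $F(\zeta_{p^N})$ and is linearly disjoint from the cyclotomic tower over $F(\zeta_p)$ by the adequacy hypothesis, to produce a place $v\notin S$ of $F^+$ which splits in $F$, satisfies $\mathbf{N}v\equiv 1\pmod{p^N}$, and whose Frobenius realizes the prescribed element $\sigma$; the eigenspace $\ovl{\psi}_v$ is then the one picked out by $\sigma$. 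Iterating this at most $q$ times produces the desired $Q$, and the cohomological identity above then yields the required bound on the number of generators. The main technical obstacle is establishing the Euler--Poincar\'e identity in the second paragraph with the asymmetric framing (places in $S\setminus T$ unframed, but not necessarily minimal); everything else is a careful repetition of the arguments of \cite[\S 2.3--4]{CHT} and \cite[\S 4]{Thorne}, using that the split hypothesis at $S\setminus T$ makes the local cohomology contributions match up as in the usual case.
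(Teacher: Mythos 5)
The paper's proof of Proposition~\ref{prop:TWprime} consists of a single sentence: ``This follows from \cite[Proposition 4.4]{Thorne}.'' What you have written is essentially a reconstruction of the proof of that cited result, so you and the paper are ultimately relying on the same Greenberg--Wiles/Poitou--Tate plus Chebotarev plus adequacy argument; the only difference is that the paper outsources it entirely. Your outline correctly identifies the three moving parts (Proposition~\ref{prop:localgenglobal} to turn generators into an $H^1$, the Euler--Poincar\'e bookkeeping, and the Chebotarev/adequacy annihilation of the dual Selmer group), and the observation that the computation is insensitive to enlarging the split unframed set $S\setminus T$ is exactly the point that makes the ``arbitrary level'' patching of this appendix go through. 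That said, a couple of things deserve more care if one really wanted a self-contained proof rather than a citation. The displayed Euler--Poincar\'e identity in your second paragraph is not quite stated correctly: it omits the global $H^0$ and $H^2$ terms, suppresses the archimedean contribution (which does not ``vanish'' but rather combines with the global $H^0$ using $\xi(c_v)=-1$ and the adequacy hypothesis to give a net contribution of $0$, after using that $\rbar|_{G_F}$ is absolutely irreducible), and does not separate the contribution $\dim L_v - h^0 = 1$ of each $v\in Q$ from that of $v\in T$. Also, since $q=\max(\dim H^1_{\cL^\perp,T},q_0)$ may strictly exceed the number of Chebotarev classes you need, you must pad $Q$ with extra Taylor--Wiles places (still satisfying the congruence and adequacy conditions) to reach $\#Q=q$; ``iterating at most $q$ times'' does not obviously force $\#Q=q$. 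These are exactly the kinds of bookkeeping details that \cite[Proposition 4.4]{Thorne} handles, which is why the paper is content to just cite it.
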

\begin{proof}
This follows from \cite[Proposition 4.4]{Thorne}.
\end{proof}

We say that $(Q,(\overline{\psi}_v)_{v\in Q})$ in Proposition \ref{prop:TWprime} is a \emph{Taylor--Wiles datum of level} $N$ disjoint from $S$.

With $Q$ as above, let $\Delta_Q$ be $\prod\limits_{v\in Q} k_v^\times(p)$ where $k_v$ denotes the residue field of $F^+_v$ and $k_v^\times(p)$ denotes the maximal $p$-quotient of $k_v^\times$. 
(So $\Delta_Q$ is nontrivial if $Q$ is nonempty.)
Choose a universal lifting $r_{\cS_Q}$ and let $\psi_v$ be as above for each $v\in Q$.
For each $v\in Q$, the action of $k_v^\times$, thought of as a subgroup of $I_v^{\mathrm{ab}}$, acts on the summand $\psi_v$ and gives a character $k_v^\times \ra R_{\cS_Q}^\times$ which factors through $k_v^\times(p)$. 
Altogether, we have a map $\cO[\Delta_Q] \ra R_{\cS_Q}$.
Moreover, the natural map $R_{\cS_Q}/\mathfrak{a}_Q \ra R_{\cS}$ is an isomorphism, where $\mathfrak{a}_Q \subset \cO[\Delta_Q]$ denotes the augmentation ideal.
Similarly, $R_{\cS_Q}^{\square_T}/\mathfrak{a}_Q \ra R_{\cS}^{\square_T}$ is an isomorphism.

\subsection{Automorphic forms on definite unitary groups}  \label{sec:autforms}

For the reader's convenience, we recall notation from \S \ref{sec:unitary}.
Recall that $F^+$ is a totally real field not equal to $\Q$ and that $F \subset \ovl{F}^+$ is a CM extension of $F^+$.
The set of places in $F^+$ dividing $p$ is denoted $S_p$.
A finite place of $F^+$ (resp.~of $F$) is \emph{split} if it splits in $F$ (resp.~if its restriction to $F^+$ splits in $F$).
We assume that all places in $S_p$ are split.
Recall that $G_{/F^+}$ is an outer form for $\GL_n$ which 
\begin{itemize}
\item splits over $F$; and
\item is definite at all archimedean places.
\end{itemize}
Moreover, there is an $N\in \N$ prime to $p$ and a reductive model $\cG_{/\cO_{F^+}[1/N]}$ for $G$ with an isomorphism
\begin{equation}
\label{iso integral}
\iota:\,\cG_{/\cO_{F}[1/N]} \stackrel{\iota}{\rightarrow}{\GL_n}_{/\cO_{F}[1/N]}
\end{equation}
which specializes to
$
\iota_w:\,\cG(\cO_{F^+_v})\stackrel{\sim}{\rightarrow}\cG(\cO_{F_w})\stackrel{\iota}{\rightarrow}\GL_n(\cO_{F_w})
$
for all split finite places $w$ in $F$ where $v$ is $w|_{F^+}$ here.

In \S \ref{sec:globaldeforms}, we chose homomorphisms $\ovl{F}^+ \into \ovl{F}^+_v$, which induces a $v$-adic norm on $\ovl{F}^+$ (for the unique norm on $\ovl{F}^+_v$ extending any fixed norm on $F^+_v$ in the class of the place $v$).
Restriction to $F$ gives a place $\tld{v}$ dividing $v$ (that does not depend on the choice of the norm on $F^+_v$).
Changing the homomorphisms $\ovl{F}^+ \into \ovl{F}^+_v$, we assume without loss of generality, that $\tld{v}$ coincides with the choices in \S \ref{sec:unitary}.
We write $\iota_v$ be the composition of $\iota_{\tld{v}}$ and the canonical isomorphism $\GL_n(\cO_{F_{\tld{v}}}) \cong \GL_n(\cO_{F^+_v})$ (suppressing the dependence on the choice of $\tld{v}$).

If $U = U_p U^{\infty,p} \leq G(\A_{F^+,p}^{\infty}) \times G(\A_{F^+}^{\infty,p})$ is a compact open subgroup, and $W$ is a finite $\cO$-module endowed with a continuous action of $U_p$, then 
\begin{equation}
S(U,W) \defeq \left\{f:\,G(F^{+})\backslash G(\A^{\infty}_{F^{+}})\rightarrow W\,|\, f(gu)=u_p^{-1}f(g)\,\,\forall\,\,g\in G(\A^{\infty}_{F^{+}}), u\in U\right\}.
\end{equation}
From now on we assume that $U$ is sufficiently small, i.e.~for all $t \in G(\bA^{\infty}_{F^+})$, the order of the finite group $t^{-1} G(F^+) t \cap U$ is prime to $p$, so that $S(U,-)$ is exact.

We let $S$ be a finite set of finite places in $F^+$ containing $S_p$, places dividing $N$, and all places at which $U$ is not unramified; and we let $\cP_S$ be the set of split finite places $w$ of $F$ such that $v = w|_{F^+} \notin S$. 
For a subset $\cP\subseteq \cP_S$ of finite complement that is closed under complex conjugation, $\bT_{\cP}=\cO[T^{(i)}_w,\,\,w\in\cP,\, 0 \leq i \leq n]$ is the universal Hecke algebra on $\cP$.
Then $T_w^{(i)}\in \bT_{\cP}$ acts on $S(U,W)$ by the usual double coset operator
\[
\iota_w^{-1}\left[ \GL_n(\cO_{F_w}) \left(\begin{matrix}
      \varpi_{w}\mathrm{Id}_i & 0 \cr 0 & \mathrm{Id}_{n-i} \end{matrix} \right)
\GL_n(\cO_{F_w}) \right].
\]

Suppose that $S(U,W)_{\fm}\neq 0$ where $\fm$ is the kernel of a homomorphism $\alpha: \bT_{\cP}\rightarrow \F$.
Let $\bT_{\cP}(U,W)$ be the image of $\bT_{\cP}$ in $\End_{\cO}(S(U,W))$.
If $Q$ is the (finite) set $\{w|_{F^+}:w\in \cP_S \setminus \cP\}$, then we also denote $\bT_{\cP}(U,W)$ by $\bT^Q(U,W)$.
Let $\alpha: \bT_{\cP} \onto \bT^Q(U,W)_{\fm}$ be the natural quotient map.
Then there is a Galois representation $r_{\fm} \defeq r(U,W)_{\fm}: G_{F^+,S} \ra \cG_n(\bT^Q(U,W)_{\fm})$ determined by the equations
\[
\det\left(1-r(U,W)_{\fm}|_{G_F}(\mathrm{Frob}_w)X\right)=\sum_{j=0}^n (-1)^j(\mathbf{N}_{F/\Q}(w))^{\binom{j}{2}}\alpha(T_w^{(j)})X^j
\]
for all $w\in \cP$.
We denote the reduction $r_{\fm} \pmod{\fm}$ by $\rbar: G_{F^+} \ra \cG_n(\F)$.

Let $\cO_p$ be $\cO_{F^+} \otimes_{\Z} \Z_p \cong \prod\limits_{v \in S_p} \cO_{F^+_v}$.
Then the composition
\begin{equation}
\iota_p \defeq \prod_{v\in S_p} \iota_v: \cG(\cO_p) \cong \prod_{v\in S_p} \cG(\cO_{F^+_v}) \risom \prod_{v\in S_p} \GL_n(\cO_{F^+_v})
\end{equation}
gives an equivalence between $\cG(\cO_p)$-modules and $\prod\limits_{v\in S_p} \GL_n(\cO_{F^+_v})$-modules.
Let $k_v$ denote the residue field of $F^+_v$ and $\rG \defeq \prod\limits_{v\in S_p} \GL_n(k_v)$.
If $\sigma$ is a Serre weight of $\rG$, then $\sigma$ is naturally a $\prod\limits_{v\in S_p} \GL_n(\cO_{F^+_v})$-module by inflation.

We will need a local-global compatibility result for $r(U,W)_{\fm}$.
Let $\underline{G}$ be (the split group) $(\Res_{F^+ \otimes \Q_p/\Q_p} \GL_n)_{/E}$.
\begin{itemize}
\item %
Fix a highest weight $\lambda = (\lambda_v)_{v\in S_p}$ of $\un{G}$, which we also view as a coweight of the dual group $\un{G}^\vee$.
For $v\in S_p$, let $\tau_v$ be an $n$-dimensional Weil--Deligne inertial type for $F^+_v$.
Recall that there is a natural correspondence between local deformation problems for $\rhobar_v$ in $D_{\rhobar_v}^{\xi_v}$ and local deformation problems for $\rhobar_v|$.
Recall from \S \ref{sec:localdeforms} that $R_{\rhobar_v|}^{\lambda_v,\tau_v}$ represents a certain subfunctor, which we will denote $D_{\rhobar_v|}^{\lambda_v,\tau_v}$, of a potentially semistable deformation functor.
Let $D_{\rhobar_v}^{\lambda_v,\tau_v}$ be the local deformation problem corresponding to $D_{\rhobar_v|}^{\lambda_v,\tau_v}$. 
(Note that $D_{\rhobar_v}^{\lambda_v,\tau_v}$ does not depend on the choice of place $\tld{v}$.) \\
\item 
Let $m$ be a positive integer and $(Q,(\overline{\psi}_v)_{v\in Q})$ a Taylor--Wiles datum of level $m$ and disjoint from the union of $S_p$ and the set of places dividing $N$ (see \S \ref{sec:TWprimes}).
Let $d_v$ be the dimension of the generalized $\ovl{\psi}_v$-eigenspace.
Let $\fp_v$ be the standard (block upper triangular) parahoric corresponding to the partition $(n-d_v)+d_v$ of $n$ (suppressing the dependence on $\ovl{\psi}_v$).
Let $\fp_1^v$ be the kernel of the natural map $\fp^v \ra \GL_{d_v}(k_v) \overset{\det}{\ra} k_v^\times \ra k_v^\times(p)$, where $k_v$ is the residue field of $F^+_v$ and $k_v^\times(p) \defeq \Delta_v$ denotes the maximal $p$-quotient of $k_v^\times$.
Setting $U = U_Q U^Q$, let $U_0(Q)$ (resp.~$U_1(Q)$) be the compact open subgroup $(\prod\limits_{v\in Q} U_0(Q)_v) U^Q$ (resp.~$(\prod\limits_{v\in Q} U_1(Q)_v) U^Q$) where $U_0(Q)_v$ (resp.~$U_1(Q)_v$) is $\iota_v^{-1}(\fp^v)$ (resp.~$\iota_v^{-1}(\fp^v_1)$).
Let $D_v$ be the local deformation problem defined in \S \ref{sec:TWprimes}.
Note that for each $v\in Q$, the quotient $U_0(Q)_v/U_1(Q)_v$ is naturally identified with $\Delta_v$ so that $U_0(Q)/U_1(Q)$ is naturally identified with $\Delta_Q \defeq \prod_{v\in Q} \Delta_v$.
\end{itemize}

\begin{thm}\label{thm:localglobal}
Let $\xi$ be $\eps^{1-n} \delta^n_{F/F^+}$ where $\delta_{F/F^+}$ denotes the quadratic character of $G_{F^+}/G_F$.
Fix a dominant weight $\lambda = (\lambda_v)_{v\in S_p}$ of $\un{G}$ and, for each $v\in S_p$, an $n$-dimensional Weil--Deligne inertial type $\tau_v$ for $F^+_v$.
Let $\sigma(\tau_v)$ be as in Theorem \ref{thm:ILL}.
For $v\in S_p$, let $D_v$ be $D_{\rbar_v}^{\lambda_v,\tau_v}$. %
Let $W$ be an $\cO$-lattice in the $U_p$-module
\[
\bigotimes_{v\in S_p} \sigma(\lambda_v,\tau_v)^*  \circ \iota_v,
\]
where $(-)^*$ denotes the $E$-dual of an $E$-vector space.
Suppose that $S$ contains $S_p$, places dividing $N$, and all places where $U$ is not unramified. 

As in \S \ref{sec:TWprimes}, let $(Q,(\overline{\psi}_v)_{v\in Q})$ be a Taylor--Wiles datum disjoint from $S$.
Then there are a maximal ideal $\fm_Q\subset \bT_{\cP_S \setminus Q}$, maps $\alpha: \bT_{\cP_S \setminus Q} \ra \bT^Q(U,W)_{\fm_Q}$, $\alpha_0: \bT_{\cP_S \setminus Q} \ra \bT^\emptyset(U_0(Q),W)_{\fm_Q}$, and $\alpha_1: \bT_{\cP_S \setminus Q} \ra \bT^\emptyset(U_1(Q),W)_{\fm_Q}$, and Galois representations %
\begin{itemize}
\item $r(U,W)_{\fm_Q}: G_{F^+,S} \ra \cG_n(\bT^Q(U,W)_{\fm_Q})$, uniquely determined by the equations
\[
\det\left(1-r(U,W)_{\fm_Q}|_{G_F}(\mathrm{Frob}_w)X\right)=\sum_{j=0}^n (-1)^j(\mathbf{N}_{F/\Q}(w))^{\binom{j}{2}}\alpha(T_w^{(j)})X^j
\]
for all $w\in \cP_S \setminus Q$, of type 
\[
\cS \defeq (F/F^+,S,\cO,\rbar,\xi,\{D_v\}_{v\in S_p} \cup \{D_v^{\xi_v}\}_{v\in S\setminus S_p});
\]
\item $r(U_0(Q),W)_{\fm_Q}: G_{F^+,S} \ra \cG_n(\bT^\emptyset(U_0(Q),W)_{\fm_Q})$, uniquely determined by the equations
\[
\det\left(1-r(U_0(Q),W)_{\fm_Q}|_{G_F}(\mathrm{Frob}_w)X\right)=\sum_{j=0}^n (-1)^j(\mathbf{N}_{F/\Q}(w))^{\binom{j}{2}}\alpha_0(T_w^{(j)})X^j
\]
for all $w\in \cP_S \setminus Q$, of type 
\[
\cS = (F/F^+,S,\cO,\rbar,\xi,\{D_v\}_{v\in S_p} \cup \{D_v^{\xi_v}\}_{v\in S\setminus S_p});
\]
\item and $r(U_1(Q),W)_{\fm_Q}: G_{F^+,S} \ra \cG_n(\bT^\emptyset(U_1(Q),W)_{\fm_Q})$, uniquely determined by the equations
\[
\det\left(1-r(U_1(Q),W)_{\fm_Q}|_{G_F}(\mathrm{Frob}_w)X\right)=\sum_{j=0}^n (-1)^j(\mathbf{N}_{F/\Q}(w))^{\binom{j}{2}}\alpha_1(T_w^{(j)})X^j
\]
for all $w\in \cP_S \setminus Q$, of type 
\[
\cS_Q = (F/F^+,S \cup Q,\cO,\rbar,\xi,\{D_v\}_{v\in S_p} \cup \{D_v\}_{v\in Q} \cup \{D_v^{\xi_v}\}_{v\in S\setminus S_p}).
\]
\end{itemize}
\end{thm}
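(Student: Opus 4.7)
The plan is to construct each of the three Galois representations via classical local–global compatibility for RACSDC automorphic representations on $G$, then verify that they satisfy the advertised local deformation conditions at the various places in $S$ and $Q$. First, for any maximal ideal $\fm$ of $\bT^{Q}(U,W)$ in the support of $S(U,W)$ (resp.\ $\fm_0$ of $\bT^\emptyset(U_0(Q),W)$, resp.\ $\fm_1$ of $\bT^\emptyset(U_1(Q),W)$), one obtains an associated semisimple Galois representation $\rbar:G_{F^+,S\cup Q}\to \cG_n(\F)$ by the Jacquet–Langlands / base change machinery (together with the results of Harris–Taylor, Chenevier–Harris, Caraiani, etc.). Two such $\rbar$'s associated to the three systems coincide on all but finitely many Frobenii and hence are conjugate; this lets us fix a single $\rbar$ and single maximal ideals $\fm_Q$, $\fm_{0,Q}$, $\fm_{1,Q}$ matching each other under the natural restriction maps between Hecke algebras. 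The existence and uniqueness of the three families of characteristic-zero Galois representations interpolating Frobenii on $\cP_S\setminus Q$ then follows from a standard Chebotarev / pseudorepresentation argument (e.g.\ the arguments of \cite[\S 3]{CHT} in the deformation-theoretic setup of \S \ref{sec:globaldeforms}); the same argument gives the conjugate-self-duality (i.e.\ the image lies in $\cG_n$ and induces the expected isomorphism $G_{F^+}/G_F \risom \cG_n/\cG_n^0$), together with the determinant condition $\det r = \xi$ coming from the fact that $\xi = \eps^{1-n}\delta_{F/F^+}^n$ is the fixed similitude character for regular algebraic, polarized, cuspidal automorphic representations.

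Next I would check the local conditions place by place. At the auxiliary places $v \in S\setminus(S_p\cup Q)$ no condition is imposed beyond the determinant, so there is nothing to verify. At $v\in S_p$, the characteristic-zero points of $\Spec \bT^Q(U,W)_{\fm_Q}$ correspond (via classical $p$-adic Langlands / Harris–Taylor) to RACSDC automorphic representations $\pi$ whose $v$-component $\pi_v$ contains the $\GL_n(\cO_{F^+_v})$-type $\sigma(\lambda_v,\tau_v)$; by the defining property of $\sigma(\tau_v)$ from Theorem~\ref{thm:ILL}, the inertial Weil–Deligne parameter of $\pi_v$ is $\preceq\tau_v$, and the presence of $V(\lambda_v)$ pins down the Hodge–Tate weights. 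Hence every such point of $\Spec \bT^Q(U,W)_{\fm_Q}[1/p]$ lies in $\Spec R_{\rbar_v}^{\lambda_v,\preceq\tau_v}$, and Zariski density of these classical points together with the $\cO$-flatness of the quotient gives the factorization through $R_v=R_{\rbar_v}^{\lambda_v,\tau_v}$ (note that by the genericity hypotheses elsewhere in the paper one in fact has equality $\preceq\tau_v = \tau_v$, but even without that the extremality of the type still yields a local deformation problem, and the definition of $D_v^{\lambda_v,\tau_v}$ is the one of \S \ref{sec:locdef}). At $v\in Q$ the analogous verification is the standard one due to Taylor–Wiles: classical points of $\bT^\emptyset(U_0(Q),W)_{\fm_{0,Q}}$ (resp.\ $\bT^\emptyset(U_1(Q),W)_{\fm_{1,Q}}$) correspond to automorphic representations whose $v$-component contains a vector fixed by $\fp^v$ (resp.\ $\fp_1^v$); decomposing into unramified principal series and inspecting the action of $I_{F^+_v}$ on the $\overline{\psi}_v$-part, one shows as in \cite[Lemma~4.2]{Thorne} / \cite[\S 4.3]{CHT} that $r(U_1(Q),W)_{\fm_{1,Q}}|_{G_{F^+_v}}$ decomposes as an unramified summand plus a summand on which inertia acts through a lift of $\overline{\psi}_v$ by characters of $k_v^\times(p)=\Delta_v$, i.e.\ lies in $D_v(\bT^\emptyset(U_1(Q),W)_{\fm_{1,Q}})$.

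The main obstacle to set up cleanly is that we are allowing the tame level $U^{S_p\cup Q}$ to be arbitrary (only sufficiently small), rather than the maximal-compact / mild choices used in most references: in particular at places $v\in S\setminus(S_p\cup Q)$ the representation $r(U,W)_{\fm_Q}|_{G_{F^+_v}}$ need not be unramified, and one cannot appeal to the usual ``big image + Chebotarev at unramified places'' pseudorepresentation argument naively. The way I would handle this is to first produce $r(U,W)_{\fm_Q}$ on the pro-$\ell$ closure $G_{F^+,S}$ using only Frobenii at split places in $\cP_S\setminus Q$, at which level classical automorphic local-global compatibility (\cite[Theorem~A]{harris-taylor} and its companions) gives the required determinantal identities; the existence of the lift $r(U,W)_{\fm_Q}$ valued in $\cG_n$ is then governed entirely by the irreducibility of $\rbar|_{G_F}$ (guaranteed by the adequacy assumption used when we invoke this theorem inside \S \ref{sec:unitary}) and does not require any control at the ramified auxiliary places. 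The formation of $r(U_0(Q),W)$ and $r(U_1(Q),W)$ goes through the same way. Finally, one checks compatibility of the three Galois representations under the natural Hecke-algebra maps $\bT_{\cP_S\setminus Q}(U_1(Q),W)\to \bT_{\cP_S\setminus Q}(U_0(Q),W)\to \bT_{\cP_S\setminus Q}(U,W)_{\fm_Q}$ using Frobenii at the dense set of split places in $\cP_S\setminus Q$, which automatically propagates the determinant and the deformation-type conditions.
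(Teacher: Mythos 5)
Your proposal follows essentially the same route the paper takes: the paper's proof is a one-line citation to \cite[Proposition 3.4.4]{CHT} for the construction template, together with \cite[Theorem 7.2.1]{EGH} for local--global compatibility at the places dividing $p$, and your sketch is a hand-written expansion of exactly those ingredients (pseudorepresentations and the CHT-style gluing; the Thorne/CHT lemma at Taylor--Wiles places; local--global compatibility to pin down the $p$-adic Hodge-theoretic condition). Two points are worth flagging, though neither overturns the structure of the argument.

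First, the step at $v \in S_p$ is not quite justified as written: Zariski density of classical points and $\cO$-flatness give at best a factorization through $R_{\rbar_v}^{\lambda_v,\preceq\tau_v}$, since Theorem~\ref{thm:ILL} only guarantees $\mathrm{rec}_{F^+_v}(\pi_v)|_{I_{F^+_v}} \preceq \tau_v$ when $\pi_v$ contains $\sigma(\tau_v)$; your density argument therefore does not by itself land you inside the (a priori smaller) closed subscheme $\Spec R_{\rbar_v}^{\lambda_v,\tau_v}$. Your parenthetical that ``genericity gives $\preceq\tau_v=\tau_v$'' conflates two things: $R_{\rbar_v}^{\lambda_v,\preceq\tau_v}=R_{\rbar_v}^{\lambda_v,\tau_v}$ when $\tau_v$ is \emph{minimal} for $\preceq$ (i.e.~$N_{\tau_v}=0$), which has nothing to do with genericity of $\mu$, and the theorem is stated for general Weil--Deligne inertial types. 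This is precisely the ground the cited \cite[Theorem 7.2.1]{EGH} covers, so the citation is doing real work; note also that the downstream application in the proof of Lemma~\ref{lemma:patchSWC} only invokes the $\preceq\tau$ version, so the weaker factorization you actually establish is what gets used.

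Second, the ``main obstacle'' you identify --- arbitrary tame level $U^{S_p\cup Q}$ --- is mostly a non-issue for \emph{constructing} the Galois representation: the pseudorepresentation/absolute-irreducibility argument only uses Frobenii at the split places in $\cP_S\setminus Q$ (where $U$ is unramified), which have density~$1/2$, and no control at the ramified auxiliary places in $S\setminus(S_p\cup Q)$ is needed since the deformation condition imposed there is the trivial one $D_v^{\xi_v}$. What arbitrary level actually complicates is the patching bookkeeping in \S\ref{sec:patching} (the pro-$U_0(Q)/U_1(Q)$ freeness and $\pr$-projectors), not the existence or uniqueness of $r(U,W)_{\fm_Q}$.
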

\begin{proof}
The construction of the Galois representations is as in the proof of \cite[Proposition 3.4.4]{CHT} using \cite[Theorem 7.2.1]{EGH}.
\end{proof}

If $U$ is sufficiently small, then with the natural action of $\Delta_Q$ on $S(U_1(Q),W)$, $S(U_1(Q),W)$ is a free $\cO[\Delta_Q]$-module and the image of $S(U_0(Q),W)$ in $S(U_1(Q),W)$ under the natural inclusion is identified with $S(U_1(Q),W)[\mathfrak{a}_Q]$.
Moreover, the induced action on $S(U_1(Q),W)_\fm$ coincides with the one given by the composition $\cO[\Delta_Q] \ra R_{\cS_Q} \ra \bT^\emptyset(U_1(Q),W)_{\fm} \ra \End_\cO(S(U_1(Q),W)_\fm)$. 

For each $v \in Q$, choose an element $\phi_v \in W_{F^+_v}$ lifting the geometric Frobenius element. 
Let $\varpi_v \in \cO_{F^+_v}$ be the uniformizer such that $\mathrm{Art}_{F^+_v}(\varpi_v)$ is the image of $\phi_v$ in $W_{F^+_v}^{\mathrm{ab}}$. 
Using the isomorphism $\iota_v: \cG(\cO_{F^+_v}) \risom \GL_n(F^+_v)$, we define $\pr_{\varpi_v} \in \End_{\cO}(S(U_i(Q),W)_{\fm_{Q}})$ as in \cite[Proposition 5.9]{Thorne} (suppressing the dependence on $U$ and $(Q,(\ovl{\psi}_v)_{v\in Q})$).
Then the operators $\pr_{\varpi_v}$ commute with each other and with the actions of $\cO[\Delta_{Q}]$ and $\bT^{\emptyset}(U_i(Q),W)_{\fm_Q}$ for $i=0,\, 1$. 
So letting $\pr = \prod_{v\in Q} \pr_{\varpi_v}$, $\pr(S(U_i(Q),W)_{\fm_Q})$ is well-defined for $i=0, 1$, $\pr(S(U_1(Q),W)_{\fm_Q})$ is a free $\cO[\Delta_Q]$-module, and the natural map
\[
\pr(S(U_0(Q),W)_{\fm_Q}) \risom \pr(S(U_1(Q),W)_{\fm_Q})[\mathfrak{a}_Q] 
\]
is an isomorphism.
Moreover, the image of the natural injection $S(U,W)_{\fm_Q} \ra S(U_0(Q),W)_{\fm_Q}$ is $\pr(S(U_0(Q),W)_{\fm_Q})$ as in the proof of \cite[Theorem 6.8]{Thorne}.

\subsection{The patching construction}\label{sec:patching}

We continue with the notation from \S \ref{sec:autforms}.
Let $\rbar: G_{F^+} \ra \cG_n(\F)$ be an automorphic Galois representation such that $\rbar(G_{F^+(\zeta_p)})$ is adequate (so that in particular $\rbar|_{G_F}$ is absolutely irreducible). 
By shrinking the level $U$, we can assume that $\rbar$ is automorphic of level $U = U_{S_p}U^{S_p}$ and coefficients $\cO$ with trivial $U_p$-action so that $(\prod_{v\in S_p} \cG(\cO_{F^+_v})) U^{S_p}$ is sufficiently small.
Let $S$ be a finite set of finite places of $F^+$ containing $S_p$, all places dividing $N$, and all places at which $U$ is not unramified.

Let $\cS$ be the global $\cG_n$-deformation datum
\[
\cS = (F/F^+,S,\cO,\rbar,\xi,\{D_v^{\xi_v}\}_{v\in S}).
\]
For each integer $m \geq 1$, let $(Q_m,(\ovl{\psi}_v)_{v\in Q_m})$ be as in Proposition \ref{prop:TWprime}.
For each $m$ and $v \in Q_m$, choose an element $\phi_v \in W_{F^+_v}$ lifting the geometric Frobenius element. 
Let $\varpi_v \in \cO_{F^+_v}$ be the uniformizer such that $\mathrm{Art}_{F^+_v}(\varpi_v)$ is the image of $\phi_v$ in $W_{F^+_v}^{\mathrm{ab}}$. 
Then for each $Q_m$, we define $\pr$ as in \S \ref{sec:autforms}.
For any open compact subgroup $K_p\subset U_{S_p}$ and integer $r>0$, we define 
\[
M_{m,K_p,r} \defeq \pr (S(K_p U_1(Q_m)^{S_p},W/\varpi^r)_{\fm_{Q_m}})^\vee/\mathfrak{a}_{Q_m}^r,
\]
where $(-)^\vee = \Hom^{\mathrm{cont}}_{\cO}(-,E/\cO)$ (with the compact open topology) denotes the Pontrjagin dual.
By Theorem \ref{thm:localglobal}, we have a (in fact surjective) map $R_{\cS_{Q_m}} \ra \bT^{Q_m}(K_p U_1(Q_m)^{S_p},W)_{\fm_{Q_m}}$.
Then $M_{m,K_p,r}$ is an $R_{\cS_{Q_m}}$-module, and we define
\[
M_{m,K_p,r}^\square \defeq M_{m,K_p,r}\otimes_{R_{\cS_{Q_m}}} R_{\cS_{Q_m}}^{\square_S}/\mathfrak{a}_S^r,
\]
where $\mathfrak{a}_S \subset \cO_S$ denotes the augmentation ideal of the formally smooth $\cO$-algebra $\cO_S$ defined in \S \ref{sec:globaldeforms}.
Let $\cO_\infty$ be $\cO[\![y_1,\ldots,y_q]\!]$. %
For each $m\in \N$ choose an ordering $v_1,\ldots,v_q$ of $Q_m$ and for each $v_i$ a generator $g_i$ of $\Delta_{v_i}$ which gives a surjection $\cO_\infty \onto \cO[\Delta_{Q_m}]$ mapping $y_i$ to $[g_i]-1 \in \cO[\Delta_{v_i}]$.
Let $S_\infty \defeq \cO_\infty \widehat{\otimes}_{\cO} \cO_S$ and let $\mathfrak{a}_\infty \subset S_\infty$ denote the augmentation ideal.
Then $S_\infty$ acts on $M_{m,K_p,r}^\square$ for all $N$, $K_p$, and $r$, and $M_{m,K_p,r}^\square/\mathfrak{a}_\infty$ is naturally identified with $S(K_p U^{S_p},W/\varpi^r)^\vee_\fm$.

We now patch our (dual) spaces of automorphic forms on $G$ as in \cite[\S 2]{CEGGPS}, in the language of ultrafilters following \cite[\S 9]{scholze}.
Choose a non-principal ultrafilter $\cF \subset 2^{\N}$.
Let $\mathbf{R} \defeq \prod_{m\in \N} \cO$ and $S_\cF\subset \mathbf{R}$ be the multiplicative set of idempotents $e_I = (e_{I,m})_{m\in \N}$ where $e_{I,m} = 1$ if $m\in I$ and $e_{I,m} = 0$ if $m\notin I$.
Then the diagonal map $\cO \ra \mathbf{R}$ induces an isomorphism
\[
\cO \cong \varprojlim_r S_\cF^{-1}\mathbf{R}/(\varpi^r),
\]
which gives a surjection $\mathbf{R} \onto \varprojlim_r S_\cF^{-1}\mathbf{R}/(\varpi^r) \cong \cO$.
Then we let
\[
M_\infty = \varprojlim_{K_p \subset U_{S_p},r} \cO \otimes_{\mathbf{R}} \prod_{m\in \N} M_{m,K_p,r}^\square.
\]
Through the diagonal map, $S_\infty$ acts on $M_\infty$ and 
\begin{equation}\label{eqn:control}
M_\infty/\mathfrak{a}_\infty \cong \varprojlim_{K_p \subset U_{S_p},r}S(K_p U^{S_p},W/\varpi^r)^\vee_\fm.
\end{equation}
Moreover, $(y_1,\ldots,y_q,z_1,\ldots,z_{n^2\# S})$ is an $M_\infty$-regular sequence, where $(z_1,\ldots,z_{n^2\# S})$ is any $\cO_S$-regular sequence.
Since $\cG(\cO_p)U^{S_p}$ is sufficiently small, $\varprojlim_{K_p \subset U_{S_p},r}S(K_p U^{S_p},W/\varpi^r)^\vee$ is a finite free $\cO[\![G_0(\Z_p)]\!]$-module.
By (\ref{eqn:control}), $M_\infty/\mathfrak{a}_\infty$ is a finitely generated projective and hence finitely generated maximal Cohen--Macaulay $\cO[\![G_0(\Z_p)]\!]$-module.
This implies that $M_\infty$ is a finitely generated maximal Cohen--Macaulay $S_\infty[\![G_0(\Z_p)]\!]$-module.
By \cite[Theorem 6.2]{venjakob}, $M_\infty$ is a finitely generated projective $S_\infty[\![G_0(\Z_p)]\!]$-module. 

Let $R_\infty$ be $R^{\mathrm{loc}}_{\cS,S}[\![x_1,\ldots,x_g]\!]$ where $g = q - [F:\Q]n(n-1)/2$.
Then for each $m\in \N$, we can and do choose surjections $R_\infty \onto R_{\cS_{Q_m}}^{\square_S}$ by Proposition \ref{prop:TWprime}.
We get a surjective map 
\[
R_\infty \ra \prod_{m\in \N} R_{\cS_{Q_m}}^{\square_S} \onto \cO \otimes_{\mathbf{R}} \prod_{m\in \N} R_{\cS_{Q_m}}^{\square_S},
\]
where the first map is the product of the above surjections (composed with the diagonal map).
Through this map, $M_\infty$ is an $R_\infty$-module.
The above $S_\infty$ action on $M_\infty$ factors through $\cO \otimes_{\mathbf{R}} \prod_{m\in \N} R_{\cS_{Q_m}}^{\square_S}$ as in \S \ref{sec:autforms}.
By formal smoothness of $S_\infty$, we can and do choose a lift 
\[
\begin{tikzcd}
\phantom{M} & R_\infty \ar[two heads]{d} \\
S_\infty \ar[dashed]{ur} \ar{r} & \cO \otimes_{\mathbf{R}} \prod_{m\in \N} R_{\cS_{Q_m}}^{\square_S}.
\end{tikzcd}
\]

Recall that we set $\cO_p\defeq \cO_{F^+} \otimes_{\Z} \Z_p \cong \prod\limits_{v \in S_p} \cO_{F^+_v}$.
Then $M_\infty$ has an natural $\cG(\cO_p)$-action (even a $\cG(F^+ \otimes_{\Q} \Qp)$-action though we will not use this), which can be thought of as a $\GL_n(\cO_p) \cong G_0(\Z_p)$-action via
\[
\iota_p \defeq \prod_{v\in S_p} \iota_v: \cG(\cO_p) \cong \prod_{v\in S_p} \cG(\cO_{F^+_v}) \risom \prod_{v\in S_p} \GL_n(\cO_{F^+_v}).
\]
Then we let 
\[
M_\infty(-) \defeq \Hom^{\mathrm{cont}}_{\cO[\![G_0(\Z_p)]\!]}(M_\infty,(-)^\vee)^\vee
\]
be the exact covariant functor from finite $\cO[G_0(\Z_p)]$-modules to finitely generated $R_\infty$-modules (finitely generated even over $S_\infty$).

\begin{proof}[Proof of Lemma \ref{lemma:patchSWC}]
In the construction of $M_\infty$ above, by shrinking $U^{S_p}$ we can assume without loss of generality that $W = \cO$ and that if $\rbar$ is modular of weight $\sigma$, then it is modular of weight $\sigma$ and level $\cG(\cO_p) U^{S_p}$.
We claim that $M_\infty$ constructed above is a weak patching functor.
Since $R_\infty \cong R_{\rbar_p} \widehat{\otimes}_{\cO} R^p$ where $R^p\defeq (\widehat{\bigotimes}_{v\in S\setminus S_p,\cO} R^{\xi_v}_v)[\![x_1,\ldots,x_g]\!]$ then $R^p$ is equidimensional by \cite[Theorem 3.3.3]{BG} or \cite[Theorem 1]{BoPa}.

To see that $M_\infty(-)$ is nonzero, note that
\begin{align*}
M_\infty(\Ind_{U_{S_p}}^{G_0(\Z_p)} \F)/\mathfrak{a}_\infty &\cong \Hom^{\mathrm{cont}}_{\cO[\![G_0(\Z_p)]\!]}(M_\infty/\mathfrak{a}_\infty,(\Ind_{U_{S_p}}^{G_0(\Z_p)} \F)^\vee)^\vee \\
&\cong \Hom^{\mathrm{cont}}_{\cO[\![G_0(\Z_p)]\!]}(\varprojlim_{K_p \subset U_{S_p},r}S(K_p U^{S_p},\cO/\varpi^r)^\vee_\fm,(\Ind_{U_{S_p}}^{G_0(\Z_p)} \F)^\vee)^\vee \\
&\cong \Hom^{\mathrm{cont}}_{\cO[\![G_0(\Z_p)]\!]}(\Ind_{U_{S_p}}^{G_0(\Z_p)} \F,\varinjlim_{K_p \subset U_{S_p}}S(K_p U^{S_p}, E/\cO)_\fm)^\vee \\
&\cong S(U,\F)^\vee_\fm \\
&\neq 0.
\end{align*}
If $\sigma$ is a Serre weight, then the same computation shows that $M_\infty(\sigma)/\mathfrak{a}_\infty \cong S(\cG(\cO_p) U^{S_p},\sigma^\vee)_\fm^\vee$.
By assumption, this latter space is nonzero if and only if $\sigma \in W(\rbar)$.
Furthermore, by Nakayama's lemma, $M_\infty(\sigma)$ is nonzero if and only if $M_\infty(\sigma)/\mathfrak{a}_\infty$ is nonzero.
We conclude that $M_\infty(\sigma)$ is nonzero if and only if $\sigma \in W(\rbar)$.

Let $\lambda\in X^*(\un{T})$ be a dominant weight and $\tau$ a Weil--Deligne inertial $L$-parameter.
For an $\cO$-lattice $\sigma^\circ(\lambda,\tau) \subset \sigma(\lambda,\tau)$, $M_\infty(\sigma^\circ(\lambda,\tau))$ is isomorphic to
\begin{align*}
 & \Hom^{\mathrm{cont}}_{\cO[\![G_0(\Z_p)]\!]}(M_\infty,\sigma^\circ(\lambda,\tau)^\vee)^\vee \\
\cong & \varprojlim_r \varprojlim_{K_p \subset U_{S_p}}\Hom_{G_0(\Z_p)}(\cO \otimes_{\mathbf{R}} \prod_{m\in \N} M_{m,K_p,r}^\square,(\sigma^\circ(\lambda,\tau)/\varpi^r)^\vee)^\vee \\
\cong & \varprojlim_r \varprojlim_{K_p \subset U_{S_p}}\Hom_{G_0(\Z_p)}(\sigma^\circ(\lambda,\tau)/\varpi^r,\cO \otimes_{\mathbf{R}} \prod_{m\in \N} \pr (S(K_p U_1(Q_m)^{S_p},\cO/\varpi^r)_{\fm_{Q_m}}) \otimes_{R_{\cS_{Q_m}}} R_{\cS_{Q_m}}^{\square_S}/\mathfrak{a}_S^r)^\vee \\
\cong & \varprojlim_r \cO \otimes_{\mathbf{R}} \prod_{m\in \N} \pr (S(\cG(\cO_p) U_1(Q_m)^{S_p}, (\sigma^\circ(\lambda,\tau)/\varpi^r)^\vee)_{\fm_{Q_m}}) \otimes_{R_{\cS_{Q_m}}} R_{\cS_{Q_m}}^{\square_S}/\mathfrak{a}_S^r)^\vee. 
\end{align*}
So the action of $R_\infty$ on $M_\infty(\sigma^\circ(\lambda,\tau))$ factors through $R_\infty(\lambda,\preceq\tau)$ by Theorem \ref{thm:localglobal}. 
Moreover, $M_\infty(\sigma^\circ(\lambda,\tau))$ is a maximal Cohen--Macaulay $S_\infty$-module, and therefore a maximal Cohen--Macaulay $R_\infty(\lambda,\preceq\tau)$-module since $\dim S_\infty = \dim R_\infty(\lambda,\preceq\tau)$ as can be seen from \cite[Theorem 3.3.4]{KisinPSS}.
Finally, if $\sigma \in \JH(\ovl{\sigma}^\circ(\lambda,\tau))$, then the $R_\infty$-action on $M_\infty(\sigma)$, a subquotient of $M_\infty(\ovl{\sigma}^\circ(\lambda,\tau))$ factors through ${R}_\infty(\lambda,\preceq\tau)/\varpi$.
Since $M_\infty(\sigma)$ is a maximal Cohen--Macaulay $S_\infty/\varpi$-module, it is a maximal Cohen--Macaulay $R_\infty(\lambda,\preceq\tau)/\varpi$ by dimension considerations.
This concludes the proof of (\ref{it:patchSWC:1}).

Proposition \ref{prop:patchobv} applied to $M_\infty(-)$ above implies that $\rbar$ is modular of a weight in $W_\obv(\rbar_p|_{I_{\Qp}})$.
Then \cite[Theorem 4.3.8]{LLL} implies that $W_\obv(\rbar_p|_{I_{\Qp}}) \subset W(\rbar)$.
This implies that $M_\infty(-)$ above is detectable, which establishes (\ref{it:patchSWC:2}).
\end{proof}

\section{A numerical example.}
\label{ex:failure:UB}

In this appendix, we work out a numerical example where the polynomial $P$ appearing in item \eqref{it:stack_local_model:2} of Theorem \ref{thm:stack_local_model} is made explicit.
For our example, we will choose $n=3$, $\cJ$ is a singleton, $\lambda=(3,1,0)$ and $\tld{z}=(23)t_{(2,1,1)}$. %

We first recall the scheme $\cU(\tld{z})^{\det,\leq 0}\to X=\bA^1$ from \S  \ref{sec:affine:charts}.
By Proposition \ref{prop:explicit_affine_chart}, the universal matrix $A\in \cU(\tld{z})^{\det,\leq 0}$ has the form 
\[
\begin{pmatrix}
(v-t)^2+d_{11}(v-t)+c_{11}&c_{12}&c_{13}\\
v(d_{21}(v-t)+c_{21})&c_{22} &(v-t)+c_{23}
\\v(d_{31}(v-t)+c_{31})&v&(v-t)d_{33}+c_{33}
\end{pmatrix}.
\]
so that $\cU^{\det,\leq 0}(\tld{z})$ is the quotient of $\Z[t,c_{11},d_{11},c_{12},c_{13},c_{21},d_{21},c_{22},c_{23},d_{31},c_{31},c_{33},d_{33}]$ subject to the equation 
\[\det A=-(v-t)^4.\]
The affine scheme $\cU(\tld{z})\cap \cS_X(\lambda)$ is obtained by imposing divisibility conditions of the minors on the universal matrix $A$ corresponding to $\lambda$ (and taking the underlying reduced subscheme).

We now turn to the universal monodromy condition \eqref{eq:universalnabla} as in \S \ref{subsec:UMLM}. In fact, we will work with its simplified version as explained in Remark \ref{rmk:setting_last_0}, so that our $\bf{a}\in\bA^3$ always belongs to the $\bA^2$ where the last coordinate is $0$, i.e. $\bf{a}=(a,b,0)$. In addition, we will only work over the open locus $V=\Spec \Z[a,b][\frac{1}{P(a,b)}]\subset \bA^2$ where 
\[P(a,b)=7!b(b-1)(a-1)(a-2)(a-b)(a-b-1)(a-b-2)\]
is invertible. This turns out to substantially simplify our considerations below, and is enough for our purposes, as any specialization we will eventually consider always occurs in $V$, due to the fact that the inertial types we consider will need to be at least $2$-generic.

Let $\cU(\tld{z},\lambda,\nabla)=(\cU(\tld{z})\times \bA^2) \cap \cM_X(\lambda,\nabla)$, an open affine of (the simplified variant of) $\cM_X(\lambda,\nabla)$.

\begin{prop}\label{prop:numerical_base_space}
\begin{enumerate} 
\item \label{it1:prop_numerical_base_space}
The scheme $\cU(\tld{z},\lambda,\nabla)\times _{\bA^2}V$ is represented by the quotient of the ring $\Z[t,a,b,c_{12},c_{13},d_{21},c_{22},d_{31},d_{33}][\frac{1}{P(a,b)}]$ by the ideal generated by
\begin{eqnarray*}
&&(a-2)c_{13}c_{22}-(a-b-2)c_{12}c_{22}d_{33}+c_{12}t(a-b),\\
&&b(a-1)(a-2)c_{13}d_{21}+(a-b-1)(a-1)(a-2)c_{12}d_{31}-b(a-1)(a-b-2)c_{22}d_{33}+\\
&&\quad t\big((a-b-1)(a-2)+(a-b)(a-1)b-2(a-1)(a-2)\big),\\
&&(a-1)(a-2)\big(c_{12}d_{21}d_{33}-c_{13}d_{21}-c_{22}d_{33}+t\big)+t(a-2)+(a-1)\big(-t(a-b)+(a-b-2)c_{22}d_{33}\big).
\end{eqnarray*}
\item
The irreducible components of $\cU(\tld{z},\lambda,\nabla)\times_{X\times \bA^2}  (\{0\}\times V)$ are given by
\begin{align*}
&(c_{22},c_{12}d_{33}-c_{13},bd_{21}d_{33}+(a-b-1)d_{31})
\\
&(c_{22},d_{21},c_{12})
\\
&(d_{31},c_{22},d_{21})\\
&(c_{22},c_{13},c_{12})\\
&(d_{33},c_{13},c_{12})\\
&(d_{33},d_{31},c_{13})\\
&(d_{31},c_{12}d_{21}-c_{22},(a-b-2)c_{12}d_{33}+(-a+2)c_{13})
\end{align*}
\item \label{it:prop_numerical_base_space}$\cU(\tld{z},\lambda,\nabla)\times _{\bA^2}V$ is an irreducible complete intersection, and 
$\cU(\tld{z},\lambda,\nabla)\times _{\bA^2}V\to X\times_\Z V$ is flat.
\item 
\label{it4:prop:numerical_base_space}
Let $H$ be the ideal generated by the $3\times 3$ minors of Jacobian matrix of $\cO(\cU(\tld{z},\lambda,\nabla)\times _{\bA^2}V)$ relative to $\Z[t,a,b][\frac{1}{P(a,b)}]$ with respect to the presentation \eqref{it1:prop_numerical_base_space}. Then $t^3\in H$.
\end{enumerate}

\end{prop}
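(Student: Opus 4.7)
For part (\ref{it1:prop_numerical_base_space}), I would start from the explicit universal matrix $A$ of $\cU(\tld{z})^{\det,\leq 0}$ written out above, and impose in turn (i) the determinant condition $\det A = -(v-t)^4$, (ii) the Schubert condition $A \in \cS_X(\lambda)$ with $\lambda=(3,1,0)$ (which is cut out by divisibility properties of the $2\times 2$ minors of $(v-t)^{-1}A$), and (iii) the simplified universal monodromy condition with $\bf{a}=(a,b,0)$. Working over $V$ (so that all of $b,b-1,a-1,a-2,a-b,a-b-1,a-b-2$ and $7!$ are invertible), one can use the determinant relation to solve for $c_{11}$ and $d_{11}$ in terms of the remaining coordinates, and then use the Schubert divisibility conditions to eliminate $c_{21}$, $c_{23}$, $c_{31}$, $c_{33}$. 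The monodromy condition \eqref{eq:universalnabla} is a condition of the form $v\frac{dA}{dv}A^{-1}+A\Diag(a,b,0)A^{-1}\in \frac{1}{v-t}L^+\cM$, which after clearing denominators becomes a system of polynomial identities in $v$; extracting the coefficients of the different powers of $v$ and repeatedly applying the genericity of $P(a,b)$ to eliminate variables produces exactly the three stated relations in $\Z[t,a,b,c_{12},c_{13},d_{21},c_{22},d_{31},d_{33}][P(a,b)^{-1}]$. This step is purely mechanical once the elimination strategy is fixed.

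For part (2), I specialize to $t=0$ and compute the primary decomposition of the ideal generated by the three relations over $\Z[a,b][P(a,b)^{-1}]$. Each of the seven listed ideals is manifestly prime (linear generators or a complete intersection of a linear relation with a bihomogeneous quadric); by direct substitution each contains the three generators at $t=0$, and by a saturation/intersection argument (for instance by successively intersecting with $(c_{22})$, $(d_{33})$, $(d_{31})$ and showing the resulting pieces decompose as listed) one sees that the intersection of the seven ideals coincides with the radical of the specialization. A component count gives seven components, matching the expected $\#\Adm^{\reg,\vee}(\lambda+\eta)$ for $\GL_3$.

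For part (\ref{it:prop_numerical_base_space}), the generic fiber of $\cU(\tld{z},\lambda,\nabla)\times_{\bA^2} V \to X\times V$ over $X^0\times V$ has relative dimension $\dim_E (P_\lambda\backslash \GL_n)_E = 3$ by Proposition \ref{prop:intersect_open_schubert_variety}. Since there are $6$ fiber coordinates cut down by $3$ equations, and the special fiber at $t=0$ is $3$-equidimensional by (2), the three relations form a regular sequence. Hence the coordinate ring is a complete intersection, which is Cohen--Macaulay, and in particular $\cO$-flat relative to $X\times V$ by miracle flatness (the target is regular and the fibers have the right dimension). Irreducibility over all of $X\times V$ follows from the fact that the restriction to $X^0\times V$ is $(P_\lambda\backslash \GL_n)_{X^0\times V}$, which is irreducible, together with the flatness just proved.

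Part (4) is where I expect the main difficulty, and it is essentially a single finite but unpleasant computer-algebra check. The Jacobian matrix is the $3\times 6$ matrix of partial derivatives of the three defining relations with respect to $c_{12}, c_{13}, d_{21}, c_{22}, d_{31}, d_{33}$; I would expand the $\binom{6}{3}=20$ maximal minors and, working modulo the defining ideal, exhibit $t^3$ as an explicit $\Z[t,a,b][P(a,b)^{-1}]$-linear combination of these minors. The strategy is to start from the third relation, where differentiating in $d_{21}$ and $d_{33}$ produces leading terms involving $(a-1)(a-2)$; taking minors that mix these with derivatives of the first two relations (which involve the factors $c_{22}$ and $t$) produces, after reducing modulo the ideal of (1), a term $c\cdot t^3$ with $c$ a unit in $\Z[a,b][P(a,b)^{-1}]$. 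The calculation is easiest to present as a single Gr\"obner-basis verification, which is straightforward to implement in Macaulay2 or Singular; the content of the statement is that the discriminant locus of the map $\cU(\tld{z},\lambda,\nabla)\times_{\bA^2}V \to X\times V$ is set-theoretically contained in $\{t=0\}$, which is what subsequent applications of Elkik's approximation theorem require.
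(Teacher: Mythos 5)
The central gap is in part (\ref{it1:prop_numerical_base_space}). You treat it as a self-contained elimination exercise, but $\cU(\tld{z},\lambda,\nabla)$ is \emph{by definition} a Zariski closure (of the monodromy locus over $X^0$), so its defining ideal is the $t$-saturation of whatever elimination produces, not the elimination ideal itself. Eliminating $c_{11},d_{11},c_{21},c_{23},c_{31},c_{33}$ from the naive monodromy equations and extracting the three stated relations gives you a ring $R$ that the closure maps into, i.e.~a closed immersion $\cU(\tld{z},\lambda,\nabla)\times_{\bA^2}V \hookrightarrow \Spec R$; it does not by itself show this is an isomorphism. The paper closes precisely this gap by running your parts (2) and (3) \emph{before} concluding (1): the minimal primes of $R/t$ show each fiber has codimension $3$, whence $R$ is a complete intersection, hence flat over $X\times V$, hence $t$-torsion-free, hence $\Spec R[1/t]$ is dense in $\Spec R$, and density plus the closed immersion forces equality. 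Your logical order --- prove (1) first by ``purely mechanical'' elimination, then use it in (2) and (3) --- skips the saturation step and makes (3)'s ``6 fiber coordinates cut down by 3 equations'' circular, since that phrase already presupposes (1).

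The good news is that your own dimension and flatness arguments are correct and are exactly what is needed to repair this: (2) is a $t=0$ primary decomposition that does not depend on (1), and your CI/miracle-flatness step in (3) only needs the codimension count from (2). Re-order to: closed immersion into $\Spec R$, then (2), then CI/flatness of $R$, then density, then (1). Your part (4) matches the paper (a direct Macaulay2 verification). One small inaccuracy: your irreducibility argument in (3) should invoke the regularity of $R[1/t]$ as a domain (which comes from Proposition \ref{prop:intersect_open_schubert_variety}) together with $t$ being regular in $R$, rather than ``irreducibility plus flatness''; flatness alone does not rule out a reducible generic fiber, so you need the explicit identification of the generic fiber with $(P_\lambda\backslash \GL_n)_{X^0\times V}$ which you do cite.
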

\begin{proof} Let $R$ be the ring given by the presentation in the first item. We first observe that the equations in the first item indeed hold in 
$\cU(\tld{z},\lambda,\nabla)\times _{\bA^2}V$, and that the monodromy condition solves the variables $c_{11},d_{11},c_{21},c_{23},c_{31},c_{33}$ in terms of $t,c_{12},c_{13},d_{21},c_{22},d_{31},d_{33}$ (this uses the fact that its ring of functions if $t$-torsion free and that $P(a,b)$ is invertible). Thus, $\cU(\tld{z},\lambda,\nabla)\times _{\bA^2}V$ is a closed subcheme of $\Spec R$. Note that over $X^0$, this closed subscheme is all of $\Spec R[\frac{1}{t}]$.

Next, we observe that the minimal primes of $\Spec R/t$ are given by the list in the second item. In particular, this shows that each fiber of $\Spec R \to X\times V$ has codimension $3$ in the corresponding fiber of $\Spec\Z[t,a,b,c_{12},c_{13},d_{21},c_{22},d_{31},d_{33}][\frac{1}{P(a,b)}]\to X\times V$.
It follows that $\Spec R$ is a complete intersection, and that $\Spec R \to X\times V$ is flat \cite[\href{https://stacks.math.columbia.edu/tag/00R4}{Tag 00R4}]{stacks-project}. Since $R[\frac{1}{t}]$ is a regular domain (using the corresponding fact for $\cU(\tld{z},\lambda,\nabla)\times _{X\times \bA^2}X^0\times V$ cf Proposition \ref{prop:intersect_open_schubert_variety}), we conclude from the fact that $t$ is regular in $R$ that $\Spec R[\frac{1}{t}]$ is dense in $\Spec R$. It follows that $\cU(\tld{z},\lambda,\nabla)\times _{\bA^2}V=\Spec R$. This finishes the proof of the first three items. The last item follows from by a computation in Macaulay 2.
\end{proof}
We now define $\cU(\tld{z},\lambda,\nabla)^{\nm}$ to be the spectrum of the quotient
of $\Z[t,a,b,W,c_{12},c_{13},d_{21},c_{22},d_{31},d_{33}]$ by the ideal $\tld{I}$ generated by the following polynomials
\begin{eqnarray*}
&&Wc_{22}+td_{21},\\
&&(a-2)b(a-b)Wc_{12}+b(a-2)c_{13}d_{21}+(a-2)(b+1)(a-b-1)c_{12}d_{31}-b(a-b-2)c_{22}d_{33}-\\
&&\quad-t(b(b-a)+(a-2)),\\
&&(a-b-2)c_{12}c_{22}d_{33}-(a-2)c_{13}c_{22}-t(a-b)c_{12},\\
&&bc_{12}d_{21}d_{33}-(a-b)(a-1)Wc_{12}-(a-2)(a-b-1)c_{12}d_{31}-bc_{22}d_{33}+(b-2)t,
\\
&&(a-2)c_{12}d_{21}d_{33}+(a - 2) (b - a)Wc_{12}-(a-2)c_{13}d_{21}-(a - 2) (a - b - 1)c_{12}d_{31}-bc_{22}d_{33}+t(b-2),
\\
&&b(a(a-b)-a+b-1)Wd_{21}d_{33}+b(a(a-b-2)+b+1)d_{21}d_{31}d_{33}+(a - 1) (b - 1) (a - b)W^2+\\
&&\quad+(a - b - 1) (a (b - 1) - b)Wd_{31},
\\
&&b(a-2)c_{13}d_{21}d_{33}-(a - 2) (a - 1) (b - 1) (a - b)Wc_{13}-(a - 2) (a - b - 1) (a (b - 1) - b)c_{13}d_{31}+\\
&&\quad+b(-a+b+2)c_{22}d_{33}^2-tb(-a+b)d_{33}.
\end{eqnarray*}

We have the natural map $\cU(\tld{z},\lambda,\nabla)^{\nm}\to \cU(\tld{z},\lambda,\nabla)$ which is finite and birational, and hence identifies the former as a partial normalization of the latter (for the birationality, we note that the map is an isomorphism after inverting $c_{22}$, and in fact $\cO(\cU(\tld{z},\lambda,\nabla)^{\nm})$ is the subring $\cO(\cU(\tld{z},\lambda,\nabla))[\frac{-td_{21}}{c_{22}}]$ in the fraction field of $\cO(\cU(\tld{z},\lambda,\nabla))$.)
\begin{prop} \label{prop:numerical_base_change}
Suppose we are given $s:\Spec \cO\to X\times V \subset X\times \bA^2$, correspoding to $(-p,a,b)\in \cO^3$.
\begin{enumerate}
\item The base change $U(\tld{z},\lambda,\nabla_{(a,b,0)})=\cU(\tld{z},\lambda,\nabla)\times_{X\times \bA^2,s}\Spec \cO$ is $\cO$-flat.
\item \label{it2:prop_numerical_base_change}
The base change $U(\tld{z},\lambda,\nabla_{(a,b,0)})^{\nm}=\cU(\tld{z},\lambda,\nabla)^{\nm}\times_{X\times \bA^2,s}\Spec \cO$ is $\cO$-flat and normal.

In particular the base changed map $U(\tld{z},\lambda,\nabla_{(a,b,0)})^{\nm}\to U(\tld{z},\lambda,\nabla_{(a,b,0)})$ is the normalization map.
\item The pullback of each irreducible component of $U(\tld{z},\lambda,\nabla_{(a,b,0)})_{\F}$ along $U(\tld{z},\lambda,\nabla_{(a,b,0)})^{\mathrm{nm}}_{\F}\rightarrow U(\tld{z},\lambda,\nabla_{(a,b,0)})_{\F}$ decomposes into irreducible components according to Table \ref{table:Num_ex}
\end{enumerate}

\end{prop}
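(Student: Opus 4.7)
The plan is to verify each of the three assertions by exploiting the explicit presentations provided. For part (1), $\cO$-flatness of $U(\tld{z},\lambda,\nabla_{(a,b,0)})$ follows immediately from Proposition \ref{prop:numerical_base_space}(\ref{it:prop_numerical_base_space}), which asserts that $\cU(\tld{z},\lambda,\nabla)\times_{\bA^2} V \to X\times V$ is flat, since flatness is preserved under arbitrary base change.

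For part (2), the strategy is to check that $\cU(\tld{z},\lambda,\nabla)^{\nm}$ fits into Setup \ref{setup:spreading} over the base $V$, and then sharpen the conclusion of Proposition \ref{prop:spreading} from a generic open subset of $V$ to all of $V$ by direct verification made possible by the explicit equations. To verify the setup one needs: (i) smoothness of $\cU(\tld{z},\lambda,\nabla)^{\nm}\times_X X^0$ over $X^0\times V$, which follows from Proposition \ref{prop:smooth_generic_fiber} together with the fact that normalization and smoothness agree on the smooth locus; (ii) that $\cU(\tld{z},\lambda,\nabla)^{\nm}$ is the closure of its restriction to $X^0$, which follows from $\cO$-flatness of $\cU(\tld{z},\lambda,\nabla)$ and the finite birationality of the normalization; and (iii) normality of the total space $\cU(\tld{z},\lambda,\nabla)^{\nm}$, which we intend to verify using the explicit presentation by $\tld{I}$ via Serre's $R_1+S_2$ criterion. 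To upgrade the base-change conclusion to arbitrary $\cO$-points of $V$ (rather than generic ones, as in Proposition \ref{prop:spreading}), we split the question into $\cO$-flatness and normality after base change. Flatness is handled by Lemma \ref{lem:flat_poly}: flatness over $X^0$ comes from smoothness, while flatness of the special fiber at $t=0$ is verified by exhibiting an explicit regular sequence in the presentation of $\tld{I}$ cutting out the fiber, using that $P(a,b)$ is invertible in $V$. Normality is then obtained by Serre's criterion: $S_2$ follows from the complete intersection property combined with flatness over $X\times V$, and $R_1$ is checked by a Jacobian computation on the eight generators of $\tld{I}$, where invertibility of the factors of $P(a,b)$ on $V$ makes the relevant minors nonvanishing at the codimension one points of the special fiber. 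Once normality of $U(\tld{z},\lambda,\nabla_{(a,b,0)})^{\nm}$ is known, the final clause identifying the base-changed map with the normalization is automatic: a finite birational map with normal source targets the normalization of its image.

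For part (3), the decomposition is obtained by pulling back each of the seven ideals from Proposition \ref{prop:numerical_base_space}(2) via the surjection defining $\cU(\tld{z},\lambda,\nabla)^{\nm}\to \cU(\tld{z},\lambda,\nabla)$ and computing the primary decomposition of the resulting ideal in the presentation of $\tld{I}$, specialized to $(-p,a,b)$. This is an explicit calculation which is most efficiently carried out by a computer algebra system, whose output populates Table \ref{table:Num_ex}.

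The principal technical obstacle lies in part (2): while Proposition \ref{prop:spreading} guarantees normality after generic specialization, extending this to \emph{every} $\cO$-point of $V$ requires that the Jacobian ideal $H$ of Proposition \ref{prop:numerical_base_space}(\ref{it4:prop:numerical_base_space}) and the analogous ideal for $\tld{I}$ remain large enough after specialization, which is where the invertibility of $P(a,b)$ on $V$ is crucial. Establishing the $R_1$ condition on each specialization by a uniform Jacobian computation—rather than a case-by-case analysis of which $\cO$-points might fail normality—is the main nontrivial step, and is carried out by an explicit Macaulay 2 computation paralleling the one used to verify Proposition \ref{prop:numerical_base_space}(\ref{it4:prop:numerical_base_space}).
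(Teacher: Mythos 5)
Your overall plan for parts (1) and (3) matches the paper, and your awareness that Proposition \ref{prop:spreading} only gives a generic statement over $V$ and must be sharpened is the right instinct. But there is a genuine gap in part (2), namely your mechanism for $S_2$.

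You write that ``$S_2$ follows from the complete intersection property combined with flatness over $X\times V$.'' The scheme $\cU(\tld{z},\lambda,\nabla)$ is indeed a relative complete intersection over $X\times V$ (three equations, codimension three; this is Proposition \ref{prop:numerical_base_space}\eqref{it:prop_numerical_base_space}), but the partial normalization $\cU(\tld{z},\lambda,\nabla)^{\nm}$ is \emph{not}: it carries an extra variable $W$ and is presented by seven generators $\tld{I}$ cutting out a locus of relative codimension four in $\Spec \Z[t,a,b,W,c_{12},c_{13},d_{21},c_{22},d_{31},d_{33}][\tfrac{1}{P(a,b)}]$ over $X\times V$. There is no subset of four generators that cuts it out, and no reason to expect one. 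So you cannot deduce Cohen--Macaulayness (hence $S_2$) of $U(\tld{z},\lambda,\nabla_{(a,b,0)})^{\nm}$ from a complete-intersection argument, and the Serre $R_1+S_2$ route as you set it up does not close.

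The paper's actual mechanism for this step is quite different. It works with the mod-$\varpi$ reduction $I$ of $\tld{I}$, runs Buchberger's algorithm to exhibit an explicit Groebner basis (under a fixed monomial order, with a case distinction on the leading term of one generator when $(a-b)(a-1)=1$), checks in Macaulay2 that the associated \emph{monomial} scheme $\Spec \F[W,c_{12},\ldots,d_{33}]/\mathrm{lead}(I)$ is Cohen--Macaulay, and then uses the flat Groebner degeneration to transfer Cohen--Macaulayness back to the special fiber $U(\tld{z},\lambda,\nabla_{(a,b,0)})^{\nm}_{\F}$. Generic reducedness of the special fiber is then read off from the explicit list of its minimal primes, $S_1$ (from Cohen--Macaulay) upgrades this to reducedness, $\cO$-flatness is deduced from topological flatness plus reducedness of the special fiber, and finally normality comes from the criterion ``$\cO$-flat with reduced special fiber implies normal'' (\cite[Proposition 8.2]{PZ}), using that the generic fiber is regular. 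This chain --- Groebner degeneration for Cohen--Macaulay, then reducedness, then the Pappas--Zhu criterion --- is the step you would need to substitute for your complete-intersection argument. Your proposed Jacobian computation for $R_1$ is plausible on its own, but without $S_2$ it is not enough, and neither the $R_1$ computation nor Setup \ref{setup:spreading} recovers the $S_2$ input.
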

\begin{rmk} It follows from the first item that $U(\tld{z},\lambda,\nabla_{(a,b,0)})$ is the intersection of $M(\lambda,\nabla_{(a,b,0)})$ (cf Definition \ref{defn:alg:MLM}) with the affine open $U(\tld{z},\leql)\subset M(\leql)$ (cf \eqref{eq:chartsforlambda}). 
\end{rmk}
\begin{proof}
The first item immediately follows from Proposition \ref{prop:numerical_base_space}\eqref{it:prop_numerical_base_space}, while the last item is a direct computation in Macaulay 2.

We now establish the second item.
Let $I$ be the image of (the base change of) $\tld{I}$ inside  
$\F[W,c_{12},c_{13},d_{21},c_{22},d_{31},d_{33}]$ under the natural mod-$\varpi$ reduction map.
Using the fact that $7!b(b-1)(a-1)(a-2)(a-b)(a-b-1)(a-b-2)\in \F^\times$, we verify by running Buchberger's algorithm that $I$ admits the following Groebner basis with respect to the monomial order on $\F[W,c_{12},c_{13},d_{21},c_{22},d_{31},d_{33}]$ given by $W>c_{12}>c_{13}>d_{21}>c_{22}>d_{31}>d_{33}$:
\begin{eqnarray*}
&&Wc_{22},\\ &&
(a - 1) (a - 2) bc_{13}d_{21}+(a - 2) (a - 1) (a - b - 1)c_{12}d_{31}-(a - 1) b (a - b - 2)c_{22}d_{33},\\ &&
-(a - 2) (a-1) (a - b)Wc_{12}-(a - 2) (a-1) (a - b - 1)c_{12}d_{31},\\ &&
-(a - 1) b (-a + b + 1)c_{22}d_{31}d_{33},\\ &&
(a - 2) (a - 1) (a - b - 1)c_{12}d_{31}d_{33}+(a - 2) (a - 1)^2 (b - 1) (a - b)Wc_{13}+
\\&&\qquad+
(a - 2) (a - 1) (a - b - 1) (a (b - 1) - b)c_{13}d_{31},\\ &&
(a-b-2)c_{12}c_{22}d_{33}-(a-2)c_{13}c_{22},\\ &&
bc_{12}d_{21}d_{33}-(a - 1) (a - b)Wc_{12}-(a - 2) (a - b - 1)c_{12}d_{31}-bc_{22}d_{33},\\  &&
b ((a-b)(a-1)- 1)Wd_{21}d_{33}+(a - 1) b (a - b - 1)d_{21}d_{31}d_{33}+\\&&\qquad+(a - 1) (b - 1) (a - b)W^2+(a - b - 1) (a (b - 1) - b)Wd_{31},\\ &&
(a - 2) (a - 1) (a - b - 1)c_{13}c_{22}d_{31},\\ &&
(a - 2) (a - 1) (a - b - 1)c_{12}c_{22}d_{31},\\ &&
(a - 2) (a - 1)^2 (b - 1)(a - b)^2
W^2c_{13}+(a - 2) (a - 1) (2 a (b - 1) - 2 b + 1) (a - b - 1) (a - b)W
      c_{13}d_{31}+\\&&\qquad+(a - 2) (a - 1) (a (b - 1) - b) (-a + b + 1)^2c_{13}d_{31}^2,
      \end{eqnarray*}
and the leading monomial for each polynomial is the left-most term, except when $(a-b)((a-1)-1)$ vanishes the leading term of the $8$th generator is its second monomial (since its first monomial vanishes in this case).
 
A computation in Macaulay2 shows that the monomial scheme defined by the ideal of leading terms of $I$ 
\[\Spec \F[W,c_{12},c_{13},d_{21},c_{22},d_{31},d_{33}]/lead(I)\]
is Cohen-Macaulay. Since there is a flat Groebner degeneration from 
$U(\tld{z},\lambda_{(a,b,0)}),\nabla)^{\nm}_\F$ to this monomial scheme, we conclude that 
$U(\tld{z},\lambda,\nabla_{(a,b,0)})^{\nm}_\F$ is Cohen-Macaulay.

Next, we compute that the irreducible components of the special fiber of $U(\tld{z},\lambda,\nabla_{(a,b,0)})^{\mathrm{nm}}_\F$ are given by
\begin{eqnarray*}
&&(c_{22},(a-b)W+(a-b-1)d_{31},c_{12}d_{33}-c_{13},bd_{21}d_{33}+(a-b-1)d_{31},b
      c_{13}d_{21}+(a-b-1)c_{12}d_{31}),
      \\&&(d_{31},c_{22},d_{21},W),
      \\&&(d_{33},d_{31},c_{13},W),
      \\
      &&(d_{33},c_{13},c_{12},W), 
      \\
      &&(d_{31},W,c_{12}d_{21}-c_{22},(a-b-2)c_{12}d_{33}+(-a+2)
      c_{13},(a-2)c_{13}d_{21}+(-a+b+2)c_{22}d_{33}),
      \\
      &&(c_{22},d_{21},c_{12},(a - 1) (b - 1) (a - b)W+(a - b - 1) \big(a (b - 1) - b\big)d_{31}),
      \\
      &&(c_{22},c_{13},c_{12},b \big(a (a - b - 1) + b - 1\big)Wd_{21}d_{33}+(a - 1) b (a - b - 1)d_{21}d_{31}d_{33}+(a - 1) (b - 1) (a - b)W^2+\\&&\qquad+(a - b - 1) \big(a (b - 1) - b\big)Wd_{31}).
\end{eqnarray*}
From this, we see by inspection that $U(\tld{z},\lambda,\nabla_{(a,b,0)})^{\nm}_\F$ is generically reduced, and since it is also $S_1$ (since it is Cohen-Macaulay), we conclude it is reduced.

Next we show that $U(\tld{z},\lambda,\nabla_{(a,b,0)})^{\nm}$ is $\cO$-flat. For this, we first observe that $U(\tld{z},\lambda,\nabla_{(a,b,0)})^{\nm}$ is topologically flat over $\cO$. Now, if $U(\tld{z},\lambda,\nabla_{(a,b,0)})^{\nm}$ were not $\cO$-flat, we can find a global function $g$ which is not divisible by $\varpi$ and is $\varpi$-power torsion. Then $g$ must be nilpotent by topological flatness, but its reduction mod $\varpi$ then produces a non-zero nilpotent global function on $U(\tld{z},\lambda,\nabla_{(a,b,0)})^{\nm}_\F$, a contradiction.

Finally, since $U(\tld{z},\lambda,\nabla_{(a,b,0)})^{\nm}$ is $\cO$-flat with reduced special fiber, $U(\tld{z},\lambda,\nabla_{(a,b,0)})^{\nm}$ is normal \cite[Proposition 8.2]{PZ}.

\end{proof}

\begin{table}[H]
\captionsetup{justification=centering}
\caption[Foo content]{\textbf{Irreducible components of fibers
}
}
\label{table:Num_ex}
\centering
\adjustbox{max width=\textwidth}{
\begin{tabular}{| c | c | }
\hline
\hline
&\\
Irreducible components of $U(\tld{z},\lambda,\nabla_{(a,b,0)})_{\F}$& Irreducible components of the preimage\\
&\\
\hline
&\\
$(c_{22},c_{12}d_{33}-c_{13},bd_{21}d_{33}+(a-b-1)d_{31})$&$
\begin{array}{ll}(c_{22},(a-b)W+(a-b-1)d_{31},c_{12}d_{33}-c_{13},bd_{21}d_{33}+(a-b-1)d_{31},bc_{13}d_{21}+(a-b-1)c_{12}d_{31}), \\
(c_{22},c_{13},c_{12},(b-1)W+(-a+b+1)d_{31},bd_{21}d_{33}+(a-b-1)d_{31})\end{array}$\\
&\\
\hline
&\\
$(c_{22},d_{21},c_{12})$&$\begin{array}{ll}
(c_{22},d_{21},c_{12},(a - 1) (b - 1) (a - b)W+(a - b - 1) (a (b - 1) - b)d_{31})\\ (c_{22},d_{21},c_{13},c_{12},W)\end{array}$\\
&\\
\hline
&\\
$(d_{31},c_{22},d_{21})$&$(d_{31},c_{22},d_{21},W)$\\
&\\
\hline
&\\
$(c_{22},c_{13},c_{12})$&$(c_{22},c_{13},c_{12})$\\
&\\
\hline
&\\
$(d_{33},c_{13},c_{12})$&$\begin{array}{ll}(d_{33},c_{13},c_{12},W),\\
(d_{33},c_{22},c_{13},c_{12},(a - 1) (b - 1) (a - b)W+(a - b - 1) (a (b - 1) - b)d_{31})\end{array}$\\
&\\
\hline
&\\
$(d_{33},d_{31},c_{13})$&$(d_{33},d_{31},c_{13},W)$\\
&\\
\hline
&\\
$(d_{31},c_{12}d_{21}-c_{22},(a-b-2)c_{12}d_{33}+(-a+2)c_{13})$&$\begin{array}{ll}
(d_{31},W,c_{12}d_{21}-c_{22},(a-b-2)c_{12}d_{33}+(-a+2)c_{13},(a-2)c_{13}d_{21}+(-a+b+2)c_{22}d_{33}),
\\
(d_{31},c_{22},c_{13},c_{12},b (a (a - b - 1) + b - 1)d_{21}d_{33}+(a - 1) (b - 1) (a - b)W)
\end{array}$\\
&\\
\hline
\hline
\end{tabular}}
\caption*{
In each of the entries of the left column, the first ideal corresponds to the unique maximal dimensional component of the pre-image.
}
\end{table}

\begin{cor}\label{cor:numerical_unibranch} Suppose $(a,b)\in \cO^2$ such that 
\[b(b-1)(a-1)(a-2)(a-b)(a-b-1)(a-b-2)\in \cO^\times\]
and $p>7$. Then:
\begin{enumerate}
\item $U(\tld{z},\lambda,\nabla_{(a,b,0)})$ is unibranch at $\tld{z}\in U(\tld{z},\lambda,\nabla_{(a,b,0)})(\F)$.
\item $U(\tld{z},\lambda,\nabla_{(a,b,0)})$ is \emph{not} unibranch on a Zariski dense set of points on the irreducible component of its special fiber given by
\[(c_{22},c_{13},c_{12})\].
\end{enumerate}
\end{cor}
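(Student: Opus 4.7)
The strategy relies on the explicit description of the normalization $\pi\colon U(\tld{z},\lambda,\nabla_{(a,b,0)})^{\nm}\to U(\tld{z},\lambda,\nabla_{(a,b,0)})$ provided by Proposition \ref{prop:numerical_base_change}(\ref{it2:prop_numerical_base_change}). Since $\pi$ is finite and the target is excellent, Remark \ref{rmk:unibranch_vs_analytic}(\ref{item:onepoint}) reduces unibranch-ness at a closed point $x$ to the condition that $|\pi^{-1}(x)|=1$ set-theoretically. For part (1), the point $\tld{z}$ is the origin of the chart $\cU(\tld{z})$: $t=c_{12}=c_{13}=d_{21}=c_{22}=d_{31}=d_{33}=0$. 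Substituting these values into the seven generators of $\tld{I}$, all of them except the sixth vanish identically, while the sixth collapses to $(a-1)(b-1)(a-b)\,W^{2}=0$. The hypothesis $(a-1)(b-1)(a-b)\in\cO^{\times}$ forces $W=0$ set-theoretically, so $\pi^{-1}(\tld{z})$ is a single point.

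For part (2), let $C:=V(c_{12},c_{13},c_{22})\subset U(\tld{z},\lambda,\nabla_{(a,b,0)})_{\F}$ be the given irreducible component, an affine $3$-space in $(d_{21},d_{31},d_{33})$. Setting $c_{12}=c_{13}=c_{22}=t=0$ in $\tld{I}$ leaves only the sixth generator as a nontrivial constraint, which we view as a relative quadratic in $W$ over $C$:
\[
(a-1)(b-1)(a-b)\,W^{2}+\alpha\,W+\beta=0,
\]
where $\alpha=b\bigl(a(a-b-1)+b-1\bigr)d_{21}d_{33}+(a-b-1)\bigl(a(b-1)-b\bigr)d_{31}$ and $\beta=b\bigl(a(a-b-2)+b+1\bigr)d_{21}d_{31}d_{33}$. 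The leading coefficient is a unit, and a direct inspection shows the discriminant $\Delta=\alpha^{2}-4(a-1)(b-1)(a-b)\beta$ has nonzero $d_{21}^{2}d_{33}^{2}$-coefficient $b^{2}\bigl(a(a-b-1)+b-1\bigr)^{2}$ under the stated genericity on $(a,b)$ (after possibly enlarging the universal polynomial $P$ by the irreducible factor $a(a-b-1)+b-1$, which is independent of $p$). In particular, $\Delta\not\equiv 0$ as a polynomial in $(d_{21},d_{31},d_{33})$.

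Consequently, on the Zariski dense open subscheme of $C_{\ovl{\F}}$ where $\Delta\neq 0$, the quadratic has two distinct roots in $W$, giving two geometric points in $\pi^{-1}(x)$. Applying Remark \ref{rmk:unibranch_vs_analytic}(\ref{item:onepoint}) once more, $U(\tld{z},\lambda,\nabla_{(a,b,0)})$ fails to be unibranch on this dense subset of $C$. The principal subtlety is establishing $\Delta\not\equiv 0$; in the present case this is transparent, but in more complex examples the analogous verification would typically require computer algebra support.
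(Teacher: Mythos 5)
Your strategy is essentially the paper's: reduce unibranch-ness to the set-theoretic fiber of the explicit partial normalization $\pi$ via Remark \ref{rmk:unibranch_vs_analytic}(\ref{item:onepoint}), then compute fibers by substituting into the generators of $\tld{I}$. Part (1) is handled identically, and your Part (2) reduction to a relative quadratic in $W$ over $C$ with leading coefficient $(a-1)(b-1)(a-b)$ is exactly the paper's ``double cover by an irreducible quadric, generically finite \'etale of degree~$2$,'' with the advantage that you make the discriminant computation explicit rather than leaning on the Macaulay2 output recorded in Table~\ref{table:Num_ex}.

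There is, however, a genuine gap. You justify $\Delta\not\equiv 0$ by exhibiting the $d_{21}^2d_{33}^2$-coefficient $b^2\bigl(a(a-b-1)+b-1\bigr)^2$, which is \emph{not} guaranteed nonzero by the stated unit condition (the factor $a(a-b-1)+b-1$ does not divide it), and your remedy of ``enlarging the universal polynomial $P$'' is not available here: Corollary~\ref{cor:numerical_unibranch} has a fixed hypothesis, not a polynomial one is free to adjust. As written you therefore prove a weaker statement. The fix is to use all three coefficients. Writing $\alpha=A\,d_{21}d_{33}+B\,d_{31}$ and $\beta=C\,d_{21}d_{31}d_{33}$ as you do, one has
\[
\Delta = A^2 d_{21}^2 d_{33}^2 + \bigl(2AB-4(a-1)(b-1)(a-b)C\bigr)d_{21}d_{31}d_{33} + B^2 d_{31}^2.
\]
Suppose $\Delta\equiv 0$ in $\F[d_{21},d_{31},d_{33}]$. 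Then $A=B=0$ since $\F$ is a domain, and the middle coefficient forces $C=0$ because $4$ and $(a-1)(b-1)(a-b)$ are nonzero in $\F$. Using that $b$ and $a-b-1$ are nonzero, $B=0$ gives $ab=a+b$; substituting this into $A=0$ gives $a^2-2a-1=0$, while substituting into $C=0$ gives $a^2-3a+1=0$; subtracting yields $a=2$, contradicting $a-2\in\cO^\times$. Hence $\Delta\not\equiv 0$ already under the stated hypotheses, and the rest of your argument goes through without any enlargement of the genericity condition.
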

\begin{proof}
Let $\pi:U(\tld{z},\lambda,\nabla_{(a,b,0)})^{\nm}\to U(\tld{z},\lambda,\nabla_{(a,b,0)})$. Then $\pi$ is the normalization map by Proposition \ref{prop:numerical_base_change}\eqref{it2:prop_numerical_base_change}.

The first item immediately follows from the fact that $\pi^{-1}(\tld{z})$ is set-theoretically a singleton.

For the second item, let $C$ be the irreducible component of $U(\tld{z},\lambda,\nabla_{(a,b,0)})_\F$ cut out by the ideal $(c_{22},c_{13},c_{12})$.
Then according to Table \ref{table:Num_ex}, at the set theoretic level the map $\pi:\pi^{-1}(C)\to C$ identifies with
\begin{align*}
Z& \to \bA^3_\F\\
(d_{21},d_{31},d_{33},W)&\mapsto (d_{21},d_{31},d_{33}).
\end{align*}
where $Z\subset  \bA^4_\F$ is cut out by
\[
\text{\Small{$\frac{b((a-b)(a-1)- 1)}{(a - 1) (b - 1) (a - b)}Wd_{21}d_{33}+\frac{b (a - b - 1)}{(b - 1) (a - b)}d_{21}d_{31}d_{33}+\frac{(a - b - 1) (a (b - 1) - b)}{(a - 1) (b - 1) (a - b)}Wd_{31}+W^2$}}.
\]

This map is a double cover of $\bA^3_\F$ by an irreducible quadric, and hence is generically finite \'{e}tale of degree $2$. In particular, any point outside the branch locus will not be (geometrically) unibranch.
\end{proof}
\begin{cor} \label{cor:numerical_main_theorem}Let $n=3$, $K=\Qp$, $\lambda=(3,1,0)$ and $\tld{z}=(23)t_{(2,1,1)}$. 
Let $(s,\mu)$ be a $5$-generic lowest alcove presentation of a tame inertial type $\tau$.
Then both conclusions of Theorem \ref{thm:stack_local_model} holds when $\mu$ is $10$-deep in $C_0$, i.e.~with the polynomial in \emph{loc.cit.}~ taken to be 
$P(X,Y,Z)=\prod_{m=0}^{10} \big((X-Y-m)(Y-Z-m)(Z-X-m)\big)$

In particular, if $\mu$ is $10$-deep in $C_0$, then
\[\tld{\cX}^{\lambda,\tau}(\tld{z})\cong \tld{U}(\tld{z},\lambda, \nabla_{\bf{a}_\tau})\]
\end{cor}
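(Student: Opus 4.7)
The plan is to specialize and make quantitative the proof of Theorem \ref{thm:stack_local_model} in our concrete numerical situation, using the explicit computations in Propositions \ref{prop:numerical_base_space}, \ref{prop:numerical_base_change} and Corollary \ref{cor:numerical_unibranch} as the substitutes for the abstract inputs Proposition \ref{prop:Elkik} and Theorem \ref{thm:model_unibranch}. The general polynomial $P_{\{\lambda_j\}_{j\in\cJ},e}$ arose in that proof as the product of three pieces: (i) the vanishing polynomial for the Elkik approximation, coming from the smoothness locus of the naive universal model; (ii) the polynomial controlling the error term in the algebraic monodromy approximation (Proposition \ref{prop:monodromy_approximation}); and (iii) the polynomial guaranteeing unibranch-ness at torus fixed points (Theorem \ref{thm:model_unibranch}). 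I plan to pin down each contribution in turn.

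For (i) and (ii), I would work over the Zariski open $V=\Spec\Z[a,b][P(a,b)^{-1}]\subset \bA^2$, where $P(a,b)=7!b(b-1)(a-1)(a-2)(a-b)(a-b-1)(a-b-2)$, and invoke the explicit presentation of $\cU(\tld{z},\lambda,\nabla)\times_{\bA^2}V$ in Proposition \ref{prop:numerical_base_space}(\ref{it1:prop_numerical_base_space}). Item (\ref{it4:prop:numerical_base_space}) provides the key input $t^3\in H$, which fixes the Elkik constant $r=3$ in Proposition \ref{prop:Elkik}. Combining this with Proposition \ref{prop:monodromy_approximation} applied with $h=h_\lambda=3$, which gives $I_{\fM,\beta,\nabla_1}\subset (I_{\fM,\nabla_\infty},p^{m-2h+3})$, one sees that a depth $m$ at least $N+r$ suffices for the Hensel-type lift producing the diagonal dotted arrow, where $N$ is the Elkik threshold from the chosen presentation. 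Tracking these constants and pushing through the proof of Theorem \ref{thm:stack_local_model}\eqref{it:stack_local_model:1} verbatim produces the diagonal dotted arrow for any $\mu$ that is $10$-deep in $\un{C}_0$ (the $7!$ factor requires $p>7$, which is automatic from the $P$-genericity since $P$ has factors of the form $X-Y-m$ with $m\leq 10<p$).

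For (iii), I would substitute Corollary \ref{cor:numerical_unibranch}(1) for Theorem \ref{thm:model_unibranch} in the proof of Theorem \ref{thm:stack_local_model}\eqref{it:stack_local_model:2}: Proposition \ref{prop:numerical_base_change}\eqref{it2:prop_numerical_base_change} constructs the explicit normalization $U(\tld{z},\lambda,\nabla_{(a,b,0)})^{\nm}$, and the set-theoretic singleton fibre above $\tld{z}$ computed from Table \ref{table:Num_ex} gives the unibranch property whenever the parameter $(a,b)$ lands in $V$. Since $\bf{a}_\tau\pmod\varpi=s^{-1}(\mu+\eta)$ (reshuffled by the orientation; note Remark \ref{rmk:setting_last_0} to reduce to the $2$-parameter case), the condition $P(\bf{a}_\tau)\in\cO^\times$ is implied by $\mu$ being $10$-deep in $\un{C}_0$, so Corollary \ref{cor:numerical_unibranch}\emph{applies}. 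Feeding this into the last part of the proof of Theorem \ref{thm:stack_local_model}\eqref{it:stack_local_model:2} (the cardinality argument comparing minimal primes of the versal rings $B$ on the local model side and $A$ on the Galois side, using Lemma \ref{lem:local_model_lower_bound}) yields the vertical dotted isomorphism $B\risom A$, and thus the domain property for versal rings of $\cX^{\lambda',\tau}$ at semisimple $\rhobar$ for every regular dominant $\lambda'\leq\lambda$. Since here $\lambda=(3,1,0)$ is itself minuscule-regular and the only regular dominant $\lambda'\leq \lambda$ is $\lambda' =\lambda$, one gets $\tld{\cX}^{\lambda,\tau}(\tld{z})\cong\tld{U}(\tld{z},\lambda,\nabla_{\bf{a}_\tau})$.

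The main obstacle, and the only real work beyond bookkeeping, is confirming that the depth bound of $10$ genuinely absorbs all three sources of genericity above simultaneously; the bulk of this amounts to carefully computing the Elkik threshold $N$ for the presentation in Proposition \ref{prop:numerical_base_space}\eqref{it1:prop_numerical_base_space} and checking the arithmetic $m-2h+3\geq N$ with $h=3$ and $r=3$, together with comparing the seven linear factors of $P(a,b)$ to the explicit factors of $P(X,Y,Z)$ appearing in the statement. The necessary inequalities are verified by a direct numerical check once all constants are extracted from the computer-algebra output supporting Propositions \ref{prop:numerical_base_space} and \ref{prop:numerical_base_change}.
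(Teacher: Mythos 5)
Your proposal matches the paper's proof in both structure and substance: you correctly identify that the explicit inputs Propositions~\ref{prop:numerical_base_space}(\ref{it4:prop:numerical_base_space}) and \ref{prop:numerical_base_change}(\ref{it2:prop_numerical_base_change}) and Corollary~\ref{cor:numerical_unibranch} substitute for Proposition~\ref{prop:Elkik} and Theorem~\ref{thm:model_unibranch} in the proof of Theorem~\ref{thm:stack_local_model}, that the Elkik constant is $r=3$, and that the decisive inequality is $m - 2h + 3 > 2r$ with $h=h_\lambda=3$, giving $m\geq 10$. Your earlier slip ``$m\geq N+r$'' is not quite the right formula in general (it should be $m\geq N + 2h-3$, i.e.\ $m-2h+3\geq N$, which you do state correctly later in the paragraph), but it happens to give the same bound here because $r=2h-3$ when $h=3$.

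Two small comments on the final step. First, the phrase ``minuscule-regular'' is incorrect terminology: $\lambda=(3,1,0)$ is regular but certainly not minuscule (a minuscule coweight of $\GL_3$ has all pairings with roots in $\{0,\pm 1\}$). Second, your reduction of the union $\bigcup_{\lambda'\leq\lambda\text{ reg.\ dom.}}$ to the single term $\lambda$ uses the observation that the only regular dominant $\lambda'\leq (3,1,0)$ is $(3,1,0)$ itself (since $(2,2,0)$ and $(2,1,1)$ fail regularity), whereas the paper instead observes that $\tld{z}$ is not $\lambda'$-admissible for any $\lambda'<\lambda$. Both observations are valid; yours is cleaner in that it does not use the particular choice of $\tld{z}$, and it shows directly that $\cX^{\leql,\tau}_\reg=\cX^{\lambda,\tau}$ and $\tld{U}_\reg(\tld{z},\leql,\nabla_{\bf{a}_\tau})=\tld{U}(\tld{z},\lambda,\nabla_{\bf{a}_\tau})^{\wedge_p}$ globally.
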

\begin{proof} For the conclusion of the first part of Theorem \ref{thm:stack_local_model}, we need to choose the polynomial $P(X,Y,Z)$ to guarantee the Elkik approximation argument goes through. By Propositions \ref{prop:Elkik} and \ref{prop:monodromy_approximation} we need $\mu$ to be $m$-deep in $C_0$ for any $m$ such that
\[m-6+3>6\]
since the integer $r$ in Propositions \ref{prop:Elkik} is $3$ by Proposition \ref{prop:numerical_base_space}\eqref{it4:prop:numerical_base_space} and $h_{\lambda}=3$. In other words, we need $m\geq 10$, leading to the polynomial $P$ in the statement. However, in view of Corollary \ref{thm:stack_local_model}, this choice of $P$ already guarantees the unibranch property needed for the second part of Theorem \ref{thm:stack_local_model}.

Finally the last statement follows from the fact that $\tld{z}$ is not $\lambda'$-admissible for any $\lambda'<\lambda$.
\end{proof}
\begin{rmk} Corollary \ref{cor:numerical_unibranch} has only been stated when $\cJ$ is a singleton. However, it easily generalizes to the case of general $\cJ$ by taking products: the essential point is that in our situation taking products preserves the property of being reduced, and hence the product version of $U(\tld{z},\lambda,\nabla_{(a,b,0)})^{\nm}$ is still the normalization of the product version of $U(\tld{z},\lambda,\nabla_{(a,b,0)})$. In particular, the generelization of Corollary \ref{cor:numerical_main_theorem} to the case $K$ being a general unramified extension of $\Qp$ holds.
\end{rmk}
\clearpage{}%

\newpage
\bibliography{Biblio}
\bibliographystyle{amsalpha}

\end{document}